\theoremstyle{plain}
\newtheorem*{thmintro}{Theorem}
\newtheorem{thm}{Theorem}[chapter]
\newtheorem{pro}[thm]{Proposition}
\newtheorem{lem}[thm]{Lemma}
\newtheorem{cor}[thm]{Corollary}
\newtheorem{claim}[thm]{Claim}
\theoremstyle{definition}
\newtheorem*{defi}{Definition}
\newtheorem*{defis}{Definitions}
\newtheorem{eg}{Example}
\newtheorem{egs}{Examples}
\newtheorem{rem}[thm]{Remark}
\newtheorem{rems}[thm]{Remarks}
\def\og{\leavevmode\raise.3ex\hbox{$\scriptscriptstyle\langle\!\langle$~}}
\def\fg{\leavevmode\raise.3ex\hbox{~$\!\scriptscriptstyle\,\rangle\!\rangle$}}
\numberwithin{equation}{section}       % Number formulas within sections
\newsavebox{\fmbox}
\newenvironment{fmpage}[1]
     {\begin{minipage}{#1}}
     {\end{minipage}}
\begin{document}

\title{The \textsc{Cremona} group and its subgroups}

\author{Julie \textsc{D\'eserti}}

\address{Universit\'e C\^ote d'Azur, CNRS, LJAD, France}
\email{deserti@math.cnrs.fr}
\thanks{The author was partially supported by the ANR grant Fatou ANR-17-CE40-
0002-01 and the ANR grant Foliage ANR-16-CE40-0008-01.} 

\maketitle

\newpage

\begin{abstract}
We give an extensive introduction to the current
literature on the \textsc{Cremona} groups 
over the field of complex numbers, mostly
of rank $2$, with an emphasis on group 
theoretical and dynamical questions. 

After a short
introduction which explains in an informal 
style some selected results and techniques 
Chapter \ref{chap:hyperbolicspace} gives
a description of the hyperbolic space on 
which the \textsc{Cremona} group in two 
variables acts, and which has turned out 
to provide some of the key techniques 
to understand the plane \textsc{Cremona}
group. In Chapter \ref{Chapter:algebraicsubgroup}
the \textsc{Zariski} topology is described.
Chapter \ref{chapter:gen} gives an 
overview of various presentations of the 
plane \textsc{Cremona} group. Chapter
\ref{chapter:alg} treats some group 
theoretical properties of the plane 
\textsc{Cremona} group. Chapter 
\ref{chapter:finite} surveys some results 
about finite (mostly abelian) subgroups of the 
plane \textsc{Cremona} group. Chapter 
\ref{chapter:uncountable} surveys results
about various subgroups using techniques 
that rely on the base-field being uncountable.
Chapter \ref{chap:hyper} gives a big
variety of important results that can be
deduce from the action of the plane 
\textsc{Cremona} group on the hyperbolic 
space, such as the \textsc{Tits} alternative
or the non-simplicity of the group. 
Chapter \ref{chapter:dyn} gives an 
introduction to some notions from dynamics and 
their relationship to the plane 
\textsc{Cremona} group.
\end{abstract}

\setcounter{page}{4}

\newpage 

\vspace*{10cm}

\begin{flushright}
\begin{Large}
\textsl{\`A Beno\^it}
\end{Large}
\end{flushright}

\newpage

\frontmatter

\tableofcontents

%-----------------------------------------------------------------------------
% Beginning of preface.tex
%-----------------------------------------------------------------------------
%
% AMS-LaTeX 1.2 sample file for a monograph, based on amsbook.cls.
% This is a data file input by chapter.tex.
%%%%%%%%%%%%%%%%%%%%%%%%%%%%%%%%%%%%%%%%%%%%%%%%%%%%%%%%%%%%%%%%%%%%%%%%

\chapter*{Preface}

The main purpose of the present treatise
is to draw a portrait of the $n$-dimensional
Cremona group 
$\mathrm{Bir}(\mathbb{P}^n_\mathbb{C})$. 
The study of 
this group started in the XIXth century;
the subject has known a lot of developments
since the beginning of the XXIth century.
Old and new results are discussed; 
unfortunately we will not be exhaustive. 
The Cremona group is approached 
through the study of its subgroups: 
algebraic, finite, normal, nilpotent,
simple, torsion subgroups are 
evoked but also centralizers of elements,
representation of lattices, subgroups
of automorphisms of positive entropy etc

\bigskip

Let us introduce birational self maps of the 
plane and the plane Cremona group from a geometrical 
point of view.

A plane collineation is a one-to-one map 
from $\mathbb{P}^2_\mathbb{C}$ to itself
such that the images of collinear points
are themselves collinear. Such maps 
leave the projective properties of curves
unaltered. In advancing beyond such properties
let us introduce other maps of the plane to
itself that establish relations between 
curves of differents orders and possessing 
different sets of singularities. The most 
general rational map of the plane is 
defined by equations of the form
\[
\phi\colon(z_0:z_1:z_2)\dashrightarrow\big(\phi_0(z_0,z_1,z_2):\phi_1(z_0,z_1,z_2):\phi_2(z_0,z_1,z_2)\big)
\]
where $\phi_0$, $\phi_1$ and $\phi_2$ are 
homogeneous polynomials of degree $n$
without common factor of positive degree.
Such a map makes correspond to a point 
$p$ with coordinates $(p_0:p_1:p_2)$ a 
point $\phi(p)=q$ with coordinates 
$(q_0:q_1:q_2)$ where 
\begin{equation}\label{eq:crem}
\delta q_0=\phi_0(p_0,p_1,p_2),\quad
\delta q_1=\phi_1(p_0,p_1,p_2),\quad
\delta q_2=\phi_2(p_0,p_1,p_2)
\end{equation}
with $\delta$ in $\mathbb{C}^*$.

Consider the net of curves $\Lambda_\phi$
defined by the equation
\[
\alpha\phi_0+\beta\phi_1+\gamma\phi_2=0
\]
where $\alpha$, $\beta$ and $\gamma$ 
are arbitrary parameters. As $p$ 
describes a line in $\mathbb{P}^2_\mathbb{C}$,
then $q=\phi(p)$ describes a curve 
$\mathcal{C}$ of~$\Lambda_\phi$. The 
curves of the net $\Lambda_\phi$ are thus
correlated by $\phi$ with the lines 
of the plane. Conversely given any 
net $\Lambda$ of curves such as 
$\Lambda_\phi$ a linear representation
of the curves of $\Lambda$ on the lines
of the plane is equivalent to a rational
map of the plane.

The curves of $\Lambda_\phi$ may have
base-points $p_i$ common to them all.
Each such point is a common zero of 
$\phi_0$, $\phi_1$ and $\phi_2$, so the
equations (\ref{eq:crem}) to determine
its corresponding point are illusory. 
Conversely each point, termed a 
\textsl{base-point} of $\phi$, which 
renders equation (\ref{eq:crem}) 
illusory is a base-point of 
$\Lambda_\phi$. In other words

\begin{thmintro}
The base-points of any rational map 
are the base-points of the associated
net of curves.
\end{thmintro}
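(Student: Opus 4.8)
The plan is to prove the two inclusions separately, by unravelling the two definitions of base-point. Recall that a base-point of $\phi$ is a point $p$ at which the three homogeneous polynomials $\phi_0$, $\phi_1$, $\phi_2$ vanish simultaneously, so that the right-hand sides of (\ref{eq:crem}) are all zero and no image point $q\in\mathbb{P}^2_\mathbb{C}$ is determined; whereas a base-point of the net $\Lambda_\phi$ is a point lying on every curve $\alpha\phi_0+\beta\phi_1+\gamma\phi_2=0$ of the net, that is, a point $p$ with $\alpha\phi_0(p)+\beta\phi_1(p)+\gamma\phi_2(p)=0$ for all choices of the parameters $\alpha$, $\beta$, $\gamma$.

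First I would show that every base-point of $\phi$ is a base-point of $\Lambda_\phi$. If $\phi_0(p)=\phi_1(p)=\phi_2(p)=0$, then for arbitrary $\alpha$, $\beta$, $\gamma$ the linear combination $\alpha\phi_0(p)+\beta\phi_1(p)+\gamma\phi_2(p)$ vanishes; hence $p$ lies on every member of the net. Conversely, to show that a base-point of $\Lambda_\phi$ is a base-point of $\phi$, I would specialise the parameters. Since $\alpha\phi_0(p)+\beta\phi_1(p)+\gamma\phi_2(p)=0$ holds for all $(\alpha,\beta,\gamma)$, choosing $(\alpha,\beta,\gamma)$ successively equal to $(1,0,0)$, $(0,1,0)$ and $(0,0,1)$ forces $\phi_0(p)=0$, $\phi_1(p)=0$ and $\phi_2(p)=0$ respectively. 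Thus $p$ annihilates all three polynomials and is a base-point of $\phi$.

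There is essentially no obstacle here: the statement is a direct reformulation of the definitions, the only point worth isolating being that membership in every curve of the net is equivalent to the vanishing of each $\phi_i$ separately, which is secured by testing against the three coordinate values of the parameters. I note that the standing hypothesis that $\phi_0$, $\phi_1$, $\phi_2$ share no common factor of positive degree plays no role in the equivalence itself; it only guarantees that the common zero locus is finite, so that the base-points form a genuine discrete collection of points $p_i$ rather than containing a curve.
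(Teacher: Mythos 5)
Your proof is correct and follows exactly the route the paper itself takes: the paper's own justification is precisely the observation that a base-point of $\phi$ (a point rendering the equations (\ref{eq:crem}) illusory, i.e.\ a common zero of $\phi_0$, $\phi_1$, $\phi_2$) is a common point of all curves $\alpha\phi_0+\beta\phi_1+\gamma\phi_2=0$ and conversely, with your specialisation of $(\alpha,\beta,\gamma)$ to $(1,0,0)$, $(0,1,0)$, $(0,0,1)$ simply making explicit the step the paper leaves implicit. Your closing remark about the role of the no-common-factor hypothesis is also accurate in this informal setting, where base-points are taken to be proper points of $\mathbb{P}^2_\mathbb{C}$.
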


Any two general curves $\mathcal{C}$ 
and $\mathcal{C}'$ of $\Lambda_\phi$ 
define a pencil of curves 
$\mathcal{C}+\alpha\mathcal{C}'$
of the net. Denote by $n$ the number
of free intersections of $\mathcal{C}$
and $\mathcal{C}'$ not occuring at the 
base-points $p_i$ of $\Lambda_\phi$; denote by $r_1$, 
$r_2$, $\ldots$, $r_n$ these points. 
The integer $n$ is called the 
\textsl{grade} of $\Lambda_\phi$.

To curves of the arbitrary pencil 
$\mathcal{C}+\alpha\mathcal{C}'$
there correspond by the map $\phi$ 
lines of a pencil $L+\alpha L'$. 
Furthermore if the base-point of the 
latter pencil is $q$, then clearly 
every point $r_i$ corresponds to $q$.
Conversely if any two points of the 
plane have the same preimage $q$, 
then they belong to the same free
intersection set of some pencil 
in $\Lambda_\phi$.

\begin{thmintro}
Let $\phi$ be a rational self map
of the plane. Let $\Lambda_\phi$ 
be its associated net and let $n$ 
be the grade of $\Lambda_\phi$. 
An arbitrary point $q$ is the 
transform of $n$ points $r_1$, 
$r_2$, $\ldots$, $r_n$ which 
together form the free intersection
set of a pencil of curves of 
$\Lambda_\phi$. 
\end{thmintro}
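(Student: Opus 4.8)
The plan is to exploit the duality, built into the definition of the net, between the curves of $\Lambda_\phi$ and the lines of the target plane. A line $\ell$ of the target, with equation $\alpha w_0+\beta w_1+\gamma w_2=0$ in coordinates $(w_0:w_1:w_2)$ on the image, pulls back under $\phi$ to the curve
\[
\phi^*\ell\colon\ \alpha\phi_0+\beta\phi_1+\gamma\phi_2=0,
\]
which is precisely the member of $\Lambda_\phi$ with parameters $(\alpha:\beta:\gamma)$. This assignment $\ell\mapsto\phi^*\ell$ is a bijection between the lines of the target plane and the curves of $\Lambda_\phi$, and because it is linear in $(\alpha:\beta:\gamma)$ it carries a pencil of lines to a pencil of curves of the net. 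In particular, the pencil of all lines through a fixed point $q=(q_0:q_1:q_2)$ of the target, cut out in dual coordinates by the single linear condition $\alpha q_0+\beta q_1+\gamma q_2=0$, is sent to a pencil $\mathcal{P}_q\subset\Lambda_\phi$.

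First I would identify the fibre $\phi^{-1}(q)$ with the base locus of $\mathcal{P}_q$, away from the base-points $p_i$ of the net. If $r$ is not a base-point and $\phi(r)=q$, then for every line $\ell\colon\alpha w_0+\beta w_1+\gamma w_2=0$ through $q$ one has $\alpha q_0+\beta q_1+\gamma q_2=0$, whence, using $\phi_i(r)=\delta q_i$ from (\ref{eq:crem}),
\[
\alpha\phi_0(r)+\beta\phi_1(r)+\gamma\phi_2(r)=\delta\,(\alpha q_0+\beta q_1+\gamma q_2)=0,
\]
so $r$ lies on $\phi^*\ell$, and hence on every curve of $\mathcal{P}_q$. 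Conversely, if $r$ lies on $\phi^*\ell$ and on $\phi^*\ell'$ for two distinct lines $\ell,\ell'$ through $q$, then $\phi(r)$ lies on both $\ell$ and $\ell'$, and since two distinct lines meet in the single point $q$ this forces $\phi(r)=q$. Thus the preimages of $q$ that are not base-points are exactly the common points of $\mathcal{P}_q$ lying off the $p_i$.

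It then remains to count these points. The pencil $\mathcal{P}_q$ is spanned by two of its members, which are two general curves $\mathcal{C},\mathcal{C}'$ of $\Lambda_\phi$, and its base locus is the intersection $\mathcal{C}\cap\mathcal{C}'$. By construction $\mathcal{C}\cap\mathcal{C}'$ consists of the base-points $p_i$ common to the whole net together with the free intersections $r_1,r_2,\ldots,r_n$, whose number $n$ is by definition the grade of $\Lambda_\phi$. Since every member of $\mathcal{P}_q$ passes through the fixed points $p_i$ and through these free points, the base locus off the $p_i$ is exactly $\{r_1,\ldots,r_n\}$. Combined with the previous step, this shows $\phi^{-1}(q)=\{r_1,\ldots,r_n\}$ and that these points form the free intersection set of the pencil $\mathcal{P}_q$, as claimed.

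The step I expect to be the main obstacle is the bookkeeping in the count: one must ensure that for a general $q$ the $n$ free intersections are genuinely simple and disjoint from the base-points $p_i$, so that the generic fibre really has cardinality $n$ rather than being diminished by a free point colliding with a base-point or by a loss of transversality between $\mathcal{C}$ and $\mathcal{C}'$. This is where the hypothesis that $\phi_0,\phi_1,\phi_2$ share no common factor of positive degree, forcing the $p_i$ to constitute a proper lower-dimensional base locus, together with the genericity of $q$, enters decisively.
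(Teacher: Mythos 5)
Your proof is correct and follows essentially the same route as the paper: both arguments rest on the correspondence between pencils of curves in $\Lambda_\phi$ and pencils of lines in the target, identifying the fibre $\phi^{-1}(q)$ with the free intersection set of the pencil attached to $q$. The paper merely states this in the pushforward direction (a pencil $\mathcal{C}+\alpha\mathcal{C}'$ of the net maps to a pencil of lines with base-point $q$, so each $r_i$ maps to $q$, and conversely), while you run the same duality in the pullback direction, with the same genericity caveat the paper leaves implicit.
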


In other words the general rational
map of the plane is a $(n,1)$ 
correspondence between the points 
$p$ and $q$. And this means that, 
when the ratios of $q_0$, $q_1$, $q_2$
are given the equations 
(\ref{eq:crem}) have in general $n$ 
distinct solutions for the ratios
of $p_0$, $p_1$ and $p_2$. If 
$n=1$, {\it i.e.} if these equations
have only one solution, $(p_0:p_1:p_2)$
are rational functions of $(q_0:q_1:q_2)$.
In this case the equations of the reverse
map will be of the form 
\begin{align*}
& \alpha p_0=\psi_0(q_0,q_1,q_2)&& \alpha p_1=\psi_1(q_0,q_1,q_2)&& \alpha p_2=\psi_2(q_0,q_1,q_2)
\end{align*}
where $\psi_0$, $\psi_1$ and $\psi_2$ 
are homogeneous polynomials of degree
$n'$.
A \textsl{Cremona map}\index{defi}{Cremona map} 
is a rational map 
whose reverse is also rational, we 
also speak about birational self map
of the plane. The \textsl{plane
Cremona group}\index{defi}{plane Cremona group} 
is the group of birational self maps of the plane.

A \textsl{homaloidal net} of curves 
in the plane is one whose grade is 
$1$. 

Equations (\ref{eq:crem}) define a 
birational map $\phi$ if 
and only if the associated net~$\Lambda_\phi$ is homaloidal. 
Conversely from any given 
homaloidal net we can derive many
birational self maps of the plane; if 
$\overline{\phi_0}$, $\overline{\phi_1}$
and $\overline{\phi_2}$ 
are three independent linear 
combinations of $\phi_0$, $\phi_1$ 
and $\phi_2$, the net 
\[
\alpha\,\phi_0+\beta\,\phi_1+\gamma\,\phi_2=0
\]
can also be expressed in the form
\[
\alpha'\,\overline{\phi_0}+\beta'\,\overline{\phi_1}+\gamma'\,\overline{\phi_2}=0
\]
and the map defined by 
\[
(z_0:z_1:z_2)\dashrightarrow\big(\overline{\phi_0}(z_0,z_1,z_2):\overline{\phi_1}(z_0,z_1,z_2):\overline{\phi_2}(z_0,z_1,z_2)\big)
\]
is based on the same net. Moreover

\begin{thmintro}
To any birational self map of the plane there is 
associated a homaloidal net of 
curves. 

Conversely any homaloidal net of curves
generates an infinity of birational 
self maps of the plane, any of which 
is the product of 
any other by a plane collineation.
\end{thmintro}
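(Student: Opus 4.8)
The plan is to read off both assertions from the criterion recalled just above the statement, namely that the equations \eqref{eq:crem} define a birational map precisely when the associated net $\Lambda_\phi$ is homaloidal, together with the elementary linear algebra governing the choice of a basis of a net. The forward implication will be essentially a restatement of this criterion, while the real content lies in the converse, where one must convert a change of basis of the net into post-composition with a collineation.

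For the first assertion I would start from a birational self map $\phi = (\phi_0 : \phi_1 : \phi_2)$, the $\phi_i$ being homogeneous of the same degree and without common factor. By the $(n,1)$-correspondence established above, a general point $q$ of the target has exactly as many preimages as the grade of $\Lambda_\phi$; birationality of $\phi$ forces this number to be $1$, hence the grade of $\Lambda_\phi$ equals $1$ and $\Lambda_\phi$ is homaloidal. Equivalently, one invokes directly the stated equivalence between birationality of $\phi$ and homaloidality of $\Lambda_\phi$.

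For the converse, let $\Lambda$ be a homaloidal net with no fixed component, and let $V \subset \mathbb{C}[z_0,z_1,z_2]_d$ be the three-dimensional space of homogeneous polynomials of degree $d$ cutting out its members. Choosing a basis $(\phi_0, \phi_1, \phi_2)$ of $V$ produces a rational map $\phi = (\phi_0 : \phi_1 : \phi_2)$ whose associated net is exactly $\Lambda$; since $\Lambda$ is homaloidal, the stated criterion makes $\phi$ birational. Any other basis $(\overline{\phi_0}, \overline{\phi_1}, \overline{\phi_2})$ of the same space $V$ is obtained by an invertible linear substitution, i.e.\ there is $M \in \mathrm{GL}_3(\mathbb{C})$ with $\overline{\phi_i} = \sum_j M_{ij}\,\phi_j$. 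At the level of maps this reads $\overline{\phi} = [M] \circ \phi$, where $[M] \in \mathrm{PGL}_3(\mathbb{C})$ is the collineation attached to $M$; thus any two birational maps arising from $\Lambda$ differ by a plane collineation. Finally, distinct maps correspond to distinct classes $[M]$, and since $\mathrm{PGL}_3(\mathbb{C})$ is infinite this yields infinitely many such birational maps, which proves the converse.

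The main point requiring care is precisely the passage from a change of basis of $V$ to composition with a collineation: one must check that the projective normalisation $\delta$ in \eqref{eq:crem} is consistent, so that replacing $(\phi_0,\phi_1,\phi_2)$ by $(\overline{\phi_0},\overline{\phi_1},\overline{\phi_2})$ really amounts to post-composing $\phi$ with $[M]$ and not merely with some projectively equivalent correspondence. One also has to ensure that $V$ has no common factor---equivalently that $\Lambda$ has no fixed component---so that $\phi$ genuinely has degree $d$ and the grade is the \emph{honest} count of free intersections; this is exactly the hypothesis that $\Lambda$ is a net of curves in the sense fixed above. Granting these standard verifications, the two displayed claims follow.
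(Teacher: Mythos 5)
Your proposal is correct and follows essentially the same route as the paper's own (informal) argument: the forward direction is the observation that birationality forces the grade of $\Lambda_\phi$ to be $1$ via the $(n,1)$-correspondence, and the converse is exactly the paper's remark that the three independent linear combinations $\overline{\phi_0},\overline{\phi_1},\overline{\phi_2}$ of a basis of the net correspond to post-composition with an element of $\mathrm{PGL}_3(\mathbb{C})$. Your added care about the normalisation $\delta$ and the no-fixed-component hypothesis is a sound tightening of the same argument, not a different proof.
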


A collineation is the simplest kind of
birational self map of the plane whose 
homaloidal net is composed of the lines 
of the plane.

The \textsl{degree} of a birational
self map of the plane is the degree 
of the curves of its 
generating homaloidal net.

Let $\phi$ be a birational self map
of the plane of degree $n$. Denote by $n'$ 
the degree of its inverse~$\phi^{-1}$. If the 
number of intersections of two 
curves $\mathcal{C}$ and $\mathcal{C}'$
is denoted by $\mathcal{C}\cdot\mathcal{C}'$
and if~$L$ and $L'$ are lines, then 
\[
n=L\cdot\Lambda_\phi=\phi(L)\cdot\phi(\Lambda_\phi)=\Lambda_{\phi^{-1}}\cdot L'=n'.
\]
Hence 
\begin{thmintro}
A birational self map of the plane and its
inverse have the same degree.
\end{thmintro}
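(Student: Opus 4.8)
The plan is to express both degrees as intersection numbers and then to transport one count into the other through $\phi$, exploiting the fact that a birational map preserves the number of free intersection points of two general curves. Write $n=\deg\phi$ and $n'=\deg\phi^{-1}$. By definition $n$ is the degree of a general member of the homaloidal net $\Lambda_\phi$. Since such a general member $C$ of $\Lambda_\phi$ is the proper transform under $\phi^{-1}$ of a general line $L'$, B\'ezout's theorem gives $L\cdot C=n$ for a general line $L$: a line meets a curve of degree $n$ in exactly $n$ points, and for general $L$ none of these lies among the base-points of $\Lambda_\phi$. This is the first equality $n=L\cdot\Lambda_\phi$ of the displayed chain.

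First I would choose $L$ and $C$ generic enough that all $n$ of their intersection points avoid the base-points of $\phi$ (equivalently, by the earlier observation, of $\Lambda_\phi$) and that their images avoid the base-points of $\phi^{-1}$; this is possible because all these base-loci are finite. On the dense open set where $\phi$ restricts to a local isomorphism it carries the intersection points of $L$ and $C$ bijectively onto the intersection points of $\phi(L)$ and $\phi(C)$, and the generality of the choice prevents any spurious intersection from appearing along the indeterminacy locus of $\phi^{-1}$. Hence $L\cdot C=\phi(L)\cdot\phi(C)$, which is the content of the middle equality in the chain.

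It then remains to identify the two image curves. Since $C=\phi^{-1}(L')$ for the line $L'$, we have $\phi(C)=L'$, again a line. On the other hand $\phi(L)$, the image of a general line, is by definition a general member of the homaloidal net $\Lambda_{\phi^{-1}}$ associated to the inverse, so it is a curve of degree $n'$. Applying B\'ezout once more yields $\phi(L)\cdot\phi(C)=n'\cdot 1=n'$, and combining the three equalities gives $n=n'$.

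The hard part will be the middle step, namely the invariance of the free intersection count under $\phi$. The two B\'ezout computations at the ends are routine, but this invariance requires care about base-points: one must argue that for sufficiently general $L$ and $C$ the relevant intersection points lie in the locus where $\phi$ is biregular and that the image curves acquire no extra intersections at the base-points of $\phi^{-1}$. This is exactly where the earlier fact that the base-points of a map coincide with those of its net, together with their finiteness, is put to use.
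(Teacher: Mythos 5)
Your proposal is correct and follows exactly the paper's own argument: the displayed chain $n=L\cdot\Lambda_\phi=\phi(L)\cdot\phi(\Lambda_\phi)=\Lambda_{\phi^{-1}}\cdot L'=n'$, with the two B\'ezout computations at the ends and transport of the free intersection points through $\phi$ in the middle. You merely make explicit the genericity bookkeeping (intersection points avoiding the finite base loci of $\phi$ and $\phi^{-1}$) that the paper leaves implicit, which is a fair completion rather than a different route.
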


Let us finish this introduction by pointing
out that this statement is not true in higher 
dimension: 
\begin{align*}
&\mathbb{P}^3_\mathbb{C}\dashrightarrow\mathbb{P}^3_\mathbb{C} && (z_0:z_1:z_2:z_3)\dashrightarrow(z_0^2:z_0z_1:z_1z_2:z_0z_3-z_1^2)
\end{align*}
is a birational self map of 
$\mathbb{P}^3_\mathbb{C}$ of 
degree $2$ whose inverse  
\begin{align*}
&\mathbb{P}^3_\mathbb{C}\dashrightarrow\mathbb{P}^3_\mathbb{C} && (z_0:z_1:z_2:z_3)\dashrightarrow \big(z_0^2z_1:z_0z_1^2:z_0^2z_2:z_1(z_0z_3+z_1^2)\big)
\end{align*}
has degree $3$. As we will
see there are many other differences between 
the $2$-dimensional Cremona 
group and the $n$-dimensional Cremona
group, $n\geq 3$. 

\smallskip

Note that the study of 
$\mathrm{Bir}(\mathbb{P}^2_\mathbb{C})$ is
central: if $S$ is a complex rational surface, 
then its group of birational self maps is 
isomorphic to
$\mathrm{Bir}(\mathbb{P}^2_\mathbb{C})$.

\bigskip

\bigskip

We now deal with the content of the manuscript.
Chapter \ref{chapter:intro} contains introductory 
examples and the very basic techniques used to 
study birational maps of the projective plane.
This chapter explains in particular the importance
of divisors and linear systems in the study of the 
plane Cremona groups.

\smallskip

Chapter \ref{chap:hyperbolicspace} builds up on Chapter
\ref{chapter:intro} by explaining how to blow-up all 
points in~$\mathbb{P}^2_\mathbb{C}$ and subsequent 
blown-up surfaces. It gives rise to an infinite
hyperbolic space on which the Cremona group
acts. This space plays a fundamental role in the 
study of Cremona groups, as it allows to 
apply tools from geometric group theory to 
study subgroups of the Cremona group,
as well as degree growth and dynamical 
behaviours of birational maps.

\smallskip

Chapter \ref{Chapter:algebraicsubgroup} presents 
two natural topologies on the Cremona group
and their properties, and the notion of algebraic 
subgroups of the Cremona groups. The 
construction of one of the topologies - the 
Zariski topology - is defined via the 
concept of morphisms. It links to the concept 
of an algebraic group acting on a variety, 
which is discussed in this chapter as well.

\smallskip

Chapter \ref{chapter:gen} adresses a very basic
and classical interest while dealing with a 
group: finding a "nice" and generating set and 
"nice" structures of the group, such as an 
amalgamated structure. This is quite an 
important topic in research on Cremona
groups because for the plane Cremona
group there are "nice" generating sets, and 
many statements are proven by using them. In higher
dimensions no nice generating sets are known: 
this is one of the many reasons why working with
Cremona groups in higher dimensions is 
very hard.

\smallskip

Chapter \ref{chapter:alg} discusses other group 
geometric properties of plane Cremona
groups. While Chapter \ref{chap:hyperbolicspace}
presents a representation of the Cremona
group in terms of isometries of an infinite
hyperbolic space this chapter deals with linear
representations (there are none) and representations
of subgroups of $\mathrm{SL}(n,\mathbb{Z})$,
$n\geq 3$, inside the plane Cremona group.

\smallskip

Chapter \ref{chapter:finite} deals with results on finite
subgroups of the plane Cremona groups. 
They have been of much interest for a very long time, 
and a short overview of the progress made in the last 
$80$ years is given. The chapter focuses on the 
classification results of finite abelian and finite 
cyclic subgroups by Blanc and 
Dolgachev and Iskovskikh.

\smallskip

Chapter \ref{chapter:uncountable} is an extension 
of Chapter \ref{chapter:finite}; it deals with 
infinite abelian subgroups of the plane 
Cremona group. It then moves on the 
related topic of endomorphisms of Cremona
groups, subject already mentioned in 
Chapter \ref{chapter:alg}.

\smallskip

Chapter \ref{chap:hyper} picks up the topic of 
Chapter \ref{chap:hyperbolicspace} which is the action
of the plane Cremona group on an infinite
hyperbolic space by isometries. The action and its
properties have been very fruitful and has played 
a vital role in many recent results on the plane
Cremona group. 

\smallskip

Chapter \ref{chapter:dyn} has a more dynamical 
flavour. We first give three answers to the 
question "when is a birational self map 
of $\mathbb{P}^2_\mathbb{C}$ birationally 
conjugate to an automorphism ?" We then 
recall some constructions of automorphisms
of rational surfaces with positive entropy.
And then we 
%present loxodromic
%embeddings of $\mathrm{SL}(2,\mathbb{Z})$
%into the plane Cremona group, which 
%connectes two actions of $\mathrm{SL}(2,\mathbb{Z})$:
%the usual one on the hyperbolic plane and the one 
%on the infinite hyperblic space on which the 
%\textsc{Cremona} group acts. It allows to 
realize $\mathrm{SL}(2,\mathbb{Z})$ as a subgroup
of automorphisms of a rational surface with the 
property that every element of infinite order 
has positive entropy.

\bigskip

\textbf{Acknowledgments}

I would like to thank all people working on the 
Cremona groups and around. 
I have a very special thought for 
Serge Cantat and Dominique Cerveau 
who made me discover this wonderful 
group.

I am grateful to the referees for very constructive recommendations.

Enfin merci \`a mon petit univers...

%-----------------------------------------------------------------------------
% End of preface.tex
%-----------------------------------------------------------------------------

\mainmatter

\chapter{Introduction}\label{chapter:intro}

\bigskip
\bigskip

This chapter is devoted to recalls and first definitions.

In the first section morphisms
between varieties, blow-ups, Cremona groups
and bubble space are introduced, the 
Zariski theorem, base-points, indeterminacy
points are recalled, ans examples of subgroups of the 
Cremona group are given, among them the group
of automorphisms of $\mathbb{P}^n_\mathbb{C}$, 
the Jonqui\`eres group, the group of 
monomial maps. 

The second section is devoted to divisors (prime
divisors, Weil divisors, principal 
divisors, Picard group) and 
intersection theory.

The third section deals with a geometric definition
of birational maps of the complex projective plane.

\bigskip
\bigskip

%%%%%%%%%%%%%%%%%%%%%%%%%%%%%%%%%%%%%%%%%%%%%%%%%%%%%%%%%%%%%%%%%%%%%%%%%%%%%%%%%%%%%%%%%%%%%%%%%%%%%%%%%%%%%%%%%%%
%%%%%%%%%%%%%%%%%%%%%%%%%%%%%%%%%%%%%%%%%%%%%%%%%%%%%%%%%%%%%%%%%%%%%%%%%%%%%%%%%%%%%%%%%%%%%%%%%%%%%%%%%%%%%%%%%%%
% section
%%%%%%%%%%%%%%%%%%%%%%%%%%%%%%%%%%%%%%%%%%%%%%%%%%%%%%%%%%%%%%%%%%%%%%%%%%%%%%%%%%%%%%%%%%%%%%%%%%%%%%%%%%%%%%%%%%%
%%%%%%%%%%%%%%%%%%%%%%%%%%%%%%%%%%%%%%%%%%%%%%%%%%%%%%%%%%%%%%%%%%%%%%%%%%%%%%%%%%%%%%%%%%%%%%%%%%%%%%%%%%%%%%%%%%%

\section{First definitions and examples}\label{sec:firstdef}

Denote by $\mathbb{P}^n_\mathbb{C}$ the complex projective space of dimension $n$.
A \textsl{rational map}\index{defi}{rational map} 
\[
\phi\colon V_1\subset\mathbb{P}^n_\mathbb{C}\dashrightarrow V_2\subset\mathbb{P}^k_\mathbb{C}
\]
between two smooth projective complex varieties $V_1$ and $V_2$ is
a regular map on a non-empty Zariski open subset of $V_1$ such 
that the image of the points where $\phi$ is well defined is 
contained in $V_2$. If $\phi$ is well defined on $V_1$ we say
that $\phi$ is a \textsl{morphism}\index{defi}{morphism (between two varieties)} or a
\textsl{regular map}\index{defi}{regular (map)}, otherwise we 
denote by $\mathrm{Ind}(\phi)$\index{not}{$\mathrm{Ind}(\phi)$} 
the set where $\phi$ is not defined, and call it the 
\textsl{indeterminacy set}\index{defi}{indeterminacy set (of a rational map)}
of $\phi$. A \textsl{birational map}\index{defi}{birational map} 
between $V_1$ and $V_2$ is a rational map that admits an inverse
which is rational. In other words it is an isomorphism between 
two non-empty Zariski open subsets of $V_1$ and $V_2$.

\smallskip

\begin{eg}
Let us give an example of a birational morphism. 
Let $p$ be a point on a smooth algebraic surface $S$. We say that 
$\pi\colon Y\to S$ is a \textsl{blow-up}\index{defi}{blow-up of a point}
of $p$ if 
\begin{itemize}
\item[$\diamond$] $Y$ is a smooth surface,

\item[$\diamond$] $\pi_{\vert Y\smallsetminus\{\pi^{-1}(p)\}}\colon Y\smallsetminus\{\pi^{-1}(p)\}\to S\smallsetminus \{p\}$ is an isomorphism,

\item[$\diamond$] and $\pi^{-1}(p)\simeq\mathbb{P}^1_\mathbb{C}$.
\end{itemize}
We call $\pi^{-1}(p)$ the \textsl{exceptional divisor}\index{defi}{exceptional (divisor)}.

If $\pi\colon Y\to S$ and $\pi'\colon Y'\to S$ are two blow-ups of the
same point $p$, then there exists an isomorphism $\varphi\colon Y\to Y'$
such that $\pi=\pi'\circ\varphi$. We can thus speak about \textsl{the}
blow-up of $p\in S$. 

Let us describe the blow-up of $(0:0:1)$ in $\mathbb{P}^2_\mathbb{C}$
endowed with the homogeneous coordinates $(z_0:z_1:z_2)$. Consider
the affine chart $z_2=1$, {\it i.e.} let us work in $\mathbb{C}^2$ with 
coordinates $(z_0,z_1)$. Set
\[
V=\big\{\big((z_0,z_1),(u:v)\big)\in\mathbb{C}^2\times\mathbb{P}^1_\mathbb{C}\,\vert\, z_0v=z_1u\big\}.
\]
Let $\pi\colon V\to\mathbb{C}^2$ 
be the morphism given by the first projection. Then 
\begin{itemize}
\item[$\diamond$] $\pi^{-1}(0,0)=\big\{\big((0,0),(u:v)\big)\,\vert\,(u:v)\in\mathbb{P}^1_\mathbb{C}\big\}$, 
so $\pi^{-1}(0,0)\simeq\mathbb{P}^1_\mathbb{C}$;

\item[$\diamond$] if $p=(z_0,z_1)$ is a point of 
$\mathbb{C}^2\smallsetminus\{(0,0)\}$, then 
\[
\pi^{-1}(p)=\big\{((z_0,z_1),(z_0:z_1))\big\}\in V\smallsetminus\{\pi^{-1}(0,0)\},
\]
and $\pi_{\vert V\smallsetminus\{\pi^{-1}(0,0)\}}$ is an isomorphism, the inverse being 
\[
(z_0,z_1)\mapsto\big((z_0,z_1),(z_0:z_1)\big).
\]
In other words $V=\mathrm{Bl}_{(0,0)}\mathbb{P}^2_\mathbb{C}$ is the 
surface obtained by blowing up the complex projective plane 
at $(0:0:1)$, $\pi$ is the blow up of $(0:0:1)$, and 
$\pi^{-1}(0,0)$ is the exceptional divisor.
\end{itemize}
\end{eg}

\smallskip

Let $V$ be a complex algebraic variety, and let $\mathrm{Bir}(V)$ be 
the group of birational maps of $V$. The group 
$\mathrm{Bir}(\mathbb{P}^n_\mathbb{C})$\index{not}{$\mathrm{Bir}(\mathbb{P}^n_\mathbb{C})$}
is called the \textsl{Cremona group}\index{defi}{Cremona group}. 
If we fix homogeneous coordinates $(z_0:z_1:\ldots:z_n)$ of 
$\mathbb{P}^n_\mathbb{C}$ every element $\phi\in\mathrm{Bir}(\mathbb{P}^n_\mathbb{C})$
can be described by homogeneous polynomials of the same degree $\phi_0$, 
$\phi_1$, $\ldots$, $\phi_n\in\mathbb{C}[z_0,z_1,\ldots,z_n]$ without 
common factor of positive degree:
\begin{small}
\[
\phi\colon(z_0:z_1:\ldots:z_n)\dashrightarrow\big(\phi_0(z_0,z_1,z_2,\ldots,z_n):\phi_1(z_0,z_1,z_2,\ldots,z_n):\ldots:\phi_n(z_0,z_1,z_2,\ldots,z_n)\big).
\]
\end{small}
The \textsl{degree}\index{defi}{degree (of a rational map)} of $\phi$ is
the degree of the $\phi_i$'s. In the affine chart $z_0=1$, the map $\phi$
is given by $(\varphi_1,\varphi_2,\ldots,\varphi_n)$ where for any
$1\leq i\leq n$
\[
\varphi_i=\frac{\phi_i(1,z_1,z_2,\ldots,z_n)}{\phi_0(1,z_1,z_2,\ldots,z_n)}
\in\mathbb{C}(z_1,z_2,\ldots,z_n).
\]

The subgroup of $\mathrm{Bir}(\mathbb{P}^n_\mathbb{C})$ consisting of elements 
$\phi$ such that all the $\varphi_i$ are polynomials as well as the entries of
$\phi^{-1}$ is exactly the \textsl{group} 
$\mathrm{Aut}(\mathbb{A}^n_\mathbb{C})$\index{not}{$\mathrm{Aut}(\mathbb{A}^n_\mathbb{C})$}
\textsl{of polynomial automorphisms of the affine space} $\mathbb{A}^n_\mathbb{C}$
\index{defi}{polynomial automorphisms of the affine space}.

\medskip

Let $S$ be a smooth projective surface. The
\textsl{bubble space}\index{defi}{bubble space}
$\mathcal{B}(S)$\index{not}{$\mathcal{B}(S)$} is, roughly 
speaking, the set of all points that belong to $S$, or are
infinitely near to $S$. Let us be more precise: consider all
surfaces $Y$ \textsl{above}\index{defi}{above (surface)} $S$, 
{\it i.e.} all birational morphisms
$\pi\colon Y\to S$ ; we identify $p_1\in Y_1$ and $p_2\in Y_2$
if $\pi_1^{-1}\circ\pi_2$ is a local isomorphism in a 
neighborhood of $p_2$ that maps $p_2$ onto $p_1$. The bubble
space $\mathcal{B}(S)$ is the union of all points of all 
surfaces above $S$ modulo the equivalence relation generated
by these identifications. A point $p\in\mathcal{B}(S)\cap S$ 
is a \textsl{proper point}\index{defi}{proper (point)}.
All points in $\mathcal{B}(S)$ that are not proper are 
called \textsl{infinitely near}\index{defi}{infinitely near (point)}.

Let $S$ and $S'$ be two smooth projective surfaces. Let 
$\phi\colon S\dashrightarrow S'$ be a birational map. By Zariski's 
theorem (\emph{see for instance} \cite{Beauville:book}) we can 
write $\phi=\pi_2\circ\pi_1^{-1}$ where 
$\pi_1\colon Y\to S$ and $\pi_2\colon Y\to S'$ are finite sequences 
of blow-ups. We may assume that there is 
no $(-1)$-curve in $Y$ contracted by both $\pi_1$ and $\pi_2$.
We then say that $\pi_2\circ\pi_1^{-1}$ is a 
\textsl{minimal resolution}\index{defi}{minimal (resolution)} of $\phi$. 
The \textsl{base-points}\index{defi}{base-point (birational map)}
$\mathrm{Base}(\phi)$\index{not}{$\mathrm{Base}(\phi)$} of $\phi$
are the points blown up by $\pi_1$. The proper base-points of 
$\phi$ are precisely the
\textsl{indeterminacy points}\index{defi}{indeterminacy points (of a birational map)}
of $\phi$.

A birational morphism $\pi\colon S\to S'$ induces a bijection 
$\pi_\bullet\colon\mathcal{B}(S)\to\mathcal{B}(S')\smallsetminus\mathrm{Base}(\pi^{-1})$.
A birational map of smooth projective surfaces 
$\phi\colon S\dashrightarrow S'$ induces a bijection 
\[
\phi_\bullet\colon\mathcal{B}(S)\smallsetminus\mathrm{Base}(\phi)\to\mathcal{B}(S')\smallsetminus\mathrm{Base}(\phi^{-1})
\]\index{not}{$\phi_\bullet$}
by $\phi_\bullet=(\pi_2)_\bullet\circ(\pi_1)_\bullet^{-1}$ 
where $\pi_2\circ\pi_1^{-1}$ is a minimal resolution of $\phi$.

\medskip

Let us now give some subgroups of the Cremona group:

\begin{itemize}
\item First consider the automorphism group of $\mathbb{P}^n_\mathbb{C}$. 
It is the subgroup formed by \textsl{regular maps}, {\it i.e.}
maps well defined
on $\mathbb{P}^n_\mathbb{C}$ and whose inverse is also well defined 
on $\mathbb{P}^n_\mathbb{C}$:
\[
\mathrm{Aut}(\mathbb{P}^n_\mathbb{C})=\big\{\phi\in\mathrm{Bir}(\mathbb{P}^n_\mathbb{C})\,\vert\,\mathrm{Base}(\phi)=\mathrm{Base}(\phi^{-1})=\emptyset\big\}.
\]
To any 
$M=\big(a_{i,j}\big)_{0\leq i,\,j\leq n}\in\mathrm{PGL}(n+1,\mathbb{C})$
corresponds an element of $\mathrm{Bir}(\mathbb{P}^n_\mathbb{C})$ of 
degree~$1$:
\[
(z_0:z_1:\ldots:z_n)\mapsto\left(\displaystyle\sum_{j=0}^na_{0,j}z_j:\displaystyle\sum_{j=0}^na_{1,j}z_j:\ldots:\displaystyle\sum_{j=0}^na_{n,j}z_j\right)
\]
and vice-versa. Such elements are biregular. Furthermore Bezout theorem
implies that all biregular maps are linear. We thus have the following 
isomorphism
\[
\mathrm{Aut}(\mathbb{P}^n_\mathbb{C})\simeq\mathrm{PGL}(n+1,\mathbb{C}).
\]

\item The $n$-dimensional subgroup of $\mathrm{Aut}(\mathbb{P}^n_\mathbb{C})$ 
consisting of diagonal automorphisms is denoted by 
$\mathrm{D}_n$\index{not}{$\mathrm{D}_n$}. Note that $\mathrm{D}_n$ 
is the torus of highest rank of 
$\mathrm{Bir}(\mathbb{P}^n_\mathbb{C})$\footnote{indeed according to
\cite{BialynickiBirula} if $\mathrm{G}$ is an algebraic 
subgroup (\emph{see} Chapter \ref{Chapter:algebraicsubgroup} for a definition) of
$\mathrm{Bir}(\mathbb{P}^n_\mathbb{C})$ isomorphic to 
$(\mathbb{C}^*)^k$, then $k\leq n$, and if $k=n$, then $\mathrm{G}$ is
conjugate to $\mathrm{D}_n$.}.

\medskip

\item Start with the surface 
$\mathbb{P}^1_\mathbb{C}\times\mathbb{P}^1_\mathbb{C}$
considered as a smooth quadric in $\mathbb{P}^3_\mathbb{C}$; its 
automorphism group contains 
$\mathrm{PGL}(2,\mathbb{C})\times\mathrm{PGL}(2,\mathbb{C})$. By the 
stereographic projection the quadric is birationally equivalent to 
the plane, so that $\mathrm{Bir}(\mathbb{P}^2_\mathbb{C})$ contains 
also a copy of $\mathrm{PGL}(2,\mathbb{C})\times\mathrm{PGL}(2,\mathbb{C})$.

If $\mathrm{G}$ is a semi-simple algebraic group, $\mathrm{H}$ is a parabolic 
subgroup of $\mathrm{G}$, and $V=\faktor{\mathrm{G}}{\mathrm{H}}$ is a homogeneous variety 
of dimension $n$, then $V$ is rational. Once a birational map 
$\pi\colon V\dashrightarrow\mathbb{P}^n_\mathbb{C}$ is given, 
$\pi\circ\mathrm{G}\circ\pi^{-1}$ determines an algebraic subgroup of 
$\mathrm{Bir}(\mathbb{P}^n_\mathbb{C})$.

\medskip

\item A \textsl{fibration}\index{defi}{fibration} of a surface $S$ is a 
rational map $\pi\colon S\dashrightarrow C$, where $C$ is a curve, 
such that the general fibers are one-dimensional.
Two fibrations $\pi_1\colon S\dashrightarrow C$ and 
$\pi_2\colon S\dashrightarrow C'$ are identified if there exists
an open dense subset $\mathcal{U}\subset S$ that is contained in 
the domains of $\pi_1$ and $\pi_2$ such that 
$\pi_{1\vert\mathcal{U}}$ and $\pi_{2\vert\mathcal{U}}$ 
define the same set of 
fibers. We say that a group $\mathrm{G}$ 
\textsl{preserves}\index{defi}{preserved (fibration)} a 
fibration $\pi$ if $\mathrm{G}$ permutes the fibers. 
A \textsl{rational fibration}\index{defi}{rational fibration}
of a rational surface $S$ is a rational map 
$\pi\colon S\dashrightarrow\mathbb{P}^1_\mathbb{C}$ such that
the general fiber is rational. The following statement due to
Noether and Enriques says that, up to birational maps, there 
exists only one rational fibration of $\mathbb{P}^2_\mathbb{C}$:

\begin{thm}[\cite{Beauville:book}]\label{thm:fibration}
Let $S$ be a surface.
Let $\pi\colon S\dashrightarrow C$ be a rational fibration. Then
there exists a birational map 
$\phi\colon C\times\mathbb{P}^1_\mathbb{C}\dashrightarrow S$
such that $\pi\circ\phi$ is the projection onto the first
factor.
\end{thm}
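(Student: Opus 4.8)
The plan is to pass to the generic fibre and to reduce the whole statement to a question about a single curve over the function field $K=\mathbb{C}(C)$. The dominant rational map $\pi$ corresponds to an inclusion of fields $K=\mathbb{C}(C)\hookrightarrow\mathbb{C}(S)$, and since $S$ is a surface and $C$ a curve the extension $\mathbb{C}(S)/K$ has transcendence degree one. Hence $\mathbb{C}(S)$ is the function field of a smooth projective geometrically integral curve $X$ over $K$, namely the generic fibre $X=S_\eta$ of $\pi$. The point of this reformulation is that any $K$-isomorphism $X\simeq\mathbb{P}^1_K$ yields a $K$-algebra isomorphism $\mathbb{C}(S)\simeq K(t)=\mathbb{C}(C\times\mathbb{P}^1_\mathbb{C})$, and such an isomorphism is exactly a birational map $\phi\colon C\times\mathbb{P}^1_\mathbb{C}\dashrightarrow S$ commuting with the two projections to $C$, i.e.\ with $\pi\circ\phi=\mathrm{pr}_1$. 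So the entire theorem reduces to proving that $X\simeq\mathbb{P}^1_K$.

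First I would check that $X$ has genus $0$. The hypothesis that $\pi$ is a rational fibration says that the general closed fibre is a rational curve; since the arithmetic genus is constant in the family and coincides with the geometric genus on the smooth general fibre, the base change $X_{\overline{K}}=X\times_K\overline{K}$ is a smooth curve of genus $0$, that is, $X$ is a genus $0$ curve over $K$. The anticanonical linear system then embeds $X$ as a smooth conic in $\mathbb{P}^2_K$, because $-K_X$ has degree $2$ and $h^0(X,-K_X)=3$.

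The hard part is to produce a $K$-rational point on this conic; once we have one, projection from it identifies $X$ with $\mathbb{P}^1_K$ and the proof is finished. This is exactly where the arithmetic of $K=\mathbb{C}(C)$ enters, and the key input is \textsc{Tsen}'s theorem: the function field of a curve over an algebraically closed field is a $C_1$ field. A smooth conic is the vanishing locus of a quadratic form in three variables, and $3>2$, so \textsc{Tsen}'s theorem guarantees a non-trivial zero, hence a $K$-point on $X$. I expect this step --- the passage from a conic bundle to a trivial $\mathbb{P}^1$-bundle --- to be the genuine obstacle: geometrically it amounts to finding a rational section of $\pi$, and without the $C_1$ property of $K$ one would only be able to conclude that $S$ is birational to a conic bundle over $C$, not to $C\times\mathbb{P}^1_\mathbb{C}$.

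An alternative, more geometric route would be to first resolve the indeterminacy of $\pi$ so as to assume it is a morphism of smooth projective surfaces with general fibre $\simeq\mathbb{P}^1_\mathbb{C}$, then to construct a multisection and reduce its degree to one; but locating a genuine section rests on the same rational-point input, so I would favour the function-field formulation above as the most economical. The remaining verifications --- that the genus is really constant along the fibration, and that the $K$-isomorphism of generic fibres spreads out to an honest birational map over $C$ with $\pi\circ\phi=\mathrm{pr}_1$ --- are routine, and I would only sketch them.
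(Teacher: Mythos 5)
Your proof is correct, but it is not the argument the paper relies on: Theorem \ref{thm:fibration} is quoted without proof from \cite{Beauville:book}, and Beauville's proof there produces the crucial $K$-rational datum on the generic fibre by complex-geometric rather than arithmetic means. In outline: after resolving the indeterminacy one may assume $\pi\colon S\to C$ is a morphism of smooth projective surfaces whose fibre class $F$ satisfies $F^2=0$ and $K_S\cdot F=-2$; one shows $H^2(S,\mathcal{O}_S)=0$, so that $c_1\colon\mathrm{Pic}(S)\to H^2(S,\mathbb{Z})$ is surjective; the genus formula shows that $[F]$ is indivisible in $H^2(S,\mathbb{Z})$, so unimodularity of the intersection form (Poincar\'e duality) furnishes a divisor $H$ with $H\cdot F=1$, i.e.\ a $K$-rational divisor class of degree $1$ on the genus-$0$ generic fibre $X$, and Riemann--Roch then yields $X\simeq\mathbb{P}^1_K$ exactly as at the end of your argument. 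So both proofs share the reduction to the generic fibre over $K=\mathbb{C}(C)$ and the final Riemann--Roch step, and they differ precisely at the point you correctly identified as the genuine obstacle: you invoke Tsen's theorem ($K$ is a $C_1$ field), Beauville invokes $p_g(S)=0$ together with Poincar\'e duality. Your route buys algebraic purity and generality --- it works verbatim over any algebraically closed ground field of characteristic zero and isolates the conceptual reason why a conic bundle over a curve always has a rational section --- whereas Beauville's stays inside the Hodge-theoretic toolkit of complex surfaces and gives the useful byproduct that a surface fibred in rational curves satisfies $H^2(S,\mathcal{O}_S)=0$. The one step you should make explicit rather than assert is that $K$ is algebraically closed in $\mathbb{C}(S)$, i.e.\ that the generic fibre is geometrically integral: this requires the general fibre to be irreducible, which is implicit in the convention that it ``is a rational curve'', and for a pencil with disconnected general fibres the statement as formulated would fail.
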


The \textsl{Jonqui\`eres subgroup}\index{defi}{Jonqui\`eres group}
$\mathcal{J}$\index{not}{$\mathcal{J}$} of $\mathrm{Bir}(\mathbb{P}^2_\mathbb{C})$ is
the subgroup of elements that preserve the pencil of lines 
through the point $(0:0:1)\in\mathbb{P}^2_\mathbb{C}$. 

Any subgroup of 
$\mathrm{Bir}(\mathbb{P}^2_\mathbb{C})$ that preserves a 
rational fibration is conjugate to a subgroup of $\mathcal{J}$
(Theorem \ref{thm:fibration}).

With respect to affine coordinates $(z_0:z_1:1)$ an element of 
$\mathcal{J}$ is of the form
\[
(z_0,z_1)\dashrightarrow\left(\frac{\alpha z_0+\beta}{\gamma z_0+\delta},\frac{A(z_0)z_1+B(z_0)}{C(z_0)z_1+D(z_0)}\right)
\]
where $\left(\begin{array}{cc}\alpha & \beta \\ \gamma & \delta\end{array}\right)$ belongs to $\mathrm{PGL}(2,\mathbb{C})$ and $\left(\begin{array}{cc} A & B\\ C & D\end{array}\right)$ to $\mathrm{PGL}(2,\mathbb{C}(z_0))$. This induces an isomorphism
\[
\mathcal{J}\simeq\mathrm{PGL}(2,\mathbb{C})\rtimes\mathrm{PGL}(2,\mathbb{C}(z_0)).
\]

\medskip

\item Let $M=(a_{i,j})_{1\leq i,\,j\leq n}\in M(n,\mathbb{Z})$ be a $n\times n$
matrix of integers. The matrix $M$ determines a rational self map of 
$\mathbb{P}^n_\mathbb{C}$ given in the affine
chart $z_0=1$ by
\[
\phi_M\colon (z_1,\ldots,z_n)\mapsto\Big(z_1^{a_{1,1}}z_2^{a_{1,2}}\ldots z_n^{a_{1,n}},z_1^{a_{2,1}}z_2^{a_{2,2}}\ldots z_n^{a_{2,n}},\ldots,z_1^{a_{n,1}}z_2^{a_{n,2}}\ldots z_n^{a_{n,n}}\Big).
\]
The map $\phi_M$ is birational if and only if $M$ belongs
to $\mathrm{GL}(n,\mathbb{Z})$. This yields an injective 
homomorphism 
$\mathrm{GL}(n,\mathbb{Z})\to\mathrm{Bir}(\mathbb{P}^n_\mathbb{C})$
whose image is called the 
\textsl{group of monomial maps}\index{defi}{monomial maps}
and is denoted $\mathrm{Mon}(n,\mathbb{C})$.

\medskip

\item The well known result of Noether and Castelnuovo states that

\begin{thm}[\cite{Castelnuovo, AlberichCarraminana}]
The group
$\mathrm{Bir}(\mathbb{P}^2_\mathbb{C})$ is generated by the 
involution
\[
\sigma_2\colon(z_0:z_1:z_2)\dashrightarrow(z_1z_2:z_0z_2:z_0z_1)
\]
and the group 
$\mathrm{Aut}(\mathbb{P}^2_\mathbb{C})=\mathrm{PGL}(3,\mathbb{C})$.
\end{thm}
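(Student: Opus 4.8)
The plan is to prove the equivalent statement that every $\phi\in\mathrm{Bir}(\mathbb{P}^2_\mathbb{C})$ is a composition of elements of $\mathrm{PGL}(3,\mathbb{C})$ and copies of $\sigma_2$, arguing by induction on the degree $d=\deg\phi$. The base case $d=1$ is immediate, since a degree-one birational self map is biregular and thus lies in $\mathrm{Aut}(\mathbb{P}^2_\mathbb{C})=\mathrm{PGL}(3,\mathbb{C})$. For $d\geq 2$ I would produce a birational map $\tau$, itself a composition of a linear map and $\sigma_2$, for which $\deg(\phi\circ\tau)<d$; the inductive hypothesis then expresses $\phi\circ\tau$ through the generators, and composing back with $\tau^{-1}$ — which has the same form — closes the induction.

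The engine of the degree reduction is the numerology of the homaloidal net $\Lambda_\phi$ attached to $\phi$ in the introduction. Let $m_1\geq m_2\geq\ldots\geq m_k$ be the multiplicities of the (possibly infinitely near) base points $p_1,p_2,\ldots,p_k$ of $\phi$. Intersection theory on a resolution gives the two classical relations
\[
\sum_{i=1}^k m_i^2=d^2-1,\qquad \sum_{i=1}^k m_i=3(d-1),
\]
the first expressing that two general members of $\Lambda_\phi$ meet in a single free point, the second that such members are rational. Dividing, $\sum m_i^2/\sum m_i=(d+1)/3$, so the multiplicities average (weighted by themselves) to $(d+1)/3$; a short manipulation of the two relations then yields Noether's inequality $m_1+m_2+m_3>d$ whenever $d\geq 2$. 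This strict gain is exactly what the reduction needs.

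Suppose first that the three base points $p_1,p_2,p_3$ of largest multiplicity are proper points of $\mathbb{P}^2_\mathbb{C}$ in general position. Choose $A\in\mathrm{PGL}(3,\mathbb{C})$ carrying them to the coordinate points $(1:0:0)$, $(0:1:0)$, $(0:0:1)$, so that $\tau=A^{-1}\sigma_2 A$ is the quadratic involution based at $p_1,p_2,p_3$. As $\tau$ pulls a general line back to a conic through these three points, the net $\Lambda_\phi$ transforms under $\tau$ into a net of degree $2d-m_1-m_2-m_3$, which is $<d$ by Noether's inequality. Since $\sigma_2$ is an involution, $\tau^{-1}=\tau=A^{-1}\sigma_2 A$ is again a product of a linear map and $\sigma_2$, so $\phi=(\phi\circ\tau)\circ\tau^{-1}$ is expressed through the generators by the inductive hypothesis, settling this case.

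The genuine difficulty — the reason the result is attributed to Noether \emph{and} Castelnuovo rather than to Noether alone — is that the three points of highest multiplicity need not be so placed: two or three of them may be collinear, or some may be infinitely near others, and then no single conjugate of $\sigma_2$ realizes the reduction. The hard part is therefore to show that, by composing with suitable elements of $\mathrm{PGL}(3,\mathbb{C})$ and with auxiliary quadratic transformations, one can always reach a configuration to which the argument above applies, all the while keeping the degree from growing. Making this precise demands careful bookkeeping of the infinitely near base points and of the proximity inequalities they obey in the bubble space $\mathcal{B}(\mathbb{P}^2_\mathbb{C})$; this is exactly the content supplied by \cite{Castelnuovo,AlberichCarraminana}, and to finish the induction I would follow Alberich-Carrami\~{n}ana's treatment of the infinitely near case.
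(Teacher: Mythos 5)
Your easy case is sound and matches the paper's setup: the relations $\sum m_i=3(d-1)$, $\sum m_i^2=d^2-1$ and the resulting inequality $m_1+m_2+m_3>d$ are exactly the starting point of the paper's proof, and the reduction $\deg(\phi\circ\tau)=2d-m_1-m_2-m_3<d$ when the three points of highest multiplicity are proper and non-collinear is correct. But the proposal has a genuine gap at precisely the point you flag as "the hard part": you do not prove it, you cite it — and since the references you invoke are the very sources the theorem is attributed to, the attempt does not stand on its own. Worse, the inductive scheme you propose would fail as stated. You claim one can normalize the configuration "all the while keeping the degree from growing," but this is false: to disperse base points that are infinitely near $p_0$ one must compose with quadratic maps centered at $p_0$ and auxiliary general points, and these compositions \emph{increase} the degree. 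In the paper's proof (Alexander's argument) this is explicit: Lemma 5.6 produces a new system of degree $\nu'=\nu+2c-m_{p_k}\geq\nu$, and Lemma 5.7 produces $\nu'=2\nu-m_{p_0}>\nu$. Consequently an induction on the degree alone has no well-founded descent in the hard case — this is exactly why Noether's original proof was incomplete.

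What the paper does instead is introduce Alexander's complexity $2c=\nu-m_{p_0}$ (where $m_{p_0}$ is the top multiplicity) together with the secondary counter $n=\#\{p\in\mathrm{Base}(\phi)\smallsetminus\{p_0\}\mid m_p>c\}$, and runs a lexicographic induction on the pair: each composition with a general quadratic map either strictly decreases the complexity (Lemma 5.4/5.5), or keeps it constant while decreasing $n$ by $2$ (Lemma 5.5), with Lemmas 5.6 and 5.7 used to disperse infinitely near points at the cost of raising the degree but \emph{not} the complexity (indeed $2c'=2c+m_{p'_0}-m'_{\max}\leq 2c$, with equality exactly when $p'_0$ stays of maximal multiplicity). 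Lemma 5.3 guarantees $n\geq 2$ and non-alignment of the points of $C$, so the process never gets stuck; when $c$ reaches $1$ and $\#C$ drops to $0$, the relations force $\nu=1$. To repair your proof you would need to replace your degree induction by such a two-variable descent (or follow Alberich-Carrami\~nana's proximity-inequality bookkeeping in full), rather than asserting the degree can be held down while the configuration is normalized.
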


For $n\geq 3$ the Cremona group is not 
generated by $\mathrm{PGL}(n+1,\mathbb{C})$ and $\mathrm{Mon}(n,\mathbb{C})$
(\emph{see} \cite{Hudson, Pan:generation}).
In other words the subgroup 
\[
\langle\mathrm{PGL}(n+1,\mathbb{C}),\,\mathrm{Mon}(n,\mathbb{C})\rangle
\]
is a strict subgroup of $\mathrm{Bir}(\mathbb{P}^n_\mathbb{C})$.
The finite index subgroup of 
$\langle\mathrm{PGL}(n+1,\mathbb{C}),\,\mathrm{Mon}(n,\mathbb{C})\rangle$ 
generated by $\mathrm{PGL}(n+1,\mathbb{C})$ and the involution
\[
\sigma_n\colon(z_0:z_1:\ldots:z_n)\dashrightarrow\left(\prod_{\stackrel{i=0}{i\not=0}}^nz_i:\prod_{\stackrel{i=0}{i\not=1}}^nz_i:\ldots:\prod_{\stackrel{i=0}{i\not=n}}^nz_i\right)
\]
has been studied in \cite{BlancHeden, Deserti:reg}. The 
group 
$\mathrm{G}(n,\mathbb{C})=\langle\sigma_n,\,\mathrm{PGL}(n+1,\mathbb{C})\rangle$
"looks like" $\mathrm{G}(2,\mathbb{C})=\mathrm{Bir}(\mathbb{P}^2_\mathbb{C})$ in
the following sense (\cite{Deserti:reg}):
\begin{itemize}
\item[$\diamond$] there is no non-trivial finite dimensional 
linear representation of $\mathrm{G}(n,\mathbb{C})$ over any
field;

\item[$\diamond$] the group $\mathrm{G}(n,\mathbb{C})$ is 
perfect, {\it i.e.} 
$\big[\mathrm{G}(n,\mathbb{C}),\mathrm{G}(n,\mathbb{C})\big]=\mathrm{G}(n,\mathbb{C})$;

\item[$\diamond$] the group $\mathrm{G}(n,\mathbb{C})$ equipped
with the Zariski topology is simple;

\item[$\diamond$] let $\varphi$ be an automorphism of 
$\mathrm{Bir}(\mathbb{P}^n_\mathbb{C})$; there exist an
automorphism~$\kappa$ of the field~$\mathbb{C}$ and a 
birational self map $\psi$ of $\mathbb{P}^n_\mathbb{C}$ such
that
\[
\varphi(\phi)={}^{\kappa}\!\,(\psi\circ\phi\circ\psi^{-1})\qquad\qquad \forall\,\phi\in\mathrm{G}(n,\mathbb{C}).
\]
\end{itemize}

We will deal with
\begin{itemize}
\item[$\diamond$] the Noether and Castelnuovo theorem in 
\S \ref{subsection:nc1} and \S \ref{subsection:nc2};

\item[$\diamond$] the Hudson and Pan theorem in \S \ref{subsection:hudsonandpan};

\item[$\diamond$] the fact that there is no non-trivial
finite dimensional linear representation of 
$\mathrm{G}(2,\mathbb{C})$ over any field in 
\S \ref{section:notlinear};

\item[$\diamond$] the fact that $\mathrm{Bir}(\mathbb{P}^2_\mathbb{C})=\mathrm{G}(2,\mathbb{C})$ is perfect in \S \ref{section:perfect};

\item[$\diamond$] the fact that $\mathrm{Bir}(\mathbb{P}^2_\mathbb{C})=\mathrm{G}(2,\mathbb{C})$ equipped with the Zariski topology is simple in \S \ref{section:closednormalsubgroups};

\item[$\diamond$] the description of $\mathrm{Aut}(\mathrm{Bir}(\mathbb{P}^2_\mathbb{C}))=\mathrm{Aut}(\mathrm{G}(2,\mathbb{C}))$ 
in \S \ref{sec:uncountableabelian}.
\end{itemize}
\end{itemize}

%%%%%%%%%%%%%%%%%%%%%%%%%%%%%%%%%%%%%%%%%%%%%%%%%%%%%%%%%%%%%%%%%%%%%%%%%%%%%%%%%%%%%%%%%%%%%%%%%%%%%%%%%%%%%%%%%%%
%%%%%%%%%%%%%%%%%%%%%%%%%%%%%%%%%%%%%%%%%%%%%%%%%%%%%%%%%%%%%%%%%%%%%%%%%%%%%%%%%%%%%%%%%%%%%%%%%%%%%%%%%%%%%%%%%%%
% section
%%%%%%%%%%%%%%%%%%%%%%%%%%%%%%%%%%%%%%%%%%%%%%%%%%%%%%%%%%%%%%%%%%%%%%%%%%%%%%%%%%%%%%%%%%%%%%%%%%%%%%%%%%%%%%%%%%%
%%%%%%%%%%%%%%%%%%%%%%%%%%%%%%%%%%%%%%%%%%%%%%%%%%%%%%%%%%%%%%%%%%%%%%%%%%%%%%%%%%%%%%%%%%%%%%%%%%%%%%%%%%%%%%%%%%%

\section{Divisors and intersection theory}

Let $V$ be an algebraic variety.

A \textsl{prime divisor}\index{defi}{prime (divisor)}
on $V$ is an irreducible closed subset of $V$ of codimension $1$.
For instance if~$V$ is a surface, then the prime divisors of $V$
are the irreducible curves that lie on it; if $V$ is the complex
projective space, then the prime divisors are given by the zeros
locus of irreducible homogeneous polynomials.

A \textsl{Weil divisor}\index{defi}{Weil divisor} on $V$ is a 
formal finite sum of prime divisors with integer coefficients:
\[
\displaystyle\sum_{i=1}^ma_iD_i \qquad\qquad m\in\mathbb{N},\,a_i\in\mathbb{Z},\,D_i\text{ prime divisor of $V$}.
\]
Let us denote by $\mathrm{Div}(V)$\index{not}{$\mathrm{Div}(V)$}
the set of all Weil divisors of $V$. 

Let $f\in\mathbb{C}(V)^*$ be a rational function, and let $D$
be a prime divisor. The \textsl{multiplicity}\index{defi}{multiplicity (of a rational function at a prime divisor)}
$\nu_f(D)$\index{not}{$\nu_f(D)$} of $f$ at $D$ is defined by
\begin{itemize}
\item[$\diamond$] $\nu_f(D)=k>0$ if $f$ vanishes on $D$ at the order $k$;

\item[$\diamond$] $\nu_f(D)=-k$ if $f$ has a pole of order $k$ on $D$;

\item[$\diamond$] $\nu_f(D)=0$ otherwise.
\end{itemize}
To any rational function $f\in\mathbb{C}(V)^*$ we associate a 
divisor $\mathrm{div}(f)$\index{not}{$\mathrm{div}(f)$} defined by
\[
\mathrm{div}(f)=\displaystyle\sum_{\stackrel{\text{$D$ prime}}{\text{divisor}}}\nu_f(D)D.
\]

Since $\nu_f(D)$ is zero for all but finitely many $D$ the divisor
$\mathrm{div}(f)$ belongs to $\mathrm{Div}(V)$. Divisors obtained 
like that are called \textsl{principal divisors}\index{defi}{principal (divisor)}.
The set of principal divisors form a subgroup of $\mathrm{Div}(V)$;
indeed $\mathrm{div}(fg)=\mathrm{div}(f)+\mathrm{div}(g)$ for any 
$f$, $g\in\mathbb{C}(V)^*$.

Let us introduce an equivalence relation on $\mathrm{Div}(V)$. Two
divisors $D$, $D'$ are \textsl{linearly equivalent}\index{defi}{linearly equivalent (divisors)}
if $D-D'$ is a principal divisor. The set of equivalence classes 
corresponds to the quotient of $\mathrm{Div}(V)$ by the subgroup
of principal divisors. The 
\textsl{Picard group}\index{defi}{Picard group} of $V$ is the 
group of isomorphism classes of line bundles on $V$; it is 
denoted $\mathrm{Pic}(V)$\index{not}{$\mathrm{Pic}(V)$}. When $V$ 
is smooth the quotient of $\mathrm{Div}(V)$ by the subgroup
of principal divisors is isomorphic to $\mathrm{Pic}(V)$.

\begin{eg}
Let us determine $\mathrm{Pic}(\mathbb{P}^n_\mathbb{C})$.
Consider the morphism of groups 
\[
\theta\colon\mathrm{Div}(\mathbb{P}^n_\mathbb{C})\to\mathbb{Z}
\]
which associates to any divisor $D$ of degree $d$ the integer $d$.
Note that $\ker\theta$ is the subgroup of principal divisors of
$\mathbb{P}^n_\mathbb{C}$: let $D=\sum a_iD_i$ be an element of $\ker\theta$ where each 
$D_i$ is a prime divisor given by an homogeneous polynomial 
$f_i\in\mathbb{C}[z_0,z_1,\ldots,z_n]$ of some degree $d_i$. 
Since $\sum a_id_i=0$, $f=\prod f_i^{a_i}$ belongs to 
$\mathbb{C}(\mathbb{P}^n_\mathbb{C})^*$. By construction 
$D=\mathrm{div}(f)$ hence $D$ is a principal divisor. 
Conversely any principal divisor is equal to $\mathrm{div}(f)$
where $f=g/h$ for some homogeneous polynomials $g$, $h$ of the 
same degree. Thus any principal divisor belongs to $\ker\theta$.

Since $\mathrm{Pic}(\mathbb{P}^n_\mathbb{C})$ is the quotient 
of $\mathrm{Div}(\mathbb{P}^n_\mathbb{C})$ by the subgroup of 
principal divisors, we get by restricting $\theta$ to the 
quotient an isomorphism between $\mathrm{Pic}(\mathbb{P}^n_\mathbb{C})$
and $\mathbb{Z}$. As an hyperplane is sent on $1$ we obtain that 
$\mathrm{Pic}(\mathbb{P}^n_\mathbb{C})=\mathbb{Z}H$ where $H$ is
the divisor of an hyperplane.
\end{eg}

Let us now assume that $\dim V=2$; set $V=S$. We can define
the notion of intersection:

\begin{pro}[\cite{Hartshorne}]
Let $S$ be a smooth projective surface. There exists a unique 
bilinear symmetric form
\begin{align*}
&\mathrm{Div}(S)\times\mathrm{Div}(S)\to\mathbb{Z}&& (C,D)\mapsto C\cdot D
\end{align*}
such that 
\begin{itemize}
\item[$\diamond$]  if $C$ and $D$ are smooth curves with transverse 
intersections, then $C\cdot D=\#(C\cap D)$;

\item[$\diamond$] if $C$ and $C'$ are linearly equivalent, then 
$C\cdot D=C'\cdot D$ for any $D$.
\end{itemize}
In particular this yields an intersection form
\begin{align*}
&\mathrm{Pic}(S)\times\mathrm{Pic}(S)\to\mathbb{Z}&& (C,D)\mapsto C\cdot D.
\end{align*}
\end{pro}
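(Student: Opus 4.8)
The plan is to establish uniqueness first and then existence, the latter being where the real work lies. The backbone of both parts is the following reduction, which I would prove using Bertini's theorem: after fixing an ample divisor $H$ on $S$, for any divisor $D$ the twists $D+nH$ and $nH$ are very ample once $n$ is large, and a general member of a very ample linear system is a smooth irreducible curve. Hence every class in $\mathrm{Pic}(S)$ can be written as a difference $C_1-C_2$ of classes of smooth curves, and within any very ample system one can move a representative so that it meets a fixed curve transversally.

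For uniqueness, suppose two bilinear symmetric forms satisfy the two listed properties. By bilinearity it suffices to compare them on classes of smooth irreducible curves $C$ and $D$, using the reduction above. Choosing $D$ in a very ample system, Bertini lets me pick a linearly equivalent smooth curve $D'$ meeting $C$ transversally; then invariance under linear equivalence forces $C\cdot D=C\cdot D'$, and the transversality axiom forces $C\cdot D'=\#(C\cap D')$. Thus both forms are pinned down by the axioms and must coincide.

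For existence, I would define the pairing through Euler characteristics of line bundles,
\[
C\cdot D:=\chi(\mathcal{O}_S)-\chi(\mathcal{O}_S(-C))-\chi(\mathcal{O}_S(-D))+\chi(\mathcal{O}_S(-C-D)).
\]
Symmetry is visible from the formula, and invariance under linear equivalence is immediate because $\chi$ of a line bundle depends only on its isomorphism class, hence only on the linear equivalence class of the divisor. To check the normalization, take smooth curves $C$ and $D$ meeting transversally; the Koszul resolution
\[
0\to\mathcal{O}_S(-C-D)\to\mathcal{O}_S(-C)\oplus\mathcal{O}_S(-D)\to\mathcal{O}_S\to\mathcal{O}_{C\cap D}\to 0
\]
is exact, and additivity of $\chi$ along it gives $C\cdot D=\chi(\mathcal{O}_{C\cap D})=\#(C\cap D)$, since the transverse intersection is a reduced finite set of points.

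The hard part is bilinearity of this pairing, that is the additivity $(C_1+C_2)\cdot D=C_1\cdot D+C_2\cdot D$, and I expect this to be the main obstacle. I would handle it via Snapper's theorem: for fixed divisors the function $(m,n)\mapsto\chi(\mathcal{O}_S(mC+nD))$ agrees with a numerical polynomial of total degree at most $\dim S=2$, so the mixed second difference appearing in the definition is automatically bilinear in the classes. Equivalently, one argues additivity directly from the ideal-sheaf sequence $0\to\mathcal{O}_S(-C)\to\mathcal{O}_S\to\mathcal{O}_C\to 0$ twisted by $\mathcal{O}_S(-D)$, which rewrites $C\cdot D$ as $\chi(\mathcal{O}_C)-\chi(\mathcal{O}_S(-D)\vert_C)=\deg(\mathcal{O}_S(D)\vert_C)$ for smooth effective $C$, a quantity manifestly linear in $D$; linearity in $C$ then follows from symmetry, and the reduction lemma extends everything to arbitrary classes. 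Once bilinearity is in hand the form satisfies all requirements, and the descent to $\mathrm{Pic}(S)\times\mathrm{Pic}(S)$ is exactly the linear-equivalence invariance already verified.
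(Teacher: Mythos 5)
Your proposal is correct, but it is worth noting that the paper offers no proof at all here: the proposition is stated with a citation to Hartshorne, whose argument (Theorem V.1.1 there) runs differently from yours. Hartshorne \emph{defines} $C\cdot D$ on very ample classes by choosing, via Bertini, smooth transverse representatives $C'\in\vert C\vert$, $D'\in\vert D\vert$ and counting $\#(C'\cap D')$; the normalization is then trivial, and all the work goes into well-definedness, which he settles through the restriction formula $C\cdot D=\deg\big(\mathcal{O}_S(D)\vert_C\big)$ for $C$ smooth, before extending by linearity using differences of very ample divisors. You instead take Beauville's route: the global Euler-characteristic definition
\[
C\cdot D=\chi(\mathcal{O}_S)-\chi(\mathcal{O}_S(-C))-\chi(\mathcal{O}_S(-D))+\chi(\mathcal{O}_S(-C-D)),
\]
which makes symmetry and linear-equivalence invariance immediate and pushes all the difficulty into bilinearity, resolved either by Snapper's polynomiality theorem or by the same restriction-degree formula you derive from the twisted ideal-sheaf sequence. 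Each approach buys something: Hartshorne's is closer to the geometric intuition of counting points but must prove independence of the chosen representatives; yours is cleaner functorially and generalizes directly to higher dimension (it is essentially the Snapper--Kleiman theory), at the cost of requiring coherent cohomology. Two small points deserve care in your sketch. First, the Koszul exactness you use needs $C$ and $D$ to have no common component (so the local equations form a regular sequence), which transversality does guarantee. Second, your closing phrase ``the reduction lemma extends everything to arbitrary classes'' hides the one non-formal step: additivity in one slot is only known when the other argument is a smooth curve, and to propagate it to arbitrary pairs one should observe that the defect $s(C,D+E)-s(C,D)-s(C,E)$ expands into an expression in Euler characteristics that is \emph{totally symmetric} in $C$, $D$, $E$, hence vanishes as soon as any one of the three classes is represented by a smooth curve; writing the remaining classes as differences of smooth very ample curves then closes the argument. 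Your primary route through Snapper's theorem avoids this issue entirely, so the proof stands as correct modulo these routine completions.
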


Let $\pi\colon\mathrm{Bl}_pS\to S$ be the blow-up of the point $p\in S$. 
The morphism $\pi$ induces the map 
\begin{align*}
&\pi^*\colon\mathrm{Pic}(S)\to\mathrm{Pic}(\mathrm{Bl}_pS), && C\mapsto \pi^{-1}(C).
\end{align*}
If $C$ is an irreducible curve on $S$, the 
\textsl{strict transform}\index{defi}{strict transform (of a curve)} $\widetilde{C}$
of $C$ is $\widetilde{C}=\overline{\pi^{-1}(C\smallsetminus\{p\})}$. 

If $C\subset S$ is a curve and if $p$ is a point of $S$, let us 
define the \textsl{multiplicity $m_p(C)$}\index{not}{$m_p(C)$}
\textsl{of $C$ at $p$}\index{defi}{multiplicity (of a curve at a point)}.  
Recall that if $V$ is a quasi-projective variety, and if $q$ is a point of $V$, then~$\mathcal{O}_{q,V}$ denotes the set of equivalence classes of pairs $(\mathcal{U},\varphi)$
where $\varphi$ belongs to~$\mathbb{C}[\mathcal{U}]$, and $\mathcal{U}\subset V$
is an open subset such that $q\in\mathcal{U}$. Let $\mathfrak{m}$ be the maximal ideal 
of $\mathcal{O}_{p,S}$. If $f$ is a local equation of $C$, then $m_p(C)$
is the integer $k$ such that~$f$ belongs to $\mathfrak{m}^k\smallsetminus\mathfrak{m}^{k+1}$.

\begin{eg}
Assume that $S$ is a rational surface. There exists a neighborhood~$\mathcal{U}$
of $p$ in~$S$ with $\mathcal{U}\subset\mathbb{C}^2$. We can assume that $p=(0,0)$
in this affine neighborhood and that $C$ is a curve described by the equation 
$\displaystyle\sum_{i=1}^nP_i(z_0,z_1)=0$ where $P_i$ is an homogeneous polynomial
of degree~$i$. The multiplicity $m_p(C)$ is the lowest $i$ such that $P_i$ 
is not equal to $0$. The following properties hold:
\begin{itemize}
\item[$\diamond$] $m_p(C)\geq 0$,

\item[$\diamond$] $m_p(C)=0$ if and only if $p$ does not belong to $C$,

\item[$\diamond$] $m_p(C)=1$ if and only if $p$ is a smooth point of $C$.
\end{itemize}
\end{eg}

Assume that $C$ and $D$ are distinct curves with no common component ;
we can define an integer $(C\cdot D)_p$\index{not}{$(C\cdot D)_p$} 
which counts the intersection of $C$ and $D$ at $p$:
\begin{itemize}
\item[$\diamond$] if either $C$ or $D$ does not pass
through $p$, it is equal to $0$;

\item[$\diamond$] otherwise let $f$, resp. $g$ be some local equation 
of $C$, resp. $D$ in a neighborhood of $p$, and define $(C\cdot D)_p$  
to be the dimension of $\faktor{\mathcal{O}_{p,S}}{(f,g)}$.
\end{itemize}

This number is related to $C\cdot D$ by the following statement:

\begin{pro}[\cite{Hartshorne}]\label{pro:hart2}
If $C$ and $D$ are distinct curves without any common irreducible 
component on a smooth surface, then
\[
C\cdot D=\displaystyle\sum_{p\in C\cap D}(C\cdot D)_p.
\]
In particular $C\cdot D\geq 0$.
\end{pro}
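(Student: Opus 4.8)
The plan is to compute $C\cdot D$ sheaf-theoretically and match it, point by point, with the local multiplicities $(C\cdot D)_p$. The bridge is the Euler characteristic of the structure sheaf $\mathcal{O}_{C\cap D}$ of the scheme-theoretic intersection, computed in two ways: directly (giving the local sum) and through a Koszul resolution (giving a combination of Euler characteristics that depends only on the classes of $C$ and $D$). I will then identify that combination with $C\cdot D$ by invoking the uniqueness in the preceding proposition.

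First I would record the finiteness. Since $C$ and $D$ share no irreducible component, at each $p\in C\cap D$ the local equations $f,g\in\mathcal{O}_{p,S}$ have no common factor in the unique factorization domain $\mathcal{O}_{p,S}$, so $V(f,g)$ is zero-dimensional at $p$ and $\faktor{\mathcal{O}_{p,S}}{(f,g)}$ is a finite-dimensional $\mathbb{C}$-vector space. Hence $C\cap D$ is finite, $\mathcal{O}_{C\cap D}$ is a skyscraper sheaf of finite length whose stalks at distinct points are independent, and
\[
\chi(\mathcal{O}_{C\cap D})=h^0(S,\mathcal{O}_{C\cap D})=\sum_{p\in C\cap D}\dim_{\mathbb{C}}\faktor{\mathcal{O}_{p,S}}{(f,g)}=\sum_{p\in C\cap D}(C\cdot D)_p.
\]
This also yields the final assertion $C\cdot D\geq 0$, once the main identity is proved, because every summand is a dimension and hence nonnegative.

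The core step is the Koszul resolution. Let $s_C\in H^0(S,\mathcal{O}_S(C))$ and $s_D\in H^0(S,\mathcal{O}_S(D))$ be the canonical sections cutting out $C$ and $D$. Because $(f,g)$ is a regular sequence in $\mathcal{O}_{p,S}$ at every $p$, the local Koszul complexes glue, after the appropriate line-bundle twists, into a global exact sequence
\[
0\to\mathcal{O}_S(-C-D)\to\mathcal{O}_S(-C)\oplus\mathcal{O}_S(-D)\to\mathcal{O}_S\to\mathcal{O}_{C\cap D}\to 0,
\]
where the first map is $t\mapsto(s_D\,t,-s_C\,t)$ and the second is $(a,b)\mapsto s_C\,a+s_D\,b$. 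Additivity of the Euler characteristic along this sequence then gives
\[
\sum_{p\in C\cap D}(C\cdot D)_p=\chi(\mathcal{O}_S)-\chi(\mathcal{O}_S(-C))-\chi(\mathcal{O}_S(-D))+\chi(\mathcal{O}_S(-C-D)).
\]

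It remains to identify the right-hand side with $C\cdot D$. Write $\varphi(C,D)$ for this combination of Euler characteristics; it is visibly symmetric and depends only on the line bundles $\mathcal{O}_S(C)$, $\mathcal{O}_S(D)$, hence only on the linear equivalence classes. Feeding the restriction sequences $0\to\mathcal{O}_S(-D)\to\mathcal{O}_S\to\mathcal{O}_D\to 0$ and its twist by $\mathcal{O}_S(-C)$ through additivity of $\chi$ rewrites $\varphi(C,D)=\chi(\mathcal{O}_D)-\chi(\mathcal{O}_S(-C)|_D)$, which Riemann--Roch on the curve $D$ exhibits as the degree of $\mathcal{O}_S(C)|_D$; degrees are additive under tensor product, so $\varphi$ is biadditive. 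Finally, for smooth curves meeting transversally at $p$ one has $(f,g)=\mathfrak{m}_p$, so $(C\cdot D)_p=1$ and $\varphi(C,D)=\#(C\cap D)$. Thus $\varphi$ is a symmetric bilinear form satisfying both axioms, and by the uniqueness of the intersection pairing it coincides with $C\cdot D$. I expect the main obstacle to be the global exactness of the Koszul-type sequence: everything downstream is formal, but that exactness rests squarely on the pointwise regular-sequence property of $(f,g)$, which in turn is what the no-common-component hypothesis provides.
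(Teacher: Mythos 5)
Your proof is correct and follows essentially the argument of the source the paper cites for this statement (the paper itself states the proposition without proof): the Koszul-type resolution of $\mathcal{O}_{C\cap D}$, whose local exactness rests on $(f,g)$ being a regular sequence, additivity of Euler characteristics, and identification of $\chi(\mathcal{O}_S)-\chi(\mathcal{O}_S(-C))-\chi(\mathcal{O}_S(-D))+\chi(\mathcal{O}_S(-C-D))$ with $C\cdot D$ via the uniqueness clause of the preceding proposition. The one point you gloss is that your degree formula $\varphi(C,D)=\deg\big(\mathcal{O}_S(C)|_D\big)$ yields additivity in one slot only when the other argument is effective, whereas the uniqueness statement requires bilinearity on all of $\mathrm{Div}(S)$; this is filled by repeating your restriction-sequence computation with one argument written as a difference of very ample (hence effective) divisors, a routine step in the spirit of what you already did.
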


Let $C$ be a curve on $S$, and let $p$ be a point of $S$. Take
local coordinates $z_0$, $z_1$ at $p$ such that $p=(0,0)$. Set
$k=m_p(C)$. The curve $C$ is thus given by 
\[
P_k(z_0,z_1)+P_{k+1}(z_0,z_1)+\ldots+P_r(z_0,z_1)=0
\]
where the $P_i$'s denote homogeneous polynomials of degree $i$.
The blow up of $p$ can be viewed as $(u,v)\mapsto(uv,v)$, and the 
pull-back of $C$ is given by 
\[
v^k\big(p_k(u,1)+vp_{k+1}(u,1)+\ldots+v^{r-k}p_r(u,1)\big)=0.
\]
In other words the pull-back of $C$ decomposes into $k$ times 
the exceptional divisor $E=\pi^{-1}(0,0)=(v=0)$ and the strict
transform. We can thus state:

\begin{lem}[\cite{Hartshorne}]\label{lem:tireenarriere}
Let $S$ be a smooth surface. Let $\pi\colon\mathrm{Bl}_pS\to S$
be the blow-up of a point $p\in S$. If $C$ is a curve on $S$, 
if $\widetilde{C}$ is its strict transform and if $E=\pi^{-1}(p)$
is the exceptional divisor, then 
\[
\pi^*(C)=\widetilde{C}+m_p(C)E.
\]
\end{lem}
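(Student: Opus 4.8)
The plan is to verify the identity $\pi^*(C)=\widetilde{C}+m_p(C)E$ by a purely local computation in the coordinate chart where the blow-up is explicit, and then upgrade this local statement to the asserted equality in $\mathrm{Pic}(\mathrm{Bl}_pS)$. Since both sides are divisors, the strategy is to show that $\pi^*(C)$ and $\widetilde{C}+m_p(C)E$ have the same multiplicity along every prime divisor of $\mathrm{Bl}_pS$. Away from the exceptional curve $E$, the map $\pi$ is an isomorphism onto $S\smallsetminus\{p\}$, so $\pi^*(C)$ and $\widetilde{C}$ agree there by the very definition of the strict transform as $\widetilde{C}=\overline{\pi^{-1}(C\smallsetminus\{p\})}$; hence the only prime divisor where the two sides could differ is $E$ itself, and the entire content of the lemma is the computation of the coefficient of $E$ in $\pi^*(C)$.

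\textbf{Local computation.} First I would reduce to the local model. Choose local coordinates $z_0,z_1$ at $p$ with $p=(0,0)$, and write a local equation of $C$ as a sum of homogeneous pieces
\[
f=P_k(z_0,z_1)+P_{k+1}(z_0,z_1)+\ldots+P_r(z_0,z_1),
\]
where $k=m_p(C)$ is the multiplicity, so that $P_k\neq 0$. Using the chart of the blow-up given by $(u,v)\mapsto(uv,v)$ (as recalled just before the statement), the pull-back $\pi^*f=f(uv,v)$ factors because each $P_i$ is homogeneous of degree $i$: namely $P_i(uv,v)=v^i\,P_i(u,1)$. Factoring out the smallest power $v^k$ gives
\[
\pi^*f=v^k\big(P_k(u,1)+v\,P_{k+1}(u,1)+\ldots+v^{r-k}\,P_r(u,1)\big).
\]
Here $E$ is the divisor $(v=0)$, so $v^k$ contributes exactly $kE=m_p(C)\,E$ to the divisor of $\pi^*f$, while the bracketed factor is the local equation of $\widetilde{C}$; the key point is that this factor does not vanish identically along $E$, i.e. its restriction to $v=0$ is $P_k(u,1)$, which is nonzero since $P_k\neq 0$. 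This is precisely what guarantees that the multiplicity of the strict transform along $E$ is zero and that the coefficient of $E$ in $\pi^*(C)$ is exactly $k$.

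\textbf{Globalization and the main obstacle.} Finally I would assemble the local identity into the global equality of divisors. Since $\pi^*(C)$, $\widetilde{C}$, and $E$ are all honest divisors on $\mathrm{Bl}_pS$, and since we have matched their multiplicities along every prime divisor (trivially away from $E$ by the isomorphism $\pi_{\vert\mathrm{Bl}_pS\smallsetminus E}$, and along $E$ by the local factorization), the equality $\pi^*(C)=\widetilde{C}+m_p(C)E$ holds in $\mathrm{Div}(\mathrm{Bl}_pS)$ and hence in $\mathrm{Pic}(\mathrm{Bl}_pS)$. \textbf{The main obstacle} is the coordinate-independence of the multiplicity $k=m_p(C)$: one must check that the order to which $f$ lies in the powers of the maximal ideal $\mathfrak{m}\subset\mathcal{O}_{p,S}$, and thus the lowest-degree homogeneous part $P_k$, is intrinsic and does not depend on the chosen local coordinates or on the particular chart of the blow-up. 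This is exactly the definition of $m_p(C)$ recalled earlier in the excerpt (the integer $k$ with $f\in\mathfrak{m}^k\smallsetminus\mathfrak{m}^{k+1}$), so the subtlety is in verifying that the explicit $v^k$ appearing in the chart computation genuinely equals this intrinsic multiplicity rather than a chart-dependent artifact; checking the second affine chart $(u,v)\mapsto(u,uv)$ and confirming the coefficient of $E$ is the same $k$ closes this gap.
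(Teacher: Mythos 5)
Your proof is correct and takes essentially the same route as the paper, which establishes the lemma by exactly your local computation: writing a local equation of $C$ as $P_k+P_{k+1}+\ldots+P_r$ with $k=m_p(C)$, pulling back through the chart $(u,v)\mapsto(uv,v)$, and factoring out $v^k$ to identify the coefficient of $E=(v=0)$ with $m_p(C)$ and the remaining factor with $\widetilde{C}$. Your extra care about matching multiplicities away from $E$ and checking the second chart merely makes explicit what the paper leaves implicit.
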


We also have the following statement:

\begin{pro}[\cite{Hartshorne}]\label{pro:tireenarriere}
Let $S$ be a smooth surface, let $p$ be a point of $S$, and let
$\pi\colon\mathrm{Bl}_pS\to S$ be the blow-up of $p$. Denote by
$E\subset\mathrm{Bl}_pS$ the exceptional divisor 
$\pi^{-1}(p)\simeq\mathbb{P}^1_\mathbb{C}$. Then 
\[
\mathrm{Pic}(\mathrm{Bl}_pS)=\pi^*\mathrm{Pic}(S)+\mathbb{Z}E.
\]
The intersection form on $\mathrm{Bl}_pS$ is induced by the 
intersection form on $S$ via the following formulas:
\begin{itemize}
\item[$\diamond$] $\pi^*C\cdot\pi^*D=C\cdot D$ for any $C$, 
$D$ in $\mathrm{Pic}(S)$;

\item[$\diamond$] $\pi^*C\cdot E=0$ for any $C$ in 
$\mathrm{Pic}(S)$;

\item[$\diamond$] $E^2=E\cdot E=-1$;

\item[$\diamond$] $\widetilde{C}^2=C^2-1$ for any smooth 
curve $C$ passing through $p$ and where $\widetilde{C}$ is 
the strict transform of $C$.
\end{itemize}
\end{pro}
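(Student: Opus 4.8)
The plan is to establish the generation statement for the Picard group first, then compute the four intersection numbers, and finally feed the intersection computations back in to upgrade the sum to a direct sum. For the generation statement I would use the excision exact sequence for divisor class groups of smooth varieties. Since $\pi$ restricts to an isomorphism between $\mathrm{Bl}_pS\smallsetminus E$ and $S\smallsetminus\{p\}$, and since $\{p\}$ has codimension $2$ in the surface $S$, removing it does not change the class group, so $\mathrm{Pic}(\mathrm{Bl}_pS\smallsetminus E)\cong\mathrm{Pic}(S)$. The excision sequence
\[
\mathbb{Z}\longrightarrow\mathrm{Pic}(\mathrm{Bl}_pS)\longrightarrow\mathrm{Pic}(\mathrm{Bl}_pS\smallsetminus E)\longrightarrow 0,
\]
whose first arrow sends $1$ to the class of $E$ and whose second is restriction, then shows that $\mathrm{Pic}(\mathrm{Bl}_pS)$ is generated by $E$ together with $\pi^*\mathrm{Pic}(S)$; here one checks that $\pi^*$ is a section of the restriction map, hence injective.

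Next I would compute the first two intersection numbers together. Every class in $\mathrm{Pic}(S)$ can, after a linear equivalence, be represented by a difference of curves avoiding the single point $p$; on such a curve the total transform equals the strict transform and $\pi$ restricts to an isomorphism, so the numbers computed via Proposition \ref{pro:hart2} are preserved and $\pi^*C\cdot\pi^*D=C\cdot D$ follows. Moreover a curve disjoint from $p$ pulls back to a curve disjoint from $E$, which gives $\pi^*C\cdot E=0$.

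The computation of $E^2$ is the crux. I would choose a smooth curve $L$ on $S$ passing through $p$ (for instance a line in a local affine chart), so that $m_p(L)=1$ and Lemma \ref{lem:tireenarriere} gives $\pi^*L=\widetilde{L}+E$. Intersecting this with $E$ and using $\pi^*L\cdot E=0$ yields $0=\widetilde{L}\cdot E+E^2$. In the local model $(u,v)\mapsto(uv,v)$ the exceptional divisor is $(v=0)$ and the strict transform of $L$ meets it transversally in a single point, so $\widetilde{L}\cdot E=1$ and therefore $E^2=-1$. The last formula is then purely formal: for a smooth curve $C$ through $p$ we have $\pi^*C=\widetilde{C}+E$, whence $C^2=\pi^*C\cdot\pi^*C=(\widetilde{C}+E)^2=\widetilde{C}^2+2\,\widetilde{C}\cdot E+E^2=\widetilde{C}^2+1$, that is $\widetilde{C}^2=C^2-1$.

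Finally, to see that the sum is direct I would use the intersection numbers once more: if $\pi^*C=nE$ for some $C\in\mathrm{Pic}(S)$ and $n\in\mathbb{Z}$, then intersecting with $E$ gives $0=\pi^*C\cdot E=nE^2=-n$, so $n=0$ and $\pi^*C=0$, whence $C=0$ by the injectivity of $\pi^*$; in particular $E$ is non-torsion and the sequence above splits. I expect the main obstacle to be precisely the local transversality computation behind $E^2=-1$, since everything else is either formal manipulation of the intersection pairing or an appeal to the excision sequence, whereas pinning down $\widetilde{L}\cdot E=1$ requires genuinely examining the geometry of the blow-up chart.
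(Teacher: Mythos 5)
Your proof is correct and follows essentially the same route as the source the paper cites for this statement (the paper itself only invokes \cite{Hartshorne} and supplies, just before Lemma \ref{lem:tireenarriere}, the local model $(u,v)\mapsto(uv,v)$ that underlies the transversality fact $\widetilde{L}\cdot E=1$ which you correctly identified as the crux): excision for the class group, moving representatives of classes off $p$ for the first two formulas, and intersecting $\pi^*L=\widetilde{L}+E$ with $E$ to extract $E^2=-1$. The only cosmetic point is that your auxiliary curve $L$ must be a global divisor on $S$ smooth at $p$ (e.g.\ a general very ample curve through $p$) rather than literally a line in an affine chart, and your closing argument in fact sharpens the stated sum $\pi^*\mathrm{Pic}(S)+\mathbb{Z}E$ to a direct sum, which is a harmless bonus.
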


If $V$ is an algebraic variety, then the 
\textsl{nef cone}\index{defi}{nef cone}
$\mathrm{Nef}(V)$\index{not}{$\mathrm{Nef}(V)$} is the cone of
divisors $D$ such that $D\cdot C\geq 0$ for any curve $C$ in $V$.

%%%%%%%%%%%%%%%%%%%%%%%%%%%%%%%%%%%%%%%%%%%%%%%%%%%%%%%%%%%%%%%%%%%%%%%%%%%%%%%%%%%%%%%%%%%%%%%%%%%%%%%%%%%%%%%%%%%
%%%%%%%%%%%%%%%%%%%%%%%%%%%%%%%%%%%%%%%%%%%%%%%%%%%%%%%%%%%%%%%%%%%%%%%%%%%%%%%%%%%%%%%%%%%%%%%%%%%%%%%%%%%%%%%%%%%
% section
%%%%%%%%%%%%%%%%%%%%%%%%%%%%%%%%%%%%%%%%%%%%%%%%%%%%%%%%%%%%%%%%%%%%%%%%%%%%%%%%%%%%%%%%%%%%%%%%%%%%%%%%%%%%%%%%%%%
%%%%%%%%%%%%%%%%%%%%%%%%%%%%%%%%%%%%%%%%%%%%%%%%%%%%%%%%%%%%%%%%%%%%%%%%%%%%%%%%%%%%%%%%%%%%%%%%%%%%%%%%%%%%%%%%%%%

\section{A geometric definition of birational maps}\label{sec:geodef}

Let $\phi$ be the element of $\mathrm{Bir}(\mathbb{P}^2_\mathbb{C})$ 
given by
\[
\phi\colon(z_0:z_1:z_2)\dashrightarrow\big(\phi_0(z_0,z_1,z_2):\phi_1(z_0,z_1,z_2):\phi_2(z_0,z_1,z_2)\big)
\]
where the $\phi_i$'s are homogeneous polynomials of the same degree
$\nu$, and without common factor of positive degree. The 
\textsl{linear system}\index{defi}{linear system (birational map)}
$\Lambda_\phi$\index{not}{$\Lambda_\phi$} of $\phi$ is the 
strict pull-back of the system $\mathcal{O}_{\mathbb{P}^2_\mathbb{C}}(1)$
of lines of $\mathbb{P}^2_\mathbb{C}$ by $\varphi$.

\begin{rems}
\begin{itemize}
\item[$\diamond$] If $A$ is an automorphism of $\mathbb{P}^2_\mathbb{C}$, 
then $\Lambda_\phi=\Lambda_{A\circ\phi}$.

\item[$\diamond$] The degree of the curves of $\Lambda_\phi$ is $\nu$.
\end{itemize}
\end{rems}

\begin{eg}
The linear system associated to $\sigma_2$ is the linear system 
of conics passing through $(1:0:0)$, $(0:1:0)$ and $(0:0:1)$.
\end{eg}

\begin{rem}
Let us define the linear system of a divisor and then mention 
the connection between the linear system of a divisor and the linear 
system of a birational map. Let $D$ be a divisor on a surface
$S$. Denote by $\vert D\vert$ the set of all effective divisors
on $S$ linearly equivalent to $D$. Every non-vanishing 
section of $\mathcal{O}_S(D)$ defines an element of 
$\vert D\vert$, namely its divisor of zeros; conversely 
every element of $\vert D\vert$ is the divisor of zeros of
a non-vanishing section of $\mathcal{O}_S(D)$, defined 
up to scalar multiplication. Hence $\vert D\vert$ can be 
naturally identified with the projective space associated to 
the vector space $H^0(\mathcal{O}_S(D))$. A 
linear subspace $P$ of $\vert D\vert$ is called a 
\textsl{linear system}\index{defi}{linear system} 
on~$S$; of course equivalently $P$ can be defined by a 
vector subspace of $H^0(\mathcal{O}_S(D))$. The subspace
$P$ is \textsl{complete}\index{defi}{complete (linear system)}
if $P=\vert D\vert$. The 
\textsl{dimension}\index{defi}{dimension (linear system)}
of $P$ is its dimension as a projective space. A one-dimensional
linear system is a \textsl{pencil}\index{defi}{pencil}.
A curve $C$ is a 
\textsl{fixed component}\index{defi}{fixed component (linear system)}
of $P$ if every divisor of $P$ contains $C$. The 
\textsl{fixed part}\index{defi}{fixed part (linear system)} 
of $P$ is the biggest divisor that is contained in every 
element of $P$. A point $p$ of $S$ is a 
\textsl{base-point}\index{defi}{base-point (linear system)}
of $P$ if every divisor of $P$ contains $p$. If the linear
system has no fixed part, then it has only a finite number 
of fixed points; this number is bounded by $D^2$ for 
$D\in P$. 

Let $S$ be a surface. Then there is a bijection between 
\[
\big\{\text{rational maps $\phi\colon S\dashrightarrow\mathbb{P}^n_\mathbb{C}$ such that $\phi(S)$ is contained in no hyperplane}\big\}
\]
and 
\[
\big\{\text{linear systems on $S$ without fixed part and of dimension $n$}\big\}.
\]
This correspondence is constructed as follows: to the map
$\phi$ we associate the linear system $\phi^*\vert H\vert$
where $\vert H\vert$ is the system of hyperplanes in 
$\mathbb{P}^n_\mathbb{C}$. Conversely let $P$ be a linear
system on $S$ with no fixed part; denote by 
$\widehat{P}$ the projective dual space to~$P$. Define 
a rational map $\phi\colon S\dashrightarrow\widehat{P}$
by sending $p\in S$ to the hyperplane in $P$ consisting
of the divisors passing through $p$: the map $\phi$ is
defined at $p$ if and only if $p$ is not a base-point 
of $P$. 
\end{rem}

If $p_1$ is a point of indeterminacy of $\phi$, then denote by 
$\pi_1\colon\mathrm{Bl}_{p_1}\mathbb{P}^2_\mathbb{C}\to\mathbb{P}^2_\mathbb{C}$
the blow-up of $p_1$ and by $\mathcal{E}_1$ the associated exceptional divisor.
The map $\varphi_1=\phi\circ\pi_1$ is a birational map from 
$\mathrm{Bl}_{p_1}\mathbb{P}^2_\mathbb{C}$ to $\mathbb{P}^2_\mathbb{C}$.
If $p_2$ is a point of indeterminacy of $\varphi_1$, we blow up $p_2$ via
$\pi_2\colon\mathrm{Bl}_{p_1,p_2}\mathbb{P}^2_\mathbb{C}\to\mathbb{P}^2_\mathbb{C}$,
and we set $\mathcal{E}_2=\pi_2^{-1}(p_2)$. Again the map 
$\varphi_2=\varphi_1\circ\pi_1\colon\mathrm{Bl}_{p_1,p_2}\mathbb{P}^2_\mathbb{C}\dashrightarrow\mathbb{P}^2_\mathbb{C}$
is a birational map. We iterate this processus until~$\varphi_r$ becomes a
morphism. Set $E_i=(\pi_{i+1}\circ\ldots\circ\pi_r)^*\mathcal{E}_i$ and 
$\ell=(\pi_1\circ\ldots\circ\pi_r)^*L$ where $L$ is the divisor of a line. 
Applying $r$ times Proposition \ref{pro:tireenarriere} we get
\[
\left\{
\begin{array}{lllll}
\mathrm{Pic}(\mathrm{Bl}_{p_1,p_2,\ldots,p_r}\mathbb{P}^2_\mathbb{C})=\mathbb{Z}\ell\oplus\mathbb{Z}E_1\oplus\mathbb{Z}E_2\oplus\ldots\oplus\mathbb{Z}E_r,\\
\ell^2=\ell\cdot\ell,\\ 
E_i^2=E_i\cdot E_i=-1, \\
E_i\cdot E_j=0\quad\forall\,1\leq i\not=j\leq r,\\ E_i\cdot\ell=0\quad\forall\,1\leq i\leq r.
\end{array}
\right.
\]

The curves of $\Lambda_\phi$ pass through the $p_i$'s with multiplicity
$m_{p_i}(\phi)$. Applying $r$ times Lem\-ma~\ref{lem:tireenarriere} the 
elements of $\Lambda_{\varphi_r}$ are equivalent to 
\[
\nu L-\displaystyle\sum_{i=1}^rm_{p_i}(\phi)E_i
\]
where $L$ is the pull-back of a generic line in $\mathbb{P}^2_\mathbb{C}$. 
As a result the curves of $\Lambda_{\varphi_r}$
have self intersection $\nu^2-\displaystyle\sum_{i=1}^rm_{p_i}(\phi)^2$. Note 
that all
the members of a linear system are linearly equivalent and that the dimension
of $\Lambda_{\varphi_r}$ is $2$; the self intersection has thus to be 
non-negative by Proposition \ref{pro:hart2}. As a consequence the number $r$ 
exists; in other words $\phi$
has a finite number of base-points. By construction
\[
\varphi_r\colon\mathrm{Bl}_{p_1,p_2,\ldots,p_r}\mathbb{P}^2_\mathbb{C}\to\mathbb{P}^2_\mathbb{C}
\]
is a birational morphism which is the blow-up of the base-points of $\phi^{-1}$.
Consider a general line $L$ of $\mathbb{P}^2_\mathbb{C}$ that does not pass 
through $p_1$, $p_2$, $\ldots$, $p_r$. Its pull-back $\varphi_r^{-1}(L)$ 
corresponds to a smooth curve on 
$\mathrm{Bl}_{p_1,p_2,\ldots,p_r}\mathbb{P}^2_\mathbb{C}$
which has self-intersection $1$ and genus $0$. Hence
\[
\left\{
\begin{array}{ll}
 (\varphi_r^{-1}(L))^2=1, \\
 \varphi_r^{-1}(L)\cdot K_{\mathrm{Bl}_{p_1,p_2,\ldots,p_r}\mathbb{P}^2_\mathbb{C}}=-3.
\end{array}
\right.
\]
As the elements of $\Lambda_{\varphi_r}$ are equivalent to 
$\nu L-\displaystyle\sum_{i=1}^rm_{p_i}(\phi)E_i$ and since 
\[
K_{\mathrm{Bl}_{p_1,p_2,\ldots,p_r}\mathbb{P}^2_\mathbb{C}}=-3L+\displaystyle\sum_{i=1}^rE_i 
\]
the following equalities hold:
\[
\left\{
\begin{array}{ll}
\displaystyle\sum_{i=1}^rm_{p_i}(\phi)=3(\nu-1),\\ \displaystyle\sum_{i=1}^rm_{p_i}(\phi)^2=\nu^2-1.
\end{array}
\right.
\]

\begin{egs}
\begin{itemize}
\item[$\diamond$] If $\nu=2$, then $r=3$ and $m_{p_1}(\phi)=m_{p_2}(\phi)=m_{p_3}(\phi)=~1$.

\item[$\diamond$] If $\nu=3$, then $r=5$ and $m_{p_1}(\phi)=2$, $m_{p_2}(\phi)=m_{p_3}(\phi)=m_{p_4}(\phi)=m_{p_5}(\phi)=1$.
\end{itemize}
\end{egs}

%%%%%%%%%%%%%%%%%%%%%%%%%%%%%%%%%%%%%%%%%%%%%%%%%%%%%%%%%%%%%%%%%%%%%%%%%%%%%%%%%%%%%%%%%%%%%%%%%%%%%%%%%%%%%%%%%%%
%%%%%%%%%%%%%%%%%%%%%%%%%%%%%%%%%%%%%%%%%%%%%%%%%%%%%%%%%%%%%%%%%%%%%%%%%%%%%%%%%%%%%%%%%%%%%%%%%%%%%%%%%%%%%%%%%%%
%%%%%%%%%%%%%%%%%%%%%%%%%%%%%%%%%%%%%%%%%%%%%%%%%%%%%%%%%%%%%%%%%%%%%%%%%%%%%%%%%%%%%%%%%%%%%%%%%%%%%%%%%%%%%%%%%%%
% CHAPTER
%%%%%%%%%%%%%%%%%%%%%%%%%%%%%%%%%%%%%%%%%%%%%%%%%%%%%%%%%%%%%%%%%%%%%%%%%%%%%%%%%%%%%%%%%%%%%%%%%%%%%%%%%%%%%%%%%%%
%%%%%%%%%%%%%%%%%%%%%%%%%%%%%%%%%%%%%%%%%%%%%%%%%%%%%%%%%%%%%%%%%%%%%%%%%%%%%%%%%%%%%%%%%%%%%%%%%%%%%%%%%%%%%%%%%%%
%%%%%%%%%%%%%%%%%%%%%%%%%%%%%%%%%%%%%%%%%%%%%%%%%%%%%%%%%%%%%%%%%%%%%%%%%%%%%%%%%%%%%%%%%%%%%%%%%%%%%%%%%%%%%%%%%%%

\chapter[Action of $\mathrm{Bir}(\mathbb{P}^2_\mathbb{C})$ onto $\mathbb{H}^\infty$]{An isometric action of the Cremona group on an infinite dimensional hyperbolic space}\label{chap:hyperbolicspace}

\bigskip
\bigskip

If $S$ is a projective surface, the group
$\mathrm{Bir}(S)$ of birational self maps
of $S$ acts faithfully by isometries on a 
hyperbolic space $\mathbb{H}^\infty(S)$ 
of infinite dimension.
After recalling some notions of hyperbolic
geometry in the first section of this 
chapter we describe this construction 
in the second section. Let us now give an 
outline of it before heading into details. 
Let $S$ be a projective
surface. If $\pi\colon Y\to S$ is a 
birational morphism, then one 
obtains an embedding 
$\pi^*\colon\mathrm{NS}(S)\to\mathrm{NS}(Y)$
of N\'eron-Severi groups. 
If $\pi_1\colon Y_1\to S$ and $\pi_2\colon Y_2\to S$
are two birational morphisms, then
\begin{itemize}
\item[$\diamond$] $\pi_2$ is above $\pi_1$ 
if $\pi_1^{-1}\circ\pi_2$ is a morphism, 

\item[$\diamond$] one can always find 
a third birational morphism $\pi_3\colon Y_3\to S$
that is above $\pi_1$ and $\pi_2$.
\end{itemize}

Hence the inductive limit of all groups 
$\mathrm{NS}(Y_i)$ for all surfaces $Y_i$ 
above $S$ is well-defined; this limit
$\mathcal{Z}(S)$ is the 
Picard-Manin space of 
$S$. The intersection forms on~$Y_i$ 
yield to a scalar product $\langle\,,\,\rangle$
on $\mathcal{Z}(S)$.

Consider all surfaces $Y$ above $S$, {\it i.e.}
all birational morphisms $\pi\colon Y\to S$.
We identify $p_1\in Y_1$ and $p_2\in Y_2$
if $\pi_1^{-1}\circ\pi_2$ is a local 
isomorphism in a neighborhood of~$p_2$ that 
maps $p_2$ onto $p_1$. The bubble space
$\mathcal{B}(S)$ of $S$ is the union of all 
points of all surfaces above $S$ modulo
the equivalence relation generated by these
identifications. If $p$ belongs to 
$\mathcal{B}(S)$, then we denote by $\mathbf{e}_p$ the
divisor class of the exceptional divisor
of the blow up of $p$. The equalities 
$\mathbf{e}_p\cdot\mathbf{e}_p=-1$ and 
$\mathbf{e}_p\cdot\mathbf{e}_{p'}=0$ hold
by Proposition~\ref{pro:tireenarriere}

The N\'eron-Severi group
$\mathrm{NS}(S)$ is naturally embedded as a 
subgroup of the Picard-Manin
space; this finite dimensional lattice is 
orthogonal to $\mathbf{e}_p$ for any 
$p\in~\mathcal{B}(S)$. More precisely
\[
\mathcal{Z}(S)=\mathrm{NS}(S)\displaystyle\bigoplus_{p\in\mathcal{B}(S)}\mathbb{Z}\mathbf{e}_p.
\]
As a result any element $v$ of $\mathcal{Z}(S)$
can be written as a finite sum
\[
v=w+\displaystyle\sum_{p\in\mathcal{B}(S)}m_p\mathbf{e}_p.
\]
There is a completion process for which the 
completion $\mathrm{Z}(S)$ of 
$\mathcal{Z}(S)\otimes_\mathbb{Z}\mathbb{R}$
is 
\[
\mathrm{Z}(S)=\Big\{w+\displaystyle\sum_{p\in\mathcal{B}(S)}m_p\mathbf{e}_p\,\vert\, w\in\mathrm{NS}(\mathbb{R},S),\,\displaystyle\sum_{p\in\mathcal{B}(S)}m_p^2<\infty\Big\}.
\]
The intersection form extends as a scalar product
with signature $(1,\infty)$ on this space. The 
hyperbolic space $\mathbb{H}^\infty(S)$ of $S$ is 
defined by 
\[
\mathbb{H}^\infty(S)=\big\{w\in\mathrm{Z}(S),\,\vert\, \langle w,\,w\rangle=1,\,\langle w,\,a\rangle>0\text{ for all ample classes } a\in\mathrm{NS}(S)\big\}.
\]
It is an infinite dimensional analogue of the 
classical hyperbolic space $\mathbb{H}^n$. One
can define a complete distance $\mathrm{dist}$
on $\mathbb{H}^\infty(S)$ by
\[
\cosh(\mathrm{dist}(v,w))=\langle v,\,w\rangle.
\]
Geodesics are intersection of 
$\mathbb{H}^\infty(S)$ with planes. The
projection of $\mathbb{H}^\infty(S)$ to the 
projective space $\mathbb{P}(\mathrm{Z}(S))$ 
is one to one, and the boundary of its image 
is the projection of the cone of isotropic 
vectors of $\mathrm{Z}(S)$: 
\[
\partial\mathbb{H}^\infty(S)=\big\{\mathbb{R}_+v\,\vert\,v\in\mathrm{Z}(S),\,\langle v,\,v\rangle=0,\,\langle v,\,a\rangle>0\text{ for all ample classes }a\in\mathrm{NS}(S)\big\}.
\]
The important fact is that $\mathrm{Bir}(S)$
acts faithfully on $\mathrm{Z}(S)$ by 
continuous linear endomorphisms preserving 
the intersection form, the effective cone, 
the nef cone, $\mathcal{Z}(S)$ and also 
$\mathbb{H}^\infty(S)$.

If $\phi$ is an element of $\mathrm{Bir}(S)$, 
we denote by $\phi_*$ its action on 
$\mathrm{Z}(S)$: it is a linear isometry 
with respect to the intersection form; we
also denote by $\phi_*$ the isometry of 
$\mathbb{H}^\infty(S)$ induced by this 
endomorphism of $\mathrm{Z}(S)$. Let $f$ be 
an isometry of $\mathbb{H}^\infty(S)$; the 
translation length of $f$ is 
\[
L(f)=\inf\big\{\mathrm{dist}(v,f(v))\,\vert\, v\in\mathbb{H}^\infty(S)\big\}.
\]
If this infimum is a minimum, then 
\begin{itemize}
\item[$\diamond$] either it is equal to $0$, 
$f$ has a fixed point in $\mathbb{H}^\infty(S)$,
and $f$ is elliptic; 

\item[$\diamond$] or it is positive, and $f$ is 
loxodromic.
\end{itemize}

When the infimum is not realized, $L(f)$ is 
equal to $0$, and $f$ is parabolic. 

This classification into three types holds 
for all isometries of $\mathbb{H}^\infty(S)$. 
For isometries $\phi_*$ induced by birational
maps $\phi$ of $S$ there is a dictionary between 
this classification and the geometric 
properties of $\phi$. We give this 
dictionary in the third section.

\bigskip
\bigskip

%%%%%%%%%%%%%%%%%%%%%%%%%%%%%%%%%%%%%%%%%%%%%%%%%%%%%%%%%%%%%%%%%%%%%%%%%%%%%%%%%%%%%%%%%%%%%%%%%%%%%%%%%%%%%%%%%%%
%%%%%%%%%%%%%%%%%%%%%%%%%%%%%%%%%%%%%%%%%%%%%%%%%%%%%%%%%%%%%%%%%%%%%%%%%%%%%%%%%%%%%%%%%%%%%%%%%%%%%%%%%%%%%%%%%%%
% section
%%%%%%%%%%%%%%%%%%%%%%%%%%%%%%%%%%%%%%%%%%%%%%%%%%%%%%%%%%%%%%%%%%%%%%%%%%%%%%%%%%%%%%%%%%%%%%%%%%%%%%%%%%%%%%%%%%%
%%%%%%%%%%%%%%%%%%%%%%%%%%%%%%%%%%%%%%%%%%%%%%%%%%%%%%%%%%%%%%%%%%%%%%%%%%%%%%%%%%%%%%%%%%%%%%%%%%%%%%%%%%%%%%%%%%%

\section{Some hyperbolic geometry}

Consider a real Hilbert space $\mathcal{H}$ of dimension $n$. 
Let $\mathbf{e}_0$ be a unit vector of 
$\mathcal{H}$, and let $\mathbf{e}_0^\perp$ be the orthogonal
complement of the space $\mathbb{R}\mathbf{e}_0$. Denote by 
$(\mathbf{e}_i)_{i\in I}$ an orthonormal basis of 
$\mathbf{e}_0^\perp$. A scalar product with signature $(1,n-1)$ 
can be defined on $\mathcal{H}$ by setting
\[
\langle u,\,v\rangle=a_0b_0-\displaystyle\sum_{i\in I}a_ib_i
\]
for any two elements 
$u=a_0\mathbf{e}_0+\displaystyle\sum_{i\in I}a_i\mathbf{e}_i$
and 
$v=b_0\mathbf{e}_0+\displaystyle\sum_{i\in I}b_i\mathbf{e}_i$
of $\mathcal{H}$. The set 
\[
\big\{v\in\mathcal{H}\,\vert\,\langle v,v\rangle=1\big\}
\]
defines a hyperboloid with two connected components. Let 
$\mathbb{H}^{n-1}$ be the connected component of this 
hyperboloid that contains $\mathbf{e}_0$. A metric can be 
defined on $\mathbb{H}^{n-1}$ by 
\[
d(u,v):=\mathrm{arccosh}(\langle u,v\rangle).
\]

\begin{rem}
A useful model for $\mathbb{H}^2$ is the Poincar\'e model:
$\mathbb{H}^2$ is identified to the upper half-plane
$\big\{z\in\mathbb{C}\,\vert\,\mathrm{Im}(z)>0\big\}$ with
its Riemanniann metric given by 
$\mathrm{d}s^2=\frac{x^2+y^2}{y^2}$. Its group of orientation
preserving isometries coincides with 
$\mathrm{PSL}(2,\mathbb{R})$, acting by linear fractional 
transformations.
\end{rem}

Let $(\mathcal{H},\langle\, .,.\rangle)$ be a real 
Hilbert space of infinite dimension. Let $\mathbf{e}_0$ 
be a unit vector of $\mathcal{H}$, and let 
$\mathbf{e}_0^\perp$ be its orthogonal complement. 
Any element $v$ of $\mathcal{H}$ can be written in a 
unique way as $v=v_{\mathbf{e}_0}\mathbf{e}_0+v_{\mathbf{e}_0^\perp}$
where $v_{\mathbf{e}_0}$ belongs to $\mathbb{R}$ and 
$v_{\mathbf{e}_0^\perp}$ belongs to $\mathbf{e}_0^\perp$. 
Consider the symetric bilinear form $\mathcal{B}$ 
of $\mathcal{H}$ defined by 
\[
\mathcal{B}(x,y)=x_{\mathbf{e}_0}y_{\mathbf{e}_0}-\langle x_{\mathbf{e}_0^\perp},y_{\mathbf{e}_0^\perp}\rangle;
\]
it has signature $(1,\infty)$. Let $\mathbb{H}^\infty$
be the hyperboloid given by 
\[
\mathbb{H}^\infty=\big\{x\in\mathcal{H}\,\vert\,\mathcal{B}(x,x)=1,\,\mathcal{B}(\mathbf{e}_0,x)>0\big\}.
\]
We consider on $\mathbb{H}^\infty$ the distance $d$ 
defined by $\cosh d(x,y)=\mathcal{B}(x,y)$. 
The space $(\mathbb{H}^\infty,d)$ is a complete 
metric space of infinite dimension.

\medskip

\subsection{$\delta$-hyperbolicity and $\mathrm{CAT}$($-1$) spaces}

Let $(X,d)$ be a geodesic metric space. Let $x$, $y$, $z$
be three points of $X$. We denote by $[p,q]$ the segment 
with endpoints $p$ and $q$. A 
\textsl{geodesic triangle}\index{defi}{geodesic triangle}
with vertices $x$, $y$, $z$ is the union of three geodesic
segments $[x,y]$, $[y,z]$ and $[z,x]$. Let 
$\delta\geq 0$. If for any point $m\in[x,y]$ there is a 
point in $[y,z]\cup[z,x]$ at distance less than $\delta$
of $m$, and similarly for points on the other edges, then
the triangle is said do be 
\textsl{$\delta$-slim}\index{defi}{$\delta$-slim}.
A \textsl{$\delta$-hyperbolic space}\index{defi}{$\delta$-hyperbolic space}
is a geodesic metric space whose all of geodesic 
triangles are $\delta$-slim. A $\delta$-hyperbolic 
space is called 
\textsl{Gromov hyperbolic space}\index{defi}{Gromov hyperbolic space}.

\begin{egs}
\begin{itemize}
\item[$\diamond$] Metric trees are $0$-hyperbolic: all 
triangles are tripods.

\item[$\diamond$] The hyperbolic plane is $(-2)$-hyperbolic. In 
fact the incircle of a geodesic triangle is the circle
of largest diameter contained in the triangle, and any 
geodesic triangle lies in the interior of an ideal triangle, 
all of which are isometric with incircles of diameter 
$2\log 3$ (\emph{see} \cite{CoornaertDelzantPapadopoulos}).

\item[$\diamond$] The space $\mathbb{R}^2$ endowed with the euclidian 
metric is not $\delta$-hyperbolic (for instance because
of the existence of homotheties). 
\end{itemize}
\end{egs}

Let us now introduce $\mathrm{CAT}$($-1)$ spaces\footnote{The terminology
corresponds to the initials of E. Cartan, A. Alexandrov
and V. Toponogov.}. Let $(X,d_X)$ be a geodesic metric 
space. Consider a geodesic triangle $T$ in $X$ determined
by the three points $x$, $y$, $z$ and the data of three
geodesics between two of these three points. A 
\textsl{comparison triangle}\index{defi}{comparison triangle}
of $T$ in the metric space $(X',d_{X'})$ is a triangle $T'$ such that
\[
\left\{
\begin{array}{lll}
d_X(x,y)=d_{X'}(x',y')\\
d_X(x,z)=d_{X'}(x',z')\\
d_X(y,z)=d_{X'}(y',z')\\
\end{array}
\right.
\]
Let $p$ be a point of $[x,y]\subset T$. A point 
$p'\in[x',y']\subset T'$ is a 
\textsl{comparison point}\index{defi}{comparison point}
of $p$ if $d_{X'}(x',p')=d_X(x,p)$. 

The triangle $T$ \textsl{satisfies the $\mathrm{CAT}$}$(-1)$ \textsl{inequality}\index{defi}{$\mathrm{CAT}$($-1$) inequality}
if for any $(x,y)\in T^2$ 
\[
d_X(x,y)\leq\vert\vert x'-y'\vert\vert_{\mathbb{H}^2}
\]
where $T'$ is a comparison triangle of 
$T$ in $\mathbb{H}^2$ and $x'\in T'$ (resp. $y'\in T'$) is a comparison point 
of $x$ (resp. $y$).

The space $X$ is \textsl{$\mathrm{CAT}$}$(-1)$\index{defi}{$\mathrm{CAT}$($-1$) space}
if all its triangles satisfy the $\mathrm{CAT}$($-1$) inequality.

\begin{rem}
The $\mathrm{CAT}$($-1$) spaces are Gromov hyperbolic, but the converse
is false.
\end{rem}

Set $\mathcal{H}_{>0}=\big\{v\in\mathcal{H}\,\vert\, \langle v,v\rangle>0\big\}$.
The image of $v$ by the map  
\begin{align*}
&\eta\colon\mathcal{H}_{>0}\to\mathbb{H}^\infty && v\mapsto \frac{v}{\sqrt{\langle v,v\rangle}}
\end{align*}
is called the normalization of $v$. Geometrically
$\eta$ associates to a point $v\in\mathcal{H}_{>0}$ the intersection
of $\mathbb{H}^\infty$ with the line through $v$. Note that if the 
intersection of $\mathcal{H}$ with a vectorial subspace of dimension 
$n+1$ of $\mathcal{H}$ is not empty, then it is a copy of $\mathbb{H}^n$.
In particular there exists a unique geodesic segment between two 
points of $\mathbb{H}^\infty$ obtained as the intersection of $\mathbb{H}^\infty$
with the plane that contains these two points. Hence any triangle of 
$\mathbb{H}^\infty$ is isometric to a triangle of $\mathbb{H}^2$.
As a result $\mathbb{H}^\infty$ is $\mathrm{CAT}$($-1$) and $\delta$-hyperbolic
for the same constant $\delta $ as $\mathbb{H}^2$.

\medskip

\subsection{Boundary of $\mathbb{H}^\infty$}

Let $(X,d)$ be a geodesic metric space. Let $T$ be a 
geodesic triangle of $X$ given by $x$, $y$, $z\in X$ and
geodesic segments between two of these three points. The 
triangle $T$ \textsl{satisfies the $\mathrm{CAT}$}$(0)$ \textsl{inequality}\index{defi}{$\mathrm{CAT}$($0$) inequality}
if for any $(x,y)\in T^2$ 
\[
d_X(x,y)\leq\vert\vert x'-y'\vert\vert_{\mathbb{R}^2}
\]
where $x'\in T'$ (resp. $y'\in T'$) is a comparison point 
of $x$ (resp. $y$) and $T'$ is a comparison triangle of 
$T$ in $\mathbb{R}^2$.

The space $X$ is \textsl{$\mathrm{CAT}$}$(0)$\index{defi}{$\mathrm{CAT}$($0$) space}
if all its triangles satisfy the $\mathrm{CAT}$($0$) inequality.

\begin{rem}
A $\mathrm{CAT}$($-1$) space is a $\mathrm{CAT}$($0$) space. In particular 
$\mathbb{H}^\infty$ is a $\mathrm{CAT}$($0$) space.
\end{rem}

Since $\mathbb{H}^\infty$ is a $\mathrm{CAT}$($0$), complete metric space
there exists a notion of boundary at infinity that generalizes 
the notion of boundary of finite dimensional Riemann varieties
which are complete, simply connected and with negative curvature.
The \textsl{boundary of $\mathbb{H}^\infty$}\index{defi}{boundary of $\mathbb{H}^\infty$}
is defined by\index{not}{$\partial\mathbb{H}^\infty$} 
\[
\partial\mathbb{H}^\infty=\big\{v\in\mathcal{H}\,\vert\,\langle v,v\rangle=0,\,\langle v,\mathbf{e}_0\rangle>0\big\}.
\]

A point of $\partial\mathbb{H}^\infty$ is called 
\textsl{point at infinity}\index{defi}{point at infinity}. 

\medskip

\subsection{Isometries}\label{subsec:isometries}

Denote by $\mathrm{O}_{1,n}(\mathbb{R})$ the group of 
linear transformations of~$\mathcal{H}$ preserving the 
scalar product $\langle\,,\rangle$. The group of isometries
$\mathrm{Isom}(\mathbb{H}^n)$ coincides with the index 
$2$ subgroup $\mathrm{O}_{1,n}^+(\mathbb{R})$ of 
$\mathrm{O}(\mathcal{H})$ that preserves the chosen sheet
$\mathbb{H}^n$ of the hyperboloid
\[
\big\{u\in\mathcal{H}\,\vert\,\langle u,u\rangle=1\big\}.
\]
This group acts transitively on $\mathbb{H}^n$ and on its
unit tangent bundle.

If $h$ is an isometry of
$\mathbb{H}^n$ and $v\in\mathcal{H}$ is an eigenvector of 
$h$ with eigenvalue $\lambda$, then either $\vert\lambda\vert=1$
or $v$ is isotropic. Furthermore $\mathbb{H}^n$ is homeomorphic
to a ball, so $h$ has a least one eigenvector in 
$\mathbb{H}^n\cup\partial\mathbb{H}^n$. As a consequence 
according to \cite{BurgerIozziMonod} there are three types 
of isometries:

\begin{itemize}
\item[$\diamond$] $h$ is \textsl{elliptic}\index{defi}{elliptic (isometry)} 
if and only if $h$ fixes a point $p\in\mathbb{H}^n$. Since 
$\langle\,,\,\rangle$ is negative definite on $p^\perp$, $h$ 
fixes pointwise $\mathbb{R}p$ and acts by rotation on $p^\perp$ 
with respect to $\langle\,,\,\rangle$;

\item[$\diamond$] $h$ is \textsl{parabolic}\index{defi}{parabolic (isometry)}
if $h$ is not elliptic and fixes a vector $v$ in the isotropic
cone. The line $\mathbb{R}v$ is uniquely determined by $h$. Let 
$p$ be a point of $\mathbb{H}^n$; there exists an increasing 
sequence $(n_i)\in\mathbb{N}^\mathbb{N}$ such that 
$(h^{n_i}(p))_{i\in\mathbb{N}}$ 
converges to the boundary point determined by $v$.

\item[$\diamond$] $h$ is \textsl{loxodromic}\index{defi}{loxodromic (isometry)}
if and only if $h$ has an eigenvector $v_h^+$ with eigenvalue 
$\lambda>1$. Note that $v_h^+$ is unique up to scalar 
multiplication. There is another unique isotropic eigenline
$\mathbb{R}v_h^-$ corresponding to the eigenvalue~$\frac{1}{\lambda}$. 
On the orthogonal complement of 
$\mathbb{R}v_h^-\oplus\mathbb{R}v_h^+$ the isometry $h$ acts as a 
rotation with respect to $\langle\,,\,\rangle$. The 
boundary points determined by $v_h^-$ and~$v_h^+$ are the 
two fixed points of $h$ in $\mathbb{H}^n\cup\partial\mathbb{H}^n$;
the first one is an attracting fixed point $\alpha(h)$, the second one is 
a repelling fixed point $\omega(h)$.
\end{itemize}

\smallskip

To an isometry $h$ of $\mathbb{H}^n$ one can associate the 
\textsl{translation length}\index{defi}{translation length (of an isometry)}
of $h$:
\[
L(h)=\inf\big\{d(h(p),p)\,\vert p\in\mathbb{H}^n\big\}.
\]
The isometry $h$ is elliptic if and only if  $L(h)=0$, and the infimum
is achieved, {\it i.e.}~$h$ has a fixed point in $\mathbb{H}^n$.
The isometry $h$ is parabolic if and only if $L(h)=0$, and the 
infinimum is not achieved. The isometry $h$ is loxodromic if and
only if~$L(h)>0$. In that case 
\begin{itemize}
\item[$\diamond$] $\exp(L(h))$ is the largest eigenvalue
of $h$
\item[$\diamond$] and $d(p,h^n(p))$ grows like $nL(h)$ as 
$n$ goes to infinity for any point $p\in\mathbb{H}^n$.
\end{itemize}

%%%%%%%%%%%%%%%%%%%%%%%%%%%%%%%%%%%%%%%%%%%%%%%%%%%%%%%%%%%%%%%%%%%%%%%%%%%%%%%%%%%%%%%%%%%%%%%%%%%%%%%%%%%%%%%%%%%
%%%%%%%%%%%%%%%%%%%%%%%%%%%%%%%%%%%%%%%%%%%%%%%%%%%%%%%%%%%%%%%%%%%%%%%%%%%%%%%%%%%%%%%%%%%%%%%%%%%%%%%%%%%%%%%%%%%
% section
%%%%%%%%%%%%%%%%%%%%%%%%%%%%%%%%%%%%%%%%%%%%%%%%%%%%%%%%%%%%%%%%%%%%%%%%%%%%%%%%%%%%%%%%%%%%%%%%%%%%%%%%%%%%%%%%%%%
%%%%%%%%%%%%%%%%%%%%%%%%%%%%%%%%%%%%%%%%%%%%%%%%%%%%%%%%%%%%%%%%%%%%%%%%%%%%%%%%%%%%%%%%%%%%%%%%%%%%%%%%%%%%%%%%%%%

\section{The isometric action of $\mathrm{Bir}(S)$ 
on an infinite dimensional hyperbolic space}

\subsection{The Picard-Manin space}

Let $S$ be a smooth, irreducible, projective, complex surface.
As we see in Chapter \ref{chapter:intro}
the Picard group $\mathrm{Pic}(S)$ is the
quotient of the abelian group of divisors by the subgroup
of principal divisors (\cite{Hartshorne}).
The intersection between curves extends to a quadratic 
form, the so-called intersection form:
\begin{align*}
& \mathrm{Pic}(S)\times\mathrm{Pic}(S)\to\mathbb{Z}, && (C,D)\mapsto C\cdot D
\end{align*}
The quotient of $\mathrm{Pic}(S)$ by the subgroup of divisors
$E$ such that $E\cdot D=0$ for all divisor classes $D$ is the
\textsl{N\'eron-Severi group}\index{defi}{N\'eron-Severi group} 
$\mathrm{NS}(S)$\index{not}{$\mathrm{NS}(S)$}. In case of 
rational surfaces we have $\mathrm{NS}(S)=\mathrm{Pic}(S)$. 
The N\'eron-Severi group is a free abelian group, and its rank, 
the \textsl{Picard number}\index{defi}{Picard number} 
is finite. 
The pull-back of a birational morphism $\pi\colon Y\to S$ 
yields an injection from $\mathrm{Pic}(S)$ into $\mathrm{Pic}(Y)$; 
we thus get an injection from $\mathrm{NS}(S)$ into
$\mathrm{NS}(Y)$. The morphism $\pi\colon Y\to S$ 
can be written as a finite sequence of blow ups. Let 
$\mathbf{e}_1$, $\mathbf{e}_2$, $\ldots$, $\mathbf{e}_k\subset Y$ 
be the class of the irreducible components of the exceptional 
divisor of $\pi$, that is the classes contracted by $\pi$. We
have the following decomposition
\begin{equation}\label{eq:decom}
\mathrm{NS}(Y)=\mathrm{NS}(S)\oplus\mathbb{Z}\mathbf{e}_1\oplus\mathbb{Z}\mathbf{e}_2\oplus\ldots\oplus\mathbb{Z}\mathbf{e}_k
\end{equation}
which is orthogonal with respect to the intersection form.

Consider $\pi_1\colon Y\to S$ and $\pi_2\colon Y'\to S$ two
birational morphisms of smooth projective surfaces. We say
that $\pi_1$ is \textsl{above}\index{defi}{above (morphism)} $\pi_2$
if $\pi_2^{-1}\circ\pi_1$ is a morphism. For any two 
birational morphisms $\pi_1\colon Y\to S$ and 
$\pi_2\colon Y'\to S$ there exists a birational morphism
$\pi_3\colon Y''\to S$ that lies above $\pi_1$ and $\pi_2$.

Let us consider the set of all birational morphisms of smooth
projective surfaces $\pi\colon Y\to S$. The corresponding 
embeddings of the N\'eron-Severi groups 
$\mathrm{NS}(S)\to\mathrm{NS}(Y)$ form a directed family;
the direct limit
\[
\mathcal{Z}(S):=\displaystyle\lim_{\pi\colon Y\to S}\mathrm{NS}(Y)
\]\index{not}{$\mathcal{Z}(S)$}
thus exists. It is called the \textsl{Picard Manin space} of $S$.
The intersection forms on the groups $\mathrm{NS}(Y)$ induce a 
quadratic form on $\mathcal{Z}(S)$ of signature $(1,\infty)$.

Let $p$ be a point of the bubble space of $S$. Denote by $\mathbf{e}_p$ the
divisor class of the exceptional divisor of the blow-up of $p$ in 
the corresponding N\'eron-Severi group. One deduces from 
(\ref{eq:decom}) the following decomposition
\[
\displaystyle\mathcal{Z}(S)=\mathrm{NS}(S)\oplus\displaystyle\bigoplus_{p\in\mathcal{B}(S)}\mathbb{Z}\mathbf{e}_p.
\]
Furthermore according to Proposition \ref{pro:tireenarriere} 
the following properties hold
\[
\left\{
\begin{array}{ll}
\mathbf{e}_p\cdot \mathbf{e}_p=-1 \\
\mathbf{e}_p\cdot \mathbf{e}_q=0 \text{ for all $p\not=q$}
\end{array}
\right.
\]

\subsection{The hyperbolic space $\mathbb{H}^\infty(S)$}

Let $S$ be a smooth projective surface, and let $\mathcal{Z}(S)$ be 
its Picard-Manin space. We define $\mathrm{Z}(S)$ to be the 
completion of the real vector space $\mathcal{Z}(S)\otimes\mathbb{R}$
\[
\mathrm{Z}(S)=\Big\{v+\displaystyle\sum_{p\in\mathcal{B}(S)}m_p\mathbf{e}_p\,\vert\, v\in\mathrm{NS}(S)\otimes\mathbb{R},\,m_p\in\mathbb{R},\,\displaystyle\sum_{p\in\mathcal{B}(S)}m_p^2<\infty\Big\}.
\]
The intersection form extends continuously to a quadratic form on 
$\mathrm{Z}(S)$ with signature $(1,\infty)$. Let 
$\mathrm{Isom}(\mathrm{Z}(S))$ be the group of isometries of 
$\mathrm{Z}(S)$ with respect to the intersection form. The
set of vectors $v\in\mathrm{Z}(S)$ such that $\langle v,v\rangle=1$
is a hyperboloid. The subset
\[
\mathbb{H}^\infty(S)=\big\{v\in\mathrm{Z}(S)\,\vert\,\langle v,v\rangle=1,\, \langle v,\mathbf{e}_0\rangle> 0\big\}
\]
is the sheet of that hyperboloid containing ample classes of 
$\mathrm{NS}(S,\mathbb{R})$. Let 
$\mathrm{Isom}(\mathbb{H}^\infty(S))$ be the subgroup of
$\mathrm{Isom}(\mathcal{Z}(S))$ that preserves 
$\mathbb{H}^\infty(S)$. The space $\mathbb{H}^\infty(S)$
equipped with the distance defined by 
\[
\cosh(d(v,v'))=\langle v,v'\rangle
\]
is isometric to a hyperbolic space $\mathbb{H}^\infty$. Let
$\partial\mathbb{H}^\infty(S)$ be the boundary of 
$\mathbb{H}^\infty(S)$. To simplify we will often write
$\mathbb{H}^\infty$\index{not}{$\mathbb{H}^\infty$} 
(resp. $\partial\mathbb{H}^\infty$\index{not}{$\mathbb{H}^\infty$}) 
instead of $\mathbb{H}^\infty(S)$ (resp. 
$\partial\mathbb{H}^\infty(S)$).

\subsection{An isometric action of $\mathrm{Bir}(S)$}

Let us now describe the action of $\mathrm{Bir}(S)$
on $\mathbb{H}^\infty$ (\emph{see} 
\cite{Manin, Cantat:annals}). Let $\phi\colon Y\to S$
be a birational morphism of smooth projective surfaces.
Denote by $p_1$, $p_2$, $\ldots$, $p_n\in\mathcal{B}(S)$
the points blown up by $\phi$. Denote by $\mathbf{e}_{p_i}$
the irreducible component of the exceptional divisor
contracted to~$p_i$. One has
\[
\mathrm{NS}(Y)=\mathrm{NS}(S)\oplus\mathbb{Z}\mathbf{e}_{p_1}\oplus\mathbb{Z}\mathbf{e}_{p_2}\oplus\ldots\oplus\mathbb{Z}\mathbf{e}_{p_n}.
\]
The morphism $\phi$ induces the isomorphism 
$\phi_*\colon\mathcal{Z}(Y)\to\mathcal{Z}(S)$ 
defined by 
\[
\left\{
\begin{array}{lll}
\phi_*(\mathbf{e}_p)=\mathbf{e}_{\phi_\bullet(p)}\qquad\forall\,p\in\mathcal{B}(Y)\smallsetminus\mathrm{Base}(\phi)\\
\phi_*(\mathbf{e}_{p_i})=\mathbf{e}_{p_i}\qquad\forall\,1\leq i\leq n\\
\phi_*(D)=D\qquad\forall\,D\in\mathrm{NS}(S)\subset\mathrm{NS}(Y)
\end{array}
\right.
\]

Let $\phi\colon Y\dashrightarrow S$ be a birational map
of smooth projective surfaces. Let $\pi_2\circ\pi_1^{-1}$
be a minimal resolution of $\phi$. The 
map $\phi$ induces an isomorphism 
$\phi_*\colon\mathcal{Z}(Y)\to\mathcal{Z}(S)$ defined 
by 
\[
\phi_*=(\pi_2)_*\circ(\pi_1)_*^{-1}.
\]

Let $S$ be a smooth projective surface. Any element $\phi$ 
of $\mathrm{Bir}(S)$ induces an isomorphism
$\phi_*\colon\mathcal{Z}(S)\to\mathcal{Z}(S)$, and $\phi_*$
yields an automorphism of $\mathcal{Z}(S)\otimes\mathbb{R}$
which extends to an automorphism of the completion 
$\mathrm{Z}(S)$ and preserves the intersection form.

\medskip

Let $\phi$ be a birational self map of $\mathbb{P}^2_\mathbb{C}$.
Assume that $\phi$ has degree $d$.
Then the base-point $\mathbf{e}_0$, {\it i.e.} the class of 
a line in $\mathbb{P}^2_\mathbb{C}$, is mapped by $\phi_*$ to 
the finite sum 
\[
d\mathbf{e}_0-\displaystyle\sum_im_i\mathbf{e}_{p_i}
\]
where each $m_i$ is a positive integer and $\mathbf{e}_{p_i}$ are 
the classes of the exceptional divisors corresponding to the 
base-points of $\phi^{-1}$. For instance if $\phi=\sigma_2$
is the standard Cremona involution, then 
\[
(\sigma_2)_*\mathbf{e}_0=2\mathbf{e}_0-\mathbf{e}_{p_1}-\mathbf{e}_{p_2}-\mathbf{e}_{p_3}
\]
where $p_1=(1:0:0)$, $p_2=(0:1:0)$ and $p_3=(0:0:1)$.

\begin{rem}
An invariant structure is given by the canonical form. The 
canonical class of $\mathbb{P}^2_\mathbb{C}$ blown up in $n$
points $p_1$, $p_2$, $\ldots$, $p_n$ is equal to 
$-3\mathbf{e}_0-\displaystyle\sum_{j=1}^n\mathbf{e}_{p_j}$. By 
taking intersection products one obtains a linear form 
$\omega_\infty$ defined by 
\begin{align*}
& \omega_\infty\colon\mathcal{Z}(\mathbb{P}^2_\mathbb{C})\to\mathbb{Z}, &&
m_0\mathbf{e}_0-\displaystyle\sum_{j=1}^nm_j\mathbf{e}_{p_j}\mapsto -3m_0+\displaystyle\sum_{j=1}^nm_j
\end{align*}
Since the isometric action of $\mathrm{Bir}(\mathbb{P}^2_\mathbb{C})$
on $\mathcal{Z}(\mathbb{P}^2_\mathbb{C})$ preserves the linear
form $\omega_\infty$ we get the following equalities already obtained
in \S \ref{sec:geodef}: if 
$\phi_*\mathbf{e}_0=d\mathbf{e}_0-\displaystyle\sum_{j=1}^nm_j\mathbf{e}_{p_j}$, 
then 
\[
\left\{
\begin{array}{ll}
d^2=1+\displaystyle\sum_{j=1}^nm_j^2\\
3d-3=\displaystyle\sum_{j=1}^nm_j
\end{array}
\right.
\]
\end{rem}

\begin{eg}
Let us understand the isometry $(\sigma_2)_*$. Denote by $p_1$, 
$p_2$ and~$p_3$ the base-points of $\sigma_2$, and set 
$S=\mathrm{Bl}_{p_1,p_2,p_3}\mathbb{P}^2_\mathbb{C}$. The involution
$\sigma_2$ lifts to an automorphism $\widetilde{\sigma_2}$ on $S$. 
The N\'eron-Severi group $\mathrm{NS}(S)$ of $S$ is the lattice of 
rank~$4$ generated by the class $\mathbf{e}_0$, coming from the class
of a line in $\mathbb{P}^2_\mathbb{C}$, and the classes 
$\mathbf{e}_i=\mathbf{e}_{p_i}$ given by the three exceptional divisors.
The action of $\widetilde{\sigma_2}$ on $\mathrm{NS}(S)$ is given by
\[
\left\{
\begin{array}{llll}
(\widetilde{\sigma_2})_*\mathbf{e}_0=2\mathbf{e}_0-\mathbf{e}_1-\mathbf{e}_2-\mathbf{e}_3\\
(\widetilde{\sigma_2})_*\mathbf{e}_1=\mathbf{e}_0-\mathbf{e}_2-\mathbf{e}_3 \\
(\widetilde{\sigma_2})_*\mathbf{e}_2=\mathbf{e}_0-\mathbf{e}_1-\mathbf{e}_3 \\
(\widetilde{\sigma_2})_*\mathbf{e}_3=\mathbf{e}_0-\mathbf{e}_1-\mathbf{e}_2
\end{array}
\right.
\]
Then $(\widetilde{\sigma_2})_*$ coincides
on $\mathrm{NS}(S)$ with the 
reflection with respect to  $\mathbf{e}_0-\mathbf{e}_1-\mathbf{e}_2-\mathbf{e}_3$:
\[
(\widetilde{\sigma_2})_*(p)=p+\langle p,\mathbf{e}_0-\mathbf{e}_1-\mathbf{e}_2-\mathbf{e}_3\rangle\qquad\qquad\forall\,p\in\mathrm{NS}(S)
\]
Let us blow up all points of $S$; we thus obtain a basis of
$\mathrm{Z}(\mathbb{P}^2_\mathbb{C})$:
\[
\mathrm{Z}(\mathbb{P}^2_\mathbb{C})=\mathrm{NS}(S)\displaystyle\bigoplus_{p\in\mathcal{B}(S)}\mathbb{Z}\mathbf{e}_p.
\]

The isometry $(\sigma_2)_*$ of $\mathcal{Z}(\mathbb{P}^2_\mathbb{C})$
acts on $\mathrm{NS}(S)$ as the reflection 
$(\widetilde{\sigma_2})_*$ and permutes
each vector $\mathbf{e}_p$ with $\mathbf{e}_{\sigma_2(p)}$.
\end{eg}

%%%%%%%%%%%%%%%%%%%%%%%%%%%%%%%%%%%%%%%%%%%%%%%%%%%%%%%%%%%%%%%%%%%%%%%%%%%%%%%%%%%%%%%%%%%%%%%%%%%%%%%%%%%%%%%%%%%
%%%%%%%%%%%%%%%%%%%%%%%%%%%%%%%%%%%%%%%%%%%%%%%%%%%%%%%%%%%%%%%%%%%%%%%%%%%%%%%%%%%%%%%%%%%%%%%%%%%%%%%%%%%%%%%%%%%
% section
%%%%%%%%%%%%%%%%%%%%%%%%%%%%%%%%%%%%%%%%%%%%%%%%%%%%%%%%%%%%%%%%%%%%%%%%%%%%%%%%%%%%%%%%%%%%%%%%%%%%%%%%%%%%%%%%%%%
%%%%%%%%%%%%%%%%%%%%%%%%%%%%%%%%%%%%%%%%%%%%%%%%%%%%%%%%%%%%%%%%%%%%%%%%%%%%%%%%%%%%%%%%%%%%%%%%%%%%%%%%%%%%%%%%%%%

\section{Types and degree growth}\label{sec:degreegrowth}

Consider an ample class $\mathbf{h}\in\mathrm{NS}(S,\mathbb{R})$
with self-intersection $1$. The \textsl{degree} of 
$\phi\in\mathrm{Bir}(S)$ with respect to the 
polarization $\mathbf{h}$ is defined by
\[
\deg_\mathbf{h}\phi=\langle\phi_*(\mathbf{h}), \mathbf{h}\rangle=\cosh(d(\mathbf{h},\phi_*\mathbf{h})).
\]
Note that if $S=\mathbb{P}^2_\mathbb{C}$ and 
$\mathbf{h}=\mathbf{e}_0$ is the class of a line, then
$\deg_\mathbf{h}\phi$ is the degree of~$\phi$
as defined in Chapter \ref{chapter:intro}.

\medskip

A birational map $\phi$ of a projective surface $S$ is
\begin{itemize}
\item[$\diamond$] \textsl{virtually isotopic to the identity}
\index{defi}{virtually isotopic to the identity (map)}
if there is a positive iterate $\phi^n$ of $\phi$ and a 
birational map $\psi\colon Z\dashrightarrow S$ such that 
$\psi^{-1}\circ\phi^n\circ\psi$ is an element of 
$\mathrm{Aut}(Z)^0$;

\item[$\diamond$] a 
\textsl{Jonqui\`eres twist}\index{defi}{Jonqui\`eres twist}
if $\phi$ preserves a one parameter family of rational 
curves on $S$, but $\phi$ is not virtually isotopic to the 
identity;

\item[$\diamond$] a 
\textsl{Halphen twist}\index{defi}{Halphen twist} if 
$\phi$ preserves a one parameter family of genus one 
curves on $S$, but $\phi$ is not virtually isotopic to
the identity.
\end{itemize}

Furthermore the Jonqui\`eres twists
(resp. Halphen twists) preserve
a unique fibration (\cite{DillerFavre}).

\begin{rem}
If $\phi$ is a Jonqui\`eres (resp. Halphen) twist, then, 
after conjugacy by a birational map 
$\psi\colon Z\dashrightarrow S$, $\phi$ permutes the 
fibers of a rational (resp. genus one) fibration 
$\pi\colon Z\to B$. If $z$ is the divisor class of the 
generic fiber of the fibration, then $z$ is an isotropic
vector in $\mathcal{Z}(S)$ fixed by $\phi_*$. In 
particular $\phi_*$ can not be loxodromic.
\end{rem}

Let $\mathcal{C}$ and $\mathcal{C}'$ be two smooth cubic curves in the complex
projective plane. By Bezout theorem $\mathcal{C}$ and $\mathcal{C}'$ intersect
in nine points denoted $p_1$, $p_2$, $\ldots$, $p_9$.
There is a pencil of cubic curves passing through these
nine points. Let us blow up $p_1$, $p_2$, $\ldots$, $p_9$. 
We get a rational surface $S$ with a fibration 
$\pi\colon S\to\mathbb{P}^1_\mathbb{C}$ whose fibers are
genus $1$ curves. 
More generally let us consider a pencil of curves of
degree $3m$ for $m\in\mathbb{Z}_+$, blow up its base-points
and denote by $S$ the surface we get. Such a pencil of 
genus $1$ curves is called a 
\textsl{Halphen pencil}\index{defi}{Halphen pencil},
and such a surface is called a 
\textsl{Halphen surface of index $m$}
\index{defi}{Halphen surface of index $m$}.

\begin{defi}
A surface $S$ is a \textsl{Halphen} one
\index{defi}{Halphen surface} if $\vert -mK_S\vert$ 
satisfies the three following properties
\begin{itemize}
\item[$\diamond$] it is one-dimensional, 

\item[$\diamond$] it has no fixed component, 

\item[$\diamond$] it is base-point free.
\end{itemize}
\end{defi}

According to \cite{CantatDolgachev} up to birational 
conjugacy
\begin{itemize}
\item[$\diamond$] every pencil of genus $1$ curves of $\mathbb{P}^2_\mathbb{C}$ is a Halphen pencil, 

\item[$\diamond$] Halphen surfaces are the only examples of rational elliptic surfaces.
\end{itemize}

\begin{lem}[\cite{Urech:ellipticsubgroups}]\label{lem:UrechHalphen}
Let $S$ be a Halphen surface. Let $\phi$ be an element of 
$\mathrm{Bir}(S)$ that preserves the Halphen pencil. 
Then $\phi$ belongs to $\mathrm{Aut}(S)$. 
\end{lem}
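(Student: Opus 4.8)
The plan is to exploit the fact that on a Halphen surface the pencil in question is intrinsically \emph{anticanonical}: by definition it is the linear system $|-mK_S|$, and the three defining properties (one-dimensional, no fixed component, base-point free) say precisely that it has no base points and therefore defines a genus one fibration $S\to\mathbb{P}^1_\mathbb{C}$ whose members are linearly equivalent to $-mK_S$. The strategy is to show that a birational self map $\phi$ permuting the members of $|-mK_S|$ cannot have any base point, so that its minimal resolution is trivial and $\phi\in\mathrm{Aut}(S)$.

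First I would fix a minimal resolution $\phi=\pi_2\circ\pi_1^{-1}$ with $\pi_1\colon Y\to S$ and $\pi_2\colon Y\to S$ finite sequences of blow-ups, as in Chapter \ref{chapter:intro}, and record the two canonical formulas $K_Y=\pi_1^*K_S+\mathcal{E}$ and $K_Y=\pi_2^*K_S+\mathcal{F}$, where $\mathcal{E}$ (resp. $\mathcal{F}$) is an effective divisor supported on the entire exceptional locus of $\pi_1$ (resp. $\pi_2$), with strictly positive coefficient on every exceptional component. Next, since $|-mK_S|$ is base-point free the fibration is a morphism, every point of $S$ lies on a member, and a general member avoids any prescribed finite set; so I can choose a general smooth irreducible genus one curve $C\in|-mK_S|$ that avoids $\mathrm{Base}(\phi)$ and whose image $C'=\phi(C)$—again a member of the pencil, since $\phi$ preserves it—avoids $\mathrm{Base}(\phi^{-1})$.

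The central computation is to identify the strict transform of $C$ in $Y$ with that of $C'$. Because $C$ avoids the points blown up by $\pi_1$, iterating Lemma \ref{lem:tireenarriere} gives $\pi_1^*C=\widetilde{C}$ with no exceptional contribution; and since $\phi(C)=C'$, the irreducible curve $\widetilde C$ is carried onto $C'$ by $\pi_2$, so as $C'$ avoids the points blown up by $\pi_2$ we likewise obtain $\pi_2^*C'=\widetilde C$. Hence $\pi_1^*C=\pi_2^*C'$; as $C\sim C'\sim -mK_S$ and pullback respects linear equivalence, dividing by $-m$ yields the key identity $\pi_1^*K_S=\pi_2^*K_S$ in $\mathrm{NS}(Y)$. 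Subtracting the two canonical formulas gives $\mathcal{E}=\mathcal{F}$, so $\pi_1$ and $\pi_2$ share the same exceptional locus. If $\phi$ were not an automorphism, then $\pi_1$ would be a nontrivial birational morphism and would contract at least one $(-1)$-curve $E\subset Y$; being a component of $\mathrm{Supp}(\mathcal{E})=\mathrm{Supp}(\mathcal{F})$, this $E$ would also be contracted by $\pi_2$, contradicting the minimality of the resolution. Thus $\pi_1$ (and with it $\pi_2$) blows up nothing, and $\phi\in\mathrm{Aut}(S)$.

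I expect the main obstacle to be the central identity $\pi_1^*K_S=\pi_2^*K_S$: everything hinges on arranging that both $C$ and its image $C'$ avoid the relevant base-point sets, so that neither pullback carries an exceptional term, and this is exactly where both hypotheses are used—that the pencil is anticanonical (to convert the equality of strict transforms into an equality of pullbacks of $K_S$) and that it is base-point free (to produce such general members avoiding any finite set). Once this identity is secured, the passage through the canonical formulas and the minimality argument is purely formal.
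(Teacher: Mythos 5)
Your proof is correct in substance and follows essentially the same route as the paper's: take a minimal resolution, write the two canonical-bundle formulas $K_Y=\pi_i^*K_S+(\text{exceptional divisor})$, use the anticanonical nature of the Halphen pencil to get $\pi_1^*K_S=\pi_2^*K_S$, and contradict minimality. Your general-member computation (choosing $C\in|-mK_S|$ avoiding $\mathrm{Base}(\phi)$ with $\phi(C)$ avoiding $\mathrm{Base}(\phi^{-1})$, so that $\pi_1^*C=\widetilde{C}=\pi_2^*C'$) is a legitimate and in fact more detailed justification of the step the paper disposes of in one sentence (``the map $\phi$ preserves $K_S$, so $\eta^*(K_S)=\pi^*(K_S)$''); since $\mathrm{NS}(Y)$ is torsion-free, dividing by $-m$ is harmless.

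The one place where you are too quick is the inference from $\mathcal{E}=\mathcal{F}$ to ``$\pi_1$ and $\pi_2$ share the same exceptional locus.'' Subtracting the canonical formulas only yields $\mathcal{E}\sim\mathcal{F}$, an equality of classes in $\mathrm{Pic}(Y)$, whereas your conclusion about supports requires equality of divisors. The statement is true but needs a line of argument: since $\mathcal{E}$ is effective and $\pi_1$-exceptional, $(\pi_1)_*\mathcal{O}_Y(\mathcal{E})=\mathcal{O}_S$, so $h^0\big(\mathcal{O}_Y(\mathcal{E})\big)=1$ and $\mathcal{E}$ is the unique effective divisor in its linear equivalence class; as $\mathcal{F}$ is effective and equivalent to it, $\mathcal{F}=\mathcal{E}$ as divisors and the supports coincide. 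The paper avoids this issue by staying entirely at the level of numerical classes: for a $(-1)$-curve $E_k$ contracted by $\pi_1$, adjunction and the projection formula give $E_k\cdot\mathcal{E}=K_Y\cdot E_k=-1$, hence $E_k\cdot\mathcal{F}=-1<0$, and effectivity of $\mathcal{F}$ forces $E_k$ to lie in $\mathrm{Supp}(\mathcal{F})$, i.e. to be contracted by $\pi_2$ as well, contradicting minimality. Either repair is one line; with it your argument is complete and matches the paper's proof.
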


Up to conjugacy by birational maps every pencil of 
genus $1$ curves of $\mathbb{P}^2_\mathbb{C}$ is a
Halphen pencil and Halphen surfaces are the only 
examples of rational elliptic surfaces 
(\cite{CantatDolgachev}) so Lemma \ref{lem:UrechHalphen}
implies:

\begin{cor}
A subgroup $\mathrm{G}$ of 
$\mathrm{Bir}(\mathbb{P}^2_\mathbb{C})$ that 
preserves a pencil of genus~$1$ curves is conjugate 
to a subgroup of the automorphism group 
of some Halphen surface.
\end{cor}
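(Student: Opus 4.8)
The plan is to reduce the statement directly to Lemma \ref{lem:UrechHalphen}, using the two facts of Cantat and Dolgachev recalled just above it. The entire content of the corollary is already packaged into those cited results; what remains is a short formal argument showing that the property of preserving a fibration passes through conjugation.

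First I would invoke the fact that, up to conjugacy by a birational map, every pencil of genus $1$ curves of $\mathbb{P}^2_\mathbb{C}$ is a Halphen pencil, and that Halphen surfaces are the only rational elliptic surfaces. Thus, given a subgroup $\mathrm{G}\subset\mathrm{Bir}(\mathbb{P}^2_\mathbb{C})$ preserving a pencil $\mathcal{P}$ of genus $1$ curves, there exist a Halphen surface $S$ and a birational map $\psi\colon S\dashrightarrow\mathbb{P}^2_\mathbb{C}$ carrying the Halphen pencil of $S$ onto $\mathcal{P}$, in the sense that $\psi$ sends the fibers of the Halphen pencil to the members of $\mathcal{P}$.

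Next I would transport $\mathrm{G}$ into $\mathrm{Bir}(S)$ by setting $\mathrm{G}'=\psi^{-1}\,\mathrm{G}\,\psi$. The key point is that preserving a fibration is a conjugation-invariant property: for $g\in\mathrm{G}$ and a member $F$ of the Halphen pencil, the image $\psi(F)$ lies in $\mathcal{P}$, so $g(\psi(F))$ is again a member of $\mathcal{P}$, say $\psi(F')$; hence $(\psi^{-1} g\,\psi)(F)=F'$ is again a member of the Halphen pencil. Therefore every element of $\mathrm{G}'$ permutes the fibers of the Halphen pencil, i.e. preserves it. Applying Lemma \ref{lem:UrechHalphen} to each such element gives $\mathrm{G}'\subset\mathrm{Aut}(S)$, which says precisely that $\psi^{-1}\,\mathrm{G}\,\psi$ is contained in the automorphism group of the Halphen surface $S$, as claimed.

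The only genuine obstacles are already absorbed into the quoted theorems: the reduction of an arbitrary invariant genus $1$ pencil to an honest Halphen pencil (Cantat--Dolgachev) and the rigidity statement that a fiber-preserving birational self map of a Halphen surface is automatically an automorphism (Lemma \ref{lem:UrechHalphen}). Granting those, the corollary is the short invariance-under-conjugacy argument above, and I would take care only that $\psi$ is chosen so that its associated pencil is the Halphen pencil, so that the conjugated maps land in $\mathrm{Bir}(S)$ and preserve that pencil.
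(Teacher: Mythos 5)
Your proposal is correct and follows essentially the same route as the paper, which derives the corollary in one line from the Cantat--Dolgachev result (every invariant pencil of genus $1$ curves is birationally a Halphen pencil) combined with Lemma \ref{lem:UrechHalphen}; you have merely made explicit the routine verification that preserving a fibration is invariant under conjugation by the birational map $\psi$, which the paper leaves implicit. No gaps.
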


\begin{proof}[Proof of Lemma \ref{lem:UrechHalphen}]
The Halphen pencil is defined by a multiple of the class
of the anticanonical divisor $-K_S$. As a result any 
birational map of a Halphen surface that preserves the 
Halphen fibration preserves the class of the canonical
divisor $K_S$. Assume by contradiction that $\phi$ is 
not an automorphism. Take a minimal resolution of $\phi$
\[
 \xymatrix{
     & Z\ar[rd]^\eta\ar[ld]_\pi & \\
    S\ar@{-->}[rr]_\phi & & S
  }
\]
Denote by $E_i$ and $F_i$ the total pull backs of 
the exceptional curves. On the one hand
\[
K_Z=\eta^*(K_S)+\sum E_i,
\] 
and on the other hand
\[
K_Z=\pi^*(K_S)+\sum F_i.
\]
The map $\phi$ preserves $K_S$, so $\eta^*(K_S)=\pi^*(K_S)$,
and hence $\sum E_i=\sum F_i$. By assumption $\phi$ is not
an automorphism, {\it i.e.} $\sum E_i$ contains at least 
one $(-1)$-curve~$E_k$. Hence both 
\[
E_k\cdot\Big(\sum E_i\Big)=-1
\]
and 
\[
E_k\cdot\Big(\sum F_i\Big)=-1
\]
hold. This implies that $E_k$ is contained in the support
of $\sum F_i$: contradiction with the minimality of the 
resolution.
\end{proof}

\begin{rem}
The automorphism groups of Halphen surfaces are 
studied in~\cite{Gizatullin} and in \cite{CantatDolgachev}.
\end{rem}

\smallskip

On the contrary Jonqui\`eres twists are not conjugate to
automorphisms of projective surfaces 
(\cite{DillerFavre, BlancDeserti:degree}).

\smallskip

Let $S$ be a projective complex surface with a polarization
$H$. Let $\phi\colon S\dashrightarrow S$ be a birational 
map. The 
\textsl{dynamical degree}\index{defi}{dynamical degree}
of $\phi$ is defined by
\[
\lambda(\phi)=\displaystyle\lim_{n\to +\infty}\deg_H(\phi^n)^{1/n}.
\]\index{not}{$\lambda(\phi)$}

\begin{defis}
An element $\phi$ of $\mathrm{Bir}(\mathbb{P}^2_\mathbb{C})$
is called \textsl{elliptic}\index{defi}{elliptic (birational map)}, 
(resp. \textsl{parabolic}\index{defi}{parabolic (birational map)}, 
resp. \textsl{loxodromic}\index{defi}{loxodromic (birational map)})
if the corresponding isometry $\phi_*$ is elliptic (resp. parabolic,
resp. loxodromic).
.
\end{defis}

The map $\phi$ is loxodromic if and only if $\lambda(\phi)>1$.
As a consequence when $\phi\in\mathrm{Bir}(\mathbb{P}^2_\mathbb{C})$,
$\lambda(\phi)>1$, the isometry $\phi_*$ preserves a 
unique geodesic line $\mathrm{Ax}(\phi)\subset\mathbb{H}^\infty$\index{not}{$\mathrm{Ax}(\phi)$}
called the \textsl{axis}\index{defi}{axis (of a 
birational map)} of $\phi$. This line is the 
intersection of $\mathbb{H}^\infty$ with a plane 
$P_\phi\subset\mathrm{Z}(\mathbb{P}^2_\mathbb{C})$
which intersects the isotropic cone of 
$\mathrm{Z}(\mathbb{P}^2_\mathbb{C})$ in two lines
$\mathbb{R}v^+_{\phi_*}$ and $\mathbb{R}v^-_{\phi_*}$ 
such that 
\[
\phi_*(p)=\lambda(\phi)^{\pm 1}p
\]
for all $p\in\mathbb{R}v^\pm_{\phi_*}$ (the lines
$\mathbb{R}v^+_{\phi_*}$ and $\mathbb{R}v^-_{\phi_*}$
correspond to $\omega(\phi)$ and $\alpha(\phi)$ 
with the notations of \S\ref{subsec:isometries}).

Take $\alpha\in\mathbb{R}v^-_{\phi_*}$ and 
$\omega\in\mathbb{R}v^+_{\phi_*}$ normalized so 
that $\langle\alpha,\,\omega\rangle=1$. The point
$p=\frac{\alpha+\omega}{\sqrt{2}}$ lies on 
$\mathrm{Ax}(\phi)$. Since 
$\phi_*(p)=\frac{\lambda(\phi)^{-1}\alpha+\lambda(\phi)\omega}{\sqrt{2}}$
one obtains
\begin{eqnarray*}
\exp(L(\phi_*))+\frac{1}{\exp(L(\phi_*))}&=& 2\mathrm{cosh}(d(p,\phi_*(p)))\\
&=& 2\langle p,\,\phi_*(p)\rangle\\
&=&\lambda(\phi)+\frac{1}{\lambda(\phi)}.
\end{eqnarray*}
The translation length is thus equal to 
$\log\lambda(\phi)$. Consequently $\lambda(\phi)$
does not depend on the polarization and is invariant
under conjugacy.

There is a correspondence between the dynamical behavior of 
a birational map~$\phi$ of $S$, in particular its degree, and 
the type of the induced isometry on~$\mathbb{H}^\infty$:

\begin{thm}[\cite{Gizatullin, DillerFavre, Cantat2}]\label{thm:dilfav}
Let $S$ be a smooth projective complex surface with a fixed
polarization $H$. Let $\phi\colon S\dashrightarrow S$ be a 
birational map. Then one of the following holds:
\begin{itemize}
\item[$\diamond$] $\phi$ is elliptic, $(\deg_H\phi^n)_n$ is 
bounded, and $\phi$ is virtually isotopic to the identity;

\item[$\diamond$] $\phi$ is parabolic and 

either $\deg_H\phi^n\sim cn$ for some positive constant $c$
and $\phi$ is a Jonqui\`eres twist;

or $\deg_H\phi^n\sim cn^2$ for some positive constant $c$
and $\phi$ is a Halphen twist;

\item[$\diamond$] $\phi$ is loxodromic and 
$\deg_H\phi^n=c\lambda(\phi)^n+O(1)$ for some positive 
constant~$c$.
\end{itemize}
\end{thm}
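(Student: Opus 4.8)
The plan is to organize the proof around the trichotomy of isometries of $\mathbb{H}^\infty$ recalled in \S\ref{subsec:isometries}: the induced isometry $\phi_*$ is exactly one of elliptic, parabolic, or loxodromic, and these three alternatives will produce the three bullets of the statement. The bridge between the two sides is the identity already used in \S\ref{sec:degreegrowth}, namely
\[
\deg_H(\phi^n)=\langle\phi_*^n(\mathbf{h}),\mathbf{h}\rangle=\cosh\big(d(\mathbf{h},\phi_*^n(\mathbf{h}))\big),
\]
which converts every statement about degree growth into a statement about the distance travelled by the $\phi_*$-orbit of the polarization class $\mathbf{h}\in\mathbb{H}^\infty$. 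Since the type of $\phi_*$ and the dynamical degree $\lambda(\phi)$ are invariant under conjugacy, and $\lambda(\phi)=\exp(L(\phi_*))$ was computed in \S\ref{sec:degreegrowth}, it suffices to read off, in each of the three cases, the asymptotics of $d(\mathbf{h},\phi_*^n(\mathbf{h}))$ and then to supply the geometric description of $\phi$.

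First I would treat the analytic (degree-growth) half, which follows from the spectral structure of $\phi_*$. If $\phi_*$ is elliptic it fixes a point $p\in\mathbb{H}^\infty$, so $d(\mathbf{h},\phi_*^n(\mathbf{h}))\le 2\,d(\mathbf{h},p)$ and $\deg_H(\phi^n)$ is bounded. If $\phi_*$ is loxodromic, decompose $\mathbf{h}=a\alpha+b\omega+w$ with $\alpha\in\mathbb{R}v^-_{\phi_*}$, $\omega\in\mathbb{R}v^+_{\phi_*}$ normalised by $\langle\alpha,\omega\rangle=1$ and $w$ in the orthogonal complement of $P_\phi$, on which $\phi_*$ acts by an isometry $R$ preserving a negative definite form; then $\phi_*^n(\mathbf{h})=a\lambda^{-n}\alpha+b\lambda^n\omega+R^n w$ with $\langle w,R^nw\rangle$ bounded, so $\langle\mathbf{h},\phi_*^n(\mathbf{h})\rangle=ab\,\lambda^n+O(1)$, i.e. $\deg_H(\phi^n)=c\lambda^n+O(1)$ with $\lambda=\lambda(\phi)>1$ and $c=ab>0$. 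If $\phi_*$ is parabolic it fixes a unique isotropic line $\mathbb{R}v$ with $\phi_*v=v$; here $L(\phi_*)=0$ is not attained, so $\deg_H(\phi^n)\to\infty$ subexponentially, and the precise rate is governed by the nilpotent operator $u=\phi_*-\mathrm{id}$. Because the intersection form has signature $(1,\infty)$, the Jordan block of $\phi_*$ at the eigenvalue $1$ controlling the $\mathbf{h}$-orbit has size $2$ or $3$, so that $\langle\mathbf{h},\phi_*^n(\mathbf{h})\rangle$ grows like $cn$ or like $cn^2$; these are the two sub-cases of the parabolic alternative.

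The harder, geometric half is to match these three regimes with the stated descriptions of $\phi$. In the elliptic case, boundedness of $(\deg_H\phi^n)_n$ must be upgraded to the assertion that $\phi$ is virtually isotopic to the identity: this is the regularisation step, in which a bounded orbit of $\mathbf{h}$ forces $\phi$ to be conjugate, by some birational $\psi\colon Z\dashrightarrow S$, to an automorphism of a projective surface $Z$, and passing to an iterate lands in $\mathrm{Aut}(Z)^0$ (Gizatullin). In the parabolic case, the fixed isotropic class $v$ is nef with $v^2=0$ and is therefore the class of the fibres of a $\phi$-invariant fibration $\pi\colon Z\dashrightarrow B$; by adjunction the genus of the generic fibre is determined by $v\cdot K_Z$, and since the action preserves the canonical form $\omega_\infty$, this intersection is a conjugacy invariant. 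The value $v\cdot K_Z=-2$ gives a rational fibration, which by Theorem \ref{thm:fibration} is, up to birational conjugacy, the pencil of lines preserved by the Jonqui\`eres group, so $\phi$ is a Jonqui\`eres twist; the value $v\cdot K_Z=0$ gives a genus one fibration, i.e. a Halphen pencil, so $\phi$ is a Halphen twist (compare Lemma \ref{lem:UrechHalphen}).

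I expect the main obstacle to be exactly this geometric matching rather than the degree estimates. Two points are delicate. First, the elliptic regularisation is a genuine theorem about $\mathrm{Bir}(S)$ and not a formal consequence of having a fixed point in $\mathbb{H}^\infty$: one has to produce the surface $Z$ and control the algebraic-group structure of the closure of $\langle\phi\rangle$. Second, in the parabolic case one must verify that the two Jordan-block sizes computed analytically correspond precisely to the two fibration genera, i.e. that quadratic growth cannot occur for a rational fibration and linear growth cannot occur for a genus one fibration; this is the core of the Diller--Favre analysis and requires relating the nilpotent order of $u=\phi_*-\mathrm{id}$ to the self-intersection behaviour of the invariant fibre class. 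Once these facts are in hand, the remaining bookkeeping (uniqueness of the preserved fibration, the $O(1)$ error term, conjugacy invariance of $\lambda(\phi)$) is routine.
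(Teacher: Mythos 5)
Your reconstruction of the elliptic and loxodromic cases is sound, and your loxodromic computation is essentially the one the paper itself performs in \S\ref{sec:degreegrowth} when it shows $L(\phi_*)=\log\lambda(\phi)$ (note the paper states Theorem \ref{thm:dilfav} without proof, citing Gizatullin, Diller--Favre and Cantat, so the comparison is with those arguments). The genuine gap is your parabolic case. You derive the dichotomy $cn$ versus $cn^2$ from ``the Jordan block of $\phi_*$ at the eigenvalue $1$ has size $2$ or $3$,'' but this argument is not available and, worse, would prove the wrong thing. First, on the infinite-dimensional space $\mathrm{Z}(S)$ the orbit of $\mathbf{h}$ under a parabolic isometry in general spans an infinite-dimensional subspace and $\phi_*$ admits no Jordan normal form; abstract parabolic isometries of $\mathbb{H}^\infty$ can exhibit many intermediate subexponential rates, so ``linear or quadratic'' is not a consequence of isometry theory alone. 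Second, if you try to repair this by passing to a finite-dimensional invariant lattice via an algebraically stable model (Proposition \ref{pro:DillerFavre}), the operator $\phi^*$ on $\mathrm{NS}(X)$ is an isometry only when $\phi$ is an automorphism: by the push--pull formula, $\phi^*C\cdot\phi^*D=C\cdot D+\sum_j(E_j\cdot C)(E_j\cdot D)$. And for a genuinely isometric unipotent action $\exp(nN)$, anti-self-adjointness of $N$ gives $\langle\mathbf{h},N\mathbf{h}\rangle=0$, so $\cosh d(\mathbf{h},\phi_*^n\mathbf{h})$ grows with \emph{even} polynomial degree; linear growth is impossible in that setting. This is exactly consistent with the fact, recorded in the paper, that Jonqui\`eres twists are never conjugate to automorphisms while Halphen twists are: the linear case exists precisely because the finite-dimensional action fails to be isometric. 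So the parabolic dichotomy cannot be read off the spectral structure of $\phi_*$; it requires the Diller--Favre analysis of the non-isometric action of $\phi^*$ on $\mathrm{NS}(X)$ (integrality, Kronecker's theorem forcing root-of-unity eigenvalues, and the signature $(1,\rho-1)$ constraint there).

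In the geometric half you also pass too quickly from the fixed isotropic class $v$ to ``the class of the fibres of a $\phi$-invariant fibration.'' One must first show that $v$ descends to a rational nef class on a finite-dimensional model $\mathrm{NS}(X)$ and that a multiple of it is effective and semi-ample -- this is where Riemann--Roch and the Hodge index theorem enter (Gizatullin's contribution) -- and one must separately exclude invariant fibrations of genus $\geq 2$ (for which degree growth is bounded) and match the growth rate to the genus in the correct direction. You flag this matching as ``the core of the Diller--Favre analysis,'' which is accurate; but since your analytic half leans on the same unavailable Jordan argument, the proposal as written establishes only the elliptic and loxodromic alternatives, with a genuine gap at the parabolic case.
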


\begin{egs}
\begin{itemize}
\item[$\diamond$] Any birational map of finite order is 
elliptic. Any automorphism of $\mathbb{P}^2_\mathbb{C}$
is elliptic. Any element of the group 
\[
\big\{(z_0,z_1)\mapsto(\alpha z_0+P(z_1),\beta z_1+\gamma)\,\vert\,\alpha,\,\beta\in\mathbb{C}^*,\,\gamma\in\mathbb{C},\,P\in\mathbb{C}[z_1]\big\}
\]
is elliptic.

\item[$\diamond$] Any element of $\mathcal{J}$ of the form
\[
(z_0,z_1)\dashrightarrow\left(z_0,\frac{a(z_0)z_1+b(z_0)}{c(z_0)z_1+d(z_0)}\right)
\]
with $\frac{(\mathrm{tr}\,M)^2}{\det M}\in\mathbb{C}(z_0)\smallsetminus\mathbb{C}$ where 
\[
M=\left(
\begin{array}{cc}
a(z_0) & b(z_0)\\
c(z_0) & d(z_0)
\end{array}
\right)
\] 
is a Jonqui\`eres twist (\cite{CerveauDeserti:centralisateurs}).

\item[$\diamond$] Consider the family of birational self maps of
$\mathbb{P}^2_\mathbb{C})$
given in the affine chart $z_2=1$ by
\[
\phi_\varepsilon\colon(z_0,z_1)\dashrightarrow\left(z_1+1-\varepsilon,z_0\frac{z_1-\varepsilon}{z_1+1}\right).
\]
If 
\begin{itemize}
\item[$\diamond$] $\varepsilon=-1$, then $\phi_\varepsilon$ is elliptic;

\item[$\diamond$] $\varepsilon\in\{0,\,1\}$, then $\phi_\varepsilon$ is a 
Jonqui\`eres twist;

\item[$\diamond$] $\varepsilon\in \left\{\frac{1}{2},\,\frac{1}{3}\right\}$, then $\phi_\varepsilon$ is 
a Halphen twist;

\item[$\diamond$] $\varepsilon\in\displaystyle\bigcup_{k\geq 4}\frac{1}{k}$, 
then $\phi_\varepsilon$ is loxodromic.
\end{itemize}

This family has been introduced in \cite{DillerFavre}.

\item[$\diamond$] If 
$\phi\colon(z_0,z_1)\dashrightarrow(z_1,z_0+z_1^2)$, 
then $\deg(\phi^n)=(\deg\phi)^n=2^n$. If 
$\psi\colon(z_0,z_1)\dashrightarrow(z_0^2z_1,z_0z_1)$, then 
$\deg\psi^n\sim\left(\frac{3+\sqrt{5}}{2}\right)^n$; in 
particular $\deg(\psi^n)\not=(\deg\psi)^n$.

\item[$\diamond$] Let us finish with a more geometric example.
Consider the elliptic curve $E=\faktor{\mathbb{C}}{\mathbb{Z}[\mathbf{i}]}$.
The linear action of the group $\mathrm{GL}(2,\mathbb{Z}[\mathbf{i}])$
on the complex plane preserves the lattice 
$\mathbb{Z}[\mathbf{i}]\times\mathbb{Z}[\mathbf{i}]$. 
This yields to an action of $\mathrm{GL}(2,\mathbb{Z}[\mathbf{i}])$
by regular automorphisms on the abelian surface $S=E\times E$. 
Since this action commutes with 
$(z_0,z_1)\mapsto(\mathbf{i}z_0,\mathbf{i}z_1)$ one gets a 
morphism from $\mathrm{PGL}(2,\mathbb{Z}[\mathbf{i}])$ to
$\mathrm{Aut}\left(\faktor{S}{(z_0,z_1)}\mapsto(\mathbf{i}z_0,\mathbf{i}z_1)\right)$.
As $\faktor{S}{(z_0,z_1)}\mapsto(\mathbf{i}z_0,\mathbf{i}z_1)$
is rational one obtains an embedding of $\mathrm{PGL}(2,\mathbb{Z}[\mathbf{i}])$
into $\mathrm{Bir}(\mathbb{P}^2_\mathbb{C})$. 
\end{itemize}
\end{egs}

Any element virtually isotopic to the identity is 
\textsl{regularizable}\index{defi}{regularizable (birational map)}, that is 
birationally conjugate to an automorphism. What can we say
about two birational maps virtually isotopic to the 
identity ? We will see that if they commute they are 
simultaneously regularizable. Before proving it let us 
introduce a new notion.

\begin{defis}
An element $\phi\in\mathrm{Bir}(\mathbb{P}^2_\mathbb{C})$ is 
\textsl{algebraically stable}\index{defi}{algebraically stable 
(birational map of $\mathbb{P}^2_\mathbb{C}$)} if 
$\deg\phi^n=(\deg\phi)^n$ for all $n\geq 0$.

More generally if $S$ is a compact complex surface, then 
$\phi\in\mathrm{Bir}(S)$ is 
\textsl{algebraically stable}\index{defi}{algebraically stable (birational map of a surface)} 
if $(\phi^*)^n=(\phi^n)^*$ for all $n\geq 0$.
\end{defis}

A geometric characterization of algebraically stable maps is the 
following: $\phi\in\mathrm{Bir}(S)$ is algebraically stable if 
and only if there is no curve $C\subset S$ such that 
$\phi^k(C)\in\mathrm{Ind}(\phi)$ for some integer $k$.
Let us give an idea of the fact that this geometric 
characterization is equivalent to the Definition when 
$S=\mathbb{P}^2_\mathbb{C}$. If 
$\phi^k\big(\mathcal{C}\smallsetminus\mathrm{Ind}(\phi)\big)\subset\mathrm{Ind}(\phi)$, then all the components of $\phi\circ\phi^k$
have a common factor that defines the equation of 
$\mathcal{C}$. Then $\deg(\phi\circ\phi^k)<(\deg\phi)(\deg\phi^k)$.
The converse holds.

Diller and Favre proved the following result:

\begin{pro}[\cite{DillerFavre}]\label{pro:DillerFavre}
Let $S$ be a compact complex surface, and let $\phi$ be a birational
self map of $S$. There exists a composition of finitely many
point blow-ups that lifts $\phi$ to an algebraically stable map.
\end{pro}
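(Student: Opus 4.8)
The plan is to use the geometric characterization of algebraic stability recalled just above the statement: $\phi$ is algebraically stable if and only if no curve contracted by $\phi$ has a forward orbit meeting $\mathrm{Ind}(\phi)$. Since $\phi$ contracts only finitely many curves, the failure of algebraic stability is encoded by a finite list of ``bad'' orbit segments, and the strategy is to destroy them one at a time by blowing up, controlling the process with a nonnegative integer invariant.

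First I would set up the bookkeeping on the N\'eron--Severi group. For a birational self map the pullback is functorial only up to the contribution of contracted curves, so in general $(\phi^{n})^{*}\neq(\phi^{*})^{n}$; using Lemma~\ref{lem:tireenarriere} and Proposition~\ref{pro:tireenarriere} one checks that, for an ample class $\mathbf{h}$, the difference $(\phi^{*})^{n}\mathbf{h}-(\phi^{n})^{*}\mathbf{h}$ is an effective class whose support consists precisely of the exceptional curves $C$ with $\phi^{k}(C)\in\mathrm{Ind}(\phi)$ for some $k<n$. This makes the geometric criterion quantitative and shows that algebraic stability is equivalent to $(\phi^{*})^{n}=(\phi^{n})^{*}$ for all $n$.

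Next, assume $\phi$ is not algebraically stable. Then there is a contracted curve $C$ with $p_{0}=\phi(C)$ a proper point, whose orbit $p_{0},\,p_{1}=\phi(p_{0}),\,\ldots$ first meets $\mathrm{Ind}(\phi)$ at some finite index $N$; among all bad curves I would choose one realising the maximal such $N$. Then I would blow up the finite orbit segment $p_{0},p_{1},\ldots,p_{N}$ (in fact it suffices to blow up the points forced by the resolution), obtaining $\pi\colon\hat S\to S$ and the lift $\hat\phi=\pi^{-1}\circ\phi\circ\pi$. On $\hat S$ the curve $C$ is no longer contracted onto a point whose orbit reaches an indeterminacy point: the exceptional divisors created along the segment ``open up'' the orbit so that it escapes into the new $(-1)$-curves instead of colliding with $\mathrm{Ind}(\phi)$.

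The main obstacle is termination. One must exhibit a nonnegative integer invariant --- for instance the sum over the finitely many contracted curves of the length of their orbit before hitting $\mathrm{Ind}(\phi)$, truncated so as to be finite --- that strictly decreases at each such blow-up, while verifying that the modification does not create a new contracted curve whose orbit produces an equal or worse obstruction. This last control is the delicate point: blowing up a point of $\mathrm{Ind}(\phi)$ can move the indeterminacy locus and alter the exceptional set, so one has to track carefully how $\mathrm{Ind}(\hat\phi)$ and the contracted curves of $\hat\phi$ relate to those of $\phi$. Granting that the invariant strictly decreases, it reaches $0$ after finitely many blow-ups; at that stage no contracted curve has an orbit meeting the indeterminacy set, so the resulting lift is algebraically stable, which is the assertion of the proposition.
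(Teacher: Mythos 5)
Your overall strategy---blow up the finite orbit segment along which a contracted curve collides with $\mathrm{Ind}(\phi)$, and repeat---is exactly the heuristic the paper states before its proof, but the actual mathematical content of the proposition is the termination argument, and that is precisely the step you leave open (``granting that the invariant strictly decreases''). The invariant you propose, a truncated sum of orbit lengths of contracted curves, does not obviously decrease: blowing up the terminal point $p_N\in\mathrm{Ind}(\phi)$ only partially resolves the indeterminacy, so the lift $\hat\phi$ can acquire new indeterminacy points lying on the exceptional divisor over $p_N$; the strict transform of $C$ (or some other contracted curve) may still be contracted, its forward orbit may now collide with these new indeterminacy points, and the orbit segments of the remaining bad curves can lengthen after the modification. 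So what you have is a correct plan plus an unverified claim, and the unverified claim is the whole difficulty---your own ``delicate point'' is never resolved.

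The paper (following Diller and Favre) gets around this by changing the level at which the count is made. Write $\phi=\phi_n\circ\phi_{n-1}\circ\ldots\circ\phi_1$ as a composition of elementary maps, each a single point blow-up or a single blow-down, and choose a bad pair $(i,N)$ of \emph{minimal} length: $\phi_i$ blows down a curve $E_i$ whose image point travels forward to $p_N\in\mathrm{Ind}(\phi_N)$, the minimality guaranteeing that the intermediate points $m_j$ avoid $\mathrm{Ind}(\phi_j)\cup\mathrm{Exc}(\phi_j)$. Blowing up $m_N=p_N$ lifts $\phi_N$ to a biholomorphism; blowing up $m_{N-1},\ldots,m_{i+2}$ leaves the quantity $\sum_j\mathrm{Card}\big(\phi_j(\mathrm{Exc}(\phi_j))\big)$ unchanged; and blowing up $m_{i+1}=\phi_i(E_i)$ turns the factor $\phi_i$ into a biholomorphism---the crucial point, unavailable when you work with the whole map $\phi$ at once, being that a one-point blow-down composed with the blow-up of its image point is biregular---so the count drops by exactly one. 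Since this count is a nonnegative integer, finitely many rounds either yield an algebraically stable lift or exhaust all exceptional components of the factors. This factorization, together with the count of exceptional components of the factors, is the decreasing invariant your proposal is missing; minimality of the bad segment is what ensures the intermediate blow-ups do not interfere with one another (the paper also checks that distinct points $m_{j_1}\neq m_{j_2}$ lying on the same surface in the periodic factorization can be blown up independently).
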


Before giving the proof, let us give its idea. Assume
that $\phi$ is not algebraically stable. In other words
there exist a curve $C\subset S$ and an integer $k$ such 
that $C$ is blown down onto $p_1$ and $p_k=\phi^{k-1}(p_1)$
belongs to $\mathrm{Ind}(\phi)$. 
The idea of Diller and Favre to get an algebraically 
stable map is the following: after blowing up the points
$p_i=\phi^i(p_1)$, $i=1$, $\ldots$, $k$, the orbit of~$C$ 
consists of curves. Doing this 
for any element of $\mathrm{Exc}(\phi)$ whose an 
iterate belongs to $\mathrm{Ind}(\phi)$ one gets the 
statement (note that the cardinal of $\mathrm{Exc}(\phi)$ 
is finite, so the process ends).

\begin{proof}
Let us write $\phi$ as follows
$\phi=\phi_n\circ\phi_{n-1}\circ\ldots\circ\phi_1$
where 
\begin{itemize}
\item[$\diamond$] $\phi_i\colon S_{i-1}\to S_i$;

\item[$\diamond$] $S_0=S_n=S$;

\item[$\diamond$] and 
\begin{itemize}
\item[(i)] either $\phi_i$ blows up a point 
$p_i=\mathrm{Ind}(\phi_i)\in S_i$, and we denote by
$V_{i+1}=\mathrm{Exc}(\phi_i^{-1})\subset S_{i+1}$
the exceptional divisor of $\phi_i^{-1}$;

\item[(ii)] or $\phi_i$ blows down the exceptional
divisor $E_i\subset S_i$; in this case we set 
$q_{i+1}:=\phi_i(E_i)\in S_{i+1}$.
\end{itemize}
\end{itemize}
For any $j\in\mathbb{N}$ set $S_j:=S_{\text{$j$ mod $n$}}$
and $\phi_j:=\phi_{\text{$j$ mod $n$}}$.

Assume that $\phi$ is not algebraically stable. Then there 
exist integers $1\leq i\leq N$ such that $\phi_i$ blows
down $E_i$ and 
\[
\phi_{N-1}\circ\phi_{N-2}\circ\ldots\circ\phi_i(E_i)=p_N\in\mathrm{Ind}(\phi_N).
\]
Choosing a pair $(i,N)$ of minimal length we can assume that
for all $i<j\leq N$
\[
m_j:=\phi_j\circ\phi_{j-1}\circ\ldots\circ\phi_i(E_i)=\phi_j\circ\phi_{j-1}\circ\ldots\circ\phi_{i+1}(q_{i+1})
\]
does not belong to $\mathrm{Ind}(\phi_i)\cup\mathrm{Exc}(\phi_i)$.

First blow up $S_N$ at $m_N=p_N$. Then 
\begin{itemize}
\item[$\diamond$] $\phi_N$ lifts to a biholomorphism $\widehat{\phi}_N$
of $\mathrm{Bl}_{p_N}S_N$;

\item[$\diamond$] $\widehat{\phi}_{N-1}$ either blows up the two 
distinct points $m_{N-1}$ and $p_{N-1}$ or blows up $m_{N-1}$ and 
blows down $E_{N-1}\notin m_{N-1}$;

\item[$\diamond$] $\sum\mathrm{Card}\big(\phi_j\big(\mathrm{Exc}(\phi_j)\big)\big)=\sum\mathrm{Card}\big(\widehat{\phi}_j\big(\mathrm{Exc}(\widehat{\phi}_j)\big)\big)$.
\end{itemize}
Remark that modifying $S_N$ means modifying $S_{N+n}$, $S_{N-n}$, 
$\ldots$ nevertheless blowing up a point $m_j$ does not interfere
with the behavior of the map $\phi_j$ around $m_{N+n}$, $m_{N-n}$,
$\ldots$ (indeed if $j_1=j_2$ mod $n$ but $j_1\not=j_2$, then the 
points $m_{j_1}$, $m_{j_2}$ of $S_1=S_2$ are distinct), and 
these points can be blow up independently.

Similarly blow up $m_{N-1}$, $m_{N-2}$, $\ldots$, $m_{i+2}$. At 
each step $\sum\mathrm{Card}(\phi_j(\mathrm{Exc}(\phi_j)))$ 
remains constant. Let us finish by blowing up 
$m_{i+1}=\phi_i(E_i)$; the situation is then different: $\phi_i$
becomes a biholomorphism $\widehat{\phi}_i$. The number of 
components of $\mathrm{Exc}(\phi_i)$ thus reduces from $1$ to $0$.
As a consequence
\begin{equation}\label{eq:dim}
\sum\mathrm{Card}\big(\widehat{\phi}_j\big(\mathrm{Exc}(\widehat{\phi}_j)\big)\big)=
\sum\mathrm{Card}\big(\phi_j\big(\mathrm{Exc}(\phi_j)\big)\big)-1.
\end{equation}
Repeating finitely many times the above argument either we
produce an algebraically stable map 
$\widehat{\phi}=\widehat{\phi}_N\circ\widehat{\phi}_{N-1}\circ
\ldots\circ\widehat{\phi}_1$, or thanks to (\ref{eq:dim}) we
eleminate all exceptional components of the factors of $\phi$.
In both cases we get an algebraically stable map.
\end{proof}

\begin{lem}[\cite{Deserti:IMRN}]\label{lem:commas}
Let $\phi$, $\psi$ be two birational self maps of a compact
complex surface~$S$.
Assume that $\phi$ and $\psi$ are both virtually isotopic to the
identity. Assume that $\phi$ and $\psi$ commute. 

There exist a surface $Y$ and a birational map
$\zeta\colon\ Y\dashrightarrow S$  such that
\begin{itemize}
\item[$\diamond$] $\zeta^{-1}\circ\phi^\ell\circ\zeta\in\mathrm{Aut}(Y)^0$
  for some integer $\ell$,
\item[$\diamond$] $\zeta^{-1}\circ\psi\circ\zeta$ is algebraically stable.
\end{itemize}
\end{lem}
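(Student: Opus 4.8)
The plan is to regularize a power of $\phi$ first, and then, on a blow-up of that model, make $\psi$ algebraically stable while keeping the regularized power of $\phi$ an automorphism in the identity component. Commutativity is exactly what makes these two requirements compatible. Concretely, I would start from the hypothesis that $\phi$ is virtually isotopic to the identity: there is a birational map $\zeta_0\colon Y_0\dashrightarrow S$ and an integer $\ell$ with $\widehat\phi:=\zeta_0^{-1}\circ\phi^\ell\circ\zeta_0\in\mathrm{Aut}(Y_0)^0$. Set $\widehat\psi:=\zeta_0^{-1}\circ\psi\circ\zeta_0\in\mathrm{Bir}(Y_0)$; since $\phi$ and $\psi$ commute, so do $\widehat\phi$ and $\widehat\psi$. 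Because $\widehat\phi$ is biregular and commutes with $\widehat\psi$, from $\mathrm{Ind}(\widehat\psi)=\mathrm{Ind}(\widehat\phi^{-1}\circ\widehat\psi\circ\widehat\phi)=\widehat\phi^{-1}(\mathrm{Ind}(\widehat\psi))$ I get $\widehat\phi(\mathrm{Ind}(\widehat\psi))=\mathrm{Ind}(\widehat\psi)$, and likewise $\widehat\phi$ permutes the finite set $\mathrm{Exc}(\widehat\psi)$ of contracted curves.

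Next I would stabilize $\widehat\psi$. By Proposition \ref{pro:DillerFavre} the Diller--Favre construction yields a finite set $\mathcal{P}\subset\mathcal{B}(Y_0)$ whose blow-up makes $\widehat\psi$ algebraically stable, namely the union, over those contracted curves $C\in\mathrm{Exc}(\widehat\psi)$ whose forward orbit meets $\mathrm{Ind}(\widehat\psi)$, of the orbit segments $q_C,\widehat\psi(q_C),\dots,\widehat\psi^{\,k_C}(q_C)$ where $q_C=\widehat\psi(C)$ and $k_C$ is the first index with $\widehat\psi^{\,k_C}(q_C)\in\mathrm{Ind}(\widehat\psi)$. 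This $\mathcal{P}$ is $\widehat\phi$-invariant: if $C$ is such a bad curve, so is $C':=\widehat\phi(C)$, with $q_{C'}=\widehat\phi(q_C)$, $\widehat\psi^{\,j}(q_{C'})=\widehat\phi(\widehat\psi^{\,j}(q_C))$ and $k_{C'}=k_C$, so $\widehat\phi$ carries the orbit segment of $C$ onto that of $C'$. Let $\tau\colon Y:=\mathrm{Bl}_{\mathcal{P}}Y_0\to Y_0$ be the blow-up and put $\zeta:=\zeta_0\circ\tau$; then $\zeta^{-1}\circ\psi\circ\zeta=\tau^{-1}\circ\widehat\psi\circ\tau$ is algebraically stable by construction.

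It remains to control the lift of $\widehat\phi$, and this is the step I expect to be the main obstacle. As $\mathcal{P}$ is $\widehat\phi$-invariant, $\widehat\phi$ lifts to an automorphism of $Y$, but a priori it permutes the exceptional divisors and need not lie in $\mathrm{Aut}(Y)^0$; this is precisely why the statement asks only for a power of $\phi$. To repair this I would work inside the connected group $G:=\mathrm{Aut}(Y_0)^0$ containing $\widehat\phi$. The setwise stabilizer $\mathrm{Stab}_G(\mathcal{P})$ is a closed subgroup of the algebraic (or complex Lie) group $G$, hence has finitely many connected components, so some power $\widehat\phi^{\,m}$ lies in $\mathrm{Stab}_G(\mathcal{P})^0$. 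A connected group stabilizing the finite set $\mathcal{P}$ must fix it pointwise, so $\mathrm{Stab}_G(\mathcal{P})^0$ fixes each point of $\mathcal{P}$ and, being connected, lifts through $\tau$ into $\mathrm{Aut}(Y)^0$. Therefore $\zeta^{-1}\circ\phi^{\ell m}\circ\zeta=\tau^{-1}\circ\widehat\phi^{\,m}\circ\tau\in\mathrm{Aut}(Y)^0$, and taking the integer to be $\ell m$ together with the surface $Y$ and the map $\zeta$ finishes the argument.

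The delicate points to verify carefully are the finiteness of the number of components of $\mathrm{Stab}_G(\mathcal{P})$ for the relevant $Y_0$ (immediate when $G$ is linear algebraic, as for rational surfaces, and still true for the connected complex Lie groups arising for general compact complex surfaces), and the correct bookkeeping of the infinitely near points of $\mathcal{P}$, so that the Diller--Favre stabilization and the equivariant lifting of $G$ are genuinely compatible on the level of the bubble space $\mathcal{B}(Y_0)$. I would also note that the hypothesis that $\psi$ is virtually isotopic to the identity is not needed for the algebraic stability conclusion, which follows from Proposition \ref{pro:DillerFavre} for an arbitrary $\widehat\psi$; only the virtual isotopy of $\phi$ and the commutativity are used.
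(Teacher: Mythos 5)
Your proof is correct and rests on the same two ingredients as the paper's: Proposition \ref{pro:DillerFavre} and the observation that, $\widehat\phi$ being biregular and commuting with $\widehat\psi$, the sets $\mathrm{Ind}(\widehat\psi)$ and $\mathrm{Exc}(\widehat\psi)$ --- hence the whole Diller--Favre orbit data --- are $\widehat\phi$-invariant. The organization, however, differs. The paper argues by induction on the minimal number $N(\psi)$ of blow-ups needed to stabilize $\psi$: at each step it blows up a \emph{single} point $p\in\mathrm{Ind}(\psi)$ reached by the orbit of a contracted curve, after first replacing $\phi$ by an iterate $\phi^k$ fixing each point of the finite set $\mathrm{Ind}(\psi)$; the lift of $\phi^k$ is again an automorphism, $N$ drops by one, and a fresh power of $\phi$ is taken at every round. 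You instead blow up the entire invariant set $\mathcal{P}$ in one pass and extract a single power at the end. The trade-off surfaces exactly where you predicted: your version must justify membership in $\mathrm{Aut}(Y)^0$ globally, and your $\mathrm{Stab}_G(\mathcal{P})$ device does this cleanly when $G=\mathrm{Aut}(Y_0)^0$ is linear algebraic (e.g.\ $S$ rational, which is the case actually used later for Theorem \ref{thm:IMRN}), but ``closed subgroup, hence finitely many components'' is not automatic for a general complex Lie group, so for an arbitrary compact surface this step needs a separate verification --- though in fairness the paper's induction silently assumes the same thing, namely that the lifted iterate can be taken in the identity component, so your write-up is if anything more explicit at the one point where the published proof is glossed. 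Two further accurate remarks of yours: the Diller--Favre process is iterative, so $\mathcal{P}$ must indeed be assembled round by round in the bubble space (your equivariance computation $q_{C'}=\widehat\phi(q_C)$, $k_{C'}=k_C$ applies verbatim at each round, since $\widehat\phi$ lifts as an automorphism after each invariant blow-up); and the hypothesis that $\psi$ is virtually isotopic to the identity is genuinely unused in this lemma --- the paper carries it only as part of the inductive statement and because it is needed in the subsequent Proposition \ref{pro:commas}.
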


\begin{proof}
Since $\phi$ is virtually isotopic to the identity we can assume
that up to birational conjugacy and finite index $\phi$ is an
automorphism of $S$.
Let $N(\psi)$ be the minimal number of blow-ups needed to 
make $\psi$ algebraically stable (such a $N(\psi)$ exists
according to Proposition \ref{pro:DillerFavre}). If
$N(\psi)=0$, then $\zeta=\mathrm{id}$ suits. Assume that
Lemma \ref{lem:commas} holds when $N(\psi)\leq j$. Consider
a pair $(\phi,\psi)$ of birational self maps of $S$ such that 
\begin{itemize}
\item[$\diamond$] $\phi$ and $\psi$ are both virtually isotopic 
to the identity,
\item[$\diamond$] $\phi$ and $\psi$ commute,
\item[$\diamond$] $N(\psi)=j+1$.
\end{itemize}
Since $\psi$ is not algebraically stable there exists a curve 
$C$ blown down by $\psi$ and such that~$\psi^q(C)$ is a point 
of indeterminacy $p$ of $\psi$ for some integer $q$. The maps 
$\psi$ and~$\phi$ commute, so an iterate $\phi^k$ of $\phi$ 
fixes the irreducible components of $\mathrm{Ind}(\psi)$.
Let us blow up $p$ via $\pi$. On the one hand 
$\pi^{-1}\circ\phi^k\circ\pi$ is an automorphism because 
$p$ is fixed by $\phi^k$ and on the other hand 
$N(\pi^{-1}\circ\psi\circ\pi)=j$. One can thus conclude 
by induction that there exist a surface $Y$ and a birational
map $\zeta\colon Y\dashrightarrow S$ such that
$\zeta^{-1}\circ\phi^\ell\circ\zeta\in\mathrm{Aut}(Y)^0$ for
some integer $\ell$ and $\zeta^{-1}\circ\psi\circ\zeta$ is
algebraically stable.
\end{proof}

\begin{pro}[\cite{Deserti:IMRN}]\label{pro:commas}
Let $\phi$, $\psi$ be two birational self maps of a surface $S$.
Assume that $\phi$ and $\psi$ are both virtually isotopic to the
identity. Assume that $\phi$ and~$\psi$ commute. 

Then there exist a surface $Z$ and a birational map 
$\pi\colon Z\dashrightarrow S$ such that 
\begin{itemize}
\item[$\diamond$] $\pi^{-1}\circ\phi\circ\pi$ and 
$\pi^{-1}\circ\psi\circ\pi$ belong to $\mathrm{Aut}(Z)$;

\item[$\diamond$] $\pi^{-1}\circ\phi^k\circ\pi$ and 
$\pi^{-1}\circ\psi^k\circ\pi$ belong to $\mathrm{Aut}(Z)^0$
for some integer $k$.
\end{itemize}
\end{pro}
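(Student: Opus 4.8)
The plan is to upgrade the asymmetric conclusion of Lemma \ref{lem:commas} to the symmetric one required here. First I would apply Lemma \ref{lem:commas} to the commuting pair $(\phi,\psi)$ and conjugate by the resulting birational map, so that, after renaming, $\phi^{\ell}\in\mathrm{Aut}(Y)^{0}$ for some $\ell$ while $\psi$ is merely algebraically stable on $Y$; both maps stay virtually isotopic to the identity (hence elliptic, by Theorem \ref{thm:dilfav}) and keep commuting. The entire difficulty is then concentrated in turning \emph{algebraically stable} into \emph{automorphism}.

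The key step, which I expect to be the main obstacle, is the claim that an elliptic, algebraically stable birational self-map of a smooth projective surface is an automorphism. To prove it for $\psi$ I would argue as follows. Algebraic stability gives $(\psi^{*})^{n}=(\psi^{n})^{*}$ on $\mathrm{NS}(Y)$, and since $\psi$ is elliptic the degrees $\deg_{H}\psi^{n}=\langle(\psi^{n})^{*}H,H\rangle$ stay bounded (Theorem \ref{thm:dilfav}). An integral isometry of the lattice $\mathrm{NS}(Y)$ with bounded powers has all eigenvalues on the unit circle and no nontrivial Jordan block, so it is of finite order; thus $(\psi^{m})^{*}=\mathrm{id}$ on $\mathrm{NS}(Y)$ for some $m$. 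Writing a minimal resolution $Y\xleftarrow{p_{1}}W\xrightarrow{p_{2}}Y$ of $\psi^{m}$, the equality $(\psi^{m})^{*}=(p_{1})_{*}p_{2}^{*}=\mathrm{id}$ on all of $\mathrm{NS}(Y)$ forces $p_{1}$ and $p_{2}$ to be isomorphisms, so $\psi^{m}\in\mathrm{Aut}(Y)$. Finally I would exclude contracted curves: if $\psi$ contracted a curve $C$, the geometric characterisation of algebraic stability keeps the forward orbit of $C$ away from $\mathrm{Ind}(\psi)$, whence $\psi^{m}$ would also contract $C$, contradicting $\psi^{m}\in\mathrm{Aut}(Y)$; since then $\mathrm{Ind}(\psi^{-1})=\emptyset$, the same argument applied to $\psi^{-1}$ (now a morphism, using $(\psi^{m})^{-1}\in\mathrm{Aut}(Y)$) shows $\psi^{-1}$ contracts no curve either, so $\psi\in\mathrm{Aut}(Y)$.

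With $\psi\in\mathrm{Aut}(Y)$ in hand, I would symmetrise. As $\psi$ is a regular automorphism commuting with $\phi$, it permutes the finite sets $\mathrm{Ind}(\phi)$ and $\mathrm{Exc}(\phi)$; re-running the blow-up procedure of Proposition \ref{pro:DillerFavre} (as in the proof of Lemma \ref{lem:commas}) on $\phi$, but blowing up at each stage the full $\psi$-orbit of the relevant indeterminacy point, yields a surface $Z$ on which $\phi$ is algebraically stable and $\psi$ remains a morphism, hence an automorphism of $Z$. The crux above, now applied to $\phi$, gives $\phi\in\mathrm{Aut}(Z)$ as well. The delicate point here, and the part I expect to demand the most care, is precisely that blowing up entire $\psi$-orbits—finite because they sit inside the finite sets $\mathrm{Ind}(\phi)$ and $\mathrm{Exc}(\phi)$—preserves the regularity of $\psi$ while still lowering the Diller--Favre defect of $\phi$.

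It remains to exhibit a common power in the connected component. Both $\phi$ and $\psi$ are elliptic automorphisms of $Z$, so $\phi_{*}$ and $\psi_{*}$ have finite order on $\mathrm{NS}(Z)$; as they commute, the subgroup of $\mathrm{GL}(\mathrm{NS}(Z))$ they generate is finite, and some $k$ satisfies $(\phi^{k})^{*}=(\psi^{k})^{*}=\mathrm{id}$ on $\mathrm{NS}(Z)$. Since the kernel of the action of $\mathrm{Aut}(Z)$ on $\mathrm{NS}(Z)$ has only finitely many connected components, a further multiple of $k$ places both $\phi^{k}$ and $\psi^{k}$ in $\mathrm{Aut}(Z)^{0}$, which is the second assertion. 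Taking $\pi\colon Z\dashrightarrow S$ to be the composite of the two birational maps produced along the way then completes the proof.
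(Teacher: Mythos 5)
Your reduction via Lemma \ref{lem:commas} is the right opening move, but the pivotal claim on which everything afterwards rests --- \emph{an elliptic, algebraically stable birational self-map of a smooth projective surface is an automorphism} --- is false, and the specific error occurs where you call $\psi^{*}$ an ``integral isometry of the lattice $\mathrm{NS}(Y)$''. The total transform $\psi^{*}$ of a non-regular map does not preserve the intersection form: the paper's own push--pull identity (\ref{eq:losange}) gives $\psi^{*}C\cdot\psi^{*}C=C\cdot C+\sum_{j}(E_{j}\cdot C)^{2}$, and worse, $\psi^{*}$ need not even be injective, so bounded powers do not force finite order. Concretely: let $\alpha\colon(z_0,z_1)\mapsto(2z_0,3z_1)$, let $p=(1,1)$ (a point with infinite, non-recurrent $\alpha$-orbit), let $Y=\mathrm{Bl}_{p}\mathbb{P}^2_{\mathbb{C}}$ with blow-down $\pi$ and exceptional class $e$, and let $\psi=\pi^{-1}\circ\alpha\circ\pi$. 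Then $\psi$ is elliptic (even virtually isotopic to the identity, so it satisfies exactly the hypotheses you have after Lemma \ref{lem:commas}, e.g.\ with $\phi=\mathrm{id}$), and it is algebraically stable: its only contracted curve is $E_p$, sent to $\pi^{-1}(\alpha(p))$, whose forward orbit $\pi^{-1}(\alpha^{k}(p))$ never meets $\mathrm{Ind}(\psi)=\{\pi^{-1}(\alpha^{-1}(p))\}$ because $p$ is not periodic; on $\mathrm{NS}(Y)=\mathbb{Z}h\oplus\mathbb{Z}e$ one computes $\psi^{*}h=h$ and $\psi^{*}e=0$, so $(\psi^{*})^{n}=(\psi^{n})^{*}$ for all $n$. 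Yet $\psi$ is not an automorphism of $Y$, and no power of $\psi$ is (each $\psi^{m}$ still contracts $E_p$). So $(\psi^{*})^{m}$ is never the identity (it is a non-injective projection), your minimal-resolution step never gets off the ground, and the whole ``symmetrisation'' that follows, which presupposes $\psi\in\mathrm{Aut}(Y)$, collapses.

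The paper's proof avoids this trap precisely by \emph{not} trying to promote the algebraically stable $\overline{\psi}$ to an automorphism of the same surface: it contracts the finitely many curves of $\mathrm{Exc}(\overline{\psi}^{-1})$, i.e.\ it changes the birational model once more. This is legitimate because those curves have negative self-intersection and are therefore fixed by $\overline{\phi}$, which lies in $\mathrm{Aut}(Y)^{0}$ and hence fixes every curve of negative self-intersection (Remark \ref{rem:hom}); so the blow-downs preserve the regularity of the relevant iterate of $\phi$, and finiteness of $\mathrm{Exc}(\overline{\psi}^{-1})$ terminates the descent. Note that the counterexample above is exactly a case where this extra contraction is mandatory: blowing $E_p$ back down returns to $\mathbb{P}^2_{\mathbb{C}}$, where $\psi$ becomes the linear automorphism $\alpha$. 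To repair your argument, replace your central lemma by this descent step; your final paragraph (finite order of the commuting NS-actions of two elliptic \emph{automorphisms}, plus finiteness of the component group of the kernel of the action on $\mathrm{NS}(Z)$) is then sound, since for honest automorphisms $\phi_{*},\psi_{*}$ really are lattice isometries.
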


\begin{proof}
By assumption there exist a surface $\widetilde{S}$, a birational map 
$\eta\colon\widetilde{S}\dashrightarrow S$ and an integer~$n$ such 
that $\eta^{-1}\circ\phi\circ\eta$ belongs to $\mathrm{Aut}(S)$ and 
$\eta^{-1}\circ\phi^n\circ\eta$ belongs to $\mathrm{Aut}(S)^0$.
Let us now work on $\widetilde{S}$; to simplify denote by $\phi$
the automorphism 
$\eta^{-1}\circ\phi^n\circ\eta$ and by $\psi$ the birational map 
$\eta^{-1}\circ\psi\circ\eta$. 

According to Lemma \ref{lem:commas} there exist a surface $Y$, 
a birational map $\upsilon\colon Y\dashrightarrow S$ and an integer~$\ell$ such that $\zeta^{-1}\circ\widetilde{\phi}^\ell\circ\zeta$
belongs to $\mathrm{Aut}(Y)^0$ and
$\zeta^{-1}\circ\widetilde{\psi}\circ\zeta$ is algebraically 
stable.

Set $\overline{\phi}=\zeta^{-1}\circ\widetilde{\phi}^i\circ\zeta$
and $\overline{\psi}=\zeta^{-1}\circ\widetilde{\psi}\circ\zeta$.
To get an automorphism from~$\overline{\psi}$ let us blow
down curves in $\mathrm{Exc}(\overline{\psi}^{-1})$. But 
curves blown down by $\overline{\psi}^{-1}$ are of 
self-intersection $<0$ and~$\overline{\phi}$ fixes such 
curves since $\overline{\phi}$ is isotopic to the identity.
We conclude by using the fact that 
$\mathrm{Card}\big(\mathrm{Exc}(\overline{\psi}^{-1})\big)$ is finite.
\end{proof}

%%%%%%%%%%%%%%%%%%%%%%%%%%%%%%%%%%%%%%%%%%%%%%%%%%%%%%%%%%%%%%%%%%%%%%%%%%%%%%%%%%%%%%%%%%%%%%%%%%%%%%%%%%%%%%%%%%%
%%%%%%%%%%%%%%%%%%%%%%%%%%%%%%%%%%%%%%%%%%%%%%%%%%%%%%%%%%%%%%%%%%%%%%%%%%%%%%%%%%%%%%%%%%%%%%%%%%%%%%%%%%%%%%%%%%%
% section
%%%%%%%%%%%%%%%%%%%%%%%%%%%%%%%%%%%%%%%%%%%%%%%%%%%%%%%%%%%%%%%%%%%%%%%%%%%%%%%%%%%%%%%%%%%%%%%%%%%%%%%%%%%%%%%%%%%
%%%%%%%%%%%%%%%%%%%%%%%%%%%%%%%%%%%%%%%%%%%%%%%%%%%%%%%%%%%%%%%%%%%%%%%%%%%%%%%%%%%%%%%%%%%%%%%%%%%%%%%%%%%%%%%%%%%

\section{On the hyperbolicity of graphs associated to the Cremona group}

To reinforce the analogy between the mapping class group and the 
plane Cremona group Lonjou looked for a graph analogous to 
the curve graph and such that the Cremona group acts on it trivially 
in \cite{Lonjou:epiga}. 

A candidate is the graph introduce by Wright 
(Chapter \ref{chapter:gen} \S 
\ref{subsec:wright} and \cite{Wright}).

As we have recalled in Chapter \ref{chapter:gen} 
\S \ref{subsec:wright} the complex $C$ is a 
simplicial complex of dimension $2$ and $1$-connected on which 
$\mathrm{Bir}(\mathbb{P}^2_\mathbb{C})$ acts. Since Lonjou is 
interested in the Gromov hyperbolicity property, she is only 
interested in the $1$-skeleton of~$C$. She proved that the diameter 
of this non-locally finite graph is infinite (\cite[Corollary 
2.7]{Lonjou:epiga}). She then focuses on the following question "Is 
this graph Gromov hyperbolic ?"\footnote{Minosyan and Osin note that
if the answer to this question is yes, the results of 
\cite{DahmaniGuirardelOsin} allow
to give a new proof of the non-simplicity of
$\mathrm{Bir}(\mathbb{P}^2_\mathbb{C})$ (\emph{see}
\cite{MinasyanOsin1, MinasyanOsin2}).} The answer is no:

\begin{thm}[\cite{Lonjou:epiga}]
The Wright graph is not Gromov hyperbolic.
\end{thm}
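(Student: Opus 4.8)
The statement to prove is that the Wright graph $C^{(1)}$ (the $1$-skeleton of Wright's simplicial complex $C$) is not Gromov hyperbolic. The strategy is to exhibit a family of geodesic triangles whose thinness constant is unbounded, i.e.\ triangles that are arbitrarily far from being $\delta$-slim for any fixed $\delta$. Since the graph is not locally finite and has infinite diameter (by the already-cited Corollary $2.7$ of \cite{Lonjou:epiga}), the natural source of non-hyperbolicity is the presence of large ``flat'' pieces: isometrically (or quasi-isometrically) embedded copies of a Euclidean grid or, more economically, of a complete bipartite graph or a large cycle, on which comparison triangles to $\mathbb{H}^2$ fail. First I would recall the precise definition of the vertices and edges of $C$ from Chapter \ref{chapter:gen} \S\ref{subsec:wright}, identifying vertices with the relevant geometric data (the generating set underlying Wright's amalgam presentation) and edges with the incidence/adjacency coming from that presentation.

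The key steps, in order, are as follows. First, locate inside $C^{(1)}$ a sequence of subgraphs $G_n$ that are isometrically embedded and that contain ``fat'' geodesic triangles; the most robust candidates are embedded $n \times n$ grids arising from the abelian (monomial, or diagonal-torus) subgroups sitting inside $\mathrm{Bir}(\mathbb{P}^2_\mathbb{C})$, since a copy of $\mathbb{Z}^2$ acting on $C$ tends to produce a quasi-flat. Second, verify that the inclusion $G_n \hookrightarrow C^{(1)}$ is an isometric embedding, or at least a quasi-isometric embedding with constants independent of $n$; this is what guarantees that distances computed inside $G_n$ are not short-circuited by the ambient non-local-finiteness. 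Third, in each $G_n$ produce a geodesic triangle whose vertices are pairwise at distance $\sim n$ and whose midpoints of opposite sides are at distance $\sim n$ from the union of the other two sides, so the slimness constant grows linearly in $n$. Since Gromov hyperbolicity requires a \emph{uniform} $\delta$ bounding the slimness of all triangles, the unboundedness of these constants forces the conclusion.

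The main obstacle, and the step that requires the most care, is the \emph{isometric-embedding} step: in a non-locally-finite graph one must rule out the existence of short paths that leave $G_n$ and re-enter it, exploiting high-valence vertices to create shortcuts. This is precisely where the combinatorial structure of Wright's complex must be used in detail. I would control it by identifying a retraction or a distance-nonincreasing projection $C^{(1)} \to G_n$ (for instance coming from the amalgamated-product structure of the stabilizers, or from a Lipschitz ``coordinate'' function on $C$ built from the fibration/Jonqui\`eres data), which certifies $d_{G_n} = d_{C^{(1)}}$ on the vertices of $G_n$. An alternative, if an exact retraction is unavailable, is to bound shortcut lengths from below directly using the local combinatorics of links of vertices in $C$, accepting quasi-isometry constants; this still suffices since a space quasi-isometric to a non-hyperbolic space is non-hyperbolic, and a graph containing quasi-isometrically embedded flats of unbounded size cannot be Gromov hyperbolic.

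Finally, I would remark that this non-hyperbolicity is exactly why the footnoted program of Minasyan--Osin (applying \cite{DahmaniGuirardelOsin} to reprove non-simplicity of $\mathrm{Bir}(\mathbb{P}^2_\mathbb{C})$ via this particular graph) does not go through, so the negative answer here is the expected one and consistent with the fact that the hyperbolic features of the Cremona group are captured instead by the Picard--Manin space $\mathbb{H}^\infty$ of Chapter \ref{chap:hyperbolicspace} rather than by Wright's complex.
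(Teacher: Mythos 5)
Your overall skeleton --- produce a quasi-isometrically embedded copy of $\mathbb{Z}^2$ (a quasi-flat) in the Wright graph and conclude non-hyperbolicity --- is exactly the skeleton of Lonjou's proof as the paper presents it, but the step where you choose the abelian subgroup contains a genuine error. You propose to extract the $\mathbb{Z}^2$ from monomial or diagonal-torus subgroups, and neither can work. In the Wright graph, or equivalently in the quasi-isometric \og Cayley-like\fg~graph whose vertices are elements of $\mathrm{Bir}(\mathbb{P}^2_\mathbb{C})$ modulo pre-composition by automorphisms of $\mathbb{P}^2_\mathbb{C}$ and whose edges come from Jonqui\`eres maps, an element displaces vertices roughly by its Jonqui\`eres translation length in the sense of Blanc and Furter. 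Diagonal automorphisms lie in $\mathrm{PGL}(3,\mathbb{C})$ and displace every vertex a bounded amount, so any $\mathbb{Z}^2\subset\mathrm{D}_2$ has bounded orbits; and the monomial group $\mathrm{GL}(2,\mathbb{Z})$ is virtually free, so it contains no $\mathbb{Z}^2$ at all --- commuting loxodromic monomial maps share an axis and generate only virtually cyclic groups. For the same reason a pair of commuting Jonqui\`eres twists fails too: every power of a Jonqui\`eres map has Jonqui\`eres length $1$, hence such a twist is elliptic for this metric. The subgroup that actually works, and that Lonjou uses, is generated by two \emph{commuting Halphen twists} (sitting in $\mathrm{Aut}(S)$ for a Halphen surface $S$, whose automorphism group contains free abelian subgroups of rank up to $8$): the Jonqui\`eres length of the powers of a Halphen twist grows linearly, by the Blanc--Furter computations, so both coordinate directions of the $\mathbb{Z}^2$ genuinely grow.

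The other fragile point is your verification of the (quasi-)isometric embedding. The retraction / Lipschitz-coordinate or link-combinatorics arguments you sketch are speculative, and this is precisely where the non-local-finiteness bites; in the actual proof the lower bound on distances does not come from a projection but from the step you skipped: the quasi-isometry between the Wright graph and the generating-set graph, combined with Blanc--Furter's algorithm and formulas for the translation length, which allow one to compute the distances between orbit points of the $\mathbb{Z}^2$ explicitly and certify the quasi-flat. Once that is in place, your final reduction --- quasi-flats of unbounded size (a fortiori, triangles of unbounded slimness constant) are incompatible with Gromov hyperbolicity, and non-hyperbolicity passes through quasi-isometries --- is standard and correct, as is your concluding remark about the Minasyan--Osin program.
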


The first point of the proof is to note that the 
Wright graph is quasi-isometric to a graph related 
to the system of generators of 
$\mathrm{Bir}(\mathbb{P}^2_\mathbb{C})$ given by 
$\mathrm{PGL}(3,\mathbb{C})$ and the Jonqui\`eres
maps. It is an analogue of the Cayley graph in the
case of a finitely generated group. The vertices of 
this graph are the elements of 
$\mathrm{Bir}(\mathbb{P}^2_\mathbb{C})$ modulo
pre-composition by an automorphism of $\mathbb{P}^2_\mathbb{C}$.
An edge connects two vertices if there exists a 
Jonqui\`eres map that permutes the two vertices. 
The distance between two vertices $\phi$, $\psi$
in $\mathrm{Bir}(\mathbb{P}^2_\mathbb{C})$
is the minimal number of Jonqui\`eres maps needed
to decompose $\psi^{-1}\circ\phi$ (in 
\cite{BlancFurter:length} Blanc and Furter called this 
integer the translation length of $\psi^{-1}\circ\phi$. 
They gave an algorithm to compute this length. 
They also got that the diameter of the Wright graph
is infinite).

The second point is to prove that this graph contains
a subgraph quasi-isometric to $\mathbb{Z}^2$
(\emph{see} \cite[Theorem 2.12]{Lonjou:epiga}).
She took two Halphen twists that commute. They generate
a subgroup isomorphic to $\mathbb{Z}^2$. Using some
results of \cite{BlancFurter:length} she established
that the action of this subgroup on one of the 
vertices of the graph induces the desired 
graph\footnote{The Cayley graph of the modular group 
of a compact surface of genus $g\geq 2$ is not Gromov
hyperbolic; indeed, this group has subgroups 
isomorphic to $\mathbb{Z}^2$ (for instance 
generated by two Dehn twists along two disjoint 
closed curves).}.

Then Lonjou constructed two graphs associated to a 
Voron\"{i} tessellation of the Cremona group
introduced in \cite{Lonjou:publmath}; she proved
that 
\begin{itemize}
\item[$\diamond$] one of these graphs is quasi-isometric
to the Wright graph;

\item[$\diamond$] the second one is Gromov hyperbolic.
\end{itemize}

%%%%%%%%%%%%%%%%%%%%%%%%%%%%%%%%%%%%%%%%%%%%%%%%%%%%%%%%%%%%%%%%%%%%%%%%%%%%%%%%%%%%%%%%%%%%%%%%%%%%%%%%%%%%%%%%%%%
%%%%%%%%%%%%%%%%%%%%%%%%%%%%%%%%%%%%%%%%%%%%%%%%%%%%%%%%%%%%%%%%%%%%%%%%%%%%%%%%%%%%%%%%%%%%%%%%%%%%%%%%%%%%%%%%%%%
%%%%%%%%%%%%%%%%%%%%%%%%%%%%%%%%%%%%%%%%%%%%%%%%%%%%%%%%%%%%%%%%%%%%%%%%%%%%%%%%%%%%%%%%%%%%%%%%%%%%%%%%%%%%%%%%%%%
% CHAPTER
%%%%%%%%%%%%%%%%%%%%%%%%%%%%%%%%%%%%%%%%%%%%%%%%%%%%%%%%%%%%%%%%%%%%%%%%%%%%%%%%%%%%%%%%%%%%%%%%%%%%%%%%%%%%%%%%%%%
%%%%%%%%%%%%%%%%%%%%%%%%%%%%%%%%%%%%%%%%%%%%%%%%%%%%%%%%%%%%%%%%%%%%%%%%%%%%%%%%%%%%%%%%%%%%%%%%%%%%%%%%%%%%%%%%%%%
%%%%%%%%%%%%%%%%%%%%%%%%%%%%%%%%%%%%%%%%%%%%%%%%%%%%%%%%%%%%%%%%%%%%%%%%%%%%%%%%%%%%%%%%%%%%%%%%%%%%%%%%%%%%%%%%%%%

\chapter{Algebraic subgroups of the Cremona group}\label{Chapter:algebraicsubgroup}

\bigskip
\bigskip

The first section of this chapter deals with the algebraic structure of the
$n$-dimensional Cremona group, the fact that it is 
not an algebraic group of infinite dimension if $n\geq 2$, 
the obstruction to this, which is of a topological nature. 
By contrast, the existence of a Euclidean 
topology on the Cremona group which extends that 
of its classical subgroups and makes it a topological 
group is recalled. More precisely
in \cite{Shafarevich} Shafarevich asked

\begin{center}
\begin{fmpage}{10cm}
"Can one introduce a universal structure of an infinite dimensional group
in the group of all automorphisms (resp. all birational automorphisms)
of arbitrary algebraic variety ?"
\end{fmpage}
\end{center}

We will see that the answer to this question is no 
(\cite{BlancFurter}). For any 
algebraic va\-riety $V$ defined over $\mathbb{C}$ 
there is a natural 
notion of families of elements of $\mathrm{Bir}(\mathbb{P}^n_\mathbb{C})$
parameterized by $V$.
These are maps 
$V(\mathbb{C})\to\mathrm{Bir}(\mathbb{P}^n_\mathbb{C})$
compatible with the 
structures of algebraic varieties. Note that 
$\mathrm{Bir}(\mathbb{P}^1_\mathbb{C})\simeq\mathrm{PGL}(2,\mathbb{C})$ 
and fa\-milies
$V\dashrightarrow\mathrm{Bir}(\mathbb{P}^1_\mathbb{C})$
correspond to morphisms of algebraic varieties. If
$n\geq 2$ the set 
$\mathrm{Bir}_d(\mathbb{P}^n_\mathbb{C})$ of all
birational maps of 
$\mathbb{P}^n_\mathbb{C}$ of degree $d$ has the structure of an algebraic 
variety defined over $\mathbb{C}$ such that the families 
$V\to\mathrm{Bir}_d(\mathbb{P}^n_\mathbb{C})$ correspond to morphisms of
algebraic varieties (\cite{BlancFurter}). So  
$\mathrm{Bir}(\mathbb{P}^n_\mathbb{C})$ decomposes into a disjoint 
infinite union of algebraic varieties, having unbounded dimension.
Blanc and Furter established the following statement:

\begin{thm}[\cite{BlancFurter}]\label{thm:bf}
Let $n\geq 2$. There is no structure of algebraic variety of infinite
dimension on $\mathrm{Bir}(\mathbb{P}^n_\mathbb{C})$ such that families
$V\to\mathrm{Bir}(\mathbb{P}^n_\mathbb{C})$ would correspond to morphisms
of algebraic varieties.
\end{thm}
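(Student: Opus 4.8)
The plan is to argue by contradiction. Suppose $M$ is a structure of (infinite-dimensional) algebraic variety carried by the set $\mathrm{Bir}(\mathbb{P}^n_\mathbb{C})$ with the property that a map $V(\mathbb{C})\to\mathrm{Bir}(\mathbb{P}^n_\mathbb{C})$ from a variety $V$ is a family if and only if it is a morphism $V\to M$. The first step is to upgrade $M$ to an algebraic group: in this framework a composite of two families is again a family and the inverse of a family is a family, so under the identification families $=$ morphisms the composition map $M\times M\to M$, $(\phi,\psi)\mapsto\phi\circ\psi$, and the inversion $M\to M$, $\phi\mapsto\phi^{-1}$, are morphisms. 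Hence $M$ is a group object, that is, an algebraic group, and in particular it is homogeneous: translation by any element of $\mathrm{Bir}(\mathbb{P}^n_\mathbb{C})$ is an automorphism of $M$, so the local structure of $M$ at every point is isomorphic to its local structure at the identity.

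Next I would recover the degree filtration inside $M$. For each $d$ the identity map on $\mathrm{Bir}_{\le d}(\mathbb{P}^n_\mathbb{C})$ is a family, hence a morphism $\mathrm{Bir}_{\le d}(\mathbb{P}^n_\mathbb{C})\to M$; since having degree $\le d$ is a closed (lower semicontinuous) condition in any family, this realises $\mathrm{Bir}_{\le d}(\mathbb{P}^n_\mathbb{C})$ as a closed subvariety of $M$. Conversely, any family parameterised by a finite-type variety has bounded degree, so $M$ is exhausted by the increasing chain of closed finite-dimensional subvarieties $\mathrm{Bir}_{\le 1}\subset\mathrm{Bir}_{\le 2}\subset\cdots$, whose dimensions tend to infinity because $\dim\mathrm{Bir}_d\to\infty$.

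The decisive step is a dimension count at the identity, where the uncountability of $\mathbb{C}$ enters. Choose a neighbourhood $U$ of $\mathrm{id}$ in $M$ and an irreducible local piece through $\mathrm{id}$; it is covered by the countable increasing family of its closed subsets $U\cap\mathrm{Bir}_{\le d}$. If this local piece had finite dimension, then, since an irreducible variety of positive dimension over the uncountable field $\mathbb{C}$ cannot be a countable union of proper closed subvarieties, it would coincide with $U\cap\mathrm{Bir}_{\le d}$ for some $d$; thus $M$ would be finite-dimensional near $\mathrm{id}$. By homogeneity $M$ would then be finite-dimensional everywhere, contradicting $\dim\mathrm{Bir}_d\to\infty$. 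This disposes of every structure for which $M$ is locally of finite dimension, and it is the clean half of the argument.

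The main obstacle is therefore to rule out the possibility that $M$ is genuinely infinite-dimensional at each point, that is, locally modelled on an infinite-dimensional affine space: there the exhaustion by proper closed subvarieties is no longer forbidden and the dimension count breaks down. This is the hard part of \cite{BlancFurter}. To handle it I would pass to finer invariants of the local structure at $\mathrm{id}$, namely the Zariski tangent space and the comparison with the Euclidean topology that makes $\mathrm{Bir}(\mathbb{P}^n_\mathbb{C})$ a topological group, and exhibit an explicit algebraic family through the identity whose local behaviour cannot be reproduced by any morphism of varieties into a fixed local model. Concretely, for $n\ge 2$ one builds families of maps of growing degree degenerating to $\mathrm{id}$ (available on $\mathbb{P}^2_\mathbb{C}$ via Jonqui\`eres or quadratic maps, then transported to $\mathbb{P}^n_\mathbb{C}$) and shows that the induced germ is incompatible with the ring-theoretic data a variety structure would impose, yielding the required contradiction and completing the proof.
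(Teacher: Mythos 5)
Your first three steps are essentially sound, and they match the infrastructure the paper builds around the statement: that translations and inversion send families to families is Remark \ref{rem:homeo}, the closedness of $\mathrm{Bir}_{\leq d}(\mathbb{P}^n_\mathbb{C})$ is Corollary \ref{cor:BlancFurter}, and the bounded degree of any family parameterized by a finite-type variety is Lemma \ref{lem:BlancFurter}. Your uncountability argument does rule out any putative structure with finite-dimensional charts, though the final clause "contradicting $\dim\mathrm{Bir}_d\to\infty$" should be replaced by an actual mechanism: for instance, $\pi_d$ restricts to an injective morphism from $H_{d,d}=(\pi_d)^{-1}(\mathrm{Bir}_d(\mathbb{P}^n_\mathbb{C}))$ into $M$, and $\dim H_{d,d}$ is unbounded in $d$, so a finite local dimension at the identity (hence, by homogeneity, everywhere) is impossible. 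The genuine gap is the case you yourself flag as "the hard part". Your last paragraph is not an argument: asserting that one "shows that the induced germ is incompatible with the ring-theoretic data a variety structure would impose" is a restatement of the theorem, since an infinite-dimensional variety can perfectly well contain algebraic curves of unbounded degree through a point, so the mere existence of families of growing degree degenerating to $\mathrm{id}$ is no obstruction by itself; you never identify the specific pathology. Moreover the appeal to the Euclidean topology is a red herring: that topology is an independent construction (Theorem \ref{thm:bf3}, answering Serre's question) and plays no role in the non-existence proof.

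For comparison, the obstruction isolated in \cite{BlancFurter}, and recorded in the text immediately after the statement, is precise: degree-$d$ maps degenerate onto non-reduced expressions $P\,\mathrm{id}$, as witnessed by the family $\phi_{a,b,c}$ parameterized by the nodal cubic $a^3+b^3=abc$, which is globally defined by formulas of degree $3$ while every member has degree $\leq 2$ and no global parameterization by degree-$2$ formulas exists. Blanc and Furter convert this degeneration phenomenon into a purely topological statement (Theorem \ref{thm:bf2}): no $\mathbb{C}$-algebraic variety of infinite dimension is even \emph{homeomorphic} to $\mathrm{Bir}(\mathbb{P}^n_\mathbb{C})$ with its Zariski topology, which is strictly stronger than the compatibility-with-families statement and is exactly what disposes of the infinite-dimensional local models your dimension count cannot reach. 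To complete your proof along your own lines you would have to extract from such degenerating families a concrete topological or local invariant of the pair $(\mathrm{Bir}(\mathbb{P}^n_\mathbb{C}),\mathrm{id})$ that no infinite-dimensional variety possesses; that extraction is the content of \cite{BlancFurter} and is absent from your proposal.
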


The lack of structure come from the degeneration of maps of degree $d$ 
into maps of smaller degree. A family of birational self maps of 
$\mathbb{P}^2_\mathbb{C}$ of degree $d$ which depends on a parameter
$t$ may degenerate for certain values of $t$ onto a non-reduced 
expression of the type $P\,\mathrm{id}=P(z_0:z_1:z_2)$ where $P$ denotes
an homogeneous polynomial of degree $d-1$. Consider for instance the 
family 
\begin{small}
\begin{eqnarray*}
\phi_{a,b,c}&\colon&(z_0:z_1:z_2)\dashrightarrow \\
& & \hspace{0.2cm}\big(z_0(az_2^2+cz_0z_2+bz_0^2):z_1(az_2^2+(b+c)z_0z_2+(a+b)z_0^2):z_2(az_2^2+cz_0z_2+bz_0^2)\big)
\end{eqnarray*}
\end{small}
parameterized by the nodal plane cubic $a^3+b^3=abc$.
The family $(\phi_{a,b,c})$ is globally defined by formulas of degree $3$,
but each element $\phi_{a,b,c}$ has degree $\leq 2$ and there is no global
para\-meterization by homogeneous formulas of degree $2$. 
In fact the obstruction to a positive answer to Shafarevich question 
comes only from the topology:

\begin{thm}[\cite{BlancFurter}]\label{thm:bf2}
There is no $\mathbb{C}$-algebraic variety of infinite dimension that is 
homeo\-morphic to $\mathrm{Bir}(\mathbb{P}^n_\mathbb{C})$.
\end{thm}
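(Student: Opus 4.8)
The plan is to argue by contradiction and to isolate a purely topological invariant of $\mathrm{Bir}(\mathbb{P}^n_\mathbb{C})$, with its Euclidean topology, that no infinite-dimensional algebraic variety can possess. First I would recall that this Euclidean topology is the inductive limit of the Euclidean topologies of the finite-dimensional varieties $\mathrm{Bir}_{\leq d}(\mathbb{P}^n_\mathbb{C})$ of maps of degree at most $d$; since the degree is lower semicontinuous under specialization, each $\mathrm{Bir}_{\leq d}$ is closed in $\mathrm{Bir}_{\leq d+1}$, so $\mathrm{Bir}(\mathbb{P}^n_\mathbb{C})$ is a countable increasing union of closed finite-dimensional pieces, and in particular a $k_\omega$-space. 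The same holds for any infinite-dimensional variety $X=\bigcup_m X_m$. A homeomorphism $\Phi\colon X\to\mathrm{Bir}(\mathbb{P}^n_\mathbb{C})$ therefore carries compact sets to compact sets; the whole difficulty is that it need not carry the algebraic filtration of $X$ to the degree filtration, because a homeomorphism destroys algebraic structure. This is exactly why the statement is genuinely stronger than Theorem~\ref{thm:bf} and cannot be deduced from it formally. It suffices to treat $n=2$, where the explicit degeneration above lives; the general case follows from the same mechanism.

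The pathology I would exploit is precisely the degeneration displayed before the statement. Restricting $\phi_{a,b,c}$ to the nodal cubic $C\colon a^3+b^3=abc$ gives a map $\phi\colon C\to\mathrm{Bir}(\mathbb{P}^2_\mathbb{C})$ which is continuous for the Euclidean topology because it is given globally by formulas of degree $3$. A direct computation shows that the three components share the common linear factor $bz_2+az_0$ precisely along $C$, so that a generic member has degree exactly $2$; at the node $(a:b:c)=(0:0:1)$ this linear factor degenerates, the components acquire the common factor $z_0z_2$, and the reduced map is the identity. Thus $\phi$ is a continuous curve passing through $\mathrm{id}\in\mathrm{PGL}(3,\mathbb{C})=\mathrm{Bir}_{\leq 1}$, lying in $\mathrm{Bir}_{\leq 2}\smallsetminus\mathrm{Bir}_{\leq 1}$ off the node, and admitting no parameterization by degree-$2$ formulas in any neighbourhood of the node. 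Iterating or composing such degenerations, and using their higher-degree analogues, produces for every $d$ continuous curves through $\mathrm{id}$ that meet the stratum of degree exactly $d$ and degenerate non-algebraically onto $\mathrm{id}$.

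The heart of the proof is then local, at the point $\mathrm{id}$. The goal is to show that the germ of $\mathrm{Bir}(\mathbb{P}^2_\mathbb{C})$ at $\mathrm{id}$ for the Euclidean topology is not homeomorphic to the germ of any ind-variety at any of its points. The strategy I would follow is to recover the degree filtration intrinsically from the topology near $\mathrm{id}$ — using that the $\mathrm{Bir}_{\leq d}$ form the canonical closed exhaustion and that compactness is a topological notion — and to show that $\Phi$ must send the local filtration of $X$ to the degree filtration up to a cofinal reindexing. Once the filtration is matched, the continuous family $\phi$ becomes a continuous map into $X$ that is a morphism off the node and whose target level is controlled; a removable-singularity argument along the normalization $\mathbb{P}^1\to C$ would upgrade it to a morphism, hence make $\phi$ itself a morphism for the transported structure, contradicting Theorem~\ref{thm:bf}.

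The main obstacle is exactly this last upgrade: a homeomorphism carries no algebraic information, so forcing the transported filtration to agree with the degree filtration, and then extracting algebraicity of $\phi$ from mere continuity, is the delicate point. I expect this to require a genuinely topological invariant separating the two germs — for instance a refined local dimension, or a local (co)homological and fundamental-group computation of small punctured neighbourhoods of $\mathrm{id}$, detecting the infinitely many non-algebraically degenerating nodal germs that accumulate there — rather than any transfer of scheme-theoretic data. Establishing that such an invariant is finite, or algebraically constrained, for every ind-variety, while being infinite, or unconstrained, for $\mathrm{Bir}(\mathbb{P}^2_\mathbb{C})$, is where I expect the real work to lie.
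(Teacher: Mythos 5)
Note first that the survey offers no proof of Theorem~\ref{thm:bf2}: it records the nodal-cubic degeneration only as motivation and refers to \cite{BlancFurter}, so your text has to stand as a proof on its own --- and it does not, by your own admission: the decisive step, a topological invariant separating the germ of $\mathrm{Bir}(\mathbb{P}^n_\mathbb{C})$ at the identity from every ind-variety germ, is explicitly deferred, and that step is the entire content of the theorem. Moreover your framework is aimed at the wrong topology. The remark closing the section states that Theorem~\ref{thm:bf2} holds over any algebraically closed field; the homeomorphism in question is therefore between Zariski-topologized spaces (an infinite-dimensional variety in Shafarevich's sense carries the inductive limit of the Zariski topologies of its finite-dimensional closed pieces), whereas your $k_\omega$-structure, compactness transfer, and local (co)homology of punctured neighbourhoods are Euclidean notions that do not even exist over, say, $\overline{\mathbb{Q}}$. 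And even granting the Euclidean setting, your filtration-matching step is unjustified: cofinal equivalence of exhaustions is automatic only for exhaustions by \emph{compact} sets, and $\mathrm{Bir}_{\leq d}(\mathbb{P}^n_\mathbb{C})$ is not Euclidean-compact (it contains the closed noncompact subset $\mathrm{PGL}(n+1,\mathbb{C})$), so the degree filtration is not a topologically canonical exhaustion and a homeomorphism has no reason to respect it. In the Zariski topology this particular step can in fact be repaired --- $\mathrm{Bir}_{\leq d}$ is the continuous image of the Noetherian space $H_d$ of Lemma~\ref{lem:propbf}, hence quasi-compact, and a quasi-compact closed subset $F$ of an ind-variety $\bigcup_i V_i$ must lie in some $V_i$, since otherwise points $x_i\in F\smallsetminus V_i$ would form an infinite closed discrete subspace of $F$, contradicting quasi-compactness --- but you make no such argument.

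The endgame fails for a structural reason that no local repair can fix. A homeomorphism transports no algebraic data, so after matching filtrations all you know is that the family $\phi\colon C\to X$ is continuous with image in a finite-dimensional piece; Zariski continuity is vastly weaker than regularity --- any two irreducible complex curves are Zariski-homeomorphic, both carrying the cofinite topology on a set of continuum cardinality, and there are abundant Zariski-continuous non-regular maps between curves --- so no removable-singularity argument along the normalization can promote $\phi$ to a morphism and produce the desired contradiction with Theorem~\ref{thm:bf}. You yourself observe that Theorem~\ref{thm:bf2} cannot be formally deduced from Theorem~\ref{thm:bf}, yet your proposed contradiction is precisely with Theorem~\ref{thm:bf} after transporting structure across the homeomorphism, i.e.\ the deduction you ruled out. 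The underlying issue is that the non-algebraicity of a single degenerating family is not a topological property of its image; any genuine proof must extract, from the whole uncountable web of such families accumulating at the identity, an invariant of the ambient germ that is constrained for varieties by Noetherianity and constructibility, and nothing in your text begins this. What is correct and worth keeping: your verification that the three components of $\phi_{a,b,c}$ acquire the common factor $bz_2+az_0$ exactly along $a^3+b^3=abc$, degenerating to $z_0z_2\cdot\mathrm{id}$ at the node, is accurate --- but it re-proves the motivation for Theorem~\ref{thm:bf}, not Theorem~\ref{thm:bf2}. (Your reduction to $n=2$ is likewise only asserted: a homeomorphism statement about $\mathrm{Bir}(\mathbb{P}^n_\mathbb{C})$ does not formally restrict to $n=2$, although analogous degenerating families do exist in every dimension $n\geq 2$.)
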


In $2010$ in the question session of the workshop "Subgroups of the Cremona
group" in Edinburgh, Serre asked the following question

\begin{center}
\begin{fmpage}{10cm}
"Is it possible to introduce such topology on 
$\mathrm{Bir}(\mathbb{P}^2_\mathbb{C})$ that is compatible with 
$\mathrm{PGL}(3,\mathbb{C})$ and 
$\mathrm{PGL}(2,\mathbb{C})\times\mathrm{PGL}(2,\mathbb{C})$ ?"
\end{fmpage}
\end{center}

We will see that Blanc and Furter 
gave a positive answer to this question:

\begin{thm}[\cite{BlancFurter}]\label{thm:bf3}
Let $n\geq 1$ be an integer. There is a natural topology on 
$\mathrm{Bir}(\mathbb{P}^n_\mathbb{C})$, called the Euclidean topology,
such that:
\begin{itemize} 
\item[$\diamond$] $\mathrm{Bir}(\mathbb{P}^n_\mathbb{C})$, endowed with 
the Euclidean topology, is a Hausdorff topological group, 
\item[$\diamond$] the restriction of the Euclidean topology to
algebraic subgroups in particular to $\mathrm{PGL}(n+1,\mathbb{C})$ and
$\mathrm{PGL}(2,\mathbb{C})^n$ is the classical Euclidean topology.
\end{itemize}
\end{thm}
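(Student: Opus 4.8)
The plan is to introduce the Euclidean topology as a \emph{final} (inductive) topology with respect to the class of algebraic families, and then to check the two asserted properties against this definition. Recall that a family, or morphism, $f\colon A\to\mathrm{Bir}(\mathbb{P}^n_\mathbb{C})$ from an algebraic variety $A$ is given by a birational self map of $A\times\mathbb{P}^n_\mathbb{C}$ over $A$ whose restriction to each fibre $\{a\}\times\mathbb{P}^n_\mathbb{C}$ is birational; concretely it is a tuple of homogeneous polynomials in the coordinates of $\mathbb{P}^n_\mathbb{C}$ with coefficients regular on $A$, inducing a birational map for each $a\in A(\mathbb{C})$. I would declare a subset $U\subseteq\mathrm{Bir}(\mathbb{P}^n_\mathbb{C})$ to be \emph{open} precisely when $f^{-1}(U)\subseteq A(\mathbb{C})$ is open for the classical Euclidean topology of $A(\mathbb{C})$, for every family $f$. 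This is the finest topology making all families continuous; it is automatically a topology, and every family is continuous by construction.

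The easy half of the group structure is continuity of inversion. The structural input here is that inverting a family again yields a family: if $f\colon A\to\mathrm{Bir}(\mathbb{P}^n_\mathbb{C})$ is a morphism, then $a\mapsto f(a)^{-1}$ is again a morphism from $A$, one of the basic structural results underpinning the statement. Granting this, the map $\phi\mapsto\phi^{-1}$ carries families to families, and hence is continuous for the final topology, the preimage of an open set being open by definition.

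The genuine obstacle is joint continuity of composition $\mathrm{Bir}(\mathbb{P}^n_\mathbb{C})\times\mathrm{Bir}(\mathbb{P}^n_\mathbb{C})\to\mathrm{Bir}(\mathbb{P}^n_\mathbb{C})$. Two difficulties combine. First, one must know that composing two families produces a family: given $f\colon A\to\mathrm{Bir}(\mathbb{P}^n_\mathbb{C})$ and $g\colon B\to\mathrm{Bir}(\mathbb{P}^n_\mathbb{C})$, the assignment $(a,b)\mapsto f(a)\circ g(b)$ is a morphism $A\times B\to\mathrm{Bir}(\mathbb{P}^n_\mathbb{C})$. This is delicate precisely because of the degeneration phenomenon highlighted around Theorems \ref{thm:bf} and \ref{thm:bf2}: the naive composite of degree-$d$ and degree-$e$ formulas has degree $de$, but for special parameters a common factor appears and the true degree drops, so one must verify the composite is still a well-defined family depending regularly on $(a,b)$. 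Second, and more seriously, final topologies do not in general commute with products, so continuity along each product family does not by itself give continuity on the product \emph{Euclidean} topology of $\mathrm{Bir}(\mathbb{P}^n_\mathbb{C})\times\mathrm{Bir}(\mathbb{P}^n_\mathbb{C})$. The way around this is to show that the product topology agrees with the final topology defined by the product families $A\times B\to\mathrm{Bir}(\mathbb{P}^n_\mathbb{C})\times\mathrm{Bir}(\mathbb{P}^n_\mathbb{C})$; here one exploits that each index space $A(\mathbb{C})$ is locally compact Hausdorff, so that products of the relevant quotient maps remain quotient maps, reducing the problem to a single well-chosen product family. I expect this reduction to be the technical heart of the argument.

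Finally I would settle the agreement of topologies on algebraic subgroups, and Hausdorffness. If $G\subseteq\mathrm{Bir}(\mathbb{P}^n_\mathbb{C})$ is an algebraic subgroup, then $G$ has bounded degree, so $G\subseteq\mathrm{Bir}_{\leq d}(\mathbb{P}^n_\mathbb{C})$ for some $d$, and the inclusion $G\hookrightarrow\mathrm{Bir}(\mathbb{P}^n_\mathbb{C})$ is a family, hence continuous; this shows the classical topology of $G$ is finer than the subspace Euclidean topology. For the reverse inclusion I would use that $\mathrm{Bir}_{\leq d}(\mathbb{P}^n_\mathbb{C})$ is closed in $\mathrm{Bir}(\mathbb{P}^n_\mathbb{C})$ and that on this bounded-degree locus the Euclidean topology is the evident one induced from the space of normalized coefficient tuples; since $G$ sits in $\mathrm{Bir}_{\leq d}(\mathbb{P}^n_\mathbb{C})$ as a locally closed subvariety, it inherits exactly its classical topology. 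Specializing to $G=\mathrm{PGL}(n+1,\mathbb{C})=\mathrm{Bir}_1(\mathbb{P}^n_\mathbb{C})$ and to $G=\mathrm{PGL}(2,\mathbb{C})^n$ gives the second assertion. Hausdorffness is then immediate from the coefficient description: distinct birational maps are separated by the values of their normalized defining polynomials, which vary continuously.
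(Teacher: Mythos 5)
Your definition of the Euclidean topology as the final topology with respect to all families does coincide with the paper's definition (the inductive limit over $d$ of the quotient topologies of $\pi_d\colon H_d\to\mathrm{Bir}_{\leq d}(\mathbb{P}^n_\mathbb{C})$): every family factors Zariski-locally as $\pi_{d_i}\circ\upsilon_i$ with $\upsilon_i$ a morphism of varieties (Lemma~\ref{lem:BlancFurter}), hence Euclidean-continuous, and each $\pi_d$ is itself a family; your packaging of inversion is then a genuine simplification over the paper's Lemma~\ref{lem:bfcont}. But there is a real gap at the step you call the technical heart. Writing $\pi_d\times\pi_k=(\mathrm{id}\times\pi_k)\circ(\pi_d\times\mathrm{id})$, Whitehead's theorem makes $\pi_d\times\mathrm{id}_{H_k}$ a quotient map because $H_k$ is locally compact, but to conclude for $\mathrm{id}_{\mathrm{Bir}_{\leq d}}\times\pi_k$ you need the \emph{target-side} space $\mathrm{Bir}_{\leq d}(\mathbb{P}^n_\mathbb{C})$ to be locally compact --- local compactness of the parameter varieties $A(\mathbb{C})$ alone does not suffice, since a quotient of a locally compact space need not be locally compact. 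That is exactly what the paper proves by showing $\pi_d$ is \emph{proper} (closed, with compact fibers; Lemma~\ref{lem:procom}), via the sequential argument that factors a degenerating sequence as $\varphi_i=(a_if_{i,0}:\cdots:a_if_{i,n})$ with $a_i$ homogeneous of degree $d-m$ and extracts limits by compactness of the projective coefficient spaces. Note also that you misplace the degeneration difficulty: composing two families is immediate (the composition of an $A$-birational with a $B$-birational map is an $(A\times B)$-birational map, with no degree bookkeeping), whereas degeneration is precisely what makes properness of $\pi_d$ nontrivial.

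The same omission undermines your last paragraph. Quotients of Hausdorff spaces need not be Hausdorff, and ``the normalized defining polynomials vary continuously'' is false across degree drops: a sequence of degree-$d$ maps can converge in $H_d$ to a non-reduced tuple $(af_0:\cdots:af_n)$ whose reduced representative lives in $H_m$ with $m<d$ --- this is the degeneration phenomenon behind Theorem~\ref{thm:bf}, not an innocuous normalization. The paper instead gets Hausdorffness of $\mathrm{Bir}_{\leq d}(\mathbb{P}^n_\mathbb{C})$ from properness of $\pi_d$, and Hausdorffness of $\mathrm{Bir}(\mathbb{P}^n_\mathbb{C})$ from the topological-group criterion together with closedness of points (via the closed fibers $\pi_d^{-1}(\phi)$ of Lemma~\ref{lem:propbf}). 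Finally, for algebraic subgroups, $\mathrm{Bir}_{\leq d}(\mathbb{P}^n_\mathbb{C})$ is \emph{not} an algebraic variety, so ``$G$ sits in $\mathrm{Bir}_{\leq d}(\mathbb{P}^n_\mathbb{C})$ as a locally closed subvariety'' is not meaningful as written; the correct statement (Proposition~\ref{pro:agree}, Corollary~\ref{cor:agree}) is that $\pi_d$ restricts to a homeomorphism from a locally closed subvariety $\mathrm{K}\subset H_d$ onto $G$, and checking that this remains a homeomorphism for the Euclidean topologies again rests on the closedness of $\pi_d$. In short: your architecture matches the paper's and improves the treatment of inversion, but the proof is incomplete without establishing properness of $\pi_d$ (equivalently, local compactness and Hausdorffness of each $\mathrm{Bir}_{\leq d}(\mathbb{P}^n_\mathbb{C})$), which is the load-bearing ingredient for composition, Hausdorffness, and the subgroup comparison alike.
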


In the literature an algebraic subgroup
$\mathrm{G}$ of $\mathrm{Bir}(V)$ 
corresponds to taking an algebraic 
group~$\mathrm{G}$ and a morphism
$\mathrm{G}\to\mathrm{Bir}(V)$ that
is a group morphism and whose
schematic kernel is trivial.
We will see that in the case of 
$V=\mathbb{P}^n_\mathbb{C}$ one 
can give a more intrinsic definition
(Corollary \ref{cor:agree})
which corresponds to taking closed 
subgroups of 
$\mathrm{Bir}(\mathbb{P}^n_\mathbb{C})$
of bounded degree and that these 
two definitions agree (Lemma \ref{lem:agree}).

\smallskip

An element 
$\phi\in\mathrm{Bir}(\mathbb{P}^n_\mathbb{C})$ 
is algebraic if it is contained in an algebraic 
subgroup~$\mathrm{G}$ of 
$\mathrm{Bir}(\mathbb{P}^n_\mathbb{C})$. 
It is equivalent to say that 
the sequence $(\deg\phi^n)_{n\in\mathbb{N}}$ is 
bounded. According to \cite{BlancFurter} the 
group $\mathrm{G}$ is thus an affine algebraic group. 
As a consequence~$\phi$ decomposes as 
$\phi=\phi_s\circ\phi_u$ where $\phi_s$ is 
a semi-simple element of $\mathrm{G}$ and $\phi_u$
an unipotent element of $\mathrm{G}$. This 
decomposition does not depend on $\mathrm{G}$ 
(\emph{see} \cite{Popov}).
In particular there is a natural notion
of semi-simple and unipotent elements 
of~$\mathrm{Bir}(\mathbb{P}^n_\mathbb{C})$. 
As we will see $\mathrm{G}$ could even by chosen to 
be the abelian algebraic subgroup 
$\overline{\big\{\phi^i\,\vert\,i\in\mathbb{Z}\big\}}$ 
of $\mathrm{Bir}(\mathbb{P}^n_\mathbb{C})$. 
In all linear algebraic groups the set of 
unipotent elements is closed; Popov
asked if it is the case in the context
of the Cremona group. A natural 
and related question is the following one:
is the set 
$\mathrm{Bir}(\mathbb{P}^n_\mathbb{C})_{\text{alg}}$
of algebraic elements of
$\mathrm{Bir}(\mathbb{P}^n_\mathbb{C})$
closed ? The second section deals with the answers 
to these questions (Theorem \ref{thm:Blancelalg}).

In the third section the classification 
of maximal algebraic subgroups of the 
plane Cremona group is
given.

In the fourth section we give a sketch of the 
proof of the fact that $\mathrm{Bir}(\mathbb{P}^n_\mathbb{C})$
is topologically simple when endowed with the 
Zariski topology, {\it i.e.} it
does not contain any non-trivial closed 
normal strict subgroup. The main 
ingredients of the proof are some clever
deformation arguments.

The fifth section is devoted to a modern proof
of the regularization theorem of Weil
which says that for every rational action $\rho$ of 
an algebraic group $\mathrm{G}$ on a variety~$X$ 
there exist a variety~$Y$ with a regular
action $\mu$ of $\mathrm{G}$ and a dominant rational 
map $\phi\colon X\dashrightarrow Y$ with the following
properties: for any $(g,p)\in\mathrm{G}\times X$ 
such that
\begin{itemize}
\item[$\diamond$] $\rho$ is defined in 
$(g,p)$;

\item[$\diamond$] $\phi$ is defined 
in $p$ and $\rho(g,p)$;

\item[$\diamond$] $\mu$ is defined in 
$(g,\phi(p))$
\end{itemize}
we have $\phi(\rho(g,p))=\mu(g,\phi(p))$.

\bigskip
\bigskip

%%%%%%%%%%%%%%%%%%%%%%%%%%%%%%%%%%%%%%%%%%%%%%%%%%%%%%%%%%%%%%%%%%%%%%%%%%%%%%%%%%%%%%%%%%%%%%%%%%%%%%%%%%%%%%%%%%%
%%%%%%%%%%%%%%%%%%%%%%%%%%%%%%%%%%%%%%%%%%%%%%%%%%%%%%%%%%%%%%%%%%%%%%%%%%%%%%%%%%%%%%%%%%%%%%%%%%%%%%%%%%%%%%%%%%%
% section
%%%%%%%%%%%%%%%%%%%%%%%%%%%%%%%%%%%%%%%%%%%%%%%%%%%%%%%%%%%%%%%%%%%%%%%%%%%%%%%%%%%%%%%%%%%%%%%%%%%%%%%%%%%%%%%%%%%
%%%%%%%%%%%%%%%%%%%%%%%%%%%%%%%%%%%%%%%%%%%%%%%%%%%%%%%%%%%%%%%%%%%%%%%%%%%%%%%%%%%%%%%%%%%%%%%%%%%%%%%%%%%%%%%%%%%

\section{Topologies and algebraic subgroups of $\mathrm{Bir}(\mathbb{P}^n_\mathbb{C})$}\label{sec:alg}

\subsection{Zariski topology}

Take an irreducible algebraic variety $V$. A \textsl{family of birational 
self maps of $\mathbb{P}^n_\mathbb{C}$ parameterized by $V$}\index{defi}
{family (of birational maps)} is a birational self map
\[
\varphi\colon V\times\mathbb{P}^n_\mathbb{C}\dashrightarrow V\times\mathbb{P}^n_\mathbb{C}
\]
such that
\begin{itemize}
\item[$\diamond$] $\varphi$ determines an isomorphism between two open 
subsets $\mathcal{U}$ and $\mathcal{V}$ of $V\times\mathbb{P}^n_\mathbb{C}$
 such that the first projection $\mathrm{pr}_1$ maps both $\mathcal{U}$ and 
$\mathcal{V}$ surjectively onto $V$,

\item[$\diamond$] $\varphi(v,x)=\big(v,\mathrm{pr}_2(\varphi(v,x))\big)$ where $\mathrm{pr}_2$ 
denotes the second projection; hence each $\varphi_v=\mathrm{pr}_2(\varphi(v,\cdot))$ 
is a birational self map of $\mathbb{P}^n_\mathbb{C}$.
\end{itemize}
The map $v\mapsto \varphi_v$ is called a \textsl{morphism}\index{defi}{morphism (from a parameter space to $\mathrm{Bir}(\mathbb{P}^n_\mathbb{C})$)}
from the parameter space $V$ to $\mathrm{Bir}(\mathbb{P}^n_\mathbb{C})$.

A subset $S\subset\mathrm{Bir}(\mathbb{P}^n_\mathbb{C})$ is 
\textsl{closed}\index{defi}{closed subset (of $\mathrm{Bir}(\mathbb{P}^n_\mathbb{C})$)} 
if for any algebraic variety $V$ and any morphism 
$V\to\mathrm{Bir}(\mathbb{P}^n_\mathbb{C})$ its preimage is closed.

This yields a topology on $\mathrm{Bir}(\mathbb{P}^n_\mathbb{C})$ 
called the \textsl{Zariski topology}\index{defi}{Zariski topology}.

\begin{rem}\label{rem:homeo}
For any $\phi\in\mathrm{Bir}(\mathbb{P}^n_\mathbb{C})$
the maps from $\mathrm{Bir}(\mathbb{P}^n_\mathbb{C})$ into itself 
given by 
\begin{align*}
&\psi\mapsto \psi\circ\varphi, && \psi\mapsto \varphi\circ\psi, 
&& \psi\mapsto\psi^{-1} 
\end{align*}
are homeomorphisms of $\mathrm{Bir}(\mathbb{P}^n_\mathbb{C})$ with 
respect to the Zariski topology.

Indeed let $V$ be an irreducible algebraic 
variety. If $f$, 
$g\colon V\times\mathbb{P}^n_\mathbb{C}\to V\times\mathbb{P}^n_\mathbb{C}$ 
are two $V$-birational maps inducing 
morphisms 
$V\to\mathrm{Bir}(\mathbb{P}^n_\mathbb{C})$,
then $f\circ g$ and $f^{-1}$ are again
$V$-birational maps that induce morphisms
$V\to\mathrm{Bir}(\mathbb{P}^n_\mathbb{C})$.
\end{rem}

Let 
$\mathrm{Bir}_{\leq d}(\mathbb{P}^n_\mathbb{C})$\index{not}{$\mathrm{Bir}_{\leq d}(\mathbb{P}^n_\mathbb{C})$}
 (resp. 
$\mathrm{Bir}_d(\mathbb{P}^n_\mathbb{C})$\index{not}{$\mathrm{Bir}_d(\mathbb{P}^n_\mathbb{C})$}) 
be the set of elements of $\mathrm{Bir}(\mathbb{P}^n_\mathbb{C})$ of degree
$\leq d$ (resp. of degree $d$); we have the following increasing sequence
\[
\mathrm{Aut}(\mathbb{P}^n_\mathbb{C})=\mathrm{Bir}_{\leq 1}(\mathbb{P}^n_\mathbb{C})\subseteq\mathrm{Bir}_{\leq 2}(\mathbb{P}^n_\mathbb{C})\subseteq\mathrm{Bir}_{\leq 3}(\mathbb{P}^n_\mathbb{C})\subseteq\ldots
\]
whose union gives the Cremona group. We will see that 
$\mathrm{Bir}_{\leq d}(\mathbb{P}^n_\mathbb{C})$ is closed in~$\mathrm{Bir}(\mathbb{P}^n_\mathbb{C})$ and the topology of 
$\mathrm{Bir}(\mathbb{P}^n_\mathbb{C})$ is the inductive
topology induced by the above sequence. As a result it 
suffices to describe the topology of 
$\mathrm{Bir}_{\leq d}(\mathbb{P}^n_\mathbb{C})$ to understand
the topology of $\mathrm{Bir}(\mathbb{P}^n_\mathbb{C})$.

Take a positive integer $d$. Let $W_d$\index{not}{$W_d$} be the set of 
equivalence classes 
of non-zero $(n+1)$-uples $(\phi_0,\phi_1,\ldots,\phi_n)$ of homogeneous
polynomials $\phi_i\in\mathbb{C}[z_0,z_1,\ldots,z_n]$ of degree $d$ where
$(\phi_0,\phi_1,\ldots,\phi_n)$ is equivalent to 
$(\lambda\phi_0,\lambda\phi_1,\ldots,\lambda\phi_n)$ for any 
$\lambda\in\mathbb{C}^*$. We denote by $(\phi_0:\phi_1:\ldots:\phi_n)$
the equivalence class of $(\phi_0,\phi_1,\ldots,\phi_n)$. Let 
$H_d\subseteq W_d$\index{not}{$H_d$} be the set of elements 
$\phi=(\phi_0:\phi_1:\ldots:\phi_n)\in W_d$ such that the rational 
map $\psi_\phi\colon\mathbb{P}^n_\mathbb{C}\dashrightarrow\mathbb{P}^n_\mathbb{C}$
given by 
\[
(z_0:z_1:\ldots:z_n)\dashrightarrow \big(\phi_0(z_0,z_1,\ldots,z_n):\phi_1(z_0,z_1,\ldots,z_n):\ldots:\phi_n(z_0,z_1,\ldots,z_n)\big)
\]
is birational. The map
\begin{align*}
&H_d\to\mathrm{Bir}(\mathbb{P}^n_\mathbb{C}) && \phi\mapsto\psi_\phi
\end{align*}
is denoted by $\pi_d$\index{not}{$\pi_d$}.

\begin{lem}[\cite{BlancFurter}]\label{lem:propbf}
The following properties hold:
\begin{itemize}
\item[$\diamond$] The set $W_d$ is isomorphic to
$\mathbb{P}^k_\mathbb{C}$ where 
$k=(n+1)\binom{d+n}{d}-1$.

\item[$\diamond$] The set $H_d$ is locally closed in 
$W_d$; thus it inherits from $W_d$ the 
structure of an algebraic variety.

\item[$\diamond$] The map $\pi_d\colon H_d\to\mathrm{Bir}(\mathbb{P}^n_\mathbb{C})$
is a morphism, and $\pi_d(H_d)$ is the set 
$\mathrm{Bir}_{\leq d}(\mathbb{P}^n_\mathbb{C})$.

\item[$\diamond$] For all $\phi$ in $\mathrm{Bir}_{\leq d}(\mathbb{P}^n_\mathbb{C})$ 
the set $\pi_d^{-1}(\phi)$ is closed in $W_d$, so in $H_d$
as well.

\item[$\diamond$] If $S\subset H_\ell$ $(\ell\geq 1)$ is closed, then 
$\pi_d^{-1}(\pi_\ell(S))$ is closed in $H_d$.
\end{itemize}
\end{lem}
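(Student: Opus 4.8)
The plan is to treat the five assertions in turn, the only substantial one being the local closedness of $H_d$ in the second bullet. For the first bullet I would simply count monomials: the space of homogeneous polynomials of degree $d$ in $z_0,\dots,z_n$ has dimension $\binom{d+n}{d}$, so an $(n+1)$-tuple of such polynomials varies in a vector space of dimension $(n+1)\binom{d+n}{d}$, and passing to classes modulo $\mathbb{C}^*$ identifies $W_d$ with $\mathbb{P}^k_\mathbb{C}$, $k=(n+1)\binom{d+n}{d}-1$.

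For the second bullet — the heart of the lemma — I would characterize birationality of $\psi_\phi$ as being \emph{dominant} together with \emph{generically injective}, and control the two conditions separately. Dominance is open: writing $\psi_\phi$ in an affine chart as $(\varphi_1,\dots,\varphi_n)$, the map fails to be dominant exactly when the Jacobian determinant $\det(\partial\varphi_i/\partial z_j)$ vanishes identically, which is a closed condition on the coefficients of $\phi$; let $U\subseteq W_d$ be the resulting open dominant locus, and note $H_d\subseteq U$. To show $H_d$ is closed in $U$ I would invoke the a priori bound $\deg((\psi_\phi)^{-1})\le(\deg\psi_\phi)^{n-1}\le d^{n-1}$ for the inverse of a birational map; writing $e=d^{n-1}$, every inverse then lies in the fixed projective space $W_e$. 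Now run a valuative/limit argument: given $\phi_t\in H_d$ with $\phi_t\to\phi_0\in U$, the inverses $\psi_t=(\psi_{\phi_t})^{-1}\in W_e$ admit a limit $\psi_0\in W_e$ by completeness of $W_e$; the identities $\psi_{t,i}(\phi_t)\,z_j=\psi_{t,j}(\phi_t)\,z_i$ pass to the limit, and since $\phi_0$ is dominant, $\psi_0$ cannot vanish on the dense image of $\phi_0$, so $\psi_0$ genuinely inverts $\phi_0$ and $\phi_0\in H_d$. Hence $H_d$ is closed in the open set $U$, i.e. locally closed in $W_d$. I expect this step — precisely, ruling out the degenerate limit in which the composite collapses — to be the main obstacle, and the dominance of $\phi_0$ is exactly what removes it.

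The third and fourth bullets are then formal. The assignment $(\phi,x)\mapsto(\phi,\psi_\phi(x))$ is a birational self map of $H_d\times\mathbb{P}^n_\mathbb{C}$ restricting to the tautological family over $H_d$, so by the definition of a morphism to $\mathrm{Bir}(\mathbb{P}^n_\mathbb{C})$ the map $\pi_d$ is a morphism; its image lies in $\mathrm{Bir}_{\le d}(\mathbb{P}^n_\mathbb{C})$ by construction, and it is onto because any $\phi$ of degree $\delta\le d$ with reduced form $(f_0:\dots:f_n)$ is the image of $(z_0^{\,d-\delta}f_0:\dots:z_0^{\,d-\delta}f_n)\in H_d$. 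For the fourth bullet, fixing the reduced form $(f_0:\dots:f_n)$ of $\phi$, the fiber $\pi_d^{-1}(\phi)$ consists of the classes $(\psi_0:\dots:\psi_n)\in W_d$ with $\psi_if_j=\psi_jf_i$ for all $i,j$; these are linear equations in the coefficients of $\psi$, so $\pi_d^{-1}(\phi)$ is a linear subspace of $W_d=\mathbb{P}^k_\mathbb{C}$, hence closed in $W_d$ and a fortiori in $H_d$.

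For the fifth bullet I would use the same same-map correspondence together with properness. Set
\[
\Gamma=\big\{(\phi,\phi')\in W_d\times W_\ell\ \big|\ \phi_i\phi'_j=\phi_j\phi'_i\ \ \forall\,i,j\big\},
\]
a closed subvariety cut out by bihomogeneous equations. Writing $\overline{S}$ for the closure of $S$ in $W_\ell$, closedness of $S$ in $H_\ell$ gives $S=\overline{S}\cap H_\ell$, and since the projection $\mathrm{pr}_1\colon W_d\times W_\ell\to W_d$ is proper ($W_\ell$ being projective), the set $T=\mathrm{pr}_1\big(\Gamma\cap(W_d\times\overline{S})\big)$ is closed in $W_d$. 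It then remains to identify $T\cap H_d$ with $\pi_d^{-1}(\pi_\ell(S))$: if $\phi\in H_d$ and $(\phi,\phi')\in\Gamma$ with $\phi'\in\overline{S}$, then $\phi$ is dominant, so the nonzero tuple $\phi'$ must be proportional to $\phi$ as a rational map (no collapse is possible), whence $\phi'\in H_\ell$, $\phi'\in\overline{S}\cap H_\ell=S$, and $\psi_\phi=\psi_{\phi'}\in\pi_\ell(S)$; the reverse inclusion is immediate. Thus $\pi_d^{-1}(\pi_\ell(S))=T\cap H_d$ is closed in $H_d$, which completes the plan.
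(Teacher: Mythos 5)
Your proposal is correct and follows essentially the same route as the paper's proof: the monomial count for $W_d$, the bound $\deg(\psi_\phi^{-1})\le d^{n-1}$ placing the inverses in the complete space $W_{d^{n-1}}$, the incidence relation $\varphi\circ\phi=P\,\mathrm{id}$ (your identities $\psi_{t,i}(\phi_t)z_j=\psi_{t,j}(\phi_t)z_i$) combined with the Jacobian-nonvanishing open set to rule out the degenerate case $P=0$, the tautological family together with $z_0$-padding for the third bullet, the linear proportionality equations for the fibers, and the same-map correspondence pushed down by a proper projection for the last bullet. The only repackagings are harmless: in the second bullet the paper concludes directly that $\mathrm{pr}_2(Y)$ is closed since $W_{d^{n-1}}$ is complete, while you run the equivalent valuative/limit argument (legitimate here because your incidence relation exhibits the relevant set as the image of a closed subvariety under a proper projection, hence constructible, so closedness may be tested on one-parameter specializations), and in the fifth bullet your appeal to dominance of $\phi$ is actually unnecessary, since the cross-product equations alone force $\phi'=c\,\phi$ for a nonzero rational function $c$, whence $\psi_{\phi'}=\psi_\phi$.
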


Hence $W_d$ and $H_d$ are naturally algebraic varieties, 
 $\mathrm{Bir}_d(\mathbb{P}^n_\mathbb{C})$ also, but not 
$\mathrm{Bir}_{\leq d}(\mathbb{P}^n_\mathbb{C})$.

\begin{proof}[Proof of Lemma \ref{lem:propbf}]
\begin{itemize}
\item[$\diamond$] The set of homogeneous polynomials of 
degree $d$ in $(n+1)$ variables is a 
$\mathbb{C}$-vector space of dimension $\binom{d+n}{d}$;
this implies the first assertion.

\item[$\diamond$] Denote by $Y\subseteq W_{d^{n-1}}\times W_d$
the set defined by
\[
\big\{(\varphi,\phi)\in W_{d^{n-1}}\times W_d\,\vert\, \varphi\circ \phi=P\,\mathrm{id}\text{ for some $P\in\mathbb{C}[z_0,z_1,\ldots,z_n]_{d^n}$}\big\}.
\]

If $P$ is nonzero, then the rational maps $\psi_\phi$ and 
$\psi_\varphi$ are birational and inverses of each other.

If $P$ is zero, then $\psi_\phi$ contracts the entire
set $\mathbb{P}^n_\mathbb{C}$ onto a strict 
subvariety included in the set
$\big\{\varphi_1=\varphi_2=\ldots=\varphi_n=0\big\}$.

In particular for any pair $(\varphi,\phi)$ of $Y$ 
the rational map $\psi_\phi$ is birational if and 
only if its Jacobian is nonzero.

As a consequence any element $\phi\in H_d$ corresponds
to at least one pair $(\varphi,\phi)$ in $Y$ (indeed 
according
to \cite{BassConnelWright} the inverse of a birational 
self map of~$\mathbb{P}^n_\mathbb{C}$ of degree $d$ 
has degree $\leq d^{n-1}$).

The description of $Y$ shows that it is closed 
in $W_{d^{n-1}}\times W_d$. The 
image $\mathrm{pr}_2(Y)$ of $Y$ by the second
projection $\mathrm{pr}_2$ is closed in $W_d$
since $W_{d^{n-1}}$ is a complete variety and 
$\mathrm{pr}_2$ a Zariski closed morphism. 
One can write $H_d$ as 
$\mathcal{U}\cap\mathrm{pr}_2(Y)$ where 
$\mathcal{U}\subseteq W_d$ is the open set 
of elements having a nonzero Jacobian. 
As a result $H_d$ is locally closed in $W_d$ 
and closed in $\mathcal{U}$.

\item[$\diamond$] Consider the $H_d$-rational 
map $\phi$ defined by 
\begin{align*}
& f\colon H_d\times\mathbb{P}^n_\mathbb{C}\dashrightarrow H_d\times\mathbb{P}^n_\mathbb{C}&& (\varphi,z)\dashrightarrow(\varphi,\varphi(z)).
\end{align*}
Set $J=\det\left(\left(\frac{\partial \varphi_i}{\partial x_j}\right)_{0\leq i,\,j\leq n}\right)$.
Let $\mathcal{V}\subset H_d\times\mathbb{P}^n_\mathbb{C}$
be the open set where $J$ is not zero.

\begin{claim}[\cite{BlancFurter}] 
The restriction $f_{\vert\mathcal{V}}$ of 
$f$ to $\mathcal{V}$ is an open immersion.
\end{claim}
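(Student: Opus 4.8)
The plan is to recognise $f$ as a morphism over the parameter space $H_d$ — it is the identity on the first factor, i.e. an $H_d$-morphism — and to prove the statement fibre by fibre, then assemble the fibres into a statement over the family. First I would check that $f_{\vert\mathcal V}$ is genuinely a morphism. By Euler's relation $\sum_j z_j\,\partial_j\varphi_i=d\,\varphi_i$, at any common zero of $\varphi_0,\ldots,\varphi_n$ the vector $(z_0,\ldots,z_n)$ lies in the kernel of the Jacobian matrix $\big(\partial_j\varphi_i\big)$, so $J$ vanishes there; hence $\mathrm{Ind}(\varphi)\subseteq\{J=0\}$, and on $\mathcal V=\{J\neq 0\}$ each $\varphi$ is defined at $z$, making $f(\varphi,z)=(\varphi,\varphi(z))$ regular. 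Since $\mathcal V$ is open in $H_d\times\mathbb P^n_\mathbb C$, the projection $\mathcal V\to H_d$ is flat and of finite presentation, which is exactly the input the fibrewise criterion for open immersions needs.

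The heart of the argument is the fibre statement: for fixed $\varphi\in H_d$ the restriction
\[
\varphi_{\vert\mathcal V_\varphi}\colon\mathcal V_\varphi=\big\{z\in\mathbb P^n_\mathbb C\,\vert\, J_\varphi(z)\neq 0\big\}\to\mathbb P^n_\mathbb C
\]
is an open immersion. Here $J_\varphi(z)\neq 0$ says exactly that $\varphi$ is a local isomorphism at $z$; a morphism of smooth varieties of the same dimension whose differential is an isomorphism is \emph{étale}, so $\varphi_{\vert\mathcal V_\varphi}$ is étale, hence quasi-finite and separated, and it is birational (degree one onto its dense image) because $\varphi$ is. I would then invoke Zariski's Main Theorem to factor it as an open immersion followed by a finite morphism; degree one together with the normality of $\mathbb P^n_\mathbb C$ forces the finite factor to be an isomorphism, so $\varphi_{\vert\mathcal V_\varphi}$ is itself an open immersion. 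In particular it is injective — the point that cannot be seen from the differential alone.

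With the fibres understood, I would conclude by the fibrewise criterion for open immersions: an $H_d$-morphism between schemes of finite presentation over $H_d$, with flat source over $H_d$, all of whose fibres are open immersions, is an open immersion. This applies to $f_{\vert\mathcal V}$, whose fibres are the $\varphi_{\vert\mathcal V_\varphi}$, and gives that $f_{\vert\mathcal V}$ is an open immersion onto the open set $\big\{(\varphi,w)\,\vert\,w\in\varphi(\mathcal V_\varphi)\big\}$. An equivalent, more hands-on finish would be to check directly that $f_{\vert\mathcal V}$ is étale — its differential is block-triangular, the identity on the $H_d$-directions and the invertible $\mathrm d_z\varphi$ on the $\mathbb P^n$-directions — and universally injective, and then to exhibit the inverse as $(\varphi,w)\mapsto(\varphi,\varphi^{-1}(w))$, using from the proof of Lemma \ref{lem:propbf} (via \cite{BassConnelWright}) that $\deg\varphi^{-1}\leq d^{n-1}$, so that this inverse varies algebraically.

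The main obstacle, I expect, is not the pointwise computation but the passage from the fibrewise statement to the statement over the whole family: one must be careful that $H_d$ need not be smooth, so étaleness of $f_{\vert\mathcal V}$ should be extracted from flatness of $\mathcal V\to H_d$ together with the fibrewise open immersions, rather than from a naive tangent-space count on $H_d\times\mathbb P^n_\mathbb C$. The secondary subtle point is the injectivity in each fibre, for which the differential is useless and Zariski's Main Theorem (or the explicit bounded-degree inverse) is genuinely needed.
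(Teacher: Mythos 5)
Your proof is correct in substance, but note first that the paper itself contains no argument for this Claim: it is quoted from \cite{BlancFurter} and used as a black box inside the proof of Lemma \ref{lem:propbf}. So the relevant comparison is with the Blanc--Furter argument, which runs along a genuinely different (and more elementary) line than yours: instead of Zariski's Main Theorem plus the fibrewise criterion, one extracts injectivity \emph{and} a regular inverse directly from the correspondence $Y$ defined a few lines earlier in that same proof. If $(\hat\varphi,\varphi)\in Y$, so that $\hat\varphi\circ\varphi=P\,\mathrm{id}$, then every irreducible hypersurface inside $\{P=0\}$ is either contracted by $\varphi$ or lies in its base locus, hence is contained in $\{J=0\}$; therefore $\mathcal{V}\subseteq\{P\neq 0\}$ fibrewise, and for $z$ with $J(z)\neq 0$ one recovers $z$ from its image via $\hat\varphi(\varphi(z))=P(z)\,z\neq 0$. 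This gives injectivity on $\mathcal{V}$ and exhibits the local inverse as a morphism in one stroke, with no appeal to ZMT. What your route buys is generality (it would apply verbatim to any flat family of birational maps); what it costs is two technical debts. First, the fibrewise criterion for open immersions (EGA IV, 17.9.1) requires the fibres over \emph{all} scheme points of $H_d$, not only closed ones; your \'etale-plus-ZMT fibre argument does go through over the residue field $\kappa(s)$ of a non-closed point, but birationality of that fibre map is itself something one must justify there --- and the clean justification is again the correspondence $Y$, whose fibre over $s$ certifies birationality over an extension of $\kappa(s)$, whence over $\kappa(s)$. You flag the failure of smoothness of $H_d$, which is indeed a real issue for a naive tangent-space count, but this all-points subtlety deserved the same flag. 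Second, your ``hands-on finish'' $(\varphi,w)\mapsto(\varphi,\varphi^{-1}(w))$ is not immediately algebraic in $\varphi$: when $\deg\varphi^{-1}<d^{n-1}$ the lift of $\varphi^{-1}$ to $W_{d^{n-1}}$ is non-unique (one may multiply the inverse tuple by any homogeneous polynomial of complementary degree), so $\varphi\mapsto\hat\varphi$ is a multivalued correspondence rather than a morphism, and making this finish rigorous requires working on $Y$ or choosing local sections of $Y\to H_d$ --- which is, in effect, exactly what the Blanc--Furter argument supplies. Your two pivots for the fibre statement are sound: the Euler-relation observation that $\mathrm{Ind}(\varphi)\subseteq\{J=0\}$ (and, implicitly, that invertibility of the homogeneous Jacobian descends to the projective differential, again by Euler), and the recognition that fibrewise injectivity cannot be read off the differential and needs ZMT or an explicit bounded-degree inverse.
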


Hence $\pi_d$ is a morphism and it follows from 
the construction of $H_d$ that 
the image of $\pi_d$ is 
$\mathrm{Bir}_{\leq d}(\mathbb{P}^n_\mathbb{C})$.

\item[$\diamond$] Let $\phi$ be an element in 
$\mathrm{Bir}(\mathbb{P}^n_\mathbb{C})_{\leq d}$. 
It corresponds to a birational self map~$\psi_\phi$
of $\mathbb{P}^n_\mathbb{C}$ given by 
\begin{align*}
\psi_\phi\colon (z_0:z_1:\ldots:z_n)\dashrightarrow(\phi_0(z_0,z_1,\ldots,z_n):\phi_1(z_0,z_1,\ldots,z_n):\ldots:\phi_n(z_0,z_1,\ldots,z_n))
\end{align*}
for some homogeneous polynomials of degree $k\leq d$
having no common divisor. Then 
\[
(\pi_d)^{-1}(\psi_\phi)=\big\{(\varphi_0:\varphi_1:\ldots:\varphi_n)\in W_d\,\vert\, \varphi_i\phi_j=\varphi_j\phi_i\quad\forall\, 1\leq i<j\leq n\big\}\subset H_d.
\]
This set is thus closed in $W_d$, and so in $H_d$.

\item[$\diamond$] If $\ell$ is a positive integer 
and $F$ a closed subset of $H_\ell$, then we 
denote by~$Y_F$ the subset of $Y\times\overline{F}$
(where $Y\subset W_{d^{n-1}}\times W_d$ is as 
above and $\overline{F}$ is the closure of $F$ in
$W_\ell$) given by 
\[
Y_F=\big\{((\zeta,\phi),\varphi)\,\vert\, \text{$\phi$ and $\varphi$ yield the same map $\mathbb{P}^n_\mathbb{C}\dashrightarrow\mathbb{P}^n_\mathbb{C}$}\big\}.
\]
In other words
\[
Y_F=\big\{((\zeta,\phi),\varphi)\,\vert\, \phi_i\varphi_j=\phi_j\varphi_i\quad \forall\, i,\,j\big\}.
\]
\end{itemize}
Hence $Y_F$ is closed in $Y\times\overline{F}$ and 
also in $W_{d^{n-1}}\times W_d\times W_\ell$. The 
subset $\mathrm{pr}_2(Y_F)$ of $W_d$ is closed 
in $W_d$, and so in $\mathrm{pr}_2(Y)$; as a
result $\mathrm{pr}_2(Y_F)\cap\mathcal{U}$ is 
closed in $\mathrm{pr}_2(Y)\cap\mathcal{U}$. 
We conclude using the fact that 
$\mathrm{pr}_2(Y_F)\cap\mathcal{U}=(\pi_d)^{-1}(\pi_\ell(F))$ 
and 
$\mathrm{pr}_2(Y)\cap\mathcal{U}=H_d$.
\end{proof}

\begin{lem}[\cite{BlancFurter}]\label{lem:BlancFurter}
Let $V$ be an irreducible algebraic variety, and let 
$\upsilon\colon V\to\mathrm{Bir}(\mathbb{P}^n_\mathbb{C})$ be a morphism.
There exists an open affine covering $(\mathcal{V}_i)_{i\in I}$ of $V$ such that
for each $i$ there exist an integer $d_i$ and a morphism 
$\upsilon_i\colon \mathcal{V}_i\to H_{d_i}$ such that $\upsilon_{\vert \mathcal{V}_i}=\pi_{d_i}\circ\upsilon_i$.
\end{lem}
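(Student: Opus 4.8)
The plan is to unwind the definition of a morphism to $\mathrm{Bir}(\mathbb{P}^n_\mathbb{C})$ and then, over suitable affine charts of $V$, produce an explicit system of homogeneous formulas of fixed degree representing it. By definition the datum of $\upsilon$ is the same as a family $\varphi\colon V\times\mathbb{P}^n_\mathbb{C}\dashrightarrow V\times\mathbb{P}^n_\mathbb{C}$ restricting to an isomorphism between two open subsets $\mathcal{U}$, $\mathcal{V}$ that project surjectively onto $V$, with $\varphi(v,x)=(v,\varphi_v(x))$ and $\upsilon(v)=\varphi_v$. Since $\varphi$ commutes with $\mathrm{pr}_1$, for every $v\in V$ it restricts to an isomorphism $\mathcal{U}\cap(\{v\}\times\mathbb{P}^n_\mathbb{C})\xrightarrow{\sim}\mathcal{V}\cap(\{v\}\times\mathbb{P}^n_\mathbb{C})$ between non-empty opens of the fibre; hence \emph{every} $\varphi_v$ is birational, not merely the generic one.

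First I would introduce the rational map $\Psi=\mathrm{pr}_2\circ\varphi\colon V\times\mathbb{P}^n_\mathbb{C}\dashrightarrow\mathbb{P}^n_\mathbb{C}$, record that its restriction to a general fibre is $\varphi_v$ of the generic degree $d$, and note that this $d$ bounds $\deg\varphi_v$ for \emph{all} $v$ (a homogeneous component of the defining formulas that vanishes generically vanishes identically, so no higher degree can appear on a special fibre). The invertible sheaf $\mathcal{L}=\Psi^*\mathcal{O}_{\mathbb{P}^n_\mathbb{C}}(1)$, defined on the domain of $\Psi$ and extended across the indeterminacy locus, restricts to $\mathcal{O}(d)$ on a general fibre, so it has the form $\mathcal{L}\cong\mathrm{pr}_1^*M\otimes\mathrm{pr}_2^*\mathcal{O}(d)$ for some $M\in\mathrm{Pic}(V)$. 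Choosing an affine open covering $(\mathcal{V}_i)_{i\in I}$ of $V$ trivialising $M$, the sections $\Psi^*z_0,\ldots,\Psi^*z_n$ become, over $\mathcal{V}_i\times\mathbb{P}^n_\mathbb{C}$, homogeneous polynomials $\Phi_0,\ldots,\Phi_n$ of degree $d$ in $z_0,\ldots,z_n$ with coefficients in $\mathcal{O}(\mathcal{V}_i)$, which I take without common factor.

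The decisive step is to check that the tuple $(\Phi_0(v,\cdot),\ldots,\Phi_n(v,\cdot))$ is nonzero for \emph{every} $v\in\mathcal{V}_i$, so that $v\mapsto(\Phi_0(v,\cdot):\ldots:\Phi_n(v,\cdot))$ is a well-defined morphism $\upsilon_i\colon\mathcal{V}_i\to W_d\cong\mathbb{P}^k_\mathbb{C}$. For a reduced representation the common zero locus of the $\Phi_j$ is exactly $\mathrm{Ind}(\Psi)$; since $\mathcal{U}\subseteq V\times\mathbb{P}^n_\mathbb{C}\smallsetminus\mathrm{Ind}(\Psi)$ surjects onto $V$, no fibre $\{v\}\times\mathbb{P}^n_\mathbb{C}$ can be contained in $\mathrm{Ind}(\Psi)$, which forces the required non-vanishing. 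This is the point I expect to be the main obstacle, together with the subsidiary technical issue of extending $\mathcal{L}$ and its sections across the codimension $\geq 2$ indeterminacy locus and identifying $\mathcal{L}$ as $\mathrm{pr}_1^*M\otimes\mathrm{pr}_2^*\mathcal{O}(d)$; both I would handle by first reducing to the case $V$ normal and working chart by chart.

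Finally, for each $v$ the map $\psi_{\upsilon_i(v)}=\varphi_v$ is birational of degree $\leq d$, so $\upsilon_i$ takes values in $H_d$; as $H_d$ is locally closed in $W_d$ by Lemma \ref{lem:propbf}, the morphism $\upsilon_i\colon\mathcal{V}_i\to W_d$ factors through a morphism $\upsilon_i\colon\mathcal{V}_i\to H_d$. By construction $\pi_d\circ\upsilon_i(v)=\psi_{\upsilon_i(v)}=\varphi_v=\upsilon(v)$, hence $\upsilon_{|\mathcal{V}_i}=\pi_d\circ\upsilon_i$. In fact the argument shows that one may take $d_i=d$ uniformly, the degree bound being global; the freedom to let the $d_i$ vary in the statement is therefore only a matter of convenience.
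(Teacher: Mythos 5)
Your overall strategy — represent $\upsilon$ by homogeneous forms of bounded degree over affine charts, and get fibrewise non-vanishing from the fact that $\mathcal{U}$ surjects onto $V$ — is the right general idea, and that non-vanishing step is indeed the crux in the paper too. But two of your steps fail. First, the line-bundle argument (extending $\mathcal{L}=\Psi^*\mathcal{O}_{\mathbb{P}^n_\mathbb{C}}(1)$ across the indeterminacy locus and splitting it as $\mathrm{pr}_1^*M\otimes\mathrm{pr}_2^*\mathcal{O}(d)$) requires $V\times\mathbb{P}^n_\mathbb{C}$ to be locally factorial, i.e.\ essentially $V$ smooth: on a merely normal variety a line bundle off a codimension-$2$ set extends only as a reflexive sheaf, and "taking the $\Phi_j$ without common factor" presupposes a factoriality you do not have. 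Worse, "reducing to the case $V$ normal" is not a reduction at all: the lemma asks for affine charts of $V$ itself and morphisms $\mathcal{V}_i\to H_{d_i}$, and a factorization over the normalization $\widetilde{V}$ does not descend, precisely because regular functions on $\widetilde{V}$ are not regular on $V$. The paper sidesteps all of this with a purely local computation valid for arbitrary irreducible $V$: around each $p_0$ it picks $w_0=(p_0,y)\in\mathcal{U}$, writes $\varphi$ near $w_0$ in affine coordinates as $(R_1/Q_1,\ldots,R_n/Q_n)$ with $R_i,Q_i\in\mathbb{C}[\mathcal{V}_0][x_1,\ldots,x_n]$ and $Q_i(w_0)\neq 0$, clears denominators to obtain homogeneous $P_0,\ldots,P_n$ of some degree $d_0$ not all vanishing at $w_0$, and then shrinks the chart to an affine open inside $\mathrm{pr}_1$ of the locus where some $P_i\neq 0$ — no Picard groups, no smoothness, and a degree $d_0$ that genuinely depends on the point.

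Second, your closing claim that one may take $d_i=d$ uniformly equal to the generic degree is false, and the paper's own nodal-cubic family $\phi_{a,b,c}$, parameterized by $a^3+b^3=abc$, refutes it. The generic degree is $2$: away from the node $(0:0:1)$ the three cubic forms have the common linear factor $az_0+bz_2$, and dividing it out and clearing denominators yields the degree-$2$ tuple $\bigl(z_0(b^2z_0+a^2z_2):z_1(b(a+b)z_0+a^2z_2):z_2(b^2z_0+a^2z_2)\bigr)$, which vanishes identically exactly at the node. Since a map of degree exactly $2$ has a single preimage under $\pi_2$ (Lemma \ref{lem:propbf}), any local lift to $H_2$ near the node must agree with this tuple up to a scalar function along each branch; parameterizing the branches by $(a,b)\approx(t^2,t)$ and $(a,b)\approx(t,t^2)$ and dividing by $t^2$, the limits at $t=0$ are $(z_0^2:z_0z_1:z_0z_2)$ and $(z_0z_2:z_1z_2:z_2^2)$ — two \emph{distinct} points of $W_2$, so no morphism from any neighbourhood of the node to $H_2$ lifts $\upsilon$, and one is forced to take $d_i=3$ there. (Your machinery, where it applies, i.e.\ $V$ smooth, does give uniform degree; the varying $d_i$ in the statement are exactly the price of allowing arbitrary $V$.) Relatedly, your one-line justification that $\deg\varphi_v\leq d$ for all $v$ does not parse as a proof; note that semicontinuity of the degree is deduced in the paper \emph{from} this lemma via Corollary \ref{cor:BlancFurter}, so it cannot be invoked here without a direct argument.
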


\begin{proof}
Consider a morphism 
$\tau\colon V\to\mathrm{Bir}(\mathbb{P}^n_\mathbb{C})$
given by a $V$-birational map
\[
\phi\colon V\times\mathbb{P}^n_\mathbb{C}\dashrightarrow V\times\mathbb{P}^n_\mathbb{C}
\]
which restricts to an open immersion on an open 
set $\mathcal{U}$. Take a point $p_0$ in $V$. 
Let $\mathcal{V}_0\subset V$ be an open affine set containing
$p_0$. Take an element $w_0=(p_0,y)$ of 
$\mathcal{U}$. Let us fix homogeneous coordinates
$(z_0:z_1:\ldots:z_n)$ on $\mathbb{P}^n_\mathbb{C}$
such that 
\begin{itemize}
\item[$\diamond$] $y=(1:0:0:\ldots:0)$,

\item[$\diamond$] $\phi(w_0)$ does not belong to the 
plane $z_0=0$.
\end{itemize}

Let us denote by 
$\mathbb{A}^n_\mathbb{C}\subset\mathbb{P}^n_\mathbb{C}$
the affine set where $z_0=1$;
\begin{align*}
&x_1=\frac{z_1}{z_0}&&x_2=\frac{z_2}{z_0} &&\ldots && x_n=\frac{z_n}{z_0} 
\end{align*}
are natural affine coordinates of $\mathbb{A}^n_\mathbb{C}$.
The map $\phi$ restricts to a rational map of
$\mathcal{V}_0\times\mathbb{P}^n_\mathbb{C}$ defined at $w_0$. 
Its composition with the projection on the $i$-th 
coordinate is a rational function on 
$\mathcal{V}_0\times\mathbb{A}^n_\mathbb{C}$ defined at $w_0$.
Hence $\phi_{\vert \mathcal{V}_0\times\mathbb{A}^n_\mathbb{C}}$
can be written in a neighborhood of $w_0$ as 
\[
(v,x_1,x_2,\ldots,x_n)\mapsto\left(\frac{R_1}{Q_1},\frac{R_2}{Q_2},\ldots,\frac{R_n}{Q_n}\right)
\]
for some $R_i$, $Q_i$ in $\mathbb{C}[V][x_1,x_2,\ldots,x_n]$
such that none of the $Q_i$ vanish at $w_0$. 
As a result $\phi$ is given in a neighborhood of 
$w_0$ by 
\[
\big(v,(z_0:z_1:\ldots:z_n)\big)\mapsto(P_0:P_1:\ldots:P_n)
\]
where the $P_i\in\mathbb{C}[\mathcal{V}_0][z_0,z_1,\ldots,z_n]$ 
are homogeneous polynomials of the same degree $d_0$
such that not all vanish at $w_0$. Denote by 
$\mathcal{U}_0$ the set of points of 
$(V\times\mathbb{P}^n_\mathbb{C})\cap\mathcal{U}$
where at least one of the $P_i$ does not vanish; 
$\mathcal{U}_0$ is an open subset of 
$V\times\mathbb{P}^n_\mathbb{C}$. Its projection 
$\mathrm{pr}_1(\mathcal{U}_0)$ on $V$ is an 
open subset of $\mathcal{V}_0$ containing $p_0$. There 
thus exists an affine open subset
$\widetilde{A_0}\subseteq\mathrm{pr}_1(\mathcal{U}_0)$
containing $p_0$. The $n$-uple $(P_0,P_1,\ldots,P_n)$
yields to a morphism 
$\upsilon_0\colon\widetilde{A_0}\to H_d$.
By construction 
$\upsilon_{\vert \widetilde{A_0}}=\pi_d\circ\upsilon_0$.
If we repeat this process for any point of $V$ we
get an affine covering.
\end{proof}

Lemma \ref{lem:BlancFurter} implies the following one:

\begin{cor}[\cite{BlancFurter}]\label{cor:BlancFurter}
\begin{itemize}
\item[$\diamond$] A set $S\subseteq\mathrm{Bir}(\mathbb{P}^n_\mathbb{C})$ 
is closed if and only if $\pi_d^{-1}(S)$ is closed in $H_d$ for any 
$d\geq 1$. 

\item[$\diamond$] For any $d$,the set $\mathrm{Bir}_{\leq d}(\mathbb{P}^n_\mathbb{C})$ 
is closed in $\mathrm{Bir}(\mathbb{P}^n_\mathbb{C})$.

\item[$\diamond$] For any $d$, the map
$\pi_d\colon H_d\to\mathrm{Bir}_{\leq d}(\mathbb{P}^n_\mathbb{C})$ 
is surjective, continuous and closed. In particular it is a topological 
quotient map.
\end{itemize}
\end{cor}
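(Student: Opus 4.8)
The plan is to deduce all three assertions from Lemma~\ref{lem:BlancFurter} together with the structural properties collected in Lemma~\ref{lem:propbf}; the first assertion is the crux, and the other two will follow from it by short topological arguments.

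First I would establish the characterization of closed sets. The forward implication is immediate: since $\pi_d\colon H_d\to\mathrm{Bir}(\mathbb{P}^n_\mathbb{C})$ is itself a morphism by Lemma~\ref{lem:propbf}, the very definition of the Zariski topology (applied to the variety $V=H_d$ and the morphism $\pi_d$) forces $\pi_d^{-1}(S)$ to be closed in $H_d$ whenever $S$ is closed. For the substantial converse, suppose $\pi_d^{-1}(S)$ is closed in $H_d$ for every $d\geq 1$, and let $\upsilon\colon V\to\mathrm{Bir}(\mathbb{P}^n_\mathbb{C})$ be an arbitrary morphism from an irreducible variety $V$. By Lemma~\ref{lem:BlancFurter} there are an affine open covering $(\mathcal{V}_i)_{i\in I}$ of $V$ and morphisms $\upsilon_i\colon\mathcal{V}_i\to H_{d_i}$ with $\upsilon_{\vert\mathcal{V}_i}=\pi_{d_i}\circ\upsilon_i$. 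On each piece $\upsilon^{-1}(S)\cap\mathcal{V}_i=\upsilon_i^{-1}\big(\pi_{d_i}^{-1}(S)\big)$ is closed in $\mathcal{V}_i$, because $\pi_{d_i}^{-1}(S)$ is closed by hypothesis and $\upsilon_i$ is continuous. Since closedness is a local property on an open covering, $\upsilon^{-1}(S)$ is closed in $V$; as $\upsilon$ was arbitrary, $S$ is closed.

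Second, to see that $\mathrm{Bir}_{\leq d}(\mathbb{P}^n_\mathbb{C})$ is closed I would invoke the characterization just proved: it suffices to check that $\pi_\ell^{-1}\big(\mathrm{Bir}_{\leq d}(\mathbb{P}^n_\mathbb{C})\big)$ is closed in $H_\ell$ for every $\ell\geq 1$. But $\mathrm{Bir}_{\leq d}(\mathbb{P}^n_\mathbb{C})=\pi_d(H_d)$ by Lemma~\ref{lem:propbf}, and $H_d$ is trivially closed in itself, so the last assertion of Lemma~\ref{lem:propbf} (applied to the closed set $S=H_d$, with the roles of the two indices interchanged) delivers exactly the required closedness. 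Third, for the map $\pi_d\colon H_d\to\mathrm{Bir}_{\leq d}(\mathbb{P}^n_\mathbb{C})$: surjectivity is the image statement of Lemma~\ref{lem:propbf}; continuity follows because, $\mathrm{Bir}_{\leq d}(\mathbb{P}^n_\mathbb{C})$ being closed, its closed subsets in the subspace topology are precisely the closed subsets of $\mathrm{Bir}(\mathbb{P}^n_\mathbb{C})$ contained in it, and the characterization shows their $\pi_d$-preimages are closed. For closedness of the map I would take $F\subset H_d$ closed and apply the characterization to $\pi_d(F)$, so that I must verify $\pi_\ell^{-1}(\pi_d(F))$ is closed in $H_\ell$ for every $\ell$ --- once more this is the last assertion of Lemma~\ref{lem:propbf}. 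A continuous, surjective, closed map is automatically a topological quotient map.

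The entire argument is topological bookkeeping once the two lemmas are in hand, so I do not expect a genuine obstacle inside the corollary itself. The only point demanding care is keeping the roles of the indices $\ell$ and $d$ straight when invoking the final bullet of Lemma~\ref{lem:propbf}, and noting that all the real content --- the reduction of an arbitrary family $V\to\mathrm{Bir}(\mathbb{P}^n_\mathbb{C})$ to the universal parameter spaces $H_d$ --- is carried entirely by Lemma~\ref{lem:BlancFurter}.
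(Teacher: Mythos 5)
Your proof is correct and takes essentially the same route as the paper's: the forward half of the first assertion from the definition of the Zariski topology applied to the morphism $\pi_d$, the converse via the local factorization $\upsilon_{\vert\mathcal{V}_i}=\pi_{d_i}\circ\upsilon_i$ of Lemma~\ref{lem:BlancFurter}, and the second and third assertions by feeding the last bullet of Lemma~\ref{lem:propbf} (with the indices interchanged, as you correctly flag) into the characterization just established. Your slightly more explicit justification of continuity, routed through the closedness of $\mathrm{Bir}_{\leq d}(\mathbb{P}^n_\mathbb{C})$, is a harmless elaboration of the paper's one-line observation that $\pi_d$, being a morphism, is continuous.
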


\begin{proof}
  Let us prove the first assertion. Assume that $S$ 
is closed in $\mathrm{Bir}(\mathbb{P}^n_\mathbb{C})$.
Recall that a subset of $\mathrm{Bir}(\mathbb{P}^n_\mathbb{C})$ is closed in 
$\mathrm{Bir}(\mathbb{P}^n_\mathbb{C})$ if and only if its preimage by any 
morphism is closed. Since any 
$\pi_d\colon H_d\to\mathrm{Bir}(\mathbb{P}^n_\mathbb{C})$ is a morphism
$\pi_d^{-1}(S)$ is thus closed in $H_d$. 

Conversely suppose that $\pi_d^{-1}(S)$ is closed in $H_d$ for any $d$. Let 
$V$ be an irreducible algebraic variety, and let 
$\upsilon\colon V\to\mathrm{Bir}(\mathbb{P}^n_\mathbb{C})$ be a morphism. 
According to Lemma \ref{lem:BlancFurter} there exists an open affine 
covering $(\mathcal{V}_i)_{i\in I}$ of $V$ such that for any $i$ there exist
\begin{itemize}
\item[$\diamond$] an integer $d_i$,
\item[$\diamond$] a morphism $\upsilon_i\colon \mathcal{V}_i\to H_{d_i}$
\end{itemize}
with 
$\upsilon_{\vert \mathcal{V}_i}=\pi_{d_i}\circ\upsilon_i$. As $\pi_{d_i}^{-1}(S)$ is closed
and $\upsilon^{-1}(S)\cap \mathcal{V}_i=\upsilon_i^{-1}(\pi_{d_i}^{-1}(F))$ one gets that 
$\upsilon^{-1}(S)\cap~\mathcal{V}_i$ is closed in $\mathcal{V}_i$ for any $i$. As a result 
$\upsilon^{-1}(S)$ is closed.

\medskip

We will now prove the second assertion. According to 
the first assertion it suffices to prove that 
\[
\pi_\ell^{-1}\big(\mathrm{Bir}_{\leq d}(\mathbb{P}^n_\mathbb{C})\big)=\pi_\ell^{-1}(\pi_d(H_d))
\]
is closed in $H_\ell$ for any $\ell$. This follows from 
Lemma \ref{lem:propbf}.

\medskip

Finally let us prove the third assertion. The 
surjectivity follows from the construction of $H_d$ and $\pi_d$ 
(\emph{see} \cite{BlancFurter}). Since $\pi_d$ is a morphism, 
$\pi_d$ is continuous. Let $S\subseteq H_d$ be a closed subset. 
According to 
Lemma \ref{lem:propbf} the set 
$\pi_\ell^{-1}(\pi_d(S))$ is closed in $H_\ell$ for any~$\ell$. The 
first assertion allows to conclude.
\end{proof}

The first and third assertions of Corollary \ref{cor:BlancFurter} imply:

\begin{pro}[\cite{BlancFurter}]
The Zariski topology of $\mathrm{Bir}(\mathbb{P}^n_\mathbb{C})$ is the inductive
limit topology given by the Zariski topologies of 
$\mathrm{Bir}_{\leq d}(\mathbb{P}^n_\mathbb{C})$, $d\in\mathbb{N}$, 
which are the quotient topology of 
\[
\pi_d\colon H_d\to\mathrm{Bir}_{\leq d}(\mathbb{P}^n_\mathbb{C})
\]
where $H_d$ is endowed with its Zariski topology.
\end{pro}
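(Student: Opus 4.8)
The plan is to deduce the statement as a formal consequence of the first and third assertions of Corollary \ref{cor:BlancFurter}, without any new geometric input. The claim that each $\mathrm{Bir}_{\leq d}(\mathbb{P}^n_\mathbb{C})$ carries the quotient topology of $\pi_d$ is immediate: the third assertion says precisely that $\pi_d\colon H_d\to\mathrm{Bir}_{\leq d}(\mathbb{P}^n_\mathbb{C})$ is a topological quotient map, so the Zariski topology on $\mathrm{Bir}_{\leq d}(\mathbb{P}^n_\mathbb{C})$ is the final topology transported from the Zariski topology of $H_d$. It remains to identify the Zariski topology of $\mathrm{Bir}(\mathbb{P}^n_\mathbb{C})$ with the inductive limit topology of the filtration $\mathrm{Bir}_{\leq 1}(\mathbb{P}^n_\mathbb{C})\subseteq\mathrm{Bir}_{\leq 2}(\mathbb{P}^n_\mathbb{C})\subseteq\ldots$.

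First I would phrase the inductive limit topology through its closed sets: a subset $S\subseteq\mathrm{Bir}(\mathbb{P}^n_\mathbb{C})$ is closed for it if and only if $S\cap\mathrm{Bir}_{\leq d}(\mathbb{P}^n_\mathbb{C})$ is closed in $\mathrm{Bir}_{\leq d}(\mathbb{P}^n_\mathbb{C})$ for every $d$. The goal is thus to prove that this condition is equivalent to $S$ being Zariski-closed in $\mathrm{Bir}(\mathbb{P}^n_\mathbb{C})$. The pivotal observation is that, since the image of $\pi_d$ is exactly $\mathrm{Bir}_{\leq d}(\mathbb{P}^n_\mathbb{C})$, one has
\[
\pi_d^{-1}(S)=\pi_d^{-1}\big(S\cap\mathrm{Bir}_{\leq d}(\mathbb{P}^n_\mathbb{C})\big)
\]
for every subset $S$. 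Feeding this into the quotient property of $\pi_d$ gives, for each $d$, the equivalence that $S\cap\mathrm{Bir}_{\leq d}(\mathbb{P}^n_\mathbb{C})$ is closed in $\mathrm{Bir}_{\leq d}(\mathbb{P}^n_\mathbb{C})$ precisely when $\pi_d^{-1}(S)$ is closed in $H_d$. The first assertion of Corollary \ref{cor:BlancFurter} states that $S$ is Zariski-closed in $\mathrm{Bir}(\mathbb{P}^n_\mathbb{C})$ if and only if $\pi_d^{-1}(S)$ is closed in $H_d$ for all $d$. Chaining the two equivalences over all $d$ yields exactly the inductive limit characterization, completing the argument.

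Because the proof is a short chain of equivalences, there is no genuine obstacle; the only point demanding care is the bookkeeping of topologies. Specifically, one must confirm that the quotient topology furnished by the third assertion is the same Zariski topology on $\mathrm{Bir}_{\leq d}(\mathbb{P}^n_\mathbb{C})$ that enters the first assertion, so that the two equivalences are composable. It is also worth recording that the filtration inclusions are closed embeddings (second assertion of Corollary \ref{cor:BlancFurter}), which ensures that the subspace topology and the intrinsic quotient topology on each $\mathrm{Bir}_{\leq d}(\mathbb{P}^n_\mathbb{C})$ coincide and that the inductive limit is well-behaved.
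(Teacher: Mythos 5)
Your argument is correct and is exactly the paper's route: the paper derives this proposition with no further proof beyond the remark that the first and third assertions of Corollary \ref{cor:BlancFurter} imply it, and your chain of equivalences (using $\pi_d^{-1}(S)=\pi_d^{-1}\big(S\cap\mathrm{Bir}_{\leq d}(\mathbb{P}^n_\mathbb{C})\big)$ together with the quotient property of $\pi_d$) simply makes that deduction explicit. Your closing remarks on the compatibility of the topologies and on the closed embeddings from the second assertion are sound bookkeeping, nothing more is needed.
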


\subsection{Algebraic subgroups}

An \textsl{algebraic subgroup}\index{defi}{algebraic subgroup of 
$\mathrm{Bir}(\mathbb{P}^n_\mathbb{C})$} of $\mathrm{Bir}(\mathbb{P}^n_\mathbb{C})$ is a subgroup 
$\mathrm{G}\subset\mathrm{Bir}(\mathbb{P}^n_\mathbb{C})$ 
which is the image of an algebraic group~$\mathrm{H}$ by a
homomorphism~$\upsilon$ such that
$\upsilon\colon\mathrm{H}\to\mathrm{Bir}(\mathbb{P}^n_\mathbb{C})$
is a morphism; by Lemma \ref{lem:BlancFurter}
 any algebraic group is contained in some 
$\mathrm{Bir}_{\leq d}(\mathbb{P}^n_\mathbb{C})$, {\it i.e.} 
any algebraic group has 
\textsl{bounded degree}\index{defi}{bounded degree (group of)}.
Corollary \ref{cor:agree} allows to give a more intrinsic
definition of algebraic groups which corresponds to taking 
closed subgroups of 
$\mathrm{Bir}(\mathbb{P}^n_\mathbb{C})$ of bounded 
degree. Lemma \ref{lem:agree} shows that these two definitions 
agree.

\begin{pro}[\cite{BlancFurter}]\label{pro:agree}
Let $\mathrm{G}$ be a subgroup of 
$\mathrm{Bir}(\mathbb{P}^n_\mathbb{C})$. 
Assume that 
\begin{itemize}
\item[$\diamond$] $\mathrm{G}$ is closed for the 
Zariski topology;

\item[$\diamond$] $\mathrm{G}$ is connected for the
Zariski topology;

\item[$\diamond$] $\mathrm{G}\subset\mathrm{Bir}_{\leq d}(\mathbb{P}^n_\mathbb{C})$ for some integer $d$.
\end{itemize}

If $d$ is choosen minimal, then the set 
$(\pi_d)^{-1}\big(\mathrm{G}\cap\mathrm{Bir}_d(\mathbb{P}^n_\mathbb{C})\big)$
is non empty. Let us denote by~$\mathrm{K}$ the closure of 
$(\pi_d)^{-1}\big(\mathrm{G}\cap\mathrm{Bir}_d(\mathbb{P}^n_\mathbb{C})\big)$
in $H_d$. Then 
\begin{itemize}
\item[$\diamond$] $\pi_d$ induces a homeomorphism 
$\mathrm{K}\to\mathrm{G}$;

\item[$\diamond$] if $V$ is an irreducible algebraic
variety, the morphisms
$V\to\mathrm{Bir}(\mathbb{P}^n_\mathbb{C})$
having image in $\mathrm{G}$ correspond, via
$\pi_d$, to the morphisms of algebraic 
varieties $V\to\mathrm{K}$;

\item[$\diamond$] the liftings to $\mathrm{K}$
of the maps 
\begin{align*}
& \mathrm{G}\times\mathrm{G}\to\mathrm{G},\, (\varphi,\psi)
&&
\mathrm{G}\to\mathrm{G},\,\varphi\mapsto\varphi^{-1}
\end{align*}
give rise to morphisms of algebraic varieties 
$\mathrm{K}\times\mathrm{K}\to\mathrm{K}$
and $\mathrm{K}\to\mathrm{K}$.
\end{itemize}
This gives $\mathrm{G}$ a unique structure of 
algebraic group.
\end{pro}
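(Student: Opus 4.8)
The plan is to transport the algebraic structure of the closed subvariety $\mathrm{K}$ of $H_d$ to $\mathrm{G}$ through the bijection induced by $\pi_d$, and then to check that the group operations and the families are morphisms by invoking the universal properties already packaged in Lemma \ref{lem:BlancFurter} and Corollary \ref{cor:BlancFurter}.

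First I would settle the preliminary reductions. Since $d$ is minimal, $\mathrm{G}\not\subseteq\mathrm{Bir}_{\leq d-1}(\mathbb{P}^n_\mathbb{C})$, so $\mathrm{G}$ contains an element of degree exactly $d$; as $\pi_d$ restricts to a bijection from the reduced locus $U=\pi_d^{-1}(\mathrm{Bir}_d(\mathbb{P}^n_\mathbb{C}))$ (open in $H_d$) onto $\mathrm{Bir}_d(\mathbb{P}^n_\mathbb{C})$, the set $B:=\mathrm{G}\cap\mathrm{Bir}_d(\mathbb{P}^n_\mathbb{C})$ is nonempty and so is $(\pi_d)^{-1}(B)$. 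Because $\mathrm{Bir}_{\leq d-1}(\mathbb{P}^n_\mathbb{C})$ is closed (Corollary \ref{cor:BlancFurter}), $B$ is open in $\mathrm{G}$, and here is where connectedness enters: I would use it, via irreducibility of $\mathrm{G}$ and hence of $\mathrm{K}$, to conclude that $B$ is dense in $\mathrm{G}$, so $\overline{B}=\mathrm{G}$. Since $\mathrm{G}$ is closed, $(\pi_d)^{-1}(\mathrm{G})$ is closed in $H_d$ (Corollary \ref{cor:BlancFurter}), hence contains $\mathrm{K}=\overline{(\pi_d)^{-1}(B)}$, which is therefore a closed subvariety of $H_d$ and a quasi-projective variety; combined with $\pi_d$ being continuous and closed, this gives $\pi_d(\mathrm{K})=\overline{B}=\mathrm{G}$ and shows $\pi_d|_{\mathrm{K}}\colon\mathrm{K}\to\mathrm{G}$ is a continuous closed surjection which is an isomorphism over the dense open $B$.

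The key step, and the one I expect to be the main obstacle, is the injectivity of $\pi_d|_{\mathrm{K}}$. Note that $\mathrm{K}$ is in general strictly smaller than the saturated set $(\pi_d)^{-1}(\mathrm{G})$: over an element $\psi\in\mathrm{G}$ of degree $k<d$ the full fibre of $\pi_d$ is the projective space of tuples $P\cdot\underline{\phi}$, with $\underline{\phi}$ the reduced representative and $\deg P=d-k$, whereas the claim is that $\mathrm{K}$ meets this fibre in a single point. The difficulty is precisely that this forces the limiting common factor $P$ acquired along a degeneration $g_t\to\psi$ inside $\mathrm{G}$ to be independent of the chosen degeneration. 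The plan is to prove this by showing that the non-reduced limit is determined by the family and not by the path: every morphism from a variety $V$ lifts, Zariski-locally, to a morphism into some $H_{d_i}$ by Lemma \ref{lem:BlancFurter}, and such a lift produces a well-defined tuple, hence a well-defined factor $P$, at every point, including the degenerate ones. Applying this to a morphism parameterising a neighbourhood of $\psi$ in $\mathrm{G}$, and using irreducibility of $\mathrm{K}$, forces $\mathrm{K}\cap\pi_d^{-1}(\psi)$ to reduce to that single tuple. This is the technical heart; once it is in place, $\pi_d|_{\mathrm{K}}$ is a continuous closed bijection, hence a homeomorphism $\mathrm{K}\to\mathrm{G}$.

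It then remains to promote this homeomorphism to an identification of the families and to transport the group law. If $f\colon V\to\mathrm{K}\subseteq H_d$ is a morphism of varieties, then $\pi_d\circ f\colon V\to\mathrm{Bir}(\mathbb{P}^n_\mathbb{C})$ is a morphism with image in $\mathrm{G}$, because $\pi_d$ is a morphism (Lemma \ref{lem:propbf}); conversely, given a morphism $\upsilon\colon V\to\mathrm{Bir}(\mathbb{P}^n_\mathbb{C})$ with image in $\mathrm{G}$, I would use Lemma \ref{lem:BlancFurter} to lift it Zariski-locally to morphisms into the $H_{d_i}$, check that these local lifts coincide with the set-theoretic lift $(\pi_d|_{\mathrm{K}})^{-1}\circ\upsilon$ furnished by the homeomorphism, and conclude that the latter is a morphism into the closed subvariety $\mathrm{K}$ of $H_d$, the reconciliation of the varying local degrees $d_i$ with the fixed $d$ being exactly what the injectivity statement above guarantees. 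With this universal property in hand, the group operations lift automatically: the assignments $(\varphi,\psi)\mapsto\varphi\circ\psi$ and $\varphi\mapsto\varphi^{-1}$ define morphisms $\mathrm{K}\times\mathrm{K}\to\mathrm{Bir}(\mathbb{P}^n_\mathbb{C})$ and $\mathrm{K}\to\mathrm{Bir}(\mathbb{P}^n_\mathbb{C})$ as in Remark \ref{rem:homeo}, whose images lie in $\mathrm{G}$ since $\mathrm{G}$ is a subgroup, so by the correspondence of families they lift to morphisms $\mathrm{K}\times\mathrm{K}\to\mathrm{K}$ and $\mathrm{K}\to\mathrm{K}$. Transporting the variety structure of $\mathrm{K}$ to $\mathrm{G}$ through the homeomorphism therefore makes $\mathrm{G}$ an algebraic group, the axioms holding because they hold in $\mathrm{Bir}(\mathbb{P}^n_\mathbb{C})$ and the operations are morphisms. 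Uniqueness follows from the universal property: any two such structures have the same families $V\to\mathrm{G}$, so applying the correspondence with $V=\mathrm{G}$ to the identity exhibits $\mathrm{id}_{\mathrm{G}}$ as an isomorphism between them.
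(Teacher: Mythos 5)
First, a caveat on the comparison: the paper states Proposition \ref{pro:agree} without proof, citing \cite{BlancFurter}, so the benchmark is Blanc--Furter's argument --- and the technical core of that argument is precisely the step your sketch leaves unresolved. Your outline of the easy parts is correct (nonemptiness of $\mathrm{G}\cap\mathrm{Bir}_d(\mathbb{P}^n_\mathbb{C})$ from minimality of $d$, its openness in $\mathrm{G}$, closedness of $\mathrm{K}$ inside $(\pi_d)^{-1}(\mathrm{G})$, surjectivity of $\pi_d|_{\mathrm{K}}$ from Corollary \ref{cor:BlancFurter}, and the transport-of-structure endgame), and you correctly identify injectivity of $\pi_d|_{\mathrm{K}}$ over the degenerate locus as the heart. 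But the mechanism you propose for it does not work. It is circular: at that stage of the proof there is no ``morphism parameterising a neighbourhood of $\psi$ in $\mathrm{G}$'' other than $\pi_d|_{\mathrm{K}}$ itself, since the existence of such a parameterization is exactly the variety structure being constructed. And it rests on a false premise: the local lifts of Lemma \ref{lem:BlancFurter} do \emph{not} produce ``a well-defined tuple, hence a well-defined factor $P$, at every point''. Lifts are highly non-unique --- the constant family $\mathrm{id}$ lifts to $H_2$ as $v\mapsto(P_vz_0:\ldots:P_vz_n)$ for \emph{any} nowhere-vanishing family of linear forms $P_v$. What is true is that two degree-$d$ lifts of one family over an irreducible base with generic degree exactly $d$ coincide (they agree on the dense open where the representative is reduced, hence everywhere by separatedness of $H_d$); but this only compares lifts of a \emph{single} family, and says nothing against two distinct curves in $(\pi_d)^{-1}\big(\mathrm{G}\cap\mathrm{Bir}_d(\mathbb{P}^n_\mathbb{C})\big)$ degenerating to the same $\psi$ with different limit factors $P_1\neq P_2$. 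Topologically nothing forbids this: $\pi_d|_{\mathrm{K}}$ is a closed continuous surjection bijective over a dense open, and such maps can have positive-dimensional fibres (a blow-down is the standard example). Ruling it out genuinely requires the group structure of $\mathrm{G}$ --- for instance the fact that $\psi\circ\mathrm{G}=\mathrm{G}$ provides $g\in\mathrm{G}$ with $\deg(\psi\circ g)=d$, which is how composition is brought to bear in \cite{BlancFurter} --- and your sketch never uses it at this point.

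Two secondary steps are also asserted rather than proved. First, ``connected implies irreducible'' is false for topological spaces; here it does hold for $\mathrm{G}$, but only via the argument the paper itself runs in the proof of Corollary \ref{cor:agree}: $(\pi_d)^{-1}(\mathrm{G})$ is closed in $H_d$, hence has finitely many irreducible components whose closed images cover $\mathrm{G}$, and since translations are homeomorphisms (Remark \ref{rem:homeo}) one argues as for algebraic groups that connected components coincide with irreducible components. Without this, the density of $B=\mathrm{G}\cap\mathrm{Bir}_d(\mathbb{P}^n_\mathbb{C})$ in $\mathrm{G}$ --- which you need for $\pi_d(\mathrm{K})=\mathrm{G}$ --- is unjustified. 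Second, in the converse direction of the families correspondence, injectivity of $\pi_d|_{\mathrm{K}}$ does not by itself make the set-theoretic lift $(\pi_d|_{\mathrm{K}})^{-1}\circ\upsilon$ a morphism: you additionally need the \emph{existence}, locally on $V$, of a morphism lift of degree exactly $d$ landing in $\mathrm{K}$, i.e.\ the ability to divide the lifts of Lemma \ref{lem:BlancFurter} by their common factors in families --- including for families whose image lies entirely in $\mathrm{G}\cap\mathrm{Bir}_{\leq d-1}(\mathbb{P}^n_\mathbb{C})$, where the relevant part of $\mathrm{K}$ is exactly the boundary locus controlled by the injectivity analysis. This is again a dedicated lemma in \cite{BlancFurter}, not a formal consequence of bijectivity, so the proposal as written has a genuine gap at its load-bearing step.
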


\begin{cor}[\cite{BlancFurter}]\label{cor:agree}
Let $\mathrm{G}$ be a subgroup of 
$\mathrm{Bir}(\mathbb{P}^n_\mathbb{C})$. Assume
that $\mathrm{G}$ is 
\begin{itemize}
\item[$\diamond$] closed for the Zariski
topology,

\item[$\diamond$] of bounded degree.
\end{itemize}

Then there exist an algebraic group $\mathrm{K}$
together with a morphism 
$\mathrm{K}\to\mathrm{Bir}(\mathbb{P}^n_\mathbb{C})$
inducing a homeomorphism 
$\pi\colon\mathrm{K}\to\mathrm{G}$ such that:
\begin{itemize}
\item[$\diamond$] $\pi$ is a 
group homomorphism 
\item[$\diamond$] and for any 
irreducible algebraic variety $V$ the 
morphisms $V\to\mathrm{Bir}(\mathbb{P}^n_\mathbb{C})$
having their image in $\mathrm{G}$ correspond,
via $\pi$, to the morphisms of algebraic 
varieties $V\to\mathrm{K}$.
\end{itemize}
\end{cor}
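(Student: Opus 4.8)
The plan is to deduce the Corollary from Proposition \ref{pro:agree} by reducing the general case to the connected case through the identity component $\mathrm{G}^0$ of $\mathrm{G}$. The whole argument rests on two facts: that $\mathrm{G}$ has only finitely many connected components, so that $\mathrm{G}^0$ has finite index; and that translations, inversion, and composites of families are compatible with the notion of morphism into $\mathrm{Bir}(\mathbb{P}^n_\mathbb{C})$, as recorded in Remark \ref{rem:homeo}. Note that I never need joint continuity of the group law on $\mathrm{G}$, only that each translation and the inversion are homeomorphisms.

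First I would check that $\mathrm{G}$ has finitely many connected components. Since $\mathrm{G}$ has bounded degree it is contained in some $\mathrm{Bir}_{\leq d}(\mathbb{P}^n_\mathbb{C})=\pi_d(H_d)$, which is the continuous image of the algebraic variety $H_d$ (Lemma \ref{lem:propbf}) and hence a Noetherian topological space. As $\mathrm{G}$ is closed in $\mathrm{Bir}(\mathbb{P}^n_\mathbb{C})$, it is closed in $\mathrm{Bir}_{\leq d}(\mathbb{P}^n_\mathbb{C})$, so it too is Noetherian and thus has finitely many irreducible, hence finitely many connected, components, each of which is therefore clopen. By Remark \ref{rem:homeo} every translation and the inversion are homeomorphisms of $\mathrm{Bir}(\mathbb{P}^n_\mathbb{C})$; the standard topological-group argument (using only that translations are homeomorphisms) then shows that the identity component $\mathrm{G}^0$ is a closed normal subgroup of finite index $[\mathrm{G}:\mathrm{G}^0]=m$, whose cosets are exactly the connected components of $\mathrm{G}$. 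Being closed, connected, and of bounded degree, $\mathrm{G}^0$ falls under Proposition \ref{pro:agree}: there is a connected (hence irreducible) algebraic group $\mathrm{K}^0$ and a homeomorphism $\pi^0\colon\mathrm{K}^0\to\mathrm{G}^0$ realizing the morphism correspondence for all irreducible parameter varieties.

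Next I would assemble $\mathrm{K}$. Fix coset representatives $g_1=\mathrm{id},g_2,\ldots,g_m$ and let $\mathrm{K}=\bigsqcup_{i=1}^m\mathrm{K}^0$ be the disjoint union of $m$ copies of $\mathrm{K}^0$, with $\pi\colon\mathrm{K}\to\mathrm{G}$ defined on the $i$-th copy by $k\mapsto g_i\cdot\pi^0(k)$. Each copy maps homeomorphically onto the clopen coset $g_i\mathrm{G}^0$, and these cosets partition $\mathrm{G}$, so $\pi$ is a homeomorphism; transporting the group law of $\mathrm{G}$ through $\pi$ makes $\pi$ a group isomorphism by construction. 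The substantive point, and the main obstacle, is that this transported law is \emph{algebraic}, i.e. that multiplication $\mathrm{K}\times\mathrm{K}\to\mathrm{K}$ and inversion $\mathrm{K}\to\mathrm{K}$ are morphisms of varieties. On the $(i,j)$-block $\mathrm{K}^0\times\mathrm{K}^0$ (irreducible, as a product of irreducible varieties) multiplication is the map $(k,k')\mapsto g_i\pi^0(k)g_j\pi^0(k')$; since composites and inverses of morphisms into $\mathrm{Bir}(\mathbb{P}^n_\mathbb{C})$ are again morphisms (Remark \ref{rem:homeo}) and constant families are morphisms, this is a morphism $\mathrm{K}^0\times\mathrm{K}^0\to\mathrm{Bir}(\mathbb{P}^n_\mathbb{C})$, hence has connected image lying in a single coset $g_\ell\mathrm{G}^0$ with $\ell=\ell(i,j)$ determined by $g_ig_j\in g_\ell\mathrm{G}^0$. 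The map $(k,k')\mapsto g_\ell^{-1}g_i\pi^0(k)g_j\pi^0(k')$ is then a morphism into $\mathrm{G}^0$, so by the correspondence in Proposition \ref{pro:agree} it lifts to a morphism $\mathrm{K}^0\times\mathrm{K}^0\to\mathrm{K}^0$, i.e. to the $\ell$-th copy of $\mathrm{K}$. Running this over the finitely many blocks shows multiplication is a morphism, and the same device handles inversion.

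Finally I would verify the universal property. Given an irreducible variety $V$ and a morphism $V\to\mathrm{Bir}(\mathbb{P}^n_\mathbb{C})$ with image in $\mathrm{G}$, connectedness of $V$ forces the image into one coset $g_i\mathrm{G}^0$; left-multiplying by the constant $g_i^{-1}$ (a homeomorphism preserving morphisms by Remark \ref{rem:homeo}) lands in $\mathrm{G}^0$ and, via Proposition \ref{pro:agree}, corresponds to a morphism $V\to\mathrm{K}^0$, hence to $V\to\mathrm{K}$; the converse direction is the same computation read backwards, using that a morphism $V\to\mathrm{K}$ from irreducible $V$ has image in one copy. This establishes the stated correspondence, and uniqueness of the structure, if desired, follows by applying the correspondence component by component and invoking the uniqueness already proved in Proposition \ref{pro:agree}. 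I expect the delicate parts to be the finiteness of the number of components and the algebraicity of multiplication across the blocks; everything else is translation bookkeeping built on Remark \ref{rem:homeo} and Proposition \ref{pro:agree}.
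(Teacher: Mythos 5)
Your proof is correct and follows essentially the same route as the paper: both establish that $\mathrm{G}$ has finitely many components, identify the identity component $\mathrm{G}^0$ as a closed normal subgroup of finite index whose cosets are the components, and then conclude by applying Proposition \ref{pro:agree} to the connected group $\mathrm{G}^0$. The only divergences are minor: the paper obtains finiteness of components by pulling $\mathrm{G}$ back to $H_d$ and pushing the finitely many irreducible components of the closed preimage forward along the closed map $\pi_d$ (third assertion of Corollary \ref{cor:BlancFurter}), whereas you invoke Noetherianity of $\mathrm{Bir}_{\leq d}(\mathbb{P}^n_\mathbb{C})$ as a continuous surjective image of $H_d$; and you spell out the disjoint-union assembly of $\mathrm{K}$ together with the block-by-block algebraicity of the transported group law, details the paper compresses into a citation of the Humphreys-style argument and the phrase ``this allows to reduce to the connected case''.
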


\begin{proof}
Let us first prove that $\mathrm{G}$ has 
a finite number of irreducible components.
The group~$\mathrm{G}$ is closed in 
$\mathrm{Bir}_{\leq d}(\mathbb{P}^n_\mathbb{C})$ 
hence its preimage
$(\pi_d)^{-1}(\mathrm{G})$ is also closed
in~$H_d$. It thus has a finite number of 
irreducible components $C_1$, $C_2$, $\ldots$, 
$C_r$. The sets $\pi_d(C_1)$, $\pi_d(C_2)$, 
$\ldots$, $\pi_d(C_r)$ are closed and 
irreducible and cover $\mathrm{G}$
(third assertion of Corollary \ref{cor:BlancFurter}).
If we keep the maximal ones with respect
to inclusion we get
the irreducible components of $\mathrm{G}$.

As for algebraic groups (\cite[\S 7.3]{Humphreys}) 
one can show that:
\begin{itemize}
\item[$\diamond$] exactly one irreducible 
component of $\mathrm{G}$ passes through 
$\mathrm{id}$;

\item[$\diamond$] this irreducible component
is a closed normal subgroup of finite
index in $\mathrm{G}$ whose cosets are the 
connected as well as irreducible components
of $\mathrm{G}$. 
\end{itemize}

This allows to reduced to the connected case; 
Proposition \ref{pro:agree} allows to conclude.
\end{proof}

\begin{lem}[\cite{BlancFurter}]\label{lem:agree}
Let $\mathrm{A}$ be an algebraic group and
$\rho\colon \mathrm{A}\to\mathrm{Bir}(\mathbb{P}^n_\mathbb{C})$
be a morphism that is also a group
homomorphism.

Then the image $\mathrm{G}$ of $\mathrm{A}$
is a closed subgroup of 
$\mathrm{Bir}(\mathbb{P}^n_\mathbb{C})$ 
of bounded degree. 

If $\pi\colon\mathrm{K}\to\mathrm{G}$ is 
the homeomorphism constructed in Corollary
\ref{cor:agree}, there exists a 
unique morphism of algebraic groups
$\widetilde{\rho}\colon\mathrm{A}\to\mathrm{K}$
such that $\rho=\pi\circ\widetilde{\rho}$.
\end{lem}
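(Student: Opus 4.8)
The plan is to split the statement into three parts: bounded degree of $\mathrm{G}$, closedness of $\mathrm{G}$, and the existence and uniqueness of the lift $\widetilde{\rho}$. The first part is essentially already recorded in the text. Since $\mathrm{A}$ is an algebraic group over $\mathbb{C}$ it is smooth, so its connected components coincide with its (finitely many) irreducible components $\mathrm{A}_j$, which are disjoint and open-closed. Restricting $\rho$ to each $\mathrm{A}_j$ and applying Lemma \ref{lem:BlancFurter}, together with quasi-compactness of $\mathrm{A}_j$, yields a finite affine covering on each piece of which $\rho$ factors as $\pi_{d_i}\circ\rho_i$ for some morphism $\rho_i$ into $H_{d_i}$. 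Setting $d=\max_i d_i$ gives $\mathrm{G}=\rho(\mathrm{A})\subseteq\mathrm{Bir}_{\leq d}(\mathbb{P}^n_\mathbb{C})$, so $\mathrm{G}$ has bounded degree.

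For closedness I would not argue with $\mathrm{G}$ directly but pass to its closure $\overline{\mathrm{G}}$. Using Remark \ref{rem:homeo}, the translations and the inversion of $\mathrm{Bir}(\mathbb{P}^n_\mathbb{C})$ are homeomorphisms, and the standard two-step argument (namely $g\overline{\mathrm{G}}=\overline{g\mathrm{G}}=\overline{\mathrm{G}}$ for $g\in\mathrm{G}$, then $\overline{\mathrm{G}}h=\overline{\mathrm{G}h}\subseteq\overline{\mathrm{G}}$ for $h\in\overline{\mathrm{G}}$, together with $\overline{\mathrm{G}}^{-1}=\overline{\mathrm{G}}$) shows that $\overline{\mathrm{G}}$ is again a subgroup. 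It is closed by construction and of bounded degree, since $\mathrm{Bir}_{\leq d}(\mathbb{P}^n_\mathbb{C})$ is itself closed by the second assertion of Corollary \ref{cor:BlancFurter}, forcing $\overline{\mathrm{G}}\subseteq\mathrm{Bir}_{\leq d}(\mathbb{P}^n_\mathbb{C})$. Thus Corollary \ref{cor:agree} applies to $\overline{\mathrm{G}}$ and provides an algebraic group $\mathrm{K}$ with a homeomorphism $\pi\colon\mathrm{K}\to\overline{\mathrm{G}}$ that is also a group isomorphism, and with the crucial functorial clause that morphisms $V\to\mathrm{Bir}(\mathbb{P}^n_\mathbb{C})$ with image in $\overline{\mathrm{G}}$ correspond, via $\pi$, to morphisms of varieties $V\to\mathrm{K}$.

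Now I would feed $\rho$, whose image lies in $\mathrm{G}\subseteq\overline{\mathrm{G}}$, into this correspondence. Applying it to each irreducible component $\mathrm{A}_j$ and gluing over the disjoint open-closed components produces a morphism of varieties $\widetilde{\rho}\colon\mathrm{A}\to\mathrm{K}$ with $\rho=\pi\circ\widetilde{\rho}$. Since $\pi$ is injective and both $\pi$ and $\rho$ are group homomorphisms, the chain $\pi(\widetilde{\rho}(ab))=\rho(ab)=\rho(a)\rho(b)=\pi(\widetilde{\rho}(a)\widetilde{\rho}(b))$ forces $\widetilde{\rho}(ab)=\widetilde{\rho}(a)\widetilde{\rho}(b)$, so $\widetilde{\rho}$ is a morphism of algebraic groups. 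The external input is then the classical theorem that the image of a homomorphism of algebraic groups is a closed subgroup (\emph{see} \cite{Humphreys}): it gives that $\widetilde{\rho}(\mathrm{A})$ is closed in $\mathrm{K}$, whence $\mathrm{G}=\pi(\widetilde{\rho}(\mathrm{A}))$ is closed in $\overline{\mathrm{G}}$. Being at once dense and closed in $\overline{\mathrm{G}}$, we conclude $\mathrm{G}=\overline{\mathrm{G}}$, so $\mathrm{G}$ is closed in $\mathrm{Bir}(\mathbb{P}^n_\mathbb{C})$; in particular $\mathrm{K}$ and $\pi$ are exactly the objects attached to $\mathrm{G}$ by Corollary \ref{cor:agree}, and the $\widetilde{\rho}$ just built is the desired lift. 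Uniqueness is immediate, since injectivity of $\pi$ forces $\widetilde{\rho}=\pi^{-1}\circ\rho$ at the level of maps.

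The step I expect to be the real crux is transporting the classical closed-image theorem into the Cremona setting. A priori $\overline{\mathrm{G}}$ is only a topological group for the Zariski topology of $\mathrm{Bir}(\mathbb{P}^n_\mathbb{C})$, so algebraic-group machinery cannot be applied to it directly; Corollary \ref{cor:agree} is precisely what lets me replace $\overline{\mathrm{G}}$ by a genuine algebraic group $\mathrm{K}$ and the continuous homomorphism $\rho$ by an honest morphism of algebraic groups $\widetilde{\rho}$, after which closedness of the image becomes a statement internal to linear algebraic groups. Checking that $\widetilde{\rho}$ is simultaneously a morphism of varieties (from the functorial clause of Corollary \ref{cor:agree}) and a group homomorphism (from injectivity of $\pi$) is therefore the technical heart of the argument, with the mild bookkeeping over the components of $\mathrm{A}$ being the only place where reducibility of $\mathrm{A}$ must be handled.
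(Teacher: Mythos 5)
Your proof is correct and follows essentially the same route as the paper: bounded degree via Lemma \ref{lem:BlancFurter}, the two-step translation/inversion argument showing $\overline{\mathrm{G}}$ is a subgroup, Corollary \ref{cor:agree} applied to $\overline{\mathrm{G}}$ to produce $\mathrm{K}$ and the lift $\widetilde{\rho}$, and the classical closed-image theorem for morphisms of algebraic groups to conclude $\mathrm{G}=\overline{\mathrm{G}}$. If anything, you are slightly more careful than the published argument, which compresses the final step to \og its image is closed, so $\mathrm{im}\,\rho=\mathrm{K}$\fg{} without spelling out the density of $\mathrm{G}$ in $\overline{\mathrm{G}}$, and which leaves the bookkeeping over the irreducible components of $\mathrm{A}$ and the verification that $\widetilde{\rho}$ is a group homomorphism implicit.
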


\begin{proof}
Lemma \ref{lem:BlancFurter} asserts that 
$\mathrm{G}=\rho(\mathrm{A})$ has bounded
degree. The closure~$\overline{\mathrm{G}}$ 
of $\mathrm{G}$ is a subgroup of 
$\mathrm{Bir}(\mathbb{P}^n_\mathbb{C})$; 
indeed inversion being a homeomorphism
$\overline{\mathrm{G}}^{-1}=\overline{\mathrm{G}^{-1}}=\overline{\mathrm{G}}$. Similarly translation by 
$g\in\mathrm{G}$ is a homeomorphism thus
$g\overline{\mathrm{K}}=\overline{g\mathrm{K}}=\overline{\mathrm{K}}$,
that is 
$\mathrm{G}\overline{\mathrm{G}}\subset\overline{\mathrm{G}}$. 
In turn, if $g\in\overline{\mathrm{G}}$, 
then $\mathrm{G}g\subset\overline{\mathrm{G}}$,
so $\overline{\mathrm{G}}g=\overline{\mathrm{G}g}\subset\overline{\mathrm{G}}$. 
As a result $\overline{\mathrm{G}}$ is a 
subgroup of 
$\mathrm{Bir}(\mathbb{P}^n_\mathbb{C})$.

According to Corollary \ref{cor:agree} there
exist a canonical homeomorphism 
$\mathrm{K}\to\overline{\mathrm{G}}$ where 
$\mathrm{K}$ is an algebraic group and a lift
$\widetilde{\rho}\colon\mathrm{A}\to\mathrm{H}$
of the morphism 
$\rho\colon\mathrm{A}\to\mathrm{Bir}(\mathbb{P}^n_\mathbb{C})$
whose image is contained in 
$\overline{\mathrm{G}}$. As $\rho$ is a 
group homomorphism $\widetilde{\rho}$ 
is a morphism of algebraic groups 
hence its image is closed, 
so $\mathrm{im}\,\rho=\mathrm{K}$. 
Therefore, $\overline{\mathrm{G}}=\mathrm{G}$.
\end{proof}

\begin{pro}[\cite{BlancFurter}]
Any algebraic subgroup of
$\mathrm{Bir}(\mathbb{P}^n_\mathbb{C})$
is affine.
\end{pro}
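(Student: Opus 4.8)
The plan is to reduce the statement to the structure theory of algebraic groups acting on projective varieties. First I would reduce to the connected case: an algebraic group is affine as soon as its identity component is affine, since the whole group is then a finite disjoint union of translates of $\mathrm{G}^0$, each isomorphic to $\mathrm{G}^0$. Applying Proposition \ref{pro:agree} to the identity component, I may therefore assume that $\mathrm{G}$ is a \emph{connected} algebraic group, carrying its canonical structure and sitting inside $\mathrm{Bir}(\mathbb{P}^n_\mathbb{C})$ with bounded degree.

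Next I would exploit the action. The inclusion $\mathrm{G}\hookrightarrow\mathrm{Bir}(\mathbb{P}^n_\mathbb{C})$, being simultaneously a morphism and a group homomorphism, is nothing but a faithful rational action of $\mathrm{G}$ on $\mathbb{P}^n_\mathbb{C}$. I would regularize it: by Weil's regularization theorem (the subject of the fifth section of this chapter) there exist a variety carrying a faithful \emph{regular} $\mathrm{G}$-action and a $\mathrm{G}$-equivariant birational map from $\mathbb{P}^n_\mathbb{C}$ to it. After passing to a smooth projective equivariant model, I obtain a smooth projective variety $Y$ that is birational to $\mathbb{P}^n_\mathbb{C}$ --- hence \emph{rational} --- together with an injective homomorphism of algebraic groups $\mathrm{G}\hookrightarrow\mathrm{Aut}^0(Y)$. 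Since we work in characteristic $0$, this realizes $\mathrm{G}$ as a closed algebraic subgroup of $\mathrm{Aut}^0(Y)$, and a closed subgroup of an affine algebraic group is affine; so it suffices to prove that $\mathrm{Aut}^0(Y)$ is affine.

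For this I would invoke the structure theory of automorphism groups of smooth projective varieties. By Chevalley's theorem the connected group $\mathrm{Aut}^0(Y)$ fits in an exact sequence $1\to L\to\mathrm{Aut}^0(Y)\to A\to 1$ with $L$ connected linear (hence affine) and $A$ an abelian variety; the theorem of Nishi and Matsumura identifies this abelian quotient with (a subvariety of, up to isogeny) the image of the natural homomorphism $\mathrm{Aut}^0(Y)\to\mathrm{Alb}(Y)$ induced by functoriality of the Albanese map, whose kernel is affine. Because $Y$ is rational its irregularity vanishes, so $\mathrm{Alb}(Y)=0$; consequently the homomorphism to $\mathrm{Alb}(Y)$ is trivial, $\mathrm{Aut}^0(Y)$ coincides with its affine kernel, and $A=0$. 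Thus $\mathrm{Aut}^0(Y)$ is affine, and therefore so is $\mathrm{G}$.

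The main obstacle is precisely this last step, namely the vanishing of the abelian-variety part of $\mathrm{Aut}^0(Y)$. Its nontrivial content is the Nishi--Matsumura linearization: the only ``abelian-variety directions'' in the automorphism group of a smooth projective variety are those acting by translations on its Albanese torus, so no positive-dimensional abelian variety can act faithfully and regularly on a variety with trivial Albanese. Granting that input, the rationality of $Y$ forces $\mathrm{Alb}(Y)=0$ and the argument closes at once; the earlier steps (reduction to the connected case and Weil regularization) are comparatively routine, the regularization being the device that converts the \emph{birational} action on $\mathbb{P}^n_\mathbb{C}$ into a \emph{regular} action on a projective model to which the structure theorem applies.
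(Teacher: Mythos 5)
Your proof is correct and follows essentially the same route as the paper's sketch: regularize the action via Weil's theorem to get a regular action on a smooth rational variety, invoke the Nishi--Matsumura theorem together with the vanishing of the Albanese of a rational variety to kill the abelian part, and conclude affineness by Chevalley's structure theorem. The only difference is cosmetic --- you apply Chevalley to $\mathrm{Aut}^0(Y)$ and realize $\mathrm{G}$ as a closed subgroup, whereas the paper applies Nishi--Matsumura and Chevalley to $\mathrm{G}$ directly.
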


\begin{proof}[Sketch of the proof]
Let $\mathrm{G}$ be an algebraic subgroup
of $\mathrm{Bir}(\mathbb{P}^n_\mathbb{C})$.
One can show that $\mathrm{G}$ is linear, 
and this reduces to the connected case.
By the regularization theorem of 
Weil (\emph{see} \S \ref{sec:WeilKraft})
the group $\mathrm{G}$ acts by 
automorphisms on some (smooth) rational
variety $V$. Assume that
$\alpha_V\colon V\to A(V)$ is the 
Albanese morphism. According
to the Nishi-Matsumura
theorem the induced action of 
$\mathrm{G}$ on $A(V)$ factors through
a morphism $A(\mathrm{G})\to A(V)$ 
with finite kernel (\emph{see} for 
instance \cite{Brion}). But~$V$ is 
rational hence $A(V)$ is trivial and
so does $A(\mathrm{G})$. The 
structure theorem of Chevalley
asserts that $\mathrm{G}$ is affine
(\emph{see for instance} \cite{Rosenlicht}).
\end{proof}

Let us finish by some examples:
\begin{itemize}
\item[$\diamond$] The Cremona group in one variable 
$\mathrm{Bir}(\mathbb{P}^1_\mathbb{C})$ coincides with the group of 
linear projective transformations $\mathrm{PGL}(2,\mathbb{C})$; it
is an algebraic group of dimension $3$.

\item[$\diamond$] In dimension $2$ the Cremona group contains the 
two following algebraic subgroups:
\begin{itemize}
\item the group $\mathrm{PGL}(3,\mathbb{C})$ of automorphisms of 
$\mathbb{P}^2_\mathbb{C}$;
 
\item the group $\mathrm{PGL}(2,\mathbb{C})\times\mathrm{PGL}(2,\mathbb{C})$
obtained as follows: the surface 
$\mathbb{P}^1_\mathbb{C}\times\mathbb{P}^1_\mathbb{C}$ can be 
considered as a smooth quadric in $\mathbb{P}^3_\mathbb{C}$ whose
automorphism group contains 
$\mathrm{PGL}(2,\mathbb{C})\times\mathrm{PGL}(2,\mathbb{C})$;
by stereographic projection the quadric is birationally equivalent
to $\mathbb{P}^2_\mathbb{C}$. Hence $\mathrm{Bir}(\mathbb{P}^2_\mathbb{C})$
also contains a copy of 
$\mathrm{PGL}(2,\mathbb{C})\times\mathrm{PGL}(2,\mathbb{C})$.
\end{itemize}

\item[$\diamond$] More generally
$\mathrm{Aut}(\mathbb{P}^n_\mathbb{C})=\mathrm{PGL}(n+1,\mathbb{C})$ is an 
algebraic subgroup of $\mathrm{Bir}(\mathbb{P}^n_\mathbb{C})$ and 
\[
\underbrace{\mathrm{PGL}(2,\mathbb{C})\times\mathrm{PGL}(2,\mathbb{C})\times\ldots\times\mathrm{PGL}(2,\mathbb{C})}_\text{$n$ times}
\]
is an algebraic subgroup of 
\[
\mathrm{Aut}(\underbrace{\mathbb{P}^1_\mathbb{C}\times\mathbb{P}^1_\mathbb{C}\times\ldots\mathbb{P}^1_\mathbb{C}}_\text{$n$ times})\subset\mathrm{Bir}(\mathbb{P}^n_\mathbb{C}).
\]

\item[$\diamond$]  If $\mathrm{G}$ is a semi-simple algebraic 
group, $\mathrm{H}$ is a parabolic subgroup of $G$ and $V=\faktor{\mathrm{G}}{\mathrm{H}}$, 
then the homogeneous variety $V$ of dimension $n$ is rational; 
$\pi\circ\mathrm{G}\circ\pi^{-1}$ determines an algebraic subgroup 
of~$\mathrm{Bir}(\mathbb{P}^n_\mathbb{C})$ for any birational map 
$\pi\colon V\dashrightarrow\mathbb{P}^n_\mathbb{C}$.
\end{itemize}

\subsection{Euclidean topology}

We can put the Euclidean topology on a 
complex algebraic group; this gives
any algebraic group the structure of a
topological group. Recall that the 
Euclidean topology is finer than the
Zariski one. 

Let $n\geq 1$ be an integer. The group
$\mathrm{Bir}(\mathbb{P}^1_\mathbb{C})=\mathrm{Aut}(\mathbb{P}^2_\mathbb{C})=\mathrm{PGL}(2,\mathbb{C})$
is obviously a topological group.
Assume now that $n\geq 2$; we will 
\begin{itemize}
\item[$\diamond$] first define the 
Euclidean topology on 
$\mathrm{Bir}_{\leq d}(\mathbb{P}^n_\mathbb{C})$
and show that the natural inclusion
$\mathrm{Bir}_{\leq d}(\mathbb{P}^n_\mathbb{C})\hookrightarrow\mathrm{Bir}_{\leq d+1}(\mathbb{P}^n_\mathbb{C}) $ 
is a closed embedding;

\item[$\diamond$] second define the 
Euclidean topology on 
$\mathrm{Bir}(\mathbb{P}^n_\mathbb{C})$
as the inductive limit topology 
induced by those of 
$\mathrm{Bir}_{\leq d}(\mathbb{P}^n_\mathbb{C})$, 
that is a subset 
$F\subset\mathrm{Bir}(\mathbb{P}^n_\mathbb{C})$
is closed if and only if 
$F\cap\mathrm{Bir}_{\leq d}(\mathbb{P}^n_\mathbb{C})$
is closed in 
$\mathrm{Bir}_{\leq d}(\mathbb{P}^n_\mathbb{C})$
for each $d$. Finally we will prove that
$\mathrm{Bir}(\mathbb{P}^n_\mathbb{C})$ 
endowed with the Euclidean topology 
is a topological group;

\item[$\diamond$] third give some 
remarks and properties.
\end{itemize}

\subsection{The Euclidean topology on $\mathrm{Bir}_{\leq d}(\mathbb{P}^n_\mathbb{C})$}

Let us recall that $W_d$ is a projective space 
and $H_d$ is locally closed in $W_d$
for the Zariski topology (Lemma
\ref{lem:propbf}). Let us put the 
Euclidean topology on $W_d$: the distance
between $(p_0:p_1:\ldots:p_n)$ and 
$(q_0:q_1:\ldots:q_n)$ is 
(\emph{see} \cite{Weyl})
\[
\frac{\displaystyle\sum_{i<j}\vert p_iq_j-p_jq_i\vert^2}{\left(\displaystyle\sum_{i}\vert p_i\vert^2\right)\left(\displaystyle\sum_{i}\vert q_i\vert^2\right)}
\]
We then put the induced topology on 
$H_d$. The behavior of the Zariski
topology on $\mathrm{Bir}(\mathbb{P}^n_\mathbb{C})$
leads to:

\begin{defi}
The \textsl{Euclidean topology}
\index{defi}{Euclidean topology on 
$\mathrm{Bir}_{\leq d}(\mathbb{P}^n_\mathbb{C})$} on 
$\mathrm{Bir}_{\leq d}(\mathbb{P}^n_\mathbb{C})$
is the quotient topology induced by the 
surjective map 
$\pi_d\colon H_d\to \mathrm{Bir}_{\leq d}(\mathbb{P}^n_\mathbb{C})$
where we put the Euclidean topology on 
$H_d$. 
\end{defi}

Recall that if $f\colon X\to Y$ is a quotient
map between topological spaces, $A$ is 
a subspace of~$X$, $A$ is open and 
$A=f^{-1}(f(A))$, then the induced map
$A\to f(A)$ is a quotient map 
(\cite[Chapter I, \S 3.6]{Bourbaki}).
Set 
\[
H_{d,d}=(\pi_d)^{-1}(\mathrm{Bir}_d(\mathbb{P}^n_\mathbb{C})).
\]
As
$(\pi_d)^{-1}(\mathrm{Bir}_{\leq d-1}(\mathbb{P}^n_\mathbb{C}))$
is closed in $H_d$, $H_{d,d}$ is 
open in $H_d$ for the 
Zariski topology and 
hence also for the Euclidean 
topology; $\pi_d$ restricts to 
a homeomorphism 
$H_{d,d}\to\mathrm{Bir}_d(\mathbb{P}^n_\mathbb{C})$
for any $d\geq 1$.

\begin{lem}[\cite{BlancFurter}]\label{lem:sequential}
Let $d\geq 1$ be an integer. The spaces
$W_d$ and $H_d$ are locally compact metric
spaces endowed with the Euclidean topology.

In particular the sets $W_d$, $H_d$ and 
$\mathrm{Bir}_{\leq d}(\mathbb{P}^n_\mathbb{C})$
are sequential spaces: a subset $F$ is closed
if the limit of every convergent sequence
with values in $F$ belongs to $F$. 
\end{lem}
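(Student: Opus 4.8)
The plan is to reduce both assertions to standard point-set topology, using only the identifications from Lemma~\ref{lem:propbf}: namely $W_d\simeq\mathbb{P}^k_\mathbb{C}$ with $k=(n+1)\binom{d+n}{d}-1$, and the fact that $H_d$ is locally closed in $W_d$. First I would deal with $W_d$. The displayed distance is the (square of the) Fubini--Study distance, which metrizes the usual topology of the complex projective space $\mathbb{P}^k_\mathbb{C}$ (\emph{see} \cite{Weyl}); since $\mathbb{P}^k_\mathbb{C}$ is compact, $W_d$ is a compact, hence locally compact, metric space. For $H_d$ I would argue that a subset which is locally closed for the Zariski topology is a fortiori locally closed for the finer Euclidean topology, so one may write $H_d=\mathcal{U}\cap C$ with $\mathcal{U}$ Euclidean open and $C$ Euclidean closed in $W_d$. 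A one-line computation then gives $H_d=\mathcal{U}\cap\overline{H_d}$, i.e. $H_d$ is open in its Euclidean closure $\overline{H_d}$. As $\overline{H_d}$ is closed in the compact space $W_d$ it is compact and Hausdorff, and an open subset of a compact Hausdorff space is locally compact; since $H_d$ inherits the metric of $W_d$ it is a locally compact metric space. This settles the first assertion.

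For the second assertion I would use the classical fact that every metric space is first countable and therefore sequential, so that $W_d$ and $H_d$ are sequential. The space $\mathrm{Bir}_{\leq d}(\mathbb{P}^n_\mathbb{C})$ carries, by definition, the quotient topology induced by the surjection $\pi_d\colon H_d\to\mathrm{Bir}_{\leq d}(\mathbb{P}^n_\mathbb{C})$, so it remains to invoke that a quotient of a sequential space is sequential. I would establish this directly: if $A\subseteq\mathrm{Bir}_{\leq d}(\mathbb{P}^n_\mathbb{C})$ is not closed, then $\pi_d^{-1}(A)$ is not closed in the sequential space $H_d$, hence there is a sequence in $\pi_d^{-1}(A)$ converging to a point of its complement; applying the continuous map $\pi_d$ yields a sequence in $A$ converging to a point outside $A$, so $A$ is not sequentially closed. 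The contrapositive says that every sequentially closed subset is closed, which is exactly the stated reformulation of sequentiality.

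The step I expect to require the most care is not deep but concerns keeping the two topologies straight: one must check that $H_d$ is open in its \emph{Euclidean} closure (not merely its Zariski closure) before invoking local compactness, and that $\pi_d$ is genuinely a quotient map for the \emph{Euclidean} topology. The former holds because local closedness is preserved when passing to a finer topology, and the latter is immediate from the very definition of the Euclidean topology on $\mathrm{Bir}_{\leq d}(\mathbb{P}^n_\mathbb{C})$ as the quotient topology of $\pi_d$. Once these compatibilities are recorded, the single genuinely nontrivial ingredient is the general principle that quotients of sequential (equivalently, of metric) spaces remain sequential.
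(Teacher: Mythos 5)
Your proof is correct and follows essentially the same route as the paper: $W_d$ is a compact metric space, $H_d$ is locally compact because it is locally closed in $W_d$ (Lemma~\ref{lem:propbf}), and sequentiality passes from the metric spaces to the quotient $\mathrm{Bir}_{\leq d}(\mathbb{P}^n_\mathbb{C})$. The only differences are cosmetic: where the paper cites \cite{Franklin} for the fact that quotients of sequential spaces are sequential, you prove it directly via the contrapositive sequence argument, and you make explicit the Zariski-versus-Euclidean compatibility (locally closed for the coarser topology remains locally closed for the finer one) that the paper leaves implicit.
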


\begin{proof}
The construction of the topology implies
that $W_d$ and $H_d$ are metric spaces.
As~$W_d$ is compact and $H_d$ is locally
closed in $W_d$ (Lemma \ref{lem:propbf})
the set~$H_d$ is locally compact. But 
metric spaces are sequential spaces and
quotients of sequential spaces 
are sequential (\cite{Franklin}). 
\end{proof}

We now would like to prove that the topological map 
$\pi_d\colon H_d\to\mathrm{Bir}_{\leq d}(\mathbb{P}^n_\mathbb{C})$
is proper and the topological space
$\mathrm{Bir}_{\leq d}(\mathbb{P}^n_\mathbb{C})$ is 
locally compact. 
Recall that a map $f\colon X\to Y$ between two 
topological spaces is 
\textsl{proper}\index{defi}{proper (map)}
if it is continuous and universally 
closed: for each topological space $Z$ the
map 
$f\times\mathrm{id}_Z\colon X\times Z\to Y\times Z$ 
is closed (\cite{Bourbaki}). A topological space
is \textsl{locally compact}\index{defi}{locally compact (topological space)} if it is Hausdorff 
and if each of its points has a 
compact neighborhood. If $f\colon X\to Y$
is a quotient map between topological spaces
such that $X$ is locally compact, then $f$
is proper if and only if it is closed and the
preimages of points are compact. This implies
furthermore that $Y$ is locally compact. According 
to Lemma \ref{lem:propbf} for any $\phi$ in 
$\mathrm{Bir}_{\leq d}(\mathbb{P}^n_\mathbb{C})$
the set $(\pi_d)^{-1}(\phi)$ is closed in the 
compact space $W_d$, so $(\pi_d)^{-1}(Y)$ is
compact. The topological space~$H_d$ being 
locally compact (Lemma \ref{lem:sequential}), to 
prove that $\pi_d$ is proper it suffices to 
prove that $\pi_d$ is closed.

\begin{claim}
The map 
$\pi_d\colon H_d\to\mathrm{Bir}_{\leq d}(\mathbb{P}^n_\mathbb{C})$
is proper.
\end{claim}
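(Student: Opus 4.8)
The plan is to use the criterion recalled just before the statement: since $\pi_d$ is a quotient map, $H_d$ is locally compact (Lemma \ref{lem:sequential}), and for each $\phi$ the fibre $(\pi_d)^{-1}(\phi)$ is closed in the compact space $W_d$ and hence compact, it suffices to prove that $\pi_d$ is closed. Because $H_d$ is a sequential space (Lemma \ref{lem:sequential}), I will test closedness with sequences; concretely, for a closed subset $F\subseteq H_d$ I will show that the saturated set $(\pi_d)^{-1}(\pi_d(F))$ is sequentially closed in $H_d$, which is equivalent to $\pi_d(F)$ being closed in the quotient.

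First I would take a sequence $(\phi_m)$ in $(\pi_d)^{-1}(\pi_d(F))$ converging in $H_d$ to some $\phi$, and for each $m$ choose $\eta_m\in F$ with $\pi_d(\eta_m)=\pi_d(\phi_m)$. By the description of the fibres in Lemma \ref{lem:propbf} this equality means precisely that the relations $\eta_{m,i}\,\phi_{m,j}=\eta_{m,j}\,\phi_{m,i}$ hold for all $i$, $j$. Next I would use that $W_d\cong\mathbb{P}^k_\mathbb{C}$ is compact to extract a subsequence along which $\eta_m\to\eta$ in $W_d$; since these bihomogeneous relations are continuous and scale-invariant, they pass to the limit and yield $\eta_i\,\phi_j=\eta_j\,\phi_i$ for all $i$, $j$. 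As $\eta$ is a genuine, nonzero point of $W_d$, these relations say exactly that $\eta\in(\pi_d)^{-1}(\pi_d(\phi))$; in particular $\psi_\eta=\psi_\phi$ is birational, so $\eta$ lies in $H_d$ and not in the degenerate locus $W_d\smallsetminus H_d$.

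The step just described is the crux, and the main obstacle. A priori the limit $\eta$ could leave $H_d$ by acquiring a common factor or a vanishing Jacobian, which is exactly the degeneration phenomenon that obstructs an algebraic variety structure; what rescues the argument is that the cross-relations with the \emph{fixed} birational map $\phi$ force $\eta$ back into $H_d$. Once this is in hand the conclusion is immediate: since $\eta\in H_d$ and $F$ is closed in $H_d$, the convergence $\eta_m\to\eta$ gives $\eta\in F$, whence $\pi_d(\phi)=\pi_d(\eta)\in\pi_d(F)$ and $\phi\in(\pi_d)^{-1}(\pi_d(F))$. This shows $(\pi_d)^{-1}(\pi_d(F))$ is closed, hence $\pi_d(F)$ is closed, hence $\pi_d$ is closed; combined with the compactness of the fibres and the local compactness of $H_d$, the recalled criterion gives that $\pi_d$ is proper.
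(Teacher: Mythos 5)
Your proof is correct, and while it shares the paper's overall skeleton --- reduce properness to closedness via the recalled criterion (quotient map, compact fibres, $H_d$ locally compact), then test closedness of the saturated set $\widehat{F}=(\pi_d)^{-1}(\pi_d(F))$ with sequences, which is legitimate since $H_d$ is metrizable (Lemma \ref{lem:sequential}) --- it handles the crux genuinely differently. The paper first passes to a subsequence along which the degree of $\pi_d(\varphi_i)$ is constant, equal to some $m\leq d$, treats $m=d$ separately (there the fibres are singletons), and for $m<d$ factors each lift in $F$ as $\varphi'_i=(b_if_{i,0}:\ldots:b_if_{i,n})$, extracting limits of the common factors $b_i$ in the compact projectivized space of degree $d-m$ polynomials and of the primitive parts $(f_{i,0}:\ldots:f_{i,n})$ in $W_m$ separately; the limit $(bf_0:\ldots:bf_n)$ is then the required point of $F$ over $\pi_d(\varphi)$. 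You instead compactify the lifts $\eta_m$ directly in $W_d$ and let the closed bihomogeneous fibre equations $\eta_i\phi_j=\eta_j\phi_i$, imposed against the \emph{fixed} limit $\phi\in H_d$, do the work: by the fourth assertion of Lemma \ref{lem:propbf} the locus they cut out in $W_d$ is exactly $(\pi_d)^{-1}(\pi_d(\phi))$, which is contained in $H_d$, so the limit $\eta$ cannot degenerate out of $H_d$ and, being a limit of points of the closed set $F$, must lie in $F$. This buys you a shorter argument with no case distinction on the degree and no factorization; the paper's version, in exchange, exhibits the degeneration mechanism (acquisition of a common factor of degree $d-m$) explicitly, which is precisely the phenomenon behind Theorem \ref{thm:bf}. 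One line you should add for completeness: Lemma \ref{lem:propbf} phrases the fibre equations with a coprime representative of $\pi_d(\phi)$ of some degree $k\leq d$, whereas you impose them against the possibly imprimitive degree $d$ tuple $\phi$; the two systems are equivalent after cancelling the gcd of the $\phi_i$ in the integral domain $\mathbb{C}[z_0,z_1,\ldots,z_n]$, and a lowest-terms argument then gives $\eta=a\widetilde{\phi}$ with $a\neq 0$ and $\widetilde{\phi}$ the primitive part, whence $\psi_\eta=\psi_\phi$ is birational --- so your citation is justified, but not quite verbatim.
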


\begin{proof}
Let $F\subset H_d$ be a closed subset. To prove 
that $\pi_d(F)$ is closed 
in~$\mathrm{Bir}_{\leq d}(\mathbb{P}^n_\mathbb{C})$
amounts to prove that the saturated set 
$\widehat{F}=(\pi_d)^{-1}(\pi_d(F))$ is closed
in~$H_d$. Consider a sequence 
$(\varphi_i)_{i\in\mathbb{N}}$ of elements 
in $\widehat{F}$ which converges to 
$\varphi\in H_d$. Let us show that $\varphi$
belongs to $\widehat{F}$. Since $\pi_d$ is 
by construction continuous, the sequence 
$\big(\pi_d(\varphi_i)\big)_{i\in\mathbb{N}}$
converges to $\pi_d(\varphi)$ in 
$\mathrm{Bir}_{\leq d}(\mathbb{P}^n_\mathbb{C})$.
Taking a subsequence of 
$\big(\pi_d(\varphi_i)\big)_{i\in\mathbb{N}}$
if needed, we may suppose that the degree of 
all $\pi_d(\varphi_i)$ is constant equal to 
some $m\leq d$. 
\begin{itemize}
\item[$\diamond$] Assume $m=d$, then 
$(\pi_d)^{-1}(\pi_d(\varphi_i))=\{\varphi_i\}$
for each $i$. As a result each~$\varphi_i$ belongs
to $F$, so $\varphi$ belongs to 
$F\subset\widehat{F}$ as
wanted.

\item[$\diamond$] Suppose $m<d$. Set $k=d-m\geq 1$.
For any $i$ there exists a non-zero homogeneous 
polynomial $a_i\in\mathbb{C}[z_0,z_1,\ldots,z_n]$
of degree $k$ such that 
\[
\varphi_i=\big(a_if_{i,0}:a_if_{i,1}:\ldots:a_if_{i,n}\big)
\]
and $(f_{i,0}:f_{i,1}:\ldots:f_{i,n})\in W_m$ 
corresponds to a birational map of degree $m<d$.
Each $a_i$ is defined up to a constant and 
$\mathbb{P}(\mathbb{C}[z_0,z_1,\ldots,z_n])$ is 
compact, so, taking a subsequence if needed, we can 
suppose that $(a_i)_{i\in\mathbb{N}}$ converges
to a non-zero homogeneous polynomial 
$a\in\mathbb{C}[z_0,z_1,\ldots,z_n]$ of degree $k$.

Taking a subsequence if needed we can assume that 
$\{(f_{i,0}:f_{i,1}:\ldots:f_{i,n})\}_{i\in\mathbb{N}}$
converges to an element $(f_0:f_1:\ldots:f_n)$
of the projective space~$W_m$. Since 
$(\varphi_i)_{i\in\mathbb{N}}$ converges to 
$\varphi$ we get that 
$\varphi=(af_0:af_1:\ldots:af_n)$ in $H_d$.

As $\varphi_i$ belongs to 
$\widehat{F}=(\pi_d)^{-1}(\pi_d(F))$ for any $i$ 
there exists $\varphi'_i$ in $F$ such that 
$\pi_d(\varphi'_i)=\pi_d(\varphi_i)$. 
Consequently 
\[
\varphi'_i=\big(b_if_{i,0}:b_if_{i,1}:\ldots:b_if_{i,n}\big)
\]
for some non-zero homogeneous polynomial 
$b_i\in\mathbb{C}[z_0,z_1,\ldots,z_n]$ of 
degree~$k$. As before we can assume that $(b_i)_{i\in\mathbb{N}}$
converges to a non-zero homogeneous polynomial
$b\in\mathbb{C}[z_0,z_1,\ldots,z_n]$ of degree $k$. 
The sequence $(\varphi'_i)_{i\in\mathbb{N}}$ converges
to $(bf_0:bf_1:\ldots:bf_n)$ and $F$ is closed, thus 
$(bf_0:bf_1:\ldots:bf_n)$ belongs to $F$. This implies
that $\varphi=(af_0:af_1:\ldots:af_n)$ belongs to 
$\widehat{F}$.
\end{itemize}
\end{proof}

We can thus state:

\begin{lem}[\cite{BlancFurter}]\label{lem:procom}
Let $d\geq 1$ be an integer. Then
\begin{itemize}
\item[$\diamond$] the topological map
$\pi_d\colon H_d\to \mathrm{Bir}_{\leq d}(\mathbb{P}^n_\mathbb{C})$
is proper $($and closed$)$;

\item[$\diamond$] the topological space
$\mathrm{Bir}_{\leq d}(\mathbb{P}^n_\mathbb{C})$ 
is locally compact $($and Hausdorff$)$.
\end{itemize}
\end{lem}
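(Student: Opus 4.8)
The plan is to observe that the two assertions are now purely formal consequences of the work already carried out, with the single genuinely substantive point---the closedness of $\pi_d$---being exactly the content of the Claim established just above. Accordingly I would spend no further effort on hard analysis and instead assemble the pieces already in place.

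For the first bullet, the preceding Claim already asserts that $\pi_d\colon H_d\to\mathrm{Bir}_{\leq d}(\mathbb{P}^n_\mathbb{C})$ is proper. Since properness means universal closedness, taking $Z$ to be a one-point space in the definition shows at once that $\pi_d$ is closed; this yields the parenthetical ``and closed'' for free. For the second bullet, I would invoke the point-set criterion recalled immediately before the Claim: if $f\colon X\to Y$ is a quotient map with $X$ locally compact, and if $f$ is proper, then $Y$ is locally compact. Here $X=H_d$ is locally compact by Lemma \ref{lem:sequential}; the map $\pi_d$ is a quotient map by the very definition of the Euclidean topology on $\mathrm{Bir}_{\leq d}(\mathbb{P}^n_\mathbb{C})$; and $\pi_d$ is proper by the Claim. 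Hence $\mathrm{Bir}_{\leq d}(\mathbb{P}^n_\mathbb{C})$ is locally compact, which in the convention adopted in the text already includes being Hausdorff.

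Should one wish to exhibit Hausdorffness directly rather than fold it into the local-compactness package, I would argue as follows. Given distinct $\phi,\psi\in\mathrm{Bir}_{\leq d}(\mathbb{P}^n_\mathbb{C})$, their fibers $(\pi_d)^{-1}(\phi)$ and $(\pi_d)^{-1}(\psi)$ are disjoint and compact, being closed subsets of the compact space $W_d$ by Lemma \ref{lem:propbf}; hence they can be separated by disjoint open sets $U_\phi$, $U_\psi$ in the locally compact Hausdorff space $H_d$. Because $\pi_d$ is closed, for each fiber one finds an open neighborhood $V_\phi\ni\phi$ with $(\pi_d)^{-1}(V_\phi)\subset U_\phi$, and similarly a $V_\psi\ni\psi$. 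Surjectivity of $\pi_d$ then forces $V_\phi\cap V_\psi=\emptyset$, since a common point would have its nonempty fiber inside $U_\phi\cap U_\psi=\emptyset$.

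The main point I would stress is that there is essentially no obstacle left to overcome here: all the difficulty has been concentrated in the Claim, whose proof extracts, via compactness of the projective space of homogeneous polynomials, convergent subsequences of the common factors $a_i$ and $b_i$, and thereby shows that the saturation of a closed set is again closed. Once that sequential argument is in hand, the present lemma is merely a formal repackaging of it together with the local compactness of $H_d$ and the compactness of the fibers.
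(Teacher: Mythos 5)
Your proposal is correct and follows essentially the same route as the paper: the paper also treats Lemma \ref{lem:procom} as an immediate consequence of the Claim (whose proof establishes closedness of $\pi_d$ via the sequential extraction of common factors), combined with the recalled criterion that a quotient map from a locally compact space with compact fibers is proper and forces local compactness of the target, the fibers being compact because $(\pi_d)^{-1}(\phi)$ is closed in the compact space $W_d$ by Lemma \ref{lem:propbf}. Your optional direct separation argument for Hausdorffness is sound but unnecessary, since the paper's Bourbaki-style convention builds Hausdorffness into local compactness, exactly as you note.
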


\begin{lem}[\cite{BlancFurter}]\label{lem:top}
Let $d\geq 0$ be an integer.
The natural injection 
\[
\iota_d\colon\mathrm{Bir}_{\leq d}(\mathbb{P}^n_\mathbb{C})\to\mathrm{Bir}_{\leq d+1}(\mathbb{P}^n_\mathbb{C})
\]
is a closed embedding, that is a 
homeomorphism onto its image 
which is closed in 
$\mathrm{Bir}_{\leq d+1}(\mathbb{P}^n_\mathbb{C})$.
\end{lem}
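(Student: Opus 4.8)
The plan is to construct an auxiliary closed embedding at the level of the parameter spaces $H_d$ and $H_{d+1}$ and then push it through the quotient maps $\pi_d$ and $\pi_{d+1}$, exploiting the fact that $\pi_{d+1}$ is a \emph{closed} map (Lemma \ref{lem:procom}).

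First I would fix a nonzero linear form, say $z_0$, and consider the multiplication map $j\colon W_d\to W_{d+1}$ sending $(\phi_0:\phi_1:\ldots:\phi_n)$ to $(z_0\phi_0:z_0\phi_1:\ldots:z_0\phi_n)$. This is linear in the coefficients of $\phi$, hence a closed immersion of projective spaces whose image is the projective linear subspace of $W_{d+1}$ consisting of tuples all of whose entries are divisible by $z_0$; in particular $j$ is a homeomorphism onto a closed subset for the Euclidean topology. Since $z_0\phi$ and $\phi$ define the same rational self map of $\mathbb{P}^n_\mathbb{C}$, the map $j$ carries $H_d$ into $H_{d+1}$ and satisfies the commutation relation $\pi_{d+1}\circ j=\iota_d\circ\pi_d$ on $H_d$. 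Moreover $j(H_d)=j(W_d)\cap H_{d+1}$: if $z_0\phi$ lies in $H_{d+1}$ it defines a birational map, hence so does $\phi$, so $\phi\in H_d$. As $j(W_d)$ is closed in $W_{d+1}$, the set $j(H_d)$ is closed in $H_{d+1}$, so $j$ restricts to a topological embedding $H_d\hookrightarrow H_{d+1}$ with closed image; consequently $j$ sends closed subsets of $H_d$ to closed subsets of $H_{d+1}$.

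Next I would verify that $\iota_d$ is continuous and closed. Continuity is immediate: $\pi_d$ is a topological quotient map (by the very definition of the Euclidean topology), and $\iota_d\circ\pi_d=\pi_{d+1}\circ j$ is continuous, so $\iota_d$ is continuous. For closedness, let $C\subseteq\mathrm{Bir}_{\leq d}(\mathbb{P}^n_\mathbb{C})$ be closed. Then $\pi_d^{-1}(C)$ is closed in $H_d$, hence $j\big(\pi_d^{-1}(C)\big)$ is closed in $H_{d+1}$ by the previous paragraph, and using the commutation relation together with the surjectivity of $\pi_d$ one gets
\[
\iota_d(C)=\iota_d\big(\pi_d(\pi_d^{-1}(C))\big)=\pi_{d+1}\big(j(\pi_d^{-1}(C))\big).
\]
Since $\pi_{d+1}$ is a closed map (Lemma \ref{lem:procom}), the right-hand side is closed in $\mathrm{Bir}_{\leq d+1}(\mathbb{P}^n_\mathbb{C})$, so $\iota_d$ is a closed map.

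Finally I would invoke the elementary fact that a continuous, injective, closed map is automatically a homeomorphism onto its image and that this image is closed. The map $\iota_d$ is the set-theoretic inclusion, hence injective, so the conclusion follows and $\iota_d$ is a closed embedding. I expect the only genuinely delicate point to be the verification in the first paragraph that $j(H_d)$ is \emph{closed} in $H_{d+1}$ (equivalently, that $j(H_d)=j(W_d)\cap H_{d+1}$) rather than merely locally closed, since $H_d$ itself is only locally closed in $W_d$ (Lemma \ref{lem:propbf}); once this is in place, everything else is a formal manipulation of quotient and closed maps resting on the properness and closedness of $\pi_{d+1}$ already established in Lemma \ref{lem:procom}.
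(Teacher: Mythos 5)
Your proof is correct and follows essentially the same route as the paper: your auxiliary map $j$ is exactly the paper's $\widehat{\iota_d}\colon(f_0:f_1:\ldots:f_n)\mapsto(z_0f_0:z_0f_1:\ldots:z_0f_n)$, a closed immersion fitted into the same commutative square with $\pi_d$ and $\pi_{d+1}$, and continuity of $\iota_d$ is deduced from the quotient property in the same way. The only difference is that you make the closedness of $\iota_d$ fully explicit — via the (correct) identity $j(H_d)=j(W_d)\cap H_{d+1}$ together with the closedness of $\pi_{d+1}$ from Lemma \ref{lem:procom} — a step the paper's printed proof leaves implicit after remarking that $\widehat{\iota_d}$ is continuous and closed.
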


\begin{proof}
Consider the map 
\begin{align*}
& \widehat{\iota_d}\colon H_d\to H_{d+1},&&  (f_0:f_1:\ldots:f_n)\mapsto (z_0f_0:z_0f_1:\ldots:z_0f_n).
\end{align*}
It is a morphism of algebraic varieties that 
is a closed immersion. As a result it is 
continuous and closed with respect to the 
Euclidean topology.
The diagram
\[
  \xymatrix{
 H_d \ar[r]^{\widehat{\iota_d}} \ar[d]_{\pi_d} & H_{d+1}\ar[d]^{\pi_{d+1}} \\
   \mathrm{Bir}_{\leq d}(\mathbb{P}^n_\mathbb{C})  \ar[r]_{\iota_d} & \mathrm{Bir}_{\leq d+1}(\mathbb{P}^n_\mathbb{C})
  }
\]
commutes. 

The continuity of $\widehat{\iota_d}$ implies
the continuity of $\iota_d$: let 
$\mathcal{U}$ be an open subset of 
$\mathrm{Bir}_{\leq d+1}(\mathbb{P}^n_\mathbb{C})$;
the equality 
$(\pi_d)^{-1}((\iota_d)^{-1}(\mathcal{U}))=(\pi_{d+1}\circ\widehat{\iota_d})^{-1}(\mathcal{U})$
shows that $(\pi_d)^{-1}((\iota_d)^{-1}(\mathcal{U}))$
is open in $H_d$, that is 
$(\iota_d)^{-1}(\mathcal{U})$ is open in 
$\mathrm{Bir}_{\leq d}(\mathbb{P}^n_\mathbb{C})$.
\end{proof}

\subsection{The Euclidean topology on $\mathrm{Bir}(\mathbb{P}^n_\mathbb{C})$}

Thanks to Lemma \ref{lem:top} one can put 
on $\mathrm{Bir}(\mathbb{P}^n_\mathbb{C})$
the inductive limit topology given by the 
$\mathrm{Bir}_{\leq d}(\mathbb{P}^n_\mathbb{C})$:
a subset of 
$\mathrm{Bir}(\mathbb{P}^n_\mathbb{C})$ is 
closed (resp. open) if and only if its 
intersection with any 
$\mathrm{Bir}_{\leq d}(\mathbb{P}^n_\mathbb{C})$
is closed (resp. open). In particular the 
injections 
$\mathrm{Bir}_{\leq d}(\mathbb{P}^n_\mathbb{C})\hookrightarrow\mathrm{Bir}(\mathbb{P}^n_\mathbb{C})$
are closed embeddings. This topology is 
called the \textsl{Euclidean topology}\index{defi}{Euclidean topology on $\mathrm{Bir}(\mathbb{P}^n_\mathbb{C})$}
of $\mathrm{Bir}(\mathbb{P}^n_\mathbb{C})$.
Let us now prove that 
$\mathrm{Bir}(\mathbb{P}^n_\mathbb{C})$ 
is a topological group endowed with the 
Euclidean topology.

\begin{lem}[\cite{BlancFurter}]\label{lem:bfcont}
Let $d\geq 1$ be an integer.
The map
\begin{align*}
&\mathcal{I}_d\colon\mathrm{Bir}_{\leq d}(\mathbb{P}^n_\mathbb{C})\to\mathrm{Bir}_{\leq d^{n-1}}(\mathbb{P}^n_\mathbb{C}),
&& \phi\mapsto\phi^{-1}
\end{align*}
is continuous.
\end{lem}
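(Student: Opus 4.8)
The plan is to build inversion from the incidence correspondence already used to prove Lemma~\ref{lem:propbf}, and then to transport continuity across the quotient maps $\pi_d$. Recall the set
\[
Y=\big\{(\zeta,\phi)\in W_{d^{n-1}}\times W_d\,\vert\,\zeta\circ\phi=P\,\mathrm{id}\text{ for some }P\big\},
\]
whose closedness in $W_{d^{n-1}}\times W_d$ is established in the proof of Lemma~\ref{lem:propbf}, where the bound $\deg\phi^{-1}\leq d^{n-1}$ of \cite{BassConnelWright} is used. First I would set $\widetilde{Y}=Y\cap\big(W_{d^{n-1}}\times H_d\big)$, a closed subset of $W_{d^{n-1}}\times H_d$, and consider the two projections $p=\mathrm{pr}_2\colon\widetilde{Y}\to H_d$ and $q=\mathrm{pr}_1\colon\widetilde{Y}\to W_{d^{n-1}}$.

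The key elementary observation is that for $(\zeta,\phi)\in\widetilde{Y}$ the polynomial $P$ cannot vanish: since $\phi\in H_d$ the map $\psi_\phi$ is dominant, so $\zeta_i(\phi_0,\ldots,\phi_n)\equiv 0$ would force $\zeta_i\equiv 0$, contradicting $\zeta\in W_{d^{n-1}}$. Hence $\psi_\zeta\circ\psi_\phi=\mathrm{id}$, that is $\psi_\zeta=\psi_\phi^{-1}$ is birational; in particular $q$ takes its values in $H_{d^{n-1}}$, and $p$ is surjective because every birational map admits an inverse of degree $\leq d^{n-1}$. With this in hand the diagram
\[
\mathcal{I}_d\circ\pi_d\circ p=\pi_{d^{n-1}}\circ q
\]
commutes, both sides sending $(\zeta,\phi)$ to $\psi_\phi^{-1}=\psi_\zeta$, and its right-hand side is continuous.

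It then remains to check that $\pi_d\circ p$ is a topological quotient map, for then the universal property of quotients forces $\mathcal{I}_d$ to be continuous. Here I would use that a continuous closed surjection is automatically a quotient map, and that a composition of quotient maps is a quotient map. The map $\pi_d$ is a closed continuous surjection by Lemma~\ref{lem:procom}. For $p$, the projection $W_{d^{n-1}}\times H_d\to H_d$ is closed because $W_{d^{n-1}}$ is a compact projective space (Lemma~\ref{lem:propbf}), so the tube lemma applies; restricting a closed map to the closed set $\widetilde{Y}$ keeps it closed, and together with surjectivity this makes $p$ a quotient map. Hence $\pi_d\circ p$ is a quotient map and $\mathcal{I}_d$ is continuous.

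The main obstacle is conceptual rather than computational: the assignment $\phi\mapsto\psi_\phi^{-1}$ cannot be realised as a \emph{continuous section} $H_d\to H_{d^{n-1}}$, because the fibres of $q$ over $H_d$ are positive-dimensional (an inverse is well defined only up to a common homogeneous factor, and the degree of $\psi_\phi^{-1}$ may drop on strata of $H_d$). This is exactly why one must pass through the correspondence $\widetilde{Y}$ and argue with quotient maps rather than with an explicit inverse formula. Equivalently, since $\mathrm{Bir}_{\leq d}(\mathbb{P}^n_\mathbb{C})$ is a sequential space by Lemma~\ref{lem:sequential}, one may verify continuity on sequences: given $g_i\to g$, lift a subsequence through the proper map $\pi_d$ to $\varphi_{i_k}\to\varphi$ in $H_d$, choose inverse representatives $\zeta_{i_k}\in q\big(p^{-1}(\varphi_{i_k})\big)$, extract a convergent sub-subsequence $\zeta_{i_{k_l}}\to\zeta$ by compactness of $W_{d^{n-1}}$, and use closedness of $\widetilde{Y}$ to obtain $\psi_\zeta=\psi_\varphi^{-1}=g^{-1}$; continuity of $\pi_{d^{n-1}}$ then yields $g_{i_{k_l}}^{-1}\to g^{-1}$, and the usual subsequence criterion finishes the argument.
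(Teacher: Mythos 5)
Your argument is correct and essentially reproduces the paper's own proof: your correspondence $\widetilde{Y}$ is the same set as the paper's $L=Y\cap(\mathcal{U}'\times\mathcal{U})$, your maps $p$ and $q$ are its $\eta_2$ and $\eta_1$, and your closedness of $p$ via compactness of $W_{d^{n-1}}$ combined with the commuting square is exactly how the paper shows that $\pi_d^{-1}\big(\mathcal{I}_d^{-1}(F)\big)=\eta_2\big((\pi_{d^{n-1}}\circ\eta_1)^{-1}(F)\big)$ is closed in $H_d$. Phrasing the last step through the universal property of the quotient map $\pi_d\circ p$, instead of chasing preimages of closed sets directly through the closed quotient map $\pi_d$ of Lemma \ref{lem:procom}, is only a cosmetic repackaging of the same argument.
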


\begin{proof}
As in Lemma \ref{lem:propbf} we consider the 
set $Y\subset W_{d^{n-1}}\times W_d$ 
defined by
\[
Y=\big\{(\varphi,\phi)\in W_{d^{n-1}}\times W_d\,\vert\, \varphi\circ \phi=P\,\mathrm{id}\text{ for some $P\in\mathbb{C}[z_0,z_1,\ldots,z_n]_d$}\big\}.
\]
Let $\mathcal{U}\subset W_d$ (resp. 
$\mathcal{U}'\subset W_{d^{n-1}}$) be the 
set of elements having a nonzero Jacobian.
The set $Y$ is closed in $W_{d^{n-1}}\times W_d$
(see the proof of Lemma \ref{lem:propbf}) 
and the set $\mathcal{U}$ is open in $W_d$.
As a consequence 
\[
L=Y\cap(W_{d^{n-1}}\times\mathcal{U})=Y\cap(\mathcal{U}'\times\mathcal{U})
\]
is locally closed in the algebraic 
variety $W_{d^{n-1}}\times W_d$.

\smallskip

The projection on the first factor is 
a morphism $\eta_1\colon L\to H_{d^{n-1}}$
which is not surjective in general.
The projection on the second factor
induces a surjective morphism 
$\eta_2\colon L\to H_d$. By construction
the diagram 
\[
  \xymatrix{
 H_d  \ar[d]_{\pi_d} & L\subset W_{d^{n-1}}\times W_d\ar[l]_{\eta_2}\ar[r]^{\eta_1} & H_{d^{n-1}}\ar[d]^{\pi_{d-1}} \\
   \mathrm{Bir}_{\leq d}(\mathbb{P}^n_\mathbb{C})  \ar[rr]_{\mathcal{I}_d} & &\mathrm{Bir}_{\leq d^{n-1}}(\mathbb{P}^n_\mathbb{C})
  }
\]
commutes. 

\smallskip

Let us prove that $\eta_2$ is a closed map for
the Euclidean topology. The set $W_{d^{n-1}}$ 
is compact, so the second projection 
$W_{d^{n-1}}\times W_d\to W_d$ is a closed 
map. Its restriction $\eta'_2\colon Y\to W_d$
to the closed subset $Y$ of 
$W_{d^{n-1}}\times W_d$ is a closed map.
Since $L=(\eta'_2)^{-1}(H_d)$, we get that $\eta_2$
is a closed map\footnote{Let us recall 
that if $\varphi\colon A\to B$ is a continuous
closed map between topological spaces and 
$C$ is any subset of $B$, then $\varphi$ 
induces a continuous closed map 
$\varphi^{-1}(C)\to C$.}.

\smallskip

As the diagram is commutative for any 
$F\subset\mathrm{Bir}_{\leq d^{n-1}}(\mathbb{P}^n_\mathbb{C})$
we have 
\[
\eta_2\big((\pi_{d^{n-1}}\circ\eta_1)^{-1}(F)\big)=(\mathcal{I}_d\circ\pi_d)^{-1}(F);
\]
furthermore this set corresponds to elements
$(\phi_0:\phi_1:\ldots:\phi_n)\in W_d$
such that the rational map~$\psi_\phi$ 
is the inverse of an element of $F$. 
Assume that $F$ is closed 
in~$\mathrm{Bir}_{\leq d^{n-1}}(\mathbb{P}^n_\mathbb{C})$.
The maps $\eta_1$ and $\pi_{d^{n-1}}$ 
are continuous for the Euclidean topology
hence $(\pi_{d^{n-1}}\circ\eta_1)^{-1}(F)$ 
is closed in~$L$. Lemma \ref{lem:procom}
asserts that
\[
\pi_d^{-1}(\mathcal{I}_d^{-1}(F))=\eta_2\big((\pi_{d^{n-1}}\circ\eta_1)^{-1}(F)\big)
\]
is closed in $H_d$ and $\mathcal{I}_d^{-1}(F)$
is closed in 
$\mathrm{Bir}_{\leq d}(\mathbb{P}^n_\mathbb{C})$.
\end{proof}

Let us introduce the map $\mathcal{I}$ defined 
by 
\begin{align*}
&\mathcal{I}\colon\mathrm{Bir}(\mathbb{P}^n_\mathbb{C})\to \mathrm{Bir}(\mathbb{P}^n_\mathbb{C}),&&\phi\mapsto\phi^{-1}.
\end{align*} 
The degree of the inverse of a birational 
self map of $\mathbb{P}^n_\mathbb{C}$ of 
degree $d$ has degree at most $d^{n-1}$. 
Consequently for any $d\geq 1$ the map $\mathcal{I}$ 
restricts to an injective map
\[
\mathcal{I}_d\colon\mathrm{Bir}_{\leq d}(\mathbb{P}^n_\mathbb{C})\to \mathrm{Bir}_{\leq d^{n-1}}(\mathbb{P}^n_\mathbb{C}).
\]
According to Lemma \ref{lem:bfcont} the map
$\mathcal{I}_d$ is continuous. The 
definition of the topology of 
$\mathrm{Bir}(\mathbb{P}^n_\mathbb{C})$ 
implies that $\mathcal{I}$ is continuous. 
Since $\mathcal{I}=\mathcal{I}^{-1}$ 
one has:

\begin{cor}[\cite{BlancFurter}]\label{cor:bfcor1}
The map 
\begin{align*}
&\mathcal{I}\colon\mathrm{Bir}(\mathbb{P}^n_\mathbb{C})\to \mathrm{Bir}(\mathbb{P}^n_\mathbb{C}),&&\phi\mapsto\phi^{-1}
\end{align*}
is a homeomorphism.
\end{cor}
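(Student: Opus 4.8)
The plan is to exploit the fact that $\mathcal{I}$ is an involution. Inversion of birational maps satisfies $(\phi^{-1})^{-1}=\phi$ for every $\phi\in\mathrm{Bir}(\mathbb{P}^n_\mathbb{C})$, so $\mathcal{I}$ is a bijection with $\mathcal{I}\circ\mathcal{I}=\mathrm{id}$; in particular $\mathcal{I}^{-1}=\mathcal{I}$. It therefore suffices to prove that $\mathcal{I}$ is continuous, for then both $\mathcal{I}$ and its set-theoretic inverse are continuous and $\mathcal{I}$ is a homeomorphism. The advantage of this observation is that it removes any need to analyze the inverse map as a separate object: continuity in one direction automatically furnishes continuity in the other.

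To establish continuity I would invoke the inductive limit description of the Euclidean topology on $\mathrm{Bir}(\mathbb{P}^n_\mathbb{C})$. By definition a map out of this inductive limit is continuous if and only if its restriction to each $\mathrm{Bir}_{\leq d}(\mathbb{P}^n_\mathbb{C})$ is continuous. The restriction of $\mathcal{I}$ to $\mathrm{Bir}_{\leq d}(\mathbb{P}^n_\mathbb{C})$ factors as the map $\mathcal{I}_d\colon\mathrm{Bir}_{\leq d}(\mathbb{P}^n_\mathbb{C})\to\mathrm{Bir}_{\leq d^{n-1}}(\mathbb{P}^n_\mathbb{C})$ followed by the inclusion $\mathrm{Bir}_{\leq d^{n-1}}(\mathbb{P}^n_\mathbb{C})\hookrightarrow\mathrm{Bir}(\mathbb{P}^n_\mathbb{C})$. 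The degree bound $\deg(\phi^{-1})\leq(\deg\phi)^{n-1}$ guarantees that $\mathcal{I}_d$ indeed lands in $\mathrm{Bir}_{\leq d^{n-1}}(\mathbb{P}^n_\mathbb{C})$, Lemma \ref{lem:bfcont} gives continuity of $\mathcal{I}_d$, and Lemma \ref{lem:top} (together with the construction of the inductive topology) ensures that the inclusion is a closed embedding, hence continuous. The composition is therefore continuous, and since $d$ was arbitrary the universal property of the inductive limit yields continuity of $\mathcal{I}$.

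The genuinely substantial input is Lemma \ref{lem:bfcont}, whose proof already handles the delicate point: passing from a birational map to its inverse is realized geometrically through the incidence variety $Y\subset W_{d^{n-1}}\times W_d$, and the properness and closedness established there (using compactness of $W_{d^{n-1}}$) are what make $\mathcal{I}_d$ continuous for the quotient topologies. Granting that lemma, the present statement is a formal consequence. The main obstacle, namely controlling the behaviour of inversion level by level and across the degree-doubling jump from $d$ to $d^{n-1}$, has therefore already been resolved; all that remains is the bookkeeping with the inductive limit topology together with the involution property $\mathcal{I}=\mathcal{I}^{-1}$, which is precisely what lets me conclude that the continuous bijection $\mathcal{I}$ is a homeomorphism.
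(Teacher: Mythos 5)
Your proposal is correct and follows essentially the same route as the paper: the text preceding Corollary \ref{cor:bfcor1} argues exactly as you do, using the degree bound $\deg(\phi^{-1})\leq(\deg\phi)^{n-1}$ to see that $\mathcal{I}$ restricts to the maps $\mathcal{I}_d$, invoking Lemma \ref{lem:bfcont} for their continuity, concluding continuity of $\mathcal{I}$ from the inductive limit topology, and finishing with $\mathcal{I}=\mathcal{I}^{-1}$. Your identification of Lemma \ref{lem:bfcont} (via the incidence variety $Y\subset W_{d^{n-1}}\times W_d$ and compactness of $W_{d^{n-1}}$) as the substantial input matches the paper's structure precisely.
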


Let us now look at the composition
of two birational maps.

\begin{lem}[\cite{BlancFurter}]\label{lem:bfcont2}
For any $d$, $k\geq 1$ the map
\begin{align*}
&\chi_{d,k}\colon\mathrm{Bir}_{\leq d}(\mathbb{P}^n_\mathbb{C})\times \mathrm{Bir}_{\leq k}(\mathbb{P}^n_\mathbb{C})\to \mathrm{Bir}_{\leq dk}(\mathbb{P}^n_\mathbb{C}), && (\phi,\psi)\mapsto\phi\circ\psi
\end{align*}
is continuous.
\end{lem}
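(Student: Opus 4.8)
The plan is to lift $\chi_{d,k}$ to the level of polynomial representatives and then descend continuity through the quotient maps $\pi_d$, $\pi_k$, $\pi_{dk}$. First I would introduce the substitution map
\begin{align*}
&\widehat{\chi}_{d,k}\colon H_d\times H_k\to H_{dk}, && \big((\phi_0:\ldots:\phi_n),(\psi_0:\ldots:\psi_n)\big)\mapsto\big(\phi_0(\psi_0,\ldots,\psi_n):\ldots:\phi_n(\psi_0,\ldots,\psi_n)\big).
\end{align*}
Since each $\phi_i$ is homogeneous of degree $d$ and each $\psi_j$ is homogeneous of degree $k$, the entries $\phi_i(\psi_0,\ldots,\psi_n)$ are homogeneous of degree $dk$; moreover on $H_d\times H_k$ the maps $\psi_\phi$, $\psi_\psi$ are birational, so their composite is birational and in particular the substituted tuple is not identically zero. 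Hence $\widehat{\chi}_{d,k}$ takes values in $H_{dk}$ (the substituted tuple may have a common factor, but $H_{dk}$ contains such non-reduced representatives), and being given by polynomial formulas in the coefficients it is a morphism of algebraic varieties, hence continuous for the Euclidean topology. By construction $\pi_{dk}\circ\widehat{\chi}_{d,k}=\chi_{d,k}\circ(\pi_d\times\pi_k)$, so the relevant square commutes.

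The composite $\pi_{dk}\circ\widehat{\chi}_{d,k}$ is continuous as a composition of continuous maps. To conclude that $\chi_{d,k}$ itself is continuous it then suffices, by the universal property of quotient maps, to check that $\pi_d\times\pi_k$ is a quotient (identification) map: a map out of the target of a quotient map is continuous as soon as its pullback along the quotient map is, and that pullback is exactly $\pi_{dk}\circ\widehat{\chi}_{d,k}$.

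The key step, and the one where the topological input really enters, is therefore to prove that $\pi_d\times\pi_k$ is a quotient map. Here I would use the classical fact (\cite{Bourbaki}) that if $p\colon X\to Y$ is a quotient map and $Z$ is locally compact Hausdorff, then $p\times\mathrm{id}_Z$ is again a quotient map. I factor
\[
\pi_d\times\pi_k=(\mathrm{id}_{\mathrm{Bir}_{\leq d}(\mathbb{P}^n_\mathbb{C})}\times\pi_k)\circ(\pi_d\times\mathrm{id}_{H_k}).
\]
The map $\pi_d$ is a quotient map by the very definition of the Euclidean topology on $\mathrm{Bir}_{\leq d}(\mathbb{P}^n_\mathbb{C})$, and $H_k$ is locally compact (Lemma \ref{lem:sequential}); hence $\pi_d\times\mathrm{id}_{H_k}$ is a quotient map. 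Likewise $\pi_k$ is a quotient map and $\mathrm{Bir}_{\leq d}(\mathbb{P}^n_\mathbb{C})$ is locally compact and Hausdorff (Lemma \ref{lem:procom}); hence $\mathrm{id}\times\pi_k$ is a quotient map. A composite of quotient maps is a quotient map, so $\pi_d\times\pi_k$ is one, which finishes the argument.

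I expect the only genuine subtlety to be this last point: quotient maps are not stable under products in general, and it is precisely the local compactness of $H_k$ and of $\mathrm{Bir}_{\leq d}(\mathbb{P}^n_\mathbb{C})$ — established in Lemmas \ref{lem:sequential} and \ref{lem:procom} — that rescues the argument. Alternatively one could bypass the product-of-quotients issue by working sequentially, since all spaces in sight are metric or quotients of locally compact metric spaces and hence sequential: one would then check continuity of $\chi_{d,k}$ on convergent sequences and lift them through $\pi_d$, $\pi_k$ as in the proof of Lemma \ref{lem:procom}. The quotient-map route nonetheless seems the cleanest.
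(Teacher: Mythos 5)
Your proof is correct, and its skeleton coincides with the paper's: the same lift $\widehat{\chi}_{d,k}\colon H_d\times H_k\to H_{dk}$ (including the observation that the substituted tuple may acquire a common factor, which is harmless because $H_{dk}$ contains non-reduced representatives, exactly as in the paper's description of the fibres of $\pi_d$), the same commutative square, and the same reduction to showing that $\pi_d\times\pi_k$ is a quotient map. Where you genuinely diverge is in how that last fact is established. The paper argues via properness: $\pi_d$ and $\pi_k$ are proper by Lemma \ref{lem:procom}, a product of proper maps is proper (\cite{Bourbaki}), proper maps are closed, and a surjective continuous closed map is a quotient map. You instead factor $\pi_d\times\pi_k=(\mathrm{id}\times\pi_k)\circ(\pi_d\times\mathrm{id}_{H_k})$ and apply Whitehead's theorem that the product of a quotient map with the identity of a locally compact Hausdorff space is again a quotient map, feeding in the local compactness of $H_k$ (Lemma \ref{lem:sequential}) and of $\mathrm{Bir}_{\leq d}(\mathbb{P}^n_\mathbb{C})$ (Lemma \ref{lem:procom}), together with the fact that a composite of quotient maps is a quotient map. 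Both routes draw on the same two lemmas, so neither is cheaper in terms of prerequisites; the paper's version yields slightly more in one line (closedness of $\pi_d\times\pi_k$, not merely the quotient property), while yours isolates exactly the topological input needed and would still go through if one only knew $\pi_d$, $\pi_k$ to be quotient maps rather than proper ones. Your fallback sequential argument would also work, since all spaces in sight are sequential by Lemma \ref{lem:sequential}, but the quotient-map route you chose is, as you say, the cleanest.
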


\begin{proof}
Let us consider the map 
$\widehat{\chi_{d,k}}\colon H_d\times H_k\to H_{dk}$
given by 
\begin{small}
\[
\big((\phi_0:\phi_1:\ldots:\phi_n),\,(\psi_0:\psi_1:\ldots:\psi_n)\big)\mapsto
\big(\phi_n(\psi_0,\psi_1,\ldots,\psi_n)):\ldots:\phi_n(\psi_0,\psi_1,\ldots,\psi_n)).
\]
\end{small}
The diagram
\[  \xymatrix{
 H_d\times H_k  \ar[d]_{\pi_d\times\pi_k}\ar[r]^{\widehat{\chi_{d,k}}} & H_{dk}\ar[d]^{\pi_{dk}} \\
   \mathrm{Bir}_{\leq d}(\mathbb{P}^n_\mathbb{C})\times\mathrm{Bir}_{\leq k}(\mathbb{P}^n_\mathbb{C})  \ar[r]_{\mathcal{I}_d} & \mathrm{Bir}_{\leq dk}(\mathbb{P}^n_\mathbb{C})
  }
\]
commutes.

The map $\widehat{\chi_{d,k}}$ is a morphism
of algebraic varieties, so is continuous for
the Euclidean topo\-logy. Therefore, if $F$ is 
a closed subset of 
$\mathrm{Bir}_{\leq dk}(\mathbb{P}^n_\mathbb{C})$,
then $(\pi_{dk}\circ\widehat{\chi_{d,k}})^{-1}(F)$
is closed in $H_d\times H_k$. But the diagram
is commutative, so 
\[
(\pi_d\circ\widehat{\chi_{d,k}})(F)=(\pi_d\times\pi_k)^{-1}\big((\chi_{d,k})^{-1}(F)\big).
\]
The product of two proper maps is proper
(\cite[Chapter 1,\S 10.1]{Bourbaki}); as a 
consequence $\pi_d\times\pi_k$ is proper
and hence closed. This implies that 
$\pi_d\times\pi_k$ is a quotient map. 
Hence $(\chi_{d,k})^{-1}(F)$ is closed and 
$\chi_{d,k}$ is continuous.
\end{proof}

According to Lemma \ref{lem:bfcont2} the map
\[
\chi_{d,k}\colon\mathrm{Bir}_{\leq d}(\mathbb{P}^n_\mathbb{C})\times
\mathrm{Bir}_{\leq k}(\mathbb{P}^n_\mathbb{C})\to\mathrm{Bir}_{\leq dk}(\mathbb{P}^n_\mathbb{C}) 
\] 
is continuous for each $d$, $k\geq 1$. As a consequence
by definition of the topology of~$\mathrm{Bir}(\mathbb{P}^n_\mathbb{C})$ 
we get:

\begin{cor}[\cite{BlancFurter}]\label{cor:bfcor2}
The map 
\begin{align*}
& \mathrm{Bir}(\mathbb{P}^n_\mathbb{C})\times \mathrm{Bir}(\mathbb{P}^n_\mathbb{C})\to \mathrm{Bir}(\mathbb{P}^n_\mathbb{C}),&&(\phi,\psi)\mapsto\phi\circ\psi 
\end{align*}
is continuous.
\end{cor}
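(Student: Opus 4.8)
The plan is to deduce the continuity of the composition map $\mu\colon(\phi,\psi)\mapsto\phi\circ\psi$ from the degreewise statement of Lemma \ref{lem:bfcont2} together with the inductive-limit description of the Euclidean topology. Since $\mathrm{Bir}(\mathbb{P}^n_\mathbb{C})$ carries the final topology with respect to the closed embeddings $\mathrm{Bir}_{\leq m}(\mathbb{P}^n_\mathbb{C})\hookrightarrow\mathrm{Bir}(\mathbb{P}^n_\mathbb{C})$, I would like to reduce the verification of the continuity of $\mu$ to a verification on each bounded-degree piece. For each pair $(d,k)$ the restriction of $\mu$ to $\mathrm{Bir}_{\leq d}(\mathbb{P}^n_\mathbb{C})\times\mathrm{Bir}_{\leq k}(\mathbb{P}^n_\mathbb{C})$ factors as $\chi_{d,k}$ followed by the inclusion $\mathrm{Bir}_{\leq dk}(\mathbb{P}^n_\mathbb{C})\hookrightarrow\mathrm{Bir}(\mathbb{P}^n_\mathbb{C})$; by Lemma \ref{lem:bfcont2} the first map is continuous and by Lemma \ref{lem:top} the inclusion is a closed embedding, so each such restriction is continuous as a map into $\mathrm{Bir}(\mathbb{P}^n_\mathbb{C})$.

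The point that makes this reduction legitimate is that the source $\mathrm{Bir}(\mathbb{P}^n_\mathbb{C})\times\mathrm{Bir}(\mathbb{P}^n_\mathbb{C})$, equipped with the product topology, itself carries the inductive-limit topology relative to the subspaces $\mathrm{Bir}_{\leq d}(\mathbb{P}^n_\mathbb{C})\times\mathrm{Bir}_{\leq k}(\mathbb{P}^n_\mathbb{C})$; equivalently, a map out of the product is continuous as soon as all of its restrictions to these subspaces are. This is the only delicate step, and it is where local compactness enters: by Lemma \ref{lem:procom} each $\mathrm{Bir}_{\leq d}(\mathbb{P}^n_\mathbb{C})$ is locally compact and Hausdorff, and forming a product with a locally compact space preserves quotient, hence final, topologies. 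Concretely, I would invoke the fact that for a fixed locally compact $Z$ the functor $-\times Z$ carries quotient maps to quotient maps, and combine this with the observation that our inductive limit is a strict increasing union of closed subspaces; here one may restrict attention to the diagonal chain $\mathrm{Bir}_{\leq d}(\mathbb{P}^n_\mathbb{C})\times\mathrm{Bir}_{\leq d}(\mathbb{P}^n_\mathbb{C})$, which is cofinal among all products and reduces matters to a strict inductive limit of a single increasing sequence.

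Thus the strategy has two layers: a soft topological layer, asserting that the product commutes with the strict inductive limit of the locally compact spaces $\mathrm{Bir}_{\leq d}(\mathbb{P}^n_\mathbb{C})$, and a concrete layer supplied by Lemma \ref{lem:bfcont2}. Granting the first, continuity of $\mu$ follows at once, since its restriction to every $\mathrm{Bir}_{\leq d}(\mathbb{P}^n_\mathbb{C})\times\mathrm{Bir}_{\leq k}(\mathbb{P}^n_\mathbb{C})$ is continuous. I expect the main obstacle to be precisely the justification of this commutation of products with the inductive limit; the subtlety is that for a general inductive limit the product topology is strictly finer than the inductive-limit topology on the product, so the argument genuinely relies on the local compactness established in Lemma \ref{lem:procom}, which is, implicitly, the same ingredient already exploited through the properness of $\pi_d\times\pi_k$ in the proof of Lemma \ref{lem:bfcont2}.
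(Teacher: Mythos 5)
Your overall route is exactly the paper's: the proof given in the text consists precisely of your first paragraph, namely that Lemma \ref{lem:bfcont2} gives continuity of each $\chi_{d,k}$, after which the corollary is declared to follow \og by definition of the topology\fg\ of $\mathrm{Bir}(\mathbb{P}^n_\mathbb{C})$. You are right — and it is to your credit — that this last deduction is not automatic: the universal property of the inductive limit governs maps out of $\mathrm{Bir}(\mathbb{P}^n_\mathbb{C})$ itself, not out of the product, so one genuinely needs the product topology on $\mathrm{Bir}(\mathbb{P}^n_\mathbb{C})\times\mathrm{Bir}(\mathbb{P}^n_\mathbb{C})$ to coincide with the final topology induced by the pieces $\mathrm{Bir}_{\leq d}(\mathbb{P}^n_\mathbb{C})\times\mathrm{Bir}_{\leq k}(\mathbb{P}^n_\mathbb{C})$. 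The survey elides this point entirely; your proposal is the honest, fleshed-out version of its one-line proof.

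There is, however, a gap in the mechanism you propose for that commutation. Writing $B_d=\mathrm{Bir}_{\leq d}(\mathbb{P}^n_\mathbb{C})$, the fact that $-\times Z$ preserves quotient maps for $Z$ locally compact does show, applied with $Z=B_d$, that $B_d\times\mathrm{Bir}(\mathbb{P}^n_\mathbb{C})$ carries the final topology with respect to the $B_d\times B_k$; but to pass from continuity on each $B_d\times\mathrm{Bir}(\mathbb{P}^n_\mathbb{C})$ to continuity on the full product you would have to apply the same fact once more in the other variable, and there the fixed factor is $\mathrm{Bir}(\mathbb{P}^n_\mathbb{C})$ itself, which is \emph{not} locally compact for $n\geq 2$ — the paper proves exactly this a few lines later. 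The repair is standard and uses only ingredients you already cite: the chain is strict with closed pieces (Lemma \ref{lem:top}) that are locally compact and Hausdorff (Lemma \ref{lem:procom}), so given a set $U$ meeting each $B_d\times B_d$ in an open set and a point $(a,b)\in U$ with $a,b\in B_m$, one inductively constructs relatively compact open sets $V_d,V'_d\subseteq B_d$ for $d\geq m$ with $\overline{V_d}\times\overline{V'_d}\subseteq U$, $\overline{V_d}\subseteq V_{d+1}$ and $\overline{V'_d}\subseteq V'_{d+1}$, using the tube lemma around the compact set $\overline{V_d}\times\overline{V'_d}$ at each stage; then $\bigcup_d V_d\times\bigcup_d V'_d$ is a product-open neighborhood of $(a,b)$ contained in $U$. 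Equivalently, since each $H_d$ is locally closed in the compact variety $W_d$ it is $\sigma$-compact, hence each $B_d$ is hemicompact and $\mathrm{Bir}(\mathbb{P}^n_\mathbb{C})$ is a $k_\omega$-space, and finite products of $k_\omega$-spaces carry the inductive topology of the products of their defining sequences. With that patch your argument is complete and agrees with the argument of Blanc and Furter in the original article.
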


Corollaries \ref{cor:bfcor1} and \ref{cor:bfcor2}
complete the proof of:

\begin{thm}[\cite{BlancFurter}]
The $n$-dimensional Cremona group 
endowed with the Euclidean topology is a topological group.
\end{thm}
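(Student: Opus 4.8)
The plan is to verify directly that $\mathrm{Bir}(\mathbb{P}^n_\mathbb{C})$, equipped with the Euclidean topology, satisfies the two defining axioms of a topological group. The abstract group law is already in place, namely composition of birational self maps, so the only thing to establish is that the structure maps are continuous: the composition map $(\phi,\psi)\mapsto\phi\circ\psi$ from $\mathrm{Bir}(\mathbb{P}^n_\mathbb{C})\times\mathrm{Bir}(\mathbb{P}^n_\mathbb{C})$ to $\mathrm{Bir}(\mathbb{P}^n_\mathbb{C})$, and the inversion map $\phi\mapsto\phi^{-1}$ on $\mathrm{Bir}(\mathbb{P}^n_\mathbb{C})$.

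At this stage both verifications are immediate, since each is precisely the content of one of the two preceding corollaries. For inversion I would invoke Corollary \ref{cor:bfcor1}, which asserts that $\mathcal{I}\colon\phi\mapsto\phi^{-1}$ is a homeomorphism of $\mathrm{Bir}(\mathbb{P}^n_\mathbb{C})$ and in particular continuous. For composition I would invoke Corollary \ref{cor:bfcor2}, which states that $(\phi,\psi)\mapsto\phi\circ\psi$ is continuous. Assembling these two facts yields the statement at once, so the proof itself is a one-line conclusion; all the substance resides in the corollaries already proved.

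The genuine difficulty, which is absorbed into Corollary \ref{cor:bfcor2} rather than appearing in this final step, is a topological subtlety about inductive limits. The source of the composition map carries the product of two inductive-limit topologies, whereas continuity is naturally checked on the inductive limit of the products $\mathrm{Bir}_{\leq d}(\mathbb{P}^n_\mathbb{C})\times\mathrm{Bir}_{\leq k}(\mathbb{P}^n_\mathbb{C})$, and for general inductive limits these two topologies on the product need not coincide. This is the point where one must work: it is rescued by the local compactness of each $\mathrm{Bir}_{\leq d}(\mathbb{P}^n_\mathbb{C})$ from Lemma \ref{lem:procom}, together with the properness of $\pi_d\times\pi_k$ exploited in the proof of Lemma \ref{lem:bfcont2}, which allow the fibrewise continuity of $\chi_{d,k}$ to globalize through the quotient maps $\pi_d$. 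I would note in passing that Hausdorffness, established level by level in Lemma \ref{lem:procom}, is not part of the present claim and so need not be re-examined here; the theorem follows solely from the continuity of the two group operations.
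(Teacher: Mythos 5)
Your proof is correct and follows exactly the paper's own route: the theorem is obtained by combining Corollary \ref{cor:bfcor1} (continuity of inversion) with Corollary \ref{cor:bfcor2} (continuity of composition), with all the substance residing in those results. Your side remark correctly identifies where the real work hides, namely that the product of inductive-limit topologies need not be the inductive limit of the products, which the paper resolves in the proof of Lemma \ref{lem:bfcont2} via the properness of $\pi_d\times\pi_k$.
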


Let us give a statement about the restriction of the 
topology on algebraic subgroups:

\begin{pro}[\cite{BlancFurter}]
Let $\mathrm{G}$ be a 
Zariski closed subgroup of 
$\mathrm{Bir}(\mathbb{P}^n_\mathbb{C})$ of 
bounded degree, let $\mathrm{K}$ be 
its associated algebraic group 
$($Corollary \ref{cor:agree}$)$. We put
on~$\mathrm{G}$ the restriction of the 
Euclidean topology of 
$\mathrm{Bir}(\mathbb{P}^n_\mathbb{C})$, 
we get the Euclidean topology on $\mathrm{K}$ 
via the bijection 
$\pi\colon\mathrm{K}\to\mathrm{G}$ which
becomes a homeomorphism.
\end{pro}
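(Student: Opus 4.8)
The plan is to realize $\mathrm{K}$ concretely inside $H_d$ and then reduce the statement to the fact that $\pi_d$ is a proper, closed, quotient map for the Euclidean topology (Lemma \ref{lem:procom}). Since $\mathrm{G}$ has bounded degree, I would choose $d$ minimal with $\mathrm{G}\subset\mathrm{Bir}_{\leq d}(\mathbb{P}^n_\mathbb{C})$. In the connected case Proposition \ref{pro:agree} exhibits $\mathrm{K}$ as the Zariski-closed subvariety $\overline{(\pi_d)^{-1}(\mathrm{G}\cap\mathrm{Bir}_d(\mathbb{P}^n_\mathbb{C}))}$ of $H_d$, the map $\pi$ being the restriction $\pi_d|_{\mathrm{K}}$, a bijection onto $\mathrm{G}$. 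I would first record two identifications of topologies. On the one hand, the Euclidean topology of the algebraic group $\mathrm{K}$ coincides with the subspace topology it inherits from $H_d\subset W_d$, since the analytic topology of a complex variety is intrinsic and $\mathrm{K}\hookrightarrow H_d$ is an isomorphism onto a closed subvariety. On the other hand, the restriction to $\mathrm{G}$ of the Euclidean topology of $\mathrm{Bir}(\mathbb{P}^n_\mathbb{C})$ equals the subspace topology from $\mathrm{Bir}_{\leq d}(\mathbb{P}^n_\mathbb{C})$, because $\mathrm{Bir}_{\leq d}(\mathbb{P}^n_\mathbb{C})\hookrightarrow\mathrm{Bir}(\mathbb{P}^n_\mathbb{C})$ is a closed embedding (Lemma \ref{lem:top} together with the definition of the inductive limit topology).

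With these identifications the proposition becomes the assertion that $\pi_d|_{\mathrm{K}}\colon\mathrm{K}\to\mathrm{G}$ is a homeomorphism for the Euclidean topologies, which I would prove by showing it is a continuous closed bijection. Continuity is immediate: $\pi_d$ is continuous for the Euclidean topology (Lemma \ref{lem:procom}), hence so is its restriction, and since $\mathrm{G}$ carries the subspace topology the corestriction is continuous as well. For closedness, note that $\mathrm{K}$, being Zariski-closed in $H_d$, is also Euclidean-closed, the Euclidean topology being finer than the Zariski one. Let $C\subseteq\mathrm{K}$ be Euclidean-closed; then $C$ is Euclidean-closed in $H_d$, so $\pi_d(C)$ is Euclidean-closed in $\mathrm{Bir}_{\leq d}(\mathbb{P}^n_\mathbb{C})$ because $\pi_d$ is a closed map (Lemma \ref{lem:procom}), and as $\pi_d(C)\subseteq\mathrm{G}$ it is closed in the subspace $\mathrm{G}$. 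Thus $\pi_d|_{\mathrm{K}}$ is a continuous closed bijection, hence a homeomorphism. Equivalently one may argue by properness: the inclusion of the closed subset $\mathrm{K}\hookrightarrow H_d$ is proper and $\pi_d$ is proper, so the composition $\mathrm{K}\to\mathrm{Bir}_{\leq d}(\mathbb{P}^n_\mathbb{C})$ is proper, hence closed onto the locally compact Hausdorff space $\mathrm{Bir}_{\leq d}(\mathbb{P}^n_\mathbb{C})$.

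Finally I would remove the connectedness hypothesis. By Corollary \ref{cor:agree} the identity component $\mathrm{G}^0$ is a closed normal subgroup of finite index whose cosets are the connected components of $\mathrm{G}$; each coset is a translate $g\,\mathrm{G}^0$, and left translation by $g$ is a homeomorphism of $\mathrm{Bir}(\mathbb{P}^n_\mathbb{C})$ for the Euclidean topology (Corollaries \ref{cor:bfcor1} and \ref{cor:bfcor2}) as well as an automorphism of the algebraic group $\mathrm{K}$. Both topologies under comparison are therefore translation-invariant, and the finitely many cosets form a partition of $\mathrm{G}$ into clopen pieces for both; so agreement on $\mathrm{G}^0$, provided by the connected case, propagates to all of $\mathrm{G}=\bigsqcup_i g_i\,\mathrm{G}^0$. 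I expect the main obstacle to be precisely the first identification of the opening paragraph, namely matching the abstractly defined Euclidean topology of the algebraic group $\mathrm{K}$ with the concrete subspace topology from its embedding in $H_d$; this rests on the intrinsic and functorial nature of the analytic topology on complex algebraic varieties, after which everything reduces to a formal combination of the properness of $\pi_d$ with elementary point-set topology.
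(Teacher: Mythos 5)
Your argument is correct, and it is essentially the intended one: the paper states this proposition without proof (deferring to Blanc--Furter), but the machinery of the surrounding section — the realization of $\mathrm{K}$ as the Zariski-closed subvariety of $H_d$ from Proposition \ref{pro:agree}, the properness and closedness of $\pi_d$ from Lemma \ref{lem:procom}, and the closed embeddings $\mathrm{Bir}_{\leq d}(\mathbb{P}^n_\mathbb{C})\hookrightarrow\mathrm{Bir}(\mathbb{P}^n_\mathbb{C})$ of Lemma \ref{lem:top} — is exactly what the original proof combines, in the same way you do. Your reduction of the non-connected case to the identity component via translations (homeomorphisms by Corollaries \ref{cor:bfcor1} and \ref{cor:bfcor2}, intertwined by $\pi$ since it is a group homomorphism) correctly fills in the step that Corollary \ref{cor:agree} itself leaves implicit.
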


\subsection{Properties of the Euclidean topology of $\mathrm{Bir}(\mathbb{P}^n_\mathbb{C})$}

\begin{lem}
Any convergent sequence of 
$\mathrm{Bir}(\mathbb{P}^n_\mathbb{C})$ has 
bounded degree.
\end{lem}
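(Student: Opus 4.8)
The plan is to exploit the description of the Euclidean topology on $\mathrm{Bir}(\mathbb{P}^n_\mathbb{C})$ as the inductive limit topology attached to the increasing chain of closed subspaces $\mathrm{Bir}_{\leq d}(\mathbb{P}^n_\mathbb{C})$: a subset $F$ is closed if and only if $F\cap\mathrm{Bir}_{\leq d}(\mathbb{P}^n_\mathbb{C})$ is closed in $\mathrm{Bir}_{\leq d}(\mathbb{P}^n_\mathbb{C})$ for every $d$. Since each $\mathrm{Bir}_{\leq d}(\mathbb{P}^n_\mathbb{C})$ is Hausdorff by Lemma \ref{lem:procom}, finite subsets of $\mathrm{Bir}_{\leq d}(\mathbb{P}^n_\mathbb{C})$ are closed. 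Consequently any subset of $\mathrm{Bir}(\mathbb{P}^n_\mathbb{C})$ that meets each $\mathrm{Bir}_{\leq d}(\mathbb{P}^n_\mathbb{C})$ in a finite set is automatically closed. This is the observation that drives the whole argument.

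I would argue by contradiction. Suppose a convergent sequence $(\phi_i)_{i\in\mathbb{N}}$ has unbounded degree, and let $\phi$ be a limit. First I would pass to a subsequence along which the degrees $\deg\phi_i$ are strictly increasing. The terms of such a subsequence are then pairwise distinct, so at most one of them coincides with $\phi$; discarding that single term, I may assume $\phi_i\neq\phi$ for all $i$, while the degrees remain strictly increasing and unbounded.

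The key step is then to show that $S=\{\phi_i\,\vert\,i\in\mathbb{N}\}$ is closed in $\mathrm{Bir}(\mathbb{P}^n_\mathbb{C})$. For each fixed $d$ the intersection $S\cap\mathrm{Bir}_{\leq d}(\mathbb{P}^n_\mathbb{C})$ consists precisely of those $\phi_i$ with $\deg\phi_i\leq d$, and since the degrees strictly increase with $i$ this is a finite set, hence closed in the Hausdorff space $\mathrm{Bir}_{\leq d}(\mathbb{P}^n_\mathbb{C})$. By the inductive-limit description of the topology, $S$ is closed. As $\phi\notin S$, the complement $\mathrm{Bir}(\mathbb{P}^n_\mathbb{C})\smallsetminus S$ is an open neighborhood of $\phi$ containing none of the $\phi_i$, which contradicts $\phi_i\to\phi$, since convergence forces every neighborhood of $\phi$ to contain all but finitely many terms. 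Hence no convergent sequence can have unbounded degree.

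I do not expect a genuine obstacle here: the content is entirely formal once the inductive-limit description and the Hausdorffness from Lemma \ref{lem:procom} are in hand. The only point that requires care is the bookkeeping in the second paragraph, namely passing to a subsequence of strictly increasing degree and removing the possible term equal to the limit, so that $S$ meets every $\mathrm{Bir}_{\leq d}(\mathbb{P}^n_\mathbb{C})$ in finitely many points while still missing $\phi$. After that, the closedness of $S$ and the contradiction follow immediately from the definition of the topology.
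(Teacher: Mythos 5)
Your proof is correct, but it is not quite the route the paper takes. The paper's own proof is a two-step reduction: the set $\big\{\varphi_i\,\vert\,i\in\mathbb{N}\big\}\cup\{\varphi\}$ is compact (a convergent sequence together with its limit is compact in any topological space), and compact subsets of $\mathrm{Bir}(\mathbb{P}^n_\mathbb{C})$ lie in some $\mathrm{Bir}_{\leq d}(\mathbb{P}^n_\mathbb{C})$ — this last fact being Lemma \ref{lem:compboun}, whose proof is exactly your mechanism (a set of strictly increasing degrees meets each stratum in a finite set, hence is closed and discrete, hence cannot be compact if infinite). So in substance you have unpacked the same inductive-limit observation, but deployed it directly: instead of routing through compactness you show the tail set $S$ is closed and misses the limit, so its complement is an open neighborhood of $\varphi$ avoiding the sequence, contradicting convergence. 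What your version buys is self-containedness and cleaner logical order: in the paper this lemma is stated \emph{before} Lemma \ref{lem:compboun} yet its one-line proof invokes it, whereas you need only Hausdorffness of each $\mathrm{Bir}_{\leq d}(\mathbb{P}^n_\mathbb{C})$ (Lemma \ref{lem:procom}, or simply the closedness of points from Lemma \ref{lem:propbf}) and the definition of the inductive-limit topology. What the paper's version buys is reuse: the compactness lemma is needed anyway (e.g. for non-local-compactness and non-compact-generation), so once it is in hand the sequence statement is immediate. Your bookkeeping — extracting a strictly increasing-degree subsequence and discarding the at most one term equal to $\varphi$ — is exactly the care the argument requires, and it is handled correctly.
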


\begin{proof}
If the sequence $(\varphi_i)_{i\in\mathbb{N}}$ of 
$\mathrm{Bir}(\mathbb{P}^n_\mathbb{C})$ converges
to $\varphi$, then 
$\big\{\varphi_i\,\vert\,i\in\mathbb{N}\big\}\cup\{\varphi\}$
is compact, so contained in 
$\mathrm{Bir}_{\leq d}(\mathbb{P}^n_\mathbb{C})$ for 
some $d$.
\end{proof}

\begin{lem}
The topological group $\mathrm{Bir}(\mathbb{P}^n_\mathbb{C})$ 
is Hausdorff.
\end{lem}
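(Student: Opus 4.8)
The plan is to exploit that $\mathrm{Bir}(\mathbb{P}^n_\mathbb{C})$ carries the inductive limit topology of the closed subspaces $\mathrm{Bir}_{\leq d}(\mathbb{P}^n_\mathbb{C})$ (Lemma \ref{lem:top}) and to separate two given distinct points $\phi\neq\psi$ by building an increasing tower of neighbourhoods with pairwise disjoint closures, one level of the filtration at a time. Since $\phi$ and $\psi$ have finite degrees, both lie in $\mathrm{Bir}_{\leq N}(\mathbb{P}^n_\mathbb{C})$ for $N=\max(\deg\phi,\deg\psi)$.

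First I would check that each $\mathrm{Bir}_{\leq d}(\mathbb{P}^n_\mathbb{C})$ is normal. By Lemma \ref{lem:procom} it is locally compact and Hausdorff. It is moreover $\sigma$-compact: indeed $H_d$ is a locally closed subset of the compact metric space $W_d\cong\mathbb{P}^k_\mathbb{C}$ (Lemma \ref{lem:propbf}), hence $\sigma$-compact, and $\mathrm{Bir}_{\leq d}(\mathbb{P}^n_\mathbb{C})=\pi_d(H_d)$ is its continuous image. A locally compact, $\sigma$-compact Hausdorff space is paracompact, hence normal.

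Then comes the inductive construction. In the normal space $\mathrm{Bir}_{\leq N}(\mathbb{P}^n_\mathbb{C})$ choose disjoint open sets $U_N\ni\phi$, $V_N\ni\psi$ with $\overline{U_N}\cap\overline{V_N}=\emptyset$ (closures taken inside $\mathrm{Bir}_{\leq N}(\mathbb{P}^n_\mathbb{C})$); this is possible because in a normal space two disjoint closed sets admit open neighbourhoods with disjoint closures. Assuming $U_d,V_d$ have been built in $\mathrm{Bir}_{\leq d}(\mathbb{P}^n_\mathbb{C})$ with $\overline{U_d}\cap\overline{V_d}=\emptyset$, I note that since $\mathrm{Bir}_{\leq d}(\mathbb{P}^n_\mathbb{C})$ is closed in $\mathrm{Bir}_{\leq d+1}(\mathbb{P}^n_\mathbb{C})$ (Lemma \ref{lem:top}), the sets $\overline{U_d}$ and $\overline{V_d}$ are disjoint closed subsets of $\mathrm{Bir}_{\leq d+1}(\mathbb{P}^n_\mathbb{C})$; by normality I enlarge them to open sets $U_{d+1}\supset\overline{U_d}$ and $V_{d+1}\supset\overline{V_d}$ with disjoint closures. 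In particular $U_d\subset U_{d+1}$ and $V_d\subset V_{d+1}$, so the two families are increasing.

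Finally set $U=\bigcup_{d\geq N}U_d$ and $V=\bigcup_{d\geq N}V_d$. They contain $\phi$ and $\psi$, and they are disjoint: a common point would, after passing to a large enough index $e$, lie in $U_e\cap V_e\subset\overline{U_e}\cap\overline{V_e}=\emptyset$. It then remains to see that $U$ (and likewise $V$) is open for the inductive limit topology, i.e. that $U\cap\mathrm{Bir}_{\leq m}(\mathbb{P}^n_\mathbb{C})$ is open in $\mathrm{Bir}_{\leq m}(\mathbb{P}^n_\mathbb{C})$ for every $m$; since the $U_d$ increase, this set equals $\bigcup_{d\geq\max(m,N)}\big(U_d\cap\mathrm{Bir}_{\leq m}(\mathbb{P}^n_\mathbb{C})\big)$, a union of sets open in $\mathrm{Bir}_{\leq m}(\mathbb{P}^n_\mathbb{C})$, because $U_d$ is open in $\mathrm{Bir}_{\leq d}(\mathbb{P}^n_\mathbb{C})$ and the inclusion $\mathrm{Bir}_{\leq m}(\mathbb{P}^n_\mathbb{C})\hookrightarrow\mathrm{Bir}_{\leq d}(\mathbb{P}^n_\mathbb{C})$ is a closed embedding, hence induces the given topology on the subspace (Lemma \ref{lem:top}). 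The main point to carry out with care is exactly this openness verification together with the repeated use of normality to obtain neighbourhoods with \emph{disjoint closures} at each stage; the rest is bookkeeping on the nested families $(U_d)$ and $(V_d)$.
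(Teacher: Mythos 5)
Your proof is correct, but it follows a genuinely different route from the paper. The paper's argument is three lines: since $\mathrm{Bir}(\mathbb{P}^n_\mathbb{C})$ has already been shown to be a topological group (Corollaries \ref{cor:bfcor1} and \ref{cor:bfcor2}), it invokes Bourbaki's criterion that a topological group is Hausdorff if and only if the trivial subgroup is closed, and then observes that every point lies, and is closed, in some $\mathrm{Bir}_{\leq d}(\mathbb{P}^n_\mathbb{C})$ (Lemma \ref{lem:propbf}), hence is closed in the inductive limit. You instead separate two arbitrary points directly, never using the group structure: you upgrade each stratum from locally compact Hausdorff (Lemma \ref{lem:procom}) to normal via $\sigma$-compactness of $H_d$ and paracompactness, and then run the standard tower construction for colimits of closed embeddings of normal spaces, producing increasing families $(U_d)$, $(V_d)$ with disjoint closures whose unions are open in the limit topology. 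All the steps check out: the closures of $U_d$ computed in $\mathrm{Bir}_{\leq d}(\mathbb{P}^n_\mathbb{C})$ agree with those in $\mathrm{Bir}_{\leq d+1}(\mathbb{P}^n_\mathbb{C})$ because the embedding is closed (Lemma \ref{lem:top}), the "disjoint closures" refinement of normality is standard (e.g.\ via Urysohn), and your openness verification correctly uses that each inclusion $\mathrm{Bir}_{\leq m}(\mathbb{P}^n_\mathbb{C})\hookrightarrow\mathrm{Bir}_{\leq d}(\mathbb{P}^n_\mathbb{C})$ induces the subspace topology. What each approach buys: the paper's proof is essentially free once the (nontrivial) continuity of composition and inversion is in hand, whereas yours is longer but more robust — it proves the purely topological statement that the colimit of an increasing sequence of closed embeddings of locally compact, $\sigma$-compact Hausdorff spaces is Hausdorff, with no group structure needed, and it would survive in settings where continuity of the group operations is not yet available.
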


\begin{proof}
According to \cite[III, \S 2.5, Prop. 13]{Bourbaki} 
a topological group is Hausdorff if and only if the 
trivial one-element subgroup is closed. Any point of
$\mathrm{Bir}(\mathbb{P}^n_\mathbb{C})$ is closed 
in some $\mathrm{Bir}_{\leq d}(\mathbb{P}^n_\mathbb{C})$
(Lemma \ref{lem:propbf}), 
so is closed in $\mathrm{Bir}(\mathbb{P}^n_\mathbb{C})$.
As a result $\mathrm{Bir}(\mathbb{P}^n_\mathbb{C})$
is Hausdorff.
\end{proof}

\begin{lem}\label{lem:compboun}
Any compact subset of 
$\mathrm{Bir}(\mathbb{P}^n_\mathbb{C})$ is contained
in $\mathrm{Bir}_{\leq d}(\mathbb{P}^n_\mathbb{C})$
for some $d$.
\end{lem}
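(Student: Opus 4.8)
The plan is to use that, by its very construction, the Euclidean topology on $\mathrm{Bir}(\mathbb{P}^n_\mathbb{C})$ is the inductive limit topology associated with the increasing chain of subspaces $\mathrm{Bir}_{\leq 1}(\mathbb{P}^n_\mathbb{C})\subseteq\mathrm{Bir}_{\leq 2}(\mathbb{P}^n_\mathbb{C})\subseteq\cdots$, each of which is a closed subspace of the next (Lemma \ref{lem:top}) and of the whole group, and each of which is locally compact and Hausdorff (Lemma \ref{lem:procom}). Writing $X_d=\mathrm{Bir}_{\leq d}(\mathbb{P}^n_\mathbb{C})$ and $X=\mathrm{Bir}(\mathbb{P}^n_\mathbb{C})$, a subset $F\subseteq X$ is closed precisely when $F\cap X_d$ is closed in $X_d$ for every $d$. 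This reduces the statement to a purely topological fact about such inductive limits, which I would establish by contradiction.

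Suppose $K\subseteq X$ is compact but not contained in any single $X_d$. First I would extract from $K$ a sequence of points living at strictly increasing levels. Since $K\not\subseteq X_1$, choose $p_1\in K\smallsetminus X_1$; as $K\subseteq X=\bigcup_d X_d$, the point $p_1$ lies in some $X_{d_1}$ with $d_1>1$. Since $K\not\subseteq X_{d_1}$, choose $p_2\in K\smallsetminus X_{d_1}$, lying in some $X_{d_2}$ with $d_2>d_1$; iterating, I obtain points $p_k\in K$ and integers $d_1<d_2<\cdots$ with $p_k\in X_{d_k}$ and $p_k\notin X_{d_{k-1}}$.

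The key step is to show that $A=\{p_k : k\in\mathbb{N}\}$ is a closed, discrete subset of $X$. For any subset $B\subseteq A$ and any fixed $d$, the intersection $B\cap X_d$ is contained in $\{p_k : d_k\leq d\}$, which is finite because $d_k\to\infty$; since $X_d$ is Hausdorff, this finite set is closed in $X_d$. By the description of the inductive limit topology, every such $B$ is therefore closed in $X$. Taking $B=A$ shows that $A$ is closed; taking $B$ arbitrary shows that the subspace topology on $A$ is discrete. I expect this verification to be the main point of the argument, since it is exactly where both the inductive-limit definition of the topology and the Hausdorff (indeed merely $T_1$) property of the levels enter.

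To conclude, note that $A$ is closed in $X$ and contained in the compact set $K$, hence $A$ is itself compact. But an infinite set carrying the discrete topology is never compact, as its open cover by singletons admits no finite subcover; this is a contradiction. Therefore $K\subseteq X_d$ for some $d$, which is the desired assertion.
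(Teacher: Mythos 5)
Your proof is correct and takes essentially the same route as the paper's: extract from $K$ a sequence of elements of strictly increasing degree, use the inductive limit definition of the topology together with the Hausdorff property of each $\mathrm{Bir}_{\leq d}(\mathbb{P}^n_\mathbb{C})$ to see that this set is an infinite closed discrete subset of $K$, and contradict compactness. The only difference is expository: you spell out the verification (every subset of $A$ meets each level in a finite, hence closed, set) that the paper states in one line.
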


\begin{proof}
Assume by contradiction that 
$\mathrm{Bir}(\mathbb{P}^n_\mathbb{C})$ contains
a compact subset $K$ such that 
$(\varphi_i)_{i\in\mathbb{N}}$ is a sequence 
of elements of $K$ with 
$\deg\varphi_{i+1}>\deg\varphi_i$ for each~$i$.
Let us consider 
$K'=\big\{\varphi_i\,\vert\,i\in\mathbb{N}\big\}$. 
On the one hand it is a closed subset of the 
compact set $K$; hence it is compact. On the 
other hand the intersection of any subset of 
$K'$ with 
$\mathrm{Bir}_{\leq d}(\mathbb{P}^n_\mathbb{C})$
is closed, so $\mathrm{K}'$ is an infinite set
endowed with the discrete topology; in particular
it cannot be compact: contradiction.
\end{proof}

\begin{lem}
For $n\geq 2$ the topological
space $\mathrm{Bir}(\mathbb{P}^n_\mathbb{C})$ 
is not locally compact.
\end{lem}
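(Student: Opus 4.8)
The plan is to argue by contradiction, reducing the statement to the production of high-degree maps arbitrarily close to the identity. Suppose $\mathrm{Bir}(\mathbb{P}^n_\mathbb{C})$ were locally compact. Then $\mathrm{id}$ would admit a compact neighbourhood $K$, so there would be an open set $U$ with $\mathrm{id}\in U\subseteq K$. By Lemma \ref{lem:compboun} the compact set $K$ is contained in $\mathrm{Bir}_{\leq d}(\mathbb{P}^n_\mathbb{C})$ for some $d$, hence so is $U$. The whole proof then reduces to exhibiting, inside the given open neighbourhood $U$ of $\mathrm{id}$, a birational map of degree strictly larger than $d$; this contradicts $U\subseteq\mathrm{Bir}_{\leq d}(\mathbb{P}^n_\mathbb{C})$.

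To this end I would produce an explicit algebraic family through $\mathrm{id}$ whose nonzero members have degree exactly $d+1$. In the affine chart $z_0=1$ consider the polynomial automorphisms
\[
\psi_t\colon(x_1,x_2,\ldots,x_n)\mapsto\big(x_1+t\,x_2^{\,d+1},\,x_2,\,\ldots,\,x_n\big),\qquad t\in\mathbb{C},
\]
whose inverse is $\psi_{-t}$, so that each $\psi_t$ is birational. In homogeneous coordinates $\psi_t$ is written
\[
(z_0:z_1:\cdots:z_n)\dashrightarrow\big(z_0^{\,d+1}:z_0^{\,d}z_1+t\,z_2^{\,d+1}:z_0^{\,d}z_2:\cdots:z_0^{\,d}z_n\big).
\]
For $t\neq 0$ these homogeneous polynomials have no common factor (a common factor would divide $z_0^{\,d+1}$, hence be a power of $z_0$, but $z_0\nmid z_0^{\,d}z_1+t\,z_2^{\,d+1}$), so $\deg\psi_t=d+1$; for $t=0$ the common factor $z_0^{\,d}$ reduces the expression to $\mathrm{id}$. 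Here the hypothesis $n\geq 2$ is used in an essential way, since the variable $x_2$ must be available; for $n=1$ the group is $\mathrm{PGL}(2,\mathbb{C})$ and the statement indeed fails.

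It then remains to check continuity of $t\mapsto\psi_t$ and to conclude. The lifts $\widetilde{\psi_t}=\big(z_0^{\,d+1}:z_0^{\,d}z_1+t\,z_2^{\,d+1}:z_0^{\,d}z_2:\cdots:z_0^{\,d}z_n\big)$ define a morphism $\mathbb{C}\to W_{d+1}$ into the projective space $W_{d+1}$ of Lemma \ref{lem:propbf}, with image in $H_{d+1}$, hence a continuous map $\mathbb{C}\to H_{d+1}$ for the Euclidean topology. Composing with the continuous quotient $\pi_{d+1}\colon H_{d+1}\to\mathrm{Bir}_{\leq d+1}(\mathbb{P}^n_\mathbb{C})$ (Corollary \ref{cor:BlancFurter}) and with the continuous inclusion $\mathrm{Bir}_{\leq d+1}(\mathbb{P}^n_\mathbb{C})\hookrightarrow\mathrm{Bir}(\mathbb{P}^n_\mathbb{C})$ (Lemma \ref{lem:top} together with the definition of the inductive-limit Euclidean topology), I obtain a continuous curve $c\colon\mathbb{C}\to\mathrm{Bir}(\mathbb{P}^n_\mathbb{C})$ with $c(t)=\psi_t$ and $c(0)=\mathrm{id}$. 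Since $U$ is open and $\mathrm{id}\in U$, the set $c^{-1}(U)$ is an open neighbourhood of $0$ in $\mathbb{C}$, so it contains some $t\neq 0$; then $\psi_t=c(t)\in U$ while $\deg\psi_t=d+1>d$, contradicting $U\subseteq\mathrm{Bir}_{\leq d}(\mathbb{P}^n_\mathbb{C})$.

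The subtle point to respect is that the Euclidean topology on $\mathrm{Bir}(\mathbb{P}^n_\mathbb{C})$ is a strict inductive limit and is not metrizable, so arguments phrased purely through sequences would be fragile. The device that keeps the proof clean is to replace the vague idea of ``approximating $\mathrm{id}$ by maps of growing degree'' with an honest continuous curve $c$ arising from an algebraic family, and to invoke the two facts already established in the excerpt: that $\pi_{d+1}$ is continuous and that compact subsets have bounded degree (Lemma \ref{lem:compboun}). Everything else is the routine degree and common-factor bookkeeping indicated above.
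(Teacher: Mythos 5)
Your proof is correct and takes essentially the same route as the paper: both arguments rest on Lemma \ref{lem:compboun} (compact subsets have bounded degree) and then exhibit shear maps of the form $x_1\mapsto x_1+c\,x_2^m$ of arbitrarily large degree arbitrarily close to the identity. The only cosmetic difference is that the paper fixes $m$ and lets the coefficient $\frac{1}{k}\to 0$, asserting sequential convergence of $f_{m,k}$ to the identity, whereas you run a continuous one-parameter curve $t\mapsto\psi_t$ through $H_{d+1}$ and pull back the open neighbourhood along it — a slightly more robust phrasing in the non-metrizable inductive-limit topology, but the same argument.
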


\begin{proof}
Let $\mathcal{U}\subset\mathrm{Bir}(\mathbb{P}^n_\mathbb{C})$
be an open neighborhood of the identity. Since any 
compact subset of $\mathrm{Bir}(\mathbb{P}^n_\mathbb{C})$
is contained in $\mathrm{Bir}_{\leq d}(\mathbb{P}^n_\mathbb{C})$
(Lemma \ref{lem:compboun}) for some $d$ to prove that 
$\mathcal{U}$ is not contained in any compact subset of 
$\mathrm{Bir}(\mathbb{P}^n_\mathbb{C})$ it suffices to 
show that $\mathcal{U}$ contains elements of arbitrarily 
large degree. For any integers $m$, $k\geq 1$ let us 
consider the birational map given in the affine chart
$z_0=1$ by 
\[
f_{m,k}\colon(z_1,z_2,\ldots,z_n)\dashrightarrow\left(z_1+\frac{1}{k}z_2^m,z_2,\ldots,z_n\right).
\]
Fixing $m$ we note that the sequence 
$(f_{m,k})_{k\geq 1}$ converges to the identity; 
in parti\-cular $f_{m,k}$ belongs to $\mathcal{U}$
when $k$ is large enough.
\end{proof}

\begin{lem}
For $n\geq 2$ the topological
space $\mathrm{Bir}(\mathbb{P}^n_\mathbb{C})$ 
is not metrisable.
\end{lem}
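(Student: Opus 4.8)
The plan is to deduce non-metrisability from the failure of first countability at the identity: every metric space is first countable, so it suffices to show that $\mathrm{id}\in\mathrm{Bir}(\mathbb{P}^n_\mathbb{C})$ has no countable neighbourhood basis. The one piece of genuine geometric input I would use is already isolated in the proof of the preceding lemma (non-local-compactness): \emph{every} neighbourhood of $\mathrm{id}$ contains birational maps of arbitrarily large degree. Indeed, for fixed $m$ the maps $f_{m,k}$ of that proof converge to $\mathrm{id}$ as $k\to\infty$ while $\deg f_{m,k}=m$, so for each $N$ any open $\mathcal{U}\ni\mathrm{id}$ contains an element of degree $\geq N$. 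Everything else in the argument is a formal consequence of the fact that $\mathrm{Bir}(\mathbb{P}^n_\mathbb{C})$ carries the inductive limit topology of the closed pieces $\mathrm{Bir}_{\leq d}(\mathbb{P}^n_\mathbb{C})$.

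First I would argue by contradiction. Suppose $(U_m)_{m\in\mathbb{N}}$ is a countable neighbourhood basis at $\mathrm{id}$. Using the input above, for each $m$ I would choose a point $x_m\in U_m$ with $\deg x_m\geq m+1$, and set $S=\{x_m\mid m\in\mathbb{N}\}$. Since $(U_m)$ is a basis and $x_m\in U_m$, every neighbourhood $V$ of $\mathrm{id}$ contains some $U_m$ and hence the point $x_m$; thus $V\cap S\neq\emptyset$, which means $\mathrm{id}\in\overline{S}$.

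The heart of the argument is to check that $S$ is in fact closed while $\mathrm{id}\notin S$. By the description of the Euclidean topology as the inductive limit of the $\mathrm{Bir}_{\leq d}(\mathbb{P}^n_\mathbb{C})$, the set $S$ is closed if and only if $S\cap\mathrm{Bir}_{\leq d}(\mathbb{P}^n_\mathbb{C})$ is closed in $\mathrm{Bir}_{\leq d}(\mathbb{P}^n_\mathbb{C})$ for every $d$. Now $S\cap\mathrm{Bir}_{\leq d}(\mathbb{P}^n_\mathbb{C})=\{x_m\mid\deg x_m\leq d\}$ is \emph{finite}, because $\deg x_m\geq m+1$ forces $m\leq d-1$; and a finite subset of the Hausdorff space $\mathrm{Bir}_{\leq d}(\mathbb{P}^n_\mathbb{C})$ (Lemma \ref{lem:procom}) is closed. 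Hence $S$ is closed, so $\overline{S}=S$. But $\deg x_m\geq 2>1=\deg\mathrm{id}$ for all $m$, so $\mathrm{id}\notin S=\overline{S}$, contradicting $\mathrm{id}\in\overline{S}$. This contradiction shows that no countable neighbourhood basis at $\mathrm{id}$ exists, whence $\mathrm{Bir}(\mathbb{P}^n_\mathbb{C})$ is not first countable and therefore not metrisable.

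The main obstacle is not the topological bookkeeping, which is the standard reason a strict inductive limit of a properly increasing tower of closed $T_1$-subspaces cannot be first countable; it is rather to justify that $\mathrm{id}$ genuinely admits maps of unbounded degree in \emph{every} neighbourhood. This is exactly where the hypothesis $n\geq 2$ enters, and it is supplied by the family $f_{m,k}$ from the previous lemma (for $n=1$ one has $\mathrm{Bir}(\mathbb{P}^1_\mathbb{C})=\mathrm{PGL}(2,\mathbb{C})$, which is a metrisable Lie group, so the statement genuinely fails there). Once that input is in hand, the diagonal choice of $x_m$ together with the finiteness of $S\cap\mathrm{Bir}_{\leq d}(\mathbb{P}^n_\mathbb{C})$ closes the argument.
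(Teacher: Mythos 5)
Your proof is correct, but it takes a genuinely different route from the paper's. The paper passes to the subgroup of polynomial translations
\[
P\mapsto\big((z_1,z_2,\ldots,z_n)\mapsto(z_1+P(z_2),z_2,\ldots,z_n)\big),
\]
asserts that its image is closed in $\mathrm{Bir}(\mathbb{P}^n_\mathbb{C})$ and that the induced topology is the inductive limit topology on $\mathbb{C}[z_2]=\bigcup_d\mathbb{C}[z_2]_{\leq d}$, and then exhibits the uncountable family of open sets $\mathcal{U}_\ell$ (indexed by all sequences $\ell$ of positive integers) to conclude that this subspace is not first countable --- the printed phrase \og not countable\fg{} is evidently a slip for \og not first countable\fg{} --- hence not metrisable; since metrisability is inherited by subspaces, $\mathrm{Bir}(\mathbb{P}^n_\mathbb{C})$ is not metrisable either. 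You instead work directly at $\mathrm{id}$ in the full group: given a putative countable basis $(U_m)$, your diagonal choice of $x_m\in U_m$ of escalating degree produces a set $S$ with $S\cap\mathrm{Bir}_{\leq d}(\mathbb{P}^n_\mathbb{C})$ finite, hence closed in the Hausdorff piece $\mathrm{Bir}_{\leq d}(\mathbb{P}^n_\mathbb{C})$ (Lemma \ref{lem:procom}), so $S$ is closed in the inductive limit topology while $\mathrm{id}\in\overline{S}\smallsetminus S$ --- a contradiction. Your version is more self-contained: it needs only Hausdorffness of the pieces and the unbounded-degree-in-every-neighbourhood input, which, as you correctly note, is exactly what the maps $f_{m,k}$ of the preceding (non-local-compactness) lemma provide and is where $n\geq 2$ enters; it thereby avoids the two assertions the paper leaves unverified (that $\mathbb{C}[z_2]$ is closed and that the induced topology on it is the stated inductive limit topology). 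What the paper's route buys in exchange is a sharper structural fact: the pathology already lives on an explicit closed abelian subgroup --- an LF-type space of polynomials inside $\mathrm{Aut}(\mathbb{A}^n_\mathbb{C})$. One cosmetic repair: with your normalization $\deg x_m\geq m+1$ and indexing from $m=0$, the element $x_0$ could have degree $1$, so your final line \og$\deg x_m\geq 2$\fg{} requires either indexing from $m=1$ or demanding $\deg x_m\geq m+2$; this does not affect the argument.
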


\begin{proof}
Consider the inclusion
\begin{eqnarray*}
\mathbb{C}[z_2]&\hookrightarrow&\mathrm{Aut}(\mathbb{C}^n)\subset\mathrm{Bir}(\mathbb{P}^n_\mathbb{C})\\
P &\hookrightarrow& \big((z_1,z_2,\ldots,z_n)\dashrightarrow(z_1+P(z_2),z_2,z_3,\ldots,z_n)\big)
\end{eqnarray*}
Observe that $\mathbb{C}[z_2]$ is closed in 
$\mathrm{Bir}(\mathbb{P}^n_\mathbb{C})$ and that 
for any $d$ the induced topology 
on~$\mathbb{C}[z_2]_{\leq d}$ is the topology as
a vector space (or as an algebraic group). 
The induced topology on $\mathbb{C}[z_2]$ is
thus the inductive limit topology given by 
\[
\mathbb{C}[z_2]_{\leq 1}\subset\mathbb{C}[z_2]_{\leq 2}\subset\ldots
\]
For any sequence $\ell=(\ell_n)_{n\in\mathbb{N}}$ of 
positive integers the set 
\[
\mathcal{U}_\ell=\left\{\displaystyle\sum_{i=0}^d a_iX_i\,\vert\,\vert a_i\vert<\frac{1}{\ell_i}\right\}
\]
is open in $\mathbb{C}[z_2]$. This implies that 
$\mathbb{C}[z_2]$ is not countable and hence not
metrisable. The same holds for 
$\mathrm{Bir}(\mathbb{P}^n_\mathbb{C})$.
\end{proof}

\begin{lem}
The topological group 
$\mathrm{Bir}(\mathbb{P}^n_\mathbb{C})$ is 
compactly generated if and only if $n\leq 2$.
\end{lem}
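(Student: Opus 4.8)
The plan is to reduce the question to generation by maps of bounded degree and then to split according to the dimension. First I would recall that, by definition, $\mathrm{Bir}(\mathbb{P}^n_\mathbb{C})$ is compactly generated precisely when there is a compact subset $K$ with $\langle K\rangle=\mathrm{Bir}(\mathbb{P}^n_\mathbb{C})$. The starting observation is a one-way reduction: if such a $K$ exists, then by Lemma \ref{lem:compboun} it is contained in $\mathrm{Bir}_{\leq d}(\mathbb{P}^n_\mathbb{C})$ for some $d$, whence $\mathrm{Bir}(\mathbb{P}^n_\mathbb{C})=\langle K\rangle\subseteq\langle\mathrm{Bir}_{\leq d}(\mathbb{P}^n_\mathbb{C})\rangle\subseteq\mathrm{Bir}(\mathbb{P}^n_\mathbb{C})$. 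Thus compact generation forces $\mathrm{Bir}(\mathbb{P}^n_\mathbb{C})$ to be generated, as an abstract group, by its elements of degree at most $d$ for some fixed $d$. Note that I only need this forward implication; I will handle the converse direction by an explicit construction rather than by trying to prove that bounded-degree generation conversely yields compact generation, which is the genuinely delicate point and which I deliberately avoid.

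For $n\geq 3$ I would then invoke the theorem of Pan (\cite{Pan:generation}) in the form asserting that, for $n\geq 3$ and every integer $d\geq 1$, one has $\langle\mathrm{Bir}_{\leq d}(\mathbb{P}^n_\mathbb{C})\rangle\neq\mathrm{Bir}(\mathbb{P}^n_\mathbb{C})$; equivalently, any generating set must contain birational maps of arbitrarily large degree. Combined with the reduction above, this rules out the existence of a compact generating set, so $\mathrm{Bir}(\mathbb{P}^n_\mathbb{C})$ is not compactly generated when $n\geq 3$. I expect this implication, the failure of bounded-degree generation in dimension at least three, to be the real content and the main obstacle of the whole statement; everything surrounding it is formal, and I would simply cite Pan's result rather than reprove it.

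For the converse, when $n\leq 2$, I would exhibit a compact generating set directly. For $n=1$ we have $\mathrm{Bir}(\mathbb{P}^1_\mathbb{C})=\mathrm{PGL}(2,\mathbb{C})$, which, carrying its usual Euclidean topology, is a connected locally compact topological group; any connected topological group is generated by any neighbourhood of the identity, so choosing one with compact closure shows it is compactly generated. For $n=2$ the Noether--Castelnuovo theorem gives $\mathrm{Bir}(\mathbb{P}^2_\mathbb{C})=\langle\mathrm{PGL}(3,\mathbb{C}),\sigma_2\rangle$; picking, as above, a compact generating set $K_1$ of the connected group $\mathrm{PGL}(3,\mathbb{C})$, the set $K=K_1\cup\{\sigma_2\}$ is compact and generates $\mathrm{Bir}(\mathbb{P}^2_\mathbb{C})$. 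This settles both directions, with the only nonformal input being Pan's bounded-degree non-generation theorem in the case $n\geq 3$.
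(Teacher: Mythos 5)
Your proposal is correct and takes essentially the same route as the paper: compact subsets have bounded degree by Lemma \ref{lem:compboun}, Pan's non-generation theorem for $n\geq 3$ rules out bounded-degree (hence compact) generating sets, and for $n\leq 2$ one uses compact generation of the linear groups $\mathrm{PGL}(2,\mathbb{C})$ and $\mathrm{PGL}(3,\mathbb{C})$ together with the Noether--Castelnuovo theorem. The only cosmetic difference is that the paper justifies compact generation of $\mathrm{PGL}(n+1,\mathbb{C})$ by saying it is a linear algebraic group, whereas you argue via connectedness and local compactness; both are standard and equally valid.
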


\begin{proof}
The group 
$\mathrm{Bir}(\mathbb{P}^1_\mathbb{C})=\mathrm{PGL}(2,\mathbb{C})$
is a linear algebraic group; consequently it is 
compactly generated.

By the classical Noether and Castelnuovo Theorem
the group $\mathrm{Bir}(\mathbb{P}^2_\mathbb{C})$
is gene\-rated by 
$\mathrm{Aut}(\mathbb{P}^2_\mathbb{C})=\mathrm{PGL}(3,\mathbb{C})$
and the standard involution $\sigma_2$. The linear algebraic 
group 
$\mathrm{Aut}(\mathbb{P}^2_\mathbb{C})=\mathrm{PGL}(3,\mathbb{C})$
being compactly generated, 
$\mathrm{Bir}(\mathbb{P}^2_\mathbb{C})$
is compactly gene\-rated.

Assume $n\geq 3$. The group 
$\mathrm{Bir}(\mathbb{P}^n_\mathbb{C})$ is not generated
by $\mathrm{Bir}_{\leq d}(\mathbb{P}^n_\mathbb{C})$
for any integer~$d$ because the birational type of 
the hypersurfaces that are contracted by some 
element of 
$\mathrm{Bir}_{\leq d}(\mathbb{P}^n_\mathbb{C})$ is 
bounded (\emph{see} \cite{Pan:generation}
for more details or Chapter \ref{chapter:gen}, \S 
\ref{subsection:hudsonandpan}).
The fact that $\mathrm{Bir}(\mathbb{P}^n_\mathbb{C})$
is not compactly generated follows from 
Lemma~\ref{lem:compboun}.
\end{proof}

\begin{rem}
Theorem \ref{thm:bf} holds for any field, Theorem \ref{thm:bf2}
holds for any algebraically closed field, and Theorem
\ref{thm:bf3} holds for (locally compact) local field.
\end{rem}

%%%%%%%%%%%%%%%%%%%%%%%%%%%%%%%%%%%%%%%%%%%%%%%%%%%%%%%%%%%%%%%%%%%%%%%%%%%%%%%%%%%%%%%%%%%%%%%%%%%%%%%%%%%%%%%%%%%
%%%%%%%%%%%%%%%%%%%%%%%%%%%%%%%%%%%%%%%%%%%%%%%%%%%%%%%%%%%%%%%%%%%%%%%%%%%%%%%%%%%%%%%%%%%%%%%%%%%%%%%%%%%%%%%%%%%
% section
%%%%%%%%%%%%%%%%%%%%%%%%%%%%%%%%%%%%%%%%%%%%%%%%%%%%%%%%%%%%%%%%%%%%%%%%%%%%%%%%%%%%%%%%%%%%%%%%%%%%%%%%%%%%%%%%%%%
%%%%%%%%%%%%%%%%%%%%%%%%%%%%%%%%%%%%%%%%%%%%%%%%%%%%%%%%%%%%%%%%%%%%%%%%%%%%%%%%%%%%%%%%%%%%%%%%%%%%%%%%%%%%%%%%%%%

\section{Algebraic elements of the Cremona group}

The goal of this section is the study of algebraic
elements; in particular we will show that the set 
of all these elements is a countable union of closed
subsets but it is not closed.

In this section the considered topology is the 
Zariski topology.

An element 
$\phi\in\mathrm{Bir}(\mathbb{P}^n_\mathbb{C})$ 
is \textsl{algebraic}\index{defi}{algebraic birational map}
if it is contained in an algebraic 
subgroup~$\mathrm{G}$ of 
$\mathrm{Bir}(\mathbb{P}^n_\mathbb{C})$. 

Let us denote by 
$\mathrm{Bir}(\mathbb{P}^n_\mathbb{C})_{\text{alg}}$
\index{not}{$\mathrm{Bir}(\mathbb{P}^n_\mathbb{C})_{\text{alg}}$}
the set of algebraic elements of
$\mathrm{Bir}(\mathbb{P}^n_\mathbb{C})$.  

\begin{thm}[\cite{Blanc:algel}]\label{thm:Blancelalg}
Let $n\geq 2$. 
\begin{itemize}
\item[$\diamond$] There are a closed subset 
$U\subset\mathrm{Bir}(\mathbb{P}^n_\mathbb{C})$ 
canonically homeomorphic to $\mathbb{A}^1_\mathbb{C}$
and a family of birational maps 
$U\to\mathrm{Bir}(\mathbb{P}^n_\mathbb{C})$ such that
algebraic elements of $U$ are unipotent and correspond
to elements of the subgroup of $(\mathbb{C},+)$ 
generated by $1$;

\item[$\diamond$] there is a closed subset 
$S\subset\mathrm{Bir}(\mathbb{P}^n_\mathbb{C})$ 
such that algebraic elements of $S$ are semi-simple
and correspond to elements of 
\[
\big\{(a,\xi)\in\mathbb{A}^1_\mathbb{C}\times(\mathbb{A}^1_\mathbb{C}\smallsetminus\{0\})\,\vert\, a=\xi^k\big\}
\]
for some $k\in\mathbb{Z}$.
\end{itemize}

In particular $\mathrm{Bir}(\mathbb{P}^n_\mathbb{C})_{\text{alg}}$ and the set of unipotent elements 
of $\mathrm{Bir}(\mathbb{P}^n_\mathbb{C})$
are not closed in 
$\mathrm{Bir}(\mathbb{P}^n_\mathbb{C})$.
\end{thm}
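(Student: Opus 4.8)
The plan is to reduce algebraicity to a statement about degree growth and then to exhibit two explicit families of the required shape, the decisive point being purely topological: a curve $\mathbb{A}^1_\mathbb{C}$ sitting inside $\mathrm{Bir}(\mathbb{P}^n_\mathbb{C})$ whose algebraic locus is an infinite proper subset cannot have that locus closed, because the only Zariski-closed subsets of $\mathbb{A}^1_\mathbb{C}$ are the finite sets and the whole line. First I would record the criterion recalled in this chapter, that $\phi\in\mathrm{Bir}(\mathbb{P}^n_\mathbb{C})$ is algebraic if and only if the sequence $(\deg\phi^m)_{m\in\mathbb{N}}$ is bounded, together with the canonical decomposition $\phi=\phi_s\circ\phi_u$ into semi-simple and unipotent parts. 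On the planar core this boundedness is, by Theorem~\ref{thm:dilfav}, the same as ellipticity of the induced isometry, which is the property I will actually track. Thus everything reduces to controlling $\deg\phi_t^m$ as a function of the parameter $t$.

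Next I would build the unipotent family. Working in the affine chart $z_0=1$ with coordinates $(x,y)$ on $\mathbb{P}^2_\mathbb{C}$, and extending to $\mathbb{P}^n_\mathbb{C}$ by letting the map act on $(x,y)$ and fix the remaining coordinates (an extension under which boundedness of $\deg\phi_t^m$ is preserved), I would take a Jonqui\`eres-type family
\[
\phi_t\colon(x,y)\dashrightarrow\Big(x+1,\;\tfrac{A_t(x)\,y+B_t(x)}{C_t(x)\,y+D_t(x)}\Big)
\]
lying over the translation $x\mapsto x+1$, with the rational twist chosen (following \cite{Blanc:algel}) so that the base-points of $\phi_t$ and of its iterates, which are moved by the integer lattice generated by $x\mapsto x+1$, form a finite set exactly when $t\in\mathbb{Z}$. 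For such resonant $t$ the orbit is finite, $\phi_t$ lifts after blowing it up to a unipotent automorphism of a rational surface, hence is elliptic with $(\deg\phi_t^m)_m$ bounded and corresponds to the element $t\in\langle 1\rangle\subset(\mathbb{C},+)$; for every other $t$ the orbit is infinite, $\phi_t$ is a genuine Jonqui\`eres twist with $\deg\phi_t^m\sim c\,m$, and so is not algebraic. I would then check that $t\mapsto\phi_t$ is a morphism $\mathbb{A}^1_\mathbb{C}\to\mathrm{Bir}(\mathbb{P}^n_\mathbb{C})$ inducing a homeomorphism onto a closed image $U$, so that $U\cap\mathrm{Bir}(\mathbb{P}^n_\mathbb{C})_{\mathrm{alg}}$ corresponds to $\mathbb{Z}$.

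For the semi-simple family $S$ the scheme is analogous, with the additive resonance replaced by a multiplicative one: I would produce a family parameterized by a pair of multipliers $(a,\xi)\in\mathbb{A}^1_\mathbb{C}\times(\mathbb{A}^1_\mathbb{C}\smallsetminus\{0\})$ whose member is algebraic — then diagonalizable, i.e.\ semi-simple — exactly when a resonance $a=\xi^k$ holds for some $k\in\mathbb{Z}$, and loxodromic otherwise. Once both families are closed and homeomorphic to their parameter spaces the conclusion is immediate: $\mathbb{Z}$ is not Zariski-closed in $\mathbb{A}^1_\mathbb{C}$, and $\bigcup_{k\in\mathbb{Z}}\{(a,\xi)\,\vert\,a=\xi^k\}$ is a countable union of curves that is not closed in $\mathbb{A}^1_\mathbb{C}\times(\mathbb{A}^1_\mathbb{C}\smallsetminus\{0\})$; pulling these non-closed loci back through $U$ and $S$ shows that $\mathrm{Bir}(\mathbb{P}^n_\mathbb{C})_{\mathrm{alg}}$ is not closed, and since the algebraic members of $U$ are unipotent, the set of unipotent elements is not closed either.

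The hard part will be the degree-growth analysis of the explicit families, namely proving that $(\deg\phi_t^m)_m$ is bounded precisely on the resonant parameters and grows otherwise; this requires tracking the base-point orbits, applying the algebraic-stabilization procedure of Proposition~\ref{pro:DillerFavre}, and computing the induced action on the Picard--Manin space to read off the isometry type through Theorem~\ref{thm:dilfav}. The remaining delicate points are verifying that $t\mapsto\phi_t$ is a genuine closed embedding rather than merely a morphism, and confirming that the resonant members are truly unipotent (respectively semi-simple), not just of finite dynamical degree.
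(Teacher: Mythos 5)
Your plan is correct and is essentially the paper's proof: the paper uses exactly such a Jonqui\`eres family over the translation, namely $\rho(a)\colon(z_1,z_2,\ldots,z_n)\dashrightarrow\big(z_1+1,\,z_2\tfrac{z_1+a}{z_1},z_3,\ldots,z_n\big)$, whose $k$-th iterate has second coordinate $z_2\prod_{i=0}^{k-1}(z_1+a+i)\big/\prod_{i=0}^{k-1}(z_1+i)$, so the cancellation (equivalently your finite base-point orbit) occurs precisely for $a$ in the subgroup generated by $1$, together with the multiplicative analogue $(z_1,z_2)\dashrightarrow\big(\xi z_1,\,z_2\tfrac{z_1+a}{z_1+1}\big)$ resonant exactly when $a=\xi^k$, and it concludes, as you do, from the non-closedness of $\mathbb{Z}$ in $\mathbb{A}^1_\mathbb{C}$ and of the union of the resonance curves. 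Your two flagged delicate points are settled in the paper more directly than you anticipate: closedness of $U$ and $S$ comes from explicit closed embeddings $\widehat{\rho}$ into $W_2$ restricting to closed embeddings into $H_2$ on which $\pi_2$ is a homeomorphism, and unipotence (resp.\ semi-simplicity) of the resonant members comes from an explicit birational conjugation to the linear models $(z_1+1,z_2,\ldots,z_n)$ (resp.\ $(\xi z_1,z_2,\ldots,z_n)$), so no appeal to Theorem~\ref{thm:dilfav}, Proposition~\ref{pro:DillerFavre} or regularization is needed.
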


Furthermore we will see that $\mathrm{Bir}(\mathbb{P}^n_\mathbb{C})_{\text{alg}}$
is a countable union of closed sets 
of~$\mathrm{Bir}(\mathbb{P}^n_\mathbb{C})$.

\begin{lem}[\cite{Blanc:algel}]\label{lem:plus}
Let $\phi$ be an element of 
$\mathrm{Bir}(\mathbb{P}^n_\mathbb{C})$.
The closure of 
$\big\{\phi^k\,\vert\,k\in\mathbb{Z}\big\}$
in $\mathrm{Bir}(\mathbb{P}^n_\mathbb{C})$
is a closed abelian subgroup of 
$\mathrm{Bir}(\mathbb{P}^n_\mathbb{C})$.
\end{lem}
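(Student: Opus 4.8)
The plan is to establish three facts about $\overline{\Gamma}$, the closure of $\Gamma=\big\{\phi^k\,\vert\,k\in\mathbb{Z}\big\}$: that it is closed, that it is a subgroup, and that it is abelian. The first is automatic, since the closure of any subset of a topological space is closed; all the content is in the last two, and the only inputs I would use are Remark \ref{rem:homeo} (left and right translations and inversion are homeomorphisms for the Zariski topology), the fact that a single point such as $\{\mathrm{id}\}$ is Zariski-closed (Lemma \ref{lem:propbf} together with the first assertion of Corollary \ref{cor:BlancFurter}), and the definition of the Zariski topology via morphisms $V\to\mathrm{Bir}(\mathbb{P}^n_\mathbb{C})$.

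First I would record the general fact that the closure of any subgroup $G\subset\mathrm{Bir}(\mathbb{P}^n_\mathbb{C})$ is again a subgroup. Since inversion is a homeomorphism and $G^{-1}=G$, one gets $\overline{G}^{-1}=\overline{G^{-1}}=\overline{G}$. For $h\in G$ left translation $L_h$ is a homeomorphism, so $h\,\overline{G}=\overline{hG}=\overline{G}$, whence $G\,\overline{G}\subseteq\overline{G}$; and for $g\in\overline{G}$ right translation $R_g$ is a homeomorphism, so from $Gg\subseteq G\,\overline{G}\subseteq\overline{G}$ one deduces $\overline{G}\,g=\overline{Gg}\subseteq\overline{G}$, i.e. $\overline{G}\,\overline{G}\subseteq\overline{G}$. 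Taking $G=\Gamma$ shows $\overline{\Gamma}$ is a subgroup.

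The crux is to prove that the centralizer $Z(a)=\big\{g\in\mathrm{Bir}(\mathbb{P}^n_\mathbb{C})\,\vert\,ag=ga\big\}$ of an arbitrary element $a$ is Zariski-closed; this is where I would use the definition of the topology rather than any separation axiom (note that the Zariski topology is far from Hausdorff, so the usual ``fixed-point set of a homeomorphism'' argument is unavailable). By definition it suffices to show that for every morphism $\upsilon\colon V\to\mathrm{Bir}(\mathbb{P}^n_\mathbb{C})$, given by a family $(\upsilon_v)_{v\in V}$, the set $\upsilon^{-1}(Z(a))$ is closed in $V$. The constant assignment $v\mapsto a$ is a morphism, and by the stability of families under pointwise composition and inversion (the justification of Remark \ref{rem:homeo}) the map $\nu\colon v\mapsto a\,\upsilon_v\,a^{-1}\,\upsilon_v^{-1}$ is again a morphism $V\to\mathrm{Bir}(\mathbb{P}^n_\mathbb{C})$. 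Since $a\upsilon_v=\upsilon_v a$ is equivalent to $\nu(v)=\mathrm{id}$, we have $\upsilon^{-1}(Z(a))=\nu^{-1}(\{\mathrm{id}\})$, which is closed because $\{\mathrm{id}\}$ is closed and $\nu$ is continuous. Hence $Z(a)$ is closed (and it is a subgroup, being a centralizer).

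Finally I would deduce commutativity. As $\Gamma$ is abelian we have $\Gamma\subseteq Z(\phi)$, and since $Z(\phi)$ is closed this forces $\overline{\Gamma}\subseteq Z(\phi)$; thus $\phi$ commutes with every element of $\overline{\Gamma}$. Then for each $a\in\overline{\Gamma}$ we get $\phi\in Z(a)$, so $\langle\phi\rangle=\Gamma\subseteq Z(a)$, and closedness of $Z(a)$ gives $\overline{\Gamma}\subseteq Z(a)$; in other words $a$ commutes with all of $\overline{\Gamma}$. Since $a$ was arbitrary, $\overline{\Gamma}$ is abelian, completing the proof. The main obstacle is precisely the closedness of centralizers: the entire argument hinges on replacing the unavailable Hausdorff separation by the morphism-theoretic definition of closed sets and the closedness of the point $\{\mathrm{id}\}$.
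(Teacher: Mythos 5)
Your proof is correct and takes essentially the same approach as the paper's: the subgroup property is obtained from the fact that translations and inversion are homeomorphisms (Remark \ref{rem:homeo}), and commutativity from the closedness of centralizers, realized as preimages of the closed point $\{\mathrm{id}\}$ under the continuous commutator map $\psi\mapsto\psi\circ\varphi\circ\psi^{-1}\circ\varphi^{-1}$, applied first to $\phi$ and then to each element of the closure. The only difference is presentational: the paper routes the subgroup step through the closed set $\bigcap_{\omega\in\Omega}\Omega\,\omega^{-1}$, while you use the standard closure-of-a-subgroup argument, which is the very argument the paper itself employs in the proof of Lemma \ref{lem:agree}.
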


\begin{proof}
Let us denote by $\Omega$ the closure of 
$\big\{\phi^k\,\vert\,k\in\mathbb{Z}\big\}$
in $\mathrm{Bir}(\mathbb{P}^n_\mathbb{C})$.
For any $j\in\mathbb{Z}$ the set 
$\phi^j(\Omega)$ is a closed subset of 
$\mathrm{Bir}(\mathbb{P}^n_\mathbb{C})$.
It contains
$\big\{\phi^k\,\vert\,k\in\mathbb{Z}\big\}$; 
thus it contains $\Omega$. As a result 
$\phi^k(\Omega)=\Omega$ for any 
$k\in\mathbb{Z}$. Set
\[
M=\big\{\psi\in\mathrm{Bir}(\mathbb{P}^n_\mathbb{C})\,\vert\,\psi(\Omega)\subset\Omega\big\}=\displaystyle\bigcap_{\omega\in\Omega}\Omega\omega^{-1}.
\]
As $M$ is closed and contains 
$\big\{\phi^k\,\vert\,k\in\mathbb{Z}\big\}$, 
the set $M$ contains $\Omega$. Therefore, $M$ 
is closed under composition. Similarly the set 
$\big\{\psi^{-1}\,\vert\,\psi\in\Omega\big\}$ 
is closed in $\mathrm{Bir}(\mathbb{P}^n_\mathbb{C})$
and contains 
$\big\{\phi^k\,\vert\,k\in\mathbb{Z}\big\}$.
The set $\Omega$ is then a subgroup of 
$\mathrm{Bir}(\mathbb{P}^n_\mathbb{C})$.

Let us now prove that $\Omega$ is abelian. 
The centralizer
\[
\mathrm{Cent}(\varphi)=\big\{\psi\in\mathrm{Bir}(\mathbb{P}^n_\mathbb{C})\,\vert\,\psi\circ\varphi=\varphi\circ\psi\big\}
\]
of an element $\varphi$ of
$\mathrm{Bir}(\mathbb{P}^n_\mathbb{C})$ is 
the preimage of the identity by the continuous 
map 
\begin{align*}
& \mathrm{Bir}(\mathbb{P}^n_\mathbb{C})\to\mathrm{Bir}(\mathbb{P}^n_\mathbb{C}) && \psi\mapsto \psi\circ\varphi\circ\psi^{-1}\circ\varphi^{-1}.
\end{align*}
Since a point 
of $\mathrm{Bir}(\mathbb{P}^n_\mathbb{C})$
is closed (Lemma \ref{lem:propbf}), 
$\mathrm{Cent}(\phi)$ is closed.

The closed subgroup $\mathrm{Cent}(\phi)$
of $\mathrm{Bir}(\mathbb{P}^n_\mathbb{C})$
contains 
$\big\{\phi^j\,\vert\,j\in\mathbb{Z}\big\}$ 
hence it contains~$\Omega$. Consequently 
each element of $\Omega$ commutes with 
$\phi$. The set
\[
\big\{\psi\in\mathrm{Bir}(\mathbb{P}^n_\mathbb{C})\,\vert\,\psi\circ\omega=\omega\circ\psi\qquad\forall\,\omega\in\Omega\big\}=\displaystyle\bigcap_{\omega\in\Omega}\mathrm{Cent}(\omega)
\]
is closed and contains
$\big\{\phi^j\,\vert\,j\in\mathbb{Z}\big\}$,
so contains $\Omega$. Therefore, $\Omega$ is 
abelian.
\end{proof}

\begin{pro}[\cite{Blanc:algel}]\label{pro:2plus}
Let $\phi$ be an element of 
$\mathrm{Bir}(\mathbb{P}^n_\mathbb{C})$. 
\begin{itemize}
\item[$\diamond$] If the sequence 
$(\deg \phi^k)_{k\in\mathbb{N}}$ is unbounded, 
then $\phi$ is not contained in any algebraic 
subgroup of $\mathrm{Bir}(\mathbb{P}^n_\mathbb{C})$.

\item[$\diamond$] If the sequence 
$(\deg \phi^k)_{k\in\mathbb{N}}$ is bounded, 
then $\big\{\phi^j\,\vert\,j\in\mathbb{Z}\big\}$
is an abelian algebraic subgroup of 
$\mathrm{Bir}(\mathbb{P}^n_\mathbb{C})$.
\end{itemize}
\end{pro}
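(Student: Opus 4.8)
The plan is to deduce both assertions directly from the structural results already in place: the bounded-degree property of algebraic subgroups (Lemma \ref{lem:BlancFurter}), the estimate on the degree of an inverse, and the two facts that the closure of the cyclic group $\big\{\phi^k\,\vert\,k\in\mathbb{Z}\big\}$ is a closed abelian subgroup (Lemma \ref{lem:plus}) and that a Zariski closed subgroup of bounded degree carries the structure of an algebraic group (Corollary \ref{cor:agree}).

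For the first bullet I would argue contrapositively. Suppose $\phi$ lies in an algebraic subgroup $\mathrm{G}\subset\mathrm{Bir}(\mathbb{P}^n_\mathbb{C})$. By Lemma \ref{lem:BlancFurter} the group $\mathrm{G}$ has bounded degree, say $\mathrm{G}\subset\mathrm{Bir}_{\leq d}(\mathbb{P}^n_\mathbb{C})$. As $\mathrm{G}$ is a subgroup it contains every iterate $\phi^k$, whence $\deg\phi^k\leq d$ for all $k$; the sequence $(\deg\phi^k)_{k\in\mathbb{N}}$ is therefore bounded, which is exactly the negation of the hypothesis of the first bullet.

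For the second bullet, assume $(\deg\phi^k)_{k\in\mathbb{N}}$ is bounded by an integer $d$. First I would promote this to a two-sided bound: since the inverse of a degree $m$ map of $\mathbb{P}^n_\mathbb{C}$ has degree at most $m^{n-1}$,
\[
\deg\phi^{-k}=\deg\big(\phi^k\big)^{-1}\leq\big(\deg\phi^k\big)^{n-1}\leq d^{\,n-1}\qquad\text{for all }k\geq 0,
\]
so $\big\{\phi^j\,\vert\,j\in\mathbb{Z}\big\}\subset\mathrm{Bir}_{\leq D}(\mathbb{P}^n_\mathbb{C})$ with $D=d^{\,n-1}$. Let $\Omega$ denote the Zariski closure of $\big\{\phi^j\,\vert\,j\in\mathbb{Z}\big\}$. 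Because $\mathrm{Bir}_{\leq D}(\mathbb{P}^n_\mathbb{C})$ is closed in $\mathrm{Bir}(\mathbb{P}^n_\mathbb{C})$ (Corollary \ref{cor:BlancFurter}), the closure $\Omega$ is again contained in it, so $\Omega$ is of bounded degree.

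It then remains to assemble the pieces. By Lemma \ref{lem:plus} the set $\Omega$ is a closed abelian subgroup; being moreover of bounded degree, Corollary \ref{cor:agree} applies and endows $\Omega$ with the structure of an algebraic group, realized by a morphism $\mathrm{K}\to\mathrm{Bir}(\mathbb{P}^n_\mathbb{C})$ inducing a group isomorphism and homeomorphism onto $\Omega$. Hence $\Omega=\overline{\big\{\phi^j\,\vert\,j\in\mathbb{Z}\big\}}$ is an abelian algebraic subgroup containing $\phi$, matching the choice $\mathrm{G}=\overline{\big\{\phi^i\,\vert\,i\in\mathbb{Z}\big\}}$ announced earlier; note that the closure is indispensable, since the bare cyclic group need not be Zariski closed when $\phi$ has infinite order. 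The one step needing genuine attention — and thus the main obstacle — is precisely this two-sided degree estimate, as boundedness of the positive powers does not formally contain the negative ones; once the inverse-degree bound is invoked, everything reduces to the already-proven Lemmas and Corollaries.
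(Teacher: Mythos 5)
Your proof is correct and takes essentially the same route as the paper: for the first bullet the paper invokes Lemma \ref{lem:agree} (whose content is precisely the bounded-degree property you extract from Lemma \ref{lem:BlancFurter}), and for the second it uses exactly your Bass--Connell--Wright estimate $\deg\phi^{-k}\leq(\deg\phi^k)^{n-1}$ to get the two-sided bound, then concludes with Lemma \ref{lem:plus} and, implicitly, Corollary \ref{cor:agree}. Your remark that the Zariski closure is what actually carries the algebraic-group structure correctly makes explicit a point the paper's statement glosses over.
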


A direct consequence is the following 
result:

\begin{cor}[\cite{Blanc:algel}]\label{cor:3plus}
Let $\phi$ be a birational self map of 
$\mathbb{P}^n_\mathbb{C}$. The following 
assertions are equivalent:
\begin{itemize}
\item[$\diamond$] the map $\phi$ is algebraic;

\item[$\diamond$] the sequence 
$(\deg\phi^k)_{k\in\mathbb{N}}$ is bounded, 
{\it i.e.} $\phi$ is elliptic.
\end{itemize}
\end{cor}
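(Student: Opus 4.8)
The plan is to deduce both implications directly from Proposition \ref{pro:2plus}, which already carries all the substantive content; the corollary is essentially a repackaging of its two bullets together with the definition of an algebraic element.

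First I would treat the implication that $\phi$ algebraic forces $(\deg\phi^k)_{k\in\mathbb{N}}$ to be bounded. I would argue by contraposition using the first assertion of Proposition \ref{pro:2plus}: if $(\deg\phi^k)_{k\in\mathbb{N}}$ is unbounded, then $\phi$ is contained in no algebraic subgroup of $\mathrm{Bir}(\mathbb{P}^n_\mathbb{C})$, so by definition $\phi$ is not algebraic. Equivalently, one can argue directly: any algebraic subgroup $\mathrm{G}$ has bounded degree, since by Lemma \ref{lem:BlancFurter} it sits inside some $\mathrm{Bir}_{\leq d}(\mathbb{P}^n_\mathbb{C})$; hence if $\phi\in\mathrm{G}$, then every power $\phi^k$ lies in $\mathrm{G}\subset\mathrm{Bir}_{\leq d}(\mathbb{P}^n_\mathbb{C})$, so $\deg\phi^k\leq d$ for all $k$.

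For the converse, namely that a bounded degree sequence makes $\phi$ algebraic, I would invoke the second assertion of Proposition \ref{pro:2plus}: when $(\deg\phi^k)_{k\in\mathbb{N}}$ is bounded, the group $\big\{\phi^j\,\vert\,j\in\mathbb{Z}\big\}$ (more precisely its closure, cf. Lemma \ref{lem:plus}) is an abelian algebraic subgroup of $\mathrm{Bir}(\mathbb{P}^n_\mathbb{C})$. As this subgroup visibly contains $\phi$, the element $\phi$ is algebraic by the very definition of algebraic element. Finally, the phrase \og\,$\phi$ is elliptic\,\fg{} requires only unpacking terminology: by Theorem \ref{thm:dilfav} the isometry $\phi_*$ is elliptic exactly when $(\deg_H\phi^n)_n$ is bounded, and the definitions preceding that theorem declare $\phi$ elliptic precisely when $\phi_*$ is. I expect no genuine obstacle here; the only point to state with care is that being contained in an algebraic subgroup already entails bounded degree, which is the content secured in the preceding subsection via Lemma \ref{lem:BlancFurter}.
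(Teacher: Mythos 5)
Your proof is correct and follows essentially the same route as the paper, which states the corollary as a direct consequence of Proposition \ref{pro:2plus}: the first assertion (equivalently, the bounded-degree property of algebraic subgroups from Lemma \ref{lem:BlancFurter}) gives one direction, and the second assertion gives the other. Your parenthetical remark that one should really take the closure of $\big\{\phi^j\,\vert\,j\in\mathbb{Z}\big\}$, as in Lemma \ref{lem:plus}, matches the paper's own proof of that proposition.
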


\begin{proof}[Proof of Proposition \ref{pro:2plus}]
The first assertion follows from Lemma \ref{lem:agree}.

Let us now focus on the second assertion. Assume that 
the sequence $(\deg\phi^k)_{k\in\mathbb{N}}$ is 
bounded. According to \cite{BassConnelWright} one 
has for any $k$ 
\[
\deg\phi^{-k}\leq(\deg\phi^k)^{n-1}.
\]
As a consequence the set 
$\big\{\phi^j\,\vert\,j\in\mathbb{Z}\big\}$
is contained in 
$\mathrm{Bir}(\mathbb{P}^n_\mathbb{C})_{\leq d}$ 
for some $d$, and so does the closure $\Omega$ of
$\big\{\phi^j\,\vert\,j\in\mathbb{Z}\}$.
Lemma \ref{lem:plus} allows to conclude.
\end{proof}

\begin{pro}
For any $k$, $d\in\mathbb{N}$ set
\[
\mathrm{Bir}(\mathbb{P}^n_\mathbb{C})_{k,d}=\big\{\phi\in\mathrm{Bir}(\mathbb{P}^n_\mathbb{C})\,\vert\,\deg\phi^k\leq d\big\}
\]
and
\[
\mathrm{Bir}(\mathbb{P}^n_\mathbb{C})_{\infty,d}=\big\{\phi\in\mathrm{Bir}(\mathbb{P}^n_\mathbb{C})\,\vert\,\deg\phi^k\leq d\,\forall\,k\in\mathbb{N}\big\}
\]
Then 
\begin{itemize}
\item[$\diamond$] the set
$\mathrm{Bir}(\mathbb{P}^n_\mathbb{C})_{k,d}$
is closed in $\mathrm{Bir}(\mathbb{P}^n_\mathbb{C})$;

\item[$\diamond$] the set
$\mathrm{Bir}(\mathbb{P}^n_\mathbb{C})_{\infty,d}=\displaystyle\bigcap_{i\in\mathbb{N}}
\mathrm{Bir}(\mathbb{P}^n_\mathbb{C})_{i,d}$
is closed in $\mathrm{Bir}(\mathbb{P}^n_\mathbb{C})$;

\item[$\diamond$] the set $\mathrm{Bir}(\mathbb{P}^n_\mathbb{C})_{\text{alg}}$
of all algebraic elements of $\mathrm{Bir}(\mathbb{P}^n_\mathbb{C})$
coincides with the union of all 
$\mathrm{Bir}(\mathbb{P}^n_\mathbb{C})_{\infty,d}$, $d\geq 1$.
\end{itemize}
\end{pro}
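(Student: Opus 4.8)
The plan is to deduce all three assertions from results already established, the only genuine work being the first. For the first assertion, the idea is to realise $\mathrm{Bir}(\mathbb{P}^n_\mathbb{C})_{k,d}$ as the preimage of a set already known to be closed. Introduce the $k$-th power map $p_k\colon\mathrm{Bir}(\mathbb{P}^n_\mathbb{C})\to\mathrm{Bir}(\mathbb{P}^n_\mathbb{C})$, $\phi\mapsto\phi^k$; then by definition
\[
\mathrm{Bir}(\mathbb{P}^n_\mathbb{C})_{k,d}=p_k^{-1}\big(\mathrm{Bir}_{\leq d}(\mathbb{P}^n_\mathbb{C})\big).
\]
Since $\mathrm{Bir}_{\leq d}(\mathbb{P}^n_\mathbb{C})$ is closed (Corollary \ref{cor:BlancFurter}), it suffices to prove that $p_k$ is continuous for the Zariski topology, and this is exactly the step I expect to be the main obstacle.

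To check the continuity of $p_k$ I would argue straight from the definition of closedness, using the stability of families under composition recorded in Remark \ref{rem:homeo}. Let $\upsilon\colon V\to\mathrm{Bir}(\mathbb{P}^n_\mathbb{C})$ be a morphism from an irreducible algebraic variety, given by a $V$-birational map $\varphi\colon V\times\mathbb{P}^n_\mathbb{C}\dashrightarrow V\times\mathbb{P}^n_\mathbb{C}$ with $\varphi(v,x)=(v,\varphi_v(x))$. The $k$-fold composition $\varphi^{\circ k}$ is again a $V$-birational map inducing a morphism $V\to\mathrm{Bir}(\mathbb{P}^n_\mathbb{C})$ by Remark \ref{rem:homeo}, and by construction it is the morphism $v\mapsto(\varphi_v)^k=(p_k\circ\upsilon)(v)$. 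Hence, for any closed subset $C\subset\mathrm{Bir}(\mathbb{P}^n_\mathbb{C})$ and any such $\upsilon$, the set $\upsilon^{-1}\big(p_k^{-1}(C)\big)=(p_k\circ\upsilon)^{-1}(C)$ is closed; by the definition of the Zariski topology this means $p_k^{-1}(C)$ is closed, i.e. $p_k$ is continuous. The delicate point here is precisely the verification that fibrewise $k$-th powers again form a family, which is supplied by Remark \ref{rem:homeo}; everything else is formal.

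The second assertion is then immediate: by definition $\phi\in\mathrm{Bir}(\mathbb{P}^n_\mathbb{C})_{\infty,d}$ if and only if $\deg\phi^k\leq d$ for every $k\in\mathbb{N}$, that is if and only if $\phi\in\mathrm{Bir}(\mathbb{P}^n_\mathbb{C})_{k,d}$ for every $k$, whence
\[
\mathrm{Bir}(\mathbb{P}^n_\mathbb{C})_{\infty,d}=\bigcap_{i\in\mathbb{N}}\mathrm{Bir}(\mathbb{P}^n_\mathbb{C})_{i,d}.
\]
Each term on the right is closed by the first assertion, and an arbitrary intersection of closed sets is closed.

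For the third assertion I would simply invoke the characterisation of algebraic elements. By Corollary \ref{cor:3plus} an element $\phi$ is algebraic if and only if the sequence $(\deg\phi^k)_{k\in\mathbb{N}}$ is bounded. If $\phi\in\mathrm{Bir}(\mathbb{P}^n_\mathbb{C})_{\infty,d}$ for some $d$, then this sequence is bounded by $d$, so $\phi$ is algebraic; conversely, if $\phi$ is algebraic, setting $d=\sup_{k\in\mathbb{N}}\deg\phi^k<\infty$ gives $\phi\in\mathrm{Bir}(\mathbb{P}^n_\mathbb{C})_{\infty,d}$. Therefore
\[
\mathrm{Bir}(\mathbb{P}^n_\mathbb{C})_{\text{alg}}=\bigcup_{d\geq 1}\mathrm{Bir}(\mathbb{P}^n_\mathbb{C})_{\infty,d},
\]
which moreover exhibits $\mathrm{Bir}(\mathbb{P}^n_\mathbb{C})_{\text{alg}}$ as a countable union of closed subsets of $\mathrm{Bir}(\mathbb{P}^n_\mathbb{C})$.
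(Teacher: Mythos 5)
Your proof is correct and follows essentially the same route as the paper: the paper likewise obtains the first assertion by combining the closedness of $\mathrm{Bir}_{\leq d}(\mathbb{P}^n_\mathbb{C})$ (Corollary \ref{cor:BlancFurter}) with the continuity of $\varphi\mapsto\varphi^k$ via Remark \ref{rem:homeo}, deduces the second as an intersection of closed sets, and gets the third from Corollary \ref{cor:3plus}. Your only addition is to spell out explicitly why Remark \ref{rem:homeo} yields the continuity of the $k$-th power map (fibrewise powers of a $V$-family are again a $V$-family), a step the paper leaves implicit.
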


\begin{proof}
The set $\mathrm{Bir}(\mathbb{P}^n_\mathbb{C})_{\leq d}$ 
is closed in $\mathrm{Bir}(\mathbb{P}^n_\mathbb{C})$ 
for any $d$ (Corollary \ref{cor:BlancFurter}), and 
the map 
\begin{align*}
& \mathrm{Bir}(\mathbb{P}^n_\mathbb{C})\to\mathrm{Bir}(\mathbb{P}^n_\mathbb{C}), && \varphi\mapsto\varphi^k
\end{align*}
is continuous (Remark \ref{rem:homeo}); 
the set 
$\mathrm{Bir}(\mathbb{P}^n_\mathbb{C})_{k,d}$
is thus closed in 
$\mathrm{Bir}(\mathbb{P}^n_\mathbb{C})$.

The first assertion clearly implies the second one.

The third assertion follows from Corollary 
\ref{cor:3plus}.
\end{proof}

Let us now deal with the first assertion of 
Theorem \ref{thm:Blancelalg}. Assume 
$n\geq 2$. Consider the morphism 
$\rho\colon\mathbb{A}^1_\mathbb{C}\to\mathrm{Bir}(\mathbb{P}^n_\mathbb{C})$ given by 
\begin{small}
\[
a\mapsto\big((z_0:z_1:\ldots:z_n)\dashrightarrow(z_0z_1:z_1(z_1+z_0):z_2(z_1+az_0):z_3z_1:z_4z_1:\ldots:z_nz_1\big).
\]
\end{small}
It is clearly injective. Let
$\widetilde{\rho}\colon\mathbb{P}^1_\mathbb{C}\to W_2$
be the closed embedding given by 
\begin{small}
\[
(\alpha:\beta)\to(\alpha z_0z_1:\alpha z_1(z_1+z_0):z_2(z_1+az_0):\alpha z_3z_1:\alpha z_4z_1:\ldots:\alpha z_nz_1).
\]
\end{small}
Note that $\widehat{\rho}((0:1))$ does not 
belong to $H_2$. However for any
$t\in\mathbb{A}^1_\mathbb{C}$ one has 
$\mathrm{pr}_2(\widehat{\rho}((1:t))=\rho(t)$.
The restriction to $\mathbb{A}^1_\mathbb{C}$;
thus it yields a closed embedding 
$\mathbb{A}^1_\mathbb{C}\to H_2$. 
According to Corollary \ref{cor:BlancFurter}
the restriction of $\pi_2$ to 
$\widehat{\rho}(\mathbb{P}^1_\mathbb{C}\smallsetminus\{(0:1)\})$ 
is an homeomorphism.

\begin{pro}
\begin{itemize}
\item[$\diamond$] For $t\in\mathbb{C}$
the following conditions are equivalent:
\begin{itemize}
\item[- ] $\rho(t)$ is algebraic,

\item[- ] $\rho(t)$ is unipotent,

\item[- ]  $\rho(t)$ is conjugate to
$\rho(0)\colon(z_1,z_2,\ldots,z_n)\to(z_1+1,z_2,\ldots,z_n)$,

\item[- ] $t$ belongs to the subgroup
of $(\mathbb{C},+)$ generated by $1$.
\end{itemize}

\item[$\diamond$] The pull-back by $\rho$ 
of the set of algebraic elements is not
closed.
\end{itemize}
\end{pro}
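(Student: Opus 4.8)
The plan is to reduce everything to an explicit computation in the affine chart $z_0=1$, where $\rho(a)$ becomes a triangular map, and then to read off all four conditions from the growth of the iterates.

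First I would rewrite $\rho(a)$ in the affine coordinates $x_i=z_i/z_0$. Clearing the common factor $z_1$ one finds
\[
\rho(a)\colon(x_1,x_2,\ldots,x_n)\dashrightarrow\left(x_1+1,\,x_2\cdot\frac{x_1+a}{x_1},\,x_3,\ldots,x_n\right),
\]
so that the first coordinate is a translation, the coordinates $x_3,\ldots,x_n$ are fixed, and all the dynamics sits in the $x_2$-coordinate. Iterating is then immediate: since $x_1$ advances by $1$ at each step,
\[
\rho(a)^k\colon(x_1,\ldots,x_n)\dashrightarrow\left(x_1+k,\,x_2\prod_{j=0}^{k-1}\frac{x_1+j+a}{x_1+j},\,x_3,\ldots,x_n\right).
\]
After homogenizing, the $x_2$-entry carries the ratio of the two degree-$k$ forms $\prod_{j=0}^{k-1}(z_1+(j+a)z_0)$ and $\prod_{j=0}^{k-1}(z_1+jz_0)$.

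The crux is the degree of $\rho(t)^k$, which I expect to be the main obstacle because it hinges on an exact coprimality statement. The two products above share a linear factor exactly when $j+a=i$ for some $i,j\in\{0,\ldots,k-1\}$, that is, exactly when $a\in\mathbb{Z}$. Hence for $t\notin\mathbb{Z}$ there is no cancellation, and the homogeneous components of $\rho(t)^k$ have no nonconstant common divisor: the linear forms $z_1+(j+t)z_0$ dividing the $x_2$-entry divide the other entries only when $t\in\mathbb{Z}$, while $z_0$ and $z_2$ are ruled out directly. Thus $\deg\rho(t)^k=k+1\to\infty$, and Corollary \ref{cor:3plus} forces $\rho(t)$ to be non-algebraic, hence a fortiori non-unipotent, for every $t\notin\mathbb{Z}$.

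For $t\in\mathbb{Z}$ I would instead exhibit an explicit conjugacy to $\rho(0)$. Conjugating by the birational map $\psi\colon(x_1,x_2,\ldots,x_n)\mapsto(x_1,\,g(x_1)x_2,\,x_3,\ldots,x_n)$ turns $\rho(0)$ into $\rho(t)$ precisely when $g(x_1+1)/g(x_1)=(x_1+t)/x_1$; for $t=m\geq 0$ the choice $g(x_1)=\prod_{j=0}^{m-1}(x_1+j)$ works, and for $t<0$ one takes the reciprocal product, so $\rho(t)$ is conjugate to $\rho(0)$ for all $t\in\mathbb{Z}$. Now $\rho(0)$ is the translation $(x_1+1,x_2,\ldots,x_n)$, an automorphism of $\mathbb{A}^n_\mathbb{C}$ lying in the one-parameter unipotent subgroup $\{(x_1+s,x_2,\ldots,x_n)\,\vert\,s\in\mathbb{C}\}\simeq(\mathbb{C},+)$; in particular it is algebraic and unipotent. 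Since algebraicity and unipotence are invariant under conjugacy—the Jordan decomposition of an algebraic element is intrinsic, and conjugation is a homeomorphism of $\mathrm{Bir}(\mathbb{P}^n_\mathbb{C})$ carrying algebraic subgroups to algebraic subgroups by Remark \ref{rem:homeo}—the map $\rho(t)$ is algebraic, unipotent and conjugate to $\rho(0)$ for all $t\in\mathbb{Z}$. Combining the two cases closes the cycle of equivalences: the four listed conditions hold simultaneously, and precisely for $t\in\mathbb{Z}$.

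Finally, the second assertion is then formal. By the equivalences just established, $\rho^{-1}\big(\mathrm{Bir}(\mathbb{P}^n_\mathbb{C})_{\mathrm{alg}}\big)=\mathbb{Z}\subset\mathbb{A}^1_\mathbb{C}$, an infinite proper subset and hence not closed for the Zariski topology. Since $\rho$ is a morphism, the preimage of a closed set would be closed; this is exactly what is needed to conclude that $\mathrm{Bir}(\mathbb{P}^n_\mathbb{C})_{\mathrm{alg}}$ itself is not closed.
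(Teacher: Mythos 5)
Your proof is correct and takes essentially the same route as the paper's: the same explicit formula for $\rho(t)^k$, linear degree growth for $t\notin\mathbb{Z}$ coming from the coprimality of the two products (hence non-algebraicity via Corollary \ref{cor:3plus}), the same telescoping conjugation $g(x_1)=\prod_{j=0}^{m-1}(x_1+j)$ to $\rho(0)$ when $t\in\mathbb{Z}$, and non-closedness of $\mathbb{Z}\subset\mathbb{A}^1_\mathbb{C}$ for the second assertion. The only differences are expository: you make explicit the absence of common factors in the homogenized iterate and the conjugacy-invariance of algebraicity and unipotence, points the paper leaves implicit.
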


\begin{proof}
\begin{itemize}
\item[$\diamond$] A direct computation 
yields to 
\begin{small}
\[
\rho(a)^k\colon(z_1,z_2,\ldots,z_n)\mapsto\left(z_1+k,z_2\,\frac{(z_1+a)(z_1+a+1)\ldots(z_1+a+k-1)}{z_1(z_1+1)\ldots(z_1+m-1)},z_3,z_4,\ldots,z_n\right)
\]
\end{small}
In particular the second coordinate of
$\rho(a)^k(z_1,z_2,\ldots,z_n)$ is 
\[
z_2\,\frac{\displaystyle\prod_{i=0}^{k-1}(z_1+a+i)}{\displaystyle\prod_{i=0}^{k-1}(z_1+i)}
\]
If $a$ does not belong to the subgroup
of $(\mathbb{C},+)$ generated by $1$, then
the degree growth of $\rho(a)^k$ is 
linear which implies that $\rho(a)$ is
not algebraic.

If $a$ belongs to the subgroup of 
$(\mathbb{C},+)$ generated by $1$, then
\[
\deg\rho(a)^k\leq \vert k\vert+1\qquad\forall\,k\in\mathbb{N}.
\]
As a consequence $\rho(a)$ is algebraic.
Furthermore $\rho(a)$ is conjugate to
\[
\rho(0)\colon(z_1,z_2,\ldots,z_n)\mapsto(z_1+1,z_2,\ldots,z_n)
\]
via 
\[
(z_1,z_2,\ldots,z_n)\dashrightarrow\left(z_1,\frac{z_2}{z_1(z_1+1)\ldots(z_1+a-1)},z_3,z_4,\ldots,z_n\right)
\]
if $a>0$ or via 
\[
(z_1,z_2,\ldots,z_n)\dashrightarrow\Big(z_1,z_2z_1(z_1-1)\ldots(z_1+a),z_3,z_4,\ldots,z_n\Big)
\]
if $a<0$\footnote{Let us recall that $a$ belongs to 
the subgroup of $(\mathbb{C},+)$ generated by $1$.}. 
In particular $\rho(a)$ is unipotent.

\item[$\diamond$] The second assertion follows from the first 
one and the fact that the subgroup of $(\mathbb{C},+)$ generated 
by $1$ is not closed.
\end{itemize}
\end{proof}

Finally let us prove the second assertion
of Theorem \ref{thm:Blancelalg}. Assume 
$n\geq 2$. Consider the morphism 
\[
\rho\colon\mathbb{A}^1_\mathbb{C}\times(\mathbb{A}^1_\mathbb{C}\smallsetminus\{0\})\to\mathrm{Bir}(\mathbb{P}^n_\mathbb{C})
\]
given by 
\begin{small}
\[
(a,\xi)\mapsto\Big((z_0:z_1:\ldots:z_n)\dashrightarrow(z_0(z_1+z_0):\xi z_1(z_1+z_0):z_2(z_1+az_0):z_3(z_1+z_0):\ldots:z_n(z_1+z_0)\Big).
\]
\end{small}
It is injective. Let 
$\widehat{\rho}\colon\mathbb{P}^2_\mathbb{C}\to W_2$
be the closed embedding given by 
\begin{small}
\[
(\alpha:\beta:\gamma)\dashrightarrow\Big(\alpha z_0(z_1+z_0):\gamma z_1(z_1+z_0):z_2(\alpha z_1+\beta z_0):\alpha z_3(z_1+z_0):\ldots:\alpha z_n(z_1+z_0)\Big).
\]
\end{small}
Note that 
\begin{small}
\[
(z_0:z_1:\ldots:z_n)\dashrightarrow\Big(\alpha z_0(z_1+z_0):\gamma z_1(z_1+z_0):z_2(\alpha z_1+\beta z_0):\alpha z_3(z_1+z_0):\ldots:\alpha z_n(z_1+z_0)\Big)
\]
\end{small}
is a birational map if and only if 
$\alpha\gamma\not=0$. This yields a 
closed embedding
\begin{align*}
& \mathbb{A}^1_\mathbb{C}\times(\mathbb{A}^1_\mathbb{C}\smallsetminus\{0\})\to H_2, && (a,\xi)\mapsto\widehat{\rho}((1:a:\xi)).
\end{align*}
Furthermore 
$\mathrm{pr}_2(\widehat{\rho}(1:a:\xi))=\rho(a,\xi)$.
Proposition \ref{pro:2plus} says that
the restriction of $\pi_2$ to the 
image is a homeomorphism.

\begin{pro}
\begin{itemize}
\item[$\diamond$] For $(a,\xi)\in\mathbb{A}^1_\mathbb{C}\times(\mathbb{A}^1_\mathbb{C}\smallsetminus\{0\})$ the following conditions
are equi\-valent:
\begin{itemize}
\item[- ]  $\rho(a,\xi)$ is algebraic,

\item[- ] $\rho(a,\xi)$ is semi-simple,

\item[- ] $\rho(a,\xi)$ is conjugate to $\rho(1,\xi)\colon(z_1,z_2,\ldots,z_n)\mapsto(\xi z_1,z_2,z_3,\ldots,z_n)$,

\item[- ] there exists $k\in\mathbb{Z}$ such 
that $a=\xi^k$.
\end{itemize}

\item[$\diamond$] The pull-back by $\rho$ of 
the set of algebraic elements is not closed.
\end{itemize}
\end{pro}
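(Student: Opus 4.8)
The plan is to mimic the strategy of the preceding (unipotent) proposition, replacing the additive structure on the first coordinate by a multiplicative one. First I would pass to the affine chart $z_0=1$, where a direct computation gives
\[
\rho(a,\xi)\colon(z_1,z_2,\ldots,z_n)\mapsto\Big(\xi z_1,\,z_2\,\frac{z_1+a}{z_1+1},\,z_3,\ldots,z_n\Big),
\]
and then prove by induction the iterate formula
\[
\rho(a,\xi)^k\colon(z_1,\ldots,z_n)\mapsto\Big(\xi^k z_1,\,z_2\prod_{i=0}^{k-1}\frac{\xi^i z_1+a}{\xi^i z_1+1},\,z_3,\ldots,z_n\Big).
\]
This formula is the engine for the whole equivalence, exactly as in the additive case.

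Next I would read off the degree growth. A numerator factor $\xi^i z_1+a$ is proportional to a denominator factor $\xi^j z_1+1$ precisely when $a=\xi^{\,i-j}$; hence if $a\notin\{\xi^k\,\vert\,k\in\mathbb{Z}\}$ no cancellation ever occurs, the reduced numerator and denominator both have degree growing linearly in $k$, and so $\deg\rho(a,\xi)^k$ grows linearly. By Corollary \ref{cor:3plus} this forces $\rho(a,\xi)$ to be non-algebraic, which proves the implication algebraic $\Rightarrow$ $a=\xi^k$. The reverse direction is where an explicit conjugation enters: for $a=\xi^m$ with $m>0$ I would take $g(z_1)=z_1^{\,m}/\prod_{j=1}^{m}(z_1+\xi^j)$, check the functional equation $g(\xi z_1)/g(z_1)=(z_1+\xi^m)/(z_1+1)$, and conclude that
\[
h\colon(z_1,z_2,\ldots,z_n)\mapsto\Big(z_1,\,g(z_1)\,z_2,\,z_3,\ldots,z_n\Big)
\]
satisfies $h\circ\rho(1,\xi)\circ h^{-1}=\rho(\xi^m,\xi)$; the case $m<0$ is symmetric and $m=0$ is trivial. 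This gives $a=\xi^k\Rightarrow$ conjugate to $\rho(1,\xi)$. Finally $\rho(1,\xi)\colon(z_1,\ldots,z_n)\mapsto(\xi z_1,z_2,\ldots,z_n)$ is a diagonal element of $\mathrm{PGL}(n+1,\mathbb{C})$, hence semi-simple, so every conjugate of it is semi-simple; and semi-simple elements are algebraic by definition. These four implications close the cycle and yield the stated equivalence.

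For the second assertion, the equivalence just proved identifies the pull-back $\rho^{-1}\big(\mathrm{Bir}(\mathbb{P}^n_\mathbb{C})_{\mathrm{alg}}\big)$ with
\[
E=\big\{(a,\xi)\in\mathbb{A}^1_\mathbb{C}\times(\mathbb{A}^1_\mathbb{C}\smallsetminus\{0\})\,\vert\,\exists\,k\in\mathbb{Z},\ a=\xi^k\big\}=\bigcup_{k\in\mathbb{Z}}\{a=\xi^k\}.
\]
Each $\{a=\xi^k\}$ is an irreducible curve, and these curves are pairwise distinct, so $E$ is a countably infinite union of distinct irreducible curves inside the irreducible surface $\mathbb{A}^1_\mathbb{C}\times(\mathbb{A}^1_\mathbb{C}\smallsetminus\{0\})$. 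Since a Zariski-closed subset of this surface has only finitely many irreducible components, $E$ cannot be closed; this is the exact analogue of the fact that $\mathbb{Z}\subset(\mathbb{C},+)$ is not Zariski closed.

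The only genuinely computational step is the guess-and-verify of the conjugating function $g$. The telescoping identity $\frac{z_1+\xi^m}{z_1+1}=\prod_{j=1}^{m}\frac{z_1+\xi^j}{z_1+\xi^{j-1}}$ makes the right $g$ apparent, but one must be careful: using only factors of the form $z_1+\xi^j$ leaves a spurious constant $\xi^{-m}$, which is precisely why the factor $z_1^{\,m}$ (a root at $0$) must be included. I expect this bookkeeping, together with the careful treatment of the degree-growth step when $\xi$ is a root of unity (where numerator and denominator acquire repeated factors but still no common factor, so the degree is still unbounded), to be the main obstacle; everything else runs parallel to the unipotent proposition already established.
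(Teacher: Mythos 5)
Your proof is correct and follows essentially the same route as the paper's: the iterate formula $\rho(a,\xi)^k\colon z_2\mapsto z_2\prod_{i=0}^{k-1}\frac{\xi^i z_1+a}{\xi^i z_1+1}$, linear degree growth when $a\notin\langle\xi\rangle$ combined with Corollary \ref{cor:3plus}, an explicit conjugation to $\rho(1,\xi)$ when $a=\xi^k$, and the non-closedness of $\bigcup_{k\in\mathbb{Z}}\{a=\xi^k\}$ for the second assertion. If anything, your conjugating map is better than the paper's: your $g(z_1)=z_1^m/\prod_{j=1}^{m}(z_1+\xi^j)$ does satisfy $g(\xi z_1)/g(z_1)=(z_1+\xi^m)/(z_1+1)$, whereas the conjugations printed in the paper are carried over verbatim from the unipotent (additive) proposition and do not satisfy this multiplicative functional equation, so your computation in effect repairs that misprint.
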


\begin{proof}
\begin{itemize}
\item[$\diamond$] Note that 
\begin{small}
\[
\rho(a,\xi)^k\colon(z_1,z_2,\ldots,z_n)\dashrightarrow\left(\xi^kz_1,z_2\,\frac{(z_1+a)(\xi z_1+a)\ldots(\xi^k z_1+a)}{(z_1+1)(\xi z_1+1)\ldots(\xi^{k-1}z_1+1)},z_3,z_4,\ldots,z_n\right).
\]
\end{small}
In particular the second coordinate of $\rho(a,\xi)^k$
is 
\[
z_2\,\frac{\displaystyle\prod_{i=0}^{k-1}(\xi^iz_1+a)}{\displaystyle\prod_{i=0}^{k-1}(\xi^iz_1+1)}.
\]
If $a$ does not belong to 
$\langle\xi\rangle\subset(\mathbb {C},\cdot)$, 
then the degree growth of $\rho(a,\xi)^k$ 
is linear hence $\rho(a,\xi)$ is not algebraic.

If $a$ belongs to 
$\langle\xi\rangle\subset(\mathbb {C},\cdot)$, 
then $a=\xi^k$ for some $k\in\mathbb{Z}$ and for any 
$j\in\mathbb{N}$
\[
\deg\rho(a,\xi)^j\leq\vert k\vert+1,
\]
so $\rho(a,\xi)$ is algebraic. Remark that 
$\rho(a,\xi)$ is conjugate to $\rho(1,\xi)$
via 
\[
(z_1,z_2,\ldots,z_n)\dashrightarrow\left(z_1,\frac{z_2}{z_1(z_1+1)\ldots(z_1+a-1)},z_3,z_4,\ldots,z_n\right)
\]
if $k>0$ and via
\[
(z_1,z_2,\ldots,z_n)\dashrightarrow\Big(z_1,z_2z_1(z_1-1)\ldots(z_1+a),z_3,z_4,\ldots,z_n\Big)
\]
if $k<0$. 

\item[$\diamond$] The second assertion follows
from the first one and the fact that 
\[
\big\{(a,\xi)\in\mathbb{A}^1_\mathbb{C}\times(\mathbb{A}^1_\mathbb{C}\smallsetminus\{0\})\,\vert\, a=\xi^k\text{ for some $k\in\mathbb{Z}$}\big\}
\]
is not closed.
\end{itemize}
\end{proof}

\begin{rem}
Note that all the results of this section hold for
$\mathrm{Bir}(\mathbb{P}^n_\Bbbk)$ whose~$\Bbbk$ is an 
algebraically closed field of characteristic $0$.
\end{rem}

\section{Classification of maximal algebraic subgroups of $\mathrm{Bir}(\mathbb{P}^2_\mathbb{C})$}

In \cite{Blanc:ssgpealg} the author gives a complete classification 
of maximal algebraic subgroups of the plane Cremona group 
and provides
algebraic varieties that parametrize the conjugacy classes. The 
algebraic subgroups of $\mathrm{Bir}(\mathbb{P}^n_\mathbb{C})$ have
been studied for a long time. Enriques established in 
\cite{Enriques} the complete classification of maximal connected 
algebraic subgroups of $\mathrm{Bir}(\mathbb{P}^2_\mathbb{C})$:
every such subgroup is the conjugate of the identity component of
the automorphism group of a minimal rational surface. A modern 
proof was given in \cite{Umemura:maxconnalg1}. The case of 
$\mathrm{Bir}(\mathbb{P}^3_\mathbb{C})$ was treated by Enriques and
Fano and more recently by Umemura 
(\cite{Umemura:dim3, Umemura:maxconnalg1, Umemura:imp}). Demazure
has studied the smooth connected subgroups of 
$\mathrm{Bir}(\mathbb{P}^n_\mathbb{C})$ that contain a split torus
of dimension $n$ (\emph{see} \cite{Demazure:sousgroupesalgebriques}).
Only a few results are known for non-connected subgroups even in
dimension $2$. Nevertheless there are a lot of statements in the 
case of finite subgroups which are algebraic ones
(\cite{Wiman, BayleBeauville, deFernex, BeauvilleBlanc, Beauville, Iskovskikh, DolgachevIskovskikh, Blanc:SMF, Blanc:CRAS}) and we deal with in Chapter \ref{chapter:finite}.
But these results do not show which finite groups are maximal
algebraic subgroups. As mentioned in \cite{DolgachevIskovskikh}
there are some remaining open questions like the description of
the algebraic varieties that parameterize conjugacy classes of 
finite subgroups $\mathrm{G}$ of $\mathrm{Bir}(\mathbb{P}^2_\mathbb{C})$.
Blanc gives an answer to this question for 
\begin{itemize}
\item[$\diamond$] abelian finite subgroups $\mathrm{G}$ of 
$\mathrm{Bir}(\mathbb{P}^2_\mathbb{C})$ whose elements  
 do not fix a curve of positive genus (\cite{Blanc:abelien});

\item[$\diamond$] finite cyclic subgroups of 
$\mathrm{Bir}(\mathbb{P}^2_\mathbb{C})$ (\emph{see}
\cite{Blanc:cyclic});

\item[$\diamond$] maximal algebraic subgroups of 
$\mathrm{Bir}(\mathbb{P}^2_\mathbb{C})$ (\emph{see}
\cite{Blanc:ssgpealg}).
\end{itemize}

Before specifying Blanc results let us recall some notions. If 
$S$ is a projective smooth rational surface and $\mathrm{G}$
a subgroup of $\mathrm{Aut}(S)$ we say that $(\mathrm{G},S)$
is a \textsl{pair}\index{defi}{pair}. A birational map 
$\varphi\colon X\dashrightarrow Y$ is 
\textsl{$\mathrm{G}$-equivariant}
\index{defi}{$\mathrm{G}$-equivariant (birational map)} if 
the inclusion 
$\varphi\circ\mathrm{G}\circ\varphi^{-1}\subset\mathrm{Aut}(Y)$
holds. The pair $(\mathrm{G},S)$ is 
\textsl{minimal}\index{defi}{minimal (pair)} if every 
birational $\mathrm{G}$-equivariant morphism 
$\varphi\colon S\dashrightarrow S'$ where $S'$ is a projective,
smooth surface, is an isomorphism. A morphism 
$\pi\colon S\to \mathbb{P}^1_\mathbb{C}$ is a 
\textsl{conic bundle}\index{defi}{conic bundle} if all 
generic fibers of $\pi$ are isomorphic to $\mathbb{P}^1_\mathbb{C}$
and if there exists a finite number of singular fibers which 
are the transverse union of two curves isomorphic to 
$\mathbb{P}^1_\mathbb{C}$.

\subsection{del Pezzo surfaces and their automorphism groups}

 A \textsl{del Pezzo surface}\index{defi}{del Pezzo surface} is a smooth projective surface $S$ such that the anti-canonical divisor $-K_S$ is 
ample. Let us recall the classification of del Pezzo surfaces. The 
number $d=K_S^2$ is called the \textsl{degree} of 
$S$\index{defi}{degree (of 
a del Pezzo surface)}. By Noether's formula $1\leq d\leq 9$. For 
$d\geq 3$, the anticanonical linear system $\vert -K_S\vert$ maps 
$S$ onto a non-singular surface of degree $d$ in $\mathbb{P}^d_\mathbb{C}$.
If $d=9$, then $S\simeq\mathbb{P}^2_\mathbb{C}$. If $d=8$, then
$S\simeq \mathbb{P}^1_\mathbb{C}\times\mathbb{P}^1_\mathbb{C}$ or
$S\simeq\mathbb{F}_1$. For $d\leq 7$ a del Pezzo surface $S$
is isomorphic to the blow up of $n=9-d$ points in~$\mathbb{P}^2_\mathbb{C}$ in general position, that is 
\begin{itemize}
\item[$\diamond$] no three of them are colinear,
\item[$\diamond$] no six are on the same conic,
\item[$\diamond$] if $n=8$, then the points are not on a 
plane cubic which has one of them as its singular point.
\end{itemize}

There exist (\cite[Chapter 8]{Dolgachev}) 
\begin{itemize}
\item[$\diamond$] a unique isomorphism class of del Pezzo 
surfaces of degree $5$ (resp. $6$, resp. $7$, resp. $9$), 
\item[$\diamond$] two isomorphism classes of del Pezzo 
surfaces of degree $8$,
\item[$\diamond$] and infinitely many isomorphism classes
of del Pezzo surfaces of degree $1$, (resp. $2$, resp.~$3$,
resp. $4$).
\end{itemize}

We will see that automorphism groups of del Pezzo surfaces 
are algebraic subgroups of~$\mathrm{Bir}(\mathbb{P}^2_\mathbb{C})$
and that they are finite if and only if the degree of
the corresponding surface is~$\leq 5$. If $S$ is a del 
Pezzo surface of degree $5$, then $\mathrm{Aut}(S)=\mathfrak{S}_5$.
Automorphism groups of del Pezzo surfaces of degree $\leq 4$ 
are described in \cite[\S 10]{DolgachevIskovskikh}. In 
particular the authors got the following:

\begin{thm}[\cite{DolgachevIskovskikh}]\label{thm:648}
If the automorphism group of a del Pezzo surface
is finite, then it has order at most $648$.
\end{thm}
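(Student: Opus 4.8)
The plan is to reduce to a finite list of cases indexed by the degree $d=K_S^2\in\{1,\ldots,9\}$, to bound $|\mathrm{Aut}(S)|$ in each case, and to show that the extremal value $648$ occurs only for $d=3$ (the Fermat cubic). The first step is to invoke the dichotomy recalled just above the statement: $\mathrm{Aut}(S)$ is finite if and only if $d\leq 5$. Hence only the degrees $d\in\{1,2,3,4,5\}$ have to be examined, and it suffices to check that in each the order is at most $648$. The common engine is the action of $\mathrm{Aut}(S)$ on the lattice $\mathrm{Pic}(S)$, which has rank $10-d$ and carries the intersection form; every automorphism preserves this form and fixes the canonical class $K_S$. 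Restricting to the orthogonal complement $K_S^{\perp}$, which is the root lattice of type $A_4$, $D_5$, $E_6$, $E_7$, $E_8$ for $d=5,4,3,2,1$ respectively, produces a homomorphism
\[
\rho\colon\mathrm{Aut}(S)\to W(R_d),\qquad R_5=A_4,\ R_4=D_5,\ R_3=E_6,\ R_2=E_7,\ R_1=E_8,
\]
whose image is contained in the subgroup of the Weyl group that permutes the classes of the $(-1)$-curves actually present on $S$.

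Next I would prove that $\rho$ is injective for every $d\leq 5$. If $g\in\mathrm{Aut}(S)$ acts trivially on $\mathrm{Pic}(S)$, then it fixes each $(-1)$-curve as well as the polarization $-K_S$; for $d\geq 3$ the anticanonical embedding $S\hookrightarrow\mathbb{P}^d_\mathbb{C}$ then forces $g$ to come from an element of $\mathrm{PGL}(d+1,\mathbb{C})$ fixing enough configuration data to be the identity. For $d=2$ (resp. $d=1$) one argues through the double cover of $\mathbb{P}^2_\mathbb{C}$ branched along a quartic (resp. of a quadric cone branched along a sextic): an element trivial on $\mathrm{Pic}(S)$ descends to an automorphism of the branch curve fixing all its bitangents, hence is trivial downstairs, and the deck involution (Geiser, resp. Bertini) is excluded because it acts as $-\mathrm{id}$ on $K_S^{\perp}$ and is therefore detected by $\rho$. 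Thus $\mathrm{Aut}(S)$ embeds into $W(R_d)$.

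The heart is the case-by-case bounding of the image, where the bare lattice bound is refined by genuine geometry. For $d=5$ one has directly $\mathrm{Aut}(S)\cong\mathfrak{S}_5$, of order $120$. For $d=4$, writing $S$ as a smooth intersection of two quadrics in $\mathbb{P}^4_\mathbb{C}$ exhibits $\mathrm{Aut}(S)$ as an extension of a subgroup $G\subseteq\mathfrak{S}_5$ (the permutations of the five singular members of the pencil of quadrics realized inside $\mathrm{PGL}(2,\mathbb{C})$) by $(\mathbb{Z}/2)^4$; since a five-point subset of $\mathbb{P}^1_\mathbb{C}$ has stabilizer at most the dihedral group of order $10$, one gets $|\mathrm{Aut}(S)|\leq 160$. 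For $d=3$ (the crux), $\mathrm{Aut}(S)\subset\mathrm{PGL}(4,\mathbb{C})$ is the stabilizer of the cubic acting on its $27$ lines, i.e.\ a subgroup of $W(E_6)$; a classification of the subgroups actually realized by a smooth cubic gives the maximum $3^3\rtimes\mathfrak{S}_4$, of order $648$, attained by the Fermat cubic. For $d=2$, $\mathrm{Aut}(S)$ is generated by the Geiser involution and the lift of the automorphism group of the plane quartic branch curve, so $|\mathrm{Aut}(S)|\leq 2\cdot 168=336$ (Klein quartic). For $d=1$, the central Bertini involution together with the automorphisms of the sextic branch configuration gives a maximum of $144$. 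Taking the maximum over all degrees yields $648$.

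The main obstacle is the analysis in the low degrees $d=1,2$: there the ambient Weyl groups $W(E_7)$ and $W(E_8)$ have orders of the order of $2.9\times 10^6$ and $6.9\times 10^8$, so the embedding $\rho$ alone gives a hopelessly weak bound, and one must bring in the explicit double-cover description and control the automorphisms of the branch curve. A secondary difficulty is the verification that $648$ is \emph{exactly} the maximum for $d=3$, which requires enumerating the conjugacy classes of subgroups of $W(E_6)$ that are geometrically realizable on a smooth cubic surface rather than merely abstractly present in the Weyl group.
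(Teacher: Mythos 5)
The paper does not actually prove this statement: it is quoted directly from \cite{DolgachevIskovskikh}, whose proof is precisely the degree-by-degree classification you outline. Your sketch is correct and follows that same route --- the finiteness dichotomy $\deg S\leq 5$, the faithful action of $\mathrm{Aut}(S)$ on $\mathrm{Pic}(S)$ landing in $W(A_4)$, $W(D_5)$, $W(E_6)$, $W(E_7)$, $W(E_8)$, and the geometric bounds $120$, $160$, $648$, $336$, $144$ for $d=5,4,3,2,1$, with the maximum $648$ attained by the Fermat cubic. Two small remarks: injectivity of $\rho$ has a uniform one-line proof that bypasses your case analysis --- for $d\leq 5$ the surface is a blow-up of at least $4$ points of $\mathbb{P}^2_\mathbb{C}$ in general position, and an automorphism trivial on $\mathrm{Pic}(S)$ fixes every exceptional curve, hence descends to an automorphism of $\mathbb{P}^2_\mathbb{C}$ fixing those points, hence is the identity (this is exactly the argument used in the proof of Lemma \ref{lem:te1} above); and you should be aware that the maxima you invoke for $d=3,2,1$ (the classification of automorphism groups of smooth cubics, the bound $|\mathrm{Aut}(Q)|\leq 168$ for smooth plane quartics, and the order-$144$ extremal surface $w^2=z^3+x^6+y^6$ in degree $1$) are not shortcuts but constitute the substantive content of \cite[\S 10]{DolgachevIskovskikh} itself, so your proposal is a faithful outline of, rather than an alternative to, the cited proof.
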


\begin{lem}[\cite{Urech:ellipticsubgroups}]\label{lem:te1}
If the automorphism group of a del Pezzo surface is finite, 
then it can be embedded into $\mathrm{GL}(8,\mathbb{C})$.
\end{lem}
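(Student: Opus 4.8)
The plan is to let $\mathrm{Aut}(S)$ act on the Picard lattice of $S$ and to extract from this action a faithful representation of rank at most $8$. Assume $\mathrm{G}=\mathrm{Aut}(S)$ is finite; by the classification recalled above this forces the degree $d=K_S^2$ to satisfy $d\leq 5$, so that $S$ is the blow-up $\pi\colon S\to\mathbb{P}^2_\mathbb{C}$ of $n=9-d$ points $p_1,\ldots,p_n$ in general position, with $4\leq n\leq 8$. Every automorphism of $S$ acts on $\mathrm{Pic}(S)$ preserving both the intersection form and the canonical class $K_S$, hence it preserves the orthogonal complement $K_S^{\perp}\subset\mathrm{Pic}(S)$, a free abelian group of rank $n=9-d\leq 8$. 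This yields a homomorphism
\[
\rho\colon\mathrm{G}\to\mathrm{GL}(K_S^{\perp})\simeq\mathrm{GL}(9-d,\mathbb{Z})\hookrightarrow\mathrm{GL}(8,\mathbb{C}),
\]
where the last inclusion comes from completing a $\mathbb{Z}$-basis and padding with an identity block. It then remains only to prove that $\rho$ is injective.

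The heart of the argument is that $\mathrm{G}$ already acts faithfully on the whole lattice $\mathrm{Pic}(S)$. Suppose $g\in\mathrm{G}$ acts trivially on $\mathrm{Pic}(S)$; then $g$ fixes the class $\mathbf{e}_i$ of each exceptional divisor $E_i=\pi^{-1}(p_i)$ as well as the class $\ell$ of the pull-back of a line. Since it is classical that on a del Pezzo surface each $(-1)$-class contains a unique irreducible curve, $g$ must send every exceptional curve $E_i$ to itself. Consequently $g$ is compatible with the contraction $\pi$ and descends to an automorphism $\bar g\in\mathrm{Aut}(\mathbb{P}^2_\mathbb{C})=\mathrm{PGL}(3,\mathbb{C})$ with $\bar g(p_i)=p_i$ for all $i$. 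As $n\geq 4$ and the $p_i$ are in general position (in particular no three are collinear), $\bar g$ fixes a projective frame and is therefore the identity; hence $g=\mathrm{id}$.

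Finally I would transfer this faithfulness to the summand $K_S^{\perp}$. If $g\in\ker\rho$, then $g$ fixes $K_S^{\perp}$ pointwise and, being an automorphism, also fixes $K_S$; since $K_S^2=d>0$ one has the orthogonal decomposition $\mathrm{Pic}(S)\otimes\mathbb{Q}=\mathbb{Q}K_S\oplus(K_S^{\perp}\otimes\mathbb{Q})$, so $g$ acts trivially on $\mathrm{Pic}(S)\otimes\mathbb{Q}$. Because $\mathrm{Pic}(S)$ is a free abelian group it embeds into $\mathrm{Pic}(S)\otimes\mathbb{Q}$, whence $g$ acts trivially on $\mathrm{Pic}(S)$ and thus $g=\mathrm{id}$ by the previous paragraph. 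Therefore $\rho$ is injective and $\mathrm{G}$ embeds into $\mathrm{GL}(8,\mathbb{C})$. The main obstacle is precisely the faithfulness of the action on $\mathrm{Pic}(S)$, that is, ruling out a nontrivial kernel; once this is settled, the remaining steps are lattice bookkeeping and the uniform passage from rank $9-d$ to the fixed target $\mathrm{GL}(8,\mathbb{C})$.
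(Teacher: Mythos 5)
Your proof is correct and follows essentially the same route as the paper's: reduce to $\deg S\leq 5$ so that $S$ is the blow-up of $r=9-\deg S\geq 4$ general points, use the uniqueness of the irreducible representative of each exceptional $(-1)$-class to show the action on the Picard lattice is faithful (the automorphism descends to an element of $\mathrm{PGL}(3,\mathbb{C})$ fixing at least four points in general position, hence is trivial), and then pass to $K_S^{\perp}$, of rank at most $8$. The only difference is organizational: the paper first establishes faithfulness on all of $\mathrm{NS}(S)\otimes\mathbb{R}$ and then projects to the orthogonal complement of $K_S$, whereas you define the representation on $K_S^{\perp}$ from the start and explicitly supply the orthogonal-decomposition argument (using $K_S^2>0$) showing the kernel is trivial, a step the paper states without proof.
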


\begin{proof}
Let $S$ be a del Pezzo surface such that $\mathrm{Aut}(S)$
is finite. Then $\deg S\leq 5$ and $S$ is isomorphic to
$\mathrm{Bl}_{p_1,p_2,\ldots,p_r}\mathbb{P}^2_\mathbb{C}$
where $4\leq r=9-\deg S\leq 8$ and $p_1$, $p_2$, $\ldots$,
$p_r$ are general points of $\mathbb{P}^2_\mathbb{C}$. 
Denote by $\mathbf{e}_0$ the pullback of the class of a line 
and by $\mathbf{e}_{p_i}$ the class of the exceptional line 
$E_{p_i}$ corresponding to the point $p_i$. The dimension
of the N\'eron-Severi space $\mathrm{NS}(S)\otimes\mathbb{R}$
is $r+1$ and $\mathbf{e}_0$, $\mathbf{e}_{p_1}$, $\mathbf{e}_{p_2}$, $\ldots$,
$\mathbf{e}_{p_r}$ is a basis of $\mathrm{NS}(S)\otimes\mathbb{R}$. 
Note that the equality $\mathbf{e}_{p_i}\cdot\mathbf{e}_{p_i}=-1$ implies 
that $E_{p_i}$ is the only representative of $\mathbf{e}_{p_i}$ 
on $S$. 

If $\varphi\in\mathrm{Aut}(S)$ acts as the identity on 
$\mathrm{NS}(S)\otimes\mathbb{R}$, then $\varphi$
preserves the exceptional lines $E_{p_i}$ for
$1\leq i\leq r$. Hence $\varphi$ induces an automorphism
of $\mathbb{P}^2_\mathbb{C}$ that fixes $p_1$, $p_2$, 
$\ldots$, $p_r$. As $r\geq 4$ and as the $p_i$ are in 
general position the induced automorphism of 
$\mathbb{P}^2_\mathbb{C}$ is the identity. The action of 
$\mathrm{Aut}(S)$ on $\mathrm{NS}(S)\otimes\mathbb{R}$
is thus faithful and we get a faithful representation
\[
\mathrm{Aut}(S)\to\mathrm{GL}(r+1,\mathbb{C}).
\]
Any element $\varphi$ of $\mathrm{Aut}(S)$ fixes $K_S$;
as a result the one-dimensional subspace 
$\mathbb{R}\cdot K_S$ of $\mathrm{NS}(S)\otimes\mathbb{R}$
is fixed. By projecting the orthogonal complement of 
$K_S$ in $\mathrm{NS}(S)\otimes\mathbb{R}$ we obtain
a faithful representation of $\mathrm{Aut}(S)$ 
into $\mathrm{GL}(r,\mathbb{C})$.
\end{proof}

A del Pezzo surface of degree $6$ is isomorphic to the 
blow up of the complex projective plane in three general
points, {\it i.e.} isomorphic to the surface
\[
S_6=\big\{\big((z_0:z_1:z_2),(a:b:c)\big)\in\mathbb{P}^2_\mathbb{C}\times\mathbb{P}^2_\mathbb{C}\,\vert\, az_0=bz_1=cz_2\big\}.
\]
The automorphism group of $S_6$ is isomorphic to 
$(\mathbb{C}^*)^2\rtimes\big(\mathfrak{S}_3\times\faktor{\mathbb{Z}}{2\mathbb{Z}}\big)$ 
where $\mathfrak{S}_3$ acts by permuting the coordinates of
the two factors simultaneously, $\faktor{\mathbb{Z}}{2\mathbb{Z}}$
exchanges the two factors and $d\in(\mathbb{C}^*)^2$ acts 
as follows
\[
d\cdot\big((z_0:z_1:z_2),(a:b:c)\big)=\big(d(z_0:z_1:z_2):d^{-1}(a:b:c)\big).
\]
In other words $\mathrm{Aut}(S_6)$ is conjugate to 
$\big(\mathfrak{S}_3\times\faktor{\mathbb{Z}}{2\mathbb{Z}}\big)\ltimes \mathrm{D}_2 \subset\mathrm{GL}(2,\mathbb{Z})\ltimes\mathrm{D}_2$.

\begin{lem}[\cite{Urech:ellipticsubgroups}]\label{lem:te2}
The group $\mathrm{Aut}(S_6)$ can be embedded
in $\mathrm{GL}(6,\mathbb{C})$. 
\end{lem}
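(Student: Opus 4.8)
The plan is to work directly from the explicit structure just recalled, namely $\mathrm{Aut}(S_6)\cong\mathrm{D}_2\rtimes\big(\mathfrak{S}_3\times\faktor{\mathbb{Z}}{2\mathbb{Z}}\big)$, and to produce an honest (not merely projective) faithful $6$-dimensional representation. Note first that the method of Lemma \ref{lem:te1} cannot be recycled here: since $\mathrm{D}_2$ is connected it acts trivially on the discrete lattice $\mathrm{NS}(S_6)$, so the representation on $\mathrm{NS}(S_6)\otimes\mathbb{R}$ is far from faithful. Instead I would feed on the characters of the torus. Realise $\mathrm{D}_2$ as the diagonal torus $(\mathbb{C}^*)^3/\mathbb{C}^*$ of $\mathrm{PGL}(3,\mathbb{C})$, so that its character lattice is $X^*(\mathrm{D}_2)=\{(m_0,m_1,m_2)\in\mathbb{Z}^3\,\vert\,m_0+m_1+m_2=0\}$, a copy of the $A_2$-root lattice, and the six characters $\chi_{ij}\colon[d]\mapsto d_i/d_j$ for $i\neq j$ are exactly the six roots. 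The target $\mathrm{GL}(6,\mathbb{C})$, and in particular the number $6$, is dictated by this hexagon of roots.

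First I would set $V=\bigoplus_{i\neq j}\mathbb{C}\,v_{ij}$ and define the action on the three factors separately: $\mathrm{D}_2$ acts diagonally through the characters, $d\cdot v_{ij}=\chi_{ij}(d)\,v_{ij}=(d_i/d_j)v_{ij}$; the group $\mathfrak{S}_3$ permutes the basis by $\sigma\cdot v_{ij}=v_{\sigma(i)\sigma(j)}$; and the generator $\tau$ of $\faktor{\mathbb{Z}}{2\mathbb{Z}}$, which exchanges the two $\mathbb{P}^2_\mathbb{C}$-factors and therefore acts on $\mathrm{D}_2$ by inversion, sends $v_{ij}\mapsto v_{ji}$. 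The first step is to check these assemble into a representation of the semidirect product, i.e. that $w\cdot(d\cdot v)=(wdw^{-1})\cdot(w\cdot v)$ for every $w$ in the finite factor. This is a one-line verification on basis vectors: for $\sigma\in\mathfrak{S}_3$ one uses that conjugation permutes the torus coordinates, giving $\chi_{\sigma(i)\sigma(j)}(\sigma d\sigma^{-1})=d_i/d_j=\chi_{ij}(d)$, and for $\tau$ one uses $\chi_{ji}(d^{-1})=d_i/d_j=\chi_{ij}(d)$; in both cases the two sides agree term by term.

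The crux will be faithfulness of the resulting homomorphism $\rho\colon\mathrm{Aut}(S_6)\to\mathrm{GL}(V)\cong\mathrm{GL}(6,\mathbb{C})$, which I would establish by analysing its kernel $K$. On the torus the representation is faithful because $\chi_{01}=e_0-e_1$ and $\chi_{12}=e_1-e_2$ already form a $\mathbb{Z}$-basis of $X^*(\mathrm{D}_2)$, so no nontrivial $[d]$ lies in every $\ker\chi_{ij}$. On the finite factor the induced action is the permutation action on the six ordered pairs $(i,j)$: an element $(\sigma,1)$ fixes all pairs only if $\sigma=\mathrm{id}$, while an element $(\sigma,\tau)$ sends $(i,j)\mapsto(\sigma(j),\sigma(i))$ and fixing $(0,1)$ and $(0,2)$ simultaneously would force $\sigma(0)$ to equal both $1$ and $2$, which is impossible; hence $\rho$ restricted to $\mathfrak{S}_3\times\faktor{\mathbb{Z}}{2\mathbb{Z}}$ is a faithful permutation representation. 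Finally, to exclude mixing, suppose $\rho(d)\rho(w)=\mathrm{id}$, so the permutation matrix $\rho(w)$ equals the diagonal matrix $\rho(d)^{-1}$: a permutation matrix is diagonal only when it is the identity, forcing $w=1$ and then $d=1$. Thus $K$ is trivial and $\rho$ is the desired embedding.

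The only genuine obstacle I anticipate is bookkeeping rather than mathematics: one must pin down carefully that the identification $\mathrm{D}_2\cong(\mathbb{C}^*)^3/\mathbb{C}^*$ together with the induced actions of $\mathfrak{S}_3$ and $\tau$ really coincide with those coming from the geometric action of $\mathrm{Aut}(S_6)$ on $S_6\subset\mathbb{P}^2_\mathbb{C}\times\mathbb{P}^2_\mathbb{C}$ described above — in particular that exchanging the two factors induces inversion on the torus, hence negation $\chi_{ij}\mapsto\chi_{ji}$ on characters. Once these identifications are fixed, the verifications of compatibility and faithfulness are the routine computations sketched here, and $\rho$ yields the embedding $\mathrm{Aut}(S_6)\hookrightarrow\mathrm{GL}(6,\mathbb{C})$.
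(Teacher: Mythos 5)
Your proof is correct, and the representation you construct is in fact identical to the one the paper uses, reached by a different route. The paper argues geometrically: it takes the rational map $\phi\colon\mathbb{P}^2_\mathbb{C}\dashrightarrow\mathbb{P}^6_\mathbb{C}$, $(z_0:z_1:z_2)\dashrightarrow(z_0^2z_1:z_0^2z_2:z_0z_1^2:z_1^2z_2:z_0z_2^2:z_1z_2^2:z_0z_1z_2)$ — the seven cubic monomials vanishing at the three coordinate points, i.e.\ essentially the anticanonical system of $S_6$ — and observes that the rational action of $\big(\mathfrak{S}_3\times\faktor{\mathbb{Z}}{2\mathbb{Z}}\big)\ltimes\mathrm{D}_2$ on the image extends to a regular action on $\mathbb{P}^6_\mathbb{C}$ preserving the affine chart $z_6\neq 0$, which gives a linear action on that chart and hence the embedding into $\mathrm{GL}(6,\mathbb{C})$. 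But the affine coordinates on that chart are exactly $z_0^2z_1/z_0z_1z_2=z_0/z_2$, $z_0^2z_2/z_0z_1z_2=z_0/z_1$, and so on — the six functions $z_i/z_j$, $i\neq j$ — on which $\mathrm{D}_2$ acts through your characters $\chi_{ij}$, $\mathfrak{S}_3$ permutes the indices, and the factor exchange (which on the birational model $\mathbb{P}^2_\mathbb{C}$ is the Cremona involution $\sigma_2$) sends $z_i/z_j$ to $z_j/z_i$, exactly your $v_{ij}\mapsto v_{ji}$. So your abstract character-theoretic module is the paper's geometric one in disguise. What your route buys is self-containedness: the paper's one-paragraph proof leaves implicit both that the action linearizes on the chart and that the resulting homomorphism is injective, whereas you verify the semidirect-product relations (including that $\tau$ inverts the torus, which is indeed forced by $d\cdot\big((z_0:z_1:z_2),(a:b:c)\big)=\big(d(z_0:z_1:z_2):d^{-1}(a:b:c)\big)$) and carry out the full kernel analysis — faithfulness on the torus because $e_0-e_1$, $e_1-e_2$ generate the character lattice, faithfulness of the finite factor on ordered pairs, and the monomial-matrix argument excluding mixed elements. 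What the geometric route buys is that the $6$-dimensional space appears as the natural one, with the identifications you flag as the "only genuine obstacle" built in from the start.
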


\begin{proof}
Consider the rational map
\begin{align*}
& \phi\colon\mathbb{P}^2_\mathbb{C}\dashrightarrow\mathbb{P}^6_\mathbb{C}, 
&& (z_0:z_1:z_2)\dashrightarrow(z_0^2z_1:z_0^2z_2:z_0z_1^2:z_1^2z_2:z_0z_2^2:z_1z_2^2:z_0z_1z_2).
\end{align*}
The rational action of 
$(\mathfrak{S}_3\times\faktor{\mathbb{Z}}{2\mathbb{Z}})\ltimes \mathrm{D}_2$ 
on $\phi(\mathbb{P}^2_\mathbb{C})$ extends to a regular 
action on~$\mathbb{P}^6_\mathbb{C}$ that preserves the 
affine space given by $z_6\not=0$. This yields an 
embedding of 
$(\mathfrak{S}_3\times\faktor{\mathbb{Z}}{2\mathbb{Z}})\ltimes\mathrm{D}_2$ 
into $\mathrm{GL}(6,\mathbb{C})$.
\end{proof}

\subsection{Hirzebruch surfaces and their automorphism groups}\label{subsec:hirz}

Let us introduce the Hirzebruch surfaces. Consider the surface 
$\mathbb{F}_1$ obtained by blowing up 
$(1:0:0)\in\mathbb{P}^2_\mathbb{C}$; it is a compactification of 
$\mathbb{C}^2$ which has a natural fibration corresponding to the
lines $z_1=$ constant. The divisor at infinity is the union of two 
rational curves which intersect in one point:
\begin{itemize}
\item[$\diamond$] one of them is the strict transform of the line
at infinity in $\mathbb{P}^2_\mathbb{C}$, it is a fiber denoted by
$f_1$;

\item[$\diamond$] the other one, denoted by $s_1$, is the exceptional
divisor which is a section for the fibration.
\end{itemize}

Furthermore $f_1^2=0$ and $s_1^2=-1$.
More generally for any $n$, $\mathbb{F}_n$ is a compactification of
$\mathbb{C}^2$ with a rational fibration and such that the divisor
at infinity is the union of two transversal rational curves: a 
fiber $f_n$ and a section~$s_n$ of self-intersection $-n$. These
surfaces are called \textsl{Hirzebruch surfaces}\index{defi}{Hirzebruch
surfaces}. One can go from~$\mathbb{F}_n$ to $\mathbb{F}_{n+1}$ as 
follows. Consider the surface $\mathbb{F}_n$. Set $p=s_n\cap f_n$. Let 
$\mathfrak{p}_1$ be the blow up of~$p\in\mathbb{F}_n$ and let 
$\mathfrak{p}_2$ be the contraction of the strict transform 
$\widetilde{f_n}$ of $f_n$. One goes
from $\mathbb{F}_n$ to~$\mathbb{F}_{n+1}$ via 
$\mathfrak{p}_2\circ\mathfrak{p}_1^{-1}$. We can
also go from $\mathbb{F}_{n+1}$ to $\mathbb{F}_n$ via 
$\widetilde{\mathfrak{p}_2}\circ\widetilde{\mathfrak{p}_1}^{-1}$ where 
\begin{itemize}
\item[$\diamond$] $\widetilde{\mathfrak{p}_1}$ is the blow-up of a 
point $q$ such that $q\in f_{n+1}$, $q\not\in s_{n+1}$;

\item[$\diamond$] $\widetilde{\mathfrak{p}_2}$ is the contraction of the strict transform 
$\widetilde{f_{n+1}}$ of $f_{n+1}$.
\end{itemize}

We will say that both $\mathfrak{p}_2\circ\mathfrak{p}_1^{-1}$ and
$\widetilde{\mathfrak{p}_2}\circ\widetilde{\mathfrak{p}_1}^{-1}$ 
are \textsl{elementary transformations}\index{defi}{elementary transformation}.

The $n$-th Hirzebruch surface $\mathbb{F}_n=\mathbb{P}\big(\mathcal{O}_{\mathbb{P}^1_\mathbb{C}}\oplus\mathcal{O}_{\mathbb{P}^1_\mathbb{C}}(n)\big)$
is isomorphic to the hypersurface 
\[
\big\{([x_0,x_1],[y_0,y_1,y_2])\in\mathbb{P}^1_\mathbb{C}\times\mathbb{P}^2_\mathbb{C}\,\vert\, x_0^ny_1-x_1^ny_2=0\big\}
\]
of $\mathbb{P}^1_\mathbb{C}\times\mathbb{P}^2_\mathbb{C}$.

Their automorphism groups are 
\begin{eqnarray*}
\mathrm{Aut}(\mathbb{P}^2_\mathbb{C}\times\mathbb{P}^1_\mathbb{C})&=&(\mathrm{PGL}(2,\mathbb{C})\times\mathrm{PGL}(2,\mathbb{C}))\rtimes\langle(z_0,z_1)\mapsto(z_1,z_0)\rangle, \\
\mathrm{Aut}(\mathbb{P}^2_\mathbb{C})&=&\mathrm{PGL}(3,\mathbb{C})
\end{eqnarray*}
and
\begin{eqnarray*}
\mathrm{Aut}(\mathbb{F}_n)&=&\left\{(z_0,z_1)\mapsto\left(\frac{az_0+P(z_1)}{(\gamma z_1+\delta)^n},\frac{\alpha z_1+\beta}{\gamma z_1+\delta}\right)\,\big\vert\,\right.\\
& & \hspace{0.5cm}\left.\left(\begin{array}{cc}\alpha & \beta \\ \gamma & \delta\end{array}\right)\in\mathrm{PGL}(2,\mathbb{C}),\,a\in\mathbb{C}^*,\,P\in\mathbb{C}[z_1],\,\deg P\leq n\right\}.
\end{eqnarray*}
In other words as soon as $n\geq 2$ the group 
$\mathrm{Aut}(\mathbb{F}_n)$ is isomorphic to 
$\mathbb{C}[z_0,z_1]_n\rtimes\faktor{\mathrm{GL}(2,\mathbb{C})}{\mu_n}$
where $\mu_n\subset\mathrm{GL}(2,\mathbb{C})$ is the
subgroup of $n$-torsion elements in the center of
$\mathrm{GL}(2,\mathbb{C})$.

\begin{lem}[\cite{Urech:ellipticsubgroups}]\label{lem:ru1}
If $n\geq 2$ is even, then $\faktor{\mathrm{GL}(2,\mathbb{C})}{\mu_n}$ 
is isomorphic as an algebraic group to 
$\mathrm{PGL}(2,\mathbb{C})\times\mathbb{C}^*$.

If $n$ is odd, then $\faktor{\mathrm{GL}(2,\mathbb{C})}{\mu_n}$ is 
isomorphic as an algebraic group to $\mathrm{PGL}(2,\mathbb{C})$.

In particular all finite subgroups of $\mathrm{Aut}(\mathbb{F}_n)$ 
can be embedded into 
$\mathrm{PGL}(2,\mathbb{C})\times\mathrm{PGL}(2,\mathbb{C})$
as soon as $n\geq 2$.
\end{lem}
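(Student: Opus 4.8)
The plan is to realise each quotient as the image of an explicit surjective morphism of algebraic groups whose kernel is exactly $\mu_n$, and then use that over $\mathbb{C}$ a bijective morphism of algebraic groups is an isomorphism. Throughout, $\mu_n=\{\lambda\,\mathrm{id}\,\vert\,\lambda^n=1\}$ is the group of scalar matrices of order dividing $n$, so that $\det(\lambda\,\mathrm{id})=\lambda^2$; moreover every algebraic character of $\mathrm{GL}(2,\mathbb{C})$ is a power of $\det$ (its commutator subgroup being $\mathrm{SL}(2,\mathbb{C})$), which makes the maps below essentially forced.

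For $n$ even I would take $\Psi\colon A\mapsto([A],(\det A)^{n/2})$ from $\mathrm{GL}(2,\mathbb{C})$ to $\mathrm{PGL}(2,\mathbb{C})\times\mathbb{C}^*$, where $[A]$ denotes the class of $A$. It is a homomorphism of algebraic groups; since $[A]=e$ forces $A=\lambda\,\mathrm{id}$, its kernel is the set of $\lambda\,\mathrm{id}$ with $\lambda^n=1$, i.e. $\mu_n$; and it is surjective, since any $g\in\mathrm{PGL}(2,\mathbb{C})$ lifts to some $A_0$ which one rescales by a solution $\lambda$ of $\lambda^n=t\,(\det A_0)^{-n/2}$ to reach a prescribed $t\in\mathbb{C}^*$. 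This gives $\faktor{\mathrm{GL}(2,\mathbb{C})}{\mu_n}\cong\mathrm{PGL}(2,\mathbb{C})\times\mathbb{C}^*$. For $n$ odd the analogous target cannot be $\mathrm{PGL}(2,\mathbb{C})$: the quotient has dimension $4$ whereas $\dim\mathrm{PGL}(2,\mathbb{C})=3$, so a dimension count forces the target to be $\mathrm{GL}(2,\mathbb{C})$. Here I would use $F\colon A\mapsto(\det A)^{(n-1)/2}A$, which maps into $\mathrm{GL}(2,\mathbb{C})$ because $\det F(A)=(\det A)^n\neq0$; it is a homomorphism by multiplicativity of $\det$, its kernel is the set of $\lambda\,\mathrm{id}$ with $\lambda^{n}=1$, again $\mu_n$, and it is surjective because $F(\lambda A_0)=\lambda^n(\det A_0)^{(n-1)/2}A_0$ runs through all scalar multiples of any fixed $A_0$. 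Hence $\faktor{\mathrm{GL}(2,\mathbb{C})}{\mu_n}\cong\mathrm{GL}(2,\mathbb{C})$.

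For the last assertion I would first reduce to the quotient: in $\mathrm{Aut}(\mathbb{F}_n)=\mathbb{C}[z_0,z_1]_n\rtimes\faktor{\mathrm{GL}(2,\mathbb{C})}{\mu_n}$ the normal factor $\mathbb{C}[z_0,z_1]_n\cong(\mathbb{C}^{n+1},+)$ is torsion free, so a finite subgroup $\mathrm{G}$ meets it trivially and injects, through the projection, into $\faktor{\mathrm{GL}(2,\mathbb{C})}{\mu_n}$. For $n$ even this lands $\mathrm{G}$ in $\mathrm{PGL}(2,\mathbb{C})\times\mathbb{C}^*$; its image in $\mathbb{C}^*$ is a finite cyclic group $\mu_m$, and post-composing the second coordinate with an embedding $\mu_m\hookrightarrow\mathrm{PGL}(2,\mathbb{C})$ (say $\zeta\mapsto[\mathrm{diag}(\zeta,1)]$) yields $\mathrm{G}\hookrightarrow\mathrm{PGL}(2,\mathbb{C})\times\mathrm{PGL}(2,\mathbb{C})$.

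The main obstacle is the odd case of this last assertion. There $\mathrm{G}$ only becomes a finite subgroup of $\mathrm{GL}(2,\mathbb{C})$, and one checks that $\mathrm{G}$ embeds into $\mathrm{PGL}(2,\mathbb{C})\times\mathrm{PGL}(2,\mathbb{C})$ precisely when some homomorphism $q\colon\mathrm{G}\to\mathrm{PGL}(2,\mathbb{C})$ is injective on the central cyclic subgroup $\mathrm{C}=\mathrm{G}\cap\mathbb{C}^*\,\mathrm{id}$, in which case $([\cdot],q)$ is an embedding because $\ker[\cdot]=\mathrm{C}$. This holds for every abelian $\mathrm{G}$ (diagonalise and separate $\mathrm{C}$ by the two coordinate characters) and, via $\det$, whenever $-\mathrm{id}\notin\mathrm{G}$; but it can fail in general. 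For instance the binary icosahedral group $2I\subset\mathrm{SL}(2,\mathbb{C})$ is perfect with centre $\{\pm\mathrm{id}\}$, so every homomorphism $2I\to\mathrm{PGL}(2,\mathbb{C})$ has image $1$ or $A_5$ and kills $-\mathrm{id}$; thus $2I$ admits no embedding into $\mathrm{PGL}(2,\mathbb{C})\times\mathrm{PGL}(2,\mathbb{C})$. Consequently I would record the last assertion for even $n$ for arbitrary finite subgroups, and for odd $n$ only for abelian (indeed linearizable) ones. The single routine point left in each isomorphism step, namely that the bijective algebraic-group morphisms produced are genuine isomorphisms, is automatic in characteristic zero.
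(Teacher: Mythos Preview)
The paper supplies no proof of this lemma; it only records the citation to \cite{Urech:ellipticsubgroups}. Your explicit morphisms $A\mapsto\big([A],(\det A)^{n/2}\big)$ for even $n$ and $A\mapsto(\det A)^{(n-1)/2}A$ for odd $n$ are correct, with the kernels and surjectivity checked cleanly. In carrying this out you have spotted a genuine misprint in the statement: for odd $n$ the quotient $\mathrm{GL}(2,\mathbb{C})/\mu_n$ is $4$-dimensional and is isomorphic to $\mathrm{GL}(2,\mathbb{C})$, not to $\mathrm{PGL}(2,\mathbb{C})$. The paper itself tacitly uses this corrected form later, when in the proof of Theorem~\ref{thm:urechtorsion} it writes that torsion subgroups of $\mathrm{Aut}(\mathbb{F}_n)$ embed into $\mathrm{GL}(2,\mathbb{C})$ or into $\mathrm{PGL}(2,\mathbb{C})\times\mathbb{C}^*$.

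Your objection to the ``in particular'' clause for odd $n$ is likewise valid. The binary icosahedral group $2I\subset\mathrm{SL}(2,\mathbb{C})$ sits, via the isomorphism you built, inside $\mathrm{GL}(2,\mathbb{C})/\mu_n\subset\mathrm{Aut}(\mathbb{F}_n)$; it is perfect with centre $\{\pm\mathrm{id}\}$, so every homomorphism to $\mathrm{PGL}(2,\mathbb{C})$ factors through $A_5$ and kills $-\mathrm{id}$, whence $2I$ admits no embedding into $\mathrm{PGL}(2,\mathbb{C})\times\mathrm{PGL}(2,\mathbb{C})$. Hence the final assertion, as stated, is only correct for even $n$; for odd $n$ one gets an embedding into $\mathrm{GL}(2,\mathbb{C})$, which for the paper's applications (producing a faithful linear representation of bounded dimension) is just as good, though strictly weaker than what is claimed.
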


\subsection{Automorphism groups of exceptional conic bundles}

An \textsl{exceptional conic bundle}\index{defi}{exceptional conic bundle}
$S$ is a conic bundle with singular fiber above $2n$ points in 
$\mathbb{P}^1_\mathbb{C}$ and with two sections $s_1$ and $s_2$
of self-intersection $-n$, where $n\geq 2$ (\emph{see} 
\cite{Blanc:ssgpealg}).

\begin{lem}[\cite{Blanc:ssgpealg}]\label{lem:ru2}
Let $\pi\colon S\to\mathbb{P}^1_\mathbb{C}$ be an exceptional
conic bundle. Then $\mathrm{Aut}(S,\pi)$ is isomorphic to a 
subgroup of 
$\mathrm{PGL}(2,\mathbb{C})\times\mathrm{PGL}(2,\mathbb{C})$.
\end{lem}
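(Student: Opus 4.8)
The plan is to produce two homomorphisms from $G:=\mathrm{Aut}(S,\pi)$ to $\mathrm{PGL}(2,\mathbb{C})$ and to show that their product is injective. The first one, $\rho_1\colon G\to\mathrm{PGL}(2,\mathbb{C})$, is the action on the base of the conic bundle: every $\phi\in G$ permutes the fibers of $\pi$, hence descends to an automorphism of the base $\mathbb{P}^1_\mathbb{C}$, giving $\rho_1\colon G\to\mathrm{Aut}(\mathbb{P}^1_\mathbb{C})=\mathrm{PGL}(2,\mathbb{C})$. Since $n\geq 2$, the two sections $s_1$ and $s_2$ are the only sections of self-intersection $-n$, so they are intrinsically attached to $S$; consequently every $\phi\in G$ either fixes both of them or exchanges them. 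Let $G^+\subset G$ be the subgroup fixing each $s_i$; it has index at most $2$ in $G$.

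The second map is the fibrewise action, and its construction is where the two sections are used. First I would treat $G^+$. Each of the $2n$ singular fibers is a transverse union of two $(-1)$-curves, one meeting $s_1$ and the other meeting $s_2$; let $\Gamma$ be the union of the $2n$ components meeting $s_2$. Because $\Gamma$ consists of \emph{all} components meeting $s_2$, it is invariant under every $\phi\in G^+$ (which fixes $s_2$ and permutes the singular fibers), so the contraction $c\colon S\to\mathbb{F}_n$ of $\Gamma$ is $G^+$-equivariant and realizes $S$ as the blow-up of $\mathbb{F}_n$ at $2n$ points lying on $c(s_2)$, with $c(s_1)$ the $(-n)$-section and $c(s_2)$ the $(+n)$-section. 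Thus $G^+$ embeds into the group of automorphisms of $\mathbb{F}_n$ fixing $c(s_1)$, fixing $c(s_2)$, and preserving these $2n$ points. Using the description of $\mathrm{Aut}(\mathbb{F}_n)$ from \S\ref{subsec:hirz}, fixing both sections forces the additive term $P(z_1)$ to vanish, leaving exactly the automorphisms
\[
(z_0,z_1)\mapsto\left(\frac{a\,z_0}{(\gamma z_1+\delta)^n},\frac{\alpha z_1+\beta}{\gamma z_1+\delta}\right),\qquad a\in\mathbb{C}^*,\ \left(\begin{array}{cc}\alpha&\beta\\\gamma&\delta\end{array}\right)\in\mathrm{PGL}(2,\mathbb{C}).
\]
A direct composition check (using that $(\gamma_2z_1+\delta_2)^n(\gamma_1 M_2(z_1)+\delta_1)^n=(\gamma_3z_1+\delta_3)^n$, where $(\gamma_3,\delta_3)$ is the bottom row of $M_1M_2$) shows that $(a_1,M_1)$ and $(a_2,M_2)$ compose to $(a_1a_2,M_1M_2)$, so this group is a subgroup of $\mathbb{C}^*\times\mathrm{PGL}(2,\mathbb{C})$, which embeds into $\mathrm{PGL}(2,\mathbb{C})\times\mathrm{PGL}(2,\mathbb{C})$ by sending $a$ to the diagonal torus $T$. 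I define $\rho_2$ on $G^+$ to be the resulting $\mathbb{C}^*$-component.

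Then $(\rho_1,\rho_2)\colon G^+\to\mathrm{PGL}(2,\mathbb{C})\times\mathrm{PGL}(2,\mathbb{C})$ is injective: if $\rho_1(\phi)=\mathrm{id}$ and $\rho_2(\phi)=\mathrm{id}$, then $\phi$ fixes the base pointwise, acts trivially on each fibre, and fixes both sections, whence $\phi=\mathrm{id}$. To pass from $G^+$ to $G$ I would use that a section-exchanging element acts on a general fibre $F$ through the two marked points $s_1\cap F$ and $s_2\cap F$ by a transformation swapping them, hence lands in the normaliser $N(T)\subset\mathrm{PGL}(2,\mathbb{C})$ of the torus; combined with $\rho_1$ this extends $(\rho_1,\rho_2)$ to an injective homomorphism $G\to\mathrm{PGL}(2,\mathbb{C})\times N(T)\subset\mathrm{PGL}(2,\mathbb{C})\times\mathrm{PGL}(2,\mathbb{C})$.

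The main obstacle is the well-definedness of $\rho_2$, i.e. pinning down a canonical fibre coordinate: a fibre-scaling factor is \emph{not} invariant under reparametrisation of the fibres once the base is moved, so one genuinely needs the two sections (equivalently the splitting $\mathbb{F}_n=\mathbb{P}(\mathcal{O}_{\mathbb{P}^1_\mathbb{C}}\oplus\mathcal{O}_{\mathbb{P}^1_\mathbb{C}}(n))$ they determine) to make $a$ well defined, and there is a mild $\mu_n$-ambiguity between $a$ and the scaling of $M$, exactly as in Lemma \ref{lem:ru1}, which must be absorbed. The second delicate point is the claim that every $\pi$-preserving automorphism of $S$ preserves $\{s_1,s_2\}$ and hence, after contracting $\Gamma$, descends to a \emph{section-preserving} automorphism of $\mathbb{F}_n$; this is precisely what kills the unwanted additive part $P(z_1)$ of $\mathrm{Aut}(\mathbb{F}_n)$ and is the crux of the embedding.
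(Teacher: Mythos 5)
The paper itself contains no proof of this lemma — it is quoted from \cite{Blanc:ssgpealg} — so your argument has to stand on its own. Its geometric skeleton does: $\rho_1$, the uniqueness for $n\geq 2$ of the two sections of self-intersection $-n$ (hence $G$ preserves $\{s_1,s_2\}$ and $G^+$ has index $\leq 2$), and the $G^+$-equivariant contraction identifying $G^+$ with the full group of automorphisms of $\mathbb{F}_n$ fixing $c(s_1)$ and $c(s_2)$ whose base action preserves $\Delta$, are all correct, and you have located the crux correctly. The genuine gap is the sentence ``so this group is a subgroup of $\mathbb{C}^*\times\mathrm{PGL}(2,\mathbb{C})$'': your own composition law shows that the two-section stabiliser is the set of pairs $(a,M)\in\mathbb{C}^*\times\mathrm{GL}(2,\mathbb{C})$ modulo $(a,M)\sim(\lambda^n a,\lambda M)$, i.e. it is $\mathrm{GL}(2,\mathbb{C})/\mu_n$, the fibrewise scalings being the image of the scalar matrices. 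For $n$ even the character $(a,M)\mapsto a/\det(M)^{n/2}$ is well defined and restricts injectively to the fibrewise $\mathbb{C}^*$, so your $\rho_2$ exists and the even case closes (modulo the only-sketched extension to the section-swapping coset, where conjugation inverts the fibrewise torus). For $n$ odd, however, $\mathrm{GL}(2,\mathbb{C})/\mu_n\simeq\mathrm{GL}(2,\mathbb{C})$: it is a \emph{nonsplit} central extension of $\mathrm{PGL}(2,\mathbb{C})$ by the fibrewise $\mathbb{C}^*$, not a product (note that Lemma \ref{lem:ru1} as printed, asserting $\mathrm{PGL}(2,\mathbb{C})$ in the odd case, cannot be right either, since the fibrewise $\mathbb{C}^*$ is a nontrivial centre). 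Lifts $\widetilde{A},\widetilde{B}$ of any two commuting involutions of $H_\Delta$ satisfy $[\widetilde{A},\widetilde{B}]=\omega$, where $\omega\colon z_0\mapsto -z_0$ is the nontrivial fibrewise involution fixing both sections; $\omega$ is central, so it dies under any homomorphism to an abelian group, and no $\rho_2$ with the property you need can exist. The ``mild $\mu_n$-ambiguity\dots which must be absorbed'' is thus not mild: it is exactly where the proof breaks.

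Moreover the failure is not an artifact of your route. Take $n$ odd, say $n=3$ and $\Delta=\{0,\infty,t,-t,1/t,-1/t\}$, so that $H_\Delta$ contains the Klein four-group generated by $z_1\mapsto -z_1$ and $z_1\mapsto 1/z_1$. With $A=\mathrm{diag}(1,-1)$ and $B=\left(\begin{smallmatrix}0&1\\1&0\end{smallmatrix}\right)$, the automorphisms $\phi\colon(z_0,z_1)\mapsto(-\mathbf{i}z_0,-z_1)$ and $\psi\colon(z_0,z_1)\mapsto(\mathbf{i}z_0/z_1^n,1/z_1)$ of $\mathbb{F}_n$ fix both sections and permute the $2n$ blown-up points, hence lift to $\mathrm{Aut}(S,\pi)$; one checks $\phi^2=\psi^2=[\phi,\psi]=\omega$, so they generate a quaternion group $Q_8\subset\mathrm{Aut}(S,\pi)$. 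But $Q_8$ admits \emph{no} embedding into $\mathrm{PGL}(2,\mathbb{C})\times\mathrm{PGL}(2,\mathbb{C})$ at all: every nontrivial normal subgroup of $Q_8$ contains its centre, so one of the two projections would have to be injective, while the finite subgroups of $\mathrm{PGL}(2,\mathbb{C})$ are cyclic, dihedral, $A_4$, $S_4$, $A_5$, and $Q_8$ is not among them. So for odd $n$ with $H_\Delta$ containing a Klein four-group the asserted conclusion fails outright, and a correct treatment must carry the same even/odd dichotomy as Lemma \ref{lem:ru1} (with $\mathrm{GL}(2,\mathbb{C})$, not $\mathrm{PGL}(2,\mathbb{C})\times\mathrm{PGL}(2,\mathbb{C})$, appearing in the odd case). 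In short: your proof is complete for even $n$, and the point you flagged as a formality is the whole difficulty — indeed an obstruction — for odd $n$.
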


\subsection{$\Big(\faktor{\mathbb{Z}}{2\mathbb{Z}}\Big)^2$-conic bundles}

A conic bundle $\pi\colon S\to\mathbb{P}^1_\mathbb{C}$ is a 
\textsl{$\Big(\faktor{\mathbb{Z}}{2\mathbb{Z}}\Big)^2$-conic bundle}\index{defi}{$\Big(\faktor{\mathbb{Z}}{2\mathbb{Z}}\Big)^2$-conic bundle}
if 
\begin{itemize}
\item[$\diamond$] the group
$\mathrm{Aut}\Big(\faktor{S}{\mathbb{P}^1_\mathbb{C}}\Big)$
is isomorphic to $\Big(\faktor{\mathbb{Z}}{2\mathbb{Z}}\Big)^2$,

\item[$\diamond$] each of the three involutions of 
$\mathrm{Aut}\Big(\faktor{S}{\mathbb{P}^1_\mathbb{C}}\Big)$ fixes
pointwise an irreducible curve $C$ such that
$\pi\colon C\to\mathbb{P}^1_\mathbb{C}$ is a 
double covering that is ramified over a positive
even number of points. 
\end{itemize}

The automorphism group $\mathrm{Aut}(S,\pi)$ of a
$\Big(\faktor{\mathbb{Z}}{2\mathbb{Z}}\Big)^2$-conic bundle is finite; 
its structure is given by the following exact sequence
(\cite{Blanc:ssgpealg})
\[
1 \longrightarrow V\longrightarrow\mathrm{Aut}(S,\pi)\longrightarrow H_V\longrightarrow 1
\]
where $V\simeq\Big(\faktor{\mathbb{Z}}{2\mathbb{Z}}\Big)^2$ 
and $H_V$ is a finite subgroup of 
$\mathrm{Aut}(\mathbb{P}^1_\mathbb{C})$.

Note that we also have the following property:

\begin{lem}[\cite{Urech:ellipticsubgroups}]\label{lem:ru3}
Let $\mathrm{G}\subset\mathrm{Bir}(\mathbb{P}^2_\mathbb{C})$
be an infinite torsion group. Assume that for any finitely
generated subgroup $\Gamma\subset\mathrm{G}$ there exists 
a $\Big(\faktor{\mathbb{Z}}{2\mathbb{Z}}\Big)^2$-conic bundle 
$S\to\mathbb{P}^1_\mathbb{C}$ such that $\Gamma$ is 
conjugate to a subgroup of $\mathrm{Aut}(S,\pi)$. Then 
any finitely generated subgroup of $\mathrm{G}$ is 
isomorphic to a subgroup of
$\mathrm{PGL}(2,\mathbb{C})\times\mathrm{PGL}(2,\mathbb{C})$.
\end{lem}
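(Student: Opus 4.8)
The plan is to reduce the statement to a single finite group and then to embed that group into $\mathrm{PGL}(2,\mathbb{C})\times\mathrm{PGL}(2,\mathbb{C})$ by producing two homomorphisms into $\mathrm{PGL}(2,\mathbb{C})$ whose kernels meet trivially. First I would carry out the reduction. Let $\Gamma\subset\mathrm{G}$ be finitely generated. By hypothesis there are a $\big(\faktor{\mathbb{Z}}{2\mathbb{Z}}\big)^2$-conic bundle $\pi\colon S\to\mathbb{P}^1_\mathbb{C}$ and an element $\theta\in\mathrm{Bir}(\mathbb{P}^2_\mathbb{C})$ with $\theta\Gamma\theta^{-1}\subset\mathrm{Aut}(S,\pi)$. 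Since $\mathrm{Aut}(S,\pi)$ is finite, $\Gamma$ is finite and conjugation by $\theta$ is an isomorphism of $\Gamma$ onto a subgroup of $A:=\mathrm{Aut}(S,\pi)$. As the target $\mathrm{PGL}(2,\mathbb{C})\times\mathrm{PGL}(2,\mathbb{C})$ is the same for all $\Gamma$, it suffices to produce, for every such conic bundle, an embedding $A\hookrightarrow\mathrm{PGL}(2,\mathbb{C})\times\mathrm{PGL}(2,\mathbb{C})$.

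For the embedding I would use the two coordinate projections. Any injection $A\hookrightarrow\mathrm{PGL}(2,\mathbb{C})\times\mathrm{PGL}(2,\mathbb{C})$ is the same datum as two homomorphisms $\alpha,\beta\colon A\to\mathrm{PGL}(2,\mathbb{C})$ with $\ker\alpha\cap\ker\beta=\{\mathrm{id}\}$, and then $(\alpha,\beta)$ is the sought embedding. The first homomorphism is immediate: the exact sequence $1\to V\to A\to H_V\to 1$ furnishes the action $\alpha\colon A\to H_V\subset\mathrm{PGL}(2,\mathbb{C})$ on the base $\mathbb{P}^1_\mathbb{C}$, with $\ker\alpha=V\simeq\big(\faktor{\mathbb{Z}}{2\mathbb{Z}}\big)^2$. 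It remains to build a homomorphism $\beta\colon A\to\mathrm{PGL}(2,\mathbb{C})$ whose restriction to $V$ is injective, for then $\ker\alpha\cap\ker\beta=V\cap\ker\beta=\{\mathrm{id}\}$.

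The natural candidate for $\beta$ is the fibrewise action. The generic fibre of $\pi$ is a conic with a rational point over $K=\mathbb{C}(t)$, hence is isomorphic to $\mathbb{P}^1_K$, and $A$ acts faithfully on it; the subgroup $V$ acts $K$-linearly, giving $V\hookrightarrow\mathrm{PGL}(2,K)$. The hard part will be to convert this $\mathrm{PGL}(2,K)$-action into an honest $\mathrm{PGL}(2,\mathbb{C})$-representation of all of $A$ that remains faithful on $V$. Here one must use the geometry rather than the abstract extension: by the definition of a $\big(\faktor{\mathbb{Z}}{2\mathbb{Z}}\big)^2$-conic bundle each of the three involutions of $V$ fixes pointwise an irreducible bisection, so $V$ sits inside $\mathrm{PGL}(2,K)$ as a twisted (dihedral) Klein four-group, and the configuration of these three bisections together with the way $H_V$ permutes them is exactly the data of a copy of $V$ normalised by $A$. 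I would extract $\beta$ from the action of $A$ on this configuration and its associated double covers, and the main obstacle is precisely the descent step: checking that this action specialises to a $\mathbb{C}$-linear action on a $\mathbb{P}^1$ and is still injective on $V$.

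Finally I would emphasise why no purely group-theoretic shortcut is available, which also pinpoints where the geometric hypothesis is indispensable. The extension $1\to V\to A\to H_V\to 1$ by itself does not force $A$ into $\mathrm{PGL}(2,\mathbb{C})\times\mathrm{PGL}(2,\mathbb{C})$: for instance a central extension of $H_V=\mathfrak{A}_5$ by $V$ of the shape $\mathrm{SL}(2,\mathbb{F}_5)\times\big(\faktor{\mathbb{Z}}{2\mathbb{Z}}\big)$ admits no such embedding, since both projections would kill the centre of $\mathrm{SL}(2,\mathbb{F}_5)$, leaving a common nontrivial element in $\ker\alpha\cap\ker\beta$. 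Thus the construction of $\beta$ must genuinely use that $A=\mathrm{Aut}(S,\pi)$ arises from a $\big(\faktor{\mathbb{Z}}{2\mathbb{Z}}\big)^2$-conic bundle, which is what excludes such pathological extensions. Once $\beta$ is in hand, $(\alpha,\beta)\colon A\to\mathrm{PGL}(2,\mathbb{C})\times\mathrm{PGL}(2,\mathbb{C})$ is injective, and pulling it back along conjugation by $\theta$ yields the required embedding of $\Gamma$, completing the proof.
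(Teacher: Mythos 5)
The survey states this lemma without proof (it is quoted from \cite{Urech:ellipticsubgroups}), so the comparison must be with the argument of that reference; measured against it, your proposal has a genuine gap at its crux. You reduce the lemma to the strictly stronger assertion that $\mathrm{Aut}(S,\pi)$ embeds into $\mathrm{PGL}(2,\mathbb{C})\times\mathrm{PGL}(2,\mathbb{C})$ for \emph{every} $\big(\faktor{\mathbb{Z}}{2\mathbb{Z}}\big)^2$-conic bundle, and the homomorphism $\beta$ that is supposed to realise this is never constructed: you yourself label the descent from $\mathrm{PGL}(2,\mathbb{C}(t))$ to $\mathrm{PGL}(2,\mathbb{C})$ ``the hard part'' and ``the main obstacle'' and stop there. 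Notice that your reduction discards the hypothesis that $\mathrm{G}$ is an \emph{infinite} torsion group, which never appears in your argument; that hypothesis is exactly what the cited proof consumes. Since every finitely generated subgroup of $\mathrm{G}$ is finite (being conjugate into the finite group $\mathrm{Aut}(S,\pi)$) and $\mathrm{G}$ is infinite, a given $\Gamma$ sits inside finitely generated subgroups $\Gamma\subset\Gamma_1\subset\Gamma_2\subset\ldots$ of strictly increasing, hence arbitrarily large, order; the image of $\Gamma_n$ in $H_V\subset\mathrm{PGL}(2,\mathbb{C})$ has order at least $\vert\Gamma_n\vert/4$, and finite subgroups of $\mathrm{PGL}(2,\mathbb{C})$ of order $>60$ are cyclic or dihedral. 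Only for such base actions --- which preserve a point or a pair of points of $\mathbb{P}^1_\mathbb{C}$, hence an invariant fibre or pair of fibres --- is the embedding into the product established, and the conclusion for $\Gamma$ follows because $\Gamma$ embeds in such a $\Gamma_n$. Your route, which must also handle $H_V\in\{\mathfrak{A}_4,\,\mathfrak{S}_4,\,\mathfrak{A}_5\}$, attacks precisely the general statement that the hypothesis is designed to avoid.

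Two further points show the gap is essential rather than an omitted computation. First, the specialisation you envisage has no starting point in general: the three involutions of $V$ act on the generic fibre $\mathbb{P}^1_{\mathbb{C}(t)}$ with fixed points lying on the irreducible bisections, hence irrational over $\mathbb{C}(t)$, so $V$ is a twisted Klein four group in $\mathrm{PGL}(2,\mathbb{C}(t))$ that is not conjugate into $\mathrm{PGL}(2,\mathbb{C})$; specialising at a point $t_0\in\mathbb{P}^1_\mathbb{C}$ only yields an action of the stabiliser of $t_0$ in $\mathrm{Aut}(S,\pi)$, and when $H_V$ fixes no point of $\mathbb{P}^1_\mathbb{C}$ there is no equivariant fibre at which to specialise. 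Second, your closing observation that abstract extension data does not suffice is correct but understated: it already fails for \emph{cyclic} quotients. The modular group of order $16$, $\mathrm{M}=\langle r,s\,\vert\, r^8=s^2=1,\,srs=r^5\rangle$, is an extension of $\faktor{\mathbb{Z}}{4\mathbb{Z}}$ by the normal Klein four group $\langle r^4,s\rangle$, with the quotient permuting two of the three involutions exactly as $H_V$ permutes the fixed curves; yet $\mathrm{M}$ does not embed into $\mathrm{PGL}(2,\mathbb{C})\times\mathrm{PGL}(2,\mathbb{C})$, since $\mathrm{M}$ is neither cyclic nor dihedral (so both kernels would be non-trivial normal subgroups of a $2$-group and would each contain the unique central involution $r^4$). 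Hence even after the cyclic-or-dihedral reduction the geometry of the invariant fibres must be invoked to exclude such extensions, and your proposal contains no mechanism for doing so.
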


\subsection{Blanc results}

First Blanc proved:

\begin{thm}[\cite{Blanc:ssgpealg}]
Every algebraic subgroup of $\mathrm{Bir}(\mathbb{P}^2_\mathbb{C})$ 
is contained in a maximal algebraic subgroup of 
$\mathrm{Bir}(\mathbb{P}^2_\mathbb{C})$. 

The maximal algebraic subgroups of the plane Cremona
group are the 
conjugate of the groups $\mathrm{G}=\mathrm{Aut}(S,\pi)$ where 
$S$ is a rational surface and $\pi\colon S\to Y$ is a morphism 
such that
\begin{enumerate}
\item $Y$ is a point, $\mathrm{G}=\mathrm{Aut}(S)$ and $S$ is 
one of the following:
\begin{itemize}
\item[$\diamond$] $\mathbb{P}^2_\mathbb{C}$, 
$\mathbb{P}^1_\mathbb{C}\times\mathbb{P}^1_\mathbb{C}$; 
\item[$\diamond$] a del Pezzo surface of degree $1$, $4$, $5$ or 
$6$; 
\item[$\diamond$] a del Pezzo surface of degree $3$ (resp. $2$) 
such that the pair $(\mathrm{Aut}(S),S)$ is minimal and such
that the fixed points 
of the action of $\mathrm{Aut}(S)$ on~$S$ are lying on exceptional 
curves; 
\end{itemize}

\item $Y\simeq\mathbb{P}^1_\mathbb{C}$ and $\pi$ is one of the 
following conic bundles:
\begin{itemize}
\item[$\diamond$] the fibration by lines of the Hirzebruch surface 
$\mathbb{F}_n$ for $n\geq 2$; 
\item[$\diamond$] an exceptional conic bundle with at least $4$ 
singular fibers; 
\item[$\diamond$] a $\Big(\faktor{\mathbb{Z}}{2\mathbb{Z}}\Big)^2$-conic bundle such 
that $S$ is not a del Pezzo surface. 
\end{itemize}
\end{enumerate}
\end{thm}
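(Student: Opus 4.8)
The plan is to reduce the classification to minimal $\mathrm{G}$-surfaces via regularization and an equivariant minimal model program, and then to determine maximality by analysing equivariant birational links. First I would invoke the regularization theorem of Weil, recalled in the fifth section of this chapter, to linearize the action: since any algebraic subgroup $\mathrm{G}\subset\mathrm{Bir}(\mathbb{P}^2_\mathbb{C})$ has bounded degree (Lemma \ref{lem:BlancFurter}) and is affine, Weil's theorem lets me replace $\mathrm{G}$ by a conjugate acting \emph{biregularly} on some smooth projective rational surface $S$. Thus every algebraic subgroup is conjugate to a subgroup of $\mathrm{Aut}(S)$, and the problem becomes one about pairs $(\mathrm{G},S)$.

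Next I would run a $\mathrm{G}$-equivariant minimal model program: contracting $\mathrm{G}$-orbits of $(-1)$-curves yields a minimal pair $(\mathrm{G},S')$ in the sense defined above, with $\mathrm{G}\subseteq\mathrm{Aut}(S')$. By the structure theory of minimal rational $\mathrm{G}$-surfaces (Manin--Iskovskikh), exactly two cases occur: either the $\mathrm{G}$-invariant part of $\mathrm{Pic}(S')$ has rank $1$ and $S'$ is a del Pezzo surface, or it has rank $2$ and $S'$ admits a $\mathrm{G}$-equivariant conic bundle $\pi\colon S'\to\mathbb{P}^1_\mathbb{C}$. In both cases the relevant group, $\mathrm{Aut}(S')$ or $\mathrm{Aut}(S',\pi)$, is a \emph{linear algebraic} group containing $\mathrm{G}$; passing to the full group only shrinks the invariant Picard lattice, which keeps its rank $1$ (resp. $2$), so the pair remains minimal for the larger group. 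The Enriques classification of maximal connected algebraic subgroups and its modern proof in \cite{Umemura:maxconnalg1} then handle the connected part, singling out $\mathbb{P}^2_\mathbb{C}$, $\mathbb{P}^1_\mathbb{C}\times\mathbb{P}^1_\mathbb{C}$ and the Hirzebruch surfaces $\mathbb{F}_n$ ($n\geq 2$); the surfaces of del Pezzo degree $\leq 5$, whose automorphism groups are finite (bounded in order by Theorem \ref{thm:648} and embeddable in linear groups by Lemmas \ref{lem:te1}--\ref{lem:ru3}), are exactly those invisible to Enriques but relevant here.

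The core of the proof is the maximality step, which decides which of these distinguished groups cannot be properly enlarged. The key observation is that an inclusion $\mathrm{Aut}(S'_1,\pi_1)\subsetneq\mathrm{Aut}(S'_2,\pi_2)$ of algebraic subgroups forces a birational map $S'_1\dashrightarrow S'_2$ equivariant for the smaller group, that is, an elementary link of the Sarkisov program. I would therefore enumerate all equivariant links emanating from each distinguished surface and check whether the automorphism group genuinely grows. This accounts for every clause of the statement: $\mathbb{F}_1$ and del Pezzo surfaces of degree $7$ admit an equivariant contraction to smaller Picard rank, so they are never maximal and do not appear; del Pezzo surfaces of degree $2$ and $3$ require both minimality of the pair and that the fixed points lie on exceptional curves, precisely to forbid the equivariant links---blow-ups at fixed points followed by contractions---that would otherwise enlarge the group; and the conic bundle thresholds (at least four singular fibers in the exceptional case, and $S$ not del Pezzo in the $\big(\faktor{\mathbb{Z}}{2\mathbb{Z}}\big)^2$ case) are imposed because below them the bundle admits an equivariant elementary transformation into a del Pezzo or Hirzebruch surface with strictly larger automorphism group.

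Granting this analysis, both assertions follow. Every algebraic subgroup lies in a distinguished group by the first two steps, and every distinguished group is either maximal or, by the explicit link analysis, properly contained in one of the listed maximal groups; hence any $\mathrm{G}$ is contained in a maximal algebraic subgroup, and the maximal subgroups are exactly those enumerated. I would emphasize that one cannot shortcut the existence statement by a dimension bound, since $\dim\mathrm{Aut}(\mathbb{F}_n)=n+5$ is unbounded; the required finiteness is instead supplied by the constructive enlargements produced in the link analysis. The hard part will be precisely this link analysis: matching each excluded configuration to an explicit equivariant Sarkisov link and verifying the delicate numerology that separates degrees $2$ and $3$ and fixes the conic bundle thresholds, which rests on the detailed automorphism computations recalled in this section.
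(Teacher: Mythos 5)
Your proposal is correct and follows essentially the route the paper attributes to Blanc: regularize the algebraic subgroup via Weil's theorem, reduce by the $\mathrm{G}$-equivariant minimal model program to $\mathrm{G}$-Mori fibrations (del Pezzo surfaces with invariant Picard rank $1$, or conic bundles with rank $2$), and settle both containment and maximality by describing each listed group as a minimal $\mathrm{G}$-fibration that is birationally superrigid, the superrigidity being checked through the analysis of equivariant Sarkisov links. The exclusions you isolate ($\mathbb{F}_1$ and del Pezzo surfaces of degree $7$ via equivariant contractions, the fixed-point conditions in degrees $2$ and $3$, and the conic-bundle thresholds) are precisely the points the paper's sketch assigns to that link analysis, so no further comparison is needed.
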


Moreover, in all these cases, the pair $(\mathrm{G},S)$ is minimal 
and the fibration $\pi\colon S\to~Y$ is a $\mathrm{G}$-Mori fibration 
which is birationally \textsl{superrigid}\index{defi}{superrigid}. 
This means that two such groups $\mathrm{G}=\mathrm{Aut}(S,\pi)$ 
and $\mathrm{G}'=\mathrm{Aut}(S',\pi')$ are conjugate if and only 
if there exists an isomorphism $S\to S'$ which sends fibers of $\pi$ 
onto fibers of $\pi'$.

Then Blanc described more precisely the structure of these 
minimal algebraic subgroups of $\mathrm{Bir}(\mathbb{P}^2_\mathbb{C})$.
Furthermore he provides algebraic varieties that paramete\-rize
the conjugacy classes of these groups:

\begin{thm}[\cite{Blanc:ssgpealg}]\label{thm:blanc11cases}
The maximal algebraic subgroups of $\mathrm{Bir}(\mathbb{P}^2_\mathbb{C})$ belong 
up to conjugacy to one of the eleven following families:
\begin{enumerate}
\item[(1)] $\mathrm{Aut}(\mathbb{P}^2_\mathbb{C})\simeq\mathrm{PGL}(3,\mathbb{C})$;

\item[(2)] $\mathrm{Aut}(\mathbb{P}^1_\mathbb{C}\times\mathbb{P}^1_\mathbb{C})\simeq\big(\mathrm{PGL}(2,\mathbb{C})\big)^2\rtimes\faktor{\mathbb{Z}}{2\mathbb{Z}}$;

\item[(3)] $\mathrm{Aut}(S)\simeq(\mathbb{C}^*)^2\rtimes\Big(\mathfrak{S}_3\times\faktor{\mathbb{Z}}{2\mathbb{Z}}\Big)$ 
where $S$ is the del Pezzo surface of degree $6$;

\item[(4)] $\mathrm{Aut}(\mathbb{F}_n)\simeq\mathbb{C}^{n+1}\rtimes\faktor{\mathrm{GL}(2,\mathbb{C})}{\mu_n}$
where $\mu_n$ is the $n$-th torsion of the center of $\mathrm{GL}(2,\mathbb{C})$
with $n\geq 2$;

\item[(5)] $\mathrm{Aut}(S,\pi)$ where $(S,\pi)$ is an exceptional conic bundle with 
singular fibers over a set $\Delta\subset\mathbb{P}^1_\mathbb{C}$ of $2n$ distinct
points, $n\geq 2$; the projection of $\mathrm{Aut}(S,\pi)$ onto
$\mathrm{PGL}(2,\mathbb{C})$ gives an exact sequence
\[
1 \longrightarrow \mathbb{C}^*\rtimes\faktor{\mathbb{Z}}{2\mathbb{Z}}\longrightarrow\mathrm{Aut}(S,\pi)\longrightarrow H_\Delta\longrightarrow 1
\]
where $H_\Delta$ is the finite subgroup of $\mathrm{PGL}(2,\mathbb{C})$ formed 
by elements that preserve $\Delta$;

\item[(6)] $\mathrm{Aut}(S)\simeq\mathfrak{S}_5$ where $S$ is the del Pezzo 
surface of degree $5$;

\item[(7)] $\mathrm{Aut}(S)\simeq\Big(\faktor{\mathbb{Z}}{2\mathbb{Z}}\Big)^4\rtimes H_S$ 
where $S$ is a del Pezzo surface of degree $4$ obtained by blowing up $5$ 
points in $\mathbb{P}^2_\mathbb{C}$ and $H_S$ is the group of automorphisms
of $\mathbb{P}^2_\mathbb{C}$ that preserve this set of points;

\item[(8)] $\mathrm{Aut}(S)$ where $S$ is a del Pezzo surface of degree $3$ of the 
following form 
\begin{itemize}
\item[$\diamond$] the triple cover of $\mathbb{P}^2_\mathbb{C}$ ramified 
along a smooth cubic $\Gamma$. If $S$ is the Fermat cubic, then 
$\mathrm{Aut}(S)=\Big(\faktor{\mathbb{Z}}{3\mathbb{Z}}\Big)^3\rtimes\mathfrak{S}_4$,
otherwise we have an exact sequence 
\[
1 \longrightarrow \faktor{\mathbb{Z}}{3\mathbb{Z}}\longrightarrow\mathrm{Aut}(S)\longrightarrow H_\Gamma\longrightarrow 1
\]
where $H_\Gamma$ is the group of automorphisms of $\mathbb{P}^2_\mathbb{C}$
that preserve $\Gamma$, $H_\Gamma$ contains a subgroup isomorphic to 
$\Big(\faktor{\mathbb{Z}}{3\mathbb{Z}}\Big)^2$;

\item[$\diamond$] the Clebsch cubic surface  
whose automorphism group is isomorphic to $\mathfrak{S}_5$;
 
\item[$\diamond$] a cubic surface given by $z_0^3+z_0(z_1^2+z_2^2+z_3^2)+\lambda z_1z_2z_3=0$ 
for some $\lambda\in\mathbb{C}$, $9\lambda^3\not=8\lambda$, 
$8\lambda^3\not=-1$ and whose automorphism group is isomorphic to 
$\mathfrak{S}_4$;
\end{itemize}
\item[(9)] $\mathrm{Aut}(S)\simeq\faktor{\mathbb{Z}}{2\mathbb{Z}}\rtimes H_S$ where $S$
is a del Pezzo surface of degree $2$ which is a double cover of a smooth
quartic $Q_S\subset\mathbb{P}^2_\mathbb{C}$ such that $H_S=\mathrm{Aut}(Q_S)$
acts without fixed point on the quartic without its bitangent points;

\item[(10)] $\mathrm{Aut}(S)$ where $S$ is a del Pezzo surface of degree $1$, 
double cover of a quadratic cone $Q$, ramified along a curve $\Gamma_S$
of degree $6$, complete intersection of $Q$ with a cubic surface of 
$\mathbb{P}^3_\mathbb{C}$. We have the following exact sequence
\[
1 \longrightarrow \faktor{\mathbb{Z}}{2\mathbb{Z}}\longrightarrow\mathrm{Aut}(S)\longrightarrow H_S\longrightarrow 1
\]
where $H_S$ denotes the automorphism group of $Q$ preserving the curve 
$\Gamma_S$;

\item[(11)] $\mathrm{Aut}(S,\pi)$ where $(S,\pi)$ is a 
$\Big(\faktor{\mathbb{Z}}{2\mathbb{Z}}\Big)^2$-conic bundle such that $S$ is not 
a del Pezzo surface. The projection of $\mathrm{Aut}(S,\pi)$ onto 
$\mathrm{PGL}(2,\mathbb{C})$ gives the following exact sequence 
\[
1 \longrightarrow V\longrightarrow\mathrm{Aut}(S,\pi)\longrightarrow H_V\longrightarrow 1
\]
where $V\simeq\Big(\faktor{\mathbb{Z}}{2\mathbb{Z}}\Big)^2$ contains three 
involutions fixing an hyperelliptic
curve ramified over points of $p_1$, $p_2$, $p_3\subset\mathbb{P}^1_\mathbb{C}$
and $H_V\subset\mathrm{Aut}(\mathbb{P}^1_\mathbb{C})$ is the finite 
subgroup preserving the set $\big\{p_1,\,p_2,\,p_3\big\}$.
\end{enumerate}
\end{thm}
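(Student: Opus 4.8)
The plan is to reduce the classification to a list of $\mathrm{G}$-Mori fiber spaces by running the equivariant minimal model program, and then to sort the resulting minimal pairs by maximality. First I would regularize: since $\mathrm{G}$ is an algebraic subgroup, by Corollary \ref{cor:agree} it is a closed subgroup of bounded degree, and the regularization theorem of Weil (\S\ref{sec:WeilKraft}) lets us conjugate $\mathrm{G}$ into $\mathrm{Aut}(S)$ for some smooth projective rational surface $S$, producing a pair $(\mathrm{G},S)$ in the sense defined above. Applying the $\mathrm{G}$-equivariant MMP, I would contract $\mathrm{G}$-orbits of pairwise disjoint $(-1)$-curves until the pair is minimal. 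A minimal pair is a $\mathrm{G}$-Mori fibration $\pi\colon S\to Y$, and a relative Picard rank count splits the analysis into exactly two cases: either $Y$ is a point and $S$ is a del Pezzo surface with $\mathrm{Pic}(S)^{\mathrm{G}}$ of rank $1$, or $Y\simeq\mathbb{P}^1_\mathbb{C}$ and $\pi$ is a conic bundle with $\mathrm{Pic}(S)^{\mathrm{G}}$ of rank $2$.

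Second, for each case I would enumerate the surfaces and compute the groups $\mathrm{G}=\mathrm{Aut}(S,\pi)$. In the del Pezzo case I would invoke the classification recalled above, namely $1\le K_S^2\le 9$ together with the known descriptions of the automorphism groups: for $\mathbb{P}^2_\mathbb{C}$, $\mathbb{P}^1_\mathbb{C}\times\mathbb{P}^1_\mathbb{C}$ and the degree $6$ surface $S_6$ these are computed directly, for degree $5$ one has $\mathrm{Aut}(S)\simeq\mathfrak{S}_5$, and for degrees $\le 4$ one uses the detailed tables of Dolgachev and Iskovskikh (Theorem \ref{thm:648} and Lemmas \ref{lem:te1}, \ref{lem:te2}). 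In the conic bundle case the relevant total spaces are the Hirzebruch surfaces $\mathbb{F}_n$ with $n\ge 2$, the exceptional conic bundles, and the $\big(\faktor{\mathbb{Z}}{2\mathbb{Z}}\big)^2$-conic bundles, whose automorphism groups are governed by Lemmas \ref{lem:ru1}, \ref{lem:ru2} and \ref{lem:ru3} and the exact sequences recorded in the preceding subsections.

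The main obstacle is the passage from minimal to maximal, together with the conjugacy bookkeeping. Minimality of $(\mathrm{G},S)$ is necessary but not sufficient for $\mathrm{G}$ to be a maximal algebraic subgroup: one must exclude the existence of a $\mathrm{G}$-equivariant birational map onto another model carrying a strictly larger algebraic group. This is exactly where birational superrigidity of the Mori fibration intervenes — I would show that every $\mathrm{G}$-equivariant birational map between two of the listed models is induced by an isomorphism of fibrations, so that the conjugacy class of $\mathrm{G}$ in $\mathrm{Bir}(\mathbb{P}^2_\mathbb{C})$ is pinned down by the isomorphism class of $(S,\pi)$. Running this analysis forces the extra hypotheses in the statement: for del Pezzo surfaces of degrees $3$ and $2$ the fixed points of $\mathrm{Aut}(S)$ must lie on exceptional curves, since otherwise a further equivariant blow-up followed by a contraction embeds $\mathrm{G}$ into a larger group; and the $\big(\faktor{\mathbb{Z}}{2\mathbb{Z}}\big)^2$-conic bundles that happen to be del Pezzo must be discarded as already counted. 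The technical heart is the orbit analysis of exceptional curves and of the singular fibers, controlling the elementary transformations linking conic bundles and the Sarkisov links between del Pezzo models.

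Finally, to obtain the moduli statement I would, family by family, exhibit the algebraic variety parametrizing the isomorphism classes of $(S,\pi)$: the set $\Delta$ of $2n$ points for exceptional conic bundles, the ramification cubic $\Gamma$ in degree $3$, the quartic $Q_S$ in degree $2$, the sextic branch curve $\Gamma_S$ in degree $1$, and the triple $\{p_1,p_2,p_3\}$ for the last family. By the superrigidity established in the previous step these isomorphism classes coincide with conjugacy classes in $\mathrm{Bir}(\mathbb{P}^2_\mathbb{C})$, and the group structure in each of the eleven families is read off from the exact sequences already displayed, completing the list.
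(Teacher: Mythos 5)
Your proposal is correct and follows essentially the same route as the paper's (that is, Blanc's) argument: regularization of the algebraic subgroup to an automorphism group of a rational surface, the $\mathrm{G}$-equivariant Mori program yielding the dichotomy between del Pezzo surfaces with $\mathrm{rk}\,\mathrm{Pic}(S)^{\mathrm{G}}=1$ and conic bundles with $\mathrm{rk}\,\mathrm{Pic}(S)^{\mathrm{G}}=2$, enumeration of the automorphism groups via the Dolgachev--Iskovskikh tables, and birational superrigidity of each $\mathrm{G}$-Mori fibration to settle both maximality (including the extra hypotheses in degrees $2$ and $3$ and the exclusion of del Pezzo $\Big(\faktor{\mathbb{Z}}{2\mathbb{Z}}\Big)^2$-conic bundles) and the parametrization of conjugacy classes by isomorphism classes of fibered pairs. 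Nothing essential is missing relative to the paper's sketch.
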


The eleven families are disjoint and the conjugacy classes in any family are
parameterized respectively by
\begin{itemize}
\item[$(1)$], $(2)$, $(3)$, $(6)$ the point;

\item[$(4)$] there is only one conjugacy class for any integer $n\geq 2$;

\item[$(5)$] for any integer $n\geq 2$ the set of $2n$ points of 
$\mathbb{P}^1_\mathbb{C}$ modulo the action of 
$\mathrm{Aut}(\mathbb{P}^1_\mathbb{C})=\mathrm{PGL}(2,\mathbb{C})$;

\item[$(7)$] the isomorphism classes of del Pezzo surfaces of degree $4$;

\item[$(8)$] the isomorphism classes of cubic surfaces given respectively 
\begin{itemize}
\item[$\diamond$] by the isomorphism classes of elliptic curves;

\item[$\diamond$] for the Clebsch surface there is only one isomorphism class;

\item[$\diamond$] by the classes of 
$\big\{\lambda\in\mathbb{C}\,\vert\, 9\lambda^3\not=8\lambda,\,8\lambda^3\not=-1\big\}$
modulo the equivalence $\lambda\sim -\lambda$.
\end{itemize}

\item[$(9)$] the isomorphism classes of smoooth quartics of 
$\mathbb{P}^2_\mathbb{C}$ having automorphism groups acting without 
fixed points on the quartic without its bitangent points;

\item[$(10)$] the isomorphism classes of del Pezzo surfaces of degree $1$;

\item[$(11)$] the triplets of ramification 
$\big\{p_1,\,p_2,\,p_3\big\}\subset\mathbb{P}^1_\mathbb{C}$ that determine
$\big(\faktor{\mathbb{Z}}{2\mathbb{Z}}\big)^2$ conic bundles on surfaces that are 
not del Pezzo ones, modulo the action of $\mathbb{P}^1_\mathbb{C}$.
\end{itemize}

 The approach of Blanc used the modern viewpoint of Mori's theory and 
 Sarkisov's 
 program, aiming a generalization in higher dimension:
\begin{itemize}
\item[$\diamond$] he described each maximal 
  algebraic subgroup of the classification as a $\mathrm{G}$-Mori fibration;
\item[$\diamond$] he 
then proved that any algebraic subgroup is contained in one of the groups of 
the classification;
\item[$\diamond$] he also showed that any group of the classification is a 
minimal $\mathrm{G}$-fibration that is furthermore superrigid.
\end{itemize}

Lemmas \ref{lem:te1}, \ref{lem:te2} and Theorem \ref{thm:blanc11cases} 
allow to prove the following statement:

\begin{lem}[\cite{Urech:ellipticsubgroups}]\label{lem:ru4}
Let $\mathrm{G}$ be a subgroup of the plane Cremona group.
Assume that $\mathrm{G}$ is conjugate to an automorphism group of 
a del Pezzo surface~$S$. Then $\mathrm{G}$ can be embedded into 
$\mathrm{GL}(8,\mathbb{C})$.
\end{lem}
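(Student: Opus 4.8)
The plan is to run through the classification of del Pezzo surfaces by their degree $d=K_S^2\in\{1,\ldots,9\}$, treating finite and infinite automorphism groups separately. Since $\mathrm{G}$ is conjugate in $\mathrm{Bir}(\mathbb{P}^2_\mathbb{C})$ to $\mathrm{Aut}(S)$, the two groups are isomorphic as abstract groups, so it suffices to produce a faithful linear representation $\mathrm{Aut}(S)\hookrightarrow\mathrm{GL}(8,\mathbb{C})$. Recall that $\mathrm{Aut}(S)$ is finite precisely when $d\leq 5$. In that range the statement is exactly Lemma \ref{lem:te1}, which already yields an embedding into $\mathrm{GL}(8,\mathbb{C})$, so nothing more is needed. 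Thus the whole problem reduces to the infinite cases $d\in\{6,7,8,9\}$.

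For $d=6$ I would invoke Lemma \ref{lem:te2}: $\mathrm{Aut}(S_6)$ embeds into $\mathrm{GL}(6,\mathbb{C})$, and composing with the block inclusion $A\mapsto\mathrm{diag}(A,I_2)$ gives $\mathrm{GL}(6,\mathbb{C})\subset\mathrm{GL}(8,\mathbb{C})$. For $d=9$, where $S\simeq\mathbb{P}^2_\mathbb{C}$ and $\mathrm{Aut}(S)=\mathrm{PGL}(3,\mathbb{C})$ by Theorem \ref{thm:blanc11cases}(1), the key observation is that the adjoint representation of $\mathrm{PGL}(3,\mathbb{C})$ on its Lie algebra $\mathfrak{sl}(3,\mathbb{C})$, which has dimension $8$, is faithful; this gives $\mathrm{PGL}(3,\mathbb{C})\hookrightarrow\mathrm{GL}(8,\mathbb{C})$ directly. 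For $d=7$, and for the non-minimal surface $S\simeq\mathbb{F}_1$ of degree $8$, the surface carries a distinguished configuration of $(-1)$-curves (canonically characterised, hence $\mathrm{Aut}(S)$-invariant) whose contraction is $\mathbb{P}^2_\mathbb{C}$; every automorphism therefore descends to $\mathrm{PGL}(3,\mathbb{C})$, and we are reduced to the case $d=9$.

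The remaining and most delicate case is $d=8$ with $S\simeq\mathbb{P}^1_\mathbb{C}\times\mathbb{P}^1_\mathbb{C}$, where by Theorem \ref{thm:blanc11cases}(2) one has $\mathrm{Aut}(S)\simeq\big(\mathrm{PGL}(2,\mathbb{C})\big)^2\rtimes\faktor{\mathbb{Z}}{2\mathbb{Z}}$. Here I would use the exceptional isomorphism $\mathrm{PGL}(2,\mathbb{C})\cong\mathrm{SO}(3,\mathbb{C})$, i.e. the adjoint action on $\mathfrak{sl}(2,\mathbb{C})\cong\mathbb{C}^3$, to embed each factor faithfully into $\mathrm{GL}(3,\mathbb{C})$, realize the two factors block-diagonally on $\mathbb{C}^3\oplus\mathbb{C}^3$ via $(g_1,g_2)\mapsto\mathrm{diag}(\rho(g_1),\rho(g_2))$, and represent the swap involution $\tau$ by the $6\times 6$ permutation matrix $P$ exchanging the two summands. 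Since $P\,\mathrm{diag}(A,B)\,P^{-1}=\mathrm{diag}(B,A)$ and $P^2=I_6$, this respects the semidirect-product structure and defines a faithful homomorphism into $\mathrm{GL}(6,\mathbb{C})\subset\mathrm{GL}(8,\mathbb{C})$. The main obstacle is exactly these infinite automorphism groups: once one sees that $\mathrm{PGL}(3,\mathbb{C})$ and $\mathrm{PGL}(2,\mathbb{C})$ admit faithful representations of dimensions $8$ and $3$ through their adjoint actions, and that the degree $6$, $7$ and $\mathbb{F}_1$ cases reduce either to Lemma \ref{lem:te2} or to $\mathrm{PGL}(3,\mathbb{C})$, all del Pezzo automorphism groups fit below the bound $8$, which completes the proof.
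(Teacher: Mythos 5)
Your proof is correct and follows essentially the same route as the paper's: a case analysis on $\deg S$, with Lemma \ref{lem:te1} for $\deg S\leq 5$, Lemma \ref{lem:te2} for degree $6$, a reduction of degrees $7$ and $8$ either to $\mathrm{Aut}(\mathbb{P}^1_\mathbb{C}\times\mathbb{P}^1_\mathbb{C})\subset\mathrm{GL}(6,\mathbb{C})$ or to $\mathrm{PGL}(3,\mathbb{C})$, and $\mathrm{PGL}(3,\mathbb{C})\subset\mathrm{GL}(8,\mathbb{C})$ for degree $9$. The only cosmetic differences are that you contract canonically determined $(-1)$-curve configurations to handle degree $7$ and $\mathbb{F}_1$ directly (the paper instead conjugates the degree-$7$ group into $\mathrm{Aut}(\mathbb{P}^1_\mathbb{C}\times\mathbb{P}^1_\mathbb{C})$ and disposes of $\mathbb{F}_1$ by citing its non-maximality via Theorem \ref{thm:blanc11cases}), and that you spell out explicitly, via adjoint representations and the block-diagonal-plus-swap construction, the inclusions the paper merely asserts.
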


\begin{proof}
If $\deg S\leq 5$, then $\mathrm{Aut}(S)$ is finite and Lemma 
\ref{lem:te1} allows to conclude.

If $\deg S=6$, then $\mathrm{Aut}(S)$ can be embedded into 
$\mathrm{GL}(8,\mathbb{C})$ (Lemma \ref{lem:te2}).

If $\deg S=7$, then $\mathrm{Aut}(S)$ is conjugate to a 
subgroup of 
\[
\mathrm{Aut}(\mathbb{P}^1_\mathbb{C}\times\mathbb{P}^1_\mathbb{C})\simeq\big(\mathrm{PGL}(2,\mathbb{C})\times\mathrm{PGL}(2,\mathbb{C})\big)\rtimes\faktor{\mathbb{Z}}{2\mathbb{Z}}\subset \mathrm{GL}(6,\mathbb{C}).
\]

If $\deg S=8$, then $S$ is isomorphic either to 
$\mathbb{F}_0=\mathbb{P}^1_\mathbb{C}\times\mathbb{P}^1_\mathbb{C}$
or to $\mathbb{F}_1$. On the one hand 
\[
\mathrm{Aut}(\mathbb{P}^1_\mathbb{C}\times\mathbb{P}^1_\mathbb{C})\simeq\big(\mathrm{PGL}(2,\mathbb{C})\times\mathrm{PGL}(2,\mathbb{C})\big)\rtimes\faktor{\mathbb{Z}}{2\mathbb{Z}}\subset \mathrm{GL}(6,\mathbb{C}).
\]
and on the other hand $\mathrm{Aut}(\mathbb{F}_1)$ is 
not a maximal algebraic subgroup of 
$\mathrm{Bir}(\mathbb{P}^2_\mathbb{C})$ (Theorem~\ref{thm:blanc11cases}).

If $\deg S=9$, then $S\simeq\mathbb{P}^2_\mathbb{C}$ and 
$\mathrm{Aut}(S)=\mathrm{PGL}(3,\mathbb{C})\subset\mathrm{GL}(8,\mathbb{C})$.
\end{proof}

%%%%%%%%%%%%%%%%%%%%%%%%%%%%%%%%%%%%%%%%%%%%%%%%%%%%%%%%%%%%%%%%%%%%%%%%%%%%%%%%%%%%%%%%%%%%%%%%%%%%%%%%%%%%%%%%%%%
%%%%%%%%%%%%%%%%%%%%%%%%%%%%%%%%%%%%%%%%%%%%%%%%%%%%%%%%%%%%%%%%%%%%%%%%%%%%%%%%%%%%%%%%%%%%%%%%%%%%%%%%%%%%%%%%%%%
% section
%%%%%%%%%%%%%%%%%%%%%%%%%%%%%%%%%%%%%%%%%%%%%%%%%%%%%%%%%%%%%%%%%%%%%%%%%%%%%%%%%%%%%%%%%%%%%%%%%%%%%%%%%%%%%%%%%%%
%%%%%%%%%%%%%%%%%%%%%%%%%%%%%%%%%%%%%%%%%%%%%%%%%%%%%%%%%%%%%%%%%%%%%%%%%%%%%%%%%%%%%%%%%%%%%%%%%%%%%%%%%%%%%%%%%%%

\section{Closed normal subgroups of the Cremona group}\label{section:closednormalsubgroups}

As we have seen we can endow the Cremona group with a natural 
Zariski topology induced by morphisms 
$V\to\mathrm{Bir}(\mathbb{P}^n_\mathbb{C})$ where $V$ is 
an algebraic variety.

In \cite{Shafarevich} Mumford discussed properties of 
$\mathrm{Bir}(\mathbb{P}^2_\mathbb{C})$ and in particular asked 
if it is a simple group with respect to the Zariski topology, 
{\it i.e.} if every closed normal subgroup of 
$\mathrm{Bir}(\mathbb{P}^2_\mathbb{C})$ is trivial. Blanc 
and Zimmermann provided 
an affirmative answer to Mumford question:

\begin{thm}[\cite{BlancZimmermann}]\label{thm:BlancZimmermann}
Let $n\geq 1$ be an integer. The group $\mathrm{Bir}(\mathbb{P}^n_\mathbb{C})$
is topolo\-gically simple when endowed with the Zariski topology. 
\end{thm}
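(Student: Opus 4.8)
The case $n=1$ is immediate: $\mathrm{Bir}(\mathbb{P}^1_\mathbb{C})=\mathrm{PGL}(2,\mathbb{C})$ is simple as an abstract group (over an algebraically closed field $\mathrm{PGL}$ coincides with $\mathrm{PSL}$), so \emph{a fortiori} topologically simple. Assume $n\geq 2$ and let $N\subset\mathrm{Bir}(\mathbb{P}^n_\mathbb{C})$ be a Zariski-closed normal subgroup with $N\neq\{\mathrm{id}\}$; the goal is to prove $N=\mathrm{Bir}(\mathbb{P}^n_\mathbb{C})$. The plan splits into two parts: first showing that $N$ contains $\mathrm{PGL}(n+1,\mathbb{C})$, and then showing that the smallest closed normal subgroup containing $\mathrm{PGL}(n+1,\mathbb{C})$ is the whole group.

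For the first part I would extract a nontrivial linear element from $N$ by a deformation that linearizes a given $f\in N\smallsetminus\{\mathrm{id}\}$. Pick a general point $p$ at which $f$ is a local isomorphism, and choose homogeneous coordinates so that $p$ and $f(p)$ are both the origin of the chart $z_0=1$. Conjugating by the one-parameter family of homotheties $h_t=\mathrm{diag}(1,t,\ldots,t)\in\mathrm{PGL}(n+1,\mathbb{C})$ gives $g_t=h_t^{-1}\circ f\circ h_t$, which in the affine chart reads $x\mapsto t^{-1}f(tx)$ and therefore tends, as $t\to 0$, to the differential $\mathrm{D}_pf$, a genuinely linear map. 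The key point is that $t\mapsto g_t$ extends to a morphism $\mathbb{A}^1_\mathbb{C}\to\mathrm{Bir}(\mathbb{P}^n_\mathbb{C})$ whose value at $0$ has degree $1$: this is exactly the degree-drop degeneration underlying Theorem \ref{thm:bf}. Since $N$ is normal, $g_t\in N$ for every $t\neq 0$; since $N$ is closed, the limit $g_0=\mathrm{D}_pf$ lies in $N$ as well. For a general $p$ the differential $\mathrm{D}_pf$ is not a scalar matrix (otherwise $f$ would be the identity), so $N\cap\mathrm{PGL}(n+1,\mathbb{C})$ is a nontrivial normal subgroup of the abstractly simple group $\mathrm{PGL}(n+1,\mathbb{C})$, whence $\mathrm{PGL}(n+1,\mathbb{C})\subseteq N$.

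It remains to propagate $\mathrm{PGL}(n+1,\mathbb{C})\subseteq N$ to all of $\mathrm{Bir}(\mathbb{P}^n_\mathbb{C})$, and this is where I expect the main difficulty. For $n=2$ the theorem of Noether and Castelnuovo reduces the problem to showing $\sigma_2\in N$: since $N$ already contains every conjugate $c\circ\ell\circ c^{-1}$ with $\ell\in\mathrm{PGL}(3,\mathbb{C})$ and $c$ a fixed quadratic map, I would let $\ell$ run through a family $(\ell_s)$ chosen so that the base points of $c\circ\ell_s\circ c^{-1}$ collide at $s=0$; the corresponding degree drop (again the mechanism of Theorem \ref{thm:bf}) produces, in the closed set $N$, a quadratic involution conjugate to $\sigma_2$, and conjugating by suitable elements of $\mathrm{PGL}(3,\mathbb{C})\subseteq N$ yields $\sigma_2$ itself. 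For $n\geq 3$ no analogue of Noether and Castelnuovo is available, so this last step must instead be carried out by more elaborate linkage and deformation arguments connecting an arbitrary birational map to $\mathrm{PGL}(n+1,\mathbb{C})$ through families lying in $N$. The genuine obstacles are therefore twofold: verifying that the conjugation families above really are morphisms with the asserted degenerate limits (so that closedness of $N$ can be invoked), and, for $n\geq 3$, supplying the deformations that replace the missing generating set.
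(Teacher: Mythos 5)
Your $n=1$ case and the overall two-part plan (get $\mathrm{PGL}(n+1,\mathbb{C})$ into $N$, then propagate) match the paper, and your homothety degeneration is exactly the paper's Proposition \ref{pro:chemin}. But the first half as written contains an error: you cannot ``choose homogeneous coordinates so that $p$ and $f(p)$ are both the origin'' when $f(p)\neq p$ --- no single coordinate change makes $f$ fix $p$. The paper allows \emph{two} automorphisms, replacing $\phi$ by $\psi=\alpha_1\circ\phi\circ\alpha_2$ which fixes $p$, is a local isomorphism there and has $D_p\psi$ nontrivial; the price is that $\psi$ no longer lies in $N$, so your conclusion $g_0=D_pf\in N$ does not follow as stated. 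To stay inside $N$ you would need, say, a commutator of $f$ with an automorphism fixing both $p$ and $f(p)$; this repair is the content of the results the survey cites as [Prop.~3.3, Lemma~3.4] of Blanc--Zimmermann for the inclusion $\mathrm{Aut}(\mathbb{P}^n_\mathbb{C})\subset N$.

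The genuine gap is your second half. For $n\geq 3$ you supply no argument at all, and even your $n=2$ sketch is fragile: colliding base points of the family $c\circ\ell_s\circ c^{-1}$ only yields a limit in the parameter space $W_d$, which may be a degenerate expression of the form $P\,\mathrm{id}$ rather than a birational map (the very degeneration behind Theorem \ref{thm:bf}), and nothing guarantees the family extends to a morphism $\mathbb{A}^1_\mathbb{C}\to\mathrm{Bir}(\mathbb{P}^2_\mathbb{C})$ whose value at $s=0$ is a quadratic involution. The paper closes this step without Noether--Castelnuovo, uniformly for all $n\geq 2$, by running your own deformation a second time in reverse: with $\psi=\alpha_1\circ\phi\circ\alpha_2$ as above, apply Proposition \ref{pro:chemin} to $\psi^{-1}$ to obtain $\upsilon_1$ with $\upsilon_1(t)=\nu(t)^{-1}\circ\psi^{-1}\circ\nu(t)$ for $t\neq 0$ and $\upsilon_1(0)\in\mathrm{Aut}(\mathbb{P}^n_\mathbb{C})$, and set $\upsilon_2(t)=\alpha_1^{-1}\circ\psi\circ\upsilon_1(t)\circ\upsilon_1(0)^{-1}\circ\alpha_2^{-1}$. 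For $t\neq 0$ this rewrites as $\alpha_1^{-1}\circ\big(\psi\circ\nu(t)^{-1}\circ\psi^{-1}\big)\circ\nu(t)\circ\upsilon_1(0)^{-1}\circ\alpha_2^{-1}$, a product of automorphisms and one conjugate of an automorphism, hence an element of the normal subgroup generated by $\mathrm{Aut}(\mathbb{P}^n_\mathbb{C})$, while $\upsilon_2(0)=\phi$. So that normal subgroup is dense, and any nontrivial closed normal subgroup, containing $\mathrm{Aut}(\mathbb{P}^n_\mathbb{C})$ by the first part, must be all of $\mathrm{Bir}(\mathbb{P}^n_\mathbb{C})$. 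This is precisely the propagation argument your proposal is missing: it comes from the deformation you already set up, not from any generating set.
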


\begin{rem}
This statement was proved in 
dimension~$2$ by Blanc (\cite{Blanc:simplicite}) using the classical
Noether and Castelnuovo Theorem.
\end{rem}

Let us mention that
$\mathrm{Bir}(\mathbb{P}^n_\mathbb{C})$, 
$n\geq 2$, is not simple as an abstract group
(for $n=2$ \emph{see} \cite{CantatLamy} or \S
\ref{CantatLamy:passimple}, for $n\geq 3$
\emph{see} \cite{BlancLamyZimmermann}).
Furthermore there is an analogue of Theorem
\ref{thm:BlancZimmermann}
when $\mathrm{Bir}(\mathbb{P}^n_\mathbb{C})$ is 
endowed with the Euclidean topology:

\begin{thm}[\cite{BlancZimmermann}]\label{thm:BZ2}
Let $n\geq 2$ be an integer. The topological group 
$\mathrm{Bir}(\mathbb{P}^n_\mathbb{C})$ is topologically simple when 
endowed with the Euclidean topology.
\end{thm}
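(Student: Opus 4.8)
The plan is to run the argument behind Theorem~\ref{thm:BlancZimmermann} inside the Euclidean category, checking that each topological ingredient used there has a Euclidean counterpart, rather than trying to deduce the Euclidean statement formally from the Zariski one. Let $N\subseteq\mathrm{Bir}(\mathbb{P}^n_\mathbb{C})$ be normal, closed for the Euclidean topology, with $N\neq\{\mathrm{id}\}$; I want $N=\mathrm{Bir}(\mathbb{P}^n_\mathbb{C})$. First I would record the functoriality: since $\mathrm{Bir}(\mathbb{P}^n_\mathbb{C})$ with the Euclidean topology is a topological group, inversion (Corollary~\ref{cor:bfcor1}) and composition (Corollary~\ref{cor:bfcor2}) are continuous, so conjugation by a fixed element is a Euclidean homeomorphism and $N$ is stable under it. Because the Euclidean topology is finer than the Zariski one, the Zariski closure $\overline{N}^{\,\mathrm{Zar}}$ is a Zariski-closed, nontrivial subgroup, and it is normal since Zariski conjugation is a homeomorphism (Remark~\ref{rem:homeo}) fixing $N$ setwise; Theorem~\ref{thm:BlancZimmermann} then forces $\overline{N}^{\,\mathrm{Zar}}=\mathrm{Bir}(\mathbb{P}^n_\mathbb{C})$. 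Thus $N$ is automatically Zariski-dense, and the whole problem is to upgrade this density to a genuine equality for the strictly finer Euclidean topology.

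The upgrade is the heart of the matter, and I would obtain it by importing the \emph{deformation mechanism} underlying Theorem~\ref{thm:BlancZimmermann}, not merely its conclusion. Fix $f\in N\smallsetminus\{\mathrm{id}\}$. For a morphism $\upsilon\colon V\to\mathrm{Bir}(\mathbb{P}^n_\mathbb{C})$ out of an irreducible variety $V$, the map $t\mapsto\upsilon(t)\,f\,\upsilon(t)^{-1}f^{-1}$ is valued in $N$ by normality and is Euclidean-continuous, since such families factor through the maps $\pi_d\colon H_d\to\mathrm{Bir}_{\leq d}(\mathbb{P}^n_\mathbb{C})$ carrying the Euclidean topology of the projective spaces $W_d$ (Lemma~\ref{lem:BlancFurter}). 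Taking $V$ a one-parameter subgroup of $\mathrm{PGL}(n+1,\mathbb{C})$ through the identity yields Euclidean paths in $N$ issuing from $\mathrm{id}$, and their limits remain in $N$ since $N$ is Euclidean-closed. Running the degeneration argument of Blanc and Zimmermann — in which a family of maps of a fixed degree $d$ is allowed to specialise and drop degree, in the manner of the family $\phi_{a,b,c}$ discussed after Theorem~\ref{thm:bf} — I would extract from the conjugates of $f$ a nontrivial element of a connected algebraic subgroup contained in $N$, together with its full conjugacy class.

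The legitimacy of this transfer rests on three Euclidean facts, all available in the excerpt: each $\mathrm{Bir}_{\leq d}(\mathbb{P}^n_\mathbb{C})$ is Euclidean-closed with closed embeddings between consecutive levels, so a bounded-degree family cannot escape $N$ in the limit; the map $\pi_d$ is proper (Lemma~\ref{lem:procom}), so the lifted families and their degree behaviour are controlled exactly as in the Zariski proof; and the parameter spaces that occur (copies of $\mathbb{A}^1_\mathbb{C}$, of $\mathbb{A}^1_\mathbb{C}\smallsetminus\{0\}$, and of $\mathrm{PGL}(2,\mathbb{C})$) are Euclidean-\emph{connected}, which is what makes a Euclidean-closed $N$ containing a path through $\mathrm{id}$ absorb the entire connected family. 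Once $N$ contains a nontrivial connected algebraic subgroup and all its $\mathrm{Bir}(\mathbb{P}^n_\mathbb{C})$-conjugates, the same normal-generation step as in the Zariski case makes $N$ Euclidean-dense, whence $N=\mathrm{Bir}(\mathbb{P}^n_\mathbb{C})$.

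The main obstacle is precisely the step I isolated: one cannot conclude formally, because a Euclidean-closed, Zariski-dense, normal subgroup is a priori proper. The delicate work is to check that the degenerations producing Zariski density are all realised by Euclidean-continuous families over \emph{connected} parameter spaces, so that they in fact occur inside the Euclidean closure of $N$; the properness of $\pi_d$ and the closedness of the $\mathrm{Bir}_{\leq d}(\mathbb{P}^n_\mathbb{C})$ are what keep the limits from leaking out of $N$, and verifying this degree by degree is where the real effort lies.
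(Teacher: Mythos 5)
Your plan is correct and is essentially the paper's own route: the paper proves Theorem~\ref{thm:BZ2} by running the proof of Theorem~\ref{thm:BlancZimmermann} verbatim in the Euclidean category — Proposition~\ref{pro:chemin} produces, from any nontrivial element of a closed normal subgroup, a nontrivial automorphism as the value at $t=0$ of a morphism $\mathbb{A}^1_\mathbb{C}\to\mathrm{Bir}(\mathbb{P}^n_\mathbb{C})$ whose other values lie in the subgroup, and one concludes via the density of the normal subgroup generated by $\mathrm{Aut}(\mathbb{P}^n_\mathbb{C})$ — using exactly the three Euclidean ingredients you isolate (continuity of morphisms for the Euclidean topology via $\pi_d$, closedness of the $\mathrm{Bir}_{\leq d}(\mathbb{P}^n_\mathbb{C})$ together with properness of $\pi_d$, and connectedness of the parameter spaces). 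Only cosmetic points differ: your Zariski-closure detour is harmless but unnecessary, and the relevant deformation is the conjugation-by-scalings of Proposition~\ref{pro:chemin} rather than degree-dropping families of the type $\phi_{a,b,c}$.
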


The proof of Theorem \ref{thm:BZ2} is similar
to the proof of Theorem \ref{thm:BlancZimmermann}, 
so we will just focus on this last one.

\begin{proof}[Sketch of the proof of Theorem \ref{thm:BlancZimmermann}]

Let us first prove the statement for $n=1$. 

\begin{lem}
Let $n\geq 2$ be an integer. The group 
$\mathrm{PSL}(n,\mathbb{C})$ is dense in 
$\mathrm{PGL}(n,\mathbb{C})$ with respect
to the Zariski topology. 

Moreover every non-trivial normal subgroup of
$\mathrm{PGL}(n,\mathbb{C})$ contains 
$\mathrm{PSL}(n,\mathbb{C})$. In particular
$\mathrm{PGL}(n,\mathbb{C})$ does not contain 
any non-trivial normal strict subgroups closed 
for the Zariski topology.
\end{lem}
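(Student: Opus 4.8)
The plan is to dispatch the three assertions in order, using a dimension count for density, the classical abstract simplicity of $\mathrm{PSL}(n,\mathbb{C})$ for the statement about normal subgroups, and then combining the two — via closedness — to settle the final claim. The three parts interlock: density is what makes the centralizer argument in the second part work, and closedness is what upgrades \emph{contains the dense subgroup} into \emph{equals the whole group}.

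\emph{Density.} First I would recall that $\mathrm{PGL}(n,\mathbb{C})=\mathrm{GL}(n,\mathbb{C})/\mathbb{C}^*$ is a connected, hence irreducible, linear algebraic group of dimension $n^2-1$, and that $\mathrm{PSL}(n,\mathbb{C})$ is the image of the irreducible variety $\mathrm{SL}(n,\mathbb{C})$ under the quotient morphism $\mathrm{GL}(n,\mathbb{C})\to\mathrm{PGL}(n,\mathbb{C})$, whose restriction to $\mathrm{SL}(n,\mathbb{C})$ has finite kernel $\mu_n$. Thus $\mathrm{PSL}(n,\mathbb{C})$ is irreducible of dimension $n^2-1$, and its Zariski closure $\overline{\mathrm{PSL}(n,\mathbb{C})}$ is a closed subgroup of the same dimension as the irreducible ambient group, so it equals $\mathrm{PGL}(n,\mathbb{C})$. (In fact, since every scalar of $\mathbb{C}^*$ is an $n$-th power, each class has a representative of determinant $1$ and one even has $\mathrm{PSL}(n,\mathbb{C})=\mathrm{PGL}(n,\mathbb{C})$; but only the displayed dimension argument is needed.)

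\emph{Normal subgroups contain $\mathrm{PSL}(n,\mathbb{C})$.} Let $N$ be a non-trivial normal subgroup of $\mathrm{PGL}(n,\mathbb{C})$. Since $\mathrm{PSL}(n,\mathbb{C})$ is the kernel of $\mathrm{PGL}(n,\mathbb{C})\to\mathbb{C}^*/(\mathbb{C}^*)^n$ it is normal, so $N\cap\mathrm{PSL}(n,\mathbb{C})$ is a normal subgroup of $\mathrm{PSL}(n,\mathbb{C})$. Here I would invoke the classical theorem that $\mathrm{PSL}(n,K)$ is simple as an abstract group for $n\geq 2$ and $|K|\geq 4$ (so in particular for $K=\mathbb{C}$); the cleanest self-contained route is Iwasawa's criterion applied to the primitive action of $\mathrm{SL}(n,\mathbb{C})$ on $\mathbb{P}^{n-1}_\mathbb{C}$, using that a point-stabiliser contains the abelian normal subgroup of transvections and that transvections generate $\mathrm{SL}(n,\mathbb{C})$. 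Hence $N\cap\mathrm{PSL}(n,\mathbb{C})$ is trivial or all of $\mathrm{PSL}(n,\mathbb{C})$. The trivial case is excluded: if $N\cap\mathrm{PSL}(n,\mathbb{C})=\{1\}$, then both factors being normal gives $[N,\mathrm{PSL}(n,\mathbb{C})]\subseteq N\cap\mathrm{PSL}(n,\mathbb{C})=\{1\}$, so $N$ centralises $\mathrm{PSL}(n,\mathbb{C})$; as $\mathrm{PSL}(n,\mathbb{C})$ is dense and centralising is a Zariski-closed condition, this centraliser coincides with the centraliser of $\mathrm{PGL}(n,\mathbb{C})$, i.e.\ its trivial centre, forcing $N=\{1\}$, a contradiction. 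Therefore $\mathrm{PSL}(n,\mathbb{C})\subseteq N$.

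\emph{Conclusion and main obstacle.} Finally, if $N$ is a non-trivial normal subgroup that is closed for the Zariski topology, the second step gives $N\supseteq\mathrm{PSL}(n,\mathbb{C})$, and the first step gives $\overline{\mathrm{PSL}(n,\mathbb{C})}=\mathrm{PGL}(n,\mathbb{C})$; since $N$ is closed, $N=\overline{N}\supseteq\overline{\mathrm{PSL}(n,\mathbb{C})}=\mathrm{PGL}(n,\mathbb{C})$, so $N=\mathrm{PGL}(n,\mathbb{C})$. Thus $\mathrm{PGL}(n,\mathbb{C})$ has no non-trivial proper Zariski-closed normal subgroup. The only genuinely non-formal ingredient is the abstract simplicity of $\mathrm{PSL}(n,\mathbb{C})$, which I would cite as a classical result (pointing to Iwasawa's criterion as the mechanism) rather than reprove; everything else is the dimension count and the routine commutator/centraliser manipulation.
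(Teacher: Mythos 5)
Your proof is correct, and it shares the paper's overall skeleton: density of $\mathrm{PSL}(n,\mathbb{C})$, abstract simplicity of $\mathrm{PSL}(n,\mathbb{C})$ cited as classical (the paper points to Dieudonn\'e where you point to Iwasawa's criterion), and closedness to conclude. The mechanisms differ in both halves, though. For density the paper does not count dimensions: it exhibits the one-parameter family $\rho\colon t\mapsto\mathrm{diag}(t,\mathrm{id})$, observes that $\rho^{-1}(\mathrm{PSL}(n,\mathbb{C}))$ contains the infinite (hence Zariski-dense in $\mathbb{A}^1_\mathbb{C}$) set of $n$-th powers, so $\overline{\mathrm{PSL}(n,\mathbb{C})}$ contains $\rho(\mathbb{A}^1_\mathbb{C}\smallsetminus\{0\})$, and then uses that every class in $\mathrm{PGL}(n,\mathbb{C})$ equals some $\rho(t)$ modulo $\mathrm{PSL}(n,\mathbb{C})$. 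That argument is engineered to work over any infinite field where $\mathrm{PSL}\neq\mathrm{PGL}$; your dimension count is cleaner over an algebraically closed field, and your parenthetical remark that $\mathbb{C}^*$ is divisible, so $\mathrm{PSL}(n,\mathbb{C})=\mathrm{PGL}(n,\mathbb{C})$, in fact makes the density statement (and, strictly speaking, the whole lemma) collapse to the simplicity of $\mathrm{PSL}(n,\mathbb{C})$ over $\mathbb{C}$ --- the paper keeps the two groups distinct precisely to retain the field-general shape of the argument. For the second assertion, the paper is more direct than you are: given a non-trivial $f\in\mathrm{N}$, triviality of the center supplies some $\alpha$ not commuting with $f$, and one replaces $f$ by the non-trivial commutator $\alpha\circ f\circ\alpha^{-1}\circ f^{-1}$, which lies in $\mathrm{N}\cap\mathrm{PSL}(n,\mathbb{C})$ since $\mathrm{N}$ is normal and $\mathrm{PGL}/\mathrm{PSL}$ is abelian; simplicity then finishes. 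Your Dedekind-style route --- $\mathrm{N}\cap\mathrm{PSL}(n,\mathbb{C})$ is normal in the simple group, and the trivial-intersection case is excluded because $[\mathrm{N},\mathrm{PSL}(n,\mathbb{C})]=\{1\}$ would put $\mathrm{N}$ in the centralizer of a dense subgroup, hence in the trivial center --- is equally valid but makes the second part depend on the first (density), whereas the paper's commutator trick keeps the two parts independent. Both proofs rest on exactly the same two non-formal inputs (trivial center and simplicity of $\mathrm{PSL}$), and the final deduction for closed normal subgroups is identical.
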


\begin{proof}
The group morphism $\det\colon\mathrm{GL}(n,\mathbb{C})\to\mathbb{C}^*$
yields a group morphism
\[
\det\colon\mathrm{PGL}(n,\mathbb{C})\to\faktor{\mathbb{C}^*}{\big\{f^n\,\vert\,f\in\mathbb{C}^*\big\}}
\]
whose kernel is the group 
$\mathrm{PSL}(n,\mathbb{C})$. Denote by $\mathrm{id}$
the identity matrix of size $(n-1)\times(n-1)$ and 
consider the morphism 
\begin{align*}
& \rho\colon\mathbb{A}^1_\mathbb{C}\smallsetminus\{0\}\to\mathrm{PGL}(n,\mathbb{C}), && t\mapsto\left(
\begin{array}{cc}
t & 0\\
0 & \mathrm{id}
\end{array}
\right).
\end{align*}
Note that $\rho^{-1}(\mathrm{PSL}(n,\mathbb{C}))$ contains 
$\big\{t^n\,\vert\,t\in\mathbb{A}^1_\mathbb{C}\big\}$ which is 
an infinite subset of $\mathbb{A}^1_\mathbb{C}$ and is 
thus dense in $\mathbb{A}^1_\mathbb{C}$. Therefore the closure 
of $\mathrm{PSL}(n,\mathbb{C})$ contains 
$\rho(\mathbb{A}^1_\mathbb{C}\smallsetminus\{0\})$. 
Any element of $\mathrm{PGL}(n,\mathbb{C})$ is equal to some
$\rho(t)$ modulo $\mathrm{PSL}(n,\mathbb{C})$ hence 
$\mathrm{PSL}(n,\mathbb{C})$ is dense in $\mathrm{PGL}(n,\mathbb{C})$.

\smallskip

Let $\mathrm{N}$ be a non-trivial normal subgroup of 
$\mathrm{PGL}(n,\mathbb{C})$. Let $f$ be a non-trivial 
element of $\mathrm{N}$. Let us prove that $\mathrm{N}$
contains $\mathrm{PSL}(n,\mathbb{C})$. The center of 
$\mathrm{PGL}(n,\mathbb{C})$ being trivial one can 
replace $f$ by 
$\alpha\circ f\circ\alpha^{-1}\circ f^{-1}$ where 
$\alpha\in\mathrm{PGL}(n,\mathbb{C})$ does not 
commute with $f$, and assume that $f$ belongs to 
$\mathrm{N}\cap\mathrm{PSL}(n,\mathbb{C})$. But 
$\mathrm{PSL}(n,\mathbb{C})$ is a simple group 
(\cite[Chapitre II, \S 2]{Dieudonne}) so 
$\mathrm{PSL}(n,\mathbb{C})\subset\mathrm{N}$. 

\smallskip

The first two points imply that 
$\mathrm{PGL}(n,\mathbb{C})$ does not contain any 
non-trivial normal strict subgroup which is closed 
with respect to the Zariski topology.
\end{proof}

We will now focus on $\mathrm{Bir}(\mathbb{P}^n_\mathbb{C})$
as soon as $n\geq 2$. 

\begin{pro}[\cite{BlancZimmermann}]\label{pro:chemin}
Let $\phi$ be an element of $\mathrm{Bir}(\mathbb{P}^n_\mathbb{C})$. 
Let $p$ be a point of $\mathbb{P}^n_\mathbb{C}$ such that~$\phi$ induces 
a local isomorphism at $p$, and fixes $p$. Then there exist morphisms 
$\nu\colon\mathbb{A}^1_\mathbb{C}\smallsetminus\{0\}\to\mathrm{Aut}(\mathbb{P}^n_\mathbb{C})$ 
and 
$\upsilon\colon\mathbb{A}^1_\mathbb{C}\to\mathrm{Bir}(\mathbb{P}^n_\mathbb{C})$ 
such that:
\begin{itemize}
\item[$\diamond$] $\upsilon(t)=\nu(t)^{-1}\circ\phi\circ \nu(t)$ for any $t\in\mathbb{C}$, 
moreover $\nu(1)=\mathrm{id}$, so $\upsilon(1)=\phi$;

\item[$\diamond$] $\upsilon(0)$ belongs to $\mathrm{Aut}(\mathbb{P}^n_\mathbb{C})$ 
and is the identity if and only if the action of $\phi$ on the tangent space is 
trivial.
\end{itemize} 
\end{pro}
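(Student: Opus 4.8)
The plan is to conjugate $\phi$ by the one-parameter family of homotheties centred at $p$ and to let the scaling parameter tend to $0$, so that the nonlinear part of $\phi$ is crushed and only its differential at $p$ survives. First I would choose homogeneous coordinates so that $p=(1:0:\ldots:0)$ and pick a homogeneous representative $\phi=(\phi_0:\ldots:\phi_n)$ of degree $d$; since $\phi$ fixes $p$ I may normalise it so that $\phi_0(1,0,\ldots,0)=1$ and $\phi_i(1,0,\ldots,0)=0$ for $i\geq 1$. Writing $\phi_i=\sum_{j=0}^d z_0^{d-j}Q_{i,j}(z_1,\ldots,z_n)$ with $Q_{i,j}$ homogeneous of degree $j$ in $z_1,\ldots,z_n$, this normalisation reads $Q_{0,0}=1$ and $Q_{i,0}=0$ for $i\geq 1$; moreover in the chart $z_0=1$ one has $\phi_i(1,x)=Q_{i,1}(x)+O(x^2)$ for $i\geq 1$ while $\phi_0(1,x)=1+O(x)$, so the linear forms $Q_{1,1},\ldots,Q_{n,1}$ encode exactly the differential $A=D\phi_p$, i.e. the action of $\phi$ on the tangent space $T_p\mathbb{P}^n_\mathbb{C}$. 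The hypothesis that $\phi$ is a local isomorphism at $p$ says precisely that $A$ is invertible.

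Next I would set $\nu(t)\colon(z_0:\ldots:z_n)\mapsto(z_0:tz_1:\ldots:tz_n)$, a morphism $\mathbb{A}^1_\mathbb{C}\smallsetminus\{0\}\to\mathrm{Aut}(\mathbb{P}^n_\mathbb{C})=\mathrm{PGL}(n+1,\mathbb{C})$ with $\nu(1)=\mathrm{id}$, and compute $\upsilon(t)=\nu(t)^{-1}\circ\phi\circ\nu(t)$ in coordinates. Substituting $z_k\mapsto tz_k$ ($k\geq 1$) into $\phi$ and applying $\nu(t)^{-1}$ turns the $i$-th component into $t^{-1}\phi_i(z_0,tz_1,\ldots,tz_n)$ for $i\geq 1$ and $\phi_0(z_0,tz_1,\ldots,tz_n)$ for $i=0$. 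Because $Q_{i,0}=0$ for $i\geq 1$, each $\phi_i(z_0,tz_1,\ldots,tz_n)$ with $i\geq 1$ is divisible by $t$, so the $t^{-1}$ disappears and the family is represented by
\[
\upsilon(t)=\Bigl(\phi_0(z_0,tz_1,\ldots,tz_n):\tfrac{1}{t}\phi_1(z_0,tz_1,\ldots,tz_n):\ldots:\tfrac{1}{t}\phi_n(z_0,tz_1,\ldots,tz_n)\Bigr),
\]
a tuple of degree-$d$ forms depending polynomially on $t$. This is visibly a morphism $\mathbb{A}^1_\mathbb{C}\to H_d$ in the sense of Lemma \ref{lem:propbf}, and composing with $\pi_d$ yields the desired morphism $\upsilon\colon\mathbb{A}^1_\mathbb{C}\to\mathrm{Bir}(\mathbb{P}^n_\mathbb{C})$; for $t\neq 0$ the value $\upsilon(t)$ is by construction a conjugate of $\phi$, hence birational, which guarantees the tuple lands in $H_d$, while $\nu(1)=\mathrm{id}$ gives $\upsilon(1)=\phi$.

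Then I would specialise at $t=0$. The formula above gives the representative $(z_0^d:z_0^{d-1}Q_{1,1}:\ldots:z_0^{d-1}Q_{n,1})$, whose components share the common factor $z_0^{d-1}$; removing it exhibits
\[
\upsilon(0)=\bigl(z_0:Q_{1,1}(z_1,\ldots,z_n):\ldots:Q_{n,1}(z_1,\ldots,z_n)\bigr),
\]
a linear map, namely the automorphism of $\mathbb{P}^n_\mathbb{C}$ with matrix $\begin{pmatrix}1&0\\0&A\end{pmatrix}$. Since $A$ is invertible, $\upsilon(0)\in\mathrm{Aut}(\mathbb{P}^n_\mathbb{C})$, and $\upsilon(0)=\mathrm{id}$ precisely when $A=\mathrm{id}$, that is when $\phi$ acts trivially on $T_p\mathbb{P}^n_\mathbb{C}$, which is the content of the second bullet.

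I expect the main obstacle to be the final bookkeeping of this common factor. One must check that dividing by $t$ really produces polynomial (not merely rational) dependence on $t$ across all components simultaneously, and that the degenerate tuple at $t=0$, although of formal degree $d$, still represents an honest element of $\mathrm{Bir}_{\leq d}(\mathbb{P}^n_\mathbb{C})\cap\mathrm{Aut}(\mathbb{P}^n_\mathbb{C})$ once its common factor $z_0^{d-1}$ is cleared. In other words, the conceptual step --- conjugating by a homothety and letting $t\to 0$ --- is immediate, but the delicate point is to verify that this degeneration of a degree-$d$ family lands inside $\mathrm{PGL}(n+1,\mathbb{C})$ rather than merely defining a rational family off $t=0$; this is exactly the degree-dropping degeneration phenomenon discussed around Theorem \ref{thm:bf}, and here it is controlled by the normalisation $Q_{i,0}=0$.
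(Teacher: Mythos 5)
Your proposal is correct and follows essentially the same route as the paper's proof: conjugate $\phi$ by the one-parameter family of homotheties $\nu(t)$ centred at the fixed point $p$ and observe that the family extends across $t=0$ to the linear part of $\phi$ at $p$, which is an automorphism precisely because $\phi$ is a local isomorphism there. The only difference is presentational --- you carry out the computation on a homogeneous degree-$d$ representative and justify morphism-hood via $H_d$ and Lemma \ref{lem:propbf}, whereas the paper works in the affine chart $z_0=1$ with the normalised rational expression $p_{i,1}+\ldots+p_{i,\ell}$ over $1+q_{i,1}+\ldots+q_{i,\ell}$; your handling of the common factor $z_0^{d-1}$ at $t=0$ is exactly the degree-dropping bookkeeping the paper's chart computation absorbs implicitly.
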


\begin{proof}
Up to conjugacy by an element of $\mathrm{Aut}(\mathbb{P}^n_\mathbb{C})$ we can 
assume that $p=(1:0:0:\ldots:0)$. In the affine chart $z_0=1$ one can write 
$\phi$ locally as 
\begin{small}
\[
\left(\frac{p_{1,1}(z_1,\ldots,z_n)+\ldots+p_{1,\ell}(z_1,\ldots,z_n)}{1+q_{1,1}(z_1,\ldots,z_n)+\ldots+q_{1,\ell}(z_1,\ldots,z_n)},\ldots,\frac{p_{n,1}(z_1,\ldots,z_n)+\ldots+p_{n,\ell}(z_1,\ldots,z_n)}{1+q_{n,1}(z_1,\ldots,z_n)+\ldots+q_{n,\ell}(z_1,\ldots,z_n)}\right)
\]
\end{small}
where $p_{i,j}$, $q_{i,j}$ are homogeneous of degree $j$. For each 
$t\in\mathbb{C}\smallsetminus\{0\}$ the element 
\[
\nu_t\colon(z_1,z_2,\ldots,z_n)\mapsto(tz_1,tz_2,\ldots,tz_n)
\]
extends to a linear automorphism of $\mathbb{P}^n_\mathbb{C}$ that fixes $p$. 
Hence the map $t\mapsto\nu_t^{-1}\circ\phi\circ\nu_t$ gives rise to a morphism
$\Theta\colon\mathbb{A}^1_\mathbb{C}\smallsetminus\{0\}\to\mathrm{Bir}(\mathbb{P}^n_\mathbb{C})$ 
and the image of $\Theta$ contains only conjugates of $\phi$ by linear 
automorphisms. Note that 
\begin{small}
\begin{eqnarray*}
\Theta\colon t\mapsto & &\left(\frac{p_{1,1}(z_1,\ldots,z_n)+tp_{1,2}(z_1,\ldots,z_n)+\ldots+t^{\ell-1}p_{1,\ell}(z_1,\ldots,z_n)}{1+tq_{1,1}(z_1,\ldots,z_n)+t^2q_{1,2}(z_1,\ldots,z_n)+\ldots+t^\ell q_{1,\ell}(z_1,\ldots,z_n)},\right.\\
& &\qquad\left.\ldots,\frac{p_{n,1}(z_1,\ldots,z_n)+tp_{n,2}(z_1,\ldots,z_n)+\ldots+t^{\ell-1}p_{n,\ell}(z_1,\ldots,z_n)}{1+tq_{n,1}(z_1,\ldots,z_n)+t^2q_{n,2}(z_1,\ldots,z_n)+\ldots+t^\ell q_{n,\ell}(z_1,\ldots,z_n)}\right)
\end{eqnarray*}
\end{small}
and $\Theta(0)$ corresponds to the linear part of $\Theta$ at $p$ which is locally 
given by 
\[
\big(p_{1,1}(z_1,\ldots,z_n),\ldots,p_{n,1}(z_1,\ldots,z_n)\big).
\]
As $\phi$ is a local isomorphism at $p$, this linear part is an automorphism
of $\mathbb{P}^n_\mathbb{C}$. Furthermore it is trivial if and only if the 
action of $\phi$ on the tangent space is trivial.
\end{proof}

Let $\phi\in\mathrm{Bir}(\mathbb{P}^n_\mathbb{C})\smallsetminus\{\mathrm{id}\}$; 
it induces an isomorphism from $\mathcal{U}$ to $\mathcal{V}$ where 
$\mathcal{U}$, $\mathcal{V}\subset\mathbb{P}^n_\mathbb{C}$ are two non-empty
open subsets. There exist a point $p$ in $\mathcal{U}$ and two 
automorphisms $\alpha_1$, $\alpha_2$ of $\mathbb{P}^n_\mathbb{C}$ such that
\begin{itemize}
\item[$\diamond$] $\psi=\alpha_1\circ \phi\circ\alpha_2$ fixes $p$,
\item[$\diamond$] $\psi=\alpha_1\circ \phi\circ\alpha_2$ is a local isomorphism at $p$, 
\item[$\diamond$] $D_p\psi$ is not trivial.
\end{itemize}
According to Proposition \ref{pro:chemin} there 
exist morphisms 
$\nu\colon\mathbb{A}^1_\mathbb{C}\smallsetminus\{0\}\to\mathrm{Aut}(\mathbb{P}^n_\mathbb{C})$ and 
$\upsilon_1\colon\mathbb{A}^1_\mathbb{C}\to\mathrm{Bir}(\mathbb{P}^n_\mathbb{C})$ 
such that
\begin{itemize}
\item[$\diamond$] $\upsilon_1(t)=\nu(t)^{-1}\circ \psi^{-1}\circ\nu(t)$ for each $t\not=0$,
\item[$\diamond$] $\upsilon_1(0)$ is an automorphism of $\mathbb{P}^n_\mathbb{C}$.
\end{itemize}
Consider the morphism
$\upsilon_2\colon\mathbb{A}_\mathbb{C}^1\to\mathrm{Bir}(\mathbb{P}^n_\mathbb{C})$
defined by 
\[
\upsilon_2(t)=\alpha_1^{-1}\circ \psi\circ\upsilon_1(t)\circ\upsilon_1(0)^{-1}\circ\alpha_2^{-1}.
\]
Since $\alpha_1$, $\alpha_2$, $\upsilon_1(0)$ and $\nu(t)$ are automorphisms of 
$\mathbb{P}^n_\mathbb{C}$ for all $t\not=0$ 
\[
\upsilon_2(t)=\alpha_1^{-1}\circ\big(\psi\circ\nu(t)^{-1}\circ \psi^{-1}\big)\circ\nu(t)\circ\upsilon_1(0)^{-1}\circ\alpha_2^{-1}
\]
belongs for any $t\not=0$ to the normal subgroup of 
$\mathrm{Bir}(\mathbb{P}^n_\mathbb{C})$ generated by 
$\mathrm{Aut}(\mathbb{P}^n_\mathbb{C})$. As a consequence $\phi=\upsilon_2(0)$ belongs
to the closure of the normal subgroup of $\mathrm{Bir}(\mathbb{P}^n_\mathbb{C})$
generated by~$\mathrm{Aut}(\mathbb{P}^n_\mathbb{C})$.
The normal subgroup of
$\mathrm{Bir}(\mathbb{P}^n_\mathbb{C})$ 
generated by 
$\mathrm{Aut}(\mathbb{P}^n_\mathbb{C})$
is dense in 
$\mathrm{Bir}(\mathbb{P}^n_\mathbb{C})$ 
(\emph{see} \cite{BlancFurter}). 

\smallskip

In particular $\mathrm{Bir}(\mathbb{P}^n_\mathbb{C})$
does not contain any non-trivial closed normal 
strict subgroup. Indeed let 
$\{\mathrm{id}\}\not=\mathrm{N}\subset\mathrm{Bir}(\mathbb{P}^n_\mathbb{C})$
be a closed normal subgroup with respect to the Zariski topology.
Then $\mathrm{Aut}(\mathbb{P}^n_\mathbb{C})\subset\mathrm{N}$
(\emph{see} \cite[Prop. 3.3, Lemma 3.4]{BlancZimmermann}). 
Since~$\mathrm{N}$ is closed it contains the closure of the 
normal subgroup generated by 
$\mathrm{Aut}(\mathbb{P}^n_\mathbb{C})$ which is equal to
$\mathrm{Bir}(\mathbb{P}^n_\mathbb{C})$.
\end{proof}

Furthermore, one has:

\begin{thm}[\cite{BlancZimmermann}]
If $n\geq 1$, the group 
$\mathrm{Bir}(\mathbb{P}^n_\mathbb{C})$ 
is connected with respect 
to the Zariski topology.

If $n\geq 2$, the group 
$\mathrm{Bir}(\mathbb{P}^n_\mathbb{C})$
is path-connected, and thus connected with respect 
to the Euclidean topology.
\end{thm}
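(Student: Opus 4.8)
The plan is to reduce everything to the path-straightening device of Proposition \ref{pro:chemin}, which connects a suitable representative of any birational map to a genuine automorphism, combined with the connectivity of $\mathrm{Aut}(\mathbb{P}^n_\mathbb{C})=\mathrm{PGL}(n+1,\mathbb{C})$. Since path-connectedness implies connectedness, and since the Euclidean topology is finer than the Zariski one (so that connectedness for the Euclidean topology forces connectedness for the Zariski topology), it suffices to prove the Euclidean path-connectedness statement for $n\geq 2$ and to treat $n=1$ separately. For $n=1$ one has $\mathrm{Bir}(\mathbb{P}^1_\mathbb{C})=\mathrm{PGL}(2,\mathbb{C})$, an irreducible algebraic group, hence Zariski connected, and there is nothing more to do.

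So fix $n\geq 2$ and $\phi\in\mathrm{Bir}(\mathbb{P}^n_\mathbb{C})$; the goal is to build a Euclidean-continuous path from $\phi$ to $\mathrm{id}$. First I would reduce to a map fixing a point: choosing a point $p$ in the open set where $\phi$ is a local isomorphism and setting $q=\phi(p)$, I pick $\alpha\in\mathrm{Aut}(\mathbb{P}^n_\mathbb{C})$ with $\alpha(q)=p$, so that $\psi=\alpha\circ\phi$ fixes $p$ and is a local isomorphism there. Because $\mathrm{PGL}(n+1,\mathbb{C})$ is path-connected for the classical topology, which agrees with the restriction of the Euclidean topology by Theorem \ref{thm:bf3}, there is a Euclidean path $\gamma$ in $\mathrm{Aut}(\mathbb{P}^n_\mathbb{C})$ from $\mathrm{id}$ to $\alpha$; joint continuity of composition (Corollary \ref{cor:bfcor2}) then turns $s\mapsto\gamma(s)\circ\phi$ into a Euclidean path from $\phi$ to $\psi$.

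Next I would apply Proposition \ref{pro:chemin} to $\psi$, obtaining a morphism $\upsilon\colon\mathbb{A}^1_\mathbb{C}\to\mathrm{Bir}(\mathbb{P}^n_\mathbb{C})$ with $\upsilon(1)=\psi$ and $\upsilon(0)\in\mathrm{Aut}(\mathbb{P}^n_\mathbb{C})$. Such a morphism is Euclidean continuous: by Lemma \ref{lem:BlancFurter} it factors locally through a morphism of varieties $\mathbb{A}^1_\mathbb{C}\to H_d$ followed by $\pi_d$ and the closed embedding $\mathrm{Bir}_{\leq d}(\mathbb{P}^n_\mathbb{C})\hookrightarrow\mathrm{Bir}(\mathbb{P}^n_\mathbb{C})$ of Lemma \ref{lem:top}, all of which are Euclidean continuous. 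Composing $\upsilon$ with a path in $\mathbb{C}$ joining $1$ to $0$ yields a Euclidean path from $\psi$ to the automorphism $\upsilon(0)$, and a path inside the path-connected group $\mathrm{Aut}(\mathbb{P}^n_\mathbb{C})$ joins $\upsilon(0)$ to $\mathrm{id}$. Concatenating the three pieces gives a Euclidean path from $\phi$ to $\mathrm{id}$, establishing path-connectedness for $n\geq 2$; connectedness for both topologies then follows as explained in the first paragraph.

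The routine verifications (existence of $p$, continuity of $\upsilon$, joint continuity of composition) are all supplied by results already proved, so the argument is essentially assembly. The one genuine point of care, and the step I would treat most carefully, is that all intermediate maps are continuous for the \emph{Euclidean} topology rather than merely the Zariski one, since path-connectedness is an intrinsically Euclidean notion; here the local factorization through the spaces $H_d$ and the identification of the classical topology on $\mathrm{PGL}(n+1,\mathbb{C})$ with the restricted Euclidean topology (Theorem \ref{thm:bf3}) are exactly what make the path construction legitimate.
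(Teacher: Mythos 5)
Your proof is correct, and it shares the paper's core engine --- composing $\phi$ with an automorphism so that the result fixes a point where it is a local isomorphism, then applying Proposition \ref{pro:chemin} --- but the assembly is genuinely different. The paper's argument, following \cite{BlancZimmermann}, stays entirely algebraic: it introduces the set $S$ of maps $\varphi$ for which there is a morphism $\nu\colon\mathbb{A}^1_\mathbb{C}\to\mathrm{Bir}(\mathbb{P}^n_\mathbb{C})$ with $\nu(0)=\mathrm{id}$ and $\nu(1)=\varphi$, checks that $S$ is a normal subgroup (pointwise composition and conjugation of families of birational maps are again families), and uses Example \ref{eg:utile} --- a map with two fixed points, giving two morphisms $\Theta_1,\Theta_2$ with distinct linear limits --- together with the generation of $\mathrm{PSL}(n+1,\mathbb{C})$ by unipotent one-parameter subgroups to conclude $\mathrm{Aut}(\mathbb{P}^n_\mathbb{C})\subset S$; only then does it run the reduction you run, correcting $\theta(1)=A\circ\phi$ by the automorphisms $A$ and $\theta(0)^{-1}$, which lie in $S$. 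The payoff of that extra bookkeeping is a stronger conclusion: any two elements of $\mathrm{Bir}(\mathbb{P}^n_\mathbb{C})$ are joined by a single morphism $\mathbb{P}^1_\mathbb{C}\to\mathrm{Bir}(\mathbb{P}^n_\mathbb{C})$, which gives Zariski connectedness directly (morphisms are Zariski continuous by the very definition of the topology) and survives over fields other than $\mathbb{C}$. Your version instead splices one algebraic path with two topological paths inside $\mathrm{PGL}(n+1,\mathbb{C})$, relying on classical path-connectedness of that complex Lie group; this ties the argument to $\mathbb{C}$ and yields the Zariski statement for $n\geq 2$ only indirectly, via the comparison of topologies (with $n=1$ handled separately by irreducibility, which is also how it must be handled, since the Euclidean statement is asserted only for $n\geq 2$). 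What you gain is economy: no normal-subgroup structure, no Example \ref{eg:utile}, and your continuity checks --- Euclidean continuity of morphisms via the local factorization of Lemma \ref{lem:BlancFurter} through the $H_d$ and the quotient maps $\pi_d$, the closed embeddings of Lemma \ref{lem:top}, joint continuity of composition from Corollary \ref{cor:bfcor2}, and Theorem \ref{thm:bf3} identifying the topology on $\mathrm{PGL}(n+1,\mathbb{C})$ --- are exactly the right ones, so your proof is complete for the theorem as stated.
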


Let us give an idea of the proof of 
this statement. We start with an 
example.

\begin{eg}\label{eg:utile}
Let $n\geq 2$ and let $\alpha$ be
an element of $\mathbb{C}^*$. Consider 
the birational self map 
of~$\mathbb{P}^n_\mathbb{C}$ given by 
\[
\Phi\colon(z_0:z_1:\ldots:z_n) \dashrightarrow \left(\frac{z_0(z_1+\alpha z_2)+z_1z_2}{z_1+z_2}:z_1:z_2:\ldots:z_n\right).
\]
The points $p=(0:1:0:0:\ldots:0)$ and
$q=(0:0:1:0:0:\ldots:0)$ are fixed
by $\Phi$. Applying Proposition 
\ref{pro:chemin} to the points $p$ and 
$q$ we get two morphisms $\Theta_1$, 
$\Theta_2\colon\mathbb{A}^1_\mathbb{C}\to\mathrm{Bir}(\mathbb{P}^n_\mathbb{C})$ 
such that
\begin{itemize}
\item[$\diamond$] $\Theta_1(0)\colon(z_0:z_1:\ldots:z_n)\mapsto(z_0+z_2:z_1:z_2:\ldots:z_n)\in\mathrm{Aut}(\mathbb{P}^n_\mathbb{C})$,

\item[$\diamond$] $\Theta_2(0)\colon(z_0:z_1:\ldots:z_n)\mapsto(\alpha z_0+z_1:z_1:z_2:\ldots:z_n)\in\mathrm{Aut}(\mathbb{P}^n_\mathbb{C})$,

\item[$\diamond$] $\Theta_1(1)=\Theta_2(1)=\Phi$.
\end{itemize}
\end{eg}

\begin{pro}
Let $n\geq 2$ be an integer. For any $\phi$, 
$\psi\in\mathrm{Bir}(\mathbb{P}^n_\mathbb{C})$ there is a morphism 
$\upsilon\colon\mathbb{P}^1_\mathbb{C}\to\mathrm{Bir}(\mathbb{P}^n_\mathbb{C})$
such that $\upsilon(0)=\phi$ and $\upsilon(1)=\psi$. 
\end{pro}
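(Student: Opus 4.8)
The plan is to recast the statement as a single group-theoretic assertion and then reach it through the automorphism group. Define
\[
\mathrm{G}_0=\big\{\phi\in\mathrm{Bir}(\mathbb{P}^n_\mathbb{C})\,\vert\,\text{there is a morphism }\upsilon\colon\mathbb{P}^1_\mathbb{C}\to\mathrm{Bir}(\mathbb{P}^n_\mathbb{C})\text{ with }\upsilon(0)=\mathrm{id},\ \upsilon(1)=\phi\big\}.
\]
First I would check that $\mathrm{G}_0$ is a normal subgroup. If $\upsilon_1,\upsilon_2$ realize $\phi_1,\phi_2\in\mathrm{G}_0$, then $t\mapsto\upsilon_1(t)\circ\upsilon_2(t)$ and $t\mapsto\upsilon_1(t)^{-1}$ are again morphisms $\mathbb{P}^1_\mathbb{C}\to\mathrm{Bir}(\mathbb{P}^n_\mathbb{C})$ by Remark \ref{rem:homeo} (composition and inversion of families are families), taking the value $\mathrm{id}$ at $0$ and the values $\phi_1\phi_2$, $\phi_1^{-1}$ at $1$; conjugating $\upsilon_1$ by a fixed $g$ (again a morphism by Remark \ref{rem:homeo}) shows $g\mathrm{G}_0g^{-1}=\mathrm{G}_0$. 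Left-translating a connecting morphism by $\phi^{-1}$ then yields the reformulation: two maps $\phi,\psi$ are joined by a morphism from $\mathbb{P}^1_\mathbb{C}$ if and only if $\phi^{-1}\psi\in\mathrm{G}_0$. Hence the Proposition is equivalent to $\mathrm{G}_0=\mathrm{Bir}(\mathbb{P}^n_\mathbb{C})$.

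Next I would prove $\mathrm{Aut}(\mathbb{P}^n_\mathbb{C})=\mathrm{PGL}(n+1,\mathbb{C})\subset\mathrm{G}_0$. The input is Example \ref{eg:utile}: conjugating the quadratic map $\Phi$ by the one-parameter subgroup of $\mathrm{PGL}(n+1,\mathbb{C})$ having $p$ and $q$ as its two extreme fixed points gives a family with values in $\mathrm{Bir}_{\leq 2}(\mathbb{P}^n_\mathbb{C})$; since $W_2$ is a complete variety (Lemma \ref{lem:propbf}) this family extends over $0$ and $\infty$, where by Example \ref{eg:utile} it degenerates to the two automorphisms $\Theta_1(0)$ and $\Theta_2(0)$. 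Thus $\Theta_1(0)^{-1}\Theta_2(0)$ lies in $\mathrm{G}_0\cap\mathrm{PGL}(n+1,\mathbb{C})$, and a direct computation shows it is non-trivial. As $\mathrm{G}_0$ is normal in $\mathrm{Bir}(\mathbb{P}^n_\mathbb{C})$, the intersection $\mathrm{G}_0\cap\mathrm{PGL}(n+1,\mathbb{C})$ is a non-trivial normal subgroup of $\mathrm{PGL}(n+1,\mathbb{C})$; by the lemma established at the start of the proof of Theorem \ref{thm:BlancZimmermann} (every non-trivial normal subgroup of $\mathrm{PGL}(n+1,\mathbb{C})$ contains $\mathrm{PSL}(n+1,\mathbb{C})=\mathrm{PGL}(n+1,\mathbb{C})$) we conclude $\mathrm{PGL}(n+1,\mathbb{C})\subset\mathrm{G}_0$.

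It then remains to place an arbitrary $\phi$ in $\mathrm{G}_0$, and for this it suffices to join $\phi$ to some automorphism by a morphism from $\mathbb{P}^1_\mathbb{C}$: combined with $\mathrm{PGL}(n+1,\mathbb{C})\subset\mathrm{G}_0$ and the coset description of the first paragraph, this forces $\phi\in\mathrm{G}_0$. Pre- and post-composing with automorphisms (which preserves the coset of $\mathrm{G}_0$), I would arrange, exactly as in the proof of Theorem \ref{thm:BlancZimmermann}, that $\phi$ fixes a point $p$, is a local isomorphism there, and has non-trivial derivative. Proposition \ref{pro:chemin} then yields a morphism $\mathbb{A}^1_\mathbb{C}\to\mathrm{Bir}(\mathbb{P}^n_\mathbb{C})$, $t\mapsto\nu(t)^{-1}\circ\phi\circ\nu(t)$, of bounded degree $\leq d$, equal to $\phi$ at $1$ and to an automorphism at $0$. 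Lifting through $\pi_d$ to $H_d\subset W_d=\mathbb{P}^k_\mathbb{C}$ and using the completeness of $W_d$, this lift extends to a morphism $\mathbb{P}^1_\mathbb{C}\to W_d$, and whenever the limiting point at $\infty$ still lies in $H_d$ one obtains a morphism $\mathbb{P}^1_\mathbb{C}\to\mathrm{Bir}_{\leq d}(\mathbb{P}^n_\mathbb{C})$ joining $\phi$ to automorphisms.

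The main obstacle is precisely the control of this limit at $\infty$. A priori the conjugates $\nu(t)^{-1}\circ\phi\circ\nu(t)$ may degenerate to a non-reduced expression $P\,\mathrm{id}$ or, worse, to a non-dominant tuple leaving $H_d$; this is the same degeneration phenomenon that prevents $\mathrm{Bir}(\mathbb{P}^n_\mathbb{C})$ from carrying an algebraic-variety structure (Theorems \ref{thm:bf} and \ref{thm:bf2}). The remedy, and the technical heart of the argument, is to replace the one-sided homothety of Proposition \ref{pro:chemin} by a one-parameter subgroup with two transverse extreme fixed points, so that both limits are honest local models and hence automorphisms lying in $H_d$; this is the two-sided situation isolated in Example \ref{eg:utile}. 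Producing such a configuration of fixed points for a general $\phi$, after the reduction by composition with automorphisms, is where the real work lies; once it is in place, the completeness of $W_d$ delivers the required morphism from $\mathbb{P}^1_\mathbb{C}$.
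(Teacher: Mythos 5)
Your skeleton coincides with the paper's proof: D\'eserti also reduces to $\phi=\mathrm{id}$, introduces the set of maps joined to the identity, checks it is a normal subgroup by composing, inverting and conjugating families pointwise (Remark \ref{rem:homeo}), puts all automorphisms inside it via Example \ref{eg:utile}, and finishes with Proposition \ref{pro:chemin} exactly as in your third paragraph. Your handling of the $\mathrm{PGL}$ step is a genuine (and correct) variant: the paper quotes the claim $\mathrm{PSL}(n+1,\mathbb{C})\subset S$ from \cite{BlancFurter} and uses $\Theta_1(0)$, $\Theta_2(0)$ only to pass from $\mathrm{PSL}$ to $\mathrm{PGL}$, whereas you extract the single non-trivial element $\Theta_1(0)^{-1}\circ\Theta_2(0)$ of $\mathrm{G}_0\cap\mathrm{PGL}(n+1,\mathbb{C})$ and invoke normality of $\mathrm{G}_0$ together with the simplicity lemma from the proof of Theorem \ref{thm:BlancZimmermann}; this is self-contained and avoids the external claim. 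One caveat: Example \ref{eg:utile} produces \emph{two separate} $\mathbb{A}^1$-families $\Theta_1,\Theta_2$ (Proposition \ref{pro:chemin} applied at $p$ and at $q$), both equal to $\Phi$ at $t=1$, not the two ends of a single family conjugated by one one-parameter subgroup; your conclusion is salvaged by the morphism $t\mapsto\Theta_1(t)^{-1}\circ\Theta_2(t)$, equal to $\mathrm{id}$ at $1$ and to $\Theta_1(0)^{-1}\circ\Theta_2(0)$ at $0$, which needs no limit at $\infty$ at all.

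The genuine gap is exactly where you locate it, and it is fatal to your text as written: your last two paragraphs defer "the real work" of controlling the limit at $\infty$, so you have not proved the statement you set out to prove. Moreover your proposed remedy does not compute: completeness of $W_d$ (Lemma \ref{lem:propbf}) only extends the lift to a morphism $\mathbb{P}^1_\mathbb{C}\to W_d$, with no reason for the value at $\infty$ to lie in $H_d$, and a one-parameter subgroup cannot be a homothety centered at $p$ near $t=0$ and simultaneously a homothety centered at a second point near $t=\infty$ --- if $\nu_t=\mathrm{diag}(1,t,\ldots,t)$ with $p=(1:0:\ldots:0)$, then projectively $\nu_t\to\mathrm{diag}(0,1,\ldots,1)$ as $t\to\infty$, i.e. the degeneration is toward the hyperplane opposite $p$, not toward a point, so the limit of $\nu_t^{-1}\circ\phi\circ\nu_t$ at $\infty$ is not a derivative at a fixed point and need not be an automorphism. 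What you missed is that the paper's own proof never faces this problem: its set $S$ is defined using morphisms $\nu\colon\mathbb{A}^1_\mathbb{C}\to\mathrm{Bir}(\mathbb{P}^n_\mathbb{C})$ and $t=\infty$ is never considered; the $\mathbb{P}^1_\mathbb{C}$ in the statement is inconsistent with the proof (the source result of \cite{BlancZimmermann} is for $\mathbb{A}^1$), and with $\mathbb{A}^1$ in place of $\mathbb{P}^1$ your first three paragraphs already constitute a complete proof. Note finally that the $\mathbb{P}^1$ version cannot be recovered from the $\mathbb{A}^1$ one by reparametrisation, since every morphism $\mathbb{P}^1_\mathbb{C}\to\mathbb{A}^1_\mathbb{C}$ is constant: it is a strictly stronger assertion, established neither by you nor by the paper.
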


\begin{proof}
Up to composition with $\phi^{-1}$
one can assume that $\phi=\mathrm{id}$.
Let us consider the subset $S$ of 
$\mathrm{Bir}(\mathbb{P}^n_\mathbb{C})$
given by 
\[
S=\left\{\varphi\in\mathrm{Bir}(\mathbb{P}^n_\mathbb{C})\,\vert\,\exists\,\, \nu\colon\mathbb{A}^1_\mathbb{C}\to\mathrm{Bir}(\mathbb{P}^n_\mathbb{C})\text{ morphism such that }\nu(0)=\mathrm{id}\text{ and }\nu(1)=\varphi \right\}.
\]
Let $\phi$ (resp. $\psi$) be an 
element of $S$; denote by 
$\nu_\phi$ (resp. $\nu_\psi$) the
associated morphism. We define
a morphism 
$\nu_{\phi\circ\psi}\colon\mathbb{A}^1_\mathbb{C}\to\mathrm{Bir}(\mathbb{P}^n_\mathbb{C})$
by 
$\nu_{\phi\circ\psi}(t)=\nu_\phi(t)\circ\nu_\psi(t)$ 
which satisfies
$\nu_{\phi\circ\psi}(0)=~\mathrm{id}$
and $\nu_{\phi\circ\psi}(1)=\phi\circ\psi$.
For any 
$\varphi\in\mathrm{Bir}(\mathbb{P}^n_\mathbb{C})$ 
it is also possible to define a 
morphism 
$t\mapsto\varphi\circ\nu_\phi(t)\circ\varphi^{-1}$. 
Therefore, $S$ is a normal subgroup
of~$\mathrm{Bir}(\mathbb{P}^n_\mathbb{C})$.

\begin{claim}[\cite{BlancFurter}] 
The group $S$ contains
$\mathrm{PSL}(n+1,\mathbb{C})$.
\end{claim}

Take $\alpha$, $\Phi$, $\Theta_1$ and 
$\Theta_2$ as in Example \ref{eg:utile};
for $i\in\{1,\,2\}$ the morphisms
\[
t\mapsto\Theta_i(t)\circ(\Theta_i(0))^{-1}
\]
show that 
$g\circ\big(\Theta_1(0)\big)^{-1}$
and 
$g\circ\big(\Theta_2(0)\big)^{-1}$
belong to $S$, hence 
$\Theta_1(0)\circ\big(\Theta_2(0)\big)^{-1}$
belong to $S$. But $\Theta_1(0)$ 
belongs to 
$\mathrm{PSL}(n+1,\mathbb{C})\subset S$,
so $\Theta_2(0)$ belongs to $S$. 
Thus~$\mathrm{Aut}(\mathbb{P}^n_\mathbb{C})=\mathrm{PGL}(n+1,\mathbb{C})$
is contained in $S$.

\smallskip

Take 
$\phi\in\mathrm{Bir}(\mathbb{P}^n_\mathbb{C})$
of degree $d\geq 2$. Let $p$ be 
a point of $\mathbb{P}^n_\mathbb{C}$
such that $\phi$ induces a local 
isomorphism at $p$. Consider 
an element $A$ of 
$\mathrm{PSL}(n+1,\mathbb{C})$ such
that~$A\circ\phi$ fixes~$p$. There exists a morphism
$\theta\colon\mathbb{A}^1_\mathbb{C}\to\mathrm{Bir}(\mathbb{P}^n_\mathbb{C})$ 
such that $\theta(0)$ belongs 
to~$\mathrm{Aut}(\mathbb{P}^n_\mathbb{C})$ 
and $\theta(1)=A\circ\phi$. 
Let us define 
$\theta'\colon\mathbb{A}^1_\mathbb{C}\to\mathrm{Bir}(\mathbb{P}^n_\mathbb{C})$
by $\theta'(t)=\rho(t)\circ\theta(0)^{-1}$. 
Then 
$\theta'(1)=A\circ\phi\circ\theta(0)^{-1}$.
But $A$ and $\theta(0)$
belong to 
$\mathrm{Aut}(\mathbb{P}^n_\mathbb{C})\subset S$, so $\phi$ belongs to~$S$. 
\end{proof}

%%%%%%%%%%%%%%%%%%%%%%%%%%%%%%%%%%%%%%%%%%%%%%%%%%%%%%%%%%%%%%%%%%%%%%%%%%%%%%%%%%%%%%%%%%%%%%%%%%%%%%%%%%%%%%%%%%%
%%%%%%%%%%%%%%%%%%%%%%%%%%%%%%%%%%%%%%%%%%%%%%%%%%%%%%%%%%%%%%%%%%%%%%%%%%%%%%%%%%%%%%%%%%%%%%%%%%%%%%%%%%%%%%%%%%%
% section
%%%%%%%%%%%%%%%%%%%%%%%%%%%%%%%%%%%%%%%%%%%%%%%%%%%%%%%%%%%%%%%%%%%%%%%%%%%%%%%%%%%%%%%%%%%%%%%%%%%%%%%%%%%%%%%%%%%
%%%%%%%%%%%%%%%%%%%%%%%%%%%%%%%%%%%%%%%%%%%%%%%%%%%%%%%%%%%%%%%%%%%%%%%%%%%%%%%%%%%%%%%%%%%%%%%%%%%%%%%%%%%%%%%%%%%

\section{Regularization of rational group actions}\label{sec:WeilKraft}

The aim of \cite{Kraft:regularization} is to give
a modern proof of the regularization theorem of
Weil which says:

\begin{thm}[\cite{Weil}]\label{thm:Weil}
For every rational action 
of an algebraic group $\mathrm{G}$ on a variety
$X$ there exist a variety~$Y$ with a regular 
action of $\mathrm{G}$ and a $\mathrm{G}$-equivariant
birational map $X\dashrightarrow Y$.
\end{thm}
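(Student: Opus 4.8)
The plan is to follow the classical gluing strategy of Weil, realizing $Y$ as an abstract variety obtained by patching together the $\mathrm{G}$-translates of a well-chosen dense open subset of $X$. First I would reduce the rational action to a \emph{birational} one: the data of the action $\rho$ is equivalent to that of the birational self map
\[
\Phi\colon\mathrm{G}\times X\dashrightarrow\mathrm{G}\times X,\qquad (g,x)\mapsto\big(g,\rho(g,x)\big),
\]
whose inverse is $(g,y)\mapsto\big(g,\rho(g^{-1},y)\big)$ (using that inversion is a morphism of $\mathrm{G}$). For $g$ in a dense open subset of $\mathrm{G}$ the translation $\rho_g=\rho(g,\cdot)$ is then a birational self map of $X$, defined on a dense open $X_g\subseteq X$, and the associativity axiom of the action says precisely that one has the cocycle relation $\rho_g\circ\rho_h=\rho_{gh}$ as an identity of birational maps. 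It is also harmless to replace $X$ by its normalization at the outset, since a rational action lifts to the normalization; this gives access to Zariski's main theorem later on.

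Next I would produce a \emph{local model}. By spreading out I can find a dense open subset $U\subseteq X$ and an open neighbourhood $O$ of the identity in $\mathrm{G}$ such that the restriction of $\rho$ to $O\times U$ is a morphism and, for each $g\in O$, the map $\rho_g$ restricts to an open immersion $U\hookrightarrow X$. Shrinking $U$ and $O$ as needed, all the composition relations required below become genuine equalities of morphisms on suitable dense opens.

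Then I would build $Y$ by gluing. For each $g\in\mathrm{G}$ take a copy $U_g$ of $U$, thought of as representing the translate $g\cdot U\subseteq X$, and glue $U_g$ to $U_h$ along the open subset on which the map $u\mapsto\rho_{h^{-1}g}(u)$ is defined and restricts to an isomorphism onto an open subset of $U_h$. The cocycle relation, in the form $\rho_{k^{-1}h}\circ\rho_{h^{-1}g}=\rho_{k^{-1}g}$, is exactly what is needed for these identifications to be compatible, so the quotient prevariety $Y$ is well defined. Left translation, sending $U_g$ onto $U_{g'g}$ by the identity on the $U$-coordinate, then defines a \emph{regular} action of $\mathrm{G}$ on $Y$, and the inclusion of $U$ as the chart $U_e$ gives, by construction, a $\mathrm{G}$-equivariant birational map $X\dashrightarrow Y$.

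The hard part will be showing that the prevariety $Y$ so obtained is \emph{separated}, i.e.\ an honest variety rather than a scheme with doubled points; this is precisely the delicate point in Weil's original argument. I would establish separatedness through the valuative criterion: given a valuation of the function field $\mathbb{C}(X)=\mathbb{C}(Y)$ with a centre in each of two charts $U_g$ and $U_h$, I must show the two centres coincide in $Y$. Using that $X$ itself is separated, together with the closedness of the graph-closure of the action morphism $\mathrm{G}\times U\dashrightarrow X\times X$ after a careful shrinking of $U$, the cocycle relation forces the two centres to be identified by the transition map $\rho_{h^{-1}g}$, which is exactly the statement that they define the same point of $Y$. Finally, a possibly disconnected $\mathrm{G}$ is handled by treating the finitely many connected components, reducing everything to the connected case.
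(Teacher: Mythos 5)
Your overall architecture — gluing translate-charts of an open set along the partial translation maps, using the cocycle relation for compatibility and a closed-graph/valuative argument for separatedness — is the same Weil-style construction that the paper (following Kraft) carries out in \S\ref{sec:WeilKraft}. But your "local model" is precisely where the difficulty of the theorem lives, and it does not exist in the generality you claim. Openness of the domain of $\rho$ gives density of the domain in each slice $\{g\}\times X$ and product neighbourhoods around points \emph{inside} the domain, but it does not produce an open $O\ni e$ and a dense open $U\subseteq X$ with $O\times U\subseteq\big((\mathrm{G}\times X)\smallsetminus\mathrm{Base}(\rho)\big)$: the indeterminacy locus can meet the identity slice $\{e\}\times X$ in a divisor and accumulate on it from nearby slices, so that no product neighbourhood of any dense part of that slice lies in the domain; spreading out only yields $O'\times U$ for \emph{some} open $O'\subseteq\mathrm{G}$, with no control on whether $e\in O'$. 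The paper's substitute is the slice-wise notion of a $\mathrm{G}$-regular point (biregularity of $\widetilde{\rho}$ at $(g,p)$ for a dense set of $g$, for the given $p$), the fact that the set $X_{\text{reg}}$ of such points is open and dense (Proposition~\ref{pro:Kraft3}), and Lemma~\ref{lem:Kraft3} ("defined at $p$ implies biregular at $p$" once $X=X_{\text{reg}}$). These are also what make the gluing relation a genuine equivalence relation and what give closedness of the graphs $\Gamma(\rho_g)$ in $X\times X$, hence separatedness; your "careful shrinking of $U$" would have to handle all transition maps $\rho_{h^{-1}g}$, $g,h\in\mathrm{G}$, simultaneously, which no single shrinking can do — the correct "shrinking" is exactly the passage to $X_{\text{reg}}$, which you never construct.

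The same missing input undermines your final step: "left translation defines a regular action" only shows that each element of $\mathrm{G}$ acts by an automorphism of $Y$, whereas the theorem requires the \emph{joint} map $\mathrm{G}\times Y\to Y$ to be a morphism. Without a product neighbourhood of $\{e\}\times U$ in the domain you cannot reduce regularity near an arbitrary $(g_0,y)$ to your chart $U_e$, and for a chart $U_g$ the relevant map is the $g$-conjugated action $(o,u)\mapsto\rho(g^{-1}og,u)$, whose good behaviour near $(e,u)$ is not granted. The paper's proof of Theorem~\ref{thm:Kraft3} resolves this by covering $\mathrm{G}\times X$ by finitely many translates $g_iD$ of the biregularity locus $D=\mathrm{Breg}(\rho)$ (using quasi-compactness — which, incidentally, also repairs the finiteness problem in your construction: gluing one chart per element of $\mathrm{G}$ does not obviously yield a finite-type variety, whereas the paper glues only the finitely many copies $X^{(0)},\ldots,X^{(m)}$ into $\widetilde{X}(S)$), equipping each chart $X^{(i)}$ with the conjugated action $\rho^{(i)}(g,p)=\rho(g_igg_i^{-1},p)$, and proving that $\widetilde{\rho}_S$ is biregular on all of $\mathrm{G}\times\widetilde{X}^{(0)}$; only then does Lemma~\ref{lem:Kraft5} yield a regular action on $\bigcup_{g\in\mathrm{G}}g\cdot\mathcal{U}$, which is automatically a variety because it is an open subset of the finite gluing. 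So your proposal has the right skeleton, but it omits the two ideas ($\mathrm{G}$-regularity and the finite cover by translates with conjugated actions) that make the construction work.
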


In this section a variety is an algebraic
complex variety, and an algebraic group is an
algebraic $\mathbb{C}$-group. 

A rational map $\phi\colon X\dashrightarrow Y$
is called 
\textsl{biregular}\index{defi}{biregular (rational map)} 
in $p\in X$ if there is an open neighborhood
$\mathcal{U}\subset (X\smallsetminus\mathrm{Base}(\phi))$
of $p$ such that 
$\phi_{\vert\mathcal{U}}\colon\mathcal{U} \hookrightarrow Y$ 
is an open immersion. As a result the subset 
\[
X'=\big\{p\in X\,\vert\,\phi\text{ is biregular in }p\big\}
\] 
is open in $X$, and the induced morphism 
$\phi\colon X'\hookrightarrow Y$ is an 
open immersion. We can thus state:

\begin{lem}\label{lem:Kraft1}
Let $X$ and $Y$ be two varieties.
Let $\phi\colon X\dashrightarrow Y$ be a birational
map. Then the set
\[
\mathrm{Breg}(\phi)=\big\{p\in X\,\vert\,\phi\text{ is biregular in }p\big\}
\]\index{not}{$\mathrm{Breg}(\phi)$}
is open and dense in $X$.
\end{lem}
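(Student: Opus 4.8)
The plan is to treat the two assertions separately: openness is essentially already recorded in the discussion preceding the statement, so the only point really requiring an argument is density.

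For openness I would argue directly from the definition. If $p\in\mathrm{Breg}(\phi)$, then by definition there is an open neighbourhood $\mathcal{U}\subset X\smallsetminus\mathrm{Base}(\phi)$ of $p$ on which $\phi$ restricts to an open immersion $\phi_{\vert\mathcal{U}}\colon\mathcal{U}\hookrightarrow Y$. Every point $q\in\mathcal{U}$ then admits the same $\mathcal{U}$ as a witness, so $\mathcal{U}\subseteq\mathrm{Breg}(\phi)$; this exhibits $\mathrm{Breg}(\phi)$ as a union of open sets, hence as an open set.

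For density I would exploit that $\phi$ is birational, so it has a rational inverse $\psi=\phi^{-1}\colon Y\dashrightarrow X$. (In fact, by the very definition of a birational map recalled in \S\ref{sec:firstdef} there are dense open subsets of $X$ and $Y$ between which $\phi$ is an isomorphism, which already yields density; but I prefer to exhibit such subsets explicitly.) Write $U_\phi=X\smallsetminus\mathrm{Base}(\phi)$ and $U_\psi=Y\smallsetminus\mathrm{Base}(\psi)$ for the dense open loci where $\phi$ and $\psi$ are morphisms. On the open set $\Omega=U_\phi\cap\phi^{-1}(U_\psi)$ the composite $\psi\circ\phi$ is a genuine morphism $\Omega\to X$. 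The key step is to upgrade the identity $\psi\circ\phi=\mathrm{id}_X$ of rational maps, which only asserts agreement on some dense open subset, to a pointwise identity on all of $\Omega$: since $X$ is irreducible and separated and $\Omega$ is a dense open subset of $X$ (hence irreducible), the morphism $\psi\circ\phi_{\vert\Omega}$ and the inclusion $\Omega\hookrightarrow X$ coincide on a nonempty open subset and therefore on all of $\Omega$. Arguing symmetrically on $\Omega'=U_\psi\cap\psi^{-1}(U_\phi)\subseteq Y$ gives $\phi\circ\psi=\mathrm{id}$ on $\Omega'$.

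Finally I would set $A=\Omega\cap\phi^{-1}(\Omega')$ and $B=\Omega'\cap\psi^{-1}(\Omega)$. A short verification shows that $\phi$ maps $A$ into $B$, that $\psi$ maps $B$ into $A$, and that the two restrictions are mutually inverse, so $\phi_{\vert A}\colon A\to B$ is an isomorphism onto the open set $B\subseteq Y$, i.e.\ an open immersion. Since $\phi$ is dominant, $\phi^{-1}(\Omega')$ is dense, whence $A$ is a dense open subset of $X$ contained in $U_\phi$; thus $A\subseteq\mathrm{Breg}(\phi)$ and $\mathrm{Breg}(\phi)$ is dense. The only genuinely delicate point is the one flagged above, namely passing from generic to everywhere-agreement of $\psi\circ\phi$ with the identity, which is exactly where separatedness and irreducibility of the varieties enter; the remaining bookkeeping with the open sets $A$ and $B$ is routine.
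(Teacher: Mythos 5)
Your proof is correct and follows essentially the same route as the paper, which states the lemma as an immediate consequence of the definition: openness via the union of witnessing neighbourhoods (exactly your first paragraph), and density implicitly from the fact, recalled in \S\ref{sec:firstdef}, that a birational map is an isomorphism between two non-empty Zariski open subsets. Your explicit construction of the sets $A$ and $B$, including the separatedness argument upgrading $\psi\circ\phi=\mathrm{id}$ from generic to everywhere-agreement on $\Omega$, is just a careful unwinding of that definitional fact which the paper leaves to the reader.
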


\subsection{Rational group actions}

Let $X$ and $Z$ be two varieties. Let us recall
that a map $\phi\colon Z\to\mathrm{Bir}(X)$ 
is a morphism if there exists an open dense 
set $\mathcal{U}\subset Z\times X$ such that
\begin{itemize}
\item[$\diamond$] the induced map 
$\mathcal{U}\to X$, $(q,p)\mapsto\phi(q)(p)$
is a morphism of varieties;

\item[$\diamond$] for every $q\in Z$ the open
set 
$\mathcal{U}_q=\big\{p\in X\,\vert\,(q,p)\in\mathcal{U}\big\}$ is dense in $X$;

\item[$\diamond$] for every $q\in Z$ the 
birational map $\phi(q)\colon X\dashrightarrow X$
is defined on $\mathcal{U}_q$.
\end{itemize}
Equivalently there is a rational map 
$\phi\colon Z\times X\to X$ such that for 
every $q\in Z$
\begin{itemize}
\item[$\diamond$] the open subset 
$(Z\times X\smallsetminus\mathrm{Base}(\phi))\cap(\{q\}\times X)$ 
is dense in $\{q\}\times X$;

\item[$\diamond$] the induced birational 
map $\phi_q\colon X\dashrightarrow X$, 
$p\mapsto\phi(q,p)$ is birational.
\end{itemize}

Recall that this definition allows to define
the Zariski topology on $\mathrm{Bir}(X)$
(\emph{see}~\S\ref{sec:alg}).

We can now define rational group actions on 
varieties. Let $\mathrm{G}$ be an algebraic 
group, and let~$X$ be a variety. A 
\textsl{rational action}\index{defi}{rational action of a group on a variety} 
of $\mathrm{G}$ on $X$ is a morphism 
$\rho\colon\mathrm{G}\to\mathrm{Bir}(X)$ 
which is a morphism of groups. In 
other words there is a rational map still
denoted $\rho$
\[
\rho\colon\mathrm{G}\times X\dashrightarrow X
\]
such that
\begin{itemize}
\item[$\diamond$] the open set 
$((\mathrm{G}\times X)\smallsetminus\mathrm{Base}(\rho))\cap(\{g\}\times X)$ 
is dense in $\{g\}\times X$ for every 
$g\in\mathrm{G}$;

\item[$\diamond$] the induced map 
$\rho_g\colon X\dashrightarrow X$, 
$p\mapsto\rho(g,p)$ is birational
for every $g\in\mathrm{G}$;

\item[$\diamond$] the map $g\mapsto\rho_g$
is a group morphism.
\end{itemize}

\begin{thm}[\cite{Kraft:regularization}]\label{thm:Kraft2}
Let $\rho\colon\mathrm{G}\to\mathrm{Bir}(X)$ be 
a rational action where $X$ is affine. Assume 
that there exists a dense subgroup 
$\Gamma\subset\mathrm{G}$ such that 
$\rho(\Gamma)\subset\mathrm{Aut}(X)$. 
Then the $\mathrm{G}$-action on $X$ is 
regular.
\end{thm}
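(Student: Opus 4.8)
The plan is to prove that the rational $\mathrm{G}$-action on the affine variety $X$ is in fact regular, using the density of $\Gamma$ together with the fact that $\rho(\Gamma)$ acts by genuine automorphisms. The heart of the matter is to upgrade "biregular on a dense open set for each fixed group element" to "biregular everywhere, uniformly in the group parameter." First I would set up the fundamental open locus: by Lemma \ref{lem:Kraft1} each $\rho_g\colon X\dashrightarrow X$ is biregular on a dense open subset, and more importantly the rational map $\rho\colon\mathrm{G}\times X\dashrightarrow X$ has an open dense domain of definition $\mathcal{U}\subset\mathrm{G}\times X$. The key object to study is the "non-regular locus" of the action, namely the complement of the set of points where $\rho$ is biregular; I want to show this locus is empty.

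The central idea I would exploit is a translation/covering argument. For a fixed point $p\in X$, consider the set of $g\in\mathrm{G}$ for which $\rho$ is defined and biregular at $(g,p)$; this is an open subset $W_p\subset\mathrm{G}$. The crucial observation is that $\rho(\Gamma)\subset\mathrm{Aut}(X)$ means that for $\gamma\in\Gamma$ the map $\rho_\gamma$ is everywhere biregular, so composing with left translations by elements of $\Gamma$ lets me move the good locus around. Concretely, using the cocycle relation $\rho_{\gamma g}=\rho_\gamma\circ\rho_g$ and the fact that $\rho_\gamma$ is an automorphism, I would show that $W_p$ is invariant under left multiplication by $\Gamma$, hence (being open and nonempty, and $\Gamma$ being dense in the irreducible group $\mathrm{G}$) that $W_p=\mathrm{G}$. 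This gives biregularity of $\rho$ at every $(g,p)$, which is exactly regularity of the action.

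To make the translation argument rigorous I would first reduce to $\mathrm{G}$ connected (replacing $\mathrm{G}$ by its identity component changes nothing essential since $\Gamma$ meets each relevant component, or one handles components separately), and then verify that $\Gamma$-invariance of an open set forces it to be everything. The point is that $\Gamma$ dense and $W_p$ open nonempty with $\Gamma\cdot W_p=W_p$ implies $\overline{\Gamma}\cdot W_p=W_p$ as well, but $\overline{\Gamma}=\mathrm{G}$ acts transitively on itself, so $W_p=\mathrm{G}\cdot w=\mathrm{G}$ for any $w\in W_p$. I would need to check carefully that the affineness of $X$ enters correctly: it guarantees that $\mathrm{Aut}(X)$ is well-behaved and that "biregular" can be tested via regularity of the comorphism on the coordinate ring, so that the locus where $\rho$ fails to be an open immersion is genuinely closed and the above density argument applies.

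The hard part will be the bookkeeping that converts the abstract $\Gamma$-invariance into a statement about the single rational map $\rho$ rather than the individual maps $\rho_g$. One must be attentive that $\rho$ is a rational map on the product $\mathrm{G}\times X$, so biregularity at $(g,p)$ is a condition on the pair, and the translation $(g,p)\mapsto(\gamma g,p)$ must be shown to carry the biregular locus to itself precisely because $\rho_\gamma$ is a global automorphism (this is where $\rho(\Gamma)\subset\mathrm{Aut}(X)$ is indispensable, and where a purely generic action would fail). I expect the delicate verification to be that the composed rational map agrees with $\rho_\gamma\circ\rho$ on a dense set and therefore everywhere, so that regularity propagates; once this identity of rational maps is secured, the density of $\Gamma$ closes the argument and yields that $Y=X$ already works as the regularization, with the birational map in Theorem \ref{thm:Weil} being the identity.
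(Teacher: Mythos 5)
Your central translation argument only ever applies at points $p$ where $W_p$ is nonempty, and this is precisely the gap: $W_p\not=\emptyset$ means $p$ is a $\mathrm{G}$-regular point, and Proposition \ref{pro:Kraft3} only guarantees that $X_{\text{reg}}$ is open and \emph{dense} in $X$, not equal to $X$. So what your argument can prove is that $\widetilde{\rho}$ is biregular on $\mathrm{G}\times X_{\text{reg}}$, i.e. that the action is regular on the dense open set $X_{\text{reg}}$ — which is only the first half of the paper's proof. For $p\in X\smallsetminus X_{\text{reg}}$ the set $W_p$ is empty and the $\Gamma$-invariance argument is vacuous, so your announced goal of showing "the non-regular locus is empty" is never reached. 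The passage from $\mathrm{G}\times X_{\text{reg}}$ to $\mathrm{G}\times X$ is exactly where affineness of $X$ is indispensable, and in the paper it is carried by Lemma \ref{lem:Kraft8}: a rational map $\mathrm{G}\times X\dashrightarrow X$ with \emph{affine} target, regular on $\mathrm{G}\times X_{\text{reg}}$, and whose slices $\rho_\gamma$ are morphisms on all of $X$ for $\gamma$ in the dense set $\Gamma$, is a regular morphism. Your proposed role for affineness — that it makes the locus where $\rho$ fails to be biregular closed — is incorrect: that locus is closed for arbitrary varieties (Lemma \ref{lem:Kraft1}), and without Lemma \ref{lem:Kraft8} or a substitute your proof stops at $X_{\text{reg}}$; the conclusion that "$Y=X$ already works" does not follow.

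There is also a secondary, repairable flaw in the density upgrade. As written, "$\Gamma\cdot W_p=W_p$ implies $\overline{\Gamma}\cdot W_p=W_p$" is a non sequitur: invariance under a dense subgroup does not pass to the closure for an \emph{open} set ($g\cdot w$ is a limit of points $\gamma_i\cdot w\in W_p$, but $W_p$ is open, not closed). The correct route is via the complement: $F=\mathrm{G}\smallsetminus W_p$ is closed and $\Gamma$-invariant; for any $f\in F$ the set $\big\{g\in\mathrm{G}\,\vert\,gf\in F\big\}$ is closed (continuity of $g\mapsto gf$) and contains $\Gamma$, hence equals $\mathrm{G}$, so $\mathrm{G}f\subset F$ and thus $F=\mathrm{G}$ whenever $F\not=\emptyset$; consequently $W_p\not=\emptyset$ forces $W_p=\mathrm{G}$. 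Note that the $\Gamma$-invariance of $W_p$ itself is sound and is exactly the paper's Lemma \ref{lem:Kraft2}, since $\rho_h$ is biregular everywhere for $h\in\Gamma$; once repaired, your argument yields biregularity on all of $\mathrm{G}\times X_{\text{reg}}$ somewhat more directly than the paper, which routes this step through the glued regular model of Theorem \ref{thm:Kraft3}. But the theorem itself remains out of reach without the affine extension step of Lemma \ref{lem:Kraft8}.
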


\begin{defi}
Let $X$ and $Y$ be two varieties.
Let $\rho$ be a rational $\mathrm{G}$-action
on~$X$. Let $\mu$ be a rational $\mathrm{G}$-action
on $Y$.

A dominant rational map 
$\phi\colon X\dashrightarrow Y$ is 
\textsl{$\mathrm{G}$-equivariant}\index{defi}{$\mathrm{G}$-equivariant (rational map)}
if the following holds: for every 
$(g,p)\in\mathrm{G}\times X$ such 
that 
\begin{itemize}
\item[$\diamond$] $\rho$ is defined in $(g,p)$,

\item[$\diamond$] $\phi$ is defined in $p$ and 
in $\rho(g,p)$,

\item[$\diamond$] $\mu$ is defined in 
$(g,\phi(p))$,
\end{itemize}
we have $\phi(\rho(g,p))=\mu(g,\phi(p))$.
\end{defi}

\begin{rem}
The set of $(g,p)\in\mathrm{G}\times X$ 
satisfying the previous assumptions is
open and dense in $\mathrm{G}\times X$ 
and has the property that it meets 
all $\{g\}\times X$ in a dense open 
set.
\end{rem}

Let $X$ be a variety with a rational action 
$\rho\colon \mathrm{G}\times X\dashrightarrow X$ of 
an algebraic group~$\mathrm{G}$. Consider 
\begin{align*}
&\widetilde{\rho}\colon\mathrm{G}\times X\dashrightarrow\mathrm{G}\times X,&& (g,p)\mapsto (g,\rho(g,p)).
\end{align*}
It is clear that
$(\mathrm{G}\times X)\smallsetminus\mathrm{Base}(\widetilde{\rho})=(\mathrm{G}\times X)\smallsetminus\mathrm{Base}(\rho)$. 
Furthermore $\widetilde{\rho}$ is birational with 
inverse $\widetilde{\rho}^{-1}(g,p)=(g,\rho(g^{-1},p))$, 
that is 
\[
\widetilde{\rho}^{-1}=\tau\circ\widetilde{\rho}\circ\tau
\]
where $\tau$ is the isomorphism
\begin{align*}
& \tau\colon\mathrm{G}\times X\to\mathrm{G}\times X, && (g,p)\mapsto(g^{-1},p).
\end{align*}

\begin{defi}
A point $x\in X$ is called 
\textsl{$\mathrm{G}$-regular}\index{defi}{$\mathrm{G}$-regular (point)} 
for the rational $\mathrm{G}$-action $\rho$ on~$X$ if 
$\mathrm{Breg}(\widetilde{\rho})\cap(\mathrm{G}\times\{p\})$
 is dense in $\mathrm{G}\times\{p\}$. 

In other words a point $p\in X$ is called $\mathrm{G}$-regular 
for the rational $\mathrm{G}$-action $\rho$ on~$X$ if 
$\widetilde{\rho}$ is biregular in $(g,p)$ for all $g$ in 
a dense open set of $\mathrm{G}$.
\end{defi}

Denote by 
$X_{\text{reg}}\subset X$\index{not}{$X_{\text{reg}}$} 
the set of $\mathrm{G}$-regular points.

Let 
$\lambda_g\colon \mathrm{G}\stackrel{\sim}{\longrightarrow}\mathrm{G}$ 
be the left multiplication with $g\in\mathrm{G}$. 
For any $h\in\mathrm{G}$ the diagram
\[
  \xymatrix{
    \mathrm{G}\times X \ar@{-->}[r]^{\widetilde{\rho}} \ar[d]_{\lambda_h\times\mathrm{id}} & \mathrm{G}\times X \ar[d]^{\lambda_h\times\rho_h} \\
    \mathrm{G}\times X \ar@{-->}[r]_{\widetilde{\rho}} & \mathrm{G}\times X
  }
\]
commutes. This implies the following statement:

\begin{lem}[\cite{Kraft:regularization}]\label{lem:Kraft2}
If $\rho$ is defined in $(g,p)$ and if $\rho_h$ is defined in 
$g\cdot p$, then~$\rho$ is defined in $(hg,p)$.

If $\widetilde{\rho}$ is biregular in $(g,p)$ and if $\rho_h$ 
is biregular in $g\cdot p$, then $\widetilde{\rho}$ is 
biregular in~$(hg,p)$.
\end{lem}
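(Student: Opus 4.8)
The plan is to read both assertions directly off the commutative square displayed just above the lemma, which records the identity of rational maps
\[
\widetilde{\rho}\circ(\lambda_h\times\mathrm{id})=(\lambda_h\times\rho_h)\circ\widetilde{\rho}.
\]
Two structural facts will do all the work. First, $\lambda_h\times\mathrm{id}$ and $\lambda_h$ are isomorphisms, so precomposing or postcomposing a rational map with them neither enlarges nor shrinks the locus where that map is defined, respectively biregular; in particular $\lambda_h\times\mathrm{id}$ carries $(g,p)$ to $(hg,p)$. Second, "being defined at a point" and "being biregular at a point" are intrinsic to a rational map and hence are shared by any two rational maps that coincide. I will also use, as noted in the text, that $\widetilde{\rho}$ and $\rho$ have the same base locus, so "$\rho$ defined at $(g,p)$" and "$\widetilde{\rho}$ defined at $(g,p)$" mean the same thing.

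For the first statement I would argue as follows. By hypothesis $\widetilde{\rho}$ is defined at $(g,p)$, with value $\widetilde{\rho}(g,p)=(g,g\cdot p)$. Since $\lambda_h$ is an isomorphism and $\rho_h$ is defined at $g\cdot p$ by hypothesis, the map $\lambda_h\times\rho_h$ is defined at $(g,g\cdot p)$. Composing on suitable open neighborhoods shows that the right-hand side $(\lambda_h\times\rho_h)\circ\widetilde{\rho}$ is defined at $(g,p)$, hence $(g,p)$ lies in the domain of definition of this rational map. By the displayed equality the left-hand side $\widetilde{\rho}\circ(\lambda_h\times\mathrm{id})$ is therefore also defined at $(g,p)$; and because $\lambda_h\times\mathrm{id}$ is an isomorphism sending $(g,p)$ to $(hg,p)$, this is equivalent to $\widetilde{\rho}$ being defined at $(hg,p)$, which is the claim (equivalently $\rho$ is defined at $(hg,p)$).

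The second statement is the same chase with "defined" replaced by "biregular", invoking Lemma~\ref{lem:Kraft1} for the relevant biregular loci. Here $\widetilde{\rho}$ is biregular at $(g,p)$ with image $(g,g\cdot p)$, while $\lambda_h\times\rho_h$ is biregular at $(g,g\cdot p)$ since $\lambda_h$ is an isomorphism and $\rho_h$ is biregular at $g\cdot p$; a composite of two maps each an open immersion on a neighborhood is again an open immersion, so $(\lambda_h\times\rho_h)\circ\widetilde{\rho}$ is biregular at $(g,p)$. Transporting this across the displayed equality and through the isomorphism $\lambda_h\times\mathrm{id}$ yields biregularity of $\widetilde{\rho}$ at $(hg,p)$. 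The only point requiring care—and the main (mild) obstacle—is the transfer step itself: one must treat the domain of definition and the biregular locus as properties of the rational maps rather than of chosen representative morphisms, so that the equality of the two composites genuinely forwards the local property, and one must check that composition with an isomorphism is neutral for both properties. Once that is made precise, both implications are immediate.
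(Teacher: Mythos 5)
Your proof is correct and is essentially the paper's own argument: the paper deduces the lemma directly from the commutative square $\widetilde{\rho}\circ(\lambda_h\times\mathrm{id})=(\lambda_h\times\rho_h)\circ\widetilde{\rho}$, and your write-up simply makes that chase explicit, including the two points that genuinely need checking—that $\mathrm{Base}(\widetilde{\rho})=\mathrm{Base}(\rho)$ so the hypotheses transfer to $\widetilde{\rho}$, and that the defined and biregular loci of a rational map are intrinsic and pull back exactly under the isomorphism $\lambda_h\times\mathrm{id}$. Nothing is missing.
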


\begin{pro}[\cite{Kraft:regularization}]\label{pro:Kraft3}
The set $X_{\text{reg}}$ of $\mathrm{G}$-regular points is 
open and dense in $X$.

If $p$ belongs to $X_{\text{reg}}$ and if $\widetilde{\rho}$ 
is biregular in $(g,p)$, then $g\cdot p$ belongs to 
$X_{\text{reg}}$. 
\end{pro}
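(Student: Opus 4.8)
The plan is to work throughout with the birational map $\widetilde{\rho}\colon \mathrm{G}\times X\dashrightarrow \mathrm{G}\times X$ and its biregular locus. Since $\widetilde{\rho}$ is birational, Lemma~\ref{lem:Kraft1} tells us that $B:=\mathrm{Breg}(\widetilde{\rho})$ is open and dense in $\mathrm{G}\times X$. Writing $Z=(\mathrm{G}\times X)\smallsetminus B$ for its (closed, nowhere dense) complement and $Z_p=Z\cap(\mathrm{G}\times\{p\})$ for the fibre over $p$ of the second projection $\mathrm{pr}_2\colon \mathrm{G}\times X\to X$, the definition of $X_{\text{reg}}$ reads: $p\in X_{\text{reg}}$ if and only if $Z_p$ is nowhere dense in $\mathrm{G}\times\{p\}$. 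I will assume $X$ irreducible, the general case following by running the argument on each irreducible component of $X$ and of $\mathrm{G}$.

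For the first assertion I would argue through the dimensions of the fibres of $\mathrm{pr}_2\colon Z\to X$. For $\mathrm{G}$ irreducible, $Z_p$ is nowhere dense in $\mathrm{G}\times\{p\}$ exactly when $\dim Z_p<\dim\mathrm{G}$, so
\[
X_{\text{reg}}=\big\{p\in X\,\vert\,\dim Z_p<\dim\mathrm{G}\big\}.
\]
By the upper semicontinuity of the fibre dimension of a morphism, the set $\{p\,\vert\,\dim Z_p\geq\dim\mathrm{G}\}$ is closed, whence $X_{\text{reg}}$ is open. For density, suppose to the contrary that $X\smallsetminus X_{\text{reg}}$ contained a non-empty open set $U$; then $Z_p=\mathrm{G}\times\{p\}$ for every $p\in U$, so $\mathrm{G}\times U\subset Z$ and $\dim Z\geq\dim\mathrm{G}+\dim X=\dim(\mathrm{G}\times X)$, contradicting the fact that $Z$ is a proper closed subset. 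Hence $X_{\text{reg}}$ is dense.

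For the second assertion, fix $p\in X_{\text{reg}}$ and $g$ with $\widetilde{\rho}$ biregular at $(g,p)$, and set $q=\rho(g,p)=g\cdot p$. The key is a commuting diagram of the same type as the one preceding Lemma~\ref{lem:Kraft2}, but using right translation $R_g\colon k\mapsto kg$: a direct computation with $\rho(k,\rho(g,p'))=\rho(kg,p')$ gives
\[
\widetilde{\rho}\circ(R_g\times\rho_{g^{-1}})=(R_g\times\mathrm{id})\circ\widetilde{\rho},
\]
where $R_g\times\mathrm{id}$ is an isomorphism and $R_g\times\rho_{g^{-1}}$ sends $(k,q)$ to $(kg,p)$. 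Now biregularity of $\widetilde{\rho}$ at $(g,p)$ forces $\rho_g$ to be biregular at $p$, hence $\rho_{g^{-1}}$ is biregular at $q=\rho_g(p)$, so $R_g\times\rho_{g^{-1}}$ is biregular at $(k,q)$ for every $k$. On the other hand $p\in X_{\text{reg}}$ gives a dense open $\Omega\subset\mathrm{G}$ with $\widetilde{\rho}$ biregular at $(m,p)$ for all $m\in\Omega$; since $\Omega g^{-1}$ is again dense open, $\widetilde{\rho}$ is biregular at $(kg,p)$ for $k$ in a dense open set. For such $k$ the composite $\widetilde{\rho}\circ(R_g\times\rho_{g^{-1}})$ is biregular at $(k,q)$, hence so is $(R_g\times\mathrm{id})\circ\widetilde{\rho}$, and therefore $\widetilde{\rho}$ itself is biregular at $(k,q)$. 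As $k$ ranges over a dense open subset of $\mathrm{G}$, this is precisely the statement that $q\in X_{\text{reg}}$.

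The routine parts are the dictionary between ``biregular'', ``open immersion'' and ``local isomorphism'' (used to pass biregularity through the isomorphism $R_g\times\mathrm{id}$ and through compositions), together with the verification of the displayed identity. The main obstacle is the density claim in the first assertion: one must know that the non-biregular locus $Z$ is genuinely small, which is why I would lean on Lemma~\ref{lem:Kraft1} combined with the semicontinuity of fibre dimensions, rather than trying to exhibit regular points by hand. Everything else then reduces to the bookkeeping of which generic loci in $\mathrm{G}$ one intersects.
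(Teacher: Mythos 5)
Your proof is correct, and in both halves it takes a genuinely different route from the paper's. For the first assertion the paper argues in one line: writing $\mathrm{G}=\mathrm{G}_0\cup\ldots\cup\mathrm{G}_n$ for the decomposition into (irreducible) components, it sets $\mathcal{D}_i=\mathrm{pr}_2\big(\mathrm{Breg}(\widetilde{\rho})\cap(\mathrm{G}_i\times X)\big)$, notes that each $\mathcal{D}_i$ is open and dense because $\mathrm{Breg}(\widetilde{\rho})$ is open dense (Lemma \ref{lem:Kraft1}) and $\mathrm{pr}_2$ is an open surjection, and identifies $X_{\text{reg}}=\bigcap_i\mathcal{D}_i$ — the identification resting on exactly your observation that a nonempty open subset of the irreducible slice $\mathrm{G}_i\times\{p\}$ is automatically dense, so $p$ fails to be regular only when an entire slice lies in $Z$. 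For the second assertion the paper first transports biregularity through the inversion, using $\widetilde{\rho}^{-1}=\tau\circ\widetilde{\rho}\circ\tau$ to get biregularity at $(g^{-1},g\cdot p)$, and then invokes the left-translation Lemma \ref{lem:Kraft2} with the dense open set $\mathrm{G}'$ to cover $(hg^{-1},g\cdot p)$ for $h\in\mathrm{G}'$. Your right-translation identity $\widetilde{\rho}\circ(R_g\times\rho_{g^{-1}})=(R_g\times\mathrm{id})\circ\widetilde{\rho}$ performs the same sweep in a single step and dispenses with both $\tau$ and Lemma \ref{lem:Kraft2}; the price is the fibrewise statement that biregularity of $\widetilde{\rho}$ at $(g,p)$ forces $\rho_g$ to be biregular at $p$, which you correctly assert and which holds because $\widetilde{\rho}$ commutes with $\mathrm{pr}_1$, so an open immersion over $\mathrm{G}$ restricts to an open immersion on each slice $\{g\}\times X$. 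Your density argument by dimension count is also a sound substitute for the paper's projection argument.

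One citation does need repair: upper semicontinuity of fibre dimension is a statement on the source (Chevalley), not on the target, and for a non-proper morphism the set $\{p\,\vert\,\dim Z_p\geq k\}$ can fail to be closed even for $Z$ closed and irreducible in a product — e.g.\ for $Z=V(at-b)\times\mathbb{A}^1_s\subset\mathbb{A}^2_{(a,b)}\times\mathbb{A}^2_{(t,s)}$ projected to $(a,b)$, the locus where the fibre has dimension $\geq 1$ is $\{a\not=0\}\cup\{(0,0)\}$, which is not closed. Your step nevertheless survives, for the reason already implicit in your setup: at the threshold $k=\dim\mathrm{G}$ the condition $\dim Z_p\geq\dim\mathrm{G}$ says the fibre is all of $\mathrm{G}\times\{p\}$, so the bad locus is exactly $X\smallsetminus\mathrm{pr}_2\big(\mathrm{Breg}(\widetilde{\rho})\big)$, the complement of the image of an open set under the open map $\mathrm{pr}_2$ — which is precisely the paper's one-line argument. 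Replace the semicontinuity appeal by this remark and the proof is complete.
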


\begin{proof}
Let 
$\mathrm{G}=\mathrm{G}_0\cup\mathrm{G}_1\cup\ldots\cup\mathrm{G}_n$ 
be the decomposition into connected components. Then 
$D_i=\mathrm{Breg}(\rho)\cap(\mathrm{G}_i\times X)$ 
is open and dense for all $i$ (Lemma~\ref{lem:Kraft1}); the 
same holds for the image $\mathcal{D}_i\subseteq X$ under 
the projection onto $X$. Since 
$X_{\text{reg}}=~\displaystyle\bigcap_i\mathcal{D}_i$ the 
set $X_{\text{reg}}$ is open and dense in $X$.

If $\widetilde{\rho}$ is biregular in $(g,p)$, then 
$\widetilde{\rho}^{-1}=\tau\circ\widetilde{\rho}\circ\tau$ 
is biregular in $(g,g\cdot p)$. As a consequence $\widetilde{\rho}$ 
is biregular in $\tau(g,g\cdot p)=(g^{-1},g\cdot p)$. If $p$ is 
$\mathrm{G}$-regular, then $\rho_h$ is biregular in $p$ for 
all $h$ in a dense open subset $\mathrm{G}'$ of $\mathrm{G}$. 
According to the second assertion of Lemma \ref{lem:Kraft2} 
the birational map $\widetilde{\rho}$ is biregular in 
$(hg^{-1},g\cdot p)$ for all $h\in \mathrm{G}'$. Hence $g\cdot p$ belongs 
to $X_{\text{reg}}$.
\end{proof}

A consequence of Proposition 
\ref{pro:Kraft3} allows us to only consider the case of a 
rational $\mathrm{G}$-action such every point is 
$\mathrm{G}$-regular.

\begin{cor}[\cite{Kraft:regularization}]
For the rational $\mathrm{G}$-action on $X_{\text{reg}}$ 
every point is $\mathrm{G}$-regular.
\end{cor}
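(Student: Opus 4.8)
The plan is to reduce everything to the two facts already established in Proposition \ref{pro:Kraft3}. Write $U:=X_{\text{reg}}$. The first thing I would do is record that, by Proposition \ref{pro:Kraft3}, the set $U$ is open and dense in $X$, and that $U$ is ``rationally $\mathrm{G}$-invariant'' in the sense that whenever $p\in U$ and $\widetilde{\rho}$ is biregular in $(g,p)$ one has $g\cdot p\in U$. From this I would check that $\rho$ restricts to a genuine rational $\mathrm{G}$-action $\rho'\colon\mathrm{G}\times U\dashrightarrow U$: since $U$ is open, $\mathrm{G}\times U$ is open in $\mathrm{G}\times X$, and the invariance statement guarantees that on a dense open subset of each $\{g\}\times U$ the map $\rho_g$ carries $U$ into $U$, so $\rho'$ satisfies the three conditions defining a rational action. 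Let $\widetilde{\rho'}\colon\mathrm{G}\times U\dashrightarrow\mathrm{G}\times U$ denote the associated birational map $(g,p)\mapsto(g,\rho'(g,p))$.

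The central step will be to compare the biregular loci of $\widetilde{\rho}$ and $\widetilde{\rho'}$ over points of $U$. Fix $p\in U$ and $g\in\mathrm{G}$ with $(g,p)\in\mathrm{Breg}(\widetilde{\rho})$. Since $\mathrm{Breg}(\widetilde{\rho})$ is open (Lemma \ref{lem:Kraft1}) and $\widetilde{\rho}$ restricts to an open immersion on it, intersecting with the open set $\mathrm{G}\times U$ produces an open neighbourhood $W=\mathrm{Breg}(\widetilde{\rho})\cap(\mathrm{G}\times U)$ of $(g,p)$ on which $\widetilde{\rho}$ is an open immersion. For every $(g',p')\in W$ we have $p'\in U$ and $\widetilde{\rho}$ biregular in $(g',p')$, so Proposition \ref{pro:Kraft3} gives $\widetilde{\rho}(g',p')=(g',g'\cdot p')\in\mathrm{G}\times U$. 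Hence $\widetilde{\rho}|_W$ is an open immersion whose image lies in the open subset $\mathrm{G}\times U$, and restricting an open immersion to the preimage of an open subset of the target again yields an open immersion; this is exactly the statement that $(g,p)\in\mathrm{Breg}(\widetilde{\rho'})$.

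To conclude I would take any $p\in U$. By definition of $U=X_{\text{reg}}$ the point $p$ is $\mathrm{G}$-regular for $\rho$, so $\widetilde{\rho}$ is biregular in $(g,p)$ for all $g$ in a dense open subset $\mathrm{G}_p\subset\mathrm{G}$. By the previous paragraph $\widetilde{\rho'}$ is then biregular in $(g,p)$ for all $g\in\mathrm{G}_p$, whence $\mathrm{Breg}(\widetilde{\rho'})\cap(\mathrm{G}\times\{p\})$ contains $\mathrm{G}_p\times\{p\}$ and is therefore dense in $\mathrm{G}\times\{p\}$. Thus $p$ is $\mathrm{G}$-regular for the restricted action $\rho'$, which is precisely the assertion.

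The only genuinely delicate point I expect is the verification in the first step that the restriction is again a rational action \emph{on} $U$, rather than merely a rational map into $X$: this is exactly where the invariance half of Proposition \ref{pro:Kraft3} is indispensable, and without it the statement would not even typecheck. Once that is in place, the remainder is a formal manipulation of open immersions and biregular loci, with no calculation required.
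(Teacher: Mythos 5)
Your argument is correct and is essentially the derivation the paper intends: the corollary appears there as an immediate consequence of Proposition \ref{pro:Kraft3}, and your second and third paragraphs spell out exactly how the invariance half of that proposition transfers biregular points of $\widetilde{\rho}$ over $X_{\text{reg}}$ into biregular points of the restricted map $\widetilde{\rho'}$. One small correction to your closing remark: that the restriction is again a rational action on $U=X_{\text{reg}}$ follows already from $U$ being dense open and each $\rho_g$ being birational (hence dominant, so the locus of $U$ mapping into $U$ is dense open in $U$) — the invariance statement, which for fixed $g$ says nothing about density in $\{g\}\times U$, is not what makes your first step work; where it is genuinely indispensable, and correctly invoked, is your second paragraph, to guarantee that the image $(g',g'\cdot p')$ lands in $\mathrm{G}\times U$.
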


\begin{lem}[\cite{Kraft:regularization}]\label{lem:Kraft3}
Assume that $X=X_{\text{reg}}$. If $\rho_g$ is defined 
in $p$, {\it i.e.} if 
$p\in X\smallsetminus\mathrm{Base}(\rho_g)$, then $\rho_g$ 
is biregular in $p$.
\end{lem}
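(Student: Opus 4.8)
The plan is to prove the apparently stronger statement that $\widetilde{\rho}$ is biregular at $(g,p)$; this is enough, because a local isomorphism of $\mathrm{G}\times X$ at $(g,p)$ preserves the first projection and hence restricts to a local isomorphism of the fibre $\{g\}\times X\cong X$ at $p$, which is exactly $\rho_g$. I would first record this elementary translation as a preliminary observation: whenever $\widetilde{\rho}$ is biregular at a point $(g',x)$, the map $\rho_{g'}$ is biregular at $x$. The only place where the hypothesis ``$\rho_g$ is defined at $p$'' enters is to guarantee that $q:=\rho_g(p)=g\cdot p$ is a genuine point of $X$; once this is granted, the assumption $X=X_{\text{reg}}$ makes \emph{both} $p$ and $q$ be $\mathrm{G}$-regular points.

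To produce biregularity at $(g,p)$ I would apply Lemma~\ref{lem:Kraft2} to the factorisation $g=(gh^{-1})\cdot h$. Thus it suffices to find a single $h\in\mathrm{G}$ satisfying two conditions: (A) $\widetilde{\rho}$ is biregular at $(h,p)$, and (B) $\rho_{gh^{-1}}$ is biregular at $h\cdot p$; Lemma~\ref{lem:Kraft2} then yields $\widetilde{\rho}$ biregular at $\big((gh^{-1})h,p\big)=(g,p)$. Condition (A) holds for all $h$ in a dense open subset of $\mathrm{G}$, directly by the $\mathrm{G}$-regularity of $p$ together with Lemma~\ref{lem:Kraft1}.

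The crux is condition (B), whose naive ``good locus'' depends on $h$ through the moving point $h\cdot p$. The key computation decouples it: using $\widetilde{\rho}^{-1}(g',x)=(g',\rho(g'^{-1},x))$ and $(hg^{-1})\cdot q=h\cdot p$, one checks the identity $(gh^{-1},\,h\cdot p)=\widetilde{\rho}^{-1}(gh^{-1},q)$. Consequently $\widetilde{\rho}$ is biregular at $(gh^{-1},h\cdot p)$ if and only if $(gh^{-1},q)$ lies in $\mathrm{Breg}(\widetilde{\rho}^{-1})$. Writing $\widetilde{\rho}^{-1}=\tau\circ\widetilde{\rho}\circ\tau$ with $\tau(g',x)=(g'^{-1},x)$, the set $\{g'\in\mathrm{G}\mid (g',q)\in\mathrm{Breg}(\widetilde{\rho}^{-1})\}$ equals $\{g'\mid (g'^{-1},q)\in\mathrm{Breg}(\widetilde{\rho})\}$, which is dense open in $\mathrm{G}$ precisely because the \emph{fixed} point $q$ is $\mathrm{G}$-regular. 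Hence, through the preliminary observation, (B) holds for all $h$ in a dense open subset as well.

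Intersecting the two dense open subsets of $\mathrm{G}$ gives a nonempty set; choosing any $h$ in it and invoking Lemma~\ref{lem:Kraft2} completes the argument. The main obstacle is exactly the decoupling carried out above: the identity $(gh^{-1},h\cdot p)=\widetilde{\rho}^{-1}(gh^{-1},q)$ replaces the $h$-dependent target $h\cdot p$ by the single regular point $q$, and it is available only because $q=\rho_g(p)$ is well defined. This is what the hypothesis on $p$ provides, and it is what ultimately forces the use of $X=X_{\text{reg}}$, so that $q$ too is regular and the density statement for (B) is legitimate.
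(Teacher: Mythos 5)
Your proof is correct, and it rests on exactly the same input as the paper's: the hypothesis that $\rho_g$ is defined at $p$ is used only to make $q=g\cdot p$ an honest point of $X$, and then $X=X_{\text{reg}}$ is invoked twice, at $p$ and at $q$, to choose a generic $h$. But the execution is genuinely different. The paper never leaves $X$: it picks $h$ in a dense open subset of $\mathrm{G}$ so that $\rho_h$ is biregular at the \emph{fixed} point $q$ and $\rho_{hg}$ is biregular at the \emph{fixed} point $p$ (the latter being a dense-open condition on $h$ via the right translation $h\mapsto hg$), and then simply inverts the factorisation $\rho_{hg}=\rho_h\circ\rho_g$ to get $\rho_g=\rho_h^{-1}\circ\rho_{hg}$ biregular at $p$. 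Because that choice of decomposition places both genericity requirements at fixed points, the paper has no moving-target problem and hence needs no analogue of your decoupling step. You instead chose the decomposition $g=(gh^{-1})h$ so as to feed Lemma~\ref{lem:Kraft2} on the product $\mathrm{G}\times X$, which creates the $h$-dependent target $h\cdot p$; your identity $(gh^{-1},h\cdot p)=\widetilde{\rho}^{-1}(gh^{-1},q)$ combined with $\widetilde{\rho}^{-1}=\tau\circ\widetilde{\rho}\circ\tau$ resolves this correctly, since regularity of $q$ makes $\big\{g'\mid (g'^{-1},q)\in\mathrm{Breg}(\widetilde{\rho})\big\}$ dense open. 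What your route buys is a formally stronger conclusion — biregularity of $\widetilde{\rho}$ at the point $(g,p)$ itself, not merely of the fibre map $\rho_g$ at $p$ — which is a statement the paper only obtains implicitly by combining this lemma with Lemma~\ref{lem:Kraft2}; the cost is the extra product-level bookkeeping (your preliminary fibre-restriction observation, which is correct since $\widetilde{\rho}$ commutes with the first projection, plus the decoupling identity). One small point worth making explicit if you write this up: the pointwise equality $(hg^{-1})\cdot q=h\cdot p$ requires $\rho_{hg^{-1}}$ to be defined at $q$, which is guaranteed precisely on the dense open set where you verify condition (B), so the identity should be asserted only after $h$ has been chosen in the intersection of your two dense open sets.
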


\begin{proof}
Suppose that $\rho_g$ is defined in $p\in X$. As 
$X=X_{\text{reg}}$ there exists a dense open 
subset $\mathrm{G}'$ of $\mathrm{G}$ such 
that for all $h\in\mathrm{G}'$ 
\begin{itemize}
\item[$\diamond$] $\rho_h$ is biregular in $g\cdot p$,

\item[$\diamond$] $\rho_{hg}$ is biregular in $p$.
\end{itemize}
Since $\rho_{hg}=\rho_h\circ\rho_g$ the map 
$\rho_g$ is biregular in $p$. 
\end{proof}

Let us recall that if $\phi\colon X\dashrightarrow Y$ is a rational
map, its graph $\Gamma(\phi)$ is defined by 
\[
\Gamma(\phi)=\big\{(x,y)\in X\times Y\,\vert\,x\in X\smallsetminus\mathrm{Base}(\phi)\text{ and } \phi(x)=y\big\}.
\]
In particular 
$\mathrm{pr}_1(\Gamma(\phi))=X\smallsetminus\mathrm{Base}(\phi)$
and 
$\mathrm{pr}_2(\Gamma(\phi))=\phi(X\smallsetminus\mathrm{Base}(\phi))$.

\begin{lem}[\cite{Kraft:regularization}]
Let $\rho$ be a rational $\mathrm{G}$-action on a variety $X$.
Suppose that every point of $X$ is $\mathrm{G}$-regular, that
is $X=X_{\text{reg}}$. Then for every $g\in\mathrm{G}$ 
the graph $\Gamma(\rho_g)$ of $\rho_g$ is closed in $X\times X$. 
\end{lem}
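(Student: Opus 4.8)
The plan is to exploit that, on $X=X_{\text{reg}}$, every $\rho_g$ is biregular wherever it is defined, so that $\Gamma(\rho_g)$ is automatically closed away from a small ``corner'', and then to eliminate the corner by a degeneration argument that genuinely uses $\mathrm{G}$-regularity.

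First I would record the local picture. Write $U=X\smallsetminus\mathrm{Base}(\rho_g)$ and $V=X\smallsetminus\mathrm{Base}(\rho_{g^{-1}})$. By Lemma~\ref{lem:Kraft3} the map $\rho_g$ is biregular at every point of $U$, hence $\rho_g|_U\colon U\hookrightarrow X$ is an open immersion; moreover the consequence of $\widetilde{\rho}^{-1}=\tau\circ\widetilde{\rho}\circ\tau$ recorded in the proof of Proposition~\ref{pro:Kraft3} (namely, $\rho_g$ biregular at $p$ forces $\rho_{g^{-1}}$ biregular at $\rho_g(p)$) shows that $\rho_g$ and $\rho_{g^{-1}}$ restrict to mutually inverse isomorphisms $\rho_g\colon U\stackrel{\sim}{\longrightarrow}V$ and $\rho_{g^{-1}}\colon V\stackrel{\sim}{\longrightarrow}U$. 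Consequently $\Gamma(\rho_g)$, being the graph of the morphism $\rho_g|_U$ into the separated variety $X$, is closed in $U\times X$; and since $\Gamma(\rho_g)=\tau\big(\Gamma(\rho_{g^{-1}})\big)$, where $\tau$ is the coordinate swap of $X\times X$, it is likewise closed in $X\times V$.

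From this it follows that any point of $\overline{\Gamma(\rho_g)}\smallsetminus\Gamma(\rho_g)$ (closure in $X\times X$) must lie in $\mathrm{Base}(\rho_g)\times\mathrm{Base}(\rho_{g^{-1}})$: if its first coordinate lay in $U$ it would lie in the open set $U\times X$, where $\Gamma(\rho_g)$ is closed, hence in $\Gamma(\rho_g)$; symmetrically for the second coordinate and $X\times V$. So it remains to show there is no boundary point $(x_0,y_0)$ with $x_0\in\mathrm{Base}(\rho_g)$ and $y_0\in\mathrm{Base}(\rho_{g^{-1}})$. This ``corner'' step is the heart of the matter, and it is exactly here that $\mathrm{G}$-regularity is indispensable, since for a general open immersion the graph need not be closed.

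To rule the corner out I would choose a smooth curve $c\mapsto(x(c),y(c))$ in $\overline{\Gamma(\rho_g)}$ with $(x(0),y(0))=(x_0,y_0)$ and $x(c)\in U$, $y(c)=\rho_g(x(c))$ for $c\neq 0$ (equivalently, one argues with a discrete valuation ring). Since $x_0,y_0\in X=X_{\text{reg}}$, the sets $\{h:\rho_h\text{ biregular at }x_0\}$ and $\{h:\rho_{hg^{-1}}\text{ biregular at }y_0\}$ are dense open in $\mathrm{G}$, so I may pick $h$ in their intersection. For such $h$ the map $\rho_h$ is biregular near $x_0$ and $\rho_{hg^{-1}}$ is biregular near $y_0$; hence for $c\neq 0$ small, Lemma~\ref{lem:Kraft2} gives $\rho_{hg^{-1}}\big(y(c)\big)=\rho_{hg^{-1}}\big(\rho_g(x(c))\big)=\rho_h\big(x(c)\big)$, and since both sides are morphisms near $c=0$, letting $c\to 0$ yields $\rho_{hg^{-1}}(y_0)=\rho_h(x_0)$. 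Now $\rho_{hg^{-1}}$ is biregular at $y_0$ with value $\rho_h(x_0)$, so its inverse $\rho_{gh^{-1}}$ is biregular at $\rho_h(x_0)$; combining this with the biregularity of $\rho_h$ at $x_0$, Lemma~\ref{lem:Kraft2} forces $\rho_g=\rho_{gh^{-1}}\circ\rho_h$ to be biregular at $x_0$, contradicting $x_0\in\mathrm{Base}(\rho_g)$. Thus no corner point exists, $\overline{\Gamma(\rho_g)}=\Gamma(\rho_g)$, and the graph is closed. The main obstacle is precisely this final degeneration argument: closedness on $U\times X$ and on $X\times V$ is formal, but closing up the corner forces one to transport the limit through a generic group element and to invoke $\mathrm{G}$-regularity simultaneously at $x_0$ and at $y_0$.
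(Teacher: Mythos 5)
Your proof is correct, and its engine is the same as the paper's: at a closure point $(x_0,y_0)$ of $\Gamma(\rho_g)$ you use $\mathrm{G}$-regularity at \emph{both} coordinates to pick a generic $h$ (your $h$ is the paper's $h$ multiplied on the right by $g$ — the paper asks that $\rho_{hg}$ be biregular at $x_0$ and $\rho_h$ at $y_0$), you derive the limit compatibility $\rho_{hg^{-1}}(y_0)=\rho_h(x_0)$, and you compose biregular maps to force $\rho_g$ to be biregular at $x_0$. Where you differ is in the packaging, in two respects. First, your preliminary reduction to the corner $\mathrm{Base}(\rho_g)\times\mathrm{Base}(\rho_{g^{-1}})$ — closedness of the graph in $U\times X$ and, via the swap $\tau$, in $X\times V$ — is correct but dispensable: your own degeneration step, like the paper's argument, applies verbatim to an arbitrary point of $\overline{\Gamma(\rho_g)}$ and shows directly that $\rho_g$ is defined there, so no case analysis is needed. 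Second, to obtain the limit identity you invoke curve selection (or a DVR) through $(x_0,y_0)$, whereas the paper gets it by pure topology: it introduces $\phi=(\rho_{hg},\rho_h)\colon X\times X\dashrightarrow X\times X$, observes that $\phi$ maps $\Gamma(\rho_g)$ into the diagonal $\Delta(X)$, hence $\overline{\phi(\Gamma)}\subseteq\Delta(X)$, and then — since $\phi$ is biregular at $(x_0,y_0)$ and $\mathrm{pr}_{1\vert\Delta(X)}$ is an isomorphism — reads off from $\rho_{hg}\circ\mathrm{pr}_{1\vert\Gamma}=\mathrm{pr}_{1\vert\Delta(X)}\circ\phi_{\vert\Gamma}$ that $\mathrm{pr}_{1\vert\overline{\Gamma}}$ is biregular at $(x_0,y_0)$, i.e.\ that $\rho_g$ is defined at $x_0$. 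The diagonal trick buys uniformity and avoids any appeal to the existence of curves through a point of a closure; your route is slightly heavier but makes the geometric content — the boundary point is a limit of graph points transported by a generic group element — more explicit. One small point of hygiene: Lemma~\ref{lem:Kraft2} is stated with $\widetilde{\rho}$-biregularity in one hypothesis, while you quote it with only fibrewise biregularity of $\rho_h$ at $x_0$; this is harmless, both because the fibrewise analogue is immediate (a composition of local isomorphisms represents $\rho_{k_2k_1}$), and because $\mathrm{G}$-regularity of $x_0$ in fact lets you choose $h$ with $\widetilde{\rho}$ itself biregular at $(h,x_0)$.
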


\begin{proof}
Denote by $\Gamma$ the closure 
$\overline{\Gamma(\rho_g)}$ of the graph 
of $\rho_g$ in $X\times X$. Let us prove 
that for any $(x_0,y_0)\in\Gamma$
the rational map $\rho_g$ is defined in $x_0$. 
It is equivalent to prove that the morphism 
$\mathrm{pr}_{1\vert\Gamma}\colon\Gamma\to X$ is 
biregular in $(x_0,y_0)$. 

Let $h$ be an element of $\mathrm{G}$ such 
that $\rho_{hg}$ is biregular in $x_0$ and 
$\rho_h$ is biregular in $y_0$. Consider the
birational map
\[
\phi=(\rho_{hg},\rho_h)\colon X\times X\dashrightarrow X\times X.
\]
If $\phi$ is defined in $(x,y)\in\Gamma(\rho_g)$, 
$y=g\cdot x$, then $\phi(x,y)=((hg)\cdot x,(hg)\cdot x)$ belongs 
to the diagonal
\[
\Delta(X)=\big\{(x,x)\in X\,\vert\,x\in X\big\}
\]
of $X\times X$. It follows that 
$\overline{\phi(\Gamma)}\subseteq\Delta(X)$. 
Since $\phi$ is biregular in $(x_0,y_0)$, 
the restriction 
$\varphi=\phi_{\vert\Gamma}\colon\Gamma\dashrightarrow\Delta(X)$ 
of $\phi$ to $\Gamma$ is also biregular in 
$(x_0,y_0)$. By construction 
\[
\rho_{hg}\circ\mathrm{pr}_{1\vert\Gamma}=\mathrm{pr}_{1\vert\Delta(X)}\circ\varphi;
\]
indeed
\[
  \xymatrix{
    X\times X \ar@{-->}[r]^{\rho_{hg}\times\rho_h}  & X\times X \\
    \Gamma \ar@{-->}[r]^{\varphi} \ar[d]_{\mathrm{pr}_{1\vert\Gamma}}\ar@{^{(}->}[u] & \Delta(X) \ar[d]^{\mathrm{pr}_{1\vert\Delta(X)}}\ar@{^{(}->}[u] \\
    X \ar@{-->}[r]^{\rho_{hg}}  & X 
  }
\]
But $\rho_{hg}$ is biregular in 
$\mathrm{pr}_{1\vert\Gamma}(x_0,x_0)$, $\varphi$
is biregular in $(x_0,y_0)$ and 
$\mathrm{pr}_{1\vert\Delta(X)}$ is an isomorphism, 
so $\mathrm{pr}_{1\vert\Gamma}$ is biregular 
in $(x_0,y_0)$.
\end{proof}

\begin{lem}[\cite{Kraft:regularization}]\label{lem:Kraft5}
Let $\rho$ be a rational action of $\mathrm{G}$ 
on a variety $X$. Suppose that there is a dense 
open subset $\mathcal{U}$ of $X$ such that
\begin{align*}
& \widetilde{\rho}\colon\mathrm{G}\times\mathcal{U}\to\mathrm{G}\times X, && (g,p)\mapsto\big(g,\rho(g,p)\big)
\end{align*}
defines an open immersion.
Then the open dense subset 
$Y=\displaystyle\bigcup_{g\in\mathrm{G}}g\cdot \mathcal{U}\subseteq X$ 
carries a regular $\mathrm{G}$-action.
\end{lem}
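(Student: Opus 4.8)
The plan is to exploit the open immersion hypothesis to produce, for each $g\in\mathrm{G}$, an explicit isomorphism $\rho_g\colon\mathcal{U}\xrightarrow{\sim}g\cdot\mathcal{U}$, and then to transport the single good chart $\mathcal{U}$ around $Y$ by translating in the group. First I would record the consequences of the hypothesis. Writing $W=\widetilde{\rho}(\mathrm{G}\times\mathcal{U})\subseteq\mathrm{G}\times X$, the map $\widetilde{\rho}$ identifies $\mathrm{G}\times\mathcal{U}$ with the open subset $W$; intersecting with the slice $\{g\}\times X$ shows that $\rho_g\colon\mathcal{U}\to X$ is an isomorphism onto the open set $g\cdot\mathcal{U}=\rho_g(\mathcal{U})$, whose inverse is a morphism. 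In particular each $g\cdot\mathcal{U}$ is open in $X$, so $Y=\bigcup_{g}g\cdot\mathcal{U}$ is open; since $\rho$ is a group homomorphism we have $\rho_e=\mathrm{id}$, whence $\mathcal{U}=e\cdot\mathcal{U}\subseteq Y$ and $Y$ is dense.

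Next I would show that every $\rho_h$ restricts to an automorphism of $Y$. Fix $y\in Y$ and $h\in\mathrm{G}$, and write $y=\rho_g(u)$ with $u\in\mathcal{U}$. Because $(hg,u)$ lies in $\mathrm{G}\times\mathcal{U}$, the open immersion $\widetilde{\rho}$ is biregular at $(hg,u)$, so $\rho_{hg}$ is biregular at $u$ and $\rho_{hg}(u)\in(hg)\cdot\mathcal{U}\subseteq Y$. Since $\rho_g$ is biregular near $u$ and $\rho_{hg}=\rho_h\circ\rho_g$ as birational maps, near $y$ one has $\rho_h=\rho_{hg}\circ\rho_g^{-1}$; as $\rho_g^{-1}$ is biregular near $y=\rho_g(u)$ with $\rho_g^{-1}(y)=u$ and $\rho_{hg}$ is biregular at $u$, the map $\rho_h$ is biregular at $y$ with $\rho_h(y)=\rho_{hg}(u)\in Y$. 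Thus $\rho_h$ is a morphism $Y\to Y$; applying the same to $h^{-1}$ and using $\rho_{h^{-1}}\circ\rho_h=\rho_e=\mathrm{id}$ gives $\rho_h\in\mathrm{Aut}(Y)$.

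Finally I would establish that the joint map $\alpha\colon\mathrm{G}\times Y\to Y$, $(h,y)\mapsto\rho_h(y)$, is a morphism, checking this on the open cover $\{\mathrm{G}\times(g_0\cdot\mathcal{U})\}_{g_0\in\mathrm{G}}$ of $\mathrm{G}\times Y$. On such a chart I would factor $\alpha$ as
\[
(h,y)\longmapsto\big(h,\rho_{g_0}^{-1}(y)\big)\longmapsto\rho_{hg_0}\big(\rho_{g_0}^{-1}(y)\big)=\rho_h(y).
\]
The first arrow is a morphism because $\rho_{g_0}^{-1}\colon g_0\cdot\mathcal{U}\to\mathcal{U}$ is a morphism; the second is a morphism because $(h,u)\mapsto\rho_{hg_0}(u)$ is the composite of the morphism $(h,u)\mapsto(hg_0,u)$ on $\mathrm{G}\times\mathcal{U}$, the morphism $\widetilde{\rho}$, and the projection $\mathrm{pr}_2$. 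Hence $\alpha$ is a morphism on each chart, and therefore on $\mathrm{G}\times Y$, and the pointwise identity from the previous step shows it computes the rational action $\rho$. Together with $\rho_e=\mathrm{id}$ and the homomorphism property $\rho_g\circ\rho_h=\rho_{gh}$ — which holds birationally and, now being an identity between morphisms into the separated variety $Y$ that agree on a dense open set, holds everywhere — this exhibits a regular $\mathrm{G}$-action on $Y$.

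The hard part will be the second step: upgrading the fact that for each fixed $h$ the map $\rho_h$ is biregular on the dense open $\mathcal{U}$ to the statement that $\rho_h$ is biregular at every point of $Y$. The device that overcomes it is to move the good chart by the group, that is, to apply the open immersion at the translated parameter $(hg,u)$ rather than at $(h,\cdot)$ and to invert the local isomorphism $\rho_g$. This is exactly where the hypothesis that $\widetilde{\rho}$ is an open immersion on all of $\mathrm{G}\times\mathcal{U}$, and not merely dominant, is essential.
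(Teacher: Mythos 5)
Your proof is correct and follows essentially the same route as the paper's (very terse) argument: each $\rho_g$ restricts to an isomorphism $\mathcal{U}\xrightarrow{\sim}g\cdot\mathcal{U}$, so $Y$ is open, dense and stable under all $\rho_g$, and regularity of the joint map $\mathrm{G}\times Y\to Y$ is checked on the charts $\mathrm{G}\times(g_0\cdot\mathcal{U})$ by transporting back to $\mathrm{G}\times\mathcal{U}$ via $\rho_{g_0}^{-1}$ and applying $\widetilde{\rho}$ at the translated parameter $hg_0$. Your write-up merely makes explicit the local-biregularity bookkeeping (the factorization $\rho_h=\rho_{hg}\circ\rho_g^{-1}$) that the paper leaves implicit.
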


\begin{proof}
Any $\rho_g$ induces an isomorphism 
$\mathcal{U}\stackrel{\sim}{\longrightarrow} g\cdot\mathcal{U}$. Therefore,
\[
Y=\displaystyle\bigcup_{g\in\mathrm{G}} g\cdot X\subset~X
\]
is stable under all
$\rho_g$. By assumption the induced map on 
$\mathrm{G}\times\mathcal{U}$ is a morphism, so 
the induced map on $\mathrm{G}\times g\cdot \mathcal{U}$
is a morphism for all $g\in\mathrm{G}$. As a result
the induced map $\mathrm{G}\times Y\to Y$ is a 
morphism.
\end{proof}

\subsection{Construction of a regular model}

\begin{thm}[\cite{Kraft:regularization}]\label{thm:Kraft3}
Let $X$ be a variety with a rational action of $\mathrm{G}$.
Suppose that every point of $X$ is $\mathrm{G}$-regular.
Then there exists a variety $Y$ with a regular $\mathrm{G}$-action
and a $\mathrm{G}$-equivariant open immersion.
\end{thm}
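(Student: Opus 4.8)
The plan is to build $Y$ by gluing together the $\mathrm{G}$-translates of $X$ along the birational self-maps $\rho_g$, and then to certify that the resulting object is a genuine separated variety on which $\mathrm{G}$ acts \emph{regularly}. The hypothesis $X=X_{\text{reg}}$ supplies exactly the gluing data: by Lemma~\ref{lem:Kraft3} every $\rho_g\colon X\dashrightarrow X$ restricts to an isomorphism of its domain $X\smallsetminus\mathrm{Base}(\rho_g)$ onto an open subset of $X$, and the group law yields the cocycle identity $\rho_g\circ\rho_h=\rho_{gh}$ (with $\rho_e=\mathrm{id}$). Concretely one takes a copy $V_g$ of $X$ for each $g\in\mathrm{G}$ and glues $V_g$ to $V_h$ along the locus where $\rho_{h^{-1}g}$ is biregular, the cocycle identity guaranteeing consistency. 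The clean way to make this precise as a single scheme is to present $Y$ as the quotient of the finite-type scheme $\mathrm{G}\times X$ by the free action $s\bullet(g,p)=(gs^{-1},\rho_s(p))$ (a group action because $\rho$ is), the residual left-translation $k\cdot[g,p]=[kg,p]$ descending to the sought $\mathrm{G}$-action on $Y$ and $p\mapsto[e,p]$ giving the chart $V_e\cong X$.

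Granting that this quotient exists as a variety, the remaining verifications are formal. Finite type of $Y$ is immediate from its presentation as the image of the finite-type scheme $\mathrm{G}\times X$ under the quotient morphism. The open immersion $X=V_e\hookrightarrow Y$ is $\mathrm{G}$-equivariant because $k\cdot[e,p]=[k,p]=[e,\rho_k(p)]$, which is precisely the image of $\rho(k,p)$. Finally, to see that the induced action on $Y$ is regular, I would apply Lemma~\ref{lem:Kraft5} with $\mathcal{U}$ the dense open image of $X$ in $Y$: on $\mathcal{U}$ the lifted map $(g,p)\mapsto(g,g\cdot p)$ sends $V_e$ isomorphically into the chart $V_g$ and is thus an open immersion, so $Y=\bigcup_{g\in\mathrm{G}}g\cdot\mathcal{U}$ carries a regular $\mathrm{G}$-action.

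The main obstacle is therefore the existence of $Y$ as a \emph{separated} variety: an abstract gluing or quotient of this type a priori produces only an algebraic space, or a scheme with doubled points arising from pairs $(g,h)$ for which $\rho_{h^{-1}g}$ does not extend across the overlap. This is exactly where the full force of $X=X_{\text{reg}}$ enters, via the lemma that each graph $\Gamma(\rho_g)$ is closed in $X\times X$. The strategy is to deduce from this closedness that the action $\bullet$ on $\mathrm{G}\times X$ is proper (equivalently, that the glued overlaps are closed in the relevant $V_g\times V_h$), so that the geometric quotient is separated and no doubling occurs. Once separatedness is established, the finite type, regularity, and equivariance recorded above complete the construction of the desired $\mathrm{G}$-equivariant open immersion $X\hookrightarrow Y$.
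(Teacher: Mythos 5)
Your overall architecture coincides with the paper's: charts are translates of $X$ glued along the loci where the $\rho_g$ are biregular (with Lemma \ref{lem:Kraft3} ensuring symmetry of the identifications), the closed-graph lemma is used for separatedness, and Lemma \ref{lem:Kraft5} closes the argument. But there is a genuine gap in how you produce $Y$ as a \emph{variety}. You glue one copy $V_g$ for \emph{every} $g\in\mathrm{G}$, packaged as a quotient of $\mathrm{G}\times X$ by $s\bullet(g,p)=(gs^{-1},\rho_s(p))$. This is not an action on the scheme $\mathrm{G}\times X$: it is undefined wherever $p\in\mathrm{Base}(\rho_s)$, so the quotient only makes sense as a gluing along an equivalence relation, a priori an algebraic space. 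In particular the "quotient morphism" you invoke to get finite type does not exist: the canonical map $(g,p)\mapsto[g,p]$ is only a \emph{rational} map to $Y$, since in the chart $V_{g_0}$ it reads $(g,p)\mapsto\rho(g_0^{-1}g,p)$. Hence "finite type is immediate from the image of the finite-type scheme $\mathrm{G}\times X$" is unjustified, and a gluing of uncountably many full copies of $X$ need not be quasi-compact at all.

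The missing idea --- and the paper's key move --- is to reduce to \emph{finitely many} charts before gluing. Since $X=X_{\text{reg}}$, every $p\in X$ admits some $g$ with $(g,p)\in D=\mathrm{Breg}(\rho)$, so the open sets $g\cdot D$ (for the left translation action on the $\mathrm{G}$-factor) cover $\mathrm{G}\times X$; quasi-compactness then yields a finite subset $S=\{g_0=e,g_1,\ldots,g_m\}$ with $\bigcup_i g_i\cdot D=\mathrm{G}\times X$, and one glues only the copies $X^{(0)},\ldots,X^{(m)}$ to obtain an honest variety $\widetilde{X}(S)$ (separated by the closed-graph lemma, exactly as you propose). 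The same finite covering also repairs your verification of the hypothesis of Lemma \ref{lem:Kraft5}, which as written is only fiberwise: knowing that for each fixed $g$ the map $p\mapsto g\cdot p$ sends $V_e$ isomorphically into $V_g$ does not yet make $(g,p)\mapsto(g,g\cdot p)$ an open immersion on $\mathrm{G}\times\widetilde{X}^{(0)}$, because the chart expression $(g,p)\mapsto\rho(g_0^{-1}g,p)$ is merely rational. The paper gets the joint biregularity by factoring the map on each $g_i\cdot D^{(0)}$ as $(g,p)\mapsto(g,(g_i^{-1}g)\cdot p)$, biregular there with image in $\mathrm{G}\times\widetilde{X}^{(0)}$, followed by $(g,y)\mapsto(g,g_i\cdot y)$, which is biregular because $\overline{\rho}_{g_i}$ defines an isomorphism $\widetilde{X}^{(0)}\stackrel{\sim}{\to}\widetilde{X}^{(i)}$ (Lemma \ref{lem:Kraft7}). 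With these two repairs --- finiteness of the gluing datum and the joint, not fiberwise, open-immersion argument --- your proof matches the paper's.
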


Assume now that $X$ is a variety with a rational $\mathrm{G}$-action
$\rho$ such that $X_{\text{reg}}=X$. Consider a finite subset
$S=\big\{g_0=e,\,g_1,\,g_2,\,\ldots,\,g_m\big\}$ of $\mathrm{G}$. Denote
by $X^{(0)}$, $X^{(1)}$, $\ldots$, $X^{(m)}$ some copies of $X$.
Consider the disjoint union
\[
X(S)=X^{(0)}\cup X^{(1)}\cup\ldots\cup X^{(m)}.
\]
Let us define on $X^{(i)}$ the following relations
\[
\left\{
\begin{array}{ll}
\forall\,i\quad p_i\sim p'_i \Longleftrightarrow p_i=p'_i\\
\forall\,i,\,j\quad i\not=j\quad p_i\sim p_j\Longleftrightarrow \rho_{g_j^{-1}g_i} \text{ is defined in $p_i$ and sends $p_i$ to $p_j$}
\end{array}
\right.
\]
This defines an equivalence relation (Lemma \ref{lem:Kraft3} 
is needed to prove the symmetry).
Consider $\widetilde{X}(S)=\faktor{X(S)}{\sim}$ the set of equivalence 
classes endowed with the induced topology.

\begin{lem}[\cite{Kraft:regularization}]
The maps $\iota_i\colon X^{(i)}\to\widetilde{X}(S)$ are 
open immersions and endow $\widetilde{X}(S)$ with the 
structure of a variety.
\end{lem}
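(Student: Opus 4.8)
The plan is to recognise $\widetilde{X}(S)$ as the variety obtained by gluing the copies $X^{(0)},X^{(1)},\ldots,X^{(m)}$ along open subvarieties via the transition maps coming from $\rho$, and then to read off the asserted properties from the classical gluing lemma for varieties. For each pair $(i,j)$ set $h_{ij}=g_j^{-1}g_i$ and let $U_{ij}=X^{(i)}\smallsetminus\mathrm{Base}(\rho_{h_{ij}})$ be the open dense subset of $X^{(i)}$ on which $\rho_{h_{ij}}$ is defined (openness and density come from Lemma \ref{lem:Kraft1}). Since by hypothesis $X=X_{\text{reg}}$, Lemma \ref{lem:Kraft3} guarantees that $\rho_{h_{ij}}$ is in fact biregular at every point of $U_{ij}$; hence $\phi_{ij}:=\rho_{h_{ij}}|_{U_{ij}}$ is an open immersion, and one checks that its image is exactly $U_{ji}$ (the locus where the inverse birational map $\rho_{h_{ij}}^{-1}=\rho_{h_{ji}}$ is defined), so that $\phi_{ij}\colon U_{ij}\xrightarrow{\sim}U_{ji}$ is an isomorphism of varieties. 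By definition the equivalence relation $\sim$ identifies $p\in U_{ij}\subset X^{(i)}$ with $\phi_{ij}(p)\in U_{ji}\subset X^{(j)}$, so $\widetilde{X}(S)$ is precisely the topological space underlying this gluing.

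First I would verify that the $(\phi_{ij})$ form a genuine gluing datum. The normalisation $g_0=e$ gives $\phi_{ii}=\rho_{\mathrm{id}}=\mathrm{id}$, and $\phi_{ji}=\phi_{ij}^{-1}$ because $h_{ji}=h_{ij}^{-1}$ and $\rho$ is a morphism of groups into $\mathrm{Bir}(X)$. The crucial point is the cocycle condition $\phi_{jk}\circ\phi_{ij}=\phi_{ik}$ on the appropriate overlap: it follows from the relation $\rho_{h_{jk}}\circ\rho_{h_{ij}}=\rho_{h_{ik}}$ in $\mathrm{Bir}(X)$ — valid since $h_{jk}h_{ij}=g_k^{-1}g_jg_j^{-1}g_i=h_{ik}$ — once one knows that the three maps are biregular on the relevant open sets. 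Again $X=X_{\text{reg}}$ together with Lemma \ref{lem:Kraft3} supplies this: on $U_{ij}\cap U_{ik}$ both $\rho_{h_{jk}}\circ\rho_{h_{ij}}$ and $\rho_{h_{ik}}$ are biregular and agree as birational maps, hence agree as morphisms, while $\phi_{ij}$ carries $U_{ij}\cap U_{ik}$ into $U_{ji}\cap U_{jk}$. The same group-theoretic computation shows that $\sim$ is transitive and that within a single copy $X^{(i)}$ no two distinct points are identified (a chain $p_i\sim p_j\sim q_i$ forces $\rho_{h_{ii}}=\mathrm{id}$ to send $p_i$ to $q_i$), so each $\iota_i$ is injective.

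With these verifications in hand I would invoke the gluing lemma: a finite family of varieties $X^{(i)}$, open subsets $U_{ij}\subseteq X^{(i)}$ and isomorphisms $\phi_{ij}\colon U_{ij}\to U_{ji}$ satisfying $\phi_{ii}=\mathrm{id}$, $\phi_{ji}=\phi_{ij}^{-1}$ and the cocycle condition can be glued into a single variety in which each $X^{(i)}$ sits as an open subvariety. Transporting the sheaf of regular functions through the $\iota_i$ — which is consistent precisely because of the cocycle condition — endows $\widetilde{X}(S)$ with the structure of a variety for which the $\iota_i$ are open immersions, and their images $\iota_i(X^{(i)})$ are open and cover $\widetilde{X}(S)$ by construction. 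Note that separatedness need not hold and is not claimed, which is harmless here since a regular $\mathrm{G}$-model is extracted afterwards.

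The main obstacle is entirely concentrated in the cocycle and transitivity verification and, underlying it, the passage from \emph{defined} to \emph{biregular}. Without the hypothesis $X=X_{\text{reg}}$ the transition maps $\rho_{h_{ij}}$ would only be birational where defined, the images $\phi_{ij}(U_{ij})$ would fail to be open, and the naive quotient would not be a variety; it is exactly Lemma \ref{lem:Kraft3} that converts the group law of $\rho$ into a genuine cocycle of isomorphisms. Everything else — the identity and inverse axioms and the final appeal to the gluing lemma — is routine.
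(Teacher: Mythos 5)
Your gluing mechanics are correct and are essentially the intended argument: since $X=X_{\text{reg}}$, Lemma \ref{lem:Kraft3} upgrades ``defined at $p$'' to ``biregular at $p$'', so each transition map $\phi_{ij}=\rho_{g_j^{-1}g_i}$ restricts to an isomorphism $U_{ij}\to U_{ji}$, the group law $h_{jk}h_{ij}=h_{ik}$ yields the cocycle condition and the transitivity of $\sim$, the chain argument $p_i\sim p_j\sim q_i$ forces injectivity of $\iota_i$, and openness of $\iota_i$ follows because the saturation of an open $V\subseteq X^{(i)}$ meets $X^{(j)}$ in $\phi_{ij}(V\cap U_{ij})$. (The paper itself, which states the lemma without proof, flags that Lemma \ref{lem:Kraft3} is what makes $\sim$ symmetric, consistent with your use of it.)

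The genuine gap is your closing remark that separatedness ``need not hold and is not claimed''. It is claimed: in this section a variety is separated by convention, so ``endow $\widetilde{X}(S)$ with the structure of a variety'' includes precisely the point you discard; and it is not harmless downstream, since in the proof of Theorem \ref{thm:Kraft3} the regular model is produced, via Lemma \ref{lem:Kraft5}, as an \emph{open subset} of $\widetilde{X}(S)$, so non-separatedness would propagate to the variety $Y$ of Weil's theorem rather than being ``extracted away''. Gluing along open subsets is exactly where doubled points arise, so separation must be proved, and the tool is the unlabelled lemma stated just before the construction: for every $g\in\mathrm{G}$ the graph $\Gamma(\rho_g)$ is closed in $X\times X$ (valid because $X=X_{\text{reg}}$). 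Since on $U_{ij}$ every point of definition is biregular, the graph of your transition isomorphism $\phi_{ij}$ is exactly $\Gamma(\rho_{g_j^{-1}g_i})$, and a gluing of separated varieties along open subsets is separated if and only if the graphs of the transition maps are closed in $X^{(i)}\times X^{(j)}$. Adding this observation closes the gap; indeed that closed-graph lemma appears in the paper for this purpose and is otherwise unused.
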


Let us fix the open immersion 
$\iota_0\colon X=X^{(0)}\hookrightarrow\widetilde{X}(S)$. 
Then $\mathrm{G}$ acts rationally on $\widetilde{X}(S)$ 
via $\overline{\rho}=\overline{\rho}_S$ such that $\iota_0$ 
is $\mathrm{G}$-equivariant. Consider any $X^{(i)}$ as 
the variety~$X$ with the rational $\mathrm{G}$-action 
\[
\rho^{(i)}(g,p)=\rho(g_igg_i^{-1},p);
\]
by construction of $\widetilde{X}(S)$ the open immersions
\[
\iota_i\colon X^{(i)}\hookrightarrow\widetilde{X}(S)
\]
are all $\mathrm{G}$-equivariant.

\begin{lem}[\cite{Kraft:regularization}]\label{lem:Kraft7}
Let $\widetilde{X}^{(i)}$ be the image of the 
open immersion 
$\iota_i\colon X^{(i)}\hookrightarrow\widetilde{X}(S)$.
For all $i$ the rational map $\overline{\rho}_{g_i}$ is
defined on $\widetilde{X}^{(0)}$.

Furthermore 
$\overline{\rho}_{g_i}\colon\widetilde{X}^{(0)}\stackrel{\sim}{\to}\widetilde{X}^{(i)}$ defines an isomorphism.
\end{lem}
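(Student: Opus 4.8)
The plan is to show that the restriction of $\overline{\rho}_{g_i}$ to $\widetilde{X}^{(0)}$ coincides with the tautological gluing map $\psi_i:=\iota_i\circ\iota_0^{-1}$. Since $\iota_0$ and $\iota_i$ are open immersions with images $\widetilde{X}^{(0)}$ and $\widetilde{X}^{(i)}$, the map $\psi_i\colon\widetilde{X}^{(0)}\stackrel{\sim}{\longrightarrow}\widetilde{X}^{(i)}$ is an isomorphism of varieties defined on all of $\widetilde{X}^{(0)}$. Hence, once the identity $\overline{\rho}_{g_i}|_{\widetilde{X}^{(0)}}=\psi_i$ is established, both assertions of the lemma follow simultaneously.

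First I would isolate a dense open subset of $\widetilde{X}^{(0)}$ on which the computation is transparent. As $X=X_{\text{reg}}$, the birational map $\rho_{g_i}$ is biregular at every point of its domain $\mathcal{U}_i\subset X$ by Lemma \ref{lem:Kraft3}, and $\mathcal{U}_i$ is open and dense. For $p\in\mathcal{U}_i$, the $\mathrm{G}$-equivariance of $\iota_0$ gives $\overline{\rho}_{g_i}(\iota_0(p))=\iota_0(\rho_{g_i}(p))$. Writing $r=\rho_{g_i}(p)$, the defining gluing relation of $\widetilde{X}(S)$ identifies the point $r$ of the copy $X^{(0)}$ with the point $p$ of the copy $X^{(i)}$: indeed $\rho_{g_i^{-1}}=\rho_{g_i}^{-1}$ is defined at $r$ (biregularity of $\rho_{g_i}$ at $p$) and sends $r$ to $p$, which is precisely the condition $r\sim p$. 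Therefore $\iota_0(r)=\iota_i(p)=\psi_i(\iota_0(p))$, so $\overline{\rho}_{g_i}$ and $\psi_i$ agree on the dense open set $\iota_0(\mathcal{U}_i)$.

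Finally I would promote this agreement to regularity on the whole of $\widetilde{X}^{(0)}$. The variety $\widetilde{X}(S)$ is separated, so two rational maps into it that coincide on a dense open set coincide as rational maps; consequently the closure of the graph of $\overline{\rho}_{g_i}|_{\widetilde{X}^{(0)}}$ equals the graph of the morphism $\psi_i$. As the latter projects isomorphically onto $\widetilde{X}^{(0)}$, the rational map $\overline{\rho}_{g_i}$ is in fact biregular at every point of $\widetilde{X}^{(0)}$ and there equals $\psi_i\colon\widetilde{X}^{(0)}\stackrel{\sim}{\longrightarrow}\widetilde{X}^{(i)}$. The one genuinely delicate step is this last passage from ``defined and correct on a dense open'' to ``regular everywhere on $\widetilde{X}^{(0)}$''; this is exactly the payoff of the construction, since the gluing was arranged so that the a priori merely rational translation $\overline{\rho}_{g_i}$ becomes the globally defined chart-change $\psi_i$. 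I would take care to invoke separatedness of $\widetilde{X}(S)$ and the graph description of the domain of a rational map, rather than trying to argue pointwise.
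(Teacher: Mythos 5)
Your proof is correct and is essentially the paper's own argument: the paper likewise introduces the chart-change map $\tau_i=\iota_i\circ\iota_0^{-1}$, verifies $\tau_i(\overline{p})=g_i\cdot\overline{p}$ on the dense open set $\iota_0(\mathcal{U})$ by combining the $\mathrm{G}$-equivariance of $\iota_0$ with the gluing identification $\iota_0(g_i\cdot p)=\iota_i(p)$, and concludes that $\overline{\rho}_{g_i}$ coincides with this everywhere-defined open immersion. Your version merely makes two steps explicit that the paper leaves tacit --- the appeal to Lemma \ref{lem:Kraft3} to justify the identification $r\sim p$, and the separatedness/graph argument promoting dense-open agreement to equality of rational maps --- which is a welcome precision, not a different route.
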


\begin{proof}
Consider the open immersion
\[
\tau_i=\iota_i\circ\iota_0^{-1}\colon\widetilde{X}^{(0)}\hookrightarrow\widetilde{X}(S).
\]
Note that $\mathrm{im}\,\tau_i=\widetilde{X}^{(i)}$.
Let us check that $\tau_i(\overline{p})=g_i\overline{p}$. 
It is sufficient to show that it holds on an open 
dense subset of $\widetilde{X}^{(0)}$. Let 
$\mathcal{U}\subseteq X$ be the open dense set 
where $g_i\cdot p$ is defined. Take $p$ in~$\mathcal{U}$.
On the one hand by definition
\[
\iota_0(g_i\cdot p)=\iota_i(p);
\]
on the other hand
\[
\iota_0(g_i\cdot p)=g_i\cdot\iota_0(p).
\]
As a result $g_i\cdot \iota_0(p)=\iota_i(p)$ and 
\[
\tau_i(\overline{p})=\iota_i(\iota_0^{-1}(\overline{p}))=g_i\cdot \iota_0(\iota_0^{-1}(\overline{p}))=g_i\cdot \overline{p}
\]
for any $\overline{p}\in\iota_0(\mathcal{U})$.
\end{proof}

\begin{proof}[Proof of Theorem \ref{thm:Kraft3}]
Set $D=\mathrm{Breg}(\rho)\cap(\mathrm{G}\times X)$. 
Since $X_{\text{reg}}=X$ for any $p\in X$ there is
an element $g$ in $\mathrm{G}$ such that $(g,p)\in D$.
As a consequence 
$\displaystyle\bigcup_{g\in\mathrm{G}}g\cdot D=\mathrm{G}\times X$
where $\mathrm{G}$ acts on $\mathrm{G}\times X$ by 
left-multiplication on $\mathrm{G}$. Hence 
$\displaystyle\bigcup_ig_iD=\mathrm{G}\times X$
for a suitable finite subset 
\[
S=\big\{g_0=e,\,g_1,\,g_2,\,\ldots,\,g_m\big\}.
\]
Recall that $\widetilde{X}^{(0)}=\mathrm{im}(\iota_0)$.
Let $D^{(0)}\subset\mathrm{G}\times\widetilde{X}^{(0)}$
be the image of $D$. 
Consider the rational map
\begin{align*}
&\widetilde{\rho}_S\colon\mathrm{G}\times\widetilde{X}^{(0)}\dashrightarrow\mathrm{G}\times\widetilde{X}(S).
\end{align*}
The map $(g,p)\mapsto(g,g\cdot p)$ is the composition
of 
$(g,p)\mapsto(g,(g_i^{-1}g)\cdot p)$ 
and 
$(g,y)\mapsto(g,g_i\cdot y)$. The first one is biregular on 
$g_i\cdot D^{(0)}$ and its image is contained in 
$\mathrm{G}\times\widetilde{X}^{(0)}$; the second 
is biregular on $\mathrm{G}\times\widetilde{X}^{(0)}$
(Lemma \ref{lem:Kraft7}). As 
$\mathrm{G}\times\widetilde{X}^{(0)}=\displaystyle\bigcup_ig_i\cdot D^{(0)}$
the map $\widetilde{\rho}_S$ is biregular. As a 
consequence the rational action $\overline{\rho}$
of $\mathrm{G}$ on $\widetilde{X}(S)$ has the 
property that 
\[
\widetilde{\rho}_S\colon\mathrm{G}\times\widetilde{X}^{(0)}\hookrightarrow\mathrm{G}\times\widetilde{X}(S)
\]
defines an open immersion. Lemma \ref{lem:Kraft5}
allows to conclude.
\end{proof}

\subsection{Proof of Theorem \ref{thm:Kraft2}}

Let us start with the following statement:

\begin{lem}[\cite{Kraft:regularization}]\label{lem:Kraft8}
Let $X$, $Y$, $Z$ be varieties. Assume that $Z$ is affine. 
Let $\phi\colon X\times Y\dashrightarrow Z$ be a rational
map. Suppose that 
\begin{itemize} 
\item[$\diamond$] there exists an open dense subset $\mathcal{U}$
of $Y$ such that $\phi$ is defined on $X\times\mathcal{U}$;

\item[$\diamond$] there exists a dense subset $X'$ of $X$ such that
the induced maps $\phi_p\colon\{p\}\times Y\to Z$ are 
morphisms for all $p\in X'$.
\end{itemize}
Then $\phi$ is a regular morphism.
\end{lem}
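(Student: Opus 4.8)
The plan is to reduce the statement to the case $Z=\mathbb{A}^1_\mathbb{C}$ and then to an explicit tensor-product computation. Since $Z$ is affine, I would fix a closed immersion $Z\hookrightarrow\mathbb{A}^N$; the rational map $\phi$ is then given by $N$ rational functions $f_1,\ldots,f_N$ on $X\times Y$, and $\phi$ is a morphism as soon as each $f_j$ is regular everywhere. Indeed, once the $f_j$ are globally regular the resulting morphism $X\times Y\to\mathbb{A}^N$ has image in $Z$, because $Z$ is closed and $\phi$ already maps a dense open into $Z$. Each $f_j$ inherits the two hypotheses: it is regular on $X\times\mathcal{U}$, and its restriction $(f_j)_p$ to $\{p\}\times Y$ is regular for every $p\in X'$. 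So it suffices to treat a single rational function $f$ on $X\times Y$.

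Regularity being local on the source, I would cover $X$ and $Y$ by affine opens and assume $X=\mathrm{Spec}\,A$ and $Y=\mathrm{Spec}\,B$ are integral and affine, so that, using that $\mathbb{C}$ is algebraically closed, $R:=A\otimes_\mathbb{C}B$ is a domain and $f\in\mathrm{Frac}(R)$. The trace of $X'$ on the chart remains dense and both hypotheses persist. I would then shrink $\mathcal{U}$ to a basic open $\{b\neq 0\}$ with $b\in B$; regularity of $f$ on $X\times\{b\neq 0\}=\mathrm{Spec}\,R_b$ means $f\in R_b$, i.e. $g:=b^nf\in R$ for some $n\geq 0$. The goal becomes to show that $g$ is divisible by $b^n$ in $R$, for then $f=g/b^n\in R$, which is exactly the assertion that $f$ is regular everywhere (recall that for a domain $R=\bigcap_{\mathfrak m}R_{\mathfrak m}$).

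For the divisibility I would specialize. Note first that $f$ restricts to $\{p\}\times Y$ because $f$ is regular on $X\times\{b\neq 0\}\supseteq\{p\}\times\{b\neq 0\}$. For $p\in X'$ write $\mathrm{ev}_p\colon A\to\mathbb{C}$ for the evaluation and $g_p=(\mathrm{ev}_p\otimes\mathrm{id})(g)\in B$ for the restriction of $g$ to $\{p\}\times Y$; since $b\in B$ is unchanged under this restriction, $g_p=b^n f_p$, and the second hypothesis gives $g_p\in b^nB$. Passing to $M:=B/b^nB$, viewed as a $\mathbb{C}$-vector space, the image $\bar g$ of $g$ in $A\otimes_\mathbb{C}M$ therefore specializes to $0$ at every $p\in X'$. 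Writing $\bar g=\sum_i a_i\otimes m_i$ with the $m_i$ linearly independent over $\mathbb{C}$, the specialization $\sum_i a_i(p)m_i$ vanishes if and only if $a_i(p)=0$ for all $i$; hence each $a_i$ vanishes on the dense set $X'$ and so $a_i=0$ in $A$. Thus $\bar g=0$, i.e. $g\in b^nR$, and the argument is complete.

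The only genuinely delicate point is the reduction to the integral affine situation, together with the fact that $A\otimes_\mathbb{C}B$ is a domain — this is where I use that the base field is algebraically closed — and the bookkeeping ensuring that shrinking $\mathcal{U}$ to a basic open and passing to affine charts keeps both hypotheses intact. Everything after that is the short linear-algebra observation in $A\otimes_\mathbb{C}(B/b^nB)$ combined with the density of $X'$, where I expect no real obstacle.
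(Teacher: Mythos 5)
The paper itself does not prove this lemma: it is stated verbatim with a citation to \cite{Kraft:regularization}, so the only meaningful comparison is with the argument in that source. Your proof is correct and is essentially Kraft's: reduce to $Z=\mathbb{A}^1_\mathbb{C}$ and to integral affine charts, pick $b$ with $Y_b\subseteq\mathcal{U}$ so that $g=b^nf\in A\otimes_\mathbb{C}B$, and use density of $X'$ together with a tensor decomposition to conclude $g\in b^n(A\otimes_\mathbb{C}B)$. The only (cosmetic) difference is where the linear algebra happens: Kraft writes $g=\sum_i p_i\otimes q_i$ with the $p_i\in A$ linearly independent over $\mathbb{C}$ and uses the evaluations at points of the dense set $X'$ to force each $q_i\in b^nB$, whereas you first pass to $A\otimes_\mathbb{C}\bigl(B/b^nB\bigr)$ and put the independence on the $B$-side — the two are mirror images of the same argument, and your handling of the delicate points (restriction of hypotheses to charts, $g_p=b^nF_p$ in $B$ via agreement on the dense open $D(b)$ of the reduced $Y$, flatness giving $\ker\bigl(A\otimes B\to A\otimes B/b^nB\bigr)=b^n(A\otimes B)$) is sound.
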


Consider a rational action 
$\rho\colon\mathrm{G}\to\mathrm{Bir}(X)$ of an algebraic
group on a variety~$X$. Assume that there is a dense 
subgroup $\Gamma$ of $\mathrm{G}$ such that 
$\rho(\Gamma)\subset\mathrm{Aut}(X)$.
\begin{itemize}
\item[$\diamond$] Let us first prove that the rational 
$\mathrm{G}$-action on the open dense set 
$X_{\text{reg}}\subseteq~X$ is regular. For every 
$p\in X_{\text{reg}}$ there is $g\in\Gamma$ such 
that $\widetilde{\rho}$ is biregular in $(g,p)$. By 
assumption for any $h\in\Gamma$ the map 
$\rho_h$ is biregular on $X$, hence the map $\widetilde{\rho}$ is 
biregular in $(h,p)$ for any $h\in\Gamma$ (Lemma 
\ref{lem:Kraft2}).
Furthermore $h\cdot p$ belongs to $X_{\text{reg}}$
(Proposition \ref{pro:Kraft3}), {\it i.e.}
$X_{\text{reg}}$ is stable under $\Gamma$.
According to Theorem \ref{thm:Kraft3} there 
exists a $\mathrm{G}$-equivariant open immersion
\[
X_{\text{reg}}\hookrightarrow Y
\]
where $Y$ is a variety with a regular $\mathrm{G}$-action.
The complement $Y\smallsetminus X_{\text{reg}}$ is 
closed and $\Gamma$-stable, so 
$Y\smallsetminus X_{\text{reg}}$ is stable under 
$\overline{\Gamma}=\mathrm{G}$.

\item[$\diamond$] From the previous point the rational map
\[
\rho\colon\mathrm{G}\times X\dashrightarrow X
\]
has the following properties:
\begin{itemize}
\item[- ] there is a dense open set $X_{\text{reg}}\subseteq X$
such that $\rho$ is regular on 
$\mathrm{G}\times X_{\text{reg}}$;

\item[- ] for every $g\in\Gamma$ the rational map
\begin{align*}
&\rho_g\colon X\to X, && p\mapsto \rho(g,p)
\end{align*}
is a regular isomorphism.
\end{itemize}
Lemma \ref{lem:Kraft8} implies that $\rho$ is a regular
action in case $X$ is affine.
\end{itemize}

\begin{rem}
All the statements of this section hold for an algebraically
closed field.
\end{rem}

%%%%%%%%%%%%%%%%%%%%%%%%%%%%%%%%%%%%%%%%%%%%%%%%%%%%%%%%%%%%%%%%%%%%%%%%%%%%%%%%%%%%%%%%%%%%%%%%%%%%%%%%%%%%%%%%%%%
%%%%%%%%%%%%%%%%%%%%%%%%%%%%%%%%%%%%%%%%%%%%%%%%%%%%%%%%%%%%%%%%%%%%%%%%%%%%%%%%%%%%%%%%%%%%%%%%%%%%%%%%%%%%%%%%%%%
% section
%%%%%%%%%%%%%%%%%%%%%%%%%%%%%%%%%%%%%%%%%%%%%%%%%%%%%%%%%%%%%%%%%%%%%%%%%%%%%%%%%%%%%%%%%%%%%%%%%%%%%%%%%%%%%%%%%%%
%%%%%%%%%%%%%%%%%%%%%%%%%%%%%%%%%%%%%%%%%%%%%%%%%%%%%%%%%%%%%%%%%%%%%%%%%%%%%%%%%%%%%%%%%%%%%%%%%%%%%%%%%%%%%%%%%%%

%\section{Homomorphisms between Cremona groups}

%\cite{Urech:homomorphismsbetweenCremonagroups}

%\marginpar{A FAIRE}

%%%%%%%%%%%%%%%%%%%%%%%%%%%%%%%%%%%%%%%%%%%%%%%%%%%%%%%%%%%%%%%%%%%%%%%%%%%%%%%%%%%%%%%%%%%%%%%%%%%%%%%%%%%%%%%%%%
%%%%%%%%%%%%%%%%%%%%%%%%%%%%%%%%%%%%%%%%%%%%%%%%%%%%%%%%%%%%%%%%%%%%%%%%%%%%%%%%%%%%%%%%%%%%%%%%%%%%%%%%%%%%%%%%%%%
%%%%%%%%%%%%%%%%%%%%%%%%%%%%%%%%%%%%%%%%%%%%%%%%%%%%%%%%%%%%%%%%%%%%%%%%%%%%%%%%%%%%%%%%%%%%%%%%%%%%%%%%%%%%%%%%%%%
%CHAPTER
%%%%%%%%%%%%%%%%%%%%%%%%%%%%%%%%%%%%%%%%%%%%%%%%%%%%%%%%%%%%%%%%%%%%%%%%%%%%%%%%%%%%%%%%%%%%%%%%%%%%%%%%%%%%%%%%%%%
%%%%%%%%%%%%%%%%%%%%%%%%%%%%%%%%%%%%%%%%%%%%%%%%%%%%%%%%%%%%%%%%%%%%%%%%%%%%%%%%%%%%%%%%%%%%%%%%%%%%%%%%%%%%%%%%%%%
%%%%%%%%%%%%%%%%%%%%%%%%%%%%%%%%%%%%%%%%%%%%%%%%%%%%%%%%%%%%%%%%%%%%%%%%%%%%%%%%%%%%%%%%%%%%%%%%%%%%%%%%%%%%%%%%%%%

\chapter{Generators and relations of the Cremona
group}\label{chapter:gen}

\bigskip
\bigskip

As we already say 

\begin{thm}[\cite{Castelnuovo}]\label{thm:noether}
The group $\mathrm{Bir}(\mathbb{P}^2_\mathbb{C})$ is 
generated by $\mathrm{Aut}(\mathbb{P}^2_\mathbb{C})=\mathrm{PGL}(3,\mathbb{C})$
and the standard quadratic involution 
\[
\sigma_2\colon(z_0:z_1:z_2)\dashrightarrow(z_1z_2:z_0z_2:z_0z_1).
\]
\end{thm}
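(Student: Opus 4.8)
The plan is to argue by induction on the degree $d$ of a given $\phi\in\mathrm{Bir}(\mathbb{P}^2_\mathbb{C})$, the point being that $\sigma_2$ together with $\mathrm{PGL}(3,\mathbb{C})$ allows one to strictly lower the degree of any non-linear map. If $d=1$ then $\phi\in\mathrm{Aut}(\mathbb{P}^2_\mathbb{C})=\mathrm{PGL}(3,\mathbb{C})$ and there is nothing to prove, so I assume $d\geq 2$ and that every birational self map of degree $<d$ lies in the subgroup $\mathrm{G}$ generated by $\mathrm{PGL}(3,\mathbb{C})$ and $\sigma_2$.

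The heart of the argument is \emph{Noether's inequality}: if $m_1\geq m_2\geq\ldots\geq m_r$ are the multiplicities of $\Lambda_\phi$ at the base-points $p_1,\ldots,p_r$ of $\phi$, then $m_1+m_2+m_3>d$. I would deduce this purely formally from the two equalities established in \S\ref{sec:geodef}, namely $\sum_i m_i=3(d-1)$ and $\sum_i m_i^2=d^2-1$. Each multiplicity satisfies $m_i^2\leq\sum_j m_j^2=d^2-1<d^2$, hence $m_i\leq d-1$. Since $m_i\leq m_3$ for $i\geq 4$, one has $\sum_{i\geq4}m_i^2\leq m_3\sum_{i\geq4}m_i=m_3\bigl(3(d-1)-m_1-m_2-m_3\bigr)$, so that $d^2-1\leq m_1(m_1-m_3)+m_2(m_2-m_3)+3m_3(d-1)$. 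Bounding $m_j(m_j-m_3)\leq(d-1)(m_j-m_3)$ for $j=1,2$ gives $d^2-1\leq(d-1)(m_1+m_2+m_3)$, and dividing by $d-1>0$ yields $m_1+m_2+m_3\geq d+1$.

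With this in hand the inductive step runs as follows. Let $\sigma$ be a quadratic transformation whose three base-points are $p_1$, $p_2$, $p_3$; then the homaloidal net of $\phi\circ\sigma^{-1}$ is the $\sigma$-image of $\Lambda_\phi$, and a computation with Lemma~\ref{lem:tireenarriere} and Proposition~\ref{pro:tireenarriere} gives $\deg(\phi\circ\sigma^{-1})=2d-m_1-m_2-m_3$, which is $<d$ by Noether's inequality. When $p_1,p_2,p_3$ are three distinct proper points of $\mathbb{P}^2_\mathbb{C}$ in general position, $\sigma$ is of the form $A\circ\sigma_2\circ A^{-1}$ with $A\in\mathrm{PGL}(3,\mathbb{C})$ sending the three coordinate points onto the $p_i$, so $\sigma\in\mathrm{G}$; since $\phi\circ\sigma^{-1}\in\mathrm{G}$ by the induction hypothesis, also $\phi=(\phi\circ\sigma^{-1})\circ\sigma\in\mathrm{G}$, which closes the induction in this case.

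The main obstacle is precisely the configuration of the three points of highest multiplicity: they need not be distinct proper points in general position, since two or three of them may be collinear, or some may be infinitely near one another. In those degenerate cases one cannot directly exhibit a quadratic involution based at them, and the clean degree drop above must be replaced by a more delicate bookkeeping in the bubble space $\mathcal{B}(\mathbb{P}^2_\mathbb{C})$. The strategy I would follow is the classical one, made rigorous in \cite{AlberichCarraminana}: by composing $\phi$ with suitable quadratic transformations, each itself a product of elements of $\mathrm{PGL}(3,\mathbb{C})$ and $\sigma_2$, one first moves the three top points into a general-position configuration while keeping the degree under control through a secondary invariant, thereby reducing every case to the general-position one treated above. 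Granting this reduction, the induction closes and $\mathrm{Bir}(\mathbb{P}^2_\mathbb{C})=\mathrm{G}$.
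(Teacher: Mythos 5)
Your first two paragraphs are correct: the derivation of Noether's inequality $m_1+m_2+m_3\geq d+1$ from $\sum_i m_i=3(d-1)$ and $\sum_i m_i^2=d^2-1$ is sound (and in fact, when the three points of highest multiplicity are proper, they are automatically non-collinear by B\'ezout, since a line through them would meet a general member of $\Lambda_\phi$ in $m_1+m_2+m_3>d$ points and would be a fixed component), and the degree drop $\deg(\phi\circ\sigma^{-1})=2d-m_1-m_2-m_3<d$ in the general-position case is exactly right. The genuine gap is your last paragraph, and the mechanism you sketch there is not merely incomplete but wrong in a way that breaks your induction. The degenerate configurations --- above all base-points infinitely near one another, as for $(z_0:z_1:z_2)\dashrightarrow(z_1^3-z_0z_2^2:z_1z_2^2:z_2^3)$ --- are the entire content of the theorem beyond the trivial case; this is precisely where Noether's original argument failed for decades. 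You claim one can ``move the three top points into general position while keeping the degree under control'' and then apply the induction hypothesis on the degree $d$. But the dispersal moves necessarily \emph{increase} the degree: in the paper's proof (Alexander's), separating a point of $C$ from $p_0$ produces a system of degree $\nu'=\nu+2c-m_{p_k}\geq\nu$ (Lemma~\ref{lem:six}), and dispersing the points infinitely near $p_0$ produces $\nu'=2\nu-m_{p_0}>\nu$ (Lemma~\ref{lem:seven}). After your reduction to general position the map has degree $d'>d$, and the subsequent quadratic drop yields a map of degree $<d'$ but possibly still $\geq d$, so the induction hypothesis ``every map of degree $<d$ lies in $\mathrm{G}$'' does not apply and the induction does not close.

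What is missing is the terminating invariant that replaces the degree, and supplying it is the actual work. The paper inducts on the lexicographically ordered pair $(c,n)$, where $2c=\nu-m_{p_0}$ is the complexity and $n=\#C$ is the number of base-points of multiplicity $>c$: Lemma~\ref{lem:four} determines when a quadratic composition preserves or strictly lowers $c$; Lemma~\ref{lem:five} shows that two points of $C$ not infinitely near $p_0$ or each other can be used to lower $c$ or lower $n$ by $2$; Lemmas~\ref{lem:six} and~\ref{lem:seven} disperse infinitely near points at the cost of degree growth but without increasing $(c,n)$ beyond what the bookkeeping in Steps~1 and~2 of the paper's proof absorbs; and Lemma~\ref{lem:three} ($n\geq 2$, non-alignment) guarantees these moves are always available. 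Only when $c=1$ and $C=\emptyset$ do the equalities $\sum m_i=3(\nu-1)$, $\sum m_i^2=\nu^2-1$ force $\nu=1$. As written, ``granting this reduction'' grants exactly the hard part of the theorem; to repair the proposal you must define the secondary invariant explicitly and verify its monotonicity through each dispersal step, i.e.\ reproduce the content of Lemmas~\ref{lem:three}--\ref{lem:seven}, abandoning induction on the degree alone.
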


This result is well-known as the Theorem of Noether and Castelnuovo.
Noether was the first mathematician
to state this result at the end of the 
XIXth century. Nevertheless the first 
exact proof is due to Castelnuovo. 
Noether's idea was the following.
Let us consider a birational self map 
$\phi$ of $\mathbb{P}^2_\mathbb{C}$. Take
a quadratic birational self map $q$ of 
$\mathbb{P}^2_\mathbb{C}$ such that 
the three base-points of $q$ are three
base-point of $\phi$ of highest 
multiplicity. Then $\deg(\phi\circ q)<\deg\phi$.
By induction one gets a birational map
of degree $1$. But such a quadratic 
birational map $q$ may not exist. This is for
instance the case if one starts with 
the polynomial automorphism
\[
(z_0:z_1:z_2)\dashrightarrow(z_1^3-z_0z_2^2:z_1z_2^2:z_2^3).
\]
In \cite{Alexander} Alexander
fixes Noether's proof by 
introducing the notion of complexity
of a map: start with a birational 
self map $\phi$ of the complex
projective plane; one can find 
a quadratic birational self map
$q$ of the complex projective 
plane such that
\begin{itemize}
\item[$\diamond$] either the complexity
of $\phi\circ q$ is strictly less 
that the complexity of $\phi$; 

\item[$\diamond$] or the complexities
of $\phi\circ q$ and $\phi$ are 
equal but 
$\#\mathrm{Base}(\phi\circ q)<\#\mathrm{Base}(\phi)$.
\end{itemize}
Alexander's proof is a proof
by induction on these two integers.

\begin{rem}
One consequence of Noether
and Castelnuovo theorem 
is: the Jonqui\`eres group
and 
$\mathrm{Aut}(\mathbb{P}^2_\mathbb{C})=\mathrm{PGL}(3,\mathbb{C})$ 
generate 
$\mathrm{Bir}(\mathbb{P}^2_\mathbb{C})$.
This result is "weaker" nevertheless it 
has the following nice property:
\end{rem}

\begin{thm}[\cite{AlberichCarraminana}]
Let $\phi$ be an element of 
$\mathrm{Bir}(\mathbb{P}^2_\mathbb{C})$. 
There exist $j_1$, $j_2$, $\ldots$, $j_k$
in $\mathcal{J}$ and~$A$ in 
$\mathrm{PGL}(3,\mathbb{C})$ such that 
\begin{itemize}
\item[$\diamond$] $\phi=A\circ j_k\circ j_{k-1}\circ\ldots\circ j_2\circ j_1$;

\item[$\diamond$] for any $1\leq i\leq n-1$
\[
\deg(A\circ j_k\circ j_{k-1}\circ\ldots\circ j_{i+1}\circ j_i)>\deg(A\circ j_k\circ j_{k-1}\circ\ldots\circ j_{i+2}\circ j_{i+1}).
\]
\end{itemize}
\end{thm}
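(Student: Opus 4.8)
The plan is to argue by induction on the degree $\nu=\deg\phi$, peeling off one Jonqui\`eres factor at a time from the right so that the degree of what remains strictly drops at each step. If $\nu=1$ then $\phi\in\mathrm{PGL}(3,\mathbb{C})$ and one takes $A=\phi$ with $k=0$. So assume $\nu\geq 2$, and let $p_1,p_2,\ldots,p_r$ be the base-points of $\phi$ in the bubble space $\mathcal{B}(\mathbb{P}^2_\mathbb{C})$, with multiplicities $m_1\geq m_2\geq\cdots\geq m_r$. This refines the Noether--Castelnuovo theorem (Theorem \ref{thm:noether}) by organizing its elementary reductions into a decomposition with strictly decreasing tail-degrees.

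First I would record the two numerical identities established in \S\ref{sec:geodef}, namely $\sum_i m_i=3(\nu-1)$ and $\sum_i m_i^2=\nu^2-1$, whose difference gives $\sum_i m_i(m_i-1)=(\nu-1)(\nu-2)$. From these relations one deduces the \emph{Noether inequality} $m_1+m_2+m_3\geq\nu+1$: if the three largest multiplicities summed to at most $\nu$, a short Cauchy--Schwarz type estimate played against the two identities would force $\sum_i m_i^2<\nu^2-1$, a contradiction. This inequality is the engine of the whole argument.

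Next I would use $p_1,p_2,p_3$ to build the reducing factor. Let $j_1^{-1}$ be the quadratic (hence simplest de Jonqui\`eres) map whose three base-points are $p_1,p_2,p_3$; after conjugating by the linear map carrying $p_1$ to $(0:0:1)$ one regards $j_1$ as an element of $\mathcal{J}$, since a quadratic map preserves the pencil of lines through each of its base-points. Using $\Lambda_{\phi\circ j_1^{-1}}=(j_1^{-1})^{*}\Lambda_\phi$ and computing its degree with the intersection form of Proposition \ref{pro:tireenarriere}, one obtains
\[
\deg(\phi\circ j_1^{-1})=2\nu-m_1-m_2-m_3\leq\nu-1<\nu .
\]
Setting $\phi'=\phi\circ j_1^{-1}$, the inductive hypothesis applied to $\phi'$ gives $\phi'=A\circ j_k\circ\cdots\circ j_2$ with $j_i\in\mathcal{J}$, $A\in\mathrm{PGL}(3,\mathbb{C})$, and strictly decreasing tail-degrees. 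Then $\phi=\phi'\circ j_1=A\circ j_k\circ\cdots\circ j_2\circ j_1$, and the only new inequality to check is $\deg(A\circ j_k\circ\cdots\circ j_1)=\nu>\deg(A\circ j_k\circ\cdots\circ j_2)=\deg\phi'$, which holds by construction; the remaining inequalities are inherited from the induction. Since the degrees are positive integers that strictly decrease, the process terminates.

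The step I expect to be the main obstacle is the bookkeeping needed to keep every factor inside the one prescribed group $\mathcal{J}$ while retaining a \emph{single} linear factor on the left. Moving the centre $p_1$ to the fixed point $(0:0:1)$ costs a conjugation by an automorphism, and automorphisms that do not fix $(0:0:1)$ fail to normalise $\mathcal{J}$, so one cannot naively commute the auxiliary linear maps through the Jonqui\`eres factors. This is handled by exploiting the fibred structure $\mathcal{J}\simeq\mathrm{PGL}(2,\mathbb{C})\rtimes\mathrm{PGL}(2,\mathbb{C}(z_0))$ together with the subgroup of linear maps fixing the centre (which does normalise $\mathcal{J}$), absorbing the residual positional freedom into $A$. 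A secondary technical point is that the reduction must be phrased through multiplicities in the bubble space, so that the quadratic step remains valid when $p_1,p_2,p_3$ are infinitely near: there the ``quadratic map through $p_1,p_2,p_3$'' is read off its minimal resolution, and Lemma \ref{lem:tireenarriere} together with Proposition \ref{pro:hart2} control the relevant intersection numbers.
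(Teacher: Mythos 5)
Your reduction step is exactly Noether's original argument, and it contains Noether's original gap: the quadratic map $j_1^{-1}$ ``whose three base-points are $p_1,p_2,p_3$'' need not exist. The Noether inequality $m_1+m_2+m_3\geq\nu+1$ is a correct numerical consequence of the two identities of \S\ref{sec:geodef}, but it says nothing about the \emph{position} of the three points of highest multiplicity: they can be infinitely near in configurations (satellite points, three points proximate to a common line) that are not the base scheme of any homaloidal quadratic net, and then there is no quadratic map based at them -- resolving a cluster of three bubble points only yields a Cremona map when the associated net is homaloidal, so your appeal to the minimal resolution does not repair this. The paper flags precisely this failure just before stating the theorem, with the explicit counterexample $(z_0:z_1:z_2)\dashrightarrow(z_1^3-z_0z_2^2:z_1z_2^2:z_2^3)$: for this degree-$3$ map no quadratic map lowers the degree at all, so any quadratic-only algorithm with strictly decreasing degrees must stall on it, while a single Jonqui\`eres factor of degree $3$ finishes it in one step. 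This is why the theorem is stated with Jonqui\`eres maps of \emph{arbitrary} degree: the higher-degree factors are what absorb the bad configurations in one monotone step. Note also that the known repair of Noether's argument (Alexander's, reproduced in the paper for Theorem \ref{thm:noether}) does not rescue your scheme either, since it composes with general quadratic maps centered at points that are not necessarily base points and its intermediate degrees can strictly \emph{increase} (see Lemmas \ref{lem:six} and \ref{lem:seven}, where $\nu'\geq\nu$ and $\nu'>\nu$); the strictly-decreasing statement of the theorem requires the different, Castelnuovo-style argument of the cited reference, which selects a de Jonqui\`eres map of suitable degree centered at a point of maximal multiplicity. The paper itself gives no proof, so there is nothing in it that your sketch could be matching.

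A secondary but genuine problem is the normalization to the form $A\circ j_k\circ\cdots\circ j_1$ with a single linear factor and all $j_i$ in the fixed group $\mathcal{J}$. A quadratic map based at $p_1,p_2,p_3$ lies in $\mathcal{J}$ only if $(0:0:1)$ is a base point of both the map and its inverse, which you cannot arrange since the $p_i$ are dictated by $\phi$. Writing $q=v^{-1}\circ j\circ u^{-1}$ with $j\in\mathcal{J}$ and $u,v$ linear leaves a linear factor $u^{-1}$ on the \emph{right} of the word, and it cannot be pushed through the previous Jonqui\`eres factors: conjugation by a linear map not fixing $(0:0:1)$ does not preserve $\mathcal{J}$ (the image of the pencil of lines through $(0:0:1)$ under such a composition is a pencil of higher-degree curves, not of lines). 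So even granting your quadratic step, the natural output is an alternating word $a_k\circ q_k\circ\cdots\circ a_1\circ q_1\circ a_0$ as in Blanc's setting, and the passage to the one-linear-factor form is a nontrivial part of the statement that ``absorbing the residual positional freedom into $A$'' does not supply.
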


The first presentation of the plane Cremona group is given
by Giza\-tullin:

\begin{thm}[\cite{Gizatullin:relations}]
  The Cremona group $\mathrm{Bir}(\mathbb{P}^2_\mathbb{C})$ is generated
  by the set $\mathcal{Q}$ of all quadratic maps.

  The relations in $\mathrm{Bir}(\mathbb{P}^2_\mathbb{C})$ are
  consequences of relations of the form $q_1\circ q_2\circ q_3=\mathrm{id}$
  where $q_1$, $q_2$, $q_3$ are quadratic birational 
  self maps of $\mathbb{P}^2_\mathbb{C}$. In other words
  we have the following presentation
  \[
  \mathrm{Bir}(\mathbb{P}^2_\mathbb{C})=\langle\mathcal{Q}\,\vert\,q_1\circ q_2\circ q_3=\mathrm{id}\text{ $\,\,\forall$ $q_1$, $q_2$, $q_3\in\mathcal{Q}$ such that $q_1\circ q_2\circ q_3=\mathrm{id}$ in $\mathrm{Bir}(\mathbb{P}^2_\mathbb{C})$} \rangle
  \]
\end{thm}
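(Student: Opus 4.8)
The plan is to treat the two assertions separately, reducing the generation statement to the Noether--Castelnuovo Theorem (Theorem~\ref{thm:noether}) and concentrating all the real work on the relations.

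For generation I would argue directly. By Theorem~\ref{thm:noether} the group $\mathrm{Bir}(\mathbb{P}^2_\mathbb{C})$ is generated by $\mathrm{PGL}(3,\mathbb{C})$ together with the single quadratic involution $\sigma_2$. Now $\sigma_2\in\mathcal{Q}$, and every $A\in\mathrm{PGL}(3,\mathbb{C})$ can be written as $A=(A\circ\sigma_2)\circ\sigma_2$, where $A\circ\sigma_2$ is again a quadratic map. Hence every element of the Noether--Castelnuovo generating set lies in the subgroup generated by $\mathcal{Q}$, so $\mathcal{Q}$ generates $\mathrm{Bir}(\mathbb{P}^2_\mathbb{C})$.

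For the relations I would introduce the abstractly presented group
\[
\Gamma=\langle\,\mathcal{Q}\,\mid\, q_1q_2q_3=\mathrm{id}\ \text{ whenever }\ q_1q_2q_3=\mathrm{id}\ \text{ in }\ \mathrm{Bir}(\mathbb{P}^2_\mathbb{C})\,\rangle
\]
together with the canonical homomorphism $\pi\colon\Gamma\to\mathrm{Bir}(\mathbb{P}^2_\mathbb{C})$ sending each generator to the corresponding quadratic map; by the previous step $\pi$ is surjective, and the theorem is equivalent to the injectivity of $\pi$. The first task is to lift the linear subgroup: writing $\underline{q}$ for the generator attached to a quadratic map $q$, I would show that $A\mapsto \underline{A\circ\sigma_2}\,\underline{\sigma_2}$ defines a group homomorphism $\mathrm{PGL}(3,\mathbb{C})\to\Gamma$ splitting $\pi$ over $\mathrm{PGL}(3,\mathbb{C})$. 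Concretely this amounts to deriving from triangle relations the few elementary facts $\sigma_2^2=\mathrm{id}$, the identification of $\underline{q^{-1}}$ with $\underline{q}^{-1}$, and the multiplicativity relations of $\mathrm{PGL}(3,\mathbb{C})$, each of which is closed into a triangle $q_1q_2q_3=\mathrm{id}$ by an auxiliary quadratic factor. With the linear subgroup lifted, I would run a Noether--Alexander rewriting on words in the generators. The only nontrivial move is the substitution licensed by a triangle relation: whenever two adjacent letters $\underline{q},\underline{q'}$ have a product $q\circ q'$ that is again quadratic, the relation $\underline{q}\,\underline{q'}\,\underline{(q\circ q')^{-1}}=\mathrm{id}$ (a genuine triangle, since the inverse of a quadratic map is quadratic) collapses the two letters into one. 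Given a word $w$ with $\pi(w)=\mathrm{id}$, the degree-lowering procedure underlying Theorem~\ref{thm:noether} steers these substitutions: at each stage a quadratic map whose three base points are base points of highest multiplicity of the current product shortens the word and strictly decreases the degree of the map it represents. Iterating, $w$ is brought, modulo triangle relations, into a word supported in the lifted copy of $\mathrm{PGL}(3,\mathbb{C})$, which the splitting forces to be trivial in $\Gamma$; hence $\ker\pi=\{\mathrm{id}\}$.

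The main obstacle is exactly the one Alexander had to repair in Noether's original argument: at non-generic configurations — when the base points of highest multiplicity are aligned, coincide, or are infinitely near — no quadratic map with the prescribed base-point data exists, so the naive degree-lowering move is unavailable. The hard part of the plan is therefore to show that the detours needed in these degenerate cases (compositions of several quadratic maps sharing base points, encoding associativity and the local resolution of infinitely near points in the bubble space) are themselves consequences of triangle relations. Equivalently, one must check that a complete finite list of elementary relations among quadratic maps sharing base points is generated by the triangle relations, and that these suffice to pass between any two quadratic factorizations of a given map. I would expect to organize this verification geometrically, via the simple connectivity of the $2$-complex whose vertices are the cosets modulo $\mathrm{PGL}(3,\mathbb{C})$, whose edges are quadratic maps, and whose $2$-cells are the triangle relations, so that the injectivity of $\pi$ becomes the vanishing of the corresponding fundamental group.
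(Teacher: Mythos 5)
The paper itself gives no proof of this statement: it is quoted as a survey result with the citation to Gizatullin, and the only presentations proved (in sketch) in the chapter are those of Blanc and of Urech--Zimmermann. So your proposal has to be judged against the literature rather than against an in-paper argument. Your generation step is correct and is the standard one: $\sigma_2\in\mathcal{Q}$ and $A=(A\circ\sigma_2)\circ\sigma_2$ with $A\circ\sigma_2$ quadratic, so $\mathcal{Q}$ generates by Theorem~\ref{thm:noether}.

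The relations half, however, has a genuine gap, and it sits exactly at the crux. First, a repairable issue: the presentation contains no relations of length $1$ or $2$, so even $\underline{\sigma_2}^{\,2}=\mathrm{id}$ and $\underline{q^{-1}}=\underline{q}^{\,-1}$ are not free; they must be manufactured from triangles, e.g.\ from $q\circ(q^{-1}\circ t)\circ t^{-1}=\mathrm{id}$ with $t\in\mathcal{Q}$ chosen so that $q^{-1}\circ t\in\mathcal{Q}$, which in turn requires verifying existence via the degree formula $\deg(f\circ g)=4-\#\big(\mathrm{Base}(f)\cap\mathrm{Base}(g^{-1})\big)$ for quadratic $f$, $g$; your phrase "closed into a triangle by an auxiliary quadratic factor" gestures at this but none of the bookkeeping is done, and the same applies to the multiplicativity of your would-be splitting of $\mathrm{PGL}(3,\mathbb{C})$, where $\sigma_2\circ B\circ\sigma_2$ is generically of degree $4$ and no single collapse applies. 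Second, and decisively: reducing injectivity of $\pi$ to the simple connectivity of the $2$-complex whose $2$-cells are the triangle relations is not a reduction at all, since $\pi_1$ of that complex is exactly $\ker\pi$ -- the vanishing you "expect to organize geometrically" \emph{is} the theorem restated. All of Gizatullin's work consists precisely in the verification you defer: showing that when consecutive base-points of highest multiplicity are collinear, coincide, or are infinitely near, the necessary detours through chains of intermediate quadratic maps are consequences of triangles. Finally, your induction quantity is ill-posed as stated: a word $w$ with $\pi(w)=\mathrm{id}$ represents a map of degree $1$, so "strictly decreases the degree of the map it represents" cannot drive the induction; what is needed is something like the lexicographic pair $(D,n)$ -- the maximal degree of the partial products and the last index attaining it -- as in the proofs of Theorems~\ref{thm:Blancrelations} and~\ref{thm:UrechZimmermann} sketched in the paper, together with explicit case lemmas (analogues of Lemmas~\ref{lem:tecnik1}--\ref{lem:tecnik5}, which split into the cases $\deg(\sigma_2\circ\phi\circ\sigma_2)\in\{1,2,3\}$) covering each degeneration. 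As it stands the proposal is a sensible roadmap whose hard part is acknowledged but not carried out.
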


Two years later Iskovskikh proved the following statement:

\begin{thm}[\cite{Iskovskikh:generatorsandrelations, Iskovskikh:relations}]\label{thm:Iskovskikh}
The group $\mathrm{Bir}(\mathbb{P}^1_\mathbb{C}\times\mathbb{P}^1_\mathbb{C})$
is generated by the group~$B$ of birational maps preserving the fibration
given by the first projection together with $\tau\colon(z_0,z_1)\mapsto(z_1,z_0)$. 

Moreover the following relations form a complete system of relations:
\begin{itemize}
\item[$\diamond$] relations inside the groups $\mathrm{Aut}(\mathbb{P}^1_\mathbb{C}\times\mathbb{P}^1_\mathbb{C})$ and $B$;

\item[$\diamond$] $\left(\tau\circ\left((z_0,z_1)\mapsto\left(z_0,\frac{z_0}{z_1}\right)\right)\right)^3=\mathrm{id}$;

\item[$\diamond$] $\left(\tau\circ\left((z_0,z_1)\mapsto(-z_0,z_1-z_0)\right)\right)^3=\mathrm{id}$.
\end{itemize}
\end{thm}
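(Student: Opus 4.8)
The plan is to prove the two assertions, generation and completeness of relations, separately, treating $\mathbb{P}^1_\mathbb{C}\times\mathbb{P}^1_\mathbb{C}$ as the del Pezzo surface of degree $8$ equipped with its two rulings. Here $B$ is the stabilizer of the first ruling, a Jonqui\`eres-type subgroup isomorphic to $\mathrm{PGL}(2,\mathbb{C})\ltimes\mathrm{PGL}(2,\mathbb{C}(z_0))$ by the Noether--Enriques theorem (Theorem \ref{thm:fibration}), while $\tau$ is the involution exchanging the two rulings. Since $\mathbb{P}^1_\mathbb{C}\times\mathbb{P}^1_\mathbb{C}$ is birational to $\mathbb{P}^2_\mathbb{C}$, I may freely invoke the Noether--Castelnuovo theorem (Theorem \ref{thm:noether}) and the constructions of \S\ref{subsec:hirz}.

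First I would establish generation by a bidegree-reduction argument. To each $\phi$ one attaches the bidegree $(a,b)$ recording the class $a\mathbf{f}_0+b\mathbf{f}_1$ of its linear system in the two ruling classes, together with base-point data. If $\phi\notin B$, the idea is to produce an element relative to the second ruling, that is an element of the form $\tau\circ c\circ\tau$ with $c\in B$, whose composition with $\phi$ has strictly smaller complexity. Iterating drives the complexity down to an element of $B$, and unwinding expresses $\phi$ as a word in $B$ and $\tau$. This is the direct analogue of Noether's algorithm, with the role of the quadratic involutions played by $\tau$ conjugated into $B$.

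The serious part is completeness of the relation system. The strategy is to view $\mathrm{Bir}(\mathbb{P}^1_\mathbb{C}\times\mathbb{P}^1_\mathbb{C})$ through its action on the graph $\mathcal{G}$ whose vertices are the rational conic-bundle (ruling) structures on rational surfaces above $\mathbb{P}^1_\mathbb{C}\times\mathbb{P}^1_\mathbb{C}$ and whose edges are elementary links, i.e.\ elementary transformations of conic bundles of the type described in \S\ref{subsec:hirz}. The stabilizer of the base vertex is conjugate to $B$, the two rulings of $\mathbb{P}^1_\mathbb{C}\times\mathbb{P}^1_\mathbb{C}$ are exchanged by $\tau$, and the relations internal to $\mathrm{Aut}(\mathbb{P}^1_\mathbb{C}\times\mathbb{P}^1_\mathbb{C})$ and $B$ are precisely the relations carried by the vertex- and edge-stabilizers, including the conjugation rule $\tau(\alpha,\beta)\tau^{-1}=(\beta,\alpha)$ inside $\mathrm{Aut}$. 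The two cubic relators $(\tau\circ g)^3=\mathrm{id}$ correspond to triangular $2$-cells, one attached to $\mathcal{G}$ for each relation, giving a $2$-complex $\overline{\mathcal{G}}$. The standard theorem producing a presentation from a group acting on a simply connected complex with known cell-stabilizers then yields exactly the claimed presentation, \emph{provided} $\overline{\mathcal{G}}$ is simply connected.

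Thus the main obstacle is precisely the simple connectivity of $\overline{\mathcal{G}}$: every relation in $\mathrm{Bir}(\mathbb{P}^1_\mathbb{C}\times\mathbb{P}^1_\mathbb{C})$, read as a loop of links, must be shown to bound a disc tiled by the chosen triangles. Concretely, one must classify all the ways in which two elementary links can be composed and prove that every resulting syzygy is a consequence of the two cubic relations together with the internal relations of $B$ and $\mathrm{Aut}$. This is the analogue of checking that the relations of the Sarkisov program are generated by the elementary ones, and carrying out this case analysis, while keeping careful track of how the base points and singular fibres move under each link, is where essentially all of the work lies.
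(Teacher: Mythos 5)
Your proposal has a genuine gap, and it is worth locating it precisely. As a preliminary remark, the paper itself contains no proof of this theorem: it is imported from Iskovskikh's papers and used as a black box in the proof of Theorem \ref{thm:Wright}, so your attempt must stand on its own. Judged that way, both halves of your plan are deferred rather than executed. For generation, you never define the complexity nor prove that some element $\tau\circ c\circ\tau$ with $c\in B$ strictly decreases it; the descent cannot in general stay on $\mathbb{P}^1_\mathbb{C}\times\mathbb{P}^1_\mathbb{C}$ but must pass through the intermediate conic bundles $\mathbb{F}_n$ of \S\ref{subsec:hirz} via elementary transformations, and one needs a well-founded invariant (bidegree together with base-point multiplicities) and a verification that a suitable link always exists and strictly lowers it. This is the Castelnuovo-style untwisting argument, which has to be carried out, not merely invoked by analogy with Theorem \ref{thm:noether}.

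More seriously, for completeness of relations your argument is in effect circular: the presentation theorem for a group acting on a simply connected $2$-complex is sound, but the simple connectivity of $\overline{\mathcal{G}}$ is not an auxiliary lemma one can quote --- it is logically equivalent to the assertion being proved, namely that every syzygy among elementary links is a consequence of the two cubic relators together with the relations internal to $B$ and $\mathrm{Aut}(\mathbb{P}^1_\mathbb{C}\times\mathbb{P}^1_\mathbb{C})$. You acknowledge this (``where essentially all of the work lies'') and stop, so the theorem has been reformulated in the language of complexes rather than proved. Iskovskikh's actual argument does exactly this missing work by hand: an induction on the degree of a word representing the identity, with an exhaustive case analysis of how consecutive elementary transformations interact --- which singular fibres and base points they share --- showing that each local configuration is absorbed by one of the listed relations. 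Your framing is the modern packaging of that computation (compare Wright's complex in \S\ref{subsec:wright}), and it would become a proof only once the tiling of every loop of links by the two triangle types is established; as written, that classification of syzygies is entirely absent.
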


In $1994$ Iskovskikh, Kabdykairov and Tregub present a list of
generators and relations of~$\mathrm{Bir}(\mathbb{P}^2_\Bbbk)$
over arbitrary perfect field $\Bbbk$ (\emph{see}
\cite{IskovskikhKabdykairovTregub}).

The group $\mathrm{Bir}(\mathbb{P}^2_\mathbb{C})$ hasn't a
structure of amalgamated product (\cite{Cornulier:amalgamatedproduct}). 
Nevertheless a presentation of the plane Cremona group in the form
of a generalized amalgam was given by Wright:

\begin{thm}[\cite{Wright}]\label{thm:Wright}
The plane Cremona group is the free product of~$\mathrm{PGL}(3,\mathbb{C})$, 
$\mathrm{Aut}(\mathbb{P}^1_\mathbb{C}\times\mathbb{P}^1_\mathbb{C})$
and $\mathcal{J}$ amalgamated along their pairwise 
intersections in~$\mathrm{Bir}(\mathbb{P}^2_\mathbb{C})$.  
\end{thm}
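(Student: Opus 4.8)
The plan is to exhibit $\mathrm{Bir}(\mathbb{P}^2_\mathbb{C})$ as a group acting by simplicial automorphisms on a $2$-dimensional simply connected complex $C$ whose quotient is a single triangle, and then to invoke the structure theorem for such actions (the $2$-dimensional analogue of Bass--Serre theory, i.e. the theory of complexes of groups): if a group $G$ acts without inversions on a simply connected complex $X$ whose quotient $G\backslash X$ is a single $2$-simplex $\Delta$, then $G$ is the colimit of the diagram formed by the stabilizers of the faces of a lift of $\Delta$. For a triangle this colimit is exactly the free product of the three vertex stabilizers amalgamated along the three edge stabilizers, the face stabilizer being their common intersection. Applied with vertex stabilizers $\mathrm{PGL}(3,\mathbb{C})$, $\mathrm{Aut}(\mathbb{P}^1_\mathbb{C}\times\mathbb{P}^1_\mathbb{C})$ and $\mathcal{J}$, this yields the asserted presentation as a generalized amalgam.

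First I would fix the three subgroups and record their intersections inside $\mathrm{Bir}(\mathbb{P}^2_\mathbb{C})$. Fixing the birational identification of $\mathbb{P}^1_\mathbb{C}\times\mathbb{P}^1_\mathbb{C}$ with $\mathbb{P}^2_\mathbb{C}$ given by the stereographic projection, and the point $(0:0:1)$ defining $\mathcal{J}$, one computes the pairwise intersections $\mathrm{PGL}(3,\mathbb{C})\cap\mathcal{J}$, $\mathrm{Aut}(\mathbb{P}^1_\mathbb{C}\times\mathbb{P}^1_\mathbb{C})\cap\mathcal{J}$ and $\mathrm{PGL}(3,\mathbb{C})\cap\mathrm{Aut}(\mathbb{P}^1_\mathbb{C}\times\mathbb{P}^1_\mathbb{C})$, and checks that the triple intersection is their common subgroup. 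These three groups generate $\mathrm{Bir}(\mathbb{P}^2_\mathbb{C})$: indeed $\mathrm{PGL}(3,\mathbb{C})$ and $\mathcal{J}$ already do, by the consequence of the Noether and Castelnuovo Theorem recorded in the remark following Theorem \ref{thm:noether}.

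Next I would construct $C$. Its vertices are the cosets $g\,\mathrm{PGL}(3,\mathbb{C})$, $g\,\mathrm{Aut}(\mathbb{P}^1_\mathbb{C}\times\mathbb{P}^1_\mathbb{C})$ and $g\,\mathcal{J}$ for $g\in\mathrm{Bir}(\mathbb{P}^2_\mathbb{C})$; three vertices of distinct types span an edge, resp. a triangle, exactly when the corresponding cosets have nonempty intersection, so that edges are indexed by cosets of the pairwise intersections and triangles by cosets of the triple intersection. Then $\mathrm{Bir}(\mathbb{P}^2_\mathbb{C})$ acts on $C$ by left translation, the quotient is a single triangle, and the stabilizers of its three vertices, three edges and unique $2$-cell are precisely the three groups, their pairwise intersections and their triple intersection. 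Since left translation preserves the type of a coset and the three vertices of every triangle carry distinct types, no group element can permute the vertices of a simplex, so the action is automatically without inversions. Geometrically the vertices correspond to the minimal rational models ($\mathbb{P}^2_\mathbb{C}$, the quadric, and the rational fibrations governed by Theorem \ref{thm:fibration}), which makes the incidence relations transparent.

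The hard part will be establishing that $C$ is simply connected. Connectedness is equivalent to the generation statement above and is therefore immediate, so the real content is the vanishing of $\pi_1(C)$: every loop in the $1$-skeleton must bound a disc tiled by the triangles of $C$. This is exactly a relations statement, namely that every relation among elements of the three subgroups is a consequence of relations supported on a single subgroup or on a single pairwise intersection. I would deduce it from a known presentation of the plane Cremona group, using either the relations among quadratic maps (the presentation of Gizatullin) or the $\mathbb{P}^1_\mathbb{C}\times\mathbb{P}^1_\mathbb{C}$ presentation of Theorem \ref{thm:Iskovskikh}, translating each defining relation into the null-homotopy of a corresponding edge-loop in $C$, the triangles of $C$ furnishing exactly the $2$-cells needed to fill these loops. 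Once simple connectivity is in hand, the structure theorem applies verbatim and produces the generalized amalgam, completing the proof; this is consistent with the fact, recalled in this chapter, that $\mathrm{Bir}(\mathbb{P}^2_\mathbb{C})$ carries no ordinary amalgamated-product structure along a single subgroup.
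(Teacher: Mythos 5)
Your proposal is correct in outline, and it reaches Theorem \ref{thm:Wright} by a genuinely different packaging than the paper's proof, which is purely algebraic. The paper defines the candidate amalgam $\mathrm{G}$, gets a surjection $\alpha\colon\mathrm{G}\to\mathrm{Bir}(\mathbb{P}^2_\mathbb{C})$ from Theorem \ref{thm:Iskovskikh}, and then constructs an explicit inverse $\beta$: starting from the free product $\big\{\mathrm{id},\,\tau\big\}\ast\mathcal{J}$ it checks that Iskovskikh's defining relations already hold in $\mathrm{G}$, the decisive computation being $(\tau\circ\varepsilon)^3=\sigma_2$, obtained by writing $\varepsilon=\rho\circ\sigma_2$ and observing that each needed relation ($\sigma_2$ and $\rho$ commute in $\mathcal{J}$, $\sigma_2$ and $\tau$ commute in $\mathrm{Aut}(\mathbb{P}^1_\mathbb{C}\times\mathbb{P}^1_\mathbb{C})$, $(\tau\circ\rho)^3=\mathrm{id}$ in $\mathrm{PGL}(3,\mathbb{C})$) is witnessed inside a single one of the three subgroups. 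Your route through the coset complex $C$ and the structure theorem for simple complexes of groups is an equivalent reformulation: for a connected coset complex the kernel of the natural map from the generalized amalgam to $\mathrm{Bir}(\mathbb{P}^2_\mathbb{C})$ is exactly $\pi_1(C)$, so your ``hard part'' --- filling the edge-loops coming from the defining relations --- is literally the paper's relation-check in geometric clothing, and no work is saved there. What your approach buys is the complex itself: the paper only introduces Wright's complex $C$ after the theorem, in \S \ref{subsec:wright}, as a reflection of the amalgam structure (via the higher-dimensional analogue of Serre's theory, \cite{Soule, Swan}), and this object is what is exploited later (e.g.\ for the hyperbolicity questions of Lonjou); your order of argument reverses the published one without circularity, since you prove $\pi_1(C)=1$ directly from a presentation rather than deducing it from the amalgam theorem. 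Two cautions in executing it: first, do the translation only after fixing the stereographic identification, since $\mathcal{J}$ and $\mathrm{Aut}(\mathbb{P}^1_\mathbb{C}\times\mathbb{P}^1_\mathbb{C})$ intersect as subgroups of $\mathrm{Bir}(\mathbb{P}^2_\mathbb{C})$ only through that choice; second, drop the Gizatullin option --- its generators are \emph{all} quadratic maps, which do not lie in the union of the three pieces, so every generator would first have to be rewritten as a word in the pieces before any loop can even be formed, whereas Iskovskikh's generators $B\cup\{\tau\}$ land directly in $\mathcal{J}\cup\mathrm{Aut}(\mathbb{P}^1_\mathbb{C}\times\mathbb{P}^1_\mathbb{C})$, which is precisely why the paper bases its proof on Theorem \ref{thm:Iskovskikh}.
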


Twenty years later Blanc proved: 

\begin{thm}[\cite{Blanc:relations}]\label{thm:Blancrelations}
The group $\mathrm{Bir}(\mathbb{P}^2_\mathbb{C})$ is the amalgamated 
product of the Jonqui\`eres group with the group  of automorphisms
of the plane along their intersection,
divided by the relation $\sigma_2\circ\tau=\tau\circ\sigma_2$ where
$\sigma_2$ is the standard involution and 
$\tau$ is the involution $(z_0:z_1:z_2)\mapsto(z_1:z_0:z_2)$.
\end{thm}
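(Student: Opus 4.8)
The plan is to exhibit the stated group as $\mathrm{Bir}(\mathbb{P}^2_\mathbb{C})$ by producing mutually inverse homomorphisms. Write $\mathcal{D}=\mathcal{J}\cap\mathrm{PGL}(3,\mathbb{C})$ and set
\[
G=\bigl(\mathcal{J}\ast_{\mathcal{D}}\mathrm{PGL}(3,\mathbb{C})\bigr)\big/\langle\!\langle \sigma_2\tau\sigma_2^{-1}\tau^{-1}\rangle\!\rangle .
\]
Since $\mathcal{J}$ and $\mathrm{PGL}(3,\mathbb{C})$ are genuine subgroups of $\mathrm{Bir}(\mathbb{P}^2_\mathbb{C})$ agreeing on $\mathcal{D}$, the two inclusions assemble into a homomorphism $\Phi\colon G\to\mathrm{Bir}(\mathbb{P}^2_\mathbb{C})$; it is well defined because $\sigma_2$ and $\tau$ commute in $\mathrm{Bir}(\mathbb{P}^2_\mathbb{C})$, as one checks in the affine chart $z_2=1$ where $\sigma_2\colon(x,y)\mapsto(1/x,1/y)$ and $\tau\colon(x,y)\mapsto(y,x)$, so that both composites send $(x,y)$ to $(1/y,1/x)$. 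As $\sigma_2\in\mathcal{J}$ and $\mathrm{PGL}(3,\mathbb{C})$ generate $\mathrm{Bir}(\mathbb{P}^2_\mathbb{C})$ by the Noether--Castelnuovo Theorem~\ref{thm:noether}, $\Phi$ is surjective. Everything thus reduces to the injectivity of $\Phi$, that is, to showing that every relation between elements of $\mathcal{J}$ and of $\mathrm{PGL}(3,\mathbb{C})$ holding in $\mathrm{Bir}(\mathbb{P}^2_\mathbb{C})$ is a consequence of the relations internal to the two factors, of the identifications along $\mathcal{D}$, and of the single relation $[\sigma_2,\tau]=1$.

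To control the "extra" relations I would feed in a presentation of $\mathrm{Bir}(\mathbb{P}^2_\mathbb{C})$ already organised around a Jonqui\`eres group. The most economical is Iskovskikh's Theorem~\ref{thm:Iskovskikh}, which presents $\mathrm{Bir}(\mathbb{P}^1_\mathbb{C}\times\mathbb{P}^1_\mathbb{C})\cong\mathrm{Bir}(\mathbb{P}^2_\mathbb{C})$ by the fibre-preserving group $B$ and the factor-exchange $\tau'$, subject to the relations internal to $B$ and to $\mathrm{Aut}(\mathbb{P}^1_\mathbb{C}\times\mathbb{P}^1_\mathbb{C})$ together with exactly two cubic relations. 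I would fix a birational conjugacy $\theta\colon\mathbb{P}^1_\mathbb{C}\times\mathbb{P}^1_\mathbb{C}\dashrightarrow\mathbb{P}^2_\mathbb{C}$ (a stereographic model) carrying the fibration by the first ruling to the pencil defining $\mathcal{J}$, so that $\theta$ conjugates $B$ onto $\mathcal{J}$ and carries $\mathrm{Aut}(\mathbb{P}^1_\mathbb{C}\times\mathbb{P}^1_\mathbb{C})$ into $\langle\mathrm{PGL}(3,\mathbb{C}),\mathcal{J}\rangle$, with $\theta\tau'\theta^{-1}$ expressible through $\tau$ and $\sigma_2$. Transporting Iskovskikh's presentation along $\theta$ then rewrites every generator and relation in terms of $\mathcal{J}$ and $\mathrm{PGL}(3,\mathbb{C})$.

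The heart of the argument is the bookkeeping that identifies the transported relations. I expect the relations internal to $B$ to become relations internal to $\mathcal{J}$, and those internal to $\mathrm{Aut}(\mathbb{P}^1_\mathbb{C}\times\mathbb{P}^1_\mathbb{C})$ to follow from relations of $\mathrm{PGL}(3,\mathbb{C})$ together with the $\mathcal{D}$-identifications; one of the two cubic relations should become a formal consequence of the amalgam relations, while the remaining one should be equivalent, modulo all the others, to the commutation $\sigma_2\tau=\tau\sigma_2$. Granting this, $\mathrm{Bir}(\mathbb{P}^2_\mathbb{C})$ admits precisely the presentation defining $G$, the inverse homomorphism $\Psi\colon\mathrm{Bir}(\mathbb{P}^2_\mathbb{C})\to G$ exists, and $\Psi\circ\Phi=\mathrm{id}$ on the generating factors yields injectivity. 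A conceptually cleaner but computationally equivalent route replaces Iskovskikh's presentation by Wright's Theorem~\ref{thm:Wright}: there one must show that the third vertex $\mathrm{Aut}(\mathbb{P}^1_\mathbb{C}\times\mathbb{P}^1_\mathbb{C})$ of the triangle is absorbed into the edge $\mathcal{J}\ast_{\mathcal{D}}\mathrm{PGL}(3,\mathbb{C})$, with the triangle's pairwise-intersection relations collapsing to the single relation $R=[\sigma_2,\tau]$.

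The step I expect to be the genuine obstacle is precisely this last reduction: verifying that \emph{no relation beyond $R$ survives}. Concretely, one must pin down the images under $\theta$ of $\tau'$ and of the two distinguished maps occurring in Iskovskikh's cubic relations, and check that the second cubic relation is redundant once the amalgam structure and $R$ are imposed. This is a finite but delicate computation: it requires explicit normal forms for elements of $\mathcal{D}$ and of $B\cap\mathcal{J}$ and $B\cap\mathrm{PGL}(3,\mathbb{C})$, and care that the birational identification $\theta$ introduces no spurious indeterminacies. Everything else---surjectivity, the commutation relation, and the transport of the "internal" relations---is routine.
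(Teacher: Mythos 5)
Your setup (surjectivity via Noether--Castelnuovo, reduction to injectivity of $\Phi$) is correct, but the proposal defers its own central step, and the expectations you record about that step are wrong in one concrete respect, so there is a genuine gap. Under any stereographic model $\theta\colon\mathbb{P}^1_\mathbb{C}\times\mathbb{P}^1_\mathbb{C}\dashrightarrow\mathbb{P}^2_\mathbb{C}$, the group $B$ does conjugate onto $\mathcal{J}$ (the first ruling goes to the pencil of lines through one of the two points $p_1,p_2$ onto which the rulings through the projection center are contracted), and since $\mathrm{PGL}(2,\mathbb{C})\times\mathrm{PGL}(2,\mathbb{C})\subset B$ its internal relations do transport into $\mathcal{J}$. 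But the factor exchange $\tau'$ becomes a genuinely \emph{quadratic} map of $\mathbb{P}^2_\mathbb{C}$, i.e.\ a length-$\geq 2$ word in the amalgam, and the relations internal to $\mathrm{Aut}(\mathbb{P}^1_\mathbb{C}\times\mathbb{P}^1_\mathbb{C})$ that involve $\tau'$ --- above all the \emph{continuous} family $\tau'\circ(g_1,g_2)\circ\tau'=(g_2,g_1)$ for $(g_1,g_2)\in\mathrm{PGL}(2,\mathbb{C})^2$ --- transport to a continuum of conjugation identities $w\circ j\circ w^{-1}=j'$ with $w$ a mixed word and $j,j'\in\mathcal{J}$. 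These are \emph{not} "relations of $\mathrm{PGL}(3,\mathbb{C})$ together with the $\mathcal{D}$-identifications", and the check is not "finite": you must prove, for this whole family, that each identity is a consequence of the amalgam relations and $R=[\sigma_2,\tau]$. That is exactly a statement of the strength of Lemma~\ref{lem:tec1} (that $\nu\circ\varphi^{-1}=\psi^{-1}\circ\nu$, for $\nu\in\mathrm{PGL}(3,\mathbb{C})$ exchanging two base-points of $\varphi\in\mathcal{J}$, is generated by $\sigma_2\circ\tau=\tau\circ\sigma_2$), and nothing in your proposal supplies it; likewise the redundancy of the second cubic relation of Theorem~\ref{thm:Iskovskikh} is asserted ("should be equivalent"), not proved. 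Absorbing the third vertex of Wright's Theorem~\ref{thm:Wright} into the edge $\mathcal{J}\ast_{\mathcal{D}}\mathrm{PGL}(3,\mathbb{C})$ is precisely the content of Blanc's theorem; it cannot be dismissed as bookkeeping.

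For comparison, the proof sketched in \S\ref{subsection:nc1} is not a transport of Iskovskikh's presentation: as the paper notes, Blanc is "inspired by Iskovskikh's proof but stays on $\mathbb{P}^2_\mathbb{C}$". One takes a word $j_r\circ a_r\circ\cdots\circ j_1\circ a_1$ in the quotient of the amalgam, tracks the degrees $d_i$ of the linear systems $\Lambda_i=(j_i\circ a_i\circ\cdots\circ j_1\circ a_1)(\Lambda_0)$, and argues by lexicographic induction on the pair $(D,k)$, where $D=\max d_i$ and $k$ measures the word at the maximal degree. The equalities $\sum m_{p_i}=3(\nu-1)$ and $\sum m_{p_i}^2=\nu^2-1$ produce, via Bezout-type inequalities such as $m(\ell_0)+m(r_0)+m(q)>d_n$, three non-aligned base-points of large multiplicity; inserting $\theta^{-1}\circ\theta$ for a quadratic Jonqui\`eres map $\theta$ centered there rewrites the word so that $(D,k)$ strictly drops, and Lemma~\ref{lem:tec1} --- the single place where $\sigma_2\circ\tau=\tau\circ\sigma_2$ is used --- legitimizes the required conjugation inside the quotient of the amalgam. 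If you wish to pursue your route you will in any case have to prove an analogue of Lemma~\ref{lem:tec1} for the transported $\tau'$-relations, at which point the direct induction on $(D,k)$ is the shorter path.
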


As we have seen in Chapter \ref{Chapter:algebraicsubgroup} there is 
an Euclidean topology on the Cremona group (\cite{BlancZimmermann}).
With respect to this topology $\mathrm{Bir}(\mathbb{P}^2_\mathbb{C})$
is a Hausdorff topological group. Furthermore the restriction of
the Euclidean topology to any algebraic subgroup is the classical
Euclidean topology. To show that $\mathrm{Bir}(\mathbb{P}^2_\mathbb{C})$
is compactly presentable with respect to the Euclidean topology
Zimmermann established the following statement:

\begin{thm}[\cite{Zimmermann:presentation}]
The group $\mathrm{Bir}(\mathbb{P}^2_\mathbb{C})$ is isomorphic to
the amalgamated product of $\mathrm{Aut}(\mathbb{P}^2_\mathbb{C})$,
$\mathrm{Aut}(\mathbb{F}_2)$,
$\mathrm{Aut}(\mathbb{P}^1_\mathbb{C}\times\mathbb{P}^1_\mathbb{C})$
along their pairwise intersection 
in~$\mathrm{Bir}(\mathbb{P}^2_\mathbb{C})$ modulo the relation
$\tau\circ\sigma_2\circ\tau\circ\sigma_2=\mathrm{id}$ where
$\sigma_2$ is the standard involution and $\tau$ the involution
$\tau\colon(z_0:z_1:z_2)\mapsto(z_1:z_0:z_2)$.
\end{thm}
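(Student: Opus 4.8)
The plan is to exhibit a canonical homomorphism from the abstract generalized amalgam onto $\mathrm{Bir}(\mathbb{P}^2_\mathbb{C})$ and then to prove it is an isomorphism by bootstrapping from the presentation of Blanc (Theorem~\ref{thm:Blancrelations}). Write $\mathcal{G}$ for the colimit of the diagram formed by the three groups $\mathrm{Aut}(\mathbb{P}^2_\mathbb{C})$, $\mathrm{Aut}(\mathbb{F}_2)$ and $\mathrm{Aut}(\mathbb{P}^1_\mathbb{C}\times\mathbb{P}^1_\mathbb{C})$ together with the inclusions of their pairwise intersections taken inside $\mathrm{Bir}(\mathbb{P}^2_\mathbb{C})$, and let $\overline{\mathcal{G}}$ be the quotient of $\mathcal{G}$ by the normal closure of the single relator $\tau\circ\sigma_2\circ\tau\circ\sigma_2$. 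Since each of the three groups embeds in $\mathrm{Bir}(\mathbb{P}^2_\mathbb{C})$, since these embeddings agree on the pairwise intersections, and since the relator is trivial in $\mathrm{Bir}(\mathbb{P}^2_\mathbb{C})$, the universal property of the colimit furnishes a homomorphism $\Psi\colon\overline{\mathcal{G}}\to\mathrm{Bir}(\mathbb{P}^2_\mathbb{C})$. Surjectivity of $\Psi$ follows from the Noether and Castelnuovo theorem (Theorem~\ref{thm:noether}): the image contains $\mathrm{Aut}(\mathbb{P}^2_\mathbb{C})=\mathrm{PGL}(3,\mathbb{C})$ and a standard quadratic involution, the latter being realized as an element of $\mathrm{Aut}(\mathbb{P}^1_\mathbb{C}\times\mathbb{P}^1_\mathbb{C})$ under the birational identification given by stereographic projection, and these generate $\mathrm{Bir}(\mathbb{P}^2_\mathbb{C})$.

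The heart of the matter is the injectivity of $\Psi$. Rather than starting from scratch I would compare $\overline{\mathcal{G}}$ with Blanc's amalgam $\mathrm{Bir}(\mathbb{P}^2_\mathbb{C})\cong\mathcal{J}\ast_{\mathcal{J}\cap\mathrm{PGL}(3,\mathbb{C})}\mathrm{PGL}(3,\mathbb{C})/\langle\!\langle\sigma_2\tau=\tau\sigma_2\rangle\!\rangle$. The first observation is that the two relators coincide: since $\tau$ and $\sigma_2$ are involutions, the identity $\tau\sigma_2\tau\sigma_2=\mathrm{id}$ is equivalent, after left multiplication by $\tau$ and right multiplication by $\sigma_2$, to the commutation $\sigma_2\tau=\tau\sigma_2$. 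It therefore suffices to show that the subdiagram $\{\mathrm{Aut}(\mathbb{F}_2),\,\mathrm{Aut}(\mathbb{P}^1_\mathbb{C}\times\mathbb{P}^1_\mathbb{C})\}$, amalgamated along its intersection, recovers the Jonqui\`eres group $\mathcal{J}$ together with its intersection with $\mathrm{PGL}(3,\mathbb{C})$, compatibly with the gluing to $\mathrm{Aut}(\mathbb{P}^2_\mathbb{C})$. Geometrically both $\mathbb{F}_2$ and $\mathbb{P}^1_\mathbb{C}\times\mathbb{P}^1_\mathbb{C}=\mathbb{F}_0$ carry the rational fibration preserved by $\mathcal{J}$, and their fibre-preserving automorphism groups are linked by the elementary transformations described in \S\ref{subsec:hirz}. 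I would use this linkage, in the spirit of a graph-of-groups (Bass--Serre) decomposition of the fibred group, to write every element of $\mathcal{J}$ as an alternating word in $\mathrm{Aut}(\mathbb{F}_2)$ and $\mathrm{Aut}(\mathbb{F}_0)$ and to identify the relations among such words with relations holding already in the pairwise intersections.

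Two points require care. First, one must explain why $\mathbb{F}_2$, rather than another Hirzebruch surface, is the correct third vertex: every $\mathbb{F}_n$ with $n\geq 2$ preserves the ruling and has $\mathrm{Aut}(\mathbb{F}_n)\subset\mathcal{J}$, but the elementary transformations connecting $\mathbb{F}_2$ to $\mathbb{F}_0$, together with the contraction of the exceptional section producing $\mathbb{P}^2_\mathbb{C}$, realize $\mathbb{F}_2$ as the surface whose automorphisms, adjoined to those of $\mathbb{F}_0$, already generate all fibre-preserving maps; the groups $\mathrm{Aut}(\mathbb{F}_n)$ for $n\geq 3$ then appear as conjugates inside the amalgam and contribute no new relators. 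Second, and this is where the main difficulty lies, one must prove \emph{completeness} of the relation set: that every relation in $\mathrm{Bir}(\mathbb{P}^2_\mathbb{C})$ among words in the three vertex groups is a consequence of the internal relations, the edge identifications, and the single relator.

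The cleanest way to secure completeness is to exploit the action on the infinite-dimensional hyperbolic space $\mathbb{H}^\infty(\mathbb{P}^2_\mathbb{C})$ of Chapter~\ref{chap:hyperbolicspace}, or equivalently the $1$-connectedness of Wright's complex underlying Theorem~\ref{thm:Wright}: the three vertex groups are stabilisers of the relevant cells, and $1$-connectedness forces the presentation to be complete once the $2$-cells --- the elementary relations between Sarkisov links joining $\mathbb{P}^2_\mathbb{C}$, $\mathbb{F}_0$ and $\mathbb{F}_2$ --- have been accounted for. The main obstacle is precisely the bookkeeping of these $2$-cells: one must verify that every such elementary relation either lies inside a single vertex or a single edge group, or is conjugate to the commutation $\sigma_2\tau=\tau\sigma_2$, and that no further independent relator survives this reduction. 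Carrying out this verification is what upgrades Blanc's two-vertex amalgam to the stated three-vertex form adapted to the genuinely algebraic (and hence Euclidean-topological) structure of the three generating automorphism groups.
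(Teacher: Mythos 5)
Your global skeleton --- a canonical map $\Psi$ out of the three-vertex colimit, surjectivity via Theorem \ref{thm:noether}, the correct observation that $\tau\circ\sigma_2\circ\tau\circ\sigma_2=\mathrm{id}$ is equivalent to the commutation relator of Theorem \ref{thm:Blancrelations} since $\tau$ and $\sigma_2$ are involutions, and a comparison with Blanc's amalgam --- is reasonable, and bootstrapping from Theorem \ref{thm:Blancrelations} is indeed the route of \cite{Zimmermann:presentation} (the survey itself only states this theorem, with no in-text proof). But your pivotal reduction is false: the subdiagram $\{\mathrm{Aut}(\mathbb{F}_2),\,\mathrm{Aut}(\mathbb{P}^1_\mathbb{C}\times\mathbb{P}^1_\mathbb{C})\}$ does not recover $\mathcal{J}$. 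First, $\mathrm{Aut}(\mathbb{P}^1_\mathbb{C}\times\mathbb{P}^1_\mathbb{C})\not\subset\mathcal{J}$: the swap of the two rulings does not preserve the pencil defining $\mathcal{J}$. Second, and fatally, even the fibration-preserving parts generate a \emph{proper} subgroup of $\mathcal{J}$. Write $\mathcal{J}\simeq\mathrm{PGL}(2,\mathbb{C}(x))\rtimes\mathrm{PGL}(2,\mathbb{C})$ with $x$ the base coordinate. Since substitution $x\mapsto\eta(x)$, $\eta\in\mathrm{PGL}(2,\mathbb{C})$, sends squares of $\mathbb{C}(x)^*$ to squares, the set $S$ of elements whose fibrewise matrix has determinant a square in $\mathbb{C}(x)^*$ is a subgroup of $\mathcal{J}$. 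It contains $\mathcal{J}\cap\mathrm{Aut}(\mathbb{P}^1_\mathbb{C}\times\mathbb{P}^1_\mathbb{C})$ (fibrewise matrices with constant determinant class) and $\mathcal{J}\cap\mathrm{Aut}(\mathbb{F}_2)$ (determinant $a/(\gamma x+\delta)^2$, a square because $\mathbb{C}$ is algebraically closed), but it does not contain the degree-two Jonqui\`eres involution $\varepsilon\colon(x,y)\dashrightarrow\left(x,\frac{x}{y}\right)$, whose fibrewise determinant $-x$ is not a square. Tellingly, in the proof of Theorem \ref{thm:Wright} recalled in this very chapter, $\varepsilon$ interacts with the vertex groups only through the identity $(\tau\circ\varepsilon)^3=\sigma_2$, i.e.\ by leaving $\mathcal{J}$ through $\tau$. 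Consequently you cannot define a comparison homomorphism from Blanc's amalgam by sending $\mathcal{J}$ into the two-vertex sub-amalgam: decomposing Jonqui\`eres elements unavoidably uses $\mathcal{J}\cap\mathrm{Aut}(\mathbb{P}^2_\mathbb{C})$ as well, and the genuine content of the theorem --- a presentation of $\mathcal{J}$ relative to the three intersections, whose relations are shown to follow from the edge identifications and the single relator --- is exactly what your plan omits.

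The fallback in your last paragraph does not close this gap. Wright's complex has vertex stabilizers $\mathrm{PGL}(3,\mathbb{C})$, $\mathrm{Aut}(\mathbb{P}^1_\mathbb{C}\times\mathbb{P}^1_\mathbb{C})$ and the \emph{full} group $\mathcal{J}$; the algebraic group $\mathrm{Aut}(\mathbb{F}_2)$ stabilizes no cell of it, and its $1$-connectedness yields precisely the relator-free amalgam of Theorem \ref{thm:Wright}, not a presentation with an extra relator in which the infinite-dimensional vertex $\mathcal{J}$ has been traded for $\mathrm{Aut}(\mathbb{F}_2)$. To argue geometrically you would have to build a new simply connected complex whose vertices are the models $\mathbb{P}^2_\mathbb{C}$, $\mathbb{F}_0$, $\mathbb{F}_2$ and classify its $2$-cells, i.e.\ the elementary relations among the Sarkisov links joining these models; that classification \emph{is} the proof, not bookkeeping. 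For the same reason your remark that $\mathrm{Aut}(\mathbb{F}_n)$, $n\geq 3$, ``appear as conjugates inside the amalgam'' is circular: the elementary transformations $\mathbb{F}_2\dashrightarrow\mathbb{F}_3$ are not elements of the three vertex groups, and expressing them as words in those groups presupposes the theorem you are trying to prove.
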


Urech and Zimmermann got a presentation of the
plane Cremona group with respect to the generators
given by the Theorem of Noether and Castelnuovo:

\begin{thm}[\cite{UrechZimmermann}]\label{thm:UrechZimmermann}
The Cremona group $\mathrm{Bir}(\mathbb{P}^2_\mathbb{C})$ is
isomorphic to
\[
\langle\sigma_2,\,\mathrm{PGL}(3,\mathbb{C})\,\vert\,(\mathcal{R}_1), \,(\mathcal{R}_2), \,(\mathcal{R}_3), \,(\mathcal{R}_4),\, (\mathcal{R}_5)\rangle
\]
where
\begin{eqnarray*}
& (\mathcal{R}_1)& g_1\circ g_2\circ g_3^{-1}=\mathrm{id} 
\text{ for all } g_1,\,g_2,\,g_3\in\mathrm{PGL}(3,\mathbb{C}) 
\text{ such that } g_1\circ g_2=g_3;\\
& (\mathcal{R}_2) & \sigma_2^2=\mathrm{id}\\
& (\mathcal{R}_3)& \sigma_2\circ\eta\circ(\eta\circ\sigma_2)^{-1}=\mathrm{id}\text{ for all } \eta \text{ in the symmetric group } 
\mathfrak{S}_3\subset\mathrm{PGL}(3,\mathbb{C}) \\
& & \text{ of order } 6 \text{ acting on }
\mathbb{P}^2_\mathbb{C} \text{ by coordinate permutations}\\
& (\mathcal{R}_4)& \sigma_2\circ d\circ\sigma_2\circ d=\mathrm{id}\text{ for all diagonal automorphisms }d \text{ in the subgroup} \\
& &\mathrm{D}_2\subset\mathrm{PGL}(3,\mathbb{C})\text{ of diagonal automorphisms};\\
& (\mathcal{R}_5)& (\sigma_2\circ h)^3=\mathrm{id} \text{ where } h\colon(z_0:z_1:z_2)\mapsto(z_2-z_0:z_2-z_1:z_2)
\end{eqnarray*}
\end{thm}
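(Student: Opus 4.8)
The plan is to exhibit the abstractly presented group
\[
G = \langle\, \sigma_2,\, \mathrm{PGL}(3,\mathbb{C}) \mid (\mathcal{R}_1),\ldots,(\mathcal{R}_5)\,\rangle
\]
as isomorphic to $\mathrm{Bir}(\mathbb{P}^2_\mathbb{C})$ by constructing mutually inverse homomorphisms. First I would check that all five relations genuinely hold in $\mathrm{Bir}(\mathbb{P}^2_\mathbb{C})$; this is a routine computation, the only non-obvious case being $(\mathcal{R}_5)$, where in the affine chart $z_2=1$ one has $\sigma_2\colon(x,y)\mapsto(1/x,1/y)$ and $h\colon(x,y)\mapsto(1-x,1-y)$, so that $\sigma_2\circ h$ acts coordinatewise by the M\"obius transformation $x\mapsto 1/(1-x)$, which has order $3$. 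Hence there is a canonical homomorphism $\Phi\colon G\to\mathrm{Bir}(\mathbb{P}^2_\mathbb{C})$ fixing the generators, and $\Phi$ is surjective by the Noether--Castelnuovo Theorem \ref{thm:noether}. Everything then reduces to the injectivity of $\Phi$, for which I would build an inverse $\Psi\colon\mathrm{Bir}(\mathbb{P}^2_\mathbb{C})\to G$.

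To construct $\Psi$ I would feed on Blanc's amalgamated presentation (Theorem \ref{thm:Blancrelations}), which writes $\mathrm{Bir}(\mathbb{P}^2_\mathbb{C})$ as the amalgam of $\mathcal{J}$ and $\mathrm{PGL}(3,\mathbb{C})$ along $J_0:=\mathcal{J}\cap\mathrm{PGL}(3,\mathbb{C})$, divided by the single extra relation $\sigma_2\circ\tau=\tau\circ\sigma_2$. By the universal property of the amalgam, defining $\Psi$ amounts to producing two homomorphisms $\mathrm{PGL}(3,\mathbb{C})\to G$ and $\mathcal{J}\to G$ that agree on $J_0$ and kill the commutator of $\sigma_2$ and $\tau$. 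On $\mathrm{PGL}(3,\mathbb{C})$ one takes the tautological map, well defined by $(\mathcal{R}_1)$; the extra relation is exactly $(\mathcal{R}_3)$ specialised to $\eta=\tau\in\mathfrak{S}_3$ and therefore already holds in $G$. So the entire problem collapses to constructing a homomorphism $\mathcal{J}\to G$ restricting to the inclusion $J_0\hookrightarrow\mathrm{PGL}(3,\mathbb{C})\subset G$ and sending the quadratic involution to $\sigma_2$.

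The core of the argument is thus a presentation of the Jonqui\`eres group itself with the generators $\sigma_2$ and $J_0$. Using the decomposition $\mathcal{J}\simeq\mathrm{PGL}(2,\mathbb{C})\rtimes\mathrm{PGL}(2,\mathbb{C}(z_0))$, I would first verify that $\mathcal{J}$ is generated by $J_0$ together with the single involution $\sigma_2$ (the constant torus, unipotent and permutation parts all lie in $J_0$, while $\sigma_2$ supplies the fibrewise inversion). I would then present the infinite-dimensional factor $\mathrm{PGL}(2,\mathbb{C}(z_0))$ through its action on the Bruhat--Tits tree, in the spirit of Nagao's theorem, which expresses it as a tree of groups whose vertex and edge stabilisers are conjugates of $\mathrm{PGL}_2$ over the local rings; translated back, this yields a complete set of relations among $\sigma_2$ and $J_0$. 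The claim to be checked is that these relations are consequences of the internal $J_0$-relations together with $\sigma_2^2=\mathrm{id}$ (from $(\mathcal{R}_2)$), the inversion of the diagonal torus by $\sigma_2$ (from $(\mathcal{R}_4)$), and the order-three relation $(\mathcal{R}_5)$. Granting this, $\Psi$ is defined, and a final check on generators shows that $\Psi\circ\Phi$ and $\Phi\circ\Psi$ are the identity.

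The hard part will be precisely this presentation of $\mathcal{J}$: proving that $(\mathcal{R}_2)$, $(\mathcal{R}_4)$, $(\mathcal{R}_5)$ form a \emph{complete} system of relations for the fibrewise group. The relation $(\mathcal{R}_5)$ is the delicate one, since it is what ``unfolds'' the whole of $\mathrm{PGL}(2,\mathbb{C}(z_0))$ out of the single involution $\sigma_2$ and the finite-dimensional group $J_0$; controlling how arbitrarily high fibre-degree is produced, and confirming that every resulting relation reduces to the five listed ones, is where the genuine work lies. A secondary bookkeeping difficulty is to carry out the Tietze transformations that merge Blanc's amalgam presentation with the presentation of $\mathcal{J}$, and to confirm that the amalgamation identifications and the extra relation $\sigma_2\tau=\tau\sigma_2$ collapse without remainder into $(\mathcal{R}_1)$ and $(\mathcal{R}_3)$.
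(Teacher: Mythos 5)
Your global architecture coincides with the paper's actual proof: verify the five relations in $\mathrm{Bir}(\mathbb{P}^2_\mathbb{C})$ to get the canonical surjection $\pi\colon\mathrm{G}\to\mathrm{Bir}(\mathbb{P}^2_\mathbb{C})$ (surjectivity by Theorem \ref{thm:noether}), then invert it through Blanc's amalgam (Theorem \ref{thm:Blancrelations}) and its universal property, with $(\mathcal{R}_1)$ making the map on $\mathrm{PGL}(3,\mathbb{C})$ tautological and $(\mathcal{R}_3)$ specialised to $\tau$ killing $\tau\circ\sigma_2\circ\tau\circ\sigma_2$. Your reduction of everything to a well-defined homomorphism $\mathcal{J}\to\mathrm{G}$ is exactly the paper's Proposition \ref{pro:UrechZimmermann}: every identity $\phi_n\circ\sigma_2\circ\phi_{n-1}\circ\ldots\circ\sigma_2\circ\phi_1=\mathrm{id}$ with $\phi_i\in\mathrm{PGL}(3,\mathbb{C})\cap\mathcal{J}$ must be shown to follow from $(\mathcal{R}_1)$--$(\mathcal{R}_5)$. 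But that proposition \emph{is} the theorem, and it is precisely the step your proposal does not prove: you substitute for it an appeal to Nagao/Bruhat--Tits theory plus an unargued assertion that ``the resulting relations are consequences of the listed ones.''

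The gap is that the proposed tool does not apply and, even if it did, the translation step it defers is the whole problem. Nagao's theorem presents $\mathrm{GL}_2(k[t])$, not $\mathrm{PGL}_2(k(t))$, and Serre's tree is attached to a single discrete valuation; $\mathbb{C}(z_0)$ carries a place for every point of $\mathbb{P}^1_\mathbb{C}$ plus the degree valuation, and no single tree of groups records relations mixing denominators at different places, so there is no ``tree of groups'' presentation of the full group $\mathrm{PGL}(2,\mathbb{C}(z_0))$ of the kind you invoke. Granting instead some presentation (say of Steinberg type, with generators $z_1\mapsto z_1+f(z_0)$ and $z_1\mapsto g(z_0)z_1$ for arbitrary $f,g$), those generators are Jonqui\`eres maps of unbounded degree with no preferred word in $\sigma_2$ and $J_0$; choosing words and checking that every relation collapses is exactly the unbounded-degree bookkeeping the theorem consists of, and your proposal offers no mechanism for it. The paper's mechanism is geometric, not group-theoretic: the degree/multiplicity formulas for composition with a quadratic Jonqui\`eres map (Lemma \ref{lem:tecnik1}, Remark \ref{rem:tecnik2}), the normalization eliminating infinitely near base-points without increasing complexity (Lemma \ref{lem:tecnik5}), and a lexicographic induction on the pair $(D,n)$ attached to the intermediate linear systems $\Lambda_i$, split into the cases $\deg(\sigma_2\circ\phi_n\circ\sigma_2)\in\{1,2,3\}$ handled by Lemmas \ref{lem:tecnik3} and \ref{lem:tecnik4} --- the same strategy as in the proofs of Theorem \ref{thm:Blancrelations} and Theorem \ref{thm:Iskovskikh}. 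A secondary inaccuracy: inside the Jonqui\`eres reduction you claim only $(\mathcal{R}_2)$, $(\mathcal{R}_4)$, $(\mathcal{R}_5)$ are needed, whereas the paper's case analysis also uses $(\mathcal{R}_1)$ and $(\mathcal{R}_3)$ for the permutations $\eta\in\mathfrak{S}_3\cap\mathcal{J}$ that normalize the base-point configurations; and note that $\sigma_2$ is not fibrewise (it inverts the base as well), so the splitting $\mathcal{J}\simeq\mathrm{PGL}(2,\mathbb{C})\rtimes\mathrm{PGL}(2,\mathbb{C}(z_0))$ does not isolate $\sigma_2$ in the second factor as cleanly as your sketch suggests.
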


\begin{rems}
\begin{itemize}
\item[$\diamond$] The relations $(\mathcal{R}_2)$, $(\mathcal{R}_3)$ 
and $(\mathcal{R}_4)$ occur in the group 
$\mathrm{Aut}(\mathbb{C}^*\times\mathbb{C}^*)$ which is given by the 
group of monomial maps $\mathrm{GL}(2,\mathbb{Z})\ltimes\mathrm{D}_2$.

\item[$\diamond$] $(\mathcal{R}_5)$ is a relation from the group 
$\mathrm{Aut}(\mathbb{P}^1_\mathbb{C}\times\mathbb{P}^1_\mathbb{C})^0\simeq\mathrm{PGL}(2,\mathbb{C})\times\mathrm{PGL}(2,\mathbb{C})$ 
which is considered as a subgroup of $\mathrm{Bir}(\mathbb{P}^2_\mathbb{C})$
by conjugation with the birational equivalence
\begin{eqnarray*}
\mathbb{P}^1_\mathbb{C}\times\mathbb{P}^1_\mathbb{C}&\dashrightarrow&\mathbb{P}^2_\mathbb{C}\\
\big((u_0:u_1),(v_0:v_1)\big)&\dashrightarrow&(u_1v_0:u_0v_1:u_1v_1)
\end{eqnarray*}
\end{itemize}
\end{rems}

\begin{rem}
All the results are stated on $\mathbb{C}$ but indeed
\begin{itemize}
\item[$\diamond$] \cite{Cornulier:amalgamatedproduct, UrechZimmermann, Gizatullin:relations, Iskovskikh:generatorsandrelations, Iskovskikh:relations, Wright, Blanc:relations} 
work for the plane Cremona group over an algebraically 
closed field,
\item[$\diamond$] \cite{Zimmermann:presentation} works for the plane 
Cremona group over a locally compact local field.
\end{itemize}
\end{rem}

\medskip

In the first section we recall the proof of 
Noether and Castelnuovo 
due to Alexander. 

In the second 
section we give an outline of the proof of 
the result of \cite{Cornulier:amalgamatedproduct} that 
says that the plane Cremona group
does not decompose as a non-trivial amalgam. We also
recall the proof of Theorem \ref{thm:Wright}.

The third section is devoted to generators and relations
in the Cremona group. We first give a sketch
of the proof of Theorem \ref{thm:Blancrelations}. 
We also give
a sketch of the proof of Theorem 
\ref{thm:UrechZimmermann}. We then 
explain why there is no Noether and
Castelnuovo theorem in higher dimension.

\bigskip
\bigskip

%%%%%%%%%%%%%%%%%%%%%%%%%%%%%%%%%%%%%%%%%%%%%%%%%%%%%%%%%%%%%%%%%%%%%%%%%%%%%%%%%%%%%%%%%%%%%%%%%%%%%%%%%%%%%%%%%%%
%%%%%%%%%%%%%%%%%%%%%%%%%%%%%%%%%%%%%%%%%%%%%%%%%%%%%%%%%%%%%%%%%%%%%%%%%%%%%%%%%%%%%%%%%%%%%%%%%%%%%%%%%%%%%%%%%%%
% section
%%%%%%%%%%%%%%%%%%%%%%%%%%%%%%%%%%%%%%%%%%%%%%%%%%%%%%%%%%%%%%%%%%%%%%%%%%%%%%%%%%%%%%%%%%%%%%%%%%%%%%%%%%%%%%%%%%%
%%%%%%%%%%%%%%%%%%%%%%%%%%%%%%%%%%%%%%%%%%%%%%%%%%%%%%%%%%%%%%%%%%%%%%%%%%%%%%%%%%%%%%%%%%%%%%%%%%%%%%%%%%%%%%%%%%%

\section{Noether and Castelnuovo theorem}

Let us now deal with the proof of Theorem \ref{thm:noether} given by 
Alexander (\cite{Alexander}). 
Recall the two following formulas proved in
\S \ref{sec:geodef}. Consider a birational
self map $\phi$ of $\mathbb{P}^2_\mathbb{C}$ 
of degree $\nu$; denote by $p_1$, $p_2$, 
$\ldots$, $p_k$ the base-points of $\phi$ 
and by $m_{p_i}$ the multiplicity 
of $p_i$. Then 
\begin{equation}\label{eq:cremona3}
\displaystyle\sum_{i=0}^km_{p_i}=3(\nu-1)
\end{equation}
\begin{equation}\label{eq:cremona1}
\displaystyle\sum_{i=0}^km_{p_i}^2=\nu^2-1.
\end{equation}

From (\ref{eq:cremona1}) and (\ref{eq:cremona3})
one gets
\begin{equation}\label{eq:cremona2}
\displaystyle\sum_{i=0}^km_{p_i}\big(m_{p_i}-1\big)=(\nu-1)(\nu-2).
\end{equation}

Consider a birational self map of 
$\mathbb{P}^2_\mathbb{C}$ of degree $\nu$. 
If $\nu=1$, then according to 
(\ref{eq:cremona3}) the map $\phi$ is an 
automorphism of $\mathbb{P}^2_\mathbb{C}$.
So let us now assume that $\nu>1$. Let 
$\Lambda_\phi$ be the linear system associated
to $\phi$. Denote by $p_1$, $p_2$, $\ldots$,
$p_k$ the base-points (in 
$\mathbb{P}^2_\mathbb{C}$ or infinitely near)
of $\phi$ and $m_{p_i}$ their multiplicity. 
Up to reindexation let us assume that 
\[
m_{p_0}\geq m_{p_1}\geq \ldots \geq m_{p_k}\geq 1.
\]
Alexander introduced the notion of 
complexity: the 
\textsl{complexity}\index{defi}{complexity (linear system)} 
of $\Lambda_\phi$
is the integer $2c=\nu-m_{p_0}$. It is the 
number of points except $p_0$ that belong
to the intersection of a general line passing
through $p_0$ and a curve of $\Lambda_\phi$. 
\begin{rems}
One has
\begin{itemize}
\item[$\diamond$]  $2c\geq 0$: the degree
of the hypersurfaces of $\Lambda_\phi$ is 
$\nu$, so a point has multiplicity $\leq\nu$;

\item[$\diamond$] furthermore $2c\geq 1$; indeed
if an homogeneous polynomial of degree $\nu$
has a point of multiplicity $\nu$, then the 
hypersurface given by this polynomial is the union of $\nu$ lines.
\end{itemize}
\end{rems}

Set 
\[
C=\big\{p\in\mathrm{Base}(\phi)\smallsetminus\{p_0\}\,\vert\,m_p>c\big\}
\]
and 
\[
n=\#\,C.
\]
Bezout theorem implies that the line 
through $p_0$ and $p_1$ intersects any curve 
of $\Lambda_\phi$ in $\nu$ points (counted with
multiplicity). Furthermore it intersects any 
curve of $\Lambda_\phi$ at $p_0$ with multiplicity
$m_{p_0}$. Consequently $m_{p_1}\leq \nu-m_{p_0}=2c$
and 
\begin{equation}\label{eq:beurk}
c<m_{p_k}\leq \ldots\leq m_{p_2}\leq m_{p_1}\leq 2c
\end{equation}

\begin{lem}\label{lem:three}
There are at least three base-points of multiplicity
$>c=\frac{\nu-m_{p_0}}{2}$, {\it i.e.} $n\geq 2$;
hence $m_{p_0}>\frac{\nu}{3}$.

Furthermore if $\nu\geq 3$, then $p_1$, $p_2$, 
$\ldots$, $p_n$ are not aligned.
\end{lem}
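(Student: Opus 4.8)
The plan is to run everything through the two Noether relations (\ref{eq:cremona3}) and (\ref{eq:cremona1}), namely $\sum_{i=0}^k m_{p_i}=3(\nu-1)$ and $\sum_{i=0}^k m_{p_i}^2=\nu^2-1$, combined with the ordering $m_{p_0}\geq m_{p_1}\geq\ldots\geq m_{p_k}\geq 1$ and the bounds $m_{p_1}\leq 2c=\nu-m_{p_0}$ and $m_{p_0}\leq\nu-1$ already recorded in (\ref{eq:beurk}) and in the preceding remarks. I first dispose of the point $p_0$: since $m_{p_0}>\tfrac{\nu}{3}$ is exactly equivalent to $m_{p_0}>c$, it suffices to bound $m_{p_0}$ from below. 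From the two relations,
\[
\sum_{i=0}^k m_{p_i}\bigl(m_{p_i}-m_{p_0}\bigr)=\sum_{i=0}^k m_{p_i}^2-m_{p_0}\sum_{i=0}^k m_{p_i}=(\nu^2-1)-3m_{p_0}(\nu-1)=(\nu-1)(\nu+1-3m_{p_0}).
\]
Every summand on the left is $\leq 0$ because $m_{p_i}\leq m_{p_0}$, and $\nu-1>0$ since $\nu>1$; hence $\nu+1-3m_{p_0}\leq 0$, that is $m_{p_0}\geq\tfrac{\nu+1}{3}>\tfrac{\nu}{3}$. Thus $p_0$ has multiplicity $>c$, which is the assertion $m_{p_0}>\tfrac{\nu}{3}$.

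Next I would prove $n\geq 2$, which amounts to $m_{p_2}>c$, arguing by contradiction. If $m_{p_2}\leq c$ then $m_{p_i}\leq c$ for every $i\geq 2$, whence $\sum_{i\geq 2}m_{p_i}^2\leq c\sum_{i\geq 2}m_{p_i}$. Writing $a=m_{p_0}$, $b=m_{p_1}$ and substituting the tail sums $\sum_{i\geq 2}m_{p_i}=3(\nu-1)-a-b$ and $\sum_{i\geq 2}m_{p_i}^2=\nu^2-1-a^2-b^2$ coming from the Noether relations, together with $c=\tfrac{\nu-a}{2}$, this inequality becomes $\Phi(\nu,a,b)\leq 0$ for the explicit quadratic $\Phi=(\nu^2-1-a^2-b^2)-\tfrac{\nu-a}{2}(3\nu-3-a-b)$, whose Hessian in $(a,b)$ is constant and negative definite, so $\Phi$ is jointly concave. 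On the admissible polygon $\tfrac{\nu}{3}<a\leq\nu-1$, $1\leq b\leq\min(a,\nu-a)$, a concave function attains its minimum at a vertex, so it is enough to verify $\Phi>0$ at the finitely many vertices, each check reducing to a one-variable sign computation; this forces $\Phi>0$ throughout and contradicts $\Phi\leq 0$. This bounded optimisation is the routine but genuinely fiddly heart of the argument, and is where I expect to spend the most effort. Once it is done, $m_{p_0},m_{p_1},m_{p_2}$ all exceed $c$, giving three base-points of multiplicity $>c$ and hence $n\geq 2$.

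Finally, for $\nu\geq 3$ I would show that the base-points of multiplicity $>c$ cannot all lie on a single line; the substantive case, and the one needed later for the quadratic reduction, is that the three largest base-points $p_0,p_1,p_2$ are not collinear. The engine is Noether's inequality $m_{p_0}+m_{p_1}+m_{p_2}>\nu$, which I would prove by the very same contradiction scheme, now keeping only the top three multiplicities and using $\sum_{i\geq 3}m_{p_i}^2\leq m_{p_2}\sum_{i\geq 3}m_{p_i}$ together with $m_{p_2}\leq\nu-m_{p_0}-m_{p_1}$. Granting it, suppose $p_0,p_1,p_2$ lay on a line $L$ and let $D$ be a general member of $\Lambda_\phi$; then $\deg D=\nu$ and $D$ passes through each $p_j$ with multiplicity $m_{p_j}$, so the local intersection numbers give $L\cdot D\geq m_{p_0}+m_{p_1}+m_{p_2}>\nu$, whereas Bezout yields $L\cdot D=\nu$ as soon as $L$ is not a component of $D$. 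The hypothesis $\nu\geq 3$ is what guarantees that $L$ is not a fixed component of the homaloidal net, and the two delicate points are this non-containment and the bookkeeping of infinitely near base-points lying on $L$, so that the local multiplicities really add up to $m_{p_0}+m_{p_1}+m_{p_2}$. (I note that, read literally for $n=2$, the statement about $p_1,\ldots,p_n$ alone is vacuous, since two points are always collinear; the correct and useful content is the non-collinearity of the three extremal points $p_0,p_1,p_2$, which is what I would prove.)
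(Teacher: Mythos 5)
Your first two steps are correct but follow a genuinely different route from the paper. The paper funnels everything through the single identity $\sum_{i=0}^k m_{p_i}(m_{p_i}-c)=(\nu-1)(\nu-3c+1)$, obtained as the combination $c\cdot(\ref{eq:cremona2})-(c-1)\cdot(\ref{eq:cremona1})$; truncating to $i\leq n$, using $3c-1\geq\frac{1}{2}>0$ and $m_{p_i}\leq 2c$ from (\ref{eq:beurk}), it reaches $\sum_{i=1}^n(m_{p_i}-c)>m_{p_0}-c$ (its (\ref{eq:bipbip})), whence $n\geq 2$ since $m_{p_1}\leq m_{p_0}$, and only then $m_{p_0}>\frac{\nu}{3}$. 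You reverse the logic: your one-line computation $\sum_i m_{p_i}(m_{p_i}-m_{p_0})=(\nu-1)(\nu+1-3m_{p_0})\leq 0$ gives $m_{p_0}\geq\frac{\nu+1}{3}$ directly, and your concave-optimization contradiction then gives $n\geq 2$. That scheme does close: writing $2\Phi=-(\nu-1)(\nu-2)+a(4\nu-3a-3)+b(\nu-a-2b)$, the Hessian is negative definite, and at the four vertices $(\nu-1,1)$, $\left(\frac{\nu+1}{3},1\right)$, $\left(\frac{\nu}{2},\frac{\nu}{2}\right)$, $\left(\frac{\nu+1}{3},\frac{\nu+1}{3}\right)$ one gets $2\Phi=2\nu-3$, $\frac{10\nu-17}{3}$, $\frac{3\nu}{2}-2$, $\frac{7\nu-11}{3}$, all positive for $\nu\geq 2$, so the deferred checks succeed. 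Your version is more modular and elementary per step; the paper's identity is slicker and, crucially, outputs the quantitative surplus (\ref{eq:bipbip}) rather than the bare inequality $m_{p_2}>c$.

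That surplus is exactly what your third step is missing, and this is the one genuine gap relative to the lemma as stated. You prove non-collinearity of the triple $p_0,p_1,p_2$ via Noether's inequality $m_{p_0}+m_{p_1}+m_{p_2}\geq\nu+1$ (your derivation is the classical one and is fine), but the lemma asserts that $p_1,\ldots,p_n$ — the points of $C$, with $p_0$ excluded — are not aligned, and this is the version invoked later ("the points of $C'$ are not aligned" in Step 1 of the proof of Theorem \ref{thm:noether}). The paper gets it, for $n\geq 3$, from (\ref{eq:bipbip}): $\sum_{i=1}^n m_{p_i}>\nu+(n-3)c\geq\nu$, then Bezout. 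Your tools cannot reach this: from $m_{p_i}>c$ alone one only gets $\sum_{i=1}^n m_{p_i}>nc$, and $3c\geq\nu$ would require $m_{p_0}\leq\frac{\nu}{3}$, which is false. You are right that the statement is empty, indeed false as literally read, when $n=2$ (e.g.\ the degree-$4$ map with multiplicities $(2,2,2,1,1,1)$ has $c=1$ and $n=2$), and the paper's proof silently treats only $n\geq 3$; but for $n\geq 3$ the stated claim is non-vacuous and used, so you should either derive (\ref{eq:bipbip}) or strengthen your optimization to yield $\sum_{i=1}^n(m_{p_i}-c)>m_{p_0}-c$. One smaller correction: the non-containment of the line in a general member of $\Lambda_\phi$ is automatic for every $\nu\geq 2$, since a homaloidal net has no fixed component and the members containing a given line form a proper linear subsystem; the hypothesis $\nu\geq 3$ plays no role at that point.
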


\begin{proof}
According to (\ref{eq:cremona1}) and (\ref{eq:cremona2}) 
one has on the one hand
\[
c\displaystyle\sum_{i=0}^km_{p_i}(m_{p_i}-1)-(c-1)\displaystyle\sum_{i=0}^km_{p_i}^2=\displaystyle\sum_{i=0}^k m_{p_i}(cm_{p_i}-c-cm_{p_i}+m_{p_i})=\displaystyle\sum_{i=0}^km_{p_i}(m_{p_i}-c)
\]
and on the other hand
\[
(\nu-1)(\nu-2)c-(\nu^2-1)(c-1)=(\nu-1)(\nu c-2c-\nu c+\nu-c+1)=(\nu-1)(\nu-3c+1).
\]
As a result
\begin{equation}\label{eq:bloub}
\displaystyle\sum_{i=0}^km_{p_i}(m_{p_i}-c)=(\nu-1)(\nu-3c+1)
\end{equation}
Since $m_{p_{n+i}}\leq c$ for any $i>0$ one gets
\[
\displaystyle\sum_{i=0}^nm_{p_i}(m_{p_i}-c)\geq\displaystyle\sum_{i=0}^km_{p_i}(m_{p_i}-c).
\]
According to (\ref{eq:bloub}) 
\[
\displaystyle\sum_{i=0}^nm_{p_i}(m_{p_i}-c)\geq(\nu-1)(\nu-3c+1)=\nu(\nu-3c)+3c-1.
\]
But $3c-1\geq\frac{1}{2}>0$, so 
\[
\displaystyle\sum_{i=0}^nm_{p_i}(m_{p_i}-c)>\nu(\nu-3c)=\nu(m_{p_0}-c).
\]
Consequently
\[
\displaystyle\sum_{i=0}^nm_{p_i}(m_{p_i}-c)>\nu(m_{p_0}-c)-m_{p_0}(m_{p_0}-c)=(\nu-m_{p_0})(m_{p_0}-c)=2c(m_{p_0}-c).
\]
As $2c\geq m_{p_i}$ for any $i\geq 1$ (see (\ref{eq:beurk})) 
one gets $2c\displaystyle\sum_{i=1}^n(m_{p_i}-c)>2c(m_{p_0}-c)$ and 
\[
2c\displaystyle\sum_{i=1}^n(m_{p_i}-c)>2c(m_{p_0}-c)
\]
that is  
\begin{equation}\label{eq:bipbip}
m_{p_i}-c>m_{p_0}-c
\end{equation}
since $c>0$. But $m_{p_1}\leq m_{p_0}$, so $n\geq 2$. 
Therefore, 
$m_{p_0}+m_{p_1}+m_{p_2}>3\left(\frac{\nu-m_{p_0}}{2}\right)$ 
and $m_{p_0}>\frac{\nu}{3}$.

Let us assume that $n\geq 3$; then (\ref{eq:bipbip})
can be rewritten 
\[
\displaystyle\sum_{i=1}^n m_{p_i}-nc>m_{p_0}-c=\nu-3c
\]
and $\displaystyle\sum_{i=1}^nm_{p_i}>\nu+(n-3)c\geq\nu$.
\end{proof}

\begin{defi}
A \textsl{general quadratic birational 
self map of 
$\mathrm{Bir}(\mathbb{P}^2_\mathbb{C})$ 
centered at $p$, $q$ $r$}
\index{defi}{general quadratic birational 
self map centered at three points}
is the map, up
to linear automorphism, that blows up
the three distinct points $p$, $q$, $r$
of $\mathbb{P}^2_\mathbb{C}$ and blows 
down the strict transform of the lines 
$(pq)$, $(qr)$ and $(pr)$. These lines 
are thus sent onto points denoted 
$p'$, $q'$ and $r'$. 

The line $(p'q')$ (resp. $(q'r')$, 
resp. $(p'r')$) corresponds to the 
exceptional line of the blow up of
$r$ (resp. $p$, resp. $q$).
\end{defi}

\begin{lem}\label{lem:four}
Compose $\phi$ with a general quadratic
birational self map of 
$\mathbb{P}^2_\mathbb{C}$ centered at 
$p_0$, $q$ and $r$ where $p_0$ is the 
base-point of $\phi$ of maximal 
multiplicity. 

The complexity of the new system is 
equal to the complexity of the old 
system if and only if $p_0'$ is of 
maximal multiplicity. 

If it is not the case, then the 
complexity of the new system is 
strictly less than the complexity
of the old one.
\end{lem}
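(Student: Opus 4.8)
The plan is to compute directly how the degree and the base-point multiplicities change when we pass from $\Lambda_\phi$ to the linear system of the composed map, and then to read off the complexity from its definition. Write $\kappa$ for the general quadratic map centered at $p_0$, $q$, $r$, where $q$ and $r$ are base-points of $\phi$ (by Lemma~\ref{lem:three} one may even take them, together with $p_0$, of multiplicity $>c$, though the computation below does not use this), and let $\psi$ be the composition, so that its linear system is $\Lambda_\psi=\kappa^*\Lambda_\phi$. The first step is to recall the transformation rule for a quadratic map. Using the action of $\kappa$ on the Picard group of the surface on which $\kappa$ becomes an isomorphism — the reflection computation carried out for $\sigma_2$ after blowing up its three base-points, which rests on Proposition~\ref{pro:tireenarriere} — a general member of $\Lambda_\phi$, of degree $\nu$ and with multiplicities $m_{p_0}$, $m_q$, $m_r$ at the three centers, is sent to a curve of degree
\[
\nu'=2\nu-m_{p_0}-m_q-m_r,
\]
the genus/degree relations of \S\ref{sec:geodef} being available if needed to control the base-points.

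The second step is the bookkeeping of the base-points of $\Lambda_\psi$. The three fundamental points of $\kappa^{-1}$, namely the points $p_0'$, $q'$, $r'$ onto which the lines $(qr)$, $(p_0r)$, $(p_0q)$ are contracted, appear as base-points of $\Lambda_\psi$ with multiplicities $\nu-m_q-m_r$, $\nu-m_{p_0}-m_r$, $\nu-m_{p_0}-m_q$ respectively, while every remaining base-point of $\phi$ keeps its multiplicity. The key computation is then immediate:
\[
\nu'-m_{p_0'}=\bigl(2\nu-m_{p_0}-m_q-m_r\bigr)-\bigl(\nu-m_q-m_r\bigr)=\nu-m_{p_0}=2c.
\]
Because $p_0$ has maximal multiplicity, $m_{p_0'}=\nu-m_q-m_r$ is the largest of the three new exceptional multiplicities, so $p_0'$ is the only candidate among them for the point of maximal multiplicity of $\Lambda_\psi$.

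Finally, letting $M'$ denote the actual maximal multiplicity of a base-point of $\Lambda_\psi$, the complexity of the new system is $2c'=\nu'-M'$ by definition. Since $M'\ge m_{p_0'}$, the previous line gives $2c'=\nu'-M'\le\nu'-m_{p_0'}=2c$, with equality if and only if $M'=m_{p_0'}$, that is, if and only if $p_0'$ is of maximal multiplicity; otherwise the inequality is strict. This yields both assertions at once. The main obstacle is the second step: carefully justifying that $p_0'$ occurs as a base-point with multiplicity exactly $\nu-m_q-m_r$ and that the other base-points transform as claimed, which requires treating the possibly infinitely near or degenerate configurations of $p_0$, $q$, $r$ (this is precisely why one works with a \emph{general} quadratic map) and checking that no further cancellation alters the multiplicities. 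Once this dictionary between $\Lambda_\phi$ and $\Lambda_\psi$ is established, the complexity statement is a one-line arithmetic consequence.
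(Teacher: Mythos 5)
Your proof is correct and follows essentially the same route as the paper's: the identity $\nu'-m_{p_0'}=\nu-m_{p_0}=2c$ combined with $M'\ge m_{p_0'}$ is exactly the computation in the text, written there as $2c'=2c+m_{p_0'}-m'_{\text{max}}$. The only cosmetic difference is that you assume $q$ and $r$ are base-points of $\phi$, which the lemma does not require — but since your formulas remain valid when $m_q$ or $m_r$ equals $0$, this costs nothing.
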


\begin{proof}
The complexity of the new system is 
$2c'=\nu'-m'_{\text{max}}$ where 
$m'_{\text{max}}$ denotes the 
highest multiplicity of the base-points
of the new system. Then 
\begin{eqnarray*}
2c'&=&\nu'-m'_{\text{max}}\\
&=&2\nu-m_{p_0}-m_q-m_r-m'_{\text{max}}\\
&=&\nu-m_{p_0}+(\nu-m_q-m_r)-m'_{\text{max}}\\
&=&\nu-m_{p_0}+m_{p'_0}-m'_{\text{max}}\\
&=&2c+m_{p'_0}-m'_{\text{max}}.
\end{eqnarray*}
Hence $c'\leq c$ and $c'=c$ if and 
only if $m_{p'_0}=m'_{\text{max}}$. 
\end{proof}

\begin{lem}\label{lem:five}
If there exist two points $p_i$ and $p_j$
in $C=\big\{p_1,\,p_2,\,\ldots,\,p_n\big\}$ such that
\begin{itemize}
\item[$\diamond$] $p_i$ and $p_j$ are not 
infinitely near;

\item[$\diamond$] $p_i$ and $p_0$ are not 
infinitely near;

\item[$\diamond$] $p_j$ and $p_0$ are not 
infinitely near.
\end{itemize}
Then there exists a general quadratic birational
self map of $\mathbb{P}^2_\mathbb{C}$ 
such that after composition with $\phi$
\begin{itemize}
\item[$\diamond$] either the complexity of the 
system decreases,

\item[$\diamond$] or $\#\,C=n$ decreases by $2$.
\end{itemize}
\end{lem}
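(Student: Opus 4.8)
The plan is to apply Lemma~\ref{lem:four} with the two auxiliary centres chosen to be $p_i$ and $p_j$ themselves. By hypothesis the points $p_0$, $p_i$, $p_j$ are pairwise not infinitely near, hence are three distinct proper points of $\mathbb{P}^2_\mathbb{C}$; a general quadratic birational self map $q$ of $\mathbb{P}^2_\mathbb{C}$ centred at $p_0$, $p_i$, $p_j$ therefore exists, and I would compose $\phi$ with it exactly as in Lemma~\ref{lem:four} (recall that $p_0$ is a base-point of maximal multiplicity). Writing $p'_0$, $p'_i$, $p'_j$ for the three points created by $q$, their multiplicities in the new system are computed by B\'ezout from the three lines contracted by $q$:
\[
m_{p'_0}=\nu-m_{p_i}-m_{p_j},\qquad m_{p'_i}=\nu-m_{p_0}-m_{p_j},\qquad m_{p'_j}=\nu-m_{p_0}-m_{p_i},
\]
while every other base-point of $\phi$ keeps its multiplicity and is simply transported by $q$.

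Lemma~\ref{lem:four} already provides the complexity relation $2c'=2c+m_{p'_0}-m'_{\text{max}}$, where $m'_{\text{max}}$ denotes the largest multiplicity of the new system. As $m'_{\text{max}}\geq m_{p'_0}$, one always has $c'\leq c$, and two cases occur. If $m_{p'_0}<m'_{\text{max}}$, then $c'<c$ and the complexity strictly decreases, which is the first alternative. It remains to analyse the case $m_{p'_0}=m'_{\text{max}}$, in which $c'=c$ and I must show that $\#\,C$ drops by exactly $2$.

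In that case $p'_0$ has maximal multiplicity in the new system and is therefore excluded from the new set $C'$, just as $p_0$ was excluded from $C$. For the two auxiliary centres, using $\nu-m_{p_0}=2c$ one finds $m_{p'_i}=2c-m_{p_j}$ and $m_{p'_j}=2c-m_{p_i}$; since $p_i,p_j\in C$ give $m_{p_i}>c$ and $m_{p_j}>c$, both of these are $<c=c'$, so $p'_i$ and $p'_j$ fall below the threshold and leave $C'$. The remaining $n-2$ points of $C\smallsetminus\{p_i,p_j\}$ keep their multiplicities $>c=c'$ and survive in $C'$. The delicate step — and the only one requiring genuine care — is to confirm that the quadratic transformation introduces no new base-point of multiplicity $>c'$: the only points created by $q$ are $p'_0$, $p'_i$, $p'_j$, all three of which have just been located (one as the maximum, two below the threshold). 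Granting this bookkeeping, $\#\,C'=n-2$, which is the second alternative and finishes the proof.
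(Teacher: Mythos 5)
Your proof is correct and takes essentially the same route as the paper's: compose with a general quadratic map centred at $p_0$, $p_i$, $p_j$, compute $m_{p'_0}$, $m_{p'_i}$, $m_{p'_j}$ by B\'ezout, split the cases according to whether $p'_0$ has maximal multiplicity via Lemma~\ref{lem:four}, and derive $m_{p'_i}<c$, $m_{p'_j}<c$ from $m_{p_i},m_{p_j}>c$ and $\nu-m_{p_0}=2c$, exactly as in the paper. The only (cosmetic) divergence is the bookkeeping of $p'_0$: you exclude it from $C'$ as the new point of maximal multiplicity, which is the reading consistent with the definition of $C$ and yields the same count $n'=n-2$ that the paper obtains.
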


\begin{proof}
Suppose that there exist two points $p_i$ and $p_j$
in $C=\big\{p_1,\,p_2,\,\ldots,\,p_n\big\}$ such that
\begin{itemize}
\item[$\diamond$] $p_i$ and $p_j$ are not 
infinitely near;

\item[$\diamond$] $p_i$ and $p_0$ are not 
infinitely near;

\item[$\diamond$] $p_j$ and $p_0$ are not 
infinitely near.
\end{itemize}

Let us now compose $\phi$ with a general
quadratic birational self map of 
$\mathbb{P}^2_\mathbb{C}$ centered at 
$p_0$, $p_i$ and $p_j$. The degree of 
the new linear system $\Lambda'_\phi$
is $\nu'=2\nu-m_{p_0}-m_{p_i}-m_{p_j}$. 
Let us remark that 
\begin{eqnarray*}
\nu'&=&2\nu-m_{p_0}-m_{p_i}-m_{p_j}\\
&=&\nu+(\nu-m_{p_0}-m_{p_i}-m_{p_j})\\
&=&\nu+(2c-m_{p_i}-m_{p_j})\\
&<&\nu
\end{eqnarray*}
{\it i.e.} the degree has decreased.

The new linear system $\Lambda'_\phi$
has complexity $c'$ and we denote by 
$C'$ the set of points of multiplicity
$>c'$. 

The points $p_0$, $p_i$ and $p_j$ are 
no more points of indeterminacy; the other
base-points and their multiplicity do not 
change. There are three new base-points 
which are $p'_0$, $p'_i$ and $p'_j$.
By definition the multiplicity of 
$p'_0$ (resp. $p'_i$, resp. $p'_j$) 
is equal to the number of intersection 
points (counted with multiplicity) 
between the corresponding contracted
line and the strict transform of a 
general curve of the linear system.
From Bezout theorem we thus
have 
\[
\left\{
\begin{array}{lll}
m_{p'_0}=\nu-m_{p_i}-m_{p_j}\\
m_{p'_i}=\nu-m_{p_0}-m_{p_j}\\
m_{p'_j}=\nu-m_{p_0}-m_{p_i}
\end{array}
\right.
\]
\begin{itemize}
\item[$\diamond$] If $p'_0$ is not the 
point of highest multiplicity, the 
complexity of the system decreases 
(Lemma \ref{lem:four});

\item[$\diamond$] otherwise if 
$p'_0$ is the point of highest 
multiplicity, then the complexity
remains constant (Lemma 
\ref{lem:four}). Furthermore 
$p'_0$ belongs to $C'$ (Lemma 
\ref{lem:three}). Since $m_{p_i}>c$, 
$m_{p_j}>c$ and $\nu-m_{p_0}=2c$, 
then $m_{p'_i}<c$ and $m_{p'_j}<c$, 
{\it i.e.} $p'_i\not\in C'$ and 
$p'_j\not\in C'$. As a consequence
$n'=n-2$.
\end{itemize}
\end{proof}

\begin{lem}\label{lem:six}
Assume there exists a base-point $p_k$ in $C$
that is not infinitely near $p_0$. Then 
after composition by a general quadratic 
birational map, one can disperse the points
above $p_0$ and $p_k$.

The complexity of the system does not change, 
the cardinal of $C$ does not change. There
is no point infinitely near $p'_0$.
\end{lem}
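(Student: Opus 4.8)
The statement is a close variant of Lemma \ref{lem:five}: there we assumed two points $p_i,p_j$ of $C$ that, together with $p_0$, are pairwise non-infinitely-near, and we composed with the general quadratic map centered at $p_0,p_i,p_j$. Here we are given only one point $p_k\in C$ not infinitely near $p_0$, and the goal is more specific: we want to ``disperse'' the cluster above $p_0$ and above $p_k$, keeping the complexity $2c$ and the cardinal $n=\#C$ unchanged, while arranging that no base-point is infinitely near $p'_0$.

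Let me describe the plan. The plan is to compose $\phi$ with a general quadratic birational self map $q$ of $\mathbb{P}^2_\mathbb{C}$ centered at $p_0$, $p_k$, and a \emph{general} point $s$ of the plane (general meaning $s\notin\mathrm{Base}(\phi)$ and $s$ not lying on any of the relevant lines). First I would record the degree bookkeeping exactly as in Lemma \ref{lem:five}: the new degree is $\nu'=2\nu-m_{p_0}-m_{p_k}-m_s$, but since $s$ is general $m_s=0$, so $\nu'=2\nu-m_{p_0}-m_{p_k}$, and the three new base-points $p'_0,p'_k,s'$ acquire multiplicities computed by Bezout as $m_{p'_0}=\nu-m_{p_k}$, $m_{p'_k}=\nu-m_{p_0}$, and $m_{s'}=\nu-m_{p_0}-m_{p_k}$. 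Next I would check the complexity is preserved: using $\nu-m_{p_0}=2c$ and the formula of Lemma \ref{lem:four}, the new maximal multiplicity is realized at $p'_0$ (since $m_{p'_0}=\nu-m_{p_k}\geq\nu-m_{p_0}$, as $m_{p_k}\leq m_{p_0}$), hence $c'=c$ by the ``if and only if'' of Lemma \ref{lem:four}. For the count of $C$, I would verify $p'_k$ and $s'$ fall \emph{below} the threshold $c$ while $p'_0$ rises into $C'$: indeed $m_{s'}=2c-m_{p_k}<c$ because $m_{p_k}>c$, and a similar estimate handles $p'_k$, so that $C$ loses $p_0,p_k$ from the role they played and gains $p'_0$; the net effect on the cardinal must be shown to be zero.

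\textbf{The main obstacle.} The delicate point is the geometric claim that the cluster of infinitely near points above $p_0$ (and above $p_k$) is genuinely \emph{dispersed} by $q$, and that after composition \emph{no} base-point of $\Lambda_{\phi\circ q}$ is infinitely near $p'_0$. This is not a numerical statement but an analysis of the local behaviour of the quadratic transformation on infinitely near points: blowing up $p_0$ and contracting the two lines through $p_0$ separates directions at $p_0$, turning first-order infinitely near points into proper points. I would argue this by tracking, on the minimal resolution, how $q$ acts on the exceptional configuration over $p_0$; the genericity of the third center $s$ guarantees the two lines $(p_0 s)$ and $(p_k s)$ meet the infinitely near structure transversally, so each infinitely near point over $p_0$ is sent to a distinct proper point. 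Verifying that this simultaneously disperses the points above $p_k$ and leaves $p'_0$ free of infinitely near base-points is the technical heart, and is where I expect most of the work to lie; the degree and multiplicity computations above are then routine consequences of Bezout together with formulas \eqref{eq:cremona3} and \eqref{eq:cremona1}.
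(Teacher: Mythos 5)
Your route is the same as the paper's: compose $\phi$ with a general quadratic map centered at $p_0$, $p_k$ and a generic third point, compute the new degree and multiplicities by Bezout, get $c'=c$ from Lemma \ref{lem:four} because $p'_0$ realizes the maximal multiplicity, and use the genericity of the third center to control the infinitely near points. But your bookkeeping for $\#\,C$ contains a genuine error that contradicts your own formulas: you correctly computed $m_{p'_k}=\nu-m_{p_0}$, which equals $2c>c$ (recall $2c\geq 1$, so $c>0$), yet you then assert that $p'_k$ falls \emph{below} the threshold $c$; and you assert that $p'_0$ ``rises into $C'$'', whereas $p'_0$ is the point of maximal multiplicity of the new system and is therefore excluded from $C'$ by the very definition of $C$, which omits the maximal point. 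The correct count, which is the one the paper makes, is $C'=(C\smallsetminus\{p_k\})\cup\{p'_k\}$: the base-points other than the three centers keep their multiplicities, $p'_k\in C'$ since $m_{p'_k}=2c>c$, $s'\notin C'$ since $m_{s'}=2c-m_{p_k}<c$, and $p'_0\notin C'$; hence $\#\,C'=\#\,C$. Your final count comes out right only because your two misstatements cancel; as written, the verification you announce (``the net effect on the cardinal must be shown to be zero'') would fail. A second caution: you say the degree bookkeeping is ``exactly as in Lemma \ref{lem:five}'', but the sign is opposite here, namely $\nu'=\nu+2c-m_{p_k}\geq\nu$, so the degree does not drop, and the lemma does not claim it does.

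On the dispersal step, which you rightly flag as the heart, your plan coincides with the paper's, which makes the genericity explicit: choose the third point $q$ so that the lines $(p_0q)$ and $(p_kq)$ contain no base-point, and so that no point infinitely near $p_0$ (resp. $p_k$) lies in the direction of $(p_0q)$ (resp. $(p_kq)$). Once this is imposed the argument is short rather than lengthy: with the paper's conventions the contracted line $(p_kq)$ is sent to $p'_0$ and the exceptional curve over $p_k$ is sent to the line $(p'_0q')$, the direction of $(p_kq)$ at $p_k$ corresponding to $p'_0$ itself; so the only threats to $p'_0$ are base-points lying on $(p_kq)$ or points infinitely near $p_k$ in that direction, both excluded by the choice of $q$, while everything above $p_0$ is sent to the opposite line $(p'_kq')$, away from $p'_0$. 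One small overstatement in your sketch: only the first infinitesimal neighbourhood of $p_0$ becomes proper; higher-order infinitely near points merely descend one level, becoming infinitely near the new proper points on that line, which is all the lemma needs.
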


\begin{proof}
Consider a point $q$ of the complex projective
plane such that 
\begin{itemize}
\item[$\diamond$] the lines $(p_0q)$ and 
$(p_kq)$ contain no base-point; 

\item[$\diamond$] there is no point 
infinitely near $p_0$ in the direction
of the line $(p_0q)$;

\item[$\diamond$] there is no point 
infinitely near $p_k$ in the direction
of the line $(p_kq)$.
\end{itemize}
Compose $\phi$ with a general quadratic 
birational map centered at $p_0$, 
$p_k$ and $q$. The degree of the new
linear system is 
\[
\nu'=2\nu-m_{p_0}-m_{p_k}=\nu+2c-m_{p_k}\geq \nu.
\]
The point $p'_0$ is the point of 
highest multiplicity:
\[
\left\{
\begin{array}{lll}
m_{p'_0}=\nu-m_{p_k}\geq\nu-m_{p_0}=2c\geq m_{p_1}\\
m_{p'_k}=\nu-m_{p_0}=2c>c\\
m_{q'}=\nu-m_{p_0}-m_{p_k}=2c-m_{p_k}<c
\end{array}
\right.
\]
hence the complexity remains constant 
(Lemma \ref{lem:four}). Note that 
$\#\,C'=\#\,C$. 

The assumptions on $q$ allow to say that 
a point infinitely near $p_k$ (resp. $p_0$)
is not transformed in a point 
infinitely near $p'_0$. Similarly a point 
infinitely near $p_k$ (resp. $p_0$)
is not transformed in a point infinitely
near $q'$.
\end{proof}

\begin{lem}\label{lem:seven}
Assume that all the points of $C$ are above
the point of highest multiplicity $p_0$. Then 
one can disperse them with a general 
quadratic birational self map; in other 
words there is no base-point of $C'$ 
infinitely near the point $p'_0$ of 
highest multiplicity of the new system. 
The cardinal $n$ increases by $2$ but 
the complexity of the system remains constant.
\end{lem}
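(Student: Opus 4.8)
Recall the context of Lemma~\ref{lem:seven}: we have a birational map $\phi$ of degree $\nu$ whose base-points of multiplicity $>c$ are $p_0,p_1,\ldots,p_n$ with $p_0$ of maximal multiplicity, and by hypothesis every $p_i\in C$ with $i\geq 1$ is infinitely near $p_0$. We want to compose $\phi$ with a well-chosen general quadratic map so that, in the resulting system $\Lambda'_\phi$, no point of $C'$ is infinitely near the new maximal-multiplicity point, while $\#C$ increases by $2$ and the complexity $c$ is unchanged.

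**The plan.**
The plan is to mimic the proof of Lemma~\ref{lem:six}, but now choosing a quadratic map centered at $p_0$ together with \emph{two} auxiliary general points $q$, $r$ of $\mathbb{P}^2_\mathbb{C}$ rather than at two genuine base-points. First I would select $q$ and $r$ to be general points chosen so that none of the three lines $(p_0q)$, $(p_0r)$, $(qr)$ passes through any base-point of $\phi$ other than $p_0$, and so that there is no point infinitely near $p_0$ in either of the directions of $(p_0q)$ and $(p_0r)$; since $C$ is finite this is possible. Then I would compose $\phi$ with the general quadratic birational self map $\theta$ of $\mathbb{P}^2_\mathbb{C}$ centered at $p_0$, $q$, $r$ and analyze the new linear system.

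**The degree and multiplicity bookkeeping.**
Next I would compute the degree of $\Lambda'_\phi$ by Bezout, exactly as in the previous lemmas: since $q$ and $r$ are not base-points one has $m_q=m_r=0$, so
\[
\nu'=2\nu-m_{p_0}-m_q-m_r=2\nu-m_{p_0}=\nu+2c.
\]
The three newly created base-points are $p'_0$, $q'$, $r'$ (images of the contracted lines), with multiplicities governed by the number of intersection points of each contracted line with a general member of $\Lambda'_\phi$. Using $m_q=m_r=0$ these are
\[
m_{p'_0}=\nu-m_q-m_r=\nu,\qquad m_{q'}=\nu-m_{p_0}-m_r=2c,\qquad m_{r'}=\nu-m_{p_0}-m_q=2c.
\]
Thus $p'_0$ has multiplicity $\nu$ and is the point of highest multiplicity of the new system, while $m_{q'}=m_{r'}=2c>c$, so $q'$ and $r'$ are two new members of $C'$. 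By Lemma~\ref{lem:four} the equality $m_{p'_0}=m'_{\mathrm{max}}$ forces $c'=c$, so the complexity is unchanged; and since the remaining base-points $p_1,\ldots,p_n$ and their multiplicities are untouched by the transformation away from $p_0,q,r$, the count increases exactly by the two new points: $\#C'=n+2$.

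**The main obstacle: controlling the infinitely-near structure.**
The delicate part — and where I would spend the most care — is verifying that no base-point of $C'$ is infinitely near $p'_0$. The geometric idea is that the quadratic map $\theta$ sends the exceptional line over $p_0$ to the line $(q'r')$, and it scatters the points that were infinitely near $p_0$ onto that line, away from $p'_0$, $q'$, $r'$. Here I would invoke the genericity of the choice of $q$ and $r$: because $(p_0q)$ and $(p_0r)$ contain no base-point and no infinitely near direction of $p_0$, the points previously infinitely near $p_0$ are transported to \emph{proper} points of $\mathbb{P}^2_\mathbb{C}$ lying on $(q'r')$ and distinct from $p'_0$. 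The key claim to establish rigorously is that a point infinitely near $p_0$ is not carried to a point infinitely near $p'_0$ (nor near $q'$ or $r'$); this is the same dispersal phenomenon analyzed in Lemma~\ref{lem:six}, now applied with both auxiliary points general. I expect the argument to reduce to a local computation of $\theta$ in suitable coordinates, checking that the proper transform of the first infinitesimal neighborhood of $p_0$ meets $(q'r')$ transversally at distinct proper points. Once this local verification is in place, the conclusion follows: $p'_0$ is the maximal-multiplicity point, all of $C'$ lies away from it in the infinitely-near sense, the complexity is preserved, and $\#C$ has increased by $2$.
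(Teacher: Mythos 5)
Your proposal is correct and follows essentially the same route as the paper: choose $q$, $r$ general (lines $(p_0q)$, $(p_0r)$, $(qr)$ avoiding base-points and the infinitely near directions at $p_0$), compute $\nu'=2\nu-m_{p_0}$, $m_{p'_0}=\nu$, $m_{q'}=m_{r'}=2c>c=c'$ via Bezout, and conclude $n'=n+2$ with the infinitely near points dispersed onto $(q'r')$ away from $p'_0$. The paper is in fact just as terse as you are on the final dispersal step, asserting it directly from the genericity of $q$ and $r$ without the local computation you flag.
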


\begin{proof}
Take two points $q$ and $r$ in 
$\mathbb{P}^2_\mathbb{C}$ such that the 
lines $(p_0r)$, $(p_0q)$ and $(rq)$
\begin{itemize}
\item[$\diamond$] do not contain 
base-points;

\item[$\diamond$] are not in the 
direction of the points infinitely 
near $p_0$.
\end{itemize}
The degree of the new linear system
is $\nu'=2\nu-m_{p_0}>\nu$. Since 
the curves of the system do not 
pass through $q$ and $r$ Bezout
theorem implies that $m_{p'_0}=\nu$; 
it is thus the point of highest 
multiplicity. Furthermore
\[
2c'=2\nu-m_{p_0}-\nu=2c.
\]
Any curve of the linear system
intersects $(p_0r)$ and $(p_0q)$
in $\nu-m_{p_0}=2c$ points. As a 
result $m_{r'}=m_{q'}=2c>c=c'$. 
Moreover $r'$ and $q'$ belong
to $C'$ and $n'=n+2$. 

The points infinitely near $p_0$ 
have been dispersed onto the line
$(q'r')$. As there is no base-point 
on the line $(qr)$ there is no 
base-point infinitely near $p'_0$.
\end{proof}

\begin{proof}[Proof of Theorem \ref{thm:noether}]
Let us consider a birational self map $\phi$ of
$\mathbb{P}^2_\mathbb{C}$ of degree $\nu$. Denote
by $p_0$, $p_1$, $\ldots$, $p_k$ its base-points
and by $\Lambda_\phi$ the linear system associated
to $\phi$. Let $m_{p_i}$ be the multiplicity of 
$p_i$ and assume up to reindexation that 
\[
m_{p_0}\geq m_{p_1}\geq \ldots\geq m_{p_k}.
\]
Recall that the complexity of the system 
$\Lambda_\phi$ is $c$ where $2c=\nu-m_{p_0}$, 
that 
\[
C=\big\{p\in\mathrm{Base}(\phi)\smallsetminus\{p_0\}\,\vert\, m_p>c\big\}
\]
and that $n=\#\,C$.
We will now compose $\phi$ with a sequence of 
general quadratic birational maps in order 
to decrease the complexity until the 
complexity equals to $1$.

\subsubsection*{Step 1} If all points of $C$ are above
$p_0$, let us apply Lemma \ref{lem:seven}. One gets 
that $p'_0$ has no more infinitely near base-points
and that $n'=n+2$. Let us now apply Lemma \ref{lem:six}
until the points of $C'$ are all distinct; note that
$C'$ and $n'$ do not change. According to Lemma 
\ref{lem:three} the points of $C'$ are not aligned. 
Let us take two of these points, denoted by $p_i$ 
and $p_j$ such that there exist two base-points 
$p_k$ and $p_\ell$ with the following property:
$p_k$ and $p_\ell$ do not belong to the lines 
$(p'_0p_i)$, $(p'_0p_j)$ and $(p_ip_j)$. Apply two 
times Lemma \ref{lem:five} to the points $p_k$ and 
$p_\ell$. If the complexity decreases (the first
or the second time anyway), then let us start this
process again; otherwise the first application of 
Lemma \ref{lem:five} yields to $n'=n$ and the 
second to $n'=n-2$. Furthermore there is no more 
base-point of $C'$ infinitely near $p'_0$ and 
we go to \textit{Step~2}.

\subsubsection*{Step 2} We distinguish two 
possibilities:

\noindent\textit{Step 2i.} Either there are two base-points
in $C$ that are not infinitely near and one applies
Lemma \ref{lem:five}. If the complexity decreases, 
come back to \textit{Step 1}, otherwise come back to 
\textit{Step 2}.

\noindent\textit{Step 2ii.} Or let us apply Lemma 
\ref{lem:six}, then there are two base-points 
that are not infinitely near and one can 
apply \textit{Step 2i}.

According to Lemma \ref{lem:three} if $\nu>1$, 
then $\#\,C\geq 3$. As a result \textit{Step 1} 
and \textit{Step 2}
allow to decrease the complexity. When the 
complexity is $1$, the point $p'_0$ has the 
highest multiplicity and from Lemmas 
\ref{lem:five}, \ref{lem:six} and \ref{lem:seven}
one gets that $\#\,C$ decreases until $0$. 
In other words our system has at most one 
base-point. From (\ref{eq:cremona3}) and 
(\ref{eq:cremona1}) one gets that $\nu=1$ and 
that there is no base-point.
\end{proof}

\section{Amalgamated product and $\mathrm{Bir}(\mathbb{P}^2_\mathbb{C})$}

\subsection{It is not an amalgamated product of two groups}

Let us recall that the group
$\mathrm{Aut}(\mathbb{A}^2_\mathbb{C})\subset\mathrm{Bir}(\mathbb{P}^2_\mathbb{C})$ 
of polynomial automorphisms of the plane is the amalgamated 
product of the affine group 
$\mathrm{Aff}_2=\mathrm{Aut}(\mathbb{P}^2_\mathbb{C})\cap\mathrm{Aut}(\mathbb{A}^2_\mathbb{C})$\index{not}{$\mathrm{Aff}_2$} 
and the group 
$\mathcal{J}_{\mathbb{A}^2_\mathbb{C}}=\mathcal{J}\cap\mathrm{Aut}(\mathbb{A}^2_\mathbb{C})$\index{not}{$\mathcal{J}_{\mathbb{A}^2_\mathbb{C}}$} 
along their intersection. On the contrary 
$\mathrm{Bir}(\mathbb{P}^2_\mathbb{C})$ is not the amalgamated product 
of $\mathrm{Aut}(\mathbb{P}^2_\mathbb{C})$ and $\mathcal{J}$. Indeed
there exist elements of $\mathrm{Bir}(\mathbb{P}^2_\mathbb{C})$ 
of finite order which are neither conjugate to an element 
of~$\mathrm{Aut}(\mathbb{P}^2_\mathbb{C})$, nor to an element of 
$\mathcal{J}$ (\emph{see} \cite{Blanc:cyclic}), contrary to the 
case of amalgamated pro\-ducts.

More precisely Cornulier proved that the plane Cremona group does not decompose 
as a non-trivial amalgam (\cite{Cornulier:amalgamatedproduct}); we will
give a sketch of the proof in this section.

\medskip

A \textsl{graph}\index{defi}{graph} $\Gamma$ consists of two sets $X$ and $Y$, 
and two applications 
\begin{align*}
& Y\to X\times X,\quad y\mapsto (o(y),t(y)) && Y\to Y,\quad y\mapsto\overline{y}
\end{align*}
such that: 
\[
\forall\, y\in Y \qquad \overline{\overline{y}}=y,\qquad \overline{y}\not=y,\qquad o(y)=t(y).
\]

An element of $X$ is a \textsl{vertex}\index{defi}{vertex (of a graph)}
of $\Gamma$; an element $y\in Y$ is an 
\textsl{oriented edge}\index{defi}{oriented edge (of a graph)}, and 
$\overline{y}$ is the \textsl{reversed edge}\index{defi}{reversed edge 
(of a graph)}. The vertex $o(y)=t(\overline{y})$ is the 
\textsl{origin}\index{defi}{origin (of an edge)} of $y$, and the 
vertex $t(y)=o(\overline{y})$ is the 
\textsl{terminal vertice}\index{defi}{terminal vertice (of an edge)}.
These two vertices are called the 
\textsl{extremities}\index{defi}{extremities (of an edge)} of~$y$.

An \textsl{orientation}\index{defi}{orientation (of a graph)} of a graph
$\Gamma$ is a part $Y_+$ of $Y$ such that $Y$ is the disjoint union of $Y_+$
and $\overline{Y_+}$. An \textsl{oriented graph}\index{defi}{oriented (graph)}
is defined, up to isomorphism, by the data of two sets $X$ and $Y_+$, and 
an application $Y_+\to X\times X$. The set of edges of the corresponding graph
is $Y=Y_+\bigsqcup\overline{Y_+}$. 
 
A graph is \textsl{connected}\index{defi}{connected (graph)} if two
vertices are the extremities of at least one path.

\begin{egs}
\begin{itemize}
\item[$\diamond$] Let $n$ be an integer. Let us consider the oriented graph
\begin{figure}[!ht]
\centering
\includegraphics[height=2.5cm]{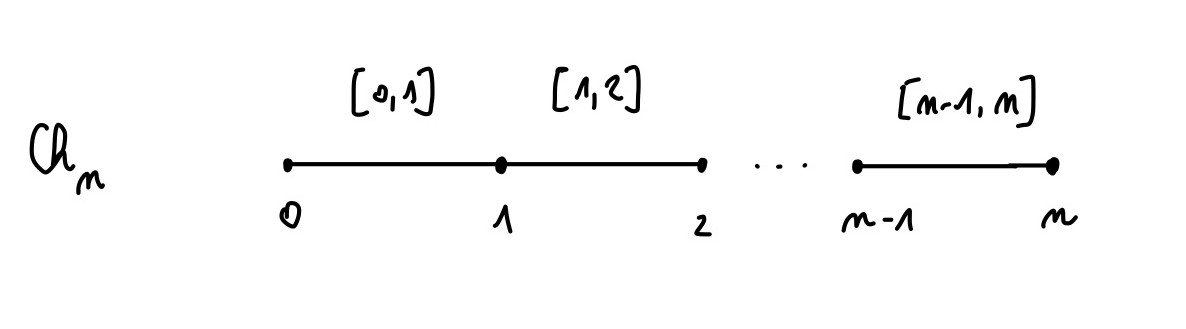}
\end{figure}

%\begin{center}
%\begin{tikzpicture}
%\draw (1,2)-- (2.5,2);
%\draw (2.5,2)-- (4,2);
%\draw (5.5,2)-- (7,2);
%\draw [dotted] (4,2)-- (5.5,2);
%\draw (-0.4,2.22) node[anchor=north west] {$\mathrm{Ch}_n$};
%\draw (1.2,1.8) node[anchor=north west] %{\begin{scriptsize}$[0,1]$\end{scriptsize}};
%\draw (2.8,1.8) node[anchor=north west] %{\begin{scriptsize}$[1,2]$\end{scriptsize}};
%\draw (5.6,1.8) node[anchor=north west] %{\begin{scriptsize}$[n-1,n]$\end{scriptsize}};
%\begin{scriptsize}
%\fill (1,2) circle (1.5pt);
%\draw (1.16,2.26) node {$0$};
%\fill (2.5,2) circle (1.5pt);
%\draw (2.5,2.26) node {$1$};
%\fill (4,2) circle (1.5pt);
%\draw (4,2.26) node {$2$};
%\fill (5.5,2) circle (1.5pt);
%\draw (5.5,2.26) node {$n-1$};
%\fill (7,2) circle (1.5pt);
%\draw (7,2.26) node {$n$};
%\end{scriptsize}
%\end{tikzpicture}
%\end{center}

It has $n+1$ vertices $0$, $1$, $\ldots$, $n$ and the orientation is 
given by the $n$ egdes $[i,i+1]$, $0\leq i<n$ with $o([i,i+1])=i$ and 
$t([i,i+1])=i+1$.

\item[$\diamond$] Let $n\geq 1$ be an integer. Consider the oriented graph given by
\begin{figure}[!h]
\centering
\includegraphics[height=4.5cm]{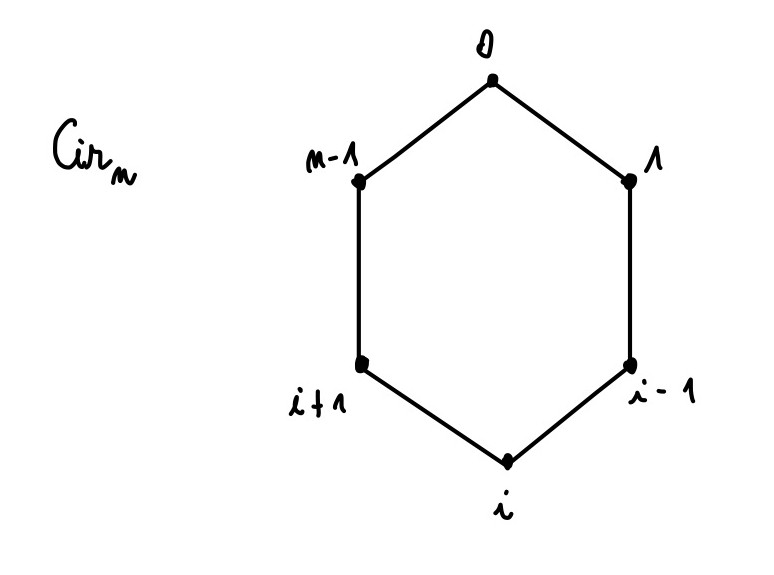}
\end{figure}

%\begin{center}
%\begin{tikzpicture}
%\clip(-4.3,-3.2) rectangle (7.4,6.3);
%\draw (0,1.5)-- (0,0);
%\draw (0,0)-- (1.3,-0.75);
%\draw (1.3,-0.75)-- (2.6,0);
%\draw (2.6,0)-- (2.6,1.5);
%\draw (2.6,1.5)-- (1.3,2.25);
%\draw (1.3,2.25)-- (0,1.5);
%\draw (-2.84,2.46) node[anchor=north west] {$\mathrm{Cir}_n$};
%\begin{scriptsize}
%\fill (0,1.5) circle (1.5pt);
%\draw (-0.1,1.76) node {$n-1$};
%\fill (0,0) circle (1.5pt);
%\draw (-0.1,-0.3) node {$i+1$};
%\fill (1.3,-0.75) circle (1.5pt);
%\draw (1.3,-1) node {$i$};
%\fill (2.6,0) circle (1.5pt);
%\draw (2.76,-0.3) node {$i-1$};
%\fill (2.6,1.5) circle (1.5pt);
%\draw (2.76,1.76) node {$1$};
%\fill (1.3,2.25) circle (1.5pt);
%\draw (1.3,2.5) node {$0$};
%\end{scriptsize}
%\end{tikzpicture}
%\end{center}

The set of vertices is $\faktor{\mathbb{Z}}{n\mathbb{Z}}$, and the orientation
is given by the $n$ edges $[i,i+1]$, $i\in\faktor{\mathbb{Z}}{n\mathbb{Z}}$, with
$o([i,i+1])=i$ and $t([i,i+1])=i+1$.
\end{itemize}
\end{egs}

\begin{defis}
A \textsl{path}\index{defi}{path (of a graph)} of length $n$ in a graph
$\Gamma$ is a morphism from $\mathrm{Ch}_n$ to $\Gamma$.

A \textsl{cycle}\index{defi}{cycle (of a graph)} of length $n$ in a graph
is a subgraph isomorphic to $\mathrm{Cir}_n$.

A \textsl{tree}\index{defi}{tree} is a non-empty, connected graph without cycle.
\end{defis}

\begin{defi}
A group $\mathrm{G}$ is said to have 
\textsl{property (FA)}\index{defi}{property (FA)} if every action of 
$\mathrm{G}$ on a tree has a global fixed point.
\end{defi}

\begin{defis}
A \textsl{geodesic metric space}\index{defi}{geodesic metric space}
is a metric space if given any two points there is a path between 
them whose length equals the distance between the points.

A \textsl{real tree}\index{defi}{real (tree)} can be defined in the 
following equivalent ways (\cite{Chiswell}):
\begin{itemize}
\item[$\diamond$] a geodesic metric space which is $0$-hyperbolic in the
sense of Gromov;

\item[$\diamond$] a uniquely geodesic metric space for which 
$[a,c]\subset[a,b]\cup[b,c]$ for all $a$, $b$ and $c$;

\item[$\diamond$] a geodesic metric space with no subspace homeomorphic
to the circle.
\end{itemize}

In a real tree a \textsl{ray}\index{defi}{ray} is a geodesic embedding 
of the half line. An \textsl{end}\index{defi}{end (of a tree)} is an equivalence 
class of rays modulo being at bounded distance. 

For a group of isometries of a real tree, to \textsl{stably fix an end}
\index{defi}{stably fix an end} means to pointwise stabilize a ray 
modulo eventual coincidence ({\it i.e.} it fixes the end as well
as the corresponding Busemann function\footnote{Let $(X,d)$ 
be a metric space. Given a ray $\gamma$ the Busemann
function $B_\gamma\colon X\to\mathbb{R}$ is defined by 
$B_\gamma(x)=\displaystyle\lim_{t\to +\infty}\big(d(\gamma(t),x)-t\big)$.}).
\end{defis}

\begin{defi}
A group has \textsl{property (FR)}\index{defi}{property (FR)} if for 
every isometric action on a complete real tree every element has a 
fixed point.
\end{defi}

\begin{rem}\label{rem:FRFA}
Property (FR) implies property (FA).
\end{rem}

\begin{lem}[\cite{Cornulier:amalgamatedproduct}]\label{lem:propertyFR}
Let $\mathrm{G}$ be a group. Property (FR) has the following equivalent 
characterizations:
\begin{itemize}
\item[$\diamond$] for every isometric action of $\mathrm{G}$ on a complete
real tree every finitely generated subgroup has a fixed point;

\item[$\diamond$] every isometric action of $\mathrm{G}$ on a complete 
real tree either has a fixed point, or stably fixes a point at infinity.
\end{itemize}
\end{lem}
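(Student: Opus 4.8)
The plan is to work entirely inside the theory of isometries of complete real trees. The starting point is that every isometry $g$ of a complete real tree is \emph{semisimple}: it is either elliptic, in which case its translation length $\ell(g)=\inf_x d(x,gx)$ vanishes and its fixed-point set $\mathrm{Fix}(g)$ is a nonempty closed subtree, or it is hyperbolic, in which case $\ell(g)>0$ and $g$ translates along a unique axis. Thus Property (FR) is exactly the statement that, in \emph{every} isometric action of $\mathrm{G}$ on a complete real tree, every element is elliptic. I would record two elementary facts from tree geometry: first, if $g$ and $h$ are elliptic with $\mathrm{Fix}(g)\cap\mathrm{Fix}(h)=\emptyset$, then $gh$ is hyperbolic with $\ell(gh)=2\,d(\mathrm{Fix}(g),\mathrm{Fix}(h))>0$; second, the Helly property of trees, namely that finitely many pairwise intersecting closed subtrees have a common point.

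\emph{Equivalence with the first characterization.} One implication is immediate: applying the hypothesis to the cyclic subgroup $\langle g\rangle$ shows every element $g$ has a fixed point, which is (FR). For the converse, assume (FR), and fix an action together with a finitely generated subgroup $H=\langle g_1,\dots,g_n\rangle$. Every element of $\mathrm{G}$, hence of $H$, is elliptic. For any $i,j$ the product $g_ig_j$ lies in $\mathrm{G}$ and is therefore elliptic, so by the first fact $\mathrm{Fix}(g_i)\cap\mathrm{Fix}(g_j)\neq\emptyset$. The subtrees $\mathrm{Fix}(g_1),\dots,\mathrm{Fix}(g_n)$ thus pairwise intersect, and the Helly property yields a point fixed by all generators, hence by $H$. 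This gives the first characterization.

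\emph{Equivalence with the second characterization.} For the direction from the second characterization to (FR), I treat a single action: if it has a global fixed point every element is obviously elliptic; if $\mathrm{G}$ stably fixes an end $\xi$, then each $g$ preserves the associated Busemann function $B_\gamma$, and a hyperbolic isometry either fails to fix $\xi$ (a hyperbolic element fixes only the two ends of its axis) or shifts $B_\gamma$ by $\pm\ell(g)\neq0$; either way $B_\gamma\circ g=B_\gamma$ forces $\ell(g)=0$, so $g$ is elliptic. Conversely, assume (FR) for a fixed action, so the family $\{\mathrm{Fix}(g)\}_{g\in\mathrm{G}}$ consists of nonempty closed subtrees that pairwise intersect as above. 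If $\bigcap_{g}\mathrm{Fix}(g)\neq\emptyset$ there is a global fixed point. Otherwise I pass to the downward-directed family of finite intersections $\mathrm{Fix}_S=\bigcap_{g\in S}\mathrm{Fix}(g)$, which are nonempty by Helly but have empty total intersection; projecting a basepoint $o$ onto $\mathrm{Fix}_S$ gives a monotone family of projections whose distances to $o$ must be unbounded (else completeness would produce a point in the total intersection), so the projections trace a ray defining an end $\xi$. Since $\mathrm{G}$ permutes the $\mathrm{Fix}(g)$ by conjugation it preserves this escaping direction and fixes $\xi$; and because every element is elliptic, evaluating the identity $B_\gamma\circ g=B_\gamma+c_g$ at a fixed point of $g$ gives $c_g=0$, so the end is stably fixed.

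The main obstacle is this last step: the passage from ``all elements elliptic, no global fixed point'' to ``fixes a unique end.'' Making the convergence of the nearest-point projections, together with the well-definedness and $\mathrm{G}$-invariance of the resulting end, rigorous in a general (non-simplicial, possibly non-separable) complete real tree is the delicate point, and is the place where I would either invoke the standard structure theory of group actions on real trees or reconstruct the escaping-ray argument carefully from completeness and convexity. Everything else is a formal consequence of the two tree-geometric facts recorded at the outset.
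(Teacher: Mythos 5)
Your proof is correct, and it is essentially the argument of the cited reference: the paper itself states this lemma without proof, deferring to \cite{Cornulier:amalgamatedproduct}, and what you write is the standard route --- semisimplicity of isometries of $\mathbb{R}$-trees, Serre's lemma $\ell(gh)=2\,d(\mathrm{Fix}(g),\mathrm{Fix}(h))$ for elliptic $g,h$ with disjoint fixed trees, the Helly property for the finitely generated characterization, and the directed-projection argument for the fixed-point/fixed-end dichotomy.

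The step you flag as delicate does go through exactly as you sketch it, and it is worth recording why. For finite $S\subset\mathrm{G}$ the sets $\mathrm{Fix}_S$ are nonempty (Helly) closed subtrees, downward directed; for $S\subset S'$ the projection $p_S$ of the basepoint $o$ lies on $[o,p_{S'}]$, so the segments $[o,p_S]$ are nested and totally ordered (compare any $S,S'$ via $S\cup S'$). If $\sup_S d(o,\mathrm{Fix}_S)<\infty$, the net $(p_S)$ is monotone along a bounded geodesic, completeness gives a limit $p$, and closedness of each $\mathrm{Fix}_S$ puts $p$ in the total intersection --- a global fixed point. Otherwise the nested union $\bigcup_S[o,p_S]$ is a genuine geodesic ray, defining an end $\xi$; since projections to a fixed closed subtree are $1$-Lipschitz, $\xi$ is independent of $o$, and the equivariance $g\cdot\mathrm{Fix}_S=\mathrm{Fix}_{gSg^{-1}}$ (conjugation permutes finite subsets cofinally) gives $g\cdot\xi=\xi$. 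Finally, writing $B_\gamma\circ g=B_\gamma+c_g$ and evaluating at a fixed point of the elliptic element $g$ yields $c_g=0$, and zero cocycle plus eventual coincidence of rays with the same end forces $g$ to fix a terminal portion of $\gamma$ pointwise, which is precisely the paper's definition of stably fixing $\xi$. Two minor remarks: semisimplicity of isometries of $\mathbb{R}$-trees requires no completeness (Culler--Morgan), so completeness enters only through the existence of nearest-point projections and the limit above; and in your direction from the second characterization to (FR) you can bypass the claim that a hyperbolic isometry fixes only the two ends of its axis --- stable fixing already forces the cocycle to vanish, and the same eventual-coincidence argument then produces fixed points, so $g$ is elliptic directly.
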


\begin{defi}
A group $\mathrm{G}$ decomposes as a \textsl{non-trivial amalgam}
\index{defi}{non-trivial amalgam} if 
$\mathrm{G}\simeq\mathrm{G}_1\ast_\mathrm{H}\mathrm{G}_2$ with 
$\mathrm{G}_1\not=\mathrm{H}\not=\mathrm{G}_2$. 
\end{defi}

\begin{thm}[\cite{Serre:arbres}, Chapter 6]\label{thm:serreFA}
A group $\mathrm{G}$ has property (FA) if and only if it does not 
decompose as a non-trivial amalgam.
\end{thm}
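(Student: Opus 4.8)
The natural framework is Bass--Serre theory, which provides a dictionary between isometric actions of a group on a simplicial tree and decompositions of that group as the fundamental group of a graph of groups. My plan is to prove the two implications separately: I would construct an explicit tree in one direction and analyse a quotient graph in the other. Throughout, the slogan is that a vertex (resp. edge) of the quotient graph $\mathrm{G}\backslash T$ records a vertex (resp. edge) stabiliser, and that a segment as quotient corresponds exactly to an amalgam.

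For the implication that a non-trivial amalgam fails property (FA)---which is the contrapositive of the direction actually needed to study $\mathrm{Bir}(\mathbb{P}^2_\mathbb{C})$---I would start from $\mathrm{G}=\mathrm{G}_1\ast_\mathrm{H}\mathrm{G}_2$ with $\mathrm{G}_1\neq\mathrm{H}\neq\mathrm{G}_2$ and build the associated Bass--Serre tree $T$: its vertex set is the disjoint union of the coset spaces $\mathrm{G}/\mathrm{G}_1$ and $\mathrm{G}/\mathrm{G}_2$, its edge set is $\mathrm{G}/\mathrm{H}$, and an edge $g\mathrm{H}$ joins $g\mathrm{G}_1$ to $g\mathrm{G}_2$. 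The key verification is that $T$ is connected and has no cycle; this is a reformulation of the normal form theorem for amalgamated products, and I would establish it by reducing a closed path in $T$ to a reduced word in $\mathrm{G}$ and invoking uniqueness of the reduced form. The group $\mathrm{G}$ acts on $T$ by left translation, preserving the bipartition into the two types of vertices, so there are no inversions; thus a global fixed point would be a fixed vertex, which would force $\mathrm{G}$ to be a conjugate of $\mathrm{G}_1$ or of $\mathrm{G}_2$. Since $\mathrm{G}_1\neq\mathrm{H}\neq\mathrm{G}_2$ forces both indices $[\mathrm{G}_1:\mathrm{H}]$ and $[\mathrm{G}_2:\mathrm{H}]$ to be at least $2$ (and $\mathrm{G}_1\cap\mathrm{G}_2=\mathrm{H}$ prevents $\mathrm{G}$ from equalling either factor), no such fixed vertex exists, and $\mathrm{G}$ does not have property (FA).

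For the converse I would suppose that $\mathrm{G}$ acts on a tree $T$ without global fixed point and try to read off an amalgam decomposition. The tool is again Bass--Serre theory: writing $Y=\mathrm{G}\backslash T$ for the quotient graph, one recovers $\mathrm{G}$ as the fundamental group of the graph of groups on $Y$ whose vertex and edge groups are the corresponding stabilisers. The strategy is to collapse $T$ equivariantly so that $Y$ becomes a single edge; the two vertex groups $\mathrm{G}_1$, $\mathrm{G}_2$ and the edge group $\mathrm{H}$ of this segment then exhibit $\mathrm{G}$ as $\mathrm{G}_1\ast_\mathrm{H}\mathrm{G}_2$, and the absence of a global fixed point translates into $\mathrm{G}_1\neq\mathrm{H}\neq\mathrm{G}_2$.

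The main obstacle lies entirely in this converse. A fixed-point-free action can produce a quotient graph carrying a \emph{loop} rather than a segment, in which case the structure theorem yields an HNN extension (equivalently, a surjection $\mathrm{G}\to\mathbb{Z}$) instead of an amalgam; the group $\mathbb{Z}$ acting by translations on the line is the prototypical group that is not a non-trivial amalgam yet fails property (FA). Consequently the clean biconditional must be read with Serre's additional hypotheses in force---finite generation of $\mathrm{G}$, the absence of a surjection onto $\mathbb{Z}$, and $\mathrm{G}$ not being a strictly increasing union of proper subgroups---which are precisely what rule out the loop and the ascending-union phenomena and allow one to reduce the quotient to a single edge. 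Carrying out this reduction carefully (choosing a fundamental domain, controlling the edge and vertex stabilisers along it, and checking the non-triviality $\mathrm{G}_1\neq\mathrm{H}\neq\mathrm{G}_2$ of the resulting amalgam) is the technical heart of the argument, and here I would follow the treatment in Chapter~6 of \cite{Serre:arbres}.
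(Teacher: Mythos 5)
Your proposal is correct and follows exactly the route of the cited source: the paper gives no proof of Theorem \ref{thm:serreFA} (it is quoted from \cite{Serre:arbres}), and Serre's argument there is precisely the Bass--Serre dictionary you describe --- the tree with vertex set $\mathrm{G}/\mathrm{G}_1\sqcup\mathrm{G}/\mathrm{G}_2$ and edge set $\mathrm{G}/\mathrm{H}$, connectedness and acyclicity via the normal form theorem, and the structure theorem for actions on trees for the converse. You are also right, and it is worth recording, that the biconditional as stated is literally false without further hypotheses: $\mathbb{Z}$ is not a non-trivial amalgam yet acts on the line by translations without fixed point, so it fails (FA). The full statement (Theorem 15 in Chapter 6 of \cite{Serre:arbres}) characterizes (FA) by three conditions: $\mathrm{G}$ is not a non-trivial amalgam, $\mathrm{G}$ has no quotient isomorphic to $\mathbb{Z}$, and $\mathrm{G}$ is not the union of a strictly increasing sequence of proper subgroups; your separate mention of finite generation is redundant, since for countable groups it is equivalent to the third condition. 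The imprecision is harmless for the paper, which only invokes the implication (FA) $\Rightarrow$ no non-trivial amalgam, i.e.\ the contrapositive direction that your Bass--Serre tree argument proves completely and unconditionally. One small point there can be tightened: if $\mathrm{G}$ fixed the vertex $g\mathrm{G}_1$, then $\mathrm{G}=g\mathrm{G}_1g^{-1}$, and since conjugation by $g\in\mathrm{G}$ is an automorphism of $\mathrm{G}$ this already forces $\mathrm{G}_1=\mathrm{G}$, which the normal form excludes because $\mathrm{G}_2\neq\mathrm{H}$ supplies an element outside $\mathrm{G}_1$.
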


In the Appendix of \cite{Cornulier:amalgamatedproduct} the 
author has shown that  $\mathrm{Bir}(\mathbb{P}^2_\mathbb{C})$ 
satisfies the first assertion of Lemma \ref{lem:propertyFR}, hence:

\begin{thm}[\cite{Cornulier:amalgamatedproduct}]\label{thm:notamalgamated}
The Cremona group $\mathrm{Bir}(\mathbb{P}^2_\mathbb{C})$ has 
property (FR).
\end{thm}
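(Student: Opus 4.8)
The plan is to prove property (FR) in the form of the first characterization of Lemma \ref{lem:propertyFR}: fixing an isometric action of $G=\mathrm{Bir}(\mathbb{P}^2_\mathbb{C})$ on a complete real tree $T$, I would show that every finitely generated subgroup $H\subset G$ has a global fixed point (equivalently, since $T$ is complete, that $H$ has bounded orbits, the center of a bounded orbit being fixed). The basic invariant is the translation length $\ell(g)=\inf_{x}d(x,g\cdot x)$ of an isometry $g$ of $T$; it is conjugacy invariant, vanishes exactly on the elliptic isometries (those with a fixed point), and satisfies $\ell(g^n)=|n|\,\ell(g)$. The arithmetic consequence I rely on repeatedly is the \emph{distortion principle}: if $g$ is conjugate in $G$ to $g^n$ for some $n\ge 2$, then $\ell(g)=\ell(g^n)=n\,\ell(g)$, so $\ell(g)=0$ and $g$ is elliptic.

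First I would run the distortion principle on the pieces supplied by the Noether--Castelnuovo Theorem \ref{thm:noether}, which writes $H$ with finitely many generators taken from $\mathrm{PGL}(3,\mathbb{C})$ together with finitely many conjugates of $\sigma_2$. For a unipotent one--parameter subgroup $t\mapsto u(t)$ of $\mathrm{PGL}(3,\mathbb{C})$, conjugation by a suitable diagonal torus element sends $u(t)$ to $u(\lambda t)$ for every $\lambda\in\mathbb{C}^*$; choosing $\lambda=n$ gives $u(t)\sim u(t)^n$, so every unipotent is elliptic. Since $\mathrm{PGL}(3,\mathbb{C})$ is generated by unipotents and is perfect, one then forces it to have bounded orbits, so it fixes a point of $T$; the same reasoning applies to the automorphism groups of the minimal rational surfaces that occur in the generating data. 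For a general element $g$ I would exploit the degeneration of Proposition \ref{pro:chemin}: after normalizing a point $p$ fixed by $g$ at which $g$ is a local isomorphism, the one--parameter conjugation $\nu(t)$ realizes $g$ inside an algebraic family $\upsilon\colon\mathbb{A}^1_\mathbb{C}\to\mathrm{Bir}(\mathbb{P}^2_\mathbb{C})$ whose special value is an automorphism. The very families that place such elements in the normal subgroup generated by $\mathrm{Aut}(\mathbb{P}^2_\mathbb{C})$ (as in Example \ref{eg:utile} and the connectedness argument) supply the conjugations needed to exhibit the divisibility that forces $\ell(g)=0$. In this way every element of $G$, and in particular every generator of $H$, turns out to be elliptic.

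The hard part will be passing from ``every generator is elliptic'' to a single fixed point for $H$, that is, the combination step. Here I would use that the fixed--point set of an elliptic isometry of $T$ is a nonempty subtree, that two elliptic isometries $g,h$ with $gh$ also elliptic have intersecting fixed subtrees, and that subtrees of a real tree enjoy the Helly property (finitely many pairwise intersecting subtrees have a common point, the intersection being taken in the complete tree). The genuine obstacle is therefore to guarantee ellipticity not only of the generators but of their pairwise products: this is exactly where one must again feed the finite list of products through the algebraic--family conjugations and the degeneration of Proposition \ref{pro:chemin}, so that each product too has translation length zero. Once all pairwise products are elliptic, Helly yields a point of $T$ fixed by the whole finite generating set, hence by $H$. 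By Lemma \ref{lem:propertyFR} this establishes property (FR); Remark \ref{rem:FRFA} and Theorem \ref{thm:serreFA} then record the consequences that $G$ has property (FA) and does not split as a nontrivial amalgam.
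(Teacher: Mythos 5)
Your proposal breaks down at both of its load-bearing steps. First, the inference \og generated by unipotents and perfect, hence bounded orbits, hence a fixed point\fg{} for $\mathrm{PGL}(3,\mathbb{C})$ is simply false as a matter of tree geometry: the product of two elliptic isometries of a real tree whose fixed subtrees are disjoint is hyperbolic, with translation length at least twice the distance between the fixed subtrees, so ellipticity of a generating set (even of all one-parameter unipotent subgroups, where your scaling argument $u(t)\sim u(t)^n$ is indeed correct) yields nothing about the group. A concrete counterexample: identifying $\mathbb{C}$ with the field $\mathbb{C}_p$, the group $\mathrm{SL}(2,\mathbb{C})$ is perfect and generated by unipotents, each elliptic by your distortion principle, and yet it acts on (the completion of) the Bruhat--Tits real tree of $\mathrm{SL}(2,\mathbb{C}_p)$ with hyperbolic elements such as $\mathrm{diag}(p,p^{-1})$. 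Property (FR) for $\mathrm{SL}(d,\mathbb{C})$ holds only for $d\geq 3$; this is exactly Step 2 of the paper's proof, and it rests on rank $\geq 2$ phenomena (relations among root subgroups, bounded elementary generation), not on ellipticity of generators. Second, your passage to arbitrary elements and to products via Proposition \ref{pro:chemin} is invalid: an abstract isometric action on a complete real tree carries no continuity whatsoever with respect to morphisms $\mathbb{A}^1_\mathbb{C}\to\mathrm{Bir}(\mathbb{P}^2_\mathbb{C})$, so the algebraic degeneration of $g$ to its linear part imposes no relation between $\ell(g)$ and the translation length of the limit. Nor can the distortion principle apply directly: a loxodromic map $g$ satisfies $\lambda(g^n)=\lambda(g)^n$ with $\lambda$ a conjugacy invariant, so $g$ is never conjugate to a proper power of itself. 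Ellipticity of every element is the \emph{conclusion} of the theorem, not something provable element by element.

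More structurally, no argument using only the generation statement of Theorem \ref{thm:noether} together with ellipticity of the pieces can succeed, because the free product $\mathrm{PGL}(3,\mathbb{C})\ast\big(\mathbb{Z}/2\mathbb{Z}\big)$ acts on its Bass--Serre tree with both factors elliptic and no global fixed point; some relation mixing $\sigma_2$ with $\mathrm{PGL}(3,\mathbb{C})$ must be exploited, and your proposal uses none. This is precisely what the paper's proof does. After establishing (FR) for $\mathrm{PGL}(3,\mathbb{C})$ (Step 2), one splits into two cases via Lemma \ref{lem:propertyFR}: if $\mathrm{PGL}(3,\mathbb{C})$ has no fixed point but stably fixes an end, Step 4 shows all of $\mathrm{Bir}(\mathbb{P}^2_\mathbb{C})$ fixes that end; if it has a fixed point, one considers the fixed subtrees $\mathcal{T}_1$, $\mathcal{T}_2$ of $\mathrm{H}_1=\mathrm{PGL}(3,\mathbb{C})$ and $\mathrm{H}_2=\sigma_2\circ\mathrm{PGL}(3,\mathbb{C})\circ\sigma_2^{-1}$, which generate the Cremona group and are exchanged by $\sigma_2$. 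The minimal segment $\mathcal{S}=[p_1,p_2]$ joining them is fixed by the normalizer $\mathrm{C}$ of the Cartan subgroup (Step 3) and reversed by $\sigma_2$, and the metric Step 1 shows $d\big(p_1,(\sigma_2\circ\varsigma)^k p_1\big)=sk$ for all $k$, contradicting the torsion relation $(\sigma_2\circ\varsigma)^3=\mathrm{id}$ unless $s=0$, i.e.\ unless the whole group fixes a point. It is this relation, absent from your argument, that carries the proof; your Helly-type combination step is a reasonable general tool but defers the genuinely hard verification (ellipticity of products) to the same invalid degeneration argument.
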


According to Remark \ref{rem:FRFA} the group $\mathrm{Bir}(\mathbb{P}^2_\mathbb{C})$ thus has property (FA). 
From Theorem \ref{thm:serreFA} one gets that:

\begin{cor}[\cite{Cornulier:amalgamatedproduct}]
The plane Cremona group does not decompose as a non-trivial
amalgam.
\end{cor}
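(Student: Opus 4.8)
The plan is to obtain the statement as a purely formal consequence of the three results established immediately beforehand, with no new geometry required. The entire work of the argument is packaged into Theorem \ref{thm:notamalgamated}, which asserts that $\mathrm{Bir}(\mathbb{P}^2_\mathbb{C})$ has property (FR); I would take that as given and simply chain the implications.

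First I would invoke Theorem \ref{thm:notamalgamated} to record that the plane Cremona group satisfies property (FR). Next I would apply Remark \ref{rem:FRFA}, which says that property (FR) implies property (FA); this hands me the fact that $\mathrm{Bir}(\mathbb{P}^2_\mathbb{C})$ has property (FA), i.e.\ every isometric action of the group on a tree admits a global fixed point. Finally I would apply the forward (``only if'') direction of Theorem \ref{thm:serreFA}: a group with property (FA) cannot decompose as a non-trivial amalgam $\mathrm{G}_1\ast_{\mathrm{H}}\mathrm{G}_2$ with $\mathrm{G}_1\neq\mathrm{H}\neq\mathrm{G}_2$. Composing these three steps yields exactly the claimed conclusion for $\mathrm{Bir}(\mathbb{P}^2_\mathbb{C})$.

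There is essentially no obstacle at the level of this corollary, since each arrow in the chain (FR)\,$\Rightarrow$\,(FA)\,$\Rightarrow$\,no non-trivial amalgam is already stated in the excerpt. The only point worth flagging is purely expository: I would make sure to use the correct direction of Serre's equivalence in Theorem \ref{thm:serreFA}, namely that property (FA) \emph{rules out} a non-trivial amalgam decomposition (the converse direction is not needed here). The genuine difficulty of the result lives entirely inside Theorem \ref{thm:notamalgamated} --- verifying that every finitely generated subgroup fixes a point under any isometric action on a complete real tree, equivalently the first characterization in Lemma \ref{lem:propertyFR} --- and that verification is assumed rather than reproved, so the corollary compresses to a one-line deduction.

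\begin{proof}
By Theorem \ref{thm:notamalgamated} the group $\mathrm{Bir}(\mathbb{P}^2_\mathbb{C})$ has property (FR). According to Remark \ref{rem:FRFA} property (FR) implies property (FA), so $\mathrm{Bir}(\mathbb{P}^2_\mathbb{C})$ has property (FA). Theorem \ref{thm:serreFA} asserts that a group with property (FA) does not decompose as a non-trivial amalgam, whence the plane Cremona group does not decompose as a non-trivial amalgam.
\end{proof}
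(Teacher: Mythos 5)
Your proposal is correct and follows exactly the paper's own deduction: Theorem \ref{thm:notamalgamated} gives property (FR), Remark \ref{rem:FRFA} upgrades this to property (FA), and the relevant direction of Theorem \ref{thm:serreFA} rules out any non-trivial amalgam decomposition. Nothing to change.
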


\smallskip

Let us give the main steps of the proof of Theorem \ref{thm:notamalgamated}.
From now on $\mathcal{T}$ is a complete real tree and all actions on 
$\mathcal{T}$ are isometric.

\subsubsection*{Step 1} Let $p_0$, $p_1$, $\ldots$, $p_k$ be points of $\mathcal{T}$ and $s\geq 0$. Suppose that the following 
equality holds
\[
d(p_i,p_j)=s\vert i-j\vert
\]
for all $i$, $j$ such that $\vert i-j\vert\leq 2$. Then it holds for all
$i$ and $j$.

\subsubsection*{Step 2} If $d\geq 3$, then $\mathrm{SL}(d,\mathbb{C})$ has 
property (FR). In particular if $d\geq 3$, then $\mathrm{PGL}(d,\mathbb{C})$ has property 
(FR).

\subsubsection*{Step 3} Let us recall that a \textsl{torus}\index{defi}{torus} $\mathrm{T}$ in a 
compact Lie group $\mathrm{G}$ is a compact, connected, abelian Lie 
subgroup of $\mathrm{G}$ (and therefore isomorphic to the standard 
torus $\mathbb{T}^n$ for some integer $n$). Given a torus $\mathrm{T}$, the 
\textsl{Weyl group}\index{defi}{Weyl group} of $\mathrm{G}$ with respect
to $\mathrm{T}$ can be defined as the normalizer of $\mathrm{T}$ modulo
the centralizer of $\mathrm{T}$. A 
\textsl{Cartan subgroup}\index{defi}{Cartan subgroup} of
an algebraic group is one of the subgroups whose Lie algebra is a 
Cartan subalgebra. For connected algebraic groups over $\mathbb{C}$ a 
Cartan subgroup is usually defined as the centralizer of a maximal 
torus.

Let $\mathrm{C}$ be the normalizer of the standard Cartan subgroup of 
$\mathrm{PGL}(3,\mathbb{C})$, {\it i.e.} the semi-direct product of the
diagonal matrices by the Weyl group (of order $6$). Set 
$\varsigma\colon (z_0,z_1)\mapsto(1-z_0,1-z_1)$.
 The group generated by $\varsigma$ and 
$\mathrm{C}$ coincides with $\mathrm{PGL}(3,\mathbb{C})$:
\[
\langle \mathrm{C},\varsigma\rangle=\mathrm{PGL}(3,\mathbb{C}).
\]

\subsubsection*{Step 4} Let $\mathrm{Bir}(\mathbb{P}^2_\mathbb{C})$ act on
$\mathcal{T}$ so that $\mathrm{PGL}(3,\mathbb{C})$ has no fixed point and 
has a (unique) stably fixed end. Then $\mathrm{Bir}(\mathbb{P}^2_\mathbb{C})$
fixes this unique end.

\subsubsection*{Step 5} Note that 
$\sigma_2=\big(\varsigma\circ\sigma_2\big)\circ\varsigma\circ\big(\varsigma\circ\sigma_2\big)^{-1}$. 
Since 
$\mathrm{Bir}(\mathbb{P}^2_\mathbb{C})=\langle\sigma_2,\,\mathrm{PGL}(3,\mathbb{C})\rangle$
the groups $\mathrm{H_1}=\mathrm{PGL}(3,\mathbb{C})$ and 
$\mathrm{H}_2=\sigma_2\circ\mathrm{PGL}(3,\mathbb{C})\circ\sigma_2^{-1}$ generate 
$\mathrm{Bir}(\mathbb{P}^2_\mathbb{C})$. Let us consider an action of 
$\mathrm{Bir}(\mathbb{P}^2_\mathbb{C})$ on $\mathcal{T}$. By Steps 2 and 4
it is sufficient to consider the case when $\mathrm{PGL}(3,\mathbb{C})$ has
a fixed point. Let us prove that $\mathrm{Bir}(\mathbb{P}^2_\mathbb{C})$ 
has a fixed point; suppose by contradiction 
that $\mathrm{Bir}(\mathbb{P}^2_\mathbb{C})$ has no fixed point. Denote by 
$\mathcal{T}_i$ the set of fixed points of $\mathrm{H}_i$, $i=1$, $2$.
These two trees are exchanged by $\sigma_2$, and as~$\mathrm{H}_1$ and 
$\mathrm{H}_2$ generate $\mathrm{Bir}(\mathbb{P}^2_\mathbb{C})$ they 
are disjoint. Denote by $\mathcal{S}=[p_1,p_2]$, $p_i\in\mathcal{T}_i$, 
the minimal segment joining the two trees, and by $s>0$ its length. The
segment $\mathcal{S}$ is thus fixed by 
$\mathrm{C}\subset\mathrm{H}_1\cap\mathrm{H}_2$, and reversed by 
$\sigma_2$. Step $1$ implies that for all $k\geq 1$, the distance between the points
$p_1$ and $(\sigma_2\circ\varsigma)^kp_1$ is exactly $sk$. This contradicts
the fact that $(\sigma_2\circ\varsigma)^3=\mathrm{id}$. 

\subsection{It is an amalgamated product of three groups}\label{subsec:wright}

In \cite{Wright} the author shows that 
$\mathrm{Bir}(\mathbb{P}^2_\mathbb{C})=\mathrm{Aut}_{\mathbb{C}}\mathbb{C}(z_0,z_1)$
acts on a two-dimensional simplicial complex~$C$, which has as vertices 
certain models in the function field $\mathbb{C}(z_0,z_1)$ and 
whose fundamental domain consists of one face $F$. This yields a structure 
description of $\mathrm{Bir}(\mathbb{P}^2_\mathbb{C})$ as an amalgamation of 
three subgroups along pairwise intersections. The subgroup 
$\mathrm{Aut}(\mathbb{A}^2_\mathbb{C})$ acts on $C$ by restriction; more precisely the
face $F$ has an edge~$E$ satisfying the following property: the 
$\mathrm{Aut}(\mathbb{A}^2_\mathbb{C})$-translates of $E$ form a tree $T$, and the 
action of $\mathrm{Aut}(\mathbb{A}^2_\mathbb{C})$ on $T$ yields the well-known structure
theory for $\mathrm{Aut}(\mathbb{A}^2_\mathbb{C})$ as an amalgamated product 
(\cite{Jung}).

Let us give some details. Recall that
\[
\mathrm{Aut}(\mathbb{P}^1_\mathbb{C}\times\mathbb{P}^1_\mathbb{C})=\big(\mathrm{PGL}(2,\mathbb{C})\times\mathrm{PGL}(2,\mathbb{C})\big)\rtimes\faktor{\mathbb{Z}}{2\mathbb{Z}}
\]
and
\[
\mathcal{J}=\mathrm{PGL}(2,\mathbb{C})\ltimes\mathrm{PGL}(2,\mathbb{C}(z_0)).
\]

\begin{proof}[Proof of Theorem \ref{thm:Wright}]
It is based on Theorem \ref{thm:Iskovskikh}.
Denote by $\mathrm{G}$ be the group obtained by amalgamating $\mathrm{PGL}(3,\mathbb{C})$, 
$\mathrm{Aut}(\mathbb{P}^1_\mathbb{C}\times\mathbb{P}^1_\mathbb{C})$, 
$\mathcal{J}$ along their pairwise intersections in 
$\mathrm{Bir}(\mathbb{P}^2_\mathbb{C})$. Let $\tau$ 
be the involution $\tau\colon(z_0,z_1)\mapsto(z_1,z_0)$. 
Consider the group homomorphism
$\alpha\colon\mathrm{G}\to\mathrm{Bir}(\mathbb{P}^2_\mathbb{C})$ 
restricting to the identity on 
\[
\mathrm{PGL}(3,\mathbb{C})\cup\mathrm{Aut}(\mathbb{P}^1_\mathbb{C}\times\mathbb{P}^1_\mathbb{C})\cup\mathcal{J}.
\]
As $\mathrm{im}\,\alpha$ contains $\mathcal{J}$ and 
$\tau\in\mathrm{Aut}(\mathbb{P}^1_\mathbb{C}\times\mathbb{P}^1_\mathbb{C})$
Theorem \ref{thm:Iskovskikh} implies that $\alpha$ is surjective.

Since 
$\big\{\mathrm{id},\,\tau\big\}\subset\mathrm{Aut}(\mathbb{P}^1_\mathbb{C}\times\mathbb{P}^1_\mathbb{C})$ 
Theorem 
\ref{thm:Iskovskikh} gives a map $\widetilde{\beta}$ from the free product 
$\big\{\mathrm{id},\,\tau\big\}\ast\mathcal{J}$ to $\mathrm{G}$. Since 
$\mathrm{Aut}(\mathbb{P}^1_\mathbb{C}\times\mathbb{P}^1_\mathbb{C})\subset\mathrm{G}$ the equality
\[
\tau\circ(\varphi_0,\varphi_1)\circ\tau=(\varphi_1,\varphi_0)\qquad\forall\,
(\varphi_0,\varphi_1)
\]
also holds in $\mathrm{G}$. Let us now prove that in $\mathrm{G}$ we have 
$(\tau\circ\varepsilon)^3=\sigma_2$ where 
$\varepsilon\colon(z_0,z_1)\mapsto\left(z_0,\frac{z_0}{z_1}\right)$. First note that the 
equality $\varepsilon=\rho\circ\sigma_2$, where 
$\rho\colon(z_0,z_1)\mapsto\left(\frac{1}{z_0},\frac{z_1}{z_0}\right)$, holds in $\mathcal{J}$
and so in $\mathrm{G}$. On the one hand $\sigma_2$ and $\rho$ commute in 
$\mathcal{J}$ so in $\mathrm{G}$, on the other hand $\sigma_2$ and $\tau$
commute in $\mathrm{Aut}(\mathbb{P}^1_\mathbb{C}\times\mathbb{P}^1_\mathbb{C})$ 
hence in $\mathrm{G}$. Therefore, one has the following equality in $\mathrm{G}$
\begin{equation}\label{eq:rel}
(\tau\circ\varepsilon)^3=(\tau\circ\rho\circ\sigma_2)^3=(\tau\circ\rho)^3\circ\sigma_2^3
\end{equation}
The maps $\tau$ and $\rho$ belong to $\mathrm{PGL}(3,\mathbb{C})$ and 
$(\tau\circ\rho)^3=\mathrm{id}$ in $\mathrm{PGL}(3,\mathbb{C})$; as a consequence 
$(\tau\circ\rho)^3=\mathrm{id}$ in $\mathrm{G}$. One has 
$\sigma_2^3=\sigma_2$ in $\mathrm{Aut}(\mathbb{P}^1_\mathbb{C}\times\mathbb{P}^1_\mathbb{C})$ so in $\mathrm{G}$. 
From (\ref{eq:rel}) one gets $(\tau\circ\varepsilon)^3=\sigma_2$ in $\mathrm{G}$.
Consequently $\widetilde{\beta}$ induces a map 
$\beta\colon\mathrm{Bir}(\mathbb{P}^2_\mathbb{C})\to\mathrm{G}$ with the 
following property: $\beta$ restricts to the identity on $\mathcal{J}$
and $\big\{\mathrm{id},\,\tau\big\}$. According to Theorem 
\ref{thm:Iskovskikh}, $\alpha\circ\beta=\mathrm{id}$. The image of 
$\beta$ contains $\mathcal{J}\subset\mathrm{G}$ and 
$\tau\in(\mathrm{PGL}(3,\mathbb{C})\cap\mathrm{Aut}(\mathbb{P}^1_\mathbb{C}\times\mathbb{P}^1_\mathbb{C}))\subset\mathrm{G}$. 
But both $\mathrm{PGL}(3,\mathbb{C})$ and 
$\mathrm{Aut}(\mathbb{P}^1_\mathbb{C}\times\mathbb{P}^1_\mathbb{C})$ are generated 
by their intersection with $\mathcal{J}$ (in $\mathrm{Bir}(\mathbb{P}^2_\mathbb{C})$) 
together with $\tau$; hence $\mathrm{PGL}(3,\mathbb{C})$ and 
$\mathrm{Aut}(\mathbb{P}^1_\mathbb{C}\times\mathbb{P}^1_\mathbb{C})$ 
belong to $\mathrm{im}\,\beta$. As $\mathrm{G}$ is generated by 
$\mathrm{PGL}(3,\mathbb{C})\cup\mathrm{Aut}(\mathbb{P}^1_\mathbb{C}\times\mathbb{P}^1_\mathbb{C})\cup\mathcal{J}$, $\beta$ is surjective.
Therefore, $\alpha$ is an isomorphism ($\alpha^{-1}=\beta$).
\end{proof}

\smallskip

The amalgamated product group structure of Theorem \ref{thm:Wright} reflects 
the fact that it acts on a simply connected two-dimensional simplicial 
complex. This follows from a higher dimensional analogue of Serre's
tree theory (\emph{see for instance} \cite{Soule, Swan}). Let us detail it.

\begin{defis}
A \textsl{simplicial complex}\index{defi}{simplicial complex} $\mathcal{K}$
is a finite collection of non-empty finite sets such that if $X\in\mathcal{K}$ 
and $\emptyset\not=Y\subseteq X$ then $Y\in\mathcal{K}$. 

The union of all members of $\mathcal{K}$ is denoted by 
$V(\mathcal{K})$\index{not}{$V(\mathcal{K})$}. 

The elements
of $V(\mathcal{K})$ are called the \textsl{vertices}\index{defi}{vertex (of
a simplicial complex)} of $\mathcal{K}$. 

The elements of $\mathcal{K}$ are
called the \textsl{simplices}\index{defi}{simplices (of a simplicial complex)}
of $\mathcal{K}$. 

The \textsl{dimension of a simplex}\index{defi}{dimension
(of a simplex)} $S\in\mathcal{K}$ is $\dim S=\vert S\vert-1$. 

The 
\textsl{dimension}\index{defi}{dimension (of a simplicial complex)} of
$\mathcal{K}$ is the maximum dimension of any simplex in $\mathcal{K}$.
\end{defis}

\textsl{Admissible models}

A \textsl{model}\index{defi}{model} is a reduced, irreducible, separated $\mathbb{C}$-scheme having 
function field $\mathbb{C}(z_0,z_1)$. Consider the set of models $S$ 
satisfying one of the three properties
\begin{itemize}
\item[$\diamond$] $S\simeq \mathbb{P}^2_\mathbb{C}$,

\item[$\diamond$] $S\simeq \mathbb{P}^1_\mathbb{C}\times\mathbb{P}^1_\mathbb{C}$, 

\item[$\diamond$] $S\simeq\mathbb{P}^1_\Bbbk$ for some subfield $\Bbbk$ 
of $\mathbb{C}(z_0,z_1)$ necessarily of pure transcendance degree $1$ 
over~$\mathbb{C}$.
\end{itemize}
Such a $\mathbb{C}$-scheme $S$ will be called an 
\textsl{admissible model}\index{defi}{admissible (model)}. 
In the first (resp. second, resp. third) case, we say that $S$ is 
$\mathbb{P}^2_\mathbb{C}$ (resp. $S$ is a 
$\mathbb{P}^1_\mathbb{C}\times\mathbb{P}^1_\mathbb{C}$, resp. $S$ is a
$\mathbb{P}^1_\Bbbk$).

\smallskip

\textsl{The complex $C$}

It is constructed using as vertices the set of admissible models. The three
models $S$, $V$ and $R$, where $S$ is a $\mathbb{P}^2_\mathbb{C}$, $V$ is a 
$\mathbb{P}^1_\mathbb{C}\times\mathbb{P}^1_\mathbb{C}$ and $R$ is a 
$\mathbb{P}^1_\Bbbk$, determine a face when there exist two distinct points
$p$ and $q$ on $S$ such that
\begin{itemize}
\item[$\diamond$] $V$ is the $\mathbb{P}^1_\mathbb{C}\times\mathbb{P}^1_\mathbb{C}$
($\simeq\mathbb{F}_0$) obtained by blowing up $S$ at $p$ and $q$, then 
blowing down the proper transform of the line in $S$ containing $p$ and $q$;
 
\item[$\diamond$] $R$ is the generic $\mathbb{P}^1_\mathbb{C}$ obtained by blowing 
up $S$ at $p$.
\end{itemize}

If $S$ is the standard $\mathbb{P}^2_\mathbb{C}$, $p=(0:1:0)$ and $q=(1:0:0)$, 
then $V$ is the standard $\mathbb{P}^1_\mathbb{C}\times\mathbb{P}^1_\mathbb{C}$, 
and $R$ the standard $\mathbb{P}^1_{\mathbb{C}(z_0)}$. The standard models form
a face called the standard face in $C$.

\smallskip

\textsl{Fundamental domain}

Note that from the construction of $C$ the group 
$\mathrm{Bir}(\mathbb{P}^2_\mathbb{C})$ acts on $C$ without inverting any edge or
rotating any face. A fondamental domain for the action is given by any one 
face
\begin{figure}[!h]
\centering
\includegraphics[height=2.5cm]{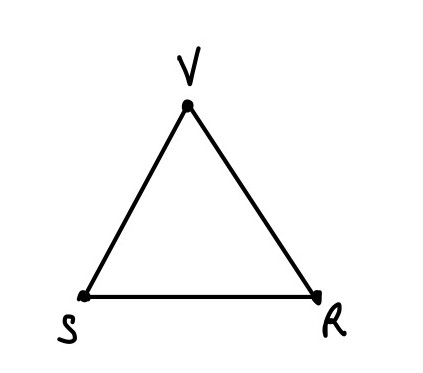}
\end{figure}
%\begin{center}
%\begin{tikzpicture}[line cap=round,line join=round,>=triangle %45,x=1.0cm,y=1.0cm]
%\draw (0,0)-- (1,1.7);
%\draw (1,1.7)-- (2,0);
%\draw (2,0)-- (0,0);
%\begin{scriptsize}
%\fill (0,0) circle (1.5pt);
%\draw (-0.16,0.26) node {$S$};
%\fill (1,1.7) circle (1.5pt);
%\draw (1,1.96) node {$V$};
%\fill (2,0) circle (1.5pt);
%\draw (2.16,0.26) node {$R$};
%\end{scriptsize}
%\end{tikzpicture}
%\end{center}

If as before we choose $S$ to be the standard $\mathbb{P}^2_\mathbb{C}$, 
$p=(0:1:0)$ et $q=(1:0:0)$ one gets the standard face. For this choice 
the centralizer of $S$, $V$ and $R$ are respectively 
$\mathrm{PGL}(3,\mathbb{C})$, 
$\mathrm{Aut}(\mathbb{P}^1_\mathbb{C}\times\mathbb{P}^1_\mathbb{C})$, 
$\mathcal{J}$.

Let us recall that two simplices $S_0$ and $S_n$ are 
\textsl{$k$-connected}\index{defi}{$k$-connected (simplices)} 
if there is a sequence of
simplices $S_0$, $S_1$, $S_2$, $\ldots$, $S_n$ such that any two 
consecutive ones share a $k$-face, {\it i.e.} they have at least $k+1$
vertices in common. The complex $\mathcal{K}$ is 
\textsl{$k$-connected}\index{defi}{$k$-connected (complex)} if any 
two simplices in $\mathcal{K}$ of dimension $\geq k$ are $k$-connected.

Wright establishes the two following results (\cite{Wright}):
\begin{itemize}
\item[$\diamond$] the simplicial complex $C$ is $1$-connected;

\item[$\diamond$] the complex $C$ contains the $\mathrm{Aut}(\mathbb{A}^2)$-tree.
\end{itemize}

\section{Two presentations of the Cremona group}

\subsection{A simple set of generators and relations for 
$\mathrm{Bir}(\mathbb{P}^2_\mathbb{C})$}\label{subsection:nc1}

In \cite{Blanc:relations} Blanc gives a simple set of generators and 
relations for the plane Cremona group 
$\mathrm{Bir}(\mathbb{P}^2_\mathbb{C})$. 
Namely he shows:

\begin{thm}[\cite{Blanc:relations}]
The group $\mathrm{Bir}(\mathbb{P}^2_\mathbb{C})$ is the amalgamated 
product of the Jonqui\`eres group with the group 
$\mathrm{Aut}(\mathbb{P}^2_\mathbb{C})$ of automorphisms of the plane,
divided by the relation $\sigma_2\circ\tau=\tau\circ\sigma_2$ where
$\tau\colon(z_0:z_1:z_2)\mapsto(z_1:z_0:z_2)$.
\end{thm}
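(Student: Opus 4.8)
The plan is to realize $\mathrm{Bir}(\mathbb{P}^2_\mathbb{C})$ as the quotient of the amalgamated product $\mathcal{J} \ast_{\mathcal{J} \cap \mathrm{PGL}(3,\mathbb{C})} \mathrm{PGL}(3,\mathbb{C})$ by the normal subgroup generated by the relator $\sigma_2 \tau \sigma_2^{-1} \tau^{-1}$. Write $\mathrm{G}$ for this quotient group and let $\alpha \colon \mathrm{G} \to \mathrm{Bir}(\mathbb{P}^2_\mathbb{C})$ be the canonical homomorphism obtained from the inclusions of $\mathcal{J}$ and $\mathrm{PGL}(3,\mathbb{C})$ into $\mathrm{Bir}(\mathbb{P}^2_\mathbb{C})$; this is well defined since the relation $\sigma_2 \tau = \tau \sigma_2$ genuinely holds in $\mathrm{Bir}(\mathbb{P}^2_\mathbb{C})$ (both $\sigma_2$ and $\tau$ are explicit maps and one checks the commutation directly). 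By the theorem of Noether and Castelnuovo (Theorem \ref{thm:noether}), $\mathrm{Bir}(\mathbb{P}^2_\mathbb{C})$ is generated by $\sigma_2$ and $\mathrm{PGL}(3,\mathbb{C})$, and since $\sigma_2 \in \mathcal{J}$, the image of $\alpha$ contains both generating families; hence $\alpha$ is surjective. The whole content of the theorem is therefore injectivity of $\alpha$, equivalently that the listed relations already suffice to present the group.

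The strategy for injectivity I would adopt is to start from a known complete set of relations and show that it can be rewritten in terms of the single extra relation $\sigma_2 \tau = \tau \sigma_2$ on top of the amalgam. The natural input is a presentation in the amalgamated-product spirit, such as the one recorded as Theorem \ref{thm:Wright} (the free product of $\mathrm{PGL}(3,\mathbb{C})$, $\mathrm{Aut}(\mathbb{P}^1_\mathbb{C} \times \mathbb{P}^1_\mathbb{C})$, and $\mathcal{J}$ amalgamated along pairwise intersections) or, even more directly, Iskovskikh's complete system of relations (Theorem \ref{thm:Iskovskikh}) for $\mathrm{Bir}(\mathbb{P}^1_\mathbb{C} \times \mathbb{P}^1_\mathbb{C})$. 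First I would produce a homomorphism $\beta$ in the reverse direction, from $\mathrm{Bir}(\mathbb{P}^2_\mathbb{C})$ to $\mathrm{G}$, by checking that each of the defining relations in the chosen complete presentation is a consequence, inside $\mathrm{G}$, of the amalgam relations together with $\sigma_2 \tau = \tau \sigma_2$. Then $\alpha \circ \beta = \mathrm{id}$ and $\beta \circ \alpha = \mathrm{id}$ would follow from the fact that both compositions fix the generating sets, exactly as in the proof of Theorem \ref{thm:Wright}, giving that $\alpha$ is an isomorphism.

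Concretely, the key computational step is to verify that the two cubic relations appearing in Iskovskikh's presentation, namely $\bigl(\tau \circ (z_0,z_1 \mapsto z_0, z_0/z_1)\bigr)^3 = \mathrm{id}$ and $\bigl(\tau \circ (z_0,z_1 \mapsto -z_0, z_1 - z_0)\bigr)^3 = \mathrm{id}$, are derivable in $\mathrm{G}$. Each of these should be manipulated so that the non-Jonquières, non-linear content is concentrated entirely into occurrences of $\sigma_2$, after which the commutation $\sigma_2 \tau = \tau \sigma_2$ collapses the word. The bridge between the $\mathbb{P}^1 \times \mathbb{P}^1$ picture and the $\mathbb{P}^2$ picture is the birational equivalence $\bigl((u_0:u_1),(v_0:v_1)\bigr) \dashrightarrow (u_1 v_0 : u_0 v_1 : u_1 v_1)$ used in the remarks following Theorem \ref{thm:UrechZimmermann}, which conjugates $\mathrm{Aut}(\mathbb{P}^1_\mathbb{C} \times \mathbb{P}^1_\mathbb{C})^0$ into $\mathrm{PGL}(2,\mathbb{C}) \times \mathrm{PGL}(2,\mathbb{C}) \subset \mathrm{Bir}(\mathbb{P}^2_\mathbb{C})$ and expresses the generator $\tau$ as the coordinate swap.

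The main obstacle I expect is precisely this last bookkeeping: faithfully translating Iskovskikh's generators and relations through the fixed birational identification and then exhibiting each relation as a word in $\mathcal{J}$, $\mathrm{PGL}(3,\mathbb{C})$, and the commutator $[\sigma_2,\tau]$. There is a genuine subtlety in that the amalgam $\mathcal{J} \ast_{\mathcal{J}\cap \mathrm{PGL}(3,\mathbb{C})} \mathrm{PGL}(3,\mathbb{C})$ must be shown to \emph{surject} onto $\mathrm{Bir}(\mathbb{P}^2_\mathbb{C})$ with kernel exactly the normal closure of $[\sigma_2,\tau]$; controlling that the relation $\sigma_2 \tau = \tau \sigma_2$ is both necessary and sufficient requires knowing that no \emph{other} relations survive, which is exactly what a pre-existing complete presentation supplies. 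I would therefore lean on Theorem \ref{thm:Wright} or Theorem \ref{thm:Iskovskikh} as the backbone, reducing the problem to the finite, explicit task of rewriting finitely many relators, and I would expect the verification of the two cubic identities to be where the real work lies.
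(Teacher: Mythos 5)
Your scaffolding (surjectivity via Noether--Castelnuovo, the two mutually inverse homomorphisms $\alpha$ and $\beta$, bootstrapping from a known complete presentation as in the paper's proof of Theorem \ref{thm:Wright}) is coherent, but there is a genuine gap, and you have located the anticipated difficulty in the wrong place. The two cubic relations are in fact nearly formal. Writing $\varepsilon=\rho\circ\sigma_2$ with $\rho\colon(z_0:z_1:z_2)\mapsto(z_2:z_1:z_0)\in\mathrm{PGL}(3,\mathbb{C})\cap\mathcal{J}$, the commutation $\sigma_2\circ\rho=\rho\circ\sigma_2$ (a relation internal to the vertex group $\mathcal{J}$) together with the imposed relation $\sigma_2\circ\tau=\tau\circ\sigma_2$ gives, in your group $\mathrm{G}$, $(\tau\circ\varepsilon)^3=(\tau\circ\rho)^3\circ\sigma_2^3=\sigma_2$, since $(\tau\circ\rho)^3=\mathrm{id}$ holds inside the vertex group $\mathrm{Aut}(\mathbb{P}^2_\mathbb{C})$; this is exactly the computation in the paper's proof of Theorem \ref{thm:Wright}, and it goes through verbatim in your smaller amalgam. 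The second cubic relation transports, under the birational identification, to a word in linear maps of $\mathbb{P}^2_\mathbb{C}$ and so holds for free in the vertex $\mathrm{Aut}(\mathbb{P}^2_\mathbb{C})$. What does \emph{not} come for free is the block you dismiss as bookkeeping: Iskovskikh's complete system also contains all relations inside $\mathrm{Aut}(\mathbb{P}^1_\mathbb{C}\times\mathbb{P}^1_\mathbb{C})$. In Wright's theorem these cost nothing because $\mathrm{Aut}(\mathbb{P}^1_\mathbb{C}\times\mathbb{P}^1_\mathbb{C})$ is one of the three vertex groups; in Blanc's amalgam it is not a vertex, and after transport the semidirect-product relations $\tau\circ(\varphi_0,\varphi_1)\circ\tau=(\varphi_1,\varphi_0)$ become an infinite family of mixed identities equating $\tau$-conjugates of \emph{quadratic} Jonqui\`eres maps with other Jonqui\`eres maps. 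Deriving this family from the single relation $\sigma_2\circ\tau=\tau\circ\sigma_2$ is the essential content of the theorem, and your proposal contains no mechanism for it; asserting that the rewriting must exist because the two presentations define the same group would be circular in a blind proof.

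This missing step is precisely where the paper invests its effort, and note that the paper's (Blanc's) proof does not use Theorem \ref{thm:Iskovskikh} as a black box at all: it stays on $\mathbb{P}^2_\mathbb{C}$. The key tool is Lemma \ref{lem:tec1}: if $\varphi\in\mathcal{J}$ has base-points $p_1=(1:0:0)$ and a proper point $q$, and $\nu\in\mathrm{Aut}(\mathbb{P}^2_\mathbb{C})$ exchanges $p_1$ and $q$, then $\nu\circ\varphi\circ\nu^{-1}$ lies in $\mathcal{J}$ and this conjugation relation is generated by $\sigma_2\circ\tau=\tau\circ\sigma_2$ in the amalgam --- exactly the statement your plan needs, your relations $\tau\circ(\varphi_0,\varphi_1)\circ\tau=(\varphi_1,\varphi_0)$ being instances of it with $\nu=\tau$ exchanging $(1:0:0)$ and $(0:1:0)$. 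Injectivity is then proved directly: to a word $j_r\circ a_r\circ\ldots\circ j_1\circ a_1$ one attaches the pair $(D,k)$, where $D$ is the maximal degree of the intermediate linear systems $\Lambda_i$ and $k$ a weighted length, and one argues by lexicographic induction, using the degree and multiplicity identities for Jonqui\`eres maps to find three base-points $\ell_0$, $r_0$, $q$ of large multiplicity, inserting a quadratic map $\theta\in\mathcal{J}$ based at them, and rewriting $j_{n+1}\circ a_{n+1}\circ j_n=(j_{n+1}\circ\widetilde{\theta}^{-1})\circ a_{n+1}\circ(\theta\circ j_n)$ modulo the relation so that $(D,k)$ strictly decreases. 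So your route can in principle be completed, but only after you prove a statement of the strength of Lemma \ref{lem:tec1}; and once that lemma is available, the paper's induction yields the theorem without ever importing Iskovskikh's presentation.
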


Blanc's proof is inspired by Iskovskikh's proof but Blanc stays on 
$\mathbb{P}^2_\mathbb{C}$. It is clear that $\sigma_2\circ\tau=\tau\circ\sigma_2$,
so it suffices to prove that no other relation holds.

Blanc first establishes the following statement:

\begin{lem}[\cite{Blanc:relations}]\label{lem:tec1}
Let $\varphi$ be an element of $\mathcal{J}$ such that 
$\big\{p_1=(1:0:0),\,q\big\}\subset\mathrm{Base}(\varphi)$ where $q$ 
is a proper point of $\mathbb{P}^2_\mathbb{C}\smallsetminus\{p_1\}$. If 
$\nu\in\mathrm{Aut}(\mathbb{P}^2_\mathbb{C})$ exchanges $p_1$ and $q$,
then
\begin{itemize}
\item[$\diamond$] $\psi=\nu\circ\varphi\circ\nu^{-1}$ belongs to 
$\mathcal{J}$,  
\item[$\diamond$] the relation $\nu\circ\varphi^{-1}=\psi^{-1}\circ\nu$ 
is generated by the relation $\sigma_2\circ\tau=\tau\circ\sigma_2$ in 
the amalgamated product of $\mathcal{J}$ and 
$\mathrm{Aut}(\mathbb{P}^2_\mathbb{C})$. 
\end{itemize}
\end{lem}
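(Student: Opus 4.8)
Write $A=\mathcal{J}\cap\mathrm{PGL}(3,\mathbb{C})$ for the amalgamated subgroup, so that $A$ consists exactly of the automorphisms of $\mathbb{P}^2_\mathbb{C}$ fixing $p_1=(1:0:0)$, and let $\overline{G}$ denote the amalgamated product $\mathcal{J}\ast_A\mathrm{PGL}(3,\mathbb{C})$ divided by the normal closure of $\sigma_2\circ\tau\circ\sigma_2^{-1}\circ\tau^{-1}$ (the relation $R\colon\ \sigma_2\circ\tau=\tau\circ\sigma_2$); the asserted equality is to be read inside $\overline{G}$. The first bullet is purely geometric: since $\nu$ exchanges $p_1$ and $q$ and $q\in\mathrm{Base}(\varphi)$, the map $\psi=\nu\circ\varphi\circ\nu^{-1}$ satisfies $\mathrm{Base}(\psi)=\nu(\mathrm{Base}(\varphi))$, so $p_1=\nu(q)$ is a base-point of $\psi$; tracking the preserved pencil then shows that $\psi$ preserves the pencil of lines through $p_1$, hence $\psi\in\mathcal{J}$. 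I would record here the multiplicity bookkeeping that makes this work (it is cleanest when $\varphi$ is quadratic, the case in which the lemma is applied), as the point where the hypothesis that $q$ is proper and $q\in\mathrm{Base}(\varphi)$ is used.

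The plan for the second bullet is to reduce to the model situation $\nu=\tau$, $q=p_2:=(0:1:0)$ by conjugating with elements of $A$, which is free of cost in $\overline{G}$. First I would choose $\beta\in\mathrm{PGL}(3,\mathbb{C})$ fixing $p_1$ and sending $p_2$ to $q$; then $\beta\in A$, and $\beta\circ\tau\circ\beta^{-1}$ again exchanges $p_1$ and $q$, so $c:=(\beta\circ\tau\circ\beta^{-1})^{-1}\circ\nu$ fixes both $p_1$ and $q$ and lies in $A$, giving $\nu=(\beta\circ\tau\circ\beta^{-1})\circ c$. Setting $\varphi_2=\beta^{-1}\circ c\circ\varphi\circ c^{-1}\circ\beta\in\mathcal{J}$, a base-point computation gives $p_1,p_2\in\mathrm{Base}(\varphi_2)$, and all the identities relating $\varphi,\psi$ to $\varphi_2$ and $\tau\circ\varphi_2\circ\tau^{-1}$ hold \emph{inside the factors} $\mathcal{J}$ and $\mathrm{PGL}(3,\mathbb{C})$ (because $\beta,c\in A$), hence in $\overline{G}$ without using $R$. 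After cancelling the outer $\beta$'s this reduces the claim to the core statement: for $\varphi\in\mathcal{J}$ with $p_1,p_2\in\mathrm{Base}(\varphi)$ and $\psi_0=\tau\circ\varphi\circ\tau^{-1}$, the equality $\tau\circ\varphi\circ\tau^{-1}=\psi_0$ holds in $\overline{G}$.

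For this core statement I would argue by induction on $\deg\varphi$, the relation $R$ being used exactly once per quadratic factor. In the base case $\varphi$ is a quadratic involution with base-points $p_1,p_2,r$; writing $\varphi=\alpha\circ\sigma_2\circ\alpha^{-1}$ with $\alpha\in\mathrm{PGL}(3,\mathbb{C})$ fixing both $p_1$ and $p_2$ and sending $(0:0:1)$ to $r$, one computes in $\overline{G}$
\[
\tau\circ\varphi\circ\tau^{-1}=(\tau\alpha\tau^{-1})\circ(\tau\sigma_2\tau^{-1})\circ(\tau\alpha\tau^{-1})^{-1}=\alpha'\circ\sigma_2\circ\alpha'^{-1},
\]
where $\tau\sigma_2\tau^{-1}=\sigma_2$ is $R$, and $\alpha'=\tau\alpha\tau^{-1}$ is a product computed \emph{within} $\mathrm{PGL}(3,\mathbb{C})$ that again fixes $p_1$ and $p_2$, so $\alpha'\in A$ and $\alpha'\circ\sigma_2\circ\alpha'^{-1}$ collapses inside $\mathcal{J}$ to the single element $\psi_0$. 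The inductive step factors $\varphi=\varphi'\circ q_0$ with $q_0$ a quadratic involution through $p_1$ and $p_2$ and $\deg\varphi'<\deg\varphi$, applies the base case to $q_0$ and the inductive hypothesis to $\varphi'$, and reassembles the two $\mathcal{J}$-elements inside the factor; taking inverses yields the stated form $\nu\circ\varphi^{-1}=\psi^{-1}\circ\nu$.

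The main obstacle I anticipate is precisely this de Jonquières reduction in the inductive step: I must produce a quadratic involution $q_0$ whose base-points lie among those of $\varphi$ and which \emph{keeps both $p_1$ and $p_2$ as base-points of} $\varphi'=\varphi\circ q_0^{-1}$, since conjugation by $\tau$ only preserves $\mathcal{J}$ when $p_2$ is retained, and a careless choice of $q_0$ either fails to lower the degree or resolves $p_2$. Controlling this is what forces $q_0$ to pass through $p_1$ and $p_2$, and it is the reason the lemma is cleanest — and is chiefly used — when $\varphi$ is already quadratic, in which case only the base case is needed and the whole relation reduces to a single application of $\sigma_2\circ\tau=\tau\circ\sigma_2$.
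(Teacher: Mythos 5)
Your proof has a genuine gap, and it sits exactly at the step you wave through in the first bullet. Conjugation by $\nu$ transports the pencil of lines through $p_1$ to the pencil through $\nu(p_1)=q$: what follows from $\varphi\in\mathcal{J}$ is that $\psi=\nu\circ\varphi\circ\nu^{-1}$ preserves the pencil through $q$, not through $p_1$, and knowing that $p_1=\nu(q)$ is a base-point of $\psi$ is not enough to conclude $\psi\in\mathcal{J}$. In fact no ``multiplicity bookkeeping'' can rescue the deduction as you state it, because under the stated hypotheses alone the conclusion fails: take $\varphi=g\circ\sigma_2$ with $g\in\mathrm{PGL}(3,\mathbb{C})$ fixing $p_1$ but with $g(p_2)\neq p_2$ (say $g\colon(x:y:z)\mapsto(x:y:y+z)$), $q=p_2=(0:1:0)$ and $\nu=\tau$. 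Then $\varphi\in\mathcal{J}$ and $p_1,q\in\mathrm{Base}(\varphi)$, but $\psi=\tau\circ\varphi\circ\tau$ sends the pencil through $p_1$ to the pencil through $\tau(g(p_2))=(1:0:1)$, so $\psi\notin\mathcal{J}$. What the conclusion actually requires is the extra property that $\varphi$ also preserves the pencil of lines through $q$ — and this is precisely what your base-case normal form $\varphi=\alpha\circ\sigma_2\circ\alpha^{-1}$ with $\alpha$ fixing \emph{both} $p_1$ and $p_2$ silently encodes. That normal form does not cover all quadratic elements of $\mathcal{J}$ through $p_1,p_2$, nor even all quadratic involutions (a monomial involution composed with the swap of the pencils through $p_2$ and $p_3$ lies in $\mathcal{J}$, has base-points $p_1,p_2,p_3$, and is not of your form; conjugating it by $\tau$ again leaves $\mathcal{J}$). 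Note that where the lemma is actually invoked in the proof of Theorem \ref{thm:Blancrelations}, one is free to \emph{choose} the quadratic map $\theta$ with base-points $\ell_0,r_0,q$, and the right choice is the involution $\alpha\circ\sigma_2\circ\alpha^{-1}$ preserving the pencils through all three of its base-points — that is the only case needed, and it is the case your base case correctly treats, with the single use of $\sigma_2\circ\tau=\tau\circ\sigma_2$ and the observation that $\tau\alpha\tau^{-1}$ fixes $p_1$, hence lies in $A$.

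Beyond this, your induction is not a proof: you yourself leave open the existence of a factorization $\varphi=\varphi'\circ q_0$ with $q_0$ a quadratic involution through $p_1,p_2$ lowering the degree while keeping $p_1$ and $p_2$ among $\mathrm{Base}(\varphi')$, and even granting that, the inductive hypothesis would further require $\varphi'$ to preserve the pencil through $p_2$ (by the first paragraph), which your bookkeeping never addresses; composing with $q_0$ generically destroys exactly that property. What is sound in your write-up, and does match the intended argument, is the reduction to $\nu=\tau$, $q=p_2$ by conjugating with elements of $A=\mathcal{J}\cap\mathrm{PGL}(3,\mathbb{C})$ (cost-free in the quotient, since all the auxiliary identities hold inside the factors) and the base-case computation $\tau\circ\alpha\circ\sigma_2\circ\alpha^{-1}\circ\tau^{-1}=(\tau\alpha\tau^{-1})\circ\sigma_2\circ(\tau\alpha\tau^{-1})^{-1}$, which collapses inside $\mathcal{J}$. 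To repair the whole, either add to the statement the hypothesis that $\varphi$ preserves the pencil through $q$ (for quadratic involutions: that $\varphi$ is $\alpha\circ\sigma_2\circ\alpha^{-1}$ with $\alpha$ fixing $p_1$ and $q$), proving the lemma in the only generality in which it is used, or supply the de Jonqui\`eres factorization with the pencil-preservation invariant carried through the induction — as written, neither is done.
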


Let $\phi$ be an element of 
$\mathrm{Aut}(\mathbb{P}^2_\mathbb{C})\ast_{\mathrm{Aut}(\mathbb{P}^2_\mathbb{C})\cap\mathcal{J}}\mathcal{J}$ 
modulo the relation $\sigma_2\circ\tau=\tau\circ\sigma_2$. Write
$\phi$ as 
\begin{align*}
j_r\circ a_r\circ j_{r-1}\circ a_{r-1}\circ\ldots\circ j_1\circ a_1
\end{align*}
where $j_i\in\mathcal{J}$ and $a_i\in\mathrm{Aut}(\mathbb{P}^2_\mathbb{C})$ 
for $i=1$, $\ldots$, $r$. Note that this decomposition is of course not
unique.

Let $\Lambda_0$ be the linear system of lines of $\mathbb{P}^2_\mathbb{C}$. 
For any $i=1$, $\ldots$, $r$ let us denote by $\Lambda_i$ the linear system
$(j_i\circ a_i\circ\ldots\circ j_1\circ a_1)(\Lambda_0)$, and by $d_i$ the 
degree of $\Lambda_i$. Set
\begin{align*}
& D=\max\big\{d_i\,\vert\,i=1,\,\ldots,\,r\big\}, && n=\max\big\{i\,\vert\,d_i=D\big\}, &&
k=\displaystyle\sum_{i=1}^n\big((\deg j_i)-1\big).
\end{align*}

Recall that $j_i$ belongs to 
$\mathcal{J}\subset\mathrm{Bir}(\mathbb{P}^2_\mathbb{C})$ and satisfies the 
following property: 
\[
\deg j_i=\deg j_i(\Lambda_0)=\deg j_i^{-1}(\Lambda_0). 
\]
In particular $\deg j_i=1$ if and only if $j_i\in\mathrm{Aut}(\mathbb{P}^2_\mathbb{C})$.

Let us give an interpretation of $k$: the number $k$ determines the complexity 
of the word $j_n\circ a_n\circ j_{n-1}\circ a_{n-1}\circ\ldots\circ j_1\circ a_1$ 
which corresponds to the birational self map 
$j_i\circ a_i\circ\ldots\circ j_1\circ a_1$ of the highest degree.

\smallskip

Let us now give the strategy of the proof. If $D=1$, then each $j_i$ is an automorphism of $\mathbb{P}^2_\mathbb{C}$ 
and the word $\phi$ is equal to an element of $\mathrm{Aut}(\mathbb{P}^2_\mathbb{C})$
in the amalgamated product. Since 
$\mathrm{Aut}(\mathbb{P}^2_\mathbb{C})\hookrightarrow\mathrm{Bir}(\mathbb{P}^2_\mathbb{C})$ 
this eventuality is clear. 
Assume now that $D>1$, and prove the result by induction on the pairs
$(D,k)$ (we consider the lexicographic order).

\subsubsection*{Fact} We can suppose that
\begin{align*}
&j_{n+1},\,j_n\in\mathcal{J}\smallsetminus\mathrm{Aut}(\mathbb{P}^2_\mathbb{C}),
&& a_{n+1}\in\mathrm{Aut}(\mathbb{P}^2_\mathbb{C})\smallsetminus\mathcal{J}.
\end{align*}

\subsubsection*{Remark} The point $p=(1:0:0)$ is the base-point of the 
pencil associated to~$\mathcal{J}$. As $a_{n+1}\not\in\mathcal{J}$, one
has $a_{n+1}(p)\not=p$.

\subsubsection*{Properties of the Jonqui\`eres maps}

Since $j_n$, $j_{n+1}$ do not belong to $\mathrm{Aut}(\mathbb{P}^2_\mathbb{C})$,
then $\deg j_n>1$, $\deg j_{n+1}>1$. Set 
$D_L=\deg j_{n+1}$, $D_R=\deg j_n$. The maps $j_{n+1}$ and $j_n$ preserve the 
pencil of lines through $p$. Furthermore $p$ is a base-point of $j_{n+1}$
(resp. $j_n$) of multiplicity $D_L-1$ (resp. $D_R-1$). Since 
$j_{n+1}^\pm(\Lambda_0)$ (resp. $j_n^\pm(\Lambda_0)$) is the image of 
the system $\Lambda_0$ it is a system of rational curves with exactly
one free intersection point. The system $j_{n+1}^\pm(\Lambda_0)$ (resp.
$j_n^\pm(\Lambda_0)$) has $2D_L-2$ (resp. $2D_R-2$) base-points distinct
from $p$, which all have multiplicity $1$.

Set $\Omega_L=(j_{n+1}\circ a_{n+1})^{-1}(\Lambda_0)$ and 
$\Omega_R=(j_n\circ a_n)(\Lambda_0)$. 
Since the automorphisms $a_{n+1}$, $a_n$ are changes of coordinates the 
following properties hold:
\begin{itemize}
\item[$\diamond$] $\deg\Omega_L=D_L$ and $\ell_0=a_{n+1}^{-1}(p)\not=p$ 
is a base-point of $\Omega_L$ of multiplicity $D_L-1$;

\item[$\diamond$] $\deg\Omega_R=D_R$ and $r_0=p$ is a base-point of 
$\Omega_R$ of multiplicity $D_R-1$.
\end{itemize}
The author uses these systems to compute the degrees $d_{n+1}$, resp. 
$d_{n-1}$ of the systems $\Lambda_{n+1}=(j_{n+1}\circ a_{n+1})(\Lambda_n)$, resp. 
$\Lambda_{n-1}=(a_n^{-1}\circ j_n^{-1})(\Lambda_n)$. Indeed for any $i$ the
integer $d_i$ coincides with the degree of $\Lambda_i$ which is 
on the one hand the intersection of $\Lambda_i$ with a general line, 
on the other hand the free intersection of $\Lambda_i$ with $\Lambda_0$.
So $d_{n+1}$ (resp. $d_{n-1}$) is the free intersection of
$\Lambda_{n+1}=(j_{n+1}\circ a_{n+1})(\Lambda_n)$ (resp. 
$\Lambda_{n-1}=(a_n^{-1}\circ j_n^{-1})(\Lambda_n)$) with $\Lambda_0$ 
but also the free intersection of $\Lambda_n$ with $\Omega_L$ (resp. 
$\Omega_R$).

Denote by $m(q)$ the multiplicity of a point $q$ as a base-point of 
$\Lambda_n$. Let $\ell_1$, $\ldots$, $\ell_{2D_L-2}$ (resp. $r_1$, 
$\ldots$, $r_{2D_R-2}$) be the base-points of $\Omega_L$ (resp. $\Omega_R$).
Assume that up to reindexation $m(\ell_i)\geq m(\ell_{i+1})$ (resp.
$m(r_i)\geq m(r_{i+1})$) and if $\ell_i$ (resp. $r_i$) is infinitely near 
to $\ell_j$ (resp. $r_j$), then $i>j$. The following equalities hold:
\begin{equation}\label{eq1}
\left\{
\begin{array}{ll}
d_{n+1}=D_Ld_n-(D_L-1)m(\ell_0)-\displaystyle\sum_{i=1}^{2D_L-2} m(\ell_i)<d_n\\
d_{n-1}=D_Rd_n-(D_R-1)m(r_0)-\displaystyle\sum_{i=1}^{2D_R-2} m(r_i)<d_n
\end{array}
\right.
\end{equation}

Inequalities (\ref{eq1}) imply 
\begin{equation*}\label{eq1b}
\left\{
\begin{array}{ll}
m(\ell_0)+m(\ell_1)+m(\ell_2)>d_n\\
m(r_0)+m(r_1)+m(r_2)\geq d_n
\end{array}
\right.
\end{equation*}

\subsubsection*{First case: $m(\ell_0)\geq m(\ell_1)$ and $m(r_0)\geq m(r_1)$}

Let $q$ be a point in 
$\big\{\ell_1,\,\ell_2,\,r_1,\,r_2\big\}\smallsetminus\{\ell_0,\,r_0\}$ with the 
maximal multiplicity $m(q)$ and so that $q$ is a proper point of 
$\mathbb{P}^2_\mathbb{C}$ or infinitely near to $\ell_0$ or $r_0$.

\begin{itemize}
\item[$\diamond$] Either $\ell_1=r_0$, $m(q)\geq m(\ell_2)$ and 
$m(\ell_0)+m(r_0)+m(q)\geq m(\ell_0)+m(\ell_1)+m(\ell_2)>d_n$ by 
(\ref{eq1b}).

\item[$\diamond$] Or $\ell_1\not=r_0$, $m(q)\geq m(\ell_1)\geq m(\ell_2)$
hence $m(\ell_0)+m(q)>\frac{2d_n}{3}$. The inequalities 
$m(r_0)\geq m(r_1)\geq m(r_2)$ imply $m(r_0)\geq \frac{d_n}{3}$ and then 
$m(\ell_0)+m(r_0)+m(q)>d_n$ holds.

The inequality $m(\ell_0)+m(r_0)+m(q)>d_n$ implies that $\ell_0$, $r_0$ 
and $q$ are not aligned and there exists an element $\theta$ in 
$\mathcal{J}$ of degree $2$ with base
points $\ell_0$, $r_0$, $q$. Note that 
\[
\deg\theta(\Lambda_n)=2d_n-m(\ell_0)-m(r_0)-m(q)<d_n. 
\]
Let us recall that
the automorphism $a_{n+1}$ of $\mathbb{P}^2_\mathbb{C}$ sends $\ell_0$ onto
$r_0=p$. Take $\nu\in\mathrm{Aut}(\mathbb{P}^2_\mathbb{C})\cap~\mathcal{J}$
such that $\nu$ fixes $r_0=p$ and sends $a_{n+1}(r_0)$ onto~$\ell_0$. 
Replace $a_{n+1}$ (resp. $j_{n+1}$) by $\nu\circ a_{n+1}$ (resp. 
$j_{n+1}\circ\nu^{-1}$); we can thus assume that $a_{n+1}$ exchanges $\ell_0$
and $r_0$. As a consequence according to Lemma \ref{lem:tec1} and modulo 
the relation $\sigma_2\circ\tau=\tau\circ\sigma_2$
\begin{small}
\[
j_{n+1}\circ a_{n+1}\circ j_n=j_{n+1}\circ a_{n+1}\circ\theta^{-1}\circ\theta\circ j_n=(j_{n+1}\circ\widetilde{\theta}^{-1})\circ a_{n+1}\circ(\theta\circ j_n)
\]
\end{small}
where $\widetilde{\theta}=a_{n+1}\circ \theta\circ a_{n+1}^{-1}\in\mathcal{J}$. Both 
$j_{n+1}\circ \widetilde{\theta}^{-1}$ and $\theta\circ j_n$ belong to $\mathcal{J}$, but $a_{n+1}$
belongs to $\mathrm{Aut}(\mathbb{P}^2_\mathbb{C})$. Since 
$\theta(\Lambda_n)=(\theta\circ j_n)(\Lambda_{n-1})$ has degree $<d_n$ this 
rewriting decreases the pair $(D,k)$.

\end{itemize}

\subsubsection*{Second case: 
$m(\ell_0)<m(\ell_1)$ or $m(r_0)<m(r_1)$.}

The author comes back to the first case by changing the writing of
$\phi$ in the amalgamated product and modulo the relation
$\sigma_2\circ\tau=\tau\circ\sigma_2$ without changing $(D,k)$
but reversing the inequalities.

\subsection{An other set of generators and relations for 
$\mathrm{Bir}(\mathbb{P}^2_\mathbb{C})$}\label{subsection:nc2}

The idea of the proof of Theorem \ref{thm:UrechZimmermann} is the same as 
in \cite{Iskovskikh:generatorsandrelations, Iskovskikh:relations, Blanc:relations}.
The authors study linear systems of compositions of birational maps of the 
complex projective plane and use the presentation of 
$\mathrm{Bir}(\mathbb{P}^2_\mathbb{C})$ given in Theorem \ref{thm:Blancrelations}.
Before giving the proof of Theorem \ref{thm:UrechZimmermann} let us state the following:

\begin{pro}[\cite{UrechZimmermann}]\label{pro:UrechZimmermann}
Let $\phi_1$, $\phi_2$, $\ldots$, $\phi_n$ be some elements of
$\mathrm{PGL}(3,\mathbb{C})\cap~\mathcal{J}$. Suppose that 
$\phi_n\circ\sigma_2\circ\phi_{n-1}\circ\sigma_2\circ\ldots\circ\sigma_2\circ\phi_1=\mathrm{id}$
as maps. 

Then this expression is generated by relations $(\mathcal{R}_1)$-$(\mathcal{R}_5)$.
\end{pro}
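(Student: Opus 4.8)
The plan is to read the hypothesis as a relation \emph{inside} the de Jonqui\`eres group $\mathcal{J}$ and then to dismantle it, by induction on a degree-type complexity, into the defining relations $(\mathcal{R}_1)$--$(\mathcal{R}_5)$. The first observation I would record is that $\sigma_2$ itself lies in $\mathcal{J}$: in the affine chart $z_2=1$ it reads $(z_0,z_1)\dashrightarrow(1/z_0,1/z_1)$, which preserves the pencil of lines through $(0:0:1)$. Since each $\phi_i$ is already an element of $\mathrm{PGL}(3,\mathbb{C})\cap\mathcal{J}$, the whole word $w=\phi_n\circ\sigma_2\circ\cdots\circ\sigma_2\circ\phi_1$ lies in $\mathcal{J}$, and the hypothesis $w=\mathrm{id}$ is precisely a relation of $\mathcal{J}$ written in the generating set $\{\sigma_2\}\cup(\mathrm{PGL}(3,\mathbb{C})\cap\mathcal{J})$. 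The Proposition is exactly the well-definedness statement one needs in order to build the homomorphism $\mathcal{J}\to\langle\sigma_2,\mathrm{PGL}(3,\mathbb{C})\mid(\mathcal{R}_1),\ldots,(\mathcal{R}_5)\rangle$ used in the proof of Theorem \ref{thm:UrechZimmermann}; it is worth noting that the extra relation $\sigma_2\circ\tau=\tau\circ\sigma_2$ of Blanc's presentation (Theorem \ref{thm:Blancrelations}) is the instance $\eta=\tau$ of $(\mathcal{R}_3)$, so once the Proposition is available the two presentations can be matched.

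For the reduction I would imitate Alexander's degree-lowering scheme recalled in \S\ref{subsection:nc1}, but keeping everything inside $\mathcal{J}$. To $w$ I attach the degrees $d_i=\deg(\phi_i\circ\sigma_2\circ\cdots\circ\sigma_2\circ\phi_1)$ of the partial products, set $D=\max_i d_i$ and let $k$ be the number of indices attaining $D$, and I induct on the pair $(D,k)$ in lexicographic order. If $D=1$, then, since a single occurrence of $\sigma_2$ already forces a partial product of degree $2$, the word contains no $\sigma_2$ at all; thus $w=\phi_1\in\mathrm{PGL}(3,\mathbb{C})$ and $w=\mathrm{id}$ is a relation covered by $(\mathcal{R}_1)$. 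If $D>1$, then because the total product has degree $1$ the degree sequence cannot stay at its maximum, so there is an interior index where it drops, i.e.\ a subword $\sigma_2\circ\phi\circ\sigma_2$ of strictly smaller degree than the generic value.

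The heart of the matter is the local rewriting at such a degree peak. A subword $\sigma_2\circ\phi\circ\sigma_2$, with $\phi\in\mathrm{PGL}(3,\mathbb{C})\cap\mathcal{J}$, fails to have maximal degree exactly when $\phi$ carries the three base points $(1:0:0)$, $(0:1:0)$, $(0:0:1)$ of $\sigma_2$ back onto that triangle, so that the quadratic parts cancel; such a $\phi$ belongs to the torus normalizer $\mathrm{D}_2\rtimes\mathfrak{S}_3$ (constrained, inside $\mathcal{J}$, to fix $(0:0:1)$). For these $\phi$ the relations $(\mathcal{R}_3)$ (commutation of $\sigma_2$ with permutations) and $(\mathcal{R}_4)$ (so that $\sigma_2\circ d\circ\sigma_2=d^{-1}$ for diagonal $d$) express $\sigma_2\circ\phi\circ\sigma_2$ as a single linear map, which I then absorb into its neighbours through $(\mathcal{R}_1)$, deleting two occurrences of $\sigma_2$; the one remaining genuinely quadratic configuration, coming from $\mathrm{Aut}(\mathbb{P}^1_\mathbb{C}\times\mathbb{P}^1_\mathbb{C})^0$, is precisely what $(\mathcal{R}_5)$, the relation $(\sigma_2\circ h)^3=\mathrm{id}$, is designed to rewrite. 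Each such move strictly lowers $(D,k)$, so the inductive hypothesis then applies to the shortened word.

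The step I expect to be the main obstacle is precisely \emph{producing} a reducible subword at the peak using only moves legitimised by $(\mathcal{R}_1)$--$(\mathcal{R}_5)$: a priori the linear map $\phi$ sitting between two $\sigma_2$'s need not already preserve the coordinate triangle, and one must pre- and post-compose the $\phi_i$ by suitable elements of $\mathrm{PGL}(3,\mathbb{C})\cap\mathcal{J}$, absorbed through $(\mathcal{R}_1)$ and the intersection relations, so as to bring the relevant base points of the linear system $\Lambda$ into the standard position assumed in $(\mathcal{R}_3)$--$(\mathcal{R}_5)$ without increasing $(D,k)$. This is the analogue, internal to $\mathcal{J}$, of the careful base-point bookkeeping and the first/second-case split in Blanc's argument of \S\ref{subsection:nc1}, now restricted to base points of de Jonqui\`eres type; it is where the bulk of the verification lies. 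Once it is carried out the complexity drops and the induction closes.
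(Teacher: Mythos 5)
Your global strategy is the one the paper actually follows: read the hypothesis as a relation in the generators $\sigma_2$ and $\mathrm{PGL}(3,\mathbb{C})\cap\mathcal{J}$, induct lexicographically on a pair $(D,\cdot)$ built from the degrees of the partial products, and rewrite locally at a degree peak; your base case, and your identification of the Proposition as the well-definedness statement behind Theorem \ref{thm:UrechZimmermann}, are both correct. But the heart of your argument --- the classification of the peak subword --- is wrong, and this is a genuine gap. You claim that $\sigma_2\circ\phi\circ\sigma_2$ fails to have maximal degree \emph{exactly when} $\phi$ carries the coordinate triangle to itself, i.e.\ when $\phi$ lies in $\mathrm{D}_2\rtimes\mathfrak{S}_3$. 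That is false: since every $\phi\in\mathrm{PGL}(3,\mathbb{C})\cap\mathcal{J}$ fixes the base point of the pencil, the maps $\sigma_2\circ\phi$ and $\sigma_2$ automatically share one base point, so $\deg(\sigma_2\circ\phi\circ\sigma_2)\leq 3$ for \emph{every} such $\phi$ (Lemma \ref{lem:tecnik1}); the triangle-normalizing case is only the degree-$1$ case. The correct trichotomy is $\deg(\sigma_2\circ\phi_n\circ\sigma_2)\in\{1,\,2,\,3\}$. Your $(\mathcal{R}_3)$--$(\mathcal{R}_4)$ absorption is the degree-$1$ case (Lemma \ref{lem:tecnik3}), and your ``remaining quadratic configuration'' rewritten by $(\mathcal{R}_5)$ is the degree-$2$ case (Lemma \ref{lem:tecnik4}); the degree-$3$ case is entirely absent from your proposal, and it is the hard one --- even the sketch above treats it as a separate case and refers to \cite{UrechZimmermann} for it. As written, your induction does not close: at a peak with $\deg(\sigma_2\circ\phi_n\circ\sigma_2)=3$ none of your moves applies, so $(D,k)$ need not decrease.

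A second, smaller omission: even in the degree-$2$ case the rewriting $\sigma_2\circ\phi_n\circ\sigma_2=\varphi\circ\sigma_2\circ\psi$ requires that no three of the relevant base points of $\sigma_2$ and $\sigma_2\circ\phi_n$ be collinear (in coordinates, $a_2b_2\neq 0$), and this is not automatic. The paper secures it by first normalizing the whole word, via Lemma \ref{lem:tecnik5} and without increasing the inductive pair, so that the inverses of the partial products have no infinitely near base-points; the collinearity degeneration is then excluded because it would force an infinitely near base-point of $\Lambda_n$. Your closing paragraph correctly senses that base-point bookkeeping is where the work lies, but locates the difficulty in \emph{producing} a reducible subword at the peak --- which, as noted above, is automatic since the degree is always $\leq 3<4$ --- rather than in these two actual obstacles. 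Your peripheral remarks are fine: $\sigma_2$ does lie in $\mathcal{J}$, and Blanc's relation $\sigma_2\circ\tau=\tau\circ\sigma_2$ is indeed the instance $\eta=\tau$ of $(\mathcal{R}_3)$.
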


\begin{proof}[Proof of Theorem \ref{thm:UrechZimmermann}]
Let $\mathrm{G}$ be the group generated by $\sigma_2$ and $\mathrm{PGL}(3,\mathbb{C})$ 
divided by the relations $(\mathcal{R}_1)$-$(\mathcal{R}_5)$
\[
\mathrm{G}=\langle\sigma_2,\,\mathrm{PGL}(3,\mathbb{C})\,\vert\,(\mathcal{R}_1)-(\mathcal{R}_5)\rangle.
\]
Denote by $\pi\colon\mathrm{G}\to\mathrm{Bir}(\mathbb{P}^2_\mathbb{C})$ 
the canonical homomorphism that sends generators onto generators. 
Proposition \ref{pro:UrechZimmermann} asserts that sending an element 
of $\mathcal{J}$ onto its corresponding word in $\mathrm{G}$ is 
well defined. Hence there exists a homomorphism 
$w\colon\mathcal{J}\to\mathrm{G}$ such that 
\[
\pi\circ w=\mathrm{id}_{\mathcal{J}}.
\]
In particular $w$ is injective.

The universal property of the amalgamated product implies that there
exists a unique homomorphism
\[
\varphi\colon\mathrm{PGL}(3,\mathbb{C})\ast_{\mathrm{PGL}(3,\mathbb{C})\cap\mathcal{J}}\mathcal{J}\to\mathrm{G}
\]
such that the following diagram commutes 
 \[
  \xymatrix{
     & & &\mathrm{G}    \\
    \mathcal{J}\ar@/^/[rrru] \ar[rr]^w  & & \mathrm{PGL}(3,\mathbb{C})\ast_{\mathrm{PGL}(3,\mathbb{C})\cap\mathcal{J}}\mathcal{J}\ar[ru]^{\varphi} & \\
    \mathrm{PGL}(3,\mathbb{C})\cap\mathcal{J}\ar@{^{(}->}[u] \ar@{^{(}->}[rr] & & \mathrm{PGL}(3,\mathbb{C})\ar@{^{(}->}[u]\ar@/_2pc/[ruu] &
  }
  \]

According to Theorem \ref{thm:Blancrelations} the plane 
Cremona group is isomorphic to 
\[
\mathrm{PGL}(3,\mathbb{C})\ast_{\mathrm{PGL}(3,\mathbb{C})\cap\mathcal{J}}\mathcal{J}
\]
divided by the relation $\tau\circ\sigma_2\circ\tau\circ\sigma_2$ 
where $\tau\colon(z_0:z_1:z_2)\mapsto(z_1:z_0:z_2)$. Note that this 
relation holds as well in $\mathrm{G}$. As a consequence $\varphi$ factors
through the quotient 
\[
\mathrm{PGL}(3,\mathbb{C})\ast_{\mathrm{PGL}(3,\mathbb{C})\cap\mathcal{J}}\mathcal{J}\Big/\langle\tau\circ\sigma_2\circ\tau\circ\sigma_2\rangle.
\]
This yields a homomorphism 
$\overline{\varphi}\colon\mathrm{Bir}(\mathbb{P}^2_\mathbb{C})\to\mathrm{G}$.
More precisely the homomorphisms
$\pi\colon\mathrm{G}\to\mathrm{Bir}(\mathbb{P}^2_\mathbb{C})$ and
$\overline{\varphi}\colon\mathrm{Bir}(\mathbb{P}^2_\mathbb{C})\to\mathrm{G}$
both send
generators to generators
\[
\left\{
\begin{array}{ll}
& \pi(\sigma_2)=\sigma_2\text{ and } \pi(A)=A\,\,\,\,\,\forall\,A\in\mathrm{PGL}(3,\mathbb{C})\\
&\overline{\varphi}(\sigma_2)=\sigma_2\text{ and } \overline{\varphi}(A)=A\,\,\,\,\,\forall\,A\in\mathrm{PGL}(3,\mathbb{C})
\end{array}
\right.
\]
The homomorphisms $\pi$ and $\overline{\varphi}$ are thus isomorphisms that 
are inverse to each other.
\end{proof}

Let us give some Lemmas and Remarks that allow to give a proof of 
Proposition~\ref{pro:UrechZimmermann}.

In \cite{AlberichCarraminana} the author gave a general formula for the 
degree of a composition of two elements of $\mathrm{Bir}(\mathbb{P}^2_\mathbb{C})$
but the multiplicities of the base-points of the composition 
is hard to compute in general. If we impose that one of the two maps 
has degree~$2$ then it is a rather straight forward computation 
(\cite{AlberichCarraminana}). Denote by $m_p(\phi)$\index{not}{$m_p(\phi)$} 
the multiplicity of $\phi$ at the point $p$.
For any $\phi\in\mathcal{J}$ of degree $d$ one has 
\begin{itemize}
\item[$\diamond$] $m_{(1:0:0)}(\phi)=d-1$,
\item[$\diamond$] $m_p(\phi)=1$ $\forall\,p\in\mathrm{Base}(\phi)\smallsetminus\{(1:0:0)\}$,
\end{itemize}
so according to \cite{AlberichCarraminana} one has:

\begin{lem}[\cite{UrechZimmermann}]\label{lem:tecnik1}
Let $\phi$, resp. $\psi$ be a Jonqui\`eres map of degree $2$, 
resp.~$d$. Let $p_1$, $p_2$ be the base-points of $\phi$ different 
from $(1:0:0)$ and $q_1$, $q_2$ be the base-points of $\phi^{-1}$ 
different from $(1:0:0)$ such that the pencil of lines through 
$p_i$ is sent by $\phi$ onto the pencil of lines through $q_i$.

Then 
\begin{itemize}
\item[$\diamond$] $\deg(\psi\circ\phi)=d+1-m_{q_1}(\psi)-m_{q_2}(\psi)$,
\item[$\diamond$] $m_{(1:0:0)}(\psi\circ\phi)=d-m_{q_1}(\psi)-m_{q_2}(\psi)=\deg(\psi\circ\phi)-1$, 
\item[$\diamond$] $m_{p_i}(\psi\circ\phi)=1-m_{q_j}(\psi)$ if $i\not=j$.
\end{itemize}
\end{lem}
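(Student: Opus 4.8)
The statement to prove is Lemma~\ref{lem:tecnik1}, a degree-and-multiplicity formula for the composition $\psi\circ\phi$ where $\phi$ is a Jonquières map of degree $2$ and $\psi$ is a Jonquières map of degree $d$. The excerpt tells me this is a consequence of the general composition formula from \cite{AlberichCarraminana}, specialized to the case where the left factor has degree~$2$. So the plan is to track how the linear system of $\psi$ is pulled back through $\phi$, using the key structural facts about Jonquières maps recalled just above the lemma: any $\phi\in\mathcal{J}$ of degree $d$ has a base-point of multiplicity $d-1$ at $p=(1:0:0)$, and all other base-points have multiplicity~$1$. The intersection-theoretic machinery from \S\ref{sec:geodef} — in particular the two Noether equalities $\sum m_{p_i}=3(\nu-1)$ and $\sum m_{p_i}^2=\nu^2-1$, together with Proposition~\ref{pro:hart2} and Lemma~\ref{lem:tireenarriere} — is the toolkit I would use.

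Let me verify the statement's internal consistency before committing to the computation. The map $\phi$ has degree $2$, so by the examples following \S\ref{sec:geodef} it has three simple base-points: $(1:0:0)$ of multiplicity $d-1=1$, plus $p_1,p_2$, each of multiplicity $1$. Its inverse $\phi^{-1}$ has base-points $(1:0:0),q_1,q_2$, and the pencil of lines through $p_i$ maps to the pencil through $q_i$. The composition $\psi\circ\phi$ has linear system $\Lambda_{\psi\circ\phi}=\phi^{-1}(\Lambda_\psi)$. The idea is that $\Lambda_\psi$ is a system of curves of degree $d$ with a point of multiplicity $m_{q_i}(\psi)$ at each $q_i$; pulling back by $\phi$ (equivalently, pushing forward the system of $\psi$ through the quadratic transformation) transforms the degree according to the quadratic Cremona rule. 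For a quadratic map with base-points $a,b,c$, a curve of degree $d$ with multiplicities $m_a,m_b,m_c$ transforms to a curve of degree $2d-m_a-m_b-m_c$. Here the three base-points of $\phi$ map to $(1:0:0),q_1,q_2$, at which $\psi$ has multiplicities $d-1$ (at $(1:0:0)$), $m_{q_1}(\psi)$, and $m_{q_2}(\psi)$; hence $\deg(\psi\circ\phi)=2d-(d-1)-m_{q_1}(\psi)-m_{q_2}(\psi)=d+1-m_{q_1}(\psi)-m_{q_2}(\psi)$, matching the first formula.

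\textbf{Carrying out the three formulas.} With the degree formula established, the remaining two multiplicity formulas follow from the same transformation rule applied to the individual exceptional divisors. First, the multiplicity at $(1:0:0)$ of the composite: since $\psi\circ\phi\in\mathcal{J}$ (the Jonquières group is a group, and both factors preserve the pencil of lines through $(1:0:0)$), the recalled structure forces $m_{(1:0:0)}(\psi\circ\phi)=\deg(\psi\circ\phi)-1=d-m_{q_1}(\psi)-m_{q_2}(\psi)$, which is exactly the second formula. Second, the multiplicities at $p_1,p_2$: under the quadratic transformation the line contracted to $q_i$ by $\phi$ carries the base-data of $\psi$ at $q_i$ back to the point $p_j$ with $i\neq j$, following the standard bookkeeping that the strict transform of the exceptional line over $p_i$ is the line $(q_j\,\cdot)$. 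Computing the free intersection of $\Lambda_\psi$ with the relevant contracted line via Bézout gives $m_{p_i}(\psi\circ\phi)=1-m_{q_j}(\psi)$ for $i\neq j$, the third formula. Throughout I would lift to the minimal resolution and use Lemma~\ref{lem:tireenarriere} ($\pi^*(C)=\widetilde C+m_p(C)E$) to justify each multiplicity count rigorously.

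\textbf{The main obstacle.} The genuinely delicate point is the bookkeeping of how base-points of $\phi$ correspond to base-points of $\phi^{-1}$ and how the pencil-matching hypothesis ($p_i\mapsto q_i$) pins down the cross-indexing $i\neq j$ in the last formula. This is precisely where one must be careful about the geometry of the quadratic involution: the exceptional line over $p_i$ is sent by $\phi$ to the line through the two points $q_j,q_k$ ($\{i,j,k\}=\{1,2\}$ together with $(1:0:0)$), so the multiplicity data of $\psi$ gets permuted in exactly the way that produces $1-m_{q_j}(\psi)$ rather than $1-m_{q_i}(\psi)$. Getting this index swap right — rather than taking it on faith from the general formula — is where I expect the real work to lie, and I would resolve it by an explicit local analysis at $p_i$ using the description of the standard quadratic map and the base-point correspondence recalled in \S\ref{sec:geodef}. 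The degree formula itself, by contrast, is a routine application of the quadratic transformation rule and should require no subtlety.
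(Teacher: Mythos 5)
Your proof is correct and takes essentially the same route as the paper, which gives no independent argument for this lemma but simply specializes the composition formula of \cite{AlberichCarraminana} using that a Jonqui\`eres map of degree $\delta$ has multiplicity $\delta-1$ at $(1:0:0)$ and multiplicity $1$ at every other base-point. In particular your handling of the index swap is the intended one: since the exceptional divisor over $p_i$ is sent to the line through $(1:0:0)$ and $q_j$ ($j\neq i$), B\'ezout gives $m_{p_i}(\psi\circ\phi)=d-m_{(1:0:0)}(\psi)-m_{q_j}(\psi)=1-m_{q_j}(\psi)$, and the other two formulas follow exactly as you computed.
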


\begin{rem}[\cite{UrechZimmermann}]\label{rem:tecnik2}
These equalities can be translate as follows when $\Lambda_\psi$ denotes 
the linear system of $\psi$:
\begin{itemize}
\item[$\diamond$] $\deg(\psi\circ\phi)=\deg(\phi^{-1}(\Lambda_\psi))=d+1-m_{q_1}(\Lambda_\psi)-m_{q_2}(\Lambda_\psi)$,
\item[$\diamond$] $m_{(1:0:0)}\big(\phi^{-1}(\Lambda_\psi)\big)=d-m_{q_1}(\Lambda_\psi)-m_{q_2}(\Lambda_\psi)=\deg\big(\phi^{-1}(\Lambda_\psi)\big)-1$,
\item[$\diamond$] $m_{p_i}\big(\phi^{-1}(\Lambda_\psi)\big)=1-m_{q_j}(\Lambda_\psi)\qquad i\not=j$.
\end{itemize}
But the multiplicity of $\Lambda_\psi$ in a point different from $(1:0:0)$
is $0$ or $1$ so
\[
\left\{
\begin{array}{lll}
\text{eiter }\deg\phi^{-1}(\Lambda_\psi)=\deg(\Lambda_\psi)+1\text{ and }m_{q_1}(\Lambda_\psi)=m_{q_2}(\Lambda_\psi)=0\\
\text{or }\deg\phi^{-1}(\Lambda_\psi)=\deg(\Lambda_\psi)\text{ and }m_{q_1}(\Lambda_\psi)+m_{q_2}(\Lambda_\psi)=1\\
\text{or }\deg\phi^{-1}(\Lambda_\psi)=\deg(\Lambda_\psi)-1\text{ and }m_{q_1}(\Lambda_\psi)=m_{q_2}(\Lambda_\psi)=1
\end{array}
\right.
\]
Furthermore Bezout theorem implies that $(1:0:0)$ and any other base-points
of $\psi$ are not collinear; indeed $(1:0:0)$ is a base-point of multiplicity
$d-1$, all other base-points of multiplicity $1$ (since $\psi$ belongs to 
$\mathcal{J}$) and a general member of $\Lambda_\psi$ intersects a line
in $d$ points counted with multiplicity.
\end{rem}

\begin{lem}[\cite{UrechZimmermann}]\label{lem:tecnik3}
Let $\phi$ be an element of $\mathrm{PGL}(3,\mathbb{C})\cap\mathcal{J}$.
Suppose that $\sigma_2\circ\phi\circ\sigma_2$ is linear. 

Then $\sigma_2\circ\phi\circ\sigma_2$ is generated by 
the relations $(\mathcal{R}_1)$, $(\mathcal{R}_3)$ and $(\mathcal{R}_4)$. 
\end{lem}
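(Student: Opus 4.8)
The plan is to split the proof into a geometric classification step and a purely combinatorial rewriting step. Write $p_1=(1:0:0)$, $p_2=(0:1:0)$, $p_3=(0:0:1)$ for the three base-points of $\sigma_2$, and let $\tau_{23}\colon(z_0:z_1:z_2)\mapsto(z_0:z_2:z_1)$ be the transposition exchanging $p_2$ and $p_3$. First I would prove that the linearity hypothesis forces $\phi$ to be a linear monomial map fixing $p_1$, that is, $\phi=d$ or $\phi=d\circ\tau_{23}$ with $d\in\mathrm{D}_2$; then I would check that for such $\phi$ the identity $\sigma_2\circ\phi\circ\sigma_2=d^{-1}$ (resp. $=d^{-1}\circ\tau_{23}$) is derivable from $(\mathcal{R}_1)$, $(\mathcal{R}_3)$, $(\mathcal{R}_4)$ alone.

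For the classification I would argue with linear systems. Since $\Lambda_{\sigma_2}$ is the net of conics through $p_1$, $p_2$, $p_3$ and $\phi$ is linear, $\phi^*\Lambda_{\sigma_2}$ is the net of conics through the three distinct proper points $q_i=\phi^{-1}(p_i)$. Pulling this net back by $\sigma_2$ multiplies degrees by $2$ and removes, as fixed part, each line contracted by $\sigma_2$ with multiplicity equal to the multiplicity at the corresponding $p_i$ of a general conic of the net; that multiplicity is $1$ if $p_i\in\{q_1,q_2,q_3\}$ and $0$ otherwise. Hence $\deg(\sigma_2\circ\phi\circ\sigma_2)=4-\#\big(\{p_1,p_2,p_3\}\cap\{q_1,q_2,q_3\}\big)$, so the map is linear exactly when $\phi^{-1}$, and thus $\phi$, permutes $\{p_1,p_2,p_3\}$. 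A linear map permuting the three coordinate vertices is a generalized permutation matrix, i.e. an element of $\mathrm{D}_2\rtimes\mathfrak{S}_3$; since the condition $\phi\in\mathcal{J}$ forces $\phi(p_1)=p_1$, only the two announced forms survive. This classification is the main obstacle: one must make sure no extra cancellation occurs when some $q_i$ are special (lie on a coordinate line or coincide with a $p_j$), but since $\phi$ is a global automorphism the $q_i$ are three distinct proper points and the only coincidences that affect the degree are $q_i=p_j$, which is precisely what the count records. An alternative, should the linear-system bookkeeping prove delicate, is a direct matrix computation: writing the $p_1$-fixing $\phi$ as an upper-block-triangular matrix and expanding $\sigma_2\circ\phi\circ\sigma_2$, the vanishing of the quadratic part is equivalent to the first row being $(a:0:0)$ and the lower $2\times2$ block being diagonal or antidiagonal.

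For the rewriting step I would first note that $(\mathcal{R}_4)$ applied to $d=\mathrm{id}$ yields $\sigma_2\circ\sigma_2=\mathrm{id}$ after one use of $(\mathcal{R}_1)$, so the relation $(\mathcal{R}_2)$ is already available. Using $(\mathcal{R}_1)$ I factor the single generator $\phi$ as the word $d\circ\eta$ with $d\in\mathrm{D}_2$ and $\eta\in\{\mathrm{id},\tau_{23}\}$, insert $\sigma_2\circ\sigma_2=\mathrm{id}$ between $d$ and $\eta$, and split
\[
\sigma_2\circ d\circ\eta\circ\sigma_2=(\sigma_2\circ d\circ\sigma_2)\circ(\sigma_2\circ\eta\circ\sigma_2).
\]
Now $(\mathcal{R}_4)$ gives $\sigma_2\circ d\circ\sigma_2=d^{-1}$, while $(\mathcal{R}_3)$ together with $\sigma_2\circ\sigma_2=\mathrm{id}$ gives $\sigma_2\circ\eta\circ\sigma_2=\eta$; a final application of $(\mathcal{R}_1)$ recombines $d^{-1}\circ\eta$ into the single linear generator that $\sigma_2\circ\phi\circ\sigma_2$ represents. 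Since every manipulation used only $(\mathcal{R}_1)$, $(\mathcal{R}_3)$, $(\mathcal{R}_4)$, this proves the claim.
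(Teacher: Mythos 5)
Your proof is correct and follows essentially the same route as the paper: linearity of $\sigma_2\circ\phi\circ\sigma_2$ forces $\phi$ to permute the base-points of $\sigma_2$, hence $\phi=d\circ\eta$ with $d\in\mathrm{D}_2$ and $\eta\in\mathfrak{S}_3\cap\mathcal{J}$, after which $(\mathcal{R}_3)$ and $(\mathcal{R}_4)$ yield $\sigma_2\circ\phi\circ\sigma_2=d^{-1}\circ\eta$. The only differences are cosmetic: the paper gets the base-point condition by citing Lemma~\ref{lem:tecnik1} where you redo the degree count via the net of conics, and your detour through $\sigma_2\circ\sigma_2=\mathrm{id}$ (a nice observation, since it shows $(\mathcal{R}_2)$ already follows from $(\mathcal{R}_1)$ and $(\mathcal{R}_4)$ with $d=\mathrm{id}$) is avoidable, as $(\mathcal{R}_3)$ lets one slide $\eta$ across the right-hand $\sigma_2$ directly.
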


\begin{proof}
By Lemma \ref{lem:tecnik1} to say that $\sigma_2\circ\phi\circ\sigma_2$ 
is linear means that 
\[
\mathrm{Base}(\sigma_2\circ\phi)=\mathrm{Base}(\sigma_2)=\big\{(1:0:0),\,(0:1:0),\,(0:0:1)\big\}.
\]
Since $\phi$ belongs to $\mathcal{J}$ it fixes the point $(1:0:0)$,
and so permutes $(0:1:0)$ and $(0:0:1)$. As a result there exist
$\varphi$ in $\mathfrak{S}_3\cap\mathcal{J}$ and $d$ in 
$\mathrm{D}_2$ such that $\phi=d\circ\varphi$. Hence
\[
\sigma_2\circ\phi\circ\sigma_2\stackrel{(1)}{=}\sigma_2\circ d\circ\varphi\circ\sigma_2\stackrel{(3),\,(4)}{=}d^{-1}\circ\varphi.
\]
\end{proof}

\begin{lem}[\cite{UrechZimmermann}]\label{lem:tecnik4}
Let $\phi$ be an element of $\mathrm{PGL}(3,\mathbb{C})\cap\mathcal{J}$.
Suppose that no three of the base-points of $\sigma_2$ and 
$\sigma_2\circ\phi$ are collinear. 

Then there exist $\varphi$, $\psi$ in 
$\mathrm{PGL}(3,\mathbb{C})\cap\mathcal{J}$ such that 
$\sigma_2\circ\phi\circ\sigma_2=\varphi\circ\sigma_2\circ\psi$. 
Furthermore this expression is generated by relations
$(\mathcal{R}_1)$, $(\mathcal{R}_3)$, $(\mathcal{R}_4)$ and
$(\mathcal{R}_5)$.
\end{lem}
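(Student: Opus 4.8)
The plan is to separate the statement into a \emph{geometric} half, exhibiting the birational identity $\sigma_2\circ\phi\circ\sigma_2=\varphi\circ\sigma_2\circ\psi$ with $\varphi,\psi\in\mathrm{PGL}(3,\mathbb{C})\cap\mathcal{J}$, and an \emph{algebraic} half, checking that this identity, read as a word in the group $\mathrm{G}=\langle\sigma_2,\,\mathrm{PGL}(3,\mathbb{C})\rangle$ of Theorem \ref{thm:UrechZimmermann}, is a consequence of $(\mathcal{R}_1)$, $(\mathcal{R}_3)$, $(\mathcal{R}_4)$, $(\mathcal{R}_5)$.

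First I would pin down the base-point geometry of $\psi_0:=\sigma_2\circ\phi$. Since $\phi$ is linear and fixes $p_1=(1:0:0)$, the map $\psi_0$ is a quadratic Jonqui\`eres map whose base-points are $p_1$, $\phi^{-1}(p_2)$, $\phi^{-1}(p_3)$, each of multiplicity $1$. Applying Lemma \ref{lem:tecnik1}, equivalently Remark \ref{rem:tecnik2}, to the composition $\psi_0\circ\sigma_2$ with the base-points $q_1,q_2\in\{p_2,p_3\}$ of $\sigma_2$, I can read off the degree and the multiplicities of $\sigma_2\circ\phi\circ\sigma_2$ directly from the incidence pattern of the points $p_1,p_2,p_3,\phi^{-1}(p_2),\phi^{-1}(p_3)$. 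Under the stated hypotheses the computation yields that $\sigma_2\circ\phi\circ\sigma_2$ is again a quadratic map whose three base-points are \emph{proper}, \emph{distinct}, contain $p_1=(1:0:0)$, and are not collinear. Since any quadratic map with three proper non-collinear base-points is of the form $\ell_1\circ\sigma_2\circ\ell_2$ for linear $\ell_1,\ell_2$, and since the map lies in $\mathcal{J}$ with base-point $(1:0:0)$, one can normalise the $\ell_i$ to fix $(1:0:0)$, i.e. to lie in $\mathrm{PGL}(3,\mathbb{C})\cap\mathcal{J}$; this produces the required $\varphi$ and $\psi$.

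The heart of the argument is the second half: showing the relation is \emph{generated} by $(\mathcal{R}_1),(\mathcal{R}_3),(\mathcal{R}_4),(\mathcal{R}_5)$. The relation $(\mathcal{R}_5)$, namely $(\sigma_2\circ h)^3=\mathrm{id}$ with $h\colon(z_0:z_1:z_2)\mapsto(z_2-z_0:z_2-z_1:z_2)$, is exactly the special instance $\sigma_2\circ h\circ\sigma_2=h\circ\sigma_2\circ h$ of the identity we want, after using $(\mathcal{R}_2)$ and the fact that $h$ fixes $(1:0:0)$ and satisfies $h=h^{-1}$, so that $h\in\mathrm{PGL}(3,\mathbb{C})\cap\mathcal{J}$. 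The strategy is therefore to transport the general case to this single instance: I would use the transitivity of $\mathrm{PGL}(3,\mathbb{C})\cap\mathcal{J}$ on configurations of base-points in general position to write $\phi$ as a product $g\circ h\circ g'$ of linear Jonqui\`eres maps, and then push the two outer copies of $\sigma_2$ past the linear factors $g,g'$. This pushing is governed precisely by $(\mathcal{R}_3)$, which expresses that $\sigma_2$ commutes with the coordinate permutations of $\mathfrak{S}_3$, and by $(\mathcal{R}_4)$, which gives $\sigma_2\circ d\circ\sigma_2=d^{-1}$ for diagonal $d\in\mathrm{D}_2$, together with the linear relations $(\mathcal{R}_1)$ used to rewrite products inside $\mathrm{PGL}(3,\mathbb{C})$.

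The main obstacle I anticipate is exactly this reduction to $(\mathcal{R}_5)$: one must show that every $\phi$ meeting the non-collinearity hypothesis can be brought, using only the moves permitted by $(\mathcal{R}_1),(\mathcal{R}_3),(\mathcal{R}_4)$, into the precise normal form in which the identity degenerates to the bare relation $(\mathcal{R}_5)$. Concretely this is a careful bookkeeping argument: factoring a linear Jonqui\`eres map through the subgroup generated by $\mathfrak{S}_3\cap\mathcal{J}$, by $\mathrm{D}_2$, and by the ``generic'' direction realised by $h$, and verifying at each conjugation step that the intermediate quadratic maps retain proper, non-collinear base-points, so that no unexpected degree drop occurs, which would instead land in the situation already handled by Lemma \ref{lem:tecnik3}. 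The degree and multiplicity computations of the first half, re-used as invariants of the rewriting, are what keep this reduction under control.
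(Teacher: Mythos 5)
Your plan is, in outline, the same as the paper's proof: normalise $\phi$ by coordinate permutations, factor it through the distinguished involution $h$ of $(\mathcal{R}_5)$ with diagonal corrections, and push the two copies of $\sigma_2$ past the permutation and diagonal factors using $(\mathcal{R}_3)$ and $(\mathcal{R}_4)$. But the step you postpone as ``the main obstacle'' is in fact the whole content of the paper's argument, and it is not the iterative rewriting with degree control at each step that you anticipate: it is a one-shot matrix computation. Since $\sigma_2$ and $\sigma_2\circ\phi$ have exactly two common base-points, one chooses $t_1$, $t_2$ in $\mathfrak{S}_3\cap\mathcal{J}$ so that $t_1\circ\phi\circ t_2\colon(z_0:z_1:z_2)\mapsto(a_1z_0+a_2z_2:b_1z_1+b_2z_2:cz_2)$, and this triangular map can be written $d_1\circ h\circ d_2$ with $d_1$, $d_2$ in $\mathrm{D}_2$ (here $h$ is the map of $(\mathcal{R}_5)$, denoted $\zeta$ in the paper's proof) \emph{if and only if} $a_2b_2\neq 0$ --- which is exactly what the non-collinearity hypothesis amounts to in these coordinates. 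You never make this identification explicit, yet it is the unique point where the hypothesis enters. Once it is made, $(\mathcal{R}_1)$, $(\mathcal{R}_3)$, $(\mathcal{R}_4)$ and $(\mathcal{R}_5)$ (the latter in the form $\sigma_2\circ h\circ\sigma_2=h\circ\sigma_2\circ h$) yield the identity with the explicit maps $\varphi=t_1^{-1}\circ d_1^{-1}\circ h$ and $\psi=h\circ d_2^{-1}\circ t_2^{-1}$; your separate ``geometric half'' is then redundant, since the relations hold in $\mathrm{Bir}(\mathbb{P}^2_\mathbb{C})$ and the map-level identity follows from the word-level one.

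There is also a step in your geometric half that fails as stated: the non-collinearity hypothesis alone does \emph{not} force $\deg(\sigma_2\circ\phi\circ\sigma_2)=2$. By Lemma \ref{lem:tecnik1} this degree equals $3-m_{(0:1:0)}(\sigma_2\circ\phi)-m_{(0:0:1)}(\sigma_2\circ\phi)$, so degree $2$ requires $\sigma_2$ and $\sigma_2\circ\phi$ to share a \emph{second} base-point besides $(1:0:0)$. For a generic $\phi\in\mathrm{PGL}(3,\mathbb{C})\cap\mathcal{J}$ fixing only $(1:0:0)$, the five base-points $(1:0:0)$, $(0:1:0)$, $(0:0:1)$, $\phi^{-1}((0:1:0))$, $\phi^{-1}((0:0:1))$ are in general position --- no three collinear --- and yet $\deg(\sigma_2\circ\phi\circ\sigma_2)=3$, so no factorisation $\varphi\circ\sigma_2\circ\psi$ can exist. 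The paper's proof accordingly \emph{assumes} the degree-$2$ condition (equivalently the second common base-point), which is automatic in the situation where the lemma is applied in the proof of Proposition \ref{pro:UrechZimmermann}; with that assumption in place the non-collinearity reduces to $a_2b_2\neq 0$ as above, and the argument closes. So your proposal has the right skeleton, but the two loads it must bear --- deriving degree $2$ and realising the factorisation through $h$ --- are respectively unobtainable from the literal hypothesis and left unproved.
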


\begin{proof}
The assumption $\deg(\sigma_2\circ\phi\circ\sigma_2)=2$ implies
that $\sigma_2$ and $\sigma_2\circ\phi$ have exactly two common
base-points (Lemma \ref{lem:tecnik1}), among them $(1:0:0)$ because
$\sigma_2\circ\phi$ and $\sigma_2$ belong to $\mathcal{J}$.
One can assume up to coordinate permutation that the second 
point is $(0:1:0)$. More precisely there exist $t_1$, $t_2$ in 
$\mathfrak{S}_3\cap\mathcal{J}$ such that $t_1\circ t_2$ fixes
$(1:0:0)$ and $(0:1:0)$. As a result
\[
t_1\circ\phi\circ t_2\colon(z_0:z_1:z_2)\mapsto(a_1z_0+a_2z_2:b_1z_1+b_2z_2:cz_2)
\]
for some complex numbers $a_1$, $a_2$, $b_1$, $b_2$, $c$. Since no
three of the base-points of $\sigma_2$ and $\sigma_2\circ\phi$
are collinear, $a_2b_2$ is non-zero. There thus exist $d_1$, 
$d_2$ in $\mathrm{D}_2$ such that
\[
t_1\circ\phi\circ t_2=d_1\circ\zeta\circ d_2.
\]
We get
\begin{eqnarray*}
 \sigma_2\circ\phi\circ\sigma_2&=&\sigma_2\circ t_1^{-1}\circ t_1\circ\phi\circ t_2\circ t_2^{-1}\circ\sigma_2  \\
 &\stackrel{(1)}{=}& \sigma_2\circ t_1^{-1}\circ d_1\circ\zeta\circ d_2\circ t_2^{-1}\circ\sigma_2 \\
 &\stackrel{(3),(4)}{=}& t_1^{-1}\circ d_1^{-1}\circ\sigma_2\circ\zeta\circ\sigma_2\circ d_2^{-1}\circ t_2^{-1}\\
 &\stackrel{(5)}{=}& t_1^{-1}\circ d_1^{-1}\circ\zeta\circ\sigma_2\circ\zeta\circ d_2^{-1}\circ t_2^{-1} \\
\end{eqnarray*}
Finally $\varphi=t_1^{-1}\circ d_1^{-1}\circ\zeta$ and $\psi=\zeta\circ d_2^{-1}\circ t_2^{-1}$ suit.
\end{proof}

\begin{lem}[\cite{UrechZimmermann}]\label{lem:tecnik5}
Let $\varphi_1$, $\varphi_2$, $\ldots$, $\varphi_n$ be elements 
of $\mathrm{PGL}(3,\mathbb{C})\cap\mathcal{J}$. Then there exist
$\psi_1$, $\psi_2$, $\ldots$, $\psi_n$ in 
$\mathrm{PGL}(3,\mathbb{C})\cap\mathcal{J}$ and $\phi$ in 
$\mathcal{J}$ such that 
\[
\phi\circ\varphi_n\circ\sigma_2\circ\varphi_{n-1}\circ\sigma_2\circ\ldots\circ\sigma_2\circ\varphi_1\circ\phi^{-1}=\psi_n\circ\sigma_2\circ\psi_{n-1}\circ\ldots\circ\sigma_2\circ\psi_1,
\]
and 
\begin{itemize}
\item[$\diamond$] the above relation is generated by relations 
$(\mathcal{R}_1)$-$(\mathcal{R}_5)$, 

\item[$\diamond$] $\deg(\sigma_2\circ\psi_i\circ\sigma_2\circ\ldots\circ\sigma_2\circ\psi_1)=\deg(\sigma_2\circ\phi_i\circ\sigma_2\circ\ldots\circ\sigma_2\circ\phi_1)$ for all $1\leq i\leq n$,

\item[$\diamond$] $(\sigma_2\circ \psi_i\circ\sigma_2\circ \psi_{i-1}\circ\ldots\circ\sigma_2\circ \psi_1)^{-1}$ does not have any infinitely near 
base-points for all $1\leq i\leq n$.
\end{itemize}
\end{lem}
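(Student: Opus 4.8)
The plan is to argue by induction on $n$, reducing at each stage to the dispersal of the infinitely near base-points of a single partial product. Write $R_i=\sigma_2\circ\varphi_i\circ\sigma_2\circ\cdots\circ\sigma_2\circ\varphi_1$ and, for the output word, $Q_i=\sigma_2\circ\psi_i\circ\sigma_2\circ\cdots\circ\sigma_2\circ\psi_1$, so that the three requested conditions read: $Q_n=\sigma_2\circ\phi\circ\varphi_n\circ\sigma_2\circ\cdots\circ\sigma_2\circ\varphi_1\circ\phi^{-1}$ is obtained from $R_n$ by a chain of moves each generated by $(\mathcal{R}_1)$-$(\mathcal{R}_5)$; $\deg Q_i=\deg R_i$ for all $i$; and each $Q_i^{-1}$ has only proper base-points. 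First I would treat $n=1$: here one may take $\psi_1=\varphi_1$, so that $Q_1^{-1}=\psi_1^{-1}\circ\sigma_2$, and since $\psi_1^{-1}$ is linear we get $\mathrm{Base}(Q_1^{-1})=\mathrm{Base}(\sigma_2)=\{(1:0:0),(0:1:0),(0:0:1)\}$, which consists of proper points, so nothing has to be done.

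For the inductive step I would first apply the inductive hypothesis to $\varphi_1,\ldots,\varphi_{n-1}$, replacing the length $n-1$ subword by one whose partial products $Q_1,\ldots,Q_{n-1}$ already have only proper base-points, and then concentrate on the effect of adjoining the last syllable. The new partial product is $Q_n=\sigma_2\circ\psi_n\circ Q_{n-1}$, and the base-points of $Q_n$ and of its inverse are computed from those of $Q_{n-1}^{-1}$ and $\sigma_2$ through the multiplicity formulas of Lemma \ref{lem:tecnik1} and Remark \ref{rem:tecnik2}. An infinitely near base-point can enter $Q_n^{-1}$ only through a collision of a base-point of $\sigma_2$ with one already present, or through two base-points of $\sigma_2\circ\psi_n$ becoming infinitely near over the pencil point $(1:0:0)$. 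I would remove such collisions by conjugating by a suitable element $\phi$ of $\mathcal{J}$, chosen to be a local isomorphism at every proper base-point already obtained while separating the colliding ones; this moves all base-points of $Q_n^{-1}$ into proper position. The degree bookkeeping is kept under control by only using those moves for which the formula of Lemma \ref{lem:tecnik1} leaves every partial degree unchanged, namely the case $m_{q_1}(\Lambda)+m_{q_2}(\Lambda)=1$ of Remark \ref{rem:tecnik2}, which is exactly the equality demanded by the second bullet.

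The delicate point is that this conjugation must be realised inside the abstract group presented by $(\mathcal{R}_1)$-$(\mathcal{R}_5)$, so each rewriting step must be an instance of those relations rather than a mere equality of birational maps. For the monomial part of $\phi$ I would use $(\mathcal{R}_3)$ and $(\mathcal{R}_4)$, which let one push $\sigma_2$ past permutations and diagonal automorphisms (this is what Lemma \ref{lem:tecnik3} already exploits), and for the genuinely quadratic dispersal I would invoke Lemma \ref{lem:tecnik4}, whose conclusion $\sigma_2\circ\phi\circ\sigma_2=\varphi\circ\sigma_2\circ\psi$ is established from $(\mathcal{R}_1),(\mathcal{R}_3),(\mathcal{R}_4),(\mathcal{R}_5)$ under precisely the no-three-collinear hypothesis that the dispersal has just arranged. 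The hard part will therefore be the simultaneous bookkeeping: I must produce a single $\phi\in\mathcal{J}$ that disperses every infinitely near base-point of every partial product, keeps all the $\psi_i$ inside $\mathrm{PGL}(3,\mathbb{C})\cap\mathcal{J}$, preserves all partial degrees, and still decomposes into relation-moves, while checking that curing a collision at level $i$ does not create a new one at a lower level. I expect this coherence across all levels, rather than any single geometric computation, to be the main obstacle, and I would control it by performing the dispersal from the innermost syllable outward, so that each partial product, once made proper, is thereafter acted on only by maps that are local isomorphisms at its base-points and hence remain proper.
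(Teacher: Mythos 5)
The paper itself states this lemma with a citation to \cite{UrechZimmermann} and reproduces no proof, so there is nothing in the text to compare against line by line; judged on its own merits, your sketch has a genuine gap, and it is the one you yourself flag and then defer. Your induction is structurally unstable: to cure the infinitely near base-points of $Q_n^{-1}$ you conjugate the \emph{whole} word by a fresh $\phi\in\mathcal{J}$, and the result must then be rewritten into the normal form $\psi_n'\circ\sigma_2\circ\ldots\circ\sigma_2\circ\psi_1'$ with linear letters. After that rewriting, the new partial products $Q_i'$ for $i<n$ are not $\phi\circ Q_i\circ\phi^{-1}$, nor the image of $Q_i$ under any single map; they are whatever the rewriting produces, so the inductive hypothesis (properness of the $Q_i^{-1}$ and the degree equalities at levels $<n$) is not known to survive the level-$n$ fix. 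Your safeguard --- choose $\phi$ to be a local isomorphism at every base-point already obtained --- is incompatible with the dispersal mechanism itself: a local isomorphism carries infinitely near configurations to infinitely near configurations, so to separate a point infinitely near $q$ the conjugator must have a base-point at $q$ or at its relevant orbit images, i.e.\ must fail to be a local isomorphism exactly where properness has possibly already been arranged; and even where $\phi$ is a local isomorphism, that property says nothing about the partial products of the rewritten word. Working "innermost outward" does not repair this, because every dispersal is a global conjugation.

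Two further steps are asserted rather than proved. First, the bullet that the relation is generated by $(\mathcal{R}_1)$-$(\mathcal{R}_5)$: the tools you invoke, Lemmas \ref{lem:tecnik3} and \ref{lem:tecnik4}, only rewrite words $\sigma_2\circ\phi\circ\sigma_2$ with $\phi$ \emph{linear}; they give no procedure for pushing a nonlinear Jonqui\`eres conjugator through the word. You would need to expand $\phi$ itself as a word in $\sigma_2$ and linear maps and check that each shuffle is an instance of the relations; merely observing that the two sides agree as birational maps is not available here, since "every identity of maps follows from $(\mathcal{R}_1)$-$(\mathcal{R}_5)$" is precisely what Proposition \ref{pro:UrechZimmermann} is trying to establish, so without explicit moves the argument is circular. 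Second, the degree bookkeeping: declaring that you only use the degree-neutral case $m_{q_1}(\Lambda_\psi)+m_{q_2}(\Lambda_\psi)=1$ of Remark \ref{rem:tecnik2} does not show that such moves exist at every collision and suffice to disperse, and your classification of how infinitely near points enter $Q_n^{-1}$ is unjustified --- in fact they arise whenever $\psi_n$ sends a base-point of $Q_{n-1}^{-1}$ onto one of the three lines contracted by $\sigma_2$, not only onto its base-points or over $(1:0:0)$, and since $\psi_n$ can only be modified by sliding diagonal and permutation factors across $\sigma_2$ via $(\mathcal{R}_3)$, $(\mathcal{R}_4)$, avoiding this is a real constraint your sketch never addresses. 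Since you explicitly postpone "the simultaneous bookkeeping" that you correctly identify as the main obstacle, what you have is a plausible strategy outline with the right toolbox (Lemma \ref{lem:tecnik1}, Remark \ref{rem:tecnik2}, Lemmas \ref{lem:tecnik3}-\ref{lem:tecnik4}) but not a proof.
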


\begin{proof}[Idea of the Proof of Proposition \ref{pro:UrechZimmermann}]
Let us introduce similar notations as in the proof of Theorem 
\ref{thm:Blancrelations}. Let $\Lambda_0$ be the complete linear system
of lines in $\mathbb{P}^2_\mathbb{C}$ and for $1\leq i\leq j$ let 
$\Lambda_i$ be the following linear system
\[
\Lambda_i:=\sigma_2\circ\varphi_{i-1}\circ\sigma_2\circ\ldots\circ\sigma_2\circ\varphi_1(\Lambda_0).
\]
Set $\delta_i:=\deg\Lambda_i$, 
$D_i:=\max\big\{\delta_i\,\vert\,i=1,\,2,\,\ldots,\,j\big\}$, 
$n:=\max\big\{i\,\vert\,\delta_i=D\big\}$. Consider the lexicographic order. Let us 
prove the result by induction on pairs of positive integers $(D,n)$.

If $D=1$, then $j=1$, and there is nothing to prove.

Assume now that $D>1$. We can suppose that for $1\leq i\leq j$ the map
\[
\big(\phi_i\circ\sigma_2\circ\phi_{i-1}\circ\sigma_2\circ\ldots\circ\sigma_2\circ\phi_1\big)^{-1}
\]
does not have any infinitely near base-points (Lemma \ref{lem:tecnik5}). 
Furthermore we can do this without increasing the pair $(D,n)$. Hence any 
$\Lambda_i$, $1\leq i\leq j$, does not have any infinitely near base-points.

The maps $\phi_i$ are Jonqui\`eres ones, so fix $(1:0:0)$. The maps
$\sigma_2\circ\phi_i$ and $\sigma_2$ always have $(1:0:0)$ as common 
base-points. In particular $\deg(\sigma_2\circ\phi_i\circ\sigma_2)\leq 3$
for any $1\leq i\leq j$ (Lemma \ref{lem:tecnik1}). Let us now deal with 
the three distinct cases: $\deg(\sigma_2\circ\phi_n\circ\sigma_2)=1$, 
$\deg(\sigma_2\circ\phi_n\circ\sigma_2)=2$, 
$\deg(\sigma_2\circ\phi_n\circ\sigma_2)=3$.
\begin{itemize}
\item[$\diamond$] First case: $\deg(\sigma_2\circ\phi_n\circ\sigma_2)=1$.
According to Lemma \ref{lem:tecnik3}  the word $\sigma_2\circ\phi_n\circ\sigma_2$
can be replaced by the linear map $\phi'_n=\sigma_2\circ\phi_n\circ\sigma_2$
using relations $(\mathcal{R}_1)$, $(\mathcal{R}_3)$ and $(\mathcal{R}_4)$. 
We thus get a new pair $(D',n')$ with $D'\leq D$; moreover if $D=D'$, then 
$n'<n$.

\item[$\diamond$] Second case: $\deg(\sigma_2\circ\phi_n\circ\sigma_2)=2$.
The maps $\sigma_2$ and $\phi_n\circ\sigma_2$ have exactly two common 
base-points, one of them being $(1:0:0)$. One can assume that the 
other one is $(0:1:0)$. More precisely there are two coordinate 
permutations $t_1$ and $t_2$ in $\mathfrak{S}_3\cap\mathcal{J}$ 
such that $t_1\circ\phi_n\circ t_2$ fixes $(1:0:0)$ and $(0:1:0)$, that is
\[
t_1\circ\phi_n\circ t_2\colon(z_0:z_1:z_2)\mapsto(a_1z_0+a_2z_2:b_1z_1+b_2z_2:cz_2)
\]
for some $a_1$, $a_2$, $b_1$, $b_2$, $c$ in $\mathbb{C}$. Using 
$(\mathcal{R}_1)$ and $(\mathcal{R}_3)$ we get
\begin{small}
\begin{eqnarray*}
& &\phi_j\circ\sigma_2\circ\ldots\circ\phi_{n+1}\circ\sigma_2\circ t_1^{-1}\circ t_1\circ\phi_n\circ t_2\circ t_2^{-1}\circ\sigma_2\circ\ldots\circ\phi_1\\
& & \hspace{2mm}=\phi_j\circ\sigma_2\circ\ldots\circ\sigma_2\circ(\phi_{n+1}\circ t_1^{-1})\circ\sigma_2\circ(t_1\circ\sigma_2\circ t_2)\circ\sigma_2\circ(t_2^{-1}\circ\phi_{n-1})\circ\sigma_2\circ\ldots\circ\phi_1
\end{eqnarray*}
\end{small}
The pair $(D,n)$ is unchanged. Let us thus assume that $t_1=t_2=\mathrm{id}$
and 
\[
\phi_n\colon(z_0:z_1:z_2)\mapsto(a_1z_0+a_2z_2:b_1z_1+b_2z_2:cz_2).
\]
Recall that by assumption for any $1\leq i\leq n$ the maps 
$\phi_i\circ\sigma_2\circ\phi_{i-1}\circ\sigma_2\circ\ldots\circ\sigma_2\circ\phi_1$
have no infinitely other base-points. As a result $\Lambda_n$ has no 
infinitely near base-points. 

\begin{claim}[\cite{UrechZimmermann}]
The product $a_2b_2$ is non-zero.
\end{claim}

\begin{proof}
Assume by contradiction that $a_2b_2=0$. Then $q:=\phi_n^{-1}(0:0:1)$ is 
a base-point of $\sigma_2\circ\phi_n$ that lies on a line contracted 
by $(\sigma_2\circ\phi_{n-1})^{-1}$. By Remark \ref{rem:tecnik2} one has
\[
D-1=\delta_{n+1}=D+1-m_{(0:1:0)}(\Lambda_n)-m_q(\Lambda_n).
\]
In particular $m_q(\Lambda_n)=2-m_{(0:1:0)}(\Lambda_n)=1$. As 
$q\not\in\mathrm{Base}(\sigma_2)$ one has: 
$q\not\in\mathrm{Base}\big((\sigma_2\circ\phi_{n-1})^{-1}\big)$.
Its proper image by $(\sigma_2\circ\phi_{n-1})^{-1}$ is thus a 
base-point of $\Lambda_{n-1}$. But $a_2b_2=0$; as a result $q$
is an infinitely near point: contradiction.
\end{proof}

If $a_2b_2$ is non-zero, then no three of the base-points of $\sigma_2$ and 
$\sigma_2\circ\phi_n$ are collinear. According to Lemma \ref{lem:tecnik5} there 
exist $\psi$ and $\varphi$ in $\mathrm{PGL}(3,\mathbb{C})$ such that the 
word $\sigma_2\circ\phi_n\circ\sigma_2$ can be replaced by the word 
$\psi\circ\sigma_2\circ\varphi$ using $(\mathcal{R}_1)$, $(\mathcal{R}_3)$, 
$(\mathcal{R}_4)$ and $(\mathcal{R}_5)$. We thus get a new pair $(D',n')$
where $D'\leq D$; moreover if $D=D'$, then $n'<n$.

\item[$\diamond$] Third case: $\deg(\sigma_2\circ\phi_n\circ\sigma_2)=3$. See
\cite{UrechZimmermann}.
\end{itemize}
\end{proof}

Let us give an application of this new presentation (\cite{UrechZimmermann}).
In \cite{Gizatullin:rep} Gizatullin has considered the following question:
can a given group homomorphism 
$\varphi\colon\mathrm{PGL}(3,\mathbb{C})\to\mathrm{PGL}(n+1,\mathbb{C})$ be 
extended to a group homomorphism 
$\Phi\colon\mathrm{Bir}(\mathbb{P}^2_\mathbb{C})\to\mathrm{Bir}(\mathbb{P}^n_\mathbb{C})$ ?
He answers yes when $\varphi$ is the projective representation induced by 
the regular action of $\mathrm{PGL}(3,\mathbb{C})$ on the space of plane
conics, plane cubics, or plane quartics. To construct these homomorphisms
Gizatullin uses the following construction. Denote by 
$\mathrm{Sym}(n,\mathbb{C})$ the $\mathbb{C}$-algebra of symmetric $n\times n$
matrices. Define $\mathbb{S}(2,n)$ as the quotient 
$\big(\mathrm{Sym}(n,\mathbb{C})\big)^3\big/\mathrm{GL}(n,\mathbb{C})$ 
where the regular action of $\mathrm{GL}(n,\mathbb{C})$ is given by 
\[
C\cdot(A_0,A_1,A_2)=\big(CA_0{}^{t}\!\,C,\,CA_1{}^{t}\!\,C,\,CA_2{}^{t}\!\,C\big).
\]

\begin{lem}[\cite{UrechZimmermann}]
The variety $\mathbb{S}(2,n)$ is a rational variety, and 
\[
\dim\mathbb{S}(2,n)=\frac{(n+1)(n+2)}{2}-1.
\]
\end{lem}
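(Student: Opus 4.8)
The plan is to treat the two assertions separately: the dimension count will follow from a direct orbit–stabiliser argument, while rationality will be obtained by a slice reduction to a quotient of an affine space by a signed permutation group.

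First I would record the basic geometry of the congruence action. Identifying $\mathrm{Sym}(n,\mathbb{C})$ with the space of quadratic forms in $n$ variables, the group $\mathrm{GL}(n,\mathbb{C})$ acts on $\mathrm{Sym}(n,\mathbb{C})^3$ by $C\cdot(A_0,A_1,A_2)=(CA_0{}^tC,CA_1{}^tC,CA_2{}^tC)$, and the scalars $\pm I$ act trivially. For the dimension I would show that the stabiliser of a \emph{generic} triple is exactly $\{\pm I\}$: a generic $A_0$ is nondegenerate, so a stabilising $C$ lies in the orthogonal group $\mathrm{O}(A_0)$; the further condition on $A_1$ forces $C$ to commute with $A_0^{-1}A_1$, which generically has distinct eigenvalues; and the condition on a generic $A_2$ kills the remaining signs. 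Hence the generic orbit has dimension $n^2$, and
\[
\dim\mathbb{S}(2,n)=3\cdot\tfrac{n(n+1)}{2}-n^2=\tfrac{n(n+3)}{2}=\tfrac{(n+1)(n+2)}{2}-1.
\]
One can also see the dimension from the $\mathrm{GL}(n,\mathbb{C})$-invariant determinantal map $[(A_0,A_1,A_2)]\mapsto[\det(uA_0+vA_1+wA_2)]$ into the $\mathbb{P}^N$ of plane curves of degree $n$, with $N=\frac{(n+1)(n+2)}{2}-1$; this map is dominant and generically finite, but I would \emph{not} use it for rationality, since its degree equals the number of symmetric determinantal representations of a generic curve and exceeds $1$ as soon as $n\geq 3$.

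For rationality the key is a slice reduction. Over $\mathbb{C}$ a generic pencil $(A_0,A_1)$ with $A_0$ nondegenerate can be simultaneously diagonalised: there is $C\in\mathrm{GL}(n,\mathbb{C})$ with $CA_0{}^tC=I$ and $CA_1{}^tC=\mathrm{diag}(\lambda_1,\dots,\lambda_n)$, the $\lambda_i$ being the generically distinct roots of $\det(A_1-\lambda A_0)$. Taking for slice $S$ the triples $(I,\mathrm{diag}(\lambda_i),B)$ with $B\in\mathrm{Sym}(n,\mathbb{C})$, the setwise stabiliser of $S$ in $\mathrm{GL}(n,\mathbb{C})$ consists of the monomial orthogonal matrices, that is the signed permutation group $W=(\mathbb{Z}/2\mathbb{Z})^n\rtimes\mathfrak{S}_n$, and each generic orbit meets $S$ in a single $W$-orbit (simultaneous diagonalisation is unique up to reordering the eigenvectors and rescaling them by $\pm 1$). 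The slice lemma then yields a birational identification
\[
\mathbb{S}(2,n)\;\dashrightarrow\;\big(\mathbb{A}^n_\lambda\times\mathrm{Sym}(n,\mathbb{C})_B\big)/W,
\]
where $\mathfrak{S}_n$ permutes the indices (acting on $\lambda$ by permutation and on $B$ by simultaneous conjugation) and $(\mathbb{Z}/2\mathbb{Z})^n$ fixes $\lambda$ and the diagonal of $B$ while sending $B_{ij}\mapsto\varepsilon_i\varepsilon_jB_{ij}$. Thus $W$ acts by monomial matrices and the problem becomes a concrete instance of Noether's problem.

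Finally I would prove that this $W$-quotient is rational, quotienting in two stages. The normal subgroup $(\mathbb{Z}/2\mathbb{Z})^n$ acts diagonally, so by Fischer's theorem the quotient by it is rational; concretely it is generated by the $\lambda_i$, the $B_{ii}$, the squares $B_{ij}^2$ and the triangle ratios $B_{ij}B_{jk}/B_{ik}$. It then remains to descend by the residual $\mathfrak{S}_n$. The vertex data $(\lambda_i,B_{ii})$ descend to the symmetric product $\mathrm{Sym}^n(\mathbb{A}^2)$ via the symmetric functions $e_k(\lambda)$ and $\sum_i\lambda_i^kB_{ii}$, which is rational; the distinctness of the $\lambda_i$ then lets me use them as canonical labels and recover the edge data from its $\lambda$-moments $\sum_{i\neq j}\lambda_i^k\lambda_j^\ell B_{ij}^2$ together with the corresponding triangle sums. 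The main obstacle is precisely this last step: organising the Fischer coordinates of the off-diagonal entries into an explicit, $\mathfrak{S}_n$-equivariant transcendence basis, so that the non-abelian $\mathfrak{S}_n$-descent produces a genuinely (and not merely stably) rational field. This is where the genericity of the eigenvalues $\lambda_i$ does the essential work, converting the symmetric-group descent into a manipulation of symmetric functions.
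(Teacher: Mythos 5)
Your proposal is correct, and essentially complete, up to its last step. The dimension half is fine: the stabiliser computation (a stabilising $C$ lies in $\mathrm{O}(A_0)$, commutes with the generically regular semisimple $A_0^{-1}A_1$, hence is diagonal with entries $\pm1$ in the common eigenbasis, and a generic $A_2$ reduces this to $\{\pm I\}$) gives $\dim\mathbb{S}(2,n)=3\cdot\frac{n(n+1)}{2}-n^2=\frac{n(n+3)}{2}=\frac{(n+1)(n+2)}{2}-1$, and your aside that the determinantal map to plane curves is dominant but generically finite of degree $>1$ for $n\geq 3$ (counting ineffective theta characteristics) is a correct reason not to use it for rationality. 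The slice reduction to $\big(\mathbb{A}^n_\lambda\times\mathrm{Sym}(n,\mathbb{C})\big)/W$ with $W=(\mathbb{Z}/2\mathbb{Z})^n\rtimes\mathfrak{S}_n$ is also sound, as is your description of the $(\mathbb{Z}/2\mathbb{Z})^n$-invariants: the squares and the triangle monomials do generate, since triangles generate the cycle space of the complete graph over $\mathbb{F}_2$. Note that the survey states this lemma without proof (it quotes \cite{UrechZimmermann}), so your argument must be judged on its own terms; your normal form $(I,\mathrm{diag}(\lambda_i),B)$ is the natural reduction and matches the route of the cited source.

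The genuine gap is the step you yourself call ``the main obstacle'': the $\mathfrak{S}_n$-descent is not a verification to be organised but the whole difficulty, and the moment functions you propose cannot close it as written. By Rosenlicht's theorem a finite family of invariants separating generic orbits generates the invariant field, so to conclude pure transcendence you would need a separating family of cardinality exactly $\operatorname{trdeg}=\frac{n(n+3)}{2}$. The diagonal moments $\sum_i\lambda_i^kB_{ii}$ together with the square moments $\sum_{i\neq j}\lambda_i^k\lambda_j^\ell B_{ij}^2$ do \emph{not} separate: they determine $B$ only up to independent sign changes of all $\binom{n}{2}$ off-diagonal entries, whereas $W$ supplies only the $n-1$ effective vertex signs, so these functions generate a subfield of index $2^{\binom{n-1}{2}}$ in the invariant field. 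Restoring separation forces you to adjoin triangle moments $\sum\lambda_i^a\lambda_j^b\lambda_k^cB_{ij}B_{jk}B_{ki}$, but each of these is a square root of a rational expression in the previous invariants; any separating family containing them therefore exceeds the transcendence degree and is algebraically dependent, and separation then yields only generation of the field, not its rationality. Equivalently, after the Fischer quotient the residual $\mathfrak{S}_n$ acts monomially rather than linearly, so neither Noether's argument nor the no-name lemma applies, and the problem becomes a multiplicative-invariant question for the ``even vertex degree'' lattice $L\subset\mathbb{Z}^{\binom{n}{2}}$. To see what a genuine fix looks like: for $n=3$, setting $u=B_{12}B_{13}B_{23}$ and $r_i=B_{jk}/(B_{ij}B_{ik})$ for $\{i,j,k\}=\{1,2,3\}$, one checks that $\mathbb{C}(\lambda,B_{ij})^{(\mathbb{Z}/2\mathbb{Z})^3}=\mathbb{C}(\lambda,r_1,r_2,r_3)$ with $\mathfrak{S}_3$ permuting the pairs $(\lambda_i,r_i)$, whence rationality by the no-name lemma. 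What your proposal lacks is precisely such an $\mathfrak{S}_n$-equivariant multiplicative change of variables (in effect, a proof that $L$ is a permutation $\mathfrak{S}_n$-lattice, or a substitute for it) valid for every $n$; without it the claimed rationality is not established.
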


\begin{rem}
The variety $\mathbb{S}(2,n)$ has thus the same dimension as the space of 
plane curves of degree $n$.
\end{rem}

An element $A=(A_0,A_1,A_2)$ of $\mathrm{PGL}(3,\mathbb{C})$ induces an 
automorphism on $(\mathrm{Sym}(n,\mathbb{C}))^3$ by
\[
\phi(A_0,A_1,A_2):=\big(\phi_0(A_0,A_1,A_2),\phi_1(A_0,A_1,A_2),\phi_2(A_0,A_1,A_2)\big).
\]
This automorphism commutes with the action of $\mathrm{GL}(n,\mathbb{C})$; 
we thus obtain a regular action of $\mathrm{PGL}(3,\mathbb{C})$ on 
$\mathbb{S}(2,n)$. 

Theorem \ref{thm:UrechZimmermann} allows to give a short proof of the 
following statement:

\begin{pro}[\cite{Gizatullin:rep}]
The regular action of $\mathrm{PGL}(3,\mathbb{C})$ extends to a rational
action of $\mathrm{Bir}(\mathbb{P}^2_\mathbb{C})$.
\end{pro}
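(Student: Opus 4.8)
The plan is to exploit the presentation of the plane Cremona group given in Theorem~\ref{thm:UrechZimmermann}: since $\mathrm{Bir}(\mathbb{P}^2_\mathbb{C})$ is generated by $\sigma_2$ together with $\mathrm{PGL}(3,\mathbb{C})$ subject only to the relations $(\mathcal{R}_1)$--$(\mathcal{R}_5)$, a group homomorphism $\Phi\colon\mathrm{Bir}(\mathbb{P}^2_\mathbb{C})\to\mathrm{Bir}(\mathbb{S}(2,n))$ extending the given regular action $\rho$ of $\mathrm{PGL}(3,\mathbb{C})$ is completely determined by the choice of a birational self map $\Phi(\sigma_2)$ of $\mathbb{S}(2,n)$, and exists as soon as $\Phi(\sigma_2)$ together with the $\rho(g)$ satisfy those five relations. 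The natural candidate is the map $S$ induced on $\mathbb{S}(2,n)=\big(\mathrm{Sym}(n,\mathbb{C})\big)^3/\mathrm{GL}(n,\mathbb{C})$ by componentwise inversion
\[
\widehat{S}\colon(A_0,A_1,A_2)\dashrightarrow(A_0^{-1},A_1^{-1},A_2^{-1}),
\]
which for $n=1$ is exactly $\sigma_2$ written projectively as $(z_0:z_1:z_2)\dashrightarrow(z_0^{-1}:z_1^{-1}:z_2^{-1})$.

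First I would check that $\widehat{S}$ descends to a birational involution of the quotient. The inverse of a symmetric matrix is symmetric, so $\widehat{S}$ is a birational involution of $\big(\mathrm{Sym}(n,\mathbb{C})\big)^3$, defined on the dense open locus of invertible matrices. Under the congruence action $A_i\mapsto CA_i\,{}^{t}C$ one has $A_i^{-1}\mapsto{}^{t}(C^{-1})A_i^{-1}C^{-1}$, that is, $\widehat{S}$ intertwines the congruence action of $C$ with that of $({}^{t}C)^{-1}$; as both belong to $\mathrm{GL}(n,\mathbb{C})$, the map $\widehat{S}$ is $\mathrm{GL}(n,\mathbb{C})$-equivariant on the quotient and therefore defines $S\in\mathrm{Bir}(\mathbb{S}(2,n))$. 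The relations $(\mathcal{R}_1)$--$(\mathcal{R}_4)$ then fall out almost immediately: $(\mathcal{R}_1)$ holds because $\rho$ is already a homomorphism on $\mathrm{PGL}(3,\mathbb{C})$; $(\mathcal{R}_2)$ because inversion is an involution; $(\mathcal{R}_3)$ because componentwise inversion commutes with every permutation of the three slots $(A_0,A_1,A_2)$; and $(\mathcal{R}_4)$ by the elementary check that, for $d=\mathrm{diag}(\lambda_0,\lambda_1,\lambda_2)$ acting by $A_i\mapsto\lambda_iA_i$, applying $d$, then $S$, then $d$, then $S$ scales each $A_i$ by $\lambda_i\lambda_i^{-1}=1$ and inverts twice, returning $(A_0,A_1,A_2)$.

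The hard part will be the relation $(\mathcal{R}_5)$, namely $(S\circ\rho(h))^3=\mathrm{id}$ for $h\colon(z_0:z_1:z_2)\mapsto(z_2-z_0:z_2-z_1:z_2)$, whose matrix incarnation is
\[
T\colon(A_0,A_1,A_2)\dashrightarrow\big((A_2-A_0)^{-1},(A_2-A_1)^{-1},A_2^{-1}\big).
\]
For $n=1$ this is the scalar identity underlying $(\mathcal{R}_5)$, but for $n>1$ the three matrices do not commute, so the scalar computation cannot simply be transcribed. I expect to carry it out by repeatedly applying resolvent-type identities for differences of inverses (the matrix analogue of $a^{-1}-b^{-1}=a^{-1}(b-a)b^{-1}$) in order to express the three entries of $T^2$ and $T^3$, while keeping track of the ambient $\mathrm{GL}(n,\mathbb{C})$-congruence that is allowed to absorb conjugating factors. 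The aim is to show that $T^3(A_0,A_1,A_2)$ is congruent, componentwise and by a single $C\in\mathrm{GL}(n,\mathbb{C})$, to $(A_0,A_1,A_2)$, so that the equality holds in $\mathbb{S}(2,n)$ even though it fails on the nose in $\big(\mathrm{Sym}(n,\mathbb{C})\big)^3$.

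Once $(\mathcal{R}_5)$ is verified, Theorem~\ref{thm:UrechZimmermann} yields the desired group homomorphism $\Phi$ restricting to $\rho$ on $\mathrm{PGL}(3,\mathbb{C})$ and sending $\sigma_2$ to $S$. Since the generators are sent to birational self maps of $\mathbb{S}(2,n)$ and $\Phi$ is a homomorphism, this is precisely an extension of the regular $\mathrm{PGL}(3,\mathbb{C})$-action to a rational action of $\mathrm{Bir}(\mathbb{P}^2_\mathbb{C})$ on $\mathbb{S}(2,n)$, as required.
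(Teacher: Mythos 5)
Your proposal is correct and takes essentially the same route as the paper, which likewise defines the image of $\sigma_2$ on $\mathbb{S}(2,n)$ by $(A_0,A_1,A_2)\dashrightarrow(A_0^{-1},A_1^{-1},A_2^{-1})$ and concludes via the presentation of Theorem~\ref{thm:UrechZimmermann} that it suffices to check $(\mathcal{R}_1)$--$(\mathcal{R}_5)$, which the paper simply asserts hold. The computation you deferred for $(\mathcal{R}_5)$ closes exactly as you predict: with $T\colon(A_0,A_1,A_2)\dashrightarrow\big((A_2-A_0)^{-1},(A_2-A_1)^{-1},A_2^{-1}\big)$, the identity $X^{-1}-Y^{-1}=X^{-1}(Y-X)Y^{-1}$ yields $T^2(A_0,A_1,A_2)=\big(-(A_2-A_0)A_0^{-1}A_2,\,-(A_2-A_1)A_1^{-1}A_2,\,A_2\big)$ and then $T^3(A_0,A_1,A_2)=\big(A_2^{-1}A_0A_2^{-1},\,A_2^{-1}A_1A_2^{-1},\,A_2^{-1}\big)$, which is the congruence of $(A_0,A_1,A_2)$ by the single symmetric matrix $C=A_2^{-1}$, hence the identity in $\mathbb{S}(2,n)$.
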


\begin{proof}
Define the birational action of $\sigma_2$ on $\mathbb{S}(2,n)$ by 
\[
(A_0,A_1,A_2)\dashrightarrow(A_0^{-1},A_1^{-1},A_2^{-1}).
\]
According to Theorem \ref{thm:UrechZimmermann} to see that this indeed 
defines a rational action of $\mathrm{Bir}(\mathbb{P}^2_\mathbb{C})$
on~$\mathbb{S}(2,n)$ it is sufficient to see 
that $(\mathcal{R}_1)$-$(\mathcal{R}_5)$ are satisfied which is the 
case.
\end{proof}

\subsection{Why no Noether and Castelnuovo theorem in higher dimension ?}
\label{subsection:hudsonandpan}

Let us give an idea of the proof of the fact that there is no Noether
and Castelnuovo theorem in higher dimension:

\begin{thm}[\cite{Hudson, Pan:generation}]\label{thm:hudsonpan}
Any set of group generators of $\mathrm{Bir}(\mathbb{P}^n_\mathbb{C})$, $n\geq 3$,
contains uncountably many elements of 
$\mathrm{Bir}(\mathbb{P}^n_\mathbb{C})\smallsetminus\mathrm{PGL}(n+1,\mathbb{C})$.
\end{thm}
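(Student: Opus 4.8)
*Any set of group generators of $\mathrm{Bir}(\mathbb{P}^n_\mathbb{C})$, $n\geq 3$, contains uncountably many elements of $\mathrm{Bir}(\mathbb{P}^n_\mathbb{C})\smallsetminus\mathrm{PGL}(n+1,\mathbb{C})$.**

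The plan is to track, for each element of $\mathrm{Bir}(\mathbb{P}^n_\mathbb{C})$, the \emph{birational types} of the hypersurfaces it contracts, and to show that a countable generating set can only produce countably many such types, whereas the whole group realizes uncountably many. First I would fix a birational invariant $\iota(\phi)$ attached to $\phi\in\mathrm{Bir}(\mathbb{P}^n_\mathbb{C})$: the (finite) set of birational-equivalence classes of the irreducible hypersurfaces contracted by $\phi$ together with those contracted by $\phi^{-1}$. The crucial point, already visible in the proof of the last Lemma of the Euclidean-topology section, is that contracted hypersurfaces behave subadditively under composition: if $\phi=\phi_1\circ\phi_2\circ\cdots\circ\phi_m$, then every hypersurface contracted by $\phi$ is birational to one contracted by some $\phi_i$ (a hypersurface contracted by the composite is, up to the action of the intermediate maps which are birational isomorphisms off their exceptional loci, already contracted by one of the factors, since a birational map alters the birational type of a hypersurface only when that hypersurface meets its indeterminacy or exceptional locus). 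Thus the family of birational types realized by all words in a generating set $\mathcal{G}$ is contained in the family realized by the elements of $\mathcal{G}$ themselves, up to birational equivalence.

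Next I would exhibit an uncountable family $(\phi_t)_{t\in T}$ of birational self-maps of $\mathbb{P}^n_\mathbb{C}$, $n\geq 3$, each contracting a hypersurface whose birational type depends on a continuous modulus, so that for $t\neq t'$ the contracted hypersurfaces of $\phi_t$ and $\phi_{t'}$ are \emph{not} birationally equivalent. The standard construction uses maps contracting a hypersurface birational to $C\times\mathbb{P}^{n-2}_\mathbb{C}$ for $C$ a smooth plane curve of fixed degree $\geq 4$, or more directly a surface (when $n=3$) ruled over an elliptic or higher-genus curve $C_t$ whose $j$-invariant, or whose moduli point, varies with $t$; since the base field $\mathbb{C}$ is uncountable these moduli vary over an uncountable set, and birational invariance of the genus (resp. of the birational class of the Albanese or of $C_t$ itself) guarantees that distinct parameters give non-equivalent contracted hypersurfaces. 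Here the hypothesis $n\geq 3$ is essential: in dimension $2$ every contracted curve is rational, so no such continuous modulus exists, which is exactly why the Noether–Castelnuovo theorem can hold.

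Combining the two ingredients gives the result by a cardinality argument. If $\mathcal{G}$ were a generating set containing only countably many elements outside $\mathrm{PGL}(n+1,\mathbb{C})$, then, since elements of $\mathrm{PGL}(n+1,\mathbb{C})$ are biregular and contract nothing, the family of birational types of hypersurfaces contracted by elements of $\mathcal{G}$ would be countable; by the subadditivity step the same bound would hold for \emph{every} element of the group, contradicting the uncountable family $(\phi_t)_{t\in T}$. Therefore $\mathcal{G}$ must contain uncountably many elements of $\mathrm{Bir}(\mathbb{P}^n_\mathbb{C})\smallsetminus\mathrm{PGL}(n+1,\mathbb{C})$. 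The main obstacle I anticipate is making the subadditivity step fully rigorous: one must argue carefully that composing with a birational map changes the birational type of a contracted hypersurface only through the action on its generic point, so that the type is genuinely inherited from one of the factors, rather than created afresh by the composition. This requires a clean statement about how indeterminacy and exceptional loci interact under composition, and is where the bulk of the technical work (and the appeal to resolution of the maps by sequences of blow-ups, as in Zariski's theorem recalled earlier) will lie.
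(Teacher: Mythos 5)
Your proposal is correct and follows essentially the same route as the paper: an uncountable family of hypersurfaces birational to $\mathcal{C}\times\mathbb{P}^{n-2}_{\mathbb{C}}$ (cones over smooth plane cubics with varying $j$-invariant, so pairwise non-equivalent birational types), each contracted onto a point by some Cremona transformation, combined with the subadditivity of the birational types of contracted hypersurfaces under composition and a cardinality count against a hypothetical generating set with only countably many non-linear elements. The one ingredient you leave as an assertion --- the existence, for each such cone, of a birational self map of $\mathbb{P}^n_\mathbb{C}$ contracting it --- is precisely what the paper supplies via Pan's explicit construction (Lemmas \ref{lem:construction} and \ref{lem:prescribed}), which produces a map blowing down any hypersurface of degree $\ell$ having a point of multiplicity $\geq\ell-1$.
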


Let us first recall the following construction of Pan which given a birational 
self map of~$\mathbb{P}^n_\mathbb{C}$ allows one to construct a birational
self map of $\mathbb{P}^{n+1}_\mathbb{C}$. First introduce some notations: let
$P\in\mathbb{C}[z_0,z_1,\ldots,z_n]_d$, $Q\in\mathbb{C}[z_0,z_1,\ldots,z_n]_\ell$
and $R_0$, $R_1$, $\ldots$, $R_{n-1}\in\mathbb{C}[z_0,z_1,\ldots,z_n]_{d-\ell}$
be some homogeneous polynomials of degree $d$, resp. $\ell$, resp. $d-\ell$.
Consider $\widetilde{\psi}_{P,Q,R}$ and $\widetilde{\psi}_R$ the rational maps given by 
\begin{align*}
& \widetilde{\psi}_{P,Q,R}\colon(z_0:z_1:\ldots:z_n)\dashrightarrow(QR_0:QR_1:\ldots:QR_{n-1}:P),\\
& \widetilde{\psi}_R\colon(z_0:z_1:\ldots:z_n)\dashrightarrow(R_0:R_1:\ldots:R_{n-1}).
\end{align*}

\begin{lem}[\cite{Pan:generation}]\label{lem:construction}
Let $d$ and $\ell$ be some integers such that $d\leq\ell+1\leq 2$. Take 
$Q\in\mathbb{C}[z_0,z_1,\ldots,z_n]_\ell$ and $P\in\mathbb{C}[z_0,z_1,\ldots,z_n]_d$
without common factors. Let $R_1$, $R_2$, $\ldots$, $R_n$ be some elements of 
$\mathbb{C}[z_0,z_1,\ldots,z_{n-1}]_{d-\ell}$. Assume that 
\begin{align*}
& P=z_nP_{d-1}+P_d&& Q=z_nQ_{\ell-1}+Q_\ell 
\end{align*}
with $P_{d-1}$, $P_d$, $Q_{\ell-1}$, $Q_\ell\in\mathbb{C}[z_0,z_1,\ldots,z_{n-1}]$
of degree $d-1$, resp. $d$, resp. $\ell-1$, resp. $\ell$ and such that 
$(P_{d-1},Q_{\ell-1})\not=(0,0)$.

The map $\widetilde{\psi}_{P,Q,R}$ is birational if and only if $\widetilde{\psi}_R$ is.
\end{lem}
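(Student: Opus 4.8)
The plan is to read off birationality from the fact that $\widetilde{\psi}_{P,Q,R}$ is a fibered map lifting $\widetilde{\psi}_R$. Write $e_n=(0:\cdots:0:1)$ and let $\pi\colon\mathbb{P}^n_\mathbb{C}\dashrightarrow\mathbb{P}^{n-1}_\mathbb{C}$ be the projection away from $e_n$, $(z_0:\cdots:z_n)\mapsto(z_0:\cdots:z_{n-1})$, and let $\pi'$ be the same projection on the target. Because the first $n$ components of $\widetilde{\psi}_{P,Q,R}$ are $QR_0,\ldots,QR_{n-1}$, the common factor $Q$ cancels projectively and one checks directly that $\pi'\circ\widetilde{\psi}_{P,Q,R}=\widetilde{\psi}_R\circ\pi$. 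In other words $\widetilde{\psi}_{P,Q,R}$ preserves the pencil of lines through $e_n$ and induces $\widetilde{\psi}_R$ on the base $\mathbb{P}^{n-1}_\mathbb{C}$. Here the hypotheses that $P$ and $Q$ are affine in $z_n$ (through $P=z_nP_{d-1}+P_d$ and $Q=z_nQ_{\ell-1}+Q_\ell$) and that the $R_i$ do not involve $z_n$ are exactly what make this a well-defined morphism of fibrations, the inequalities $0\le d-\ell$ and $d\le\ell+1\le 2$ ensuring in particular that the $n+1$ components $QR_0,\ldots,QR_{n-1},P$ all carry the common degree $d$.

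First I would compute the map induced on a general fiber. Fixing $a=(a_0:\cdots:a_{n-1})$ and parametrising the line $\pi^{-1}(a)$ by $z_n=t$, the image $(QR_0(a):\cdots:QR_{n-1}(a):P)$ lies over $\widetilde{\psi}_R(a)$, and dividing by $Q$ its fiber coordinate is
\[
s=\frac{P}{Q}=\frac{tP_{d-1}(a)+P_d(a)}{tQ_{\ell-1}(a)+Q_\ell(a)}.
\]
Thus on each fiber the map acts by the M\"obius transformation $t\mapsto s$ with determinant $\Delta(a)$, where $\Delta:=P_{d-1}Q_\ell-P_dQ_{\ell-1}\in\mathbb{C}[z_0,\ldots,z_{n-1}]$. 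The fiber map has degree one for generic $a$ if and only if $\Delta\not\equiv 0$.

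The crux of the argument, and the step where the hypotheses are genuinely used, is to show $\Delta\not\equiv 0$. The plan is the elementary identity
\[
P_{d-1}\,Q-Q_{\ell-1}\,P=P_{d-1}Q_\ell-Q_{\ell-1}P_d=\Delta,
\]
obtained by expanding. If $\Delta\equiv 0$ then $P_{d-1}Q=Q_{\ell-1}P$, so $P\mid P_{d-1}Q$ and $Q\mid Q_{\ell-1}P$; since $P$ and $Q$ are coprime this gives $P\mid P_{d-1}$ and $Q\mid Q_{\ell-1}$. But if $P_{d-1}\ne 0$ then $P$ has $z_n$-degree one while $P_{d-1}$ has $z_n$-degree zero, which is impossible, so $P_{d-1}=0$; symmetrically $Q_{\ell-1}=0$. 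This contradicts the hypothesis $(P_{d-1},Q_{\ell-1})\ne(0,0)$, the degenerate possibilities being ruled out by $P,Q$ being nonzero of degrees $d,\ell$. I expect this to be the main obstacle, since it is precisely where coprimality of $P,Q$ and the nonvanishing condition conspire.

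With $\Delta\not\equiv 0$ the equivalence follows from multiplicativity of degrees along the fibration $\pi'\circ\widetilde{\psi}_{P,Q,R}=\widetilde{\psi}_R\circ\pi$. Since the generic fiber map is a nondegenerate M\"obius transformation, hence surjective onto its target fiber, $\widetilde{\psi}_{P,Q,R}$ is dominant if and only if $\widetilde{\psi}_R$ is; and when both are dominant the number of preimages of a general point factors as the degree of $\widetilde{\psi}_R$ times the degree $1$ of the generic fiber map, so $\widetilde{\psi}_{P,Q,R}$ and $\widetilde{\psi}_R$ have the same degree. As a dominant self-map is birational exactly when this degree is $1$, the two maps are birational simultaneously. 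Concretely, for $\widetilde{\psi}_R$ birational $\Rightarrow\widetilde{\psi}_{P,Q,R}$ birational I would build the inverse by applying $\widetilde{\psi}_R^{-1}$ to $(w_0:\cdots:w_{n-1})$ to recover $(z_0:\cdots:z_{n-1})$ and then inverting the fiber M\"obius transformation to recover $z_n$; the reverse implication is immediate from the commuting square, as $\pi$ is dominant and the fiber maps are invertible.
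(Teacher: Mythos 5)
Your proof is correct: the commuting square $\pi'\circ\widetilde{\psi}_{P,Q,R}=\widetilde{\psi}_R\circ\pi$, the M\"obius action $t\mapsto(tP_{d-1}(a)+P_d(a))/(tQ_{\ell-1}(a)+Q_\ell(a))$ on the fibers of the projection from $(0:\cdots:0:1)$, and the identity $P_{d-1}Q-Q_{\ell-1}P=\Delta$ combined with the coprimality of $P,Q$ and the hypothesis $(P_{d-1},Q_{\ell-1})\neq(0,0)$ to force $\Delta\not\equiv0$ are exactly the right ingredients, and the two implications then follow as you say. Note that the paper itself gives no proof of this lemma (it is quoted from Pan's article), and your Jonqui\`eres-type fibration argument is essentially the original one; the only discrepancy is the misprinted inequality $d\leq\ell+1\leq2$ in the statement (it should read $2\leq\ell+1\leq d$), which, as your argument implicitly shows, plays no role in the proof.
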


This statement allows to prove that given a hypersurface of $\mathbb{P}^n_\mathbb{C}$
one can construct a birational self map of $\mathbb{P}^n_\mathbb{C}$ that blows down 
this hypersurface:

\begin{lem}[\cite{Pan:generation}]\label{lem:prescribed}
Let $n\geq 3$. Let $S$ be an hypersurface of $\mathbb{P}^n_\mathbb{C}$
of degree $\ell\geq 1$ having a point $p$ of multiplicity $\geq \ell-1$. 

Then there exists a birational self map of $\mathbb{P}^n_\mathbb{C}$ of 
degree $d\geq \ell+1$ that blows down $S$ onto a point.
\end{lem}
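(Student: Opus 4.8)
The plan is to realize the desired map as one of Pan's maps $\widetilde{\psi}_{P,Q,R}$ provided by Lemma \ref{lem:construction}, taking for $Q$ an equation of $S$ and exploiting the multiplicity hypothesis to put $Q$ into the special shape demanded there. First I would normalize the situation: composing with a suitable element of $\mathrm{PGL}(n+1,\mathbb{C})$ changes neither the degree of a birational self map nor the property of contracting $S$, so I may assume the prescribed point is $p=(0:0:\ldots:0:1)$. Let $Q\in\mathbb{C}[z_0,\ldots,z_n]_\ell$ be a defining equation of $S$ and write $Q=\sum_{k=0}^{\ell}z_n^kA_{\ell-k}$ with $A_{\ell-k}\in\mathbb{C}[z_0,\ldots,z_{n-1}]$ homogeneous of degree $\ell-k$. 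In the affine chart $z_n=1$ the point $p$ is the origin and $Q$ dehomogenizes to $\sum_k A_{\ell-k}$, so $m_p(S)$ equals the least degree occurring among the nonzero $A_j$. The hypothesis $m_p(S)\geq\ell-1$ therefore forces $A_0=A_1=\ldots=A_{\ell-2}=0$, that is
\[
Q=z_nQ_{\ell-1}+Q_\ell,\qquad Q_{\ell-1}=A_{\ell-1},\ Q_\ell=A_\ell,
\]
with $Q_{\ell-1}$, $Q_\ell\in\mathbb{C}[z_0,\ldots,z_{n-1}]$ of degrees $\ell-1$ and $\ell$. This is precisely the form required by Lemma \ref{lem:construction}; translating the geometric multiplicity condition into this algebraic normal form is the conceptual heart of the argument.

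Next I would fix the remaining data. Set $d=\ell+1$ (so $d-\ell=1$) and take $R_i=z_i$ for $0\leq i\leq n-1$, so that $\widetilde{\psi}_R$ is the identity map of $\mathbb{P}^{n-1}_\mathbb{C}$, which is birational. For $P$ I would choose a general homogeneous polynomial of degree $d=\ell+1$ that is linear in $z_n$, say $P=z_nP_{d-1}+P_d$ with $P_{d-1}$, $P_d\in\mathbb{C}[z_0,\ldots,z_{n-1}]$, arranged so that $P_{d-1}\neq0$ and $P$ is coprime to $Q$; such a $P$ exists because coprimality with the fixed polynomial $Q$ is an open dense condition. Then $(P_{d-1},Q_{\ell-1})\neq(0,0)$ holds by virtue of $P_{d-1}\neq0$, so all the hypotheses of Lemma \ref{lem:construction} are in force, and since $\widetilde{\psi}_R$ is birational that lemma yields that
\[
\widetilde{\psi}_{P,Q,R}\colon(z_0:\ldots:z_n)\dashrightarrow(Qz_0:Qz_1:\ldots:Qz_{n-1}:P)
\]
is birational.

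Finally I would verify the degree and the contraction. The components $Qz_0,\ldots,Qz_{n-1},P$ are homogeneous of degree $\ell+1$; a common factor $g$ would divide $P$, hence be coprime to $Q$, hence divide each $z_i$ because $g\mid Qz_i$, and would therefore be constant since $\gcd(z_0,\ldots,z_{n-1})=1$. Thus $\widetilde{\psi}_{P,Q,R}$ has degree exactly $d=\ell+1\geq\ell+1$. On $S$ one has $Q=0$, so the first $n$ coordinates vanish identically there; since $P$ is coprime to $Q$ it vanishes on no component of $S$, and a general point of $S$ is sent to $(0:\ldots:0:1)$. Hence the $(n-1)$-dimensional hypersurface $S$ is blown down to a single point, as required. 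The only genuine subtleties I expect are the genericity and coprimality checks, and the degenerate case $m_p(S)=\ell$ (so $Q_{\ell-1}=0$ and $S$ is a cone with vertex $p$), in which the condition $(P_{d-1},Q_{\ell-1})\neq(0,0)$ must be secured solely through $P_{d-1}\neq0$; the choice of $P$ above handles this case uniformly.
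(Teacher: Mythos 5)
Your proof is correct and follows essentially the same route as the paper: normalize $p=(0:\ldots:0:1)$, observe that the multiplicity hypothesis forces $Q=z_nQ_{\ell-1}+Q_\ell$, take $R_i=z_i$ and a suitable $P$ linear in $z_n$ with $P_{d-1}\neq 0$ and coprime to $Q$, and conclude by Lemma \ref{lem:construction}; you additionally spell out the degree and contraction checks the paper leaves implicit. The only divergence is that the paper sets $\widetilde{Q}=H^{d-\ell-1}Q$ for a generic hyperplane $H$ through $p$, thereby producing such maps of every degree $d\geq\ell+1$, whereas your construction fixes $d=\ell+1$ — which suffices for the existential statement as phrased and for its use in Theorem \ref{thm:hudsonpan}.
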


\begin{proof}
Let us assume without loss of generality that $p=(0:0:\ldots:0:1)$. Suppose
that $S$ is given by $(Q=0)$. Take a generic plane passing through $p$ 
given by $(H=0)$. Choose $P=z_nP_{d-1}+P_d$ such that
\begin{itemize}
\item[$\diamond$] $P_{d-1}\in\mathbb{C}[z_0,z_1,\ldots,z_{n-1}]$ of degree $d-1$ and $\not=0$;

\item[$\diamond$] $P_d\in\mathbb{C}[z_0,z_1,\ldots,z_{n-1}]$ of degree $d$;

\item[$\diamond$] $\mathrm{pgcd}(P,HQ)=1$.
\end{itemize}
Set $\widetilde{Q}=H^{d-\ell-1}q$ and $R_i=z_i$. The statement then follows from 
Lemma \ref{lem:construction}.
\end{proof}

\begin{proof}[Proof of Theorem \ref{thm:hudsonpan}]
Consider the family of hypersurfaces given by $Q(z_1,z_2,z_3)=0$ where 
$(Q=0)$ defines a smooth curve $\mathcal{C}_Q$ of degree $\ell$ on 
$\big\{z_0=z_4=z_5=\ldots=z_n=0\big\}$. Note that $(Q=0)$ is birationally
equivalent to $\mathbb{P}^{n-2}_\mathbb{C}\times\mathcal{C}_Q$. 
Furthermore $(Q=0)$ and $(Q'=0)$ are birationally equivalent if 
and only if $\mathcal{C}_Q$ and $\mathcal{C}_{Q'}$ are isomorphic. 
Take $\ell=2$; the set of isomorphism classes of smooth cubics is a 
$1$-parameter family. For any $\mathcal{C}_Q$ there exists a birational
self map of $\mathbb{P}^n_\mathbb{C}$ that blows down $\mathcal{C}_Q$
onto a point (Lemma \ref{lem:prescribed}). As a result any set of 
group generators of $\mathrm{Bir}(\mathbb{P}^n_\mathbb{C})$, $n\geq 3$, 
has to contain uncountably many elements of 
$\mathrm{Bir}(\mathbb{P}^n_\mathbb{C})\smallsetminus\mathrm{PGL}(n+1,\mathbb{C})$.
\end{proof}

As we have seen one consequence of 
Noether and Castelnuovo 
theorem is that the Jonqui\`eres 
group and 
$\mathrm{Aut}(\mathbb{P}^2_\mathbb{C})=\mathrm{PGL}(3,\mathbb{C})$ 
generate 
$\mathrm{Bir}(\mathbb{P}^2_\mathbb{C})$.
This statement does also not hold 
in higher dimension (\cite{BlancLamyZimmermann}): 
let $n\geq 3$, the $n$-dimensional 
Cremona group is not generated
by $\mathrm{Aut}(\mathbb{P}^n_\mathbb{C})$
and by Jonqui\`eres elements, 
{\it i.e.} elements that preserve a family 
of lines through a given point, which form a
subgroup
\[
\mathrm{PGL}(2,\mathbb{C}(z_2,z_3,\ldots,z_n))\rtimes\mathrm{Bir}(\mathbb{P}^{n-1}_\mathbb{C})\subseteq \mathrm{Bir}(\mathbb{P}^n_\mathbb{C}).
\]
A more precise statement has been 
established in dimension $3$ in 
\cite{BlancYasinsky}: the $3$-dimensional
Cremona group is not generated by
birational maps preserving a linear fibration 
$\mathbb{P}^3_\mathbb{C}\dashrightarrow\mathbb{P}^2_\mathbb{C}$.

%%%%%%%%%%%%%%%%%%%%%%%%%%%%%%%%%%%%%%%%%%%%%%%%%%%%%%%%%%%%%%%%%%%%%%%%%%%%%%%%%%%%%%%%%%%%%%%%%%%%%%%%%%%%%%%%%%
%%%%%%%%%%%%%%%%%%%%%%%%%%%%%%%%%%%%%%%%%%%%%%%%%%%%%%%%%%%%%%%%%%%%%%%%%%%%%%%%%%%%%%%%%%%%%%%%%%%%%%%%%%%%%%%%%%%
%%%%%%%%%%%%%%%%%%%%%%%%%%%%%%%%%%%%%%%%%%%%%%%%%%%%%%%%%%%%%%%%%%%%%%%%%%%%%%%%%%%%%%%%%%%%%%%%%%%%%%%%%%%%%%%%%%%
%CHAPTER
%%%%%%%%%%%%%%%%%%%%%%%%%%%%%%%%%%%%%%%%%%%%%%%%%%%%%%%%%%%%%%%%%%%%%%%%%%%%%%%%%%%%%%%%%%%%%%%%%%%%%%%%%%%%%%%%%%%
%%%%%%%%%%%%%%%%%%%%%%%%%%%%%%%%%%%%%%%%%%%%%%%%%%%%%%%%%%%%%%%%%%%%%%%%%%%%%%%%%%%%%%%%%%%%%%%%%%%%%%%%%%%%%%%%%%%
%%%%%%%%%%%%%%%%%%%%%%%%%%%%%%%%%%%%%%%%%%%%%%%%%%%%%%%%%%%%%%%%%%%%%%%%%%%%%%%%%%%%%%%%%%%%%%%%%%%%%%%%%%%%%%%%%%%

\chapter{Algebraic properties of the 
Cremona group}\label{chapter:alg}

\bigskip
\bigskip

The group $\mathrm{Bir}(\mathbb{P}^2_\mathbb{C})$
has many properties of linear groups, so we wonder
if $\mathrm{Bir}(\mathbb{P}^2_\mathbb{C})$ has 
a faithful linear representation; in the first
section we show that the answer
is no (\cite{CerveauDeserti:ptdegre, Cornulier}).
Still in the first section we give the proof of
the following property: the plane 
Cremona group contains non-linear finitely 
generated subgroups (\cite{Cornulier}). 

\medskip

In the second section we give the proof
of the facts that 
\begin{itemize}
\item[$\diamond$] the normal subgroup 
generated by $\sigma_2$ 
in $\mathrm{Bir}(\mathbb{P}^2_\mathbb{C})$
is $\mathrm{Bir}(\mathbb{P}^2_\mathbb{C})$. 
\item[$\diamond$] the normal subgroup, 
generated by a non-trivial element of 
$\mathrm{PGL}(3,\mathbb{C})=\mathrm{Aut}(\mathbb{P}^2_\mathbb{C})$
in $\mathrm{Bir}(\mathbb{P}^2_\mathbb{C})$
is $\mathrm{Bir}(\mathbb{P}^2_\mathbb{C})$. 
\end{itemize}

As a consequence 
$\mathrm{Bir}(\mathbb{P}^2_\mathbb{C})$
is perfect (\cite{CerveauDeserti:ptdegre}), 
that is 
$[\mathrm{Bir}(\mathbb{P}^2_\mathbb{C}),\mathrm{Bir}(\mathbb{P}^2_\mathbb{C})]=\mathrm{Bir}(\mathbb{P}^2_\mathbb{C})$.

\medskip

We finish this chapter by the description of the 
endomorphisms of the plane Cremona
group; as a corollary we get the

\begin{thm}[\cite{Deserti:hopfian}]
The plane Cremona group is hopfian, {\it i.e.} any surjective endomorphism
of $\mathrm{Bir}(\mathbb{P}^2_\mathbb{C})$ is an automorphism.
\end{thm}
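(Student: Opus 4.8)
The plan is to deduce the hopfian property as a formal consequence of a structural description of all endomorphisms of $\mathrm{Bir}(\mathbb{P}^2_\mathbb{C})$, in the same spirit as the automorphism description already quoted in Chapter \ref{chapter:intro} for $\mathrm{G}(n,\mathbb{C})$. Concretely, I would first establish a \emph{rigidity statement} for surjective (or even just nontrivial) endomorphisms: if $\varphi\colon\mathrm{Bir}(\mathbb{P}^2_\mathbb{C})\to\mathrm{Bir}(\mathbb{P}^2_\mathbb{C})$ is a group endomorphism that is surjective, then there exist a field automorphism $\kappa$ of $\mathbb{C}$ and an element $\psi\in\mathrm{Bir}(\mathbb{P}^2_\mathbb{C})$ such that
\[
\varphi(\phi)={}^{\kappa}\!\,(\psi\circ\phi\circ\psi^{-1})\qquad\forall\,\phi\in\mathrm{Bir}(\mathbb{P}^2_\mathbb{C}).
\]
Once this is in hand, hopficity is immediate: such a map is visibly injective, since conjugation by $\psi$ and the action of $\kappa$ are both bijective on $\mathrm{Bir}(\mathbb{P}^2_\mathbb{C})$, so a surjective endomorphism of this form is automatically an automorphism.

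The heart of the argument is therefore the rigidity statement, and here the strategy I would follow is to reconstruct the building blocks of the Cremona group from purely group-theoretic data preserved by $\varphi$. First I would locate the image of $\mathrm{PGL}(3,\mathbb{C})=\mathrm{Aut}(\mathbb{P}^2_\mathbb{C})$: using that $\mathrm{Bir}(\mathbb{P}^2_\mathbb{C})$ is perfect and admits no nontrivial finite-dimensional linear representation (both recalled in this chapter), together with the normal-generation facts that the normal closure of any nontrivial element of $\mathrm{PGL}(3,\mathbb{C})$ is all of $\mathrm{Bir}(\mathbb{P}^2_\mathbb{C})$, one controls how a maximal algebraic subgroup like $\mathrm{PGL}(3,\mathbb{C})$ can map. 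The aim is to show $\varphi$ restricted to $\mathrm{PGL}(3,\mathbb{C})$ is injective with image a conjugate of $\mathrm{PGL}(3,\mathbb{C})$, inducing an abstract automorphism of $\mathrm{PGL}(3,\mathbb{C})$; by the classical description of automorphisms of $\mathrm{PGL}(3,\mathbb{C})$ (inner ones composed with a field automorphism and possibly the contragredient), this produces the candidate pair $(\psi,\kappa)$. After normalizing by $\psi$ and $\kappa$ I would then show $\varphi$ acts as the identity on $\mathrm{PGL}(3,\mathbb{C})$, and finally propagate this to all of $\mathrm{Bir}(\mathbb{P}^2_\mathbb{C})$ via the Noether--Castelnuovo theorem (Theorem \ref{thm:noether}), which says $\sigma_2$ and $\mathrm{PGL}(3,\mathbb{C})$ generate: it then suffices to pin down $\varphi(\sigma_2)$, using the relations $(\mathcal{R}_1)$--$(\mathcal{R}_5)$ of Theorem \ref{thm:UrechZimmermann} to check that the only consistent value compatible with the already-normalized action on $\mathrm{PGL}(3,\mathbb{C})$ is $\sigma_2$ itself.

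The main obstacle I anticipate is the initial step of showing that an \emph{a priori} arbitrary surjective endomorphism must send $\mathrm{PGL}(3,\mathbb{C})$ isomorphically onto a conjugate of itself, rather than, say, collapsing part of it or sending it into some other algebraic subgroup from the classification of Theorem \ref{thm:blanc11cases}. Handling this cleanly requires an intrinsic group-theoretic characterization of $\mathrm{PGL}(3,\mathbb{C})$ inside $\mathrm{Bir}(\mathbb{P}^2_\mathbb{C})$ (for instance as a maximal algebraic subgroup of a suitable type, detected through properties such as divisibility, simplicity of $\mathrm{PSL}(3,\mathbb{C})$, and the torsion structure), and a ruling-out of degenerate images using perfectness and the absence of linear representations. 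The surjectivity hypothesis is exactly what I would exploit to exclude the degenerate cases, since a surjective $\varphi$ cannot factor through any proper quotient or miss the normal generators $\sigma_2$ and the elements of $\mathrm{PGL}(3,\mathbb{C})$. Once these rigidity foundations are secured, the remaining propagation and the final deduction of injectivity, hence of the automorphism property, are formal.
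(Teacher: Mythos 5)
Your overall architecture coincides with the paper's: Théorème~\ref{thm:hopfian} is exactly the structure theorem you postulate, the hopfian property is then deduced formally from injectivity of maps of the form $\phi\mapsto{}^{\kappa}\!\,(\psi\circ\phi\circ\psi^{-1})$, and the endgame (normalize on $\mathrm{PGL}(3,\mathbb{C})$, pin down $\varphi(\sigma_2)$, conclude by Noether--Castelnuovo) is also the paper's. One imprecision first: for a mere non-trivial endomorphism the structure theorem only produces a field \emph{embedding} $\kappa$ of $\mathbb{C}$, not an automorphism, and this is the statement one can actually prove first; it is harmless for your purpose, since conjugation composed with the action of an injective field morphism is still injective, and injective plus surjective gives an automorphism (surjectivity of $\varphi$ then forces $\kappa$ onto a posteriori, not a priori).

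The genuine gap is precisely the obstacle you flag, and the tools you propose do not close it. Simplicity of $\mathrm{PGL}(3,\mathbb{C})$ does give trivial-or-injective for the restriction (and the trivial case dies via the Gizatullin relation $(\ell\circ\sigma_2)^3=\mathrm{id}$ with $\ell\in\mathrm{PGL}(3,\mathbb{C})$, which forces $\varphi(\sigma_2)=\mathrm{id}$ and hence $\varphi$ trivial). But in the injective case the image of an \emph{abstract} homomorphism from $\mathrm{PGL}(3,\mathbb{C})$ need not be closed, algebraic, or even of bounded degree, so the classification of maximal algebraic subgroups (Theorem~\ref{thm:blanc11cases}) simply does not apply to it; likewise perfectness and the absence of linear representations constrain quotients of $\mathrm{Bir}(\mathbb{P}^2_\mathbb{C})$, not where $\varphi$ sends a subgroup. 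The paper's mechanism is different and is the missing idea: restrict $\varphi$ to the discrete lattice $\mathrm{SL}(3,\mathbb{Z})$ and invoke Theorem~\ref{thm:IMRN}, whose proof is dynamical rather than algebraic --- the standard generators $\mathrm{e}_{ij}$ are distorted, hence elliptic or parabolic (Lemmas~\ref{lem:disto} and~\ref{lem:nilkeylemma}), Heisenberg triples are simultaneously virtually isotopic to the identity (Proposition~\ref{pro:sim}), and one lands in automorphism groups of minimal rational surfaces, forcing the restriction to be, up to birational conjugacy, the identity or $A\mapsto A^\vee$. Explicit commutation computations then show $\varphi(\mathrm{T})\subset\mathrm{T}$ and $\varphi(\mathcal{U})\subset\mathcal{U}$ for the translation and unipotent groups, whence $\varphi(\mathrm{PGL}(3,\mathbb{C}))\subset\mathrm{PGL}(3,\mathbb{C})$ because $\mathrm{PGL}(3,\mathbb{C})=\langle\mathcal{U},\,\mathrm{SL}(3,\mathbb{Z})\rangle$, and Borel--Tits (\cite{BorelTits}) supplies $\kappa$; the contragredient case is finally excluded by a computation with $(z_0,z_1)\mapsto\left(z_0,\frac{1}{z_1}\right)$ against $(\ell\circ\sigma_2)^3=\mathrm{id}$, giving $\varphi(\sigma_2)=\sigma_2$ (your alternative of checking $(\mathcal{R}_1)$--$(\mathcal{R}_5)$ from Theorem~\ref{thm:UrechZimmermann} would serve the same purpose here). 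If you insist on characterizing the image of $\mathrm{PGL}(3,\mathbb{C})$ directly, the tool that actually accomplishes it is Theorem~\ref{thm:Cantatcompos} of Cantat, which classifies injective morphisms $\mathrm{Aut}(\mathbb{P}^r_\mathbb{C})\to\mathrm{Bir}(V)$; but that theorem is not among the ingredients you cite, and without it or the $\mathrm{SL}(3,\mathbb{Z})$ rigidity your first step does not go through.
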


We use for that the classification
of the representations of 
$\mathrm{SL}(3,\mathbb{Z})$ in 
$\mathrm{Bir}(\mathbb{P}^2_\mathbb{C})$,
we thus recall and establish it in the third 
section:

\begin{thm}[\cite{Deserti:IMRN}]\label{thm:IMRN}
Let $\Gamma$ be a finite index subgroup of $\mathrm{SL}(3,\mathbb{Z})$. Let $\upsilon$ be 
an injective morphism from $\Gamma$ to $\mathrm{Bir}(\mathbb{P}^2_\mathbb{C})$. 
Then, up to birational conjugacy, either $\upsilon$ is the canonical embedding, 
or $\upsilon$ is the involution $A\mapsto ({}^{t}\!\,A)^{-1}$.
\end{thm}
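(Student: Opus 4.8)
The plan is to prove the statement in three movements: first regularize the image, then locate it inside $\mathrm{Aut}(\mathbb{P}^2_\mathbb{C})$, and finally invoke superrigidity to pin down exactly the two allowed representations. Throughout I use that $\Gamma$, being of finite index in $\mathrm{SL}(3,\mathbb{Z})$, is itself a higher rank lattice in $\mathrm{SL}(3,\mathbb{R})$, hence Zariski dense, with Kazhdan's property (T), and subject to Margulis superrigidity.

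First I would compose $\upsilon$ with the isometric action of $\mathrm{Bir}(\mathbb{P}^2_\mathbb{C})$ on the hyperbolic space $\mathbb{H}^\infty$ of Chapter \ref{chap:hyperbolicspace}, obtaining an isometric action of $\Gamma$ on a complete $\mathrm{CAT}(-1)$ space. Since the isometry group of a (finite or infinite dimensional) hyperbolic space acts properly along geodesic rays, it is a-T-menable in the relevant sense, so a group with property (T) can only act with bounded orbits; completeness and the $\mathrm{CAT}(0)$ property then produce, via a circumcenter, a global fixed point $v\in\mathbb{H}^\infty$. Fixing $v$ forces $\mathrm{dist}(\mathbf{e}_0,\upsilon(g)_*\mathbf{e}_0)\le 2\,\mathrm{dist}(\mathbf{e}_0,v)$ for every $g\in\Gamma$, whence $\deg\upsilon(g)=\cosh(\mathrm{dist}(\mathbf{e}_0,\upsilon(g)_*\mathbf{e}_0))$ is uniformly bounded. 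By Corollary \ref{cor:3plus} each $\upsilon(g)$ is elliptic and the Zariski closure of $\upsilon(\Gamma)$ is an algebraic subgroup of bounded degree; the regularization theorem of Weil (Theorem \ref{thm:Weil}, \S\ref{sec:WeilKraft}) lets me conjugate birationally so that $\upsilon(\Gamma)\subset\mathrm{Aut}(S)$ for a smooth projective rational surface $S$, which I take minimal so that $\mathrm{Aut}(S)$ falls into the classification of Theorem \ref{thm:blanc11cases}.

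Next I would eliminate every family except $\mathbb{P}^2_\mathbb{C}$. As $\upsilon$ is injective and $\Gamma$ infinite, the del Pezzo surfaces of degree $\le 5$ are excluded immediately, their automorphism groups being finite. For the surviving surfaces — del Pezzo of degree $6,7,8$, the Hirzebruch surfaces $\mathbb{F}_n$, the exceptional and $(\mathbb{Z}/2\mathbb{Z})^2$-conic bundles, and $\mathbb{P}^1_\mathbb{C}\times\mathbb{P}^1_\mathbb{C}$ — Lemmas \ref{lem:te1}, \ref{lem:te2} and \ref{lem:ru4} realize each $\mathrm{Aut}(S)$ inside $\mathrm{GL}(8,\mathbb{C})$, and in every case the reductive part of $\mathrm{Aut}(S)$ is a product of copies of $\mathrm{PGL}(2,\mathbb{C})$ with a torus, the remaining directions being unipotent or finite. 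No such group contains an algebraic subgroup isogenous to $\mathrm{SL}_3$, so by Margulis superrigidity the extension of $\upsilon|_\Gamma$ to $\mathrm{SL}(3,\mathbb{R})$ lands, up to the unipotent radical, in this reductive part and must be trivial there; the residual image then lies in a virtually solvable group and is finite, contradicting injectivity. Hence only $\mathrm{Aut}(\mathbb{P}^2_\mathbb{C})\simeq\mathrm{PGL}(3,\mathbb{C})$ remains, and up to birational conjugacy $\upsilon$ is a faithful linear representation $\Gamma\to\mathrm{PGL}(3,\mathbb{C})$.

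Finally I would feed this genuinely linear, non-precompact representation into superrigidity once more: $\upsilon$ extends to a rational representation of $\mathrm{SL}(3,\mathbb{C})$, and the only three dimensional projective algebraic representations of $\mathrm{SL}_3$ are the standard one and its contragredient. Because every element of $\Gamma$ has integer entries, and a field automorphism of $\mathbb{C}$ fixes $\mathbb{Z}$ pointwise, no Galois twist can intervene; thus, up to conjugacy in $\mathrm{PGL}(3,\mathbb{C})$, either $\upsilon$ is the canonical embedding or $\upsilon$ is $A\mapsto({}^{\mathrm t}A)^{-1}$. The step I expect to be the main obstacle is the first movement: extracting the fixed point in $\mathbb{H}^\infty$ and the resulting uniform bound on degrees, since it is here that the analytic rigidity of $\Gamma$ must be converted into boundedness of the action; once regularization is secured, the classification of algebraic subgroups and superrigidity reduce the rest to bookkeeping over finitely many cases.
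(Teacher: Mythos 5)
Your argument is correct in outline, but it takes a genuinely different route from the paper's. Your first two movements---property (T) forcing bounded orbits on $\mathbb{H}^\infty$ (the correct justification is that the hyperbolic distance is a kernel of conditionally negative type, as invoked in the paper via \cite{delaHarpeValette}, not ``a-T-menability'' of the isometry group), hence a circumcenter fixed point, uniformly bounded degrees, Weil regularization, and elimination of every surface in Theorem \ref{thm:blanc11cases} except $\mathbb{P}^2_\mathbb{C}$---amount to re-proving Theorem \ref{thm:zimmer}, which the paper states separately; the paper's own proof of Theorem \ref{thm:IMRN} avoids $\mathbb{H}^\infty$ entirely. It instead observes that the standard generators $\mathrm{e}_{ij}^q$ of a congruence subgroup are distorted (Lemma \ref{lem:disto}), so their images have dynamical degree $1$ (Lemma \ref{lem:nilkeylemma}); the Diller--Favre dichotomy (Theorem \ref{thm:dilfav}) then either yields an invariant fibration---ruled out by property (T) through Lemma \ref{lem:fleur} and Proposition \ref{pro:2fleurs}---or makes the generators simultaneously virtually isotopic to the identity (Proposition \ref{pro:sim}), after which explicit computations with the Heisenberg triples $\langle \mathrm{e}_{ij}^q,\,\mathrm{e}_{i\ell}^q,\,\mathrm{e}_{j\ell}^q\rangle$ inside $\mathrm{Aut}(\mathbb{P}^1_\mathbb{C}\times\mathbb{P}^1_\mathbb{C})$, $\mathrm{Aut}(\mathbb{F}_n)$ and $\mathrm{PGL}(3,\mathbb{C})$ (Lemmas \ref{lem:notp1p1}, \ref{lem:hirzcong}, \ref{lem:pgl3cong}) force the image of a congruence subgroup into $\mathrm{PGL}(3,\mathbb{C})$. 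For the endgame the paper replaces your appeal to Margulis superrigidity by Steinberg's extension theorem together with Dieudonn\'e's description of $\mathrm{Aut}(\mathrm{PGL}(3,\mathbb{C}))$, and it then needs a final normality/contracted-curve argument to pass from the congruence subgroup back to all of $\Gamma$---a step your global argument dispenses with, since you work with $\Gamma$ throughout. As to what each approach buys: yours is shorter and more conceptual, but leans on heavy machinery and leaves two points to be firmed up before superrigidity applies, namely Zariski density and unboundedness of the image in $\mathrm{PGL}(3,\mathbb{C})$ (both follow from your own elimination of subgroups with semisimple part of type $A_1$ together with your integrality/no-Galois-twist remark, so this is repairable rather than a genuine gap); the paper's route is more elementary and self-contained in birational geometry, and---significantly---its method carries over to arbitrary endomorphisms of $\mathrm{Bir}(\mathbb{P}^2_\mathbb{C})$ (Theorem \ref{thm:hopfian}), where your fixed-point and superrigidity inputs would not directly apply.
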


As a result we obtain the:

\begin{cor}[\cite{Deserti:IMRN}]\label{cor:IMRN}
If a morphism from a subgroup of finite index
of $\mathrm{SL}(n,\mathbb{Z})$ into
$\mathrm{Bir}(\mathbb{P}^2_\mathbb{C})$ has 
infinite image, then $n\leq 3$.
\end{cor}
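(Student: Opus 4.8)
The plan is to establish the contrapositive of Corollary~\ref{cor:IMRN}: for $n\geq 4$ and any finite index subgroup $\Gamma\subset\mathrm{SL}(n,\mathbb{Z})$, every morphism $\upsilon\colon\Gamma\to\mathrm{Bir}(\mathbb{P}^2_\mathbb{C})$ has finite image. First I would reduce to the injective case. As $n-1\geq 3\geq 2$, the group $\Gamma$ is a higher rank lattice, so Margulis' normal subgroup theorem forces $\ker\upsilon$ to be finite (and central) or of finite index; if the image were infinite the kernel would have infinite index, hence be finite, and after passing to a further finite index subgroup I may assume $\upsilon$ is injective. Since $\mathrm{SL}(4,\mathbb{Z})$ sits as a block subgroup of $\mathrm{SL}(n,\mathbb{Z})$ and an injective $\upsilon$ still has infinite image on the corresponding finite index subgroup of that block, it suffices to reach a contradiction when $n=4$.

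For $n=4$ I would single out the two copies $G_1$, $G_2$ of (finite index subgroups of) $\mathrm{SL}(3,\mathbb{Z})$ carried by the $3\times 3$ blocks on the index sets $\{1,2,3\}$ and $\{2,3,4\}$; the Steinberg relation $[E_{13},E_{34}]=E_{14}$ shows that $G_1$ and $G_2$ generate a finite index subgroup of $\mathrm{SL}(4,\mathbb{Z})$. Theorem~\ref{thm:IMRN} applies to each $G_i$: up to a birational conjugacy $\upsilon|_{G_i}$ is the canonical embedding or the involution $A\mapsto({}^{t}\!\,A)^{-1}$, and in both cases $\upsilon(G_i)$ is conjugate into $\mathrm{Aut}(\mathbb{P}^2_\mathbb{C})=\mathrm{PGL}(3,\mathbb{C})$. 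Normalising so that $\upsilon(G_1)\subset A:=\mathrm{PGL}(3,\mathbb{C})$, there is $\psi\in\mathrm{Bir}(\mathbb{P}^2_\mathbb{C})$ with $\psi\,\upsilon(G_2)\,\psi^{-1}\subset A$. The crux is then to show that $\psi$ may be taken in $A$, so that $\upsilon(\langle G_1,G_2\rangle)\subset A$. Granting this, $\upsilon$ would yield an infinite image homomorphism from a finite index subgroup of $\mathrm{SL}(4,\mathbb{Z})$ to $\mathrm{PGL}(3,\mathbb{C})$; by Margulis superrigidity such a map extends to a nontrivial algebraic homomorphism $\mathrm{SL}(4,\mathbb{C})\to\mathrm{PGL}(3,\mathbb{C})$, which cannot exist since $\mathfrak{sl}_4$ is simple of dimension $15>8=\dim\mathrm{PGL}(3,\mathbb{C})$. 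This contradiction would finish the proof.

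The hard part will be this gluing step. The naive hope, that the common subgroup $\upsilon(G_1\cap G_2)$ has a unique fixed point in $\mathbb{H}^\infty(\mathbb{P}^2_\mathbb{C})$ which both $A$ and $\psi^{-1}A\psi$ must fix, fails: the overlap $G_1\cap G_2$ is merely a copy of $\mathrm{SL}(2,\mathbb{Z})$, and a block $\mathrm{SL}(2)\subset\mathrm{PGL}(3,\mathbb{C})$ fixes a whole geodesic of $\mathbb{H}^\infty$ --- the classes $a\mathbf{e}_0-m\mathbf{e}_p$ with $a^2-m^2=1$, where $p\in\mathbb{P}^2_\mathbb{C}$ is the point fixed by the block --- rather than a single point, so $\psi(\mathbf{e}_0)$ is only pinned to this line. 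To force $A=\psi^{-1}A\psi$ I would instead use elements of $G_1$ and $G_2$ lying outside the overlap: the Zariski closures $\overline{\upsilon(G_i)}$ are conjugates of $\mathrm{PGL}(3,\mathbb{C})$ sharing the reductive subgroup $\overline{\upsilon(G_1\cap G_2)}$, and I would combine the classification of maximal algebraic subgroups of $\mathrm{Bir}(\mathbb{P}^2_\mathbb{C})$ (Theorem~\ref{thm:blanc11cases}) with the commutator presentation of $E_{14}$ to rule out $A\neq\psi^{-1}A\psi$. It is worth noting that the difficulty is genuinely low dimensional: as soon as $n\geq 6$ one finds two \emph{commuting} copies $G_1$, $G_2$ of $\mathrm{SL}(3,\mathbb{Z})$, and then $\upsilon(G_2)$ centralises the Zariski dense subgroup $\upsilon(G_1)$ of a conjugate of $\mathrm{PGL}(3,\mathbb{C})$; since commuting with a fixed element is a Zariski closed condition (Remark~\ref{rem:homeo}) and the centraliser of $\mathrm{PGL}(3,\mathbb{C})$ in $\mathrm{Bir}(\mathbb{P}^2_\mathbb{C})$ is trivial, one gets $\upsilon(G_2)=\{\mathrm{id}\}$ outright, contradicting injectivity.
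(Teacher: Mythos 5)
Your overall skeleton --- reduce via Theorem~\ref{thm:structsl} and Margulis' normal subgroup theorem to a faithful morphism of a finite index subgroup of $\mathrm{SL}(4,\mathbb{Z})$, linearize an $\mathrm{SL}(3,\mathbb{Z})$-block by Theorem~\ref{thm:IMRN}, and finally kill a linear representation of a rank $3$ lattice in the $8$-dimensional group $\mathrm{PGL}(3,\mathbb{C})$ by a dimension count --- is exactly the paper's skeleton, and your peripheral remarks are sound. In particular your $n\geq 6$ shortcut with two commuting blocks is correct: $\mathrm{Cent}(\varphi)$ is Zariski closed, so an element commuting with $\upsilon(G_1)$ commutes with its closure, a conjugate of $\mathrm{PGL}(3,\mathbb{C})$, whose centralizer in $\mathrm{Bir}(\mathbb{P}^2_\mathbb{C})$ is trivial. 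But that shortcut cannot substitute for $n=4$, which is precisely the case your reduction lands on, and there the proof stands or falls with the step you yourself flag as ``the hard part'' --- and that step is not carried out.

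Concretely: after normalizing $\upsilon(G_1)\subset A=\mathrm{PGL}(3,\mathbb{C})$ and finding $\psi$ with $\psi\circ\upsilon(G_2)\circ\psi^{-1}\subset A$, you must show $\psi$ can be modified to lie in $A$, and your proposed repair is a plan rather than an argument. You rightly observe that the overlap $G_1\cap G_2\simeq\mathrm{SL}(2,\mathbb{Z})$ pins nothing down --- indeed any $\psi$ centralizing the $\{2,3\}$-block copy of $\mathrm{SL}_2$ produces two conjugates of $\mathrm{PGL}(3,\mathbb{C})$ sharing the Zariski closure of the overlap without being equal --- but the tools you then invoke do not close the gap: to apply the classification of maximal algebraic subgroups (Theorem~\ref{thm:blanc11cases}) to $\langle A,\,\psi^{-1}A\psi\rangle$ you would need that group to be closed of bounded degree (Corollary~\ref{cor:agree}), which is precisely what is in question, and the ``commutator presentation of $E_{14}$'' is never turned into an invariance statement. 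The paper avoids gluing altogether: with $\upsilon|_{G_1}$ normalized to the standard embedding, it studies the single generator $e_{34}$ through its commutation relations inside the already linearized block. Since $[e_{34},e_{31}]=[e_{34},e_{32}]=\mathrm{id}$, the set $\mathrm{Exc}(\upsilon(e_{34}))$ is invariant under the unipotent maps $(z_0:z_1:z_2)\mapsto(z_0:z_1:az_0+bz_1+z_2)$; since $e_{34}$ also commutes with the upper-left copy of $\mathrm{SL}(2,\mathbb{Z})$, whose affine action admits no invariant curve, $\mathrm{Exc}(\upsilon(e_{34}))$ must lie in the line at infinity --- which those unipotent maps do not preserve. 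Hence $\mathrm{Exc}(\upsilon(e_{34}))=\emptyset$, so $\upsilon(e_{34})$, and symmetrically $\upsilon(e_{43})$, are automorphisms of $\mathbb{P}^2_\mathbb{C}$, and the Steinberg relations (e.g. $e_{14}=[e_{13},e_{34}]$) place the whole image inside $\mathrm{PGL}(3,\mathbb{C})$, where your final dimension count applies; the dual case $A\mapsto({}^{t}\!\,A)^{-1}$ is handled by the same computation. Until you replace your gluing step by an argument of this kind --- directly controlling the exceptional curves of the extra generators via commutation, in the spirit of Lemma~\ref{lem:tecn} --- your proof has a genuine gap at its decisive point.
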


\bigskip
\bigskip

\section{The group $\mathrm{Bir}(\mathbb{P}^2_\mathbb{C})$ is not linear}\label{section:notlinear}

Cantat and Lamy proved that $\mathrm{Bir}(\mathbb{P}^2_\mathbb{C})$ is not
simple but the non-existence of a faithful representation does not imply 
the non-existence of a non-trivial representation. So let us deal with 
the following statement:

\begin{pro}[\cite{CerveauDeserti:ptdegre}]\label{thm:CerveauDesertinonlinear}
The plane Cremona group has no faithful linear representation
in characteristic zero.
\end{pro}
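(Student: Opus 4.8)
The plan is to exploit the fact, established earlier in this excerpt, that $\mathrm{Bir}(\mathbb{P}^2_\mathbb{C})$ contains an abundance of finite subgroups together with a rigidity phenomenon for unipotent one-parameter subgroups. The key observation is that a faithful linear representation over a field of characteristic zero would impose two incompatible constraints: on the one hand it must reproduce all the torsion, and on the other hand it must behave well with respect to the closed abelian subgroups $\overline{\{\phi^k\mid k\in\mathbb{Z}\}}$ coming from algebraic elements. First I would recall that by Proposition \ref{pro:2plus} any algebraic $\phi$ generates an abelian algebraic subgroup, so $\mathrm{Bir}(\mathbb{P}^2_\mathbb{C})$ contains copies of the additive group $(\mathbb{C},+)$ (for instance via the unipotent families $\rho$ appearing in Theorem \ref{thm:Blancelalg}).

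The heart of the argument is a characteristic-zero constraint on how $(\mathbb{C},+)$ can sit inside $\mathrm{Bir}(\mathbb{P}^2_\mathbb{C})$ simultaneously with torsion of all orders. First I would fix a hypothetical faithful representation $\iota\colon\mathrm{Bir}(\mathbb{P}^2_\mathbb{C})\hookrightarrow\mathrm{GL}(m,\mathbf{k})$, and after replacing $\mathbf{k}$ by an algebraically closed extension assume $\mathbf{k}$ algebraically closed of characteristic zero. The plane Cremona group contains, for every integer $n\geq 1$, elements of order $n$ (e.g. diagonal automorphisms in $\mathrm{PGL}(3,\mathbb{C})$, or the cyclic subgroups classified by Blanc), and it also contains a divisible abelian subgroup isomorphic to $(\mathbb{C},+)$. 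The strategy is to produce a single finitely generated subgroup $\mathrm{H}\subset\mathrm{Bir}(\mathbb{P}^2_\mathbb{C})$ whose image under any linear representation is forced to be finite while $\mathrm{H}$ itself is infinite, contradicting faithfulness; the natural candidate is a Heisenberg-type or lamplighter-type subgroup built from a unipotent element and a torsion element whose commutation relations, via the Tits alternative and Jordan-type bounds for linear groups, cannot be realized linearly in characteristic zero.

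Concretely, I would use the structural input from Cornulier cited in the section heading: the plane Cremona group contains non-linear \emph{finitely generated} subgroups. The cleanest route is therefore to isolate such a subgroup $\mathrm{H}$ first — a finitely generated group that is known to admit no faithful finite-dimensional linear representation over any field of characteristic zero (a restricted wreath product such as $(\mathbb{Z}/p\mathbb{Z})\wr\mathbb{Z}$, which is non-linear by a classical result, is the prototype). Since the restriction $\iota|_{\mathrm{H}}$ would be a faithful linear representation of $\mathrm{H}$, its mere existence contradicts the non-linearity of $\mathrm{H}$. Thus the proof reduces to exhibiting one copy of such a non-linear finitely generated group inside $\mathrm{Bir}(\mathbb{P}^2_\mathbb{C})$.

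The main obstacle, and the step I expect to require the most care, is the explicit construction of the embedding $\mathrm{H}\hookrightarrow\mathrm{Bir}(\mathbb{P}^2_\mathbb{C})$ together with the verification that the defining relations of $\mathrm{H}$ hold \emph{exactly} (and no extra relations collapse the group). The building blocks are a torsion element $t$ of prime order $p$ taken inside a torus or a finite group of automorphisms, and an infinite-order element $s$ realizing the $\mathbb{Z}$-action; one checks that the conjugates $s^k t s^{-k}$ generate a direct sum of cyclic groups of order $p$ on which $s$ acts by shift, yielding the wreath-product structure. Verifying faithfulness of this concrete embedding amounts to a degree-growth or base-point bookkeeping computation in $\mathrm{Bir}(\mathbb{P}^2_\mathbb{C})$, which I would carry out using the linear-systems formalism of \S\ref{sec:geodef} rather than by hand. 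Once $\mathrm{H}$ is in place, the contradiction with any faithful linear representation is immediate, and since the whole argument is insensitive to the particular embedding target, it rules out faithful linearity over every field of characteristic zero, proving Proposition \ref{thm:CerveauDesertinonlinear}.
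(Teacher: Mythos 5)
There is a genuine gap, and it sits exactly at the step you defer to the end. Your reduction — restrict a hypothetical faithful representation to a finitely generated non-linear subgroup — is sound in principle, and your abstract claim that $(\mathbb{Z}/p\mathbb{Z})\wr\mathbb{Z}$ admits no faithful finite-dimensional representation in characteristic zero is correct (it is not virtually torsion-free, contradicting Selberg's lemma). But this group does not embed into $\mathrm{Bir}(\mathbb{P}^2_\mathbb{C})$, so the "construction requiring the most care" is in fact impossible. Its base $\bigoplus_{k\in\mathbb{Z}}\mathbb{Z}/p\mathbb{Z}$ contains $(\mathbb{Z}/p\mathbb{Z})^r$ for every $r$, whereas by Beauville's theorem quoted in Chapter \ref{chap:hyper} an embedding of $(\mathbb{Z}/p\mathbb{Z})^r$ into $\mathrm{Bir}(\mathbb{P}^2_\mathbb{C})$ forces $r\leq 2$ for $p\geq 5$, and Blanc's classification of finite abelian subgroups (\cite{Blanc:CRAS}) caps the elementary abelian ones at $\big(\faktor{\mathbb{Z}}{2\mathbb{Z}}\big)^4$ and $\big(\faktor{\mathbb{Z}}{3\mathbb{Z}}\big)^3$. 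Alternatively, Theorem \ref{thm:urechell2} says every torsion subgroup of $\mathrm{Bir}(\mathbb{P}^2_\mathbb{C})$ is isomorphic to a subgroup of $\mathrm{GL}(48,\mathbb{C})$, while an infinite elementary abelian $p$-group is not linear at all: commuting $p$-torsion in $\mathrm{GL}(n,\mathbb{C})$ is simultaneously diagonalizable, hence of order at most $p^n$. So no torsion-based wreath-product certificate can ever be realized in the Cremona group, and no amount of degree-growth or base-point bookkeeping will rescue it; this is precisely why Cornulier's genuine finitely generated non-linear subgroup $\langle s_1,\,\alpha_1,\,\mu_{z_0}\rangle$, reproduced in \S\ref{section:notlinear}, certifies non-linearity through the nilpotent subgroups $\Gamma_n$ of unbounded nilpotency class rather than through torsion.

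The paper's actual proof is also much lighter than your reduction and is, ironically, the "Heisenberg-type" idea you mention in passing before discarding it. For each prime $p$ one takes the three maps $A\colon(z_0,z_1)\mapsto(\mathrm{e}^{-2\mathbf{i}\pi/p}z_0,z_1)$, $B\colon(z_0,z_1)\dashrightarrow(z_0,z_0z_1)$ and $C\colon(z_0,z_1)\mapsto(z_0,\mathrm{e}^{-2\mathbf{i}\pi/p}z_1)$, which satisfy $[A,B]=C$ and $[A,C]=[B,C]=\mathrm{id}$ with $C$ of order $p$, and applies Birkhoff's lemma (\cite{Birkhoff}): in $\mathrm{GL}(n,\Bbbk)$ with $\mathrm{char}\,\Bbbk=0$ such a configuration forces $p\leq n$. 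Since $p$ is arbitrary, no $n$ works. Note that no single non-linear subgroup is produced or needed — each triple generates a perfectly linear group — the contradiction comes from the uniformity over all primes, an obstruction your proposal never manufactures.
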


Before giving the proof let us mention that making an easy refinement of it 
provides the following stronger result:

\begin{pro}[\cite{Cornulier}]
If $\Bbbk$ is an algebraically closed field, then there is no non-trivial
finite dimensional linear representation for $\mathrm{Bir}(\mathbb{P}^2_\Bbbk)$
over any field.
\end{pro}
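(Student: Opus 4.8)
The plan is to show that every finite-dimensional linear representation $\rho\colon\mathrm{Bir}(\mathbb{P}^2_\Bbbk)\to\mathrm{GL}(V)$, where $V$ is a finite-dimensional vector space over an arbitrary field $K$, is trivial. Replacing $K$ by its algebraic closure does not affect triviality, so I may assume $K=\overline K$ and view $\rho$ as a map to $\mathrm{GL}_n(\overline K)$. The starting observation is that, since $\Bbbk$ is algebraically closed, every element of $\Bbbk^*$ is a cube and hence $\mathrm{PGL}(3,\Bbbk)=\mathrm{PSL}(3,\Bbbk)$, which is a simple group. Thus the restriction $\rho|_{\mathrm{PGL}(3,\Bbbk)}$ is either faithful or trivial, and the whole problem splits into these two cases. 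Using Theorem \ref{thm:noether}, $\mathrm{Bir}(\mathbb{P}^2_\Bbbk)$ is generated by $\mathrm{PGL}(3,\Bbbk)$ and $\sigma_2$, so once the restriction to $\mathrm{PGL}(3,\Bbbk)$ is understood, only the image of $\sigma_2$ remains to be controlled.

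The heart of the argument is to exclude the faithful case, and this is where the hypothesis that $\Bbbk$ is algebraically closed is used. Consider the rank-two torus $\mathrm{D}_2\simeq(\Bbbk^*)^2$ of diagonal classes inside $\mathrm{PGL}(3,\Bbbk)$, and fix a hyperbolic matrix $M\in\mathrm{GL}(2,\mathbb{Z})$ (for instance $\left(\begin{smallmatrix}2&1\\1&1\end{smallmatrix}\right)$), whose monomial map $\phi_M\in\mathrm{Mon}(2,\Bbbk)\subset\mathrm{Bir}(\mathbb{P}^2_\Bbbk)$ normalises $\mathrm{D}_2$ and acts on it through $M$. Over $\overline K$ the commuting family $\rho(\mathrm{D}_2)$ admits a generalised weight decomposition $V=\bigoplus_\chi V_\chi$, the weights $\chi$ being the finitely many characters $\mathrm{D}_2\to\overline K^{*}$ that occur; conjugation by $\rho(\phi_M)$ permutes the $V_\chi$ via $\chi\mapsto\chi\circ M^{-1}$. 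I would then show that a nontrivial weight has infinite $M$-orbit: if $\chi\circ M^{j}=\chi$ for some $j\neq 0$, then $\chi$ is trivial on $(M^{j}-I)(\mathrm{D}_2)$, and since $\det(M^{j}-I)\neq 0$ and $\Bbbk^{*}$ is divisible this image equals all of $(\Bbbk^*)^2$, forcing $\chi=1$. As there are only finitely many weights, every weight is trivial, i.e. $\rho(\mathrm{D}_2)$ is unipotent.

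To reach a contradiction I would pick a prime $\ell$ different from $\mathrm{char}\,\Bbbk$ and from $\mathrm{char}\,K$ (only finitely many primes are excluded) and choose $d\in\mathrm{D}_2$ of order $\ell$, which exists because $\Bbbk$ is algebraically closed. If $\rho|_{\mathrm{PGL}(3,\Bbbk)}$ were faithful, then $\rho(d)$ would be a nontrivial unipotent element of order exactly $\ell$; but a nontrivial unipotent matrix over $\overline K$ has infinite order when $\mathrm{char}\,K=0$ and order a power of $\mathrm{char}\,K$ when $\mathrm{char}\,K>0$, neither of which can equal $\ell$. This rules out the faithful case, so $\rho|_{\mathrm{PGL}(3,\Bbbk)}$ is trivial.

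Finally, with $\rho(\mathrm{PGL}(3,\Bbbk))=\{\mathrm{id}\}$, I would eliminate $\sigma_2$ using two relations valid in $\mathrm{Bir}(\mathbb{P}^2_\Bbbk)$: from $\sigma_2^{2}=\mathrm{id}$ one gets $\rho(\sigma_2)^{2}=\mathrm{id}$, and from $(\sigma_2\circ h)^{3}=\mathrm{id}$ with $h\in\mathrm{PGL}(3,\Bbbk)$ (relation $(\mathcal{R}_5)$) together with $\rho(h)=\mathrm{id}$ one gets $\rho(\sigma_2)^{3}=\mathrm{id}$; as $\gcd(2,3)=1$ this yields $\rho(\sigma_2)=\mathrm{id}$. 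Since $\mathrm{PGL}(3,\Bbbk)$ and $\sigma_2$ generate the group, $\rho$ is trivial. I expect the weight-orbit step — and in particular making both it and the subsequent torsion contradiction uniform in the characteristics of $\Bbbk$ and of $K$ — to be the main obstacle; this is precisely the point at which the proof strengthens the faithful, characteristic-zero statement of Proposition \ref{thm:CerveauDesertinonlinear} into a non-triviality statement over an arbitrary field.
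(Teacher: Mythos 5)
Your proof is correct, but it follows a genuinely different route from the paper's. The paper proves the characteristic-zero faithful statement (Proposition \ref{thm:CerveauDesertinonlinear}) via Birkhoff's lemma applied to the Heisenberg-type triple $(z_0,z_1)\mapsto(\zeta_p z_0,z_1)$, $(z_0,z_1)\dashrightarrow(z_0,z_0z_1)$, $(z_0,z_1)\mapsto(z_0,\zeta_p z_1)$, which forces $p\leq n$ for every prime $p$; the "easy refinement" to Cornulier's statement, which the paper leaves implicit, consists of noting that for $p$ large and prime to both characteristics the central element $(z_0,z_1)\mapsto(z_0,\zeta_p z_1)$ must then lie in $\ker\rho$, and that this element is a non-trivial linear map, so $\ker\rho\supseteq\,\ll A\gg_{\mathrm{Bir}(\mathbb{P}^2_\Bbbk)}=\mathrm{Bir}(\mathbb{P}^2_\Bbbk)$ by the normal-generation result of \S\ref{section:perfect} (Proposition \ref{pro:parfait2}). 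You instead dichotomize via simplicity of $\mathrm{PGL}(3,\Bbbk)$, kill the faithful branch by showing $\rho(\mathrm{D}_2)$ is unipotent (infinite weight orbits under conjugation by a hyperbolic monomial map, using divisibility of $\Bbbk^*$ and $\det(M^j-I)\neq 0$) and contradicting this with a torsion element of prime order $\ell$ prime to both characteristics, and then eliminate $\sigma_2$ with the relations $\sigma_2^2=\mathrm{id}$ and $(\mathcal{R}_5)$ rather than with normal generation. Both arguments ultimately rest on Theorem \ref{thm:noether} (yours directly, the paper's through Proposition \ref{pro:parfait2}); what yours buys is an explicit handling of arbitrary characteristics on source and target — exactly the content the paper compresses into "easy refinement" — plus the structural byproduct that any finite-dimensional representation is unipotent on the maximal torus; what the paper's buys is brevity, since Birkhoff's lemma replaces your whole weight-orbit analysis by a single eigenvalue count. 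One point you should make explicit: the relation $(\mathcal{R}_5)$ is stated in the paper over $\mathbb{C}$, so you need it over arbitrary $\Bbbk$; this is fine because $(\sigma_2\circ h)^3=\mathrm{id}$ is an identity of rational maps defined over $\mathbb{Z}$ (a direct computation gives $(\sigma_2\circ h)^3(x:y:z)=(x^2yz:xy^2z:xyz^2)$ before cancellation, with no division by $2$ or $3$), hence it specializes to every field — alternatively cite Gizatullin's presentation, which the paper notes holds over any algebraically closed field.
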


Let us recall the following statement due to Birkhoff:

\begin{lem}[\cite{Birkhoff}]
Let $\Bbbk$ be a field of characteristic zero. Let $A$, $B$, and $C$ be 
three elements of $\mathrm{GL}(n,\Bbbk)$ such that 
\begin{itemize}
\item[$\diamond$] $[A,B]=C$, $[A,C]=[B,C]=\mathrm{id}$, 

\item[$\diamond$] $C$ has prime order $p$.
\end{itemize}

Then $p\leq n$.
\end{lem}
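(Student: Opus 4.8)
The plan is to diagonalize the central element $C$ and to extract the inequality $p\le n$ from a single determinant computation on one of its eigenspaces. First I would pass to the algebraic closure $\bar\Bbbk$ (this alters neither the hypotheses nor the conclusion) and use that $C^p=\mathrm{id}$ together with $\mathrm{char}\,\Bbbk=0$ forces $C$ to be diagonalizable, since its minimal polynomial divides the separable polynomial $x^p-1$. Writing $\bar\Bbbk^n=\bigoplus_\lambda V_\lambda$ for the decomposition into eigenspaces $V_\lambda=\ker(C-\lambda\,\mathrm{id})$, I would invoke the hypotheses $[A,C]=[B,C]=\mathrm{id}$, that is, the fact that $A$ and $B$ commute with $C$, to conclude that both $A$ and $B$ preserve each $V_\lambda$. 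Being invertible, their restrictions $A_\lambda,B_\lambda$ to $V_\lambda$ are then well-defined automorphisms of $V_\lambda$.

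The key step is a determinant identity on one well-chosen eigenspace. Because $C\ne\mathrm{id}$ is diagonalizable, it has an eigenvalue $\lambda\ne 1$; since $p$ is prime, every $p$-th root of unity other than $1$ is primitive, so $\lambda$ has exact multiplicative order $p$. Restricting the relation $ABA^{-1}B^{-1}=C$ to $V_\lambda$ yields $A_\lambda B_\lambda A_\lambda^{-1}B_\lambda^{-1}=\lambda\,\mathrm{id}_{V_\lambda}$. Taking determinants, the left-hand side is a commutator of invertible matrices and hence equals $1$, while the right-hand side equals $\lambda^{\dim V_\lambda}$. Thus $\lambda^{\dim V_\lambda}=1$, and since $\lambda$ has order exactly $p$, this forces $p\mid\dim V_\lambda$. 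As $V_\lambda\ne 0$, I get $\dim V_\lambda\ge p$, whence $n=\dim\bar\Bbbk^n\ge\dim V_\lambda\ge p$.

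The two points that deserve care, rather than being genuine obstacles, are precisely where the characteristic hypothesis enters (more sharply, $\mathrm{char}\,\Bbbk\ne p$ suffices): first to guarantee diagonalizability of $C$, and second to ensure that a nontrivial $p$-th root of unity really has order $p$ in $\bar\Bbbk^\times$, so that $\lambda^{\dim V_\lambda}=1$ delivers $p\mid\dim V_\lambda$ rather than a weaker divisibility. I would record explicitly that the primality of $p$ is what upgrades "$\lambda\ne 1$" to "$\lambda$ primitive", and that the vanishing of the determinant exponent on a commutator is the mechanism converting the order-$p$ hypothesis on $C$ into a lower bound on a dimension. The nilpotency relation $[A,B]=C$ with $C$ central is used only through its restriction to $V_\lambda$, so no further structural input is required.
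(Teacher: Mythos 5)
Your proof is correct, and while it shares the paper's setup, its decisive step is genuinely different. Both arguments pass to an eigenvalue $\alpha\not=1$ of $C$ (which exists because $C$, being of finite order in characteristic zero, is diagonalizable and nontrivial), use $[A,C]=[B,C]=\mathrm{id}$ to see that $A$ and $B$ preserve the eigenspace $E_\alpha$, and restrict $[A,B]=C$ to get $[A_{\vert E_\alpha},B_{\vert E_\alpha}]=\alpha\,\mathrm{id}_{E_\alpha}$. From there the paper rewrites this as $(B^{-1}AB)_{\vert E_\alpha}=\alpha A_{\vert E_\alpha}$ and argues on spectra: since $(B^{-1}AB)_{\vert E_\alpha}$ and $A_{\vert E_\alpha}$ are conjugate they have the same eigenvalue multiset, so multiplication by $\alpha$ permutes the spectrum of $A_{\vert E_\alpha}$; any eigenvalue $\lambda$ then yields the $p$ distinct eigenvalues $\lambda,\alpha\lambda,\ldots,\alpha^{p-1}\lambda$ (primality of $p$ making $\alpha$ primitive), whence $\dim E_\alpha\geq p$. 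You instead take determinants of the restricted commutator relation: $1=\det\bigl(A_\lambda B_\lambda A_\lambda^{-1}B_\lambda^{-1}\bigr)=\lambda^{\dim V_\lambda}$, so $p\mid\dim V_\lambda$ and hence $\dim V_\lambda\geq p$. Your determinant identity is the more elementary mechanism — it bypasses the conjugacy and eigenvalue-orbit discussion entirely — and it records the divisibility $p\mid\dim V_\lambda$ explicitly, which the paper's orbit argument only yields implicitly (multiplicities being constant along $\alpha$-orbits). Both proofs invoke primality at exactly the same point, to upgrade $\alpha\not=1$ to $\alpha$ of exact order $p$. One small difference in bookkeeping: the paper opens with a case split on whether all eigenvalues of $C$ equal $1$; in characteristic zero that case is vacuous, as your diagonalization step makes explicit, and your remark that $\mathrm{char}\,\Bbbk\not=p$ already suffices is accurate.
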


\begin{proof}
Assume that $\Bbbk$ is algebraically closed.
Since $C$ is of order $p$ its eigenvalues
are $p$-rooth of unity. 

If the eigenvalues of $C$ are all equal to 
$1$, then $C$ is unipotent and $p\leq n$.

Otherwise $C$ admits an eigenvalue
$\alpha\not=1$. Consider the eigenspace
$E_\alpha=\big\{v\,\vert\, Cv=\alpha v\big\}$
of $C$ associated to the eigenvalue 
$\alpha$. By assumption $A$ and $B$ commute to $C$,
so $E_\alpha$ is invariant by $A$ and $B$.
From $[A,B]=C$ we get 
$[A_{\vert E_\alpha},B_{\vert E_\alpha}]=C_{\vert E_\alpha}$;
but
$C_{\vert E_\alpha}=\alpha\mathrm{id}_{\vert E_\alpha}$
hence 
$[A_{\vert E_\alpha},B_{\vert E_\alpha}]=\alpha\mathrm{id}_{\vert E_\alpha}$, 
that is $(B^{-1}AB)_{\vert E_\alpha}=\alpha A_{\vert E_\alpha}$.
Note that $(B^{-1}AB)_{\vert E_\alpha}$ and 
$A_{\vert E_\alpha}$ are conjugate thus 
$(B^{-1}AB)_{\vert E_\alpha}$ and 
$A_{\vert E_\alpha}$ have the same eigenvalues.
Furthermore these eigenvalues are non-zero. 
If $\lambda$ is an eigenvalue of $A_{\vert E_\alpha}$, 
then $\alpha\lambda$, $\alpha^2\lambda$, 
$\ldots$, $\alpha^{p-1}\lambda$ are 
also eigenvalues of $A_{\vert E_\alpha}$.
As $p$ is prime and $\alpha$ distinct from $1$, 
the numbers $\alpha$, $\alpha^2$, $\ldots$, 
$\alpha^{p-1}$ are distinct, 
$\dim E_\alpha\geq p$, and $n\geq p$.
\end{proof}

\begin{proof}[Proof of Proposition \ref{thm:CerveauDesertinonlinear}]
Assume by contradiction that there exists an injective morphism~$\zeta$ from $\mathrm{Bir}(\mathbb{P}^2_\mathbb{C})$ into
$\mathrm{GL}(n,\Bbbk)$. For any prime $p$ let us consider in the 
affine chart $z_2=1$ the group generated by the maps
\begin{align*}
& (z_0,z_1)\mapsto(\mathrm{e}^{-2\mathbf{i}\pi/p}z_0,z_1),&&(z_0,z_1)\dashrightarrow(z_0,z_0z_1), &&(z_0,z_1)\mapsto(z_0,\mathrm{e}^{-2\mathbf{i}\pi/p}z_1). 
\end{align*}
The images of these three elements of 
$\mathrm{Bir}(\mathbb{P}^2_\mathbb{C})$ satisfy the assumptions of
Birkhoff Lemma; therefore, $p\leq n$ for any prime $p$: 
contradiction.
\end{proof}

\medskip

In \cite{Cornulier} Cornulier gives an example of a non-linear finitely 
generated subgroup of the plane Cremona group. The existence of such 
subgroup is not new, for instance it follows from an unpublished construction
of Cantat. The example in \cite{Cornulier} has the additional feature of being
$3$-solvable. To prove its non-linearity Cornulier proves that it contains
nilpotent subgroups of arbitrary large nilpotency length. 

\smallskip

Let $\mathrm{G}$ be a group. Recall that 
$[g,h]=g\circ h\circ g^{-1}\circ h^{-1}$\index{not}{$[g,h]$} 
denotes the commutator of $g$ and~$h$. If $\mathrm{H}_1$ and $\mathrm{H}_2$ 
are two subgroups of $\mathrm{G}$, then 
$[\mathrm{H}_1,\mathrm{H}_2]$\index{not}{$[\mathrm{H}_1,\mathrm{H}_2]$} is 
the subgroup of $\mathrm{G}$ generated by the elements of the form $[g,h]$ 
with $g\in\mathrm{H}_1$ and $h\in\mathrm{H}_2$. We defined the 
\textsl{derived series}\index{defi}{derived series} of 
$\mathrm{G}$ by setting $\mathrm{G}^{(0)}=\mathrm{G}$ and for all $n\geq 0$
\[
\mathrm{G}^{(n+1)}=[\mathrm{G}^{(n)},\mathrm{G}^{(n)}].
\]
The \textsl{soluble length}\index{defi}{soluble length (of a group)}
$\ell(\mathrm{G})$ of $\mathrm{G}$ is defined by 
\[
\ell(\mathrm{G})=\min\big\{k\in\mathbb{N}\cup\{0\}\,\vert\,\mathrm{G}^{(k)}=\{\mathrm{id}\}\big\}
\]
with the convention: $\min\emptyset=\infty$. We say that $\mathrm{G}$
is \textsl{solvable}\index{defi}{solvable (group)} if 
$\ell(\mathrm{G})<\infty$.
The 
\textsl{descending central series}\index{defi}{descending central series} 
of a group $\mathrm{G}$ is defined by $C^0\mathrm{G}=\mathrm{G}$ and for all $n\geq 0$
\[
C^{n+1}\mathrm{G}=[\mathrm{G},C^n\mathrm{G}].
\]
The group $\mathrm{G}$ is \textsl{nilpotent}\index{defi}{nilpotent (group)}
if there exists $j\geq 0$ such that $C^j\mathrm{G}=\{\mathrm{id}\}$. If $j$
is the minimum non-negative number with such a property, we say that 
$\mathrm{G}$ is of 
\textsl{nilpotent class}\index{defi}{nilpotent class (of a group)} $j$.

\smallskip

Take $f$ in $\mathbb{C}(z_0)$ and $g$ in $\mathbb{C}(z_0)^*$; define $\alpha_f$
and $\mu_g$ by 
\begin{align*}
& \alpha_f\colon(z_0,z_1)\dashrightarrow\big(z_0,z_1+f(z_0)\big), && \mu_g\colon(z_0,z_1)\dashrightarrow\big(z_0,z_1g(z_0)\big).
\end{align*}
Note that 
\begin{equation}\label{eq:uneetoile}
\alpha_{f+f'}=\alpha_f\circ\alpha_{f'} \qquad \mu_{gg'}=\mu_g\circ\mu_{g'}\qquad
\mu_g\circ\alpha_f\circ\mu_g^{-1}=\alpha_{fg}
\end{equation}
Take $t\in\mathbb{C}$ and consider $s_t\colon(z_0,z_1)\mapsto(z_0+t,z_1)$. The following equalities 
hold
\begin{equation}\label{eq:deuxetoiles}
s_t\circ\alpha_{f(z_0)}\circ s_t^{-1}=\alpha_{f(z_0-t)}, \qquad s_t\circ\mu_{g(z_0)}\circ s_t^{-1}=\mu_{g(z_0-t)}
\end{equation}
Let $\Gamma_n$ be the subgroup of $\mathrm{Bir}(\mathbb{P}^2_\mathbb{C})$ 
defined for any $n\geq 0$ by 
\[
\Gamma_n=\langle s_1,\,\alpha_{z_0^n}\rangle.
\]
Remark that $\Gamma_n$ is indeed a subgroup of the Jonqui\`eres group.
It satisfies the following properties:

\begin{lem}[\cite{Cornulier}]
The nilpotency length of $\Gamma_n$ is exactly $n+1$, and $\Gamma_n$ is 
torsion free.
\end{lem}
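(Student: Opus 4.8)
The plan is to exhibit $\Gamma_n$ as a semidirect product and then read off its lower central series by a single module-theoretic computation. First I would record the structure. Conjugation by $s_1$ defines an automorphism $t$ of the abelian group $\{\alpha_f\,\vert\,f\in\mathbb{C}(z_0)\}\cong(\mathbb{C}(z_0),+)$, and the relation $s_1\circ\alpha_{f(z_0)}\circ s_1^{-1}=\alpha_{f(z_0-1)}$ identifies $t$ with the shift $f(z_0)\mapsto f(z_0-1)$. Let $M_n=\mathbb{Z}[t,t^{-1}]\cdot z_0^n\subset(\mathbb{C}[z_0],+)$ be the $\mathbb{Z}[t,t^{-1}]$-submodule generated by $z_0^n$ under this shift (its elements are $\mathbb{Z}$-combinations of the polynomials $(z_0-k)^n$), and set $A_n=\{\alpha_f\,\vert\,f\in M_n\}$. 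Since $s_1$ normalizes $A_n$ and $\Gamma_n=\langle s_1,\alpha_{z_0^n}\rangle$, one obtains $\Gamma_n=A_n\rtimes\langle s_1\rangle$ with $\langle s_1\rangle\cong\mathbb{Z}$; the projection $\pi\colon\Gamma_n\to\mathbb{Z}$ with kernel $A_n$ will serve for the torsion statement.

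Next I would carry out the commutator computation. Using $\alpha_{f+f'}=\alpha_f\circ\alpha_{f'}$ together with the shift relation, one gets $[s_1,\alpha_f]=\alpha_{(t-1)f}$, where $(t-1)f(z_0)=f(z_0-1)-f(z_0)$ is the finite difference operator. More generally, for $x=\alpha_a\circ s_1^{\,j}\in\Gamma_n$ and $m\in M_n$ a direct computation gives $[x,\alpha_m]=\alpha_{(t^j-1)m}$, the contribution of $a$ disappearing because $A_n$ is abelian. Since any $t$-invariant subgroup $M\subseteq M_n$ satisfies $(t^j-1)M\subseteq(t-1)M$ (as $t^j-1=(t-1)(t^{j-1}+\cdots+1)$), this yields the general identity $[\Gamma_n,\alpha_M]=\alpha_{(t-1)M}$. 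I would then prove by induction on $k\geq 1$ that
\[
C^k\Gamma_n=\{\alpha_g\,\vert\,g\in(t-1)^kM_n\},
\]
the inductive step being immediate from the identity above once one observes that each $(t-1)^kM_n$ is again $t$-invariant.

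Now the degree count finishes the nilpotency claim. On $\mathbb{C}[z_0]$ the operator $t-1$ lowers degree by exactly one, since $(t-1)z_0^d=-d\,z_0^{d-1}+\cdots$. Hence $(t-1)^n z_0^n$ is a nonzero constant $c=(-1)^n\,n!$, so $c\in(t-1)^nM_n$ gives $C^n\Gamma_n\neq\{\mathrm{id}\}$, while $(t-1)^{n+1}z_0^n=(t-1)c=0$ forces $(t-1)^{n+1}M_n=0$ and $C^{n+1}\Gamma_n=\{\mathrm{id}\}$; thus the nilpotency length is exactly $n+1$. For torsion-freeness, if $g^N=\mathrm{id}$ with $N>0$ then $N\pi(g)=0$ forces $g\in A_n$, whence $g=\alpha_f$ with $\alpha_{Nf}=\mathrm{id}$, i.e. $Nf=0$, so $f=0$ and $g=\mathrm{id}$; here one only uses that $(\mathbb{C}[z_0],+)$ is torsion free.

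The main obstacle is the precise identification $C^k\Gamma_n=\alpha_{(t-1)^kM_n}$. One must verify the general commutator formula $[x,\alpha_m]=\alpha_{(t^j-1)m}$ and, crucially, argue that commuting with $s_1$ alone already generates the full $k$-th term of the descending central series—commutators internal to the abelian $A_n$ contributing nothing—while the $t$-invariance of $(t-1)^kM_n$ is exactly what lets one replace $(t^j-1)$ by $(t-1)$ at every stage. Everything else reduces to the elementary fact that the finite difference operator drops polynomial degree by precisely one, which pins down both the nonvanishing of $C^n\Gamma_n$ and the vanishing of $C^{n+1}\Gamma_n$.
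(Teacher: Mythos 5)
Your proof is correct and takes essentially the same route as the paper: both hinge on the identity $[s_1,\alpha_f]=\alpha_{\Delta f}$, the fact that the difference operator $\Delta=t-1$ lowers polynomial degree by exactly one, and the same witness $\alpha_{\Delta^n z_0^n}=\alpha_{(-1)^n n!}\neq\mathrm{id}$ for the lower bound, while torsion-freeness (which the paper dismisses as clear) is handled by the obvious projection argument you give. The only difference is one of packaging: where the paper sandwiches $\Gamma_n$ inside the larger group $\langle s_1,\mathrm{A}_n\rangle$ (with $\mathrm{A}_k$ the maps $\alpha_P$, $\deg P\leq k$) to get class at most $n+1$, you compute the lower central series of $\Gamma_n$ itself exactly as $C^k\Gamma_n=\alpha_{(t-1)^kM_n}$ --- a sharper but equivalent formulation of the same degree-filtration argument.
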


\begin{proof}
Let $\mathrm{A}_n$ be the abelian subgroup of the Jonqui\`eres group consisting 
of all $\alpha_P$ where $P$ ranges over polynomials of degree at most $n$.
The group $\mathrm{A}_n$ is normalized by $s_1$, and $[s_1,\mathrm{A}_n]\subset \mathrm{A}_{n-1}$ for 
$n\geq 1$ while $\mathrm{A}_0=\{\mathrm{id}\}$. Therefore, the largest group 
generated by $s_1$ and $\mathrm{A}_n$ is nilpotent of class at most $n+1$, and 
so is $\Gamma_n$.

Consider now the $n$-iterated group commutator given by
\[
[s_1,[s_1,\ldots,[s_1,\alpha_{z_0^n}]\ldots]
\]
It coincides with $\alpha_{\Delta^nz_0^n}$ where $\Delta$ is the discrete 
differential operator $\Delta P(z_0)=-P(z_0)+P(z_0-1)$. Remark that 
$\Delta^nz_0^n\not=0$ and $\Gamma_n$ is not $n$-nilpotent.

Clearly $\Gamma_n$ is torsion-free.
\end{proof}

The group
\[
\mathrm{G}=\langle s_1,\,\alpha_1,\mu_{z_0}\rangle\subset\mathrm{Bir}(\mathbb{P}^2_\mathbb{Q})
\]
satisfies the following properties:

\begin{pro}[\cite{Cornulier}]
The group $\mathrm{G}\subset\mathrm{Bir}(\mathbb{P}^2_\mathbb{Q})$ is 
solvable of length $3$, and is not linear over any field.
\end{pro}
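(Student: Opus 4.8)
The plan is to treat the two assertions separately: solvability of length $3$ by exhibiting an explicit abelian-by-metabelian structure on $\mathrm{G}$, and non-linearity by locating the subgroups $\Gamma_n$ inside $\mathrm{G}$ and invoking a bound on nilpotency class in linear groups.

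First I would handle the upper bound $\ell(\mathrm{G})\le 3$. Let $\mathrm{A}$ be the normal closure of $\alpha_1$ in $\mathrm{G}$. Using the relations (\ref{eq:uneetoile}) and (\ref{eq:deuxetoiles}), every conjugate of $\alpha_1^{\pm 1}$ by a generator is again of the form $\alpha_h$ with $h\in\mathbb{Q}(z_0)$: conjugation by $s_1$ turns $\alpha_{h(z_0)}$ into $\alpha_{h(z_0-1)}$, conjugation by $\mu_{z_0}$ turns $\alpha_h$ into $\alpha_{hz_0}$, and $\alpha_1$ commutes with each $\alpha_h$. Hence $\mathrm{A}$ consists of maps $\alpha_h$ and is abelian, since $\alpha_h\circ\alpha_{h'}=\alpha_{h+h'}$. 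The quotient $\mathrm{G}/\mathrm{A}$ is generated by the images of $s_1$ and $\mu_{z_0}$, so it is a quotient of $\langle s_1,\mu_{z_0}\rangle$; as the elements $\mu_{z_0-k}=s_1^{k}\circ\mu_{z_0}\circ s_1^{-k}$ all commute and are permuted by $s_1$, this group is (abelian)-by-(infinite cyclic), hence metabelian. Therefore $\mathrm{G}$ is (abelian)-by-(metabelian) and $\ell(\mathrm{G})\le 1+2=3$.

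To show the length is exactly $3$, I would produce a nontrivial element of $\mathrm{G}^{(2)}$. From (\ref{eq:uneetoile}) one reads off $[\mu_g,\alpha_f]=\alpha_{f(g-1)}$, whence $[\mu_{z_0},\alpha_1]=\alpha_{z_0-1}\in\mathrm{G}^{(1)}$, while $[s_1,\mu_{z_0}]=\mu_{(z_0-1)/z_0}\in\mathrm{G}^{(1)}$ by (\ref{eq:deuxetoiles}). Applying the commutator formula once more,
\[
\big[\mu_{(z_0-1)/z_0},\,\alpha_{z_0-1}\big]=\alpha_{(z_0-1)\big(\frac{z_0-1}{z_0}-1\big)}=\alpha_{-(z_0-1)/z_0}\neq\mathrm{id},
\]
so $\mathrm{G}^{(2)}\neq\{\mathrm{id}\}$ and $\ell(\mathrm{G})=3$. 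For the non-linearity the decisive point is that $\mathrm{G}$ contains every $\Gamma_n$: since $\mu_{z_0}^{\,n}=\mu_{z_0^n}$, we have $\mu_{z_0}^{\,n}\circ\alpha_1\circ\mu_{z_0}^{-n}=\alpha_{z_0^n}\in\mathrm{G}$, and together with $s_1\in\mathrm{G}$ this gives $\Gamma_n=\langle s_1,\alpha_{z_0^n}\rangle\subseteq\mathrm{G}$. By the preceding Lemma each $\Gamma_n$ is torsion free and nilpotent of class exactly $n+1$, so $\mathrm{G}$ contains nilpotent subgroups of arbitrarily large nilpotency class.

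The last step is to invoke the classical fact that for each $d$ there is a bound $c(d)$ so that every nilpotent subgroup of $\mathrm{GL}(d,K)$, over any field $K$, has nilpotency class at most $c(d)$; were $\mathrm{G}$ linear in some $\mathrm{GL}(d,K)$, the classes $n+1$ of the $\Gamma_n$ would all be $\le c(d)$, which is absurd. The hard part is precisely this bound: it must hold uniformly over all fields, including positive characteristic, and this is where the torsion-freeness of $\Gamma_n$ recorded earlier is used, reducing to the triangularizable case over $\overline{K}$, where the commutator subgroup is unitriangular and hence of class $<d$. Once that bound is secured, the contradiction with $n\to\infty$ completes the argument that $\mathrm{G}$ is not linear over any field.
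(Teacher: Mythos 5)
Your proposal follows the paper's route: the same relations give solvability, the same identity $\mu_{z_0}^n\circ\alpha_1\circ\mu_{z_0}^{-n}=\alpha_{z_0^n}$ puts every $\Gamma_n$ inside $\mathrm{G}$, and non-linearity is deduced from the unbounded nilpotency classes of the $\Gamma_n$. Your treatment of the upper bound $\ell(\mathrm{G})\leq 3$ (normal closure $\mathrm{A}$ of $\alpha_1$, abelian, with metabelian quotient generated by $s_1$ and the commuting family $\mu_{z_0-k}$) is a correctly fleshed-out version of the paper's one-line appeal to the relations, and your explicit commutator computation showing $\mathrm{G}^{(2)}\neq\{\mathrm{id}\}$ is a genuine addition: the paper asserts length $3$ but only proves length $\leq 3$.

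The one point that needs repair is your final lemma as stated: it is \emph{false} that every nilpotent subgroup of $\mathrm{GL}(d,K)$ has class bounded by a constant $c(d)$. The generalized quaternion groups $Q_{2^n}\subset\mathrm{SL}(2,\mathbb{C})$ are nilpotent of class $n-1$, so already $\mathrm{GL}(2,\mathbb{C})$ contains nilpotent subgroups of arbitrarily large class. Torsion-freeness is therefore a necessary \emph{hypothesis} of the lemma, not merely a tool used somewhere in its proof, and this is exactly why the paper's Lemma records that $\Gamma_n$ is torsion free. Your proof sketch of the bound is also too quick: knowing that a triangularizable nilpotent group $H$ has $[H,H]$ unitriangular bounds the class of $[H,H]$ by $d-1$, but does not bound the class of $H$ (there is no general inequality of the form $\mathrm{class}(H)\leq\mathrm{class}([H,H])+1$; a free nilpotent group of class $3$ on two generators has abelian derived subgroup). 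A correct route, valid in all characteristics: the Zariski closure of a nilpotent group of class $c$ is nilpotent of class $c$; a connected nilpotent algebraic group is the product of a central torus and a unipotent group, hence has class $\leq d-1$ by Kolchin's theorem; so $\Gamma_n\cap\overline{\Gamma_n}^{\,0}$ is a finite-index subgroup of $\Gamma_n$ of class $\leq d-1$, and Hall's isolator theorem for torsion-free nilpotent groups transfers this bound to $\Gamma_n$ itself, contradicting $\mathrm{class}(\Gamma_n)=n+1$ for large $n$. With that substitution your argument is complete and coincides with Cornulier's.
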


A consequence of this statement is Proposition \ref{thm:CerveauDesertinonlinear}.

\begin{proof}
Relations (\ref{eq:uneetoile}) and (\ref{eq:deuxetoiles}) imply that 
$\langle s_1,\,\alpha_f,\,\mu_g\,\vert\, f\in\mathbb{C}(z_0),\,g\in\mathbb{C}(z_0)^*\rangle$ 
is solvable of length at most three. The subgroup
\[
\langle s_1,\,\alpha_f,\,\mu_g\,\vert\,f\in\mathbb{C}(z_0),\,g=\displaystyle\prod_{n\in\mathbb{Z}}(z_0-n)^{k_n},\,k_n\text{ finitely supported }\rangle
\]
contains $\Gamma_n$, and is torsion free.

As $\mu_{z_0}^n\circ\alpha_1\circ\mu_{z_0}^{-n}=\alpha_{z_0^n}$, the group $\Gamma_n$ is contained 
in $\mathrm{G}$ for all $n$. But $\Gamma_n$ is nilpotent of length 
exactly $n+1$, hence $\mathrm{G}$ has no linear representation over any field.
\end{proof}

\bigskip

%%%%%%%%%%%%%%%%%%%%%%%%%%%%%%%%%%%%%%%%%%%%%%%%%%%%%%%%%%%%%%%%%%%%%%%%%%%%%%%%%%%%%%%%%%%%%%%%%%%%%%%%%%%%%%%%%%%
%%%%%%%%%%%%%%%%%%%%%%%%%%%%%%%%%%%%%%%%%%%%%%%%%%%%%%%%%%%%%%%%%%%%%%%%%%%%%%%%%%%%%%%%%%%%%%%%%%%%%%%%%%%%%%%%%%%
% section
%%%%%%%%%%%%%%%%%%%%%%%%%%%%%%%%%%%%%%%%%%%%%%%%%%%%%%%%%%%%%%%%%%%%%%%%%%%%%%%%%%%%%%%%%%%%%%%%%%%%%%%%%%%%%%%%%%%
%%%%%%%%%%%%%%%%%%%%%%%%%%%%%%%%%%%%%%%%%%%%%%%%%%%%%%%%%%%%%%%%%%%%%%%%%%%%%%%%%%%%%%%%%%%%%%%%%%%%%%%%%%%%%%%%%%%

\section{The Cremona group is perfect}\label{section:perfect}

In this section let us prove the following statement

\begin{thm}[\cite{CerveauDeserti:ptdegre}]\label{thm:parfait}
  The plane Cremona group is perfect, {\it i.e.} the
  commutator subgroup of $\mathrm{Bir}(\mathbb{P}^2_\mathbb{C})$
  is $\mathrm{Bir}(\mathbb{P}^2_\mathbb{C})$:
  \[
\big[\mathrm{Bir}(\mathbb{P}^2_\mathbb{C}),\,\mathrm{Bir}(\mathbb{P}^2_\mathbb{C})\big]=\mathrm{Bir}(\mathbb{P}^2_\mathbb{C}).
  \]
\end{thm}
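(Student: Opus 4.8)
The plan is to show that the abelianization $\mathrm{Bir}(\mathbb{P}^2_\mathbb{C})^{\mathrm{ab}}=\mathrm{Bir}(\mathbb{P}^2_\mathbb{C})/[\mathrm{Bir}(\mathbb{P}^2_\mathbb{C}),\mathrm{Bir}(\mathbb{P}^2_\mathbb{C})]$ is trivial. By the Noether and Castelnuovo Theorem \ref{thm:noether} the group is generated by $\mathrm{PGL}(3,\mathbb{C})=\mathrm{Aut}(\mathbb{P}^2_\mathbb{C})$ together with the standard quadratic involution $\sigma_2$, so the abelianization is generated by the images of these two pieces. First I would observe that $\mathrm{PGL}(3,\mathbb{C})$ already lies in the commutator subgroup: since $\mathbb{C}$ is algebraically closed every scalar is a cube, so $\mathrm{PGL}(3,\mathbb{C})=\mathrm{PSL}(3,\mathbb{C})$, and this last group is simple and non-abelian, hence perfect (this is exactly the type of fact recorded in the proof of Theorem \ref{thm:BlancZimmermann}). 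Consequently the image of $\mathrm{PGL}(3,\mathbb{C})$ in $\mathrm{Bir}(\mathbb{P}^2_\mathbb{C})^{\mathrm{ab}}$ is trivial, and the abelianization reduces to the cyclic group generated by the class $\bar\sigma_2$ of $\sigma_2$.

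It then remains to prove $\bar\sigma_2=1$, and for this I would extract from two relations the fact that $\bar\sigma_2$ is annihilated by both $2$ and $3$. The relation $\sigma_2^2=\mathrm{id}$ gives $\bar\sigma_2^{\,2}=1$. On the other hand, the relation $(\mathcal{R}_5)$ of Theorem \ref{thm:UrechZimmermann}, namely $(\sigma_2\circ h)^3=\mathrm{id}$ with the linear map $h\colon(z_0:z_1:z_2)\mapsto(z_2-z_0:z_2-z_1:z_2)\in\mathrm{PGL}(3,\mathbb{C})$, abelianizes — using $\bar h=1$ and commutativity — to $\bar\sigma_2^{\,3}=1$. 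Since $\gcd(2,3)=1$, these force $\bar\sigma_2=\bar\sigma_2^{\,3}(\bar\sigma_2^{\,2})^{-1}=1$, so $\mathrm{Bir}(\mathbb{P}^2_\mathbb{C})^{\mathrm{ab}}$ is trivial and the group is perfect. One can avoid invoking the full presentation of Theorem \ref{thm:UrechZimmermann} by verifying the single identity $(\sigma_2\circ h)^3=\mathrm{id}$ directly, as it is one of the relations coming from $\mathrm{Aut}(\mathbb{P}^1_\mathbb{C}\times\mathbb{P}^1_\mathbb{C})$.

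The only substantial ingredients are the perfectness of $\mathrm{PGL}(3,\mathbb{C})$ — classical — and the production of a relation of \emph{odd} length in $\sigma_2$ modulo $\mathrm{PGL}(3,\mathbb{C})$. The main obstacle is therefore conceptual rather than computational: one must ensure $\bar\sigma_2$ is actually trivial and not merely of order at most $2$, and the length-$3$ relation $(\mathcal{R}_5)$ is precisely what rules out a residual $\mathbb{Z}/2\mathbb{Z}$. A more computational alternative, closer to the normal-subgroup theme announced for this section, would exhibit the needed elements as explicit products of commutators; for instance a direct check gives $[d,\sigma_2]=\sigma_2\, d\, \sigma_2\, d^{-1}\in\mathrm{D}_2$ sweeping out all diagonal scalings as $d$ ranges over $\mathrm{D}_2$, illustrating how conjugation by $\sigma_2$ interacts with linear maps. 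However, such identities only recover elements already known to be commutators, so the abelianization computation above remains the shortest complete route to the statement.
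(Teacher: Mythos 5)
Your proof is correct, but it takes a genuinely different route from the paper's. You compute the abelianization directly: Noether--Castelnuovo reduces it to the images of $\mathrm{PGL}(3,\mathbb{C})$ and $\sigma_2$; perfectness of the simple group $\mathrm{PGL}(3,\mathbb{C})=\mathrm{PSL}(3,\mathbb{C})$ kills the linear part; and the two relations $\sigma_2^2=\mathrm{id}$ and $(\sigma_2\circ h)^3=\mathrm{id}$ (which indeed holds and can be checked by hand, since in the affine chart $z_2=1$ each coordinate of $\sigma_2\circ h$ is the order-$3$ M\"obius map $z\mapsto 1/(1-z)$) force $\bar\sigma_2^{\,2}=\bar\sigma_2^{\,3}=1$, hence $\bar\sigma_2=1$. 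The paper instead argues by normal generation: simplicity of $\mathrm{PGL}(3,\mathbb{C})$ lets one write each linear factor in a Noether--Castelnuovo decomposition as a product of conjugates of the involution $(z_0,z_1)\mapsto(-z_0,-z_1)$, which is conjugate to $\sigma_2$, giving $\ll\sigma_2\gg_{\mathrm{Bir}(\mathbb{P}^2_\mathbb{C})}=\mathrm{Bir}(\mathbb{P}^2_\mathbb{C})$ (Proposition \ref{pro:parfait1}) and symmetrically $\ll A\gg_{\mathrm{Bir}(\mathbb{P}^2_\mathbb{C})}=\mathrm{Bir}(\mathbb{P}^2_\mathbb{C})$ for every non-trivial $A\in\mathrm{PGL}(3,\mathbb{C})$ (Proposition \ref{pro:parfait2}); it then exhibits the translation $(z_0,z_1)\mapsto(z_0,z_1+1)$ as an explicit commutator, so the commutator subgroup, being normal and containing it, is everything. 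Your route is shorter and isolates exactly the needed input (one relation of odd length in $\sigma_2$ modulo $\mathrm{PGL}(3,\mathbb{C})$, which correctly rules out a residual $\mathbb{Z}/2\mathbb{Z}$); the paper's route buys more, namely the normal-generation statements of Propositions \ref{pro:parfait1}--\ref{pro:parfait2} and Corollary \ref{cor:parfait}, which are reused later (e.g.\ in \S\ref{CantatLamy:passimple}), and in fact shows the group is normally generated by a single commutator. One small slip in your closing aside: with the paper's convention $[g,h]=g\circ h\circ g^{-1}\circ h^{-1}$ and $\sigma_2\circ d\circ\sigma_2=d^{-1}$ for $d\in\mathrm{D}_2$, one gets $[d,\sigma_2]=d^{2}$ (not an arbitrary diagonal map), though these squares do sweep out all of $\mathrm{D}_2$ since $\mathbb{C}^*$ is divisible; as you say, this remark is not load-bearing.
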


Let $\mathrm{G}$ be a group, and let $g$ be an element of
$\mathrm{G}$. We denote by $\ll g\gg_{\mathrm{G}}$\index{not}{$\ll g\gg_{\mathrm{G}}$}
the \textsl{normal subgroup of $\mathrm{G}$ generated by $g$}\index{defi}{normal 
subgroup generated by an element}:
\[
\ll g\gg_{\mathrm{G}}=\langle h\circ g\circ h^{-1},\,h\circ g^{-1}\circ h^{-1}\,\vert\,h\in \mathrm{G}\rangle.
\]
Since $\mathrm{PGL}(3,\mathbb{C})$ is simple then
\begin{equation}\label{eq:simple}
\ll A\gg_{\mathrm{PGL}(3,\mathbb{C})}=\mathrm{PGL}(3,\mathbb{C})
\end{equation}
for any
non-trivial element $A$ of $\mathrm{PGL}(3,\mathbb{C})$.
Consider now a birational self map $\phi$ of
$\mathrm{Bir}(\mathbb{P}^2_\mathbb{C})$. The Noether and Castelnuovo
Theorem implies that
\begin{equation}\label{eq:noether}
\phi=(A_1)\circ\sigma_2\circ A_2\circ\sigma_2\circ A_3\circ\ldots\circ A_n\circ(\sigma_2)
\end{equation}
with $A_i\in\mathrm{PGL}(3,\mathbb{C})$. The relation (\ref{eq:simple})
implies that
\[
\ll(z_0,z_1)\mapsto(-z_0,-z_1)\gg_{\mathrm{PGL}(3,\mathbb{C})}=\mathrm{PGL}(3,\mathbb{C});
\]
thus any $A_i$ in (\ref{eq:noether}) can be written
\begin{eqnarray*}
& &B_1\circ\big((z_0,z_1)\mapsto(-z_0,-z_1)\big)\circ B_1^{-1}\circ B_2\circ\big((z_0,z_1)\mapsto(-z_0,-z_1)\big)\circ B_2^{-1}\\
& & \hspace{0.3cm}\circ\ldots\circ B_n\circ\big((z_0,z_1)\mapsto(-z_0,-z_1)\big)\circ B_n^{-1}
\end{eqnarray*}
with $B_i\in\mathrm{PGL}(3,\mathbb{C})$. The involutions
$(z_0,z_1)\mapsto(-z_0,-z_1)$ and $\sigma_2$ being conjugate via
$(z_0,z_1)\mapsto\left(\frac{z_0+1}{z_0-1},\frac{z_1+1}{z_1-1}\right)\in\mathrm{PGL}(2,\mathbb{C})\times\mathrm{PGL}(2,\mathbb{C})$
any element of $\mathrm{Bir}(\mathbb{P}^2_\mathbb{C})$ can be written as
a composition of $\mathrm{Bir}(\mathbb{P}^2_\mathbb{C})$-conjugates of $\sigma_2$.
As a consequence one has

\begin{pro}[\cite{CerveauDeserti:ptdegre}]\label{pro:parfait1}
  The normal subgroup of $\mathrm{Bir}(\mathbb{P}^2_\mathbb{C})$
  generated by $\sigma_2$ in $\mathrm{Bir}(\mathbb{P}^2_\mathbb{C})$
  is $\mathrm{Bir}(\mathbb{P}^2_\mathbb{C})$:
  \[
  \ll \sigma_2\gg_{\mathrm{Bir}(\mathbb{P}^2_\mathbb{C})}=\mathrm{Bir}(\mathbb{P}^2_\mathbb{C}).
  \]
\end{pro}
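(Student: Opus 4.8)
The plan is to exploit the Noether--Castelnuovo theorem, which gives that $\mathrm{Bir}(\mathbb{P}^2_\mathbb{C})$ is generated by $\mathrm{PGL}(3,\mathbb{C})$ and $\sigma_2$. Writing $N=\ll\sigma_2\gg_{\mathrm{Bir}(\mathbb{P}^2_\mathbb{C})}$, it therefore suffices to show that $N$ contains both of these generating pieces; since $\sigma_2\in N$ by definition, the whole problem reduces to proving $\mathrm{PGL}(3,\mathbb{C})\subseteq N$.

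First I would use that $\mathrm{PGL}(3,\mathbb{C})$ is simple as an abstract group, so the normal closure (inside $\mathrm{PGL}(3,\mathbb{C})$) of any non-trivial element is all of $\mathrm{PGL}(3,\mathbb{C})$. Moreover, because $N$ is normal in the ambient group $\mathrm{Bir}(\mathbb{P}^2_\mathbb{C})$ and $\mathrm{PGL}(3,\mathbb{C})$ sits inside it, conjugating any element of $N$ by elements of $\mathrm{PGL}(3,\mathbb{C})$ keeps us inside $N$. Hence if $N$ meets $\mathrm{PGL}(3,\mathbb{C})$ in a single non-trivial element $\iota$, then $\ll\iota\gg_{\mathrm{PGL}(3,\mathbb{C})}=\mathrm{PGL}(3,\mathbb{C})\subseteq N$. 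The task thus narrows to exhibiting one non-trivial linear map lying in $N$.

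The key step is to produce such an element as a conjugate of $\sigma_2$. The natural candidate is the involution $\iota\colon(z_0,z_1)\mapsto(-z_0,-z_1)$ in the affine chart $z_2=1$. In that chart $\sigma_2$ reads $(z_0,z_1)\mapsto(1/z_0,1/z_1)$, and I would conjugate by the coordinatewise M\"obius map $h\colon(z_0,z_1)\mapsto\left(\frac{z_0+1}{z_0-1},\frac{z_1+1}{z_1-1}\right)$, which belongs to $\mathrm{PGL}(2,\mathbb{C})\times\mathrm{PGL}(2,\mathbb{C})\subset\mathrm{Bir}(\mathbb{P}^2_\mathbb{C})$. The verification is the one-variable identity $m(1/z)=-m(z)$ for $m(z)=\frac{z+1}{z-1}$, together with the fact that $m$ is an involution, which together yield $h\circ\sigma_2\circ h^{-1}=\iota$. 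Consequently $\iota\in N$, and combining with the previous paragraph gives $\mathrm{PGL}(3,\mathbb{C})\subseteq N$.

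Finally I would assemble the pieces: $N$ contains $\sigma_2$ together with all of $\mathrm{PGL}(3,\mathbb{C})$, which generate $\mathrm{Bir}(\mathbb{P}^2_\mathbb{C})$, and since $N$ is a subgroup this forces $N=\mathrm{Bir}(\mathbb{P}^2_\mathbb{C})$. The only genuinely delicate point is the reduction to a single linear element: one must be careful that the normal closure taken \emph{inside} $\mathrm{PGL}(3,\mathbb{C})$ really lands in $N$, which is precisely where the normality of $N$ in the full Cremona group (rather than merely inside $\mathrm{PGL}(3,\mathbb{C})$) is used. The conjugacy computation itself is short and routine.
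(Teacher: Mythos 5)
Your proposal is correct and takes essentially the same route as the paper: both arguments rest on the simplicity of $\mathrm{PGL}(3,\mathbb{C})$ combined with the conjugation of $\sigma_2$ to the linear involution $(z_0,z_1)\mapsto(-z_0,-z_1)$ via the very same map $(z_0,z_1)\mapsto\left(\frac{z_0+1}{z_0-1},\frac{z_1+1}{z_1-1}\right)$, and then invoke the Noether--Castelnuovo theorem. The only difference is presentational: the paper rewrites each linear factor $A_i$ in a Noether--Castelnuovo decomposition as a product of conjugates of $\sigma_2$, whereas you package the identical content as ``the normal subgroup $\ll\sigma_2\gg$ contains a generating set, hence is everything.''
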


Consider now a non-trivial automorphism $A$ of $\mathbb{P}^2_\mathbb{C}$.
As $\ll A\gg_{\mathrm{PGL}(3,\mathbb{C})}=\mathrm{PGL}(3,\mathbb{C})$
(\emph{see} (\ref{eq:simple})) the involution $(z_0,z_1)\mapsto(-z_0,-z_1)$
can be written as a composition of $\mathrm{PGL}(3,\mathbb{C})$-conjugates of $A$.
Since $(z_0,z_1)\mapsto(-z_0,-z_1)$ and $\sigma_2$ are conjugate via
$(z_0,z_1)\mapsto\left(\frac{z_0+1}{z_0-1},\frac{z_1+1}{z_1-1}\right)\in\mathrm{PGL}(2,\mathbb{C})\times\mathrm{PGL}(2,\mathbb{C})$
one gets
\[
\sigma_2=\varphi_1\circ A\circ\varphi_1^{-1}\circ\varphi_2\circ A\circ\varphi_2^{-1}\circ\ldots\circ\varphi_n\circ A\circ\varphi_n^{-1}
\]
with $\varphi_i\in\mathrm{Bir}(\mathbb{P}^2_\mathbb{C})$. As a consequence
the inclusion
$\ll \sigma_2\gg_{\mathrm{Bir}(\mathbb{P}^2_\mathbb{C})}\subset \ll A\gg_{\mathrm{Bir}(\mathbb{P}^2_\mathbb{C})}$ 
holds. But $\ll \sigma_2\gg_{\mathrm{Bir}(\mathbb{P}^2_\mathbb{C})}=\mathrm{Bir}(\mathbb{P}^2_\mathbb{C})$ (Proposition \ref{pro:parfait1}) hence

\begin{pro}[\cite{CerveauDeserti:ptdegre}]\label{pro:parfait2}
    Let $A$ be a non-trivial automorphism of the complex projective
    plane. Then
    \[
    \ll A\gg_{\mathrm{Bir}(\mathbb{P}^2_\mathbb{C})}=\mathrm{Bir}(\mathbb{P}^2_\mathbb{C}).
    \]
\end{pro}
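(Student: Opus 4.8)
The plan is to reduce the statement to Proposition \ref{pro:parfait1}, which already asserts that $\ll\sigma_2\gg_{\mathrm{Bir}(\mathbb{P}^2_\mathbb{C})}=\mathrm{Bir}(\mathbb{P}^2_\mathbb{C})$. It therefore suffices to prove that $\sigma_2$ lies in the normal subgroup $\ll A\gg_{\mathrm{Bir}(\mathbb{P}^2_\mathbb{C})}$. Indeed, once $\sigma_2\in\ll A\gg_{\mathrm{Bir}(\mathbb{P}^2_\mathbb{C})}$ is established, normality gives the chain $\ll\sigma_2\gg_{\mathrm{Bir}(\mathbb{P}^2_\mathbb{C})}\subseteq\ll A\gg_{\mathrm{Bir}(\mathbb{P}^2_\mathbb{C})}\subseteq\mathrm{Bir}(\mathbb{P}^2_\mathbb{C})$, and Proposition \ref{pro:parfait1} collapses both inclusions to equalities, yielding the claim.

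First I would exploit the abstract simplicity of $\mathrm{PGL}(3,\mathbb{C})$. Since $A$ is non-trivial, relation (\ref{eq:simple}) gives $\ll A\gg_{\mathrm{PGL}(3,\mathbb{C})}=\mathrm{PGL}(3,\mathbb{C})$; in particular the distinguished linear involution $\iota\colon(z_0,z_1)\mapsto(-z_0,-z_1)$ is a finite product of $\mathrm{PGL}(3,\mathbb{C})$-conjugates of $A$ and $A^{-1}$. As conjugation by an element of $\mathrm{PGL}(3,\mathbb{C})$ is merely a special case of conjugation inside $\mathrm{Bir}(\mathbb{P}^2_\mathbb{C})$, this same expression exhibits $\iota$ as a member of $\ll A\gg_{\mathrm{Bir}(\mathbb{P}^2_\mathbb{C})}$.

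The decisive step is the bridge from the linear involution $\iota$ to the quadratic involution $\sigma_2$. I would verify directly that $\iota$ and $\sigma_2$ are conjugate \emph{within the Cremona group}: writing $\sigma_2$ in the affine chart $z_2=1$ as $(z_0,z_1)\dashrightarrow(1/z_0,1/z_1)$ and conjugating each coordinate by the M\"obius map $t\mapsto\frac{t+1}{t-1}$ --- that is, by $\phi\colon(z_0,z_1)\mapsto\left(\frac{z_0+1}{z_0-1},\frac{z_1+1}{z_1-1}\right)$, which belongs to the copy of $\mathrm{PGL}(2,\mathbb{C})\times\mathrm{PGL}(2,\mathbb{C})\subset\mathrm{Bir}(\mathbb{P}^2_\mathbb{C})$ --- sends $t\mapsto 1/t$ to $t\mapsto -t$, so that $\phi\circ\sigma_2\circ\phi^{-1}=\iota$. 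Because $\iota\in\ll A\gg_{\mathrm{Bir}(\mathbb{P}^2_\mathbb{C})}$ and this subgroup is normal, $\sigma_2=\phi^{-1}\circ\iota\circ\phi$ lies in it as well, which is exactly the reduction demanded above.

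The genuinely substantive input is not in these manipulations but in the two facts I am permitted to assume: the simplicity of $\mathrm{PGL}(3,\mathbb{C})$ as an abstract group, and Proposition \ref{pro:parfait1}, whose proof rests on the Noether--Castelnuovo Theorem. Granting these, the only point calling for actual verification is the explicit identity $\phi\circ\sigma_2\circ\phi^{-1}=\iota$, and this is precisely where the transition from the linear group to the full Cremona group is realized; once that conjugacy is confirmed the argument closes at once.
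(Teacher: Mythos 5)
Your proof is correct and follows essentially the same route as the paper: simplicity of $\mathrm{PGL}(3,\mathbb{C})$ to write $\iota\colon(z_0,z_1)\mapsto(-z_0,-z_1)$ as a product of linear conjugates of $A$, the conjugacy $\phi\circ\sigma_2\circ\phi^{-1}=\iota$ via the same element $\phi$ of $\mathrm{PGL}(2,\mathbb{C})\times\mathrm{PGL}(2,\mathbb{C})$, and the reduction to Proposition \ref{pro:parfait1}. Your explicit verification of the M\"obius conjugation $t\mapsto\frac{t+1}{t-1}$ sending $t\mapsto 1/t$ to $t\mapsto -t$ is accurate and merely makes explicit what the paper asserts without computation.
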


According to (\ref{eq:noether}) and Proposition \ref{pro:parfait2} one has

\begin{cor}\label{cor:parfait}
  Any birational self map of $\mathbb{P}^2_\mathbb{C}$ can be written as the
  composition of $\mathrm{Bir}(\mathbb{P}^2_\mathbb{C})$-conjugates of the
  translation $(z_0,z_1)\mapsto(z_0,z_1+1)$.
\end{cor}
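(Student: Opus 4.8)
The plan is to derive Corollary~\ref{cor:parfait} directly from Proposition~\ref{pro:parfait2}, since the translation $T\colon(z_0,z_1)\mapsto(z_0,z_1+1)$ is precisely the kind of object that proposition concerns. First I would record that, written in homogeneous coordinates as $(z_0:z_1:z_2)\dashrightarrow(z_0:z_1+z_2:z_2)$, the map $T$ is a non-trivial element of $\mathrm{Aut}(\mathbb{P}^2_\mathbb{C})=\mathrm{PGL}(3,\mathbb{C})$. Applying Proposition~\ref{pro:parfait2} with $A=T$ then gives at once
\[
\ll T\gg_{\mathrm{Bir}(\mathbb{P}^2_\mathbb{C})}=\mathrm{Bir}(\mathbb{P}^2_\mathbb{C}),
\]
so that every $\phi\in\mathrm{Bir}(\mathbb{P}^2_\mathbb{C})$ is a finite composition of elements of the form $h\circ T^{\pm1}\circ h^{-1}$ with $h\in\mathrm{Bir}(\mathbb{P}^2_\mathbb{C})$.

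The only remaining point is to arrange that the factors involving $T^{-1}$ can also be presented as conjugates of $T$ itself, so that no inverses appear. For this I would exhibit a single element conjugating $T$ to $T^{-1}$: the linear involution $S\colon(z_0,z_1)\mapsto(z_0,-z_1)$ satisfies $S\circ T\circ S^{-1}=T^{-1}$. Consequently $h\circ T^{-1}\circ h^{-1}=(h\circ S)\circ T\circ(h\circ S)^{-1}$ is again a $\mathrm{Bir}(\mathbb{P}^2_\mathbb{C})$-conjugate of $T$, and replacing each such factor in the expression above exhibits $\phi$ as a finite composition of conjugates of $T$ alone.

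There is really no serious obstacle here: the statement is a formal consequence of Proposition~\ref{pro:parfait2} together with the elementary observation $T\sim T^{-1}$. If one prefers a self-contained route bypassing Proposition~\ref{pro:parfait2}, the alternative would be to combine Proposition~\ref{pro:parfait1} (every birational map is a composition of conjugates of $\sigma_2$) with the fact, extracted from the proof of Proposition~\ref{pro:parfait2}, that $\sigma_2$ is itself a composition of conjugates of $T^{\pm1}$; the same conjugacy $S\circ T\circ S^{-1}=T^{-1}$ then removes the inverses. The most delicate point to state carefully is simply that the definition of the normal closure $\ll T\gg$ involves both $T$ and $T^{-1}$, so the conjugacy $T\sim T^{-1}$ is what genuinely closes the argument rather than being a cosmetic remark.
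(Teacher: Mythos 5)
Your proposal is correct and takes essentially the same route as the paper, which likewise obtains the corollary by applying Proposition~\ref{pro:parfait2} to the non-trivial linear map $T\colon(z_0,z_1)\mapsto(z_0,z_1+1)$. Your explicit elimination of the $T^{-1}$-factors via $S\circ T\circ S^{-1}=T^{-1}$ with $S\colon(z_0,z_1)\mapsto(z_0,-z_1)$ cleanly settles a point the paper leaves implicit, since the definition of $\ll T\gg_{\mathrm{Bir}(\mathbb{P}^2_\mathbb{C})}$ involves conjugates of both $T$ and $T^{-1}$.
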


But the translation $(z_0,z_1)\mapsto(z_0,z_1+1)$ is a commutator
\[
\Big((z_0,z_1)\mapsto(z_0,z_1+1)\Big)=\left[(z_0,z_1)\mapsto(z_0,3z_1),\,(z_0,z_1)\mapsto\left(z_0,\frac{z_1+1}{2}\right)\right]
\]
and Corollary \ref{cor:parfait} thus implies Theorem \ref{thm:parfait}.

\bigskip

%%%%%%%%%%%%%%%%%%%%%%%%%%%%%%%%%%%%%%%%%%%%%%%%%%%%%%%%%%%%%%%%%%%%%%%%%%%%%%%%%%%%%%%%%%%%%%%%%%%%%%%%%%%%%%%%%%%
%%%%%%%%%%%%%%%%%%%%%%%%%%%%%%%%%%%%%%%%%%%%%%%%%%%%%%%%%%%%%%%%%%%%%%%%%%%%%%%%%%%%%%%%%%%%%%%%%%%%%%%%%%%%%%%%%%%
% section
%%%%%%%%%%%%%%%%%%%%%%%%%%%%%%%%%%%%%%%%%%%%%%%%%%%%%%%%%%%%%%%%%%%%%%%%%%%%%%%%%%%%%%%%%%%%%%%%%%%%%%%%%%%%%%%%%%%
%%%%%%%%%%%%%%%%%%%%%%%%%%%%%%%%%%%%%%%%%%%%%%%%%%%%%%%%%%%%%%%%%%%%%%%%%%%%%%%%%%%%%%%%%%%%%%%%%%%%%%%%%%%%%%%%%%%

\section{Representations of $\mathrm{SL}(n,\mathbb{Z})$ into $\mathrm{Bir}(\mathbb{P}^2_\mathbb{C})$ for $n\geq 3$}

We will now give a sketch of the proofs of 
Theorem \ref{thm:IMRN} and Corollary \ref{cor:IMRN}.

Let us introduce some notations. Given 
$A\in\mathrm{Aut}(\mathbb{P}^2_\mathbb{C})=\mathrm{PGL}(3,\mathbb{C})$ we denote by 
${}^{t}\!\,A$ the linear transpose of $A$\index{not}{${}^{t}$}. 
The involution 
\[
A\mapsto A^\vee=({}^{t}\!\,A)^{-1}
\]\index{not}{$^\vee$}
determines an exterior and algebraic automorphism of the group
$\mathrm{Aut}(\mathbb{P}^2_\mathbb{C})$ (\emph{see} \cite{Dieudonne}).

Let us recall some properties about the 
groups $\mathrm{SL}(n,\mathbb{Z})$ 
(\emph{see for instance} \cite{Steinberg}).
For any integer $q$ let us introduce the 
morphism
\[
\Theta_q\colon\mathrm{SL}(n,\mathbb{Z})\to\mathrm{SL}\left(n,\faktor{\mathbb{Z}}{q\mathbb{Z}}\right)
\]
induced by the reduction modulo $q$ morphism
$\mathbb{Z}\to\faktor{\mathbb{Z}}{q\mathbb{Z}}$. 
Denote by $\Gamma(n,q)$\index{not}{$\Gamma(n,q)$}
the kernel of $\Theta_q$ and by 
$\widetilde{\Gamma}(n,q)$\index{not}{$\widetilde{\Gamma}(n,q)$}
the reciprocical image of the subgroup of
diagonal matrices of 
$\mathrm{SL}\left(n,\faktor{\mathbb{Z}}{q\mathbb{Z}}\right)$ 
by $\Theta_q$. The $\Gamma(n,q)$ are normal 
subgroups called
\textsl{congruence subgroups}\index{defi}{congruence subgroups of $\mathrm{SL}(n,\mathbb{Z})$}.

\begin{thm}[\cite{Steinberg}]\label{thm:structsl}
Let $n\geq 3$ be an integer. Let $\Gamma$ be a 
subgroup of $\mathrm{SL}(n,\mathbb{Z})$. 

If $\Gamma$ has finite index, there exists
an integer $q$ such that the following 
inclusions hold
\[
\Gamma(n,q)\subset\Gamma\subset\widetilde{\Gamma}(n,q).
\]

If $\Gamma$ has infinite index, then 
$\Gamma$ is finite.
\end{thm}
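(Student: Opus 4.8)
The statement separates into the finite-index and the infinite-index alternatives, and in both the essential input is a rigidity property of $\mathrm{SL}(n,\mathbb{Z})$ available only when $n\geq 3$. The plan is therefore to isolate the two deep external results and to reduce the stated inclusions (and the dichotomy) to them; the genuine weight sits entirely in those results, so the main obstacle is not a computation but the invocation of the correct theorems. I would present the argument in exactly this order: first record the two cited facts, then deduce the two inclusions in the finite-index case, and finally dispatch the infinite-index case.

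For the finite-index case I would rely on the Congruence Subgroup Property for $\mathrm{SL}(n,\mathbb{Z})$, $n\geq 3$ (Bass--Milnor--Serre, Mennicke): every finite-index subgroup contains a principal congruence subgroup $\Gamma(n,q)$. Applied to $\Gamma$ this yields at once the left-hand inclusion $\Gamma(n,q)\subset\Gamma$ at some level $q$. For the right-hand inclusion I would pass to the finite quotient $\mathrm{SL}(n,\mathbb{Z}/q\mathbb{Z})\cong\mathrm{SL}(n,\mathbb{Z})/\Gamma(n,q)$ and analyze the finite image $\Theta_q(\Gamma)$, enlarging $q$ to a suitable multiple so that this image is driven into the diagonal subgroup; since $\Theta_q$ is surjective with kernel $\Gamma(n,q)$ and $\widetilde{\Gamma}(n,q)$ is by definition the full preimage of the diagonal, one then gets $\Gamma=\Theta_q^{-1}(\Theta_q(\Gamma))\subset\widetilde{\Gamma}(n,q)$. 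This reduction to the finite group is formal once the Congruence Subgroup Property is granted.

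For the infinite-index case the relevant input is the normal subgroup theorem of Margulis for higher-rank lattices, which in the arithmetic model asserts that a normal subgroup of $\mathrm{SL}(n,\mathbb{Z})$, $n\geq 3$, is either of finite index or central (hence finite). This is the form in which the statement is used, $\Gamma$ arising as a normal or congruence subgroup in the applications to Theorem \ref{thm:IMRN} and Corollary \ref{cor:IMRN}: an infinite-index subgroup cannot be of finite index, so the dichotomy leaves only the finite (central) alternative. The hypothesis $n\geq 3$ is indispensable here and in the previous step, and this is precisely what makes Corollary \ref{cor:IMRN} single out $n\leq 3$: for $n=2$ both the Congruence Subgroup Property and the normal subgroup theorem fail, since $\mathrm{SL}(2,\mathbb{Z})$ is virtually free and carries abundant infinite normal subgroups of infinite index as well as non-congruence subgroups of finite index.

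The one step I would cite rather than reprove is the pair formed by the Congruence Subgroup Property and Margulis's theorem; this is the hard part, and no elementary route avoids it. Everything else in the proof is a bookkeeping manipulation of the reduction maps $\Theta_q$ and their kernels $\Gamma(n,q)$, so after granting the two cited theorems the remaining verifications are routine and I would not grind through them.
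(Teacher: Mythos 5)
The paper offers no internal proof to compare against: Theorem \ref{thm:structsl} is imported wholesale from \cite{Steinberg}, where it is the Bass--Milnor--Serre/Mennicke description of \emph{normal} subgroups of $\mathrm{SL}(n,\mathbb{Z})$, $n\geq 3$. So your decision to outsource the hard content is in the same spirit as the paper; the problem is that your assembly of the cited inputs contains a genuine gap, precisely at the point where the missing normality hypothesis matters.

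The right-hand inclusion $\Gamma\subset\widetilde{\Gamma}(n,q)$ is simply false for arbitrary finite-index subgroups, and your repair --- ``enlarging $q$ to a suitable multiple so that this image is driven into the diagonal subgroup'' --- cannot work: enlarging $q$ only enlarges the quotient, and a fixed non-diagonal element of $\Gamma$ stays non-diagonal modulo every $q\geq 2$. Concretely, take $\Gamma=\Theta_2^{-1}(B)$ where $B\subset\mathrm{SL}\left(n,\faktor{\mathbb{Z}}{2\mathbb{Z}}\right)$ is the upper triangular subgroup. Then $\Gamma$ has finite index, contains $\mathrm{e}_{12}$, and $\Gamma(n,q)\subset\Gamma$ forces $q$ even; but $\Theta_q(\mathrm{e}_{12})=\mathrm{id}+\delta_{1,2}$ is non-diagonal for every such $q$, so no level realizes the sandwich. (The infinite-index clause likewise fails without normality: $\langle\mathrm{e}_{12}\rangle$ is infinite of infinite index.) The statement must therefore be read with $\Gamma$ normal --- which is how \cite{Steinberg} states it, and which suffices for the paper's applications since Corollary \ref{cor:IMRN} only uses the left inclusion, valid for all finite-index subgroups by the congruence subgroup property applied to the normal core. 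Once normality is granted, the right-hand inclusion is not obtained by adjusting $q$: one takes $q$ to be the exact level of $\Gamma$ and shows by commutator manipulations with elementary matrices (Mennicke symbols) that $\Theta_q(\Gamma)$ is central, hence scalar, hence diagonal in $\mathrm{SL}\left(n,\faktor{\mathbb{Z}}{q\mathbb{Z}}\right)$. The same elementary analysis delivers the infinite-index dichotomy (level $q=0$ forces $\Gamma$ central, hence finite), so your appeal to Margulis's normal subgroup theorem, while not wrong for normal subgroups, is both heavier than the cited source and insufficient as stated: observing that normality holds ``in the applications'' reinterprets the theorem rather than proving it as written.
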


Take $1\leq i,\,j\leq n$, $i\not=j$. 
Let us denote by $\delta_{ij}$ the 
$n\times n$ Kronecker matrix and
set $\mathrm{e}_{ij}=\mathrm{id}+\delta_{i,j}$.
\index{not}{$\mathrm{e}_{ij}$}

\begin{pro}[\cite{Steinberg}]
The group $\mathrm{SL}(3,\mathbb{Z})$ has 
the following presentation
\[
\langle \mathrm{e}_{ij}\,\vert\,
[\mathrm{e}_{ij},\mathrm{e}_{k\ell}]=\left\{\begin{array}{lll}
\mathrm{id}\text{ if $i\not=\ell$ and $j\not=k$}\\
\mathrm{e}_{i\ell}\text{ if $i\not=\ell$ and $j=k$}\\
\mathrm{e}_{kj}^{-1}\text{ if $i=\ell$ and $j\not=k$}
\end{array}\right.,\,(\mathrm{e}_{12}\mathrm{e}_{21}^{-1}\mathrm{e}_{12})^4=\mathrm{id}\rangle.
\]
\end{pro}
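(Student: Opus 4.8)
The plan is to prove that the abstract group $G$ defined by the displayed presentation is isomorphic to $\mathrm{SL}(3,\mathbb{Z})$ through the evident homomorphism $\pi\colon G\to\mathrm{SL}(3,\mathbb{Z})$ sending each abstract generator $\mathrm{e}_{ij}$ to the transvection $\mathrm{id}+\delta_{ij}\in\mathrm{SL}(3,\mathbb{Z})$. First I would check that $\pi$ is well defined: the listed commutator relations are exactly the Steinberg relations among elementary matrices and are confirmed by a direct $3\times 3$ computation, while a short computation shows that $\mathrm{e}_{12}\mathrm{e}_{21}^{-1}\mathrm{e}_{12}$ maps to the matrix acting as a quarter-turn rotation on the first two coordinates and fixing the third, so it has order $4$ and the relation $(\mathrm{e}_{12}\mathrm{e}_{21}^{-1}\mathrm{e}_{12})^4=\mathrm{id}$ indeed holds in $\mathrm{SL}(3,\mathbb{Z})$. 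Surjectivity of $\pi$ is then the classical fact that the transvections generate $\mathrm{SL}(3,\mathbb{Z})$: applying the Euclidean algorithm to the entries of a given matrix, one clears a row and a column by elementary row and column operations and reduces inductively to the identity, and since each operation is left or right multiplication by some $\mathrm{e}_{ij}^{\pm1}$, the matrix is a word in the generators. The whole problem thus collapses to showing that $\pi$ is injective, i.e. that every relation among the transvections is a consequence of the listed ones.

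For injectivity I would factor $\pi$ through the Steinberg group. Let $\mathrm{St}(3,\mathbb{Z})$ be the group presented by the commutator relations alone, without the order-four relation. By Milnor's construction there is a central extension $1\to\mathrm{K}_2(3,\mathbb{Z})\to\mathrm{St}(3,\mathbb{Z})\to\mathrm{SL}(3,\mathbb{Z})\to1$ whose surjection sends $\mathrm{e}_{ij}$ to the transvection $\mathrm{id}+\delta_{ij}$. Since $G$ is precisely $\mathrm{St}(3,\mathbb{Z})$ with the single extra relation $w^4=\mathrm{id}$ imposed, where $w=\mathrm{e}_{12}\mathrm{e}_{21}^{-1}\mathrm{e}_{12}$ is Steinberg's element $w_{12}(1)$, it suffices to prove that the image of $w^4$ in $\mathrm{St}(3,\mathbb{Z})$ generates the kernel $\mathrm{K}_2(3,\mathbb{Z})$. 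Tracking the element: $w^2$ projects to the diagonal matrix $\mathrm{diag}(-1,-1,1)$, so $w^4$ projects to $\mathrm{id}$ and hence lies in $\mathrm{K}_2(3,\mathbb{Z})$, where it is identified with the Steinberg symbol $\{-1,-1\}$.

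The crux, and the main obstacle, is twofold: the stability statement that for $n=3$ the relative group $\mathrm{K}_2(3,\mathbb{Z})$ already coincides with the stable $\mathrm{K}_2(\mathbb{Z})$, and Milnor's theorem that $\mathrm{K}_2(\mathbb{Z})\cong\mathbb{Z}/2\mathbb{Z}$ is cyclic of order two generated by $\{-1,-1\}$. Granting these, the argument finishes at once: imposing $w^4=\mathrm{id}$ kills a generator of an order-two group, hence kills all of $\mathrm{K}_2(3,\mathbb{Z})$, so the induced map $G\to\mathrm{SL}(3,\mathbb{Z})$ is injective as well as surjective, and $\pi$ is an isomorphism. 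I therefore expect to spend essentially all of the real effort either invoking or, if a self-contained treatment is desired, reproving the order and the generator of $\mathrm{K}_2(\mathbb{Z})$; the remaining steps are either formal or finite matrix verifications.

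An alternative route that sidesteps Milnor's computation is to set up a normal form for words in the $\mathrm{e}_{ij}$ modulo the commutator relations and then verify directly that any two words with the same image in $\mathrm{SL}(3,\mathbb{Z})$ differ by a consequence of the order-four relation; this is more elementary but combinatorially heavier, so I would favour the $\mathrm{K}_2$-theoretic argument as the cleaner one to record, reserving the normal-form method as a fallback if a citation-free proof is required.
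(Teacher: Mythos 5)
The paper offers no proof of this proposition at all: it is quoted directly from \cite{Steinberg}, so there is no internal argument to compare yours against, and your write-up supplies exactly what the paper delegates to the literature. On its merits your proof is correct and is the standard $K$-theoretic route. The verifications in your first paragraph are right (in particular $w=\mathrm{e}_{12}\mathrm{e}_{21}^{-1}\mathrm{e}_{12}$ maps to the quarter-turn, $w^2=h_{12}(-1)$ projects to $\mathrm{diag}(-1,-1,1)$, and $w^4=h_{12}(-1)^{-2}=\{-1,-1\}$), and your reduction of injectivity to the computation of $\mathrm{K}_2(3,\mathbb{Z})$ is clean. One small simplification you could record: once one knows the kernel of $\mathrm{St}(3,\mathbb{Z})\to\mathrm{SL}(3,\mathbb{Z})$ has order $2$, every conjugate of $w^4$ equals $w^4$, so the normal closure of $w^4$ is the whole kernel; a normal subgroup of order $2$ is automatically central, so no centrality theorem for unstable $\mathrm{K}_2$ is needed anywhere in your argument.

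Two cautions on the crux. First, the word \og stability\fg{} oversells what the general theorems deliver: with $\mathrm{sr}(\mathbb{Z})=2$, the Dennis--Vaserstein stability theorem gives surjectivity of $\mathrm{K}_2(3,\mathbb{Z})\to\mathrm{K}_2(\mathbb{Z})$ but injectivity only from $n\geq 4$, so $n=3$ lies outside the generic injective range. The fact you need is nevertheless true, and the correct reference is Milnor's unstable computation itself (\emph{Introduction to Algebraic $K$-theory}, \S 10): his induction shows directly, for every $n\geq 3$, that $\ker\big(\mathrm{St}(n,\mathbb{Z})\to\mathrm{SL}(n,\mathbb{Z})\big)$ is generated by $\{-1,-1\}$, and he records essentially the presentation of the proposition as a corollary. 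Second, note that only this \og generation\fg{} half of Milnor's theorem is actually used: even if $\{-1,-1\}$ were trivial in $\mathrm{St}(3,\mathbb{Z})$, your map $\pi$ would still be injective, so the deeper half of $\mathrm{K}_2(\mathbb{Z})\simeq\faktor{\mathbb{Z}}{2\mathbb{Z}}$ --- the nontriviality of $\{-1,-1\}$, proved via a symbol over $\mathbb{R}$ --- is irrelevant to the presentation and can be omitted from a self-contained treatment. With these adjustments your outline is a complete and correct proof.
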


\begin{rem}
The $\mathrm{e}_{ij}^q$'s generate $\Gamma(3,q)$ 
and satisfy relations similar to those verified
by the $\mathrm{e}_{ij}$'s except 
$(\mathrm{e}_{12}\mathrm{e}_{21}^{-1}\mathrm{e}_{12})^4=\mathrm{id}$.
\end{rem}

The $\mathrm{e}_{ij}^q$'s are called 
the 
\textsl{standard generators}\index{defi}{standard generators  (congruence group)} of $\Gamma(3,q)$.

\begin{defi}
Let $k$ be an integer. A 
\textsl{$k$-Heisenberg group} 
is a group with the following presentation
\[
\mathcal{H}_k=\langle f,\,g,\,h\,\vert\,[f,g]=h^k,\,[f,h]=[g,h]=\mathrm{id}\rangle.
\]\index{not}{$\mathcal{H}_k$}

We will say that $f$, $g$ and $h$ are the 
\textsl{standard generators} of $\mathcal{H}_k$
\index{defi}{standard generators ($k$-Heisenberg group)}.
\end{defi}

\begin{rems}\label{rem:heis}
\begin{itemize}
\item[$\diamond$] The subgroup of $\mathcal{H}_k$
generated by $f$, $g$ and $h^k$ is a 
subgroup of index~$k$.

\item[$\diamond$] The groups $\Gamma(3,q)$ contain
a lot of $k$-Heisenberg groups;
for instance if $1\leq i\not=j\not=\ell\leq 3$, 
then 
$\langle \mathrm{e}_{ij}^q,\,\mathrm{e}_{i\ell}^q,\,\mathrm{e}_{j\ell}^q\rangle$ 
is a $q$-Heisenberg group of $\Gamma(3,q)$.
\end{itemize}
\end{rems}

Let $\mathrm{G}$ be a finitely generated group, let 
$\big\{a_1,\,a_2,\,\ldots,\,a_n\big\}$ be a generating set of~$\mathrm{G}$, and
let $g$ be an element of $\mathrm{G}$. The \textsl{length} 
$\vert\vert g\vert\vert$ of $g$ is the smallest integer $k$ for which 
there exists a sequence $(s_1,\,s_2,\ldots,\,s_k)$ with 
$s_i\in\big\{a_1,\,a_2,\,\ldots,\,a_n,\,a_1^{-1},\,a_2^{-1},\,\ldots,\,a_n^{-1}\big\}$
for any $1\leq i\leq k$, such that 
\[
g=s_1s_2\ldots s_k. 
\]
We say that 
\[
\displaystyle\lim_{k\to +\infty}\frac{\vert\vert g^k\vert\vert}{k}
\]
is the \textsl{stable length}\index{defi}{stable length (word)}
of $g$. A \textsl{distorted}\index{defi}{distorted (element)}
element of $\mathrm{G}$ is an element of infinite order of~$\mathrm{G}$
whose stable length is zero.

\begin{lem}[\cite{Deserti:IMRN}]\label{lem:disto}
Let 
$\mathcal{H}_k=\langle f,\,g,\,h\,\vert\,[f,g]=h^k,\,[f,h]=[g,h]=\mathrm{id}\rangle$
be a $k$-Heisenberg group.

The element $h^k$ is distorted. 

In particular the standard generators of 
$\Gamma(3,q)$ are distorted.
\end{lem}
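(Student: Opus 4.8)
The plan is to exploit the centrality of $h$ to rewrite arbitrarily high powers of $h^k$ as commutators of \emph{short} words, which will produce quadratic distortion. The whole argument rests on one commutator identity, so I would establish that first.

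\textbf{The commutator identity.} From $[f,g]=h^k$ one gets $fgf^{-1}=h^kg$, and since $h$ is central an immediate induction on $a$ gives $f^agf^{-a}=h^{ka}g$, hence $[f^a,g]=h^{ka}$. Feeding this back in and using again that $h^{ka}$ is central,
\[
f^ag^bf^{-a}=\big(f^agf^{-a}\big)^b=\big(h^{ka}g\big)^b=h^{kab}g^b,
\]
so that
\[
[f^a,g^b]=f^ag^bf^{-a}g^{-b}=h^{kab}=(h^k)^{ab}\qquad\text{for all }a,b\in\mathbb{Z}.
\]

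\textbf{The length estimate and stable length zero.} The word $f^ag^bf^{-a}g^{-b}$ has length at most $2(\vert a\vert+\vert b\vert)$ in the generators $f,g,h$, so the identity yields $\vert\vert(h^k)^{ab}\vert\vert\le 2(\vert a\vert+\vert b\vert)$. Taking $a=b=m$ gives $\vert\vert(h^k)^{m^2}\vert\vert\le 4m$, whence $\vert\vert(h^k)^{m^2}\vert\vert/m^2\le 4/m\to 0$. Since $n\mapsto\vert\vert(h^k)^n\vert\vert$ is subadditive, the stable length $\lim_n\vert\vert(h^k)^n\vert\vert/n$ exists and equals $\inf_n\vert\vert(h^k)^n\vert\vert/n$; being a nonnegative quantity that admits a subsequence tending to $0$, it must vanish.

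\textbf{Infinite order and the congruence subgroups.} To see that $h^k$ has infinite order in the abstract group $\mathcal{H}_k$, I would exhibit the representation $\mathcal{H}_k\to\mathrm{SL}(3,\mathbb{R})$ sending $f$ and $g$ to the two standard unipotent generators and $h$ to the unipotent matrix with $1/k$ in the upper-right corner; it respects all the defining relations and shows that $h^m$ is nontrivial for every $m\neq 0$, so $h^k$ has infinite order and, by the previous paragraph, is distorted. For the ``in particular'', by Remark \ref{rem:heis} each standard generator $\mathrm{e}_{i\ell}^q$ is the central generator $h$ of the $q$-Heisenberg subgroup $\langle \mathrm{e}_{ij}^q,\mathrm{e}_{j\ell}^q,\mathrm{e}_{i\ell}^q\rangle$, where $\{i,j,\ell\}=\{1,2,3\}$. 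Its two companion generators are themselves standard generators of $\Gamma(3,q)$, so word length in $\Gamma(3,q)$ is bounded above by word length in the subgroup, and the estimate $\vert\vert(\mathrm{e}_{i\ell}^q)^{qm^2}\vert\vert\le 4m$ forces the stable length of $\mathrm{e}_{i\ell}^q$ in $\Gamma(3,q)$ to be $0$. As elementary matrices have infinite order, every standard generator of $\Gamma(3,q)$ is distorted.

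\textbf{Main obstacle.} The computation is essentially routine; the only points demanding care are conceptual rather than technical: passing from the distortion of $h^k$ to that of the generator $h$ itself, controlling the full limit from the subsequence $m^2$ (both handled by subadditivity of word length), and verifying that the length bound proved \emph{inside} the Heisenberg subgroup transfers correctly to $\Gamma(3,q)$ equipped with its own standard generating set.
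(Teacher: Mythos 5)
Your proof is correct and follows essentially the same route as the paper's: the identity $[f^\ell,g^\ell]=h^{k\ell^2}$ obtained from centrality of $h$, giving $\vert\vert h^{k\ell^2}\vert\vert\leq 4\ell$, and for the congruence subgroups the relation $\mathrm{e}_{ij}^{q^2}=[\mathrm{e}_{i\ell}^q,\mathrm{e}_{\ell j}^q]$ realizing each standard generator as the central element of a $q$-Heisenberg triple of standard generators. Your added care — invoking subadditivity of word length to pass from the subsequence $\ell^2$ to the full stable-length limit, and the faithful unitriangular representation certifying that $h^k$ has infinite order — merely fills in details the paper leaves implicit.
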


\begin{proof}
Since $[f,g]=[g,h]=\mathrm{id}$ on has 
$[f^\ell,g^m]=h^{\ell m}$ for any integer 
$\ell$, $m$. In particular 
$h^{k\ell^2}=[f^\ell,g^\ell]$. As a 
result 
$\vert\vert h^{k\ell^2}\vert\vert\leq 4\ell$.

Each standard generator of $\Gamma(3,q)$
satisfies 
$\mathrm{e}_{ij}^{q^2}=[\mathrm{e}_{i\ell}^q,\mathrm{e}_{\ell j}^q]$.
\end{proof}

\begin{lem}[\cite{Deserti:nilpotent}]\label{lem:nilkeylemma}
Let $\mathrm{G}$ be a finitely generated group. Let $\upsilon$ be a morphism
from $\mathrm{G}$ to $\mathrm{Bir}(\mathbb{P}^2_\mathbb{C})$. Any distorted
element $g$ of $\mathrm{G}$ satisfies $\lambda(\upsilon(g))=1$, {\it i.e.}
$\upsilon(g)$ is an elliptic map or a parabolic one.
\end{lem}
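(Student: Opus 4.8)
The plan is to play the submultiplicativity of the degree against the sublinear growth of word length that distortion forces. Fix a finite generating set $\{a_1, a_2, \ldots, a_r\}$ of $\mathrm{G}$ and set
\[
D = \max_{1 \leq i \leq r} \big\{ \deg \upsilon(a_i), \, \deg \upsilon(a_i^{-1}) \big\}.
\]
Recall that the standard degree on $\mathrm{Bir}(\mathbb{P}^2_\mathbb{C})$ is submultiplicative, $\deg(\phi \circ \psi) \leq (\deg \phi)(\deg \psi)$, since substituting the defining homogeneous system of $\psi$ into that of $\phi$ and then removing common factors can only lower the degree. If $g^m = s_1 s_2 \cdots s_k$ is a word realizing the length, with $k = \vert\vert g^m \vert\vert$ and each $s_j \in \{a_i^{\pm 1}\}$, then $\upsilon$ being a homomorphism gives $\upsilon(g)^m = \upsilon(g^m) = \upsilon(s_1) \circ \cdots \circ \upsilon(s_k)$, whence
\[
\deg \big( \upsilon(g)^m \big) \leq \prod_{j=1}^{k} \deg \upsilon(s_j) \leq D^{\vert\vert g^m \vert\vert}.
\]

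Next I would take logarithms and divide by $m$. Since $g$ is distorted its stable length vanishes, $\lim_{m \to \infty} \vert\vert g^m \vert\vert / m = 0$, so
\[
\frac{1}{m} \log \deg \big( \upsilon(g)^m \big) \leq \frac{\vert\vert g^m \vert\vert}{m} \log D,
\]
and the right-hand side tends to $0$ as $m \to \infty$. Because the standard degree agrees with $\deg_{\mathbf{e}_0}$ and because $\lambda(\upsilon(g))$ is independent of the chosen polarization, the left-hand side converges to $\log \lambda(\upsilon(g))$; here the limit defining $\lambda$ exists by Fekete's lemma applied to the submultiplicative sequence $\big(\deg \upsilon(g)^m\big)_{m}$. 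Therefore $\log \lambda(\upsilon(g)) \leq 0$, that is $\lambda(\upsilon(g)) \leq 1$. As degrees are at least $1$ one always has $\lambda \geq 1$, so in fact $\lambda(\upsilon(g)) = 1$.

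Finally I would invoke the dictionary of \S\ref{sec:degreegrowth}: a birational self map $\phi$ of $\mathbb{P}^2_\mathbb{C}$ is loxodromic if and only if $\lambda(\phi) > 1$. Since $\lambda(\upsilon(g)) = 1$, the map $\upsilon(g)$ is not loxodromic, hence is elliptic or parabolic by the trichotomy of Theorem \ref{thm:dilfav}, which is exactly the assertion. The argument is short, and the point needing the most care is the interchange of the word-length bound with the limit defining the dynamical degree; the two genuinely structural inputs are the submultiplicativity of degree (which is what makes $\lambda$ a well-defined conjugacy invariant) and the finite generation of $\mathrm{G}$ (which produces the uniform bound $D$). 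Neither the space $\mathbb{H}^\infty$ nor the finer growth classification is required, although the same conclusion can be read off the isometric action on $\mathbb{H}^\infty$, where $\log \lambda(\upsilon(g))$ equals the translation length $L(\upsilon(g)_*)$ and a loxodromic isometry would move the polarization at strictly positive linear speed, contradicting the sublinear bound above.
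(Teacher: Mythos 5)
Your proof is correct and is essentially the paper's own argument: both bound $\deg\big(\upsilon(g)^m\big)$ by $D^{\vert\vert g^m\vert\vert}$ via submultiplicativity and finite generation, then let the vanishing stable length force $\log\lambda(\upsilon(g))=0$, concluding with the trichotomy of Theorem \ref{thm:dilfav}. The only cosmetic differences are that the paper uses the inequality $\lambda(\upsilon(g))^k\leq\deg\big(\upsilon(g)^k\big)$ directly instead of passing to the limit on the left (the same Fekete fact underlies both), and your inclusion of inverse generators in $D$ is harmless since a plane birational map and its inverse have the same degree.
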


\begin{proof}
Let $\big\{a_1,\,a_2,\,\ldots,\,a_n\big\}$ be a generating set of 
$\mathrm{G}$. The inequalities
\[
\lambda(\upsilon(g))^k\leq \deg(\upsilon(g)^k)\leq\displaystyle\max_i\big(\deg(\upsilon(a_i))\big)^{\vert\vert g^k\vert\vert}
\]
imply the following ones
\[
0\leq\log\big(\lambda(\upsilon(g))\big)\leq\frac{\vert\vert g^k\vert\vert}{k}\log\big(\max_i\big(\deg(\upsilon(a_i))\big)\big).
\]
But since $g$ is distorted 
$\displaystyle\lim_{k\to +\infty}\frac{\vert\vert g^k\vert\vert}{k}=0$ and 
$\log\big(\lambda(\upsilon(g))\big)=0$.
\end{proof}

\begin{rem}
We follow the proof of \cite{Deserti:IMRN};
nevertheless it is possible to "simplify it"
by using the following result: any distorted element of
$\mathrm{Bir}(\mathbb{P}^2_\mathbb{C})$
is algebraic (\cite{BlancFurter:length, CantatCornulier}).

According to Corollary \ref{cor:3plus} we
thus have: any distorted element of
$\mathrm{Bir}(\mathbb{P}^2_\mathbb{C})$
is elliptic.
\end{rem}

\begin{defi}
Let $\phi_1$, $\phi_2$, $\ldots$, $\phi_k$
be some birational self maps of a rational
surface $S$. Assume that $\phi_1$, $\phi_2$, 
$\ldots$, $\phi_k$ are 
virtually isotopic to the identity. 
We say that $\phi_1$, $\phi_2$, $\ldots$, 
$\phi_k$ are 
\textsl{simultaneously virtually isotopic to the identity}
\index{defi}{simultaneously virtually isotopic to the identity} if there exists a surface 
$\widetilde{S}$, a birational map 
$\psi\colon\widetilde{S}\dashrightarrow S$
such that for any $1\leq i\leq k$ the 
map $\psi^{-1}\circ\phi_i\circ\psi$ belongs
to $\mathrm{Aut}(\widetilde{S})$ and 
$\psi^{-1}\circ\phi_i^\ell\circ\psi$
belongs to $\mathrm{Aut}(\widetilde{S})^0$
for some integer $\ell$.
\end{defi}

\begin{pro}[\cite{Deserti:IMRN}]\label{pro:sim}
Let $\upsilon$ be a representation from
\[
\mathcal{H}_k=\langle f,\,g,\,h\,\vert\,[f,g]=h^k,\,[f,h]=[g,h]=\mathrm{id}\rangle 
\]
into $\mathrm{Bir}(\mathbb{P}^2_\mathbb{C})$. 
Assume that any standard generator 
$\upsilon(f)$, $\upsilon(g)$ and $\upsilon(h)$
of $\upsilon(\mathcal{H}_k)$ is virtually 
isotopic to the identity. Then 
$\upsilon(f)$, $\upsilon(g)$ and $\upsilon(h)$
are simultaneously virtually isotopic to the 
identity.
\end{pro}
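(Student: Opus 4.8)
The plan is to work in the Picard--Manin hyperbolic space $\mathbb{H}^\infty=\mathbb{H}^\infty(\mathbb{P}^2_\mathbb{C})$ and to produce a common fixed point for the isometries induced by the three generators. Write $F=\upsilon(f)$, $G=\upsilon(g)$, $H=\upsilon(h)$, so that $[F,G]=H^k$ while $H$ commutes with $F$ and $G$. Since each generator is virtually isotopic to the identity, each is elliptic by Theorem \ref{thm:dilfav}; hence the isometries $F_*,G_*,H_*$ of $\mathbb{H}^\infty$ each have a nonempty fixed-point set. I will use throughout that $\mathbb{H}^\infty$ is a complete $\mathrm{CAT}(0)$ space, so that an isometry is elliptic exactly when its orbits are bounded, that every closed convex subset is again a complete $\mathrm{CAT}(0)$ space, and that an elliptic isometry preserving such a subset has a fixed point there (the circumcenter of a bounded orbit).

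First I would exploit the centrality of $H$. The set $Y=\mathrm{Fix}(H_*)$ is a nonempty closed convex subset of $\mathbb{H}^\infty$, and since $F_*H_*=H_*F_*$ and $G_*H_*=H_*G_*$ the isometries $F_*$ and $G_*$ preserve $Y$. The crucial observation is that the two restrictions commute on $Y$: indeed $[F_*,G_*]=(H^k)_*=H_*^{\,k}$ acts trivially on $Y=\mathrm{Fix}(H_*)$, so $[F_*|_Y,G_*|_Y]=\mathrm{id}_Y$. Now $F_*|_Y$ is an elliptic isometry of the complete $\mathrm{CAT}(0)$ space $Y$, so its fixed-point set $Y_F\subset Y$ is nonempty, closed and convex; as $G_*|_Y$ commutes with $F_*|_Y$ it preserves $Y_F$, and being elliptic it fixes a point $v\in Y_F$. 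By construction $v$ is fixed by $F_*$, $G_*$ and $H_*$, hence by the whole group $\upsilon(\mathcal{H}_k)_*$, which therefore has a bounded orbit in $\mathbb{H}^\infty$.

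Next I would pass from this bounded orbit to an algebraic model: a subgroup of $\mathrm{Bir}(\mathbb{P}^2_\mathbb{C})$ whose image in $\mathrm{Isom}(\mathbb{H}^\infty)$ fixes a point of $\mathbb{H}^\infty$ is regularizable, so there exist a smooth projective surface $\widetilde{S}$ and a birational map $\psi\colon\widetilde{S}\dashrightarrow\mathbb{P}^2_\mathbb{C}$ with $\psi^{-1}\circ\upsilon(\mathcal{H}_k)\circ\psi\subset\mathrm{Aut}(\widetilde{S})$. This regularization step --- turning a fixed point of the infinite-dimensional hyperbolic space into a genuine surface carrying a regular action --- is the main obstacle, and is exactly where the geometry of $\mathbb{H}^\infty$ built up in this chapter is needed; alternatively it can be assembled from the commuting case, Proposition \ref{pro:commas} applied to $(F,H)$, and then refined using that $H$ is central.

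Finally, to upgrade \emph{regularizable} to \emph{simultaneously virtually isotopic to the identity}, I would invoke ellipticity once more. Transported to $\widetilde{S}$ via $\psi$, each generator is an elliptic automorphism $\phi\in\mathrm{Aut}(\widetilde{S})$, so for an ample class $\mathbf{h}\in\mathrm{NS}(\widetilde{S})$ the sequence $\deg_\mathbf{h}\phi^n=\langle\phi_*^n\mathbf{h},\mathbf{h}\rangle$ is bounded. An integral isometry of the finite-dimensional lattice $\mathrm{NS}(\widetilde{S})$ whose powers are bounded lies in a compact subgroup and, the isometry group of the lattice being discrete, generates a finite cyclic group; thus $\phi_*$ has finite order on $\mathrm{NS}(\widetilde{S})$. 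Consequently a power of $\phi$ lies in the kernel of $\mathrm{Aut}(\widetilde{S})\to\mathrm{GL}(\mathrm{NS}(\widetilde{S}))$, and since $\mathrm{Aut}(\widetilde{S})^0$ has finite index in that kernel, a further power lies in $\mathrm{Aut}(\widetilde{S})^0$. This furnishes the required integer $\ell$ for each of $\upsilon(f),\upsilon(g),\upsilon(h)$ on the common model $\widetilde{S}$, which is precisely the assertion.
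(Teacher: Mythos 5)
Your proof is correct, but it takes a genuinely different route from the paper's. The paper stays entirely at the level of surfaces and base-points: it first invokes Proposition~\ref{pro:commas} for a commuting pair of generators (the text says $\upsilon(f)$ and $\upsilon(g)$, evidently a slip for a pair involving $\upsilon(h)$, since $f$ and $g$ do not commute), then observes that the relations $[g,h]=\mathrm{id}$ and $[f,g]=h^k$ force $\mathrm{Exc}(\upsilon(g))$ and $\mathrm{Ind}(\upsilon(g))$ to be invariant under $\upsilon(h)$ \emph{and} under $\upsilon(f)$, and concludes by running the Diller--Favre stabilization induction (\cite[Lemma 4.1]{DillerFavre}, as in the proofs of Propositions~\ref{pro:DillerFavre} and~\ref{pro:commas}) equivariantly: the points that must be blown up to regularize one generator are permuted by the others, so the blow-ups, and then the blow-downs of the negative curves (fixed because the relevant maps are isotopic to the identity), can be performed simultaneously --- notably without ever using the action on $\mathbb{H}^\infty$. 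You instead argue by CAT(0) geometry: centrality of $h$ makes $Y=\mathrm{Fix}(H_*)$ a nonempty closed convex set invariant under $F_*$ and $G_*$; since $[F_*,G_*]=H_*^{\,k}$ restricts to the identity on $Y$, two successive circumcenter (Cartan fixed-point) arguments in the complete CAT(0) space $Y$ give a common fixed point, hence uniformly bounded degrees for the whole image group; regularization of bounded-degree groups (Zariski closure is algebraic by \cite{BlancFurter}, then \cite{Weil}) puts $\upsilon(\mathcal{H}_k)$ inside $\mathrm{Aut}(\widetilde{S})$, and the finite order of a degree-bounded integral isometry of $\mathrm{NS}(\widetilde{S})$ together with Lieberman's theorem \cite{Lieberman} (that $\mathrm{Aut}(\widetilde{S})^0$ has finite index in the kernel of $\mathrm{Aut}(\widetilde{S})\to\mathrm{GL}(\mathrm{NS}(\widetilde{S}))$ --- a fact you should cite, as it is not in the paper) lands powers of the generators in $\mathrm{Aut}(\widetilde{S})^0$. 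Your route is shorter and more structural: it regularizes the whole group at once and uses only that $h$ is central with $[f,g]\in\langle h\rangle$, so it generalizes immediately; the price is heavier machinery (regularization of bounded-degree groups, Lieberman). The paper's argument is more elementary and constructive, keeps explicit control of base-points and exceptional curves, and deliberately avoids $\mathbb{H}^\infty$, which is in keeping with how the surrounding $\mathrm{SL}(3,\mathbb{Z})$ analysis in that chapter is carried out.
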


\begin{proof}
According to Proposition \ref{pro:commas} the 
maps $\upsilon(f)$ and $\upsilon(g)$ are 
simultaneously virtually isotopic to the 
identity. Since $g$ and $h$ commute, 
$\mathrm{Exc}(\upsilon(g))$ and 
$\mathrm{Ind}(\upsilon(g))$ are invariant
by $\upsilon(h)$. The relation 
$[f,g]=h^k$ implies that both 
$\mathrm{Exc}(\upsilon(g))$ and 
$\mathrm{Ind}(\upsilon(g))$ are invariant 
by $\upsilon(f)$. A reasoning analogous 
to that of the proof of Proposition 
\ref{pro:commas} and 
\cite[Lemma 4.1]{DillerFavre} allows us 
to establish the statement.
\end{proof}

The second assertion of Remarks \ref{rem:heis}
leads us to study the representations of 
Heisenberg $k$-groups into 
automorphisms groups of minimal rational 
surfaces. Let us deal with it.

\begin{lem}[\cite{Deserti:IMRN}]\label{lem:notp1p1}
Let $\upsilon$ be a morphism from 
$\mathcal{H}_k$ into 
$\mathrm{Aut}(\mathbb{P}^1_\mathbb{C}\times\mathbb{P}^1_\mathbb{C})$. 

The morphism $\upsilon$ is not an embedding.
\end{lem}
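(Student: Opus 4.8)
The statement to prove is that any group morphism $\upsilon\colon\mathcal{H}_k\to\mathrm{Aut}(\mathbb{P}^1_\mathbb{C}\times\mathbb{P}^1_\mathbb{C})$ fails to be injective, where $\mathcal{H}_k=\langle f,\,g,\,h\,\vert\,[f,g]=h^k,\,[f,h]=[g,h]=\mathrm{id}\rangle$. The key structural fact I would use is that $\mathrm{Aut}(\mathbb{P}^1_\mathbb{C}\times\mathbb{P}^1_\mathbb{C})\simeq\big(\mathrm{PGL}(2,\mathbb{C})\times\mathrm{PGL}(2,\mathbb{C})\big)\rtimes\faktor{\mathbb{Z}}{2\mathbb{Z}}$, as recorded earlier in the excerpt. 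The plan is to reduce to the identity component and then exploit the fact that in $\mathrm{PGL}(2,\mathbb{C})$ a commutator $[A,B]$ cannot be a nonscalar element that is centralized by both $A$ and $B$ unless it is trivial; the obstruction to injectivity then comes precisely from the relation $[f,g]=h^k$ being forced to collapse.

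\textbf{First steps.} First I would pass to the finite-index subgroup $\mathcal{H}_k^0=\langle f,g\rangle$ or, more precisely, replace $\upsilon$ by its composition with the index-two projection onto $\mathrm{PGL}(2,\mathbb{C})\times\mathrm{PGL}(2,\mathbb{C})$. Since $\upsilon(f)^2,\upsilon(g)^2,\upsilon(h)^2$ land in the identity component, and since it suffices to find \emph{one} nontrivial element of the kernel, I may assume after passing to suitable powers that $\upsilon(f),\upsilon(g),\upsilon(h)$ all lie in $\mathrm{PGL}(2,\mathbb{C})\times\mathrm{PGL}(2,\mathbb{C})$. Writing $\upsilon(f)=(f_1,f_2)$, $\upsilon(g)=(g_1,g_2)$, $\upsilon(h)=(h_1,h_2)$ coordinatewise, the defining relations decouple into two copies: in each factor $i\in\{1,2\}$ one has $[f_i,g_i]=h_i^k$ and $h_i$ central in $\langle f_i,g_i\rangle$. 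The problem thus reduces to the analogous statement inside $\mathrm{PGL}(2,\mathbb{C})$.

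\textbf{The core argument.} The heart of the matter is the claim that if $h_i$ commutes with both $f_i$ and $g_i$ in $\mathrm{PGL}(2,\mathbb{C})$ and $h_i^k=[f_i,g_i]$, then $h_i^k=\mathrm{id}$, so $h_i$ has finite order dividing $k$. To see this I would argue that a nontrivial element of $\mathrm{PGL}(2,\mathbb{C})$ has centralizer either a maximal torus (if diagonalizable) or a unipotent-by-center group (if parabolic), both abelian; hence if $h_i\neq\mathrm{id}$ then $f_i$ and $g_i$ both lie in the abelian centralizer of $h_i$, forcing $[f_i,g_i]=\mathrm{id}=h_i^k$. Applying this in both factors shows $\upsilon(h)^k=\mathrm{id}$. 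But in the abstract group $\mathcal{H}_k$ the element $h$ has infinite order (the standard generators $f,g,h$ generate a torsion-free group, as noted for the Heisenberg groups realized inside $\Gamma(3,q)$), so $h^k\neq\mathrm{id}$ while $\upsilon(h^k)=\mathrm{id}$; this already exhibits $h^k$ in $\ker\upsilon$, proving non-injectivity. The main obstacle I anticipate is the bookkeeping around the semidirect factor $\faktor{\mathbb{Z}}{2\mathbb{Z}}$: I must verify that passing to squares does not accidentally trivialize the relevant commutator, i.e.\ that $h^{2k}$ (or an appropriate power landing in the identity component) is still a genuine infinite-order element of $\mathcal{H}_k$, which it is since $\mathcal{H}_k$ is torsion-free. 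A secondary care point is the degenerate case where $h_i=\mathrm{id}$ in one factor but not the other, which is handled by running the centralizer argument factor-by-factor and observing that $\upsilon(h)=(h_1,h_2)$ has finite order as soon as each $h_i$ does.
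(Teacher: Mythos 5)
Your overall strategy is the same as the paper's: reduce modulo the $\faktor{\mathbb{Z}}{2\mathbb{Z}}$ factor so that the images lie in $\mathrm{PGL}(2,\mathbb{C})\times\mathrm{PGL}(2,\mathbb{C})$, project to each factor, and force $\upsilon(h)$ (or a power) to die, contradicting the fact that $h$ has infinite order in the torsion-free group $\mathcal{H}_k$. The paper phrases the factor-wise step via solvability of the image ("a commutator in a solvable subgroup of $\mathrm{PGL}(2,\mathbb{C})$ is conjugate to a translation $z\mapsto z+\beta_i$, and commuting with a non-trivial translation forces $f_i,g_i$ to be translations"), whereas you phrase it via centralizers; these are the same skeleton.

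However, your core lemma as stated is false, and this is a genuine gap. It is not true that every non-trivial element of $\mathrm{PGL}(2,\mathbb{C})$ has abelian centralizer: for an involution, conjugate to $z\mapsto -z$, the centralizer is $\big\{z\mapsto\gamma z^{\pm 1}\,\vert\,\gamma\in\mathbb{C}^*\big\}$, which is non-abelian — this is recorded explicitly in the paper's section on centralizers of elliptic maps. Moreover the conclusion you draw ("$h_i$ central in $\langle f_i,g_i\rangle$ and $h_i^k=[f_i,g_i]$ imply $h_i^k=\mathrm{id}$") genuinely fails for odd $k$: take
\[
f_i\colon z\mapsto \mathbf{i}z,\qquad g_i\colon z\mapsto \frac{1}{z},\qquad h_i\colon z\mapsto -z,\qquad k=1;
\]
then $[f_i,g_i]=h_i=h_i^k\not=\mathrm{id}$, while $h_i$ commutes with both $f_i$ and $g_i$. (Incidentally, this same configuration shows that the literal claim in the published proof — that the commutator is conjugate to a translation — also needs the involutive case to be excluded; the dihedral group $\langle \mathbf{i}z,\,1/z\rangle$ is nilpotent of class $2$.)

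The gap is fixable within your framework, in two ways. Either observe that in the exceptional case $h_i$ is an involution, so in \emph{every} case $h_i$ has finite order (dividing $2k$), hence $\upsilon(h)$ is torsion and $h^N\in\ker\upsilon\smallsetminus\{\mathrm{id}\}$ for a suitable $N$, since $h$ has infinite order in $\mathcal{H}_k$ — exactly the endgame you already sketch. Or perform your squaring reduction carefully first: with $F=f^2$, $G=g^2$, $H=h^2$ one has $[F,G]=H^{2k}$ (note the exponent is $2k$, not $k$ — a bookkeeping slip in your decoupled relations), and since $2k$ is even the involution case becomes harmless, as $H_i^{2k}=\mathrm{id}$ automatically. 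With either patch your argument is complete and runs parallel to the paper's.
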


\begin{proof}
We can assume that $\upsilon(f)$, 
$\upsilon(g)$ and $\upsilon(h)$ fixe the
two standard fibrations (if it is not the 
case we can consider $\mathcal{H}_{2k}$
instead of $\mathcal{H}_k$); in other 
words we can assume that 
$\mathrm{im}\,\upsilon$ is contained in 
$\mathrm{PGL}(2,\mathbb{C})\times\mathrm{PGL}(2,\mathbb{C})$.
Denote by $\mathrm{pr}_i$, $i\in\{1,\,2\}$, 
the $i$-th projection. Note that 
$\mathrm{pr}_i(\upsilon(\mathcal{H}_{2k}))$
is a solvable subgroup of 
$\mathrm{PGL}(2,\mathbb{C})$. Furthermore
$\mathrm{pr}_i(\upsilon(h^k))$ is a 
commutator. Hence $\mathrm{pr}_i(\upsilon(h^k))$
is conjugate to the translation 
$z\mapsto z+\beta_i$. Let us prove by 
contradiction that $\beta_i=0$; assume
$\beta_i\not=0$. Then both 
$\mathrm{pr}_i(\upsilon(f))$ and 
$\mathrm{pr}_i(\upsilon(g))$ are
also some translation since they 
commute with $\mathrm{pr}_i(\upsilon(h^k))$.
But then 
$\mathrm{pr}_i(\upsilon(h^k))=[\mathrm{pr}_i(\upsilon(f)),\mathrm{pr}_i(\upsilon(g))]=\mathrm{id}$: 
contradiction with $\beta_i\not=0$. 
As a result $\beta_i=0$ and $\upsilon$
is not an embedding.
\end{proof}

\begin{lem}[\cite{Deserti:IMRN}]\label{lem:heishirz}
Let $\upsilon$ be a morphism from 
$\mathcal{H}_k$ into $\mathrm{Aut}(\mathbb{F}_n)$
with $n\geq 1$.

Then up to birational conjugacy 
$\upsilon(\mathcal{H}_k)$ is a 
subgroup of 
\[
\big\{(z_0,z_1)\mapsto(\alpha z_0+P(z_1),\beta z_1+\gamma)\,\vert\,\alpha,\,\beta\in\mathbb{C}^*,\,\gamma\in\mathbb{C},\,P\in\mathbb{C}[z_1]\big\}.
\]
Moreover up to birational conjugacy
\[
\upsilon(h^{2k})\colon(z_0,z_1)\mapsto(z_0+P(z_1),z_1)
\]
for some $P\in\mathbb{C}[z_1]$.
\end{lem}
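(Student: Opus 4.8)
The plan is to analyze a morphism $\upsilon\colon\mathcal{H}_k\to\mathrm{Aut}(\mathbb{F}_n)$ using the explicit description of $\mathrm{Aut}(\mathbb{F}_n)$ recalled in Subsection \ref{subsec:hirz}. Recall that for $n\geq 1$ an element of $\mathrm{Aut}(\mathbb{F}_n)$ has the form
\[
(z_0,z_1)\mapsto\left(\frac{az_0+P(z_1)}{(\gamma z_1+\delta)^n},\frac{\alpha z_1+\beta}{\gamma z_1+\delta}\right)
\]
with $\left(\begin{smallmatrix}\alpha & \beta \\ \gamma & \delta\end{smallmatrix}\right)\in\mathrm{PGL}(2,\mathbb{C})$, $a\in\mathbb{C}^*$, $\deg P\leq n$. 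There is a natural projection $\pi\colon\mathrm{Aut}(\mathbb{F}_n)\to\mathrm{PGL}(2,\mathbb{C})$ onto the action on the base $\mathbb{P}^1_\mathbb{C}$ (the $z_1$-coordinate), whose kernel $K$ consists of the maps $(z_0,z_1)\mapsto(az_0+P(z_1),z_1)$ fixing the fibration with trivial base action. First I would study the composite $\pi\circ\upsilon\colon\mathcal{H}_k\to\mathrm{PGL}(2,\mathbb{C})$.

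The key observation is that $h$ is central in $\mathcal{H}_k$ and $h^k=[f,g]$ is a commutator. The subgroup $\mathrm{im}(\pi\circ\upsilon)$ of $\mathrm{PGL}(2,\mathbb{C})$ is solvable (being a quotient of the nilpotent group $\mathcal{H}_k$), so by the standard structure of solvable subgroups of $\mathrm{PGL}(2,\mathbb{C})$ it is, up to conjugacy, contained either in the upper triangular (affine) group or in the infinite dihedral normalizer of a torus. The second step is to argue, exactly as in the proof of Lemma \ref{lem:notp1p1}, that $\pi(\upsilon(h^k))$ must be trivial: since $\pi(\upsilon(h^k))$ is a commutator in a solvable subgroup of $\mathrm{PGL}(2,\mathbb{C})$, it is conjugate to a translation $z_1\mapsto z_1+\beta$, and if $\beta\neq 0$ then $\pi(\upsilon(f))$ and $\pi(\upsilon(g))$ commute with it and are themselves translations, forcing their commutator $\pi(\upsilon(h^k))$ to be the identity, a contradiction. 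Hence $\upsilon(h^k)\in K$. Passing to the finite-index subgroup generated by $f,g,h^k$ (of index $k$ by Remarks \ref{rem:heis}), or equivalently working with $\mathcal{H}_{2k}$ to clear any order-$2$ ambiguity coming from the dihedral case, I can arrange that $\pi\circ\upsilon$ lands in the affine group, so $\upsilon(\mathcal{H}_k)$ is conjugate into the subgroup of maps of the stated triangular form $(z_0,z_1)\mapsto(\alpha z_0+P(z_1),\beta z_1+\gamma)$; this conjugation can be taken birational, absorbing the $(\gamma z_1+\delta)^n$ denominator.

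For the second assertion I would analyze $\upsilon(h^{2k})\in K$, an element $(z_0,z_1)\mapsto(az_0+P(z_1),z_1)$. Since $h$ is central, $\upsilon(h^{2k})$ commutes with $\upsilon(f)$ and $\upsilon(g)$, which act on the $z_1$-line as genuine affine maps $z_1\mapsto\beta z_1+\gamma$. Commuting the kernel element with these base maps constrains the multiplier $a$: the relation $h^{2k}=[f,g]^2$ shows $a$ is a product of commutators of the $z_0$-multipliers, hence $a=1$, so $\upsilon(h^{2k})$ is of the form $(z_0,z_1)\mapsto(z_0+P(z_1),z_1)$. Up to the birational conjugacy already performed this gives the claimed normal form. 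The main obstacle I anticipate is the bookkeeping in the dihedral (non-affine) case of the solvable subgroup of $\mathrm{PGL}(2,\mathbb{C})$: one must verify that after replacing $\mathcal{H}_k$ by the index-$2$ overgroup or passing to $\mathcal{H}_{2k}$ the image genuinely becomes affine and that the conjugating birational map is compatible with the Hirzebruch fibration; the translation-versus-torsion dichotomy and the precise exponent $2k$ are exactly what force this delicate step, and getting the multiplier $a=1$ rather than merely a root of unity is where the factor of $2$ in $h^{2k}$ is essential.
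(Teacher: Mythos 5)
The paper itself gives no proof of this lemma (it only cites \cite{Deserti:IMRN}), so your proposal has to stand on its own; its skeleton — project to the base via $\pi\colon\mathrm{Aut}(\mathbb{F}_n)\to\mathrm{PGL}(2,\mathbb{C})$, exploit nilpotency of the image and the fact that $\pi(\upsilon(h^k))$ is a central commutator, then use the multiplier homomorphism for the second assertion — is the natural one and matches the style of the proofs of Lemmas~\ref{lem:notp1p1} and~\ref{lem:heispgl3}. (One small correction: solvable subgroups of $\mathrm{PGL}(2,\mathbb{C})$ also include $\mathfrak{A}_4$ and $\mathfrak{S}_4$; you need nilpotency of the image, which you do have, to reduce to the Borel and the normalizer of the torus.) In the Borel branch your argument is complete, and there the factor $2$ is in fact unnecessary: once everything has affine base action, every element is literally of the triangular form, the pair of multipliers $(\alpha,\beta)$ is a homomorphism to $\mathbb{C}^*\times\mathbb{C}^*$, so $\upsilon(h^k)=[\upsilon(f),\upsilon(g)]$ already has both multipliers equal to $1$.

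The genuine gap is your treatment of the dihedral branch. In the normalizer of the torus, commutators are homotheties $z_1\mapsto cz_1$, \emph{not} translations, so the step ``$\pi(\upsilon(h^k))$ is conjugate to a translation'' fails there; the centralizer argument only forces $c^2=1$, leaving $\pi(\upsilon(h^k))\colon z_1\mapsto -z_1$ as a live possibility — this is precisely why the statement involves $h^{2k}$ rather than $h^k$. Your proposed remedy, passing to $\langle f,g,h^k\rangle$ or to a copy of $\mathcal{H}_{2k}$, does not repair this: it constrains only the image of a proper subgroup, whereas the first assertion concerns all of $\upsilon(\mathcal{H}_k)$. Worse, the dihedral configuration cannot in general be ``arranged away'': for $k$ odd, $f\mapsto(z_1\mapsto 1/z_1)$, $g\mapsto(z_1\mapsto\mathbf{i}z_1)$, $h\mapsto(z_1\mapsto -z_1)$ defines a morphism from $\mathcal{H}_k$ onto a dihedral group of order $8$, which embeds into $\mathrm{Aut}(\mathbb{F}_n)$ for even $n$ by Lemma~\ref{lem:ru1}; and no non-abelian finite group is conjugate into the triangular group, since the multiplier homomorphism $(\alpha,\beta)$ has torsion-free kernel $\{(z_0,z_1)\mapsto(z_0+P(z_1),z_1+\gamma)\}$, so all finite subgroups of the triangular group are abelian. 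Thus a correct proof must either invoke the hypotheses of the original setting in \cite{Deserti:IMRN} (there $\upsilon$ is the restriction of an embedding, so $\upsilon(h)$ has infinite order) to exclude or control the dihedral base action, or carry the normal form $\pi(\upsilon(h^k))=(z_1\mapsto\pm z_1)$ through the whole computation and verify directly that $\upsilon(h^{2k})$ lies in the kernel of $\pi$ with fiber multiplier $1$; your commutator argument, formulated entirely inside the triangular group, covers neither.
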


\begin{lem}[\cite{Deserti:IMRN}]\label{lem:heispgl3}
Let $\upsilon$ be an embedding of
$\mathcal{H}_k$ into 
$\mathrm{PGL}(3,\mathbb{C})$. Up to 
linear conjugacy 
\begin{align*}
& \upsilon(f)\colon(z_0,z_1)\mapsto(z_0+\zeta z_1,z_1+\beta)&& \upsilon(g)\colon(z_0,z_1)\mapsto(z_0+\gamma z_1,z_1+\delta) \\
& \upsilon(h^k)\colon(z_0,z_1)\mapsto(z_0+k,z_1)&&
\end{align*}
where $\zeta$, $\delta$, $\beta$ $\gamma$ denote
complex numbers such that $\zeta\delta-\beta\gamma=k$.
\end{lem}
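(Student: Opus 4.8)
The plan is to transfer everything to linear algebra in $\mathrm{PGL}(3,\mathbb{C})=\mathrm{Aut}(\mathbb{P}^2_\mathbb{C})$ and to show that the faithful image of $\mathcal{H}_k$ is conjugate into the group of upper unitriangular matrices, i.e. the classical Heisenberg group. Write $A=\upsilon(f)$, $B=\upsilon(g)$, $H=\upsilon(h)$, and fix arbitrary lifts $\tilde A,\tilde B\in\mathrm{GL}(3,\mathbb{C})$; since a commutator is insensitive to scalar lifts, $\tilde C:=[\tilde A,\tilde B]\in\mathrm{SL}(3,\mathbb{C})$ is a well-defined lift of $C:=\upsilon(h^k)$. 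Because $\upsilon$ is an embedding and $h^k$ has infinite order in $\mathcal{H}_k$ (as is visible from the standard unitriangular realization of $\mathcal{H}_k$), the element $C$ has infinite order in $\mathrm{PGL}(3,\mathbb{C})$.

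First I would prove that $C$ is a nontrivial unipotent element. The relations $[f,h]=[g,h]=\mathrm{id}$ give $\tilde A\tilde C\tilde A^{-1}=\mu\tilde C$ and $\tilde B\tilde C\tilde B^{-1}=\nu\tilde C$ for scalars with $\mu^3=\nu^3=1$ (take determinants). If $\mu\neq 1$ then multiplication by $\mu$ permutes the eigenvalues of $\tilde C$ in a single $3$-cycle, so $\tilde C$ has three distinct eigenvalues $\lambda,\mu\lambda,\mu^2\lambda$ with product $\lambda^3=\det\tilde C=1$; these are all cube roots of unity, whence $C$ has finite order, a contradiction. Thus $\mu=\nu=1$, so $\tilde A,\tilde B$ commute honestly with $\tilde C$, and the same argument gives that $\tilde H$ does too. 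Then $\tilde A,\tilde B$ preserve each primary subspace $V_\lambda$ of $\tilde C$, and restricting $[\tilde A,\tilde B]=\tilde C$ to $V_\lambda$ and taking determinants yields $\lambda^{\dim V_\lambda}=1$. Running through the partitions of $3$ and discarding every case that makes $C$ finite order or trivial leaves only a single eigenvalue (a cube root of unity), so $C$ is unipotent; and it cannot be regular (a single Jordan block), since the centralizer of a regular unipotent is abelian and would force $[\tilde A,\tilde B]=\mathrm{id}$. Hence $C$ has Jordan type $(2,1)$, and after a linear conjugacy $C=I+cE_{13}$ with $c\neq 0$, where $E_{ij}$ denotes the elementary matrix.

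The key step, which I expect to be the main obstacle, is to show that $\upsilon(f)$ and $\upsilon(g)$ are themselves unipotent (the lemma only describes $\upsilon(h^k)$, not $\upsilon(h)$, which may well carry a nontrivial semisimple part). The whole image now lies in the centralizer $Z$ of $C$, the group of upper triangular matrices with equal $(1,1)$ and $(3,3)$ entries, and the two block projections $\pi_{\mathrm{top}},\pi_{\mathrm{bot}}\colon Z\to\mathrm{GL}(2,\mathbb{C})$ are homomorphisms killing $C$; hence $\pi_{\mathrm{top}}(G)$ and $\pi_{\mathrm{bot}}(G)$ are abelian. Normalising the top-left entry of each lift to $1$, I would write $\upsilon(f)$ with diagonal $(1,\rho_f,1)$ and argue by contradiction that $\rho_f=1$: if $\rho_f\neq 1$ one first conjugates, by unitriangular elements commuting with $C$, to clear the $(1,2)$ and $(2,3)$ entries of $\upsilon(f)$, and then a direct computation of the $(1,2)$ and $(2,3)$ entries of $[\upsilon(f),\upsilon(g)]$ forces the corresponding entries of $\upsilon(g)$ to vanish, which in turn makes the commutator trivial, contradicting $c\neq 0$. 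By the symmetry $f\leftrightarrow g$ the same holds for $\upsilon(g)$, so both lie in the unitriangular Heisenberg group $U$.

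Finally I would normalise the forms. With $\upsilon(f),\upsilon(g)\in U$, the commutator formula gives $C=I+(u_1w_2-u_2w_1)E_{13}$, so $c=u_1w_2-u_2w_1\neq 0$. Conjugating by a suitable element of $U$, which fixes the central element $C$, clears the $(1,3)$ entries of $\upsilon(f)$ and $\upsilon(g)$ simultaneously; this linear system is solvable precisely because its determinant equals $c\neq 0$. Taking $\zeta,\beta,\gamma,\delta$ to be the $(1,2)$ and $(2,3)$ entries then yields the stated matrices, with $\zeta\delta-\beta\gamma=c$. To pin down $c=k$, I would use $\upsilon(h)$: its unipotent part must be $I+(c/k)E_{13}$, since its $k$-th power is $C$ and the outer entries of a unitriangular $k$-th root are forced to vanish, so a final diagonal rescaling making this unipotent part the translation by $1$ turns $C$ into the translation by $k$ and gives $\zeta\delta-\beta\gamma=k$, as claimed. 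All normalisations used are conjugations in $\mathrm{PGL}(3,\mathbb{C})$, that is, linear, so the conclusion holds up to linear conjugacy.
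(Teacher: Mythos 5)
Your proof is essentially correct and reaches the stated normal form, but one subcase in your first step slips through the filter you state. After you show $\mu=\nu=1$, the constraint $\lambda^{\dim V_\lambda}=1$ on each primary subspace does \emph{not} reduce the eigenvalues of $\tilde C$ to a single one: the pattern $(-1,-1,1)$, with multiplicities $(2,1)$, satisfies $(-1)^2=1$ and $1^1=1$. If $\tilde C$ acts on the $2$-dimensional space $V_{-1}$ as $-1$ times a nontrivial Jordan block, then $C$ has \emph{infinite} order in $\mathrm{PGL}(3,\mathbb{C})$, so this case is discarded neither as ``finite order'' nor as ``trivial'', contrary to what your sentence suggests. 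It is nonetheless impossible, and by exactly the mechanism you already invoke against the regular unipotent: $\tilde A|_{V_{-1}}$ and $\tilde B|_{V_{-1}}$ lie in the commutant of a nonscalar $2\times 2$ matrix, which is abelian, so $\tilde C|_{V_{-1}}=[\tilde A|_{V_{-1}},\tilde B|_{V_{-1}}]=\mathrm{id}$, contradicting the eigenvalue $-1$ (while the diagonalizable subcase $\tilde C|_{V_{-1}}=-\mathrm{id}$ gives $C^2=\mathrm{id}$ and is caught by your finite-order filter). One added sentence repairs this. The rest checks out: the computation forcing $\rho_f=1$ (with $\upsilon(f)$ normalized to diagonal $(1,\rho_f,1)$, the commutator relation kills the $(1,2)$ and $(2,3)$ entries of $\upsilon(g)$ and then the central entry $c$, a contradiction), the simultaneous clearing of the $(1,3)$ entries via the linear system with determinant $u_1w_2-u_2w_1=c\neq 0$, and the observation that a unitriangular $k$-th root of $I+cE_{13}$ has vanishing superdiagonal entries, are all correct. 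Note that the detour through $\upsilon(h)$ is not needed for the normalization $\zeta\delta-\beta\gamma=k$: a diagonal conjugation rescales $c$ to $k$ directly; your version is correct but proves slightly more than required.

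Your route is genuinely different from the paper's. Déserti's proof passes to the Zariski closure of $\upsilon(\mathcal{H}_k)$, asserts it is a unipotent algebraic subgroup, identifies its Lie algebra with the Heisenberg algebra $\mathfrak{h}$, shows that the projection $\mathrm{SL}(3,\mathbb{C})\to\mathrm{PGL}(3,\mathbb{C})$ restricts to an isomorphism from the unitriangular group onto the closure, lifts $\upsilon$ to a unitriangular representation, and reads the normal forms off the relations. You replace this algebraic-group machinery with lifts to $\mathrm{GL}(3,\mathbb{C})$, Jordan theory, and explicit commutant computations; in particular your second step, ruling out a semisimple part of $\upsilon(f)$ and $\upsilon(g)$ by hand, proves precisely the point that the paper's opening assertion (``the closure is unipotent'') subsumes without detail (there one would argue that a connected nilpotent algebraic group splits as torus times unipotent part, and a nontrivial central torus in $\mathrm{PGL}(3,\mathbb{C})$ would force an abelian closure). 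The paper's argument is shorter once that structure theory is granted; yours is longer but self-contained, and it makes every normalization, including why the constant can be scaled to $k$, completely explicit.
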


\begin{proof}
The Zariski closure 
$\overline{\upsilon(\mathcal{H}_k)}$ of 
$\upsilon(\mathcal{H}_k)$ is an algebraic 
unipotent subgroup of 
$\mathrm{PGL}(3,\mathbb{C})$. By assumption
$\upsilon$ is an embedding, so the 
Lie algebra of 
$\overline{\upsilon(\mathcal{H}_k)}$ is
isomorphic to 
\[
\mathfrak{h}=\left\{\left(\begin{array}{ccc}
0 & \zeta & \beta\\
0 & 0 & \gamma\\
0 & 0 & 0
\end{array}
\right)\,\vert\,\zeta,\,\beta,\,\gamma\in\mathbb{C}\right\}.
\]
Let $\mathrm{pr}$ be the canonical projection
from $\mathrm{SL}(3,\mathbb{C})$ into 
$\mathrm{PGL}(3,\mathbb{C})$. The Lie
algebra of 
$\mathrm{pr}^{-1}(\upsilon(\mathcal{H}_k))$
coincides with $\mathfrak{h}$ up to conjugacy.
Let us recall that the exponential map
sends $\mathfrak{h}$ in the group $\mathrm{H}$
of upper triangular matrices and that 
$\mathrm{H}$ is a connected algebraic group.
As a consequence 
$\big(\mathrm{pr}^{-1}(\overline{\upsilon(\mathcal{H}_k)})\big)^0=\mathrm{H}$. Any element of 
$\mathrm{pr}^{-1}(\overline{\upsilon(\mathcal{H}_k)})$
acts by conjugation on $\mathrm{H}$, so belongs
to 
$\langle \mathrm{H},\,\mathbf{j}\cdot\mathrm{id}\,\vert\,\mathbf{j}^3=1\rangle$. As
$\mathrm{pr}(\mathbf{j}\cdot\mathrm{id})=\mathrm{id}$,
the restriction $\mathrm{pr}_{\vert\mathrm{H}}$ 
of $\mathrm{pr}$ to $\mathrm{H}$ is surjective
on $\overline{\upsilon(\mathcal{H}_k)}$. It is 
also injective. Hence it is an isomorphism. 
Therefore, $\upsilon$ can be lifted to a 
representation $\widetilde{\upsilon}$ from
$\mathcal{H}_k$ into $\mathrm{H}$. The map
$\widetilde{\upsilon}(h^k)$ can be written 
as a commutator; it is thus unipotent. The 
relations satisfied by the generators imply
that up to conjugacy in $\mathrm{SL}(3,\mathbb{C})$
\begin{align*}
& \upsilon(f)\colon(z_0,z_1)\mapsto(z_0+\zeta z_1,z_1+\beta)&& \upsilon(g)\colon(z_0,z_1)\mapsto(z_0+\gamma z_1,z_1+\delta) \\
& \upsilon(h^k)\colon(z_0,z_1)\mapsto(z_0+k,z_1)&&
\end{align*}
with $\zeta\delta-\beta\gamma=k$.
\end{proof}

Let $\rho$ be an embedding of $\Gamma(3,q)$ into
$\mathrm{Bir}(\mathbb{P}^2_\mathbb{C})$. According
to Lemma \ref{lem:disto} and Lemma \ref{lem:nilkeylemma}
for any standard generator $\mathrm{e}_{ij}$ of 
$\mathrm{SL}(3,\mathbb{Z})$ one has 
$\lambda(\rho(e_{ij}))=1$. Theorem 
\ref{thm:dilfav} implies that 
\begin{itemize}
\item[(i)] either one of the 
$\rho(\mathrm{e}_{ij}^q)$ preserves a unique
fibration that is rational or elliptic,

\item[(ii)] or any standard generator of 
$\Gamma(3,q)$ is virtually isotopic to the
identity.
\end{itemize}

Let us first assume that (i) holds. 

\begin{lem}[\cite{Deserti:IMRN}]\label{lem:fleur}
Let $\Gamma$ be a Kazhdan group
that is finitely generated. Let $\rho$
be a morphism from $\Gamma$ into 
$\mathrm{PGL}(2,\mathbb{C}(z_1))$ $($resp.
$\mathrm{PGL}(2,\mathbb{C}))$. Then 
$\rho$ has finite image.
\end{lem}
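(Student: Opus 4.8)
The plan is to exploit two standard consequences of property (T): first, Serre's property (FA), so that every action of $\Gamma$ on a simplicial tree has a global fixed point; and second, property (FH) together with the fact that the relevant target groups act on spaces whose isometry groups are a-T-menable (Haagerup), which forces $\Gamma$ to have relatively compact image. Throughout I would use the elementary fact that an amenable, in particular a virtually solvable, quotient of a property (T) group is finite, since an infinite amenable group never has property (T). It is convenient to treat the complex case $\mathrm{PGL}(2,\mathbb{C})$ first and then reduce the function-field case to it.

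First I would treat $\mathrm{PGL}(2,\mathbb{C})$. Since $\mathrm{PGL}(2,\mathbb{C})\simeq\mathrm{Isom}^+(\mathbb{H}^3_\mathbb{R})$ is a rank-one group with the Haagerup property, property (FH) of $\Gamma$ forces every orbit of $\rho(\Gamma)$ on $\mathbb{H}^3_\mathbb{R}$ to be bounded; taking a circumcenter yields a global fixed point, so that $\rho(\Gamma)$ lies in the stabilizer of a point, a maximal compact subgroup conjugate to $\mathrm{PSU}(2)\simeq\mathrm{SO}(3)$. In particular $\rho(\Gamma)$ is relatively compact. I would then apply the Tits alternative to the linear group $\rho(\Gamma)$: if it is virtually solvable it is amenable, hence finite by property (T); otherwise it contains a nonabelian free group and its Euclidean closure is a positive-dimensional compact subgroup, necessarily $\mathrm{SO}(3)$ itself, so that $\rho(\Gamma)$ is dense in $\mathrm{SO}(3)$.

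The main obstacle is precisely this last case: ruling out an infinite, finitely generated Kazhdan group sitting densely in the compact simple group $\mathrm{SO}(3)$. Here property (T) no longer yields a soft contradiction, being compatible with a spectral gap, and one must instead invoke the rigidity of finite-dimensional representations with bounded image. For the arithmetic groups to which the lemma is actually applied, namely finite-index subgroups of $\mathrm{SL}(3,\mathbb{Z})$ and the congruence subgroups $\Gamma(3,q)$, Margulis super-rigidity together with the congruence subgroup property shows that any representation with relatively compact image factors through a finite quotient, whence $\rho(\Gamma)$ is finite. This is the step I expect to carry the real content.

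Finally, for $\mathrm{PGL}(2,\mathbb{C}(z_1))$ I would reduce to the complex case. To each place $v$ of the function field $\mathbb{C}(z_1)$ is attached a Bruhat--Tits tree on which $\mathrm{PGL}(2,\mathbb{C}(z_1))$ acts, and property (FA) produces a fixed vertex, so that a conjugate of $\rho(\Gamma)$ lands in a bounded subgroup at $v$. Since $\rho(\Gamma)$ is finitely generated, only finitely many places are relevant; combining these fixed points, equivalently using the tree of $\mathrm{PGL}(2,\mathbb{C}[z_1])$ in the style of Nagao, conjugates $\rho(\Gamma)$ either into the Borel subgroup over $\mathbb{C}[z_1]$, which is solvable and hence gives finiteness by amenability, or into the constant subgroup $\mathrm{PGL}(2,\mathbb{C})$, where the previous analysis applies. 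The delicate point in this reduction is to make the conjugations at the various places compatible, which is where finite generation is essential.
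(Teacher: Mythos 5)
Your argument stops short of proving the lemma exactly where you say it does. The statement is for an \emph{arbitrary} finitely generated Kazhdan group $\Gamma$, but in the one case you identify as carrying the real content --- $\rho(\Gamma)$ infinite with closure $\mathrm{SO}(3)$ --- you retreat to Margulis superrigidity plus the congruence subgroup property, which only applies to the particular lattices ($\Gamma(3,q)$, finite-index subgroups of $\mathrm{SL}(3,\mathbb{Z})$) to which the lemma is later \emph{applied}, not to the class of groups for which it is \emph{stated}. The missing ingredient is a theorem of Zimmer, reproduced in de la Harpe--Valette's book, which is precisely what the paper cites at this point: every countable subgroup of $\mathrm{SO}(3,\mathbb{R})$ with property (T) is finite. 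Your remark that property (T) ``no longer yields a soft contradiction, being compatible with a spectral gap'' conflates spectral gap of a dense subgroup's action (which infinite subgroups of $\mathrm{SO}(3)$, e.g.\ free groups of rational rotations, can indeed have) with property (T) of the abstract group, which is genuinely incompatible with being an infinite subgroup of $\mathrm{SO}(3)$. With Zimmer's theorem in hand, neither your Tits-alternative dichotomy nor any arithmeticity of $\Gamma$ is needed: as in your first step (which coincides with the paper's), the fixed point in $\mathbb{H}^3$ puts $\rho(\Gamma)$ in a maximal compact subgroup $\mathrm{SO}(3)$ up to conjugacy, and finiteness follows at once.

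Your reduction of the function-field case is also more laborious than necessary, and you leave its key point open. Boundedness at every place of $\mathbb{C}(z_1)$ amounts to preserving a rank-$2$ bundle on $\mathbb{P}^1_\mathbb{C}$, which splits by Grothendieck's theorem, and this would indeed yield your Borel-or-constants dichotomy --- but the simultaneous compatibility of the local conjugations, which you flag as ``delicate,'' is exactly the part you do not supply. The paper bypasses all of this in one sentence: since $\Gamma$ is finitely generated, the matrix entries $a_i,b_i,c_i,d_i\in\mathbb{C}(z_1)$ of the images of the generators generate a finitely generated field extension of $\mathbb{Q}$, and any such field of characteristic zero embeds into $\mathbb{C}$; one may therefore assume $\mathrm{im}\,\rho\subset\mathrm{PGL}(2,\mathbb{C})$ from the outset. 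This abstract field embedding respects no topology, but none is needed, since the fixed-point property for the action on $\mathbb{H}^3$ and Zimmer's theorem are statements about the abstract countable group.
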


\begin{proof}
Denote by $\gamma_i$ the generators
of $\Gamma$ and by 
$\left(\begin{array}{cc}
a_i(z_1) & b_i(z_1)\\
c_i(z_1) & d_i(z_1) 
\end{array}\right)$ their image
by $\rho$. A finitely generated
$\mathbb{Q}$-group is isomorphic 
to a subfield of $\mathbb{C}$. 
Hence 
$\mathbb{Q}(a_i(z_0),b_i(z_0),c_i(z_0),d_i(z_0))$
is isomorphic to a subfield of
$\mathbb{C}$ and one can assume 
that 
$\mathrm{im}\,\rho\subset\mathrm{PGL}(2,\mathbb{C})=\mathrm{Isom}(\mathbb{H}_3)$. 
As $\Gamma$ is Kazhdan any 
continuous action of $\Gamma$ by 
isometries of a real or complex 
hyperbolic space has a fixed point.
The image of $\rho$ is thus up to 
conjugacy a subgroup of 
$\mathrm{SO}(3,\mathbb{R})$; 
according to \cite{delaHarpeValette} 
the image of $\rho$ is thus finite. 
\end{proof}

\begin{pro}[\cite{Deserti:IMRN}]\label{pro:2fleurs}
Let $\rho$ be a morphism from $\Gamma(3,q)$ 
to $\mathrm{Bir}(\mathbb{P}^2_\mathbb{C})$.
If one $\rho(\mathrm{e}_{ij}^q)$ preserves
a unique fibration, then $\mathrm{im}\,\rho$
is finite.
\end{pro}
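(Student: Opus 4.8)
The plan is to show that $\rho$ maps a finite index subgroup of $\Gamma(3,q)$ into the stabiliser of a single fibration, and then to kill that stabiliser with Lemma \ref{lem:fleur}. By hypothesis some $\rho(\mathrm{e}_{i_0j_0}^q)$ preserves a unique fibration; since $\lambda(\rho(\mathrm{e}_{i_0j_0}^q))=1$ by Lemmas \ref{lem:disto} and \ref{lem:nilkeylemma}, Theorem \ref{thm:dilfav} forces this map to be a Jonqui\`eres twist or a Halphen twist, so its invariant fibration $\pi$ is rational or of genus one and is \emph{unique}. Using the automorphisms of $\mathrm{SL}(3,\mathbb{Z})$ that permute the elementary generators, I would first reduce to the case $f:=\rho(\mathrm{e}_{12}^q)$, with $\pi$ as its unique invariant fibration.

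The engine of the argument is a propagation lemma: if $g,h\in\Gamma(3,q)$ satisfy $[\mathrm{e}_{12}^q,g]=h^m$ with $h$ commuting with $\mathrm{e}_{12}^q$, then $\rho(g)$ preserves $\pi$. Indeed, $h$ commuting with $\mathrm{e}_{12}^q$ forces $\rho(h)$ to preserve the \emph{unique} invariant fibration $\pi$ of $f$, hence so does the product $\rho(h)^{-m}f$, the set of maps preserving $\pi$ being a group. The commutator relation gives $\rho(g)\,f\,\rho(g)^{-1}=\rho(h)^{-m}f$, so this element preserves $\pi$; but it is conjugate to the twist $f$, hence is itself a twist whose \emph{only} invariant fibration is $\pi$. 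That unique fibration equals $\rho(g)_\ast\pi$, whence $\rho(g)_\ast\pi=\pi$, i.e. $\rho(g)$ preserves $\pi$.

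Feeding the commutation relations of the presentation of $\mathrm{SL}(3,\mathbb{Z})$ into this lemma, I would obtain that $\rho(\mathrm{e}_{13}^q)$ and $\rho(\mathrm{e}_{32}^q)$ preserve $\pi$ (they commute with $\mathrm{e}_{12}^q$), and that $\rho(\mathrm{e}_{23}^q)$ and $\rho(\mathrm{e}_{31}^q)$ preserve $\pi$ (via $[\mathrm{e}_{12}^q,\mathrm{e}_{23}^q]=\mathrm{e}_{13}^{q^2}$ and $[\mathrm{e}_{12}^q,\mathrm{e}_{31}^q]=\mathrm{e}_{32}^{-q^2}$, whose central terms commute with $\mathrm{e}_{12}^q$). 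The last generator $\mathrm{e}_{21}$ has an $\mathrm{SL}(2)$-type relation with $\mathrm{e}_{12}$ that the lemma cannot exploit, but $[\mathrm{e}_{23}^q,\mathrm{e}_{31}^q]=\mathrm{e}_{21}^{q^2}$ shows $\rho(\mathrm{e}_{21}^{q^2})$ preserves $\pi$. Thus $\rho$ maps the subgroup $\Gamma'=\langle \mathrm{e}_{12}^q,\mathrm{e}_{13}^q,\mathrm{e}_{23}^q,\mathrm{e}_{31}^q,\mathrm{e}_{32}^q\rangle$ into the stabiliser of $\pi$, and $\Gamma'$ has finite index in $\Gamma(3,q)$ (the five elementary matrices omitting $\mathrm{e}_{21}$ already generate $\mathrm{SL}(3,\mathbb{Z})$, and the corresponding fact at the $q$-congruence level rests on the congruence subgroup property).

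Finally I would use the fibered structure. If $\pi$ is rational, Theorem \ref{thm:fibration} conjugates the stabiliser of $\pi$ into $\mathcal{J}\simeq\mathrm{PGL}(2,\mathbb{C})\rtimes\mathrm{PGL}(2,\mathbb{C}(z_0))$; the homomorphism $\Gamma'\to\mathrm{PGL}(2,\mathbb{C})$ recording the action on the base has finite image by Lemma \ref{lem:fleur}, and its kernel $K$, a finite index and hence Kazhdan subgroup, maps to $\mathrm{PGL}(2,\mathbb{C}(z_0))$ with finite image again by Lemma \ref{lem:fleur}, so $\rho(\Gamma')$ is finite. If $\pi$ has genus one, the stabiliser is conjugate into $\mathrm{Aut}(S)$ for a Halphen surface $S$ (Lemma \ref{lem:UrechHalphen} and the corollary following it); the base action has finite image in $\mathrm{PGL}(2,\mathbb{C})$ and the fibre-preserving part is abelian-by-finite, so a Kazhdan group has finite image there too. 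In either case $\rho(\Gamma')$ is finite, and since $\Gamma'$ has finite index in $\Gamma(3,q)$ the kernel of $\rho$ has finite index, whence $\mathrm{im}\,\rho$ is finite. I expect the main obstacle to be precisely the two bookkeeping points that the conceptual lemma does not settle by itself: guaranteeing \emph{uniqueness} of the invariant fibration at every use of the propagation step (which is where $\lambda=1$ and the twist classification are essential), and the number-theoretic verification that the generators actually reached by the lemma cut out a finite index subgroup, including the recalcitrant $\mathrm{e}_{21}$ direction.
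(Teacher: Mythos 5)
Your proposal is correct and takes essentially the same route as the paper: the published proof likewise reduces to $\rho(\mathrm{e}_{12}^q)$, observes that "the relations satisfied by the $\mathrm{e}_{ij}^q$ imply that $\mathcal{F}$ is invariant by any $\rho(\mathrm{e}_{ij}^{q^2})$" (your propagation lemma, made explicit), and then applies Lemma \ref{lem:fleur} twice through the Kazhdan property, first to the action on the base to get a finite-index kernel $\Gamma$, then to $\rho_{\vert\Gamma}\colon\Gamma\to\mathrm{PGL}(2,\mathbb{C}(z_1))$. The only minor divergence is the genus-one case, which the paper excludes outright by remarking that the group preserving an elliptic fibration fiberwise is metabelian while no finite-index subgroup of $\Gamma(3,q^2)$ admits such an infinite quotient, whereas you route through the structure of automorphism groups of Halphen surfaces --- both dispositions are sound.
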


\begin{proof}
Let us assume without loss of generality 
that $\rho(\mathrm{e}_{12}^q)$ preserves
a unique fibration $\mathcal{F}$. The 
relations satisfied by the $\mathrm{e}_{ij}^q$
imply that $\mathcal{F}$ is invariant by
any $\rho(\mathrm{e}_{ij}^{q^2})$. Hence 
for any $\rho(\mathrm{e}_{ij}^{q^2})$ 
there exist 
\begin{itemize}
\item[$\diamond$] $F\colon\mathbb{P}^2_\mathbb{C}\to\mathrm{Aut}(\mathbb{P}^1_\mathbb{C})$ that defines $\mathcal{F}$, 

\item[$\diamond$] and 
$h_{ij}\in\mathrm{PGL}(2,\mathbb{C})$
\end{itemize}
such that $F\circ\rho(\mathrm{e}_{ij}^{q^2})=h_{ij}\circ F$.

Let $\upsilon$ be the morphism defined by 
\begin{align*}
&\upsilon\colon\Gamma(3,q^2)\to\mathrm{PGL}(2,\mathbb{C}), &&  \mathrm{e}_{ij}^{q^2}\mapsto h_{ij}.
\end{align*}
The group $\Gamma(3,q^2)$ is a Kazhdan 
group, so $\Gamma=\ker\upsilon$ is of 
finite index (Lemma \ref{lem:fleur});
as a consequence $\Gamma$ is a Kazhdan 
group. 

Remark that $\mathcal{F}$ can not be 
elliptic; indeed the group of birational 
maps that preserve fiberwise an elliptic
fibration is metabelian and a subgroup
of $\Gamma(3,q^2)$ of finite index can
not be metabelian. 

Let us assume that $\mathcal{F}$ is a 
rational fibration. One can assume 
that $\mathcal{F}=(z_1=\text{ constant})$.
The group of birational maps of the 
complex projective plane that preserves
$\mathcal{F}$ is identified with 
$\mathrm{PGL}(2,\mathbb{C}(z_1))\rtimes\mathrm{PGL}(2,\mathbb{C})$ 
hence 
$\rho_{\vert\Gamma}\colon\Gamma\to\mathrm{PGL}(2,\mathbb{C}(z_1))$
has finite image (Lemma \ref{lem:fleur}).
\end{proof}

Consider now the case (ii), {\it i.e.} assume
that any standard generator of $\Gamma(3,q)$ 
is virtually isotopic to the identity.

\begin{rem}\label{rem:hom}
Two irreducible homologous curves of negative
self-intersection coincide. As a consequence
an automorphism $\varphi$ of a surface $S$ isotopic
to the identity fixes any curve of negative 
self-intersection. Furthermore for any 
sequence of blow-downs $\psi$ from $S$ to a
minimal model $\widetilde{S}$ of $S$ the map
$\psi\circ \varphi\circ\psi^{-1}$ is an
automorphism of $\widetilde{S}$ isotopic to 
the identity. 
\end{rem}

According to Remark \ref{rem:hom}, Proposition
\ref{pro:sim}, Lemma \ref{lem:disto} and Lemma \ref{lem:nilkeylemma} the maps 
$\rho(\mathrm{e}_{12}^{qn})$, 
$\rho(\mathrm{e}_{13}^{qn})$,
$\rho(\mathrm{e}_{23}^{qn})$ are, for some 
integer $n$, some automorphisms of a minimal
rational surface, that is of 
$\mathbb{P}^2_\mathbb{C}$ or of 
$\mathbb{F}_n$, $n\geq 2$. Let us mention 
the case $\mathbb{F}_n$, $n\geq 2$
(\emph{see} \cite{Deserti:IMRN} for more
details) and detail the case 
$\mathbb{P}^2_\mathbb{C}$. 

\begin{lem}[\cite{Deserti:IMRN}]\label{lem:hirzcong}
Let $\rho$ be a morphism from a 
congruence subgroup $\Gamma(3,q)$ 
of $\mathrm{SL}(3,\mathbb{Z})$
in the plane Cremona
group. 

Assume that $\rho(\mathrm{e}_{12}^{q\ell})$,
$\rho(\mathrm{e}_{13}^{q\ell})$ and 
$\rho(\mathrm{e}_{23}^{q\ell})$ belong to
$\mathrm{Aut}(\mathbb{F}_n)$, $n\geq 2$, 
for some integer $\ell$. Then the image of 
$\rho$ is 
\begin{itemize}
\item[$\diamond$] either finite,

\item[$\diamond$] or contained in 
$\mathrm{PGL}(3,\mathbb{C})=\mathrm{Aut}(\mathbb{P}^2_\mathbb{C})$ up to conjugacy.
\end{itemize}
\end{lem}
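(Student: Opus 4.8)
The plan is to transport the Heisenberg structure of the hypothesis into $\mathrm{Aut}(\mathbb{F}_n)$ via Lemma \ref{lem:heishirz}, read off a normal form for the image of the central generator $\mathrm{e}_{13}$, and then split into a \emph{fibration} alternative (leading to finiteness) and a \emph{linear} alternative (leading into $\mathrm{PGL}(3,\mathbb{C})$).

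First I would note (Remark \ref{rem:heis}) that $\mathcal{H} = \langle \mathrm{e}_{12}^{q\ell}, \mathrm{e}_{23}^{q\ell}, \mathrm{e}_{13}^{q\ell}\rangle$ is a Heisenberg group whose centre is generated by $\mathrm{e}_{13}^{q\ell}$ and which, by hypothesis, is sent by $\rho$ into $\mathrm{Aut}(\mathbb{F}_n)$. Lemma \ref{lem:heishirz} then lets me conjugate $\rho$ birationally so that $\rho(\mathcal{H})$ lands in the triangular group $G_0 = \{(z_0,z_1)\mapsto(\alpha z_0 + P(z_1), \beta z_1 + \gamma)\}$ and the central image becomes $g_0 := \rho(\mathrm{e}_{13}^{M}) \colon (z_0,z_1)\mapsto(z_0 + Q(z_1), z_1)$ for a suitable power $M$ and a polynomial $Q$. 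The group $G_0$ preserves the rational fibration $\mathcal{F} = \{z_1 = \text{const}\}$, and $g_0$ preserves it fiberwise. If $g_0 = \mathrm{id}$, then $\rho(\mathrm{e}_{13})$ has finite order while $\mathrm{e}_{13}$ has infinite order, so $\ker\rho$ is a non-trivial normal subgroup of $\Gamma(3,q)$; by the rigidity of higher-rank congruence groups it has finite index and $\rho$ has finite image. Hence I may assume $Q \neq 0$.

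The core dichotomy I would set up is according to $\deg Q$. When $\deg Q \geq 2$ the map $g_0$ has degree $\geq 2$ in $\mathrm{Bir}(\mathbb{P}^2_\mathbb{C})$ and a direct computation shows that any birational self map commuting with $g_0$ must preserve $\mathcal{F}$ (its second component $B(z_0,z_1)$ satisfies $B(z_0 + Q(z_1), z_1) = B(z_0,z_1)$ with $Q \not\equiv 0$, forcing $B$ to be independent of $z_0$). I would then argue that the whole image $\rho(\Gamma(3,q))$ preserves $\mathcal{F}$; granting this, $\rho(\Gamma(3,q))$ is conjugate into the Jonqui\`eres group $\mathcal{J}\simeq \mathrm{PGL}(2,\mathbb{C}(z_1))\rtimes\mathrm{PGL}(2,\mathbb{C})$, and since $\Gamma(3,q)$ is a Kazhdan group its image in $\mathrm{PGL}(2,\mathbb{C})$ under the base action is finite (Lemma \ref{lem:fleur}); a finite index subgroup then maps into $\mathrm{PGL}(2,\mathbb{C}(z_1))$, on which Lemma \ref{lem:fleur} applies once more, so that $\rho(\Gamma(3,q))$ is finite. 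When $\deg Q \leq 1$ the map $g_0$ is linear, so $\rho(\mathcal{H})$ actually lies in the subgroup common to $\mathrm{Aut}(\mathbb{F}_n)$ and $\mathrm{PGL}(3,\mathbb{C})$; re-conjugating into the normal form of Lemma \ref{lem:heispgl3} reduces the situation to the $\mathbb{P}^2_\mathbb{C}$ case, whose analysis places the whole image in $\mathrm{PGL}(3,\mathbb{C})$ up to conjugacy.

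The main obstacle is the propagation step in the $\deg Q \geq 2$ case: the hypothesis controls only the three upper-triangular generators, so only the centraliser of $\mathrm{e}_{13}$ visibly preserves $\mathcal{F}$, whereas I need the whole group to do so. My plan to overcome this is to run the same Heisenberg normal-form analysis on the lower-triangular Heisenberg subgroup $\langle \mathrm{e}_{21}^{q\ell}, \mathrm{e}_{32}^{q\ell}, \mathrm{e}_{31}^{q\ell}\rangle$ --- permissible because in the case at hand all standard generators are virtually isotopic to the identity and Proposition \ref{pro:sim} applies to each Heisenberg triple --- and to reconcile the two invariant fibrations using the commutation relations between the generators together with the uniqueness of the negative section $s_n$ of $\mathbb{F}_n$; the normal subgroup theorem for $\Gamma(3,q)$ is what ultimately forces a single common invariant fibration, closing the finiteness alternative.
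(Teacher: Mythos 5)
Note first that the paper itself does not prove this lemma: it states it with a pointer to \cite{Deserti:IMRN} and only details the companion $\mathbb{P}^2_\mathbb{C}$ case (Lemma \ref{lem:pgl3cong}). Your proposal must therefore be judged against the surrounding machinery, and while its architecture is the expected one (Heisenberg normal form via Lemma \ref{lem:heishirz}; a non-trivial kernel plus the normal subgroup theorem giving finiteness; fibration-invariance plus Kazhdan's property and Lemma \ref{lem:fleur} giving finiteness; a linear branch feeding into Lemma \ref{lem:pgl3cong}), two of its steps are genuinely broken.

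First, in the branch $\deg Q\leq 1$ the inference ``$g_0$ linear, hence $\rho(\mathcal{H})$ lies in the linear part of $\mathrm{Aut}(\mathbb{F}_n)$'' is false. The normal form of Lemma \ref{lem:heishirz} lets the non-central generators carry polynomial parts of degree up to $n$ even when the central image is an honest translation: in $\mathrm{Aut}(\mathbb{F}_2)$ the maps $f\colon(z_0,z_1)\mapsto(z_0+z_1^2,z_1+1)$ and $g\colon(z_0,z_1)\mapsto(z_0+2z_1^2,z_1+2)$ satisfy $[f,g]\colon(z_0,z_1)\mapsto(z_0+2,z_1)$, and $[f,g]$ is central in $\langle f,g\rangle$, so one gets a Heisenberg-type group with linear centre that is not contained in $\mathrm{PGL}(3,\mathbb{C})$ in these coordinates. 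Consequently Lemma \ref{lem:heispgl3}, whose hypothesis is an embedding of $\mathcal{H}_k$ \emph{into} $\mathrm{PGL}(3,\mathbb{C})$, cannot be invoked, and ``re-conjugating into its normal form'' is circular: whether the triple can be linearised is exactly what must be proved, and any honest argument has to bring in the remaining generators $\mathrm{e}_{21}$, $\mathrm{e}_{31}$, $\mathrm{e}_{32}$ and the relations, as the proof of Lemma \ref{lem:pgl3cong} does.

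Second, the propagation step in the branch $\deg Q\geq 2$ is not yet an argument. Your centraliser computation is correct (and works for every $Q\not\equiv 0$, not just $\deg Q\geq 2$), but among the standard generators only $\mathrm{e}_{12}$ and $\mathrm{e}_{23}$ commute with $\mathrm{e}_{13}$, since $[\mathrm{e}_{21},\mathrm{e}_{13}]=\mathrm{e}_{23}$, etc. Moreover $g_0$ is \emph{elliptic}: $\deg g_0^k=\deg Q$ for all $k$, and besides $\{z_1=\mathrm{const}\}$ it also permutes the fibers of the rational fibration $z_0=cQ(z_1)$, so it does not preserve a unique fibration and the propagation mechanism of Proposition \ref{pro:2fleurs} is unavailable. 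Your proposed repair has its own problems: Proposition \ref{pro:sim} applied to the lower-triangular triple (legitimate once the finite-image alternative is discarded, which makes all generators elliptic) only regularises it on \emph{some} minimal surface, possibly $\mathbb{P}^2_\mathbb{C}$ or $\mathbb{P}^1_\mathbb{C}\times\mathbb{P}^1_\mathbb{C}$ rather than a Hirzebruch surface; the two normal forms are reached by different birational conjugations, so ``uniqueness of the section $s_n$'' compares objects living on different models; and the normal subgroup theorem cannot ``force a single common invariant fibration'' --- it only converts a non-trivial kernel into finiteness. To close this branch you would need to manufacture either invariance of one fibration under all of $\rho(\Gamma(3,q))$, or a single element preserving two distinct fibrations fiberwise, so that Lemma \ref{lem:2fibr} yields an infinite-order kernel element; your sketch produces neither, and this is precisely the work carried out in \cite{Deserti:IMRN} via the relations with $\mathrm{e}_{21}$, $\mathrm{e}_{31}$, $\mathrm{e}_{32}$.
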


\begin{lem}[\cite{Deserti:IMRN}]\label{lem:pgl3cong}
Let $\rho$ be an embedding of a 
congruence subgroup $\Gamma(3,q)$ of 
$\mathrm{SL}(3,\mathbb{Z})$ into 
$\mathrm{Bir}(\mathbb{P}^2_\mathbb{C})$.
If $\rho(\mathrm{e}_{12}^{qn})$, 
$\rho(\mathrm{e}_{13}^{qn})$ and 
$\rho(\mathrm{e}_{23}^{qn})$ belong, 
for some integer $n$, to 
$\mathrm{PGL}(3,\mathbb{C})=\mathrm{Aut}(\mathbb{P}^2_\mathbb{C})$, 
then $\rho(\Gamma(3,q^2n^2))$ is a 
subgroup of 
$\mathrm{PGL}(3,\mathbb{C})=\mathrm{Aut}(\mathbb{P}^2_\mathbb{C})$.
\end{lem}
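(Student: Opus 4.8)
The plan is to prove that every standard generator $\mathrm{e}_{ij}^{q^2n^2}$ ($i\neq j$) of $\Gamma(3,q^2n^2)$ has image under $\rho$ in $\mathrm{PGL}(3,\mathbb{C})$; since these six elements generate $\Gamma(3,q^2n^2)$ (as the $\mathrm{e}_{ij}^q$ generate $\Gamma(3,q)$), the assertion $\rho(\Gamma(3,q^2n^2))\subset\mathrm{PGL}(3,\mathbb{C})$ follows at once. The three \emph{upper} generators are handled for free: $\mathrm{e}_{12}^{q^2n^2}$, $\mathrm{e}_{13}^{q^2n^2}$, $\mathrm{e}_{23}^{q^2n^2}$ are the $(qn)$-th powers of the hypothesis elements $\rho(\mathrm{e}_{12}^{qn})$, $\rho(\mathrm{e}_{13}^{qn})$, $\rho(\mathrm{e}_{23}^{qn})\in\mathrm{PGL}(3,\mathbb{C})$, hence they themselves lie in $\mathrm{PGL}(3,\mathbb{C})$. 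First I would record that $f=\mathrm{e}_{12}^{qn}$, $g=\mathrm{e}_{23}^{qn}$, $h=\mathrm{e}_{13}^{qn}$ satisfy $[f,g]=h^{qn}$ and $[f,h]=[g,h]=\mathrm{id}$, so they generate a Heisenberg group $\mathcal{H}_{qn}$ on which $\rho$ is an embedding into $\mathrm{PGL}(3,\mathbb{C})$. Applying Lemma \ref{lem:heispgl3} I would conjugate $\rho$ by an element of $\mathrm{PGL}(3,\mathbb{C})$ so that $\rho(f)$, $\rho(g)$ and $\rho(\mathrm{e}_{13}^{q^2n^2})=\rho(h^{qn})$ acquire the explicit affine normal form; in particular $\rho(\mathrm{e}_{13}^{q^2n^2})$ becomes the translation $(z_0,z_1)\mapsto(z_0+qn,z_1)$, a fixed unipotent automorphism with a prescribed invariant line and invariant pencil.

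The remaining work is to control the three \emph{lower} generators $\mathrm{e}_{21}^{qn}$, $\mathrm{e}_{31}^{qn}$, $\mathrm{e}_{32}^{qn}$. Each is distorted in $\Gamma(3,q)$ (Lemma \ref{lem:disto}), hence elliptic by Lemma \ref{lem:nilkeylemma} together with the fact that distorted birational maps are algebraic, and therefore virtually isotopic to the identity (Theorem \ref{thm:dilfav}). As they again form a copy of $\mathcal{H}_{qn}$, Proposition \ref{pro:sim} shows their images are simultaneously virtually isotopic to the identity, so simultaneously conjugate by a single birational map $\psi$ into $\mathrm{Aut}(S)$ for a minimal rational surface $S$. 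Lemma \ref{lem:notp1p1} rules out $S\simeq\mathbb{P}^1_\mathbb{C}\times\mathbb{P}^1_\mathbb{C}$, and Lemma \ref{lem:heishirz} together with Lemma \ref{lem:hirzcong} govern the case $S\simeq\mathbb{F}_n$ with $n\geq 2$, leaving $S\simeq\mathbb{P}^2_\mathbb{C}$ to be forced. The tool linking the two triangular halves is the set of cross relations of $\mathrm{SL}(3,\mathbb{Z})$: from $[\mathrm{e}_{13},\mathrm{e}_{32}]=\mathrm{e}_{12}$ one gets $[\rho(h),\rho(\mathrm{e}_{32}^{qn})]=\rho(\mathrm{e}_{12}^{q^2n^2})\in\mathrm{PGL}(3,\mathbb{C})$, from $[\mathrm{e}_{21},\mathrm{e}_{13}]=\mathrm{e}_{23}$ one gets $[\rho(\mathrm{e}_{21}^{qn}),\rho(h)]=\rho(\mathrm{e}_{23}^{q^2n^2})\in\mathrm{PGL}(3,\mathbb{C})$, while $\mathrm{e}_{12}$ and $\mathrm{e}_{32}$ commute. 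Thus each lower generator interacts with the already linearised unipotent elements $\rho(\mathrm{e}_{ij}^{q^2n^2})$ only through commutators that are linear.

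The main obstacle is the gluing step: showing that the regularisation $\psi$ of the lower Heisenberg group can be taken to land on the very copy of $\mathbb{P}^2_\mathbb{C}$, in the coordinates fixed in the first paragraph, on which the upper generators already act by $\mathrm{PGL}(3,\mathbb{C})$, rather than on an a priori unrelated model. I expect to extract this from rigidity of the invariant data of the linear unipotent elements: an elliptic map whose commutator with such a unipotent automorphism is again a linear unipotent map must preserve the same fixed points and the same invariant pencil, which excludes the $\mathbb{F}_n$ alternative for the lower triangle once the upper triangle is linear and pins the common regular model to $\mathbb{P}^2_\mathbb{C}$. Carrying this out carefully — verifying that the resulting simultaneous conjugacy is realised \emph{inside} $\mathrm{PGL}(3,\mathbb{C})$ and not merely birationally — is where the technical weight lies; once every $\rho(\mathrm{e}_{ij}^{q^2n^2})$ is shown to be linear, the inclusion $\rho(\Gamma(3,q^2n^2))\subset\mathrm{PGL}(3,\mathbb{C})$ is immediate.
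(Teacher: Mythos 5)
Your opening moves coincide with the paper's: reducing to the six standard generators, observing that the upper ones are powers of the hypothesis elements, and normalizing the upper triple $\big(\mathrm{e}_{12}^{qn},\mathrm{e}_{23}^{qn},\mathrm{e}_{13}^{qn}\big)$, which is a $qn$-Heisenberg group, via Lemma \ref{lem:heispgl3} is exactly the paper's first step. But the entire content of the lemma lies in what you call the ``gluing step,'' and there you have a genuine gap: you defer it to a rigidity argument (``I expect to extract this from rigidity of the invariant data'') that, as sketched, would fail. Commutation with the linearized unipotent upper triple does \emph{not} force the lower generators to preserve the right structure linearly: for example $(z_0,z_1)\mapsto(z_0+P(z_1),z_1)$ with $P$ a nonconstant polynomial is elliptic, fixes the line at infinity and the pencil $z_1=\mathrm{cst}$, and commutes with the translation $(z_0,z_1)\mapsto(z_0+qn,z_1)$; more generally, a whole Heisenberg group of non-linear polynomial automorphisms of $\mathbb{A}^2_\mathbb{C}$ is compatible with all the invariance constraints you list. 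So ``rigidity of fixed points and invariant pencils'' cannot pin the lower triple into $\mathrm{PGL}(3,\mathbb{C})$, and your detour through Proposition \ref{pro:sim} only yields a simultaneous \emph{birational} conjugation onto some minimal model, a priori unrelated to the coordinates already fixed — which is precisely the problem you name without solving.

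What the paper does instead is stay in the fixed affine coordinates and exploit the Steinberg relations to constrain $\rho(\mathrm{e}_{21}^{qn})$ directly: the relation $[\rho(\mathrm{e}_{13}^{qn}),\rho(\mathrm{e}_{21}^{qn})]=\rho(\mathrm{e}_{23}^{-q^2n^2})$ forces any contracted curves to be fibers $z_1=\mathrm{cst}$, and commutation with $\rho(\mathrm{e}_{23}^{qn})$ then pushes $\mathrm{Exc}$, $\mathrm{Ind}$ and $\mathrm{Exc}$ of the square into the line at infinity, so that Lemma \ref{lem:tecn} gives the dichotomy: $\rho(\mathrm{e}_{21}^{qn})$ is either linear or a polynomial automorphism of $\mathbb{A}^2_\mathbb{C}$. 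The case distinction is on the parameters of the normal form ($\beta\delta\neq 0$ versus $\beta\delta=0$, with $\zeta\delta-\beta\gamma=q^2n^2$ excluding $\beta=\delta=0$). When $\beta\delta\neq 0$ one cannot conclude linearity at all — instead every $\rho(\mathrm{e}_{ij}^{q^2n^2})$ lands in $\mathrm{Aut}(\mathbb{A}^2_\mathbb{C})$, and the contradiction with injectivity of $\rho$ comes from the Cantat--Lamy theorem (Theorem \ref{thm:CantatLamyaut}) on lattices in $\mathrm{Aut}(\mathbb{A}^2_\mathbb{C})$ — a deep external input your plan never invokes and cannot replace by local rigidity, since the polynomial configuration is birationally consistent. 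When $\beta=0$ (or $\delta=0$), explicit computation with the remaining relations pins $\rho(\mathrm{e}_{21}^{qn})$ and $\rho(\mathrm{e}_{32}^{qn})$ down to concrete maps and forces the residual rational function $b$ to vanish, giving linearity on the nose, in the given coordinates, with no further conjugation. Without the Lemma \ref{lem:tecn} dichotomy and the Cantat--Lamy obstruction your argument cannot exclude the polynomial-automorphism scenario, so the proposal does not close.
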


To establish this statement we will need
the two following results; the first one
was obtained by Cantat and 
Lamy when they study the 
embeddings of lattices from simple 
Lie groups into the group 
of polynomial automorphisms 
$\mathrm{Aut}(\mathbb{A}^2_\mathbb{C})$
whereas the second one is a technical
one. 

\begin{thm}[\cite{CantatLamyaut}]\label{thm:CantatLamyaut}
Let $\mathrm{G}$ be a simple real 
Lie group. Let $\Gamma$ 
be a lattice of $\mathrm{G}$. If 
there exists an embedding of 
$\Gamma$ into 
$\mathrm{Aut}(\mathbb{A}^2_\mathbb{C})$, 
then $\mathrm{G}$ is isomorphic to 
either $\mathrm{PSO}(1,n)$ or 
$\mathrm{PSU}(1,n)$ for some integer $n$.
\end{thm}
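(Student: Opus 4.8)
The plan is to exploit the amalgamated-product structure of $\mathrm{Aut}(\mathbb{A}^2_\mathbb{C})$ and play it against the rigidity of lattices. By the theorem of Jung and van der Kulk recalled above (\cite{Jung}), $\mathrm{Aut}(\mathbb{A}^2_\mathbb{C})$ is the amalgamated product of $\mathrm{Aff}_2$ and $\mathcal{J}_{\mathbb{A}^2_\mathbb{C}}$ over their intersection. Hence (\cite{Serre:arbres}) it acts on the associated Bass--Serre tree $T$: a simplicial tree, without inversion after passing to the first barycentric subdivision, whose vertex stabilizers are exactly the conjugates of $\mathrm{Aff}_2$ and of $\mathcal{J}_{\mathbb{A}^2_\mathbb{C}}$, and whose edge stabilizers are conjugates of $\mathrm{Aff}_2\cap\mathcal{J}_{\mathbb{A}^2_\mathbb{C}}$. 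Any embedding $\upsilon\colon\Gamma\hookrightarrow\mathrm{Aut}(\mathbb{A}^2_\mathbb{C})$ thus produces an isometric action of $\Gamma$ on $T$.

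First I would reduce to a dichotomy coming from Kazhdan's property (T). We may assume $\mathrm{G}$ noncompact and simple, and a theorem of Kazhdan and Kostant asserts that such a $\mathrm{G}$ has property (T) unless it is isogenous to $\mathrm{SO}(1,n)$ or $\mathrm{SU}(1,n)$, in which case $\mathrm{G}\simeq\mathrm{PSO}(1,n)$ or $\mathrm{PSU}(1,n)$. Since property (T) passes between a semisimple Lie group and its lattices, it suffices to prove: if $\Gamma$ has property (T), then no embedding $\upsilon$ exists. So assume $\Gamma$ has property (T). Property (T) implies property (FA), so the $\Gamma$-action on $T$ has a global fixed vertex; consequently $\upsilon(\Gamma)$ is contained in a conjugate of $\mathrm{Aff}_2$ or of $\mathcal{J}_{\mathbb{A}^2_\mathbb{C}}$.

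The case of $\mathcal{J}_{\mathbb{A}^2_\mathbb{C}}$ is immediate: this group is solvable, hence amenable, whereas a lattice in a noncompact simple Lie group is nonamenable, and in fact not virtually solvable. The remaining case, $\upsilon(\Gamma)\subset\mathrm{Aff}_2=\mathrm{GL}(2,\mathbb{C})\ltimes\mathbb{C}^2$, is the heart of the argument and the step I expect to be the main obstacle. Composing $\upsilon$ with the linear-part projection $\mathrm{Aff}_2\to\mathrm{GL}(2,\mathbb{C})$ yields a two-dimensional linear representation of $\Gamma$. I would invoke superrigidity — Margulis in the higher-rank case, and Corlette together with Gromov and Schoen for the rank-one groups $\mathrm{Sp}(1,n)$ and $F_4^{-20}$ that carry property (T) — combined with the fact that $\mathrm{G}$ admits no nontrivial representation of dimension $2$, to conclude that this representation has finite image. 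Then a finite-index subgroup $\Gamma_0\leq\Gamma$ maps into the abelian translation kernel $\mathbb{C}^2$, so $\Gamma_0$ is abelian; but a finite-index subgroup of a lattice is again a lattice, and lattices in noncompact simple groups are never virtually abelian. This contradiction closes the property (T) case.

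Assembling the pieces: if $\mathrm{G}$ is not isomorphic to $\mathrm{PSO}(1,n)$ or $\mathrm{PSU}(1,n)$, then $\Gamma$ has property (T) and admits no embedding into $\mathrm{Aut}(\mathbb{A}^2_\mathbb{C})$, which is exactly the contrapositive of the statement. The delicate points to watch are ruling out an infinite but precompact image in $\mathrm{GL}(2,\mathbb{C})$ and controlling the solvable radical of the Zariski closure of the linear-part image; both are absorbed by superrigidity, which forces the linear part to factor through $\mathrm{G}$, whose $2$-dimensional representations are trivial. A secondary point is to confirm that the property (FA) conclusion genuinely places $\upsilon(\Gamma)$ inside a single vertex stabilizer, which is why I pass to the barycentric subdivision to avoid edge inversions.
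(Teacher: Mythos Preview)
The paper does not prove this theorem; it is quoted from \cite{CantatLamyaut} and used as a black box in the proof of Lemma~\ref{lem:pgl3cong}. Your argument is correct and is essentially the one given by Cantat and Lamy: act on the Bass--Serre tree of $\mathrm{Aut}(\mathbb{A}^2_\mathbb{C})=\mathrm{Aff}_2\ast_{\mathrm{Aff}_2\cap\mathcal{J}_{\mathbb{A}^2_\mathbb{C}}}\mathcal{J}_{\mathbb{A}^2_\mathbb{C}}$, use property~(T) to force a global fixed vertex, rule out the solvable factor $\mathcal{J}_{\mathbb{A}^2_\mathbb{C}}$ immediately, and kill the affine case via superrigidity (Margulis in higher rank, Corlette/Gromov--Schoen for $\mathrm{Sp}(1,n)$ and $F_4^{-20}$) since none of these groups admit a nontrivial $2$-dimensional representation.

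Two minor remarks. First, the action of an amalgam on its Bass--Serre tree is already without inversion, so the passage to the barycentric subdivision is unnecessary. Second, your reduction ``we may assume $\mathrm{G}$ noncompact'' is not quite innocent: if $\mathrm{G}$ is compact simple then $\Gamma$ is finite and embeds in $\mathrm{Aut}(\mathbb{A}^2_\mathbb{C})$ trivially, yet $\mathrm{G}$ is not of the form $\mathrm{PSO}(1,n)$ or $\mathrm{PSU}(1,n)$; the statement as recorded in this survey tacitly excludes the compact case.
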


\begin{lem}[\cite{Deserti:IMRN}]\label{lem:tecn}
Let $\phi$ be an element of the plane
Cremona group. Assume that 
$\mathrm{Exc}(\phi)$ and $\mathrm{Exc}(\phi^2)$
are non-empty and contained in the line at 
infinity. If $\mathrm{Ind}(\phi)$ is also
contained in the line at infinity, then $\phi$
is a polynomial automorphism of 
$\mathbb{A}^2_\mathbb{C}$.
\end{lem}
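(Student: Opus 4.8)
The plan is to set $L:=\mathbb{P}^2_\mathbb{C}\smallsetminus\mathbb{A}^2_\mathbb{C}$ (the line at infinity) and to prove that $\phi$ restricts to an isomorphism $\mathbb{A}^2_\mathbb{C}\to\mathbb{A}^2_\mathbb{C}$. The delicate point is that the hypotheses concern $\phi$ and $\phi^2$ only, whereas being a polynomial automorphism is really a condition on both $\phi$ and $\phi^{-1}$; so the heart of the argument is to convert the assumption on $\mathrm{Exc}(\phi^2)$ into information about $\phi^{-1}$.

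First I would note that, $L$ being irreducible and $\mathrm{Exc}(\phi)$ a non-empty union of irreducible curves all lying inside $L$, necessarily $\mathrm{Exc}(\phi)=\{L\}$: the map $\phi$ contracts $L$ to a single point $q=\phi(L)$ and contracts nothing else. In particular $q$ is the unique indeterminacy point of $\phi^{-1}$.

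The main step is the analysis of $\mathrm{Exc}(\phi^2)$. Writing $\phi^2=\phi\circ\phi$, a curve $C$ is contracted by $\phi^2$ if and only if either $\phi$ already contracts $C$, i.e. $C=L$, or $\phi$ maps $C$ onto a curve contracted by $\phi$, i.e. the strict transform of $C$ is $L$. Hence
\[
\mathrm{Exc}(\phi^2)=\{L\}\cup\{C\,:\,\phi\text{ maps }C\text{ onto }L\}.
\]
A curve mapped by $\phi$ onto $L$ exists precisely when $\phi^{-1}$ does not contract $L$; in that case it is the unique curve $C_0$ which is the strict transform of $L$ under $\phi^{-1}$, and $C_0\neq L$ because $\phi$ contracts $L$. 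Were such a $C_0$ to exist it would be a curve distinct from $L$ belonging to $\mathrm{Exc}(\phi^2)$, violating $\mathrm{Exc}(\phi^2)\subset L$. Therefore no curve is mapped onto $L$ by $\phi$, which is exactly the statement that $\phi^{-1}$ contracts $L$ as well.

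It then remains to assemble the pieces. Since $\mathrm{Ind}(\phi)\subset L$, the map $\phi$ is a morphism on $\mathbb{A}^2_\mathbb{C}$. The set $\phi^{-1}(L)\cap\mathbb{A}^2_\mathbb{C}$, being the support of the pullback of the Cartier divisor $L$, is empty or purely one-dimensional; a curve $C\subset\mathbb{A}^2_\mathbb{C}$ in it could not be contracted (the only contracted curve is $L\not\subset\mathbb{A}^2_\mathbb{C}$), so $\phi$ would map $C$ onto a curve inside $L$, i.e. onto $L$, contradicting the main step. Hence $\phi(\mathbb{A}^2_\mathbb{C})\subset\mathbb{A}^2_\mathbb{C}$, and $\phi$ induces a birational morphism $\mathbb{A}^2_\mathbb{C}\to\mathbb{A}^2_\mathbb{C}$ contracting no curve, thus quasi-finite; by Zariski's main theorem it is an open immersion onto an open subset $U\subset\mathbb{A}^2_\mathbb{C}$ isomorphic to $\mathbb{A}^2_\mathbb{C}$. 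Finally the only open subset of $\mathbb{A}^2_\mathbb{C}$ isomorphic to $\mathbb{A}^2_\mathbb{C}$ is $\mathbb{A}^2_\mathbb{C}$ itself (the complement of a finite set is not affine, and the complement of a curve carries non-constant units), so $U=\mathbb{A}^2_\mathbb{C}$ and $\phi$ is a polynomial automorphism of $\mathbb{A}^2_\mathbb{C}$. The hard part is the bookkeeping in the central step, namely identifying the curves contracted by $\phi^2$ with $L$ together with the $\phi^{-1}$-strict transform of $L$; once that forces $\phi^{-1}$ to contract $L$, the remaining steps are routine.
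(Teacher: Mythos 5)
Your proof is correct. The paper itself gives no argument for this lemma (it is quoted from \cite{Deserti:IMRN}), so judged on its own merits: the reduction $\mathrm{Exc}(\phi)=\{L\}$ (the only curve contained in the irreducible curve $L$ is $L$ itself), the dichotomy that $\phi^{-1}$ either contracts $L$ or sends it onto a unique curve $C_0$ with $\phi(C_0)=L$, and the key computation that such a $C_0$ lies in $\mathrm{Exc}(\phi^2)$ are all sound — the last point is clean precisely because a general point of $C_0$ maps to a general point of $L$, hence avoids the finite set $\mathrm{Ind}(\phi)$ and lands on $q=\phi(L)$; no algebraic-stability issue can interfere there. The endgame (pullback of the Cartier divisor $L$ is empty or pure codimension $1$ on $\mathbb{A}^2_\mathbb{C}$, quasi-finite birational morphism to a normal surface is an open immersion by Zariski's main theorem, and the only open subset of $\mathbb{A}^2_\mathbb{C}$ isomorphic to $\mathbb{A}^2_\mathbb{C}$ is $\mathbb{A}^2_\mathbb{C}$ via the units and non-affineness arguments you sketch) is standard and correctly deployed.

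One sentence deserves a caveat, though it does not create a gap. The displayed identity $\mathrm{Exc}(\phi^2)=\{L\}\cup\{C\,:\,\phi(C)=L\}$ is not true for an arbitrary $\phi$ with $\mathrm{Exc}(\phi)=\{L\}$: if $q=\phi(L)$ happens to lie in $\mathrm{Ind}(\phi)$ — i.e.\ if $\phi$ is not algebraically stable along this orbit — then $L$ itself need not belong to $\mathrm{Exc}(\phi^2)$, since its image under $\phi^2$ can be a curve arising from the blow-up of $q$. Your argument survives untouched because you only ever use the inclusion $\{C\,:\,\phi(C)=L\}\subset\mathrm{Exc}(\phi^2)$, which holds unconditionally by the general-point computation above, and because the hypothesis that $\mathrm{Exc}(\phi^2)$ is non-empty and contained in $L$ independently forces $\mathrm{Exc}(\phi^2)=\{L\}$. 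I would simply state the one inclusion you need rather than the biconditional.
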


\begin{proof}[Proof of Lemma \ref{lem:pgl3cong}]
Lemma \ref{lem:heispgl3} allows us to assume that 
\begin{align*}
& \rho(\mathrm{e}_{13}^{qn})\colon(z_0,z_1)\mapsto(z_0+qn,z_1), && \rho(\mathrm{e}_{12}^{qn})\colon(z_0,z_1)\mapsto (z_0+\zeta z_1,z_1+\beta),\\
& \rho(\mathrm{e}_{23}^{qn})\colon(z_0,z_1)\mapsto
(z_0+\gamma z_1,z_1+\delta)&&
\end{align*}
where $\zeta\delta-\beta\gamma=q^2n^2$.
\begin{itemize}
\item[$\diamond$] Let us first suppose that 
$\beta\delta\not=0$. Since 
$[\rho(\mathrm{e}_{13}^{qn}),\rho(\mathrm{e}_{21}^{qn})]=\rho(\mathrm{e}_{23}^{-q^2n^2})$ 
the curves blown down by $\rho(\mathrm{e}_{21}^{qn})$, 
if they exist, are of the type $z_1=$ constant. 
As $\rho(\mathrm{e}_{21}^{qn})$ and 
$\rho(\mathrm{e}_{23}^{qn})$ commute, the sets 
$\mathrm{Exc}(\rho(\mathrm{e}_{21}^{qn}))$ and
$\mathrm{Ind}(\rho(\mathrm{e}_{21}^{qn}))$
are invariant by $\rho(\mathrm{e}_{23}^{qn})$.
As a result 
$\mathrm{Exc}(\rho(\mathrm{e}_{21}^{qn}))$,
$\mathrm{Ind}(\rho(\mathrm{e}_{21}^{qn}))$
and
$\mathrm{Exc}((\rho(\mathrm{e}_{21}^{qn}))^2)$
are contained in the line at infinity. Hence 
$\rho(\mathrm{e}_{21}^{qn})$ belongs to either
$\mathrm{PGL}(3,\mathbb{C})$ or
$\mathrm{Aut}(\mathbb{A}^2_\mathbb{C})$ 
(Lemma \ref{lem:tecn}). Note that if 
$\rho(\mathrm{e}_{21}^{qn})$ belongs to 
$\mathrm{PGL}(3,\mathbb{C})$, then 
$\rho(\mathrm{e}_{21}^{qn})$ preserves 
the line at infinity because 
$[\rho(\mathrm{e}_{21}^{qn}),\rho(\mathrm{e}_{23}^{qn})]=\mathrm{id}$. 
In other words $\rho(\mathrm{e}_{21}^{qn})$ 
also belongs to 
$\mathrm{Aut}(\mathbb{A}^2_\mathbb{C})$. 
Using the relations $[\rho(\mathrm{e}_{13}^{qn}),\rho(\mathrm{e}_{32}^{qn})]=\rho(\mathrm{e}_{12}^{q^2n^2})$ 
and 
$[\rho(\mathrm{e}_{12}^{qn}),\rho(\mathrm{e}_{32}^{qn})]=\mathrm{id}$ 
we get that $\rho(\mathrm{e}_{23}^{qn})$ belongs to 
$\mathrm{Aut}(\mathbb{A}^2_\mathbb{C})$. Finally
any $\rho(\mathrm{e}_{ij}^{q^2n^2})$ is a 
polynomial automorphism of $\mathbb{A}^2_\mathbb{C}$
and $\rho$ is not an embedding (Theorem 
\ref{thm:CantatLamyaut}).

\item[$\diamond$] Assume that $\beta\delta=0$.
Since $\zeta\delta-\beta\gamma=q^2n^2$ one has 
$(\beta,\delta)\not=(0,0)$. 

Suppose that $\beta=0$. The conjugacy by 
\[
(z_0,z_1)\mapsto\left(z_0+\frac{\gamma}{2}z_1-\frac{\gamma}{2\delta}z_1^2,z_1\right)
\]
does change neither $\rho(\mathrm{e}_{13}^{qn})$,
nor $\rho(\mathrm{e}_{12}^{qn})$, and sends 
$\rho(\mathrm{e}_{23}^{qn})$ onto 
$(z_0,z_1)\mapsto(z_0,z_1+\delta)$. One can 
thus assume that 
\begin{align*}
& \rho(\mathrm{e}_{13}^{qn})\colon(z_0,z_1)\mapsto(z_0+qn,z_1), && \rho(\mathrm{e}_{12}^{qn})\colon(z_0,z_1)\mapsto (z_0+\zeta z_1,z_1)\\
& \rho(\mathrm{e}_{23}^{qn})\colon(z_0,z_1)\mapsto
(z_0,z_1+\delta).&&
\end{align*}
The map $\rho(\mathrm{e}_{21}^{qn})$ satisfies
the relations 
$[\rho(\mathrm{e}_{13}^{qn}),\rho(\mathrm{e}_{21}^{qn})]=\rho(\mathrm{e}_{23}^{-q^2n^2})$, 
and $[\rho(\mathrm{e}_{21}^{qn}),\rho(\mathrm{e}_{23}^{qn})]=\mathrm{id}$ 
so does the element 
$\psi\colon(z_0,z_1)\mapsto(z_0,\delta nz_0+z_1)$ 
of $\mathrm{PGL}(3,\mathbb{C})$. Remark that the 
map $\phi=\rho(\mathrm{e}_{21}^{qn})\circ\psi^{-1}$
commute to both $\rho(\mathrm{e}_{13}^{qn})$
and $\rho(\mathrm{e}_{23}^{qn})$. As a consequence
\[
\phi\colon(z_0,z_1)\mapsto(z_0+a,z_1+b)
\]
for some $a$, $b$ in $\mathbb{C}$. Finally up 
to conjugacy by 
$(z_0,z_1)\mapsto\left(z_0+\frac{b}{\delta},z_1\right)$
one has 
\[
\rho(\mathrm{e}_{21}^{qn})\colon(z_0,z_1)\mapsto(z_0+a,\delta z_0+z_1);
\]
in particular $\rho(\mathrm{e}_{21}^{qn})$ belongs 
to $\mathrm{PGL}(3,\mathbb{C})$. Similarly if 
$\varphi$ is the map given by 
\[
(z_0,z_1)\mapsto\left(\frac{z_0}{1+\zeta z_1},\frac{z_1}{1+\zeta z_1}\right)
\]
then the map 
$\rho(\mathrm{e}_{32}^{qn})\circ\varphi^{-1}$ 
commute to both $\rho(\mathrm{e}_{13}^{qn})$
and $\rho(\mathrm{e}_{12}^{qn})$. Therefore
\[
\rho(\mathrm{e}_{32}^{qn})\circ\varphi^{-1}\colon(z_0,z_1)\mapsto(z_0+b(z_1),z_1)
\]
and 
\[
\rho(\mathrm{e}_{32}^{qn})\colon(z_0,z_1)\mapsto\left(\frac{z_0}{1+\zeta z_1}+b\left(\frac{z_1}{1+\zeta z_1}\right),\frac{z_1}{1+\zeta z_1}\right).
\]
Thanks to $[\rho(\mathrm{e}_{23}^{qn}),\rho(\mathrm{e}_{31}^{qn})]=\rho(\mathrm{e}_{21}^{q^2n^2})$, $[\rho(\mathrm{e}_{21}^{qn}),\rho(\mathrm{e}_{31}^{qn})]=\mathrm{id}$ and $[\rho(\mathrm{e}_{12}^{qn}),\rho(\mathrm{e}_{31}^{qn})]=\rho(\mathrm{e}_{32}^{-q^2n^2})$ 
we get 
$\rho(\mathrm{e}_{21}^{qn})\colon(z_0,z_1)\mapsto(z_0,\delta z_0+z_1)$.
Finally since $\rho(\mathrm{e}_{31}^{qn})$
and $\rho(\mathrm{e}_{32}^{qn})$ commute,
$b\equiv 0$ and 
$\mathrm{im}\,\rho\subset\mathrm{PGL}(3,\mathbb{C})$. 

Assume that $\delta=0$;
using a similar reasoning we get a
contradiction.
\end{itemize}
\end{proof}

\begin{proof}[Proof of Theorem \ref{thm:IMRN}]
Any $\rho(\mathrm{e}_{ij})$ is virtually isotopic
to the identity (Lemma \ref{lem:nilkeylemma} and 
Proposition \ref{pro:2fleurs}). The maps 
$\rho(\mathrm{e}_{12}^n)$, $\rho(\mathrm{e}_{13}^n)$
and $\rho(\mathrm{e}_{23}^n)$ are, for some 
integer $n$, conjugate to automorphisms of a 
minimal rational surface (Proposition \ref{pro:sim}
and Remark \ref{rem:heis}). Up to conjugacy 
one can assume that 
$\rho(\Gamma(3,n^2))\subset\mathrm{PGL}(3,\mathbb{C})$
(Lemmas \ref{lem:notp1p1}, \ref{lem:hirzcong} and 
\ref{lem:pgl3cong}). The restriction 
$\rho_{\vert\Gamma(3,n^2)}$ of $\rho$ to
$\Gamma(3,n^2)$ can be extended to an endomorphism
 of $\mathrm{PGL}(3,\mathbb{C})$
(\emph{see} \cite{Steinberg}). But 
$\mathrm{PGL}(3,\mathbb{C})$ is simple, so 
this extension is both injective and surjective. 
The automorphisms of $\mathrm{PGL}(3,\mathbb{C})$
are obtained from inner automorphisms, automorphisms
of the field $\mathbb{C}$ and the involution 
$u\mapsto u^\vee$ (\emph{see}
\cite[Chapter IV]{Dieudonne}). But automorphisms
of the field $\mathbb{C}$ do not act on 
$\Gamma(3,n^2)$; hence up to linear conjugacy
$\rho_{\vert\Gamma(3,n^2)}$ coincides with the 
identity or the involution $u\mapsto u^\vee$.

Let $\phi$ be an element of 
$\rho(\mathrm{SL}(3,\mathbb{Z}))\smallsetminus\rho(\Gamma(3,n^2))$
that blows down at least one curve $\mathcal{C}$. 
The group $\Gamma(3,n^2)$ is a normal subgroup of
$\Gamma$. As a consequence $\mathcal{C}$ is 
invariant by $\rho(\Gamma(3,n^2))$, and so by 
$\overline{\rho(\Gamma(3,n^2))}=\mathrm{PGL}(3,\mathbb{C})$
which is impossible. Finally $\phi$ does not blow
down any curve, and 
$\rho(\mathrm{SL}(3,\mathbb{Z}))\subset\mathrm{PGL}(3,\mathbb{C})$.
\end{proof}

\begin{proof}[Proof of Corollary \ref{cor:IMRN}]
\begin{itemize}
\item[$\diamond$] Let $\Gamma$ be a subgroup of 
finite index of $\mathrm{SL}(4,\mathbb{Z})$, and
let $\rho$ be a morphism from $\Gamma$ into the 
plane Cremona group. We will
prove that $\mathrm{im}\,\rho$ is finite. To 
simplify let us suppose that 
$\Gamma=\mathrm{SL}(4,\mathbb{Z})$. Denote by 
$\mathrm{e}_{ij}$ the standard generators of 
$\mathrm{SL}(4,\mathbb{Z})$. The morphism 
$\rho$ induces a faithful representation 
$\widetilde{\rho}$ from $\mathrm{SL}(3,\mathbb{Z})$
into $\mathrm{Bir}(\mathbb{P}^2_\mathbb{C})$:
\[
\mathrm{SL}(4,\mathbb{Z})\supset\left(
\begin{array}{cc}
\mathrm{SL}(3,\mathbb{Z})& 0\\
0 & 1
\end{array}
\right)\stackrel{\widetilde{\rho}}{\to}
\mathrm{Bir}(\mathbb{P}^2_\mathbb{C})
\]
According to Theorem \ref{thm:IMRN} the map
$\widetilde{\rho}$ is, up to birational 
conjugacy, the identity or the involution 
$u\mapsto u^\vee$.

Let us first assume that up to birational 
conjugacy $\widetilde{\rho}=\mathrm{id}$.
Assume that 
$\mathrm{Exc}(\rho(\mathrm{e}_{34}))\not=~\emptyset$.
Since 
$[\mathrm{e}_{34},\mathrm{e}_{31}]=[\mathrm{e}_{34},\mathrm{e}_{32}]=\mathrm{id}$ 
the map $\rho(\mathrm{e}_{34})$ commutes with
\[
(z_0,z_1,z_2)\mapsto(z_0,z_1,az_0+bz_1+z_2)
\]
where $a$, $b\in\mathbb{C}$ and 
$\mathrm{Exc}(\rho(\mathrm{e}_{34}))$ is 
invariant by 
$(z_0,z_1,z_2)\mapsto(z_0,z_1,az_0+bz_1+z_2)$.
Moreover $\mathrm{e}_{34}$ commutes with 
$\mathrm{e}_{12}$ and $\mathrm{e}_{21}$, 
in other words $\mathrm{e}_{34}$ commutes 
with the following copy of 
$\mathrm{SL}(2,\mathbb{Z})$
\[
\mathrm{SL}(4,\mathbb{Z})\supset\left(
\begin{array}{ccc}
\mathrm{SL}(2,\mathbb{Z})& 0 & 0\\
0 & 1 & 0 \\
0 & 0 & 1
\end{array}
\right)
\]
The action of $\mathrm{SL}(2,\mathbb{Z})$
on $\mathbb{C}^2$ has no invariant curve, 
so $\mathrm{Exc}(\rho(\mathrm{e}_{34}))$ 
is contained in the line at infinity. 
But the image of this line by 
$(z_0,z_1,z_2)\mapsto(z_0,z_1,az_0+bz_1+z_2)$
intersects~$\mathbb{C}^2$: contradiction. Hence
$\mathrm{Exc}(\rho(\mathrm{e}_{34}))=\emptyset$
and $\rho(\mathrm{e}_{34})$ belongs to 
$\mathrm{PGL}(3,\mathbb{C})$. Similarly we
get that $\rho(\mathrm{e}_{43})$ belongs
to $\mathrm{PGL}(3,\mathbb{C})$. The relations 
satisfied by the standard generators thus 
imply that $\rho(\mathrm{SL}(4,\mathbb{Z}))$
is contained in $\mathrm{PGL}(3,\mathbb{C})$.
As a consequence $\mathrm{im}\,\rho$ is
finite.

A similar idea allows to conclude when 
$\widetilde{\rho}$ is, up to conjugacy, 
the involution $u\mapsto u^\vee$.

\item[$\diamond$] Let $n\geq 4$ be an 
integer. Consider a subgroup of finite
index $\Gamma$ of $\mathrm{SL}(n,\mathbb{Z})$.
Let $\rho$ be a morphism from $\Gamma$
to $\mathrm{Bir}(\mathbb{P}^2_\mathbb{C})$.
According to Theorem \ref{thm:structsl}
the group $\Gamma$ contains a congruence
subgroup $\Gamma(n,q)$. The morphism $\rho$
induces a representation $\widetilde{\rho}$
from $\Gamma(4,q)$ to 
$\mathrm{Bir}(\mathbb{P}^2_\mathbb{C})$.
As we just see the kernel of this 
representation is infinite so does 
$\ker\rho$. 
\end{itemize}
\end{proof}

\section{The group $\mathrm{Bir}(\mathbb{P}^2_\mathbb{C})$ is hopfian}

Let $V$ be a projective variety defined over a field $\Bbbk\subset\mathbb{C}$.
The group $\mathrm{Aut}_\Bbbk(\mathbb{C})$ of automorphisms of the field 
extension $\faktor{\mathbb{C}}{\Bbbk}$ acts on $V(\mathbb{C})$, and on 
$\mathrm{Bir}(V)$ as follows
\begin{equation}\label{eq:aut}
{}^{\kappa}\!\,\psi(p)=(\kappa\circ\psi\circ\kappa^{-1})(p)
\end{equation}
for any $\kappa\in\mathrm{Aut}_\Bbbk(\mathbb{C})$, any $\psi\in\mathrm{Bir}(V)$, 
and any point $p\in V(\mathbb{C})$ for which both sides of (\ref{eq:aut})
are well defined. As a consequence $\mathrm{Aut}_\Bbbk(\mathbb{C})$ acts 
by automorphisms on $\mathrm{Bir}(V)$. If $\kappa\colon\mathbb{C}\to\mathbb{C}$
is a field morphism, then this construction gives an injective morphism
\begin{align*}
&\mathrm{Aut}(\mathbb{P}^n_\mathbb{C})\to\mathrm{Aut}(\mathbb{P}^n_\mathbb{C}),
&& g\mapsto {}^{\kappa}\!\,g.
\end{align*}
Write $\mathbb{C}$ as the algebraic closure of a purely transcendental 
extension $\mathbb{Q}(x_i,\,i\in I)$ of $\mathbb{Q}$; if $f\colon I\to I$ 
is an injective map, then there exists a field morphism
\begin{align*}
& \kappa\colon\mathbb{C}\to\mathbb{C}, && x_i\mapsto x_{f(i)}.
\end{align*}
Such a morphism is surjective if and only if $f$ is onto.

The group $\mathrm{Aut}(\mathrm{Bir}(\mathbb{P}^2_\mathbb{C}))$ has been 
described in \cite{Deserti:abelien} and \cite{Deserti:IMRN} via two 
different me\-thods:

\begin{thm}[\cite{Deserti:abelien, Deserti:IMRN}]
Let $\varphi$ be an element of $\mathrm{Aut}(\mathrm{Bir}(\mathbb{P}^2_\mathbb{C}))$. 
Then there exist a birational self map $\psi$ of 
$\mathbb{P}^2_\mathbb{C}$
and an automorphism $\kappa$ of the field $\mathbb{C}$ 
such that 
\[
\varphi(\phi)={}^{\kappa}\!\,(\psi\circ\phi\circ\psi^{-1})\qquad\qquad \forall\,\phi\in\mathrm{Bir}(\mathbb{P}^2_\mathbb{C})
\]
\end{thm}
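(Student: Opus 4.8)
The plan is to leverage the rigidity of $\mathrm{SL}(3,\mathbb{Z})$ inside the plane Cremona group (Theorem \ref{thm:IMRN}) together with the Noether and Castelnuovo generation (Theorem \ref{thm:noether}). Since $\mathrm{SL}(3,\mathbb{Z})$ embeds in $\mathrm{PGL}(3,\mathbb{C})=\mathrm{Aut}(\mathbb{P}^2_\mathbb{C})\subset\mathrm{Bir}(\mathbb{P}^2_\mathbb{C})$ (its only scalar element being the identity), the restriction of $\varphi$ to this lattice is an injective homomorphism $\mathrm{SL}(3,\mathbb{Z})\to\mathrm{Bir}(\mathbb{P}^2_\mathbb{C})$. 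First I would apply Theorem \ref{thm:IMRN} to a suitable finite-index subgroup $\Gamma$ to produce a birational map $\psi$ such that $\psi^{-1}\circ\varphi(A)\circ\psi$ equals either $A$ or $A^\vee=({}^{t}\!\,A)^{-1}$ for every $A\in\Gamma$. Writing $\iota_\psi$ for conjugation by $\psi$ and replacing $\varphi$ by $\iota_\psi^{-1}\circ\varphi$, I may assume that $\varphi$ coincides on $\Gamma$ with the identity or with the transpose-inverse involution.

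Next I would propagate this control from the lattice $\Gamma$ to all of $\mathrm{PGL}(3,\mathbb{C})$. The aim is to show that $\varphi$ sends $\mathrm{PGL}(3,\mathbb{C})$ — intrinsically singled out as a maximal algebraic subgroup (Theorem \ref{thm:blanc11cases}) — onto $\mathrm{PGL}(3,\mathbb{C})$ itself: the image must be a conjugate of a maximal algebraic subgroup, and the fact that $\varphi(\Gamma)$ already lies in $\mathrm{PGL}(3,\mathbb{C})$ together with the Zariski density of $\Gamma$ should force $\varphi(\mathrm{PGL}(3,\mathbb{C}))=\mathrm{PGL}(3,\mathbb{C})$. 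The restriction $\varphi|_{\mathrm{PGL}(3,\mathbb{C})}$ is then an abstract automorphism of $\mathrm{PGL}(3,\mathbb{C})$ which is the identity or $u\mapsto u^\vee$ on $\Gamma$. By the classification of automorphisms of $\mathrm{PGL}(3,\mathbb{C})$ (inner automorphisms, field automorphisms of $\mathbb{C}$, and the involution $u\mapsto u^\vee$; see \cite{Dieudonne}), and since an inner automorphism trivial on the dense set $\Gamma$ is trivial, I expect $\varphi|_{\mathrm{PGL}(3,\mathbb{C})}$ to have the form ${}^{\kappa}(\cdot)$ or ${}^{\kappa}(\cdot)^\vee$ for some automorphism $\kappa$ of the field $\mathbb{C}$. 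Composing $\varphi$ with the automorphism ${}^{\kappa^{-1}}(\cdot)$ of $\mathrm{Bir}(\mathbb{P}^2_\mathbb{C})$ (available by the construction preceding the statement) reduces to $\varphi|_{\mathrm{PGL}(3,\mathbb{C})}=\mathrm{id}$ or $=(\cdot)^\vee$.

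Then I would determine $\varphi(\sigma_2)$ and discard the transpose-inverse branch. As $\sigma_2$ and $\mathrm{PGL}(3,\mathbb{C})$ generate the Cremona group (Theorem \ref{thm:noether}), the pair $\bigl(\varphi|_{\mathrm{PGL}(3,\mathbb{C})},\varphi(\sigma_2)\bigr)$ determines $\varphi$ completely. The relations $(\mathcal{R}_2)$–$(\mathcal{R}_5)$ of Theorem \ref{thm:UrechZimmermann}, relating $\sigma_2$ to $\mathfrak{S}_3$, to $\mathrm{D}_2$, and to the group $\mathrm{PGL}(2,\mathbb{C})\times\mathrm{PGL}(2,\mathbb{C})$, constrain $\varphi(\sigma_2)$ to be a quadratic involution with the same base points and the same normalizing relations as $\sigma_2$; this should pin $\varphi(\sigma_2)$ down to $\sigma_2$ up to conjugation by an element of $\mathrm{PGL}(3,\mathbb{C})$ that can be absorbed into $\psi$, giving $\varphi=\mathrm{id}$ in the identity branch. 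For the branch $\varphi|_{\mathrm{PGL}(3,\mathbb{C})}=(\cdot)^\vee$ I would observe that the transpose-inverse is projective duality, a correlation rather than a birational self-map, so no $\psi\in\mathrm{Bir}(\mathbb{P}^2_\mathbb{C})$ conjugates $\mathrm{PGL}(3,\mathbb{C})$ onto its dual; testing the $\sigma_2$-relations then shows this branch is incompatible with $\varphi$ being an automorphism of the whole group, and it is eliminated.

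The hard part, I expect, is the second step: controlling $\varphi$ on all of $\mathrm{PGL}(3,\mathbb{C})$ from its behaviour on $\Gamma$ alone, since $\varphi$ is merely an abstract automorphism carrying a genuinely wild field automorphism $\kappa$, so no topological continuity is available a priori. Making the passage from $\Gamma$ to $\mathrm{PGL}(3,\mathbb{C})$ rigorous will require either a purely group-theoretic characterization of $\mathrm{PGL}(3,\mathbb{C})$ inside $\mathrm{Bir}(\mathbb{P}^2_\mathbb{C})$ (through maximal algebraic subgroups and centralizers) or a commensurator argument exploiting that the commensurator of $\mathrm{SL}(3,\mathbb{Z})$ is $\mathrm{PGL}(3,\mathbb{Q})$, followed by a density argument reaching all complex points; this is where most of the technical effort will concentrate.
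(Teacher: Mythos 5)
Your first and third steps track the paper's route via \cite{Deserti:IMRN}, but your second step — the passage from the lattice $\Gamma$ to all of $\mathrm{PGL}(3,\mathbb{C})$ — contains a genuine gap, and it is precisely the step you flagged as hard. You assert that $\varphi(\mathrm{PGL}(3,\mathbb{C}))$ ``must be a conjugate of a maximal algebraic subgroup'' and that Zariski density of $\Gamma$ then forces $\varphi(\mathrm{PGL}(3,\mathbb{C}))=\mathrm{PGL}(3,\mathbb{C})$. Neither claim is available: $\varphi$ is a bare abstract automorphism, and being algebraic (equivalently, closed of bounded degree) is not an abstract group-theoretic property, so nothing guarantees that $\varphi$ carries algebraic subgroups to algebraic subgroups — that is essentially part of what the theorem asserts, since the answer involves a possibly wild, everywhere-discontinuous field automorphism $\kappa$. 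For the same reason Zariski density of $\Gamma$ in $\mathrm{PGL}(3,\mathbb{C})$ gives no control of $\varphi$ off $\Gamma$: there is no continuity to propagate along the closure. So Theorem \ref{thm:blanc11cases} cannot be invoked before you know the image is algebraic, and the commensurator variant hits the same wall, controlling $\varphi$ at best on $\mathrm{PGL}(3,\mathbb{Q})$ with no way to reach the complex points.

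The paper closes exactly this gap by purely equational means (see the proof of Theorem \ref{thm:hopfian}, which contains the automorphism case). Having normalized $\varphi|_{\mathrm{SL}(3,\mathbb{Z})}$ to be the identity or $A\mapsto A^\vee$, one computes $\varphi$ on arbitrary translations and unipotent maps: Lemma \ref{lem:com} shows that any birational map commuting with both $(z_0,z_1)\mapsto(z_0+1,z_1)$ and $(z_0,z_1)\mapsto(z_0,z_1+1)$ is itself a translation, so $\varphi(\mathrm{T})\subset\mathrm{T}$; the Heisenberg-type commutator identities among $f_\beta=\varphi\big((z_0+\beta,z_1)\big)$, $g_\alpha=\varphi\big((z_0+\alpha z_1,z_1)\big)$ and $h_\gamma=\varphi\big((z_0,z_1+\gamma)\big)$ then force $\varphi(\mathcal{U})\subset\mathcal{U}$ for the unipotent group $\mathcal{U}$, whence $\varphi(\mathrm{PGL}(3,\mathbb{C}))\subset\mathrm{PGL}(3,\mathbb{C})$ because $\mathrm{PGL}(3,\mathbb{C})=\langle\mathcal{U},\,\mathrm{SL}(3,\mathbb{Z})\rangle$ — no topology is used — and Borel--Tits \cite{BorelTits} then produces $\kappa$. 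Your elimination of the dual branch is likewise too quick as stated: that $u\mapsto u^\vee$ is a correlation rather than a birational conjugation does not rule it out (Theorem \ref{thm:IMRN} shows it genuinely occurs for the lattice); what kills it is a computation, not the heavy presentation of Theorem \ref{thm:UrechZimmermann}: commutation with $\mathrm{D}_2$ pins $\varphi\big((z_0,z_1)\mapsto(z_0,1/z_1)\big)$ to $(z_0,z_1)\mapsto(\pm z_0,\pm 1/z_1)$, hence $\varphi(\sigma_2)=\pm\sigma_2$, and the single Gizatullin relation $(\ell\circ\sigma_2)^3=\mathrm{id}$ is then violated in the $^\vee$ branch, while in the identity branch it yields $\varphi(\sigma_2)=\sigma_2$ and Noether--Castelnuovo concludes. (The paper also records an entirely different proof, via uncountable maximal abelian subgroups and the invariance of $\mathrm{J}_a$, in \S\ref{sec:autbir}.)
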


The proof of \cite{Deserti:abelien} will be deal with in 
\S\ref{sec:autbir}.
The proof of \cite{Deserti:IMRN} can in fact be used to describe the endomorphisms of the plane Cremona group:

\begin{thm}[\cite{Deserti:hopfian}]\label{thm:hopfian}
Let $\varphi$ be a non-trivial endomorphism of  $\mathrm{Bir}(\mathbb{P}^2_\mathbb{C})$. 
Then there exist~$\psi$ in $\mathrm{Bir}(\mathbb{P}^2_\mathbb{C})$
and an immersion $\kappa$ of the field $\mathbb{C}$ such that 
\[
\varphi(\phi)={}^{\kappa}\!\,(\psi\circ\phi\circ\psi^{-1})\qquad\qquad \forall\,\phi\in\mathrm{Bir}(\mathbb{P}^2_\mathbb{C})
\]
\end{thm}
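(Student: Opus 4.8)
The plan is to reduce the statement about arbitrary non-trivial endomorphisms to the already-established classification of injective representations of finite-index subgroups of $\mathrm{SL}(3,\mathbb{Z})$, namely Theorem \ref{thm:IMRN}, together with the simplicity and perfectness results proved earlier in the chapter. The first thing I would do is analyze the kernel of a non-trivial endomorphism $\varphi\colon\mathrm{Bir}(\mathbb{P}^2_\mathbb{C})\to\mathrm{Bir}(\mathbb{P}^2_\mathbb{C})$. Since $\ker\varphi$ is a normal subgroup and $\mathrm{Bir}(\mathbb{P}^2_\mathbb{C})$ is not simple as an abstract group, I cannot conclude triviality of the kernel from abstract simplicity alone; instead I would use the structure of normal subgroups. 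The key leverage is Proposition \ref{pro:parfait2}: the normal subgroup generated by \emph{any} non-trivial element of $\mathrm{PGL}(3,\mathbb{C})$ is all of $\mathrm{Bir}(\mathbb{P}^2_\mathbb{C})$. Hence if $\varphi$ is non-trivial and $\varphi$ vanishes on some non-trivial element of $\mathrm{PGL}(3,\mathbb{C})$, then $\ker\varphi\supseteq\ll A\gg=\mathrm{Bir}(\mathbb{P}^2_\mathbb{C})$, forcing $\varphi$ trivial; so $\varphi$ must be injective on $\mathrm{PGL}(3,\mathbb{C})$.

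\textbf{Controlling the image of $\mathrm{PGL}(3,\mathbb{C})$.} Next I would restrict $\varphi$ to the copies of finite-index subgroups of $\mathrm{SL}(3,\mathbb{Z})$ sitting inside $\mathrm{PGL}(3,\mathbb{C})$. The composition $\Gamma(3,q)\hookrightarrow\mathrm{PGL}(3,\mathbb{C})\xrightarrow{\varphi}\mathrm{Bir}(\mathbb{P}^2_\mathbb{C})$ is an injective morphism of a finite-index subgroup of $\mathrm{SL}(3,\mathbb{Z})$, so Theorem \ref{thm:IMRN} applies: up to birational conjugacy by some $\psi\in\mathrm{Bir}(\mathbb{P}^2_\mathbb{C})$, the restriction is either the canonical embedding or the involution $u\mapsto u^\vee$. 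Composing $\varphi$ with conjugation by $\psi^{-1}$ (and, if necessary, with the exterior automorphism $u\mapsto u^\vee$, which is realized by a field-compatible linear map), I may arrange that $\varphi$ agrees with the identity on a congruence subgroup $\Gamma(3,q)$. The Zariski density of $\Gamma(3,q)$ in $\mathrm{PGL}(3,\mathbb{C})$, combined with the fact established in the proof of Theorem \ref{thm:IMRN} that the restriction $\varphi_{\vert\Gamma(3,q)}$ extends to an endomorphism of $\mathrm{PGL}(3,\mathbb{C})$, then shows $\varphi$ on $\mathrm{PGL}(3,\mathbb{C})$ is given by an inner automorphism composed with a field immersion $\kappa\colon\mathbb{C}\to\mathbb{C}$ and possibly $u\mapsto u^\vee$ --- this is exactly the structure of endomorphisms of $\mathrm{PGL}(3,\mathbb{C})$ coming from \cite{Dieudonne}, with the crucial difference from the automorphism case being that $\kappa$ need only be an immersion, not an automorphism.

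\textbf{Propagating to all of $\mathrm{Bir}(\mathbb{P}^2_\mathbb{C})$.} Having normalized $\varphi$ to coincide with $\phi\mapsto{}^{\kappa}\!\,(\psi\circ\phi\circ\psi^{-1})$ on $\mathrm{PGL}(3,\mathbb{C})$, I would extend this identification across the whole group using the Noether--Castelnuovo theorem (Theorem \ref{thm:noether}): since $\mathrm{Bir}(\mathbb{P}^2_\mathbb{C})$ is generated by $\mathrm{PGL}(3,\mathbb{C})$ and $\sigma_2$, it suffices to pin down $\varphi(\sigma_2)$. Here I would exploit the relations $\sigma_2$ satisfies with $\mathrm{PGL}(3,\mathbb{C})$ (for instance the relation $\sigma_2\circ\tau=\tau\circ\sigma_2$ underlying the amalgam presentation of Theorem \ref{thm:Blancrelations}, or the full relation set $(\mathcal{R}_1)$--$(\mathcal{R}_5)$ of Theorem \ref{thm:UrechZimmermann}). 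Because these relations constrain $\varphi(\sigma_2)$ to commute and interact with the already-determined $\varphi$-images of the diagonal and permutation automorphisms exactly as $\sigma_2$ does, $\varphi(\sigma_2)$ is forced to equal ${}^{\kappa}\!\,(\psi\circ\sigma_2\circ\psi^{-1})$, and then the formula holds on all generators, hence everywhere.

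\textbf{The main obstacle.} The hard part will be the second step: upgrading the Zariski-dense agreement on $\Gamma(3,q)$ to a clean normal form on all of $\mathrm{PGL}(3,\mathbb{C})$ while allowing $\kappa$ to be merely an immersion rather than a field automorphism. In the automorphism setting of \cite{Deserti:IMRN} one gets an automorphism of $\mathbb{C}$ because $\varphi$ is bijective; for a general endomorphism the induced field map can fail to be surjective, so I must carefully track that the extension of $\varphi_{\vert\Gamma(3,q)}$ to $\mathrm{PGL}(3,\mathbb{C})$ is still injective (using Proposition \ref{pro:parfait2} as above) and arises from a field \emph{immersion} composed with an inner automorphism and possibly the contragredient involution. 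Verifying that no extra component of $\mathrm{Bir}(\mathbb{P}^2_\mathbb{C})$ can be mapped to something collapsing $\sigma_2$ --- i.e.\ that the relations genuinely determine $\varphi(\sigma_2)$ uniquely rather than up to an unwanted ambiguity --- is the delicate point, and is where the explicit presentation of Theorem \ref{thm:UrechZimmermann} does the real work.
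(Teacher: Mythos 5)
Your overall architecture is the same as the paper's (injectivity on $\mathrm{PGL}(3,\mathbb{C})$, restriction to $\mathrm{SL}(3,\mathbb{Z})$ and Theorem \ref{thm:IMRN}, Borel--Tits to produce the field immersion, relations involving $\sigma_2$, then Noether--Castelnuovo), and your opening move via Proposition \ref{pro:parfait2} is a clean variant of the paper's use of the simplicity of $\mathrm{PGL}(3,\mathbb{C})$. But there is a genuine gap in your treatment of the contragredient branch. You propose to compose $\varphi$ ``with the exterior automorphism $u\mapsto u^\vee$, which is realized by a field-compatible linear map'' so as to normalize $\varphi$ to the identity on $\Gamma(3,q)$. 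No such normalization is available: $A\mapsto A^\vee$ is an \emph{outer} automorphism of $\mathrm{PGL}(3,\mathbb{C})$ (a correlation sends points to lines), it is not induced by conjugation by any birational map nor by a field morphism, and it does not extend to an endomorphism of $\mathrm{Bir}(\mathbb{P}^2_\mathbb{C})$ at all --- indeed the statement you are proving, whose normal form contains no $\vee$, says precisely that this branch cannot occur, so it must be \emph{excluded}, not absorbed. The paper kills it by computation: assuming $\varphi_{\vert\mathrm{SL}(3,\mathbb{Z})}$ is $A\mapsto A^\vee$, the relations with diagonal maps force $\varphi\left((z_0,z_1)\mapsto\left(z_0,\frac{1}{z_1}\right)\right)=\left((z_0,z_1)\mapsto\left(\pm z_0,\pm\frac{1}{z_1}\right)\right)$, hence $\varphi(\sigma_2)=\pm\sigma_2$; on the other hand $\varphi(\ell)$ is determined by the contragredient, and then the second component of $\varphi\big((\ell\circ\sigma_2)^3\big)$ is $\pm\frac{1}{z_1}$, contradicting the Gizatullin relation $(\ell\circ\sigma_2)^3=\mathrm{id}$. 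Your relations step only pins down $\varphi(\sigma_2)$ \emph{after} normalization; as written, the dual case is never ruled out, and your proof cannot reach the stated conclusion without it.

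A second, smaller gap: before Borel--Tits (or the Dieudonn\'e classification) can be invoked, you must know that $\varphi(\mathrm{PGL}(3,\mathbb{C}))\subseteq\mathrm{PGL}(3,\mathbb{C})$ inside $\mathrm{Bir}(\mathbb{P}^2_\mathbb{C})$. Zariski density of $\Gamma(3,q)$ gives nothing here, because $\varphi$ is only an abstract homomorphism --- field automorphisms are invisible on $\Gamma(3,q)$ yet act non-trivially on $\mathrm{PGL}(3,\mathbb{C})$, as the paper itself notes --- and Steinberg's extension result used in the proof of Theorem \ref{thm:IMRN} concerns homomorphisms whose image already lies in $\mathrm{PGL}(3,\mathbb{C})$. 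The paper fills this hole by hand: using Lemma \ref{lem:com} and the Heisenberg-type commutation relations among the one-parameter families $f_\beta$, $g_\alpha$, $h_\gamma$ (images of translations and elementary unipotents), it shows $\varphi(\mathrm{T})\subset\mathrm{T}$ and $\varphi(\mathcal{U})\subset\mathcal{U}$, and concludes from $\mathrm{PGL}(3,\mathbb{C})=\langle\mathcal{U},\,\mathrm{SL}(3,\mathbb{Z})\rangle$. You would need to supply these computations (or an equivalent mechanism) for your second step to go through.
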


Let us work in the affine chart $z_2=1$. The group of translations is 
\[
\mathrm{T}=\big\{(z_0,z_1)\mapsto(z_0+\alpha,z_1+\beta)\,\vert\,\alpha,\,\beta\in\mathbb{C}\big\}.
\]

\begin{lem}[\cite{Deserti:hopfian}]\label{lem:com}
Let $\varphi$ be a birational self map of $\mathbb{P}^2_\mathbb{C}$. 
Assume that $\varphi$ commutes with both $(z_0,z_1)\mapsto(z_0+1,z_1)$
and $(z_0,z_1)\mapsto(z_0,z_1+1)$. 

Then $\varphi$ belongs to $\mathrm{T}$.
\end{lem}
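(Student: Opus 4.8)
The plan is to work in the affine chart $z_2=1$, write $\varphi=(f,g)$ with $f,g\in\mathbb{C}(z_0,z_1)$, and translate the two commutation hypotheses into functional equations. Denote by $t_1\colon(z_0,z_1)\mapsto(z_0+1,z_1)$ and $t_2\colon(z_0,z_1)\mapsto(z_0,z_1+1)$ the two translations. Comparing the two compositions, the identity $\varphi\circ t_1=t_1\circ\varphi$ is equivalent to the pair of rational-function identities $f(z_0+1,z_1)=f(z_0,z_1)+1$ and $g(z_0+1,z_1)=g(z_0,z_1)$, while $\varphi\circ t_2=t_2\circ\varphi$ gives $f(z_0,z_1+1)=f(z_0,z_1)$ and $g(z_0,z_1+1)=g(z_0,z_1)+1$. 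These are genuine equalities in $\mathbb{C}(z_0,z_1)$, since a birational self map of $\mathbb{P}^2_\mathbb{C}$ commuting with $t_1,t_2$ as birational maps induces exactly these identities on its components.

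Next I would set $u=f-z_0$ and $v=g-z_1$. A direct substitution using the four equations above shows that both $u$ and $v$ are invariant under $z_0\mapsto z_0+1$ and under $z_1\mapsto z_1+1$: for instance $u(z_0+1,z_1)=f(z_0,z_1)+1-(z_0+1)=u(z_0,z_1)$ and $u(z_0,z_1+1)=f(z_0,z_1)-z_0=u(z_0,z_1)$, and symmetrically for $v$. The whole statement therefore reduces to the following claim: a rational function $h\in\mathbb{C}(z_0,z_1)$ invariant under the two unit shifts $z_0\mapsto z_0+1$ and $z_1\mapsto z_1+1$ is constant.

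To prove the claim I would first treat one variable: if $K$ is a field of characteristic zero and $h\in K(t)$ satisfies $h(t+1)=h(t)$, then $h\in K$. Indeed, if $h$ had a pole at some $a\in\overline{K}$, then $h(t+1)=h(t)$ would force poles at $a-1,a-2,\ldots$, and these are pairwise distinct in characteristic zero, giving infinitely many poles, which is impossible; hence $h$ is a polynomial, and if $\deg h\geq 1$ then $h(t+1)-h(t)$ is a nonzero polynomial of degree $\deg h-1$ (using characteristic zero), contradicting $h(t+1)=h(t)$. So $h$ is constant. Applying this with $K=\mathbb{C}(z_1)$ and $t=z_0$, the invariance of $h$ under $z_0\mapsto z_0+1$ gives $h\in\mathbb{C}(z_1)$; then invariance under $z_1\mapsto z_1+1$ together with the same one-variable lemma (now over $K=\mathbb{C}$) forces $h$ to be a constant. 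Feeding this back, $u=\alpha$ and $v=\beta$ for some $\alpha,\beta\in\mathbb{C}$, so $f=z_0+\alpha$ and $g=z_1+\beta$, that is $\varphi\in\mathrm{T}$.

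The only real obstacle is the periodicity claim, and the pole/degree argument above disposes of it cleanly; everything else is bookkeeping of the four functional equations. Should one prefer, an alternative to the pole argument is to invoke \textsc{L\"uroth}'s theorem: a nonconstant $h\in K(t)$ invariant under $t\mapsto t+1$ would make the infinitely many distinct automorphisms $t\mapsto t+n$ all fix $K(h)$, contradicting the finiteness of $[\,K(t):K(h)\,]$.
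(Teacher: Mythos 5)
Your proof is correct, and it takes a route that differs from the paper's in its mechanics, so a comparison is worthwhile. The paper proceeds asymmetrically: from commutation with the horizontal translation it gets $\varphi_1(z_0+1,z_1)=\varphi_1(z_0,z_1)$, hence $\varphi_1=\varphi_1(z_1)$, and then differentiates the equation $\varphi_0(z_0+1,z_1)=\varphi_0(z_0,z_1)+1$ in $z_0$ to see that $\partial\varphi_0/\partial z_0$ is $1$-periodic in $z_0$, hence equal to some $a(z_1)$, so that $\varphi_0=a(z_1)z_0+b(z_1)$ with $a\equiv 1$; the symmetric computation for the vertical translation yields the second normal form $(\varphi_0(z_0),z_1+c(z_0))$, and intersecting the two normal forms gives $\varphi\in\mathrm{T}$. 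You instead symmetrize at the outset by subtracting the identity map (setting $u=f-z_0$, $v=g-z_1$), which collapses the whole lemma to a single clean statement --- a rational function invariant under both unit shifts is constant --- and, unlike the paper, you actually prove that periodicity statement: the paper invokes ``$1$-periodic rational implies independent of the variable'' twice (for $\varphi_1$ and for $\partial\varphi_0/\partial z_0$) without justification, and your pole-propagation and degree argument over $K=\mathbb{C}(z_1)$ (or the alternative count of $K(h)$-automorphisms of $K(t)$, which needs only the finiteness of $[K(t):K(h)]$) is exactly what fills that gap; note your argument genuinely uses characteristic zero, as it must. What each approach buys: yours is more self-contained and avoids the differentiation-and-integration step entirely, while the paper's produces the intermediate normal form $(z_0+b(z_1),\varphi_1(z_1))$ explicitly, which exhibits the centralizer of a single translation along the way --- information your streamlined reduction bypasses.
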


\begin{proof}
Let $\varphi=(\varphi_0,\varphi_1)$ be an element of 
$\mathrm{Bir}(\mathbb{P}^2_\mathbb{C})$ that commutes with both
$(z_0,z_1)\mapsto(z_0+1,z_1)$ and $(z_0,z_1)\mapsto(z_0,z_1+1)$. 
In particular 
\[
\left\{
\begin{array}{ll}
\varphi_0(z_0+1,z_1)=\varphi_0(z_0,z_1)+1\\
\varphi_1(z_0+1,z_1)=\varphi_1(z_0,z_1)
\end{array}
\right.
\]
From $\varphi_1(z_0+1,z_1)=\varphi_1(z_0,z_1)$ we get that $\varphi_1=\varphi_1(z_1)$. 
The equality $\varphi_0(z_0+1,z_1)=\varphi_0(z_0,z_1)+1$ implies
\[
\frac{\partial\varphi_0}{\partial z_0}(z_0+1,z_1)=\frac{\partial\varphi_0}{\partial z_0}(z_0,z_1);
\]
as a consequence $\frac{\partial\varphi_0}{\partial z_0}=a(z_1)$ 
and $\varphi_0=a(z_1)z_0+b(z_1)$ for some $a$, $b$ in $\mathbb{C}(z_1)$.
Then 
\[
\varphi_0(z_0+1,z_1)=\varphi_0(z_0,z_1)+1
\]
yields $a(z_1)=1$.
In other words $\varphi\colon(z_0,z_1)\dashrightarrow(z_0+b(z_1),\varphi_1(z_1))$. 

Let us now write that $\varphi\circ(z_0,z_1+1)\colon(z_0,z_1)\dashrightarrow(z_0,z_1+1)\circ\varphi$;
we get that $\varphi\colon(z_0,z_1)\dashrightarrow(\varphi_0(z_0),z_1+c(z_0))$.

Finally 
$\varphi\colon(z_0,z_1)\dashrightarrow(z_0+b(z_1),\varphi_1(z_1))$ 
and 
$\varphi\colon(z_0,z_1)\dashrightarrow(\varphi_0(z_0),z_1+c(z_0))$
imply that $\varphi$ belongs to $\mathrm{T}$.
\end{proof}

\begin{proof}[Proof of Theorem \ref{thm:hopfian}]
Since $\mathrm{PGL}(3,\mathbb{C})$ is simple the restriction 
$\varphi_{\vert\mathrm{PGL}(3,\mathbb{C})}$ is either trivial
or injective.

\smallskip

Let us first suppose that $\varphi_{\vert\mathrm{PGL}(3,\mathbb{C})}$ is 
trivial. Consider the element of $\mathrm{PGL}(3,\mathbb{C})$ given by
\[
\ell\colon(z_0,z_1)\mapsto\left(\frac{z_0}{z_0-1},\frac{z_0-z_1}{z_0-1}\right).
\]
According to \cite{Gizatullin:relations} one has $(\ell\circ\sigma_2)^3=\mathrm{id}$.

As a result $\varphi((\ell\circ\sigma_2)^3)=\mathrm{id}$. Since $\varphi(\ell)=\ell$ 
(recall that $\ell$ belongs to $\mathrm{PGL}(3,\mathbb{C})$) one gets that
$\varphi(\sigma_2)=\mathrm{id}$. As the plane Cremona group is
generated by $\mathrm{PGL}(3,\mathbb{C})$ and $\sigma_2$ one gets that
$\varphi=\mathrm{id}$.

\smallskip

Assume now that $\varphi_{\vert\mathrm{PGL}(3,\mathbb{C})}$ is injective. 
According to Theorem \ref{thm:IMRN} the restriction 
$\varphi_{\vert\mathrm{SL}(3,\mathbb{Z})}$ of $\varphi$ to 
$\mathrm{SL}(3,\mathbb{Z})$ is, up to inner conjugacy, the canonical 
embedding or $A\mapsto A^\vee$. 

\begin{itemize}
\item[$\diamond$] Suppose first that $\varphi_{\vert\mathrm{SL}(3,\mathbb{Z})}$
is the canonical embedding. Denote by $\mathcal{U}$ the group of unipotent
upper triangular matrices. Set
\begin{align*}
 & f_\beta=\varphi(z_0+\beta,z_1), && g_\alpha=\varphi(z_0+\alpha,z_1), && h_\gamma=\varphi(z_0,z_1+\gamma). 
\end{align*}
Since $f_\beta$ and $h_\gamma$ commute to both $(z_0,z_1)\mapsto(z_0+1,z_1)$
and $(z_0,z_1)\mapsto(z_0,z_1+1)$ one gets from Lemma \ref{lem:com} that 
\begin{align*}
& f_\beta\colon(z_0,z_1)\mapsto(z_0+\lambda(\beta),z_1+\zeta(\beta)) && h_\gamma\colon(z_0,z_1)\mapsto(z_0+\eta(\gamma),z_1+\mu(\gamma))
\end{align*}
where $\lambda$, $\zeta$, $\eta$ and $\mu$ are additive morphisms from 
$\mathbb{C}$ to $\mathbb{C}$. As $g_\gamma$ commutes with 
$(z_0,z_1)\mapsto(z_0+z_1,z_1)$ and $(z_0,z_1)\mapsto(z_0+1,z_1)$ there 
exists $a_\alpha$ in $\mathbb{C}(y)$ such that 
\[
g_\gamma\colon(z_0,z_1)\mapsto(z_0+a_\alpha(z_1),z_1).
\]
The equality
\[
(z_0+\alpha z_1,z_1)\circ(z_0,z_1+\gamma)\circ(z_0+\alpha z_1,z_1)^{-1}\circ(z_0,z_1+\gamma)^{-1}=(z_0+\alpha z_1,z_1)
\]
implies that $g_\alpha\circ h_\alpha=f_{\alpha\gamma}\circ h_\gamma\circ g_\alpha$ for 
any $\alpha$, $\gamma$ in $\mathbb{C}$. As a consequence 
\begin{align*}
& f_\beta\colon(z_0,z_1)\mapsto(z_0+\lambda(\beta),z_1) && g_\alpha\colon(z_0,z_1)\mapsto(z_0+\theta(\alpha)z_1+\zeta(\alpha),z_1)
\end{align*}
and $\theta(\alpha)\mu(\alpha)=\lambda(\alpha\gamma)$. From 
\[
\Big[\big((z_0,z_1)\mapsto(z_0+\alpha,z_1)\big),\big((z_0,z_1)\mapsto(z_0,z_1+\beta z_0)\big)\Big]=\big((z_0,z_1)\mapsto(z_0,z_1-\alpha)\big)
\]
one gets $h_\gamma\colon(z_0,z_1)\mapsto(z_0,z_1+\mu(\gamma))$. In other words 
for any $\alpha$, $\beta\in\mathbb{C}$ one has 
\[
\varphi(z_0+\alpha,z_1+\beta)=f_\alpha\circ h_\beta=(z_0,z_1)\mapsto(z_0+\lambda(\alpha),z_1+\mu(\beta)).
\]
Therefore, $\varphi(\mathrm{T})\subset\mathrm{T}$ and 
$\varphi(\mathcal{U})\subset\mathcal{U}$. Since 
$\mathrm{PGL}(3,\mathbb{C})=\langle\mathcal{U},\,\mathrm{SL}(3,\mathbb{Z})\rangle$ 
the inclusion $\varphi(\mathrm{PGL}(3,\mathbb{C}))\subset\mathrm{PGL}(3,\mathbb{C})$ 
holds. According to \cite{BorelTits} the action of $\varphi$ on 
$\mathrm{PGL}(3,\mathbb{C})$ comes, up to inner conjugacy, from an embedding of 
the field $\mathbb{C}$ into itself.

\item[$\diamond$] Assume now that $\varphi_{\vert\mathrm{SL}(3,\mathbb{Z})}$
is $A\mapsto A^\vee$. 
Similar computations and \cite{BorelTits} imply that 
$\varphi_{\vert\mathrm{PGL}(3,\mathbb{C})}$ comes, up to inner conjugacy,
from the composition of $A\mapsto A^\vee$ and an embedding of the field 
$\mathbb{C}$ into itself.
\end{itemize}

\smallskip

To finish let us assume for instance that 
$\varphi_{\vert\mathrm{PGL}(3,\mathbb{C})}$ comes, up to inner 
conjugacy, from the composition of $A\mapsto A^\vee$ and an 
embedding of the field $\mathbb{C}$ into itself. Set 
$(\eta_1,\eta_2)=\varphi\left((z_0,z_1)\mapsto\left(z_0,\frac{1}{z_1}\right)\right)$. From 
\begin{small}
\[
\left((z_0,z_1)\dashrightarrow\left(z_0,\frac{1}{z_1}\right)\right)\circ((z_0,z_1)\mapsto(\alpha z_0,\beta z_1))\circ\left((z_0,z_1)\dashrightarrow\left(z_0,\frac{1}{z_1}\right)\right)=\left((z_0,z_1)\mapsto\left(\alpha z_0,\frac{z_1}{\beta}\right)\right)
\]
\end{small}
one gets
\[
\left\{
\begin{array}{ll}
\eta_1\big(\lambda(\alpha^{-1})z_0,\lambda(\beta^{-1})z_1\big)=\lambda(\alpha^{-1})\eta_1(z_0,z_1)\\
\eta_2\big(\lambda(\alpha^{-1})z_0,\lambda(\beta^{-1})z_1\big)=\lambda(\beta)\eta_2(z_0,z_1)
\end{array}
\right.
\]
Hence 
$\varphi\left((z_0,z_1)\mapsto\left(z_0,\frac{1}{z_1}\right)\right)=\left((z_0,z_1)\mapsto\left(\pm z_0,\pm\frac{1}{z_1}\right)\right)$. 
But 
\[
\left(\big((z_0,z_1)\mapsto(z_1,z_0)\big)\circ\left((z_0,z_1)\mapsto\left(z_0,\frac{1}{z_1}\right)\right)\right)^2=\sigma_2, 
\]
so 
$\varphi(\sigma_2)=\pm\sigma_2$. Furthermore $\varphi(\ell)=\big((z_0,z_1)\mapsto(-z_0-z_1-1,z_1)\big)$ 
as $\varphi_{\vert\mathrm{SL}(3,\mathbb{Z})}$ coincides with $A\mapsto A^\vee$. 
Then the second component of $\varphi\big(\ell\circ\sigma_2\big)^3$ is 
$\pm\frac{1}{z_1}$: contradiction with 
$\varphi\big(\ell\circ\sigma_2\big)^3=\mathrm{id}$.

If $\varphi_{\vert\mathrm{PGL}(3,\mathbb{C})}$ comes, up to inner conjugacy, from 
an embedding of $\mathbb{C}$ similar computations imply that 
$\varphi(\sigma_2)=\sigma_2$ and one concludes with Noether and Castelnuovo theorem.
\end{proof}

%%%%%%%%%%%%%%%%%%%%%%%%%%%%%%%%%%%%%%%%%%%%%%%%%%%%%%%%%%%%%%%%%%%%%%%%%%%%%%%%%%%%%%%%%%%%%%%%%%%%%%%%%%%%%%%%%%%
%%%%%%%%%%%%%%%%%%%%%%%%%%%%%%%%%%%%%%%%%%%%%%%%%%%%%%%%%%%%%%%%%%%%%%%%%%%%%%%%%%%%%%%%%%%%%%%%%%%%%%%%%%%%%%%%%%%
%%%%%%%%%%%%%%%%%%%%%%%%%%%%%%%%%%%%%%%%%%%%%%%%%%%%%%%%%%%%%%%%%%%%%%%%%%%%%%%%%%%%%%%%%%%%%%%%%%%%%%%%%%%%%%%%%%%
%CHAPTER
%%%%%%%%%%%%%%%%%%%%%%%%%%%%%%%%%%%%%%%%%%%%%%%%%%%%%%%%%%%%%%%%%%%%%%%%%%%%%%%%%%%%%%%%%%%%%%%%%%%%%%%%%%%%%%%%%%%
%%%%%%%%%%%%%%%%%%%%%%%%%%%%%%%%%%%%%%%%%%%%%%%%%%%%%%%%%%%%%%%%%%%%%%%%%%%%%%%%%%%%%%%%%%%%%%%%%%%%%%%%%%%%%%%%%%%
%%%%%%%%%%%%%%%%%%%%%%%%%%%%%%%%%%%%%%%%%%%%%%%%%%%%%%%%%%%%%%%%%%%%%%%%%%%%%%%%%%%%%%%%%%%%%%%%%%%%%%%%%%%%%%%%%%%

\chapter{Finite subgroups of the Cremona group}
\label{chapter:finite}

\bigskip
\bigskip

The classification of finite subgroups of 
$\mathrm{Bir}(\mathbb{P}^1_\mathbb{C})=\mathrm{PGL}(2,\mathbb{C})$
is well known and goes back to Klein. It consists of cyclic, 
dihedral, tetrahedral, octahedral and icosahedral groups. 
Groups of the same type and same order constitute a unique 
conjugacy class in $\mathrm{Bir}(\mathbb{P}^1_\mathbb{C})$.

What about the two-dimensional case, {\it i.e.} what about the 
finite subgroups of $\mathrm{Bir}(\mathbb{P}^2_\mathbb{C})$ ?
The story starts a long time ago with Bertini (\cite{Bertini})
who classified conjugacy classes of subgroups of order $2$ 
in $\mathrm{Bir}(\mathbb{P}^2_\mathbb{C})$. Already the answer
is drastically different from the one-dimensional case. The 
set of conjugacy classes is parameterized (\emph{see} 
Theorem \ref{thm:BayleBeauville}) by a disconnected algebraic
variety whose connected components are respectively isomorphic
to 
\begin{itemize}
\item[$\diamond$] either the moduli spaces of hyperelliptic 
curves of genus $g$,

\item[$\diamond$] or the moduli space of canonical curves of 
genus $3$,

\item[$\diamond$] or the moduli space of canonical curves of
genus $4$ with vanishing theta characteristic.
\end{itemize}

Bertini's proof is considered to be incomplete; a complete
and short proof was published only a few years ago by 
Bayle and Beauville (\cite{BayleBeauville}). 

In $1894$ Castelnuovo proved that any element of 
$\mathrm{Bir}(\mathbb{P}^2_\mathbb{C})$ of finite order
leaves inva\-riant either a net of lines, or a pencil of
lines, or a linear system of cubic curves with $n\leq 8$
base-points (\cite{Castelnuovo}). Kantor announced a 
similar result for arbitrary finite subgroups of 
$\mathrm{Bir}(\mathbb{P}^2_\mathbb{C})$; his proof 
relies on a classification of possible groups in each 
case (\cite{Kantor}). Unfortunately Kantor's classification, 
even with some corrections made by Wiman (\cite{Wiman}),
is incomplete in the following sense:
\begin{itemize}
\item[$\diamond$] given some abstract finite group, it is
not possible using their list to say whether this group
is isomorphic to a subgroup of $\mathrm{Bir}(\mathbb{P}^2_\mathbb{C})$;

\item[$\diamond$] the possible conjugation between the 
groups of the list is not considered.
\end{itemize}

The Russian school has made great progress since the $1960$'s: 
Manin and Iskovskikh classified the minimal $G$-surfaces into automorphisms 
of del Pezzo surfaces and of conic bundles 
(\cite{Manin:rational2, Iskovskih:minimal}).
Many years after people come back to this problem. As 
we already mention Bayle and Beauville classified groups 
of order $2$. It is the first example of a precise 
description of conjugacy classes; it is shown that the 
non-rational curves fixed by the groups determine the 
conjugacy classes. Groups of prime order were also 
studied (\cite{BeauvilleBlanc, deFernex, Zhang}). 
Zhang applies Bayle and Beauville strategy to the 
case of birational automorphisms of prime order
$p\geq 3$. It turns out that nonlinear automorphisms 
occur only for $p=3$ and $p=5$; the author describes 
them explicitly. The techniques of \cite{BayleBeauville} 
are also generalized by 
de Fernex to cyclic subgroups of prime order 
(\cite{deFernex}). The list is as precise as one can 
wish, except for two classes of groups of order $5$: 
the question of their conjugacy is not answered. 
Beauville and Blanc completed this classification 
(\cite{BeauvilleBlanc}); they prove in particular 
that a birational self map of the complex 
projective plane of prime order is not conjugate to a linear automorphism if and only if
it fixes some non-rational curve. Beauville 
classified $p$-elementary groups (\cite{Beauville}).
Blanc classified all finite cyclic groups (\cite{Blanc:CRAS}), 
and all finite abelian groups (\cite{Blanc:these}).
The goal of \cite{DolgachevIskovskikh} is to update the 
list of Kantor and Wiman. The authors used the modern theory 
of $G$-surfaces, the theory of elementary links, and 
the conjugacy classes of Weyl groups.

\smallskip

In the first section we recall the definitions of 
Geiser involutions, Bertini involutions
and Jonqui\`eres involutions. We give a 
sketch of the proof of the classification of 
birational involutions of the complex projective
plane due to Bayle and Beauville.

In the second section we deal with finite abelian 
subgroups of the plane Cremona 
group. Results due to Dolgachev and 
Iskovskikh are recalled.

In the last section we state some results of 
Blanc about finite cyclic subgroups 
of $\mathrm{Bir}(\mathbb{P}^2_\mathbb{C})$, 
isomorphism classes of finite abelian subgroups
of $\mathrm{Bir}(\mathbb{P}^2_\mathbb{C})$ 
but also a generalization of a theo\-rem of 
Castelnuovo which states that an 
element of finite order which fixes a curve 
of geometric genus $>1$ has order $2$, $3$ 
or $4$.

\bigskip
\bigskip

\section{Classification of subgroups of order $2$ of $\mathrm{Bir}(\mathbb{P}^2_\mathbb{C})$}\label{sec:ordre2}

\subsection{Geiser involutions}\label{geiser}

Let $p_1$, $p_2$, $\ldots$, $p_7$ be seven points 
of the complex projective plane in general position.
Denote by $L$ the linear system of cubics through 
the $p_i$'s. The linear system $L$ of cubic curves through the $p_i$'s is two-dimensional. 
Take a general point $p$, and 
consider the pencil of curves from $L$ passing through 
$p$. A general pencil of 
cubic curves has nine base-points; let us 
define $\mathcal{I}_G(p)$ as
the ninth base-point of the pencil. 
The map $\mathcal{I}_G$ is a 
\textsl{Geiser involution}\index{defi}{Geiser
involution}\index{not}{$\mathcal{I}_G$} 
(\cite{Geiser}). The algebraic degree of a 
Geiser involution is equal to 
$8$.

One can also see a Geiser involution as follows. 
The linear system $L$ defines a rational map of degree $2$,
\[
\psi\colon \mathbb{P}^2_\mathbb{C}\dashrightarrow\vert L\vert^*\simeq \mathbb{P}^2_\mathbb{C}.
\]
The points $p$ and $\mathcal{I}_G(p)$ lie in the 
same fibre. As a consequence $\mathcal{I}_G$ is a 
birational deck map of this cover. If we blow up 
$p_1$, $p_2$, $\ldots$, $p_7$ we get a 
del Pezzo surface $S$ of degree $2$ and a regular map
of degree $2$ from $S$ to $\mathbb{P}^2_\mathbb{C}$.
Furthermore the Geiser involution becomes an automorphism
of $S$.

Note that the fixed points of $\mathcal{I}_G$ lie
on the ramification curve of $\psi$. It is a curve
of degree~$6$ with double points $p_1$, $p_2$, 
$\ldots$, $p_7$ and is birationally isomorphic to a 
canonical curve of genus~$3$.

A third way to see Geiser involutions is the following.
Let $S$ be a del Pezzo surface of degree $2$. The 
linear system $\vert -K_S\vert$ defines a double 
covering $S\to\mathbb{P}^2_\mathbb{C}$, branched 
along a smooth quartic curve (\cite{Demazure:delPezzo}).
The involution $\iota$ which exchanges the two sheets
of this covering is called a Geiser involution; it 
satisfies
\[
\mathrm{Pic}(S)^\iota\otimes\mathbb{Q}\simeq\mathrm{Pic}(\mathbb{P}^2_\mathbb{C})\otimes\mathbb{Q}=\mathbb{Q}.
\]

The exceptional locus of a Geiser involution is the union 
of seven cubics passing through the seven points of
indeterminacy of $\mathcal{I}_G$ and singular at one of 
these seven points.

\subsection{Bertini involutions}\label{bertini}

Let us fix in $\mathbb{P}^2_\mathbb{C}$ eight 
points $p_1$, $p_2$, $\ldots$, $p_8$ in general 
position. Consider the pencil of cubic curves 
through these points. It has a ninth base-point
$p_9$. For any general point $p$ there is a 
unique cubic curve $\mathcal{C}(p)$ of the 
pencil passing through $p$. Take $p_9$ as the 
zero of the group law of the cubic 
$\mathcal{C}(p)$; define $\mathcal{I}_B(p)$ 
as the negative $-p$ with respect to the group
law. The map $\mathcal{I}_B$ is a birational
involution called \textsl{Bertini involution}\index{defi}{Bertini
involution}\index{not}{$\mathcal{I}_B$} 
(\cite{Bertini}).

The algebraic degree of a Bertini involution 
is equal to $17$. The fixed points of a Bertini
involution lie on a canonical curve of genus $4$
with vanishing theta characteristic isomorphic to
a nonsingular intersection of a cubic surface and
a quadratic cone in $\mathbb{P}^3_\mathbb{C}$.

Another way to see a Bertini involution is the following.
Consider a del Pezzo surface~$S$ of degree $1$. 
The map $S\to\mathbb{P}^3_\mathbb{C}$ defined by
the linear system $\vert -2K_S\vert$ induces 
a degree $2$ morphism of $S$ onto a quadratic
cone $Q\subset\mathbb{P}^3_\mathbb{C}$, branched 
along the vertex of $Q$ and a smooth genus $4$ 
curve (\cite{Demazure:delPezzo}). The corresponding 
involution, the Bertini involution, satisfies 
$\mathrm{rk}\,\mathrm{Pic}(S)^{\mathcal{I}_B}=1$.

\subsection{Jonqui\`eres involutions}

Let $\mathcal{C}$ be an irreducible curve of degree 
$\nu\geq~3$. Assume that $\mathcal{C}$ has a unique
singular point $p$ and that $p$ is an ordinary multiple
point with multiplicity $\nu-2$. To $(\mathcal{C},p)$
we associate a birational involution $\mathcal{I}_J$ 
that fixes pointwise $\mathcal{C}$ and preserves lines
through $p$. Let $m$ be a generic point of 
$\mathbb{P}^2_\mathbb{C}\smallsetminus\mathcal{C}$. 
Let $r_m$, $q_m$ and $p$ be the intersections of the 
line $(mp)$ and $\mathcal{C}$. The point 
$\mathcal{I}_J(m)$ is the point such that the cross 
ratio of $m$, $\mathcal{I}_J(m)$, $q_m$ and $r_m$
is equal to $-1$. The map $\mathcal{I}_J$ is a 
\textsl{Jonqui\`eres involution}
\index{defi}{Jonqui\`eres involution} of degree $\nu$ centered 
at $p$; it preserves $\mathcal{C}$. More precisely
its fixed points are the curve $\mathcal{C}$ of 
genus $\nu-2$ as soon as $\nu\geq 3$.

If $\nu=2$, then $\mathcal{C}$ is a smooth conic ; 
the same construction can be done by choosing a 
point~$p$ that does not lie on $\mathcal{C}$.

\begin{lem}[\cite{DolgachevIskovskikh}]\label{lem:notconj}
Let $\mathrm{G}$ be a finite subgroup of 
$\mathrm{Bir}(\mathbb{P}^2_\mathbb{C})$. 
Let $C_1$, $C_2$, $\ldots$, $C_k$ be  
non-rational irreducible curves on 
$\mathbb{P}^2_\mathbb{C}$ such that each
of them contains an open subset $C_i^0$ 
whose points are fixed under all 
$g\in\mathrm{G}$. 

Then the set of birational isomorphism classes
of the curves $C_i$ is an invariant of the 
conjugacy class of $\mathrm{G}$ in 
$\mathrm{Bir}(\mathbb{P}^2_\mathbb{C})$.
\end{lem}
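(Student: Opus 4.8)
The plan is to show that conjugation in $\mathrm{Bir}(\mathbb{P}^2_\mathbb{C})$ carries the collection of birational isomorphism classes of non-rational fixed curves of $\mathrm{G}$ bijectively onto the corresponding collection for any conjugate $\mathrm{G}'=\psi\circ\mathrm{G}\circ\psi^{-1}$, where $\psi$ is a birational self map of $\mathbb{P}^2_\mathbb{C}$. Since replacing $\psi$ by $\psi^{-1}$ interchanges the roles of the two groups, it suffices to establish one inclusion: for each $C_i$ I would produce a non-rational irreducible curve $C_i'=\psi(C_i)$ which is fixed pointwise on a dense open subset by every element of $\mathrm{G}'$ and which is birational to $C_i$ via $\psi$.

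The first step, and the geometric heart of the argument, is to check that $\psi$ does not contract $C_i$. Here I would use a minimal resolution $\psi=\pi_2\circ\pi_1^{-1}$ with $\pi_1\colon Y\to\mathbb{P}^2_\mathbb{C}$ and $\pi_2\colon Y\to\mathbb{P}^2_\mathbb{C}$ finite sequences of blow-ups (Zariski's theorem, as recalled in Chapter \ref{chapter:intro}). The key fact is that the exceptional locus of a birational morphism between smooth projective surfaces is a union of rational curves, since each contracted curve is built from exceptional divisors of point blow-ups, each a copy of $\mathbb{P}^1_\mathbb{C}$. Letting $\widetilde{C_i}\subset Y$ be the strict transform of $C_i$, it is birational to $C_i$ via $\pi_1$ and hence non-rational; therefore $\pi_2$ cannot contract $\widetilde{C_i}$, and $C_i':=\psi(C_i)=\overline{\pi_2(\widetilde{C_i})}$ is an irreducible curve birational to $C_i$, in particular non-rational.

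The second step is the transfer of fixed points, which is elementary once one is careful about indeterminacy. I would choose a generic point $q\in C_i'$ of the form $q=\psi(p)$ with $p\in C_i^0$ generic, chosen so that $\psi$ is a local isomorphism at $p$ and so that each $g\in\mathrm{G}$ and each $g'=\psi\circ g\circ\psi^{-1}$ is defined at the relevant points. Then $g'(q)=\psi(g(p))=\psi(p)=q$, using $g(p)=p$. As both $\mathrm{G}$ and $\mathrm{G}'$ are finite, intersecting the finitely many dense open subsets on which these equalities hold yields a dense open subset $D_i\subseteq C_i'$ fixed pointwise by all of $\mathrm{G}'$. Thus the birational isomorphism class of $C_i$ occurs among the non-rational fixed curves of $\mathrm{G}'$.

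The main obstacle I anticipate is purely the bookkeeping of indeterminacy loci: one must restrict everything to the dense open set where $\psi$ is an isomorphism and where the finitely many maps $g$, $g'$, $\psi^{\pm1}$ are simultaneously defined, and verify that generic points of $C_i$ and of $C_i'$ lie there. This is precisely where the hypothesis that only an open subset $C_i^0$ is fixed, rather than all of $C_i$, is harmless. Granting the non-contraction step, the remainder is routine; applying the same construction to $\psi^{-1}$ gives the reverse inclusion, so the two sets of birational isomorphism classes coincide, which proves the lemma.
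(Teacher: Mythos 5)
Your proof is correct and follows essentially the same route as the paper: transfer the fixed curves along the conjugating map $\psi$, using the fact that birational maps of surfaces can only contract rational curves (which the paper compresses into the one line ``as $C_i$ is not rational, $\psi^{-1}(C_i^0)$ is not a point''), then conclude by symmetry in $\psi$ and $\psi^{-1}$. Your explicit justification of the non-contraction step via a minimal resolution, and your bookkeeping of indeterminacy loci, merely spell out what the paper leaves implicit.
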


\begin{proof}
Assume that 
$\mathrm{G}=\psi\circ\mathrm{H}\circ\psi^{-1}$
for some subgroup $\mathrm{H}$ of 
$\mathrm{Bir}(\mathbb{P}^2_\mathbb{C})$ and 
some birational self map $\psi$ of the complex 
projective plane. Replacing $C_i^0$ by a 
smaller open subset if needed we assume that
$\psi^{-1}(C_i^0)$ is defined and consists
of fixed points of $\mathrm{H}$. As $C_i$ is 
not rational, $\psi^{-1}(C_i^0)$ is not a 
point. Its Zariski closure is thus a rational 
irreducible curve $C'_i$ birationally isomorphic
to $C_i$ that contains an open subset of 
fixed points of $\mathrm{H}$.
\end{proof}

\begin{cor}
Jonqui\`eres involutions of degree $\geq 3$ 
are not conjugate to each other, not conjugate 
to projective involutions, not conjugate to 
Bertini involutions, not conjugate to Geiser 
involutions. 

Bertini involutions are not conjugate to Geiser
involutions, not conjugate to projective 
involutions. 

Geiser involutions are not conjugate to 
projective involutions.
\end{cor}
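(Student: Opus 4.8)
The plan is to deduce the whole corollary from Lemma \ref{lem:notconj}: to each involution I attach the birational isomorphism class of the non-rational curve it fixes pointwise, and then invoke the lemma to the effect that conjugate involutions must carry the same such class. All the curve data needed are already recorded in \S\ref{geiser}, \S\ref{bertini} and in the discussion of Jonqui\`eres involutions, namely that a Geiser involution fixes a curve birationally isomorphic to a canonical (smooth plane quartic) curve of genus $3$; a Bertini involution fixes a canonical curve of genus $4$ with vanishing theta characteristic; a Jonqui\`eres involution of degree $\nu\geq 3$ fixes an irreducible curve of genus $\nu-2\geq 1$; and a projective involution, being an element of $\mathrm{PGL}(3,\mathbb{C})$ of order $2$, has fixed locus the union of a line and an isolated point, hence fixes \emph{no} non-rational curve.

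First I would record that each of the three nonlinear families fixes pointwise a genuinely non-rational curve (geometric genus $\geq 1$), so that the hypotheses of Lemma \ref{lem:notconj} are met with a nonempty family of curves, whereas in the projective case the corresponding invariant is empty. Comparing the empty invariant with a nonempty one immediately yields that no projective involution is conjugate to a Geiser, a Bertini, or a degree-$\geq 3$ Jonqui\`eres involution.

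Next I would separate the three nonlinear families from one another by refining the isomorphism class of the fixed curve. Geiser and Bertini are distinguished by genus ($3$ versus $4$). For a Jonqui\`eres involution of degree $\nu$ I would compare genera whenever they differ, and in the two coincidence ranges $\nu=5$ (genus $3$) and $\nu=6$ (genus $4$) use the finer fact that the Jonqui\`eres fixed curve is \emph{hyperelliptic} — projection from its centre $p$ exhibits a degree-$2$ map to $\mathbb{P}^1_\mathbb{C}$ — whereas the Geiser curve (a smooth plane quartic) and the Bertini curve (a canonical genus-$4$ complete intersection lying on a quadratic cone) are not hyperelliptic. This makes the isomorphism classes provably distinct even when the genera agree. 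Finally, two Jonqui\`eres involutions of degrees $\nu\neq\nu'$ have fixed curves of distinct genera $\nu-2\neq\nu'-2$, and more generally two with non-isomorphic fixed curves cannot be conjugate.

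The main obstacle is the bookkeeping in the coincident-genus cases: I expect the delicate step to be arguing cleanly that the Jonqui\`eres fixed curve really is hyperelliptic of geometric genus $\nu-2$ (via the genus drop $\binom{\nu-1}{2}-\binom{\nu-2}{2}=\nu-2$ coming from the ordinary $(\nu-2)$-fold point) while the Geiser and Bertini fixed curves are non-hyperelliptic, so that Lemma \ref{lem:notconj} separates them even though $\dim$ and genus alone would not.
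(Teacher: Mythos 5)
Your proposal is correct and follows essentially the same route as the paper: the paper's proof also invokes Lemma \ref{lem:notconj} together with the list of normalized fixed curves (line or point for projective involutions, non-hyperelliptic canonical curve of genus $3$ for Geiser, canonical genus-$4$ curve with vanishing theta characteristic for Bertini, hyperelliptic curve of genus $\nu-2$ for a Jonqui\`eres involution of degree $\nu\geq 3$). Your explicit treatment of the genus-coincidence cases $\nu=5$ and $\nu=6$ via hyperellipticity simply spells out what the paper's terse argument implicitly uses when it records the Jonqui\`eres fixed curve as hyperelliptic.
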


\begin{proof}
The statement follows from Lemma \ref{lem:notconj}
and the above properties:
\begin{itemize}
\item[$\diamond$] a connected component of the 
fixed locus of a projective map is a line or a
point;

\item[$\diamond$] the fixed points of a Geiser
involution lie on a curve birationally
isomorphic to a cano\-nical curve of genus $3$;

\item[$\diamond$] the fixed points of a Bertini
involution lie on a canonical curve of genus $4$
with va\-nishing theta characteristic;

\item[$\diamond$] the set of fixed points of a 
Jonqui\`eres involution of degree $\nu\geq 3$
outside the base locus is an hyperelliptic 
curve of degree $\nu-2$.
\end{itemize}
\end{proof}

We can thus introduce the following definition.

\begin{defi}
An involution is of 
\textsl{Jonqui\`eres type}\index{defi}{Jonqui\`eres type}
if it is birationally conjugate to a Jonqui\`eres involution.

An involution is of 
\textsl{Bertini type}\index{defi}{Bertini type}
if it is birationally conjugate to a Bertini involution.

An involution is of 
\textsl{Geiser type}\index{defi}{Geiser type}
if it is birationally conjugate to a Geiser involution.
\end{defi}

\smallskip

The classification of subgroups of 
$\mathrm{Bir}(\mathbb{P}^2_\mathbb{C})$ of order $2$ is given 
by the following statement:

\begin{thm}[\cite{BayleBeauville}]\label{thm:BayleBeauville}
A non-trivial birational involution of the complex projective
plane is conjugate to one and only one of the following:
\begin{itemize}
\item[$\diamond$] a Jonqui\`eres involution of a given 
degree $\nu\geq 2$;

\item[$\diamond$] a Geiser involution;

\item[$\diamond$] a Bertini involution.
\end{itemize}
\end{thm}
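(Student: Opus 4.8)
The plan is to run the $G$-equivariant strategy with $G=\langle\iota\rangle\simeq\faktor{\mathbb{Z}}{2\mathbb{Z}}$, separating the existence of a normal form from its uniqueness. First I would regularize: a birational involution has finite order, hence is elliptic, hence virtually isotopic to the identity (Theorem \ref{thm:dilfav}), so it is regularizable; alternatively one applies the regularization theorem of Weil (Theorem \ref{thm:Weil}) to the action of the finite group $\langle\iota\rangle$. Thus, up to birational conjugacy, $\iota$ becomes a biregular involution of a smooth projective rational surface $S$. I would then run the $\langle\iota\rangle$-equivariant minimal model program, contracting $\iota$-orbits of disjoint $(-1)$-curves until the pair $(\langle\iota\rangle,S)$ is minimal. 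By the Manin--Iskovskikh theory of minimal $G$-surfaces, such a minimal pair carries either an $\iota$-equivariant conic bundle structure $\pi\colon S\to\mathbb{P}^1_\mathbb{C}$ with $\mathrm{rk}\,\mathrm{Pic}(S)^\iota=2$, or else $S$ is a del Pezzo surface with $\mathrm{rk}\,\mathrm{Pic}(S)^\iota=1$.

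In the conic bundle case the involution either preserves the rational fibration or acts nontrivially on the base. When $\iota$ preserves the fibration I would trivialize it by Noether--Enriques (Theorem \ref{thm:fibration}), conjugating $\pi$ to the projection $\mathbb{P}^1_\mathbb{C}\times\mathbb{P}^1_\mathbb{C}\to\mathbb{P}^1_\mathbb{C}$; then $\iota$ lands in the Jonqui\`eres group, and analyzing its action on the generic fibre inside $\mathrm{PGL}(2,\mathbb{C}(z_0))$ exhibits it as a Jonqui\`eres involution of some degree $\nu\geq 2$, with fixed curve the hyperelliptic curve of genus $\nu-2$. The subcase where $\iota$ acts nontrivially on the base must be shown to reduce, after one further equivariant contraction or a change of fibration, to this form; in particular the purely linear involutions enter here as the degenerate case $\nu=2$, whose fixed locus is a smooth conic together with an isolated point.

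For the del Pezzo case I would run through the degrees $d=K_S^2$. Since $\iota$ fixes $K_S$, minimality ($\mathrm{rk}\,\mathrm{Pic}(S)^\iota=1$) forces a genuinely non-linear, non-fibred involution to have $d\in\{1,2\}$: in the remaining degrees the involution either preserves a conic bundle (returning to the previous paragraph) or preserves a pencil of lines. For $d=2$ the system $\vert-K_S\vert$ realizes $S$ as a double cover of $\mathbb{P}^2_\mathbb{C}$ branched along a smooth quartic, and $\iota$ is forced to be the deck transformation, a Geiser involution with genus-$3$ canonical fixed curve; for $d=1$ the system $\vert-2K_S\vert$ presents $S$ as a double cover of a quadratic cone branched along a genus-$4$ curve, and $\iota$ is the Bertini deck transformation. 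This furnishes the existence half: every involution is conjugate to one of the three listed normal forms.

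The uniqueness half rests on the birational invariance of the set of non-rational fixed curves (Lemma \ref{lem:notconj}) together with the genera recorded above, which already give, via the preceding Corollary, that the three families are pairwise non-conjugate and that Jonqui\`eres involutions of distinct degrees $\nu\geq 3$ are non-conjugate. The step I expect to be the main obstacle is twofold: first, establishing the Manin--Iskovskikh dichotomy and the degree restriction $d\in\{1,2\}$ rigorously, which is the technical heart, requiring control of the $\iota$-invariant Picard lattice and of the equivariant contractions; and second, treating the degenerate end $\nu=2$, where the fixed curve is rational and hence invisible to Lemma \ref{lem:notconj}, so that one must instead prove directly that all involutions with rational (or empty) fixed curve --- including every linear involution --- constitute a single conjugacy class, isolated from the positive-genus Jonqui\`eres classes.
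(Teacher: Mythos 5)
Your proposal follows essentially the same route as the paper's proof (after Bayle and Beauville): regularize the involution to a biregular one on a rational surface, pass to a minimal pair, split according to whether $\mathrm{rk}\,\mathrm{Pic}(S)^\iota$ equals $1$ (del Pezzo, where the reflection structure of $-\iota$ on $K_S^\perp$ forces $K_S^2\in\{1,\,2\}$ apart from $\mathbb{P}^2_\mathbb{C}$ and the swap on $\mathbb{P}^1_\mathbb{C}\times\mathbb{P}^1_\mathbb{C}$, yielding the Geiser and Bertini involutions via the anticanonical double covers) or is $>1$ (an invariant pencil of rational curves, yielding Jonqui\`eres involutions, with every linear involution absorbed into the class $\nu=2$), and conclude uniqueness from the normalized fixed curve (Remark \ref{rem:norm}, Lemma \ref{lem:notconj}). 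The two obstacles you flag are resolved in the paper exactly along the lines you sketch: the dichotomy needs only one Mori-theoretic input (Lemma \ref{lem:pencil}) plus an explicit lattice computation, and the degenerate end is settled by elementary transformations through $\mathbb{F}_1$ identifying cases $(1)$, $(3)$, $(4)$ of Theorem \ref{thm:minimalpairs} with the single degree-$2$ Jonqui\`eres class.
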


More precisely the parameterization of each conjugacy
class is known. Before stating it let us give some
definitions.

\begin{rems}\label{rem:norm}
Let $S$, $S'$ be two rational surfaces and 
$\iota\in\mathrm{Bir}(S)$, 
$\iota'\in\mathrm{Bir}(S')$ be two involutions.
They are 
\textsl{birationally equivalent}\index{defi}{birationally equivalent (maps)}
if there exists a birational map 
$\varphi\colon S\dashrightarrow S'$ such that 
$\varphi\circ\iota=\iota'\circ\varphi$. Note that
in particular two involutions of 
$\mathrm{Bir}(\mathbb{P}^2_\mathbb{C})$ are 
equivalent if and only if they are conjugate
in $\mathrm{Bir}(\mathbb{P}^2_\mathbb{C})$. Assume 
that $\iota$ fixes a curve $C$. Then 
$\iota'=\varphi\circ\iota\circ\varphi^{-1}$ 
fixes the proper transform of $C$ under $\varphi$ 
which is a curve birational to $C$ except possibly 
if $C$ is rational; indeed, if $C$ is rational it 
may be contracted to a point. The 
\textsl{normalized fixed curve}\index{defi}{normalized fixed curve}
of $\iota$ is the union of the normalizations of 
the non-rational curves fixed by $\iota$. This is
an invariant of the birational equivalence class
of $\iota$.
\end{rems}

\begin{pro}[\cite{BayleBeauville}]
The map which associates to a birational involution
of $\mathbb{P}^2_\mathbb{C}$ its normalized fixed
curve establishes a one-to-one correspondence between
\begin{itemize}
\item[$\diamond$] conjugacy classes of Jonqui\`eres
involutions of degree $\nu$ and isomorphism classes 
of hyperelliptic curves of genus $\nu-2$ $(\nu\geq 3)$;

\item[$\diamond$] conjugacy classes of Geiser 
involutions and isomorphism classes of non-hyperelliptic
curves of genus $3$;

\item[$\diamond$] conjugacy classes of Bertini involutions
and isomorphism classes of non-hyperelliptic curves 
of genus $4$ whose canonical model lies on a singular
quadric.
\end{itemize}
Jonqui\`eres involutions of degree $2$ form 
one conjugacy class.
\end{pro}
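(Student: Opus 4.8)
The plan is to establish the bijection type by type, since Theorem~\ref{thm:BayleBeauville} already partitions the non-trivial involutions into the Jonqui\`eres, Geiser and Bertini classes. First I would check that the map is well defined: by Remarks~\ref{rem:norm} and Lemma~\ref{lem:notconj} the normalized fixed curve is a birational invariant of the involution, and the computations recalled in \S\ref{geiser}, \S\ref{bertini} and in the Jonqui\`eres subsection show that it lands in the correct target --- a smooth plane quartic (non-hyperelliptic of genus~$3$) for a Geiser involution, a non-hyperelliptic genus~$4$ curve whose canonical model sits on a quadric cone for a Bertini involution, and a hyperelliptic curve of genus $\nu-2$ for a Jonqui\`eres involution of degree $\nu\geq 3$. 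The degree~$2$ case, where the fixed curve is a conic and hence rational, is treated separately as a degenerate case yielding a single conjugacy class.

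The technical heart of the argument is to pass from the birational involution to a regular model carrying an intrinsic structure. I would first regularize $\iota$ to an automorphism of a smooth projective rational surface $S$, and then minimalize the pair $(\langle\iota\rangle,S)$ in the sense of Manin and Iskovskikh. The outcome is that, up to conjugacy, $(S,\iota)$ is either a conic bundle $\pi\colon S\to\mathbb{P}^1_\mathbb{C}$ with $\iota$ acting fibrewise and $\mathrm{rk}\,\mathrm{Pic}(S)^\iota=2$, or a del Pezzo surface with $\mathrm{rk}\,\mathrm{Pic}(S)^\iota=1$; in the latter case the degree is forced to be $2$ or $1$, and $\iota$ is exactly the deck transformation of the double cover defined by $\vert -K_S\vert$ (Geiser) or by $\vert -2K_S\vert$ (Bertini). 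This step identifies each involution type with a canonical double-cover or conic-bundle datum. Surjectivity then follows by reversing these constructions: a smooth plane quartic $C$ produces the degree~$2$ del Pezzo surface as the double cover of $\mathbb{P}^2_\mathbb{C}$ branched along $C$, whose deck involution is a Geiser involution with normalized fixed curve $C$; a non-hyperelliptic genus~$4$ curve with canonical model on a quadric cone $Q$ produces a degree~$1$ del Pezzo as a double cover of $Q$, with Bertini deck involution; and a hyperelliptic curve of genus $\nu-2$ produces a conic bundle whose discriminant locus is the branch divisor of the hyperelliptic $2:1$ map, together with a Jonqui\`eres involution of degree $\nu$. In each case one checks that the fixed curve of the resulting involution is precisely the prescribed one.

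The main obstacle is injectivity, that is, showing that the normalized fixed curve determines the conjugacy class. For the del Pezzo cases this is comparatively rigid: the double-cover structure is intrinsic to $S$, so the branch curve recovers $S$ up to isomorphism and $\iota$ as its unique non-trivial deck transformation, whence an isomorphism of branch curves lifts to an isomorphism of surfaces conjugating the two involutions. The genuinely delicate point is the Jonqui\`eres case, where one must promote an isomorphism of hyperelliptic curves to a birational conjugacy respecting the conic bundle, and must rule out the possibility that a single involution admits two inequivalent minimal models. I would control this by using that $\mathrm{rk}\,\mathrm{Pic}(S)^\iota$ separates the conic-bundle case from the del Pezzo cases, and by invoking the Manin--Iskovskikh analysis of elementary links between minimal pairs to prove uniqueness of the minimal model up to equivalence; the discriminant data of the conic bundle, encoded exactly by the branch points of the hyperelliptic map, then pin down the conjugacy class. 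Finally I would verify that all degree~$2$ Jonqui\`eres involutions are mutually conjugate, which completes the bijection.
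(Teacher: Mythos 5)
Your proposal is correct and takes essentially the same route as the paper: regularization of the involution (Lemma~\ref{lem:linkbirinvbirinv}), reduction to minimal pairs split by $\mathrm{rk}\,\mathrm{Pic}(S)^{\iota}$ into conic bundles and del Pezzo surfaces of degree $1$ or $2$ (Lemma~\ref{lem:pencil} and Theorem~\ref{thm:minimalpairs}), and the normalized fixed curve of Remarks~\ref{rem:norm} as the conjugacy invariant, which is exactly how \cite{BayleBeauville} obtain the parameterization that the paper itself states without further proof. The only detail left implicit in your Jonqui\`eres injectivity step is the case $\nu=3$: the $g^1_2$ on a genus-$1$ curve is not unique, and one must use that any two degree-$2$ maps from an elliptic curve to $\mathbb{P}^1_\mathbb{C}$ differ by a translation, so that the abstract curve still determines the branch data up to $\mathrm{PGL}(2,\mathbb{C})$.
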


The approach of Bayle and Beauville is different from 
the approach of Castelnuovo. It is based on the 
following observation: any birational involution of 
$\mathbb{P}^2_\mathbb{C}$ is conjugate, via an 
appropriate birational isomorphism 
$S\stackrel{\sim}{\dashrightarrow}\mathbb{P}^2_\mathbb{C}$
to a biregular involution $\iota$ of a rational 
surface~$S$. Therefore, the authors are reduced to 
the birational classification of the pairs 
$(S,\iota)$. In \cite{Manin:rational2} Manin classified 
the pairs $(S,\mathrm{G})$ where $S$ is a surface and 
$\mathrm{G}$ a finite group. This question has been 
simplified by the introduction of Mori theory. This 
theory allows Bayle and Beauville to show that the 
minimal pairs $(S,\iota)$ fall into two categories, 
those which admit a $\iota$-invariant base-point 
free pencil of rational curves, and those with 
$\mathrm{rk}\,\mathrm{Pic}(S)^{\iota}=1$. The 
first case leads to the so-called Jonqui\`eres
involutions whereas the second one leads to the 
Geiser and Bertini involutions.

\medskip

Let us now give some details. By a surface we mean
a smooth, projective, connected surface over 
$\mathbb{C}$. We consider pairs $(S,\iota)$ where 
$S$ is a rational surface and $\iota$ a non-trivial
biregular involution of $S$. Recall that the pair 
$(S,\iota)$ is minimal if 
any birational morphism $\psi\colon S\to S'$ such
that there exists a biregular involution~$\iota'$
of $S'$ with $\psi\circ\iota=\iota'\circ\psi$ is
an isomorphism.

\begin{lem}[\cite{BayleBeauville}]
The pair $(S,\iota)$ is minimal if and only if
for any exceptional curve\footnote{Recall that
an exceptional curve $E$ on a surface $S$ is 
a smooth rational curve with $E^2=-1$.} $E$ on 
$S$ the following hold:
\begin{align*}
&\iota(E)\not=E &&E\cap\iota(E)\not=\emptyset.
\end{align*}
\end{lem}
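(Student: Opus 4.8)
The plan is to translate \emph{minimality} into the (non-)existence of an $\iota$-equivariant contraction of exceptional ($(-1)$-) curves, and then to control precisely which configurations of exceptional curves can be contracted by a single birational morphism. Both implications will be proved by exhibiting, respectively by forbidding, such equivariant contractions.

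For the implication ``minimal $\Rightarrow$ the two conditions'' I would argue by contraposition: I produce an equivariant contraction whenever some exceptional curve $E$ violates one of the conditions. If $\iota(E)=E$, contract $E$ by a birational morphism $\pi\colon S\to S'$; since $\pi$ and $\pi\circ\iota$ are two birational morphisms out of $S$ contracting exactly $E$, they differ by an automorphism $\iota'$ of $S'$, i.e. $\pi\circ\iota=\iota'\circ\pi$, and $\iota'^2=\mathrm{id}$ because $\iota^2=\mathrm{id}$. If instead $\iota(E)\neq E$ but $E\cap\iota(E)=\emptyset$, then $\iota(E)$ is a second exceptional curve (it is a $(-1)$-curve because $\iota$ is biregular) disjoint from $E$, so I may contract $E\sqcup\iota(E)$ simultaneously by $\pi\colon S\to S'$; again $\pi$ and $\pi\circ\iota$ contract the same locus, hence differ by a biregular involution $\iota'$ of $S'$. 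In both cases $\pi$ is an equivariant, non-isomorphic birational morphism, so $(S,\iota)$ is not minimal.

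For the converse I would assume both conditions hold for every exceptional curve and suppose, for contradiction, that $(S,\iota)$ is not minimal, so there is an equivariant $\psi\colon S\to S'$ which is not an isomorphism. Let $\mathcal{E}$ be the set of exceptional curves on $S$ contracted by $\psi$. It is nonempty, since a non-isomorphic birational morphism of smooth surfaces contracts at least one $(-1)$-curve of its source, and it is $\iota$-stable: if $E\in\mathcal{E}$ then $\iota(E)$ is again a $(-1)$-curve and $\psi(\iota(E))=\iota'(\psi(E))$ is a point, so $\iota(E)\in\mathcal{E}$. Choosing $E\in\mathcal{E}$, the hypothesis forces $\iota(E)\neq E$ and $E\cap\iota(E)\neq\emptyset$, so $E$ and $\iota(E)$ are distinct, intersecting exceptional curves that are both contracted by $\psi$.

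The main obstacle, and the heart of the proof, is to rule this out by showing that two distinct exceptional curves $C_1,C_2$ contracted by one birational morphism must be disjoint. Setting $m=C_1\cdot C_2$ and contracting $C_1$ first via $\pi\colon S\to S_1$, the relation $\pi^{*}\pi(C_2)=C_2+mC_1$ together with $C_1^2=-1$ gives $\pi(C_2)^2=C_2^2+m^2=-1+m^2$. Since $\psi$ factors through $\pi$ and still contracts $\pi(C_2)$, and since every irreducible curve contracted by a birational morphism of smooth projective surfaces has strictly negative self-intersection, I obtain $-1+m^2<0$, forcing $m=0$. Applied to $C_1=E$ and $C_2=\iota(E)$ this yields $E\cap\iota(E)=\emptyset$, contradicting the hypothesis; hence no such $\psi$ exists and $(S,\iota)$ is minimal. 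I would take care to justify the standard surface-geometry inputs used here: Castelnuovo's contractibility criterion, the fact that a birational morphism factors through the contraction of any exceptional curve it contracts, and the rigidity statement that two birational morphisms contracting the same locus differ by an isomorphism of the target.
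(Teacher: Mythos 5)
Your proof is correct and takes essentially the same route as the paper: both directions rest on contracting $E$ (resp. $E\cup\iota(E)$) equivariantly when a condition fails, and on the negativity of the intersection form on the locus contracted by an equivariant morphism $\psi$ for the converse. The only cosmetic difference is that the paper concludes in one step from $(E+\iota(E))^2\leq 0$ for the divisor $E+\iota(E)$ contracted by $\psi$, whereas you first factor $\psi$ through the contraction of $E$ and compute $\pi(\iota(E))^2=-1+m^2<0$ with $m=E\cdot\iota(E)$; the two computations carry the same content.
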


\begin{proof}
Suppose that $(S,\iota)$ is not minimal. Then
there exist a pair $(S',\iota')$ and a birational
morphism $\psi\colon S\to S'$ such that 
$\psi\circ\iota=\iota'\circ\psi$ and $\psi$ 
contracts some exceptional curve $E$. Then~$\psi$
contracts the divisor $E+\iota(E)$. Therefore,
$(E+\iota(E))^2\leq 0$, and so $E\cdot\iota(E)\leq 0$,
{\it i.e.} $\iota(E)=E$ or $E\cap\iota(E)=\emptyset$.

\smallskip

Conversely assume that there exists an exceptional
curve $E$ on $S$ such that $\iota(E)=E$
(resp. $E\cap\iota(E)=\emptyset$). Let $S'$ be the 
surface obtained by blowing down~$E$ (resp. 
$E\cup\iota(E)$). Then $\iota$ induces an involution
$\iota'$ of $S'$ so that $(S,\iota)$ is not minimal.
\end{proof}

The only piece of Mori theory used by Bayle and 
Beauville is the following one:

\begin{lem}[\cite{BayleBeauville}]\label{lem:pencil}
Let $(S,\iota)$ be a minimal pair with 
$\mathrm{rk}\,\mathrm{Pic}(S)^\iota>1$. Then 
$S$ admits a base-point free pencil stable under 
$\iota$.
\end{lem}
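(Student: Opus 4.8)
The plan is to run the $G$-equivariant minimal model program for the group $G=\langle\iota\rangle\simeq\mathbb{Z}/2\mathbb{Z}$ acting on the rational surface $S$, and to read off the desired pencil from the resulting Mori fibration of fibring type. First I would set up the invariant linear algebra: since $\iota$ preserves the intersection form, it acts on $\mathrm{Pic}(S)=\mathrm{NS}(S)$ and the canonical class $K_S$ is $\iota$-invariant, while averaging any ample class $A$ to $A+\iota^*A$ produces a $G$-invariant class of positive self-intersection. Hence the intersection form restricted to $\mathrm{Pic}(S)^\iota\otimes\mathbb{R}$ is non-degenerate of signature $(1,r-1)$, where $r=\mathrm{rk}\,\mathrm{Pic}(S)^\iota\geq 2$ by hypothesis. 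Moreover the minimality of $(S,\iota)$ is exactly the content of the previous lemma: for every exceptional curve $E$ one has $\iota(E)\neq E$ and $E\cap\iota(E)\neq\emptyset$, i.e. there is neither an $\iota$-fixed $(-1)$-curve nor a disjoint $\iota$-orbit of $(-1)$-curves available for an equivariant contraction.

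Next I would invoke Mori theory equivariantly. Because $S$ is rational, $\kappa(S)=-\infty$ and $K_S$ is not pseudo-effective, so $\overline{\mathrm{NE}}(S)$ is not contained in the half-space $\{K_S\geq 0\}$ and the Cone Theorem furnishes $K_S$-negative extremal rays. The finite group $G$ permutes these rays, so there is a $K_S$-negative extremal ray, or the sum of an orbit of two such rays, spanning a $G$-invariant extremal face of the invariant cone $\overline{\mathrm{NE}}(S)^G\subset\mathrm{Pic}(S)^\iota\otimes\mathbb{R}$. Contracting it yields a $G$-equivariant morphism $\phi\colon S\to X$ with connected fibres onto a normal projective variety, of relative invariant Picard number one, so that $\mathrm{rk}\,\mathrm{Pic}(X)^G=r-1$. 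Three possibilities then arise according to $\dim X$.

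Finally I would eliminate two of them. If $\dim X=0$ then $-K_S$ is ample and $\mathrm{rk}\,\mathrm{Pic}(S)^\iota=1$, contradicting $r\geq 2$. If $\dim X=2$ then $\phi$ is birational and contracts either an $\iota$-fixed exceptional curve or an $\iota$-orbit of two disjoint exceptional curves, contradicting minimality as reformulated above. Hence $\dim X=1$: the map $\phi\colon S\to B:=X$ is a fibration over a smooth curve $B$, and since $S$ is rational the pullback $H^0(B,\Omega^1_B)\hookrightarrow H^0(S,\Omega^1_S)=0$ forces $B\simeq\mathbb{P}^1_\mathbb{C}$. The fibres of $\phi$ then form the base-point free pencil $\phi^*\lvert\mathcal{O}_{\mathbb{P}^1_\mathbb{C}}(1)\rvert$, which is stable under $\iota$ precisely because $\phi$ is $G$-equivariant; this is what we wanted.

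The main obstacle will be making the equivariant Cone and Contraction Theorems rigorous in this setting: one must check that a $G$-invariant $K_S$-negative extremal face really can be contracted $G$-equivariantly and that the trichotomy (del Pezzo of invariant rank one / conic bundle over a curve / birational contraction) genuinely exhausts the cases — in particular that a $G$-minimal del Pezzo surface has invariant Picard rank exactly one, and that the birational contractions are precisely the configurations excluded by the minimality lemma. For $G=\mathbb{Z}/2\mathbb{Z}$ this is the content of the Manin--Iskovskikh classification of minimal $G$-surfaces recalled earlier, so I would either cite it directly or, following Bayle and Beauville, carry out the short ad hoc argument available for a single involution.
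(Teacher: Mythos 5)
Your proposal is correct in substance, but note that the survey does not actually prove this lemma: it is quoted from Bayle--Beauville as ``the only piece of Mori theory used'', and their argument is exactly the equivariant Mori dichotomy you sketch. So your route amounts to re-deriving, for $\mathrm{G}=\langle\iota\rangle$, the Manin--Iskovskikh classification already recorded as Theorem~\ref{thm:classificationminimalGsurfaces}: the hypothesis $\mathrm{rk}\,\mathrm{Pic}(S)^\iota\geq 2$ rules out the del Pezzo case of invariant rank one, minimality (via the characterization in terms of exceptional curves, $\iota(E)\neq E$ and $E\cap\iota(E)\neq\emptyset$) rules out the birational contractions, and the remaining conic-bundle case hands you the invariant base-point free pencil of fibres; your identification $B\simeq\mathbb{P}^1_\mathbb{C}$ via $H^0(B,\Omega^1_B)\hookrightarrow H^0(S,\Omega^1_S)=0$ is fine. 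Citing Theorem~\ref{thm:classificationminimalGsurfaces} directly, as you suggest, closes the argument and is arguably the cleanest route inside this text.

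One precision is worth making explicit in your second step, where you contract ``the sum of an orbit of two rays spanning a $\mathrm{G}$-invariant extremal face'': not every such orbit sum spans a contractible face. If $E$ is a $(-1)$-curve with $\iota(E)\neq E$ and $d=E\cdot\iota(E)\geq 2$, then $(E+\iota(E))^2=2(d-1)>0$, so $E+\iota(E)$ is big and lies in the relative interior of the invariant effective cone; equivalently, the span of $E$ and $\iota(E)$ has signature $(1,1)$, its orthogonal complement is negative definite, and no nonzero nef class can support the would-be face, so there is no extremal ray and no contraction there. The invariant $K_S$-negative extremal rays are thus: an $\iota$-invariant $(-1)$-curve, or a disjoint pair $E+\iota(E)$ with $E\cdot\iota(E)=0$ (both excluded by minimality), or a pair with $E\cdot\iota(E)=1$, for which $D=E+\iota(E)$ satisfies $D^2=0$, $K_S\cdot D=-2$, $D$ nef, so that Riemann--Roch gives $h^0(D)\geq 2$ (note $h^0(K_S-D)=0$ since $(K_S-D)\cdot D=-2<0$ with $D$ nef) and $D$ defines the conic-bundle fibration directly, or an invariant fibre class of a conic bundle. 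With this observation your trichotomy and the elimination of $\dim X\in\{0,\,2\}$ go through as stated (for $\dim X=0$ one can also just note that contracting an extremal ray drops the invariant rank by exactly one, so $X$ cannot be a point when $\mathrm{rk}\,\mathrm{Pic}(S)^\iota\geq 2$).
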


It allows them to establish the:

\begin{thm}[\cite{BayleBeauville}]\label{thm:minimalpairs}
Let $(S,\iota)$ be a minimal pair. One of the 
following holds:
\begin{enumerate}
\item[(1)] there exists a smooth $\mathbb{P}^1_\mathbb{C}$-fibration
$f\colon S\to\mathbb{P}^1_\mathbb{C}$ and a non-trivial 
involution $\mathcal{I}$ of $\mathbb{P}^1_\mathbb{C}$ such 
that $f\circ\iota=\mathcal{I}\circ f$;

\item[(2)] there exists a fibration 
$f\colon S\to\mathbb{P}^1_\mathbb{C}$ such that 
$f\circ\iota=f$, the smooth fibres of $f$ are rational curves 
on which $\iota$ induces a non-trivial involution, any 
singular fibre is the union of two rational curves 
exchanged by $\iota$, meeting at one point;

\item[(3)] $S$ is isomorphic to $\mathbb{P}^2_\mathbb{C}$;

\item[(4)] $(S,\iota)$ is isomorphic to 
$\mathbb{P}^1_\mathbb{C}\times\mathbb{P}^1_\mathbb{C}$ with 
the involution $(z_0,z_1)\mapsto(z_1,z_0)$;

\item[(5)] $S$ is a del Pezzo surface of degree $2$ and 
$\iota$ is the Geiser involution;

\item[(6)] $S$ is a del Pezzo surface of degree $1$ and 
$\iota$ is the Bertini involution.
\end{enumerate}
\end{thm}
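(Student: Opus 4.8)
The plan is to run a dichotomy on the rank of the $\iota$-invariant part $\mathrm{Pic}(S)^{\iota}$ of the Picard group. The two basic inputs are, on the one hand, that $\iota$ is biregular so $\iota^{*}K_{S}=K_{S}$ and hence $-K_{S}\in\mathrm{Pic}(S)^{\iota}$, and on the other hand Lemma \ref{lem:pencil}, which says that as soon as $\mathrm{rk}\,\mathrm{Pic}(S)^{\iota}>1$ the surface carries a base-point free pencil stable under $\iota$. The minimality of the pair $(S,\iota)$, through the characterization that no exceptional curve $E$ satisfies $\iota(E)=E$ or $E\cap\iota(E)=\emptyset$, will be used repeatedly to forbid contractions inside fibres and to pin down the singular configurations.

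First I would treat the rigid case $\mathrm{rk}\,\mathrm{Pic}(S)^{\iota}=1$. Starting from any ample class $H$ on the projective surface $S$, the class $H+\iota^{*}H$ is ample and $\iota$-invariant; since the invariant lattice has rank $1$ and already contains $-K_{S}$ (which is non-zero on a rational surface), $H+\iota^{*}H$ and $-K_{S}$ are proportional, so $-K_{S}$ is a positive multiple of an ample class and $S$ is a del Pezzo surface. The classification of del Pezzo surfaces together with the description of how an involution can act on $\mathrm{Pic}(S)\otimes\mathbb{Q}\cong K_{S}^{\perp}\oplus\mathbb{Q}K_{S}$ then forces $\mathrm{rk}\,\mathrm{Pic}(S)^{\iota}=1$ to occur only when $\iota$ acts as $-\mathrm{id}$ on $K_{S}^{\perp}$; a Weyl-group analysis (for which $-\mathrm{id}$ must lie in the Weyl group of the relevant root lattice $E_{9-d}$) rules out the intermediate degrees and leaves exactly $S=\mathbb{P}^{2}_{\mathbb{C}}$ (case 3), $S=\mathbb{P}^{1}_{\mathbb{C}}\times\mathbb{P}^{1}_{\mathbb{C}}$ with the swap (case 4), the del Pezzo surface of degree $2$ (case 5), and the del Pezzo surface of degree $1$ (case 6). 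In the last two cases $\iota$ is the deck transformation of the anticanonical double cover, which I would identify with the Geiser, resp. Bertini, involution using the double-cover descriptions of \S\ref{geiser} and \S\ref{bertini}.

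Next I would treat $\mathrm{rk}\,\mathrm{Pic}(S)^{\iota}>1$. By Lemma \ref{lem:pencil} there is an $\iota$-stable base-point free pencil, giving a fibration $f\colon S\to\mathbb{P}^{1}_{\mathbb{C}}$ whose general fibre one checks to be rational; $\iota$-stability makes $\iota$ descend to an involution $\mathcal{I}$ of the base with $f\circ\iota=\mathcal{I}\circ f$. If $\mathcal{I}\neq\mathrm{id}$, then $\iota$ genuinely permutes the fibres, and after relative minimalization (permissible since minimality of $(S,\iota)$ prevents contracting a fibre component) the fibration is a smooth $\mathbb{P}^{1}_{\mathbb{C}}$-fibration, which is case (1). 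If $\mathcal{I}=\mathrm{id}$, then $f\circ\iota=f$ and $\iota$ induces a non-trivial involution on each smooth fibre, which is a copy of $\mathbb{P}^{1}_{\mathbb{C}}$; analyzing the singular fibres with the minimality criterion shows each is a union of two rational curves exchanged by $\iota$ and meeting at one point, that is a conic bundle, which is case (2).

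The main obstacle is the rank-one step: proving that $S$ is del Pezzo is quick, but extracting the precise list and, above all, showing that the involution on a degree-$2$ (resp. degree-$1$) del Pezzo surface with $\mathrm{rk}\,\mathrm{Pic}(S)^{\iota}=1$ is exactly the Geiser (resp. Bertini) involution requires the full structure of $\mathrm{Aut}(S)$ and of the anticanonical morphism, while ruling out degrees $3$ through $7$ rests on the lattice-theoretic fact about $-\mathrm{id}$ lying in Weyl groups (\cite{DolgachevIskovskikh, Dolgachev}). A secondary technical point, in the fibred case, is to justify that the pencil produced by Lemma \ref{lem:pencil} can be taken with rational general fibre and that minimality genuinely forces the singular fibres into the stated shape, rather than permitting an $\iota$-fixed $(-1)$-curve to survive inside a fibre (\cite{BayleBeauville, Manin:rational2}).
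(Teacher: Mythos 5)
Your global strategy (a dichotomy on $\mathrm{rk}\,\mathrm{Pic}(S)^{\iota}$, with Lemma \ref{lem:pencil} feeding the fibred case) is exactly the paper's, but your rank-one argument takes a different route. The paper does not pass through Weyl groups: it observes that $\iota$ acts as $+1$ on $\mathbb{Q}K_S$ and as $-1$ on $K_S^{\perp}$, so that $-\iota$ is the reflection $x\mapsto x-2\frac{(\alpha\cdot x)}{(\alpha\cdot\alpha)}\alpha$ with $\alpha$ primitive and proportional to $K_S$; integrality of this reflection on the unimodular lattice $\mathrm{Pic}(S)$ forces $(\alpha\cdot\alpha)\in\{1,2\}$, which together with the classification of del Pezzo surfaces leaves only $K_S$ primitive with $K_S^2\in\{1,2\}$, the divisible case $\mathbb{P}^1_\mathbb{C}\times\mathbb{P}^1_\mathbb{C}$, and the Picard-rank-one case $\mathbb{P}^2_\mathbb{C}$. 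Your criterion that $-\mathrm{id}$ must lie in $W(E_{9-d})$ reaches the same list (it is essentially the route of \cite{DolgachevIskovskikh}), but it needs the extra, nontrivial input that the image of $\mathrm{Aut}(S)$ in $\mathrm{O}(\mathrm{Pic}(S))$ lands in the Weyl group -- note that $-\mathrm{id}$ is always a lattice isometry, e.g. $-\mathrm{id}\in \mathrm{O}(E_6)\smallsetminus W(E_6)$, so the bare isometry condition excludes nothing -- whereas the paper's integrality computation is self-contained. The identification with the Geiser and Bertini involutions is then done in the paper just as you suggest, via faithfulness of the action on $\mathrm{Pic}(S)$ (an automorphism acting trivially on $\mathrm{Pic}(S)$ descends to an automorphism of $\mathbb{P}^2_\mathbb{C}$ fixing $9-d\geq 7$ points in general position, hence is the identity), so an involution with the prescribed cohomological action is unique; no ``full structure of $\mathrm{Aut}(S)$'' is needed.

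The genuine gap is in the fibred case, at the step where you claim that $\mathcal{I}\neq\mathrm{id}$ yields a smooth fibration ``after relative minimalization''. You cannot minimalize: the pair $(S,\iota)$ is the given minimal pair and the conclusion of case (1) is that $f$ itself is smooth on $S$; contracting fibre components would change the surface, and pair-minimality does not by itself forbid singular fibres, since the configuration $E+\iota(E)$ with $E\cdot\iota(E)=1$ inside a preserved fibre is perfectly compatible with minimality -- it is precisely the singular fibre of case (2). What actually rules out singular fibres when $\mathcal{I}\neq\mathrm{id}$ is a two-part argument you omit. If a singular fibre lies over a point not fixed by $\mathcal{I}$, its $(-1)$-curve $E$ and $\iota(E)$ lie in disjoint fibres, so $E\cap\iota(E)=\emptyset$, contradicting minimality. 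If a singular fibre lies over a fixed point of $\mathcal{I}$, minimality forces it to be $E+\iota(E)$ with a single node $p$; the differential of $\iota$ at $p$ exchanges the two branch directions, hence has eigenvalues $+1$ and $-1$, so $\iota$ fixes pointwise a smooth curve through $p$ transverse to the fibre; this fixed curve is horizontal, so $\mathcal{I}$ fixes its image, i.e. $\mathcal{I}=\mathrm{id}$ -- contradiction. This tangent-space argument is also how the paper shows that the presence of any singular fibre puts one in case (2); without it your dichotomy between cases (1) and (2) does not close.
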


\begin{proof}
\begin{itemize}
\item[$\diamond$] Assume $\mathrm{rk}\,\mathrm{Pic}(S)^\iota=1$.
As $\mathrm{Pic}(S)^\iota$ contains an ample class, $-K_S$ 
is ample, {\it i.e.} $S$ is a del Pezzo surface. If 
$\mathrm{rk}\,\mathrm{Pic}(S)=1$, then one obtains case $(3)$.

If $\mathrm{rk}\,\mathrm{Pic}(S)>1$, then $-\iota$ is the 
orthogonal reflection with respect to $K_S^\perp$. Such a 
reflection is of the form 
\[
x\mapsto x-2\,\frac{(\alpha\cdot x)}{(\alpha\cdot\alpha)}\,\alpha
\]
with $(\alpha\cdot\alpha)\in\{1,\,2\}$ and $K_S$ proportional
to $\alpha$. If $K_S$ is divisible, then $S$ is isomorphic to 
$\mathbb{P}^1_\mathbb{C}\times\mathbb{P}^1_\mathbb{C}$ and
since $\iota$ must act non-trivially on $\mathrm{Pic}(S)$
we get case $(4)$. The only remaining eventualities are 
$K_S^2\in\{1,\,2\}$. The  Geiser and Bertini involutions have
the required properties (\S\ref{geiser}, \S\ref{bertini}).
An automorphism $\varphi$ of $S$ acting trivially on $\mathrm{Pic}(S)$
is the identity; indeed $S$ is the blow up of 
$\mathbb{P}^2_\mathbb{C}$ at $9-d$ points in general 
position, $\varphi$ induces an automorphism of 
$\mathbb{P}^2_\mathbb{C}$ which must fix these points. 
Hence Geiser and Bertini involutions are the only ones
to have the required properties.

\item[$\diamond$] Suppose now that 
$\mathrm{rk}\,\mathrm{Pic}(S)^\iota>1$. According to 
Lemma \ref{lem:pencil} the surface $S$ admits a 
$\iota$-invariant pencil $\vert F\vert$ of rational 
curves. This defines a fibration 
$f\colon S\to\mathbb{P}^1_\mathbb{C}$ with fibre $F$, 
and an involution $\mathcal{I}$ of $\mathbb{P}^1_\mathbb{C}$
such that $f\circ\iota=\mathcal{I}\circ f$.

If $f$ is smooth, then this gives $(1)$ or a particular
case of $(2)$.

If $f$ is not smooth, let $F_0$ be a singular fibre of $f$. 
It contains an exceptional divisor $E$. Since $(S,\iota)$
is minimal, then $\iota(E)\not=E$ and $E\cdot\iota(E)\geq 1$.
As a result $(E+\iota(E))^2\geq 0$, 
so $F_0=E+\iota(E)$ and $E\cdot\iota(E)=1$. Set 
$p=E\cap\iota(E)$. The involution induced by $\iota$ on 
$T_pS$ exchanges the directions of $E$ and $\iota(E)$; it
thus has eigenvalues $1$ and $-1$. As a consequence 
$\iota$ fixes a curve passing through $p$; this curve
must be horizontal and $\mathcal{I}$ trivial. Furthermore
the fixed curve of $\iota$ being smooth, the involution 
induced by $\iota$ on a smooth fibre cannot be trivial.
We get case $(2)$.
\end{itemize}
\end{proof}

Bayle and Beauville precised which pairs in the list of 
Theorem \ref{thm:minimalpairs} are indeed minimal 
(\cite[Proposition 1.7]{BayleBeauville}).

\smallskip

Let us now give the link between biregular involutions of 
rational surfaces and birational involutions of the 
complex projective plane:

\begin{lem}[\cite{BayleBeauville}]\label{lem:linkbirinvbirinv}
Let $\iota$ be a birational involution of a surface $S_1$. 
There exists a birational morphism $\varphi\colon S\to S_1$
and a biregular involution $\mathcal{I}$ of $S$ such that
$\varphi\circ\mathcal{I}=\iota\circ\varphi$.
\end{lem}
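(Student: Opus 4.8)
The plan is to regularize the involution $\iota$ by a single symmetric resolution, exploiting the fact that an involution is its own inverse. Concretely, I would resolve $\iota$ by a surface lying above $S_1$ and then use the symmetry $\iota=\iota^{-1}$ to produce the biregular involution $\mathcal{I}$ on that surface as the unique isomorphism interchanging the two resolving morphisms. The only external inputs needed are Zariski's theorem (every birational map of smooth projective surfaces factors as $\pi_2\circ\pi_1^{-1}$ with $\pi_1$, $\pi_2$ finite sequences of blow-ups) and the uniqueness, up to unique isomorphism, of the minimal resolution of a birational map.

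First I would apply Zariski's theorem to write $\iota=\pi_2\circ\pi_1^{-1}$, where $\pi_1\colon Y\to S_1$ and $\pi_2\colon Y\to S_1$ are finite sequences of blow-ups and the resolution is minimal, i.e. no $(-1)$-curve of $Y$ is contracted by both $\pi_1$ and $\pi_2$. Since $\iota$ is an involution, $\iota=\iota^{-1}=\pi_1\circ\pi_2^{-1}$, so the triple $(Y,\pi_2,\pi_1)$ is again a minimal resolution of the \emph{same} map $\iota$ (the minimality condition is symmetric in $\pi_1$ and $\pi_2$). By uniqueness of the minimal resolution there is an isomorphism $\mathcal{I}\colon Y\to Y$ with $\pi_1\circ\mathcal{I}=\pi_2$ and $\pi_2\circ\mathcal{I}=\pi_1$. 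Then $\pi_1\circ\mathcal{I}^2=\pi_2\circ\mathcal{I}=\pi_1$ and $\pi_2\circ\mathcal{I}^2=\pi_1\circ\mathcal{I}=\pi_2$, so $\mathcal{I}^2$ is an automorphism of the resolution commuting with both projections; uniqueness forces $\mathcal{I}^2=\mathrm{id}$, and $\mathcal{I}$ is a biregular involution of $Y$. Setting $S=Y$ and $\varphi=\pi_1$, I would conclude by the computation
\[
\varphi\circ\mathcal{I}=\pi_1\circ\mathcal{I}=\pi_2=\pi_2\circ\pi_1^{-1}\circ\pi_1=\iota\circ\pi_1=\iota\circ\varphi,
\]
where the equalities of rational maps upgrade to equalities of morphisms because both sides are morphisms agreeing on the dense open set where $\pi_1$ is an isomorphism, and $S_1$ is separated.

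The main obstacle is the step invoking uniqueness of the minimal resolution in a form precise enough that swapping $\pi_1$ and $\pi_2$ yields a genuine biregular isomorphism $\mathcal{I}$. I would justify this by recording that the base points blown up by $\pi_1$ are exactly $\mathrm{Base}(\iota)\subset\mathcal{B}(S_1)$ and those blown up by $\pi_2$ are $\mathrm{Base}(\iota^{-1})$, so a minimal resolution is determined by $\iota$ up to canonical isomorphism and two such resolutions are linked by a unique isomorphism compatible with the structural morphisms. An alternative, more intrinsic route that avoids referring to minimality is to take the Zariski closure $\Gamma\subset S_1\times S_1$ of the graph of $\iota$: because $\iota=\iota^{-1}$, the coordinate swap $\tau(x,y)=(y,x)$ preserves $\Gamma$, and a canonical (e.g. minimal) desingularization $S\to\Gamma$ is $\tau$-equivariant, so $\tau$ lifts to an involution $\mathcal{I}$ on $S$; taking $\varphi$ to be the first projection $S\to S_1$ gives $\varphi\circ\mathcal{I}=\iota\circ\varphi$ as above. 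Either way the verification of the intertwining relation is immediate once the symmetric model is in hand.
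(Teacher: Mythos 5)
Your proof is correct, but it takes a genuinely different route from the paper's. The paper never passes through the minimal resolution: it first applies Theorem~\ref{thm:morph} to produce a birational morphism $\varphi\colon S\to S_1$, written as a tower of blow-ups $\varepsilon_{n-1}\circ\cdots\circ\varepsilon_1$, making $\psi=\iota\circ\varphi$ a morphism, and then lifts $\psi$ back up the same tower one blow-up at a time via the universal property of the blow-up (Proposition~\ref{pro:universalpropertyofblowingup}); the involution hypothesis enters there, since $\psi^{-1}=\varphi^{-1}\circ\iota$ is undefined at each center $p_i$ precisely because $\iota^{-1}=\iota$ is. This factors $\psi=\varphi\circ\mathcal{I}$ with $\mathcal{I}$ a birational morphism, and $\varphi\circ\mathcal{I}^2=\iota^2\circ\varphi=\varphi$ forces $\mathcal{I}^2=\mathrm{id}$, so $\mathcal{I}$ is biregular --- the same finishing move as yours, with $\varphi$ playing the role of your $\pi_1$. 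You instead obtain $\mathcal{I}$ in one stroke from the symmetry of the minimal resolution: since $\iota=\iota^{-1}$ and the minimality condition is symmetric, $(Y,\pi_2,\pi_1)$ is again a minimal resolution of $\iota$, and uniqueness --- which you correctly reduce to the facts that $\pi_1$ blows up exactly $\mathrm{Base}(\iota)=\mathrm{Base}(\iota^{-1})$, that blow-ups of a given point are unique, and that the intertwining isomorphism is determined on the dense open set where $\pi_1$ is an isomorphism --- hands you the swap $\mathcal{I}$ together with $\mathcal{I}^2=\mathrm{id}$. What each approach buys: the paper's induction uses only the single-point universal property it has just established, so it stays self-contained within the chapter's toolkit; yours is shorter and more conceptual, and yields the extra information that $S$ may be taken to be the minimal resolution, on which $\mathcal{I}$ exchanges the exceptional divisors over $\mathrm{Base}(\iota)$ and $\mathrm{Base}(\iota^{-1})$, at the price of invoking uniqueness of the minimal resolution, which the paper uses only implicitly (in the well-definedness of $\mathrm{Base}(\phi)$) and never proves. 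Your graph-closure variant is the same idea in different clothing; there you should be slightly careful that $\Gamma$ may fail to be normal, so one should normalize before taking the minimal desingularization in order to get canonicity, hence $\tau$-equivariance of the lift.
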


To prove it we need some results, let us state and prove
them.

\begin{thm}[\emph{see for instance} \cite{Beauville:book}, Theorem II.7]\label{thm:morph}
Let $S$ be a surface, and let $X$ be a projective variety.
Let $\phi\colon S\dashrightarrow X$ be a rational map. 

Then there exist
\begin{itemize}
\item[$\diamond$] a surface $S'$, 

\item[$\diamond$] a morphism $\eta\colon S'\to S$ which is 
the composition of a finite number of blow-ups,

\item[$\diamond$] a morphism $\psi\colon S'\to X$
\end{itemize}
such that 
\[
 \xymatrix{
     & S'\ar[rd]^\psi\ar[ld]_\eta & \\
    S\ar@{-->}[rr]_\phi & & X
  }
\]
commutes.
\end{thm}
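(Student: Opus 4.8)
The plan is to reduce to the case $X=\mathbb{P}^N_\mathbb{C}$ and then to resolve the indeterminacy of $\phi$ by successively blowing up its base-points, controlling the process by a self-intersection quantity that strictly decreases at each step. First I would fix a projective embedding $X\hookrightarrow\mathbb{P}^N_\mathbb{C}$. Since $X$ is closed, it suffices to resolve $\phi$ viewed as a rational map $S\dashrightarrow\mathbb{P}^N_\mathbb{C}$: any morphism $\psi\colon S'\to\mathbb{P}^N_\mathbb{C}$ factoring $\phi\circ\eta$ will automatically have image contained in the closure of $\phi(S)\subset X$, so that $\psi$ lands in $X$. Writing $\phi=(f_0:\ldots:f_N)$, where the $f_i$ are sections of a line bundle $\mathcal{O}_S(D)$ without common divisor, the indeterminacy set $\mathrm{Ind}(\phi)$ is the common zero locus of the $f_i$; because the $f_i$ have no common component and $S$ is a smooth surface, this locus has codimension $\geq 2$ and is therefore a finite set of points.

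If $\mathrm{Ind}(\phi)=\emptyset$ the map is already a morphism and there is nothing to do. Otherwise I would pick a point $p\in\mathrm{Ind}(\phi)$ and blow it up, $\pi\colon\mathrm{Bl}_pS\to S$. Pulling back, each $\pi^*f_i$ vanishes along the exceptional divisor $E$ to order at least $m:=\min_i m_p(f_i)\geq 1$; dividing out by the $m$-th power of a local equation of $E$ (Lemma \ref{lem:tireenarriere}) yields sections of $\pi^*\mathcal{O}_S(D)\otimes\mathcal{O}(-mE)$ with no common divisor, which define $\phi\circ\pi$. This replaces the fixed-component-free linear system $\Lambda$ of class $D$ by a fixed-component-free system of class $\pi^*D-mE$, and I would iterate this construction, setting $\eta=\pi_1\circ\ldots\circ\pi_r$.

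The main point, and the step I expect to be the crux, is termination. The self-intersection $D^2$ of a linear system without fixed component is a non-negative integer, since two general (distinct) members share no component and hence meet in a non-negative number of points by Proposition \ref{pro:hart2} (assuming, as I may, that the image of $\phi$ is not a single point, so the system is at least a pencil). By the intersection formulas of Proposition \ref{pro:tireenarriere}, namely $\pi^*D\cdot E=0$ and $E^2=-1$, the new system has self-intersection
\[
(\pi^*D-mE)^2=D^2-m^2\leq D^2-1,
\]
while remaining the class of a system without fixed component, so its self-intersection is again $\geq 0$. We thus produce a strictly decreasing sequence of non-negative integers, which must stabilize after finitely many blow-ups.

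When the sequence stabilizes, the transformed system has empty base locus, so $\psi=\phi\circ\eta\colon S'\to\mathbb{P}^N_\mathbb{C}$ is a genuine morphism; by the reduction of the first paragraph its image lies in $X$, and the required diagram commutes by construction, with $\eta$ a composition of finitely many blow-ups. The only delicate point to verify carefully is that removing the exceptional factor at each stage really does leave a fixed-component-free system defining the same rational map $\phi\circ\eta$; this is precisely what keeps the non-negativity of the self-intersection available at every step and thereby supplies the lower bound forcing the process to stop.
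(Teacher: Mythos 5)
Your proof is correct and takes essentially the same route as the paper's: reduce to $X=\mathbb{P}^N_\mathbb{C}$ via a projective embedding, blow up a base-point and subtract the exceptional multiplicity from the fixed-component-free linear system, and conclude by the strict decrease $D^2\mapsto D^2-m^2$ of a non-negative integer. The points you flag as delicate (the image landing in the closed subvariety $X$, the constant-map case, and the transformed system remaining fixed-component-free) are exactly what the paper handles implicitly, so nothing is missing.
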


\begin{proof}
As $X$ lies in some projective space we may assume that 
$X=\mathbb{P}^m_\mathbb{C}$. Furthermore we can suppose
that $\phi(S)$ lies in no hypersurface of 
$\mathbb{P}^m_\mathbb{C}$. As a result $\phi$ corresponds
to a linear system $P\subset\vert D\vert$ of dimension 
$m$ on $S$ without fixed component. 

If $P$ has no base-point, then $\phi$ is a morphism and 
there is nothing to do.

Assume that $P$ has at least one base-point $p$. Consider
the blow up $\varepsilon\colon \mathrm{Bl}_pS\to S$ at 
$p$. Set $S_1=\mathrm{Bl}_pS$. The exceptional curve $E$
occurs in the fixed part of the linear system 
$\varepsilon^*P\subset\vert\varepsilon^*D\vert$ with 
some multiplicity $k\geq 1$; that is, the system 
$P_1=\vert\varepsilon^*P-kE\vert\subset\vert\varepsilon^*D-kE\vert$
has no fixed component. It thus defines a rational map
$\phi_1=\phi\circ\varepsilon\colon S_1\dashrightarrow\mathbb{P}^m_\mathbb{C}$.
If $\phi_1$ is a morphism, then the result is proved. 
If not, we repeat the "same step". We get by induction
a sequence $\varepsilon_n\colon S_n\to S_{n-1}$ of blow
ups and a linear system 
$P_n\subset\vert D_n\vert=\vert\varepsilon_n^*D_{n-1}-k_nE_n\vert$
on $S_n$ with no fixed part. On the one hand 
$D_n^2=D_{n-1}^2-k_n^2<D_{n-1}^2$; on the other hand 
$P_n$ has no fixed part, so $D_n^2\geq 0$ for any $n$. 
Consequently the process must end. More precisely after
a finite number of blow ups we obtain a system $P_n$ 
with no base-points which defines a morphism 
$\psi\colon S_n\to\mathbb{P}^m_\mathbb{C}$ as required.
\end{proof}

\begin{lem}[\emph{see for instance} \cite{Beauville:book}]\label{lem:preim}
Let $S$ be an irreducible surface. Let~$S'$ be a smooth surface.  Let 
$\phi\colon S\to S'$ be a birational morphism.
Assume that the rational map $\phi^{-1}$ is not defined at a point 
$p\in S'$. 

Then $\phi^{-1}(p)$ is a curve on $S$.
\end{lem}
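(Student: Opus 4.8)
The plan is to resolve the inverse map and then transport the exceptional fibre forward. Write $\psi=\phi^{-1}\colon S'\dashrightarrow S$, and apply Theorem \ref{thm:morph} to $\psi$: this yields a smooth surface $T$, a morphism $\eta\colon T\to S'$ that is a composition of finitely many blow-ups, and a morphism $g\colon T\to S$ with $\psi\circ\eta=g$. Composing on the left with $\phi$ and using $\phi\circ\psi=\mathrm{id}_{S'}$ gives the key identity $\eta=\phi\circ g$ between honest morphisms. The idea is then that the set $g(\eta^{-1}(p))$ will lie inside $\phi^{-1}(p)$ and will turn out to be a genuine curve, which is exactly what we want.

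First I would show that $E:=\eta^{-1}(p)$ is one-dimensional. If $\eta$ were an isomorphism over some open neighbourhood $U$ of $p$, then $\psi|_U=g\circ(\eta|_{\eta^{-1}(U)})^{-1}$ would be a morphism, so $\psi$ would be defined at $p$, contrary to hypothesis. Hence $\eta$ is not a local isomorphism at $p$; since $\eta$ is a composition of blow-ups of smooth surfaces, this forces at least one blow-up to be centred at a point lying over $p$, so that $E$ is a non-empty connected union of rational curves, in particular a curve. Moreover $\phi(g(E))=\eta(E)=\{p\}$, whence $g(E)\subseteq\phi^{-1}(p)$; it therefore suffices to prove that $g$ does not contract $E$ to a point.

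This last point is the heart of the matter, and I would argue it by contradiction: suppose $g(E)=\{x\}$ for a single point $x\in S$. Fixing a quasi-projective embedding of $S$ and an affine chart $W\ni x$ with coordinate functions, the open set $V:=g^{-1}(W)$ contains $E$ and the pulled-back coordinates $g_i$ are regular on $V$. Because $\eta$ is proper, $\eta(T\setminus V)$ is closed and avoids $p$, so $U:=S'\setminus\eta(T\setminus V)$ is an open neighbourhood of $p$ with $\eta^{-1}(U)\subseteq V$. The crucial input is that a composition of blow-ups of the smooth, hence normal, surface $S'$ satisfies $\eta_*\mathcal{O}_T=\mathcal{O}_{S'}$; consequently each $g_i$, being regular on $\eta^{-1}(U)$, descends to a function $h_i$ regular near $p$ with $g_i=\eta^*h_i$. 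These $h_i$ exhibit $\psi$ as a morphism at $p$ sending $p$ to $x$, contradicting that $\psi=\phi^{-1}$ is undefined at $p$. Hence $g(E)$ is a curve and $\phi^{-1}(p)\supseteq g(E)$ is a curve.

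I expect the descent step $\eta_*\mathcal{O}_T=\mathcal{O}_{S'}$ to be the main obstacle, since it is precisely where the smoothness (normality) of $S'$ enters; it is a baby case of Zariski's Main Theorem, but here it follows elementarily from the corresponding property of a single blow-up of a smooth surface, together with composition. Alternatively, the statement can be deduced directly from Zariski's connectedness theorem: as $S'$ is normal and $\phi$ is proper birational, the fibres of $\phi$ are connected, and a finite connected fibre would make $\phi$ a local isomorphism at $p$, so a positive-dimensional fibre is forced exactly where $\phi^{-1}$ fails to be defined. Throughout I assume, following the standing conventions, that the surfaces are projective, so that $\phi$ and $\eta$ are proper.
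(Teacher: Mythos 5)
Your proof is correct, but it takes a genuinely different route from the paper's. The paper reproduces Beauville's purely local argument: some coordinate $g_1$ of $\phi^{-1}$ is undefined at $p$; one writes $g_1=u/v$ with $u,v$ coprime in $\mathcal{O}_{S',p}$ — legitimate because $S'$ is smooth, so this local ring is a UFD — and observes that on the curve $D=\{\phi^*v=0\}$ one also has $\phi^*u=z_0\,\phi^*v=0$, so $D=\phi^{-1}\big(\{u=v=0\}\big)$; coprimality makes $\{u=v=0\}$ finite, and after shrinking $S'$ it equals $\{p\}$, which even identifies $\phi^{-1}(p)$ exactly with the curve $D$. You instead resolve $\phi^{-1}$ by Theorem \ref{thm:morph} and push the exceptional fibre $E=\eta^{-1}(p)$ forward through the identity $\eta=\phi\circ g$, the crux being the descent $\eta_*\mathcal{O}_T=\mathcal{O}_{S'}$ (Hartogs-type extension across points of a smooth surface, or a baby case of Zariski's Main Theorem), which rules out $g$ contracting $E$ — and your glossed substeps check out: $h_i(p)=x_i$ because $g_i$ is constant equal to $x_i$ on $E$, so the descended map really sends $p$ into the chart $W$ and extends $\phi^{-1}$. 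Smoothness of $S'$ enters both proofs, but in different guises: as the UFD property of $\mathcal{O}_{S',p}$ in the paper, as normality permitting the extension in yours. The paper's argument buys economy — it is local, needs neither resolution of indeterminacy nor properness, and pins down $\phi^{-1}(p)$ as the vanishing locus of $\phi^*v$; yours (including the alternative via Zariski connectedness) buys conceptual reach, generalizing verbatim to proper birational morphisms onto normal varieties in any dimension, at the cost of heavier inputs and of showing only that $\phi^{-1}(p)$ contains a curve — which is, however, all that is used downstream in Lemma \ref{lem:preim2} and Proposition \ref{pro:universalpropertyofblowingup}.
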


\begin{proof}
We assume that $S$ is affine so that there is an embedding 
$j\colon S\hookrightarrow\mathbb{A}^n_\mathbb{C}$. The 
rational map 
\[
j\circ \phi^{-1}\colon S'\dashrightarrow \mathbb{A}^n_\mathbb{C}
\]
is defined by rational functions $g_1$, $g_2$, $\ldots$ $g_n$.
One of them, say for instance $g_1$ is undefined at~$p$, that is 
$g_1\not\in\mathcal{O}_{S',p}$. Set $g_1=\frac{u}{v}$ with 
$u$, $v\in\mathcal{O}_{S',p}$, $u$ and $v$ coprime and
$v(p)=0$. Consider the curve $D$ on $S$ given by $\phi^*v=0$. 
On $S\subset\mathbb{A}^n_\mathbb{C}$ denote by $z_0$ the first 
coordinate function. We have $\phi^*u=z_0\phi^*v$ on $S$.
Hence $\phi^*u=\phi^*v=0$ on $D$. Consequently $D=\phi^{-1}(Z)$
where 
\[
Z=\big\{u=v=0\big\}\subset S'.
\]
By assumption $u$ and $v$ are coprime, so $Z$ is finite.
Shrinking $S'$ if necessary we can assume that $Z=\{p\}$. 
Finally $D=\phi^{-1}(p)$.
\end{proof}

\begin{lem}[\emph{see for instance} \cite{Beauville:book}]\label{lem:preim2}
Let $S$, $S'$ be two surfaces. Let $\phi\colon S\dashrightarrow S'$ 
be a birational map such that $\phi^{-1}$ is not defined at $p\in S'$.

Then there exists a curve $C$ on $S$ such that $\phi(C)=\{p\}$.
\end{lem}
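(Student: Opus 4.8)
The plan is to reduce the statement to the already-established morphism case (Lemma~\ref{lem:preim}) by resolving the indeterminacy of $\phi$. Concretely, I would apply Theorem~\ref{thm:morph} (or, equivalently, pass to a minimal resolution $\phi=\pi_2\circ\pi_1^{-1}$ as in \S\ref{sec:geodef}) to produce a smooth surface $Y$ together with two birational morphisms $\pi_1\colon Y\to S$ and $\pi_2\colon Y\to S'$, each a finite sequence of blow-ups, such that $\phi\circ\pi_1=\pi_2$. The point of this factorization is that $\pi_1$ and $\pi_2$ are honest morphisms, so Lemma~\ref{lem:preim} becomes applicable to $\pi_2$.

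First I would observe that $\pi_2^{-1}$ is not defined at $p$. Indeed $\phi^{-1}=\pi_1\circ\pi_2^{-1}$, and $\pi_1$ is a morphism, hence defined everywhere; so if $\pi_2^{-1}$ were defined at $p$, the composition $\phi^{-1}$ would be defined at $p$ too, contrary to hypothesis. Applying Lemma~\ref{lem:preim} to the birational morphism $\pi_2\colon Y\to S'$ (with $S'$ smooth) then gives that $Z:=\pi_2^{-1}(p)$ is a curve on $Y$, every irreducible component of which is contracted by $\pi_2$ onto the single point $p$. The candidate curve in $S$ is $C:=\pi_1(Z_0)$ for a suitable component $Z_0$ of $Z$: for a generic point $c$ of such a $C$ the fibre $\pi_1^{-1}(c)$ is a single point $y\in Z_0$, whence $\phi(c)=\pi_2(\pi_1^{-1}(c))=\pi_2(y)=p$, so that $\phi(C)=\{p\}$.

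The main obstacle is exactly the choice of $Z_0$: one must guarantee that at least one component of $Z$ is \emph{not} contracted by $\pi_1$, for otherwise $\pi_1(Z)$ would be a finite set and the argument of the previous paragraph would collapse. This is where minimality of the resolution enters: if every component of $Z$ were contracted by $\pi_1$, then $Z$ would be contained in the common exceptional locus of $\pi_1$ and $\pi_2$, and in particular some curve would be contracted by both morphisms. Reducing within this exceptional locus to a $(-1)$-curve contracted simultaneously by $\pi_1$ and $\pi_2$ contradicts the defining property of a minimal resolution. Hence such a $Z_0$ exists, $C=\pi_1(Z_0)$ is genuinely a curve, and the verification $\phi(C)=\{p\}$ above completes the proof. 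I expect this last connectivity/minimality step to require the most care, since it relies on the structure of birational morphisms between smooth surfaces as compositions of blow-downs.
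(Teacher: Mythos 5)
Your proof is correct, but it takes a genuinely different route from the paper's. The paper never resolves $\phi$ at all: it takes the closure $\overline{\Gamma}$ of the graph of $\phi$ in $S\times S'$ --- an irreducible, possibly singular, surface --- and applies Lemma~\ref{lem:preim} to the birational morphism $\mathrm{pr}_2\colon\overline{\Gamma}\to S'$ (this is precisely why Lemma~\ref{lem:preim} was stated for an irreducible source that need not be smooth). This produces an irreducible curve $C'\subset\overline{\Gamma}$ with $\mathrm{pr}_2(C')=\{p\}$, and the step you identified as the main obstacle then becomes a one-line observation: a curve sitting inside a product $S\times S'$ cannot be contracted by both projections, so $C=\mathrm{pr}_1(C')$ is automatically a curve, with no minimality or $(-1)$-curve input needed. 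Your route through a smooth minimal resolution $\phi=\pi_2\circ\pi_1^{-1}$ is sound but costs more machinery: besides Zariski's theorem and the existence of a minimal resolution, the reduction ``within this exceptional locus to a $(-1)$-curve'' needs the standard fact, which you should make explicit, that the full fibre $\pi_2^{-1}(p)$ contains a $(-1)$-curve of $Y$ --- namely the exceptional curve of the last blow-up in a factorization of $\pi_2$ whose centre lies over $p$; no later blow-up modifies it, since a later centre not mapping to $p$ cannot lie on a curve whose points all map to $p$. With that filled in, your minimality argument correctly rules out total contraction by $\pi_1$, and your generic-fibre verification that $\phi(C)=\{p\}$ is fine. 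In short, the paper's graph trick buys economy, while your resolution argument buys a concrete description of $C$ as the $\pi_1$-image of a fibre component, at the price of the extra structure theory of birational morphisms between smooth surfaces.
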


\begin{proof}
The map $\phi$ corresponds to a morphism $\psi\colon\mathcal{U}\to S'$
for some subset $\mathcal{U}$ of $S$. Denote by 
\[
\Gamma=\big\{\big(u,\psi(u)\big)\,\vert\,u\in\mathcal{U}\big\}\subset\mathcal{U}\times S'
\]
the graph of $\psi$. Let $\overline{\Gamma}$ be the closure of 
$\Gamma$ in $S\times S'$; it is an irreducible surface, 
possibly with singularities. The projections 
\begin{align*}
&\mathrm{pr}_1\colon\overline{\Gamma}\to S, &&
\mathrm{pr}_2\colon\overline{\Gamma}\to S'
\end{align*}
are birational morphisms and the diagram
\[
 \xymatrix{
     & \overline{\Gamma}\ar[ld]_{\mathrm{pr}_1}\ar[rd]^{\mathrm{pr}_2} & \\
    S\ar@{-->}[rr]_\phi & & S'
  }
\]
is commutative. 

By assumption $\phi^{-1}$ is not defined at $p\in S'$, so does 
$\mathrm{pr}_2^{-1}$. There is an irreducible curve $C'$ on
$\overline{\Gamma}$ such that $\mathrm{pr}_2(C')=\{p\}$ 
(Lemma \ref{lem:preim}). As 
$\overline{\Gamma}\subset S\times S'$ the image 
$\mathrm{pr}_1(C')$ of $C'$ by $\mathrm{pr}_1$ is a curve 
$C$ in $S$ such that $\phi(C)=\{p\}$.
\end{proof}

\begin{pro}[\emph{see for instance} \cite{Lamy:jung}]\label{pro:universalpropertyofblowingup}
Let $X$ and $S$ be two surfaces. Let $\phi\colon X\to S$ 
be a birational morphism of surfaces. Suppose that 
the rational map $\phi^{-1}$ is not defined at a point 
$p$ of $S$.

Then
\[
 \xymatrix{
     & \mathrm{Bl}_pS\ar[rd]^\varepsilon & \\
    X\ar[ur]^\psi\ar[rr]_\phi & & S
  }
\]
where $\psi\colon X\to\mathrm{Bl}_pS$ is a birational
map and $\varepsilon\colon\mathrm{Bl}_pS\to S$ is 
the blow up at $p$.
\end{pro}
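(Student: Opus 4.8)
The plan is to set $\psi=\varepsilon^{-1}\circ\phi$, so that the factorization $\phi=\varepsilon\circ\psi$ holds automatically; the real content of the statement is that this birational map $\psi$ is in fact a \emph{morphism}. First I would localise the problem. Since $\phi^{-1}$ is not defined at $p$, Lemma~\ref{lem:preim} shows that $C:=\phi^{-1}(p)$ is a curve. Over $S\smallsetminus\{p\}$ the blow-up $\varepsilon$ is an isomorphism, hence $\psi=\varepsilon^{-1}\circ\phi$ is a morphism on $X\smallsetminus C$. As $X$ is a smooth surface and $\mathrm{Bl}_pS$ is projective, the indeterminacy locus of $\psi$ is finite, so it remains only to rule out base-points of $\psi$ lying on $C$.

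To do so I would pass to the graph, exactly as in the proof of Lemma~\ref{lem:preim2}. Let $\Gamma\subset X\times\mathrm{Bl}_pS$ be the closure of the graph of $\psi$, with the two induced birational morphisms $\mathrm{pr}_1\colon\Gamma\to X$ and $\mathrm{pr}_2\colon\Gamma\to\mathrm{Bl}_pS$; they satisfy $\phi\circ\mathrm{pr}_1=\varepsilon\circ\mathrm{pr}_2$. Showing that $\psi$ is a morphism amounts to showing that $\mathrm{pr}_1$ is an isomorphism. Suppose it is not. Then $\mathrm{pr}_1^{-1}$ has an indeterminacy point $q\in X$, and $A:=\mathrm{pr}_1^{-1}(q)$ is a curve by Lemma~\ref{lem:preim}; moreover $q$ must lie on $C$, so $\phi(q)=p$. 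From $\varepsilon(\mathrm{pr}_2(A))=\phi(\mathrm{pr}_1(A))=\phi(q)=p$, and from the fact that $A$ cannot be contracted by both projections (it would then be a single point of $X\times\mathrm{Bl}_pS$), $\mathrm{pr}_2(A)$ is a curve contracted by $\varepsilon$ to $p$. But $E:=\varepsilon^{-1}(p)$ is the only curve contracted by the blow-up, so $\mathrm{pr}_2$ maps $A$ birationally onto $E$.

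The key step, which I expect to be the main obstacle, is to produce a \emph{second} curve dominating $E$. Let $\widehat{C}\subset\Gamma$ be the strict transform of $C$ under $\mathrm{pr}_1$. Then $\varepsilon(\mathrm{pr}_2(\widehat{C}))=\phi(\mathrm{pr}_1(\widehat{C}))=\phi(C)=p$, so $\mathrm{pr}_2(\widehat{C})\subseteq E$. I claim that $\psi$ does not contract $C$; equivalently, some component of $\widehat{C}$ maps \emph{onto} $E$ rather than to a point. Granting this, $A$ and that component are two distinct irreducible curves (distinct because $\mathrm{pr}_1(A)=q$ is a point, whereas $\mathrm{pr}_1$ sends the component onto a curve contained in $C$) which both dominate $E$; this contradicts the fact that the birational morphism $\mathrm{pr}_2$ is generically one-to-one over $E$. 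Hence $\mathrm{pr}_1$ is an isomorphism and $\psi$ is a morphism.

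It remains to justify the claim that $\psi$ does not contract $C$, and this is where the hypothesis that $\phi$ is birational — not merely dominant — becomes essential. The geometric reason is that $\varepsilon$ blows $p$ up into the projectivised tangent directions $E\simeq\mathbb{P}(T_pS)$, and along $C$ the birational morphism $\phi$ induces a non-constant map to these directions: near a general point of $C$ the map $\varepsilon^{-1}\circ\phi$ is a local isomorphism onto $E$, so $C$ cannot be crushed to a single point of $E$. I would establish this either by a direct local computation (writing $\phi=(a,b)$ near a point of $C$, extracting the common factor $a=g\,a'$, $b=g\,b'$ with $a',b'$ coprime in the regular local ring $\mathcal{O}_{X,q}$, and checking that the residual direction $(a':b')$ varies along $C$), or by invoking Zariski's main theorem, which forces a birational morphism of smooth surfaces contracting no curve through the relevant point to be a local isomorphism there. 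Everything else is formal bookkeeping with the graph and with the fact that the blow-up contracts exactly the single curve $E$.
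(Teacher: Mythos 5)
Your reduction to finitely many potential base-points on $C=\phi^{-1}(p)$, the graph bookkeeping, and your final contradiction (the birational morphism $\mathrm{pr}_2$ cannot have two distinct irreducible curves of $\Gamma$ dominating $E$, being an isomorphism over the complement of finitely many points of the smooth surface $\mathrm{Bl}_pS$) are all sound. The genuine gap is exactly the step you flag as the main obstacle: the claim that $\psi=\varepsilon^{-1}\circ\phi$ does not contract $C$; neither of your two proposed justifications establishes it. The local computation cannot work as written, because the residual direction $(a':b')$ is in general \emph{constant} along a component of $C$: already for $\phi$ the blow-up of $p$ followed by the blow-up of a point $e$ of the exceptional curve, the second exceptional curve is a component of $C$ on which the direction map is constant equal to $e$, and only the strict transform of the first exceptional curve dominates $E$. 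So the true statement is merely that \emph{some} component of $C$ dominates $E$, and a gcd computation at a general point of one component --- which is all your sketch provides --- cannot detect whether any component works. The appeal to Zariski's main theorem fails for a different reason: the version you quote requires that $\phi$ contract no curve through the point in question, whereas $\phi$ contracts $C$ itself through every point of $C$, which is precisely the locus at stake; and applying it to $\psi$ instead presupposes that $\psi$ is a morphism, i.e.\ the conclusion.

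Note that the claim is immediate a posteriori ($\psi$ proper and surjective gives $\psi(C)=\psi\big(\psi^{-1}(E)\big)=E$), but proved beforehand it carries the full weight of the proposition: if every component of $C$ were contracted, your graph analysis only produces the vertical curve $A$ over an indeterminacy point $m$, equivalently the configuration in which $\psi^{-1}$ blows down $E$ onto $m$ --- and this configuration is not formally contradictory; it must be killed by a genuine local argument. That is exactly what the paper's proof does: after using Lemma \ref{lem:preim2} to see that the curve blown down by $\psi^{-1}$ onto $m$ must be $E$, it picks two distinct points $r,q\in E$ where $\psi^{-1}$ is defined and two smooth germs transverse to $E$ there; their images under $\varepsilon$ are smooth germs transverse at $p$ which are $\phi$-images of germs at $m$, forcing the differential of $\phi$ at $m$ to have rank $2$, so that $\phi$ is locally invertible at $m$ and $\phi^{-1}$ is defined at $p$, against the hypothesis. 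To repair your proof, import this rank argument at the putative indeterminacy point (it proves your claim at the same time), or switch to the universal property of blowing up and show that $\phi^{-1}\mathfrak{m}_p\cdot\mathcal{O}_X$ is invertible; either way, the work your sketch defers is the actual content of the proposition.
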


\begin{proof}
Set $\psi=\varepsilon^{-1}\circ\phi$. Suppose that $\psi$ is not 
a morphism, and let $m$ be a point of $X$ such that $\psi$ is
not defined at $m$. On the one hand $\phi(m)=p$ and $\phi$ is 
not locally invertible at $m$; on the other hand there
exists a curve in $\mathrm{Bl}_pS$ blown down onto $m$
by $\psi^{-1}$ (Lemma \ref{lem:preim2}). This curve has 
to be the exceptional divisor~$E$ associated to 
$\varepsilon$. Let $r$ and $q$ be two distinct points 
of $E$ at which $\psi^{-1}$ is well defined; consider
$C$, $C'$ two germs of smooth curves transverse to 
$E$ at $r$ and $q$ respectively. Then $\varepsilon(C)$
and $\varepsilon(C')$ are two germs of smooth curves
transverse at $p$, which are images by $\phi$ of two 
germs of curves at $m$. The differential of $\phi$ at $m$
has thus rank $2$: contradiction with the fact that 
$\phi$ is not invertible at $m$.

\smallskip

\begin{figure}[!h]
\centering
\includegraphics[height=5cm]{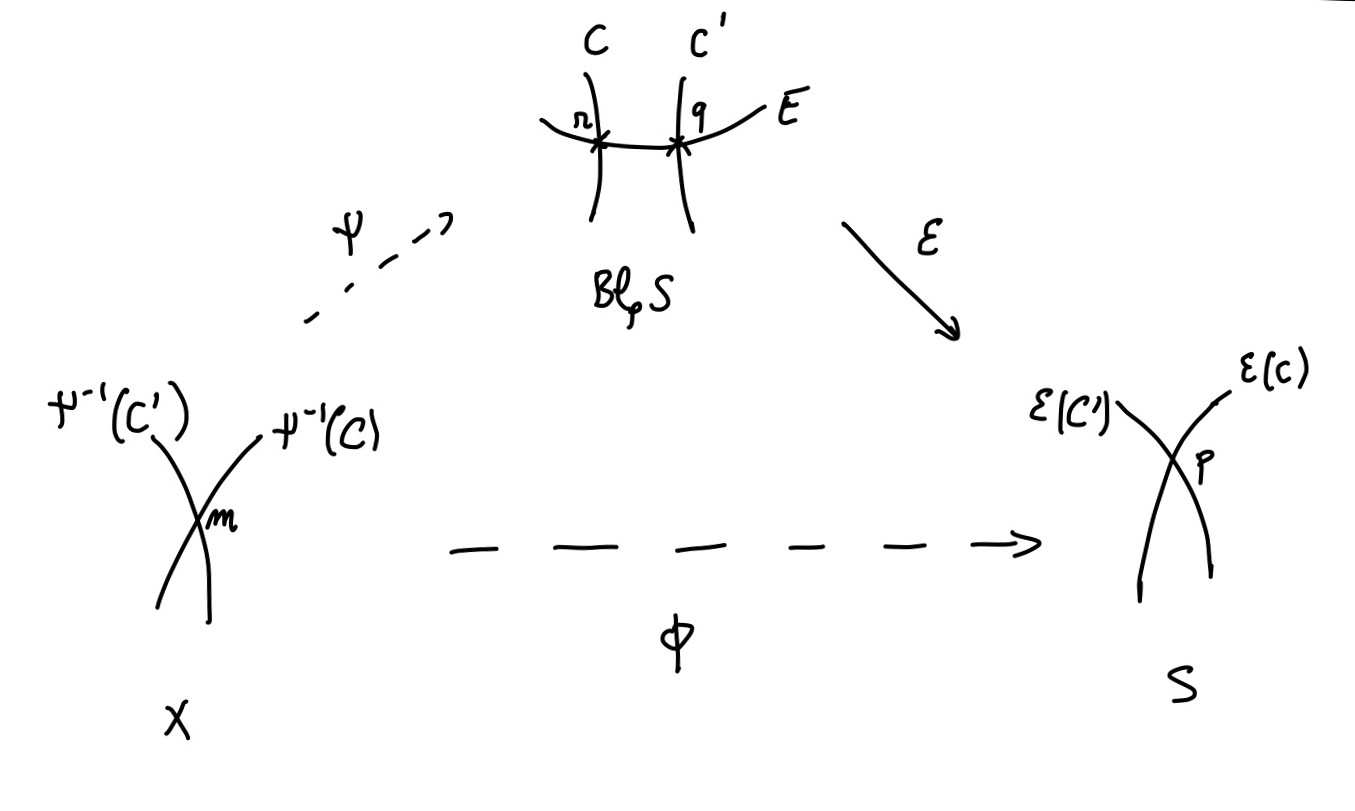}
\end{figure}
\end{proof}

\begin{proof}[Proof of Lemma \ref{lem:linkbirinvbirinv}]
There exists a birational morphism $\varphi\colon S\to S_1$
such that the rational map $\psi=\iota\circ\varphi$ is 
everywhere defined (Theorem \ref{thm:morph}). 
Furthermore~$\varphi$ can be written as 
\[
\varphi=\varepsilon_{n-1}\circ\varepsilon_{n-2}\circ\ldots\circ\varepsilon_1
\]
where $\varepsilon_i\colon S_{i+1}\to S_i$, $1\leq i\leq n-1$, 
is obtained by blowing up a point $p_i\in S_i$ and $S=S_n$. 
The map $\iota$ is not defined at $p_1$, so  
$\psi^{-1}=\varphi^{-1}\circ\iota$ is not defined at $p_1$. 
Proposition \ref{pro:universalpropertyofblowingup} implies 
that~$\psi$ factors as 
  \[
  \xymatrix{
     & S_2\ar[rd]^{\varepsilon_1} & \\
    S\ar[rr]_\psi\ar[ur]^{g_1} & & S_1
  }
  \]
Proceeding by induction we see that $\psi$ factors as
$\varphi\circ\mathcal{I}$ where $\mathcal{I}$ is a 
birational morphism. Since 
$\varphi\circ\mathcal{I}^2=\varphi$, the map 
$\mathcal{I}$ is an involution.
\end{proof}

In other words Lemma \ref{lem:linkbirinvbirinv} says that
any birational involution of a surface is birationally 
equivalent to a 
biregular involution $\iota\colon S\to S$; furthermore
$(S,\iota)$ can be assumed to be minimal. Therefore, the 
classification of conjugacy classes of involutions in 
$\mathrm{Bir}(\mathbb{P}^2_\mathbb{C})$ is equivalent to
the classification of minimal pairs $(S,\iota)$ up to
birational equivalence.

\begin{rem}
Recall that the $\mathbb{P}^1_\mathbb{C}$-bundles over
$\mathbb{P}^1_\mathbb{C}$ are of the form
\[
\mathbb{F}_n=\mathbb{P}_{\mathbb{P}^1_\mathbb{C}}\big(\mathcal{O}_{\mathbb{P}^1_\mathbb{C}}\oplus\mathcal{O}_{\mathbb{P}^1_\mathbb{C}}(n)\big)
\]
for some integers $n\geq 0$ (\emph{see} \S \ref{subsec:hirz}).

For $n\geq 1$ the fibration
\[
f\colon \mathbb{F}_n\to\mathbb{P}^1_\mathbb{C}
\]
has a unique section of self-intersection $-n$.
Consider a fibre $F$ of $f$, and a point $p$ of $F$.
Assume that $\iota$ is a birational involution of 
$\mathbb{F}_n$ regular in a neighborhood of $F$
and fixing $p$. After the elementary transformation
at $p$ we get a birational involution of 
$\mathbb{F}_{n+1}$ regular in a neighborhood of 
the new fibre.
\end{rem}

\begin{proof}[Proof of Theorem \ref{thm:BayleBeauville}]
The unicity assertion follows from Remark \ref{rem:norm}.

Using Lemma \ref{lem:linkbirinvbirinv} we will prove that 
the involutions of Theorem \ref{thm:minimalpairs} are 
birationally equivalent to one of Theorem 
\ref{thm:BayleBeauville}.

\smallskip

Cases $(5)$ and $(6)$ give by definition the Geiser and
Bertini involutions. 

\smallskip

An involution of type $(4)$ is birationally equivalent 
to a Jonqui\`eres involution of degree $2$. Indeed 
let $Q$ be a smooth conic in $\mathbb{P}^2_\mathbb{C}$, 
and let $p\in\mathbb{P}^2_\mathbb{C}\smallsetminus Q$ be a 
point. Consider the birational involution $\iota$ of
$\mathbb{P}^2_\mathbb{C}$ that maps a point $x$ 
to its harmonic conjugate on the line $(px)$ through 
$p$ and $x$ with respect to the two points of 
$(px)\cap Q$. This involution is not defined at the 
following three points: $p$ and the two points 
$q$ and $r$ where the tangent line to $Q$ passes 
through $p$. Set 
$S=\mathrm{Bl}_{p,q,r}\mathbb{P}^2_\mathbb{C}$. The 
involution $\iota$ extends to a biregular involution
$\mathcal{I}$ of $S$, the Jonqui\`eres involution
of degree $2$.

\smallskip

In case $(3)$ take a point $p\in\mathbb{P}^2_\mathbb{C}$
such that $\iota(p)\not=p$. Let us blow up $p$, $\iota(p)$
and then blow down the proper transform of the line 
$(p\iota(p))$ which is a $\iota$-invariant exceptional 
curve. We get a pair $(T,\iota')$ with 
$T\simeq\mathbb{P}^1_\mathbb{C}\times\mathbb{P}^1_\mathbb{C}$
by stereographic projection and 
$\mathrm{rk}\,\mathrm{Pic}(T)^{\iota'}=1$: we are thus 
in case $(4)$, so in the case of a Jonqui\`eres 
involution of degree $2$.

\smallskip

Let us now deal with case $(1)$. The surface $S$ is 
isomorphic to $\mathbb{F}_n$ for some $n\geq 0$. The 
involution $\iota$ has two invariant fibres, any of 
them containing at least two fixed points. One of 
these points does not belong to $s_n$ (section of 
self-intersection $-n$ on $\mathbb{F}_n$), hence 
after a (finite) sequence of elementary transformations
we get $n=1$. Let us thus focus on the case 
$n=1$. Let $\mathbb{F}_1$ be the surface obtained
by blowing up a point $p\in\mathbb{P}^2_\mathbb{C}$. 
Projecting from $p$ defines a $\mathbb{P}^1$-bundle
$f\colon\mathbb{F}_1\to\mathbb{P}^1_\mathbb{C}$.
Any biregular involution $\iota$ of $\mathbb{F}_1$
preserves this fibration hence defines a pair 
$(\mathbb{F}_1,\iota)$ of case $(1)$ or $(2)$. The 
involution $\iota$ preserves the unique exceptional
curve $E_1$ of~$\mathbb{F}_1$; the pair 
$(\mathbb{F}_1,\iota)$ is thus not minimal: 
$\iota$ induces a biregular involution of 
$\mathbb{P}^2_\mathbb{C}$. We finally get a 
Jonqui\`eres involution of degree $2$ as we just
see.

\smallskip

We now consider case $(2)$. Let us distinguish
two possibilities: denote by $F_1$, $F_2$, $\ldots$,
$F_s$ the singular fibres of $f$ and by $p_i$, 
$1\leq i\leq s$, the singular point of $F_i$. The 
fixed locus of $\iota$ is a smooth curve $C$ 
passing through $p_1$, $p_2$, $\ldots$, $p_s$. 
The degree $2$ covering $C\to\mathbb{P}^1_\mathbb{C}$
induced by $f$ is ramified at $p_1$, $p_2$, $\ldots$,
$p_s$. 
\begin{itemize}
\item[(2a)] Either $f$ is smooth, $s=0$ and $C$ is the
union of two sections of $f$ which do not intersect;
 
\item[(2b)] or $f$ is not smooth, $C$ is a hyperelliptic
curve of genus $g\geq 0$ and $s=2g+2$.
\end{itemize}

First assume that we are in case $(2a)$. After elementary
transformations we can suppose that $S=\mathbb{F}_1$.
The fixed locus of $\iota$ is the union of $E_1$ and a 
section which does not meet $E_1$. Blowing down $E_1$ 
one gets case $(4)$.

Finally let us look at case $(2b)$ for $g\geq 0$. Let 
us blow down one of the components in each singular 
fibre. We thus have a birational involution on a 
surface $\mathbb{F}_n$, the fixed curve $C$ embedded
into $\mathbb{F}_n$. After elementary transformations
at general points of $C$ one gets a birational 
involution on a surface $\mathbb{F}_1$, the fixed curve
$C$ embedded into $\mathbb{F}_1$. The genus formula 
implies that $E_1\cdot C=g$. Suppose that $C$ is 
tangent to $E_1$ at some point $q\in\mathbb{F}_1$. 
After an elementary transformation at~$q$ then
 an elementary transformation at some 
general point of $C$ the order of contact of $C$
and~$E_1$ at $q$ decreases by $1$. Proceeding in 
this way we arrive at the following situation: $E_1$
and $C$ meet transversally at $g$ distinct points. 
Let blow down $E_1$ to a point $p$ of 
$\mathbb{P}^2_\mathbb{C}$; the curve $C$ maps 
to a plane curve $\overline{C}$ of degree $g+2$ with an 
ordinary multiple point of multiplicity $g$ at $p$ 
and no other singularity. This yields to a birational 
involution of $\mathbb{P}^2_\mathbb{C}$ which 
preserves the lines through~$p$ and admits 
$\overline{C}$ as fixed
curve, {\it i.e.} a Jonqui\`eres involution 
with center $p$ and fixed curve~$\overline{C}$.
\end{proof}

\section{Finite abelian subgroups of the Cremona group}

Dolgachev and Iskovskikh used a modern approach to the 
problem initiated in the works of Manin and Iskovskikh who
gave a clear understanding of the conjugacy problem via the 
concept of a $\mathrm{G}$-surface 
(\cite{Manin:rational2, Iskovskih:minimal}). 
Let $\mathrm{G}$ be a finite group. A 
\textsl{$\mathrm{G}$-surface}\index{defi}{$\mathrm{G}$-surface}
is a pair $(S,\psi)$ where $S$ is a nonsingular projective
surface and $\psi$ is an isomorphism from $\mathrm{G}$
to $\mathrm{Aut}(S)$. A 
\textsl{morphism of the pairs}\index{defi}{morphism (of $\mathrm{G}$-surfaces)}
$(S,\psi)\to(S',\psi')$ is defined to be a morphism of 
surfaces $\phi\colon S\to S'$ such that 
\[
\psi'(\mathrm{G})=\phi\circ\psi(\mathrm{G})\circ\phi^{-1}.
\]
In particular let us note that two subgroups of 
$\mathrm{Aut}(S)$ define isomorphic $\mathrm{G}$-surfaces
if and only if they are conjugate inside $\mathrm{Aut}(S)$.

Let $(S,\psi)$ be a rational $\mathrm{G}$-surface. Take 
a birational map 
$\phi\colon S\dashrightarrow\mathbb{P}^2_\mathbb{C}$.
For any $g\in\mathrm{G}$ the map $\phi\circ g\circ\phi^{-1}$
belongs to $\mathrm{Bir}(\mathbb{P}^2_\mathbb{C})$. This 
yields to an injective homomorphism
\[
\iota_\phi\colon \mathrm{G}\to\mathrm{Bir}(\mathbb{P}^2_\mathbb{C}).
\]

\begin{lem}[\cite{DolgachevIskovskikh}]
Let $(S,\psi)$ and $(S',\psi')$ be two rational 
$\mathrm{G}$-surfaces. Let 
$\phi\colon S\dashrightarrow\mathbb{P}^2_\mathbb{C}$ 
and 
$\phi'\colon S\dashrightarrow\mathbb{P}^2_\mathbb{C}$
be two birational maps.

The subgroups $\iota_\phi(\mathrm{G})$ and 
$\iota_{\phi'}(\mathrm{G})$ are conjugate if and only if 
there exists a birational map of $\mathrm{G}$-surfaces 
$S'\dashrightarrow S$.
\end{lem}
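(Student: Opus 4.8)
Let me restate carefully what needs to be shown. We have two rational $\mathrm{G}$-surfaces $(S,\psi)$ and $(S',\psi')$, together with birational maps $\phi\colon S\dashrightarrow\mathbb{P}^2_\mathbb{C}$ and $\phi'\colon S'\dashrightarrow\mathbb{P}^2_\mathbb{C}$, and the two associated injective homomorphisms $\iota_\phi,\iota_{\phi'}\colon\mathrm{G}\to\mathrm{Bir}(\mathbb{P}^2_\mathbb{C})$. The claim is that $\iota_\phi(\mathrm{G})$ and $\iota_{\phi'}(\mathrm{G})$ are conjugate in $\mathrm{Bir}(\mathbb{P}^2_\mathbb{C})$ if and only if there is a birational map of $\mathrm{G}$-surfaces $S'\dashrightarrow S$ (i.e.\ a birational map $\theta\colon S'\dashrightarrow S$ with $\psi(\mathrm{G})=\theta\circ\psi'(\mathrm{G})\circ\theta^{-1}$, the natural birational analogue of the morphism of pairs defined just above). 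Let me plan both implications.

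**Proof of the two implications.** The proof is essentially a diagram-chase through the group $\mathrm{Bir}(\mathbb{P}^2_\mathbb{C})$, exploiting that $\phi,\phi'$ conjugate the biregular $\mathrm{G}$-actions into $\mathrm{Bir}(\mathbb{P}^2_\mathbb{C})$. For the ``if'' direction I would suppose given a birational map $\theta\colon S'\dashrightarrow S$ conjugating the $\mathrm{G}$-actions, so that for each $g$ the element $\theta\circ\psi'(g)\circ\theta^{-1}$ lies in $\psi(\mathrm{G})$; since $\theta$ intertwines the two group actions and both $\psi,\psi'$ are isomorphisms onto the respective automorphism groups, after composing $\theta$ with an automorphism of $S$ we may arrange $\theta\circ\psi'(g)\circ\theta^{-1}=\psi(g)$ for all $g$. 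Then set $\chi=\phi\circ\theta\circ(\phi')^{-1}\in\mathrm{Bir}(\mathbb{P}^2_\mathbb{C})$ and compute directly
\[
\chi\circ\iota_{\phi'}(g)\circ\chi^{-1}
=\phi\circ\theta\circ(\phi')^{-1}\circ\big(\phi'\circ\psi'(g)\circ(\phi')^{-1}\big)\circ\phi'\circ\theta^{-1}\circ\phi^{-1}
=\phi\circ\psi(g)\circ\phi^{-1}=\iota_\phi(g),
\]
so that $\chi$ conjugates $\iota_{\phi'}(\mathrm{G})$ onto $\iota_\phi(\mathrm{G})$. For the ``only if'' direction I would reverse this: suppose $\chi\in\mathrm{Bir}(\mathbb{P}^2_\mathbb{C})$ satisfies $\chi\circ\iota_{\phi'}(\mathrm{G})\circ\chi^{-1}=\iota_\phi(\mathrm{G})$, and define $\theta=\phi^{-1}\circ\chi\circ\phi'\colon S'\dashrightarrow S$. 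Unwinding the definitions of $\iota_\phi,\iota_{\phi'}$ shows that $\theta$ conjugates $\psi'(\mathrm{G})=\mathrm{Aut}(S')$ onto $\psi(\mathrm{G})=\mathrm{Aut}(S)$, i.e.\ $\theta$ is a birational map of $\mathrm{G}$-surfaces.

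**The main subtlety.** The one point requiring genuine care is the matching of \emph{individual} group elements rather than just the subgroups. The hypothesis ``conjugate subgroups'' gives $\chi\,\iota_{\phi'}(\mathrm{G})\,\chi^{-1}=\iota_\phi(\mathrm{G})$ as sets, which yields an automorphism $\alpha$ of $\mathrm{G}$ with $\chi\circ\iota_{\phi'}(g)\circ\chi^{-1}=\iota_\phi(\alpha(g))$; the resulting $\theta$ then conjugates the action $\psi'$ into the action $\psi\circ\alpha$, and this is still the same \emph{group} $\mathrm{Aut}(S)$ so $\theta$ is a perfectly good isomorphism of $\mathrm{G}$-surfaces in the sense defined (which is stated up to conjugacy of the image subgroup, absorbing $\alpha$). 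I expect the real work to lie in being precise about whether ``morphism of $\mathrm{G}$-surfaces'' records the pointwise intertwining $\psi'(g)\mapsto\psi(g)$ or only the subgroup-level condition $\psi'(\mathrm{G})=\theta\,\psi(\mathrm{G})\,\theta^{-1}$; the definition given just before the lemma uses the subgroup-level condition, so both implications go through cleanly once one checks that $\alpha$ is harmless. I would therefore carry out the chase first at the subgroup level (avoiding $\alpha$ entirely) and only afterward remark that any apparent discrepancy is exactly an inner automorphism absorbed into the composition. No deep geometry is needed beyond the fact, used implicitly, that composition and inversion are well defined in $\mathrm{Bir}(\mathbb{P}^2_\mathbb{C})$ and that $\phi,\phi'$ are genuine birational equivalences with two-sided inverses.
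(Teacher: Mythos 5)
Your proof is correct, and since the paper states this lemma without proof (it is quoted from \cite{DolgachevIskovskikh}), there is nothing to diverge from: the formal chase you give — conjugating by $\chi=\phi\circ\theta\circ(\phi')^{-1}$ in one direction and transporting $\chi$ back to $\theta=\phi^{-1}\circ\chi\circ\phi'$ in the other — is exactly the intended unwinding, and you rightly read $\phi'$ as a map $S'\dashrightarrow\mathbb{P}^2_\mathbb{C}$, silently correcting the typo in the statement. One caveat on your parenthetical normalization: composing $\theta$ with an automorphism $a$ of $S$ replaces $g\mapsto\theta\circ\psi'(g)\circ\theta^{-1}$ by its conjugate under $a$, hence changes the induced automorphism $\alpha$ of $\mathrm{G}$ only by an \emph{inner} automorphism, so the pointwise identity $\theta\circ\psi'(g)\circ\theta^{-1}=\psi(g)$ cannot in general be arranged when $\alpha$ is outer — but since both the paper's notion of morphism of $\mathrm{G}$-surfaces and the conjugacy of the subgroups $\iota_\phi(\mathrm{G})$, $\iota_{\phi'}(\mathrm{G})$ are stated at the subgroup level, the subgroup-level chase you correctly put first is complete as it stands and the parenthetical should simply be dropped.
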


In other words a birational isomorphism class of 
$\mathrm{G}$-surfaces defines a conjugacy class
of subgroups of $\mathrm{Bir}(\mathbb{P}^2_\mathbb{C})$
isomorphic to $\mathrm{G}$. The following result 
shows that any conjugacy class is obtained in this
way:

\begin{lem}[\cite{DolgachevIskovskikh}]
Let $\mathrm{G}$ be a finite subgroup of 
$\mathrm{Bir}(\mathbb{P}^2_\mathbb{C})$. There exist
a rational $\mathrm{G}$-surface $(S,\psi)$ and a 
birational map 
$\phi\colon S\dashrightarrow\mathbb{P}^2_\mathbb{C}$
such that 
\[
\mathrm{G}=\phi\circ\psi(\mathrm{G})\circ\phi^{-1}.
\]
\end{lem}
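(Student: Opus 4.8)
The statement to prove is the following lemma (from Dolgachev and Iskovskikh):

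\begin{lem}
Let $\mathrm{G}$ be a finite subgroup of
$\mathrm{Bir}(\mathbb{P}^2_\mathbb{C})$. There exist
a rational $\mathrm{G}$-surface $(S,\psi)$ and a
birational map
$\phi\colon S\dashrightarrow\mathbb{P}^2_\mathbb{C}$
such that
\[
\mathrm{G}=\phi\circ\psi(\mathrm{G})\circ\phi^{-1}.
\]
\end{lem}

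Let me sketch how I would prove this.

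The plan is to produce a smooth rational surface on which $\mathrm{G}$ acts biregularly by finding a $\mathrm{G}$-invariant resolution of the indeterminacies of all elements of $\mathrm{G}$ simultaneously. The guiding idea mirrors the argument used for a single involution in Lemma~\ref{lem:linkbirinvbirinv}: there, one birational map was regularized by a sequence of blow-ups. Here the extra input is finiteness of $\mathrm{G}$, which lets us symmetrize the construction so that \emph{all} group elements become regular at once.

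First I would invoke the bubble-space / base-point formalism from Chapter~\ref{chapter:intro}. Each $g\in\mathrm{G}\subset\mathrm{Bir}(\mathbb{P}^2_\mathbb{C})$ has a finite base locus $\mathrm{Base}(g)\subset\mathcal{B}(\mathbb{P}^2_\mathbb{C})$. The key observation is that the union $\bigcup_{g\in\mathrm{G}}\mathrm{Base}(g)$ is finite (finitely many group elements, each contributing finitely many points), and I would enlarge it to a $\mathrm{G}$-stable finite subset $T\subset\mathcal{B}(\mathbb{P}^2_\mathbb{C})$ by closing up under the action $g_\bullet$ of the group on the bubble space. Concretely, whenever $q$ is a (possibly infinitely near) base-point appearing in the resolution, I add all points $g_\bullet(q)$ for $g\in\mathrm{G}$. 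Since the orbit of a point under a finite group is finite and each element has finite base locus, iterating this closure terminates, yielding a finite $\mathrm{G}$-invariant set $T$ of points to blow up. Let $\pi\colon S\to\mathbb{P}^2_\mathbb{C}$ be the composition of blow-ups of all points in $T$ (performed in an order compatible with the infinitely-near structure), and set $\phi=\pi$. Then $S$ is a smooth rational surface.

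Next I would verify that $\psi(g):=\pi^{-1}\circ g\circ\pi$ is a biregular automorphism of $S$ for each $g\in\mathrm{G}$. The point is that because $T$ is $\mathrm{G}$-invariant and contains all base-points of every $g$ (and their $\mathrm{G}$-translates), blowing up $T$ resolves the indeterminacy of $g$ \emph{and} of $g^{-1}$ simultaneously; by the universal property of blow-ups (Proposition~\ref{pro:universalpropertyofblowingup}, applied inductively exactly as in the proof of Lemma~\ref{lem:linkbirinvbirinv}) the lift $\psi(g)$ extends to a morphism, and likewise $\psi(g^{-1})$ extends to its inverse, so $\psi(g)\in\mathrm{Aut}(S)$. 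The map $g\mapsto\psi(g)$ is clearly a group homomorphism into $\mathrm{Aut}(S)$, and it is injective because $\pi$ is birational (conjugation by a fixed birational map is injective on $\mathrm{Bir}(\mathbb{P}^2_\mathbb{C})$). Thus $\psi$ is an isomorphism of $\mathrm{G}$ onto its image, making $(S,\psi)$ a rational $\mathrm{G}$-surface, and by construction $\mathrm{G}=\pi\circ\psi(\mathrm{G})\circ\pi^{-1}=\phi\circ\psi(\mathrm{G})\circ\phi^{-1}$.

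The main obstacle, and the step deserving the most care, is the termination of the $\mathrm{G}$-closure of the base locus: one must ensure that passing to a $\mathrm{G}$-stable set does not force infinitely many blow-ups. The crucial facts are that $\mathrm{G}$ is \emph{finite} and that each group element, being a fixed birational map, has only finitely many base-points; the orbit of the (finite) total base locus under the finite group action is therefore finite. A secondary subtlety is bookkeeping with infinitely near points: one must perform the blow-ups in an admissible order (proper points before points infinitely near them) so that $S$ is well-defined and $\pi$ is genuinely a composition of point blow-ups, and one must check that the $\mathrm{G}$-action on the bubble space respects this near-ness relation — which it does, since each $g_\bullet$ is a bijection of $\mathcal{B}(\mathbb{P}^2_\mathbb{C})$ induced by a minimal resolution and hence preserves the infinitely-near structure away from base-points. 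Once these combinatorial points are settled, the regularization follows formally from the earlier results of the excerpt.
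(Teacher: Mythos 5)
Your proof is correct, but it takes a genuinely different route from the paper. You regularize $\mathrm{G}$ directly on the source: you saturate $T_0=\bigcup_{g\in\mathrm{G}}\mathrm{Base}(g)$ under the partial action $g_\bullet$ on the bubble space (finiteness of $\mathrm{G}$ together with the relation $(h\circ g)_\bullet=h_\bullet\circ g_\bullet$ on common domains shows one round of saturation already suffices, so $T$ is finite; and $T$ is automatically closed under predecessors, since the predecessor of $g_\bullet(s)$ is either $g_\bullet$ of the predecessor of $s$ or a point of $\mathrm{Base}(g^{-1})$), and then blow up $T$. The stability $g_\bullet\big(T\smallsetminus\mathrm{Base}(g)\big)\cup\mathrm{Base}(g^{-1})\subseteq T$, applied to $g$ and to $g^{-1}$ separately, is exactly what makes the inductive factorization through blow-ups (Proposition~\ref{pro:universalpropertyofblowingup}, as in the proof of Lemma~\ref{lem:linkbirinvbirinv}) produce two morphisms that are mutually inverse, hence automorphisms of $S$ --- your two-sided argument is the right way to avoid checking the exact matching $T=g_\bullet(T\smallsetminus\mathrm{Base}(g))\cup\mathrm{Base}(g^{-1})$. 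The paper instead follows the quotient argument of Dolgachev and Iskovskikh: it takes the $\mathrm{G}$-invariant open set $\mathcal{U}=\bigcap_{g\in\mathrm{G}}g(\mathcal{D})$ on which every element acts biregularly (where $\mathcal{D}$ is the intersection of the domains of definition), forms the orbit space $\mathcal{U}/\mathrm{G}$, chooses a normal projective completion $X'$ of it, takes the normalization $S'$ of $X'$ in the field of rational functions of $\mathcal{U}$ --- on which $\mathrm{G}$ acts biregularly by the canonicity of normalization --- and concludes with a $\mathrm{G}$-equivariant resolution of singularities, citing \cite{DeFernexEin}. What each approach buys: yours is more elementary and explicit, staying entirely within the toolkit of Chapters~\ref{chapter:intro} and \ref{chap:hyperbolicspace} (bubble space, $\phi_\bullet$, Zariski factorization, universal property of blow-ups), avoiding quotient singularities and equivariant resolution altogether, and producing $\phi$ as an explicit composition of point blow-ups; but it is intrinsically two-dimensional, since it rests on the resolution of birational surface maps by point blow-ups. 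The paper's argument is shorter to state and dimension-free --- it works verbatim for finite subgroups of $\mathrm{Bir}(V)$ for any variety --- at the cost of heavier inputs (normalization in a field extension and equivariant resolution of singularities, the latter deep in general though harmless for surfaces).
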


\begin{proof}
If $\phi$ belongs to $\mathrm{G}$, we denote by 
$\mathrm{dom}(\phi)$ an open subset on which $\phi$ is defined. Set 
$\mathcal{D}=\displaystyle\bigcap_{\phi\in\mathrm{G}}\mathrm{dom}(\phi)$.
Then 
$\mathcal{U}=\displaystyle\bigcap_{\phi\in\mathrm{G}}g(\mathcal{D})$
is an open invariant subset of $\mathbb{P}^2_\mathbb{C}$ on 
which $\phi$ acts biregularly. Consider 
$\mathcal{U}'=\faktor{\mathcal{U}}{\mathrm{G}}$ the orbit space; it is
a normal algebraic surface. Let us choose any normal projective 
completion $X'$ of $\mathcal{U}'$. Consider $S'$ the 
normalization of $X'$ in the field of rational functions
of $\mathcal{U}$. It is a normal projective surface on which
$\mathrm{G}$ acts by biregular transformations. A 
$\mathrm{G}$-invariant resolution of singularities $S$ of 
$S'$ suits (\cite{DeFernexEin}).
\end{proof}

Hence one has:

\begin{thm}[\cite{DolgachevIskovskikh}]
There is a natural bijective correspondence between 
birational isomorphism classes of rational 
$\mathrm{G}$-surfaces and conjugate classes of 
subgroups of $\mathrm{Bir}(\mathbb{P}^2_\mathbb{C})$
isomorphic to $\mathrm{G}$.
\end{thm}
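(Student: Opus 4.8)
The plan is to assemble the statement directly from the two lemmas that precede it, which already carry all the geometric content; the theorem itself is then a formal bookkeeping argument identifying two sets of equivalence classes. First I would define the correspondence explicitly. Given a rational $\mathrm{G}$-surface $(S,\psi)$, rationality of $S$ guarantees the existence of a birational map $\phi\colon S\dashrightarrow\mathbb{P}^2_\mathbb{C}$, and the recipe $g\mapsto\phi\circ\psi(g)\circ\phi^{-1}$ produces the injective homomorphism $\iota_\phi\colon\mathrm{G}\to\mathrm{Bir}(\mathbb{P}^2_\mathbb{C})$ introduced just above the lemmas, whose image $\iota_\phi(\mathrm{G})$ is a subgroup of $\mathrm{Bir}(\mathbb{P}^2_\mathbb{C})$ abstractly isomorphic to $\mathrm{G}$. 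I would then declare the correspondence to send the birational isomorphism class of $(S,\psi)$ to the conjugacy class of $\iota_\phi(\mathrm{G})$.

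The bulk of the work is checking that this is well defined, since the construction hides two choices: the representative $(S,\psi)$ of a birational isomorphism class, and the auxiliary map $\phi$. Both are handled by the sufficiency direction of the first lemma. Taking $(S',\psi')=(S,\psi)$ together with the identity $S\to S$ viewed as a birational map of $\mathrm{G}$-surfaces shows that any two choices $\phi,\phi'$ for the \emph{same} surface yield conjugate subgroups, so the conjugacy class does not depend on $\phi$. More generally, if $(S,\psi)$ and $(S',\psi')$ are birationally isomorphic as $\mathrm{G}$-surfaces, then the defining $\mathrm{G}$-equivariant birational map $S'\dashrightarrow S$ again forces $\iota_\phi(\mathrm{G})$ and $\iota_{\phi'}(\mathrm{G})$ to be conjugate. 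Here I would pause on the one point that needs a moment's care: that \emph{birational isomorphism of $\mathrm{G}$-surfaces} really is an equivalence relation. A $\mathrm{G}$-equivariant birational map is invertible, and its inverse is again $\mathrm{G}$-equivariant, so the asymmetry in the statement of the lemma (it produces $S'\dashrightarrow S$ rather than $S\dashrightarrow S'$) is harmless.

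Injectivity then comes for free from the necessity direction of the same lemma: if $\iota_\phi(\mathrm{G})$ and $\iota_{\phi'}(\mathrm{G})$ are conjugate in $\mathrm{Bir}(\mathbb{P}^2_\mathbb{C})$, then there is a birational map of $\mathrm{G}$-surfaces $S'\dashrightarrow S$, so the two surfaces lie in the same birational isomorphism class. Surjectivity is precisely the content of the second lemma: an arbitrary finite subgroup $\mathrm{G}\subset\mathrm{Bir}(\mathbb{P}^2_\mathbb{C})$ is realized as $\phi\circ\psi(\mathrm{G})\circ\phi^{-1}=\iota_\phi(\mathrm{G})$ for some rational $\mathrm{G}$-surface $(S,\psi)$ and some birational map $\phi$, so its conjugacy class is in the image.

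Since all the genuine difficulty is already packaged in the two lemmas — the resolution-and-normalization argument behind surjectivity, and the factorization-of-birational-maps argument behind the conjugacy criterion — I do not expect a real obstacle in this final step. The only thing to get right is the logical organization, making transparent that the first lemma supplies exactly well-definedness together with injectivity, while the second supplies surjectivity. If anything deserves emphasis, it is the verification that the equivalence relation on $\mathrm{G}$-surfaces is symmetric, so that the correspondence and its would-be inverse are set up on matching classes.
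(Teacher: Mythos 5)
Your proposal is correct and follows exactly the route the paper takes: the paper states the theorem as an immediate consequence (``Hence one has'') of the two preceding lemmas, with the first lemma supplying well-definedness and injectivity of the map $[(S,\psi)]\mapsto[\iota_\phi(\mathrm{G})]$ and the second supplying surjectivity. Your added check that $\mathrm{G}$-equivariant birational maps are invertible with $\mathrm{G}$-equivariant inverse, so that birational isomorphism of $\mathrm{G}$-surfaces is genuinely an equivalence relation, is a sound piece of bookkeeping the paper leaves implicit.
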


Therefore, the goal of Dolgachev and 
Iskovskikh is to
classify $\mathrm{G}$-surfaces up to birational
isomorphism of $\mathrm{G}$-surfaces. 

There is a $\mathrm{G}$-equivariant analogue of minimal
surfaces:

\begin{defi}
A 
\textsl{minimal $\mathrm{G}$-surface}\index{defi}{minimal ($\mathrm{G}$-surface)} 
is a $\mathrm{G}$-surface $(S,\psi)$ such that any birational
morphism of $\mathrm{G}$-surfaces 
$(S,\psi)\to(S',\psi')$ is an isomorphism.
\end{defi}

Note that it is enough to classify minimal rational 
$\mathrm{G}$-surfaces up to birational isomorphism
of $\mathrm{G}$-surfaces. The authors can rely 
on the following fundamental result:

\begin{thm}\label{thm:classificationminimalGsurfaces}
Let $S$ be a minimal rational $\mathrm{G}$-surface. Then
\begin{itemize}
\item[$\diamond$] either $S$ admits a structure of 
a conic bundle with 
$\mathrm{Pic}(S)^{\mathrm{G}}\simeq\mathbb{Z}^2$;

\item[$\diamond$] or $S$ is isomorphic to a del Pezzo
surface with $\mathrm{Pic}(S)^{\mathrm{G}}\simeq\mathbb{Z}$.
\end{itemize}
\end{thm}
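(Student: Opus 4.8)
The plan is to run the $G$-equivariant minimal model program on the pair $(S,\psi)$, exploiting the finiteness of $G$ together with the rationality of $S$. First I would record the group-theoretic set-up: since $G$ is finite and acts faithfully by automorphisms on the smooth projective surface $S$, the Picard group $\mathrm{Pic}(S)$ is a finitely generated free $G$-module, the intersection form is $G$-invariant, and the canonical class $K_S$ is fixed by $G$. Hence $K_S$ lies in the invariant lattice $\mathrm{Pic}(S)^G$, and the entire analysis can be carried out inside the finite-rank free abelian group $\mathrm{Pic}(S)^G$ equipped with the restriction of the intersection form. The goal is to bound $\rho^G := \mathrm{rk}\,\mathrm{Pic}(S)^G$ and to read off the geometry of $S$ from the value of $\rho^G$.

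Next I would translate $G$-minimality into an equivariant extremal-contraction statement. The hypothesis that every birational morphism of $G$-surfaces out of $(S,\psi)$ is an isomorphism says precisely that $S$ carries no $G$-invariant collection of pairwise disjoint $(-1)$-curves: such a collection could always be contracted $G$-equivariantly onto a smooth $G$-surface, contradicting minimality. Here one uses that a $G$-orbit of exceptional curves is either disjoint or, if two of its members meet, the sum of the orbit has non-negative self-intersection, which prevents simultaneous contraction. Consequently no $K_S$-negative extremal ray of the invariant cone $\overline{\mathrm{NE}}(S)^G$ can be of birational (blow-down) type.

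The heart of the argument is the $G$-equivariant cone and contraction theorem. Since $S$ is rational it is uniruled, so $K_S$ is not pseudo-effective; therefore $\overline{\mathrm{NE}}(S)^G$ contains a $K_S$-negative extremal ray $R$, and taking the corresponding $G$-orbit of Mori extremal rays produces a $G$-equivariant contraction $\pi\colon S\to Y$ of Mori-fibre-space type. By the previous step $\pi$ is not birational, so only two cases survive. If $Y$ is a point, then $\rho^G=1$, the ray $R$ is spanned by a multiple of $-K_S$, and $-K_S$ is ample, so $S$ is a del Pezzo surface with $\mathrm{Pic}(S)^G\simeq\mathbb{Z}$. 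If $Y$ is a curve, then since $S$ is rational $Y\simeq\mathbb{P}^1_\mathbb{C}$, the general fibre is a smooth rational curve, $\rho^G=2$, and $\pi\colon S\to\mathbb{P}^1_\mathbb{C}$ is a conic bundle; analyzing the reducible fibres, each a connected $G$-stable configuration of $(-1)$-curves that cannot be contracted individually, shows they are transverse unions of two such curves, which is exactly the conic bundle structure.

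The main obstacle I expect is making the $G$-equivariant Mori theory precise enough to guarantee that the only two non-birational contraction types are the two listed, and in particular excluding the birational case cleanly from the minimality hypothesis; this needs the equivariant cone theorem and a careful orbit analysis of extremal rays rather than a naive transcription of the classical surface MMP. A secondary technical point is verifying, in the fibration case, that $\mathrm{Pic}(S)^G$ has rank \emph{exactly} two, with the class of a fibre and a relatively ample class generating it over $\mathbb{Q}$, and that no further $G$-equivariant contraction over $\mathbb{P}^1_\mathbb{C}$ is possible; this is what pins down both the value $\rho^G=2$ and the shape of the singular fibres.
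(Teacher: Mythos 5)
Your proposal is correct and takes precisely the approach the paper itself points to: the paper offers no proof of this theorem, remarking only that it ``follows from a $\mathrm{G}$-equivariant version of Mori theory'' (citing de Fernex, after the original arguments of Manin and Iskovskikh), and your equivariant MMP argument --- minimality rules out birational $K_S$-negative extremal contractions via the orbit analysis of $(-1)$-curves, leaving only the conic bundle case with $\mathrm{rk}\,\mathrm{Pic}(S)^{\mathrm{G}}=2$ and the del Pezzo case with $\mathrm{rk}\,\mathrm{Pic}(S)^{\mathrm{G}}=1$ --- is exactly that modern proof. Your two flagged technical points (the equivariant cone and contraction theorem, and pinning down the invariant Picard rank and singular fibres in the fibration case) are indeed where the work lies, and they are handled as you describe in the equivariant Mori theory literature.
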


An analogous result from the classical literature is 
showed by using the method of the termination of 
adjoints, first introduced for linear system of 
plane curves in the work of Castelnuovo. This method
is applied to find a $\mathrm{G}$-invariant linear 
system of curves in the plane in \cite{Kantor};
Kantor essentially stated the result above but without
the concept of minimality. A first modern proof 
can be found in \cite{Manin:rational2} and 
\cite{Iskovskih:minimal}. 
Nowadays Theorem \ref{thm:classificationminimalGsurfaces} 
follows from a $\mathrm{G}$-equivariant version of 
Mori theory (\cite{deFernex}).

As a result to complete the classification Dolgachev 
and Iskovskikh need
\begin{itemize}
\item[$(i)$] to classify all finite groups $\mathrm{G}$ 
that may occur in a minimal $\mathrm{G}$-pair;

\item[$(ii)$] to determine when two minimal 
$\mathrm{G}$-surfaces are birationally isomorphic.
\end{itemize}
 
To achieve $(i)$ the authors computed the full
automorphisms group of a conic bundle surface on a del
Pezzo surface and then made a list of all finite 
subgroups acting minimally on the surface. 

To achieve $(ii)$ the authors used the ideas of Mori theory
to decompose a birational map of rational 
$\mathrm{G}$-surfaces into elementary links.

\section{Finite cyclic subgroups of 
$\mathrm{Bir}(\mathbb{P}^2_\mathbb{C})$}

In \cite{Blanc:CRAS} the author gave
the list of finite cyclic subgroups
of the plane Cremona group, 
up to conjugation. The curves fixed
by one element of the group, and the 
action of the whole group on these curves, 
are often sufficient to distinguish the 
conjugacy classes. It was done in 
\cite{Blanc:these} in many cases, but 
some remain unsolved. In \cite{Blanc:these}
the author completed this classification
with the case of abelian non-cyclic groups.

Its classification implies several results
we will now mention.

\begin{thm}[\cite{Blanc:CRAS}]
For any integer $n\geq 1$ there are infinitely many conjugacy
classes of elements of $\mathrm{Bir}(\mathbb{P}^2_\mathbb{C})$ 
of order $2n$, that are non-conjugate to a linear automorphism.

\smallskip

If $n>15$, a birational map of $\mathbb{P}^2_\mathbb{C}$ 
of order $2n$ is a $n$-th root of a Jonqui\`eres 
involution and preserves a pencil of rational curves.

\smallskip
If an element of $\mathrm{Bir}(\mathbb{P}^2_\mathbb{C})$ 
is of finite odd order and is not conjugate to a linear 
automorphism of $\mathbb{P}^2_\mathbb{C}$, then its order 
is $3$, $5$, $9$ or $15$. In particular any birational 
map of $\mathbb{P}^2_\mathbb{C}$ of odd order $>15$ is 
conjugate to a linear automorphism of the plane.
\end{thm}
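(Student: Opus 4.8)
The plan is to reduce the classification of finite cyclic subgroups to the study of minimal $\mathrm{G}$-surfaces, exactly as in the Dolgachev--Iskovskikh framework recalled in the previous section. Given a birational map $\phi$ of finite order $n$, one first replaces the pair $(\langle\phi\rangle, \mathbb{P}^2_\mathbb{C})$ by a birationally equivalent minimal $\mathrm{G}$-surface $(S,\psi)$ with $\mathrm{G}=\faktor{\mathbb{Z}}{n\mathbb{Z}}$. By Theorem \ref{thm:classificationminimalGsurfaces} there are then only two cases to analyze: either $S$ carries a $\mathrm{G}$-equivariant conic bundle structure with $\mathrm{Pic}(S)^{\mathrm{G}}\simeq\mathbb{Z}^2$, or $S$ is a del Pezzo surface with $\mathrm{Pic}(S)^{\mathrm{G}}\simeq\mathbb{Z}$. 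The strategy is to treat these two cases separately and, in each, to determine exactly which orders $n$ force $\phi$ to be conjugate to a linear automorphism of $\mathbb{P}^2_\mathbb{C}$.

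First I would handle the del Pezzo case. Since $S$ is a del Pezzo surface and $\mathrm{G}$ is finite, $\mathrm{Aut}(S)$ is finite once $\deg S\leq 5$, and by Theorem \ref{thm:648} the order of $\mathrm{Aut}(S)$ is bounded by $648$. Examining the cyclic subgroups arising in the automorphism groups of del Pezzo surfaces of each degree $1\leq\deg S\leq 9$ yields a finite list of possible orders; the key observation is that a cyclic automorphism acting minimally (so that $\mathrm{Pic}(S)^{\mathrm{G}}$ has rank one) of odd order can only occur for the special orders $3$, $5$, $9$, $15$, coming essentially from the del Pezzo surfaces of degree $1$, $3$ and the action on the root lattice $K_S^\perp$ inside $\mathrm{NS}(S)$. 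The case $S\simeq\mathbb{P}^2_\mathbb{C}$ (degree $9$) gives precisely the linear automorphisms, which is where the conjugacy to a linear map comes from.

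Next I would treat the conic bundle case. Here $\pi\colon S\to\mathbb{P}^1_\mathbb{C}$ is $\mathrm{G}$-equivariant, so $\phi$ induces an automorphism of the base $\mathbb{P}^1_\mathbb{C}$ and acts on the fibration. The crucial structural input is the exact sequence describing $\mathrm{Aut}(S,\pi)$ together with the action on the finitely many singular fibres (each a union of two $(-1)$-curves). A cyclic element of large order must act with large order either on the base or fibrewise; the fibrewise action embeds into $\mathrm{PGL}(2,\mathbb{C}(z_0))$, while the induced action on the base is through $\mathrm{PGL}(2,\mathbb{C})$. Analyzing the interaction of these two actions, one shows that an element of even order $2n$ is an $n$-th root of a Jonqui\`eres involution, and that large odd orders are incompatible with minimality unless they reduce to the del Pezzo case. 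The bound ``$>15$ forces linearizability'' and ``$>15$ of even order is a root of a Jonqui\`eres involution preserving a pencil of rational curves'' emerge from combining the two cases and the explicit conjugacy-class list of \cite{Blanc:CRAS}.

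The production of infinitely many conjugacy classes of order $2n$ for each $n$ is the constructive half: one exhibits explicit $n$-th roots of Jonqui\`eres involutions preserving a pencil of rational curves, parametrized so that the invariant data (the fixed non-rational curve and the action on it, as in Lemma \ref{lem:notconj} and Theorem \ref{thm:BayleBeauville}) distinguish infinitely many non-conjugate maps; these are visibly non-conjugate to any linear automorphism because a linear automorphism of finite order has only rational or pointwise fixed loci. The main obstacle I anticipate is the fine bookkeeping in the conic bundle case: disentangling the fibrewise versus base contributions to the order and verifying that exactly the orders $3,5,9,15$ survive among odd orders requires a careful case analysis of the action on singular fibres and on $\mathrm{Pic}(S)^{\mathrm{G}}$, together with ruling out spurious conjugacies between the conic bundle models and the del Pezzo models via equivariant Mori links. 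The del Pezzo enumeration is finite and mechanical once Theorems \ref{thm:648} and \ref{thm:classificationminimalGsurfaces} are in hand, so the genuine difficulty is concentrated in the conic bundle analysis and in certifying minimality throughout.
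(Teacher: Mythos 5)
A preliminary remark: the survey you are being compared against does not actually prove this theorem --- it is imported from \cite{Blanc:CRAS} as a consequence of Blanc's classification of finite cyclic subgroups of $\mathrm{Bir}(\mathbb{P}^2_\mathbb{C})$. Your strategy is precisely the one underlying that classification: pass to a minimal $\mathrm{G}$-surface via the Dolgachev--Iskovskikh correspondence, split along Theorem \ref{thm:classificationminimalGsurfaces} into the del Pezzo case and the conic bundle case, enumerate the former, analyse the base-versus-fibrewise actions in the latter, and separate the conjugacy classes of the constructed maps of order $2n$ by the non-rational curve fixed by the $n$-th power. (One small precision there: Lemma \ref{lem:notconj} applies to curves fixed by the whole group, whereas your curves are fixed only by $\phi^n$; the correct formulation is that conjugacy of $\phi$ and $\psi$ implies conjugacy of the involutions $\phi^n$ and $\psi^n$, whose fixed hyperelliptic curves are invariants by \cite{BayleBeauville}. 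Your construction itself is essentially Blanc's normal form $(z_0,z_1)\dashrightarrow\big(\mathrm{e}^{2\mathbf{i}\pi/n}z_0,\,p(z_0^n)/z_1\big)$, quoted later in the same section, with $\deg p$ varying, and it does work for every $n\geq 1$.)

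The genuine gap is a circularity at the quantitative heart of the statement: you extract the bound $n>15$ and the list of odd orders $3$, $5$, $9$, $15$ ``from the explicit conjugacy-class list of \cite{Blanc:CRAS}'' --- but that list is exactly what a proof of this theorem has to produce, so as a blind argument the decisive constants are assumed rather than derived. Relatedly, inside the conic bundle case your claim that an element of even order $2n$ is an $n$-th root of a Jonqui\`eres involution is nearly automatic ($\phi^n$ is an involution preserving a pencil of rational curves, hence of Jonqui\`eres type by Theorem \ref{thm:BayleBeauville}); the actual content of the second assertion is the exclusion of the del Pezzo alternative once $2n>30$, and Theorem \ref{thm:648} only yields $2n\leq 648$ there, not $2n\leq 30$. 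What is missing, concretely, is the sharp enumeration: on a minimal conic bundle an element of odd order cannot exchange the two components of a singular fibre, so (the relevant $H^1$ with $\mathbb{Z}/2$-coefficients vanishing for odd order) one contracts one component per fibre equivariantly and reduces to the del Pezzo or linearizable case; and on minimal del Pezzo pairs one must check, degree by degree as in \cite{DolgachevIskovskikh, Wiman}, that the non-linearizable odd orders are exactly $3$, $5$, $9$, $15$ (arising on surfaces of degrees $3$ and $1$) and that the maximal even order of a non-fibered example is $30$, realized on a del Pezzo surface of degree $1$ --- which is what makes the constant $15$ sharp. You correctly flag this enumeration as ``finite and mechanical'', but without executing it the theorem's numerical conclusions do not follow from your argument.
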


Then Blanc generalized a theorem of Castelnuovo which 
states that an element of finite order which fixes a curve 
of geometric genus $>1$ has order $2$, $3$ or $4$ 
(\emph{see}~\cite{Castelnuovo}):

\begin{thm}[\cite{Blanc:CRAS}]
Let $\mathrm{G}$ be a finite abelian group which fixes 
some curve of positive geometric genus. 

Then $\mathrm{G}$ is cyclic, of order $2$, $3$, $4$, 
$5$ or $6$, and all these cases occur.

If the curve has geometric genus $>1$, then $\mathrm{G}$ is
of order $2$ or $3$.
\end{thm}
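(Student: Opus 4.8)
The plan is to first make the action regular, then deduce cyclicity from a purely local computation along the fixed curve, and finally bound the order by combining adjunction with the $\mathrm{G}$-equivariant minimal model classification. First I would regularize $\mathrm{G}$: by the $\mathrm{G}$-surface dictionary of \cite{DolgachevIskovskikh} there exist a smooth rational surface $S$, an isomorphism $\mathrm{G}\simeq\mathrm{H}\subset\mathrm{Aut}(S)$, and a birational map $\varphi\colon S\dashrightarrow\mathbb{P}^2_\mathbb{C}$ conjugating $\mathrm{H}$ to $\mathrm{G}$. Since the curve $C$ fixed by $\mathrm{G}$ has positive geometric genus, Lemma~\ref{lem:notconj} guarantees that its birational class, in particular its geometric genus $q$, is a conjugacy invariant; after a further $\mathrm{G}$-equivariant resolution I may assume that the fixed curve on $S$ is a smooth projective curve $C$ of genus $q$ fixed pointwise by $\mathrm{H}$.

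Next I would prove that $\mathrm{G}$ is cyclic. Fix a general smooth point $p\in C$. Each $g\in\mathrm{H}$ has finite order and fixes $p$, so the commuting family $\mathrm{H}$ can be simultaneously linearized at $p$; choosing local coordinates $(x,y)$ with $C=\{y=0\}$, the condition that $g$ fixes $C$ pointwise forces $g\colon(x,y)\mapsto(x,\zeta_g y)$ with $\zeta_g$ a root of unity. The assignment $g\mapsto\zeta_g$ is a homomorphism $\mathrm{H}\to\mathbb{C}^*$, and it is injective: if $\zeta_g=1$ then the linear part of $g$ at $p$ is the identity, whence $g=\mathrm{id}$ by linearization at the fixed point. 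Thus $\mathrm{H}\hookrightarrow\mathbb{C}^*$ is finite, hence cyclic; let $g$ be a generator of order $n$, so that $\zeta:=\zeta_g$ is a primitive $n$-th root of unity acting on the normal bundle $N_{C/S}$.

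The hard part will be the order bound. Here I would exploit that $S$ is rational, so its holomorphic Lefschetz number $\sum_i(-1)^i\,\mathrm{tr}(g^*\mid H^{0,i}(S))$ equals $1$. The fixed locus of $g$ is the disjoint union of $C$ and finitely many isolated points $P_1,\dots,P_k$, and the holomorphic Lefschetz fixed-point formula reads $1=a(C)+\sum_j a(P_j)$, where the curve term $a(C)$ is an explicit rational function of $q$, $C^2$ and $\zeta$, while each $a(P_j)$ is determined by the pair of local eigenvalues of $g$ at $P_j$. Independently, passing to the rational quotient $Y=S/\langle g\rangle$ (smooth along the image $\bar C$, since $g$ acts as a pseudo-reflection there) and using the ramification formula $K_S=\pi^*K_Y+(n-1)C$ together with adjunction $(K_S+C)\cdot C=2q-2$ relates $n$, $q$, $C^2$ and the invariants of $\bar C$ on $Y$. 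To convert these relations into the sharp inequality $n\le 6$, and into the refinement that $q>1$ forces $n\in\{2,3\}$, I would run the $\mathrm{G}$-equivariant minimal model program and invoke Theorem~\ref{thm:classificationminimalGsurfaces}: the minimal model is either a del Pezzo surface with $\mathrm{Pic}(S)^{\mathrm{G}}\simeq\mathbb{Z}$ or a conic bundle with $\mathrm{Pic}(S)^{\mathrm{G}}\simeq\mathbb{Z}^2$, and on each of these the $g$-invariant curves of positive genus (anticanonical genus-one curves, and the genus-$3$, genus-$4$ branch curves of the Geiser and Bertini involutions of Theorem~\ref{thm:BayleBeauville}) are tightly constrained. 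The genuine obstacle is the bookkeeping of the isolated fixed points and the induced quotient singularities, which is exactly what pure intersection theory on $S$ cannot see and what forces the case-by-case del Pezzo and conic-bundle analysis; this is also where the exclusion of order $4$ for $q>1$, sharpening Castelnuovo's bound $2,3,4$, is obtained.

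Finally, for the assertion that all these cases occur I would exhibit explicit models: Geiser and Bertini involutions on del Pezzo surfaces of degrees $2$ and $1$ fix curves of genus $3$ and $4$, so order $2$ with $q>1$ occurs; Jonqui\`eres involutions of degree $\nu$ fix hyperelliptic curves of every genus $\nu-2$; an order-$3$ automorphism fixing a plane curve of genus $>1$ realizes the remaining $q>1$ case; and automorphisms of orders $4$, $5$ and $6$ of rational elliptic (Halphen) surfaces fixing a smooth fiber pointwise realize the genus-one cases, with $\zeta$ of the corresponding order acting on the normal bundle.
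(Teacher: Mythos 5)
Your regularization step and your cyclicity argument are correct, and the latter is exactly the standard opening of Blanc's proof: once $\mathrm{G}$ acts biregularly on a smooth model fixing pointwise a smooth curve $C$ of genus $q\geq 1$, the linearization at a fixed point $p\in C$ is trivial on $T_pC$, so the character $g\mapsto\zeta_g$ on the normal direction is a homomorphism into $\mathbb{C}^*$, injective because a finite-order map with unipotent differential at a fixed point is the identity; hence $\mathrm{G}$ is cyclic. (For calibration: this survey states the theorem without proof, citing Blanc, whose argument runs through the classification of minimal $\mathrm{G}$-pairs of Theorem~\ref{thm:classificationminimalGsurfaces} pushed to a complete case list.) The genuine gap is that the order bounds --- $n\leq 6$, and $n\leq 3$ when $q>1$ --- which are the actual content of the theorem, are never derived. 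The holomorphic Lefschetz relation $1=a(C)+\sum_j a(P_j)$ and the ramification/adjunction identities you display are only \emph{necessary} conditions relating $n$, $q$, $C^2$ and local eigenvalues; they admit numerical solutions for, say, $n=7$, $q=1$ or $n=4$, $q=2$, and do not by themselves exclude anything. The exclusions come from geometry on the minimal model, which you explicitly defer as ``bookkeeping'': on a conic bundle, a pointwise fixed curve dominating the base forces $g$ to act fiberwise, and if the fixed curve is an irreducible bisection the normal character, being constant along a connected curve, must equal both the multiplier $\zeta$ at one fixed point of $g|_{\text{fiber}}$ and $\zeta^{-1}$ at the other, whence $n=2$ (Jonqui\`eres) --- this is how all orders $\geq 3$ with large genus die on that side; on a del Pezzo surface with $\mathrm{Pic}(S)^{\mathrm{G}}\simeq\mathbb{Z}$ one has $C\equiv -aK_S$, adjunction gives $2q-2=a(a-1)K_S^2$, and one must then go through the automorphism groups degree by degree. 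That analysis \emph{is} the proof; as written, your text is an accurate roadmap to it rather than a proof.

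A secondary error sits in the existence part. There is no order-$3$ automorphism fixing pointwise a \emph{plane} curve of genus $>1$: the fixed locus of an element of $\mathrm{PGL}(3,\mathbb{C})$ is a union of lines and points, and the nonlinear order-$3$ classes realized on cubic surfaces fix elliptic curves. The correct $q>1$, $n=3$ example lives on a del Pezzo surface of degree $1$: in $\mathbb{P}(3,1,1,2)$ take $z_0^2=z_3^3+z_1^6+z_2^6$ with the order-$3$ automorphism $z_3\mapsto\omega z_3$ ($\omega$ a primitive cube root of unity, which preserves $z_3^3$); its fixed curve $z_3=0$, i.e. $z_0^2=z_1^6+z_2^6$, is hyperelliptic of genus $2$. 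The orders $4$ and $6$ genus-one cases are the explicit del Pezzo automorphisms recalled in Chapter~\ref{chapter:dyn}, and order $5$ also occurs on a degree-$1$ del Pezzo surface; your Halphen-surface phrasing is a harmless variant obtained by blowing up the base-point of $|-K_S|$. This slip is minor next to the main gap, but since ``all these cases occur'' is part of the statement, it needs repairing.
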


\begin{thm}[\cite{Blanc:CRAS}]
Let $\mathrm{G}$ be a finite abelian subgroup of 
$\mathrm{Bir}(\mathbb{P}^2_\mathbb{C})$.

The following assertions are equivalent:
\begin{itemize}
\item[$\diamond$] any 
$g\in\mathrm{G}\smallsetminus\big\{\mathrm{id}\big\}$ 
does not fix a curve of positive geometric genus;

\item[$\diamond$] the group $\mathrm{G}$ is birationally 
conjugate to a subgroup of 
$\mathrm{Aut}(\mathbb{P}^2_\mathbb{C})$, or to a subgroup 
of 
$\mathrm{Aut}(\mathbb{P}^1_\mathbb{C}\times\mathbb{P}^1_\mathbb{C})$, 
or to the group isomorphic to 
$\faktor{\mathbb{Z}}{2\mathbb{Z}}\times\faktor{\mathbb{Z}}{4\mathbb{Z}}$ 
generated by the two follo\-wing elements 
\begin{align*}
& (z_0:z_1:z_2)\mapsto\big(z_1z_2:z_0z_1:-z_0z_2\big), \\
&  (z_0:z_1:z_2)\mapsto\big(z_1z_2(z_1-z_2):z_0z_2(z_1+z_2):z_0z_1(z_1+z_2)\big).
\end{align*}
\end{itemize}
Furthermore this last group is conjugate neither to a subgroup of $\mathrm{Aut}(\mathbb{P}^2_\mathbb{C})$,
nor to a subgroup of $\mathrm{Aut}(\mathbb{P}^1_\mathbb{C}\times\mathbb{P}^1_\mathbb{C})$.
\end{thm}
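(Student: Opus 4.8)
The plan is to pass from the birational group $\mathrm{G}$ to a regular action on a rational surface and exploit the classification of minimal $\mathrm{G}$-surfaces together with the involution classification of Bayle and Beauville. First I would dispose of the implication ``second $\Rightarrow$ first.'' The property that no non-trivial element fixes a curve of positive geometric genus is a conjugacy invariant by Lemma \ref{lem:notconj}, so it suffices to check it on the three model groups. For subgroups of $\mathrm{Aut}(\mathbb{P}^2_\mathbb{C})=\mathrm{PGL}(3,\mathbb{C})$ and of $\mathrm{Aut}(\mathbb{P}^1_\mathbb{C}\times\mathbb{P}^1_\mathbb{C})$ this is immediate: a non-trivial automorphism of these surfaces fixes only rational curves (lines, conics, fibers, diagonals), as one reads off from the eigenspace decomposition of a semisimple element and the factorwise linearity of the action. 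For the explicit $\mathbb{Z}/2\mathbb{Z}\times\mathbb{Z}/4\mathbb{Z}$ group I would verify directly on the two displayed generators and their products that every non-trivial element is conjugate either to a linear map or to a Jonqui\`eres involution of degree $2$, both of which fix only rational curves; this is a finite explicit computation.

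For the implication ``first $\Rightarrow$ second'' I would realize $\mathrm{G}$ as a group of biregular automorphisms of a \emph{minimal} rational $\mathrm{G}$-surface $(S,\psi)$, together with a birational map $\phi\colon S\dashrightarrow\mathbb{P}^2_\mathbb{C}$ conjugating $\psi(\mathrm{G})$ to $\mathrm{G}$. By Theorem \ref{thm:classificationminimalGsurfaces} the surface $S$ is either a del Pezzo surface with $\mathrm{Pic}(S)^{\mathrm{G}}\simeq\mathbb{Z}$, or it carries a $\mathrm{G}$-equivariant conic bundle structure with $\mathrm{Pic}(S)^{\mathrm{G}}\simeq\mathbb{Z}^2$. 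The genus hypothesis must then sharply restrict both cases.

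In the del Pezzo case I would invoke Bayle--Beauville: on a del Pezzo surface of degree $1$ (resp. $2$) the Bertini (resp. Geiser) involution is central in the automorphism group and fixes a curve of genus $4$ (resp. $3$). A careful analysis of how an abelian group achieves $\mathrm{Pic}(S)^{\mathrm{G}}\simeq\mathbb{Z}$ in these small degrees then produces an element of $\mathrm{G}$ fixing a positive-genus curve, which contradicts the hypothesis and excludes degrees $1$ and $2$; the remaining del Pezzo surfaces, under the combined abelian, minimality and genus constraints, should reduce $\mathrm{G}$-equivariantly to $\mathbb{P}^2_\mathbb{C}$ or $\mathbb{P}^1_\mathbb{C}\times\mathbb{P}^1_\mathbb{C}$. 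In the conic bundle case, each involution of $\mathrm{G}$ acting non-trivially on the fibers fixes a bisection which is a double cover of $\mathbb{P}^1_\mathbb{C}$, and such a cover has positive genus as soon as it ramifies over at least four points. The genus hypothesis therefore bounds the ramification of the invariant curves, so that either the bundle degenerates to a Hirzebruch surface (contractible equivariantly to $\mathbb{P}^2_\mathbb{C}$ or $\mathbb{P}^1_\mathbb{C}\times\mathbb{P}^1_\mathbb{C}$) or it is precisely the exceptional conic bundle whose abelian automorphisms give the displayed $\mathbb{Z}/2\mathbb{Z}\times\mathbb{Z}/4\mathbb{Z}$.

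Finally, to establish that the exceptional group is conjugate neither to a subgroup of $\mathrm{PGL}(3,\mathbb{C})$ nor of $\mathrm{Aut}(\mathbb{P}^1_\mathbb{C}\times\mathbb{P}^1_\mathbb{C})$, I cannot use the fixed-curve invariant, since all three fix only rational curves; instead I would argue through minimality of the $\mathrm{G}$-surface. The exceptional group acts minimally on a conic bundle that is neither a del Pezzo surface nor a Hirzebruch surface, and by the $\mathrm{G}$-surface correspondence this minimal model is a birational invariant of the $\mathrm{G}$-action, distinct from the models $\mathbb{P}^2_\mathbb{C}$ and $\mathbb{P}^1_\mathbb{C}\times\mathbb{P}^1_\mathbb{C}$; hence no equivariant birational isomorphism can exist. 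I expect the main obstacle to be exactly the conic bundle case of the forward direction: extracting from the ``no positive-genus fixed curve'' hypothesis the precise ramification bounds on the singular fibers, and then showing that beyond the trivial Hirzebruch cases there survives \emph{one and only one} abelian family, matching the two given generators. This equivariant analysis of singular fibers and of the involutions fixing bisections is where the bulk of the casework lies.
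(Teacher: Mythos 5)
Your skeleton — pass to a minimal rational $\mathrm{G}$-surface, split via Theorem \ref{thm:classificationminimalGsurfaces} into the del Pezzo and conic-bundle branches, and use Bayle--Beauville together with the conjugacy invariance of fixed curves (Lemma \ref{lem:notconj}) — is indeed the frame of Blanc's argument; note that this text states the theorem without proof, citing \cite{Blanc:CRAS}, so the comparison is with that cited classification. Against it, your proposal has two genuine gaps. The most serious is the non-conjugacy argument at the end: it rests on the claim that the minimal $\mathrm{G}$-surface is a birational invariant of the action, and that claim is false. Minimal models of a $\mathrm{G}$-surface are not unique within its equivariant birational class; they are related by equivariant elementary links. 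A Jonqui\`eres involution of degree $2$ acts minimally on a conic bundle (case $(2)$ of Theorem \ref{thm:minimalpairs}), yet by Theorem \ref{thm:BayleBeauville} it is conjugate to a linear involution; more generally, linearisable abelian groups routinely admit minimal conic-bundle models that are neither $\mathbb{P}^2_\mathbb{C}$ nor $\mathbb{P}^1_\mathbb{C}\times\mathbb{P}^1_\mathbb{C}$. Your argument would therefore ``prove'' non-linearisability of the degree-$2$ Jonqui\`eres involution as well. To exclude conjugacy one must either produce an actual invariant or decompose any putative equivariant birational map into elementary links \`a la Iskovskikh and check that none connects the $\mathbb{Z}/2\mathbb{Z}\times\mathbb{Z}/4\mathbb{Z}$ pair to $(\mathbb{P}^2_\mathbb{C},\cdot)$ or $(\mathbb{P}^1_\mathbb{C}\times\mathbb{P}^1_\mathbb{C},\cdot)$; this link analysis is precisely where Blanc's proof does real work, and it is not a formal consequence of minimality.

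The second gap is that the forward implication is asserted exactly where the theorem lives. In the del Pezzo branch, degrees $1$ and $2$ are not excluded merely because the Bertini and Geiser involutions fix curves of genus $4$ and $3$: a minimal abelian $\mathrm{G}$ with $\mathrm{rk}\,\mathrm{Pic}(S)^{\mathrm{G}}=1$ need not contain these central involutions, so you must show each such $\mathrm{G}$ contains \emph{some} element fixing a positive-genus curve. Degree $4$ is the truly delicate case: on a del Pezzo quartic each of the five coordinate involutions fixes an elliptic curve, but subgroups of $\Big(\faktor{\mathbb{Z}}{2\mathbb{Z}}\Big)^4$ avoiding all five can still act minimally, and one must prove these are conjugate to subgroups of $\mathrm{Aut}(\mathbb{P}^1_\mathbb{C}\times\mathbb{P}^1_\mathbb{C})$. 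In the conic-bundle branch, the ramification bound you extract (a fixed bisection of genus $0$ has at most two branch points) does not by itself isolate a unique surviving family: one has to run through the finite abelian subgroups of the Jonqui\`eres group, eliminating or identifying each, and you explicitly defer this casework — but without it the equivalence is simply unproved, since hidden non-linearisable families could a priori survive the ramification constraint. A minor point: ``exceptional conic bundle'' is a technical term in this text (its automorphism group contains $\mathbb{C}^*$ and is infinite), and the minimal model of the $\mathbb{Z}/2\mathbb{Z}\times\mathbb{Z}/4\mathbb{Z}$ group — which, as your fibration computation would show, lies in $\mathcal{J}$ and acts on a conic bundle through a Klein group on the base — is not one of these.
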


In \cite{Beauville} Beauville gave the isomorphism classes 
of $p$-elementary subgroups of the plane Cremona 
group. Blanc generalized it as follows:

\begin{thm}[\cite{Blanc:CRAS}]
The isomorphism classes of finite abelian subgroups of the
plane Cremona group are the following:
\begin{itemize}
\item[$\diamond$] $\faktor{\mathbb{Z}}{m\mathbb{Z}}\times\faktor{\mathbb{Z}}{n\mathbb{Z}}$ 
for any integers $m$, $n\geq 1$,

\item[$\diamond$] $\faktor{\mathbb{Z}}{2n\mathbb{Z}}\times\Big(\faktor{\mathbb{Z}}{2\mathbb{Z}}\Big)^2$ 
for any integer $n\geq 1$,

\item[$\diamond$] $\Big(\faktor{\mathbb{Z}}{4\mathbb{Z}}\Big)^2\times\faktor{\mathbb{Z}}{2\mathbb{Z}}$,

\item[$\diamond$] $\Big(\faktor{\mathbb{Z}}{3\mathbb{Z}}\Big)^3$,

\item[$\diamond$] $\Big(\faktor{\mathbb{Z}}{2\mathbb{Z}}\Big)^4$.
\end{itemize}
\end{thm}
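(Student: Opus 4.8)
The plan is to pass from the abstract group $\mathrm{G}$ to its action on a surface, using the theory of $\mathrm{G}$-surfaces developed by Dolgachev and Iskovskikh. Given a finite abelian subgroup $\mathrm{G}\subset\mathrm{Bir}(\mathbb{P}^2_\mathbb{C})$, the correspondence between conjugacy classes of finite subgroups and birational isomorphism classes of rational $\mathrm{G}$-surfaces lets me assume, after conjugation, that $\mathrm{G}$ acts biregularly on a \emph{minimal} rational $\mathrm{G}$-surface $S$. By Theorem \ref{thm:classificationminimalGsurfaces} there are exactly two cases: either $S$ carries a $\mathrm{G}$-equivariant conic bundle structure $\pi\colon S\to\mathbb{P}^1_\mathbb{C}$ with $\mathrm{Pic}(S)^{\mathrm{G}}\simeq\mathbb{Z}^2$, or $S$ is a del Pezzo surface with $\mathrm{Pic}(S)^{\mathrm{G}}\simeq\mathbb{Z}$. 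The entire argument then splits along this dichotomy, and in each case the task reduces to listing the finite abelian subgroups of the relevant automorphism groups.

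For the \emph{sufficiency} direction I would exhibit each group on the list explicitly. The groups $\faktor{\mathbb{Z}}{m\mathbb{Z}}\times\faktor{\mathbb{Z}}{n\mathbb{Z}}$ all embed into the maximal torus $\mathrm{D}_2\simeq(\mathbb{C}^*)^2$ of $\mathrm{Aut}(\mathbb{P}^2_\mathbb{C})=\mathrm{PGL}(3,\mathbb{C})$, so they are realized by diagonal linear automorphisms. The remaining families are sporadic and come from special surfaces: $(\faktor{\mathbb{Z}}{2\mathbb{Z}})^4$ and $\faktor{\mathbb{Z}}{2n\mathbb{Z}}\times(\faktor{\mathbb{Z}}{2\mathbb{Z}})^2$ arise from del Pezzo surfaces of degree $4$ and from conic bundles with prescribed $2$-torsion, $(\faktor{\mathbb{Z}}{3\mathbb{Z}})^3$ from the Fermat cubic surface (whose automorphism group contains $(\faktor{\mathbb{Z}}{3\mathbb{Z}})^3\rtimes\mathfrak{S}_4$, compare Theorem \ref{thm:blanc11cases}), and $(\faktor{\mathbb{Z}}{4\mathbb{Z}})^2\times\faktor{\mathbb{Z}}{2\mathbb{Z}}$ from a suitable del Pezzo surface of small degree. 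Writing down explicit generators and checking the relations is routine.

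The \emph{necessity} direction is the substance of the proof. In the conic bundle case, projection onto the base gives an exact sequence $1\to\mathrm{G}_F\to\mathrm{G}\to\mathrm{G}_B\to1$ where $\mathrm{G}_B\subset\mathrm{Aut}(\mathbb{P}^1_\mathbb{C})=\mathrm{PGL}(2,\mathbb{C})$ is the image on the base and $\mathrm{G}_F$ is the subgroup acting fiberwise. Since $\mathrm{G}$ is abelian, $\mathrm{G}_B$ is a finite abelian subgroup of $\mathrm{PGL}(2,\mathbb{C})$, hence cyclic or $(\faktor{\mathbb{Z}}{2\mathbb{Z}})^2$; analyzing how $\mathrm{G}_F$ (fiberwise automorphisms fixing the two sections or permuting the components of singular fibers) can be extended by $\mathrm{G}_B$ produces precisely $\faktor{\mathbb{Z}}{m\mathbb{Z}}\times\faktor{\mathbb{Z}}{n\mathbb{Z}}$ and $\faktor{\mathbb{Z}}{2n\mathbb{Z}}\times(\faktor{\mathbb{Z}}{2\mathbb{Z}})^2$. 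In the del Pezzo case one runs through the degrees $d=K_S^2$ from $1$ to $9$: for $d\geq 5$ and $d=8$ the automorphism group is well understood ($\mathrm{PGL}(3,\mathbb{C})$, $\mathrm{Aut}(\mathbb{P}^1_\mathbb{C}\times\mathbb{P}^1_\mathbb{C})$, and so on), while for $d\leq 4$ one uses the finite groups $\mathrm{Aut}(S)$ tabulated by Dolgachev and Iskovskikh, acting faithfully on $\mathrm{Pic}(S)$ and hence on a root lattice of type $\mathrm{D}_5$, $\mathrm{E}_6$, $\mathrm{E}_7$, $\mathrm{E}_8$ (the orthogonal complement of $K_S$). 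Extracting the abelian subgroups of these finite groups yields the three sporadic families and nothing more.

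The hard part will be the del Pezzo analysis in degrees $\leq 4$: one must determine, for every isomorphism class of such surface, all its finite abelian automorphism subgroups, and then check that no abelian group outside the stated list survives — in particular that the rank of the $2$-torsion never exceeds $4$, that $(\faktor{\mathbb{Z}}{3\mathbb{Z}})^3$ is maximal among the $3$-groups, and that a group such as $(\faktor{\mathbb{Z}}{4\mathbb{Z}})^2\times\faktor{\mathbb{Z}}{2\mathbb{Z}}$ really occurs while larger ones do not. A subtle point throughout is that two different minimal surfaces may carry the same abstract group, so one must collect \emph{isomorphism classes} of groups rather than conjugacy classes of subgroups; conversely one has to rule out that a listed group is secretly forced to fix a non-rational curve in a way that would obstruct its realization, which is where Blanc's earlier results on fixed curves and the generalized Castelnuovo bound enter the picture.
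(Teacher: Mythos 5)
Your proposal is correct in outline and follows essentially the same route as the proof the paper cites: the survey states this theorem without proof, referring to \cite{Blanc:CRAS, Blanc:these}, and Blanc's argument there is exactly your reduction to minimal $\mathrm{G}$-surfaces via Theorem \ref{thm:classificationminimalGsurfaces}, followed by the enumeration of finite abelian subgroups in the conic bundle case (where $\mathrm{G}_F\subset\mathrm{PGL}(2,\mathbb{C}(t))$ and $\mathrm{G}_B\subset\mathrm{PGL}(2,\mathbb{C})$ are each cyclic or Klein) and in the del Pezzo case, together with explicit realizations of the sporadic groups. One small correction: the normalized fixed curve is an invariant used to separate \emph{conjugacy} classes, not an obstruction to realizing \emph{isomorphism} classes, so your closing worry is unnecessary — existence is settled by the explicit constructions alone (and note that groups such as $(\mathbb{Z}/2\mathbb{Z})^4$ may also arise in the conic bundle case, which is harmless since every group produced there still lies on the stated list).
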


\medskip

In \cite{Blanc:cyclic} the author finished the classification 
of cyclic subgroups of finite order
of the Cremona group, up to conjugation. He gave natural 
parameterizations of conjugacy classes, related
to fixed curves of positive genus. The classification of 
finite cyclic subgroups that are not of Jonqui\`eres 
type was almost achieved in \cite{DolgachevIskovskikh}.
Let us explain what we mean by "almost":
\begin{itemize}
\item [$\diamond$] a list of representative elements 
is available;

\item[$\diamond$] explicit forms are given;

\item[$\diamond$] the dimension of the varieties which 
parameterize the conjugacy classes are provided. 
\end{itemize}

What is missing ? A finer geometric description of the 
algebraic variety parameterizing conjugacy classes
according to \cite{DolgachevIskovskikh}.

The case of groups conjugate to subgroups of 
$\mathrm{Aut}(\mathbb{P}^2_\mathbb{C})$ was 
studied in \cite{BeauvilleBlanc}: there is 
exactly one conjugacy class for each order $n$, 
representated by 
\[
\langle(z_0:z_1:z_2)\mapsto(z_0:z_1:\mathrm{e}^{2\mathbf{i}\pi/n}z_2)\rangle.
\]

Blanc completed the classification of cyclic 
subgroups of $\mathrm{Bir}(\mathbb{P}^2_\mathbb{C})$ 
of finite order (\cite{Blanc:cyclic}). For groups of 
Jonqui\`eres type he applied cohomology group theory
and algebraic tools to the group $\mathcal{J}$ and 
got:

\begin{thm}[\cite{Blanc:cyclic}]
\begin{itemize}
\item[$\diamond$] For any positive integer $m$, there 
exists a unique conjugacy class of linearisable 
elements of order $n$, represented by the automorphism
\[
(z_0:z_1:z_2)\mapsto(z_0:z_1:\mathrm{e}^{2\mathbf{i}\pi/n}z_2).
\]
\item[$\diamond$] Any non-linearisable Jonqui\`eres 
element of finite order of 
$\mathrm{Bir}(\mathbb{P}^2_\mathbb{C})$ has order $2n$, 
for some positive integer $n$, and is conjugate to 
an element $\phi$, such that $\phi$ and $\phi^n$ are
of the following form
\[
\phi\colon(z_0,z_1)\dashrightarrow\left(\mathrm{e}^{2\mathbf{i}\pi/n}z_0,\frac{a(z_0)z_1+(-1)^\delta p(z_0^n)b(z_0)}{b(z_0)z_1+(-1)^\delta a(z_0)}\right)
\]
\[
\phi^n\colon(z_0,z_1)\dashrightarrow\left(z_0,\frac{p(z_0^n)}{z_1}\right)
\]
where $a$, $b$ belongs to $\mathbb{C}(z_0)$, $\delta$ 
to $\big\{0,\pm 1\big\}$, and $p\in\mathbb{C}[z_0]$ is 
a polynomial with simple roots.

The curve $\Gamma$ of equation $z_1^2=p(z_0^n)$, 
pointwise fixed by $\phi^n$, is hyperelliptic, of 
positive geometric genus, and admits a $(2:1)$-map
$\phi_1^2\colon\Gamma\to\mathbb{P}^1_\mathbb{C}$.
The action of $\phi$ on $\Gamma$ has order $n$, 
and is not a root of the involution associated 
to any $\phi_1^2$.

Furthermore the above association yields a 
parameterization of the conjugacy classes of 
non-linearisable Jonqui\`eres elements of 
order $2n$ of 
$\mathrm{Bir}(\mathbb{P}^2_\mathbb{C})$ by
isomorphism classes of pairs $(\Gamma,\psi)$, 
where 
\begin{itemize}
\item[$\diamond$] $\Gamma$ is a smooth hyperelliptic 
curve of positive genus, 

\item[$\diamond$] $\psi\in\mathrm{Aut}(\Gamma)$
is an automorphism of order $n$, which 
preserves the fibres of the $\phi_1^2$ and is 
not a root of the involution associated to 
the $\phi_1^2$.
\end{itemize}
\end{itemize}
\end{thm}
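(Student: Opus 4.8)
The plan is to separate the two assertions according to the nature of the element, treating first the linearisable elements and then the genuinely Jonqui\`eres ones by exploiting the semidirect product structure $\mathcal{J}\simeq\mathrm{PGL}(2,\mathbb{C})\rtimes\mathrm{PGL}(2,\mathbb{C}(z_0))$. For the first bullet, a linearisable element of order $n$ is by definition conjugate into $\mathrm{Aut}(\mathbb{P}^2_\mathbb{C})=\mathrm{PGL}(3,\mathbb{C})$, and the uniqueness of the conjugacy class follows directly from the case of automorphisms treated in \cite{BeauvilleBlanc}: every finite-order element of $\mathrm{PGL}(3,\mathbb{C})$ is diagonalisable, and a scalar normalisation reduces it to $(z_0:z_1:z_2)\mapsto(z_0:z_1:\mathrm{e}^{2\mathbf{i}\pi/n}z_2)$. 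So essentially all the work is in the second bullet.

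For a non-linearisable Jonqui\`eres element $\phi$ of finite order, first I would project $\phi$ to its action on the base of the rational fibration via the homomorphism $\mathcal{J}\to\mathrm{PGL}(2,\mathbb{C})$. This image has finite order, hence after conjugation by an element of $\mathrm{PGL}(2,\mathbb{C})$ it may be assumed to be $z_0\mapsto\mathrm{e}^{2\mathbf{i}\pi/n}z_0$ for some $n\geq 1$. Consequently $\phi^n$ lies in the kernel $\mathrm{PGL}(2,\mathbb{C}(z_0))$ and is again of finite order. A finite-order element of $\mathrm{PGL}(2,\mathbb{C}(z_0))$ is, up to conjugacy in that group, either the identity or an involution; in the former case one checks that $\phi$ becomes linearisable, so the non-linearisable hypothesis forces $\phi^n$ to be a nontrivial involution. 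Normalising this involution inside $\mathrm{PGL}(2,\mathbb{C}(z_0))$ puts it in the stated shape $\phi^n\colon(z_0,z_1)\dashrightarrow(z_0,p(z_0^n)/z_1)$ with $p$ having simple roots, and, because $\phi$ commutes with $\phi^n$, solving the commutation relation for the fibre component of $\phi$ yields the explicit M\"obius form $(a(z_0)z_1+(-1)^\delta p(z_0^n)b(z_0))/(b(z_0)z_1+(-1)^\delta a(z_0))$ displayed in the statement. This also shows the total order of $\phi$ is exactly $2n$.

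Next I would identify the geometric invariants. The fixed locus of the involution $\phi^n$ is the curve $\Gamma\colon z_1^2=p(z_0^n)$, which is visibly hyperelliptic through the projection $(z_0,z_1)\mapsto z_0$, and its geometric genus is controlled by the degree of $p$ and the integer $n$; the crucial point is that $\phi$ being non-linearisable is equivalent to $\Gamma$ having positive genus, since a rational fixed curve can be contracted to make $\phi$ biregular on $\mathbb{P}^2_\mathbb{C}$. The induced automorphism $\psi$ of $\Gamma$ has order $n$ and preserves the hyperelliptic pencil, and the condition that $\psi$ is not a root of the hyperelliptic involution is precisely what guarantees that $\phi$ is not conjugate to a linearisable element of lower order. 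I would then build the forward map sending the conjugacy class of $\phi$ to the isomorphism class of the pair $(\Gamma,\psi)$; its well-definedness rests on Lemma \ref{lem:notconj}, which ensures that the non-rational fixed curve and the group action on it are invariants of the conjugacy class in $\mathrm{Bir}(\mathbb{P}^2_\mathbb{C})$.

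The main obstacle will be establishing that this association is a genuine bijection. Surjectivity requires, given an abstract pair $(\Gamma,\psi)$ with the stated properties, the explicit reconstruction of a Jonqui\`eres element realising it, which is a direct but delicate back-substitution into the normal forms. Injectivity is the harder direction: two elements producing isomorphic pairs must be shown to be conjugate, and this is where one must control the ambiguity in the normalisations carried out above. Concretely, the residual conjugations preserving the normal form are governed by the centraliser of $\phi^n$ inside $\mathcal{J}$, and classifying the possible $\phi$ lying over a fixed $\phi^n$ amounts to a cohomological computation in $H^1$ of a suitable cyclic group acting on $\mathrm{PGL}(2,\mathbb{C}(z_0))$ --- this is the algebraic heart of \cite{Blanc:cyclic}. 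I would therefore devote the bulk of the argument to showing that distinct cohomology classes correspond exactly to non-isomorphic pairs $(\Gamma,\psi)$, thereby closing the classification.
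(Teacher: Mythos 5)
Your overall architecture --- project to the base, normalise the base action to $z_0\mapsto\mathrm{e}^{2\mathbf{i}\pi/n}z_0$, analyse $\phi^n$ inside the kernel $\mathrm{PGL}(2,\mathbb{C}(z_0))$, and close the classification with a non-abelian $H^1$ computation --- is indeed the route the survey attributes to Blanc (the paper records no proof of this theorem, only that it is obtained by "cohomology group theory and algebraic tools" applied to $\mathcal{J}$). But your pivotal classification step is false: it is not true that a finite-order element of $\mathrm{PGL}(2,\mathbb{C}(z_0))$ is, up to conjugacy in that group, the identity or an involution. The map $(z_0,z_1)\mapsto(z_0,\mathrm{e}^{2\mathbf{i}\pi/m}z_1)$ lies in the kernel and has order $m$ for every $m$. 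The correct dichotomy, which your argument needs, is this: if the two fixed points of a finite-order element on the generic fibre are rational over $\mathbb{C}(z_0)$, the element is diagonalisable and conjugate to $z_1\mapsto\zeta z_1$ with $\zeta$ a constant root of unity; if instead they are conjugate over a quadratic extension $\mathbb{C}(z_0)(\sqrt{p})$, the Galois involution swapping them conjugates $\mathrm{diag}(\zeta)$ to $\mathrm{diag}(\zeta^{-1})$, forcing $\zeta^2=1$, so only involutions $z_1\mapsto p(z_0)/z_1$ arise as twisted forms. Hence $\phi^n$ can perfectly well be a nontrivial diagonalisable element of large order, and this case has to be eliminated by proving that such a $\phi$ is linearisable --- not by your (incorrect) dichotomy.

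This exposes the deeper gap: twice you dispose of the "no positive-genus fixed curve" situation with one-line claims ("one checks that $\phi$ becomes linearisable"; "a rational fixed curve can be contracted to make $\phi$ biregular"). Neither is routine. Contracting invariant rational curves produces an automorphism of some rational surface, not of $\mathbb{P}^2_\mathbb{C}$, and the passage from there to $\mathrm{PGL}(3,\mathbb{C})$ --- equivalently, the statement that a Jonqui\`eres element of finite order fixing no curve of positive geometric genus is linearisable --- is essentially the difficult half of the theorem; for composite order it cannot be read off the fixed curve of $\phi$ alone, which is exactly why the $\mathrm{NFCA}$ invariant exists (compare the order-$6$ cubic-surface example in the paper, where the element fixes only four points while its square fixes an elliptic curve). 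This hard content is hidden in, not resolved by, the $H^1$ computation you postpone to the injectivity step. A smaller instance of the same over-compression occurs in your first bullet: diagonalisation yields $\mathrm{diag}(1,\zeta^a,\zeta^b)$, and no "scalar normalisation" reduces this to $(z_0:z_1:\mathrm{e}^{2\mathbf{i}\pi/n}z_2)$; collapsing the weights requires genuinely birational (e.g.\ monomial) conjugations, which is the actual content of the result you cite.
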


The analogous result for finite Jonqui\`eres
cyclic groups holds, and follows directly from
this statement.

Note that if the curve $\Gamma$ has geometric genus 
$\geq 2$, the $\phi_1^2$ is unique, otherwise
it is not.

\smallskip

Blanc also dealt with cyclic subgroups of 
$\mathrm{Bir}(\mathbb{P}^2_\mathbb{C})$ that 
are not of Jonqui\`eres type. Using the 
classification of \cite{DolgachevIskovskikh}
and some classical tools on surfaces and 
curves he provided the parameterization
of the $29$ families of such groups. 

The classification is divided in two parts:
\begin{itemize}
\item[$\diamond$] find representative families
and prove that each group is conjugate to 
one of these;

\item[$\diamond$] parameterize the conjugacy
classes in each families by algebraic 
varieties.
\end{itemize}

For cyclic groups of prime order the varieties 
parameterizing the conjugacy classes are the 
moduli spaces of the non-rational curves 
fixed by the groups. Blanc needs to generalize
it, by looking for the non-rational curves 
fixed by the non-trivial elements of the group.
Let us give the definition of this invariant
which provides a simple way to decide whether
two cyclic groups are conjugate. Recall that
a birational map of the complex projective
plane fixes a curve if it restricts
to the identity on the curve.

\begin{defi}
Let $\phi$ be a non-trivial element of 
$\mathrm{Bir}(\mathbb{P}^2_\mathbb{C})$ of 
finite order. 

If no curve of positive geometric genus is
(pointwise) fixed by $\phi$, then 
$\mathrm{NFC}(\phi)=\emptyset$; otherwise
$\phi$ fixes exactly one curve of positive
genus (\cite{BayleBeauville,deFernex}), 
and $\mathrm{NFC}(\phi)$ is then the 
isomorphism class of the normalization of
this curve.
\end{defi}

Two involutions $\phi$, $\psi$ of 
$\mathrm{Bir}(\mathbb{P}^2_\mathbb{C})$
are conjugate if and only if 
$\mathrm{NFC}(\phi)=\mathrm{NFC}(\psi)$
(\emph{see} \S\ref{sec:ordre2}).
If $\phi$, $\psi$ are elements of 
$\mathrm{Bir}(\mathbb{P}^2_\mathbb{C})$
of the same prime order, then 
$\langle\phi\rangle$ and $\langle\psi\rangle$
are conjugate if and only if 
$\mathrm{NFC}(\phi)=\mathrm{NFC}(\psi)$
(\emph{see} \cite{BeauvilleBlanc, deFernex}).
This is no longer the case for cyclic groups
of composite order as observed in 
\cite{BeauvilleBlanc}: the automorphism $\phi$
of the cubic surface $z_0^3+z_1^3+z_2^3+z_3^3=0$
in $\mathbb{P}^3_\mathbb{C}$ given by 
\[
\phi\colon(z_0:z_1:z_2:z_3)\mapsto\big(z_1:z_0:z_2:\zeta z_3\big) 
\]
where $\zeta^3=1$, $\zeta\not=1$ has only four
fixed points while $\phi^2$ 
fixes the elliptic curve $z_3=0$.

\begin{defi}
Let $\phi\in\mathrm{Bir}(\mathbb{P}^2_\mathbb{C})$ 
be a non-trivial element of finite order $n$. 
Then $\mathrm{NFCA}(\phi)$ is the sequence of 
isomorphism classes of pairs 
\[
\Big(\mathrm{NFC}(\phi^k),\phi_{\vert\mathrm{NFC}(\phi^k)}\Big)_{k=1}^{n-1}
\]
where $\phi_{\vert\mathrm{NFC}(\phi^k)}$ is 
the automorphism induced by $\phi$ on the 
curve $\mathrm{NFC}(\phi^k)$ (if 
$\mathrm{NFC}(\phi^k)=\emptyset$, 
then $\phi$ acts trivially on it).
\end{defi}

Let us now give a simple way to decide whether 
two cyclic subgroups of finite order of 
$\mathrm{Bir}(\mathbb{P}^2_\mathbb{C})$ are
conjugate:

\begin{thm}[\cite{Blanc:cyclic}]
Let $\mathrm{G}$ and $\mathrm{H}$ be two 
cyclic subgroups of 
$\mathrm{Bir}(\mathbb{P}^2_\mathbb{C})$ of 
the same finite order. Then $\mathrm{G}$
and $\mathrm{H}$ are conjugate in 
$\mathrm{Bir}(\mathbb{P}^2_\mathbb{C})$
if and only if 
$\mathrm{NFCA}(\phi)=\mathrm{NFCA}(\psi)$
for some generators $\phi$ of $\mathrm{G}$
and $\psi$ of $\mathrm{H}$.
\end{thm}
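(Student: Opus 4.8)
The final theorem asserts that two cyclic subgroups $\mathrm{G}$, $\mathrm{H}$ of $\mathrm{Bir}(\mathbb{P}^2_\mathbb{C})$ of the same finite order are conjugate if and only if $\mathrm{NFCA}(\phi)=\mathrm{NFCA}(\psi)$ for generators $\phi$, $\psi$. Let me sketch how I would attack this.

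The plan is to prove the two implications separately, with the forward direction (conjugacy $\Rightarrow$ equality of $\mathrm{NFCA}$) being an invariance statement and the reverse direction (equality $\Rightarrow$ conjugacy) resting on the full classification of finite cyclic subgroups. For the invariance, suppose $\mathrm{H}=\zeta\circ\mathrm{G}\circ\zeta^{-1}$ for some $\zeta\in\mathrm{Bir}(\mathbb{P}^2_\mathbb{C})$; choosing a generator $\phi$ of $\mathrm{G}$, the element $\psi=\zeta\circ\phi\circ\zeta^{-1}$ generates $\mathrm{H}$. For each $k$ the map $\psi^k=\zeta\circ\phi^k\circ\zeta^{-1}$ fixes pointwise the proper transform $\zeta\big(\mathrm{NFC}(\phi^k)\big)$; since a curve of positive geometric genus is never contracted by a birational map, this transform is again a curve birational to $\mathrm{NFC}(\phi^k)$, hence with isomorphic normalization, and by uniqueness of the fixed curve of positive genus (\cite{BayleBeauville, deFernex}) it is exactly $\mathrm{NFC}(\psi^k)$. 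As $\phi$ commutes with $\phi^k$ it preserves $\mathrm{NFC}(\phi^k)$, and $\zeta$ conjugates $\phi|_{\mathrm{NFC}(\phi^k)}$ to $\psi|_{\mathrm{NFC}(\psi^k)}$; this is precisely an isomorphism of the pairs recorded by $\mathrm{NFCA}$, so $\mathrm{NFCA}(\phi)=\mathrm{NFCA}(\psi)$. This is the refinement of Lemma \ref{lem:notconj} that keeps track of the induced automorphism.

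For the reverse direction I would first use the trichotomy furnished by \cite{Blanc:cyclic} and \cite{DolgachevIskovskikh}: every finite cyclic subgroup is, up to conjugacy, linearisable (conjugate into $\mathrm{Aut}(\mathbb{P}^2_\mathbb{C})$), of Jonqui\`eres type, or one of the $29$ non-Jonqui\`eres non-linearisable families. The first step is to show that $\mathrm{NFCA}$ detects the type. A linearisable element of finite order fixes only lines and points, so all its powers have empty $\mathrm{NFC}$ and its $\mathrm{NFCA}$ is the entirely empty sequence; conversely, a group with empty $\mathrm{NFCA}$ is conjugate to a subgroup of $\mathrm{Aut}(\mathbb{P}^2_\mathbb{C})$ or of $\mathrm{Aut}(\mathbb{P}^1_\mathbb{C}\times\mathbb{P}^1_\mathbb{C})$ by the abelian-group criterion, and in either case a cyclic group of given order is conjugate in $\mathrm{Bir}(\mathbb{P}^2_\mathbb{C})$ to the standard diagonal one of that order (\cite{BeauvilleBlanc}). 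Thus in the linearisable stratum the order alone already determines the class, and equality of $\mathrm{NFCA}$ (forced to be empty) gives conjugacy.

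In the Jonqui\`eres stratum the argument is the most transparent: a non-linearisable Jonqui\`eres element has order $2n$, and its $n$-th power pointwise fixes a hyperelliptic curve $\Gamma$ of positive genus on which $\phi$ acts by an order-$n$ automorphism $\psi$. Hence the entry of $\mathrm{NFCA}(\phi)$ at index $n$ is exactly the pair $(\Gamma,\psi)$, which by \cite{Blanc:cyclic} is a complete invariant of the conjugacy class; so equality of $\mathrm{NFCA}$ reproduces this pair and yields conjugacy, once one checks (using that hyperelliptic fixed curves characterise this type) that two groups with equal $\mathrm{NFCA}$ cannot lie in different strata. The main obstacle I expect is the non-Jonqui\`eres stratum of composite order. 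For prime order, $\mathrm{NFC}$ of a single generator already determines conjugacy (\cite{BeauvilleBlanc, deFernex}), but for composite order this fails — as the cubic-surface example shows, where $\phi$ has only isolated fixed points while $\phi^2$ fixes an elliptic curve — and it is precisely here that one must exploit the full sequence $\big(\mathrm{NFC}(\phi^k),\phi|_{\mathrm{NFC}(\phi^k)}\big)_{k}$ rather than $\mathrm{NFC}(\phi)$ alone. The plan is to pass to the minimal $\mathrm{G}$-surface (a del Pezzo surface or conic bundle) underlying the class and to verify, using the parameterizing varieties of \cite{DolgachevIskovskikh} family by family, that the isomorphism class of this $\mathrm{G}$-surface — and hence the conjugacy class — is recovered from the moduli of the fixed curves of the powers together with the recorded automorphism actions; establishing this recovery uniformly, rather than by a case check across the $29$ families, is the delicate part of the proof.
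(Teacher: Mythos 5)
First, a point of calibration: the survey you were given does not prove this theorem at all — it states it as a result of \cite{Blanc:cyclic} and only describes the strategy of that paper ("find representative families and prove that each group is conjugate to one of these; parameterize the conjugacy classes in each family by algebraic varieties"). Your forward direction (conjugacy implies equality of $\mathrm{NFCA}$) is correct and complete: the fixed curve of positive genus cannot be contracted by the conjugating map $\zeta$, uniqueness of the fixed curve of positive genus (\cite{BayleBeauville, deFernex}) identifies its strict transform with $\mathrm{NFC}(\psi^k)$, and $\zeta$ transports the induced automorphism; this is exactly the refinement of Lemma \ref{lem:notconj} that one would expect, and the empty case follows by the symmetric argument applied to $\zeta^{-1}$. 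Your overall plan for the converse — stratify into linearisable, Jonqui\`eres, and the non-Jonqui\`eres families, then show $\mathrm{NFCA}$ detects the stratum and the class within each stratum — is also the route Blanc actually takes.

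The genuine gap is that in the converse you have only carried out the two strata where the survey hands you the needed parameterization, and you have deferred precisely the step that constitutes the substance of \cite{Blanc:cyclic}. Concretely: (i) for the linearisable stratum, your reduction via the abelian-group criterion leaves unproved the claim that a finite cyclic subgroup of $\mathrm{Aut}(\mathbb{P}^1_\mathbb{C}\times\mathbb{P}^1_\mathbb{C})$ with trivial $\mathrm{NFCA}$ is conjugate in $\mathrm{Bir}(\mathbb{P}^2_\mathbb{C})$ to the standard linear one — \cite{BeauvilleBlanc} covers prime order only, so this needs an argument (ruling-preserving elements are diagonal and linearisable; ruling-swapping elements need separate treatment); (ii) for the non-Jonqui\`eres stratum you state that one should verify, "family by family" or ideally "uniformly", that the moduli data parameterizing the conjugacy classes (as in \cite{DolgachevIskovskikh} and Theorem \ref{thm:blanc11cases}-type lists) is recovered from $\big(\mathrm{NFC}(\phi^k),\phi_{\vert\mathrm{NFC}(\phi^k)}\big)_k$ — but you neither carry out the case check nor supply the uniform argument, and no such uniform argument is known: Blanc's proof does go through the families one by one, using the explicit fixed curves of all powers and the recorded actions, and the cross-family separation (e.g.\ distinguishing an order-$6$ del Pezzo automorphism whose cube fixes an elliptic curve from a Jonqui\`eres element of order $6$ fixing an isomorphic curve, where only the induced action discriminates) is exactly where the content lies. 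A smaller circularity to repair: in the Jonqui\`eres case you should cite the parameterization of conjugacy classes by pairs $(\Gamma,\psi)$ — which is a separate theorem of \cite{Blanc:cyclic}, quoted in the survey — rather than the $\mathrm{NFCA}$ theorem itself. As it stands, your text is a faithful roadmap of Blanc's proof with the easy half done, not a proof of the converse.
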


%%%%%%%%%%%%%%%%%%%%%%%%%%%%%%%%%%%%%%%%%%%%%%%%%%%%%%%%%%%%%%%%%%%%%%%%%%%%%%%%%%%%%%%%%%%%%%%%%%%%%%%%%%%%%%%%%%%
%%%%%%%%%%%%%%%%%%%%%%%%%%%%%%%%%%%%%%%%%%%%%%%%%%%%%%%%%%%%%%%%%%%%%%%%%%%%%%%%%%%%%%%%%%%%%%%%%%%%%%%%%%%%%%%%%%%
%%%%%%%%%%%%%%%%%%%%%%%%%%%%%%%%%%%%%%%%%%%%%%%%%%%%%%%%%%%%%%%%%%%%%%%%%%%%%%%%%%%%%%%%%%%%%%%%%%%%%%%%%%%%%%%%%%%
%CHAPTER
%%%%%%%%%%%%%%%%%%%%%%%%%%%%%%%%%%%%%%%%%%%%%%%%%%%%%%%%%%%%%%%%%%%%%%%%%%%%%%%%%%%%%%%%%%%%%%%%%%%%%%%%%%%%%%%%%%%
%%%%%%%%%%%%%%%%%%%%%%%%%%%%%%%%%%%%%%%%%%%%%%%%%%%%%%%%%%%%%%%%%%%%%%%%%%%%%%%%%%%%%%%%%%%%%%%%%%%%%%%%%%%%%%%%%%%
%%%%%%%%%%%%%%%%%%%%%%%%%%%%%%%%%%%%%%%%%%%%%%%%%%%%%%%%%%%%%%%%%%%%%%%%%%%%%%%%%%%%%%%%%%%%%%%%%%%%%%%%%%%%%%%%%%%

\chapter{Uncountable subgroups of the Cremona group}\label{chapter:uncountable}

\bigskip
\bigskip

All the results of this Chapter have been proved 
without the construction
of the action of the isometric action of 
$\mathrm{Bir}(\mathbb{P}^2_\mathbb{C})$ on the 
hyperbolic space $\mathbb{H}^\infty$ and 
we keep this point of view. Different 
ideas and tools are used in any section: foliations
and group theory are the main ingredients.

\smallskip

The study of the automorphis groups starts a long time ago. For
instance for classical groups let us see \cite{Dieudonne}.
Consider the automorphism group of the complex projective space
$\mathbb{P}^n_\mathbb{C}$; it is $\mathrm{PGL}(n+1,\mathbb{C})$.
The automorphism group of $\mathrm{PGL}(n+1,\mathbb{C})$ is
generated by the inner automorphisms, the involution
$M\mapsto M^\vee$ and the action of the field automorphisms
of $\mathbb{C}$. In $1963$ Whittaker showed that any isomorphism
between homeomorphism groups of connex topological varieties is
induced by an homeomorphism between the varieties themselves
(\cite{Whittaker}). In $1982$ Filipkiewicz
proved a similar statement for differentiable varieties. 

\begin{thm}[\cite{Filipkiewicz}]\label{thm:Filipkiewicz}
  Let $V$, $W$ be two connected varieties of class $\mathcal{C}^k$,
  resp. $\mathcal{C}^j$. Let $\mathrm{Diff}^k(V)$ be the
group of $\mathcal{C}^k$-diffeomorphisms of $V$.
  Let $\phi\colon\mathrm{Diff}^k(V)\to\mathrm{Diff}^j(V)$ be an
  isomorphism group. Then $k=j$ and there exists a
  $\mathcal{C}^k$-difffeomorphism $\psi\colon V\to W$ such that
  \[
  \phi(\varphi)=\psi\circ\varphi\circ\psi^{-1}\qquad\qquad\forall\,\varphi\in\mathrm{Bir}(\mathbb{P}^2_\mathbb{C}).
  \]
\end{thm}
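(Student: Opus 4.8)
The plan is to show that the entire $\mathcal{C}^k$-structure of $V$ is encoded in the abstract group $G=\mathrm{Diff}^k(V)$, so that any abstract isomorphism $\phi\colon G\to\mathrm{Diff}^j(W)$ is forced to come from a diffeomorphism. The crucial device is a purely group-theoretic characterization of the support subgroups $G_U=\{f\in G: f|_{V\smallsetminus U}=\mathrm{id}\}$ of diffeomorphisms supported in an open set $U\subseteq V$, together with the point stabilizers $G_p=\{f\in G: f(p)=p\}$. First I would assemble the dictionary relating these subgroups to the geometry of $V$: diffeomorphisms with disjoint supports commute, so $[G_U,G_{U'}]=\{\mathrm{id}\}$ whenever $U\cap U'=\emptyset$; conversely every nontrivial $f$ meets its support at some point, and that support is detected by recording which $G_U$ contain $f$. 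This is what will let one recover the inclusion lattice of open sets, hence the topology.

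The heart of the reconstruction is to isolate the subgroups $G_U$ (equivalently, the point stabilizers) intrinsically, using only the group law. Here I would invoke the simplicity theorems of Mather and Thurston: the identity component $\mathrm{Diff}^k_c(U)^0$ of the compactly supported group is simple (for $k\neq\dim V+1$), which eliminates degenerate candidate subgroups and makes the commutation-and-support structure rigid. With this rigidity, the goal is to prove that $\phi$ must carry the family $\{G_U\}$ bijectively onto the analogous family for $W$ while respecting inclusion. Since $V$ and $W$ are each recovered as topological spaces from these inclusion lattices, $\phi$ then induces a homeomorphism $\psi\colon V\to W$, and by construction $\psi$ intertwines the two actions, i.e. $\phi(f)=\psi\circ f\circ\psi^{-1}$ at the level of homeomorphisms.

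It then remains to upgrade $\psi$ from a homeomorphism to a $\mathcal{C}^k$-diffeomorphism and to force $k=j$. The strategy is local: near a point one can recognize group-theoretically the flows of complete vector fields and their orbit structure, and the way $G$ acts on a neighborhood recovers the germ of the $\mathcal{C}^k$-atlas. Because $\psi$ conjugates the $G$-action on $V$ to the $\mathrm{Diff}^j(W)$-action on $W$, comparing how each group acts on charts should force $\psi$ to be of class $\mathcal{C}^k$ and $\psi^{-1}$ of class $\mathcal{C}^j$; the existence of such mutually inverse maps of these regularities, together with the fact that the diffeomorphism type pins down the differentiability index, yields $k=j$ and $\psi\in\mathrm{Diff}^k(V)$.

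I expect the main obstacle to be the intrinsic, isomorphism-invariant characterization of the support subgroups $G_U$: making the algebraic description tight enough that an \emph{arbitrary} abstract isomorphism is compelled to preserve it, and then the delicate bootstrapping from a purely topological conjugacy to a conjugacy of the prescribed smoothness class. Both steps depend essentially on the deep simplicity and perfectness results for compactly supported diffeomorphism groups, which is also why the hypotheses on $k$ and $j$ — and ultimately the equality $k=j$ — enter the argument in a genuine way.
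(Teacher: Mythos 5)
A preliminary remark: the paper does not prove this theorem. It is quoted from \cite{Filipkiewicz} purely as an analogy motivating Theorem \ref{thm:autofbir} on $\mathrm{Aut}(\mathrm{Bir}(\mathbb{P}^2_\mathbb{C}))$, so your attempt can only be measured against Filipkiewicz's published argument, not against anything in the text. (Note also that the statement as printed carries two typos you silently inherited: the target group should be $\mathrm{Diff}^j(W)$, and the final quantifier should read $\forall\,\varphi\in\mathrm{Diff}^k(V)$.)

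Your road map is the correct and standard one --- it is Whittaker's reconstruction scheme as executed by Filipkiewicz: characterize point stabilizers (equivalently the lattice of support subgroups) in purely group-theoretic terms, deduce that the isomorphism is induced by a homeomorphism $\psi$, then upgrade the regularity of $\psi$. But two of your steps are genuine gaps, not routine details. First, your rigidity input is Mather's simplicity of $\mathrm{Diff}^k_c(U)^0$, which you yourself restrict to $k\neq\dim V+1$; simplicity in the exceptional case $k=\dim V+1$ is still open, while the theorem holds for all $k$, so your argument as stated does not cover the full statement. What the actual proof leans on is Epstein's unconditional theorem that the \emph{commutator subgroup} of such groups of compactly supported diffeomorphisms is simple, which is valid for every $k$; this is the device that makes the characterization of stabilizers isomorphism-invariant without excluding any regularity.

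Second, the regularity bootstrap cannot proceed by ``recognizing group-theoretically the flows of complete vector fields'': one-parameter flows are not algebraically distinguishable inside the abstract group (an abstract isomorphism need not preserve any topology a priori), so there is no way to ``recover the germ of the $\mathcal{C}^k$-atlas'' before $\psi$ exists. The actual argument runs in the opposite direction: once the homeomorphism $\psi$ with $\phi(f)=\psi\circ f\circ\psi^{-1}$ is in hand, every $\mathcal{C}^k$ flow on $V$ is transported to a one-parameter group of $\mathcal{C}^j$ diffeomorphisms of $W$; after proving continuity in the time parameter (itself a nontrivial point), the Bochner--Montgomery theorem forces this to be a genuine $\mathcal{C}^j$ flow, and expressing $\psi$ locally through sufficiently many such flows shows $\psi$ is $\mathcal{C}^j$; the symmetric argument applied to $\psi^{-1}$ gives $\mathcal{C}^k$, and conjugating a diffeomorphism of the critical regularity then forces $k=j$. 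So your proposal is a faithful outline of the right strategy, with the two hard steps correctly located but, as you concede, not carried out --- and at the point where you do commit to a mechanism (algebraic detection of flows, Mather simplicity as the rigidity input) the mechanism is either unavailable or insufficient for the full range of $k$.
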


The description of uncountable maximal abelian subgroups of the
plane Cremona group 
allows to characterize the automorphisms group of
$\mathrm{Bir}(\mathbb{P}^2_\mathbb{C})$:

\begin{thm}[\cite{Deserti:abelien}]\label{thm:autofbir}
  Let $\varphi$ be an automorphism of $\mathrm{Bir}(\mathbb{P}^2_\mathbb{C})$.
  There exist a birational self map $\psi$ of the complex
  projective plane and an automorphism $\kappa$ of the field
  $\mathbb{C}$ such that
  \[
  \varphi(\phi)={}^{\kappa}\!\,(\psi\circ\phi\circ\psi^{-1})\qquad\qquad\forall\,\phi\in\mathrm{Bir}(\mathbb{P}^2_\mathbb{C}).
  \]
  In other words the non-inner automorphism group of
  $\mathrm{Bir}(\mathbb{P}^2_\mathbb{C})$ can be identified
  with the automorphisms of the field $\mathbb{C}$.
\end{thm}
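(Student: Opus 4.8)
The quickest route uses the endomorphism theorem already proved. An automorphism $\varphi$ of $\mathrm{Bir}(\mathbb{P}^2_\mathbb{C})$ is in particular a non-trivial endomorphism, so Theorem \ref{thm:hopfian} provides $\psi\in\mathrm{Bir}(\mathbb{P}^2_\mathbb{C})$ and a field embedding $\kappa\colon\mathbb{C}\hookrightarrow\mathbb{C}$ with $\varphi(\phi)={}^{\kappa}\!\,(\psi\circ\phi\circ\psi^{-1})$ for every $\phi$. Conjugation by $\psi$ being bijective, $\varphi$ is onto if and only if $\phi\mapsto{}^{\kappa}\!\,\phi$ is onto; but the image of this last map consists of birational maps all of whose coefficients lie in the subfield $\kappa(\mathbb{C})$, so surjectivity forces $\kappa(\mathbb{C})=\mathbb{C}$, i.e. $\kappa$ is an automorphism of $\mathbb{C}$. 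This settles the statement, but it rests on the full $\mathrm{SL}(3,\mathbb{Z})$-rigidity of Theorem \ref{thm:IMRN}. The purpose of \S\ref{sec:autbir} is to give the independent argument of \cite{Deserti:abelien}, which I outline now.

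First I would install the governing invariant: a group automorphism $\varphi$ carries an uncountable maximal abelian subgroup onto another one, since being abelian, being maximal among abelian subgroups, and being uncountable are all preserved. The plan is therefore to classify uncountable maximal abelian subgroups of $\mathrm{Bir}(\mathbb{P}^2_\mathbb{C})$ up to conjugacy and to decide which member of this (short) list can receive the diagonal torus $\mathrm{D}_2=\{(z_0,z_1)\mapsto(\alpha z_0,\beta z_1)\}\simeq(\mathbb{C}^*)^2$. The torus is singled out by purely group-theoretic features respected by $\varphi$: it is divisible and contains elements of every finite order (all roots of unity), which already separates it from the torsion-free additive families such as the translations $(\mathbb{C},+)^2$ and from the Jonqui\`eres-type abelian groups. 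Consequently $\varphi(\mathrm{D}_2)$ is conjugate to $\mathrm{D}_2$, and after replacing $\varphi$ by its composition with conjugation by a suitable $\psi_0$, I may assume $\varphi(\mathrm{D}_2)=\mathrm{D}_2$.

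Next I would recover the field automorphism. The restriction $\varphi|_{\mathrm{D}_2}$ is merely an abstract automorphism of $(\mathbb{C}^*)^2$, which on its own is far too flexible; rigidity comes from the ambient structure. The normalizer of $\mathrm{D}_2$ is the monomial group $\mathrm{Mon}(2,\mathbb{C})=\mathrm{GL}(2,\mathbb{Z})\ltimes\mathrm{D}_2$, which $\varphi$ must preserve, and pairing the torus with the unipotent one-parameter subgroups it normalizes forces $\varphi$ to respect simultaneously the multiplicative law on $\mathbb{C}^*$, the additive law on $\mathbb{C}$, and their compatibility. A natural way to finish is then to reduce, after a further inner adjustment, to $\mathrm{PGL}(3,\mathbb{C})=\mathrm{Aut}(\mathbb{P}^2_\mathbb{C})$ and invoke the rigidity used in the proof of Theorem \ref{thm:hopfian}: the restriction $\varphi|_{\mathrm{PGL}(3,\mathbb{C})}$ comes, up to an inner automorphism, either from a field automorphism $\kappa$ or from the composition of $\kappa$ with the exterior involution $A\mapsto A^\vee$ (\cite{BorelTits, Dieudonne}).

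Finally I would propagate the description to all of $\mathrm{Bir}(\mathbb{P}^2_\mathbb{C})$ and eliminate the $A\mapsto A^\vee$ branch. Exactly as in the endomorphism argument, I would pin down $\varphi(\sigma_2)$ through the relations it satisfies with elements of $\mathrm{PGL}(3,\mathbb{C})$: using $(\ell\circ\sigma_2)^3=\mathrm{id}$ for a well-chosen $\ell\in\mathrm{PGL}(3,\mathbb{C})$ (\cite{Gizatullin:relations}), the involution branch yields a contradiction while the field-automorphism branch gives $\varphi(\sigma_2)=\sigma_2$. Since $\mathrm{Bir}(\mathbb{P}^2_\mathbb{C})=\langle\mathrm{PGL}(3,\mathbb{C}),\,\sigma_2\rangle$ by Noether and Castelnuovo, this determines $\varphi$ on a generating set and produces $\varphi(\phi)={}^{\kappa}\!\,(\psi\circ\phi\circ\psi^{-1})$ everywhere. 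The main obstacle is the very first step: classifying uncountable maximal abelian subgroups and proving that the torus can map only to a conjugate of itself. This is where the real work lies, because abstractly $(\mathbb{C}^*)^2$ carries an enormous automorphism group, and one must exploit the ambient Cremona geometry — normalizers, torsion behaviour, fixed curves — to constrain the image; the divisibility-and-torsion invariants sketched above are the crux of that constraint.
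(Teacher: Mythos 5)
Your first paragraph is a complete and correct proof of the statement, and it is non-circular within this text: Theorem \ref{thm:hopfian} is proved via the $\mathrm{SL}(3,\mathbb{Z})$-rigidity of Theorem \ref{thm:IMRN} and nowhere uses Theorem \ref{thm:autofbir}. One small caveat on your surjectivity step: coefficients of a birational map are only defined up to a common scalar, so the clean detection is by ratios — if $\kappa(\mathbb{C})\neq\mathbb{C}$, choose $t\notin\kappa(\mathbb{C})$ and observe that $(z_0:z_1:z_2)\mapsto(tz_0:z_1:z_2)$ cannot equal ${}^{\kappa}\!\,\phi$, since the relevant ratio of coefficients of ${}^{\kappa}\!\,\phi$ lies in the subfield $\kappa(\mathbb{C})$. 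This quick route is, in substance, the method of \cite{Deserti:IMRN}, which the paper itself records as the second of the ``two different methods''; \S\ref{sec:autbir} exists to present the other one, from \cite{Deserti:abelien}. What the latter buys is independence from lattice rigidity: no Theorem \ref{thm:IMRN}, no \cite{BorelTits}, and no Gizatullin relation — after the field automorphisms are built by hand inside $\mathrm{Aff}(\mathbb{C})$ (Lemma \ref{lem:autbir}) and the normalization is done, the paper checks that the involutions $(z_0,z_1)\mapsto\left(z_0,\frac{1}{z_1}\right)$ and $(z_0,z_1)\mapsto(z_1,z_0)$ are $\varphi$-invariant, and $\varphi(\sigma_2)=\sigma_2$ follows because $\sigma_2$ is the square of their composition; Noether--Castelnuovo finishes.

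Your outline of that second route, however, has a genuine gap at exactly the step you identify as the crux, and it runs in the opposite direction from the paper. You anchor on the torus $\mathrm{D}_2$ and claim that divisibility together with torsion of every order singles it out among uncountable maximal abelian subgroups. This invariant fails as stated: $\mathrm{D}_1\times\mathrm{T}_2=\big\{(z_0,z_1)\mapsto(\alpha z_0,z_1+\beta)\big\}\simeq\mathbb{C}^*\times\mathbb{C}$ is an uncountable \emph{maximal} abelian subgroup (its centralizer is itself, by the computation pattern of Lemma \ref{lem:com}) which is divisible and contains elements of every finite order; likewise torsion alone does not separate $\mathrm{D}_2$ from the Jonqui\`eres-type groups, since $\mathcal{J}_m\simeq\mathbb{C}(z_1)^*$ contains all constant scalings, hence all roots of unity. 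One can repair the abstract invariant (torsion subgroup $(\mathbb{Q}/\mathbb{Z})^2$ versus $\mathbb{Q}/\mathbb{Z}$), but the deeper obstruction remains: the classification quoted in this chapter does \emph{not} classify the torsion-containing uncountable maximal abelian subgroups up to conjugacy — ``$\mathrm{G}$ contains an element of finite order'' is a single undifferentiated branch — so nothing available lets you conclude that $\varphi(\mathrm{D}_2)$ is conjugate to $\mathrm{D}_2$; and abstract isomorphism of the image with $(\mathbb{C}^*)^2$ does not make it an algebraic subgroup, so Theorem \ref{thm:demazure} does not apply. This is precisely why the paper anchors on the torsion-free branch, where the classification does pin down conjugacy classes: it gets $\varphi(\mathrm{J}_a)=\mathrm{J}_a$ up to conjugacy with $(z_0,z_1)\mapsto(z_0+1,z_1)$ invariant, and only then does Proposition \ref{pro:autbir} recover the invariance of $\mathrm{T}_1$, $\mathrm{T}_2$, $\mathrm{D}_1$, $\mathrm{D}_2$ — the torus comes out at the end of the argument, not at the beginning. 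The remainder of your sketch (two induced automorphisms of $\mathrm{Aff}(\mathbb{C})$ producing $\kappa_1$, $\kappa_2$, then equality of the two field automorphisms forced by the commutation of $(z_0,z_1)\mapsto(z_0+z_1,z_1)$ with $(z_0,z_1)\mapsto(\alpha z_0,\alpha z_1)$) matches the paper once the anchoring is corrected; your Borel--Tits-plus-Gizatullin finish is a transplant from the proof of Theorem \ref{thm:hopfian} rather than part of the \cite{Deserti:abelien} argument, though it is not wrong.
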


In the first section we study uncountable maximal
abelian subgroups of $\mathrm{Bir}(\mathbb{P}^2_\mathbb{C})$;
let $\mathrm{G}$ be such a group. We give an outline
of the proofs of the following results:
\begin{itemize}
\item[$\diamond$] any element of $\mathrm{G}$ preserves
at least one singular holomorphic foliation;

\item[$\diamond$] either no element of $\mathrm{G}$ 
is torsion-free, or $\mathrm{G}$ leaves invariant 
a holomorphic foliation;

\item[$\diamond$] if $\mathrm{G}$ is torsion-free, 
then $\mathrm{G}$ is conjugate to a subgroup of 
the Jonqui\`eres group.
\end{itemize}

In the second section we describe the automorphism
group of $\mathrm{Bir}(\mathbb{P}^2_\mathbb{C})$.
A study of the torsion-free maximal abelian subgroups
of the Jonqui\`eres group shows that the 
group 
\[
\mathcal{J}_a=\big\{(z_0,z_1)\dashrightarrow(z_0+a(z_1),z_1)\,\vert\,a\in\mathbb{C}(z_1)\big\}
\]
is invariant by any automorphism of
$\mathrm{Bir}(\mathbb{P}^2_\mathbb{C})$. Some 
work on special subgroups of~$\mathcal{J}_a$
achieves the description of 
$\mathrm{Aut}(\mathrm{Bir}(\mathbb{P}^2_\mathbb{C}))$.

\smallskip

In a session problems during the International Congress of Mathematicians 
Mumford proposed the following (\cite{Mumford}):

\begin{quotation}
"Let $\mathrm{G}=\mathrm{Aut}_{\mathbb{C}}\mathbb{C}(z_0,z_1)$ be the Cremona 
group (...) the problem is to topologize $\mathrm{G}$ and associate to
it a Lie algebra consisting, roughly, of those meromorphic vector 
fields $D$ on $\mathbb{P}^2_\mathbb{C}$ which "integrate" into an 
analytic family of Cremona transformations."
\end{quotation}

In the third section we deal with a contribution in 
that direction: the description of $1$-parameter 
subgroups of quadratic birational self maps of 
$\mathbb{P}^2_\mathbb{C}$. 

\smallskip

In \cite{Ghys} Ghys showed that any nilpotent
subgroup of $\mathrm{Diff}^\omega(\mathbb{S}^2)$ 
is metabelian; as a consequence he got that if 
$\Gamma$ is a subgroup of finite index of 
$\mathrm{SL}(n,\mathbb{Z})$, $n\geq 4$, then 
any morphism from $\Gamma$ into 
$\mathrm{Diff}^\omega(\mathbb{S}^2)$ has finite
image. In the same spirit the nilpotent subgroups
of the plane Cremona group
are described in the fourth section: if $\Gamma$
is a strongly nilpotent group of length $>1$, 
then either $\mathrm{G}$ is metabelian up to 
finite index, or $\mathrm{G}$ is a torsion 
group. As a consequence as soon as $n\geq 5$
no subgroup of $\mathrm{SL}(n,\mathbb{Z})$ 
of finite index embeds into 
$\mathrm{Bir}(\mathbb{P}^2_\mathbb{C})$.

\smallskip

The description of centralizers of discrete
dynamical systems is an important problem 
in real/complex dynamics. Julia
(\cite{Julia1, Julia2}) then Ritt
(\cite{Ritt}) show that the set
\[
\mathrm{Cent}(\phi)=\big\{\psi\colon\mathbb{P}^1_\mathbb{C}\to\mathbb{P}^1_\mathbb{C}\,\vert\,\psi\circ \phi=\phi\circ \psi\big\}
\]
of rational functions that commute to a 
rational function $\phi$ coincide in general
\footnote{except monomial maps $z\mapsto z^k$, 
Tchebychev polynomials, 
Latt\`es examples ...} 
with $\big\{\phi_0^n\,\vert\,n\in\mathbb{N}\big\}$ 
where $\phi_0$ is an element of $\mathrm{Cent}(\phi)$.
In the $60$'s Smale considered generic
diffeomorphisms $\phi$ of compact manifolds and 
asked if its centralizer coincides with 
$\big\{\phi^n\,\vert\,n\in\mathbb{Z}\big\}$.
Many mathematicians have considered this 
question (for instance 
\cite{BonattiCrovisierWilkinson, Palis, PalisYoccoz, PalisYoccoz2}). 
The fifth section deals with centralizers of 
elliptic birational maps, Jonqui\`eres 
twists and Halphen twists.

\bigskip
\bigskip

%%%%%%%%%%%%%%%%%%%%%%%%%%%%%%%%%%%%%%%%%%%%%%%%%%%%%%%%%%%%%%%%%%%%%%%%%%%%%%%%%%%%%%%%%%%%%%%%%%%%%%%%%%%%%%%%%%%
%%%%%%%%%%%%%%%%%%%%%%%%%%%%%%%%%%%%%%%%%%%%%%%%%%%%%%%%%%%%%%%%%%%%%%%%%%%%%%%%%%%%%%%%%%%%%%%%%%%%%%%%%%%%%%%%%%%
% section
%%%%%%%%%%%%%%%%%%%%%%%%%%%%%%%%%%%%%%%%%%%%%%%%%%%%%%%%%%%%%%%%%%%%%%%%%%%%%%%%%%%%%%%%%%%%%%%%%%%%%%%%%%%%%%%%%%%
%%%%%%%%%%%%%%%%%%%%%%%%%%%%%%%%%%%%%%%%%%%%%%%%%%%%%%%%%%%%%%%%%%%%%%%%%%%%%%%%%%%%%%%%%%%%%%%%%%%%%%%%%%%%%%%%%%%

\section{Uncountable maximal abelian subgroups of $\mathrm{Bir}(\mathbb{P}^2_\mathbb{C})$}\label{sec:uncountableabelian}

Let $S$ be a complex compact surface. A \textsl{foliation}\index{defi}{foliation}
$\mathcal{F}$ on $S$ is given by a family $(\chi_i)_i$ of holomorphic vector
fields with isolated zero defined on some open cover $(\mathcal{U}_i)_i$ of 
$S$. The vector fields $\chi_i$ have to satisfy the following conditions: 
there exist $g_{ij}\in\mathcal{O}^*(\mathcal{U}_i\cap\mathcal{U}_j)$ such 
that $\chi_i=g_{ij}\chi_j$ on $\mathcal{U}_i\cap\mathcal{U}_j$. Let us remark
that a non-trivial meromorphic vector field on $S$ defines such a foliation.

\begin{lem}[\cite{Deserti:abelien}]
Let $\mathrm{G}$ be an uncountable abelian subgroup of 
$\mathrm{Bir}(\mathbb{P}^2_\mathbb{C})$. There exists a rational vector 
field $\chi$ such that 
\[ 
\varphi_*\chi=\chi\qquad\qquad\forall\,\varphi\in\mathrm{G}.
\]
In particular $\mathrm{G}$ preserves a foliation.
\end{lem}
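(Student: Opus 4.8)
The plan is to produce an invariant rational vector field by a counting/dimension argument, exploiting the uncountability of $\mathrm{G}$ together with the fact that $\mathrm{Bir}(\mathbb{P}^2_\mathbb{C})$ is an increasing countable union of the algebraic varieties $\mathrm{Bir}_{\leq d}(\mathbb{P}^2_\mathbb{C})$. Since $\mathrm{G}=\bigcup_d\big(\mathrm{G}\cap\mathrm{Bir}_{\leq d}(\mathbb{P}^2_\mathbb{C})\big)$ is uncountable and there are only countably many values of $d$, there is some $d$ for which $\mathrm{G}_d:=\mathrm{G}\cap\mathrm{Bir}_{\leq d}(\mathbb{P}^2_\mathbb{C})$ is uncountable, hence Zariski-dense in a positive-dimensional subvariety of the algebraic variety $\mathrm{Bir}_{\leq d}(\mathbb{P}^2_\mathbb{C})$. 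The closure $\overline{\mathrm{G}_d}$ then contains a positive-dimensional algebraic family, and because $\mathrm{G}$ is abelian this family should be contained in the closure of $\mathrm{G}$, which is itself an abelian group by (the abelian-closure argument analogous to) Lemma~\ref{lem:plus}.

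First I would pass to the positive-dimensional algebraic subgroup obtained this way: taking the Zariski closure $\overline{\langle\mathrm{G}_d\rangle}$ inside $\mathrm{Bir}(\mathbb{P}^2_\mathbb{C})$ yields, by Corollary~\ref{cor:agree}, a genuine algebraic group $\mathrm{K}$ of bounded degree, which is abelian (its dense subgroup $\mathrm{G}_d$ is abelian, and commutativity is a closed condition), positive-dimensional, and affine (every algebraic subgroup of $\mathrm{Bir}(\mathbb{P}^2_\mathbb{C})$ is affine). An affine positive-dimensional commutative algebraic group has a nontrivial connected component $\mathrm{K}^0$ which is a commutative affine connected group, hence isomorphic to a product $(\mathbb{C}^*)^a\times(\mathbb{C},+)^b$ with $a+b\geq 1$; in particular $\mathrm{K}^0$ contains a one-parameter subgroup $\rho\colon\mathbb{A}^1_\mathbb{C}\to\mathrm{Bir}(\mathbb{P}^2_\mathbb{C})$ (or $\mathbb{A}^1_\mathbb{C}\smallsetminus\{0\}$) which is a morphism of algebraic groups.

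Next I would differentiate this one-parameter subgroup at the identity to extract the invariant rational vector field. Concretely, writing the $\mathbb{A}^1_\mathbb{C}$-family $\rho(t)$ as a family of birational maps of $\mathbb{P}^2_\mathbb{C}$ with coefficients depending algebraically on $t$, the derivative $\chi=\frac{d}{dt}\big|_{t=0}\rho(t)$ is a rational vector field on $\mathbb{P}^2_\mathbb{C}$, nontrivial because $\rho$ is a nonconstant morphism. The flow of $\chi$ is exactly the one-parameter group $\rho$, so every element of $\mathrm{K}^0$, being of the form $\rho(t)$, preserves $\chi$ (i.e. $\rho(t)_*\chi=\chi$, reflecting commutativity of the flow with itself). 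Then I would use that $\mathrm{G}$ is abelian and commutes with $\mathrm{K}^0$: for $\varphi\in\mathrm{G}$, the pushforward $\varphi_*\chi$ is the infinitesimal generator of the conjugated one-parameter group $t\mapsto\varphi\circ\rho(t)\circ\varphi^{-1}=\rho(t)$ (the last equality because $\varphi$ commutes with each $\rho(t)\in\overline{\langle\mathrm{G}_d\rangle}$, which lies in the closure of the abelian group generated by $\mathrm{G}$). Hence $\varphi_*\chi=\chi$ for all $\varphi\in\mathrm{G}$, and the foliation $\mathcal{F}$ defined by $\chi$ is $\mathrm{G}$-invariant.

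The main obstacle I anticipate is the step asserting that $\mathrm{G}$ commutes with the whole one-parameter group $\rho$, rather than merely with the countable subgroup $\mathrm{G}_d$: one must verify that commutation with the dense subset $\mathrm{G}_d$ propagates to the Zariski closure. This requires that the centralizer of a fixed $\varphi\in\mathrm{G}$ is Zariski-closed (which holds, as in the proof of Lemma~\ref{lem:plus}, since $\mathrm{Cent}(\varphi)$ is the preimage of the identity under the continuous map $\psi\mapsto\psi\circ\varphi\circ\psi^{-1}\circ\varphi^{-1}$), so that $\mathrm{Cent}(\varphi)\supseteq\mathrm{G}_d$ forces $\mathrm{Cent}(\varphi)\supseteq\overline{\langle\mathrm{G}_d\rangle}\supseteq\mathrm{K}^0$. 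A secondary technical point is ensuring $\chi$ is genuinely rational and nonzero after differentiation, which follows from the fact that $\rho$ is given by homogeneous formulas of bounded degree depending algebraically on $t$, so term-by-term differentiation stays within rational vector fields.
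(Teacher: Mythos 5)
There is a genuine gap at the step where you pass from the uncountable bounded-degree set $\mathrm{G}_d$ to an algebraic group: Corollary \ref{cor:agree} applies only to Zariski-closed subgroups of \emph{bounded degree}, and $\overline{\langle\mathrm{G}_d\rangle}$ need not have bounded degree. The set $\mathrm{G}_d=\mathrm{G}\cap\mathrm{Bir}_{\leq d}(\mathbb{P}^2_\mathbb{C})$ is not a group (degrees multiply under composition), and the group it generates can contain elements of arbitrarily large degree even though $\mathrm{G}$ is abelian: take $\mathrm{G}=\big\{(z_0,z_1)\dashrightarrow(b(z_1)z_0,z_1)\,\vert\,b\in\mathbb{C}(z_1)^*\big\}$, where $\langle\mathrm{G}_2\rangle$ contains $(z_0,z_1)\dashrightarrow(z_1^kz_0,z_1)$ for all $k$, so its closure contains parabolic elements and is not contained in any $\mathrm{Bir}_{\leq d}(\mathbb{P}^2_\mathbb{C})$. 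Nor can you argue, as in an ambient algebraic group, that a constructible family through the identity generates a closed subgroup in finitely many products: $\mathrm{Bir}(\mathbb{P}^2_\mathbb{C})$ carries no such structure (Theorem \ref{thm:bf}), and this is precisely the kind of reasoning that fails there. Consequently the algebraic group $\mathrm{K}$, the structure theorem $\mathrm{K}^0\simeq(\mathbb{C}^*)^a\times(\mathbb{C},+)^b$, and the one-parameter subgroup $\rho$ are not available as you set them up.

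The good news is that all this machinery is unnecessary, and your final two steps are exactly right. The paper's proof works directly with the closed set $\overline{\mathrm{G}_d}\subset\mathrm{Bir}_{\leq d}(\mathbb{P}^2_\mathbb{C})$, which has dimension $\geq 1$ since $\mathrm{G}_d$ is uncountable: one picks a non-constant curve $\eta\colon\mathbb{D}\to\overline{\mathrm{G}_d}$ --- not a one-parameter subgroup, and not based at the identity --- and defines
\[
\chi(m)=\frac{\partial\eta(s)}{\partial s}\Big\vert_{s=0}\big(\eta(0)^{-1}(m)\big),
\]
the composition with $\eta(0)^{-1}$ replacing your normalization at the identity. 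All elements of $\overline{\mathrm{G}_d}$ commute with $\mathrm{G}$ (and with each other) by exactly the closed-centralizer argument you give at the end, modelled on Lemma \ref{lem:plus}; differentiating $\varphi\circ\eta(s)\circ\varphi^{-1}(m)=\eta(s)(m)$ with respect to $s$ at $s=0$, for $m$ fixed, then yields $\varphi_*\chi=\chi$ for every $\varphi\in\mathrm{G}$. So the correct repair of your argument is to delete the algebraic-group interlude entirely and differentiate an arbitrary curve in $\overline{\mathrm{G}_d}$, keeping your centralizer-propagation step verbatim.
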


\begin{proof}
Since $\mathrm{G}$ is uncountable, there exists an integer $d$ such that 
\[
\mathrm{G}_d=\mathrm{G}\cap\mathrm{Bir}_d(\mathbb{P}^2_\mathbb{C})
\]
is uncountable. Hence the Zariski closure $\overline{\mathrm{G}_d}$ of 
$\mathrm{G}_d$ in $\mathrm{Bir}_{\leq d}(\mathbb{P}^2_\mathbb{C})$ is an 
algebraic set of dimension $\geq 1$. Consider a curve in 
$\overline{\mathrm{G}_d}$, {\it i.e.} a map 
\[
\eta\colon\mathbb{D}\to\overline{\mathrm{G}_d},\qquad\qquad t\mapsto\eta(t).
\]
Remark that elements of $\overline{\mathrm{G}_d}$ are rational maps that commute.
Let us define the rational vector field $\chi$ at any 
$m\in\mathbb{P}^2_\mathbb{C}\smallsetminus\mathrm{Ind}(\eta(0)^{-1})$ by
\[
\chi(m)=\frac{\partial\eta(s)}{\partial s}\Big\vert_{s=0}\big(\eta(0)^{-1}(m)\big).
\]
Let $\varphi$ be an element of $\overline{\mathrm{G}_d}$. If we differentiate
the equality
\[
\varphi\eta(s)\varphi^{-1}(m)=\eta(s)(m)
\]
with respect to $s$, $m$ being fixed, one gets: $\varphi_*\chi=\chi$. In
other words $\chi$ is invariant by the elements of $\overline{\mathrm{G}_d}$,
and so by any element of $\mathrm{G}$.
\end{proof}

As a result for any uncountable abelian subgroup $\mathrm{G}$ of 
$\mathrm{Bir}(\mathbb{P}^2_\mathbb{C})$, there exists a foliation on 
$\mathbb{P}^2_\mathbb{C}$ invariant by $\mathrm{G}$. Brunella, McQuillan 
and Mendes have classified, up to birational equivalence, singular 
holomorphic foliations on projective, compact, complex surfaces 
(\cite{Brunella, McQuillan, Mendes}). If $S$ is a projective surface endowed
with a foliation $\mathcal{F}$, we denote by 
$\mathrm{Bir}(S,\mathcal{F})$\index{not}{$\mathrm{Bir}(S,\mathcal{F})$}
(resp. 
$\mathrm{Aut}(S,\mathcal{F})$\index{not}{$\mathrm{Aut}(S,\mathcal{F})$}) 
the group of birational maps (resp.
holomorphic maps) of $S$ preserving the foliation $\mathcal{F}$. In 
general $\mathrm{Bir}(S,\mathcal{F})$ coincides with 
$\mathrm{Aut}(S,\mathcal{F})$ and is finite. In \cite{CantatFavre} the authors
dealt with the opposite case and got a classification.

\begin{thm}[\cite{CantatFavre}]\label{thm:CantatFavre1}
Let $\mathcal{F}$ be a foliation on $S$ such that 
$\mathrm{Aut}(X,\varphi^*\mathcal{F})\subsetneq\mathrm{Bir}(X,\varphi^*\mathcal{F})$ 
for any birational map $\varphi\colon X\dashrightarrow S$. Then, up to 
conjugacy, there exists an element of infinite order in 
$\mathrm{Bir}(S,\mathcal{F})$ and
\begin{itemize}
\item[$\diamond$] either $\mathcal{F}$ is a rational fibration, 

\item[$\diamond$] or up to a finite cover there exist some integers 
$p$, $q$, $r$, $s$ such that
\[
\mathrm{Bir}(\mathbb{P}^2_\mathbb{C},\mathcal{F})=\big\{(z_0,z_1)\dashrightarrow(z_0^pz_1^q,z_0^rz_1^s),\,(z_0,z_1)\mapsto(\alpha z_0,\beta z_1)\,\vert\,\alpha,\,\beta\in\mathbb{C}^*\big\}.
\]
\end{itemize}
\end{thm}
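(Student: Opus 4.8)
The plan is to treat the group $\mathrm{Bir}(S,\mathcal{F})$ as a birational invariant of the foliated pair and to play the hypothesis $\mathrm{Aut}(X,\varphi^*\mathcal{F})\subsetneq\mathrm{Bir}(X,\varphi^*\mathcal{F})$ against the \emph{regularization principle} for foliations. The guiding idea is the foliated analogue of minimal model theory: thanks to Seidenberg reduction of singularities and the Brunella--McQuillan--Mendes classification (\cite{Brunella, McQuillan, Mendes}), one has a birational invariant, the Kodaira dimension $\mathrm{kod}(\mathcal{F})\in\{-\infty,0,1,2\}$ attached to $K_{\mathcal{F}}$, and for most birational types there is an (essentially unique) minimal foliated model on which every foliated birational symmetry becomes an automorphism. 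When such a model exists, one obtains a surface $X_0$ with $\mathrm{Aut}(X_0,\varphi_0^*\mathcal{F})=\mathrm{Bir}(X_0,\varphi_0^*\mathcal{F})$, directly contradicting the hypothesis. Thus the hypothesis is exactly an obstruction to regularizability, and the whole proof consists in identifying which foliations resist regularization.

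First I would establish the existence of an element of infinite order. Since $\mathrm{Bir}(S,\mathcal{F})$ is a birational invariant, if it were \emph{finite} then the Weil regularization theorem (Theorem \ref{thm:Weil}, and the construction of \S\ref{sec:WeilKraft}) applied to this finite group would produce a model $X_0$ on which $\mathrm{Bir}(X_0,\varphi_0^*\mathcal{F})$ acts by automorphisms, i.e. $\mathrm{Aut}=\mathrm{Bir}$ on $X_0$, contradicting the hypothesis. Hence $\mathrm{Bir}(S,\mathcal{F})$ is infinite. I would then combine this with the structural dichotomy below: both surviving cases manifestly contain elements of infinite order (monomial maps whose matrix $\left(\begin{smallmatrix}p&q\\r&s\end{smallmatrix}\right)\in\mathrm{GL}(2,\mathbb{Z})$ is of infinite order, respectively Jonqui\`eres-type symmetries of a rational fibration), so in particular $\mathrm{Bir}(S,\mathcal{F})$ contains an element of infinite order up to conjugacy.

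The core of the argument is then the case analysis along $\mathrm{kod}(\mathcal{F})$. For $\mathrm{kod}(\mathcal{F})=2$ (general type) there is a unique minimal model with only canonical singularities, birational symmetries are biregular there, so this case is excluded by the hypothesis. The genus-one (elliptic/turbulent) fibration and Riccati cases are handled by passing to the relatively minimal elliptic/ruled model, where again $\mathrm{Aut}=\mathrm{Bir}$, so these are excluded or collapse onto a listed case. What remains are the two genuinely non-regularizable families: foliations tangent to a rational fibration, where the lack of a unique minimal ruled model and the presence of Jonqui\`eres-type symmetries (\emph{cf.} Theorem \ref{thm:fibration}) force case $(a)$; and the foliations whose symmetry group forces invariance of a $\mathbb{C}^*$-action together with a rank-two lattice of monomial maps. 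In the latter case, after reduction of singularities and passage to a finite cover one normalizes the foliation to a linear foliation on a torus, equivalently $z_0\frac{\partial}{\partial z_0},z_1\frac{\partial}{\partial z_1}$-type on $\mathbb{P}^2_\mathbb{C}$, yielding the explicit description of $\mathrm{Bir}(\mathbb{P}^2_\mathbb{C},\mathcal{F})$ as the monomial maps together with the diagonal torus.

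I expect the main obstacle to be the \emph{regularization step itself}, namely proving rigorously that for every birational type outside the two exceptional families the foliated symmetries descend to automorphisms of a canonical model; this is the foliated analogue of ``birational maps between minimal models of general type are isomorphisms'' and requires controlling how invariant $(-1)$-curves interact with foliated symmetries under Seidenberg reduction. A second delicate point is the $\mathrm{kod}(\mathcal{F})=0,1$ boundary, where one must rule out infinite (possibly torsion) symmetry groups that neither regularize nor yield a rational fibration, and confirm that the only surviving $\mathrm{kod}=0$ example is the monomial/linear foliation; here the finite cover and the appearance of the $\mathrm{GL}(2,\mathbb{Z})$-lattice of monomial maps must be extracted carefully from the structure of the invariant $\mathbb{C}^*$-action, as in \cite{CantatFavre}.
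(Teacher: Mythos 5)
A preliminary caveat: the survey you are being checked against does not actually prove this statement — it quotes it verbatim from \cite{CantatFavre} — so the only meaningful comparison is with the strategy of Cantat and Favre themselves. At the level of architecture your plan is the right one and consonant with theirs: treat $\mathrm{Bir}(S,\mathcal{F})$ as an invariant of foliated birational conjugacy, use (Weil) regularization to rule out finiteness, and then sweep through the Brunella--McQuillan--Mendes classification to isolate the families that resist regularization. The reduction of the general type and genus-one fibration cases to canonical, respectively relatively minimal, models is also sound, granting the standard lift to a minimal resolution when the canonical model is singular.

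Two load-bearing steps, however, fail as written. First, the class $\mathrm{kod}(\mathcal{F})=-\infty$ is not exhausted by rational fibrations: by McQuillan's refinement there is a second family, the Hilbert modular foliations, with $\mathrm{kod}(\mathcal{F})=-\infty$ but numerical dimension $1$; your sweep silently identifies $\mathrm{kod}=-\infty$ with case (a), and these must be excluded separately (in \cite{CantatFavre} this rests on the rigidity of their transverse structure, which makes their symmetry groups finite and hence covered by your regularization step). Second, and more seriously, the claim that the Riccati case is \og handled by passing to the relatively minimal ruled model, where again $\mathrm{Aut}=\mathrm{Bir}$\fg{} is false: relatively minimal models of ruled surfaces are not unique, and elementary transformations centered on $\mathcal{F}$-invariant fibers preserve a Riccati foliation while being genuinely birational. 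In fact the exceptional linear foliation is itself Riccati with respect to both rulings of $\mathbb{P}^1_\mathbb{C}\times\mathbb{P}^1_\mathbb{C}$, so the Riccati family cannot be discarded wholesale: this is precisely where the real work of \cite{CantatFavre} happens — one analyzes the transverse projective structure and its monodromy, shows a non-regularizable symmetry group forces elementary (abelian or dihedral) monodromy, and it is from this dichotomy that the finite cover and the lattice of monomial maps with matrix $\left(\begin{smallmatrix} p & q\\ r & s\end{smallmatrix}\right)\in\mathrm{GL}(2,\mathbb{Z})$ emerge. Your proposal defers exactly this step (\og as in \cite{CantatFavre}\fg), together with the exclusion of infinite torsion symmetry groups — a genuine issue, since $\mathrm{Bir}(\mathbb{P}^2_\mathbb{C})$ contains unbounded torsion groups preserving no fibration (Wright's examples \cite{Wright:abelian}, recalled in \S\ref{sec:ellipticgroup}), and your Weil argument only shows $\mathrm{Bir}(S,\mathcal{F})$ is infinite, not that it contains an element to which a Diller--Favre type analysis applies. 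So the skeleton is correct, but the Riccati/monodromy analysis and the $\mathrm{kod}=-\infty$ and torsion exclusions are missing rather than merely sketched.
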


Before stating the opposite case $\mathrm{Aut}(S,\mathcal{F})$ infinite, 
let us give some definitions. Let $\Lambda$ be a lattice in $\mathbb{C}^2$;
it induces a complex torus $\mathbb{T}=\faktor{\mathbb{C}^2}{\Lambda}$ of dimension $2$.
For instance the product of an elliptic curve by itself is a complex torus.
An affine map $\psi$ that preserves $\Lambda$ induces an automorphism of the
torus $\mathbb{T}$. If the linear part of $\psi$ is of infinite order, then
\begin{itemize}
\item[$\diamond$] either the linear part of $\psi$ is hyperbolic and $\psi$ 
induces an Anosov automorphism that preserves two linear foliations;

\item[$\diamond$] or the linear part of $\psi$ is unipotent and $\psi$ preserves
an elliptic fibration.
\end{itemize}
Sometimes there is a finite automorphism group of $\mathbb{T}$ 
normalized by $\psi$. Denote by $\widetilde{\faktor{\mathbb{T}}{\mathrm{G}}}$ the 
desingularization of $\faktor{\mathbb{T}}{\mathrm{G}}$. The automorphism 
 induced by $\psi$ on $\widetilde{\faktor{\mathbb{T}}{\mathrm{G}}}$ 
preserves
\begin{itemize}
\item[$\diamond$] the foliations induced the stable and unstable foliations 
preserved by $\psi$ when $\psi$ is hyperbolic;

\item[$\diamond$] an elliptic fibration when the linear part of $\psi$ is 
unipotent.
\end{itemize}
If $\mathrm{G}=\big\{\mathrm{id},\,(z_0,z_1)\mapsto(-z_0,-z_1)\big\}$ we 
say that $\widetilde{\faktor{\mathbb{T}}{\mathrm{G}}}$ is a 
\textsl{Kummer surface}\index{defi}{Kummer surface}; otherwise 
$\widetilde{\faktor{\mathbb{T}}{\mathrm{G}}}$ is a 
\textsl{generalized Kummer surface}\index{defi}{generalized Kummer surface}.

\begin{thm}[\cite{CantatFavre}]\label{thm:CantatFavre2}
Let $\mathcal{F}$ be a singular holomorphic foliation on a projective 
surface~$S$.
Assume that $\mathrm{Aut}(S,\mathcal{F})$ is infinite. Then 
$\mathrm{Aut}(S,\mathcal{F})$ contains at least one element $\varphi$ of
infinite order and one of the following holds:
\begin{itemize}
\item[$\diamond$] $\mathcal{F}$ is invariant by an holomorphic vector field;

\item[$\diamond$] $\mathcal{F}$ is an elliptic fibration;

\item[$\diamond$] the surface $S$ is a generalized Kummer surface, $\varphi$
can be lifted to an Anosov automorphism $\widetilde{\varphi}$ of the
torus and $\mathcal{F}$ is the projection on $S$ of the unstable or 
stable foliation of~$\widetilde{\varphi}$.
\end{itemize}
\end{thm}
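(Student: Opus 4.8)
The plan is to first guarantee the existence of an automorphism $\varphi$ of infinite order inside $G := \mathrm{Aut}(S,\mathcal{F})$, and then to run a trichotomy governed by the action of $\varphi$ on cohomology, matching each branch to one of the three possible conclusions.

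First I would study the natural homomorphism $\rho \colon G \to \mathrm{O}(\mathrm{NS}(S))$ given by the action on the N\'eron--Severi group, which preserves the intersection form. Its image is discrete, since it stabilises a lattice, and its kernel is, up to finite index, the connected algebraic group $\mathrm{Aut}(S)^0 \cap G$. As $G$ is infinite, either $\ker\rho$ is infinite, in which case $\mathrm{Aut}(S)^0$ is positive-dimensional, or $\rho(G)$ is infinite, in which case this infinite discrete subgroup of the orthogonal group of a lattice of signature $(1, \mathrm{rk}\,\mathrm{NS}(S) - 1)$ contains an element $\varphi$ with $\rho(\varphi)$ of infinite order. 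In the first situation a one-parameter subgroup of $\mathrm{Aut}(S)^0$ produces a holomorphic vector field $\chi$ on $S$; since the whole subgroup lies in $G$ and hence preserves $\mathcal{F}$, the flow of $\chi$ preserves $\mathcal{F}$, which is exactly the first conclusion.

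Second, assuming $\rho(G)$ infinite, I would apply the growth dichotomy for surface automorphisms (the automorphism counterpart of Theorem \ref{thm:dilfav}) to $\varphi$. If $\varphi$ has bounded degree growth it is algebraic and we are sent back to the previous paragraph. If $\varphi$ is parabolic it preserves a unique fibration $\pi$, rational or elliptic, and I would then compare $\mathcal{F}$ with $\pi$. When $\pi$ is elliptic and $\mathcal{F}$ is this fibration we land in the second conclusion, while in the remaining parabolic configurations one checks, using the invariance of $\pi$ and of $\mathcal{F}$ under the whole parabolic group, that $\mathcal{F}$ is tangent to an invariant holomorphic vector field, returning to the first conclusion.

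The main obstacle is the loxodromic case, where $\varphi$ has positive entropy and two invariant closed positive currents $T^\pm$ with $\varphi^* T^\pm = \lambda(\varphi)^{\pm 1} T^\pm$. Here I would invoke the Brunella--McQuillan--Mendes birational classification of singular holomorphic foliations (\cite{Brunella, McQuillan, Mendes}) and argue, type by type, that a foliation admitting an automorphism of positive entropy must be the linear (Kronecker) foliation on a torus quotient: foliations of general type, and foliations carrying an invariant fibration of general type, have virtually finite automorphism groups because their foliated canonical bundles are too positive to be scaled by exponential dynamics, while Riccati and turbulent foliations force an invariant fibration incompatible with a loxodromic action unless it degenerates. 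What survives is a foliation with numerically trivial foliated canonical bundle, uniformised by a linear foliation of $\mathbb{C}^2$, which forces $S$ to be, up to finite cover and desingularisation, a quotient of a complex torus. Matching the eigendirections of the Anosov lift $\widetilde{\varphi}$ to the currents $T^\pm$ then identifies $S$ as a generalised Kummer surface and $\mathcal{F}$ as the projection of the stable or unstable foliation, giving the third conclusion. The delicate point throughout is this exclusion step, which couples the Hodge-theoretic dynamics of $\varphi^*$ on $H^{1,1}(S)$ with the positivity properties of the foliated canonical bundle, and it is where essentially all of the work of \cite{CantatFavre} is concentrated; the elliptic-fibration and vector-field cases, together with the relation to Theorem \ref{thm:CantatFavre1}, are comparatively formal once this is in place.
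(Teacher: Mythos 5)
You should first be aware that this survey contains no proof of Theorem \ref{thm:CantatFavre2}: the statement is quoted from \cite{CantatFavre} and used as a black box (for instance in the proof of Theorem \ref{thm:abmax}), so there is no internal argument to measure yours against. Measured against the actual strategy of the cited source, your outline follows the right broad lines — the action on the N\'eron--Severi group, the Zariski-closure argument producing a vector field when the kernel is infinite (this part is sound, since preserving $\mathcal{F}$ is a Zariski-closed condition, so the closure of an infinite subgroup of $\ker\rho$ is a positive-dimensional algebraic group inside $\mathrm{Aut}(S,\mathcal{F})$), the elliptic/parabolic/loxodromic trichotomy, and the reduction of the loxodromic case to linear foliations on torus quotients.

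However, as a proof the proposal has genuine gaps, concentrated exactly where the theorem lives. In the parabolic branch there are two defects: first, for automorphisms the rational-fibration subcase is vacuous — parabolic automorphisms have quadratic degree growth and preserve a genus-one fibration, since Jonqui\`eres twists are never conjugate to automorphisms (as recalled in this survey after Lemma \ref{lem:UrechHalphen}, citing \cite{DillerFavre}) — and second, when $\mathcal{F}$ differs from the invariant elliptic fibration, your claim that "one checks" $\mathcal{F}$ is tangent to an invariant vector field is unsupported; this configuration requires a real argument (e.g., that invariance under a parabolic acting by translations on the generic fibre forces the direction field of $\mathcal{F}$ along fibres to be translation-invariant, leading to the turbulent/vector-field dichotomy), and it is part of the actual work in \cite{CantatFavre}. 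In the loxodromic branch, which you rightly identify as carrying all the weight, the proof is replaced by a list of assertions each of which is itself a substantive theorem: finiteness of the symmetry group of a general-type foliation, the exclusion of Riccati and turbulent types (this piece is fine, since a loxodromic map fixes no nef class of self-intersection zero and hence preserves no fibration), and above all the uniformisation step "$K_{\mathcal{F}}$ numerically trivial forces a torus quotient carrying a linear foliation", which needs the classification of foliations with numerically trivial canonical bundle together with a careful treatment of singularities before the Anosov lift and the matching with the invariant currents $T^{\pm}$ can even be set up. Announcing that one would "argue type by type" through the Brunella--McQuillan--Mendes classification names the road map of \cite{CantatFavre} but does not walk it; as written, the central case of the theorem is assumed rather than proved.
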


\begin{rems}
\begin{itemize}
\item[$\diamond$] The foliations invariant by an holomorphic vector field
are described in \cite[Proposition 3.8]{CantatFavre}.

\item[$\diamond$] The last two cases are mutually exclusive.
\end{itemize}
\end{rems}

Using these two statements one can prove the following one:

\begin{thm}[\cite{Deserti:abelien}]\label{thm:abmax}
Let $\mathrm{G}$ be an uncountable maximal abelian subgroup of 
$\mathrm{Bir}(\mathbb{P}^2_\mathbb{C})$. Then:
\begin{itemize}
\item[$\diamond$] either $\mathrm{G}$ has an element of finite order;

\item[$\diamond$] or $\mathrm{G}$ is up to conjugacy a subgroup of 
the Jonqui\`eres group.
\end{itemize}
\end{thm}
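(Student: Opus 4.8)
The plan is to combine the invariance of a foliation, just established for any uncountable abelian subgroup, with the Cantat--Favre classification of foliations whose group of (bi)rational symmetries is infinite. First I would record that, by the preceding lemma, $\mathrm{G}$ preserves a singular holomorphic foliation $\mathcal{F}$ of $\mathbb{P}^2_\mathbb{C}$, so that $\mathrm{G}\subseteq\mathrm{Bir}(\mathbb{P}^2_\mathbb{C},\mathcal{F})$; since $\mathrm{G}$ is uncountable this group is in particular infinite. The whole argument then splits according to whether $\mathcal{F}$ can be realized with a biregular symmetry group on some model: either there is a birational map $\varphi\colon X\dashrightarrow\mathbb{P}^2_\mathbb{C}$ with $\varphi^{-1}\circ\mathrm{G}\circ\varphi\subseteq\mathrm{Aut}(X,\varphi^*\mathcal{F})$, so that $\mathrm{Aut}(X,\varphi^*\mathcal{F})$ is infinite and Theorem \ref{thm:CantatFavre2} applies, or no such model exists and Theorem \ref{thm:CantatFavre1} applies. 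Throughout I would argue the contrapositive of the stated dichotomy: assuming $\mathrm{G}$ has no element of finite order, I would show that $\mathcal{F}$ (or some $\mathrm{G}$-invariant foliation) is a rational fibration, whence $\mathrm{G}$ preserves a rational fibration and, by the Noether--Enriques theorem (Theorem \ref{thm:fibration}), is conjugate to a subgroup of $\mathcal{J}$.

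In the setting of Theorem \ref{thm:CantatFavre1} the conclusion is almost immediate: either $\mathcal{F}$ is already a rational fibration, and we are done as above, or $\mathrm{Bir}(\mathbb{P}^2_\mathbb{C},\mathcal{F})$ is, up to a finite cover, the monomial group $\mathrm{GL}(2,\mathbb{Z})\ltimes\mathrm{D}_2$. In the latter subcase I would use that $\mathrm{GL}(2,\mathbb{Z})$ is countable to deduce that $K=\mathrm{G}\cap\mathrm{D}_2$ is uncountable, and then exploit the commutativity of $\mathrm{G}$: if $K$ is Zariski-dense in the torus $\mathrm{D}_2$, then every monomial part commuting with $K$ must act trivially, forcing $\mathrm{G}\subseteq\mathrm{D}_2\subseteq\mathcal{J}$; otherwise $K$ lies in a one-dimensional subtorus and $\mathrm{G}$ preserves one of the two monomial (hence rational) fibrations, again landing in $\mathcal{J}$ via Theorem \ref{thm:fibration}.

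The substantial work lies in Theorem \ref{thm:CantatFavre2}, which leaves three possibilities: $\mathcal{F}$ is invariant by a holomorphic vector field, $\mathcal{F}$ is an elliptic fibration, or $S$ is a generalized Kummer surface and $\mathcal{F}$ is the stable or unstable foliation of an Anosov lift. For the elliptic fibration case I would invoke Lemma \ref{lem:UrechHalphen} and the corollary following it to conjugate $\mathrm{G}$ into the automorphism group of a Halphen surface preserving the genus-one pencil, and then argue that an uncountable abelian group there cannot be torsion-free: the fibrewise symmetries are governed by the finitely generated (hence countable) group of sections together with the finite automorphisms of the generic fibre, so uncountability must come from an action on the base with infinite image, which produces a vector field and returns us to the first case, or else it forces torsion. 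The Kummer/Anosov case is treated similarly, noting that only finitely many torus translations descend to $S$ and that the lifted Anosov maps are loxodromic, so that an uncountable abelian group acquires torsion from the finite quotient data defining the surface. The vector-field case is reduced, using the description of foliations tangent to a vector field recalled in the remark following Theorem \ref{thm:CantatFavre2}, to a short list of models that are birationally fibred, from which the rational fibration, and hence conjugacy into $\mathcal{J}$, is extracted under the torsion-free hypothesis.

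The main obstacle I anticipate is exactly this last step: organizing the three Cantat--Favre outcomes uniformly and proving, in each non-fibred case, that an uncountable torsion-free abelian subgroup cannot arise. The crux is a counting/dynamical dichotomy, namely that continuous families of symmetries (torus translations, flows of vector fields, families of sections) are either countable or assemble into a genuine algebraic flow, while the genuinely discrete exotic symmetries (Anosov maps, finite quotient groups) contribute torsion, so that removing all torsion leaves only a rational fibration. Making this precise, and verifying that the passages between birational models preserve both commutativity and uncountability of $\mathrm{G}$, is where the care will be required.
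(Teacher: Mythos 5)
Your top-level architecture is exactly the paper's: the same dichotomy on whether some birational model satisfies $\mathrm{Aut}(X,\varphi^*\mathcal{F})=\mathrm{Bir}(X,\varphi^*\mathcal{F})$, with Theorem \ref{thm:CantatFavre1} handling one branch and Theorem \ref{thm:CantatFavre2} the other, and the same disposal of the rational-fibration case via Theorem \ref{thm:fibration}. In the monomial subcase of Theorem \ref{thm:CantatFavre1} your route differs mildly: you use countability of the monomial parts plus Zariski density of $K=\mathrm{G}\cap\mathrm{D}_2$ to put $\mathrm{G}$ in $\mathrm{D}_2$ or to make it preserve a monomial (rational) fibration, hence to land it in $\mathcal{J}$. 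The paper instead observes that a non-trivial diagonal element $(z_0,z_1)\mapsto(\lambda z_0,\mu z_1)$ commuting with the monomial generator satisfies $\lambda=\lambda^p\mu^q$, $\mu=\lambda^r\mu^s$, and extracts from these equations a root of unity, i.e.\ an element of finite order, directly; indeed these equations force the commuting diagonal elements to be torsion, a point your density argument overlooks. Both conclusions verify the dichotomy, so this difference is harmless.

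The genuine gap is in the vector-field branch of Theorem \ref{thm:CantatFavre2}. You propose to reduce to ``a short list of models that are birationally fibred'' and to extract a rational fibration from the invariant foliation. This fails as stated: a foliation invariant by a holomorphic vector field need not be a fibration. The linear foliation tangent to $z_0\,\partial/\partial z_0+\lambda z_1\,\partial/\partial z_1$ with $\lambda\notin\mathbb{Q}$ has no rational first integral, yet its symmetry group contains the whole torus $\mathrm{D}_2$, which contains uncountable torsion-free abelian subgroups; such subgroups do lie in $\mathcal{J}$, but because they preserve a pencil of lines, not because $\mathcal{F}$ is fibred. The paper's actual move here is group-theoretic, not foliation-theoretic: one conjugates $\mathrm{G}$ into $\mathrm{Aut}(\widetilde{S})$ for a minimal model $\widetilde{S}\in\{\mathbb{P}^2_\mathbb{C},\,\mathbb{P}^1_\mathbb{C}\times\mathbb{P}^1_\mathbb{C},\,\mathbb{F}_n\}$ and proves a separate Proposition classifying the uncountable maximal abelian subgroups of these explicit automorphism groups: each either contains torsion, or equals $\big\{(z_0,z_1)\mapsto(z_0+P(z_1),z_1)\,\vert\,P\in\mathbb{C}[z_1],\,\deg P\leq n\big\}$, or the translation group, the latter two sitting in $\mathcal{J}$. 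Note that maximality of $\mathrm{G}$, which your sketch never invokes, does real work there: a torsion-free uncountable subgroup of $\mathrm{D}_2$ is never maximal abelian (its centralizer is $\mathrm{D}_2$, which has torsion), so the maximality hypothesis is what rules such groups out of the torsion-free branch. Your ``counting/dynamical dichotomy'' heuristic does not substitute for these concrete computations in $\mathrm{Aut}(\mathbb{F}_n)$ and its peers.

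Two smaller corrections to the remaining cases of Theorem \ref{thm:CantatFavre2}. In the elliptic-fibration case your claim that uncountability ``must come from an action on the base with infinite image'' contradicts the structure of automorphisms of Halphen surfaces (Theorem \ref{thm:candol}): the image on the base is finite and the whole group is a finite extension of an abelian group of rank $\leq 8$, hence countable, so this case is simply impossible for an uncountable $\mathrm{G}$ --- which is how the paper dismisses it, citing \cite{BHPV}; your suggested ``return to the vector-field case'' is not a valid move. Likewise the Kummer/Anosov case does not ``force torsion'' by your argument; it is vacuous for uncountable $\mathrm{G}$ because the relevant symmetry group is countable, which is presumably why the paper omits it altogether.
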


\begin{proof}[Idea of the proof]
Assume first that 
$\mathrm{Aut}(X,\phi^*\mathcal{F})\subsetneq\mathrm{Bir}(X,\phi^*\mathcal{F})$
for any birational map $\phi\colon X\dashrightarrow S$. Then according to
Theorem \ref{thm:CantatFavre1} either $\mathrm{G}$ preserves a rational 
fibration, and then $\mathrm{G}$ is up to birational conjugacy contained in 
the Jonqui\`eres group; or $\mathrm{G}$ is up to conjugacy and finite cover a
subgroup of 
\[
\big\{(z_0,z_1)\dashrightarrow(z_0^pz_1^q,z_0^rz_1^s),\,(z_0,z_1)\mapsto(\alpha z_0,\beta z_1)\,\vert\,\alpha,\,\beta\in\mathbb{C}^*,\,\alpha=\alpha^p\beta^q,\,\beta=\alpha^r\beta^s\big\}.
\]
If $\mathrm{G}$ is conjugate to the diagonal group 
$\mathrm{D}=\big\{(z_0,z_1)\mapsto(\alpha z_0,\beta z_1)\,\vert\,\alpha,\,\beta\in\mathbb{C}^*\big\}$, 
then $\mathrm{G}$ contains elements of finite order. Otherwise since 
$\mathrm{G}$ is uncountable it can not be reduced to 
\[
\langle(z_0,z_1)\dashrightarrow(z_0^pz_1^q,z_0^rz_1^s)\rangle.
\]
Therefore, there exists a non-trivial 
element $(z_0,z_1)\mapsto(\lambda z_0,\mu z_1)$ in $\mathrm{G}$ such that 
$\lambda=\lambda^p\mu^q$ and $\mu=\lambda^r\mu^s$. For any $\ell$ 
the map $(z_0,z_1)\mapsto(\lambda^\ell z_0,\mu^\ell z_1)$ satisfies these 
equalities, so belongs to $\mathrm{G}$. Consider $\ell$ such that 
$\lambda^\ell=\mathbf{i}$; then 
$\mu^\ell=\mathrm{e}^{\mathbf{i}\pi \frac{1-p}{2q}}$ is also a root of unity and
$(z_0,z_1)\mapsto(\lambda^\ell z_0,\mu^\ell z_1)$ is thus an element of finite 
order of $\mathrm{G}$. More precisely $\mathrm{G}$ contains periodic elements 
of any order.

\bigskip

Suppose now that there exist a surface $S$ and a birational map 
$\psi\colon S\dashrightarrow\mathbb{P}^2_\mathbb{C}$ such that
$\mathrm{Aut}(S,\psi^*\mathcal{F})=\mathrm{Bir}(S,\psi^*\mathcal{F})$. 
According to Theorem \ref{thm:CantatFavre2}
\begin{itemize}
\item[$\diamond$] either $\psi^*\mathcal{F}$ is invariant by an holomorphic
vector field on $S$;

\item[$\diamond$] or $\psi^*\mathcal{F}$ is an elliptic fibration.
\end{itemize}

Since $\mathrm{G}$ is uncountable the last eventuality can not occur 
(\cite{BHPV}). Let us thus assume that $\psi^*\mathcal{F}$ is invariant
by an holomorphic vector field on $S$. According to \cite{CantatFavre}
one can assume up to conjugacy that $\mathrm{G}$ is a subgroup of 
$\mathrm{Aut}(\widetilde{S})$ where $\widetilde{S}$ is a minimal model
of $S$. But minimal rational surfaces are $\mathbb{P}^2_\mathbb{C}$, 
$\mathbb{P}^1_\mathbb{C}\times\mathbb{P}^1_\mathbb{C}$ and the Hirzebruch
surfaces $\mathbb{F}_n$, $n\geq 2$, and their automorphisms groups are
known (\emph{see} Chapter \ref{Chapter:algebraicsubgroup}). 

The description of the uncountable maximal abelian subgroups of minimal rational
surfaces gives:

\begin{pro}[\cite{Deserti:abelien}]
Let $S$ be a minimal rational surface. Let $\mathrm{G}$ be an uncountable 
abelian subgroup of $\mathrm{Aut}(S)$ maximal in $\mathrm{Bir}(S)$. Then:
\begin{itemize}
\item[$\diamond$] either $\mathrm{G}$ contains an element of finite order,

\item[$\diamond$] or $\mathrm{G}$ coincides with 
$\big\{(z_0,z_1)\mapsto(z_0+P(z_1),z_1)\,\vert\, P\in\mathbb{C}[z_1],\,\deg P\leq n\big\}$,

\item[$\diamond$] or 
$\mathrm{G}=\big\{(z_0,z_1)\mapsto(z_0+\alpha,z_1+\beta)\,\vert\,\alpha,\,\beta\in\mathbb{C}\big\}$.
\end{itemize}
\end{pro}

A study of the uncountable maximal abelian subgroups of the Jonqui\`eres 
group allows to refine Theorem \ref{thm:abmax} as follows:

\begin{thm}[\cite{Deserti:abelien}]
Let $\mathrm{G}$ be an uncountable maximal abelian subgroup of the 
plane Cremona group. Then up to conjugacy:
\begin{itemize}
\item[$\diamond$] either $\mathrm{G}$ contains an element of finite
order,

\item[$\diamond$] or $\mathrm{G}=\big\{(z_0,z_1)\dashrightarrow(z_0+a(z_1),z_1)\,\vert\,a\in\mathbb{C}(z_1)\big\}$,

\item[$\diamond$] or $\mathrm{G}=\big\{(z_0,z_1)\mapsto(z_0+\alpha,z_1+\beta)\,\vert\,\alpha,\,\beta\in\mathbb{C}\big\}$,

\item[$\diamond$] or any subgroup of $\mathrm{Bir}(\mathbb{P}^2_\mathbb{C})$
acting by conjugacy on $\mathrm{G}$ is, up to finite index, solvable.
\end{itemize}
\end{thm}
\end{proof}

%%%%%%%%%%%%%%%%%%%%%%%%%%%%%%%%%%%%%%%%%%%%%%%%%%%%%%%%%%%%%%%%%%%%%%%%%%%%%%%%%%%%%%%%%%%%%%%%%%%%%%%%%%%%%%%%%%%
%%%%%%%%%%%%%%%%%%%%%%%%%%%%%%%%%%%%%%%%%%%%%%%%%%%%%%%%%%%%%%%%%%%%%%%%%%%%%%%%%%%%%%%%%%%%%%%%%%%%%%%%%%%%%%%%%%%
% section
%%%%%%%%%%%%%%%%%%%%%%%%%%%%%%%%%%%%%%%%%%%%%%%%%%%%%%%%%%%%%%%%%%%%%%%%%%%%%%%%%%%%%%%%%%%%%%%%%%%%%%%%%%%%%%%%%%%
%%%%%%%%%%%%%%%%%%%%%%%%%%%%%%%%%%%%%%%%%%%%%%%%%%%%%%%%%%%%%%%%%%%%%%%%%%%%%%%%%%%%%%%%%%%%%%%%%%%%%%%%%%%%%%%%%%%

\section{Description of the automorphisms group of the Cremona group}\label{sec:autbir}

Let us give an idea of the proof of Theorem \ref{thm:autofbir}.
The description of uncountable maximal abelian subgroups of
$\mathrm{Bir}(\mathbb{P}^2_\mathbb{C})$ yields to

\begin{cor}[\cite{Deserti:abelien}]
  Let $\varphi$ be an automorphism of $\mathrm{Bir}(\mathbb{P}^2_\mathbb{C})$.
  Set 
  \[
  \mathrm{J}_a=\big\{(z_0,z_1)\dashrightarrow(z_0+a(z_1),z_1)\,\vert\,a\in\mathbb{C}(z_1)\big\}.
  \]
  Up to birational conjugacy
$\varphi(\mathrm{J}_a)=\mathrm{J}_a$ and 
$(z_0,z_1)\mapsto(z_0+1,z_1)$ is invariant 
by $\varphi$.
\end{cor}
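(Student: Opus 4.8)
The plan is to exploit that $\mathrm{J}_a$ is an uncountable maximal abelian subgroup of $\mathrm{Bir}(\mathbb{P}^2_\mathbb{C})$, isomorphic as an abstract group to $(\mathbb{C}(z_1),+)$, and that all the relevant features are preserved by the group automorphism $\varphi$. Since $\varphi$ is an isomorphism, $\varphi(\mathrm{J}_a)$ is again uncountable and maximal abelian, so by the classification of uncountable maximal abelian subgroups recalled above it is, up to conjugacy, one of four types: a group with an element of finite order; the group $\mathrm{J}_a$ itself; the translation group $\mathrm{T}=\{(z_0,z_1)\mapsto(z_0+\alpha,z_1+\beta)\}$; or a group on which every subgroup acting by conjugacy is virtually solvable. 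First I would rule out the torsion type: $\mathrm{J}_a\cong(\mathbb{C}(z_1),+)$ is torsion-free, hence so is $\varphi(\mathrm{J}_a)$, which eliminates the first case.

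The heart of the argument is to separate $\mathrm{J}_a$ (type two) from $\mathrm{T}$ (type three) and from the virtually solvable type (type four); these cannot be distinguished by the abstract isomorphism type of the subgroup alone — indeed $\mathrm{T}\cong(\mathbb{C}^2,+)$ and $\mathrm{J}_a\cong(\mathbb{C}(z_1),+)$ are isomorphic as abstract groups, both being $\mathbb{Q}$-vector spaces of dimension continuum — so the distinction must come from the way each sits inside the Cremona group, that is, from its normalizer. The key inputs are the normalizer computations $N(\mathrm{T})=\mathrm{Aff}_2(\mathbb{C})$, whence $N(\mathrm{T})/\mathrm{T}\simeq\mathrm{GL}(2,\mathbb{C})$, and $N(\mathrm{J}_a)/\mathrm{J}_a\simeq\mathbb{C}(z_1)^*\rtimes\mathrm{PGL}(2,\mathbb{C})$, where $\mathbb{C}(z_1)^*$ corresponds to the fibre scalings $(z_0,z_1)\mapsto(\mu(z_1)z_0,z_1)$ and $\mathrm{PGL}(2,\mathbb{C})$ to the reparametrisations of the base $z_1$. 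Since $\varphi$ carries $N(\mathrm{G})$ onto $N(\varphi(\mathrm{G}))$ and $\mathrm{G}$ onto $\varphi(\mathrm{G})$, it induces an isomorphism of the quotients $N(\mathrm{G})/\mathrm{G}$, so any abstract group-theoretic property of this quotient is a $\varphi$-invariant. I would use two such properties. For type four: the normalizer of $\mathrm{J}_a$ contains the base copy of $\mathrm{PGL}(2,\mathbb{C})$ acting non-trivially by conjugation, which is not virtually solvable, so $\varphi(\mathrm{J}_a)$ cannot be conjugate to a type-four group. For type three: the quotient $N(\mathrm{J}_a)/\mathrm{J}_a$ possesses a normal abelian subgroup, namely $\mathbb{C}(z_1)^*$, that is not $2$-divisible (for instance $z_1$ is not a square), whereas every normal abelian subgroup of $\mathrm{GL}(2,\mathbb{C})\simeq N(\mathrm{T})/\mathrm{T}$ is central, hence contained in the divisible group $\mathbb{C}^*$; this non-$2$-divisibility invariant excludes $\mathrm{T}$. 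Consequently $\varphi(\mathrm{J}_a)$ is conjugate to $\mathrm{J}_a$, and after replacing $\varphi$ by $\mathrm{Inn}(\psi_0)\circ\varphi$ for a suitable birational $\psi_0$ we may assume $\varphi(\mathrm{J}_a)=\mathrm{J}_a$.

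It remains to normalise the translation $t_1\colon(z_0,z_1)\mapsto(z_0+1,z_1)$. Here I would use that $N(\mathrm{J}_a)$ acts transitively on $\mathrm{J}_a\smallsetminus\{\mathrm{id}\}$: for nonzero $a,b\in\mathbb{C}(z_1)$ the fibre scaling $s_{b/a}\colon(z_0,z_1)\mapsto\big((b/a)z_0,z_1\big)$ lies in $N(\mathrm{J}_a)$ and conjugates $(z_0,z_1)\mapsto(z_0+a,z_1)$ to $(z_0,z_1)\mapsto(z_0+b,z_1)$. Since $t_1\in\mathrm{J}_a$ and $\varphi(t_1)\in\varphi(\mathrm{J}_a)=\mathrm{J}_a$ is a nontrivial element, there is $\eta\in N(\mathrm{J}_a)$ with $\eta\, t_1\,\eta^{-1}=\varphi(t_1)$; replacing $\varphi$ by $\mathrm{Inn}(\eta^{-1})\circ\varphi$ fixes $t_1$ while preserving $\varphi(\mathrm{J}_a)=\mathrm{J}_a$, because $\eta$ normalises $\mathrm{J}_a$. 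This completes the reduction. The main obstacle is the separation of $\mathrm{J}_a$ from $\mathrm{T}$: since the two groups are abstractly isomorphic, one genuinely needs the normalizer computations together with an invariant — here the non-$2$-divisibility of a normal abelian subgroup of the normalizer quotient — that is at once easy to verify and manifestly preserved by $\varphi$; establishing that these normalizers are exactly as claimed, and not accidentally larger, is the delicate point.
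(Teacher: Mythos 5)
Your skeleton is the intended one (the paper deduces this corollary directly from the classification of uncountable maximal abelian subgroups), and most of the steps hold up: $\varphi(\mathrm{J}_a)$ is again uncountable maximal abelian and torsion-free, which kills the first case; the type-four case is excluded because $\varphi$ carries the copy of $\mathrm{PGL}(2,\mathbb{C})$ inside $N(\mathrm{J}_a)$ to a non virtually solvable subgroup normalizing $\varphi(\mathrm{J}_a)$; the normalizer computations are correct and provable (for $\mathrm{T}$, the map $t\mapsto\psi(x_0+t)-\psi(x_0)$ is rational and additive, hence linear; for $\mathrm{J}_a$, the generic orbits are the fibres $z_1=\mathrm{cst}$, so a normalizing map lies in $\mathcal{J}$ and its fibrewise M\"obius part must fix $z_0=\infty$, hence be affine); and the final renormalization of $t_1\colon(z_0,z_1)\mapsto(z_0+1,z_1)$ via the scalings $s_\mu$, with $s_\mu\circ t_a\circ s_\mu^{-1}=t_{\mu a}$, is exactly right.

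The gap is in your exclusion of $\mathrm{T}$: the inference \og contained in the divisible group $\mathbb{C}^*$, hence the non-$2$-divisibility of $\mathbb{C}(z_1)^*$ gives a contradiction\fg{} is invalid, because a subgroup of a divisible group need not be divisible. Concretely, $\{\pm\mathrm{id}\}$ (or the subgroup generated by $2$) is a normal abelian, non-$2$-divisible subgroup of $\mathrm{GL}(2,\mathbb{C})$, so \og possessing a non-$2$-divisible normal abelian subgroup\fg{} does not distinguish the two normalizer quotients; worse, $\mathbb{C}(z_1)^*$ embeds abstractly into $\mathbb{C}^*$ (its torsion is $\mathbb{Q}/\mathbb{Z}$ and its torsion-free quotient embeds in a $\mathbb{Q}$-vector space of continuum dimension), so no contradiction can come from the containment alone. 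The repair is, however, already in your setup: use centrality instead of divisibility. Any normal subgroup of $\mathrm{GL}(2,\mathbb{C})$ either consists of scalars or contains $\mathrm{SL}(2,\mathbb{C})$ (by simplicity of $\mathrm{PGL}(2,\mathbb{C})$), so if $\varphi(\mathrm{J}_a)$ were conjugate to $\mathrm{T}$, the induced isomorphism $N(\mathrm{J}_a)/\mathrm{J}_a\to\mathrm{GL}(2,\mathbb{C})$ would send the normal abelian subgroup $\mathbb{C}(z_1)^*$ into the center $\mathbb{C}^*$; since an isomorphism identifies centers, $\mathbb{C}(z_1)^*$ would be central in $\mathbb{C}(z_1)^*\rtimes\mathrm{PGL}(2,\mathbb{C})$, contradicting the non-trivial action $\mu\mapsto\mu\circ\nu$ (take $\mu=z_1$ and $\nu\colon z_1\mapsto z_1+1$). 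With this one-line substitute — $\mathrm{GL}(2,\mathbb{C})$ has no non-central normal abelian subgroup, whereas $N(\mathrm{J}_a)/\mathrm{J}_a$ does — your proof goes through.
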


Let us consider
\begin{align*}
&\mathrm{T}_1=\big\{(z_0,z_1)\mapsto(z_0+\alpha,z_1)\,\vert\,\alpha\in\mathbb{C}\big\}, && \mathrm{T}_2=\big\{(z_0,z_1)\mapsto(z_0,z_1+\beta)\,\vert\,\beta\in\mathbb{C}\big\},
\end{align*}
and
\begin{align*}
& \mathrm{D}_1=\big\{(z_0,z_1)\mapsto(\alpha z_0,z_1)\,\vert\,\alpha\in\mathbb{C}^*\big\}, &&\mathrm{D}_2=\big\{(z_0,z_1)\mapsto(z_0,\beta z_1)\,\vert\,\alpha\in\mathbb{C}\big\}.
\end{align*}

\begin{pro}[\cite{Deserti:abelien}]\label{pro:autbir}
  Let $\varphi$ be an automorphism of $\mathrm{Bir}(\mathbb{P}^2_\mathbb{C})$.
  Assume that $\varphi(\mathrm{J}_a)=\mathrm{J}_a$ and
  $(z_0,z_1)\mapsto(z_0+1,z_1)$ is invariant by $\varphi$. 
  Then up to birational conjugacy:
  \begin{itemize}
  \item[$\diamond$] $\varphi(\mathrm{J}_a)=\mathrm{J}_a$,

  \item[$\diamond$] $(z_0,z_1)\mapsto(z_0+1,z_1)$ is invariant by $\varphi$,

  \item[$\diamond$] $\varphi(\mathrm{T}_1)=\mathrm{T}_1$ and
 $\varphi(\mathrm{T}_2)=\mathrm{T}_2$,

  \item[$\diamond$] $\varphi(\mathrm{D}_1)=\mathrm{D}_1$ and
     $\varphi(\mathrm{D}_2)=\mathrm{D}_2$.
  \end{itemize}
\end{pro}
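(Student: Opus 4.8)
The plan is to exhibit each of $\mathrm{T}_1,\mathrm{T}_2,\mathrm{D}_1,\mathrm{D}_2$ as a canonical piece of the tower of subgroups attached to $\mathrm{J}_a$ and the fixed element $t\colon(z_0,z_1)\mapsto(z_0+1,z_1)$, and then to transport these pieces by $\varphi$. First I would record the ambient structure. A direct computation shows that the normalizer $\mathrm{N}=\mathrm{N}_{\mathrm{Bir}(\mathbb{P}^2_\mathbb{C})}(\mathrm{J}_a)$ consists exactly of the fiberwise-affine maps $g\colon(z_0,z_1)\mapsto\bigl(A(z_1)z_0+B(z_1),\mu(z_1)\bigr)$ with $A\in\mathbb{C}(z_1)^*$, $B\in\mathbb{C}(z_1)$, $\mu\in\mathrm{PGL}(2,\mathbb{C})$: indeed $\mathrm{J}_a$ determines the fibration $z_1=\mathrm{cst}$, and only affine maps conjugate translations to translations. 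Identifying $\mathrm{J}_a\simeq(\mathbb{C}(z_1),+)$ via $h_a\colon(z_0,z_1)\mapsto(z_0+a(z_1),z_1)$, one checks $g\,h_a\,g^{-1}=h_{\widetilde a}$ with $\widetilde a=A\cdot(a\circ\mu^{-1})$, so the conjugation action of $\mathrm{N}$ on $\mathrm{J}_a$ is $\mathbb{C}$-semilinear. Since $\varphi(\mathrm{J}_a)=\mathrm{J}_a$ we have $\varphi(\mathrm{N})=\mathrm{N}$, and $\varphi$ restricts to a group automorphism of $\mathrm{J}_a$ fixing $t$. The phrase \emph{up to birational conjugacy} means I am free to replace $\varphi$ by $\iota_\psi\circ\varphi$, where $\iota_\psi(g)=\psi\circ g\circ\psi^{-1}$; for the two standing hypotheses to survive I must take $\psi\in\mathrm{N}$ with $\psi t\psi^{-1}=t$, i.e. $\psi\in\mathrm{M}$, where $\mathrm{M}:=\mathrm{Cent}(t)\cap\mathrm{N}=\{(z_0,z_1)\mapsto(z_0+B(z_1),\mu(z_1))\}\simeq\mathrm{J}_a\rtimes\mathrm{PGL}(2,\mathbb{C})$ (the condition $A\equiv1$ being exactly $\psi t\psi^{-1}=t$).

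Next I would exploit $\mathrm{M}$. Because $\varphi(t)=t$ and $\varphi(\mathrm{N})=\mathrm{N}$ we get $\varphi(\mathrm{M})=\mathrm{M}$, and since $\varphi(\mathrm{J}_a)=\mathrm{J}_a$ the map $\varphi$ descends to an automorphism $\bar\varphi$ of $\mathrm{M}/\mathrm{J}_a\simeq\mathrm{PGL}(2,\mathbb{C})$. Invoking the structure of $\mathrm{Aut}(\mathrm{PGL}(2,\mathbb{C}))$ (Borel--Tits, as used elsewhere in the text via \cite{BorelTits,Dieudonne}; there is no graph automorphism for $\mathrm{PGL}(2,\mathbb{C})$), $\bar\varphi$ is inner composed with a field automorphism $\kappa$ of $\mathbb{C}$. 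The base-change subgroup $\{(z_0,z_1)\mapsto(z_0,\mu(z_1))\}$ is a complement to $\mathrm{J}_a$ in $\mathrm{M}$ and lies in the allowed conjugation group $\mathrm{M}$; choosing $\psi$ there realizing the required inner part and replacing $\varphi$ by $\iota_\psi\circ\varphi$, I may assume $\bar\varphi={}^{\kappa}(\,\cdot\,)$ on $\mathrm{PGL}(2,\mathbb{C})$. A field automorphism preserves the standard unipotent subgroup and the diagonal torus, so $\bar\varphi$ stabilizes the images $\overline{\mathrm{T}_2}$ (translations of $z_1$) and $\overline{\mathrm{D}_2}$ (scalings of $z_1$) in $\mathrm{M}/\mathrm{J}_a$.

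I would then pin down $\mathrm{D}_1$ and $\mathrm{T}_1$. Using that the base-$\mathrm{PGL}(2,\mathbb{C})$-action on $\mathbb{C}(z_1)$ is by field automorphisms and that $\varphi$ intertwines it with its ${}^{\kappa}$-twist, one shows $\varphi|_{\mathrm{J}_a}$ is $\kappa$-semilinear for the $\mathbb{C}$-vector space structure of $\mathrm{J}_a=\mathbb{C}(z_1)$. Now $\mathrm{D}_1$ acts on $\mathrm{J}_a$ by the scalars $\mathbb{C}^{*}\cdot\mathrm{id}$, which form the center of $\mathrm{GL}_{\mathbb{C}}(\mathrm{J}_a)$; conjugating by a $\kappa$-semilinear operator sends the scalar $\alpha$ to $\kappa(\alpha)$, whence (assuming $\mathrm{Cent}_{\mathrm{N}}(\mathrm{J}_a)=\mathrm{J}_a$, a routine check) $\varphi(d_\alpha)\in d_{\kappa(\alpha)}\cdot\mathrm{J}_a$, i.e. $\varphi(\mathrm{D}_1)\subset\mathrm{D}_1\cdot\mathrm{J}_a$. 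Once $\varphi(\mathrm{D}_1)=\mathrm{D}_1$ is established, $\mathrm{T}_1$ follows for free: the $\mathrm{D}_1$-conjugates $d_\alpha t d_\alpha^{-1}$ are the translations by $\alpha\in\mathbb{C}^{*}$, and these generate $\mathrm{T}_1$, so $\varphi(\mathrm{T}_1)=\langle\varphi(d_\alpha)\,t\,\varphi(d_\alpha)^{-1}\rangle=\mathrm{T}_1$.

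The hard part will be the \emph{lifting step}: upgrading the preservation of $\overline{\mathrm{T}_2},\overline{\mathrm{D}_2}$ in $\mathrm{M}/\mathrm{J}_a$, and the inclusions $\varphi(\mathrm{D}_1)\subset\mathrm{D}_1\cdot\mathrm{J}_a$, $\varphi(\mathrm{T}_2)\subset\mathrm{T}_2\cdot\mathrm{J}_a$, $\varphi(\mathrm{D}_2)\subset\mathrm{D}_2\cdot\mathrm{J}_a$, to equalities with the genuine subgroups. Each $\varphi(\mathrm{T}_2)$, $\varphi(\mathrm{D}_2)$ is a complement to $\mathrm{J}_a$ inside $\pi^{-1}(\overline{\mathrm{T}_2})$, resp. $\pi^{-1}(\overline{\mathrm{D}_2})$, differing from the standard one by a $1$-cocycle valued in $\mathrm{J}_a\simeq(\mathbb{C}(z_1),+)$; removing the ``translation tails'' amounts to showing these cocycles are coboundaries, i.e. to a vanishing/fixed-point statement for $H^{1}$ of $(\mathbb{C},+)\cong\overline{\mathrm{T}_2}$ and $\mathbb{C}^{*}\cong\overline{\mathrm{D}_2}$ acting on $\mathbb{C}(z_1)$ by $a\mapsto a(z_1-\beta)$, resp. $a\mapsto a(z_1/\beta)$. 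The delicate point is to carry out this normalization by a single residual conjugation $\psi\in\mathrm{J}_a\rtimes\mathrm{PGL}(2,\mathbb{C})$ that simultaneously clears all tails without reintroducing an inner part in $\bar\varphi$; I expect this to require the explicit form of elements of $\mathbb{C}(z_1)$ under these two actions (partial fractions / invariance of poles), exactly the computational core of \cite{Deserti:abelien}.
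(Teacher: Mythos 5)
Your structural frame is sound in its first half: the computation of the normalizer $\mathrm{N}$ of $\mathrm{J}_a$, the identification $\mathrm{M}=\mathrm{Cent}(t)\simeq\mathrm{J}_a\rtimes\mathrm{PGL}(2,\mathbb{C})$ of the allowed conjugations (consistent with \S\ref{subsec:centrell}), the descent to an abstract automorphism of $\mathrm{PGL}(2,\mathbb{C})$ and the normalization of its inner part are all correct. But the two load-bearing steps are missing, and one of them is circular as stated. The assertion that $F:=\varphi|_{\mathrm{J}_a}$ is $\kappa$-semilinear cannot follow from the intertwining $F(a\circ\mu^{-1})=F(a)\circ({}^{\kappa}\mu)^{-1}$ alone: the precomposition action of $\mathrm{PGL}(2,\mathbb{C})$ on $\mathbb{C}(z_1)$ fixes the constants pointwise and contains no multiplication operators, so it places no constraint on how $F$ interacts with scalars. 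Multiplication by $\alpha\in\mathbb{C}^*$ on $\mathrm{J}_a\simeq(\mathbb{C}(z_1),+)$ is precisely conjugation by $\mathrm{D}_1$ --- the subgroup you are trying to locate --- and semilinearity of $F$ is exactly the statement that $F\circ(\alpha\,\cdot)\circ F^{-1}$ is scalar, i.e.\ your conclusion modulo tails; it cannot be an input. What the intertwining does yield is $F$ on constants: taking $a=z_1$ and $\mu^{-1}(z)=z+\gamma$, additivity gives $g(z_1+\kappa(\gamma))-g(z_1)=F(\gamma)$ for $g=F(z_1)$ and every $\gamma\in\mathbb{C}$; a rational function all of whose difference functions are constant is affine, whence $F=\lambda\kappa$ on $\mathbb{C}$ and $\lambda=1$ since $F(1)=1$ (that is, $\varphi(t)=t$). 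This gives $\varphi(\mathrm{T}_1)=\mathrm{T}_1$ at once, with no detour through $\mathrm{D}_1$. The subgroup $\mathrm{D}_1$ should instead come last: once $\varphi(\mathrm{T}_2)=\mathrm{T}_2$ and $\varphi(\mathrm{D}_2)=\mathrm{D}_2$ are secured, one checks $\mathrm{Cent}(\mathrm{T}_2\cup\mathrm{D}_2)\cap\mathrm{N}=\{(az_0+b,z_1)\}=\mathrm{D}_1\ltimes\mathrm{T}_1$, the action on $t$ forces $\varphi(d_\alpha)=(\kappa(\alpha)z_0+b_\alpha,z_1)$, and commutativity of the image forces $b_\alpha=(1-\kappa(\alpha))s$ for a single $s\in\mathbb{C}$, cleared by one conjugation by $(z_0,z_1)\mapsto(z_0+s,z_1)\in\mathrm{T}_1$, which disturbs none of $t$, $\mathrm{J}_a$, $\mathrm{T}_2$, $\mathrm{D}_2$, nor the class of $\bar\varphi$.

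Second, the step you explicitly defer --- removing the $\mathrm{J}_a$-valued tails of $\varphi(\mathrm{T}_2)$ and $\varphi(\mathrm{D}_2)$ --- is the computational core of the proposition, not a routine cohomological vanishing. As abstract groups, $(\mathbb{C},+)$ and $\mathbb{C}^*$ are enormous ($\mathbb{Q}$-vector spaces of continuum dimension, up to torsion), and $H^1$ of such groups with coefficients in $\mathbb{C}(z_1)$ does not vanish for formal reasons; you must solve the concrete functional equations coming from the joint relations --- in particular that $\varphi(\mathrm{D}_2)$ normalizes $\varphi(\mathrm{T}_2)$ through $s_\beta\mapsto s_{\gamma\beta}$ --- by pole and degree analysis of the tails $B_\beta\in\mathbb{C}(z_1)$. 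This is exactly the kind of computation the paper carries out in the proofs of Lemma \ref{lem:autbir} and of Theorem \ref{thm:hopfian} (solving, e.g., $b(\alpha z_0)=b(z_0)+1-\gamma$). Since three of the four displayed equalities depend on this step, your proposal as written establishes, after the repair above, only $\varphi(\mathrm{T}_1)=\mathrm{T}_1$. Note finally that the argument of \cite{Deserti:abelien} which the paper invokes does not run through your quotient-plus-cocycle framing: as the surrounding section indicates, it rests on the classification of uncountable maximal abelian subgroups (the translation group $\mathrm{T}=\mathrm{T}_1\times\mathrm{T}_2$ is such a subgroup, torsion-free, so its image is constrained by that classification) combined with explicit functional equations; your frame is viable in principle, but what you have omitted is precisely where the proof lives.
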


As a consequence an automorphism of
$\mathrm{Bir}(\mathbb{P}^2_\mathbb{C})$
induces two automorphisms of the group
$\mathrm{Aff}(\mathbb{C})$
\index{not}{$\mathrm{Aff}(\mathbb{C})$} 
of affine maps 
of the complex line.

\begin{lem}\label{lem:autbir}
  Let $\varphi$ be an automorphism of
  $\mathrm{Aff}(\mathbb{C})$. Then $\varphi$ is the composition of
  an inner automorphism and an automorphism of the field $\mathbb{C}$.
\end{lem}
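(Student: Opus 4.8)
The plan is to exploit the semidirect-product structure $\mathrm{Aff}(\mathbb{C})=\mathrm{T}\rtimes\mathrm{H}$, where $\mathrm{T}=\big\{z\mapsto z+b\,\vert\,b\in\mathbb{C}\big\}\cong(\mathbb{C},+)$ is the translation subgroup and $\mathrm{H}=\big\{z\mapsto az\,\vert\,a\in\mathbb{C}^*\big\}\cong(\mathbb{C}^*,\cdot)$ is the stabilizer of $0$. First I would observe that $\mathrm{T}$ is characteristic: writing $t_b\colon z\mapsto z+b$ and $h_a\colon z\mapsto az$, a direct computation gives $[h_a,t_b]=t_{(a-1)b}$, so $[\mathrm{Aff}(\mathbb{C}),\mathrm{Aff}(\mathbb{C})]=\mathrm{T}$; since $\mathrm{Aff}(\mathbb{C})/\mathrm{T}\cong\mathbb{C}^*$ is abelian this is exactly the derived subgroup. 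Consequently any automorphism $\varphi$ satisfies $\varphi(\mathrm{T})=\mathrm{T}$ and descends to an automorphism of $\mathbb{C}^*$.

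Next I would reduce, up to an inner automorphism, to the case $\varphi(\mathrm{H})=\mathrm{H}$. The key point is the classification of complements of $\mathrm{T}$: if $\mathrm{K}\subset\mathrm{Aff}(\mathbb{C})$ is a subgroup with $\mathrm{K}\cap\mathrm{T}=\{\mathrm{id}\}$ and $\mathrm{K}\mathrm{T}=\mathrm{Aff}(\mathbb{C})$, then $\mathrm{K}$ is the stabilizer of a point, hence conjugate to $\mathrm{H}$ by a translation. This is where the main work lies: writing $k_a\colon z\mapsto az+\beta(a)$ for the unique element of $\mathrm{K}$ with linear part $a$, the relation $k_ak_{a'}=k_{aa'}$ forces $\beta(aa')=a\beta(a')+\beta(a)$, and because $\mathrm{K}\cong\mathbb{C}^*$ is abelian the maps $k_a$ (for $a\neq 1$) pairwise commute; two affine maps with non-trivial distinct linear parts commute precisely when they share their fixed point, so all such $k_a$ fix a common point $p$ and $\beta(a)=(1-a)p$. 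Since $\varphi(\mathrm{T})=\mathrm{T}$ and $\varphi$ is bijective, $\varphi(\mathrm{H})$ is again such a complement, so $\varphi(\mathrm{H})=t_c\,\mathrm{H}\,t_c^{-1}$ for some $c$; replacing $\varphi$ by its composition with conjugation by $t_{-c}$ (which preserves $\mathrm{T}$), I may assume $\varphi(\mathrm{H})=\mathrm{H}$.

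Then I would extract the field automorphism. Writing $\varphi(t_b)=t_{\sigma(b)}$ and $\varphi(h_a)=h_{\tau(a)}$, the map $\sigma$ is an automorphism of $(\mathbb{C},+)$ and $\tau$ an automorphism of $(\mathbb{C}^*,\cdot)$. Applying $\varphi$ to the conjugation relation $h_a t_b h_a^{-1}=t_{ab}$ yields
\[
\sigma(ab)=\tau(a)\,\sigma(b)\qquad\forall\,a\in\mathbb{C}^*,\,b\in\mathbb{C}.
\]
Putting $b=1$ gives $\tau(a)=\sigma(a)/\sigma(1)$, so with $s=\sigma(1)$ and $\kappa(b)=\sigma(b)/s$ one checks that $\kappa$ is additive, satisfies $\kappa(1)=1$ and $\kappa(ab)=\kappa(a)\kappa(b)$; being bijective it is a field automorphism of $\mathbb{C}$. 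Denoting by ${}^{\kappa}$ the automorphism of $\mathrm{Aff}(\mathbb{C})$ acting on coefficients, the composite ${}^{\kappa^{-1}}\circ\varphi$ fixes $\mathrm{H}$ pointwise and scales $\mathrm{T}$ by $s'=\kappa^{-1}(s)$, hence agrees on the generating set $\mathrm{T}\cup\mathrm{H}$ with conjugation by the homothety $h_{s'}$ and therefore equals it. Thus $\varphi={}^{\kappa}\circ(\text{inner})$, and combining with the first inner reduction shows that the original automorphism is the composition of an inner automorphism and a field automorphism. The main obstacle is the complements step, that is, proving that every splitting of $\mathrm{T}\rtimes\mathrm{H}$ is a point stabilizer (equivalently that $H^1(\mathbb{C}^*,\mathbb{C})$ vanishes for the scaling action); everything afterward is elementary bookkeeping on coefficients.
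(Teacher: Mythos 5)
Your proof is correct and follows essentially the same route as the paper's: show the translation subgroup $\mathrm{T}$ is preserved, compose with an inner automorphism (conjugation by a translation) so that a point stabilizer is also preserved, extract the field automorphism from the compatibility relation $\sigma(ab)=\tau(a)\sigma(b)$ between the additive and multiplicative bijections, and absorb the remaining scalar $\sigma(1)$ into conjugation by a homothety. The only differences are local justifications — you prove $\mathrm{T}$ is characteristic as the derived subgroup and classify complements of $\mathrm{T}$ via the common-fixed-point argument for commuting affine maps, whereas the paper invokes (without proof) the classification of maximal abelian subgroups of $\mathrm{Aff}(\mathbb{C})$ together with the torsion-freeness of $\mathrm{T}$ — so your write-up is somewhat more self-contained but not a genuinely different argument.
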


\begin{proof}[Sketch of the proof]
  The maximal abelian subgroups of $\mathrm{Aff}(\mathbb{C})$ are the
  group of translations
  \[
  \mathrm{T}=\big\{z\mapsto z+\beta\,\vert\,\beta\in\mathbb{C}\big\}
  \]
  and the groups of affine maps that preserve a point
  \[
  \mathrm{D}_{z_0}=\big\{z\mapsto \alpha(z-z_0)+z_0\,\vert\alpha\in\mathbb{C}^*\big\}.
  \]
  Since $\mathrm{T}$ does not contain element of finite order,
  $\varphi$ sends $\mathrm{T}$ onto $\mathrm{T}$. In other
  words there exists an additive bijection
  $\kappa_2\colon\mathbb{C}\to\mathbb{C}$ such that
  $\varphi(z+\beta)=z+\kappa_2(\beta)$. Up to conjugacy by an
  element of $\mathrm{T}$ one can assume that
  $\varphi(\mathrm{D}_0)=\mathrm{D}_0$. In other words there
  exists a multiplicative bijection
  $\kappa_1\colon\mathbb{C}^*\to\mathbb{C}^*$ such that
  $\varphi(\alpha z)=\kappa_1(\alpha)z$. On the one hand
  \[
  \varphi\big(z\mapsto\alpha z+\alpha\big)=\varphi\big(z\mapsto z+\alpha\big)\circ\varphi\big(z\mapsto \alpha z\big)=\big(z\mapsto\kappa_1(\alpha)z+\kappa_2(\alpha)\big)
  \]
  and on the other hand
  \[
  \varphi\big(z\mapsto \alpha z+\alpha\big)=\varphi\big(z\mapsto \alpha z\big)\circ\varphi\big(z\mapsto z+1\big)=\big(z\mapsto\kappa_1(\alpha)z+\kappa_1(\alpha)\kappa_2(1)\big).
  \]
  Hence for any $\alpha$ the equality $z\mapsto\kappa_1(\alpha)z+\kappa_2(\alpha)=z\mapsto\kappa_1(\alpha)z+\kappa_1(\alpha)\kappa_2(1)$ holds. Since $\mu=\kappa_2(1)$
  is non-zero, $\kappa_2$ is additive and multiplicative. As a
  result $\kappa_2$ is an isomorphism of the field $\mathbb{C}$ and
  \begin{eqnarray*}
    \varphi\big(z\mapsto \alpha z+\beta)&=& \big(z\mapsto{}^{\kappa_1}\!\,\alpha z+{}^{\kappa_2}\!\,\beta\big)\\
    &=& \big(z\mapsto{}^{\kappa_1}\!\,\alpha z+\mu{}^{\kappa_1}\!\,\beta\big)\\
    &=& \big(z\mapsto{}^{\kappa_1}\!\,\big(\alpha z+{}^{\kappa_1^{-1}}\!\,\mu\beta\big)\big)\\
    &=& \big(z\mapsto{}^{\kappa_1}\!\,\big(({}^{\kappa_1^{-1}}\!\,\mu z)\circ(\alpha z+\beta)\circ({}^{\kappa_1}\!\,\mu z)\big)\big)\\
    &=& \big(z\mapsto{}^{\kappa_1}\!\,\big(({}^{\kappa_1}\!\,\mu z)^{-1}\circ(\alpha z+\beta)\circ({}^{\kappa_1}\!\,\mu z)\big)\big).
  \end{eqnarray*}
\end{proof}

\begin{proof}[Sketch of the proof of Theorem \ref{thm:autofbir}]
Proposition \ref{pro:autbir} and Lemma \ref{lem:autbir} imply that for 
any $\alpha$, $\beta$ in $\mathbb{C}^*$, for any $\gamma$, $\delta$ in 
$\mathbb{C}$ one has
\[
\varphi\big((z_0,z_1)\mapsto(\alpha z_0+\gamma,\beta z_1+\delta)\big)=\big((z_0,z_1)\mapsto({}^{\kappa_1}\!\,\alpha z_0+\mu{}^{\kappa_1}\!\,\gamma,{}^{\kappa_2}\!\,\beta z_1+\eta{}^{\kappa_2}\!\,\delta)\big)
\]
where $\eta$, $\mu$ are two non-zero complex numbers and $\kappa_1$, 
$\kappa_2$ two automorphisms of the field~$\mathbb{C}$. 
Since $(z_0,z_1)\mapsto(z_0+z_1,z_1)$ and $(z_0,z_1)\mapsto(\alpha z_0,\alpha z_1)$
commute their image by $\varphi$ also, and so $\kappa_1=\kappa_2$. As a consequence 
up to conjugacy by an inner automorphism and an automorphism of the field $\mathbb{C}$, 
the groups 
\[
\mathrm{T}=\big\{(z_0,z_1)\mapsto(z_0+\alpha,z_1+\beta)\,\vert\,\alpha,\,\beta\in\mathbb{C}\big\}
\] 
and
\[
\mathrm{D}=\big\{(z_0,z_1)\mapsto(\alpha z_0,\beta z_1)\,\vert\,\alpha,\,\beta\in\mathbb{C}^*\big\}
\]
are pointwise invariant. Then one can check that the involutions 
$(z_0,z_1)\mapsto\left(z_0,\frac{1}{z_1}\right)$ and $(z_0,z_1)\mapsto(z_1,z_0)$
are invariant by $\varphi$. But the group generated by $\mathrm{T}$, $\mathrm{D}$, 
$(z_0,z_1)\mapsto\left(z_0,\frac{1}{z_1}\right)$ and $(z_0,z_1)\mapsto(z_1,z_0)$
contains $\mathrm{PGL}(3,\mathbb{C})$. Furthermore
\[
\sigma_2=\left(\left((z_0,z_1)\mapsto\left(z_0,\frac{1}{z_1}\right)\right)\circ\left((z_0,z_1)\mapsto(z_1,z_0)\right)\right)^2
\]
hence $\varphi(\sigma_2)=\sigma_2$. We conclude thanks to the Noether and 
Castelnuovo Theorem.
\end{proof}

\begin{cor}[\cite{Deserti:abelien}]
  An isomorphism of the semi-group of rational self maps of $\mathbb{P}^2_\mathbb{C}$
  is inner up to the action of an automorphism of the field $\mathbb{C}$.
\end{cor}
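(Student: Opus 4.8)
The plan is to reduce the statement about the whole semi-group to the automorphism of its unit group, which is governed by Theorem \ref{thm:autofbir}, and then to propagate the resulting normal form to the non-invertible elements. Write $\mathrm{Rat}(\mathbb{P}^2_\mathbb{C})$ for the monoid of dominant rational self maps of $\mathbb{P}^2_\mathbb{C}$ under composition. The first observation is that a dominant rational self map admits a rational inverse if and only if it is birational, so that the group of invertible elements of $\mathrm{Rat}(\mathbb{P}^2_\mathbb{C})$ is exactly $\mathrm{Bir}(\mathbb{P}^2_\mathbb{C})$. Consequently any semi-group isomorphism $\Phi$ of $\mathrm{Rat}(\mathbb{P}^2_\mathbb{C})$ sends units to units and restricts to a group automorphism $\varphi=\Phi_{\vert\mathrm{Bir}(\mathbb{P}^2_\mathbb{C})}$. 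Theorem \ref{thm:autofbir} then provides $\psi\in\mathrm{Bir}(\mathbb{P}^2_\mathbb{C})$ and $\kappa\in\mathrm{Aut}(\mathbb{C})$ with $\varphi(\phi)={}^{\kappa}\!\,(\psi\circ\phi\circ\psi^{-1})$ for every $\phi\in\mathrm{Bir}(\mathbb{P}^2_\mathbb{C})$.

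Next I would normalize. The assignment $\Theta\colon\phi\mapsto{}^{\kappa}\!\,(\psi\circ\phi\circ\psi^{-1})$ makes sense for an arbitrary dominant rational map: conjugation by the fixed birational map $\psi$ and the action of the field automorphism $\kappa$ both preserve $\mathrm{Rat}(\mathbb{P}^2_\mathbb{C})$ and are compatible with composition, so $\Theta$ is a semi-group automorphism of $\mathrm{Rat}(\mathbb{P}^2_\mathbb{C})$ extending $\varphi$. Replacing $\Phi$ by $\Theta^{-1}\circ\Phi$ reduces the problem to the case of a semi-group automorphism $\Phi_0$ that fixes $\mathrm{Bir}(\mathbb{P}^2_\mathbb{C})$ pointwise; it then suffices to show that $\Phi_0=\mathrm{id}$.

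For the last step I would pass to the contravariant dictionary with function fields: $\mathrm{Rat}(\mathbb{P}^2_\mathbb{C})$ is anti-isomorphic, via $\phi\mapsto\phi^*$, to the monoid $\mathrm{End}_{\mathbb{C}}(\mathbb{C}(x,y))$ of $\mathbb{C}$-algebra embeddings of $\mathbb{C}(x,y)$ into itself, and under this anti-isomorphism $\mathrm{Bir}(\mathbb{P}^2_\mathbb{C})$ corresponds to $\mathrm{Aut}_{\mathbb{C}}(\mathbb{C}(x,y))$. Because $\Phi_0(g\circ\phi)=g\circ\Phi_0(\phi)$, the map $\Phi_0$ permutes the left $\mathrm{Bir}(\mathbb{P}^2_\mathbb{C})$-orbits, and each such orbit is recorded exactly by the subfield $L_\phi=\phi^*\mathbb{C}(x,y)\subseteq\mathbb{C}(x,y)$; moreover $\Phi_0(\phi\circ h)=\Phi_0(\phi)\circ h$ shows that the induced permutation of subfields commutes with the natural $\mathrm{Aut}_{\mathbb{C}}(\mathbb{C}(x,y))$-action $L\mapsto h^*(L)$. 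The target is to upgrade this equivariance, using the composition of non-invertible maps, namely the relations $L_{\phi_1\circ\phi_2}=\phi_2^*(L_{\phi_1})$ and the deck groups $\{h\in\mathrm{Bir}(\mathbb{P}^2_\mathbb{C})\,\vert\,\phi\circ h=\phi\}$, to the conclusion that $\Phi_0$ fixes every subfield and then every embedding, that is $\Phi_0=\mathrm{id}$.

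The hard part will be precisely this final upgrade. Knowing only that $\Phi_0$ induces an $\mathrm{Aut}_{\mathbb{C}}(\mathbb{C}(x,y))$-equivariant bijection of the set of transcendence-degree-two subfields of $\mathbb{C}(x,y)$ does not by itself force the identity, since a priori such a bijection could permute maps sharing the same left and right orbit data and the same deck group. To remove this ambiguity I expect to need the full monoidal structure, genuinely composing branched coverings of arbitrary topological degree, together with a rigidity input in the spirit of the Noether and Castelnuovo theorem, exploited as in the proof of Theorem \ref{thm:autofbir}, in order to reconstruct each rational map from its interactions with $\mathrm{Bir}(\mathbb{P}^2_\mathbb{C})$. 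Controlling base points and indeterminacy loci throughout, and using the surjectivity of $\Phi_0$ to reach the maps of higher topological degree, will be the main technical obstacles.
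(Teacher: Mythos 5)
Your reduction is exactly the route the statement is meant to follow: the units of the monoid of dominant rational self maps of $\mathbb{P}^2_\mathbb{C}$ are precisely the birational maps, so a semi-group isomorphism $\Phi$ restricts to a group automorphism of $\mathrm{Bir}(\mathbb{P}^2_\mathbb{C})$; Theorem \ref{thm:autofbir} applies; and since $\phi\mapsto{}^{\kappa}\!\,(\psi\circ\phi\circ\psi^{-1})$ visibly extends to a semi-group automorphism of the whole monoid, everything comes down to showing that a semi-group automorphism $\Phi_0$ fixing $\mathrm{Bir}(\mathbb{P}^2_\mathbb{C})$ pointwise is the identity. Up to that point your argument is correct, and it coincides with how the paper (which states the corollary without proof, deferring to \cite{Deserti:abelien}) intends the deduction from Theorem \ref{thm:autofbir} to go.

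The genuine gap is that this last claim is the actual mathematical content of the corollary, and you never prove it — you list invariants preserved by $\Phi_0$ (the correspondence between left $\mathrm{Bir}(\mathbb{P}^2_\mathbb{C})$-orbits and finite-index subfields $L_\phi=\phi^*\mathbb{C}(x,y)$, the equivariance of the induced permutation, the deck groups) and then concede they do not suffice. They indeed do not: a dominant map $f$ of topological degree $\geq 2$ with trivial deck group and no nontrivial semiconjugation relations $f\circ g=h\circ f$ with $g,h\in\mathrm{Bir}(\mathbb{P}^2_\mathbb{C})$ — the generic case — is constrained by all of this data only up to its left orbit, so nothing written rules out $\Phi_0(f)=g_f\circ f$ for some compatible family of birational maps $g_f$. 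Note also that no blanket principle is available here: a monoid automorphism is in general \emph{not} determined by its restriction to the group of units, so the extension step cannot be waved through. A complete proof must exploit compositions among the non-invertible elements themselves — for instance, Galois covers $f$, whose deck group $D_f\subset\mathrm{Bir}(\mathbb{P}^2_\mathbb{C})$ and whose relations $f\circ g=\hat{g}\circ f$ for $g$ in the normalizer of $D_f$ pin $\Phi_0(f)$ down to $f$ when the centralizer of the induced group on the quotient is trivial, followed by a propagation argument to arbitrary dominant maps — and none of this rigidity work is carried out in your proposal. As written it is a correct plan with a sound first half, not a proof.
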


In the spirit of the result of Filipkiewicz (Theorem \ref{thm:Filipkiewicz})
one has:

\begin{cor}[\cite{Deserti:abelien}]
Let $S$ be a complex projective surface. Let $\varphi$ be an
isomorphism between $\mathrm{Bir}(S)$ and
$\mathrm{Bir}(\mathbb{P}^2_\mathbb{C})$. There exist a
birational map $\psi\colon S\dashrightarrow\mathbb{P}^2_\mathbb{C}$
and an automorphism of the field $\mathbb{C}$ such that
  \[
  \varphi(\phi)={}^{\kappa}\!\,(\psi\circ\phi\circ\psi^{-1})\qquad\qquad\forall\,\phi\in\mathrm{Bir}(S).
  \]
\end{cor}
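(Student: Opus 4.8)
The statement combines two ingredients: the description of $\mathrm{Aut}(\mathrm{Bir}(\mathbb{P}^2_\mathbb{C}))$ furnished by Theorem \ref{thm:autofbir}, and the fact that the mere existence of a group isomorphism $\varphi\colon\mathrm{Bir}(S)\to\mathrm{Bir}(\mathbb{P}^2_\mathbb{C})$ forces $S$ to be rational. The plan is to isolate this rationality statement as the single substantial point and to reduce everything else to Theorem \ref{thm:autofbir} by a formal conjugation argument.

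First I would record the reduction, assuming $S$ is already known to be rational. Since $\mathrm{Bir}(S)$ depends only on the birational class of $S$, rationality provides a birational map $\eta\colon S\dashrightarrow\mathbb{P}^2_\mathbb{C}$, and conjugation by $\eta$ gives an isomorphism $\eta_*\colon\mathrm{Bir}(S)\to\mathrm{Bir}(\mathbb{P}^2_\mathbb{C})$, $\phi\mapsto\eta\circ\phi\circ\eta^{-1}$. Then $\varphi\circ\eta_*^{-1}$ is an automorphism of $\mathrm{Bir}(\mathbb{P}^2_\mathbb{C})$, so Theorem \ref{thm:autofbir} yields $\chi\in\mathrm{Bir}(\mathbb{P}^2_\mathbb{C})$ and $\kappa\in\mathrm{Aut}(\mathbb{C})$ with $(\varphi\circ\eta_*^{-1})(g)={}^{\kappa}\!\,(\chi\circ g\circ\chi^{-1})$ for all $g$. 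Substituting $g=\eta_*(\phi)=\eta\circ\phi\circ\eta^{-1}$ and setting $\psi=\chi\circ\eta$ gives $\varphi(\phi)={}^{\kappa}\!\,(\psi\circ\phi\circ\psi^{-1})$ for every $\phi\in\mathrm{Bir}(S)$, which is exactly the desired formula.

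The heart of the matter is thus to show that $\mathrm{Bir}(S)\simeq\mathrm{Bir}(\mathbb{P}^2_\mathbb{C})$ implies that $S$ is rational, and this is where I expect the main difficulty to lie. The plan is to run through the Enriques--Kodaira classification of $S$ up to birational equivalence, exploiting two isomorphism-invariant features of $\mathrm{Bir}(\mathbb{P}^2_\mathbb{C})$: it is uncountable, and it contains a copy of the uncountable simple group $\mathrm{PSL}(3,\mathbb{C})$ (through $\mathrm{Aut}(\mathbb{P}^2_\mathbb{C})=\mathrm{PGL}(3,\mathbb{C})$), which already contains $\big(\faktor{\mathbb{Z}}{p\mathbb{Z}}\big)^2$ for every prime $p$. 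For $S$ of non-negative Kodaira dimension the minimal model is unique, so $\mathrm{Bir}(S)=\mathrm{Aut}(S)$: when $S$ is of general type, $K3$ or Enriques type this group is countable (finite in the general type case), ruling out an isomorphism with the uncountable $\mathrm{Bir}(\mathbb{P}^2_\mathbb{C})$; when $S$ is abelian, bielliptic or properly elliptic, $\mathrm{Aut}(S)$ carries a canonical normal subgroup (the translations, respectively the automorphisms preserving each fibre of the unique elliptic fibration) with small quotient, and simplicity of $\mathrm{PSL}(3,\mathbb{C})$ then shows that a putative copy of $\mathrm{PGL}(3,\mathbb{C})$ must either lie in the abelian/fibrewise normal part or inject into the small quotient, both impossible. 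The remaining non-rational case is that of ruled surfaces over a curve $C$ of genus $\geq 1$, where $\mathrm{Bir}(S)\simeq\mathrm{PGL}(2,\mathbb{C}(C))\rtimes\mathrm{Aut}(C)$ preserves the unique ruling; here the same simplicity argument forces $\mathrm{PGL}(3,\mathbb{C})$ either into the normal factor $\mathrm{PGL}(2,\mathbb{C}(C))$, which contains no $\big(\faktor{\mathbb{Z}}{3\mathbb{Z}}\big)^2$, or into the abelian-by-finite group $\mathrm{Aut}(C)$, which is far too small.

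The delicate point, and the main obstacle, is making this case analysis rigorous entirely at the level of abstract groups: for each non-rational birational class one must pin down the normal-subgroup structure of $\mathrm{Bir}(S)$ and verify that no copy of $\mathrm{PGL}(3,\mathbb{C})$ can survive, which is precisely where the fibration-theoretic and dynamical description of $\mathrm{Bir}(S)$ for non-rational $S$ enters. Once rationality is secured, the conclusion follows immediately from the reduction of the second paragraph; as a byproduct one obtains the birational analogue of Filipkiewicz's theorem (Theorem \ref{thm:Filipkiewicz}).
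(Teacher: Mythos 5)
Your proposal is correct, and the reduction in your second paragraph is exactly the formal step the paper intends: once $S$ is rational, pick $\eta\colon S\dashrightarrow\mathbb{P}^2_\mathbb{C}$, view $\varphi\circ\eta_*^{-1}$ as an automorphism of $\mathrm{Bir}(\mathbb{P}^2_\mathbb{C})$, apply Theorem \ref{thm:autofbir}, and set $\psi=\chi\circ\eta$. Where you diverge is on the one substantive input, the rationality of $S$: the paper states the corollary without proof, and within its own text the quickest route is the one it records a few lines later as Theorem \ref{thm:Cantatcompos} --- an injective morphism $\mathrm{Aut}(\mathbb{P}^r_\mathbb{C})\to\mathrm{Bir}(V)$ with $\dim V=r$ forces $V$ rational, which applied to $\varphi^{-1}\vert_{\mathrm{Aut}(\mathbb{P}^2_\mathbb{C})}$ settles the matter in one line (the original reference instead exploits the structure of uncountable maximal abelian subgroups of $\mathrm{Bir}(S)$). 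You reprove rationality from scratch via the Enriques--Kodaira classification, using two isomorphism invariants: uncountability, and the presence of the simple group $\mathrm{PGL}(3,\mathbb{C})\supset(\mathbb{Z}/p\mathbb{Z})^2$. That buys self-containedness at the price of a longer case analysis, and your cases do close up: for $\kappa(S)\geq 0$ the unique minimal model gives $\mathrm{Bir}(S)=\mathrm{Aut}(S_{\min})$, with general type/K3/Enriques killed by countability, and tori, bielliptic and properly elliptic surfaces killed by the simplicity argument against the canonical (translation or fibrewise) normal subgroup; for $S$ ruled over $C$ of genus $\geq 1$, simplicity pushes $\mathrm{PGL}(3,\mathbb{C})$ into $\mathrm{PGL}(2,\mathbb{C}(C))$, which contains no $(\mathbb{Z}/3\mathbb{Z})^2$ since in characteristic zero commuting order-$3$ elements are simultaneously diagonalizable and the torus has cyclic $3$-torsion.

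Two points deserve tightening if you write this out. First, for a properly elliptic surface fibred over $\mathbb{P}^1_\mathbb{C}$ the quotient by the fibrewise subgroup lands in $\mathrm{PGL}(2,\mathbb{C})$, which is not \emph{small} in the countability sense your phrasing suggests; the argument still works, but you must invoke the same $(\mathbb{Z}/3\mathbb{Z})^2$-obstruction you use in the ruled case rather than cardinality. Second, the simplicity arguments repeatedly use that the intersection of $\mathrm{PGL}(3,\mathbb{C})$ with a normal subgroup of the ambient group is normal in $\mathrm{PGL}(3,\mathbb{C})$, hence trivial or everything; state this once cleanly, since it is the hinge of every non-rational case.
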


\begin{cor}[\cite{Deserti:abelien}]
The automorphism group of $\mathbb{C}(z_0,z_1)$ is
isomorphic to the automorphisms group of
$\mathrm{Bir}(\mathbb{P}^2_\mathbb{C})$.
\end{cor}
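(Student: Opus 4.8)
The plan is to exploit the classical identification of the plane Cremona group with the group of $\mathbb{C}$-automorphisms of the function field. Set $K=\mathbb{C}(z_0,z_1)$, so that $\mathrm{Bir}(\mathbb{P}^2_\mathbb{C})=\mathrm{Aut}_{\mathbb{C}}(K)$, the subgroup of $\mathrm{Aut}(K)$ fixing $\mathbb{C}$ pointwise; this subgroup is normal in $\mathrm{Aut}(K)$. Conjugation then produces a homomorphism
\[
\Phi\colon\mathrm{Aut}(K)\to\mathrm{Aut}\big(\mathrm{Bir}(\mathbb{P}^2_\mathbb{C})\big),\qquad \theta\mapsto\big(\phi\mapsto\theta\circ\phi\circ\theta^{-1}\big),
\]
and the whole statement reduces to proving that $\Phi$ is an isomorphism. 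First I would set up the semidirect structure on the source of $\Phi$.

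The crucial step, and the point I expect to be the main obstacle, is to show that every field automorphism $\theta$ of $K$ preserves the subfield of constants $\mathbb{C}$, so that $\theta$ restricts to a genuine $\kappa=\theta_{\vert\mathbb{C}}\in\mathrm{Aut}(\mathbb{C})$. I would do this by giving a field-theoretic, manifestly automorphism-invariant, description of $\mathbb{C}^*$ inside $K^*$: namely $\mathbb{C}^*$ is the maximal divisible subgroup of $K^*$,
\[
\mathbb{C}^*=\big\{f\in K^*\,\vert\,\forall\, n\geq 1,\ \exists\, g\in K^*\ \text{with}\ g^n=f\big\}.
\]
Indeed every nonzero constant has all its $n$-th roots in the algebraically closed field $\mathbb{C}$; conversely, if $f\in K^*$ is an $n$-th power for every $n$, then $\mathrm{div}(f)$ is divisible by every $n$ on a smooth projective model, forcing $\mathrm{div}(f)=0$, whence $f$ is a nowhere-vanishing global function, i.e. a constant. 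Since ``being an $n$-th power'' is preserved by any field automorphism, $\theta(\mathbb{C}^*)=\mathbb{C}^*$. This yields a restriction homomorphism $\mathrm{res}\colon\mathrm{Aut}(K)\to\mathrm{Aut}(\mathbb{C})$ with kernel exactly $\mathrm{Aut}_{\mathbb{C}}(K)=\mathrm{Bir}(\mathbb{P}^2_\mathbb{C})$; it is surjective and split, since any $\kappa\in\mathrm{Aut}(\mathbb{C})$ extends to $\overline{\kappa}\in\mathrm{Aut}(K)$ acting on coefficients while fixing $z_0$ and $z_1$. Hence $\mathrm{Aut}(K)\simeq\mathrm{Bir}(\mathbb{P}^2_\mathbb{C})\rtimes\mathrm{Aut}(\mathbb{C})$, where $\kappa$ acts by $\phi\mapsto\overline{\kappa}\circ\phi\circ\overline{\kappa}^{-1}={}^{\kappa}\phi$.

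Surjectivity of $\Phi$ is exactly where Theorem \ref{thm:autofbir} enters: any $\varphi\in\mathrm{Aut}(\mathrm{Bir}(\mathbb{P}^2_\mathbb{C}))$ has the form $\varphi(\phi)={}^{\kappa}(\psi\circ\phi\circ\psi^{-1})$, and taking $\theta=({}^{\kappa}\psi)\circ\overline{\kappa}$ together with the fact that ${}^{\kappa}$ is a homomorphism of $\mathrm{Bir}(\mathbb{P}^2_\mathbb{C})$ gives, by a direct computation, $\Phi(\theta)(\phi)=({}^{\kappa}\psi)\,{}^{\kappa}\phi\,({}^{\kappa}\psi)^{-1}={}^{\kappa}(\psi\circ\phi\circ\psi^{-1})=\varphi(\phi)$. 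For injectivity, $\ker\Phi$ is the centralizer of $\mathrm{Bir}(\mathbb{P}^2_\mathbb{C})$ in $\mathrm{Aut}(K)$; writing $\theta=\psi\circ\overline{\kappa}$ with $\psi\in\mathrm{Bir}(\mathbb{P}^2_\mathbb{C})$, the centralizing condition reads ${}^{\kappa}\phi=\psi^{-1}\circ\phi\circ\psi$ for all $\phi$. If $\kappa\neq\mathrm{id}$ this would exhibit a nontrivial outer automorphism as an inner one, contradicting the identification of $\mathrm{Out}(\mathrm{Bir}(\mathbb{P}^2_\mathbb{C}))$ with $\mathrm{Aut}(\mathbb{C})$ in Theorem \ref{thm:autofbir}; if $\kappa=\mathrm{id}$, then $\psi$ is central in $\mathrm{Bir}(\mathbb{P}^2_\mathbb{C})$, which is centerless (a central element commutes with all of $\mathrm{PGL}(3,\mathbb{C})$ and with $\sigma_2$, hence is the identity by the Noether and Castelnuovo theorem). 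Thus $\theta=\mathrm{id}$, $\Phi$ is bijective, and $\mathrm{Aut}(\mathbb{C}(z_0,z_1))\simeq\mathrm{Aut}(\mathrm{Bir}(\mathbb{P}^2_\mathbb{C}))$. Concretely, both groups are isomorphic to $\mathrm{Bir}(\mathbb{P}^2_\mathbb{C})\rtimes\mathrm{Aut}(\mathbb{C})$ for the same action, which is the structural content behind the statement.
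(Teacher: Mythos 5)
Your proof is correct and takes essentially the route the paper intends: the corollary is Theorem \ref{thm:autofbir} combined with the semidirect decomposition $\mathrm{Aut}(\mathbb{C}(z_0,z_1))\simeq\mathrm{Bir}(\mathbb{P}^2_\mathbb{C})\rtimes\mathrm{Aut}(\mathbb{C})$, and your characterization of $\mathbb{C}^*$ as the set of elements of $\mathbb{C}(z_0,z_1)^*$ that are $n$-th powers for every $n$ is exactly the standard way to see that every field automorphism preserves the constants, making $\mathrm{Aut}_{\mathbb{C}}(K)$ normal. One small repair in the injectivity step: the triviality of the center of $\mathrm{Bir}(\mathbb{P}^2_\mathbb{C})$ does not follow from Noether--Castelnuovo as you invoke it (commuting with a generating set only gives centrality, which is the hypothesis), but rather from the fact that the centralizer of $\mathrm{PGL}(3,\mathbb{C})$ in $\mathrm{Bir}(\mathbb{P}^2_\mathbb{C})$ is trivial, since the stabilizer of a point acts transitively on its complement and hence a commuting map must fix every point.
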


\begin{rem}
According to \cite{Beauville} the groups
$\mathrm{Bir}(\mathbb{P}^n_\mathbb{C})$ and
$\mathrm{Bir}(\mathbb{P}^2_\mathbb{C})$ are
isomorphic if and only if $n=2$.
\end{rem}

\bigskip

Note that there is no description of 
$\mathrm{Aut}(\mathrm{Bir}(\mathbb{P}^n_\mathbb{C}))$ for $n\geq 3$.
Nevertheless there are two results in that direction:
\begin{itemize}
\item[$\diamond$] the first one is
\begin{thm}[\cite{Deserti:reg}]
Let $\varphi$ be an automorphism of 
$\mathrm{Bir}(\mathbb{P}^n_\mathbb{C})$; there exist an
automorphism $\kappa$ of the field $\mathbb{C}$, and a 
birational self map $\psi$ of $\mathbb{P}^n_\mathbb{C}$ such
that
\[
\varphi(\phi)={}^{\kappa}\!\,(\psi\circ\phi\circ\psi^{-1})\qquad\qquad \forall\,\phi\in\mathrm{G}(n,\mathbb{C})=\langle\sigma_n,\,\mathrm{PGL}(n+1,\mathbb{C})\rangle.
\]
\end{thm}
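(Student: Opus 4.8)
The plan is to follow the template of the two-dimensional argument (Theorem~\ref{thm:autofbir} and Section~\ref{sec:autbir}), replacing the Noether and Castelnuovo theorem—unavailable for $n\geq 3$ by Theorem~\ref{thm:hudsonpan}—with the tautological fact that $\mathrm{G}(n,\mathbb{C})$ is, by definition, generated by $\sigma_n$ and $\mathrm{PGL}(n+1,\mathbb{C})$. It therefore suffices to determine $\varphi$ on $\mathrm{PGL}(n+1,\mathbb{C})$ and on $\sigma_n$, up to an inner automorphism (conjugacy by a birational $\psi$) and a field automorphism $\kappa$. First I would examine $\varphi|_{\mathrm{PGL}(n+1,\mathbb{C})}$. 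Over $\mathbb{C}$ one has $\mathrm{PGL}(n+1,\mathbb{C})=\mathrm{PSL}(n+1,\mathbb{C})$, which is abstractly simple, so this restriction is either trivial or injective; a relation analogous to the relation $(\ell\circ\sigma_2)^3=\mathrm{id}$ exploited in the proof of Theorem~\ref{thm:hopfian}, expressing a nontrivial linear element through a word containing $\sigma_n$, rules out the trivial case (it would force $\varphi(\sigma_n)=\mathrm{id}$, contradicting bijectivity). Hence $\varphi|_{\mathrm{PGL}(n+1,\mathbb{C})}$ is injective.

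The core step is to show that, after composing $\varphi$ with a suitable inner automorphism and with $\kappa$, the group $\mathrm{PGL}(n+1,\mathbb{C})$ is preserved setwise and acted on in the expected way. The mechanism mirrors the affine analysis of Section~\ref{sec:autbir}: one isolates inside $\mathrm{PGL}(n+1,\mathbb{C})$ the diagonal torus $\mathrm{D}_n\cong(\mathbb{C}^*)^n$ and the unipotent translation subgroup $\mathrm{T}\cong(\mathbb{C},+)^n$, both abelian of dimension $n$, and forces $\varphi$ to send each to a conjugate of itself. The essential inputs are the higher-rank torus uniqueness of \cite{BialynickiBirula} (the torus $\mathrm{D}_n$ is, up to conjugacy, the unique torus of maximal rank $n$ in $\mathrm{Bir}(\mathbb{P}^n_\mathbb{C})$) together with a higher-dimensional counterpart of the classification of uncountable maximal abelian subgroups. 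As in Lemma~\ref{lem:autbir}, the induced maps on $\mathrm{T}$ and on $\mathrm{D}_n$ are respectively an additive and a multiplicative bijection of $\mathbb{C}$; the compatibility relations imposed by the semidirect product $\mathrm{T}\rtimes\mathrm{D}_n$ force these two bijections to coincide and to be a single field automorphism $\kappa$, which we absorb. With $\mathrm{T}$ and $\mathrm{D}_n$ pointwise fixed, and since $\mathrm{PGL}(n+1,\mathbb{C})$ is generated by $\mathrm{T}$, $\mathrm{D}_n$, the coordinate permutations and the coordinatewise inversions, the theorem of Borel and Tits (\cite{BorelTits}) on abstract homomorphisms shows that $\varphi$ is, up to inner conjugacy, either the identity or the involution $A\mapsto A^\vee=({}^{t}\!\,A)^{-1}$ on $\mathrm{PGL}(n+1,\mathbb{C})$.

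It then remains to determine $\varphi(\sigma_n)$. Writing $\sigma_n$ in the affine chart as the coordinatewise inversion $(z_1,\ldots,z_n)\mapsto(z_1^{-1},\ldots,z_n^{-1})$, which is assembled from coordinate permutations and from the partial-inversion involutions that normalize $\mathrm{D}_n$ by $t\mapsto t^{-1}$, the normalizations above pin these building blocks down; a computation as in the proof of Theorem~\ref{thm:hopfian}, using that $\varphi$ must preserve the relations binding $\sigma_n$, the torus, and the Weyl-type permutations, gives $\varphi(\sigma_n)=\sigma_n$ and simultaneously rules out the case $A\mapsto A^\vee$ as incompatible with those relations. Since $\mathrm{G}(n,\mathbb{C})=\langle\sigma_n,\,\mathrm{PGL}(n+1,\mathbb{C})\rangle$, the two pinned generators determine $\varphi$ on all of $\mathrm{G}(n,\mathbb{C})$, which is exactly the asserted formula.

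The main obstacle is the second step. In dimension $2$ the classification of uncountable maximal abelian subgroups rests on the classification of singular holomorphic foliations on surfaces (Cantat--Favre, \cite{CantatFavre}), which has no direct analogue in higher dimension, so the conjugacy of $\varphi(\mathrm{T})$ and $\varphi(\mathrm{D}_n)$ to $\mathrm{T}$ and $\mathrm{D}_n$ cannot simply be imported. The delicate point is that $\varphi$ is a priori only an \emph{abstract} group automorphism, not known to be continuous for the Zariski or Euclidean topology, so one may not directly claim that $\varphi$ carries the algebraic subgroup $\mathrm{PGL}(n+1,\mathbb{C})$ onto an algebraic subgroup. Overcoming this requires purely group-theoretic characterizations of $\mathrm{D}_n$ and $\mathrm{T}$—via maximal rank, torsion, and distortion properties—robust enough to survive an abstract automorphism. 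I expect this, together with the verification of the requisite higher-dimensional relations involving $\sigma_n$, to be where the bulk of the work lies.
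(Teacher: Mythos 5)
Your plan stalls exactly at the step you yourself flag as the main obstacle, and nothing in the proposal overcomes it. To run the two-dimensional template you must show that the abstract automorphism $\varphi$ carries $\mathrm{D}_n$ and $\mathrm{T}$ onto conjugates of themselves; but \cite{BialynickiBirula} and Theorem~\ref{thm:demazure} constrain \emph{algebraic} tori only, and since $\varphi$ is a priori not continuous for the Zariski or Euclidean topology, there is no reason $\varphi(\mathrm{D}_n)$ is an algebraic subgroup at all. In dimension $2$ the text closes this hole in two ways, neither of which you can invoke: the proof of Theorem~\ref{thm:autofbir} uses the classification of uncountable maximal abelian subgroups, which rests on the Cantat--Favre classification of invariant foliations on surfaces and, as you concede, does not transfer to $n\geq 3$; the endomorphism proof of Theorem~\ref{thm:hopfian} gets $\varphi(\mathrm{T})\subset\mathrm{T}$ from the classification of representations of $\mathrm{SL}(3,\mathbb{Z})$ into $\mathrm{Bir}(\mathbb{P}^2_\mathbb{C})$ (Theorem~\ref{thm:IMRN}), and no analogue for $\mathrm{SL}(n+1,\mathbb{Z})$ in $\mathrm{Bir}(\mathbb{P}^n_\mathbb{C})$ is available (Corollary~\ref{cor:IMRN} concerns the two-dimensional target only). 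Your proposed remedy --- ``purely group-theoretic characterizations of $\mathrm{D}_n$ and $\mathrm{T}$ via maximal rank, torsion, and distortion'' --- is a wish rather than an argument: torsion and distortion properties only force images to consist of elliptic elements, which is far from pinning a torus up to conjugacy. Without this step the appeal to \cite{BorelTits} is unavailable, since one does not yet know that $\varphi(\mathrm{PGL}(n+1,\mathbb{C}))$ sits inside any algebraic group.

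Note that the present text states the theorem without proof (citing \cite{Deserti:reg}), but the missing core step is supplied verbatim by Theorem~\ref{thm:Cantatcompos} of \cite{Cantat5}, stated immediately afterwards: $\varphi$, being an automorphism, restricts to an \emph{injective} morphism $\mathrm{Aut}(\mathbb{P}^n_\mathbb{C})\to\mathrm{Bir}(\mathbb{P}^n_\mathbb{C})$ --- so your detour through simplicity and a relation of type $(\ell\circ\sigma_2)^3=\mathrm{id}$ to exclude a trivial restriction is superfluous here (it matters only in the endomorphism setting of Theorem~\ref{thm:hopfian}) --- hence, up to a birational map $\psi$ and a field morphism $\kappa$, it is $A\mapsto{}^{\kappa}\!\,A$ or $A\mapsto({}^{\kappa}\!\,A)^\vee$, and applying the same statement to $\varphi^{-1}$ upgrades $\kappa$ to a field automorphism. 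What then legitimately remains of your proof is only the last step: computing $\varphi(\sigma_n)$ from the relations $\sigma_n^2=\mathrm{id}$, $\sigma_n\circ d\circ\sigma_n=d^{-1}$ for $d\in\mathrm{D}_n$, and the commutation with coordinate permutations, ruling out the dual case as in the proof of Theorem~\ref{thm:hopfian}. One further slip to correct along the way: $\mathrm{PGL}(n+1,\mathbb{C})$ is \emph{not} generated by $\mathrm{T}$, $\mathrm{D}_n$, the permutations and the coordinatewise inversions (the latter are not linear); at best the group they generate \emph{contains} $\mathrm{PGL}(n+1,\mathbb{C})$, which is how the $n=2$ argument is phrased, and even that containment requires proof for $n\geq 3$.
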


\item[$\diamond$] the second one is
\begin{thm}[\cite{Cantat5}]\label{thm:Cantatcompos}
Let $V$ be a smooth connected complex 
projective variety of dimension $n$. Let $r$ be a 
positive integer and let 
$\rho\colon\mathrm{Aut}(\mathbb{P}^r_\mathbb{C})\to\mathrm{Bir}(V)$
be an injective morphism of groups. Then $n\leq r$.

Furthermore if $n=r$, there exist a field morphism
$\kappa\colon\mathbb{C}\to\mathbb{C}$ and a birational
map $\psi\colon V\dashrightarrow\mathbb{P}^n_\mathbb{C}$
such that 
\begin{itemize}
\item[$\diamond$] either $\psi\circ\rho(A)\circ\psi^{-1}={}^{\kappa}\!\, A$ for all $A\in\mathrm{Aut}(\mathbb{P}^n_\mathbb{C})$,

\item[$\diamond$] or $\psi\circ\rho(A)\circ\psi^{-1}=({}^{\kappa}\!\, A)^\vee$ for all $A\in\mathrm{Aut}(\mathbb{P}^n_\mathbb{C})$.
\end{itemize}
In particular $V$ is rational. Moreover $\kappa$ is an 
automorphism of $\mathbb{C}$ if $\rho$ is an isomorphism.
\end{thm}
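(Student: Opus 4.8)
The plan is to regularise the action, read the dimension bound off a maximal torus, and obtain the rigidity at equality from the classification of the smallest homogeneous spaces of $\mathrm{PGL}(r+1,\mathbb{C})$. Write $G=\mathrm{Aut}(\mathbb{P}^r_\mathbb{C})=\mathrm{PGL}(r+1,\mathbb{C})$; it is simple as an abstract group (the quotient $\mathrm{PGL}/\mathrm{PSL}$ vanishes since $\mathbb{C}^*$ is divisible, and $\mathrm{PSL}(r+1,\mathbb{C})$ is simple, \cite{Dieudonne}), so non-triviality of $\rho$ already forces injectivity. First I would show that, after a birational change of model, $\rho$ is a \emph{regular} action. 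The group $G$ is boundedly generated by its unipotent root subgroups $U_\alpha\cong(\mathbb{C},+)$ (Gaussian elimination writes every matrix as a product of at most $N(r)$ elementary unipotents), and each element of a root subgroup is distorted inside a suitable finitely generated Heisenberg subgroup $\langle\mathrm{e}_{ij},\mathrm{e}_{ik},\mathrm{e}_{jk}\rangle$ of $G$ (Lemma \ref{lem:disto}); hence Lemma \ref{lem:nilkeylemma} and Corollary \ref{cor:3plus} show that every $\rho(u)$, $u\in U_\alpha$, is elliptic. Promoting this to the statement that $\rho$ is a rational $G$-action, one identifies $\rho(G)$ with an algebraic group through Corollary \ref{cor:agree} and Lemma \ref{lem:agree}, and then applies the regularisation theorem of Weil (Theorem \ref{thm:Weil}) to obtain a smooth projective variety $Y$ birational to $V$ carrying a faithful regular action $\mu\colon G\to\mathrm{Aut}(Y)$; in particular $\dim Y=n$.

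Next I would extract the inequality. Restrict $\mu$ to a maximal torus $T\cong(\mathbb{C}^*)^r$. For a faithful regular torus action the generic stabiliser is trivial: if $t\in T$ fixes a general point, its fixed locus is Zariski closed and dense, hence equal to $Y$, so $t$ acts trivially and therefore $t=e$. Consequently the generic $T$-orbit has dimension $r$, so $r\le\dim Y=n$, which is the desired bound $r\le n$.

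Now suppose $n=r$. Then a generic $T$-orbit has dimension $r=\dim Y$, hence is open and dense, and the $G$-orbit containing it is a dense open orbit $\Omega\simeq G/H$ with $\dim H=\dim G-r=r(r+1)$. The heart of the equality case is the classification of the homogeneous spaces of $\mathrm{PGL}(r+1,\mathbb{C})$ of minimal dimension: a closed subgroup of codimension $r$ is conjugate to one of the two maximal parabolics stabilising a point, respectively a hyperplane, so $G/H$ is $\mathbb{P}^r$ or its dual $\check{\mathbb{P}}^r$. Hence $Y$, and therefore $V$, is rational, and $\mu$ is birationally conjugate to the standard action of $G$ on $\mathbb{P}^r$ or on $\check{\mathbb{P}}^r$. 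Comparing this conjugated action with the tautological representation $\mathrm{PGL}(r+1,\mathbb{C})\to\mathrm{Aut}(\mathbb{P}^r_\mathbb{C})$ exhibits an abstract automorphism of $\mathrm{PGL}(r+1,\mathbb{C})$; by the classical description of such automorphisms (\cite{BorelTits}, \cite{Dieudonne}) it is the composition of an inner automorphism, the action of a field morphism $\kappa$, and possibly the involution $A\mapsto A^\vee$. Absorbing the inner part into the conjugating birational map $\psi$ yields exactly the two normal forms $\psi\circ\rho(A)\circ\psi^{-1}={}^{\kappa}\!\,A$ and $\psi\circ\rho(A)\circ\psi^{-1}=({}^{\kappa}\!\,A)^\vee$. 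Finally, if $\rho$ is onto, then $\kappa$ must be surjective, since a non-surjective $\kappa$ would confine the image to a proper $\kappa$-twisted copy of the Cremona group; thus $\kappa$ is an automorphism of $\mathbb{C}$.

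The main obstacle is the first step: upgrading the abstract homomorphism $\rho$ from the uncountable group $\mathrm{PGL}(r+1,\mathbb{C})$ to an algebraic, i.e. rational, action, so that Weil's theorem applies. The delicate point is not the ellipticity of individual elements but the \emph{uniform} control of degrees along an entire one-parameter subgroup $\rho(U_\alpha)$, together with the passage from bounded degree on a generating set to bounded degree on all of $\rho(G)$ via Corollary \ref{cor:agree}; this is where the genuine work of \cite{Cantat5} lies. By contrast, the classification of minimal homogeneous spaces in type $A_r$ is standard, and the field-morphism bookkeeping is routine once the action has been placed in standard form.
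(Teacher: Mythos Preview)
The paper itself does not give a proof but only a two-line sketch: the strategy follows that of Theorem~\ref{thm:autofbir} (uncountable abelian subgroups and invariant rational vector fields), with Weil's regularisation and the Epstein--Thurston theorem on nilpotent Lie subalgebras of vector fields as the new ingredients. Your outline shares the appeal to Weil but diverges elsewhere, and the divergence is instructive.

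For the inequality $r\le n$ (the direction you derive is the correct one; the statement as printed has the letters swapped, as the corollaries make clear), the paper's route presumably feeds the image of a maximal \emph{unipotent} subgroup of $\mathrm{PGL}(r+1,\mathbb{C})$, which is nilpotent of class $r$, into the Epstein--Thurston bound on nilpotent algebras of vector fields on an $n$-fold. Your route uses the maximal \emph{torus} instead and reads $r\le n$ off the generic orbit of a faithful regular $(\mathbb{C}^*)^r$-action (essentially \cite{BialynickiBirula}, cf.\ Theorem~\ref{thm:demazure}). The torus argument is cleaner but presupposes that the full regular action has already been obtained; the Epstein--Thurston approach operates at the level of rational vector fields and so meshes with the machinery of \S\ref{sec:uncountableabelian} \emph{before} any regularisation, which is plausibly why Cantat organises the proof that way.

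On the regularisation step you rightly flag the central gap, but note two further soft spots in your proposed bridge. First, Lemma~\ref{lem:nilkeylemma} and Corollary~\ref{cor:3plus} are stated only for $\mathrm{Bir}(\mathbb{P}^2_\mathbb{C})$ and $\mathrm{Bir}(\mathbb{P}^n_\mathbb{C})$; for a general $V$ one must redo them relative to a polarisation. Second, the chain you cite gives only $\lambda(\rho(u))=1$, not boundedness of $(\deg\rho(u)^k)_k$; to pass from ``distorted'' to ``algebraic'' one needs the stronger input recorded in the Remark following Lemma~\ref{lem:nilkeylemma} (due to \cite{BlancFurter:length, CantatCornulier}), and its validity for $\mathrm{Bir}(V)$ with $\dim V>2$ is a separate matter. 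Your equality-case argument via the classification of closed subgroups of $\mathrm{PGL}(r+1,\mathbb{C})$ of codimension $r$ is standard and matches what one expects once the action is regular.
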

\end{itemize}

Before giving an idea of the proof of this last result
let us state two corollaries of it. The first shows 
that the Cremona groups 
$\mathrm{Bir}(\mathbb{P}^n_\mathbb{C})$
are pairwise non-isomorphic, thereby solving an open 
problem for $n\geq 4$.

\begin{cor}[\cite{Cantat5}]
Let $n$ and $k$ be natural integers. The group 
$\mathrm{Bir}(\mathbb{P}^n_\mathbb{C})$ embeds into 
$\mathrm{Bir}(\mathbb{P}^k_\mathbb{C})$ if and only if
$n\leq k$.

In particular $\mathrm{Bir}(\mathbb{P}^n_\mathbb{C})$ 
is isomorphic to $\mathrm{Bir}(\mathbb{P}^k_\mathbb{C})$
if and only if $n=k$.
\end{cor}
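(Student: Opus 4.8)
The plan is to establish the two implications of the equivalence separately: the ``if'' direction is elementary, while the converse rests entirely on Theorem~\ref{thm:Cantatcompos}. First I would treat the easy direction. Assume $n\le k$. Realize $\mathbb{P}^k_\mathbb{C}$ birationally as the product $\mathbb{P}^n_\mathbb{C}\times\mathbb{P}^{k-n}_\mathbb{C}$, a smooth rational variety of dimension $k$; since a birational map depends only on the function field, one has $\mathrm{Bir}(\mathbb{P}^n_\mathbb{C}\times\mathbb{P}^{k-n}_\mathbb{C})\simeq\mathrm{Bir}(\mathbb{P}^k_\mathbb{C})$, exactly as in the principle recalled in the Preface. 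The assignment $\phi\mapsto\phi\times\mathrm{id}_{\mathbb{P}^{k-n}_\mathbb{C}}$ is then an injective group homomorphism from $\mathrm{Bir}(\mathbb{P}^n_\mathbb{C})$ into $\mathrm{Bir}(\mathbb{P}^n_\mathbb{C}\times\mathbb{P}^{k-n}_\mathbb{C})\simeq\mathrm{Bir}(\mathbb{P}^k_\mathbb{C})$: it is a homomorphism because $(\phi\circ\psi)\times\mathrm{id}=(\phi\times\mathrm{id})\circ(\psi\times\mathrm{id})$, and injective because $\phi\times\mathrm{id}=\mathrm{id}$ forces $\phi=\mathrm{id}$.

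For the converse, suppose there is an injective homomorphism $\iota\colon\mathrm{Bir}(\mathbb{P}^n_\mathbb{C})\hookrightarrow\mathrm{Bir}(\mathbb{P}^k_\mathbb{C})$. The crucial observation is that, although $\iota$ need not respect any geometric structure, its restriction to the subgroup $\mathrm{Aut}(\mathbb{P}^n_\mathbb{C})=\mathrm{PGL}(n+1,\mathbb{C})$ is again injective, and so gives an injective group morphism $\mathrm{Aut}(\mathbb{P}^n_\mathbb{C})\to\mathrm{Bir}(\mathbb{P}^k_\mathbb{C})$. This is precisely the situation covered by Theorem~\ref{thm:Cantatcompos}. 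Invoking that theorem with $r=n$ and $V=\mathbb{P}^k_\mathbb{C}$, a smooth connected projective variety of dimension $k$, constrains the dimension of the target in terms of $r$ and yields $n\le k$, which is exactly what we want. Together with the first paragraph this proves the equivalence.

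The ``in particular'' statement then follows formally: an isomorphism $\mathrm{Bir}(\mathbb{P}^n_\mathbb{C})\simeq\mathrm{Bir}(\mathbb{P}^k_\mathbb{C})$ supplies injective homomorphisms in both directions, whence $n\le k$ and $k\le n$, so $n=k$; the reverse implication is trivial. The only genuine difficulty lies in the converse direction, and it is absorbed wholesale by Theorem~\ref{thm:Cantatcompos}: all the hard analysis --- the dimension bound for birational actions of $\mathrm{PGL}(n+1,\mathbb{C})$ and the rigidity of such actions in the extremal case --- is carried out there. Thus, from the vantage point of this corollary, the main obstacle is conceptual rather than computational, namely recognizing that restricting an abstract embedding of the full Cremona group to the automorphism subgroup $\mathrm{PGL}(n+1,\mathbb{C})$ reduces the problem to Cantat's theorem; no further estimates are required.
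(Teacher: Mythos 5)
Your proof is correct and follows exactly the route the paper intends: the easy direction via the embedding $\phi\mapsto\phi\times\mathrm{id}$ through the birational identification of $\mathbb{P}^k_\mathbb{C}$ with $\mathbb{P}^n_\mathbb{C}\times\mathbb{P}^{k-n}_\mathbb{C}$, and the converse by restricting the abstract embedding to $\mathrm{Aut}(\mathbb{P}^n_\mathbb{C})=\mathrm{PGL}(n+1,\mathbb{C})$ and invoking Theorem~\ref{thm:Cantatcompos}. One remark: the inequality in Theorem~\ref{thm:Cantatcompos} as printed reads $n\leq r$ with $n=\dim V$, but it must be understood as $r\leq\dim V$ (the dimension of $V$ is bounded \emph{below} by $r$, as the example $\mathrm{PGL}(2,\mathbb{C})\subset\mathrm{Bir}(\mathbb{P}^2_\mathbb{C})$ shows), which is the direction you correctly use to conclude $n\leq k$.
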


The second characterizes rational varieties $V$ by 
the structure of $\mathrm{Bir}(V)$, as an abstract 
group:

\begin{cor}[\cite{Cantat5}]
Let $V$ be an irreducible complex projective variety
of dimension~$n$. The following properties are 
equivalent:
\begin{itemize}
\item[$\diamond$] $V$ is rational,

\item[$\diamond$] $\mathrm{Bir}(V)$ is isomorphic
to $\mathrm{Bir}(\mathbb{P}^n_\mathbb{C})$ as an 
abstract group,

\item[$\diamond$] there is a non-trivial morphism
from $\mathrm{PGL}(n+1,\mathbb{C})$ to $\mathrm{Bir}(V)$.
\end{itemize}
\end{cor}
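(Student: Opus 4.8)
The plan is to prove that the three stated properties are equivalent by establishing a cycle of implications, after reducing to the case of a smooth model. Since $\mathrm{Bir}(V)$ depends only on the birational equivalence class of $V$, and since $V$ is irreducible over $\mathbb{C}$, I would first replace $V$ by a smooth connected projective variety birational to it (a resolution of singularities, available in characteristic zero). This alters neither $\mathrm{Bir}(V)$ up to isomorphism, nor the dimension $n$, nor whether $V$ is rational, so I may assume throughout that $V$ is smooth and connected, which is exactly the hypothesis demanded by Theorem~\ref{thm:Cantatcompos}. Writing (i), (ii), (iii) for the three bullet points, I would then prove (i)$\Rightarrow$(ii)$\Rightarrow$(iii)$\Rightarrow$(i).

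The first two implications are formal. For (i)$\Rightarrow$(ii), if $V$ is rational there is a birational map $\psi\colon V\dashrightarrow\mathbb{P}^n_\mathbb{C}$, and conjugation $\phi\mapsto\psi\circ\phi\circ\psi^{-1}$ is a group isomorphism $\mathrm{Bir}(V)\to\mathrm{Bir}(\mathbb{P}^n_\mathbb{C})$. For (ii)$\Rightarrow$(iii), recall that $\mathrm{PGL}(n+1,\mathbb{C})=\mathrm{Aut}(\mathbb{P}^n_\mathbb{C})$ sits inside $\mathrm{Bir}(\mathbb{P}^n_\mathbb{C})$ as a subgroup; composing this inclusion with an isomorphism $\mathrm{Bir}(\mathbb{P}^n_\mathbb{C})\xrightarrow{\sim}\mathrm{Bir}(V)$ produces an injective, in particular non-trivial, morphism $\mathrm{PGL}(n+1,\mathbb{C})\to\mathrm{Bir}(V)$.

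The implication (iii)$\Rightarrow$(i) is the substantive one, and it is where I would invoke Theorem~\ref{thm:Cantatcompos}. Let $\rho\colon\mathrm{PGL}(n+1,\mathbb{C})\to\mathrm{Bir}(V)$ be a non-trivial morphism. The key preliminary observation is that $\mathrm{PGL}(n+1,\mathbb{C})$ is a simple group: over the algebraically closed field $\mathbb{C}$ every scalar has an $(n+1)$-th root, so $\mathrm{PGL}(n+1,\mathbb{C})=\mathrm{PSL}(n+1,\mathbb{C})$, which is simple (\emph{see} \cite{Dieudonne}). Hence $\ker\rho$ is a proper normal subgroup, forcing $\ker\rho=\{\mathrm{id}\}$, so $\rho$ is injective. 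Identifying $\mathrm{PGL}(n+1,\mathbb{C})$ with $\mathrm{Aut}(\mathbb{P}^n_\mathbb{C})$, I then apply Theorem~\ref{thm:Cantatcompos} with $r=n$: the inequality part gives $n\le r=n$, so we are in the equality case $n=r$, and the second part of the theorem (the case $n=r$) asserts precisely that $V$ is rational. This closes the cycle.

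Finally I would record where the real difficulty lies. All the geometric weight of the statement is carried by Theorem~\ref{thm:Cantatcompos}, which is assumed; granting it, the corollary is essentially bookkeeping. The only points requiring care are the reduction to a smooth model (needed so that the theorem applies) and the simplicity argument converting \textsl{non-trivial} into \textsl{injective}, since Theorem~\ref{thm:Cantatcompos} is stated only for injective morphisms. The genuine conceptual obstacle — bounding $\dim V$ by the rank of the acting projective linear group and pinning down the equality case — is exactly the content delegated to that theorem, and I would not attempt to reprove it here.
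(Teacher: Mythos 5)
Your proof is correct and is precisely the derivation the paper intends: the corollary is stated as a direct consequence of Theorem~\ref{thm:Cantatcompos}, and your cycle (i)$\Rightarrow$(ii)$\Rightarrow$(iii)$\Rightarrow$(i) — with the reduction to a smooth model so the theorem applies, and the simplicity of $\mathrm{PGL}(n+1,\mathbb{C})=\mathrm{PSL}(n+1,\mathbb{C})$ upgrading \emph{non-trivial} to \emph{injective} — is exactly the expected bookkeeping. Nothing is missing.
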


The strategy that leads to the proof of Theorem
\ref{thm:Cantatcompos} is similar to the proof
of Theorem \ref{thm:autofbir} but requires 
several new ideas:
\begin{itemize}
\item[$\diamond$] Weil's regularization Theorem
(Theorem \ref{thm:Weil}), 
that transforms a group of birational maps of $V$ 
with uniformly bounded degrees into a group of 
automorphisms of a new variety by a birational
change of variables;

\item[$\diamond$] Epstein and Thurston work on 
nilpotent Lie subalgebras in the Lie algebra 
of smooth vector fields of a compact manifold
(\cite{EpsteinThurston}).
\end{itemize}

%%%%%%%%%%%%%%%%%%%%%%%%%%%%%%%%%%%%%%%%%%%%%%%%%%%%%%%%%%%%%%%%%%%%%%%%%%%%%%%%%%%%%%%%%%%%%%%%%%%%%%%%%%%%%%%%%%%
%%%%%%%%%%%%%%%%%%%%%%%%%%%%%%%%%%%%%%%%%%%%%%%%%%%%%%%%%%%%%%%%%%%%%%%%%%%%%%%%%%%%%%%%%%%%%%%%%%%%%%%%%%%%%%%%%%%
% section
%%%%%%%%%%%%%%%%%%%%%%%%%%%%%%%%%%%%%%%%%%%%%%%%%%%%%%%%%%%%%%%%%%%%%%%%%%%%%%%%%%%%%%%%%%%%%%%%%%%%%%%%%%%%%%%%%%%
%%%%%%%%%%%%%%%%%%%%%%%%%%%%%%%%%%%%%%%%%%%%%%%%%%%%%%%%%%%%%%%%%%%%%%%%%%%%%%%%%%%%%%%%%%%%%%%%%%%%%%%%%%%%%%%%%%%

\section{One-parameter subgroups of $\mathrm{Bir}(\mathbb{P}^2_\mathbb{C})$}

\subsection{Description of $1$-parameter subgroups of quadratic 
birational maps of $\mathbb{P}^2_\mathbb{C}$}

A \textsl{germ of flow}\index{defi}{germ (of flow)} in 
$\mathrm{Bir}_{\leq 2}(\mathbb{P}^2_\mathbb{C})$ is a germ of holomorphic 
application $t\mapsto\phi_t\in\mathrm{Bir}_{\leq 2}(\mathbb{P}^2_\mathbb{C})$
such that 
\[
\left\{
\begin{array}{ll}
\phi_{t+s}=\phi_t\circ\phi_s\\
\phi_0=\mathrm{id}
\end{array}
\right.
\]

Since a germ of flow can be generalized we speak about 
\textsl{flow}\index{defi}{flow}. The set of lines blown down by the flow 
$\phi_t$ is a germ of analytic sets in the Grassmaniann of lines in 
$\mathbb{P}^2_\mathbb{C}$, {\it i.e.} in the dual space 
$(\mathbb{P}^2_\mathbb{C})^\vee$. Similarly the set of indeterminacy 
points of the $\phi_t$ is a germ of analytic sets of~$\mathbb{P}^2_\mathbb{C}$.

We call \textsl{family of contracted curves}\index{defi}{family (of contracted
curves)} a continuous
map (indeed an analytic one) defined over a germ of closed sector $\Delta$
of vertex $0$ in $\mathbb{C}$
\[
\mathcal{D}\colon\Delta\to(\mathbb{P}^2_\mathbb{C})^{\vee}
\]
such that for any $t\in\Delta$ the lines $\mathcal{D}_t$ coincide with a 
line $\mathcal{D}(t)$ blown down by $\phi_t$. 

Similarly a \textsl{family of indeterminacy points}\index{defi}{family (of 
indeterminacy points)} is a continuous map $t\mapsto p_t$ defined on a 
sector~$\Delta$ such that any $p_t$ is an indeterminacy point of $\phi_t$.

Let $\phi_t$ be a flow. Let $\mathcal{D}_t$ (resp. $p_t$) be a family of 
curves blown down by $\phi_t$ (resp. a family of indeterminacy points of $\phi_t$).
If $\mathcal{D}_t$ (resp. $p_t$) is independent of $t$, the family is 
\textsl{unmobile}\index{defi}{unmobile (family)}, otherwise it is 
\textsl{mobile}\index{defi}{mobile (family)}.

A rational vector field $\chi$ on $\mathbb{P}^2_\mathbb{C}$ is 
\textsl{rationally integrable}\index{defi}{rationally integrable (flow)} if 
its flow is a flow of birational maps.

A germ of flow in $\mathrm{Bir}_2(\mathbb{P}^2_\mathbb{C})$ is the flow of a rationally integrable
vector field $\chi=\frac{\partial\phi_t}{\partial t}\Big\vert_{t=0}$
called \textsl{infinitesimal generator}\index{defi}{infinitesimal generator}
of $\phi_t$. To this vector field is associated a foliation whose leaves 
are "grosso modo" the trajectories of $\chi$. Recall that a 
\textsl{fibration by lines}\index{defi}{fibration (by lines)} $\mathcal{L}$ 
of $\mathbb{P}^2_\mathbb{C}$ is given by
\[
\lambda\ell_1+\mu\ell_2=0
\]
where $\ell_1$, $\ell_2$ are linear forms that are not proportional. The 
\textsl{base-point}\index{defi}{base-point (fibration by lines)} is 
the intersection point~$p$ of all these lines. We also say that 
$\mathcal{L}$ is a \textsl{pencil of lines}\index{defi}{pencil of lines} through $p$, or $\mathcal{L}$ is a \textsl{foliation by 
lines}\index{defi}{foliation (by lines)} singular at $p$. Recall that a birational self map of $\mathbb{P}^2_\mathbb{C}$ that preserves a rational fibration belongs up to birational conjugacy to $\mathcal{J}$.

Let $\phi_t$ be a germ of flow in $\mathrm{Bir}_2(\mathbb{P}^2_\mathbb{C})$. Then the following 
properties hold:
\begin{itemize}
\item[$\diamond$] assume that $\phi_t$ blows down a mobile line, then 
$\phi_t$ preserves a fibration by lines, more precisely the family of 
contracted lines belongs to a fibration invariant by any element of the 
flow (\cite[Proposition 2.5, Remark 2.6]{CerveauDeserti:ptdegre});

\item[$\diamond$] there is at most one unmobile line blown down by 
$\phi_t$ (\emph{see} \cite[Lemma 2.10]{CerveauDeserti:ptdegre});

\item[$\diamond$] if $\phi_t$ blows down a unique line that is moreover
unmobile, then there exists an invariant affine chart $\mathbb{C}^2$
such that $\phi_{t\vert\mathbb{C}^2}\colon\mathbb{C}^2\to\mathbb{C}^2$ is 
polynomial for any $t$ (\emph{see} 
\cite[Proposition 2.12]{CerveauDeserti:ptdegre});

\item[$\diamond$] assume that there exists an invariant affine chart
$\mathbb{C}^2$ such that 
$\phi_{t\vert\mathbb{C}^2}\colon\mathbb{C}^2\to\mathbb{C}^2$ is polynomial 
for any $t$. Then $\phi_t$ preserves a pencil of lines. Furthermore
either $\phi_t$ is affine, or there exists a normal form for $\phi_t$ 
up to linear conjugacy (\cite[Proposition 2.15]{CerveauDeserti:ptdegre}).
\end{itemize}

Combining all these properties one can state the following result:

\begin{thm}[\cite{CerveauDeserti:ptdegre}]\label{thm:fibrationbis}
A germ of flow in $\mathrm{Bir}_2(\mathbb{P}^2_\mathbb{C})$ preserves 
a fibration by lines.
\end{thm}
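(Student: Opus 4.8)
The plan is to run a case analysis governed by the geometry of the lines contracted by the maps $\phi_t$, using the four properties recalled just above as the building blocks. First I would record that for $t\neq 0$ (small) each $\phi_t$ is a genuine quadratic map, so---as computed in \S\ref{sec:geodef} for $\nu=2$---it has three base-points and contracts a nonempty set of lines (the lines joining pairs of these base-points, possibly with coincidences). Letting $t$ vary, these contracted lines organize into finitely many analytic families of contracted curves $\mathcal{D}_t$, together with the associated families of indeterminacy points $p_t$; each such family is then either mobile or unmobile. The whole argument reduces to the dichotomy: \emph{either at least one family $\mathcal{D}_t$ is mobile, or every contracted family is unmobile.}

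In the mobile case the conclusion is immediate: by the first property recalled above (Proposition 2.5 and Remark 2.6 of \cite{CerveauDeserti:ptdegre}), a mobile family of contracted lines necessarily belongs to a fibration by lines that is invariant under every $\phi_t$, so $\phi_t$ preserves a fibration by lines and we are done.

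In the remaining case no family is mobile, hence every line contracted by $\phi_t$ is unmobile. Since the second property (Lemma 2.10) asserts that there is at most one unmobile contracted line, and since the contracted locus of a degree-$2$ map is nonempty, the three contracted lines must degenerate to a single unmobile line. I would then feed this into the third property (Proposition 2.12), which produces an invariant affine chart $\mathbb{C}^2$ on which every $\phi_t$ restricts to a polynomial automorphism, and finally into the fourth property (Proposition 2.15), which guarantees that such a polynomial flow preserves a pencil of lines. Thus in both branches $\phi_t$ preserves a fibration by lines, which is the assertion.

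The routine-looking glue hides two points that require care. The first is making the notion of \emph{family} rigorous: one must check that the contracted locus depends analytically on $t$ on a suitable germ of sector and control its behaviour as $t\to 0$, where $\phi_0=\mathrm{id}$ has empty base locus and the families must collapse consistently---this is what lets one speak unambiguously of mobile versus unmobile branches and verify that the dichotomy is exhaustive. The second, and the genuine heart of the matter, is the mobile case itself: the reason a one-parameter moving family of contracted lines is trapped inside a single invariant fibration is that the flow relation $\phi_{t+s}=\phi_t\circ\phi_s$ forces the set of contracted lines to be permuted by the group, and---because these are \emph{lines}---the only pencils compatible with this are fibrations by lines. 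Were the four preliminary results not already available, this is the step on which I would expect to spend essentially all of the effort.
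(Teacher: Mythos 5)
Your proposal is correct and follows essentially the same route as the paper, which obtains Theorem \ref{thm:fibrationbis} precisely by combining the four recalled properties through the same dichotomy: a mobile contracted family forces an invariant fibration by lines (Proposition 2.5 and Remark 2.6 of \cite{CerveauDeserti:ptdegre}), while in the unmobile case Lemma 2.10 reduces to a unique unmobile contracted line, and Propositions 2.12 and 2.15 then yield an invariant affine chart with polynomial restriction and an invariant pencil of lines. The two delicate points you identify (the analytic dependence of the contracted families on $t$ and the mobile case itself) are exactly the content of those cited results, so no further argument is required.
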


Let $\phi_t$ be a quadratic birational flow, and let $\chi$ be its
infinitesimal generator. A \textsl{strong symmetry}\index{defi}{strong 
symmetry} $Y$ of $\chi$ is a rationally integrable vector field of flow 
$\psi_s$ such that
\begin{itemize}
\item[$\diamond$] $\phi_t$ and $\psi_s$ commute, {\it i.e.} $[\chi,Y]=0$,
\item[$\diamond$] $\psi_s\in\mathrm{Bir}_2(\mathbb{P}^2_\mathbb{C})$ for all $s$,
\item[$\diamond$] $\chi$ and $Y$ are not $\mathbb{C}$-colinear.
\end{itemize}

Let $\phi_t$ be a flow in $\mathrm{Bir}_2(\mathbb{P}^2_\mathbb{C})$, 
and let $\chi$ (resp. $\mathcal{F}_\chi$\index{not}{$\mathcal{F}_\chi$}) be 
the associated vector field  (resp. foliation). We denote by 
$\overline{\langle\phi_t\rangle}^Z\subset\mathrm{Bir}_2(\mathbb{P}^2_\mathbb{C})$\index{not}{$\overline{\phi_t}^Z$}
the Zariski closure of $\langle\phi_t\rangle$ in $\mathrm{Bir}_2(\mathbb{P}^2_\mathbb{C})$. Let 
$\mathrm{G}(\chi)$\index{not}{$\mathrm{G}(\chi)$} be the maximal algebraic 
abelian subgroup of $\mathrm{Bir}_2(\mathbb{P}^2_\mathbb{C})$ that contains 
$\overline{\langle\phi_t\rangle}^Z$.

\begin{thm}[\cite{CerveauDeserti:ptdegre}]\label{thm:flotalt}
Let $\phi_t$ be a germ of flow in $\mathrm{Bir}_2(\mathbb{P}^2_\mathbb{C})$, and let $\chi$ be its
infinitesimal generator.
\begin{itemize}
\item[$\diamond$] If $\dim\mathrm{G}(\chi)=1$, then $\mathcal{F}_\chi$ is
a rational fibration.

\item[$\diamond$] If $\dim\mathrm{G}(\chi)\geq 2$, then $\mathcal{F}_\chi$
has a strong symmetry.
\end{itemize}

In both cases $\mathcal{F}_\chi$ is defined by a rational closed $1$-form.
\end{thm}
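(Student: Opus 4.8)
The plan is to reduce everything to the invariant pencil of lines produced by Theorem~\ref{thm:fibrationbis}, and then to dichotomize according to $\dim\mathrm{G}(\chi)$. First I would invoke that theorem: the germ of flow $\phi_t$ preserves a fibration by lines $\mathcal{L}$. Choosing affine coordinates $(z_0,z_1)$ so that $\mathcal{L}=\{z_1=\mathrm{const}\}$, the fact that each $\phi_t$ permutes the fibers of $\mathcal{L}$ forces the induced motion on the base $z_1\in\mathbb{P}^1_\mathbb{C}$ to depend on $z_1$ alone; hence the infinitesimal generator takes the form
\[
\chi=P(z_0,z_1)\,\partial_{z_0}+Q(z_1)\,\partial_{z_1},
\]
and $\mathcal{F}_\chi$ is cut out by the rational $1$-form $\omega_0=Q\,dz_0-P\,dz_1$. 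The degree-$2$ constraint on the flow, together with the explicit normal forms for quadratic flows recorded in the previous subsections, will pin down $P$ and $Q$ enough to run the case analysis.

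Next I would treat the easy implication, $\dim\mathrm{G}(\chi)\ge 2\Rightarrow$ strong symmetry. The group $\mathrm{G}(\chi)$ is an abelian algebraic subgroup of $\mathrm{Bir}_2(\mathbb{P}^2_\mathbb{C})$ whose Lie algebra is abelian and contains $\chi$. If $\dim\mathrm{G}(\chi)\ge 2$, pick $Y$ in that Lie algebra with $\chi$, $Y$ not $\mathbb{C}$-colinear. Then $[\chi,Y]=0$, and $Y$ integrates, inside $\mathrm{G}(\chi)\subset\mathrm{Bir}_2(\mathbb{P}^2_\mathbb{C})$, to a flow $\psi_s$ lying in $\mathrm{Bir}_2(\mathbb{P}^2_\mathbb{C})$ for all $s$: this is precisely a strong symmetry. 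Moreover the rational $1$-form $\omega$ normalized by $\omega(\chi)=0$ and $\omega(Y)=1$, namely $\omega=\omega_0/\det(Y,\chi)$, defines $\mathcal{F}_\chi$, and the Cartan identity $d\omega(\chi,Y)=\chi(\omega(Y))-Y(\omega(\chi))-\omega([\chi,Y])=0$, combined with the fact that $\chi$ and $Y$ frame the tangent bundle at a generic point, yields $d\omega=0$. This already produces the required closed rational $1$-form in this case.

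The main obstacle is the implication $\dim\mathrm{G}(\chi)=1\Rightarrow\mathcal{F}_\chi$ is a rational fibration, where by hypothesis no transverse commuting field exists. If $Q\equiv 0$ then $\mathcal{F}_\chi=\mathcal{L}$ is a fibration by lines, hence a rational fibration defined by $dz_1$. If $Q\not\equiv 0$, I would probe the explicit candidates $\partial_{z_0}$ and $z_0\partial_{z_0}$, both of which lie in $\mathrm{PGL}(3,\mathbb{C})\subset\mathrm{Bir}_2(\mathbb{P}^2_\mathbb{C})$: one computes $[\partial_{z_0},\chi]=(\partial_{z_0}P)\,\partial_{z_0}$ and $[z_0\partial_{z_0},\chi]=-(P-z_0\partial_{z_0}P)\,\partial_{z_0}$, so the vanishing of either bracket would manufacture a strong symmetry and contradict $\dim\mathrm{G}(\chi)=1$. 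This forces $P$ to depend genuinely and inhomogeneously on $z_0$, which reduces the leaf equation $dz_0/dz_1=P/Q$ to a linear (or Riccati) equation; the absence of symmetry then forces its integrating factor to be rational (the residues of the relevant $1$-form $(\partial_{z_0}P)/Q$ on $\mathbb{P}^1_\mathbb{C}$ being integral), producing a rational first integral $H$ whose generic level set is a graph over the $z_1$-line and therefore rational. Thus $\mathcal{F}_\chi$ is a rational fibration. Carrying this out case by case on the normal forms, and checking that every degenerate configuration with a non-integral residue or an irreducible quadratic-in-$z_0$ Riccati part in fact carries a commuting field, is the technical heart of the argument and the step I expect to be hardest.

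Finally I would assemble the closed-form conclusion uniformly. In the case $\dim\mathrm{G}(\chi)\ge 2$ the closed rational $1$-form is the $\omega$ built from the strong symmetry. In the case $\dim\mathrm{G}(\chi)=1$ the rational first integral $H$ supplies the closed rational $1$-form $dH$, or $dH/H$ in the logarithmic subcase. Hence in both cases $\mathcal{F}_\chi$ is defined by a closed rational $1$-form, which completes the proof.
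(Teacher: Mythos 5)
Your treatment of the case $\dim\mathrm{G}(\chi)\geq 2$ has a real gap at the construction of the closed form. The definition of a strong symmetry only requires that $\chi$ and $Y$ be not $\mathbb{C}$-colinear, and this does \emph{not} imply that they are generically independent over the function field: the degenerate case $Y=f\chi$ with $f\in\mathbb{C}(z_0,z_1)$ rational and non-constant is allowed. In that case $i_Y\omega=f\,i_\chi\omega=0$, so your normalization $\omega(Y)=1$ is impossible and the claim that ``$\chi$ and $Y$ frame the tangent bundle at a generic point'' fails. The paper splits exactly along this dichotomy: when $\chi$ and $Y$ are generically independent it takes $\Omega=\omega/i_Y\omega$ (your Cartan-identity computation is then correct, since $d\Omega$ is a $2$-form on a surface vanishing on the frame $(\chi,Y)$); when $Y=f\chi$, it uses $[\chi,Y]=0$ to get $\chi(f)=0$, so that $f$ is a non-constant rational first integral and $df$ is the required closed rational $1$-form defining $\mathcal{F}_\chi$. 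Your argument must be completed by this second branch, which incidentally also settles part of the closed-form conclusion you assemble at the end.

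For $\dim\mathrm{G}(\chi)=1$ your route diverges completely from the paper's, and as written it is not a proof: the decisive assertions --- that absence of a commuting field forces the integrating factor of the Riccati/linear leaf equation to be rational, that the residues of $(\partial_{z_0}P)/Q$ must be integral, and that ``every degenerate configuration \ldots\ in fact carries a commuting field'' --- are exactly the content to be established, and you defer them (``the step I expect to be hardest''). The paper avoids this normal-form analysis entirely by a structural argument: $\overline{\langle\phi_t\rangle}^Z$ is the identity component of $\mathrm{G}(\chi)$, hence a one-dimensional algebraic group isomorphic to $\mathbb{C}$, $\mathbb{C}^*$ or $\faktor{\mathbb{C}}{\Lambda}$; the invariant fibration by lines of Theorem \ref{thm:fibrationbis} gives a morphism $\pi\colon\overline{\langle\phi_t\rangle}^Z\to\mathrm{PGL}(2,\mathbb{C})$ recording the action on the base. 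If $\pi$ is trivial, $\mathcal{F}_\chi$ is the fibration by lines itself. If not, the elliptic-curve case is excluded (no $\faktor{\mathbb{C}}{\Lambda}$ inside $\mathrm{PGL}(2,\mathbb{C})$), the closure of the group in $\mathrm{Rat}_2\simeq\mathbb{P}^{17}_\mathbb{C}$ is a rational curve, so the closure of every leaf (a generic orbit) is a rational algebraic curve, and Darboux's theorem (Jouanolou) then yields a non-constant rational first integral; rationality of the leaves makes $\mathcal{F}_\chi$ a rational fibration, and $dH$ gives the closed form. You should either adopt this argument or actually carry out the case-by-case ODE analysis you sketch, which the paper's own quotient-group mechanism renders unnecessary.
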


\begin{proof}
Let us prove the first assertion. If $\dim\mathrm{G}(\chi)=1$, then
$\overline{\langle\phi_t\rangle}^Z$ is the component of $\mathrm{G}(\chi)$
that contains the identity. This group viewed as a Lie group is isomorphic
to $\mathbb{C}$, or $\mathbb{C}^*$, or $\faktor{\mathbb{C}}{\Lambda}$. According to
Theorem \ref{thm:fibrationbis} the group $\overline{\langle\phi_t\rangle}^Z$
preserves a fibration by lines; let us assume that this fibration is given 
by $z_1=$ constant. One yields a morphism
\[
\pi\colon\overline{\langle\phi_t\rangle}^Z\to\mathrm{PGL}(2,\mathbb{C})
\]
that describes the action of $\phi_t$ on the fibers. 

If $\pi$ is trivial ({\it i.e.} if the fibration is preserved fiberwise), 
then $\mathcal{F}_\chi=\big\{z_1=\text{ constant }\big\}$ and the result holds.

Otherwise $\overline{\langle\phi_t\rangle}^Z$ is not isomorphic to 
$\faktor{\mathbb{C}}{\Lambda}$ because there is no $\faktor{\mathbb{C}}{\Lambda}$ among 
the subgroup of $\mathrm{PGL}(2,\mathbb{C})$. Hence the topological closure
of $\overline{\langle\phi_t\rangle}^Z$ in
$\mathbb{P}^{17}_\mathbb{C}\simeq\mathrm{Rat}_2$ is a rational curve. But
according to Darboux a foliation of $\mathbb{P}^2_\mathbb{C}$ whose the 
closure of all leaves are algebraic curves has a non-constant rational
first integral (\cite{Jouanolou}). In our case the curves are rational,
so $\mathcal{F}_\chi$ is a rational fibration.

\medskip

Let us now prove the second assertion. Assume $\dim\mathrm{G}(\chi)\geq 2$.
One can find a germ of $1$-parameter group $\psi_s$ in $\mathrm{G}(\chi)$ 
not contained in $\langle\phi_t\rangle$. Let $Y$ be the infinitesimal 
generator of $\psi_s$. The vector fields $\chi$ and $Y$ commute and are
not $\mathbb{C}$-colinear. Let us consider $\omega$ a rational $1$-form 
that define $\mathcal{F}_\chi$,  {\it i.e.} $i_\chi\omega=0$. If $\chi$ and~$Y$
are generically independent, then $\Omega=\frac{\omega}{i_Y\omega}$ is 
closed and define $\mathcal{F}_\chi$. If $\chi$ and $Y$ are not 
generically independent, then $Y=f\chi$ with $f$ rational and non-constant.
Since $[\chi,Y]=0$ one has $\chi(f)=0$. As a result $\mathrm{d}f$ defines 
$\mathcal{F}_\chi$ and is closed.
\end{proof}

\begin{rem}
The last two statements can be generalized as follows:
\begin{thm}[\cite{CerveauDeserti:ptdegre}]
Let $\phi_t$ be a germ of flow in $\mathrm{Bir}_n(\mathbb{P}^2_\mathbb{C})$, and let $\chi$ be 
its infinitesimal generator. Denote by $\mathrm{G}(\chi)$ the abelian
maximal algebraic group contained in $\mathrm{Bir}_n(\mathbb{P}^2_\mathbb{C})$ and that contains
$\overline{\langle\phi_t\rangle}^Z$. Then 
\begin{itemize}
\item[$\diamond$] if $\dim\mathrm{G}(\chi)=1$, then $\mathcal{F}_\chi$
is either a rational fibration or an elliptic fibration;

\item[$\diamond$] if $\dim\mathrm{G}(\chi)\geq 2$, then $\chi$ has
a strong symmetry.
\end{itemize}

In both cases $\mathcal{F}_\chi$ is defined by a closed rational 
$1$-form.
\end{thm}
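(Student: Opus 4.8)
The plan is to follow the architecture of the quadratic case (Theorem \ref{thm:flotalt}) while replacing the inputs that were special to degree two. The central object remains the maximal abelian algebraic subgroup $\mathrm{G}(\chi)\subset\mathrm{Bir}_n(\mathbb{P}^2_\mathbb{C})$ containing $\overline{\langle\phi_t\rangle}^Z$. Because $\mathrm{G}(\chi)$ is an algebraic subgroup of the Cremona group, the regularization theorem of Weil (Theorem \ref{thm:Weil}) lets me conjugate it, via a birational map $\psi\colon S\dashrightarrow\mathbb{P}^2_\mathbb{C}$, to a group acting by \emph{automorphisms} of a smooth rational surface $S$. The pulled-back foliation $\psi^*\mathcal{F}_\chi$ is then invariant under a positive-dimensional group of automorphisms of $S$, the setting controlled by Cantat and Favre (Theorems \ref{thm:CantatFavre1} and \ref{thm:CantatFavre2}). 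This is exactly the device that replaces Theorem \ref{thm:fibrationbis}: in degree two the existence of a fibration by lines was extracted from the combinatorics of contracted lines, whereas in higher degree I would instead move everything to $S$, where the action is biregular and such analysis is unnecessary.

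First I would treat the case $\dim\mathrm{G}(\chi)=1$. The identity component of $\mathrm{G}(\chi)$ is a one-dimensional algebraic group, hence isomorphic as an algebraic group to $(\mathbb{C},+)$, to $\mathbb{C}^*$, or to an elliptic curve $\faktor{\mathbb{C}}{\Lambda}$. Its orbits on $S$ are one-dimensional and locally closed, and they coincide with the leaves of $\psi^*\mathcal{F}_\chi$; thus every leaf is an algebraic curve whose normalisation is rational in the first two cases and of genus one in the last. Since \emph{all} leaves of the foliation are algebraic, the theorem of Darboux and Jouanolou (\cite{Jouanolou}) produces a non-constant rational first integral $f$, so $\psi^*\mathcal{F}_\chi$ is a fibration defined by the closed rational $1$-form $\mathrm{d}f$. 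Transporting by $\psi$ gives a rational or an elliptic fibration on $\mathbb{P}^2_\mathbb{C}$, defined by a closed rational $1$-form, as claimed.

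Next I would handle $\dim\mathrm{G}(\chi)\geq 2$, which is formally identical to the quadratic argument and requires no surface-level input. Choosing a germ of one-parameter subgroup $\psi_s$ of $\mathrm{G}(\chi)$ not contained in $\langle\phi_t\rangle$ yields a rationally integrable vector field $Y$ with $[\chi,Y]=0$, $\psi_s\in\mathrm{Bir}_n(\mathbb{P}^2_\mathbb{C})$, and $Y$ not $\mathbb{C}$-colinear to $\chi$; by construction $Y$ is a strong symmetry of $\chi$. If $\chi$ and $Y$ are generically independent and $\omega$ is a rational $1$-form defining $\mathcal{F}_\chi$ with $i_\chi\omega=0$, then $\Omega=\frac{\omega}{i_Y\omega}$ is closed and still defines $\mathcal{F}_\chi$; if instead $Y=f\chi$ with $f$ rational non-constant, then $\chi(f)=0$ forces $\mathrm{d}f$ to define $\mathcal{F}_\chi$, and $\mathrm{d}f$ is closed. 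In either subcase $\mathcal{F}_\chi$ is given by a closed rational $1$-form.

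The hard part will be the first bullet, and specifically the passage from "$\mathrm{G}(\chi)$ is one-dimensional" to "the orbit closures are algebraic curves of the advertised genus". On $S$ this is clean, since an orbit of a one-dimensional algebraic group acting biregularly is automatically locally closed; the genuine care lies in checking that the birational transport $\psi$ neither destroys the algebraicity of the generic leaf nor raises its genus, and that the topological closure of $\langle\phi_t\rangle$ inside $\mathrm{Rat}_n$ is an algebraic curve rather than a mere real-analytic arc. A further subtlety to track is that the elliptic alternative \emph{does} occur once $n>2$ (it cannot for quadratic flows, where no elliptic fibration is preserved by an uncountable group), so the rational/elliptic dichotomy in the conclusion is sharp and must be carried through the Cantat--Favre alternative rather than collapsed, as it was in Theorem \ref{thm:flotalt}.
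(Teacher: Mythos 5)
Your case $\dim\mathrm{G}(\chi)\geq 2$ is correct and is verbatim the argument of Theorem \ref{thm:flotalt}: nothing there is special to degree two, and the construction of the closed form $\Omega=\frac{\omega}{i_Y\omega}$ (or $\mathrm{d}f$ when $Y=f\chi$) goes through unchanged. The genuine gap is in the case $\dim\mathrm{G}(\chi)=1$, and it is structural: your regularization step is incompatible with the elliptic alternative that the theorem asserts. If $\mathrm{G}(\chi)$ were an algebraic subgroup of $\mathrm{Bir}(\mathbb{P}^2_\mathbb{C})$ in the sense of Chapter \ref{Chapter:algebraicsubgroup}, then by the proposition that any algebraic subgroup of $\mathrm{Bir}(\mathbb{P}^n_\mathbb{C})$ is affine --- a proposition established by exactly the tools you invoke, namely Weil regularization (Theorem \ref{thm:Weil}) together with the triviality of the Albanese of a rational surface --- its identity component would be $\mathbb{C}$ or $\mathbb{C}^*$, never $\faktor{\mathbb{C}}{\Lambda}$. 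In particular an elliptic curve cannot act faithfully by automorphisms on your rational surface $S$, so the orbit analysis you run on $S$ can only ever produce rational leaves: as written, your argument "proves" that the elliptic fibration case never occurs, contradicting the statement under proof (and your own sharpness remark in the last paragraph, whose parenthetical justification for the quadratic case is also misattributed, see below).

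The source of the error is that $\mathrm{Bir}_n(\mathbb{P}^2_\mathbb{C})$ is not stable under composition, so $\overline{\langle\phi_t\rangle}^Z$, the Zariski closure taken inside the parameter space $\mathrm{Rat}_n$ (it is $\mathbb{P}^{17}_\mathbb{C}\simeq\mathrm{Rat}_2$ when $n=2$), is only a curve carrying a generically defined group law, not an algebraic subgroup of bounded degree to which Corollary \ref{cor:agree} and regularization apply. This is precisely why the proof of Theorem \ref{thm:flotalt} must entertain the three Lie-group types $\mathbb{C}$, $\mathbb{C}^*$ and $\faktor{\mathbb{C}}{\Lambda}$, and why it excludes the last one in degree $2$ not by regularization but through the invariant pencil of lines of Theorem \ref{thm:fibrationbis} and the induced morphism to $\mathrm{PGL}(2,\mathbb{C})$, which contains no elliptic curve. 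In degree $n$ there is no analogue of Theorem \ref{thm:fibrationbis}, the exclusion of $\faktor{\mathbb{C}}{\Lambda}$ fails, and that is exactly where the elliptic alternative comes from. The repair is to argue directly on $\mathrm{Rat}_n$ as in the quadratic proof: the closure $C$ of $\{\phi_t\}$ in $\mathrm{Rat}_n$ is an algebraic curve, rational when the Lie-group type is $\mathbb{C}$ or $\mathbb{C}^*$ and elliptic when it is $\faktor{\mathbb{C}}{\Lambda}$; the closure of the orbit of a generic point $p$ is the image of $C$ under the evaluation map $\phi\mapsto\phi(p)$, hence a rational or elliptic curve (a dominant map from a curve of genus $\leq 1$ cannot raise the genus); all leaf closures being algebraic, Darboux--Jouanolou \cite{Jouanolou} yields a non-constant rational first integral $f$, so that $\mathcal{F}_\chi$ is the fibration defined by the closed rational $1$-form $\mathrm{d}f$, with generic fibers --- the orbit closures --- rational or elliptic according to the type of $C$. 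With this replacement the appeal to Theorem \ref{thm:Weil} should be deleted from the one-dimensional case.
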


\begin{thm}[\cite{CerveauDeserti:ptdegre}]
Any germ of birational flow in $\mathrm{Bir}_n(\mathbb{P}^2_\mathbb{C})$ preserves a rational fibration.
\end{thm}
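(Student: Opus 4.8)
The plan is to reduce the statement to the foliation-theoretic dichotomy established just above, and then to exploit the boundedness of the degree along the flow to eliminate the genus-one behaviour. First I would record that, since $\phi_t$ stays in $\mathrm{Bir}_n(\mathbb{P}^2_\mathbb{C})$, the iterates $\phi_t^k=\phi_{kt}$ have uniformly bounded degree; by Corollary~\ref{cor:3plus} each $\phi_t$ is therefore elliptic, hence virtually isotopic to the identity by Theorem~\ref{thm:dilfav}. Consequently the Zariski closure $\overline{\langle\phi_t\rangle}^Z$ is, by Lemma~\ref{lem:plus}, an abelian algebraic subgroup of bounded degree, and it is contained in a maximal abelian algebraic subgroup $\mathrm{G}(\chi)$, where $\chi$ is the infinitesimal generator of the flow and $\mathcal{F}_\chi$ the associated foliation.

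Next I would invoke the generalization of Theorem~\ref{thm:flotalt} to $\mathrm{Bir}_n(\mathbb{P}^2_\mathbb{C})$ proved just above, which splits the analysis according to $\dim\mathrm{G}(\chi)$. If $\dim\mathrm{G}(\chi)=1$ and $\mathcal{F}_\chi$ is a rational fibration, then the flow preserves it by construction and the proof is finished; this is the exact analogue of the degree-two conclusion of Theorem~\ref{thm:fibrationbis}. What remains are two scenarios: the case $\dim\mathrm{G}(\chi)=1$ in which $\mathcal{F}_\chi$ is an \emph{elliptic} fibration, and the case $\dim\mathrm{G}(\chi)\geq 2$, where $\chi$ admits a strong symmetry and $\mathrm{G}(\chi)$ is abelian algebraic of dimension at least two.

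The main obstacle, and the place where the hypothesis $\phi_t\in\mathrm{Bir}_n(\mathbb{P}^2_\mathbb{C})$ is decisive, is excluding the elliptic fibration. Since the leaves of $\mathcal{F}_\chi$ are the trajectories of the flow, an elliptic $\mathcal{F}_\chi$ would force the orbit through a general point to lie in a genus-one fibre, so the flow would act trivially on the base and by a one-parameter group of translations along the fibres. A nontrivial such flow would make each $\phi_t$ preserve the elliptic fibration while acting by a non-torsion translation on the general fibre, and a torsion translation on every fibre would produce a nontrivial element of finite order, impossible for a germ of flow. By the dictionary of Theorem~\ref{thm:dilfav}, together with the Halphen picture of Lemma~\ref{lem:UrechHalphen}, the degrees $\deg\phi_t^k$ would then grow quadratically, contradicting their boundedness. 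Hence the elliptic case cannot occur, and in the one-dimensional case $\mathcal{F}_\chi$ is necessarily a rational fibration.

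Finally, for $\dim\mathrm{G}(\chi)\geq 2$ I would regularize: being virtually isotopic to the identity, the flow is conjugate, by a birational map $\psi\colon S\dashrightarrow\mathbb{P}^2_\mathbb{C}$, to a connected subgroup of $\mathrm{Aut}(S)^0$ for some rational surface $S$ (Theorem~\ref{thm:Weil}). Contracting the invariant exceptional curves reduces $S$ to a minimal rational surface carrying a positive-dimensional abelian group of automorphisms, and running through the list $\mathbb{P}^2_\mathbb{C}$, $\mathbb{P}^1_\mathbb{C}\times\mathbb{P}^1_\mathbb{C}$, $\mathbb{F}_n$ $(n\geq 2)$ one reads off an invariant rational fibration; for $\mathbb{P}^2_\mathbb{C}$ a two-dimensional abelian subgroup of $\mathrm{PGL}(3,\mathbb{C})$ is conjugate into the diagonal torus $\mathrm{D}_2$ and preserves a pencil of lines. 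Pulling everything back by $\psi$ then produces a rational fibration preserved by the original flow, which completes the argument. The delicate point throughout is thus not the existence of the invariant foliation $\mathcal{F}_\chi$, guaranteed by the results above, but upgrading it to a genuine \emph{rational} fibration, for which the bounded-degree hypothesis is exactly the lever that rules out the genus-one and Halphen-type possibilities.
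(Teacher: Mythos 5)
Your overall strategy is viable and stays close to the framework the survey builds around Theorem~\ref{thm:flotalt}, but three of your justifications need repair, and the first one carries the whole argument. The claim that the iterates $\phi_t^k=\phi_{kt}$ have uniformly bounded degree is not automatic for a \emph{germ}: the identity $\phi_t^k=\phi_{kt}$ is only available while $kt$ stays in the domain of the germ, and the extension of the germ to a global flow by $\phi_s:=(\phi_{s/m})^m$ is not given to stay in $\mathrm{Bir}_{\leq n}(\mathbb{P}^2_\mathbb{C})$. The claim is true, but you must say why: for fixed $k$ the family $t\mapsto\phi_t^k$ is holomorphic on the disc of definition of the germ, the degree in such a family is lower semicontinuous in $t$ (it drops exactly where the formal composition acquires a common factor, a closed analytic condition), so it is bounded by its generic value, and that generic value is already attained on the open subdisc $\vert t\vert<\varepsilon/k$ where $\deg\phi_t^k=\deg\phi_{kt}\leq n$. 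Hence $\deg\phi_t^k\leq n$ for all $t$ and all $k$. Without some such remark the appeal to Corollary~\ref{cor:3plus}, and with it the ellipticity of every $\phi_t$ on which everything else rests, has no footing.

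Two further justifications are wrong as written, although the conclusions survive. First, "a torsion translation on every fibre would produce a nontrivial element of finite order, impossible for a germ of flow" is not an argument: flows do contain nontrivial finite-order elements, e.g. $\phi_t\colon(z_0,z_1)\mapsto(\mathrm{e}^{\mathbf{i}t}z_0,z_1)$ at $t=\pi$. The correct exclusion of the elliptic case is: since every $\phi_t$ is elliptic, none is a Halphen twist, so each acts on the general fibre by a \emph{torsion} translation; then $t\mapsto\tau(t)$ is a continuous homomorphism of $\mathbb{C}$ into the countable torsion subgroup of the fibre, hence trivial, so the flow fixes the general fibre pointwise and is trivial --- a contradiction. (Equivalently, by Lemma~\ref{lem:UrechHalphen} and Theorem~\ref{thm:candol} the automorphism group of a Halphen surface is countable, so cannot contain an uncountable flow.) Second, a two-dimensional abelian subgroup of $\mathrm{PGL}(3,\mathbb{C})$ need \emph{not} be conjugate into the diagonal torus $\mathrm{D}_2$: the translation group $\big\{(z_0,z_1)\mapsto(z_0+a,z_1+b)\big\}$ is a unipotent counterexample. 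What is true and suffices is Lie--Kolchin (or the Borel fixed point theorem): a connected solvable subgroup of $\mathrm{PGL}(3,\mathbb{C})$ fixes a point $p$ of $\mathbb{P}^2_\mathbb{C}$ and therefore preserves the pencil of lines through $p$.

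Once repaired, your proof in fact diverges from the route the survey records: the paper presents the theorem as the outcome of the generalized Theorem~\ref{thm:flotalt} (strong symmetries, $\mathcal{F}_\chi$ defined by a closed rational $1$-form), whereas in the case $\dim\mathrm{G}(\chi)\geq 2$ you abandon that dichotomy for regularization and minimal models. This is legitimate, and it actually makes the dichotomy superfluous: the Zariski closure of the flow is a connected \emph{affine} algebraic group (algebraic subgroups of $\mathrm{Bir}(\mathbb{P}^2_\mathbb{C})$ are affine, so no elliptic-curve closure occurs), it regularizes via Theorem~\ref{thm:Weil} on a minimal rational surface, and the invariant ruling of $\mathbb{F}_n$ or $\mathbb{P}^1_\mathbb{C}\times\mathbb{P}^1_\mathbb{C}$, respectively the Lie--Kolchin pencil on $\mathbb{P}^2_\mathbb{C}$, yields the invariant rational fibration directly, also when $\dim\mathrm{G}(\chi)=1$. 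The price of your shortcut is losing the finer output of the paper's route, namely the description of the trajectory foliation $\mathcal{F}_\chi$ itself.
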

\end{rem}

\subsection{A few words about the classification of germs of quadratic 
birational flows}

Let $\phi_t$ be a germ of flow in $\mathrm{Bir}_2(\mathbb{P}^2_\mathbb{C})$; 
then $\phi_t$ preserves a fibration by lines 
(\cite[Theorem 2.16]{CerveauDeserti:ptdegre}). In other words up to linear 
conjugacy
\[
\phi_t\colon(z_0,z_1)\dashrightarrow\left(\frac{A(z_1,t)z_0+B(z_1,t)}{C(z_1,t)z_0+D(z_1,t)},\nu(z_1,t)\right)
\]
with 
\begin{itemize}
\item[$\diamond$] $\nu(z_1,t)=z_1$, or $z_1+t$, or $\mathrm{e}^{\beta t}z_1$;

\item[$\diamond$] $A$, $B$, $C$, $D$ are polynomials in $z_1$ and 
$\deg_{z_1}A\leq 1$, $\deg_{z_1}B\leq 2$, $\deg_{z_1}C=0$, $\deg_{z_1}D\leq 1$,

\item[$\diamond$] $B(z_1,0)=C(z_1,0)=0$ and $A(z_1,0)=D(z_1,0)$. 
\end{itemize}

The infinitesimal generator 
$\chi=\frac{\partial\phi_t}{\partial t}\Big\vert_{t=0}$ of $\phi_t$ can be 
written 
\[
\frac{\alpha z_0^2+\ell(z_1)z_0+P(z_1)}{az_1+b}\frac{\partial}{\partial z_0}+\varepsilon (z_1)\frac{\partial}{\partial z_1}
\]
with $\alpha$, $a$, $b\in\mathbb{C}$, $\ell$, $P\in\mathbb{C}[z_1]$, $\deg\ell=1$, $\deg P=2$
and up to linear conjugacy and scalar multiplication 
$\varepsilon\in\{0,\,1,\,z_1\}$.

The above vector fields are classified up to automorphisms of 
$\mathbb{P}^2_\mathbb{C}$ and renormalization in 
\cite[Chapter 2, \S 2]{CerveauDeserti:ptdegre}; such vector
fields are detected via the following methods:

\begin{itemize}
\item[$\diamond$] compute explicitely the flow by integration;

\item[$\diamond$] or degenerate $\chi$ on another vector
field $\chi_0$ that is not rationally integrable;

\item[$\diamond$] or show that a birational model of $\mathcal{F}_\chi$ 
has an isolated degenerate resonnant singular point (one and only 
one non-zero eigenvalue), and so $\mathcal{F}_\chi$ has no rational
first integral. Then prove that there is no strong symmetry
hence $\chi$ is not rationally integrable (Theorem \ref{thm:flotalt}).
\end{itemize}

%%%%%%%%%%%%%%%%%%%%%%%%%%%%%%%%%%%%%%%%%%%%%%%%%%%%%%%%%%%%%%%%%%%%%%%%%%%%%%%%%%%%%%%%%%%%%%%%%%%%%%%%%%%%%%%%%%%
%%%%%%%%%%%%%%%%%%%%%%%%%%%%%%%%%%%%%%%%%%%%%%%%%%%%%%%%%%%%%%%%%%%%%%%%%%%%%%%%%%%%%%%%%%%%%%%%%%%%%%%%%%%%%%%%%%%
% section
%%%%%%%%%%%%%%%%%%%%%%%%%%%%%%%%%%%%%%%%%%%%%%%%%%%%%%%%%%%%%%%%%%%%%%%%%%%%%%%%%%%%%%%%%%%%%%%%%%%%%%%%%%%%%%%%%%%
%%%%%%%%%%%%%%%%%%%%%%%%%%%%%%%%%%%%%%%%%%%%%%%%%%%%%%%%%%%%%%%%%%%%%%%%%%%%%%%%%%%%%%%%%%%%%%%%%%%%%%%%%%%%%%%%%%%

\section{Nilpotent subgroups of the Cremona group}

In \cite{Deserti:nilpotent} are described the nilpotent subgroups of the 
plane Cremona group:

\begin{thm}[\cite{Deserti:nilpotent}]\label{thm:nilpotent}
Let $\mathrm{N}$ be a nilpotent subgroup of $\mathrm{Bir}(\mathbb{P}^2_\mathbb{C})$.
Assume that, up to finite index, $\mathrm{N}$ is not abelian. Then
\begin{itemize}
\item[$\diamond$] either $\mathrm{N}$ is a torsion group;

\item[$\diamond$] or $\mathrm{N}$ is metabelian up to finite index, {\it i.e.}
$[\mathrm{N},\mathrm{N}]$ is abelian up to finite index.
\end{itemize}
\end{thm}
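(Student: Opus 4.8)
The plan is to combine the classification of birational maps by their degree growth (Theorem~\ref{thm:dilfav}) with two purely group-theoretic features of nilpotency: the \emph{normalizer condition} (every proper subgroup of a nilpotent group is properly contained in its normalizer, so that $\langle\phi\rangle$ is subnormal and the upper central series $1=Z_0\subset Z_1=Z(\mathrm{N})\subset\cdots\subset Z_c=\mathrm{N}$ exhausts $\mathrm{N}$) and the nontriviality of the center. Fixing a nilpotent non-virtually-abelian $\mathrm{N}$, I would first split the argument according to the type of the elements it contains: loxodromic ($\lambda(\phi)>1$, i.e. $\deg\phi^n$ grows exponentially), parabolic ($\lambda(\phi)=1$ with $\deg\phi^n\to\infty$, hence a Jonqui\`eres or Halphen twist), or elliptic ($\deg\phi^n$ bounded). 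In keeping with the viewpoint of this chapter, all three types are read off degree growth rather than from the action on $\mathbb{H}^\infty$.

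The first step is to show that $\mathrm{N}$ can contain no loxodromic element. If $f\in\mathrm{N}$ were loxodromic, I would use the description of the centralizer of a loxodromic map (the centralizer section of this chapter), which is virtually cyclic: since a conjugate of $f$ of the same dynamical degree lying in $\langle f\rangle$ must be $f^{\pm1}$, the normalizer of $\langle f\rangle$ acts on $\{f,f^{-1}\}$, and climbing the central series via the normalizer condition would force a finite-index subgroup of $\mathrm{N}$ to centralize a power of $f$. Combined with the virtual-cyclicity of that centralizer, this makes $\mathrm{N}$ virtually abelian, contradicting the hypothesis. \textbf{I expect this step to be the main obstacle}, precisely because one must propagate the control of $\langle f\rangle$ from its normalizer to all of $\mathrm{N}$ and match it against the centralizer description; the delicate point is ruling out nilpotent extensions of the virtually cyclic centralizer that are themselves loxodromic-containing but not virtually abelian.

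Granting this, $\mathrm{N}$ consists of elliptic and parabolic elements only. If $\mathrm{N}$ contains a parabolic element $\phi$, I would exploit that a Jonqui\`eres (resp. Halphen) twist preserves a \emph{unique} fibration: any $g$ normalizing $\langle\phi\rangle$ must permute, hence preserve, this fibration, and the normalizer condition then shows that (up to finite index) all of $\mathrm{N}$ preserves it. In the rational case $\mathrm{N}$ is then conjugate into $\mathcal{J}\simeq\mathrm{PGL}(2,\mathbb{C})\rtimes\mathrm{PGL}(2,\mathbb{C}(z_0))$, and I would analyze a nilpotent subgroup through the two projections onto a $\mathrm{PGL}_2$ over a field, where nilpotent subgroups are abelian up to finite index (triangularizable with abelian unipotent part); a nilpotent extension of one such by another is metabelian up to finite index. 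In the genus-one case, Lemma~\ref{lem:UrechHalphen} and its Corollary place $\mathrm{N}$ in the automorphism group of a Halphen surface preserving the Halphen pencil, whose fibration-preserving part is metabelian (an abelian group of fiber translations extended by a subgroup of the base automorphisms), again giving the second conclusion.

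Finally, if every element of $\mathrm{N}$ is elliptic, then either $\mathrm{N}$ is a torsion group, which is the first alternative, or $\mathrm{N}$ contains an element of infinite order, which is virtually isotopic to the identity. Here I would bootstrap Proposition~\ref{pro:commas} (two commuting maps virtually isotopic to the identity are simultaneously regularized on a common surface $Z$, with a power landing in $\mathrm{Aut}(Z)^0$) along the central series: commuting pairs are regularized first, then the center, then successive terms $Z_k$, to conjugate a finite-index subgroup of $\mathrm{N}$ into an algebraic group $\mathrm{Aut}(Z)^0$. A nilpotent non-abelian subgroup of a linear algebraic group is metabelian up to finite index, yielding the second alternative. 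The secondary difficulty here is the simultaneous regularization of the whole (possibly infinitely generated) nilpotent group rather than of a single commuting pair, which I would handle by reducing to finitely generated subgroups and using that the needed center of blow-ups stabilizes along the central series.
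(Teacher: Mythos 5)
There is a genuine gap, and it sits exactly where you did \emph{not} expect it: in the elliptic case, not the loxodromic one. (Your step 1 actually goes through cleanly: in a nilpotent group $\langle f\rangle$ is subnormal via the chain $\langle f\rangle Z_i$ along the upper central series, and each term of that chain normalizes a virtually cyclic group containing a power of $f$, so Theorem~\ref{thm:cent} forces each term, hence $\mathrm{N}$ itself, to be virtually cyclic --- contradiction. Your step 2 is also sound once phrased as: the stabilizer of the unique invariant fibration of a parabolic element is self-normalizing, hence equals $\mathrm{N}$ by the normalizer condition.) The problem is your step 3. Proposition~\ref{pro:commas} regularizes \emph{commuting} pairs of maps virtually isotopic to the identity; elements of $\mathrm{N}$ lying in different central layers do not commute, and no bootstrap ``along the central series'' produces a simultaneous regularization of a non-abelian group --- your closing remark that ``the needed center of blow-ups stabilizes along the central series'' is not an argument. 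Worse, the conclusion you are aiming for is false in general: a subgroup all of whose elements are elliptic need not be conjugate into any $\mathrm{Aut}(Z)$, as Theorem~\ref{thm:urechell1} makes explicit (the alternatives are bounded, fibration-preserving, or torsion); already $\big\{(z_0,z_1)\dashrightarrow(z_0+a(z_1),z_1)\,\vert\,a\in\mathbb{C}(z_1)\big\}$ consists of elliptic elements but has unbounded degrees, so it is not regularizable as a group. The paper's proof avoids this entirely: after reducing to a finitely generated, infinite, non-virtually-abelian subgroup $\mathrm{H}$ (the set $\Sigma_{\mathrm{G}}$; if all its members are finite one gets the torsion alternative), it regularizes only the finitely generated \emph{abelian} group $C^{(k-1)}\mathrm{H}$, whose generators do commute, via Proposition~\ref{pro:commas}, landing in $\mathrm{Aut}(S)^0$ for a \emph{minimal} rational $S$; then it uses the fact that these elements are central --- so everything centralizes them, no normalizer-condition propagation needed --- together with the explicit description of $\mathrm{Aut}(\mathbb{P}^2_\mathbb{C})$, $\mathrm{Aut}(\mathbb{P}^1_\mathbb{C}\times\mathbb{P}^1_\mathbb{C})$, $\mathrm{Aut}(\mathbb{F}_n)$ and Jordan decomposition to push $\mathrm{H}$, up to finite index and conjugacy, into $\mathcal{J}$, after which your Jonqui\`eres analysis finishes.

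A second, concrete error in your step 3: the lemma ``a nilpotent non-abelian subgroup of a linear algebraic group is metabelian up to finite index'' is false. The group $U_5(\mathbb{Z})$ of unipotent upper-triangular integer $5\times 5$ matrices is nilpotent of class $4$ but has derived length $3$, and every finite-index subgroup still has derived length $3$; so even if you could conjugate $\mathrm{N}$ into an algebraic group, the metabelian conclusion would not follow abstractly. It does hold for the specific groups that arise ($\mathrm{PGL}(3,\mathbb{C})$, $\mathrm{PGL}(2,\mathbb{C})^2$, $\mathrm{Aut}(\mathbb{F}_n)$ --- in each, the unipotent part of the closure of a nilpotent subgroup has abelian derived subgroup, or the group is abelian-by-$\mathrm{GL}_2$), but verifying that is precisely the case-by-case analysis on minimal rational surfaces that the paper carries out and that your blanket statement skips. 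Note finally that the paper's key lemma makes your steps 1--2 unnecessary in their present form: $C^{(k-1)}\mathrm{H}$ is generated by commutators $[f,g]$ of commuting elements, which are distorted (Remark~\ref{rem:distorted}) and hence non-loxodromic by Lemma~\ref{lem:nilkeylemma}; centrality then does all the propagation that you obtain from the normalizer condition.
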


\begin{egs}
Let $\alpha$ and $\beta$ be two non zero complex numbers; the group
\[
\langle(z_0,z_1)\mapsto(z_0+\alpha\beta,z_1),\,(z_0,z_1)\mapsto(z_0+\alpha z_1,z_1),\,(z_0,z_1)\mapsto(z_0,z_1+\beta)\rangle
\]
is a non-abelian, non-finite 
and nilpotent subgroup of $\mathrm{Bir}(\mathbb{P}^2_\mathbb{C})$.

If $a$ belongs to $\mathbb{C}(z_1)$, then 
\[
\langle(z_0,z_1)\mapsto(z_0+1,z_1),\,(z_0,z_1)\mapsto(z_0+z_1,z_1),\,(z_0,z_1)\mapsto(z_0+a(z_1),z_1-1)\rangle, 
\]
is a non-abelian, non-finite 
and nilpotent subgroup of $\mathrm{Bir}(\mathbb{P}^2_\mathbb{C})$.
\end{egs}

\begin{cor}[\cite{Deserti:nilpotent}]
Let $\mathrm{G}$ be a group. Assume that $\mathrm{G}$ contains a subgroup 
$\mathrm{N}$ such that 
\begin{itemize}
\item[$\diamond$] $\mathrm{N}$ is of nilpotent class $>1$,

\item[$\diamond$] $\mathrm{N}$ has no torsion,

\item[$\diamond$] $\mathrm{N}$ is not metabelian up to finite index.
\end{itemize}
Then there is no faithfull representation of $\mathrm{G}$ into 
$\mathrm{Bir}(\mathbb{P}^2_\mathbb{C})$.
\end{cor}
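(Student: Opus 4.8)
The plan is to derive the statement directly from Theorem \ref{thm:nilpotent} by contradiction. Suppose a faithful representation $\rho\colon\mathrm{G}\to\mathrm{Bir}(\mathbb{P}^2_\mathbb{C})$ exists. Since $\rho$ is injective, its restriction to $\mathrm{N}$ is an isomorphism onto its image $\mathrm{N}'=\rho(\mathrm{N})$, which is thus a nilpotent subgroup of the plane Cremona group. Each of the three hypotheses on $\mathrm{N}$ is an isomorphism invariant and therefore transfers to $\mathrm{N}'$: it has nilpotent class $>1$, it is torsion-free, and it is not metabelian up to finite index. First I would aim to apply Theorem \ref{thm:nilpotent} to $\mathrm{N}'$ and exhibit a contradiction with the conclusion of that theorem.

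To invoke Theorem \ref{thm:nilpotent} I must first know that $\mathrm{N}'$ is \emph{not} abelian up to finite index, so that the hypothesis of the dichotomy is met. This is the heart of the reduction, and I would isolate it as a lemma: a torsion-free nilpotent group of class $>1$ is never virtually abelian. The proof I have in mind runs as follows. Assume $\mathrm{N}'$ contains an abelian subgroup $A$ of finite index; replacing $A$ by its normal core (a finite intersection of conjugates, hence still of finite index and abelian since contained in $A$) we may take $A$ normal of some finite index $m$ in $\mathrm{N}'$. Then for every $x\in\mathrm{N}'$ the order of $xA$ in $\mathrm{N}'/A$ divides $m$, so $x^{m}\in A$; consequently $[x^{m},y^{m}]=\mathrm{id}$ for all $x,y\in\mathrm{N}'$ because $A$ is abelian. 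Now I would use the classical fact that in a torsion-free nilpotent group every centralizer is \emph{isolated}, that is, $z^{k}\in\mathrm{Cent}(w)$ with $k\geq1$ forces $z\in\mathrm{Cent}(w)$. Applying this twice to $[x^{m},y^{m}]=\mathrm{id}$ gives first $[x,y^{m}]=\mathrm{id}$ and then $[x,y]=\mathrm{id}$, so $\mathrm{N}'$ is abelian, contradicting class $>1$.

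With the lemma established, $\mathrm{N}'$ is nilpotent and not abelian up to finite index, so Theorem \ref{thm:nilpotent} forces $\mathrm{N}'$ to be either a torsion group or metabelian up to finite index. The first possibility is excluded because $\mathrm{N}'$ is torsion-free and nontrivial (a group of class $>1$ is in particular nonabelian, hence distinct from $\{\mathrm{id}\}$); the second is excluded by the hypothesis that $\mathrm{N}'$ is not metabelian up to finite index. This contradiction shows that no faithful representation of $\mathrm{G}$ into $\mathrm{Bir}(\mathbb{P}^2_\mathbb{C})$ exists.

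I expect the main obstacle to be a clean and self-contained justification of the isolated-centralizer property for torsion-free nilpotent groups, on which the whole argument hinges; the natural route is an induction on the nilpotency class (or an appeal to the uniqueness of roots in such groups via Mal'cev's theory), and I would want to present it carefully since it is exactly the point where torsion-freeness is used to upgrade the commuting of $m$-th powers to the commuting of the elements themselves. Everything else—transfer of the hypotheses through the isomorphism $\mathrm{N}\cong\mathrm{N}'$ and the elimination of the two cases in the conclusion of Theorem \ref{thm:nilpotent}—should be routine.
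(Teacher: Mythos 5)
Your proof is correct, and it follows the same contradiction scheme the paper intends: the corollary is stated as an immediate consequence of Theorem \ref{thm:nilpotent}, obtained by applying that theorem to the image $\mathrm{N}'=\rho(\mathrm{N})$ and ruling out both branches of its conclusion. The genuine difference is the key lemma you interpolate -- that a torsion-free nilpotent group of class $>1$ is never virtually abelian, proved via isolated centralizers and uniqueness of roots in the sense of Mal'cev. That lemma is true and your argument for it (normal core, $x^m\in A$, then two applications of the isolated-centralizer property) is sound, but it is superfluous here: the hypothesis of Theorem \ref{thm:nilpotent}, namely that $\mathrm{N}'$ is not abelian up to finite index, already follows from the third hypothesis alone, since abelian groups are metabelian. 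Indeed, if $A\subset\mathrm{N}'$ were abelian of finite index, then $A$ itself witnesses that $\mathrm{N}'$ is metabelian up to finite index; and even with the paper's reading of that phrase (``$[\mathrm{N}',\mathrm{N}']$ is abelian up to finite index''), the subgroup $[\mathrm{N}',\mathrm{N}']\cap A$ is abelian and of index at most $[\mathrm{N}':A]$ in $[\mathrm{N}',\mathrm{N}']$, giving the same contradiction. So torsion-freeness is not needed at the step where you deploy it; it is needed only where you also (correctly) use it, to exclude the torsion branch of the theorem's conclusion, while class $>1$ serves only to guarantee $\mathrm{N}'\neq\{\mathrm{id}\}$ there. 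What your detour buys is a self-contained structural fact of independent interest, at the cost of invoking Mal'cev theory; what the short route buys is a three-line verification of the theorem's hypothesis requiring nothing beyond the inclusion of abelian groups among metabelian ones.
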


\begin{rem}\label{rem:distorted}
Let $\mathrm{G}$ be a nilpotent group of nilpotent class $n$. Take $f$ in
$\mathrm{G}$, $g$ in $C^{(n-2)}\mathrm{G}$ and consider $h=[f,g]\in C^{(n-1)}\mathrm{G}$. Since $\mathrm{G}$
is of nilpotent class $n$, then $[f,h]=[g,h]=\mathrm{id}$. In other words
any nilpotent group contains a distorted element.
\end{rem}

According to Remark \ref{rem:distorted} and Lemma \ref{lem:nilkeylemma}
one has:

\begin{pro}
Let $\mathrm{N}$ be a nilpotent subgroup of the plane
Cremona group. It contains a distorted element which is elliptic or 
parabolic.
\end{pro}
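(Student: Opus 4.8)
The plan is to combine the two facts recalled just above: Remark~\ref{rem:distorted}, which produces a distorted element inside any nilpotent group, and Lemma~\ref{lem:nilkeylemma}, which forces distorted elements of a finitely generated group to act with dynamical degree $1$. First I would use the nilpotency of $\mathrm{N}$: writing $n$ for its nilpotency class (which I may assume is $\geq 2$, the abelian case being either vacuous when $\mathrm{N}$ has no element of infinite order, or reducible to this one), I would choose $f\in\mathrm{N}$ and $g\in C^{(n-2)}\mathrm{N}$ such that $h=[f,g]$ is a non-trivial element of $C^{(n-1)}\mathrm{N}$. By the very definition of the nilpotency class $h$ is central, that is $[f,h]=[g,h]=\mathrm{id}$, so $\langle f,g,h\rangle$ is a quotient of a Heisenberg group and the mechanism of Lemma~\ref{lem:disto} applies verbatim.

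The core computation, which is exactly the one in the proof of Lemma~\ref{lem:disto}, is that $h$ is distorted in the \emph{finitely generated} subgroup $\Gamma=\langle f,g\rangle$; note $h\in\Gamma$ since $h=[f,g]$. Indeed, from the centrality relations $[f,h]=[g,h]=\mathrm{id}$ one gets $[f^\ell,g^\ell]=h^{\ell^2}$, whence with respect to the generating set $\{f,g\}$
\[
\vert\vert h^{\ell^2}\vert\vert\leq 4\ell .
\]
Therefore $\lim_{\ell\to+\infty}\vert\vert h^{\ell^2}\vert\vert/\ell^2=0$, so the stable length of $h$ vanishes and $h$ is a distorted element of $\Gamma$, provided $h$ has infinite order.

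Finally I would apply Lemma~\ref{lem:nilkeylemma} to the finitely generated group $\Gamma$, the inclusion morphism $\upsilon\colon\Gamma\hookrightarrow\mathrm{Bir}(\mathbb{P}^2_\mathbb{C})$, and the distorted element $h$: this yields $\lambda(\upsilon(h))=\lambda(h)=1$, which by the definitions of elliptic, parabolic and loxodromic maps means precisely that $h$ is elliptic or parabolic. Since $h$ lies in $\mathrm{N}$, this exhibits the required element. The point that needs the most care is the reduction to a finitely generated subgroup: a general nilpotent subgroup of $\mathrm{Bir}(\mathbb{P}^2_\mathbb{C})$ need not be finitely generated, and Lemma~\ref{lem:nilkeylemma} genuinely uses finite generation through the bound $\deg(\upsilon(h)^k)\leq\max_i(\deg\upsilon(a_i))^{\vert\vert h^k\vert\vert}$; replacing $\mathrm{N}$ by $\Gamma=\langle f,g\rangle$—which still contains $h$ and in which $h$ remains distorted by the estimate above—is exactly what makes this inequality available. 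The secondary subtlety, guaranteeing that $h$ has infinite order so that it is genuinely distorted rather than a torsion element, is where one invokes that $\mathrm{N}$ is not a torsion group, in line with the dichotomy of Theorem~\ref{thm:nilpotent}.
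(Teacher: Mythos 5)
Your proof is correct and is essentially the paper's own argument: the paper deduces this proposition by simply citing Remark~\ref{rem:distorted} (the commutator $h=[f,g]$ with $f\in\mathrm{N}$, $g\in C^{(n-2)}\mathrm{N}$ is central, hence distorted via the Heisenberg estimate $\vert\vert h^{\ell^2}\vert\vert\leq 4\ell$ of Lemma~\ref{lem:disto}) together with Lemma~\ref{lem:nilkeylemma}, which forces $\lambda(h)=1$. Your explicit reduction to the finitely generated subgroup $\langle f,g\rangle$ and your observation that one must ensure $h$ has infinite order are refinements the paper leaves implicit, but the route is identical.
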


\begin{proof}[Idea of the proof of Theorem \ref{thm:nilpotent}]
Take $\mathrm{G}\subset\mathrm{Bir}(\mathbb{P}^2_\mathbb{C})$ a nilpotent 
subgroup of class $k$ which is not up to finite index of nilpotent class
$k-1$. Denote by $\Sigma_\mathrm{G}$ the set of finitely generated nilpotent
subgroups of $\mathrm{G}$ that are, up to finite index, not abelian. Then
\begin{itemize}
\item[$\diamond$] either any element of $\Sigma_\mathrm{G}$ is finite and 
$\mathrm{G}$ is a torsion group;

\item[$\diamond$] or $\Sigma_\mathrm{G}$ contains a non-finite element 
$\mathrm{H}$.
\end{itemize}

\begin{claim}[\cite{Deserti:IMRN}]\label{claim:nil}
The group $\mathrm{H}$ preserves a fibration $\mathcal{F}$ that is rational or
elliptic.
\end{claim}

Any element of $C^{(k-1)}\mathrm{H}$ preserves fiberwise $\mathcal{F}$.
 Let $\phi$ be in $C^{(k-1)}\mathrm{H}$. As $[\phi,\mathrm{G}]=\mathrm{id}$, then
\begin{itemize}
\item[a)] either $\phi$ preserves fiberwise two distinct fibrations; 

\item[b)] of $\mathrm{G}$ preserves fiberwise $\mathcal{F}$.
\end{itemize}

If a) holds, then $\phi$ is of finite order; if it is the case for any 
$\phi\in C^{(k-1)}\mathrm{H}$, then $\mathrm{H}$ is, up to finite index, of
nilpotent class $k-1$: contradiction.

If b) holds, then $\mathrm{G}$ is, up to finite index, metabelian. Let us 
detail why when $\mathcal{F}$ is rational. In that case $\mathrm{G}$ is,
up to conjugacy, a subgroup of the Jonqui\`eres group $\mathcal{J}$. Let 
$\mathrm{pr}_2$ be the projection $\mathcal{J}\to\mathrm{PGL}(2,\mathbb{C})$.
A non-finite nilpotent subgroup of $\mathrm{PGL}(2,\Bbbk)$, where 
$\Bbbk=\mathbb{C}$ or $\mathbb{C}(z_1)$, is up to finite index abelian.
The group $\mathrm{pr}_2(\mathrm{G})$ is thus,
up to finite index, abelian. Consequently we can assume that 
$\mathrm{pr}_2(C^{(i)}\mathrm{G})=\{\mathrm{id}\}$ for $1\leq i\leq k$.
In particular $C^{(1)}\mathrm{G}$ is a nilpotent subgroup of 
$\mathrm{PGL}(2,\mathbb{C}(z_1))$ and as a result is, up to finite 
index, abelian.
\end{proof}

\begin{proof}[Idea of the proof of the Claim \ref{claim:nil}]
Let us recall that $\mathrm{H}$ is a non-finite nilpotent subgroup of
$\mathrm{Bir}(\mathbb{P}^2_\mathbb{C})$ with the following properties:
\begin{itemize}
\item[$\diamond$] $\mathrm{H}$ is finitely generated,

\item[$\diamond$] $\mathrm{H}$ is nilpotent of class $k>0$, 

\item[$\diamond$] $\mathrm{H}$ is not, up to finite index, of nilpotent
class $k-1$.
\end{itemize}

Assume $C^{(k-1)}\mathrm{H}$ is not a torsion group. 
Then $\mathrm{H}$ preserves a fibration that is rational or elliptic.
According to Lemma \ref{lem:nilkeylemma} a non-trivial element of $C^{(k-1)}\mathrm{G}$ either preserves a 
unique fibration $\mathcal{F}$ that is rational or elliptic, or  
is an elliptic birational map. We have the following alternative:
\begin{itemize}
\item[a)] either $C^{(k-1)}\mathrm{G}$ contains an element $h$ 
that preserves a unique fibration $\mathcal{F}$,

\item[b)] or any element of
$C^{(k-1)}\mathrm{G}\smallsetminus\{\mathrm{id}\}$
is elliptic.
\end{itemize}

Let us look at these eventualities:

\begin{itemize}
\item[a)] Since $[h,\mathrm{G}]=\{\mathrm{id}\}$ any element 
of $\mathrm{G}$
preserves $\mathcal{F}$. 

\item[b)] The group $C^{(k-1)}\mathrm{G}$ 
is finitely generated and abelian. Let $\big\{a_1,\,a_2,\,\ldots,\,a_n\big\}$
be a genera\-ting set of $C^{(k-1)}\mathrm{G}$. The $a_i$'s are elliptic maps,
so there exist a surface $S_i$, a birational map 
$\eta_i\colon S_i\dashrightarrow \mathbb{P}^2_\mathbb{C}$ and an integer 
$k_i>0$ such that $\eta_i^{-1}\circ a_i^{k_i}\circ\eta_i$ belongs to the 
neutral component $\mathrm{Aut}(S_i)^0$ of $\mathrm{Aut}(S_i)$. In 
particular the $a_i$'s fix any curve of negative self-intersection, we 
can thus assume that 
$S_i$ is a minimal rational surface. A priori all the $S_i$ are distinct.
Nevertheless according to Proposition \ref{pro:commas} there exist a 
minimal rational surface $S$, a birational 
map $\eta\colon S\dashrightarrow\mathbb{P}^2_\mathbb{C}$ 
and an integer 
$k>0$ such that for any $1\leq i\leq n$ the map
$\eta^{-1}\circ a_i^{k}\circ\eta$ belongs to the 
neutral component $\mathrm{Aut}(S)^0$ of $\mathrm{Aut}(S)$. 

Minimal rational surfaces are
$\mathbb{P}^2_\mathbb{C}$, $\mathbb{P}^1_\mathbb{C}\times\mathbb{P}^1_\mathbb{C}$
and Hirzebruch sur\-faces~$\mathbb{F}_n$, $n\geq 2$. Using 

\begin{itemize}
\item[$\diamond$] on the one hand 
the
description of the automorphisms groups of minimal rational surfaces 
(\emph{see} Chapter \ref{Chapter:algebraicsubgroup}), 
\item[$\diamond$] and on the other hand the fact that if $\mathrm{K}$ is an 
algebraic Lie subgroup of $\mathrm{GL}(n,\mathbb{C})$, then the semi-simple 
and nilpotent parts of any element of $\mathrm{K}$ belong to $\mathrm{K}$,
\end{itemize}
we prove that $\mathrm{G}$ is, up to finite index and up to conjugacy, 
contained in the Jonqui\`eres group $\mathcal{J}$ (\emph{see} 
\cite{Deserti:nilpotent}).
\end{itemize}

\medskip

It remains to consider the case "$C^{(k-1)}\mathrm{G}$ 
is a torsion group"; the ideas are similar 
(\emph{see} \cite[Proposition 4.5]{Deserti:nilpotent}).
\end{proof}

\section{Centralizers in $\mathrm{Bir}(\mathbb{P}^2_\mathbb{C})$}\label{sec:cent}

\subsection{Centralizers of elliptic birational maps}\label{subsec:centrell}

We will focus on the case of birational self maps of 
$\mathbb{P}^2_\mathbb{C}$ of infinite order. Note for 
instance that for birational self map of 
$\mathbb{P}^2_\mathbb{C}$ of finite order the situation 
is wild: consider for instance a birational involution 
$\phi$ of $\mathbb{P}^2_\mathbb{C}$. If $\phi$ is 
conjugate to an automorphism of $\mathbb{P}^2_\mathbb{C}$, 
then the centralizer of $\phi$ in 
$\mathrm{Bir}(\mathbb{P}^2_\mathbb{C})$ is uncountable but 
if~$\phi$ is conjugate to a Bertini (or a Geiser) 
involution, then the centralizer is finite (\cite{BlancPanVust}).

According to \cite{BlancDeserti:degree} an elliptic 
birational self map of $\mathbb{P}^2_\mathbb{C}$ of 
infinite order is conjugate to an automorphism of 
$\mathbb{P}^2_\mathbb{C}$ which restricts to one of
the following automorphisms on some open subset 
isomorphic to $\mathbb{C}^2$:
\begin{itemize}
\item[$\diamond$] $(z_0,z_1)\mapsto(\alpha z_0,\beta z_1)$ 
where $\alpha$, $\beta$ belong to $\mathbb{C}^*$ and where 
the kernel of the group homomorphism
\begin{align*}
& \mathbb{Z}^2\to\mathbb{C}^2 && (i,j)\mapsto \alpha^i\beta^j
\end{align*}
is generated by $(k,0)$ for some $k\in\mathbb{Z}$;
\item[$\diamond$] $(z_0,z_1)\mapsto(\alpha z_0,z_1+1)$
where $\alpha\in\mathbb{C}^*$.
\end{itemize}

We can describe the centralizers of such maps; let us 
start with the centralizer of 
$(z_0,z_1)\mapsto(\alpha z_0,\beta z_1)$ where $\alpha$, 
$\beta$ belong to $\mathbb{C}^*$ and where the kernel 
of the group homomorphism
\begin{align*}
& \mathbb{Z}^2\to\mathbb{C}^2 && (i,j)\mapsto \alpha^i\beta^j
\end{align*}
is generated by $(k,0)$ for some $k\in\mathbb{Z}$. 
Recall that $\mathrm{PGL}(2,\mathbb{C})$ is the group 
of automorphisms of~$\mathbb{P}^1_\mathbb{C}$ or 
equivalently the group of M\"obius 
transformations 
\[
z_0\dashrightarrow\frac{az_0+b}{cz_0+d}
\]
A direct computation implies the following: for 
any $\alpha\in\mathbb{C}^*$
\[
\big\{\eta\in\mathrm{PGL}(2,\mathbb{C})\,\vert\,\eta(\alpha z_0)=\alpha\eta(z_0)\big\}=\left\{
\begin{array}{lll}
\mathrm{PGL}(2,\mathbb{C}) \text{ if $\alpha=1$}\\
\big\{z_0\dashrightarrow \gamma z_0^{\pm 1}\,\vert\,\gamma\in\mathbb{C}^*\big\} \text{ if $\alpha=-1$}\\
\big\{z_0\mapsto\gamma z_0\,\vert\,\gamma\in\mathbb{C}^*\big\} \text{ if $\alpha^2\not=1$}
\end{array}
\right.
\]

\begin{lem}
Let us consider 
$\phi\colon(z_0,z_1)\mapsto(\alpha z_0,\beta z_1)$ 
where $\alpha$, $\beta$ belongs to $\mathbb{C}^*$
and where the kernel of the group homomorphism
\begin{align*}
& \mathbb{Z}^2\to\mathbb{C}^2 && (i,j)\mapsto \alpha^i\beta^j
\end{align*}
is generated by $(k,0)$ for some $k\in\mathbb{Z}$.

The centralizer of $\phi$ in 
$\mathrm{Bir}(\mathbb{P}^2_\mathbb{C})$ is
\[
\big\{(z_0,z_1)\dashrightarrow(\eta(z_0),z_1a(z_0^k))\,\vert\, a\in\mathbb{C}(z_0),\,\eta\in\mathrm{PGL}(2,\mathbb{C}),\,\eta(\alpha z_0)=\alpha\eta(z_0)\big\}.
\]
\end{lem}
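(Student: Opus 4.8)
The plan is to compute directly the centralizer
\[
\mathrm{Cent}(\phi)=\big\{\psi\in\mathrm{Bir}(\mathbb{P}^2_\mathbb{C})\,\vert\,\psi\circ\phi=\phi\circ\psi\big\}
\]
of the diagonal map $\phi\colon(z_0,z_1)\mapsto(\alpha z_0,\beta z_1)$ by writing a general $\psi=(\psi_0,\psi_1)$ in the affine chart and translating the commutation relation $\psi\circ\phi=\phi\circ\psi$ into functional equations on the two rational functions $\psi_0$, $\psi_1\in\mathbb{C}(z_0,z_1)$. The commutation $\psi(\alpha z_0,\beta z_1)=(\alpha\psi_0(z_0,z_1),\beta\psi_1(z_0,z_1))$ splits into the two independent equations
\[
\psi_0(\alpha z_0,\beta z_1)=\alpha\,\psi_0(z_0,z_1),\qquad \psi_1(\alpha z_0,\beta z_1)=\beta\,\psi_1(z_0,z_1),
\]
so the analysis of the two components is essentially parallel and the whole problem reduces to understanding which rational functions are \emph{semi-invariant} (of weight $\alpha$, resp. $\beta$) under the monomial substitution $(z_0,z_1)\mapsto(\alpha z_0,\beta z_1)$.

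The key tool is the hypothesis on the kernel of the homomorphism $(i,j)\mapsto\alpha^i\beta^j$. First I would establish the scalar case: a Laurent monomial $z_0^iz_1^j$ is sent to $\alpha^i\beta^j z_0^iz_1^j$, so it is invariant exactly when $(i,j)$ lies in the kernel, i.e. exactly when $(i,j)\in\mathbb{Z}(k,0)$, which means the invariant Laurent monomials are precisely the powers of $z_0^k$. More generally a rational function $f$ satisfies $f(\alpha z_0,\beta z_1)=\lambda f(z_0,z_1)$ only if each monomial appearing in $f$ acquires the same factor $\lambda$ under the substitution; grouping monomials by their multiplier shows that every semi-invariant of weight $\lambda$ is a single monomial of the correct weight times an invariant function, and invariant rational functions are exactly the elements of $\mathbb{C}(z_0^k)$. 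Applying this to the weight-$\beta$ equation forces $\psi_1=z_1\,a(z_0^k)$ for some $a\in\mathbb{C}(z_0)$ (the monomial $z_1^1z_0^0$ already carries weight $\beta$, and the invariant factor lies in $\mathbb{C}(z_0^k)\subset\mathbb{C}(z_0)$), while the weight-$\alpha$ equation forces $\psi_0$ to be a weight-$\alpha$ semi-invariant. The remaining subtlety is to pin down the weight-$\alpha$ semi-invariants: these are the rational functions $\eta(z_0)$ in $z_0$ alone satisfying $\eta(\alpha z_0)=\alpha\eta(z_0)$, which by the displayed computation preceding the statement are exactly the M\"obius transformations $\eta\in\mathrm{PGL}(2,\mathbb{C})$ commuting with $z_0\mapsto\alpha z_0$.

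The step I expect to be the main obstacle is justifying that $\psi_0$ may depend only on $z_0$ (and lands in $\mathrm{PGL}(2,\mathbb{C})$) rather than being an arbitrary weight-$\alpha$ semi-invariant in both variables of the form $z_0\cdot b(z_0^k,z_1\text{-monomials})$; a priori the weight-$\alpha$ equation also admits solutions like $z_0^{1+k m}z_1^{j}\cdot(\text{invariant})$ for suitable $(m,j)$ in the kernel coset. Here I would use that $\psi=(\psi_0,\psi_1)$ must be \emph{birational}: once $\psi_1=z_1 a(z_0^k)$ is fixed, birationality of the whole map constrains $\psi_0$ to be invertible as a map in $z_0$ for generic $z_1$, which combined with semi-invariance and the kernel hypothesis removes the spurious $z_1$-dependence and reduces $\psi_0$ to a M\"obius transformation in $z_0$. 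Conversely I would check the easy inclusion: for any $\eta\in\mathrm{PGL}(2,\mathbb{C})$ with $\eta(\alpha z_0)=\alpha\eta(z_0)$ and any $a\in\mathbb{C}(z_0)$, the map $(z_0,z_1)\dashrightarrow(\eta(z_0),z_1 a(z_0^k))$ is birational and commutes with $\phi$, since $a(z_0^k)$ is invariant and $\eta$ is semi-invariant of weight $\alpha$ by construction. This establishes both inclusions and hence the claimed equality.
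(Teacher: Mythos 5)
Your proof is correct and follows essentially the same route as the paper: the paper's proof also splits the commutation relation into the two semi-invariance equations, writes each $\psi_i=P_i/Q_i$ in lowest terms, observes that $P_i$, $Q_i$ are then eigenvectors of the substitution operator $\phi^*$ on $\mathbb{C}[z_0,z_1]$ --- hence each is a monomial times an element of $\mathbb{C}[z_0^k]$, by the kernel hypothesis --- deduces $\psi_0=z_0a_0(z_0^k)$ and $\psi_1=z_1a_1(z_0^k)$, and finally invokes birationality of $\psi$ to conclude that $\psi_0$ belongs to $\mathrm{PGL}(2,\mathbb{C})$ and commutes with $z_0\mapsto\alpha z_0$.

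One remark on the step you single out as the main obstacle: the worry is vacuous, and your proposed fix would not work if it weren't. A monomial $z_0^{1+km}z_1^{j}$ has weight $\alpha$ only if $(km,j)$ lies in the kernel of $(i,j)\mapsto\alpha^i\beta^j$, and since that kernel is generated by $(k,0)$ this forces $j=0$; so semi-invariance together with the kernel hypothesis already shows that $\psi_0$ depends on $z_0$ alone --- indeed your own classification of semi-invariants (a single monomial of the right weight times an element of $\mathbb{C}(z_0^k)$) says exactly this, with representative monomial $z_0$. On the other hand, birationality of the pair $(\psi_0,\psi_1)$ by itself cannot exclude $z_1$-dependence of the first coordinate: the map $(z_0,z_1)\dashrightarrow(z_0z_1,z_1)$ is birational. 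The genuine role of birationality, as in the paper, is confined to the last step: forcing the weight-$\alpha$ semi-invariant $z_0a_0(z_0^k)$, already known to be a rational function of $z_0$ alone, to be a M\"obius transformation.
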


\begin{proof}
Let 
$\psi\colon(z_0,z_1)\dashrightarrow(\psi_0(z_0,z_1),\psi_1(z_0,z_1))$
be a birational self map of $\mathbb{P}^2_\mathbb{C}$ 
that commutes with $\phi$. Then 
\begin{equation}\label{eqet}
\psi_0(\alpha z_0,\beta z_1)=\alpha\psi_0(z_0,z_1)
\end{equation}
and 
\begin{equation}\label{eqtri}
\psi_1(\alpha z_0,\beta z_1)=\beta\psi_1(z_0,z_1)
\end{equation}
hold. Denote by $\phi^*$ the linear automorphism of
the $\mathbb{C}$-vector space $\mathbb{C}[z_0,z_1]$
given by 
\[
\phi^*\colon \varphi(z_0,z_1)\mapsto\varphi(\alpha z_0,\beta z_1).
\]
Let us write $\psi_i$ as $\frac{P_i}{Q_i}$ for $i=0$, 
$1$ where $P_i$, $Q_i$ are polynomials without common
factor. Note that $P_0$, $P_1$, $Q_0$, $Q_1$ are 
eigenvectors of $\phi^*$, {\it i.e.} any of the 
$P_i$, $Q_i$ is a product of a monomial in $z_0$, 
$z_1$ with an element of $\mathbb{C}[z_0^k]$. 
Using (\ref{eqet}) and (\ref{eqtri}) we get that
\[
\left\{
\begin{array}{ll}
\psi_0(z_0,z_1)=z_0a_0(z_0^k)\\
\psi_1(z_0,z_1)=z_1a_1(z_0^k)
\end{array}
\right.
\]
But $\psi$ is birational, so $\psi_0$ belongs to $\mathrm{PGL}(2,\mathbb{C})$. Furthemore $\psi_0$
satisfies $\psi_0(\alpha z_0)=\alpha\psi_0(z_0)$.
\end{proof}

Let us now deal with the other possibility:

\begin{lem}
Let $\phi$ be the automorphism of 
$\mathbb{P}^2_\mathbb{C}$ given by 
\[
\phi\colon(z_0,z_1)\mapsto(\alpha z_0,z_1+\beta)
\]
where $\alpha\in\mathbb{C}^*$, $\beta\in\mathbb{C}$. 
The centralizer of $\phi$ in
$\mathrm{Bir}(\mathbb{P}^2_\mathbb{C})$
is 
\begin{small}
\[
\big\{(z_0,z_1)\dashrightarrow(\eta(z_0),z_1+a(z_0))\,\vert\,\eta\in\mathrm{PGL}(2,\mathbb{C}),\,\eta(\alpha z_0)=\alpha\eta(z_0),\,a\in\mathbb{C}(z_0),\,a(\alpha z_0)=a(z_0)\big\}
\]
\end{small}
\end{lem}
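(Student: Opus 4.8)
The plan is to follow the template of the preceding lemma (the diagonal case), replacing the bookkeeping with monomial eigenvectors by a direct computation of the field of invariant rational functions. Throughout I write $\phi^{\ast}f=f\circ\phi$ for the induced automorphism of $\mathbb{C}(z_0,z_1)$, so that $\phi^{\ast}z_0=\alpha z_0$ and $\phi^{\ast}z_1=z_1+\beta$, and I work under the assumption $\beta\neq 0$, which is the case arising from the classification of elliptic maps of infinite order (for $\beta=0$ the map is diagonal and the previous lemma applies, the stated formula being false in that degenerate situation). The inclusion $\supseteq$ is a direct check: if $\psi\colon(z_0,z_1)\dashrightarrow(\eta(z_0),z_1+a(z_0))$ with $\eta(\alpha z_0)=\alpha\eta(z_0)$ and $a(\alpha z_0)=a(z_0)$, then comparing $\phi\circ\psi$ with $\psi\circ\phi$ gives equality in one line, and such a triangular $\psi$ is birational since $\eta\in\mathrm{PGL}(2,\mathbb{C})$.

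For the reverse inclusion I would take $\psi=(\psi_0,\psi_1)\in\mathrm{Bir}(\mathbb{P}^2_\mathbb{C})$ commuting with $\phi$ and compare the two compositions: commutation is equivalent to the functional equations $\phi^{\ast}\psi_0=\alpha\psi_0$ and $\phi^{\ast}\psi_1=\psi_1+\beta$. The crucial remark is that $z_0$ is itself an eigenvector of $\phi^{\ast}$ with eigenvalue $\alpha$, while $z_1$ satisfies $\phi^{\ast}z_1=z_1+\beta$; hence $v:=\psi_0/z_0$ and $a:=\psi_1-z_1$ are both $\phi^{\ast}$-invariant. In this way the whole problem reduces to identifying the invariant field $\mathbb{C}(z_0,z_1)^{\langle\phi^{\ast}\rangle}$.

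The key step is then the claim $\mathbb{C}(z_0,z_1)^{\langle\phi^{\ast}\rangle}=\{f\in\mathbb{C}(z_0):f(\alpha z_0)=f(z_0)\}$. The inclusion $\supseteq$ is immediate, and for $\subseteq$ the heart of the matter is to show that an invariant $f$ cannot depend on $z_1$, after which invariance reduces to $f(\alpha z_0)=f(z_0)$. Here I would split according to $\alpha$: if $\alpha$ is a root of unity of order $k$, then $\phi^{k}\colon(z_0,z_1)\mapsto(z_0,z_1+k\beta)$ is a nontrivial translation in $z_1$, and a rational function invariant under a nonzero translation is independent of $z_1$; if $\alpha$ is not a root of unity, then $\{\alpha^{n}\}$ is Zariski dense in $\mathbb{C}^{\ast}$ and $\{n\beta\}$ in $\mathbb{C}$, so the Zariski closure of $\langle\phi\rangle$ is the full two-dimensional group $\{(z_0,z_1)\mapsto(\lambda z_0,z_1+\mu)\}$, whose only rational invariants are the constants, and invariance of $f$, being a closed condition, passes to this closure.

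Finally I would assemble the pieces: the invariant field lies in $\mathbb{C}(z_0)$, so $\psi_0=z_0\,v$ depends only on $z_0$ and satisfies $\psi_0(\alpha z_0)=\alpha\psi_0(z_0)$, while $\psi_1=z_1+a$ with $a\in\mathbb{C}(z_0)$ and $a(\alpha z_0)=a(z_0)$. Since $\psi_0$ depends only on $z_0$, the map $\psi$ preserves the fibration $\{z_0=\mathrm{const}\}$ and induces $z_0\mapsto\psi_0(z_0)$ on the base $\mathbb{P}^1_\mathbb{C}$; birationality of $\psi$ forces this induced self-map of $\mathbb{P}^1_\mathbb{C}$ to be an automorphism, i.e. $\psi_0=\eta\in\mathrm{PGL}(2,\mathbb{C})$, which gives exactly the claimed description. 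I expect the main obstacle to be the invariant-field computation in the non-root-of-unity case, namely justifying carefully that $\langle\phi\rangle$ is Zariski dense in $\mathbb{G}_m\times\mathbb{G}_a$ (using that there are no nontrivial homomorphisms between $\mathbb{G}_m$ and $\mathbb{G}_a$, so a subgroup surjecting onto both factors is everything) and that invariance of a rational function indeed descends to the closure.
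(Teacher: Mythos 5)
Your proof is correct, but it follows a genuinely different route from the paper's. The paper first conjugates by $(z_0,z_1)\mapsto(z_0,\beta z_1)$ to normalize $\beta=1$ (silently assuming $\beta\neq 0$, which you rightly make explicit), writes down the same two commutation equations, and then argues componentwise and computationally: for $\psi_0$ it invokes an external reference to conclude $\psi_0$ depends only on $z_0$, and for $\psi_1$ it differentiates the functional equation $\psi_1(\alpha z_0,z_1+1)=\psi_1(z_0,z_1)+1$ with respect to $z_0$ and $z_1$ to force the affine shape $\psi_1=\gamma z_1+b(z_0)$, after which the difference equation $b(\alpha z_0)=b(z_0)+1-\gamma$ is analyzed by observing that $z_0\,b'(z_0)$ is invariant under $z_0\mapsto\alpha z_0$ — so $b'=\delta/z_0$, which has no rational antiderivative unless $\delta=0$ when $\alpha$ is not a root of unity, while commutation with $\phi^k=(z_0,z_1+k)$ forces $\gamma=1$ when $\alpha$ has order $k$. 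You instead package both equations at once as invariance of $v=\psi_0/z_0$ and $a=\psi_1-z_1$ and reduce everything to the single structural claim $\mathbb{C}(z_0,z_1)^{\langle\phi^*\rangle}=\{f\in\mathbb{C}(z_0)\,\vert\,f(\alpha z_0)=f(z_0)\}$, proved by translation invariance in the root-of-unity case and by Zariski density of $\langle\phi\rangle$ in $\mathbb{G}_m\times\mathbb{G}_a$ (via the absence of nontrivial homomorphisms between $\mathbb{G}_m$ and $\mathbb{G}_a$, in Goursat style) in the other case; you then recover $\psi_0\in\mathrm{PGL}(2,\mathbb{C})$ from birationality of a fibration-preserving map, which is indeed the unstated content behind the paper's citation. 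What each approach buys: yours is self-contained (no appeal to the external reference, no partial-derivative manipulations) and gets $\gamma=1$ for free since $\psi_1-z_1$ is invariant on the nose, at the cost of the algebraic-group facts about closed subgroups of $\mathbb{G}_m\times\mathbb{G}_a$ and the (correctly flagged) point that invariance of a rational function is a closed condition passing to the Zariski closure; the paper's argument is more elementary calculus on functional equations and needs none of that machinery. The one point to keep visible in a final write-up is the standing hypothesis $\beta\neq 0$, since (as you note) the stated formula fails for $\beta=0$, e.g. $(z_0,2z_1)$ centralizes $(\alpha z_0,z_1)$ but is not of the given form.
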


\begin{proof}
After conjugacy by $(z_0,z_1)\mapsto(z_0,\beta z_1)$ 
we can assume that $\beta=1$. 

If 
$\psi\colon(z_0,z_1)\dashrightarrow(\psi_0(z_0,z_1),\psi_1(z_0,z_1))$
is a birational map that commutes with $\phi$,
then 
\begin{equation}\label{eq:equ1}
\psi_0(\alpha z_0,z_1+1)=\alpha\psi_0(z_0,z_1)
\end{equation}
and 
\begin{equation}\label{eq:equ2}
\psi_1(\alpha z_0,z_1+1)=\psi_1(z_0,z_1)+1
\end{equation}
From (\ref{eq:equ1}) and \cite{Blanc:manuscripta}
we get that $\psi_0$ only depends on $z_0$. 
Hence $\psi_0$ belongs to $\mathrm{PGL}(2,\mathbb{C})$
and commutes with $z_0\mapsto\alpha z_0$.
From (\ref{eq:equ2}) we get 
\[
\left\{
\begin{array}{ll}
\frac{\partial\psi_1}{\partial z_1}(\alpha z_0,z_1+1)=\frac{\partial\psi_1}{\partial z_1}(z_0,z_1)\\
\frac{\partial\psi_1}{\partial z_0}(\alpha z_0,z_1+1)=\frac{1}{\alpha}\frac{\partial\psi_1}{\partial z_0}(z_0,z_1)
\end{array}
\right.
\]
which again means that both 
$\frac{\partial\psi_1}{\partial z_0}$ and
$\frac{\partial\psi_1}{\partial z_1}$ 
only depend on $z_0$. Therefore, 
$\psi_1\colon(z_0,z_1)\mapsto \gamma z_1+b(z_0)$ 
with $\gamma\in\mathbb{C}^*$ and $b\in\mathbb{C}(z_0)$. 
Then (\ref{eq:equ2}) can be rewritten
\[
b(\alpha z_0)=b(z_0)+1-\gamma
\]
which implies that
\[
\frac{\partial b}{\partial z_0}(\alpha z_0)=\frac{1}{\alpha}\frac{\partial b}{\partial z_0}(z_0)
\]
and that $z_0\frac{\partial b}{\partial z_0}(z_0)$ 
is invariant under $z_0\mapsto\alpha z_0$.

If $\alpha$ is not a root of unity, then 
$\frac{\partial b}{\partial z_0}=\frac{\delta}{z_0}$ 
for some $\delta\in\mathbb{C}$. As $b$ is rational, $\delta$ 
is zero and $b$ is constant. As a consequence 
$b(\alpha z_0)=b(z_0)+1-\gamma$ implies $\gamma=1$, that is
$\psi_1\colon(z_0,z_1)\dashrightarrow z_0+\beta$.

Assume that $\alpha$ is a primitive $k$-th root of 
unity. The map 
$\psi\colon(z_0,z_1)\dashrightarrow(\eta(z_0),\gamma z_1+b(z_0))$
commutes with 
\[
\phi^k\colon(z_0,z_1)\dashrightarrow(z_0,z_1+k)
\]
if and only if $\gamma(z_1+k)+b(z_0)=\gamma z_1+b(z_0)+k$, 
{\it i.e.} if and only if $\gamma=1$. Then 
$b(\alpha z_0)=b(z_0)+1-1$ can be rewritten 
$b(z_0)=b(\alpha z_0)$.
\end{proof}

\subsection{Centralizers of Jonqui\` eres twists}

Recall that the subgroup $\mathcal{J}$
of Jonqui\`eres maps is isomorphic
to 
$\mathrm{PGL}(2,\mathbb{C}(z_1))\rtimes\mathrm{PGL}(2,\mathbb{C})$.
Let us denote by $\mathrm{pr}_2$ the morphism
\[
\mathrm{pr}_2\colon\mathcal{J}\to\mathrm{PGL}(2,\mathbb{C}).
\]
Geometrically it corresponds to look at the action 
of $\phi\in\mathcal{J}$ on the basis of the invariant
fibration $z_1=$ cst. The kernel of $\mathrm{pr}_2$, 
{\it i.e.} the elements of $\mathcal{J}$ which preserve
the fibration $z_1=$ cst fiberwise, is a normal subgroup
$\mathcal{J}_0\simeq\mathrm{PGL}(2,\mathbb{C}(z_1))$ of 
$\mathcal{J}$. Up to a birational conjugacy an element 
$\phi$ of $\mathcal{J}_0$ is of one of the following form
(\cite{Deserti:abelien})
\begin{align*}
& (z_0,z_1)\dashrightarrow(z_0+a(z_1),z_1), && (z_0,z_1)\dashrightarrow(b(z_1)z_0,z_1), \\
& (z_0,z_1)\dashrightarrow\left(\frac{c(z_1)z_0+F(z_1)}{z_0+c(z_1)},z_1\right) &&
\end{align*}
with $a\in\mathbb{C}(z_1)$, $b\in\mathbb{C}(z_1)^*$,
 $c\in\mathbb{C}(z_1)$, $F\in\mathbb{C}[z_1]$ and 
 $F$ not a square. Still according to 
 \cite{Deserti:abelien} the non-finite maximal 
 abelian subgroups of $\mathcal{J}_0$ are
 \begin{eqnarray*}
 \mathcal{J}_a&=&\big\{(z_0,z_1)\dashrightarrow(z_0+a(z_1))\,\vert\,a\in\mathbb{C}(z_1)\big\}\\
 \mathcal{J}_m&=&\big\{(z_0,z_1)\dashrightarrow(b(z_1)z_0,z_1)\,\vert\,a\in\mathbb{C}(z_1)\big\}\\
 \mathcal{J}_F&=&\big\{(z_0,z_1)\dashrightarrow\left(\frac{c(z_1)z_0+F(z_1)}{z_0+c(z_1)},z_1\right)\,\vert\,a\in\mathbb{C}(z_1)\big\}
\end{eqnarray*}
where $F$ denotes an element of 
$\mathbb{C}[z_1]$ which is not a square.
Note that we can assume up to conjugacy that 
$F$ is a polynomial with roots of multiplicity
$1$.

If $\phi$ belongs to $\mathcal{J}_0$, let us denote
by $\mathrm{Ab}(\phi)$ the non-finite maximal 
abelian subgroup of $\mathcal{J}_0$ that contains
$\phi$. Up to conjugacy
\begin{itemize}
\item[$\diamond$] either 
$(z_0,z_1)\dashrightarrow(z_0+a(z_1),z_1)$ and 
$\mathrm{Ab}(\phi)=\mathcal{J}_a$;

\item[$\diamond$] or
$(z_0,z_1)\dashrightarrow(b(z_1)z_0,z_1)$ and 
$\mathrm{Ab}(\phi)=\mathcal{J}_m$;

\item[$\diamond$] or
$(z_0,z_1)\dashrightarrow\left(\frac{c(z_1)z_0+F(z_1)}{z_0+c(z_1)},z_1\right)$ and 
$\mathrm{Ab}(\phi)=\mathcal{J}_F$.
\end{itemize}

\begin{pro}[\cite{CerveauDeserti:centralisateurs}]\label{pro:centrfib}
Let $\phi$ be an element of $\mathcal{J}_0$ that is 
a Jonqui\`eres twist. Then the centralizer
of $\phi$ in $\mathrm{Bir}(\mathbb{P}^2_\mathbb{C})$
is contained in $\mathcal{J}$.
\end{pro}

\begin{proof}
Consider a birational self map 
$\varphi\colon(z_0,z_1)\dashrightarrow(\varphi_0(z_0,z_1),\varphi_1(z_0,z_1))$
of $\mathbb{P}^2_\mathbb{C}$ that commutes with $\phi$.
If $\varphi$ does not belong to $\mathcal{J}$, then 
$\varphi_1=$ cst is a fibration invariant by $\phi$
distinct from $z_1=$ cst. Then $\phi$ is of finite
order (Lemma \ref{lem:2fibr}): contradiction with 
the fact that $\phi$ is a Jonqui\`eres
twist.
\end{proof}

\subsubsection{Centralizers of elements of $\mathcal{J}_a$}

Note that elements of $\mathcal{J}_a$ are not 
Jonqui\`eres twists but elliptic maps.
Hence their centralizers are described in \S
\ref{subsec:centrell}. Let us give some 
details.
Let 
$\phi\colon(z_0,z_1)\dashrightarrow(z_0+a(z_1),z_1)$ 
be a non-trivial element of $\mathcal{J}_a$ 
({\it i.e.} $a\not\equiv 0$). Up to conjugacy
by $(z_0,z_1)\dashrightarrow(a(z_1)z_0,z_1)$ 
one can assume that $a\equiv 1$. The centralizer
of 
$(z_0,z_1)\dashrightarrow(z_0+1,z_1)$ is 
isomorphic to 
$\mathcal{J}_a\rtimes\mathrm{PGL}(2,\mathbb{C})$
(\emph{see} \S\ref{subsec:centrell}). Hence 

\begin{cor}
The centralizer of a non-trivial element of 
$\mathcal{J}_a$ is isomorphic to
$\mathcal{J}_a\rtimes\mathrm{PGL}(2,\mathbb{C})$.
\end{cor}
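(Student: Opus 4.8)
The final statement asserts that the centralizer of a non-trivial element of $\mathcal{J}_a$ is isomorphic to $\mathcal{J}_a\rtimes\mathrm{PGL}(2,\mathbb{C})$. The plan is to reduce immediately to the normalized case and then invoke the computation already carried out for elliptic maps in \S\ref{subsec:centrell}.

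First I would normalize. Let $\phi\colon(z_0,z_1)\dashrightarrow(z_0+a(z_1),z_1)$ be a non-trivial element of $\mathcal{J}_a$, so that $a\not\equiv 0$. Conjugating by $\theta\colon(z_0,z_1)\dashrightarrow(a(z_1)z_0,z_1)$ one checks that $\theta^{-1}\circ\phi\circ\theta$ is the map $(z_0,z_1)\dashrightarrow(z_0+1,z_1)$; indeed $\theta^{-1}(z_0,z_1)=(z_0/a(z_1),z_1)$, and a direct substitution gives the shift by $1$ in the first coordinate. Since conjugation by $\theta$ is an isomorphism of $\mathrm{Bir}(\mathbb{P}^2_\mathbb{C})$, it carries $\mathrm{Cent}(\phi)$ isomorphically onto $\mathrm{Cent}(\psi)$ where $\psi\colon(z_0,z_1)\mapsto(z_0+1,z_1)$. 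Thus it suffices to identify the centralizer of $\psi$.

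Next I would apply the description obtained in \S\ref{subsec:centrell}. The map $\psi$ is exactly the automorphism $(z_0,z_1)\mapsto(\alpha z_0, z_1+\beta)$ treated there in the degenerate case $\alpha=1$ (after swapping the roles of the two coordinates, which conjugates $\psi$ to $(z_0,z_1)\mapsto(z_0,z_1+1)$). For $\alpha=1$ the constraint $\eta(\alpha z_0)=\alpha\eta(z_0)$ becomes vacuous, so $\eta$ ranges over all of $\mathrm{PGL}(2,\mathbb{C})$, and the constraint $a(\alpha z_0)=a(z_0)$ is likewise vacuous, so $a$ ranges over all of $\mathbb{C}(z_0)$. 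Hence the centralizer of $(z_0,z_1)\mapsto(z_0,z_1+1)$ consists of all maps $(z_0,z_1)\dashrightarrow(\eta(z_0),z_1+a(z_0))$ with $\eta\in\mathrm{PGL}(2,\mathbb{C})$ and $a\in\mathbb{C}(z_0)$. Reading off the group law, the subgroup of elements with $\eta=\mathrm{id}$ is the normal copy of $\mathcal{J}_a$ (the additive group $\mathbb{C}(z_0)$ acting by $z_1\mapsto z_1+a(z_0)$), while the $\eta$-part realizes $\mathrm{PGL}(2,\mathbb{C})$ acting on $z_0$; the semidirect product structure $\mathcal{J}_a\rtimes\mathrm{PGL}(2,\mathbb{C})$ then follows from composing two such maps and checking that $\mathcal{J}_a$ is normalized, with $\mathrm{PGL}(2,\mathbb{C})$ acting by precomposition of $a$ with $\eta$.

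The only genuinely non-routine point is making sure the normalization in the first step lands precisely on the shift by $1$ and that the coordinate swap correctly matches $\psi$ to the map treated in \S\ref{subsec:centrell}; once that bookkeeping is settled, the corollary is an immediate transcription of the lemma on centralizers of $(z_0,z_1)\mapsto(\alpha z_0,z_1+\beta)$ specialized to $\alpha=1$, $\beta=1$. I therefore expect no serious obstacle: the substance of the argument was already done in \S\ref{subsec:centrell}, and the statement here is the clean reformulation in terms of the intrinsic group $\mathcal{J}_a$.
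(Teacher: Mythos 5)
Your proof is correct and follows essentially the same route as the paper: the paper also conjugates $\phi$ by $(z_0,z_1)\dashrightarrow(a(z_1)z_0,z_1)$ to reduce to the translation $(z_0,z_1)\mapsto(z_0+1,z_1)$ and then reads off the centralizer from the lemma of \S\ref{subsec:centrell} on maps $(z_0,z_1)\mapsto(\alpha z_0,z_1+\beta)$ in the degenerate case $\alpha=\beta=1$, where both constraints on $\eta$ and $a$ become vacuous. Your extra care with the coordinate swap and the explicit verification that $\theta^{-1}\circ\phi\circ\theta$ is the unit shift is exactly the bookkeeping the paper leaves implicit.
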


\subsubsection{centralizers of twists of $\mathcal{J}_m$}

An element $\phi$ of $\mathcal{J}_m$ is a 
Jonqui\`eres twist if and only if up to
birational conjugacy 
\[
(z_0,z_1)\dashrightarrow(a(z_1)z_0,z_1)
\]
with $a\in\mathbb{C}(z_1)\smallsetminus\mathbb{C}^*$.

Remark that if $a$ belongs to $\mathbb{C}^*$, then 
$(z_0,z_1)\dashrightarrow(az_0,z_1)$ is an elliptic
map whose centralizer is described in \S 
\ref{subsec:centrell}. Assume now that 
$\phi\in\mathcal{J}_m$ is a Jonqui\`eres
twist. Let 
$a\in\mathbb{C}(z_1)\smallsetminus\mathbb{C}^*$. 
Denote by 
\[
\mathrm{Stab}(a)=\big\{\nu\in\mathrm{PGL}(2,\mathbb{C})\,\vert\, a(\nu(z_1))=a(z_1)^{\pm 1}\big\}
\]
the subgroup of $\mathrm{PGL}(2,\mathbb{C})$
and by 
\[
\mathrm{stab}(a)=\big\{\nu\in\mathrm{PGL}(2,\mathbb{C})\,\vert\, a(\nu(z_1))=a(z_1)\big\}
\]
the normal subgroup of $\mathrm{Stab}(a)$. Consider 
also 
\[
\overline{\mathrm{stab}(a)}=\big\{(z_0,\nu(z_1))\,\vert\, \nu\in\mathrm{stab}(a)\big\}
\]
and $\overline{\mathrm{Stab}(a)}$ the group generated by 
$\overline{\mathrm{stab}(a)}$ and the elements 
\[
(z_0,z_1)\dashrightarrow\left(\frac{1}{z_0},\nu(z_1)\right)
\]
with $\nu\in\mathrm{Stab}(a)\smallsetminus\mathrm{stab}(a)$.

\begin{pro}[\cite{CerveauDeserti:centralisateurs}]
Let $\phi$ be a Jonqui\`eres twist in 
$\mathcal{J}_m$. The centralizer of~$\phi$ in 
$\mathrm{Bir}(\mathbb{P}^2_\mathbb{C})$ is 
$\mathcal{J}_m\rtimes\overline{\mathrm{Stab}(a)}$; 
in particular it is a finite extension of 
$\mathrm{Ab}(\phi)=\mathcal{J}_m$.
\end{pro}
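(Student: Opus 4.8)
The plan is to compute the centralizer of a Jonquières twist $\phi\in\mathcal{J}_m$ directly, exploiting the fact that by Proposition \ref{pro:centrfib} the whole centralizer already sits inside $\mathcal{J}$. So I may write any $\varphi$ commuting with $\phi$ in the form $\varphi\colon(z_0,z_1)\dashrightarrow(\varphi_0(z_0,z_1),\nu(z_1))$ with $\nu\in\mathrm{PGL}(2,\mathbb{C})$, since $\varphi$ preserves the fibration $z_1=\text{cst}$. Up to birational conjugacy I take $\phi\colon(z_0,z_1)\dashrightarrow(a(z_1)z_0,z_1)$ with $a\in\mathbb{C}(z_1)\smallsetminus\mathbb{C}^*$.

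First I would split the argument according to whether $\varphi$ preserves the fibration fiberwise or not, i.e.\ whether $\nu=\mathrm{id}$ or $\nu\not=\mathrm{id}$. Writing out the commutation relation $\varphi\circ\phi=\phi\circ\varphi$ in coordinates gives, on the second coordinate, the identity $\nu(z_1)=\nu(z_1)$ (automatic), and on the first coordinate a functional equation relating $\varphi_0(a(z_1)z_0,z_1)$ to $a(\nu(z_1))\,\varphi_0(z_0,z_1)$. The key computation is to determine, for each fixed value of $z_1$, the Möbius transformations in $z_0$ that conjugate the multiplication $z_0\mapsto a(z_1)z_0$ to $z_0\mapsto a(\nu(z_1))z_0$. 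The map $z_0\mapsto a z_0$ with $a\not=\pm1$ has exactly two fixed points $0$ and $\infty$, so any Möbius map conjugating it to $z_0\mapsto a' z_0$ must either fix or swap $\{0,\infty\}$; fixing them forces $\varphi_0=\lambda(z_1)z_0$ with $a'=a$, and swapping them forces $\varphi_0=\lambda(z_1)/z_0$ with $a'=a^{-1}$. This is precisely the dichotomy encoded in $\mathrm{Stab}(a)$ versus $\mathrm{stab}(a)$: the condition $a(\nu(z_1))=a(z_1)$ yields $\varphi\in\mathcal{J}_m\rtimes\overline{\mathrm{stab}(a)}$, while $a(\nu(z_1))=a(z_1)^{-1}$ yields the extra generators $(z_0,z_1)\dashrightarrow(1/z_0,\nu(z_1))$ up to multiplication by an element of $\mathcal{J}_m$.

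Having produced these elements, I would check the reverse inclusion, namely that every element of $\mathcal{J}_m\rtimes\overline{\mathrm{Stab}(a)}$ genuinely commutes with $\phi$: this is a routine verification using that $\mathcal{J}_m=\mathrm{Ab}(\phi)$ is abelian and contains $\phi$, together with the defining relations of $\mathrm{Stab}(a)$. The semidirect product structure itself comes from the fact that $\mathcal{J}_m$ is normalized by $\overline{\mathrm{Stab}(a)}$ inside $\mathcal{J}$, which one sees by conjugating a general element $(b(z_1)z_0,z_1)$ of $\mathcal{J}_m$ by $(1/z_0,\nu(z_1))$ and observing the result stays in $\mathcal{J}_m$. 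Finally the claim that the centralizer is a finite extension of $\mathrm{Ab}(\phi)=\mathcal{J}_m$ follows because $\overline{\mathrm{Stab}(a)}$ is finite: the rational function $a$ takes any generic value only finitely often, so the set of $\nu\in\mathrm{PGL}(2,\mathbb{C})$ with $a\circ\nu=a^{\pm1}$ is finite.

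The hard part will be the careful bookkeeping in the functional equation for $\varphi_0$, in particular ruling out spurious solutions when $a$ has special symmetries and making precise that the $z_1$-dependent scalar $\lambda(z_1)$ and the constraint linking $\nu$ to $a$ decouple correctly. I expect the main subtlety to be showing that $\mathrm{Stab}(a)$ really is the exact group that arises — that no further Möbius transformations in $z_0$ can conjugate $z_0\mapsto a z_0$ to $z_0\mapsto a^{\pm1}z_0$ beyond the diagonal and anti-diagonal ones — which rests on the two-fixed-point analysis above and the hypothesis $a\notin\mathbb{C}^*$ guaranteeing $a(z_1)\not\equiv\pm1$. The finiteness of $\overline{\mathrm{Stab}(a)}$ should then be a clean consequence of the non-constancy of $a$.
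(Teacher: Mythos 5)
Your proposal is correct and takes essentially the same route as the paper: after using Proposition \ref{pro:centrfib} to place any commuting map in $\mathcal{J}$, the paper writes it as $(z_0,z_1)\dashrightarrow\left(\frac{A(z_1)z_0+B(z_1)}{C(z_1)z_0+D(z_1)},\nu(z_1)\right)$ and extracts from the commutation relation the identities $A(z_1)C(z_1)\big(1-a(\nu(z_1))\big)=0$ and $B(z_1)D(z_1)\big(1-a(\nu(z_1))\big)=0$, forcing either $B=C=0$ with $a\circ\nu=a$ or $A=D=0$ with $a\circ\nu=a^{-1}$ --- exactly your diagonal/anti-diagonal dichotomy, which you derive by the equivalent fixed-point argument for $z_0\mapsto a(z_1)z_0$ (and your identities-of-rational-functions reading indeed disposes of the ``special symmetries'' worry, since $a\notin\mathbb{C}^*$ gives $1-a\circ\nu\not\equiv 0$). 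Your additional verifications --- the reverse inclusion using that $\mathcal{J}_m$ is abelian, and the finiteness of $\overline{\mathrm{Stab}(a)}$ from the finiteness of the fibers of $a$ --- are left implicit in the paper but are correct as you sketch them.
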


\begin{rem}
One can write $\phi$ as 
$(z_0,z_1)\dashrightarrow(a(z_1)z_0,z_1)$ with 
$a\in\mathbb{C}(z_1)\smallsetminus\mathbb{C}^*$.
For generic~$a$ the group $\overline{\mathrm{Stab}(a)}$ 
is trivial, so for generic $\phi\in\mathcal{J}_m$ 
the centralizer of~$\phi$ in 
$\mathrm{Bir}(\mathbb{P}^2_\mathbb{C})$ coincides 
with $\mathcal{J}_m=\mathrm{Ab}(\phi)$.
\end{rem}

\begin{proof}
Write $\phi$ as 
$(z_0,z_1)\dashrightarrow(a(z_1)z_0,z_1)$
with $a\in\mathbb{C}(z_1)\smallsetminus\mathbb{C}^*$.
If $\psi$ commutes with $\phi$, then $\psi$
preserves the fibration $z_1=$ cst (Proposition 
\ref{pro:centrfib}), {\it i.e.}
\[
\psi\colon(z_0,z_1)\dashrightarrow\left(\frac{A(z_1)z_0+B(z_1)}{C(z_1)z_0+D(z_1)},\nu(z_1)\right)
\]
with $\left(\begin{array}{cc}
A & B\\
C& D
\end{array}
\right)\in\mathrm{PGL}(2,\mathbb{C}(z_1))$ and
$\nu\in\mathrm{PGL}(2,\mathbb{C})$. Since $\psi$
and $\phi$ commute, the following hold
\[
\left\{
\begin{array}{ll}
A(z_1)C(z_1)\big(1-a(\nu(z_1))\big)=0\\
B(z_1)D(z_1)\big(1-a(\nu(z_1))\big)=0
\end{array}
\right.
\]
Therefore, $AC\equiv0$ and $BD\equiv 0$, 
{\it i.e.} $B=C=0$ or $A=D=0$.

Assume first that $B=C=0$, {\it i.e.} that 
\[
\psi\colon(z_0,z_1)\dashrightarrow(A(z_1)z_0,\nu(z_1)).
\]
The condition $\phi\circ\psi=\psi\circ\phi$ implies
$a(\nu(z_1))=a(z_1)$. As 
$\overline{\mathrm{stab}(a)}$ is contained in the 
centralizer of $\phi$ in 
$\mathrm{Bir}(\mathbb{P}^2_\mathbb{C})$ the map 
$\phi$ belongs to 
$\mathcal{J}_m\rtimes\overline{\mathrm{stab}(a)}$.

Suppose now that $A=D=0$, {\it i.e.} that 
$\psi\colon(z_0,z_1)\dashrightarrow\left(\frac{B(z_1)}{z_0},\nu(z_1)\right)$. The equality $\psi\circ\varphi=\varphi\circ\psi$ implies
that $a(\nu(z_1))=a(z_1)^{-1}$. But
$\overline{\mathrm{Stab}(a)}$ is contained in 
the centralizer of $\phi$ in 
$\mathrm{Bir}(\mathbb{P}^2_\mathbb{C})$, so 
$\psi$ belongs to 
$\mathcal{J}_m\rtimes\overline{\mathrm{Stab}(a)}$.
\end{proof}

\subsubsection{Centralizers of elements of $\mathcal{J}_F$}

Let $\phi$ be a twist in $\mathcal{J}_F$. Let 
us write $\phi$ as 
\[
(z_0,z_1)\dashrightarrow\left(\frac{c(z_1)z_0+F(z_1)}{z_0+c(z_1)},z_1\right)
\]
with $c\in\mathbb{C}(z_1)^*$ and $F\in\mathbb{C}[z_1]$ 
whose roots have multiplicity $1$. The curve
$\mathcal{C}$ of fixed points of $\phi$ is given
by $z_0^2=F(z_1)$. Since $F$ has simple roots one
has
\[
\left\{\begin{array}{lll}
\text{$\mathcal{C}$ is rational when 
$1\leq\deg F\leq 2$;}\\
\text{the genus of $\mathcal{C}$ is 
$1$ when $3\leq\deg F\leq 4$;}\\
\text{the genus of $\mathcal{C}$ is
$\geq 2$ when $\deg F\geq 5$.}
\end{array}
\right.
\]

\begin{itemize}
\item[$\diamond$] Assume first that the genus of 
$\mathcal{C}$ is positive.

\begin{lem}[\cite{CerveauDeserti:centralisateurs}]
Let 
\begin{align*}
&\phi\colon(z_0,z_1)\dashrightarrow\left(\frac{c(z_1)z_0+F(z_1)}{z_0+c(z_1)},z_1\right) && c\in\mathbb{C}(z_1)^*, \,F\in\mathbb{C}[z_1]
\end{align*}
be a twist in $\mathcal{J}_F$. The 
curve $z_0^2=F(z_1)$ and the fibers
$z_1=$cst are invariant and there is
no other invariant curves.
\end{lem}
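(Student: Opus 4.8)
The plan is to exploit that $\phi$ belongs to $\mathcal{J}_0$, so it preserves every fiber $z_1=\mathrm{cst}$ and acts on the generic one as the Möbius transformation $M_{z_1}$ associated with the matrix $\left(\begin{smallmatrix} c(z_1) & F(z_1) \\ 1 & c(z_1)\end{smallmatrix}\right)$; hence each fiber is invariant. First I would record the fixed locus: a point $(z_0,z_1)$ is fixed by $\phi$ exactly when $z_0(z_0+c)=cz_0+F$, that is $z_0^2=F(z_1)$, so the curve $\mathcal{C}\colon z_0^2=F(z_1)$ is pointwise fixed and in particular invariant. Since $F$ has only simple roots it is not a square in $\mathbb{C}(z_1)$, so $z_0^2-F(z_1)$ is irreducible and $\mathcal{C}$ is an irreducible curve (of positive genus under the standing assumption). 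This produces the two families of invariant curves appearing in the statement.

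For the converse, that there is no other invariant curve, I would argue as follows. Let $\mathcal{D}$ be an irreducible invariant curve with $\mathcal{D}\neq\mathcal{C}$. If the second projection contracts $\mathcal{D}$ to a point, then $\mathcal{D}$ is one of the fibers $z_1=\mathrm{cst}$ and we are done. Otherwise $\mathcal{D}$ dominates the $z_1$-line, and since $\phi$ preserves each fiber, the finite nonempty set $S_{z_1^0}=\mathcal{D}\cap\{z_1=z_1^0\}$ is invariant under $M_{z_1^0}$ for generic $z_1^0$.

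The heart of the argument, and the step I expect to be the main obstacle, is to show that for generic $z_1^0$ the only finite $M_{z_1^0}$-invariant subsets are contained in the fixed-point set $\{\pm\sqrt{F(z_1^0)}\}$. Here I would pass through the multiplier $\mu(z_1)=\frac{(\mathrm{tr}\,M)^2}{\det M}=\frac{4c^2}{c^2-F}$, which is a well-defined element of $\mathbb{C}(z_1)$ because $c^2-F\not\equiv 0$ ($F$ is not a square). Since $\phi$ is a Jonqui\`eres twist, the twist criterion recalled earlier for elements of $\mathcal{J}$ gives that $\mu$ is non-constant. A Möbius transformation whose multiplier $\lambda$, with $\lambda+\lambda^{-1}=\mu-2$, is not a root of unity has infinite order and admits no finite invariant set outside its two fixed points. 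For each of the countably many root-of-unity values, the non-constant rational function $\mu$ takes that value at finitely many points, so the exceptional set of $z_1^0$ is countable; as $\mathbb{C}$ is uncountable, the generic $z_1^0$ avoids it.

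Consequently $S_{z_1^0}\subseteq\{\pm\sqrt{F(z_1^0)}\}$ for generic $z_1^0$, so the generic points of $\mathcal{D}$ lie on $\mathcal{C}$; taking Zariski closures and using the irreducibility of $\mathcal{C}$, I conclude $\mathcal{D}=\mathcal{C}$, a contradiction. This rules out any horizontal invariant curve distinct from $\mathcal{C}$ and completes the proof. The delicate points to verify are the degeneration of $M_{z_1^0}$ at the finitely many $z_1^0$ where $c^2=F$ (the two fixed points collide), which are avoided by genericity, and the precise translation of \emph{Jonqui\`eres twist} into \emph{$\mu$ non-constant}.
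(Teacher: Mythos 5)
Your proof is correct, and its skeleton is the paper's: both arguments take the fiberwise viewpoint, identify the fixed locus $z_0^2=F(z_1)$ as the trace of the fixed points of the M\"obius restrictions, intersect a putative extra invariant curve with a generic fiber, and then rule out extra finite invariant sets of the restriction. Where you diverge is the mechanism for that last step. The paper disposes of it in one line: a M\"obius transformation preserving more than three points is periodic, contradicting that a Jonqui\`eres twist has infinite order --- which tacitly requires the period to be uniform across fibers (it is, bounded in terms of the degree of the curve over the base, but the paper does not say so). You instead pass through the multiplier $\mu=4c^2/(c^2-F)$ together with a countability argument over $\mathbb{C}$, showing that the restriction to all but countably many fibers has infinite order, so that finite invariant sets are forced into the fixed locus; this buys a fully rigorous treatment of the uniformity issue, at the cost of the translation ``twist $\Rightarrow$ $\mu$ nonconstant'', which you rightly flag since the paper's example only records the converse direction. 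That implication does hold, and in your normal form it is elementary: if $\mu\equiv\mu_0$, then $\mu_0(c^2-F)=4c^2$ gives $(\mu_0-4)c^2=\mu_0 F$, forcing either $c\equiv 0$ (so that $\phi$ is the involution $(z_0,z_1)\mapsto(F(z_1)/z_0,z_1)$, of order $2$) or $F$ a square in $\mathbb{C}(z_1)$ (including the degenerate case $F\equiv 0$), all excluded for a twist with $F$ having simple roots; more generally a constant multiplier makes $\deg\phi^n$ bounded, hence $\phi$ elliptic rather than a twist. One small remark: you only treat irreducible invariant curves, but for a reducible invariant curve whose components are permuted, the intersection with a generic fiber is still a finite set invariant under the restriction, so your argument applies verbatim.
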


\begin{proof}
The map $\phi$ has two fixed points on a generic 
fiber which correspond to the intersection of the
fiber with the curve $z_0^2=F(z_1)$. Assume by 
contradiction that there is an other invariant
curve $\mathcal{C}$. The curve $\mathcal{C}$
intersects a generic fiber in a finite number 
of points that are invariant by $\phi$. But a
M\"obius transformation that preserves 
more than three points is periodic: contradiction
with the fact that $\phi$ is a Jonqui\`eres
twist, so of infinite order.
\end{proof}

\begin{pro}[\cite{CerveauDeserti:centralisateurs}]
Let 
\begin{align*}
&\phi\colon(z_0,z_1)\dashrightarrow\left(\frac{c(z_1)z_0+F(z_1)}{z_0+c(z_1)},z_1\right) && c\in\mathbb{C}(z_1)^*, \,F\in\mathbb{C}[z_1]
\end{align*}
be a twist in $\mathcal{J}_F$. Assume that $F$ 
has only simple roots and $\deg F\geq 3$, {\it i.e.}
the curve $z_0^2=F(z_1)$ has genus $\geq 1$. 
Then the centralizer of $\phi$ in 
$\mathrm{Bir}(\mathbb{P}^2_\mathbb{C})$ is 
a finite extension of 
$\mathrm{Ab}(\phi)=\mathcal{J}_F$.
\end{pro}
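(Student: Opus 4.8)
The plan is to analyze the centralizer through its action on the base of the rational fibration $z_1=\mathrm{cst}$ and on the fixed curve of $\phi$. First, since $\phi$ is a Jonqui\`eres twist lying in $\mathcal{J}_0$, Proposition \ref{pro:centrfib} guarantees that $\mathrm{Cent}(\phi)$ is contained in $\mathcal{J}$. Hence every $\psi$ commuting with $\phi$ can be written as $(z_0,z_1)\dashrightarrow\left(\frac{A(z_1)z_0+B(z_1)}{C(z_1)z_0+D(z_1)},\nu(z_1)\right)$ with $\begin{pmatrix}A&B\\C&D\end{pmatrix}\in\mathrm{PGL}(2,\mathbb{C}(z_1))$ and $\nu\in\mathrm{PGL}(2,\mathbb{C})$, and the assignment $\psi\mapsto\nu=\mathrm{pr}_2(\psi)$ defines a group homomorphism $\mathrm{pr}_2\colon\mathrm{Cent}(\phi)\to\mathrm{PGL}(2,\mathbb{C})$. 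The strategy is to identify the kernel of this homomorphism with $\mathcal{J}_F=\mathrm{Ab}(\phi)$ and to prove that its image is finite; the desired conclusion then follows from the resulting short exact sequence.

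For the kernel, I would compute the centralizer of $\phi$ inside $\mathcal{J}_0\simeq\mathrm{PGL}(2,\mathbb{C}(z_1))$, that is, the elements commuting with $\phi$ fiberwise. Writing $\phi$ as the matrix $M=\begin{pmatrix}c&F\\1&c\end{pmatrix}$ over $\mathbb{C}(z_1)$, one checks that $M$ fixes the two points $\pm\sqrt F$ (its eigenvectors over the quadratic extension $\mathbb{C}(z_1)[\sqrt F]$) and that these are distinct since $F$ is not a square. Because $\phi$ is a twist and not an involution (the multiplier $\frac{(\mathrm{tr}\,M)^2}{\det M}=\frac{4c^2}{c^2-F}$ is non-constant, in particular $c\not\equiv 0$), $M$ is regular semisimple with multiplier $\neq\pm1$, so its centralizer in $\mathrm{PGL}(2,\mathbb{C}(z_1))$ is exactly the maximal torus fixing $\pm\sqrt F$; an element swapping the two fixed points would conjugate $\phi$ to $\phi^{-1}\neq\phi$ and hence cannot centralize it. A direct verification shows that this torus, realized over $\mathbb{C}(z_1)$, consists precisely of the maps $\begin{pmatrix}c'&F\\1&c'\end{pmatrix}$, $c'\in\mathbb{C}(z_1)$ (together with the identity), which is the group $\mathcal{J}_F$. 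Thus the kernel of $\mathrm{pr}_2$ restricted to $\mathrm{Cent}(\phi)$ is $\mathcal{J}_F=\mathrm{Ab}(\phi)$.

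For the image, the key point is that $\psi$ must preserve the fixed curve of $\phi$. Indeed, if $p$ is a generic point of $\mathcal{C}=\{z_0^2=F(z_1)\}$, then $\phi(\psi(p))=\psi(\phi(p))=\psi(p)$, so $\psi(\mathcal{C})$ is contained in the fixed locus of $\phi$; by the preceding Lemma the only invariant curves of $\phi$ are $\mathcal{C}$ and the fibers $z_1=\mathrm{cst}$, and since the fibers are not pointwise fixed, the fixed locus of $\phi$ is $\mathcal{C}$. As $\deg F\geq 3$ the curve $\mathcal{C}$ has positive genus, so it is not contracted by the birational map $\psi$; therefore $\psi$ maps $\mathcal{C}$ onto an irreducible curve contained in $\mathcal{C}$, whence $\psi(\mathcal{C})=\mathcal{C}$. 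Restricting the fibration to $\mathcal{C}$ gives the degree-two map $\mathcal{C}\to\mathbb{P}^1_\mathbb{C}$, $(z_0,z_1)\mapsto z_1$, whose branch locus is the set of roots of $F$ (together with the point at infinity when $\deg F$ is odd). Since $\psi$ preserves both $\mathcal{C}$ and the fibration, the induced automorphism of $\mathcal{C}$ covers $\nu$ and sends ramification points to ramification points; hence $\nu$ preserves the branch set, a finite set of at least three points. A subgroup of $\mathrm{PGL}(2,\mathbb{C})$ stabilizing a finite set of at least three points is finite, so the image of $\mathrm{pr}_2$ lies in a finite group.

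Combining the two computations yields a short exact sequence $1\to\mathcal{J}_F\to\mathrm{Cent}(\phi)\to H\to1$ with $H\subset\mathrm{PGL}(2,\mathbb{C})$ finite, which is exactly the assertion that $\mathrm{Cent}(\phi)$ is a finite extension of $\mathrm{Ab}(\phi)=\mathcal{J}_F$. I expect the main obstacle to be the kernel computation: one must argue carefully, over the non-closed field $\mathbb{C}(z_1)$ and using the quadratic extension $\mathbb{C}(z_1)[\sqrt F]$, that no Jonqui\`eres element outside the torus $\mathcal{J}_F$ centralizes $\phi$, in particular ruling out the fixed-point-swapping elements by invoking that $\phi$ is a genuine twist rather than an involution. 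The geometric input, namely that $\psi$ fixes the positive-genus curve $\mathcal{C}$, is comparatively soft, relying only on the preceding Lemma and on the fact that positive-genus curves are never contracted.
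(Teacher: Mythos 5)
Your proof is correct, and its skeleton matches the paper's: both reduce to $\mathrm{Cent}(\phi)\subset\mathcal{J}$ via Proposition \ref{pro:centrfib} and then produce an exact sequence $1\to\mathcal{J}_F\to\mathrm{Cent}(\phi)\to H\to 1$ with $H$ finite; but each half is executed differently. For the kernel, the paper argues geometrically: an element of $\mathrm{Cent}(\phi)$ preserving the fibration fiberwise must preserve $\mathcal{C}$ and restrict on it either to the identity or to $(z_0,z_1)\mapsto(-z_0,z_1)$, hence lies in $\mathcal{J}_F$ or in the coset $\tau\circ\mathcal{J}_F$ where $\tau\colon(z_0,z_1)\dashrightarrow\left(-\frac{F(z_1)}{z_0},z_1\right)$, and the coset is excluded because $\tau\circ\phi\circ\tau^{-1}=\phi^{-1}$. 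Your computation of the centralizer of the regular semisimple class $\begin{pmatrix}c&F\\1&c\end{pmatrix}$ in $\mathrm{PGL}(2,\mathbb{C}(z_1))$ is the algebraic counterpart of exactly this: the fixed-point-swapping elements you rule out are precisely $\tau\circ\mathcal{J}_F$, and your exclusion (a swap would conjugate $\phi$ to $\phi^{-1}\neq\phi$) is the paper's relation in disguise. The genuine divergence is the finiteness of the quotient: the paper restricts all of $\mathrm{Cent}(\phi)$ to $\mathcal{C}$ and invokes finiteness of $\mathrm{Aut}_\tau(\mathcal{C})$, the automorphisms of $\mathcal{C}$ commuting with the hyperelliptic involution --- a point requiring care when $\mathcal{C}$ is elliptic, since $\mathrm{Aut}(\mathcal{C})$ itself is then infinite --- whereas you work on the base, observing that $\nu=\mathrm{pr}_2(\psi)$ permutes the branch locus of $\mathcal{C}\to\mathbb{P}^1_\mathbb{C}$, a set of at least three points since $\deg F\geq 3$, so that $\mathrm{pr}_2(\mathrm{Cent}(\phi))$ embeds in a finite permutation group. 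Your version is more elementary (no curve-automorphism theory, no genus-$1$ subtlety), while the paper's restriction map buys a sharper description of the quotient, which embeds in $\mathrm{Aut}_\tau(\mathcal{C})$ and is generically just $\{\mathrm{id},\,\tau_{\vert\mathcal{C}}\}$. One small correction to your wording: the torus argument does not need the multiplier $\frac{4c^2}{c^2-F}$ to be non-constant, only that it is $\neq\pm 1$; multiplier $1$ is impossible since $F\not\equiv 0$, and multiplier $-1$ forces $c\equiv 0$, i.e.\ $\phi$ an involution, contradicting that $\phi$ is a twist.
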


\begin{proof}
Take $\alpha\in\mathbb{C}$ such that $F(\alpha)\not=0$.
The restriction $\phi_{\vert z_1=\alpha}$ of~$\phi$
on the fiber $z_1=\alpha$ has two fixed points: 
$(\pm\sqrt{F(\alpha)},\alpha)$. Note that the centralizer
$\mathrm{Cent}(\phi)$ of $\phi$ in 
$\mathrm{Bir}(\mathbb{P}^2_\mathbb{C})$ is contained 
in $\mathcal{J}$ (Proposition \ref{pro:centrfib}). We 
will focus on elements $\psi$ of $\mathrm{Cent}(\phi)$ 
that preserve the fibration $z_1=$cst fiberwise, {\it i.e.}
on the kernel of 
\[
\mathrm{pr}_{2\vert\mathrm{Cent}(\phi)}\colon\mathrm{Cent}(\phi)\to\mathrm{PGL}(2,\mathbb{C}).
\]
Remark that any $\psi\in\mathrm{Cent}(\phi)$ preserves
$\mathcal{C}$ and that the automorphism 
$\psi_{\vert\mathcal{C}}$ of $\mathcal{C}$ preserves
$\big\{(\pm\sqrt{F(\alpha)},\alpha)\big\}$. Hence either 
$\psi_{\vert\mathcal{C}}=\mathrm{id}$, that is 
$\psi\in\mathcal{J}_F$, or~$\psi_{\vert\mathcal{C}}$
is the involution $(z_0,z_1)\mapsto(-z_0,z_1)$ of 
$\mathcal{C}$. Note that the restriction of 
$\tau\colon(z_0,z_1)\dashrightarrow\left(-\frac{F(z_1)}{z_0},z_1\right)$ 
to $\mathcal{C}$ is 
$\tau_{\vert\mathcal{C}}\colon(z_0,z_1)\dashrightarrow(-z_0,z_1)$.
Therefore, any birational self map of 
$\mathbb{P}^2_\mathbb{C}$ that preserves both 
$\mathcal{C}$ and the fibration $z_1=$cst fiberwise
belongs either to $\mathcal{J}_F$ or to 
$\tau\circ\mathcal{J}_F$. But 
$\tau\circ\phi\circ\tau^{-1}=\tau\circ\phi\circ\tau=\phi^{-1}$,
so $\tau$ does not belong to $\mathrm{Cent}(\phi)$. As 
a result 
$\ker\mathrm{pr}_{2\vert\mathrm{Cent}(\phi)}=\mathcal{J}_F$.
Any $\varphi\in\mathrm{Cent}(\phi)$ has to preserve 
$\mathcal{C}$ and the fibration $z_1=$cst; the restriction 
$\varphi_{\vert\mathcal{C}}$ of $\varphi$ to $\mathcal{C}$ 
is an automorphism of $\mathcal{C}$ that commutes 
with the involution $\tau_{\vert\mathcal{C}}$. The 
group $\mathrm{Aut}_\tau(\mathcal{C})$ of such 
automorphisms is a finite group (more precisely if $F$
is generic, then 
$\mathrm{Aut}_\tau(\mathcal{C})=\{\mathrm{id},\,\tau_{\vert\mathcal{C}}\}$).
\end{proof}

\item[$\diamond$] Assume that $\mathcal{C}$ is
rational.

\begin{lem}[\cite{CerveauDeserti:centralisateurs}]
Let $\phi\in\mathcal{J}_F$ be a Jonqui\`eres
twist such that the curve $\mathcal{C}$ of fixed points 
of $\phi$ is rational. Any element that commutes with~$\phi$
belongs to $\mathcal{J}$ and preserves $\mathcal{C}$.
\end{lem}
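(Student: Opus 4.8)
The plan is to reduce the statement to two nearly independent facts, one already available in the excerpt and one requiring a fiberwise dynamical argument on the invariant pencil.

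First I would settle membership in $\mathcal{J}$. Since $\phi$ lies in $\mathcal{J}_F\subset\mathcal{J}_0$ and is a Jonqui\`eres twist by hypothesis, Proposition \ref{pro:centrfib} applies verbatim and shows that the entire centralizer of $\phi$ in $\mathrm{Bir}(\mathbb{P}^2_\mathbb{C})$ is contained in $\mathcal{J}$. Hence any $\psi$ commuting with $\phi$ already belongs to $\mathcal{J}$; in particular $\psi$ permutes the fibers of the pencil $z_1=$ cst through an element $\nu=\mathrm{pr}_2(\psi)\in\mathrm{PGL}(2,\mathbb{C})$, and on a generic fiber it restricts to an isomorphism onto another fiber. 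This gives the first half of the conclusion for free.

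Second I would prove $\psi(\mathcal{C})=\mathcal{C}$. Write $\phi_\alpha$ for the restriction of $\phi$ to the fiber $z_1=\alpha$; for generic $\alpha$ it is a non-trivial M\"obius transformation whose two fixed points are exactly the two points of $\mathcal{C}=\{z_0^2=F(z_1)\}$ lying over $\alpha$. Denoting by $\psi_\alpha$ the induced isomorphism from the fiber over $\alpha$ to the fiber over $\nu(\alpha)$, the identity $\psi\circ\phi=\phi\circ\psi$ restricts on the generic fiber to $\psi_\alpha\circ\phi_\alpha=\phi_{\nu(\alpha)}\circ\psi_\alpha$; that is, $\psi_\alpha$ conjugates $\phi_\alpha$ to $\phi_{\nu(\alpha)}$. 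Consequently $\psi_\alpha$ carries the two fixed points of $\phi_\alpha$ onto the fixed points of $\phi_{\nu(\alpha)}$, i.e. it sends $\mathcal{C}\cap\{z_1=\alpha\}$ to $\mathcal{C}\cap\{z_1=\nu(\alpha)\}$. Taking Zariski closures over generic $\alpha$ then yields $\psi(\mathcal{C})=\mathcal{C}$.

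The step I expect to be the real obstacle is precisely ruling out that $\psi$ \emph{contracts} $\mathcal{C}$ to a point, which is exactly where the rational case genuinely differs from the positive-genus case treated just above: there, non-rationality of $\mathcal{C}$ forbids contraction automatically (as in Lemma \ref{lem:notconj} and Remark \ref{rem:norm}), whereas here one cannot invoke the genus. The fiberwise description is what rescues the argument: since $\psi$ is an isomorphism on each generic fiber and $\nu$ sweeps out all of $\mathbb{P}^1_\mathbb{C}$, the image $\psi(\mathcal{C})$ meets infinitely many fibers and is therefore a curve, not a point. I would take care to choose the generic fiber avoiding the indeterminacy loci of $\psi$ and $\psi^{-1}$, the ramification values of $F$, and the poles of $c$, so that $\phi_\alpha$ and $\phi_{\nu(\alpha)}$ are both honest two-fixed-point M\"obius maps and $\psi_\alpha$ is an honest isomorphism; once this dense open set of good $\alpha$ is isolated, the conjugacy relation and the conclusion follow with no further computation.
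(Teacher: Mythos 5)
Your proof is correct and takes essentially the same route as the paper: the paper likewise invokes Proposition \ref{pro:centrfib} to place the centralizer in $\mathcal{J}$, then notes that $\psi$ must either contract or preserve the fixed curve and rules out contraction because $\mathcal{C}$ is transverse to the fibration $z_1=$ cst. Your fiberwise conjugation of M\"obius transformations (fixed points of $\phi_\alpha$ mapping to fixed points of $\phi_{\nu(\alpha)}$, with the image meeting infinitely many fibers) is simply an explicit unpacking of the paper's terser dichotomy-plus-transversality argument.
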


\begin{proof}
The curve of fixed points of $\phi$ is given by 
$z_0^2=F(z_1)$. Let~$\psi$ be a birational self 
map of $\mathbb{P}^2_\mathbb{C}$ such that 
$\phi\circ\psi=\psi\circ\phi$. According to 
Proposition \ref{pro:centrfib} the map $\psi$
preserves the fibration $z_1=$cst. Either $\psi$
contracts $\mathcal{C}$ or $\psi$ preserves 
$\mathcal{C}$. But $\mathcal{C}$ is transverse 
to the fibration $z_1=$cst, so $\psi$ can not 
contract $\mathcal{C}$. As a result $\varphi$ 
is an element of $\mathcal{J}$ that 
preserves~$\mathcal{C}$.
\end{proof}

Note that the case $\deg F\geq 3$ has already 
been studied, so let us assume that $\deg F\leq 2$.
Remark that if 
\[
\phi\colon(z_0,z_1)\dashrightarrow\left(\frac{c(z_1)z_0+z_1}{z_0+c(z_1)},z_1\right)
\]
and if
\[
\varphi\colon(z_0,z_1)\dashrightarrow\left(\frac{z_0}{\gamma z_1+\delta},\frac{\alpha z_1+\beta}{\gamma z_1+\delta}\right)
\]
then $\varphi^{-1}\circ\phi\circ\varphi$ is of the following type
\[
(z_0,z_1)\dashrightarrow\left(\frac{\widetilde{c}(z_1)z_0+(\alpha z_1+\beta)(\gamma z_1+\delta)}{z_0+\widetilde{c}(z_1)},z_1\right).
\]
In other words thanks to 
\[
(z_0,z_1)\dashrightarrow\left(\frac{c(z_1)z_0+z_1}{z_0+c(z_1)},z_1\right)
\]
we obtain all polynomials 
$(\alpha z_1+\beta)(\gamma z_1+\delta)$ of degree
$2$ with simple roots. So one can suppose that 
$\deg F=1$. Note that if $\deg F=1$, {\it i.e.} 
$F(z_1)=\alpha z_1+\beta$, then up to conjugacy 
by 
$(z_0,z_1)\mapsto\left(z_0,\frac{z_1-\beta}{\alpha}\right)$ 
one can assume that $F\colon z_1\mapsto z_1$.

\begin{lem}[\cite{CerveauDeserti:centralisateurs}]\label{lem:stab}
Consider the birational self map of 
$\mathbb{P}^2_\mathbb{C}$ given by 
\[
\phi\colon(z_0,z_1)\dashrightarrow\left(\frac{c(z_1)z_0+z_1}{z_0+c(z_1)},z_1\right)
\]
with $c\in\mathbb{C}(z_1)^*$. If $\psi$ is a 
birational self map of $\mathbb{P}^2_\mathbb{C}$ 
that commutes with~$\phi$, then 
\begin{itemize}
\item[$\diamond$] either 
$\mathrm{pr}_2(\psi)=\frac{\alpha}{z_1}$ with
$\alpha\in\mathbb{C}^*$;

\item[$\diamond$] or $\mathrm{pr}_2(\psi)=\zeta z_1$
with $\zeta$ root of unity.
\end{itemize}

Furthermore $\mathrm{pr}_2(\psi)$ belongs to the 
finite group 
$\mathrm{stab}\left(\frac{4c^2(z_1)}{c^2(z_1)-z_1}\right)$.
\end{lem}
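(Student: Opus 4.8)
The plan is to analyze how conjugation by $\psi$ interacts with the specific form of $\phi$, using the fixed-point structure on fibers as the key geometric invariant. First I would invoke Proposition~\ref{pro:centrfib}: since $\phi$ is a Jonqui\`eres twist lying in $\mathcal{J}_0$, any $\psi$ commuting with $\phi$ must lie in $\mathcal{J}$, so $\psi$ preserves the fibration $z_1 = \text{cst}$ and induces an element $\nu = \mathrm{pr}_2(\psi) \in \mathrm{PGL}(2,\mathbb{C})$ acting on the base. The heart of the argument is to pin down which $\nu$ are admissible. The map $\phi$ restricts on the generic fiber to a M\"obius transformation whose multiplier (trace-squared-over-determinant type invariant) is a rational function of $z_1$; a natural candidate, suggested by the statement, is $g(z_1) = \frac{4c^2(z_1)}{c^2(z_1)-z_1}$. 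I would compute this conjugacy invariant of $\phi_{\vert z_1=\text{cst}}$ explicitly and observe that conjugating $\phi$ by $\psi$ replaces $g(z_1)$ by $g(\nu(z_1))$ while preserving the $\mathrm{PGL}(2,\mathbb{C}(z_1))$-conjugacy class fiberwise. The condition $\psi \circ \phi = \phi \circ \psi$ then forces $g(\nu(z_1)) = g(z_1)$, i.e. $\nu \in \mathrm{stab}(g)$.

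Next I would determine $\mathrm{stab}(g)$ concretely for $g(z_1) = \frac{4c^2(z_1)}{c^2(z_1)-z_1}$. The elements of $\mathrm{PGL}(2,\mathbb{C})$ fixing $g$ as a function are constrained by the locations of poles and the behavior of $g$; since $F(z_1) = z_1$ has its single simple root at the origin and a pole structure dictated by $c$, the symmetries $\nu$ that preserve $g$ should be very limited. I expect that the only surviving candidates are $\nu\colon z_1 \mapsto \frac{\alpha}{z_1}$ (an involution-type inversion exchanging $0$ and $\infty$) and $\nu\colon z_1 \mapsto \zeta z_1$ with $\zeta$ a root of unity (a rotation fixing $0$ and $\infty$). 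Establishing that these are the \emph{only} possibilities is the main computational step: I would argue that any $\nu$ preserving $g$ must permute the zero/pole set $\{0,\infty\}$ of the relevant data, reducing $\nu$ to the two listed families, and then use that $g \circ \nu = g$ forces the scaling parameter to be a root of unity in the rotation case (because iterating $\nu$ must eventually return $g$ to itself in a compatible way with the rational structure of $c$).

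The hard part will be showing that $\mathrm{stab}\!\left(\frac{4c^2(z_1)}{c^2(z_1)-z_1}\right)$ is in fact \emph{finite}, which is the last assertion of the lemma. A priori $\mathrm{stab}(g)$ could be a positive-dimensional subgroup of $\mathrm{PGL}(2,\mathbb{C})$ if $g$ had special symmetry, but the presence of the factor $z_1$ in the denominator $c^2(z_1) - z_1$ breaks the homogeneity that would permit a continuous stabilizer. I would make this precise by checking that $g$ is non-constant and that its fibers (level sets) are not permuted by a one-parameter family of M\"obius maps — concretely, that $g$ takes a generic value at only finitely many points and that no positive-dimensional subgroup of $\mathrm{PGL}(2,\mathbb{C})$ can preserve all level sets of such a $g$. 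Finiteness of $\mathrm{stab}(g)$ then follows because a subgroup of $\mathrm{PGL}(2,\mathbb{C})$ preserving a non-constant rational function must be finite (it embeds in the deck group of the branched cover $z_1 \mapsto g(z_1)$). Finally I would assemble these pieces: $\mathrm{pr}_2(\psi) = \nu \in \mathrm{stab}(g)$, and the explicit description shows $\nu$ is either $\frac{\alpha}{z_1}$ or $\zeta z_1$ with $\zeta$ a root of unity, completing the proof.
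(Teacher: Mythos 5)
Your overall architecture is sound and is essentially the expected one: Proposition~\ref{pro:centrfib} puts $\psi$ in $\mathcal{J}$; the fiberwise M\"obius map of $\phi$ has matrix $M(z_1)=\left(\begin{smallmatrix} c(z_1) & z_1\\ 1 & c(z_1)\end{smallmatrix}\right)$, whose conjugacy invariant $(\mathrm{tr}\,M)^2/\det M$ is exactly $g(z_1)=\frac{4c^2(z_1)}{c^2(z_1)-z_1}$; writing $\psi(z_0,z_1)=(\Psi_{z_1}(z_0),\nu(z_1))$, commutation gives $\phi_{\nu(z_1)}=\Psi_{z_1}\circ\phi_{z_1}\circ\Psi_{z_1}^{-1}$ on generic fibers, hence $g\circ\nu=g$, i.e. $\nu=\mathrm{pr}_2(\psi)\in\mathrm{stab}(g)$. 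Your finiteness argument is also fine once you verify $g$ is non-constant, which is immediate: $g\equiv k$ would force $c(z_1)^2=\mu z_1$ for some constant $\mu$, impossible since $z_1$ is not a square in $\mathbb{C}(z_1)$.

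The gap is in your derivation of the dichotomy. You assert that any $\nu$ with $g\circ\nu=g$ must permute "the zero/pole set $\{0,\infty\}$", but $\{0,\infty\}$ is not the zero/pole set of $g$: the zeros of $g$ are the zeros of $c$ and its poles are the roots of $c^2(z_1)-z_1$ (for instance $c=z_1+1$ gives $g=4(z_1+1)^2/(z_1^2+z_1+1)$, with neither a zero nor a pole at $0$ or $\infty$), so this step fails if executed literally. Two correct replacements: (a) rewrite $\frac{4}{g}=1-\frac{z_1}{c^2}$, so $g\circ\nu=g$ is equivalent to $\frac{\nu(z_1)}{c(\nu(z_1))^2}=\frac{z_1}{c(z_1)^2}$, i.e. $\frac{\nu(z_1)}{z_1}=\left(\frac{c(\nu(z_1))}{c(z_1)}\right)^2$ is a square in $\mathbb{C}(z_1)$; for $\nu(z)=\frac{az+b}{ez+d}$ the function $\frac{az+b}{z(ez+d)}$ has all zeros and poles of even order only when $b=e=0$ (giving $\nu=\lambda z_1$) or $a=d=0$ (giving $\nu=\alpha/z_1$) — the honest version of your claim is that $\nu$ preserves the class of $z_1$ in $\mathbb{C}(z_1)^*/(\mathbb{C}(z_1)^*)^2$, whose divisor is supported on $\{0,\infty\}$. (b) Alternatively, use the unnumbered lemma preceding this one in the paper: any $\psi$ commuting with $\phi$ preserves the fixed curve $z_0^2=z_1$; since $\psi$ also preserves the fibration, $\nu$ must permute the two branch points $z_1=0,\infty$ of the double cover $\mathcal{C}\to\mathbb{P}^1_{z_1}$, again yielding $\nu=\lambda z_1$ or $\nu=\alpha/z_1$. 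Finally, the root-of-unity claim should be deduced from the finiteness of $\mathrm{stab}(g)$ (every element has finite order, and $z_1\mapsto\lambda z_1$ has finite order iff $\lambda$ is a root of unity); your iteration remark gestures at this but as stated proves nothing.
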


For any $\alpha$ non-zero consider the dihedral 
group
\[
\mathrm{D}_\infty(\alpha)=\langle z_1\mapsto \frac{\alpha}{z_1},\,z_1\mapsto\zeta z_1\,\vert\,\zeta\text{ root of unity}\rangle
\]
Note that all the $\mathrm{D}_\infty(\alpha)$
are conjugate to $\mathrm{D}_\infty(1)$.

\begin{pro}[\cite{CerveauDeserti:centralisateurs}]
Let $\phi\in\mathcal{J}_F$ be a Jonqui\`eres 
twist such that the fixed curve of $\phi$ is 
rational. Up to conjugacy we can assume that 
\[
\phi\colon(z_0,z_1)\dashrightarrow\left(\frac{c(z_1)z_0+z_1}{z_0+c(z_1)},z_1\right)
\]
with $c\in\mathbb{C}(z_1)\smallsetminus\mathbb{C}$.
The centralizer of $\phi$ in 
$\mathrm{Bir}(\mathbb{P}^2_\mathbb{C})$ is 
\[
\mathcal{J}_{z_1}\rtimes\left(\mathrm{stab}\left(\frac{4c^2(z_1)}{c^2(z_1)-z_1}\right)\cap\mathrm{D}_\infty(\alpha)\right)
\]
for some $\alpha\in\mathbb{C}^*$.
\end{pro}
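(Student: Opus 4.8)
The plan is to combine the two preceding results in this subsection. By Proposition~\ref{pro:centrfib}, any element $\psi$ commuting with $\phi$ lies in $\mathcal{J}$, so $\psi$ preserves the fibration $z_1=\text{cst}$ and induces a well-defined $\mathrm{pr}_2(\psi)\in\mathrm{PGL}(2,\mathbb{C})$. The strategy is to determine the centralizer by a short exact sequence splitting it into the part acting trivially on the base (the kernel of $\mathrm{pr}_{2\vert\mathrm{Cent}(\phi)}$) and the part acting non-trivially (the image of $\mathrm{pr}_{2\vert\mathrm{Cent}(\phi)}$).

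First I would identify the kernel. An element of $\mathrm{Cent}(\phi)$ lying in $\mathcal{J}_0$, i.e.\ preserving each fiber, must commute with $\phi$; since $\mathrm{Ab}(\phi)=\mathcal{J}_{z_1}$ is the maximal abelian subgroup of $\mathcal{J}_0$ containing $\phi$, and since $\phi$ acts with infinite order on a generic fiber (being a Jonqui\`eres twist), the centralizer of $\phi$ inside $\mathcal{J}_0\simeq\mathrm{PGL}(2,\mathbb{C}(z_1))$ is exactly the maximal abelian subgroup $\mathcal{J}_{z_1}$ containing it. This is the analogue of the computation already done for the positive-genus case, and here the fixed curve $z_0^2=z_1$ being rational is what forces us into the $\mathcal{J}_F$ normal form with $F(z_1)=z_1$ rather than reducing to an automorphism of a higher-genus curve.

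Next I would compute the image of $\mathrm{pr}_{2\vert\mathrm{Cent}(\phi)}$, which is precisely the content of Lemma~\ref{lem:stab}: any $\psi$ commuting with $\phi$ has $\mathrm{pr}_2(\psi)$ equal either to $z_1\mapsto\alpha/z_1$ for some $\alpha\in\mathbb{C}^*$ or to $z_1\mapsto\zeta z_1$ for a root of unity $\zeta$, and moreover $\mathrm{pr}_2(\psi)$ must lie in the finite group $\mathrm{stab}\!\left(\tfrac{4c^2(z_1)}{c^2(z_1)-z_1}\right)$, since $\psi$ must also preserve the rational function $c$ attached to $\phi$. The transformations of the first two types generate the infinite dihedral group $\mathrm{D}_\infty(\alpha)$, so the image lands inside $\mathrm{stab}\!\left(\tfrac{4c^2(z_1)}{c^2(z_1)-z_1}\right)\cap\mathrm{D}_\infty(\alpha)$. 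Conversely I would check that each such base transformation is realized: given a target M\"obius map in this intersection, one lifts it to an explicit element of $\mathcal{J}$ commuting with $\phi$, and then corrects by an element of the kernel $\mathcal{J}_{z_1}$ to ensure the lift is genuinely a centralizing element. This surjectivity onto the intersection, together with the splitting provided by these explicit lifts, yields the semidirect product decomposition $\mathrm{Cent}(\phi)=\mathcal{J}_{z_1}\rtimes\left(\mathrm{stab}\!\left(\tfrac{4c^2(z_1)}{c^2(z_1)-z_1}\right)\cap\mathrm{D}_\infty(\alpha)\right)$.

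The main obstacle I anticipate is the surjectivity step: showing that every admissible base automorphism actually lifts to an honest centralizing element, rather than merely constraining what $\mathrm{pr}_2(\psi)$ can be. The constraint direction follows cleanly from Lemma~\ref{lem:stab}, but verifying that the constrained candidates are all attained requires exhibiting explicit lifts and checking the commutation relation $\phi\circ\psi=\psi\circ\phi$ by direct computation in $\mathrm{PGL}(2,\mathbb{C}(z_1))\rtimes\mathrm{PGL}(2,\mathbb{C})$; one must be careful that the correction term needed to make the lift commute with $\phi$ can always be absorbed into $\mathcal{J}_{z_1}$, which is where the precise choice of $\alpha$ and the identity $z_0^2=z_1$ for the fixed curve enter. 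The bookkeeping of how the involution $\tau\colon(z_0,z_1)\dashrightarrow\left(-\tfrac{z_1}{z_0},z_1\right)$ interacts with the base transformation $z_1\mapsto\alpha/z_1$ is the delicate point, since here, unlike the positive-genus case, $\tau$-type elements can survive in the centralizer.
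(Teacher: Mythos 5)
Your proposal is correct and is essentially the paper's own proof: Proposition \ref{pro:centrfib} places $\mathrm{Cent}(\phi)$ inside $\mathcal{J}$, the kernel of $\mathrm{pr}_{2\vert\mathrm{Cent}(\phi)}$ is $\mathcal{J}_{z_1}$ (any fiberwise element swapping the fixed points $\pm\sqrt{z_1}$ conjugates $\phi$ to $\phi^{-1}$, so the $\tau\circ\mathcal{J}_{z_1}$ coset is excluded exactly as in the positive-genus case), Lemma \ref{lem:stab} constrains the image, and the paper realizes every admissible base map by the direct lifts $(z_0,z_1)\mapsto(\upsilon z_0,\upsilon^2z_1)$ with $\upsilon^2=\zeta$, $c(\upsilon^2z_1)=\upsilon c(z_1)$, and $(z_0,z_1)\mapsto\left(\frac{\beta z_0}{z_1},\frac{\beta^2}{z_1}\right)$ with $\beta^2=\alpha$ --- no correction by an element of $\mathcal{J}_{z_1}$ is needed, since the functional equations for $c$ extracted from membership in $\mathrm{stab}\left(\frac{4c^2(z_1)}{c^2(z_1)-z_1}\right)$ make these lifts commute with $\phi$ on the nose and their multiplicativity provides the splitting. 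One caveat on your closing remark: the involution $\tau\colon(z_0,z_1)\dashrightarrow\left(-\frac{z_1}{z_0},z_1\right)$ still conjugates $\phi$ to $\phi^{-1}$ and so never survives in the centralizer even in the rational case; what survives are elements lying over a nontrivial base inversion $z_1\mapsto\alpha/z_1$, and the single $\alpha$ in the statement comes from the fact that any two inversions in the image compose to a rotation of the base which Lemma \ref{lem:stab} forces to be by a root of unity.
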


\begin{proof}
Denote by $\mathrm{Cent}(\phi)$ the centralizer of 
$\phi$ in $\mathrm{Bir}(\mathbb{P}^2_\mathbb{C})$,
and by $\mathcal{C}$ the fixed curve of $\phi$.

\begin{itemize}
\item[$\diamond$] Let us first assume that any element
of $\mathrm{Cent}(\phi)$ preserves the fibration
$z_1=$cst fiberwise. Then  
$\mathrm{Cent}(\phi)=\mathcal{J}_{z_1}$.

\item[$\diamond$] Assume now that there exists
an element $\psi$ in $\mathrm{Cent}(\phi)$ that
does not preserve the fibration $z_1=$cst
fiberwise. According to Lemma \ref{lem:stab}
either $\mathrm{pr}_2(\psi)=\zeta z_1$ with
$\zeta$ root of unity, or 
$\mathrm{pr}_2(\psi)=\frac{\alpha}{z_1}$
with $\alpha$ in~$\mathbb{C}^*$.

If $\mathrm{pr}_2(\psi)=\zeta z_1$ with 
$\zeta$ root of unity, then 
\[
\frac{4c^2(\zeta z_1)}{c^2(\zeta z_1)-\zeta z_1}=\frac{4c^2(z_1)}{c^2(z_1)-z_1}
\]
{\it i.e.} $c^2(\zeta z_1)=\zeta c^2(z_1)$.
There exists $\upsilon$ such that 
$\upsilon^2=\zeta$ and 
$c(\upsilon^2 z_1)=\upsilon c(z_1)$.
Note that 
$\varphi\colon(z_0,z_1)\mapsto(\upsilon z_0,\upsilon^2z_1)$
belongs to $\mathrm{Cent}(\phi)$. Remark that 
$\mathrm{pr}_2(\psi\circ\varphi^{-1})=\mathrm{id}$, 
so $\psi\circ\varphi^{-1}$ belongs to 
$\mathcal{J}_{z_1}$. 

If $\mathrm{pr}_2(\psi)=\frac{\alpha}{z_1}$, 
then 
\[
\frac{4c^2\left(\frac{\alpha}{z_1}\right)}{c^2\left(\frac{\alpha}{z_1}\right)-z_1}=\frac{4c^2(z_1)}{c^2(z_1)-z_1}
\]
{\it i.e.} 
$c^2\left(\frac{\alpha}{z_1}\right)=\frac{\alpha}{z_1^2}c^2(z_1)$.
There exists $\beta$ in $\mathbb{C}$ such that 
$\beta^2=\alpha$ and 
$c\left(\frac{\beta^2}{z_1}\right)=\frac{\beta}{z_1}c(z_1)$.
Remark that the map 
$(z_0,z_1)\mapsto\left(\frac{\beta z_0}{z_1},\frac{\beta^2}{z_1}\right)$
commutes with $\phi$. The map $\psi\circ\varphi^{-1}$ 
belongs to $\mathrm{Cent}(\phi)$ and preserves the 
fibration $z_1=$cst fiberwise; hence 
$\psi\circ\varphi^{-1}$ belongs to $\mathcal{J}_{z_1}$.
\end{itemize}
\end{proof}

\end{itemize}

We thus have established:

\begin{pro}[\cite{CerveauDeserti:centralisateurs}]\label{pro:centrfib2}
The centralizer of a Jonqui\`eres twist $\phi$ that 
preserves fiberwise the fibration in the plane
Cremona group is a finite extension of 
$\mathrm{Ab}(\phi)$. 
\end{pro}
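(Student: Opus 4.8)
The plan is to reduce the statement to the case-by-case analysis already performed in this subsection, since a Jonqui\`eres twist preserving the fibration fiberwise lies, after birational conjugacy, in one of the three maximal abelian subgroups $\mathcal{J}_a$, $\mathcal{J}_m$, $\mathcal{J}_F$ of $\mathcal{J}_0\simeq\mathrm{PGL}(2,\mathbb{C}(z_1))$. First I would observe that a Jonqui\`eres twist has infinite order and is not elliptic; as the elements of $\mathcal{J}_a$ are precisely the elliptic maps of additive type, the normal form $(z_0,z_1)\dashrightarrow(z_0+a(z_1),z_1)$ cannot occur. Hence, up to conjugacy, $\phi$ is either a multiplicative twist, \emph{i.e.} an element of $\mathcal{J}_m$ with $\mathrm{Ab}(\phi)=\mathcal{J}_m$, or a twist of $\mathcal{J}_F$ with $\mathrm{Ab}(\phi)=\mathcal{J}_F$. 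In both situations Proposition \ref{pro:centrfib} ensures that the centralizer $\mathrm{Cent}(\phi)$ of $\phi$ in $\mathrm{Bir}(\mathbb{P}^2_\mathbb{C})$ is contained in $\mathcal{J}$, which makes the explicit computation of $\mathrm{Cent}(\phi)$ tractable by working inside $\mathcal{J}$ and projecting via $\mathrm{pr}_2$.

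Next I would invoke the descriptions obtained above in each case. For $\phi\in\mathcal{J}_m$ the centralizer is $\mathcal{J}_m\rtimes\overline{\mathrm{Stab}(a)}$, so it suffices to note that $\overline{\mathrm{Stab}(a)}$ is finite, which is immediate from the finiteness of $\mathrm{Stab}(a)$. For $\phi\in\mathcal{J}_F$ I would split according to the genus of the fixed curve $\mathcal{C}\colon z_0^2=F(z_1)$: if $\deg F\geq 3$ the fixed curve has positive genus and $\mathrm{Cent}(\phi)$ is the extension of $\mathcal{J}_F$ by the group $\mathrm{Aut}_\tau(\mathcal{C})$ of automorphisms of $\mathcal{C}$ commuting with the hyperelliptic involution; if $\deg F\leq 2$ the fixed curve is rational, and after normalising to $F(z_1)=z_1$ the centralizer equals $\mathcal{J}_{z_1}\rtimes\big(\mathrm{stab}(\tfrac{4c^2(z_1)}{c^2(z_1)-z_1})\cap\mathrm{D}_\infty(\alpha)\big)$, with $\mathrm{Ab}(\phi)=\mathcal{J}_{z_1}$. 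In every case the extending factor is finite, which yields the claim.

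The only point requiring care --- and the one I would isolate as the main step --- is the finiteness of the extending factor. This rests on two facts: a non-constant rational function on $\mathbb{P}^1_\mathbb{C}$ admits only finitely many M\"obius symmetries, which gives the finiteness of $\mathrm{Stab}(a)$ in the multiplicative case and of $\mathrm{stab}(\tfrac{4c^2(z_1)}{c^2(z_1)-z_1})$ in the rational case; and the finiteness of the group of automorphisms of a curve of genus $\geq 1$ commuting with a prescribed involution, which handles the positive-genus case (for genus one only the finitely many $2$-torsion translations commute with the involution $z\mapsto -z$, and for genus $\geq 2$ the full automorphism group is already finite). Assembling the three cases, and recalling that $\phi$ was taken up to birational conjugacy while conjugation preserves both the centralizer up to isomorphism and the property of being a finite extension of $\mathrm{Ab}(\phi)$, completes the argument.
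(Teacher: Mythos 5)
Your argument is correct and essentially coincides with the paper's: there the proposition is introduced by ``We thus have established:'' as the summary of exactly the same case-by-case analysis --- elements of $\mathcal{J}_a$ are elliptic and hence excluded, twists of $\mathcal{J}_m$ have centralizer $\mathcal{J}_m\rtimes\overline{\mathrm{Stab}(a)}$ with $\overline{\mathrm{Stab}(a)}$ finite, and twists of $\mathcal{J}_F$ are split according to the genus of the fixed curve $z_0^2=F(z_1)$, everything being confined to $\mathcal{J}$ by Proposition~\ref{pro:centrfib}. One cosmetic remark: on an elliptic curve the automorphisms commuting with the hyperelliptic involution are all maps $z\mapsto uz+a$ with $2a=0$ (not only the $2$-torsion translations), but this does not affect the finiteness of $\mathrm{Aut}_\tau(\mathcal{C})$ that your proof uses.
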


\subsubsection{Centralizers of elements of $\mathcal{J}\smallsetminus\mathcal{J}_0$}\label{subsubsec:centrjonq}

The description of the centralizers of elements of 
$\mathcal{J}_0$ (Proposition \ref{pro:centrfib2}) 
allows to describe, up to finite 
index, the centralizer of elements of $\mathcal{J}$.
Generically these maps have a trivial centralizer
(\cite{CerveauDeserti:centralisateurs}). A consequence
of the study of the centralizers of elements of 
$\mathcal{J}$ is:

\begin{cor}[\cite{CerveauDeserti:centralisateurs}]
The centralizer of a Jonqui\`eres twist
is virtually sol\-vable.
\end{cor}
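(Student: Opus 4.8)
The final statement to prove is:

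\begin{cor}
The centralizer of a Jonqui\`eres twist
is virtually solvable.
\end{cor}

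The plan is to reduce the general case to the fiberwise case, for which the centralizer has already been essentially identified. The starting observation is that a Jonqui\`eres twist $\phi$ preserves a unique rational fibration (recall from the discussion following Theorem \ref{thm:dilfav} and the Remark on Jonqui\`eres twists that parabolic maps of this type fix a unique fibration), so up to birational conjugacy I may assume $\phi$ lies in the Jonqui\`eres group $\mathcal{J}\simeq\mathrm{PGL}(2,\mathbb{C})\rtimes\mathrm{PGL}(2,\mathbb{C}(z_1))$, and that the preserved fibration is $z_1=\text{const}$. By Proposition \ref{pro:centrfib} the entire centralizer $\mathrm{Cent}(\phi)$ in $\mathrm{Bir}(\mathbb{P}^2_\mathbb{C})$ is then contained in $\mathcal{J}$. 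This is the key structural input: solvability of the centralizer is now a statement internal to $\mathcal{J}$, a group which is itself built from solvable-by-solvable pieces via $\mathrm{PGL}(2,\,\cdot\,)$ factors.

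First I would handle the case $\phi\in\mathcal{J}_0$, i.e. $\phi$ preserves the fibration \emph{fiberwise}, which is exactly Proposition \ref{pro:centrfib2}: there $\mathrm{Cent}(\phi)$ is a finite extension of the maximal abelian group $\mathrm{Ab}(\phi)$ (one of $\mathcal{J}_a$, $\mathcal{J}_m\rtimes\overline{\mathrm{Stab}(a)}$, or $\mathcal{J}_F$ extended by a finite group, according to the three normal forms). An abelian group extended by a finite group is virtually abelian, hence virtually solvable, so this case is immediate. Next I would treat $\phi\in\mathcal{J}\smallsetminus\mathcal{J}_0$, using the reduction already indicated in \S\ref{subsubsec:centrjonq}: the projection $\mathrm{pr}_2\colon\mathcal{J}\to\mathrm{PGL}(2,\mathbb{C})$ records the action of $\phi$ on the base $\mathbb{P}^1_\mathbb{C}$ of the fibration. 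Restricting to $\mathrm{Cent}(\phi)\subset\mathcal{J}$ gives a short exact sequence
\[
1\longrightarrow \big(\mathrm{Cent}(\phi)\cap\mathcal{J}_0\big)\longrightarrow\mathrm{Cent}(\phi)\stackrel{\mathrm{pr}_2}{\longrightarrow}\mathrm{pr}_2(\mathrm{Cent}(\phi))\longrightarrow 1.
\]
The kernel $\mathrm{Cent}(\phi)\cap\mathcal{J}_0$ centralizes $\phi$ and lies in $\mathcal{J}_0$; I would argue that it is contained in the centralizer of $\phi^N$ for a suitable power $\phi^N\in\mathcal{J}_0$ (passing to a power lands in the fiberwise subgroup because the base action of a parabolic twist on $\mathbb{P}^1_\mathbb{C}$ has image a virtually cyclic, hence virtually solvable, subgroup of $\mathrm{PGL}(2,\mathbb{C})$), and thus is itself virtually solvable by the $\mathcal{J}_0$ case.

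The remaining point, and the main obstacle, is controlling the quotient $\mathrm{pr}_2(\mathrm{Cent}(\phi))\subset\mathrm{PGL}(2,\mathbb{C})$. I would show this image is solvable by observing that every element of $\mathrm{Cent}(\phi)$ commutes with $\phi$, so $\mathrm{pr}_2(\mathrm{Cent}(\phi))$ commutes with $\mathrm{pr}_2(\phi)$; since $\mathrm{pr}_2(\phi)$ is a nontrivial element of $\mathrm{PGL}(2,\mathbb{C})$ (for otherwise $\phi\in\mathcal{J}_0$, already handled), its centralizer in $\mathrm{PGL}(2,\mathbb{C})$ is either a maximal torus, a unipotent-by-torus Borel, or a finite extension thereof (the normalizer of a torus), all of which are solvable. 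Hence $\mathrm{pr}_2(\mathrm{Cent}(\phi))$ embeds in a solvable subgroup of $\mathrm{PGL}(2,\mathbb{C})$. An extension of a virtually solvable group by a (virtually) solvable group is virtually solvable, so the exact sequence above delivers that $\mathrm{Cent}(\phi)$ is virtually solvable, completing the proof. The delicate bookkeeping will be in making the power-taking argument precise — ensuring that finite-index subgroups chosen at the kernel and quotient levels can be intersected to a single finite-index solvable subgroup of $\mathrm{Cent}(\phi)$ — but this is standard for extensions and does not require new geometric input.
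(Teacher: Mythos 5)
Your reduction to $\mathcal{J}$ (via uniqueness of the $\phi$-invariant fibration) and your treatment of the quotient are sound: $\mathrm{pr}_2(\mathrm{Cent}(\phi))$ centralizes the non-trivial element $\mathrm{pr}_2(\phi)\in\mathrm{PGL}(2,\mathbb{C})$, whose centralizer is indeed solvable (a torus, a one-parameter unipotent group, or the normalizer of a torus --- though not a Borel, as you write; the centralizer of a unipotent element is just the unipotent one-parameter group). The genuine gap is in the kernel. You assert that some power $\phi^N$ lies in $\mathcal{J}_0$ "because the base action \ldots has image a virtually cyclic \ldots subgroup of $\mathrm{PGL}(2,\mathbb{C})$", but virtually cyclic is not finite: if $\mathrm{pr}_2(\phi)$ has infinite order --- for instance $\phi\colon(z_0,z_1)\dashrightarrow(z_1z_0,\beta z_1)$ with $\beta$ not a root of unity, which is a Jonqui\`eres twist with linear degree growth --- then \emph{no} power of $\phi$ belongs to $\mathcal{J}_0$, and Proposition \ref{pro:centrfib2} cannot be invoked for the kernel. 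Writing $\phi=(A,\nu)\in\mathrm{PGL}(2,\mathbb{C}(z_1))\rtimes\mathrm{PGL}(2,\mathbb{C})$, the kernel $K=\mathrm{Cent}(\phi)\cap\mathcal{J}_0$ is the set of $B\in\mathrm{PGL}(2,\mathbb{C}(z_1))$ satisfying the twisted relation ${}^{\nu}B=A^{-1}BA$, i.e.\ the fixed group of a twisted automorphism of $\mathrm{PGL}(2,\mathbb{C}(z_1))$; its elements need not be twists (they may be elliptic), so the fiberwise case gives no direct handle on them, and bounding such a fixed group is precisely the hard part of the case analysis in \cite{CerveauDeserti:centralisateurs} --- and the substance of Zhao's refinement quoted immediately after the corollary, which treats exactly the infinite-order base action and concludes virtual abelianity. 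As it stands your outline proves the corollary only when $\mathrm{pr}_2(\phi)$ has finite order.

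In that finite-order case, note your argument also simplifies: since $\phi^N\in\mathcal{J}_0$ is again a Jonqui\`eres twist (powers of a parabolic map are parabolic, with $\deg\phi^{Nn}\sim cNn$), the inclusion $\mathrm{Cent}(\phi)\subset\mathrm{Cent}(\phi^N)$ together with Proposition \ref{pro:centrfib2} puts the \emph{whole} centralizer, not merely the kernel, inside a finite extension of the abelian group $\mathrm{Ab}(\phi^N)$, so no exact-sequence bookkeeping is needed there. The missing geometric input is thus concentrated entirely in the case $\mathrm{pr}_2(\phi)$ of infinite order, where a new argument (following \cite{CerveauDeserti:centralisateurs}) is required to control $K$.
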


Zhao has refined this statement:

\begin{pro}[\cite{Zhao}]
The centralizer of a Jonqui\`eres twist whose 
action on the basis of the rational fibration 
is of infinite order is virtually abelian.
\end{pro}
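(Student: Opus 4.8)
The plan is to reduce to the internal structure of the Jonquières group $\mathcal{J}$ and then exploit the infinite order of the base action to produce an abelian quotient, afterwards controlling the kernel geometrically. Let $\phi$ be a Jonquières twist whose action on the base of the preserved rational fibration has infinite order. First I would recall that a Jonquières twist preserves a \emph{unique} fibration (\cite{DillerFavre}); hence if $\psi$ commutes with $\phi$, then $\psi$ carries the $\phi$-invariant fibration to a fibration invariant under $\psi\circ\phi\circ\psi^{-1}=\phi$, which by uniqueness is the same fibration. Conjugating so that this fibration is the standard rational one, I get $\mathrm{Cent}(\phi)\subset\mathcal{J}$, exactly as in the proof of Proposition~\ref{pro:centrfib}: an element not preserving the fibration would endow $\phi$ with a second invariant fibration, forcing $\phi$ to have finite order by Lemma~\ref{lem:2fibr}, contrary to $\phi$ being a twist. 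After this reduction each element of $\mathrm{Cent}(\phi)$ is written $(z_0,z_1)\dashrightarrow\big(M(z_1)\cdot z_0,\,\beta(z_1)\big)$ with $M\in\mathrm{PGL}(2,\mathbb{C}(z_1))$ and $\beta\in\mathrm{PGL}(2,\mathbb{C})$.

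Next I would use the projection $\mathrm{pr}_2\colon\mathcal{J}\to\mathrm{PGL}(2,\mathbb{C})$ recording the action on the base. Since $\mathrm{pr}_2$ is a group homomorphism, $\mathrm{pr}_2(\mathrm{Cent}(\phi))$ centralizes $h:=\mathrm{pr}_2(\phi)$, which has infinite order by hypothesis. The key elementary fact is that the centralizer of an infinite-order element of $\mathrm{PGL}(2,\mathbb{C})$ is abelian: if $h$ is diagonalizable it is, up to conjugacy, $z\mapsto\lambda z$ with $\lambda$ not a root of unity, whose centralizer is the torus $\{z\mapsto\mu z\}\cong\mathbb{C}^*$; if $h$ is parabolic it is $z\mapsto z+1$, whose centralizer is $\{z\mapsto z+b\}\cong(\mathbb{C},+)$. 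Thus $\mathrm{pr}_2(\mathrm{Cent}(\phi))$ is abelian, and I obtain a short exact sequence $1\to K\to\mathrm{Cent}(\phi)\xrightarrow{\mathrm{pr}_2} Q\to 1$ with $Q$ abelian and $K=\mathrm{Cent}(\phi)\cap\mathcal{J}_0$ the subgroup preserving every fibre.

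It then remains to control $K$ and the extension. Writing $\psi\in K$ as $N=N_\psi\in\mathrm{PGL}(2,\mathbb{C}(z_1))$, the relation $\phi\circ\psi=\psi\circ\phi$ becomes the twisted equation $N(h(z_1))=M(z_1)\,N(z_1)\,M(z_1)^{-1}$, where $M=M_\phi$ is the fibrewise part of $\phi$. Because $h$ has infinite order, a generic point $z_1=c$ has infinite orbit, and this relation determines $N$ on the whole orbit $\{h^n(c)\}$ from the single value $N(c)$; as $N$ is rational and an infinite set of values pins it down, the evaluation $\psi\mapsto N_\psi(c)$ is an injective homomorphism $K\hookrightarrow\mathrm{PGL}(2,\mathbb{C})$. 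To upgrade this to abelianity I would argue geometrically: the fibrewise fixed points of $N_\psi$ cut out a $\phi$-invariant bisection $\Sigma_\psi\subset S$ (the twisted equation says precisely that $\phi$ permutes these fixed sections), and a twist with infinite base action should admit only a controlled supply of invariant bisections. Two elements of $K$ sharing the same fibrewise fixed points commute, since on each fibre they are Möbius maps with a common pair of fixed points; so once a finite-index subgroup of $\mathrm{Cent}(\phi)$ is shown to preserve a single common invariant bisection, the corresponding fibrewise maps lie in a common torus and that subgroup is abelian.

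The hard part will be this last rigidity statement: proving that the infinite-order base map $h$ forces the $\phi$-invariant bisections (equivalently, the solutions of the twisted cocycle equation) to share a common fixed-section structure up to finite index, rather than forming a positive-dimensional family as occurs in the $\mathcal{J}_0$ case, where the base action is trivial and $\mathrm{Cent}(\phi)$ is only a finite extension of an abelian group (Proposition~\ref{pro:centrfib2}). Concretely, I must rule out a (virtually abelian)-by-abelian extension that fails to be virtually abelian, by exhibiting an explicit abelian subgroup of finite index; the natural candidate is the stabiliser of a common invariant bisection, and establishing its existence is exactly where the dynamics of $h$ on the base must be used decisively.
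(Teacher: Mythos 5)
Your two reduction steps are correct and coincide with what the surrounding text of the paper does: the inclusion $\mathrm{Cent}(\phi)\subset\mathcal{J}$ via uniqueness of the invariant fibration (\cite{DillerFavre}) is exactly the argument of Proposition~\ref{pro:centrfib} together with Lemma~\ref{lem:2fibr}, and the computation that the centralizer of an infinite-order element of $\mathrm{PGL}(2,\mathbb{C})$ is abelian (torus or translation group) is elementary and correct, so $Q=\mathrm{pr}_2(\mathrm{Cent}(\phi))$ is indeed abelian. Note, however, that the paper gives no proof of this proposition at all — it is quoted from \cite{Zhao} — so the only internal comparison available is the $\mathcal{J}_0$ analysis (Propositions~\ref{pro:centrfib} and~\ref{pro:centrfib2}), which your setup correctly parallels. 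The twisted cocycle equation $N(h(z_1))=M(z_1)N(z_1)M(z_1)^{-1}$ and the $\phi$-invariance of the fibrewise fixed-point curves $\Sigma_\psi$ are also correctly derived.

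The genuine gap is that everything after this reduction is missing, and it is precisely the content of Zhao's theorem. First, the injection $K\hookrightarrow\mathrm{PGL}(2,\mathbb{C})$ (itself needing care: a single evaluation point $c$ valid for all of a possibly uncountable $K$ is not automatic, though this is repairable by working in $\mathrm{PGL}(2,\mathbb{C}(z_1))$) buys you nothing toward virtual abelianity, since $\mathrm{PGL}(2,\mathbb{C})$ contains non-abelian free groups; a priori two elements of $K$ with disjoint fibrewise fixed-point pairs could generate such a group, and the mechanism you propose to exclude this — that the infinite-order base dynamics of $h$ permits only a "controlled supply" of $\phi$-invariant bisections, forcing a common fixed-section structure up to finite index — is exactly the rigidity statement to be proved, which you explicitly leave open. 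Second, even granting $K$ virtually abelian, the extension $1\to K\to\mathrm{Cent}(\phi)\to Q\to 1$ with $Q$ abelian only yields that $\mathrm{Cent}(\phi)$ is virtually metabelian — i.e.\ it recovers the older statement of \cite{CerveauDeserti:centralisateurs} quoted immediately before the proposition, not the refinement. Extensions of an abelian group by a virtually abelian one need not be virtually abelian (the Heisenberg group, or $\mathbb{Z}\ltimes_A\mathbb{Z}^2$ with $A$ of infinite order, are standard counterexamples), so one must additionally show the conjugation action of $\mathrm{Cent}(\phi)$ on $K$ has finite image, or produce the finite-index abelian subgroup directly; you name this obstacle but do not resolve it. As it stands the proposal is a correct reduction plus an honest statement of the two hard points, not a proof.
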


\subsection{What about the others ?}

\subsubsection{} Let $\phi$ be an Halphen 
twist. Up to birational conjugacy one can 
assume that $\phi$ is an element of a rational
surface $S$ with an elliptic fibration 
and that this fibration is $\phi$-invariant 
(\S \ref{sec:degreegrowth}). Furthermore
we can assume that there is no smooth 
curve of self-intersection $-1$ in the 
fibers, {\it i.e.} that the fibration 
is minimal, and so that $\phi$ is an 
automorphism. The elliptic fibration
is the unique $\phi$-invariant fibration
(\cite{DillerFavre}). As a result 
the fibration is invariant by all 
elements that commute with $\phi$, and
the centralizer of $\phi$ is contained
in $\mathrm{Aut}(S)$.

Since the fibration is minimal, the surface 
$S$ is obtained by blowing up the complex
projective plane in the nine base-points 
of an Halphen pencil and the 
rank of its N\'eron-Severi group is equal
to $10$. The group $\mathrm{Aut}(S)$ 
can be embedded in the endomorphisms
of $H^2(S,\mathbb{Z})$ for the intersection
form and preserves the class $[K_S]$ of
the canonical divisor, that is the class 
of the fibration. The dimension of the 
orthogonal hyperplane to $[K_S]$ is $9$,
and the restriction of the intersection 
form on its hyperplane is semi-negative:
 its kernel coincides with 
$\mathbb{Z}[K_S]$. As a consequence~$\mathrm{Aut}(S)$ contains an abelian 
group of finite index with rank 
$\leq 8$. We can thus state:

\begin{pro}[\cite{Gizatullin}]
Let $\phi$ be an Halphen twist. The 
centralizer of $\phi$ in~$\mathrm{Bir}(\mathbb{P}^2_\mathbb{C})$ 
contains a subgroup of finite index which is
abelian, free and of rank $\leq 8$.
\end{pro}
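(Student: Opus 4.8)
The plan is to reduce the statement to a question about automorphism groups of Halphen surfaces and then exploit the action on the Néron–Severi lattice, following Gizatullin. First I would invoke Theorem \ref{thm:dilfav} (degree growth) together with the discussion in \S\ref{sec:degreegrowth}: since $\phi$ is an Halphen twist, it preserves a one-parameter family of genus one curves and is not virtually isotopic to the identity, so after birational conjugacy I may assume $\phi$ is an \emph{automorphism} of a rational surface $S$ carrying a $\phi$-invariant elliptic fibration $\pi\colon S\to\mathbb{P}^1_\mathbb{C}$. Passing to a relatively minimal model (contracting any smooth $(-1)$-curve contained in a fiber), I may further assume the fibration is minimal, so $S$ is a Halphen surface in the sense of the Definition preceding Lemma \ref{lem:UrechHalphen}.

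The second step is to pin down the centralizer. By \cite{DillerFavre} the invariant fibration of a parabolic twist is unique, so any birational map $\psi$ commuting with $\phi$ must permute the fibers of $\pi$; in particular $\psi$ preserves the Halphen pencil, which is a multiple of $-K_S$. By Lemma \ref{lem:UrechHalphen} every birational self map of the Halphen surface $S$ preserving the Halphen pencil already belongs to $\mathrm{Aut}(S)$. Hence the whole centralizer of $\phi$ in $\mathrm{Bir}(\mathbb{P}^2_\mathbb{C})$ is contained in $\mathrm{Aut}(S)$, which is exactly the reduction I want.

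The third and final step is the lattice computation. Since $S$ is obtained by blowing up the nine base-points of an Halphen pencil, its Néron–Severi group $\mathrm{NS}(S)=H^2(S,\mathbb{Z})$ has rank $10$ and is equipped with the intersection form of signature $(1,9)$. The group $\mathrm{Aut}(S)$ acts on $H^2(S,\mathbb{Z})$ preserving the intersection form and fixing the canonical class $[K_S]$, whose self-intersection is $0$; thus $\mathrm{Aut}(S)$ acts on the orthogonal complement $[K_S]^\perp$, a rank $9$ lattice on which the restricted form is negative semi-definite with radical exactly $\mathbb{Z}[K_S]$. Quotienting by the radical gives a negative definite rank $8$ lattice, and the action of $\mathrm{Aut}(S)$ on it factors through a finite group (the automorphism group of a definite lattice is finite). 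The kernel of this representation is abelian: it is the group of automorphisms acting trivially on $[K_S]^\perp/\mathbb{Z}[K_S]$, which is naturally a subgroup of the Mordell–Weil lattice of the elliptic fibration, hence free abelian of rank $\leq 8$. I would conclude that $\mathrm{Aut}(S)$ contains a free abelian subgroup of finite index and rank $\leq 8$, and therefore so does $\mathrm{Cent}(\phi)$.

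The main obstacle I anticipate is the careful identification of the finite-index abelian subgroup as free abelian of the correct rank: one must verify that the automorphisms acting trivially on the reduced lattice act by translations along the fibers, i.e. correspond to sections of the Jacobian fibration, and that this Mordell–Weil group is torsion-free after passing to finite index (or that its torsion is finite and can be absorbed). This requires the theory of rational elliptic surfaces and the structure of $\mathrm{Aut}(S)$ as an extension of a finite group by the Mordell–Weil group, for which I would cite \cite{Gizatullin} and \cite{CantatDolgachev}; the semi-negativity of the intersection form on $[K_S]^\perp$ is the clean geometric input that forces the rank bound, while the freeness is the delicate point best handled by invoking the known structure theory rather than re-deriving it.
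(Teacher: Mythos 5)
Your proof is correct and follows essentially the same route as the paper: reduce via Theorem \ref{thm:dilfav}, Lemma \ref{lem:UrechHalphen} and the uniqueness of the invariant fibration (\cite{DillerFavre}) to $\mathrm{Cent}(\phi)\subset\mathrm{Aut}(S)$ for a Halphen surface $S$, then use the action on $H^2(S,\mathbb{Z})$ fixing $[K_S]$, with the intersection form negative semi-definite on $[K_S]^\perp$ with radical $\mathbb{Z}[K_S]$, to get the finite-index abelian subgroup of rank $\leq 8$. You merely make explicit a few details the paper leaves implicit (finiteness of the automorphism group of the definite rank $8$ quotient lattice, and the Mordell--Weil/torsion discussion needed for freeness, which the paper delegates to \cite{Gizatullin} and which also follows from Theorem \ref{thm:candol}).
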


\subsubsection{}

We finish the description of the centralizers of 
birational maps with the case of loxodromic maps 
in \S \ref{subsec:centralisateursloxodromic}.

%%%%%%%%%%%%%%%%%%%%%%%%%%%%%%%%%%%%%%%%%%%%%%%%%%%%%%%%%%%%%%%%%%%%%%%%%%%%%%%%%%%%%%%%%%%%%%%%%%%%%%%%%%%%%%%%%%%
%%%%%%%%%%%%%%%%%%%%%%%%%%%%%%%%%%%%%%%%%%%%%%%%%%%%%%%%%%%%%%%%%%%%%%%%%%%%%%%%%%%%%%%%%%%%%%%%%%%%%%%%%%%%%%%%%%%
%%%%%%%%%%%%%%%%%%%%%%%%%%%%%%%%%%%%%%%%%%%%%%%%%%%%%%%%%%%%%%%%%%%%%%%%%%%%%%%%%%%%%%%%%%%%%%%%%%%%%%%%%%%%%%%%%%%
%CHAPTER
%%%%%%%%%%%%%%%%%%%%%%%%%%%%%%%%%%%%%%%%%%%%%%%%%%%%%%%%%%%%%%%%%%%%%%%%%%%%%%%%%%%%%%%%%%%%%%%%%%%%%%%%%%%%%%%%%%%
%%%%%%%%%%%%%%%%%%%%%%%%%%%%%%%%%%%%%%%%%%%%%%%%%%%%%%%%%%%%%%%%%%%%%%%%%%%%%%%%%%%%%%%%%%%%%%%%%%%%%%%%%%%%%%%%%%%
%%%%%%%%%%%%%%%%%%%%%%%%%%%%%%%%%%%%%%%%%%%%%%%%%%%%%%%%%%%%%%%%%%%%%%%%%%%%%%%%%%%%%%%%%%%%%%%%%%%%%%%%%%%%%%%%%%%

\chapter[Consequences of the action of $\mathrm{Bir}(\mathbb{P}^2_\mathbb{C})$ 
on $\mathbb{H}^\infty$]{Consequences of the action of the Cremona group on an 
infinite dimensional hyperbolic space}\label{chap:hyper}

\bigskip
\bigskip

As we will see in this chapter one of the main techniques to better 
understand infinite subgroups of $\mathrm{Bir}(\mathbb{P}^2_\mathbb{C})$ 
is the construction of the action by isometries of the plane Cremona group 
on an infinite dimensional hyperbolic space detailed in 
Chapter \ref{chap:hyperbolicspace} and the use 
of results from hyperbolic geometry and group theory. 

In the first section we recall results of 
Demazure and Beauville that
suggest that the plane Cremona 
group behaves like a rank $2$ group. We give
an outline of the proof of the description of 
the centralizer of a loxodromic element 
of $\mathrm{Bir}(\mathbb{P}^2_\mathbb{C})$. 
On the one hand it finishes the description of 
the centralizer of the elements of
$\mathrm{Bir}(\mathbb{P}^2_\mathbb{C})$, on 
the other hand it suggests that 
$\mathrm{Bir}(\mathbb{P}^2_\mathbb{C})$ behaves
as a group of rank $1$. We end this section
by recalling the description of the 
morphisms from a countable group with 
Kazhdan property $(T)$ into 
$\mathrm{Bir}(\mathbb{P}^2_\mathbb{C})$
which also insinuates that $\mathrm{Bir}(\mathbb{P}^2_\mathbb{C})$ behaves
as a group of rank $1$.

In the second section we give an 
outline of the proofs of the description of elliptic 
subgroups of $\mathrm{Bir}(\mathbb{P}^2_\mathbb{C})$,
{\it i.e.} the subgroups of 
$\mathrm{Bir}(\mathbb{P}^2_\mathbb{C})$ whose 
all elements are elliptic: if $\mathrm{G}$ is 
such a group, either $\mathrm{G}$ is a bounded
subgroup of $\mathrm{Bir}(\mathbb{P}^2_\mathbb{C})$, 
or $\mathrm{G}$ is a torsion subgroup 
(\cite{Urech:ellipticsubgroups}). It is thus 
natural to describe torsion subgroups of
$\mathrm{Bir}(\mathbb{P}^2_\mathbb{C})$. 
In the third section we give an outline of the 
proof of the fact that if $\mathrm{G}$ is 
a torsion subgroup of 
$\mathrm{Bir}(\mathbb{P}^2_\mathbb{C})$, then 
$\mathrm{G}$ is isomorphic to a bounded 
subgroup of $\mathrm{Bir}(\mathbb{P}^2_\mathbb{C})$; 
furthermore it is isomorphic to a subgroup of 
$\mathrm{GL}(48,\mathbb{C})$. Let us mention
the surprising fact that the proof uses model
theory as Malcev already did in 
\cite{Malcev}.

The fourth section deals with Tits
alternative and Burnside problem. We 
recall the Ping Pong Lemma and give a sketch 
of the proof of the Tits alternative
for the Cremona group, {\it i.e.}
the proof of 

\begin{thm}[\cite{Cantat:annals, Urech:ellipticsubgroups}]
Every subgroup of $\mathrm{Bir}(\mathbb{P}^2_\mathbb{C})$
either is virtually solvable, or contains
a non-abelian free group. 
\end{thm}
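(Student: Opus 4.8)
The plan is to exploit the isometric action of $\mathrm{Bir}(\mathbb{P}^2_\mathbb{C})$ on the infinite dimensional hyperbolic space $\mathbb{H}^\infty$ constructed in Chapter~\ref{chap:hyperbolicspace}, and to combine it with the classification of elliptic subgroups. Let $\mathrm{G}$ be an arbitrary subgroup of $\mathrm{Bir}(\mathbb{P}^2_\mathbb{C})$. The first dichotomy I would set up is according to whether $\mathrm{G}$ contains a loxodromic element. If it does \emph{not}, then by the definition of loxodromic maps every element of $\mathrm{G}$ is elliptic or parabolic; I would then reduce to the case where $\mathrm{G}$ is an elliptic subgroup and invoke the description of such subgroups (announced in the second section of this chapter, following \cite{Urech:ellipticsubgroups}): either $\mathrm{G}$ is a bounded subgroup, hence conjugate to a subgroup of the automorphism group of a rational surface and therefore linear and virtually solvable by the classical structure of such algebraic groups, or $\mathrm{G}$ is a torsion subgroup, which is then locally finite and again virtually solvable by the torsion-subgroup theorem of the third section. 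The parabolic case is handled via Theorem~\ref{thm:dilfav}: a parabolic element is a Jonqui\`eres or Halphen twist preserving a fibration, so a subgroup all of whose elements are parabolic fixes a point of $\partial\mathbb{H}^\infty$ and is virtually solvable by the analysis of stabilizers of boundary points.

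The heart of the argument is the case where $\mathrm{G}$ contains at least one loxodromic element $f$. The strategy here is the classical Ping-Pong construction adapted to the hyperbolic setting. To $f$ is associated (see \S\ref{sec:degreegrowth}) an axis $\mathrm{Ax}(f)\subset\mathbb{H}^\infty$ with two endpoints $\alpha(f)$, $\omega(f)$ in $\partial\mathbb{H}^\infty$. First I would distinguish whether $\mathrm{G}$ fixes the pair of points $\{\alpha(f),\omega(f)\}$ (as a set) on the boundary. If it does, then $\mathrm{G}$ acts on the axis and I would produce a homomorphism of $\mathrm{G}$ into the isometries of this geodesic line, whose kernel fixes $\mathrm{Ax}(f)$ pointwise; the image is virtually abelian and the kernel, fixing two boundary points, is shown to be virtually abelian as well (this matches the centralizer description in \S\ref{subsec:centralisateursloxodromic}), so $\mathrm{G}$ is virtually solvable. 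If $\mathrm{G}$ does \emph{not} fix $\{\alpha(f),\omega(f)\}$, then there is an element $g\in\mathrm{G}$ moving this pair, and I would use the loxodromic element $f$ together with a conjugate $g f g^{-1}$, whose axis has endpoints distinct from those of $f$. High powers $f^N$ and $(gfg^{-1})^N$ then have disjoint pairs of attracting/repelling neighborhoods on $\partial\mathbb{H}^\infty$, and the Ping-Pong Lemma produces a non-abelian free subgroup of $\mathrm{G}$.

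The main technical obstacle is precisely ensuring the Ping-Pong hypotheses hold in the infinite dimensional, non-locally-compact space $\mathbb{H}^\infty$: one must verify that a loxodromic isometry has genuine north-south dynamics on $\mathbb{H}^\infty\cup\partial\mathbb{H}^\infty$, namely that points away from the repelling fixed point converge uniformly on compacta toward the attracting one, and that one can choose disjoint open ``tables'' around the four distinct boundary fixed points. This uses the $\mathrm{CAT}(-1)$ and Gromov hyperbolicity of $\mathbb{H}^\infty$ (established in \S on $\delta$-hyperbolicity) to control how geodesic triangles with a vertex near infinity stay thin, so that high powers attract neighborhoods correctly. A subtle point I would need to address is the degenerate situation where $\mathrm{G}$ preserves the axis of \emph{every} loxodromic element it contains, forcing a common fixed pair on the boundary --- this is exactly the ``no free subgroup'' branch, and establishing that it implies virtual solvability requires the centralizer/normalizer computation for loxodromic maps and the fact that an isometry group fixing two boundary points of a hyperbolic space is, up to finite index, metabelian. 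Once these dynamical lemmas are in place, the dichotomy ``virtually solvable or contains a free group'' follows by assembling the elliptic, parabolic and loxodromic cases, which is the statement of the Tits alternative for $\mathrm{Bir}(\mathbb{P}^2_\mathbb{C})$.
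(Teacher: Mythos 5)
Your architecture (trichotomy elliptic/parabolic/loxodromic via the action on $\mathbb{H}^\infty$, ping-pong for loxodromic elements in general position) is the paper's, but the no-loxodromic branch of your proof contains a genuine error. You claim that a subgroup $\mathrm{G}$ without loxodromic elements is \emph{virtually solvable}, deducing for bounded subgroups that they are ``linear and therefore virtually solvable''. This is false: $\mathrm{Aut}(\mathbb{P}^2_\mathbb{C})=\mathrm{PGL}(3,\mathbb{C})$ is a bounded (hence elliptic) subgroup of $\mathrm{Bir}(\mathbb{P}^2_\mathbb{C})$ and it visibly contains non-abelian free groups. The Tits alternative is \emph{not} the dichotomy ``loxodromic present $\Rightarrow$ maybe free, loxodromic absent $\Rightarrow$ solvable''; in the no-loxodromic case the correct route (Theorem \ref{thm:weak}(2), Lemma \ref{lem:jonqhalph}, Theorems \ref{thm:urechell1} and \ref{thm:urechell2}) reduces $\mathrm{G}$ to a subgroup of a linear algebraic group, of $\mathcal{J}\simeq\mathrm{PGL}(2,\mathbb{C}(z_1))\rtimes\mathrm{PGL}(2,\mathbb{C})$, or of $\mathrm{Aut}(S)$ for a Halphen surface, and then applies the \emph{linear} Tits alternative together with the extension lemma (Proposition \ref{pro:titsext}); the outcome of these branches may perfectly well be a free subgroup. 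Only the Halphen case (virtually abelian, Theorem \ref{thm:candol}) and the torsion case (Jordan--Schur, after Theorem \ref{thm:urechell2}) land in the solvable branch. The same misstep occurs in your parabolic case: conjugating into $\mathcal{J}$ does not yield virtual solvability, since $\mathcal{J}$ itself contains free groups.

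In the loxodromic branch there is a second, subtler gap. Your ping-pong uses $f$ and $gfg^{-1}$ where $g$ moves the pair $\{\alpha(f),\omega(f)\}$; but this only makes the two endpoint \emph{pairs} distinct, and $g$ may fix one endpoint while moving the other, so $f$ and $gfg^{-1}$ can share a single boundary fixed point, in which case four disjoint tables cannot be chosen. This degenerate configuration --- a subgroup fixing one point of $\partial\mathbb{H}^\infty$ yet containing a loxodromic element --- is exactly where the paper needs input beyond hyperbolic geometry: the homomorphism $\rho\colon\mathrm{G}\to\mathbb{R}^*_+$ coming from the action on the fixed isotropic line, whose kernel consists of elliptic maps, combined with the tight/monomial dichotomy (Theorem \ref{thm:pfou}, Lemmas \ref{lem:withoutaxe} and \ref{Urech:without}, resting on the Cantat--Lamy normal subgroup theory and Theorem \ref{thm:DelzantPy}), which shows that two loxodromic maps with distinct axes never share a boundary fixed point in $\mathrm{Bir}(\mathbb{P}^2_\mathbb{C})$. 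Relatedly, your claim that the pointwise stabilizer of an axis is virtually abelian ``because it fixes two boundary points'' is not a theorem of infinite-dimensional hyperbolic geometry (rotations about a geodesic of $\mathbb{H}^\infty$ form an enormous group); it holds only because such elements move $\mathbf{e}_0$ a bounded distance, hence have bounded degree, after which regularization and Theorem \ref{thm:DelzantPy} force conjugation into $\mathrm{GL}(2,\mathbb{Z})\ltimes\mathrm{D}_2$ --- the algebro-geometric step you gesture at via the centralizer computation, but which no thin-triangle estimate can replace.
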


One consequence
is a positive answer to the Burnside
problem for the Cremona group: 
every finitely generated torsion subgroup of
$\mathrm{Bir}(\mathbb{P}^2_\mathbb{C})$
is finite.

The study of solvable groups is a very old problem. For instance let us recall
the Lie-Kolchin theorem: any linear solvable subgroup is up to finite index 
triangularizable (\cite{KargapolovMerzljakov}). Note that the 
assumption "up to finite index" is essential: for instance the subgroup
\[
\langle\left(\begin{array}{cc}
1 & 0 \\
1 & -1
\end{array}
\right),\,\left(\begin{array}{cc}
-1 & 1 \\
0 & 1
\end{array}
\right)\rangle
\] 
of $\mathrm{PGL}(2,\mathbb{C})$ is isomorphic 
to $\mathfrak{S}_3$, so is 
solvable but is not triangularizable. 
The fifth section dedicated to a 
sketch of the proof of the characterization
of the solvable subgroups of the plane
Cremona group 
(\cite{Deserti:resoluble, Urech:ellipticsubgroups}).

Let us recall a very old question, already asked in 
$1895$ in \cite{Enriques2}: 
\begin{center}
\begin{fmpage}{10cm}
"Tuttavia altre questioni
d'indole gruppale relative al gruppo Cremona nel 
piano (ed a pi\`u forte ragione in $S_n$, $n>2$) 
rimangono ancora insolute; ad esempio l'importante
questione se il gruppo Cremona contenga alcun 
sottogruppo invariante (questione alla quale
sembra probabile si debba rispondere 
negativamente)".
\end{fmpage}
\end{center}

In $2013$ Cantat and Lamy established that 
$\mathrm{Bir}(\mathbb{P}^2_\Bbbk)$ is not simple as
soon as $\Bbbk$ is algebraically closed 
(\cite{CantatLamy}). Then in $2016$ Lonjou proved
that $\mathrm{Bir}(\mathbb{P}^2_\Bbbk)$ is not 
simple over any field (\cite{Lonjou}). 
The sixth section is devoted to normal 
subgroups of $\mathrm{Bir}(\mathbb{P}^2_\mathbb{C})$ 
and the non-simplicity of
$\mathrm{Bir}(\mathbb{P}^2_\mathbb{C})$.
Strategies of \cite{CantatLamy} and \cite{Lonjou}
are evoked. A consequence of one result of 
\cite{Lonjou} is the following property: 
the Cremona group contains infinitely 
many characteristic subgroups (\cite{Cantat:survey}).

Taking the results of the sixth section
as a starting point Urech gives
a classification of all simple groups
that act non-trivially by birational 
maps on complex compact K\"ahler 
surfaces. In particular he gets the 
two following statements:

\begin{thm}[\cite{Urech:simplesubgroups}]\label{thm:urechsimple1}
A simple group $\mathrm{G}$ acts 
non-trivially by birational maps on a 
rational complex projective surface if 
and only if $\mathrm{G}$ is isomorphic
to a subgroup of $\mathrm{PGL}(3,\mathbb{C})$.
\end{thm}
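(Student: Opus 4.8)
The plan is to prove both implications of Theorem \ref{thm:urechsimple1}, the reverse one being essentially immediate and the forward one requiring the whole machinery of the preceding chapters. For the reverse direction, suppose $\mathrm{G}$ is isomorphic to a subgroup of $\mathrm{PGL}(3,\mathbb{C})=\mathrm{Aut}(\mathbb{P}^2_\mathbb{C})$. Then the inclusion $\mathrm{PGL}(3,\mathbb{C})\hookrightarrow\mathrm{Bir}(\mathbb{P}^2_\mathbb{C})$ exhibits $\mathrm{G}$ as a group of birational self maps of the rational surface $\mathbb{P}^2_\mathbb{C}$, and this action is non-trivial provided $\mathrm{G}$ is non-trivial (a simple group is non-trivial by convention). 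So the content is entirely in the forward direction.

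For the forward direction, I would start from a non-trivial action $\rho\colon\mathrm{G}\to\mathrm{Bir}(S)$ of a simple group $\mathrm{G}$ on a rational complex projective surface $S$. Since $\mathrm{G}$ is simple and $\rho$ is non-trivial, $\rho$ is injective, so $\mathrm{G}$ embeds in $\mathrm{Bir}(S)\cong\mathrm{Bir}(\mathbb{P}^2_\mathbb{C})$ (the latter isomorphism holds because $S$ is rational, as noted in the Preface). The first step is to invoke the Tits alternative for $\mathrm{Bir}(\mathbb{P}^2_\mathbb{C})$: every subgroup either is virtually solvable or contains a non-abelian free group. A non-abelian free group is visibly not simple, and a simple group that is virtually solvable is either solvable (hence, being simple, cyclic of prime order) or has a proper finite-index subgroup, contradicting simplicity unless the group itself is finite. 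So I would split into the case where $\mathrm{G}$ is solvable — where the Lie--Kolchin type classification of solvable subgroups of $\mathrm{Bir}(\mathbb{P}^2_\mathbb{C})$ forces $\mathrm{G}$ to be abelian up to finite index, hence cyclic of prime order, which embeds into the diagonal torus of $\mathrm{PGL}(3,\mathbb{C})$ — and the case where $\mathrm{G}$ is finite.

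The main work, and the main obstacle, is the finite case. Here I would use that a finite simple subgroup of $\mathrm{Bir}(\mathbb{P}^2_\mathbb{C})$ is an algebraic subgroup (being bounded, since torsion subgroups are bounded by the results invoked in the excerpt), hence conjugate into the automorphism group of a minimal $\mathrm{G}$-surface. By the classification of minimal rational $\mathrm{G}$-surfaces (Theorem \ref{thm:classificationminimalGsurfaces}), such a surface is either a conic bundle or a del Pezzo surface, and one then runs through Blanc's classification of maximal algebraic subgroups (Theorem \ref{thm:blanc11cases}). The crux is to check, family by family, that any finite \emph{simple} subgroup of each automorphism group $\mathrm{Aut}(S,\pi)$ in the list already embeds into $\mathrm{PGL}(3,\mathbb{C})$. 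The exact sequences appearing in Theorem \ref{thm:blanc11cases} (with kernels that are abelian, such as $V\simeq(\faktor{\mathbb{Z}}{2\mathbb{Z}})^2$, $\faktor{\mathbb{Z}}{3\mathbb{Z}}$, tori, etc.) let one kill the kernel by simplicity, projecting $\mathrm{G}$ either into $\mathrm{PGL}(2,\mathbb{C})$, into $\mathrm{PGL}(2,\mathbb{C})\times\mathrm{PGL}(2,\mathbb{C})$, or into a symmetric group $\mathfrak{S}_4$, $\mathfrak{S}_5$; the only non-abelian finite simple groups occurring are then $\mathfrak{A}_5\cong\mathrm{PSL}(2,\mathbb{F}_4)\cong\mathrm{PSL}(2,\mathbb{F}_5)$, which embeds into $\mathrm{PGL}(2,\mathbb{C})\subset\mathrm{PGL}(3,\mathbb{C})$, and possibly $\mathfrak{A}_6$ arising inside $\mathrm{Aut}$ of certain del Pezzo surfaces. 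I expect the delicate point to be precisely controlling which finite simple groups genuinely occur in families (8)–(11) (the del Pezzo surfaces of degree $\le 3$ and the $(\faktor{\mathbb{Z}}{2\mathbb{Z}})^2$-conic bundles), and verifying that each such group — and not merely an abstract abelian quotient — admits a three-dimensional faithful projective representation, i.e. embeds in $\mathrm{PGL}(3,\mathbb{C})$. This would be handled using the embeddings into $\mathrm{GL}(8,\mathbb{C})$ and $\mathrm{GL}(6,\mathbb{C})$ recorded in Lemmas \ref{lem:ru4}, \ref{lem:te1}, \ref{lem:te2}, together with character-theoretic dimension bounds for the relevant simple groups, reducing everything to the known fact that $\mathfrak{A}_5$ is the unique non-abelian finite simple subgroup of $\mathrm{PGL}(3,\mathbb{C})$ and ruling out all larger candidates.
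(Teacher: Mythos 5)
Your forward direction fails at the very first step, and the failure is fatal to the whole architecture. The Tits alternative (Theorems \ref{thm:cantattits} and \ref{thm:urechtits}) says a subgroup of $\mathrm{Bir}(\mathbb{P}^2_\mathbb{C})$ either is virtually solvable or \emph{contains} a non-abelian free subgroup; containing a free subgroup is not remotely incompatible with simplicity, since a group is not isomorphic to its subgroups. Concretely, $\mathrm{PGL}(2,\mathbb{C})\simeq\mathrm{PSL}(2,\mathbb{C})$ is a simple group sitting inside $\mathrm{PGL}(3,\mathbb{C})\subset\mathrm{Bir}(\mathbb{P}^2_\mathbb{C})$; it contains many non-abelian free subgroups and is not virtually solvable. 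Your reduction would conclude that every simple subgroup of $\mathrm{Bir}(\mathbb{P}^2_\mathbb{C})$ is either cyclic of prime order or finite, which contradicts the very statement you are proving, since $\mathrm{PGL}(3,\mathbb{C})$ has infinite simple subgroups that do act on rational surfaces. (Your analysis of the virtually solvable branch is fine — a virtually solvable simple group is indeed finite or $\faktor{\mathbb{Z}}{p\mathbb{Z}}$ via the normal core argument — but the other branch of the alternative cannot be discarded, and it is exactly where the interesting simple subgroups live.) Consequently your careful case analysis of finite simple groups in Blanc's list, whatever its merits, cannot rescue the proof: the infinite, non-virtually-solvable case is simply never addressed.

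What the paper actually does is run the trichotomy of Theorem \ref{thm:urechsimple2} for the isometric action on $\mathbb{H}^\infty$, and the genuinely hard point — which your sketch never confronts — is excluding loxodromic elements from a simple subgroup $\mathrm{G}$. This uses normal-subgroup technology, not the Tits alternative: if some power of a loxodromic $\phi\in\mathrm{G}$ is tight in $\mathrm{G}$, then $\ll\phi^n\gg$ is a proper non-trivial normal subgroup (Theorem \ref{thm:canlam}), contradicting simplicity; if no power is tight, then by Theorem \ref{thm:pfou} and Lemma \ref{lem:densezar} one may assume $\phi$ monomial and deduce $\mathrm{G}\subset\mathrm{GL}(2,\mathbb{Z})\ltimes\mathrm{D}_2$, whence the projection to $\mathrm{GL}(2,\mathbb{Z})$ is a non-trivial morphism with infinite kernel — again contradicting simplicity. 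The parabolic case is handled by Lemma \ref{lem:jonqhalph}: Halphen automorphism groups are virtually abelian (Theorem \ref{thm:candol}) so contain no infinite simple group, and in the Jonqui\`eres case simplicity kills one side of the exact sequence $1\to\mathrm{PGL}(2,\mathbb{C}(z_1))\to\mathcal{J}\to\mathrm{PGL}(2,\mathbb{C})\to 1$, embedding $\mathrm{G}$ into $\mathrm{PGL}(2,\mathbb{C})$; the elliptic case lands in an algebraic subgroup (Theorems \ref{thm:urechell1} and \ref{thm:urechell2} plus Jordan--Schur) or in $\mathcal{J}$, and only then does Theorem \ref{thm:blanc11cases} enter. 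A secondary but genuine error: your ``known fact'' that $\mathfrak{A}_5$ is the unique non-abelian finite simple subgroup of $\mathrm{PGL}(3,\mathbb{C})$ is false — $\mathfrak{A}_6$ (the Valentiner group modulo its center) and $\mathrm{PSL}(2,\mathbb{F}_7)$ (the Klein quartic group) also embed — and it is inconsistent even with your own earlier allowance for $\mathfrak{A}_6$.
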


\begin{thm}[\cite{Urech:simplesubgroups}]\label{thm:urechsimple2}
Let $\mathrm{G}$ be a simple subgroup of 
$\mathrm{Bir}(\mathbb{P}^2_\mathbb{C})$. 
Then 
\begin{itemize}
\item[$\diamond$] $\mathrm{G}$ does not 
contain loxodromic elements;

\item[$\diamond$] if $\mathrm{G}$ contains
a parabolic element, then $\mathrm{G}$ is
conjugate to a subgroup of $\mathcal{J}$;

\item[$\diamond$] if $\mathrm{G}$ is an 
elliptic subgroup, then $\mathrm{G}$
is either a simple subgroup of an algebraic 
subgroup of 
$\mathrm{Bir}(\mathbb{P}^2_\mathbb{C})$, 
or conjugate to a subgroup 
of $\mathrm{G}$.
\end{itemize}
\end{thm}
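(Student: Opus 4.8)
The plan is to read off the three assertions from the dynamical trichotomy (elliptic, parabolic, loxodromic) attached to the isometric action of $\mathrm{Bir}(\mathbb{P}^2_\mathbb{C})$ on $\mathbb{H}^\infty$, and to combine it with one purely algebraic observation that I will use throughout: \emph{an infinite simple group is never virtually solvable}. Indeed, if $\mathrm{G}$ is simple with a finite-index solvable subgroup $H$, then the normal core $N=\bigcap_{g}gHg^{-1}$ is a solvable normal subgroup of finite index, so by simplicity $N=\{\mathrm{id}\}$, forcing $\mathrm{G}\hookrightarrow\mathrm{Sym}(\mathrm{G}/H)$ and hence $\mathrm{G}$ finite, or $N=\mathrm{G}$, forcing $\mathrm{G}$ itself solvable and so cyclic of prime order, again finite. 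Since parabolic and loxodromic elements have infinite order, this observation will repeatedly rule out the "small" configurations.

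For the first assertion I would argue by contradiction: assume $\mathrm{G}$ contains a loxodromic element $g$, with axis $\mathrm{Ax}(g)$ and fixed boundary points $\alpha(g),\omega(g)$. By the classification of subgroups of isometries of a Gromov hyperbolic space, either $\mathrm{G}$ is non-elementary (it contains a second loxodromic whose fixed-point pair is disjoint from $\{\alpha(g),\omega(g)\}$) or it is elementary. In the non-elementary case, the Tits alternative for $\mathrm{Bir}(\mathbb{P}^2_\mathbb{C})$ produces a non-abelian free subgroup, and the small-cancellation / tight-element machinery underlying the non-simplicity of $\mathrm{Bir}(\mathbb{P}^2_\mathbb{C})$ (\cite{CantatLamy}) yields a proper non-trivial normal subgroup of $\mathrm{G}$, contradicting simplicity. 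In the elementary case $\mathrm{G}$ preserves $\{\alpha(g),\omega(g)\}$ or fixes a single boundary point, so it is virtually solvable; but then the algebraic observation above shows $\mathrm{G}$ is finite, while $g$ has infinite order, a contradiction. Hence $\mathrm{G}$ contains no loxodromic element.

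For the second assertion, suppose $\mathrm{G}$ contains a parabolic element $g$. By Theorem \ref{thm:dilfav} it is a Jonqui\`eres twist or a Halphen twist, and it fixes a unique point $\xi\in\partial\mathbb{H}^\infty$, namely the isotropic class of its invariant fibration. Since $\mathrm{G}$ has no loxodromic element by the first assertion, any element of $\mathrm{G}$ moving $\xi$ would, together with $g$, generate a loxodromic element by a ping-pong argument; therefore every element of $\mathrm{G}$ fixes $\xi$, i.e. $\mathrm{G}$ preserves the fibration. If that fibration has genus one, $\mathrm{G}$ preserves a Halphen pencil, so by Lemma \ref{lem:UrechHalphen} and its corollary $\mathrm{G}$ is conjugate into the automorphism group of a Halphen surface, which is virtually abelian; again the algebraic observation forces $\mathrm{G}$ finite, contradicting the infinite order of $g$. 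Hence the fibration is rational, and any subgroup preserving a rational fibration is conjugate to a subgroup of $\mathcal{J}$ (Theorem \ref{thm:fibration}). Finally, for the third assertion: if every element of $\mathrm{G}$ is elliptic, the structure theorem for elliptic subgroups of $\mathrm{Bir}(\mathbb{P}^2_\mathbb{C})$ (\cite{Urech:ellipticsubgroups}) gives that either $\mathrm{G}$ is bounded, whence its closure has bounded degree and is an algebraic subgroup (so $\mathrm{G}$ is a simple subgroup of an algebraic subgroup), or $\mathrm{G}$ is a torsion subgroup, in which case the torsion classification realises $\mathrm{G}$ as a linear group and, together with the companion Theorem \ref{thm:urechsimple1}, identifies it up to conjugacy inside $\mathrm{PGL}(3,\mathbb{C})=\mathrm{Aut}(\mathbb{P}^2_\mathbb{C})$. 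The step I expect to be the main obstacle is the non-elementary case of the first assertion: converting "$\mathrm{G}$ contains a loxodromic element" into "$\mathrm{G}$ has a proper normal subgroup" is not soft hyperbolic geometry but genuinely requires the tightness (WPD-type) properties of loxodromic isometries of $\mathbb{H}^\infty$, and one must check these hold relative to the $\mathrm{G}$-action rather than merely for the ambient Cremona group, while dispatching the elementary subcases separately through the virtual-solvability argument.
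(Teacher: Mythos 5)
Your overall architecture (trichotomy on the type of isometry, plus the observation that an infinite simple group is not virtually solvable) is the right one, and your parabolic case is essentially the paper's: you replace the appeal to the fixed-point statement (Theorem \ref{thm:weak}, used in Lemma \ref{lem:jonqhalph}) by producing a loxodromic element from two parabolics with distinct fixed boundary points, which is legitimate and is exactly how the paper argues in the proof of Theorem \ref{thm:resoluble}; the Halphen branch is then killed by Theorem \ref{thm:candol} as you say. But the first assertion has a genuine gap, and you correctly located it without filling it. In the non-elementary case you cannot conclude by citing the Cantat--Lamy small-cancellation machinery: that machinery requires a tight (WPD) element, and by Theorem \ref{thm:ShepherdBarron} a loxodromic map conjugate to a monomial map has \emph{no} tight power; moreover tightness relative to $\mathrm{G}$ fails precisely when $\mathrm{G}$ contains an infinite subgroup normalized by $\phi$ and conjugate into $\mathrm{D}_2$ (Theorem \ref{thm:pfou}). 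The group $\mathrm{GL}(2,\mathbb{Z})\ltimes\mathrm{D}_2$ is non-elementary and contains non-abelian free subgroups, so ``non-elementary'' alone does not yield a tight element, and your asserted step fails on exactly this toric branch. The paper's proof is built around this: simplicity forbids any power of $\phi$ from being tight in $\mathrm{G}$ (otherwise its normal closure $\ll\phi^n\gg_{\mathrm{G}}$ is proper and non-trivial), so Theorem \ref{thm:pfou} forces $\phi$ to be monomial up to conjugacy with a dense torus subgroup $\Delta_2\subset\mathrm{G}$ normalized by $\phi$ (Lemma \ref{lem:densezar}); one then shows, via Theorem \ref{thm:urechnorm} applied to the bounded group generated by $\psi\circ\phi\circ\psi^{-1}$ and $\mathrm{D}_2$, that every $\psi\in\mathrm{G}$ lies in $\mathrm{GL}(2,\mathbb{Z})\ltimes\mathrm{D}_2$, and the resulting non-trivial morphism $\mathrm{G}\to\mathrm{GL}(2,\mathbb{Z})$ has infinite kernel, contradicting simplicity. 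This toric analysis is the entire content of the first assertion, and your proposal leaves it open. (Your ``elementary case'' subargument is fine but is subsumed by this.)

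There is a second, smaller gap in the elliptic case: you quote the structure theorem for elliptic subgroups as a dichotomy (bounded or torsion), but Theorem \ref{thm:urechell1} is a trichotomy, and the branch you dropped --- $\mathrm{G}$ preserves a rational fibration --- is precisely the one producing the second alternative of the third bullet, namely conjugacy into $\mathcal{J}$ (the statement's ``subgroup of $\mathrm{G}$'' is a misprint for $\mathcal{J}$); as written your argument cannot output that alternative at all. In the torsion branch, your appeal to Theorem \ref{thm:urechsimple1} is circular in the paper's logic, since that theorem is \emph{deduced} from the present one; the paper instead argues directly: a torsion subgroup is linear by Theorem \ref{thm:urechell2}, hence by Jordan--Schur has a normal abelian subgroup of finite index, hence is finite by your own virtually-solvable observation, hence is algebraic, which places it in the first alternative.
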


In the last section we give a sketch of
the proof of these results.

\bigskip
\bigskip

%%%%%%%%%%%%%%%%%%%%%%%%%%%%%%%%%%%%%%%%%%%%%%%%%%%%%%%%%%%%%%%%%%%%%%%%%%%%%%%%%%%%%%%%%%%%%%%%%%%%%%%%%%%%%%%%%%%
%%%%%%%%%%%%%%%%%%%%%%%%%%%%%%%%%%%%%%%%%%%%%%%%%%%%%%%%%%%%%%%%%%%%%%%%%%%%%%%%%%%%%%%%%%%%%%%%%%%%%%%%%%%%%%%%%%%
% section
%%%%%%%%%%%%%%%%%%%%%%%%%%%%%%%%%%%%%%%%%%%%%%%%%%%%%%%%%%%%%%%%%%%%%%%%%%%%%%%%%%%%%%%%%%%%%%%%%%%%%%%%%%%%%%%%%%%
%%%%%%%%%%%%%%%%%%%%%%%%%%%%%%%%%%%%%%%%%%%%%%%%%%%%%%%%%%%%%%%%%%%%%%%%%%%%%%%%%%%%%%%%%%%%%%%%%%%%%%%%%%%%%%%%%%%

\section{A group of rank $1.5$}

\subsection{Rank $2$ phenomenon}

Let $\Bbbk$ be a field. Consider a connected semi-simple algebraic group 
$\mathrm{G}$ defined over $\Bbbk$. Let 
$\Psi\colon\mathrm{G}\to\mathrm{Aut}(\mathrm{G})$ be the mapping 
$g\mapsto\Psi_g$ where $\Psi_g$ denotes the inner automorphism 
given by 
\begin{align*}
& \Psi_g\colon\mathrm{G}\to\mathrm{G}, && h\mapsto ghg^{-1}.
\end{align*}
For each $g$ in $\mathrm{G}$ one can define $\mathrm{Ad}_g$ to be the 
derivative of $\Psi_g$ at the origin
\[
\mathrm{Ad}_g=(D\Psi_g)_{\mathrm{id}}\colon \mathfrak{g}\to\mathfrak{g}
\]
where $D$ is the differential and $\mathfrak{g}=T_{\mathrm{id}}\mathrm{G}$ is the 
tangent space of $\mathrm{G}$ at the identity element of~$\mathrm{G}$.
The map
\begin{align*}
& \mathrm{Ad}\colon\mathrm{G}\to\mathrm{Aut}(\mathfrak{g}), && 
g\mapsto\mathrm{Ad}_g
\end{align*}
is a group representation called the 
\textsl{adjoint representation}\index{defi}{adjoint representation} 
of $\mathrm{G}$. The \textsl{$\Bbbk$-rank}\index{defi}{$\Bbbk$-rank}
of $\mathrm{G}$ is the maximal dimension of a connected algebraic
subgroup of $\mathrm{G}$ which is diagonalizable over $\Bbbk$ in
$\mathrm{GL}(\mathfrak{g})$. Such a maximal diagonalizable 
subgroup is a \textsl{maximal torus}\index{defi}{maximal torus}.

\begin{thm}[\cite{Demazure:sousgroupesalgebriques, Enriques}]\label{thm:demazure}
Let $\mathbb{G}_m$ be the multiplicative group over 
$\mathbb{C}$. Let $r$ be an integer.

If $\mathbb{G}_m^r$ embeds as an algebraic subgroup in 
$\mathrm{Bir}(\mathbb{P}^n_\mathbb{C})$, then $r\leq n$. If $r=n$, 
then the embedding is conjugate to an embedding into the group
of diagonal matrices in $\mathrm{PGL}(n+1,\mathbb{C})$.
\end{thm}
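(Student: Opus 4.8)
The plan is to prove the statement in two halves: first the dimension bound $r \leq n$, then the rigidity statement that an $n$-dimensional torus is conjugate to the diagonal torus in $\mathrm{PGL}(n+1,\mathbb{C})$. For the dimension bound, I would argue by contradiction. Suppose $\mathbb{G}_m^r \hookrightarrow \mathrm{Bir}(\mathbb{P}^n_\mathbb{C})$ with $r > n$. By Lemma \ref{lem:agree} and Corollary \ref{cor:agree}, the image is a closed algebraic subgroup of bounded degree, hence (by the Proposition asserting that algebraic subgroups are affine, proved via Weil regularization and the Chevalley structure theorem) it is conjugate to a group acting regularly by automorphisms on some smooth rational variety $V$ birational to $\mathbb{P}^n_\mathbb{C}$. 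The key point is then that a faithful regular action of a torus on an $n$-dimensional variety cannot have rank exceeding $n$: the generic orbit is a quasi-affine torus-homogeneous space whose dimension is at most $\dim V = n$, and faithfulness forces the acting torus to embed into the automorphism group of a generic orbit, which has dimension $\leq n$. This uses precisely the footnote result attributed to \cite{BialynickiBirula} (stated near the definition of $\mathrm{D}_n$), namely that an algebraic subgroup of $\mathrm{Bir}(\mathbb{P}^n_\mathbb{C})$ isomorphic to $(\mathbb{C}^*)^k$ satisfies $k \leq n$.

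For the rigidity half, assume $r = n$. Following the Bialynicki-Birula result cited above, the conclusion that an $n$-dimensional torus is conjugate to $\mathrm{D}_n$ is exactly the $k=n$ case of that footnote. To reprove it rather than merely invoke it, I would first regularize: realize $\mathbb{G}_m^n$ as a group of automorphisms of a smooth projective rational $n$-fold $V$. Since $V$ carries a faithful action of a torus of dimension equal to $\dim V$, the generic orbit is dense, so $V$ is a toric variety with dense torus $\mathbb{G}_m^n$. The plan is then to exploit that $V$ is rational and $\mathbb{G}_m^n$-equivariantly birational to $\mathbb{P}^n_\mathbb{C}$ equipped with its standard diagonal torus action; any two faithful torus actions of full rank on rational varieties are equivariantly birational once one identifies the character lattices, and the diagonal embedding $\mathrm{D}_n \subset \mathrm{PGL}(n+1,\mathbb{C})$ is the canonical representative. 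Transporting this equivariant birational map back yields a $\psi \in \mathrm{Bir}(\mathbb{P}^n_\mathbb{C})$ conjugating the given embedding to the diagonal one.

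The main obstacle I anticipate is the rigidity step, not the bound. The bound is essentially an orbit-dimension count and follows cleanly from the regularization apparatus already developed in the Weil and Kraft sections. But the rigidity statement requires controlling \emph{which} toric variety arises and showing the equivariant birational class is unique. The subtlety is that distinct toric $n$-folds need not be equivariantly isomorphic, only equivariantly birational, and one must verify that the equivariant birational map descends to an honest element of $\mathrm{Bir}(\mathbb{P}^n_\mathbb{C})$ conjugating one torus onto $\mathrm{D}_n$. The cleanest route is likely to pass through the lattice of characters: a full-rank torus action endows $\mathbb{C}(V) = \mathbb{C}(\mathbb{P}^n_\mathbb{C})$ with a grading by the character lattice $\mathbb{Z}^n$, and the diagonal torus $\mathrm{D}_n$ is characterized as the unique (up to conjugacy in $\mathrm{PGL}(n+1,\mathbb{C})$) torus inducing the standard grading of $\mathbb{C}(z_1/z_0, \ldots, z_n/z_0)$ by monomial degree. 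Matching the two gradings via a lattice isomorphism and realizing that isomorphism by a monomial map (an element of $\mathrm{Mon}(n,\mathbb{C})$, composed with a coordinate permutation) produces the desired conjugating birational map. I would expect the technical heart of the argument to be verifying that the grading coming from an arbitrary full-rank torus action is, after a monomial change of coordinates, the standard monomial grading, which amounts to the classification of $n$-dimensional tori inside $\mathrm{Bir}(\mathbb{P}^n_\mathbb{C})$ up to the Weyl-group and lattice symmetries encoded in $\mathrm{Mon}(n,\mathbb{C}) = \mathrm{GL}(n,\mathbb{Z}) \ltimes \mathrm{D}_n$.
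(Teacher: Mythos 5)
Note first that the paper offers no proof of Theorem \ref{thm:demazure}: it is quoted from \cite{Demazure:sousgroupesalgebriques, Enriques}, and the footnote in Chapter \ref{chapter:intro} credits the same statement to \cite{BialynickiBirula}. So your proposal can only be judged on its own terms. Its skeleton --- view the image as an algebraic group of bounded degree via Corollary \ref{cor:agree} and Lemma \ref{lem:agree}, regularize by Theorem \ref{thm:Weil} to a faithful regular action of $\mathbb{G}_m^r$ on a variety $V$ of dimension $n$ birational to $\mathbb{P}^n_\mathbb{C}$, count orbit dimensions, and for $r=n$ exploit the dense orbit --- is indeed the classical Demazure-style route. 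But the bound half has a genuine gap as written. You say the key point ``uses precisely the footnote result attributed to \cite{BialynickiBirula}''; that footnote \emph{is} Theorem \ref{thm:demazure}, so invoking it is circular --- your orbit count must prove it. And the orbit count itself is not justified by your sentence about embedding the torus into the automorphism group of a generic orbit: faithfulness of the action on $V$ does not a priori give faithfulness on any single orbit, and nothing you say rules out that every orbit has dimension $<r$ with varying positive-dimensional stabilizers. The step needs an argument special to diagonalizable groups: $\mathbb{G}_m^r$ has only countably many closed subgroups (they correspond to subgroups of the character lattice $\mathbb{Z}^r$), and for each such subgroup $H$ the fixed locus $\mathrm{Fix}(H)\subseteq V$ is closed; since $V$ is irreducible and $\mathbb{C}$ is uncountable, either $\mathrm{Fix}(H)=V$ for some $H\not=\{1\}$, contradicting faithfulness, or a very general point has trivial stabilizer, so its orbit is a torsor under $\mathbb{G}_m^r$ and $r=\dim(\text{orbit})\leq\dim V=n$. (Over an arbitrary algebraically closed field, as in the Remark following the theorem, replace the countability argument by rigidity of subtori.)

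The rigidity half, by contrast, is correct but over-engineered: the ``main obstacle'' you anticipate dissolves once you notice that for $r=n$ the orbit just produced is dense, so a choice of base point identifies it $\mathbb{G}_m^n$-equivariantly with $\mathbb{G}_m^n$ acting on itself by translations --- there is no need to classify which toric model arises or to discuss uniqueness of equivariant birational classes. Concretely, in your character-lattice language: characters $\chi_1,\ldots,\chi_n$ forming a basis of the lattice are semi-invariant rational functions generating the function field of the dense orbit, hence generating $\mathbb{C}(V)=\mathbb{C}(\mathbb{P}^n_\mathbb{C})$, so $v\dashrightarrow\big(1:\chi_1(v):\ldots:\chi_n(v)\big)$ is a birational map $V\dashrightarrow\mathbb{P}^n_\mathbb{C}$ which, composed with the birational identification coming from regularization, yields $\psi\in\mathrm{Bir}(\mathbb{P}^n_\mathbb{C})$ conjugating the given torus onto $\mathrm{D}_n\subset\mathrm{PGL}(n+1,\mathbb{C})$. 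The ambiguity in the choice of basis and base point amounts to composing with an element of $\mathrm{GL}(n,\mathbb{Z})\ltimes\mathrm{D}_n$, which normalizes $\mathrm{D}_n$ and is therefore harmless, exactly as your final sentence anticipates. One last small point: Theorem \ref{thm:Weil} as recalled in \S\ref{sec:WeilKraft} produces only \emph{some} variety with a regular action, neither smooth nor projective, but nothing in the argument above requires more.
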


\begin{rem}
Theorem \ref{thm:demazure} not only holds for $\mathbb{C}$
but also for any algebraically closed field~$\Bbbk$.
\end{rem}

In other words the group of diagonal automorphisms $\mathrm{D}_n$
plays the role of a maximal torus in
$\mathrm{Bir}(\mathbb{P}^n_\mathbb{C})$
and the Cremona group "looks like" a group of rank $n$.

Furthermore Beauville has shown a finite version of Theorem
\ref{thm:demazure} in dimension $2$:

\begin{thm}[\cite{Beauville}]
Let $p\geq 5$ be a prime. 

If the abelian group $\Big(\faktor{\mathbb{Z}}{p\mathbb{Z}}\Big)^r$ 
embeds into $\mathrm{Bir}(\mathbb{P}^2_\mathbb{C})$, then $r\leq 2$.
Moreover if $r=2$, then the image of 
$\Big(\faktor{\mathbb{Z}}{p\mathbb{Z}}\Big)^r$ is conjugate to a 
subgroup of the group $\mathrm{D}_2$ of diagonal automorphisms 
of~$\mathbb{P}^2_\mathbb{C}$.
\end{thm}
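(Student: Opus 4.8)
The plan is to deduce this finite statement from the infinitesimal one (Theorem \ref{thm:demazure}) together with the classification machinery already available in the excerpt, rather than reproving everything by hand. The bound $r\le 2$ is the easy half: any embedding of $\bigl(\faktor{\mathbb{Z}}{p\mathbb{Z}}\bigr)^r$ produces a finite abelian subgroup $\mathrm{G}$ of $\mathrm{Bir}(\mathbb{P}^2_\mathbb{C})$, which is algebraic (it is of bounded degree, being finite), and by the results of Chapter \ref{chapter:finite} such a group is realized, up to conjugacy, as an automorphism group of a minimal $\mathrm{G}$-surface $S$, either a del Pezzo surface with $\mathrm{Pic}(S)^{\mathrm{G}}\simeq\mathbb{Z}$ or a conic bundle with $\mathrm{Pic}(S)^{\mathrm{G}}\simeq\mathbb{Z}^2$ (Theorem \ref{thm:classificationminimalGsurfaces}). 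First I would run through this short list of minimal models and, on each, estimate how large an elementary abelian $p$-group can act faithfully once $p\ge 5$.

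The key computational step is a rank count on each model. For del Pezzo surfaces the action of $\mathrm{G}$ on $\mathrm{NS}(S)$ is faithful (this is exactly the argument in the proof of Lemma \ref{lem:te1}: since $r\ge 4$ general points are rigidified by the exceptional classes, an element acting trivially on $\mathrm{NS}(S)$ is the identity), so $\mathrm{G}$ embeds in the orthogonal group of the $K_S^{\perp}$ lattice, which is a root lattice of type $\mathrm{A}$, $\mathrm{D}$ or $\mathrm{E}$; the order of its Weyl group and of the relevant automorphism group has prime factors bounded by $5$, so for $p\ge 5$ only very small elementary abelian $p$-subgroups survive, and these correspond to linear $\mathrm{G}$ (inside $\mathrm{PGL}(3,\mathbb{C})$). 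For conic bundles one analyzes the exact sequence
\[
1\longrightarrow V\longrightarrow \mathrm{Aut}(S,\pi)\longrightarrow H\longrightarrow 1
\]
with $V\subseteq \mathrm{PGL}(2,\mathbb{C}(z_1))$ controlling the action on fibres and $H\subseteq\mathrm{PGL}(2,\mathbb{C})$ the action on the base; an elementary abelian $p$-group for $p\ge 5$ must, after finite-index reduction, land in the diagonalizable part of both factors, forcing $r\le 1+1=2$.

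For the equality case $r=2$, the plan is to show that a $\bigl(\faktor{\mathbb{Z}}{p\mathbb{Z}}\bigr)^2$ saturating the bound cannot be genuinely non-linear. The strategy is to invoke the infinitesimal rigidity of Theorem \ref{thm:demazure}: the Zariski closure of the torus-like behaviour, or more directly the fact that a maximal rank diagonalizable subgroup is conjugate into $\mathrm{D}_2$, constrains the only surviving two-dimensional elementary abelian $p$-groups to be conjugate to subgroups of $\mathrm{D}_2$. Concretely, on each minimal model identified above I would check that a faithful $\bigl(\faktor{\mathbb{Z}}{p\mathbb{Z}}\bigr)^2$-action with $p\ge5$ forces $S\simeq\mathbb{P}^2_\mathbb{C}$ (or $\mathbb{P}^1_\mathbb{C}\times\mathbb{P}^1_\mathbb{C}$, which then degenerates to $\mathrm{D}_2$ after the standard birational identification), with the group acting by simultaneously diagonalizable automorphisms; a subgroup of $\mathrm{PGL}(3,\mathbb{C})$ of the form $\bigl(\faktor{\mathbb{Z}}{p\mathbb{Z}}\bigr)^2$ with $p$ prime is automatically conjugate into the diagonal torus, since commuting semisimple elements are simultaneously diagonalizable.

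The hard part will be ruling out, uniformly in $p$, the \emph{non-linear} del Pezzo cases of low degree: even after the rank bound $r\le 2$, one must verify that the specific finite groups $\mathrm{Aut}(S)$ for del Pezzo surfaces of degree $1,2,3,4$ contain no $\bigl(\faktor{\mathbb{Z}}{p\mathbb{Z}}\bigr)^2$ with $p\ge 5$ other than those already conjugate to $\mathrm{D}_2$. This is where I would lean on the order bound of Theorem \ref{thm:648} together with the explicit structure of these automorphism groups from \cite{DolgachevIskovskikh}: since $|\mathrm{Aut}(S)|\le 648=2^3\cdot 3^4$ in the finite cases, no prime $p\ge5$ divides the order at all, so elementary abelian $p$-subgroups of rank $2$ simply do not occur on the exceptional del Pezzo surfaces, and the whole analysis collapses onto the linear (and hence diagonalizable) case. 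The main obstacle is thus primarily bookkeeping — organizing the case distinction over minimal models so that the prime restriction $p\ge5$ is exploited cleanly at each stage — rather than any single deep estimate.
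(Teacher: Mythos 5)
Your overall route --- regularize the finite group, pass to a minimal $\mathrm{G}$-surface, and split into the del Pezzo and conic-bundle cases of Theorem \ref{thm:classificationminimalGsurfaces} --- is exactly the one the paper sketches, and your conic-bundle rank bound via the exact sequence is sound. But the arithmetic with which you collapse the del Pezzo case is false at two points. Theorem \ref{thm:648} says the order of a finite del Pezzo automorphism group is \emph{at most} $648$; it does not say the order divides $2^3\cdot 3^4$, and primes $\geq 5$ genuinely occur: the quintic del Pezzo surface and the Clebsch cubic both have automorphism group $\mathfrak{S}_5$, of order $120$ (both appear in Theorem \ref{thm:blanc11cases}), so the case $p=5$ cannot be dismissed by divisibility. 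Even taken literally, the bound $648$ excludes $(\mathbb{Z}/p\mathbb{Z})^2$ for no prime $p\leq 23$, since then $p^2\leq 529\leq 648$. The Weyl-group claim fails similarly: $\vert W(E_7)\vert$ and $\vert W(E_8)\vert$ are divisible by $7$, and $W(E_8)$ contains $W(A_4+A_4)\simeq\mathfrak{S}_5\times\mathfrak{S}_5$, hence a $(\mathbb{Z}/5\mathbb{Z})^2$; so the embedding of $\mathrm{G}$ into the Weyl group does not by itself bound the rank for $p=5,7$ on surfaces of degree $1$ and $2$. Ruling out minimal rank-two actions there requires genuine geometric input (Beauville's actual proof uses fixed-point/eigenvalue computations on the surfaces and the explicit group structures), not order bookkeeping. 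Note also that the faithfulness argument of Lemma \ref{lem:te1} needs at least four blown-up points in general position, i.e.\ $\deg S\leq 5$; for degree $\geq 6$ the kernel of the action on $\mathrm{NS}(S)$ is a positive-dimensional linear group.

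The equality case has a second gap. Theorem \ref{thm:demazure} concerns algebraic subgroups isomorphic to $\mathbb{G}_m^r$; a finite $p$-group is Zariski closed, equal to its own closure, so that theorem gives nothing here. Your fallback --- that commuting semisimple elements of $\mathrm{PGL}(3,\mathbb{C})$ are simultaneously diagonalizable --- is false: the Heisenberg group generated by $(z_0:z_1:z_2)\mapsto(z_1:z_2:z_0)$ and $(z_0:z_1:z_2)\mapsto(z_0:\omega z_1:\omega^2 z_2)$, with $\omega=\mathrm{e}^{2\mathbf{i}\pi/3}$, is a $(\mathbb{Z}/3\mathbb{Z})^2$ of commuting semisimple elements of $\mathrm{PGL}(3,\mathbb{C})$ that acts irreducibly on $\mathbb{C}^3$, hence is not conjugate into $\mathrm{D}_2$. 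The correct mechanism is that the commutator pairing of lifts to $\mathrm{GL}(3,\mathbb{C})$ takes values in $\mu_3$ and must vanish on a $p$-group with $\gcd(p,3)=1$, so the group lifts to a commuting (then simultaneously diagonalizable) family; this is precisely where the hypothesis $p\geq 5$ enters. As written, your argument never uses $p\geq 5$ at this step --- a red flag, since the conjugacy statement is false for $p=3$, exactly because of the Heisenberg group. Finally, in the rank-two conic-bundle case the conjugation into $\mathrm{D}_2$ is asserted but never argued; it needs its own normal-form analysis of a $\mathbb{Z}/p\mathbb{Z}\times\mathbb{Z}/p\mathbb{Z}$ acting with one factor on the base and one on the fibres.
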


\begin{rem}
This statement not only holds for $\mathbb{C}$
but also for any algebraically closed field~$\Bbbk$.
\end{rem}

Let us give an idea of the proof. Consider a finite group 
$\mathrm{G}$ of $\mathrm{Bir}(\mathbb{P}^2_\mathbb{C})$. It can 
be realized as a group of automorphisms of a rational 
surface $S$ (\emph{see for instance} \cite{DeFernexEin}). 
Moreover one can assume that every birational 
$\mathrm{G}$-equivariant morphism of $S$ onto a smooth 
surface with a $\mathrm{G}$-action is an isomorphism.
Then according to \cite{Manin:rational} 
\begin{itemize}
\item[$\diamond$] either $\mathrm{G}$ preserves a fibration
$\pi\colon S\to\mathbb{P}^1$ with rational fibers,

\item[$\diamond$] or $\mathrm{Pic}(S)^{\mathrm{G}}$ has rank $1$.
\end{itemize}

In the first case $\mathrm{G}$ embeds in the group of automorphisms
of the generic fibre $\mathbb{P}^1_{\mathbb{C}(t)}$ of $\pi$ and 
Beauville classified the $p$-elementary subgroups of 
$\mathrm{Aut}(\mathbb{P}^1_{\mathbb{C}(t)})$. 

In the last case $S$ is a del Pezzo surface and the group 
$\mathrm{Aut}(S)$ is well known. Beauville also classified
the $p$-elementary subgroups of such groups.

Combining this result of those recalled in Chapter \ref{chapter:uncountable}, 
\S \ref{sec:cent} Zhao get:

\begin{thm}
Let $\phi\in\mathrm{Bir}(\mathbb{P}^2_\mathbb{C})$ be 
an element of infinite order. If the centralizer of
$\phi$ is not virtually abelian, then either $\phi$
is an elliptic map, or a power of $\phi$ is conjugate to 
an automorphism of $\mathbb{C}^2$ of the form 
$(z_0,z_1)\mapsto(z_0,z_1+1)$ or 
$(z_0,z_1)\mapsto(z_0,\beta z_1)$ with $\beta\in\mathbb{C}^*$.
\end{thm}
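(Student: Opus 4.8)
The plan is to run the trichotomy of Theorem \ref{thm:dilfav} --- $\phi$ is elliptic, a Jonqui\`eres twist, a Halphen twist, or loxodromic --- and to show that in every non-elliptic case the hypothesis must fail, i.e. that $\mathrm{Cent}(\phi)$ is virtually abelian. Since the stated conclusion is a disjunction whose first alternative is already ``$\phi$ is elliptic'', eliminating the three non-elliptic types both forces $\phi$ to be elliptic and proves the statement; to stay faithful to the content I would then, within the elliptic stratum, identify precisely the maps responsible for a non-virtually-abelian centralizer, namely those a power of which is conjugate to one of the two displayed normal forms.

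First I would dispose of the loxodromic case, which I expect to be the main obstacle since it is the only step genuinely relying on the geometry of the Picard--Manin hyperbolic space rather than on explicit normal forms. Here the deep input is the rank-one behaviour of $\mathrm{Bir}(\mathbb{P}^2_\mathbb{C})$: a loxodromic $\phi$ has a unique axis $\mathrm{Ax}(\phi)\subset\mathbb{H}^\infty$, which is preserved by every element of $\mathrm{Cent}(\phi)$, and the induced action on this axis shows (\S\ref{subsec:centralisateursloxodromic}) that $\mathrm{Cent}(\phi)$ is virtually cyclic, hence virtually abelian. This contradicts the hypothesis, so $\phi$ is not loxodromic. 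Everything that follows is then a bookkeeping of the centralizer computations already carried out in the book.

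Next the two parabolic types. If $\phi$ is a Halphen twist, then up to conjugacy $\phi\in\mathrm{Aut}(S)$ for a minimal rational elliptic surface $S$, and by Gizatullin's description \cite{Gizatullin} $\mathrm{Cent}(\phi)$ contains a free abelian subgroup of finite index and rank $\le 8$, so it is virtually abelian --- a contradiction. If $\phi$ is a Jonqui\`eres twist I would conjugate it into $\mathcal{J}$ and examine $\mathrm{pr}_2(\phi)\in\mathrm{PGL}(2,\mathbb{C})$, the action on the base of the invariant rational fibration. When $\mathrm{pr}_2(\phi)$ has infinite order, Zhao's refinement \cite{Zhao} gives directly that $\mathrm{Cent}(\phi)$ is virtually abelian. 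When $\mathrm{pr}_2(\phi)$ has finite order $n$, the map $\phi^n$ lies in $\mathcal{J}_0$ and, being a power of a parabolic map preserving the fibration, is itself a Jonqui\`eres twist preserving the fibration fiberwise; by Proposition \ref{pro:centrfib2} its centralizer is a finite extension of the abelian group $\mathrm{Ab}(\phi^n)$, hence virtually abelian, and since $\mathrm{Cent}(\phi)\subseteq\mathrm{Cent}(\phi^n)$ and a subgroup of a virtually abelian group is virtually abelian, so is $\mathrm{Cent}(\phi)$. Thus no parabolic $\phi$ has a non-virtually-abelian centralizer, and combined with the loxodromic step this forces $\phi$ to be elliptic.

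Finally, to recover the precise normal forms, I would analyse the elliptic stratum using the classification of \S\ref{subsec:centrell} (following \cite{BlancDeserti:degree}): an elliptic $\phi$ of infinite order is conjugate to a linear map $(z_0,z_1)\mapsto(\alpha z_0,\beta z_1)$, with the kernel of $(i,j)\mapsto\alpha^i\beta^j$ generated by some $(k,0)$, or to $(z_0,z_1)\mapsto(\alpha z_0,z_1+1)$. Reading off the centralizer formulas of \S\ref{subsec:centrell}, when $\alpha$ is not a root of unity the centralizer reduces to a subgroup of $\mathrm{D}_2$, respectively of $\{(\gamma z_0,z_1+c)\,\vert\,\gamma\in\mathbb{C}^*,\,c\in\mathbb{C}\}$, hence is virtually abelian; whereas if $\alpha$ is a primitive $k$-th root of unity then $\phi^k$ equals $(z_0,\beta^k z_1)$, respectively $(z_0,z_1+k)$, which is conjugate to one of the two displayed forms and whose centralizer contains a copy of $\mathrm{PGL}(2,\mathbb{C})$, so is not virtually abelian. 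Consequently, for an elliptic $\phi$ with non-virtually-abelian centralizer $\alpha$ must be a root of unity and a power of $\phi$ is conjugate to $(z_0,z_1)\mapsto(z_0,z_1+1)$ or to $(z_0,z_1)\mapsto(z_0,\beta z_1)$; this is the second alternative and closes the loop with the statement.
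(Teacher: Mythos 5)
Your proof is correct and follows essentially the same route as the paper, which presents this theorem of Zhao precisely as the combination of the centralizer computations recalled in Chapter \ref{chapter:uncountable}, \S\ref{sec:cent} (elliptic normal forms of \cite{BlancDeserti:degree} with their centralizers, Proposition \ref{pro:centrfib2} and Zhao's refinement for Jonqui\`eres twists, Gizatullin's result for Halphen twists) with the virtual cyclicity of centralizers of loxodromic maps (Theorem \ref{thm:cent}). Your synthesis — including the reduction $\mathrm{Cent}(\phi)\subseteq\mathrm{Cent}(\phi^n)$ when the base action has finite order, and the observation that in the elliptic normal forms a non-virtually-abelian centralizer forces $\alpha$ to be a root of unity — fills in exactly the details the paper leaves implicit.
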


\begin{rem}
This statement also holds for $\mathrm{Bir}(\mathbb{P}^2_\Bbbk)$
where $\Bbbk$ is an algebraically closed field 
(\cite{Zhao}).
\end{rem}

\subsection{Rank $1$ phenomenon}\label{subsec:centralisateursloxodromic}

Generic elements of degree $\geq 2$ of 
$\mathrm{Bir}(\mathbb{P}^2_\mathbb{C})$ are loxodromic and 
hence can not be conjugate to elements of the maximal 
torus $\mathrm{D}_2$. The description of their 
centralizer is given by:

\begin{thm}[\cite{Cantat:annals, BlancCantat}]\label{thm:cent}
Let $\phi$ be a loxodromic element
of $\mathrm{Bir}(\mathbb{P}^2_\mathbb{C})$. 

The infinite cyclic subgroup of 
$\mathrm{Bir}(\mathbb{P}^2_\mathbb{C})$
generated by $\phi$ has finite index in the centralizer
\[
\mathrm{Cent}(\phi)=\big\{\psi\in\mathrm{Bir}(\mathbb{P}^2_\mathbb{C})\,\vert\,\psi\circ\phi=\phi\circ\psi\big\}
\]
of $\phi$.
\end{thm}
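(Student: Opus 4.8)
The plan is to use the isometric action of $\mathrm{Bir}(\mathbb{P}^2_\mathbb{C})$ on the infinite dimensional hyperbolic space $\mathbb{H}^\infty$ constructed in Chapter \ref{chap:hyperbolicspace}, together with elementary geometry of loxodromic isometries. Since $\phi$ is loxodromic, by definition (\emph{see} \S\ref{sec:degreegrowth}) the isometry $\phi_*$ of $\mathbb{H}^\infty$ is loxodromic with $\lambda(\phi)>1$, and it preserves a unique geodesic line $\mathrm{Ax}(\phi)$, the axis of $\phi$. This axis is the intersection of $\mathbb{H}^\infty$ with the plane $P_\phi$, which meets the isotropic cone in the two lines $\mathbb{R}v^+_{\phi_*}$ and $\mathbb{R}v^-_{\phi_*}$ on which $\phi_*$ acts by multiplication by $\lambda(\phi)$ and $\lambda(\phi)^{-1}$ respectively. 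The two boundary points $\alpha(\phi)$ and $\omega(\phi)$ determined by these lines are the unique attracting and repelling fixed points of $\phi_*$ on $\mathbb{H}^\infty\cup\partial\mathbb{H}^\infty$.

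The key observation is that any $\psi\in\mathrm{Cent}(\phi)$ must permute the fixed data of $\phi_*$. First I would note that if $\psi$ commutes with $\phi$, then $\psi_*$ commutes with $\phi_*$ as isometries of $\mathbb{H}^\infty$; this follows from the fact that $\phi\mapsto\phi_*$ is a homomorphism into $\mathrm{Isom}(\mathbb{H}^\infty)$. A standard fact about loxodromic isometries of a $\mathrm{CAT}(-1)$ space (or more concretely, a direct linear-algebra argument using that $\alpha(\phi)$, $\omega(\phi)$ are the only eigenlines of $\phi_*$ with eigenvalues $\neq 1$ of absolute value) shows that $\psi_*$ must send the pair $\{\alpha(\phi),\omega(\phi)\}$ to itself, because $\psi_*$ conjugates $\phi_*$ to itself and hence sends attracting/repelling fixed points to attracting/repelling fixed points of the same isometry. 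Consequently $\psi_*$ preserves the geodesic $\mathrm{Ax}(\phi)$ and acts on it either as a translation or as a reflection. This yields a homomorphism
\[
\rho\colon\mathrm{Cent}(\phi)\to\mathrm{Isom}(\mathrm{Ax}(\phi))\simeq\mathrm{Isom}(\mathbb{R}),
\]
whose image lies in the group generated by translations and the reflection exchanging the two endpoints.

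The next step is to control the kernel and image of $\rho$. The image is a subgroup of $\mathrm{Isom}(\mathbb{R})$ containing the nontrivial translation induced by $\phi$ itself (of translation length $L(\phi_*)=\log\lambda(\phi)>0$). To obtain finiteness of the index of $\langle\phi\rangle$, I would argue that the translation lengths arising from $\mathrm{Cent}(\phi)$ are bounded below away from $0$: this is where I expect the main obstacle to lie, and it is precisely here that one must invoke a discreteness or boundedness property of the action, namely that the set of possible translation lengths of elements of $\mathrm{Bir}(\mathbb{P}^2_\mathbb{C})$ that arise as $\log\lambda$ is not accumulating at $0$ in an uncontrolled way along $\mathrm{Ax}(\phi)$ — more precisely, that any isometry of $\mathbb{H}^\infty$ induced by a birational map which translates along $\mathrm{Ax}(\phi)$ by a length smaller than $L(\phi_*)$ and commutes with $\phi$ would force, after iteration, a contradiction with the algebraic-stability/degree-growth dictionary of Theorem \ref{thm:dilfav}. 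Granting this lower bound, the image of $\rho$ is a discrete subgroup of $\mathrm{Isom}(\mathbb{R})$, hence virtually cyclic, and $\langle\phi\rangle$ has finite index in $\rho(\mathrm{Cent}(\phi))$.

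Finally I would bound the kernel of $\rho$. An element $\psi\in\ker\rho$ fixes $\mathrm{Ax}(\phi)$ pointwise, so $\psi_*$ fixes an entire geodesic of $\mathbb{H}^\infty$ and in particular fixes two distinct boundary points together with a point of $\mathbb{H}^\infty$; such an isometry is elliptic and fixes a point, so $\psi$ is an elliptic birational map that moreover commutes with the loxodromic $\phi$. Using the results of Chapter \ref{chapter:uncountable}, \S\ref{sec:cent} on centralizers of elliptic maps, together with the fact that $\psi$ must also preserve the (unique) invariant structures attached to $\phi$, one shows that $\ker\rho$ is finite. Combining the finiteness of $\ker\rho$ with the finite index of $\langle\phi\rangle$ in the image gives that $\langle\phi\rangle$ has finite index in $\mathrm{Cent}(\phi)$, as claimed. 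The delicate point throughout is the quantitative control on translation lengths guaranteeing discreteness of the image of $\rho$; everything else is formal manipulation of the hyperbolic action and the classification of isometry types.
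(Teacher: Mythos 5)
Your overall architecture is the same as the paper's: commuting elements preserve $\mathrm{Ax}(\phi)$, giving a homomorphism $\rho$ to the isometry group of a line, and the theorem reduces to discreteness of the image and finiteness of the kernel. (A small simplification you missed: since $\psi_*\circ\phi_*\circ\psi_*^{-1}=\phi_*$, the isometry $\psi_*$ sends the attracting fixed point of $\phi_*$ to the attracting fixed point of $\phi_*$, so it fixes each endpoint individually and the image of $\rho$ already lies in the translations; reflections can only occur for elements conjugating $\phi$ to $\phi^{-1}$, i.e.\ in the normalizer.) The genuine gap is exactly at the step you flag and then assume: the uniform lower bound on translation lengths. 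Your proposed derivation via ``iteration and the degree-growth dictionary of Theorem \ref{thm:dilfav}'' cannot work: that theorem only says a loxodromic $\psi$ satisfies $\deg_H\psi^n=c\lambda(\psi)^n+O(1)$ with $\lambda(\psi)>1$, and it puts no uniform gap between the possible values of $\lambda(\psi)$ and $1$; iterating changes nothing since $\rho(\psi^n)=n\,\rho(\psi)$, so a sequence of commuting loxodromic maps with $\lambda\to 1$ would be perfectly consistent with Theorem \ref{thm:dilfav} and would make the image of $\rho$ dense in $\mathbb{R}$. The missing ingredient is arithmetic, not dynamical: nonzero translation lengths are of the form $\log\lambda$ with $\lambda$ a Pisot or Salem number, and by \cite{BlancCantat} they are bounded below by $\log\lambda_L$, where $\lambda_L$ is Lehmer's number. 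This spectral gap is precisely what the paper invokes to conclude that the image of $\rho$ is a discrete, hence cyclic, subgroup of $\mathbb{R}$, in which $\rho(\langle\phi\rangle)$ has finite index.

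Your kernel argument is also short of a proof. The results of \S\ref{sec:cent} describe centralizers of elliptic maps of \emph{infinite order}; they can indeed be used to exclude infinite-order elements of $\ker\rho$ (such an element $\psi$ would be conjugate to $(\alpha z_0,\beta z_1)$ or $(\alpha z_0,z_1+\beta)$, its centralizer preserves a rational fibration, and $\phi\in\mathrm{Cent}(\psi)$ would then preserve a fibration, contradicting loxodromy), but they say nothing about an infinite \emph{torsion} kernel, and infinite torsion subgroups of bounded degree do exist in $\mathrm{Bir}(\mathbb{P}^2_\mathbb{C})$, so ``torsion'' does not formally give ``finite'' here. The paper closes this differently: every $\psi\in\ker\rho$ fixes the projection $\mathbf{e}_\phi$ of $\mathbf{e}_0$ on the axis, whence $\mathrm{dist}(\psi_*\mathbf{e}_0,\mathbf{e}_0)\leq 2\,\mathrm{dist}(\mathbf{e}_0,\mathbf{e}_\phi)$ and $\ker\rho$ has uniformly bounded degree; by \cite{BlancFurter} its Zariski closure is an algebraic subgroup, and if it were infinite, the positive-dimensional identity component would be conjugate into the automorphism group of a minimal rational surface and would produce a $\phi$-invariant pencil of curves, again contradicting loxodromy. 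You need this bounded-degree/algebraic-closure argument (or a substitute handling torsion) to complete your last step.
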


\begin{rem}
Theorem \ref{thm:cent} holds for any field $\Bbbk$.
\end{rem}

The centralizer of a generic element of $\mathrm{SL}(n+1,\mathbb{C})$
is isomorphic to $(\mathbb{C}^*)^n$; Theorem \ref{thm:cent} suggests
that $\mathrm{Bir}(\mathbb{P}^2_\mathbb{C})$ behaves as a group of
rank $1$.

\begin{proof}[Sketch of the proof]
If $\psi$ commutes to $\phi$, then the isometry $\psi_*$ of 
$\mathbb{H}^\infty$ preserves the axis $\mathrm{Ax}(\phi)$
of $\phi_*$ and its two endpoints. Consider the morphism
$\Theta$ which maps 
$\mathrm{Cent}(\phi)$ to
the group of isometries of $\mathrm{Ax}(\phi)$. View it as
a morphism into the group of translations $\mathbb{R}$ 
of the line. On the one hand the translation lengths are 
bounded from below by $\log(\lambda_L)$ where $\lambda_L$\index{not}{$\Lambda_L$} 
is the Lehmer number, {\it i.e.} the unique root $>1$ of the
irreducible polynomial $x^{10}+x^9-x^7-x^6-x^5-x^4-x^3+x+1$
(\emph{see} \cite{BlancCantat}). On the other hand every
discrete subgroup of $\mathbb{R}$ is trivial or cyclic. 
As a result the image of $\Theta$ is a cyclic group. Its 
kernel is made of elliptic elements of 
$\mathrm{Cent}(\phi)$
fixing all points of $\mathrm{Ax}(\phi)$. Denote by 
$\mathbf{e}_\phi$ the projection of $\mathbf{e}_0$ on 
$\mathrm{Ax}(\phi)$. Since $\ker\Theta$ fixes $\mathbf{e}_\phi$,
the inequality 
\[
\mathrm{dist}(\psi_*\mathbf{e}_0,\mathbf{e}_0)\leq 2\mathrm{dist}(\mathbf{e}_0,\mathbf{e}_\phi)
\]
holds. As a consequence $\ker\Theta$ is a group of birational 
maps of bounded degree. From \cite{BlancFurter} the Zariski 
closure of $\ker\Theta$ in $\mathrm{Bir}(\mathbb{P}^2_\mathbb{C})$ 
is an algebraic subgroup of 
$\mathrm{Bir}(\mathbb{P}^2_\mathbb{C})$. Let us denote by 
$\mathrm{G}$ the connected component of the identity in 
this group. Assume that $\ker\Theta$ is infinite. Then 
$\dim\mathrm{G}$ is positive and $\mathrm{G}$ is 
contained, after conjugacy, in the group of automorphisms
of a minimal, rational surface (\cite{Blanc:ssgpealg, Enriques}).
Therefore, $\mathrm{G}$ contains a Zariski closed abelian 
subgroup whose orbits have dimension $1$. Those orbits 
are organised in a pencil of curves that is invariant 
under the action of $\phi$: contradiction with the fact
that $\phi_*$ is loxodromic. As a result $\ker\Theta$ is 
finite.
\end{proof}

\subsection{Rank $1$ phenomenon}

To generalize Margulis work on linear representations of lattices 
of simple real Lie groups to non-linear 
representations Zimmer proposed to study the actions of lattices
on compact varieties (\cite{Zimmer1, Zimmer2, Zimmer3, Zimmer4}).
One of the main conjectures of the program drawn by Zimmer is: 
let $\mathrm{G}$ be a connex, simple, real Lie group and let 
$\Gamma$ be a lattice of $\mathrm{G}$. If there exists a morphism
from $\Gamma$ into the diffeomorphisms group of a compact 
variety $V$ with infinite image, then the real rank of $\mathrm{G}$
is less or equal to the dimension of $V$.

In the context of birational self maps one 
has the following statement that can be see as another rank one
phenomenum:

\begin{thm}[\cite{Cantat:annals, Deserti:IMRN}]\label{thm:zimmer}
Let $S$ be a complex projective surface. 
Let $\Gamma$ be a countable group with 
Kazhdan property $(T)$. 

If 
$\upsilon\colon\Gamma\to\mathrm{Bir}(\mathbb{P}^2_\mathbb{C})$
is a morphism with infinite image, then 
$\upsilon$ is conjugate to a morphism into
$\mathrm{PGL}(3,\mathbb{C})$.
\end{thm}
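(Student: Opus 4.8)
The plan is to combine the isometric action of $\mathrm{Bir}(\mathbb{P}^2_\mathbb{C})$ on the infinite dimensional hyperbolic space $\mathbb{H}^\infty$ constructed in Chapter \ref{chap:hyperbolicspace} with the fixed point theory that property $(T)$ provides. Recall that a countable group $\Gamma$ with Kazhdan property $(T)$ has property (FH): every isometric action of $\Gamma$ on a Hilbert space, and more generally every action on a complete $\mathrm{CAT}(0)$ space with a suitable structure, has bounded orbits. Since $\mathbb{H}^\infty$ is a complete $\mathrm{CAT}(-1)$ (hence $\mathrm{CAT}(0)$) space, the first step is to show that the composite action $\Gamma \to \mathrm{Bir}(\mathbb{P}^2_\mathbb{C}) \to \mathrm{Isom}(\mathbb{H}^\infty)$, $\gamma \mapsto \upsilon(\gamma)_*$, has a bounded orbit. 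Property $(T)$ for countable groups acting on such spaces forces the existence of a global fixed point, so I would first establish that $\upsilon(\Gamma)_*$ fixes a point $v \in \mathbb{H}^\infty$.

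Granting a fixed point $v$, the next step is to deduce that $\upsilon(\Gamma)$ is an elliptic subgroup in the sense of Chapter \ref{chap:hyper}: every element $\upsilon(\gamma)$ is elliptic (its translation length vanishes and the infimum is attained), and moreover the whole group stabilizes $v$. This means $\upsilon(\Gamma)$ acts on $\mathbb{H}^\infty$ with a common fixed point, which by the dictionary of \S\ref{sec:degreegrowth} rules out loxodromic and parabolic elements. I would then invoke the boundedness results: a subgroup of $\mathrm{Bir}(\mathbb{P}^2_\mathbb{C})$ fixing a point of $\mathbb{H}^\infty$ has bounded degree, hence by Corollary \ref{cor:agree} and Lemma \ref{lem:agree} its Zariski closure is an algebraic subgroup of $\mathrm{Bir}(\mathbb{P}^2_\mathbb{C})$. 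After conjugating by a birational map realizing the relevant model, $\upsilon(\Gamma)$ becomes a group of automorphisms of a rational surface $S$.

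The third step is to push the algebraic subgroup down to $\mathrm{PGL}(3,\mathbb{C})$. Here I would use the classification of maximal algebraic subgroups (Theorem \ref{thm:blanc11cases}) together with property $(T)$ as a rigidity constraint. The key point is that a group with property $(T)$ and infinite image cannot act with infinite image on any of the low-dimensional factors appearing in the conic bundle cases or the del Pezzo cases, because those reduce—via the projection onto $\mathrm{PGL}(2,\mathbb{C})$ or onto $\mathrm{PGL}(2,\mathbb{C}) \times \mathrm{PGL}(2,\mathbb{C})$—to morphisms into rank one groups, where Lemma \ref{lem:fleur} (any morphism from a Kazhdan group into $\mathrm{PGL}(2,\mathbb{C})$ or $\mathrm{PGL}(2,\mathbb{C}(z_1))$ has finite image) forces finiteness. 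Systematically eliminating each of the eleven families in this way leaves only the case $S \simeq \mathbb{P}^2_\mathbb{C}$ with $\upsilon(\Gamma) \subset \mathrm{Aut}(\mathbb{P}^2_\mathbb{C}) = \mathrm{PGL}(3,\mathbb{C})$, which is the desired conclusion.

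The main obstacle I anticipate is the very first step: justifying that property $(T)$ yields a fixed point for the action on $\mathbb{H}^\infty$. This is delicate because $\mathbb{H}^\infty$ is infinite dimensional and non-locally-compact, so one cannot directly invoke the finite dimensional fixed point theorems; one needs the refined statement that Kazhdan groups have property (FH) and that property (FH) implies a fixed point for isometric actions on complete $\mathrm{CAT}(0)$ spaces whose orbits are bounded, together with a separate argument showing the orbit of $\mathbf{e}_0$ is actually bounded (equivalently, that no element is loxodromic or parabolic in a way that would produce unbounded orbits). Controlling the boundary behaviour at $\partial\mathbb{H}^\infty$—ruling out the possibility that $\Gamma$ fixes a point at infinity rather than an interior point—is the technically heaviest part, and is precisely where the special structure of the Picard--Manin space and the Lehmer-type lower bound on translation lengths would have to be brought in.
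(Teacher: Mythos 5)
Your proposal follows essentially the same route as the paper: property $(T)$ forces a fixed point for the isometric action on $\mathbb{H}^\infty$, hence $\upsilon(\Gamma)$ has bounded degree, can be regularized into $\mathrm{Aut}(S)$ for some rational surface $S$, and the remaining cases are eliminated by playing property $(T)$ against the rank-one pieces of the possible automorphism groups (the paper phrases this via the classification of connected algebraic groups acting on surfaces and the finiteness of Kazhdan subgroups of $\mathrm{SL}(2,\mathbb{C})$, where you invoke Theorem \ref{thm:blanc11cases} together with Lemma \ref{lem:fleur}; these are interchangeable here).

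The one place where you misjudge the difficulty is the first step, which you flag as the ``technically heaviest part'' and for which you suggest boundary analysis at $\partial\mathbb{H}^\infty$ and a Lehmer-type lower bound on translation lengths. None of that is needed, and indeed a Lehmer bound could not rule out parabolic fixed points at infinity. The clean mechanism, which is exactly what the paper's citation of \cite{delaHarpeValette} supplies, is that the distance function on a real hyperbolic space of any dimension (including $\mathbb{H}^\infty$) is a conditionally negative definite kernel; hence property $(T)$, via property (FH), directly forces every orbit of $\upsilon(\Gamma)_*$ to be bounded --- there is no need to separately exclude loxodromic or parabolic behaviour. A bounded orbit in the complete $\mathrm{CAT}(0)$ space $\mathbb{H}^\infty$ then has a circumcenter, which is a global fixed point. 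With that substitution your argument closes up and coincides with the paper's proof.
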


\begin{rem}
Theorem \ref{thm:zimmer} indeed holds for any 
algebraically closed field $\Bbbk$.
\end{rem}

\begin{proof}[Sketch of the proof]
The first step is based on a fixed point property: since
$\Gamma$ has Kazhdan property (T), then 
$\upsilon(\Gamma)$ acts by
isometries on $\mathbb{H}^\infty$ and $(\upsilon(\Gamma))_*$ has a
fixed point. Then according to \cite{delaHarpeValette} 
all its orbits have bounded diameter. Hence $\rho(\Gamma)$ has
bounded degree. There thus exists a birational map 
$\pi\colon X\dashrightarrow\mathbb{P}^2_\mathbb{C}$ such that
\begin{itemize}
\item[$\diamond$] $\Gamma_S=\pi^{-1}\circ\Gamma\circ\pi$ is a 
subgroup of $\mathrm{Aut}(S)$;

\item[$\diamond$] $\mathrm{Aut}(S)^0\cap\Gamma_S$ has finite
index in $\Gamma_S$.
\end{itemize}

The classification of algebraic groups of maps of surfaces and 
the fact that every subgroup of $\mathrm{SL}(2,\mathbb{C})$ 
having Kazhdan property (T) is finite allow to prove that: 
since $\mathrm{Aut}(S)^0$ contains an infinite group with 
Kazhdan property (T) the surface $S$ is isomorphic to the 
projective plane $\mathbb{P}^2_\mathbb{C}$.
\end{proof}

%%%%%%%%%%%%%%%%%%%%%%%%%%%%%%%%%%%%%%%%%%%%%%%%%%%%%%%%%%%%%%%%%%%%%%%%%%%%%%%%%%%%%%%%%%%%%%%%%%%%%%%%%%%%%%%%%%%
%%%%%%%%%%%%%%%%%%%%%%%%%%%%%%%%%%%%%%%%%%%%%%%%%%%%%%%%%%%%%%%%%%%%%%%%%%%%%%%%%%%%%%%%%%%%%%%%%%%%%%%%%%%%%%%%%%%
% section
%%%%%%%%%%%%%%%%%%%%%%%%%%%%%%%%%%%%%%%%%%%%%%%%%%%%%%%%%%%%%%%%%%%%%%%%%%%%%%%%%%%%%%%%%%%%%%%%%%%%%%%%%%%%%%%%%%%
%%%%%%%%%%%%%%%%%%%%%%%%%%%%%%%%%%%%%%%%%%%%%%%%%%%%%%%%%%%%%%%%%%%%%%%%%%%%%%%%%%%%%%%%%%%%%%%%%%%%%%%%%%%%%%%%%%%

\section{Subgroups of elliptic elements of $\mathrm{Bir}(\mathbb{P}^2_\mathbb{C})$}\label{sec:ellipticgroup}

A subgroup $\mathrm{G}$ of the plane Cremona group is 
\textsl{elliptic }\index{elliptic (group)} if any element of 
$\mathrm{G}$ is an elliptic birational map.

Let us give an example: a bounded subgroup of 
$\mathrm{Bir}(\mathbb{P}^2_\mathbb{C})$ is elliptic. But not 
all elliptic subgroups are bounded; indeed for instance
\begin{itemize}
\item[$\diamond$] all elements of 
$\big\{(z_0,z_1+a(z_0))\,\vert\,a\in\mathbb{C}(z_0)\big\}$ are 
elliptic but $\big\{(z_0,z_1+a(z_0))\,\vert\,a\in\mathbb{C}(z_0)\big\}$
contains elements of arbitrarily high degrees;

\item[$\diamond$] Wright gives examples of subgroups of 
$\mathrm{Bir}(\mathbb{P}^2_\mathbb{C})$ isomorphic to a 
subgroup  of roots of unity of $\mathbb{C}^*$ that are not
bounded (\cite{Wright:abelian}). Let us be more precise. 
Set $\psi_0\colon(z_0,z_1)\mapsto(-z_0,-z_1)$ and for 
any $k\geq 1$ 
\begin{align*}
& \alpha_k=\exp\left(\frac{\mathbf{i}\pi}{2^k}\right), && \phi_k\colon(z_0,z_1)\mapsto(z_1,c_kz_1^{2^k+1}+z_0), && \varphi_k=\phi_k^2\circ \phi_{k-1}^2\circ\ldots\circ \phi_1^2
\end{align*}
where $c_k$ denotes an element of $\mathbb{C}^*$. 
Consider 
\[
\psi_k=\varphi_k^{-1}\circ\big((z_0,z_1)\mapsto(\alpha_kz_0,\alpha_k^pz_1)\big)\circ\varphi_k
\]
where $p$ is an odd integer. The group
\[
\mathrm{G}=\displaystyle\bigcup_{k\geq 0}\langle \psi_k\rangle
\]
is an abelian group obtained as a growing union of 
finite cyclic groups that does not preserve any 
fibration (\cite{Lamy:dynamique}).
\end{itemize}

This gives all the possibilities for elliptic subgroups of 
$\mathrm{Bir}(\mathbb{P}^2_\mathbb{C})$:

\begin{thm}[\cite{Urech:ellipticsubgroups}]\label{thm:urechell1}
Let $\mathrm{G}$ be an elliptic subgroup of the plane
Cremona group. Then one of the following holds:
\begin{itemize}
\item[$\diamond$] $\mathrm{G}$ is a bounded subgroup; 

\item[$\diamond$] $\mathrm{G}$ preserves a rational fibration;

\item[$\diamond$] $\mathrm{G}$ is a torsion group.
\end{itemize}
\end{thm}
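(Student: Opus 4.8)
The plan is to exploit the isometric action of $\mathrm{Bir}(\mathbb{P}^2_\mathbb{C})$ on the hyperbolic space $\mathbb{H}^\infty$ constructed in Chapter \ref{chap:hyperbolicspace}, together with the trichotomy elliptic/parabolic/loxodromic for isometries. Since $\mathrm{G}$ is an elliptic subgroup, each $\phi\in\mathrm{G}$ induces an elliptic isometry $\phi_*$ of $\mathbb{H}^\infty$, that is, each $\phi_*$ fixes a point of $\mathbb{H}^\infty$. The central dichotomy I would use is whether the \emph{whole group} $\mathrm{G}_*$ has a global fixed point in $\mathbb{H}^\infty$, or fixes a point on the boundary $\partial\mathbb{H}^\infty$, or neither.

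First I would recall that if $\mathrm{G}_*$ has a bounded orbit in $\mathbb{H}^\infty$, then because $\mathbb{H}^\infty$ is $\mathrm{CAT}(0)$ and complete (Remark after the definition of $\mathbb{H}^\infty$), the circumcenter of that orbit is a global fixed point; the orbit $\mathrm{G}_*\mathbf{e}_0$ then has bounded diameter, which by the definition of $\deg_{\mathbf h}$ as $\cosh(d(\mathbf{h},\phi_*\mathbf{h}))$ means $\mathrm{G}$ has bounded degree, i.e.\ $\mathrm{G}$ is a bounded subgroup and we are in the first case. So the real content is to analyze the situation where $\mathrm{G}_*$ has \emph{unbounded} orbits. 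The key general fact from $\mathrm{CAT}(0)$/hyperbolic geometry I would invoke is that a group all of whose elements are elliptic but which has unbounded orbits must fix a unique point $\xi$ on the boundary $\partial\mathbb{H}^\infty$ (this is the standard alternative for groups acting on complete $\mathrm{CAT}(0)$ or Gromov hyperbolic spaces without a bounded orbit and without loxodromic or parabolic elements).

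The crux of the argument is then to translate this fixed boundary point $\xi$ into geometric information about $\mathrm{G}$, and here the theory of types and degree growth (Theorem \ref{thm:dilfav}) is the bridge. A fixed isotropic class in $\mathrm{Z}(\mathbb{P}^2_\mathbb{C})$ corresponds, via the dictionary between boundary points and invariant fibrations, to a pencil of curves preserved by all of $\mathrm{G}$. I would argue that if this invariant class is the class of a rational fibration, then $\mathrm{G}$ preserves a rational fibration and, by the Noether--Enriques theorem (Theorem \ref{thm:fibration}), $\mathrm{G}$ is conjugate into the Jonqui\`eres group: the second case. The remaining possibility is that the invariant pencil is a genus one (Halphen) pencil; here I would use Lemma \ref{lem:UrechHalphen}, which forces every element of $\mathrm{G}$ preserving a Halphen pencil on a Halphen surface to be an \emph{automorphism} of that surface. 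Because each such automorphism is elliptic and preserves the (semi-negative) canonical class, the induced action on the relevant lattice / N\'eron--Severi group gives that every element has finite order, so $\mathrm{G}$ is a torsion group: the third case.

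The main obstacle I anticipate is the boundary case analysis: rigorously producing the unique fixed boundary point for an elliptic group with unbounded orbits, and especially ruling out or correctly handling the possibility of \emph{parabolic-like} behaviour at the level of the whole group even though every single element is elliptic. One must be careful that ``every element elliptic'' does not by itself give a common fixed point (this is exactly why Wright's example is unbounded), so the argument cannot simply intersect individual fixed-point sets; it must use the global $\mathrm{CAT}(0)$ structure and the finiteness coming from the Lehmer-number gap on translation lengths (as in the proof of Theorem \ref{thm:cent}) to control the configuration. I would also need to verify that in the torsion alternative the fibration being genus one is genuinely forced, i.e.\ that a fixed isotropic class which is neither a rational nor a genus one fibration cannot occur for a group of elliptic elements; this I expect to follow from Theorem \ref{thm:dilfav} together with the remark that $\phi_*$ fixing an isotropic vector precludes loxodromic elements and channels the parabolic alternative into exactly the Jonqui\`eres-twist (rational fibration) or Halphen-twist (genus one fibration) dichotomy.
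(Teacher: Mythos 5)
There is a genuine gap, and it sits exactly at the step you yourself flagged as the crux. Your treatment of the unbounded case rests on the claim that the fixed isotropic class on $\partial\mathbb{H}^\infty$ ``corresponds, via the dictionary between boundary points and invariant fibrations, to a pencil of curves preserved by all of $\mathrm{G}$''. No such dictionary exists for arbitrary boundary points: only those boundary points represented by (a positive multiple of) the class of a general fibre of an actual fibration on some blow-up --- i.e.\ by a nef divisor class of self-intersection $0$ lying in the algebraic, rational part $\mathcal{Z}(\mathbb{P}^2_\mathbb{C})$ of the Picard--Manin space --- correspond to fibrations; a general point of $\partial\mathbb{H}^\infty$ is an infinite sum in the completion $\mathrm{Z}(\mathbb{P}^2_\mathbb{C})$ and carries no geometry. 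Wright's example, recalled in the paper just before the theorem, kills your argument concretely: it is an unbounded abelian group of elliptic elements preserving \emph{no} fibration whatsoever (and it is a torsion group). By Theorem~\ref{thm:weak} it fixes a boundary point, and your dictionary would force that point to be a rational or Halphen pencil class --- a contradiction. Consequently your trichotomy can never produce the torsion alternative in the situation where it actually occurs: your only source of torsion is the Halphen branch, and that branch is wrong too, since an elliptic subgroup of $\mathrm{Aut}(S)$ for $S$ Halphen has finite image in $\mathrm{GL}(\mathrm{NS}(S))$ (a torsion subgroup of a virtually abelian group, cf.\ Theorem~\ref{thm:candol}) with finite kernel, hence is finite and lands in the \emph{bounded} case, not the torsion one. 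The appeal to the Lehmer-number gap is also a red herring: that gap bounds translation lengths of loxodromic isometries and plays no role for a group of elliptic elements.

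What the paper does instead has a second, genuinely algebraic phase that your proposal omits entirely. When the fixed boundary point is not a fibration class, Lemma~\ref{lem:Urechtecalt} does not interpret it geometrically; it combines Cantat's Proposition~\ref{pro:etaumilieuCantat} (a finitely generated elliptic group is bounded or preserves a rational fibration) with the observation that a second fixed boundary point would force a fixed point inside $\mathbb{H}^\infty$, to conclude that \emph{every finitely generated subgroup of $\mathrm{G}$ is bounded}. One then sets $n=\sup\{\dim\overline{\Gamma}\,\vert\,\Gamma\subset\mathrm{G}\text{ finitely generated}\}$ and argues case by case: $n=0$ gives the torsion alternative; $n=\infty$ gives, via Lemma~\ref{lem:Urechmoinsde9}, a unique rational fibration preserved by all of $\mathrm{G}$; and for $0<n<\infty$ one shows by a maximality argument that the neutral component $\overline{\Gamma}^0$ of a maximal closure is normalized by $\mathrm{G}$, then splits according to whether it is semi-simple (Lemma~\ref{lem:Urechsemisimple}: $\mathrm{G}$ bounded) or has a nontrivial radical, in which case the last nonzero derived subgroup $R^{(\ell)}$ is an abelian algebraic group normalized by $\mathrm{G}$ that one regularizes: orbits of dimension $\leq 1$ give a rational fibration, while an open orbit forces $R^{(\ell)}\simeq\mathbb{C}^2$, $\mathbb{C}^*\times\mathbb{C}$ or $\mathbb{C}^*\times\mathbb{C}^*$, yielding respectively a bounded (affine) group, a rational fibration, or a monomial group, which is bounded by Lemma~\ref{lem:Urechmonomial}. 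Your proposal replaces all of this with the false boundary dictionary, so the gap is not a technical detail to be patched but the entire second half of the proof.
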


Furthermore he characterizes torsion subgroups of 
$\mathrm{Bir}(\mathbb{P}^2_\mathbb{C})$:

\begin{thm}[\cite{Urech:ellipticsubgroups}]\label{thm:urechell2}
Let $\mathrm{G}\subset\mathrm{Bir}(\mathbb{P}^2_\mathbb{C})$ be 
a torsion group. Then $\mathrm{G}$ is isomorphic to 
a bounded subgroup of $\mathrm{Bir}(\mathbb{P}^2_\mathbb{C})$.

Furthermore $\mathrm{G}$ is isomorphic to a subgroup of 
$\mathrm{GL}(48,\mathbb{C})$. 
\end{thm}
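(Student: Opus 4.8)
The statement to prove is Theorem \ref{thm:urechell2}: every torsion subgroup $\mathrm{G}$ of $\mathrm{Bir}(\mathbb{P}^2_\mathbb{C})$ is isomorphic to a bounded subgroup, and in fact embeds into $\mathrm{GL}(48,\mathbb{C})$.

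The plan is to feed a torsion subgroup into Theorem \ref{thm:urechell1}, but first to reduce to the finitely generated case. A torsion group has no element of infinite order, so every element is elliptic, hence $\mathrm{G}$ is an elliptic subgroup. The trichotomy of Theorem \ref{thm:urechell1} then applies to each finitely generated subgroup $\Gamma \subset \mathrm{G}$. If $\Gamma$ is bounded it is contained in an algebraic subgroup; if $\Gamma$ preserves a rational fibration it is conjugate into $\mathcal{J}$; the torsion alternative is automatic here since $\mathrm{G}$ is already a torsion group. The first thing I would do is set up a uniform bound on the degrees. A torsion group cannot contain the unbounded examples above because those rely on growing families of distinct cyclic factors, but the bound must come from structural input: one wants to show that every finitely generated subgroup is conjugate into a \emph{fixed} algebraic subgroup, so that the various conjugating maps assemble consistently.

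The heart of the argument is that torsion subgroups are linearizable, and this is where the model-theoretic input flagged in the excerpt enters. First I would reduce, via Theorem \ref{thm:urechell1} and the classification of maximal algebraic subgroups (Theorem \ref{thm:blanc11cases}), to understanding finitely generated torsion subgroups living inside automorphism groups of del Pezzo surfaces, Hirzebruch surfaces, and conic bundles. The lemmas of Urech already do the embedding work in each case: Lemma \ref{lem:te1} embeds finite automorphism groups of del Pezzo surfaces of degree $\leq 5$ into $\mathrm{GL}(8,\mathbb{C})$, Lemma \ref{lem:te2} handles the degree $6$ case, Lemma \ref{lem:ru1} embeds finite subgroups of $\mathrm{Aut}(\mathbb{F}_n)$ into $\mathrm{PGL}(2,\mathbb{C})\times\mathrm{PGL}(2,\mathbb{C})$, Lemma \ref{lem:ru2} treats exceptional conic bundles, and Lemma \ref{lem:ru3} together with Lemma \ref{lem:ru4} handles the $(\faktor{\mathbb{Z}}{2\mathbb{Z}})^2$-conic bundles and del Pezzo cases. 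Collecting these, every finitely generated torsion subgroup of $\mathrm{Bir}(\mathbb{P}^2_\mathbb{C})$ is isomorphic to a linear group, indeed into $\mathrm{GL}(N,\mathbb{C})$ for an explicit $N$; the dimension $48$ arises by taking the largest such $N$ across all cases (for instance an embedding of the relevant bounded models, $\mathrm{GL}(8,\mathbb{C})$ composed with the data of the conic bundle structure, yielding $48 = 6\cdot 8$).

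The main obstacle — and the reason model theory is needed — is passing from ``every finitely generated subgroup is linear of bounded degree'' to ``$\mathrm{G}$ itself is linear.'' Linearity is not in general preserved under directed unions, so one cannot simply take a limit of the conjugating maps. This is precisely the phenomenon Malcev addressed: the class of groups embeddable into $\mathrm{GL}(n,\mathbb{C})$ for a \emph{fixed} $n$ is, by his local theorem, a local property in the model-theoretic sense, so a group all of whose finitely generated subgroups embed into $\mathrm{GL}(n,\mathbb{C})$ itself embeds into $\mathrm{GL}(n,\Bbbk)$ for some field $\Bbbk$. The hard part will be verifying the hypotheses of this local theorem with a \emph{single uniform} dimension bound $n=48$ valid across all finitely generated subgroups simultaneously, rather than a dimension that grows with the subgroup; this forces the careful bookkeeping of the dimensions produced by Lemmas \ref{lem:te1}--\ref{lem:ru4} and the verification that the conic-bundle and del Pezzo cases never exceed $48$. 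Once the uniform bound is established and Malcev's local theorem applied, the embedding into $\mathrm{GL}(48,\mathbb{C})$ over $\mathbb{C}$ follows because a linear torsion group over any field of characteristic $0$ is conjugate into one over $\overline{\mathbb{Q}}$ and hence realizable over $\mathbb{C}$, and boundedness of the resulting subgroup of $\mathrm{Bir}(\mathbb{P}^2_\mathbb{C})$ is a consequence of its image landing in an algebraic subgroup. I would close by confirming that the constructed isomorphic copy is indeed a bounded subgroup, completing both assertions of the theorem.
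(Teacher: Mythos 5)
Your skeleton is the right one — reduce to finitely generated subgroups, use the classification of bounded subgroups (Theorem \ref{thm:blanc11cases}) together with the embedding lemmas (Lemmas \ref{lem:te1}, \ref{lem:te2}, \ref{lem:ru1}--\ref{lem:ru4}), and then pass to the whole group by Malcev-type compactness (Theorem \ref{thm:Malcev}) — but there are two genuine gaps. First, you flag the uniformity problem (``a single uniform dimension bound valid across all finitely generated subgroups simultaneously'') as the hard part and then leave it unresolved; and your proposed resolution, applying Malcev's local theorem with the single target $\mathrm{GL}(48,\mathbb{C})$, would not suffice. The paper resolves uniformity by a dichotomy with witnesses: either every finitely generated subgroup embeds into one fixed target among $\mathrm{PGL}(3,\mathbb{C})$, $\mathrm{Aut}(S_6)$, $\mathrm{Aut}(\mathbb{P}^1_\mathbb{C}\times\mathbb{P}^1_\mathbb{C})$, $\mathrm{Aut}(\mathbb{F}_{2n})$ (and then the mixed system is written for \emph{that} algebraic group), or there exist finitely generated $\Gamma_1,\ldots,\Gamma_4$ failing each of these; in the latter case one adjoins them to an arbitrary finitely generated $\mathrm{H}$, notes $\langle\Gamma_1,\ldots,\Gamma_4,\mathrm{H}\rangle$ is \emph{finite} by Theorem \ref{thm:burnside} and, after enlarging, of order $>648$, so Theorem \ref{thm:648} and Theorem \ref{thm:blanc11cases} force it into the conic-bundle or odd-Hirzebruch cases, whence $\mathrm{H}$ embeds into $\mathrm{PGL}(2,\mathbb{C})\times\mathrm{PGL}(2,\mathbb{C})$ by Lemmas \ref{lem:ru1}--\ref{lem:ru3}. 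Note also that your use of the embedding lemmas (which concern \emph{finite} groups) on finitely generated torsion subgroups silently assumes those subgroups are finite — that is exactly Theorem \ref{thm:burnside}, which you never invoke, and which is also what makes every finite subsystem of the mixed system compatible.

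Second, your derivation of the first assertion is wrong as stated: embedding $\mathrm{G}$ into $\mathrm{GL}(48,\mathbb{C})$ does not make $\mathrm{G}$ isomorphic to a bounded subgroup of $\mathrm{Bir}(\mathbb{P}^2_\mathbb{C})$, since $\mathrm{GL}(48,\mathbb{C})$ is not a subgroup of $\mathrm{Bir}(\mathbb{P}^2_\mathbb{C})$ and ``image landing in an algebraic subgroup'' has no meaning for an abstract linear image. In the paper the logical order is reversed: the compactness argument shows $\mathrm{G}$ is isomorphic to a subgroup of one of the specific automorphism groups above, each of which \emph{is} a bounded (algebraic) subgroup of $\mathrm{Bir}(\mathbb{P}^2_\mathbb{C})$ — this gives the first assertion — and the $\mathrm{GL}(48,\mathbb{C})$ statement is then a corollary of a case analysis, where the constant $48$ comes from finite automorphism groups of $\Big(\faktor{\mathbb{Z}}{2\mathbb{Z}}\Big)^2$-conic bundles (a lemma in Urech's thesis), not from your guessed composition $48=6\cdot 8$; all the other cases land in $\mathrm{GL}(8,\mathbb{C})$ or smaller.
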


As a consequence he gets an analogue of the Theorem
of Jordan and Schur:

\begin{cor}[\cite{Urech:ellipticsubgroups}]
There exists a constant $\gamma$ such that every torsion subgroup
of $\mathrm{Bir}(\mathbb{P}^2_\mathbb{C})$ contains a 
commutative normal subgroup of index $\leq \gamma$.
\end{cor}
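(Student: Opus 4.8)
The plan is to deduce this statement directly from Theorem~\ref{thm:urechell2} together with the classical Jordan--Schur theorem on torsion linear groups. Recall that the latter asserts the existence of a function $J\colon\mathbb{N}\to\mathbb{N}$ such that every torsion subgroup of $\mathrm{GL}(n,\mathbb{C})$ (that is, every subgroup all of whose elements have finite order) contains an abelian subgroup that is normal and of index at most $J(n)$; crucially, $J(n)$ depends only on the dimension $n$ and not on the subgroup. Jordan's theorem provides this for finite subgroups, and Schur's extension removes the finiteness hypothesis while retaining a uniform bound. Since Theorem~\ref{thm:urechell2} is available to me, there is essentially no remaining work beyond assembling these two inputs.

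First I would let $\mathrm{G}$ be an arbitrary torsion subgroup of $\mathrm{Bir}(\mathbb{P}^2_\mathbb{C})$. By Theorem~\ref{thm:urechell2} there is an injective group homomorphism $\iota\colon\mathrm{G}\hookrightarrow\mathrm{GL}(48,\mathbb{C})$. Since $\mathrm{G}$ is a torsion group and $\iota$ is injective, the image $\iota(\mathrm{G})$ is again a torsion subgroup, now of $\mathrm{GL}(48,\mathbb{C})$. Applying the Jordan--Schur theorem to $\iota(\mathrm{G})$ with $n=48$ yields an abelian subgroup $A\trianglelefteq\iota(\mathrm{G})$, normal in $\iota(\mathrm{G})$ and of index at most $J(48)$. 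Pulling back through the isomorphism $\iota\colon\mathrm{G}\stackrel{\sim}{\to}\iota(\mathrm{G})$, the subgroup $\iota^{-1}(A)$ is commutative, normal in $\mathrm{G}$, and of index $[\mathrm{G}:\iota^{-1}(A)]=[\iota(\mathrm{G}):A]\leq J(48)$. Should one only have access to a version of Schur's theorem producing an abelian subgroup of bounded index without normality, I would pass to its normal core, which remains abelian and normal at the cost of replacing the bound by a larger constant still depending only on $48$.

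It then suffices to set $\gamma=J(48)$. The point to emphasise is that this constant is independent of the chosen torsion subgroup $\mathrm{G}$: the entire argument hinges on the fact that every torsion subgroup of $\mathrm{Bir}(\mathbb{P}^2_\mathbb{C})$ embeds into the one fixed linear group $\mathrm{GL}(48,\mathbb{C})$, a dimension that does not vary with $\mathrm{G}$. Consequently the only substantial obstacle is already resolved inside Theorem~\ref{thm:urechell2}, where the uniform linearisation is established; once that linearisation is granted, the corollary reduces to a verbatim citation of Jordan--Schur, with no further estimation required. The hardest conceptual step is therefore not in this proof at all, but in the earlier embedding theorem, whose proof relies on model theory to control torsion subgroups of unbounded degree.
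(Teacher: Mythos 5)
Your proof is correct and follows exactly the route the paper intends: the corollary is stated there as an immediate consequence of Theorem~\ref{thm:urechell2}, obtained by embedding the torsion subgroup into $\mathrm{GL}(48,\mathbb{C})$ and invoking the Jordan--Schur theorem for torsion linear groups, with $\gamma$ depending only on the fixed dimension $48$. Your remark on passing to the normal core to secure normality is a sound precaution and does not change the argument.
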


Theorems \ref{thm:urechell1} and \ref{thm:urechell2} allow to 
refine 
\begin{itemize}
\item[$\diamond$] the result of Cantat about Tits alternative 
(\S \ref{sec:tits});

\item[$\diamond$] the description of solvable subgroups of
$\mathrm{Bir}(\mathbb{P}^2_\mathbb{C})$ 
(\emph{see} \S \ref{sec:solvable}).
\end{itemize}

The aim of the section is to prove Theorem \ref{thm:urechell1}.
We need the following technical lemmas.

\begin{lem}[\cite{CerveauDeserti:centralisateurs}]\label{lem:2fibr}
Let $\phi$ be a birational self map of the complex
projective plane that fixes pointwise two different 
rational fibrations. Then $\phi$ is of finite order.
\end{lem}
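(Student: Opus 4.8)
The plan is first to pin down the meaning of the hypothesis: \emph{fixing a fibration pointwise} should mean preserving it fiberwise, i.e.\ $\pi_i\circ\phi=\pi_i$ for the two given rational fibrations $\pi_1,\pi_2\colon\mathbb{P}^2_\mathbb{C}\dashrightarrow\mathbb{P}^1_\mathbb{C}$. This is exactly the situation in which the lemma is invoked (in the proof of Proposition~\ref{pro:centrfib}): an element of $\mathcal{J}_0$ preserves $z_1=\mathrm{cst}$ fiberwise, and a commuting $\varphi\notin\mathcal{J}$ produces a second fibration $\varphi^{-1}(\{z_1=\mathrm{cst}\})$ which, because $\phi$ and $\varphi$ commute and $\phi$ acts fiberwise on the first fibration, is also preserved fiberwise by $\phi$ and is distinct from $z_1=\mathrm{cst}$.

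I would then form the product map $\Pi=(\pi_1,\pi_2)\colon\mathbb{P}^2_\mathbb{C}\dashrightarrow\mathbb{P}^1_\mathbb{C}\times\mathbb{P}^1_\mathbb{C}$ and show it is dominant, hence generically finite of some degree $d$. Here one uses that the two fibrations are \emph{different} in the sense of the definition recalled in the excerpt (they do not define the same set of fibers): a general fiber $F_1$ of $\pi_1$ is then not a fiber of $\pi_2$, so $F_1$ meets a general fiber $F_2$ of $\pi_2$ in a finite nonempty set. Consequently the generic fiber of $\Pi$ is finite and the image is two-dimensional, so $\Pi$ is dominant. The key relation is that $\phi$ is a \emph{birational deck transformation} of $\Pi$: from $\pi_1\circ\phi=\pi_1$ and $\pi_2\circ\phi=\pi_2$ one gets $\Pi\circ\phi=\Pi$.

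To conclude I would pass to function fields. Writing $K=\Pi^*\,\mathbb{C}\big(\mathbb{P}^1_\mathbb{C}\times\mathbb{P}^1_\mathbb{C}\big)\subset\mathbb{C}(\mathbb{P}^2_\mathbb{C})$, dominance of $\Pi$ gives that $\mathbb{C}(\mathbb{P}^2_\mathbb{C})$ is a finite extension of $K$ of degree $d$. The identity $\Pi\circ\phi=\Pi$ means that the field automorphism $\phi^*$ of $\mathbb{C}(\mathbb{P}^2_\mathbb{C})$ fixes $K$ pointwise, i.e.\ $\phi^*\in\mathrm{Aut}\big(\mathbb{C}(\mathbb{P}^2_\mathbb{C})/K\big)$. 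Since the group of $K$-automorphisms of a finite extension of degree $d$ has order at most $d$, this group is finite; as $\phi\mapsto\phi^*$ is an injective (anti)homomorphism from $\mathrm{Bir}(\mathbb{P}^2_\mathbb{C})$ into $\mathrm{Aut}\big(\mathbb{C}(\mathbb{P}^2_\mathbb{C})/\mathbb{C}\big)$, it follows that $\phi$ has finite order (dividing $d!$, say). Geometrically this is the same as saying $\phi$ permutes the finite set $F_1\cap F_2$ of at most $d$ points for general fibers, which bounds its order.

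The main obstacle I expect is the generic finiteness step: one must genuinely use that the fibrations are distinct to rule out the degenerate case where $\Pi$ maps onto a curve in $\mathbb{P}^1_\mathbb{C}\times\mathbb{P}^1_\mathbb{C}$ (which would force each fiber of $\pi_1$ to lie in a fiber of $\pi_2$, hence the two fibrations to coincide), and one must be careful to restrict to a dense open set avoiding the indeterminacy loci of $\phi$, $\pi_1$, $\pi_2$ so that all compositions and intersections are transverse and well defined. Everything else (the field-degree bound and the finiteness of the automorphism group) is standard and robust.
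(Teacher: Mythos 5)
Your proof is correct and takes essentially the same route as the paper: the paper's two-line argument — the generic fibres of the two fibrations intersect in finite, uniformly bounded sets that are invariant by $\phi$, hence $\phi$ has finite order — is precisely the geometric content of your generically finite map $\Pi=(\pi_1,\pi_2)$ together with the relation $\Pi\circ\phi=\Pi$, including your (and the paper's implicit) use of distinctness of the fibrations to rule out the degenerate case where the fibre intersections fail to be finite. Your function-field conclusion via $\phi^*\in\mathrm{Aut}\big(\mathbb{C}(\mathbb{P}^2_\mathbb{C})/K\big)$ and $\vert\mathrm{Aut}(L/K)\vert\leq[L:K]$ is merely a tidy formalization of the paper's terse final step, in which the bounded $\phi$-invariant sets force some fixed iterate of $\phi$ to fix a dense set of points and hence equal the identity.
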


\begin{proof}
The intersections of the generic fibres of these 
two fibrations are finite, uniformly bounded. But 
these intersections are invariant by $\phi$ so
$\phi$ is of finite order.
\end{proof}

\begin{lem}[\cite{Urech:ellipticsubgroups}]\label{lem:Urechmoinsde9}
An algebraic subgroup $\mathrm{G}$ of 
$\mathrm{Bir}(\mathbb{P}^2_\mathbb{C})$ of 
dimension $\leq 9$ preserves a unique rational
fibration.
\end{lem}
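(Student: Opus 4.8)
The plan is to reduce the statement to the classification of maximal algebraic subgroups of $\mathrm{Bir}(\mathbb{P}^2_\mathbb{C})$ (Theorem \ref{thm:blanc11cases}) and then to a dimension bookkeeping. First I would use Corollary \ref{cor:agree}, or equivalently the regularization theorem of Weil (Theorem \ref{thm:Weil}), to realize $\mathrm{G}$, after birational conjugacy and after passing to the identity component $\mathrm{G}^0$, as a connected algebraic group acting regularly on a smooth projective rational surface $S$ with $(\mathrm{G}^0,S)$ relatively minimal. Every maximal algebraic subgroup containing $\mathrm{G}^0$ then occurs among the eleven families of Theorem \ref{thm:blanc11cases}, so it suffices to establish the uniqueness assertion on these finitely many model cases together with the Hirzebruch family $\mathrm{Aut}(\mathbb{F}_n)$. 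Since $\dim\mathrm{Aut}(\mathbb{F}_n)=n+5$, the constraint $\dim\mathrm{G}\leq 9$ already forces $n\leq 4$ in that family, keeping the analysis within a bounded list of surfaces.

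Second, the mechanism producing uniqueness is the following. If $\mathrm{G}$ preserved two distinct rational fibrations $\pi_1,\pi_2\colon S\dashrightarrow\mathbb{P}^1_\mathbb{C}$, their general fibers would be transverse, so $(\pi_1,\pi_2)$ realizes $S$ birationally and $\mathrm{G}$-equivariantly as $\mathbb{P}^1_\mathbb{C}\times\mathbb{P}^1_\mathbb{C}$, embedding $\mathrm{G}$ into $\mathrm{Aut}(\mathbb{P}^1_\mathbb{C}\times\mathbb{P}^1_\mathbb{C})$, a group of dimension $6$. Here Lemma \ref{lem:2fibr} controls the kernel of the induced action on the two bases: an element acting trivially on both bases fixes $\pi_1$ and $\pi_2$ pointwise, hence has finite order, which for the connected positive-dimensional group $\mathrm{G}^0$ pins its image down to a subgroup of $\mathrm{PGL}(2,\mathbb{C})\times\mathrm{PGL}(2,\mathbb{C})$. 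By contrast, Theorem \ref{thm:fibration} shows that any group preserving a single rational fibration is conjugate into $\mathcal{J}$, and on $\mathbb{F}_n$ with $n\geq 2$, as well as on the exceptional conic bundles of family $(5)$, the ruling is canonical: the negative section $s_n$ (respectively the pair of $(-n)$-sections) is the unique curve of its self-intersection, so the fibration is forced and is the only one preserved.

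Finally I would carry out the dimension count against the remaining families: $\dim\mathrm{PGL}(3,\mathbb{C})=8$, $\dim\mathrm{Aut}(\mathbb{P}^1_\mathbb{C}\times\mathbb{P}^1_\mathbb{C})=6$, the del Pezzo surface of degree $6$ gives dimension $2$, and the del Pezzo surfaces of degree $\leq 5$ together with the conic-bundle families $(5)$ and $(11)$ give dimension $\leq 1$; only the Hirzebruch family grows, and there $\dim\mathrm{Aut}(\mathbb{F}_4)=9$. The main obstacle, and the place where the precise value of the bound must be used, is the treatment of the surfaces that genuinely carry several rulings or conic-bundle structures — above all $\mathbb{P}^1_\mathbb{C}\times\mathbb{P}^1_\mathbb{C}$ and the del Pezzo surfaces of degree $6,8,9$. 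For these one cannot rely on the soft product-structure argument alone; I would instead invoke Demazure's bound on the maximal torus (Theorem \ref{thm:demazure}) together with the explicit form of $\mathrm{Aut}(S)$ in Theorem \ref{thm:blanc11cases} to show that, under the stated dimension hypothesis, $\mathrm{G}$ either fails to preserve any rational fibration or, when it does, that fibration must coincide with the canonical ruling. Discharging this case distinction cleanly is the delicate part of the proof.
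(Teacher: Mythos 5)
There is a genuine gap here, and it originates in your reading of the dimension hypothesis. The bound in the statement is a misprint: the lemma is true, provable, and used only for algebraic subgroups of dimension $\geq 9$. You can see this in two ways. First, with ``$\leq 9$'' the statement is simply false: $\mathrm{PGL}(3,\mathbb{C})=\mathrm{Aut}(\mathbb{P}^2_\mathbb{C})$ has dimension $8$ and preserves no rational fibration at all, while finite or trivial subgroups may preserve many. Second, the one place the lemma is invoked (in the proof of Theorem \ref{thm:urechell1}) is for a group $\overline{\Gamma}$ with $\dim\overline{\Gamma}\geq 9$. Because you take the bound as an upper bound, you are driven to a case analysis over all eleven families of Theorem \ref{thm:blanc11cases}, and you end by conceding that $\mathrm{G}$ ``either fails to preserve any rational fibration or'' preserves the canonical one --- a concession incompatible with the existence half of the lemma. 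The ``delicate part'' you could not discharge (the surfaces carrying several rulings: $\mathbb{P}^2_\mathbb{C}$, $\mathbb{P}^1_\mathbb{C}\times\mathbb{P}^1_\mathbb{C}$, the del Pezzo surfaces) is not a technical nuisance to be handled with Demazure's torus bound; it is the symptom that the statement, under your reading, is false, so no argument can discharge it.

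With the hypothesis the right way around the proof is short, and your own tools would finish it. By Theorem \ref{thm:blanc11cases} the only maximal algebraic subgroups of $\mathrm{Bir}(\mathbb{P}^2_\mathbb{C})$ of dimension $\geq 9$ are the groups $\mathrm{Aut}(\mathbb{F}_n)$ (of dimension $n+5$), so $\mathrm{G}$ is conjugate into some $\mathrm{Aut}(\mathbb{F}_n)$ with $n\geq 2$; this gives existence of an invariant rational fibration, and every surface you worried about drops out, since its automorphism group has dimension $\leq 8$. For uniqueness the paper runs a kernel count: the action on the base of the ruling gives a homomorphism $f\colon\mathrm{G}\to\mathrm{PGL}(2,\mathbb{C})$ with $\dim\ker f\geq 9-3=6$; a second invariant rational fibration would give $g\colon\mathrm{G}\to\mathrm{PGL}(2,\mathbb{C})$, whence $\dim(\ker f\cap\ker g)\geq 3>0$, and a positive-dimensional algebraic group contains an element of infinite order fixing both fibrations fiberwise, contradicting Lemma \ref{lem:2fibr}. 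Your own uniqueness mechanism --- two fibrations force $\mathrm{G}$, modulo a kernel that Lemma \ref{lem:2fibr} makes finite, into $\mathrm{PGL}(2,\mathbb{C})\times\mathrm{PGL}(2,\mathbb{C})$, of dimension $6<9$ --- would conclude just as quickly once the inequality points the right way. Note only that your intermediate claim that $(\pi_1,\pi_2)$ is birational onto $\mathbb{P}^1_\mathbb{C}\times\mathbb{P}^1_\mathbb{C}$ is unjustified (general fibers of two distinct fibrations may meet in more than one point) and also unnecessary, since the homomorphism to $\mathrm{PGL}(2,\mathbb{C})\times\mathrm{PGL}(2,\mathbb{C})$ uses only the two actions on the bases.
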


\begin{proof}
According to Theorem \ref{thm:blanc11cases} the
group $\mathrm{G}$ is conjugate to a subgroup 
of $\mathrm{Aut}(\mathbb{F}_n)$ for some 
Hirzebruch surface $\mathbb{F}_n$, $n\geq 2$.
As a consequence $\mathrm{G}$ preserves 
a rational fibration 
$\pi\colon\mathbb{F}_n\to\mathbb{P}^1_\mathbb{C}$.
The fibres of $\pi$ are permuted by $\mathrm{G}$, 
this yields to a homomorphism
\[
f\colon\mathrm{G}\to\mathrm{PGL}(2,\mathbb{C})
\]
such that $\dim\ker f\geq 6$.

Assume by contradiction that there exists a 
second rational fibration 
$\pi'\colon\mathbb{F}_n\to\mathbb{P}^1_\mathbb{C}$
preserved by $\mathrm{G}$; this yields to a second
homomorphism 
\[
g\colon\mathrm{G}\to\mathrm{PGL}(2,\mathbb{C}).
\]
One has $\dim\ker\pi'_{\vert\ker\pi}>0$; therefore,
$\dim(\ker f\cap\ker g)>0$. In particular 
$\ker f\cap\ker g$ contains an element of 
infinite order: contradiction with Lemma
\ref{lem:2fibr}.
\end{proof}

\begin{lem}[\cite{Urech:ellipticsubgroups}]\label{lem:algcst}
Let 
$\mathrm{G}\subset\mathrm{Bir}(\mathbb{P}^2_\mathbb{C})$ 
be an algebraic subgroup isomorphic as an 
algebraic group to $\mathbb{C}^*$. 

There exists a constant $K(\mathrm{G})$ such
that any elliptic element of 
\[
\mathrm{Cent}(\mathrm{G})=\big\{\varphi\in\mathrm{Bir}(\mathbb{P}^2_\mathbb{C})\,\vert\,\varphi\circ\psi=\psi\circ\varphi\quad\forall\,\psi\in\mathrm{G}\big\}
\]
has degree $\leq K(\mathrm{G})$.
\end{lem}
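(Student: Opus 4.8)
The plan is to exploit the algebraic structure of $\mathrm{G}$ to produce a canonical $\mathrm{G}$-invariant fibration, and then to control elements of $\mathrm{Cent}(\mathrm{G})$ through their action on this fibration together with the action on the hyperbolic space $\mathbb{H}^\infty$ of Chapter \ref{chap:hyperbolicspace}. First I would record that, being an algebraic subgroup, $\mathrm{G}$ has bounded degree (Lemma \ref{lem:BlancFurter}), so $\mathrm{G}\subset\mathrm{Bir}_{\leq d}(\mathbb{P}^2_\mathbb{C})$ for some $d=d(\mathrm{G})$. Using the regularization results of \S\ref{sec:WeilKraft} and the fact that any algebraic subgroup is conjugate to a group of automorphisms of a rational surface, I may, after conjugating by a fixed birational map, assume $\mathrm{G}\subset\mathrm{Aut}(S)$ for a smooth projective rational surface $S$, with $\mathrm{G}\cong\mathbb{C}^*$ acting regularly. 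A nontrivial $\mathbb{C}^*$-action on a surface has a one-dimensional field of invariant rational functions, so the closures of its general orbits form a pencil: this yields a $\mathrm{G}$-invariant fibration $\pi\colon S\dashrightarrow B$, which is rational since $S$ is (so $B\simeq\mathbb{P}^1_\mathbb{C}$), with $\mathrm{G}$ acting on the generic fiber as the diagonal torus of $\mathrm{PGL}(2,\mathbb{C})$ fixing its two $\mathrm{G}$-fixed sections.

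Next I would analyze $\mathrm{Cent}(\mathrm{G})$ through this fibration. Any $\varphi$ commuting with $\mathrm{G}$ maps $\mathrm{G}$-orbits to $\mathrm{G}$-orbits, hence preserves $\pi$ and the fixed locus of $\mathrm{G}$; by Theorem \ref{thm:fibration} it is thus of Jonqui\`eres type relative to $\pi$. Computing the centralizer of the fiberwise torus inside $\mathrm{PGL}(2,\mathbb{C}(z_1))$ shows that over the generic point $\varphi$ acts as a diagonal map $z_0\mapsto a(z_1)\,z_0$ with $a\in\mathbb{C}(z_1)^*$, followed by the induced automorphism $\overline{\varphi}\in\mathrm{Aut}(B)=\mathrm{PGL}(2,\mathbb{C})$ on the base. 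In other words $\mathrm{Cent}(\mathrm{G})$ is contained in an explicit extension of $\mathrm{PGL}(2,\mathbb{C})$ by $\mathbb{C}(z_1)^*$ of the multiplicative type $\mathcal{J}_m$ studied in \S\ref{sec:cent}. I would then translate the hypothesis ``$\varphi$ elliptic'' through Theorem \ref{thm:dilfav}, namely that $(\deg\varphi^n)_n$ is bounded, and express the $n$-th iterate in these coordinates: its multiplier over $B$ is the product $\prod_{i=0}^{n-1}a(\overline{\varphi}^{\,i}(z_1))$, while its base part is $\overline{\varphi}^{\,n}$.

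The bounding step is where I expect the real difficulty. The degree of $\varphi$ is $\cosh(\mathrm{dist}(\mathbf{e}_0,\varphi_*\mathbf{e}_0))$, and since $\mathrm{G}$ has bounded degree its orbit in $\mathbb{H}^\infty$ is bounded, so $\mathrm{G}_*$ fixes a point and $\varphi_*$ preserves the (totally geodesic) fixed set of $\mathrm{G}_*$; this reduces matters to controlling both the base automorphism $\overline{\varphi}$ and the fiberwise multiplier $a$. The crux is that, a priori, $a\in\mathbb{C}(z_1)^*$ could contribute an unbounded amount to the degree, so the heart of the argument must be to show that commutation with the \emph{entire} torus $\mathrm{G}$, combined with ellipticity, forces $\overline{\varphi}$ and $a$ into a family depending only on $\mathrm{G}$. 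The plan here is to use that $\varphi$ permutes the two $\mathrm{G}$-fixed sections of $\pi$ (hence $\overline{\varphi}$ lies in the finite stabilizer of a distinguished configuration on $B$, as in Lemmas \ref{lem:2fibr} and \ref{lem:Urechmoinsde9}), and that boundedness of $(\deg\varphi^n)_n$ rigidifies the periodic product of multipliers; extracting from these two constraints a uniform bound $K(\mathrm{G})$ on $\deg a$, and thereby on $\deg\varphi$, is the step I expect to require the most care.
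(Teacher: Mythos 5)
Your two reductions coincide with the paper's proof, which is shorter but substantively identical at this stage: after conjugating by a single birational map $\psi$ (the paper simply takes $\mathrm{G}=\{(z_0,z_1)\mapsto(\alpha z_0,z_1)\,\vert\,\alpha\in\mathbb{C}^*\}$), commutation with \emph{every} $g_\alpha$ forces an element of $\mathrm{Cent}(\mathrm{G})$ into the form $\varphi\colon(z_0,z_1)\dashrightarrow(z_0\varphi_1(z_1),\varphi_2(z_1))$ with $\varphi_2$ a M\"obius map — your ``multiplicative Jonqui\`eres'' description. One side remark in your plan is wrong, though: centralizing (as opposed to normalizing) $\mathrm{G}$ already rules out the inversion $z_0\mapsto a(z_1)/z_0$, so each of the two $\mathrm{G}$-fixed sections is preserved; but since these sections dominate the base, this puts \emph{no} constraint on the induced base map $\overline{\varphi}$ — indeed $(z_0,z_1)\mapsto(z_0,\nu(z_1))$ lies in $\mathrm{Cent}(\mathrm{G})$ for every $\nu\in\mathrm{PGL}(2,\mathbb{C})$ — so the step of your plan placing $\overline{\varphi}$ in a finite stabilizer of a configuration on $B$ (by analogy with Lemmas \ref{lem:2fibr} and \ref{lem:Urechmoinsde9}) cannot work, and is not how the paper proceeds.

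The genuine gap is that you stop exactly at the decisive step, which is the entire content of the lemma. The paper settles it in one sentence: since $(\deg\varphi^n)_n$ is bounded, $\varphi_1$ is constant, hence $\deg\varphi\leq 2$ and $K(\mathrm{G})$ depends only on $\deg\psi$. Your proposal only announces that boundedness of the iterates ``rigidifies the periodic product of multipliers'' and defers the extraction of a bound on $\deg a$; nothing in the text proves it. And your worry is well founded, because the multiplier really can telescope: for a coboundary $a=\,(h\circ\nu)/h$ with $h\in\mathbb{C}(z_1)^*$ one computes $\varphi^n\colon(z_0,z_1)\dashrightarrow\bigl(z_0\,h(\nu^n(z_1))/h(z_1),\nu^n(z_1)\bigr)$, so $(\deg\varphi^n)_n$ is bounded while $\deg a=\deg h$ is arbitrary; such $\varphi$ commute with $\mathrm{G}$ and are elliptic, being conjugate to $(z_0,z_1)\mapsto(z_0,\nu(z_1))$ by the torus-commuting map $(z_0,z_1)\dashrightarrow(z_0h(z_1),z_1)$. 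So ellipticity alone does not visibly cap $\deg a$, and a complete argument must confront precisely this coboundary phenomenon — which, it should be said, the survey's one-line assertion that $\varphi_1$ is constant glosses over as well; any repair (for instance, bounding the degree only after a further torus-commuting conjugation) has to be made explicit. In short: your reduction is correct and matches the paper, but the bounding step is missing from your attempt, and the partial plan you sketch for it would not deliver it.
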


\begin{proof}
Up to conjugacy by an element 
$\psi\in\mathrm{Bir}(\mathbb{P}^2_\mathbb{C})$
one can assume that 
\[
\mathrm{G}=\big\{(z_0,z_1)\mapsto(\alpha z_0,z_1)\,\vert\,\alpha\in\mathbb{C}^*\big\}.
\]
An elliptic element of 
$\mathrm{Cent}(\mathrm{G})$
is of the following form
\[
\varphi\colon(z_0,z_1)\dashrightarrow(z_0\varphi_1(z_1),\varphi_2(z_1))
\]
where $\varphi_1$, $\varphi_2$ are rational functions.
Since $(\deg\varphi^n)_n$ is bounded, $\varphi_1$
is constant, and so $\varphi_2\colon z_1\mapsto\frac{az_1+b}{cz_1+d}$
for some matrix $\left(
\begin{array}{cc}
a & b \\
c & d
\end{array}
\right)$ of $\mathrm{PGL}(2,\mathbb{C})$. In particular
$\deg\varphi\leq 2$. The constant $K(\mathrm{G})$ thus 
only depends on the degree of $\psi$. 
\end{proof}

\begin{lem}[\cite{Urech:ellipticsubgroups}]\label{lem:Urechmonomial}
A group $\mathrm{G}\subset\mathrm{Bir}(\mathbb{P}^2_\mathbb{C})$ of monomial elliptic elements is bounded.
\end{lem}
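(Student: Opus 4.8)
The statement to prove is Lemma~\ref{lem:Urechmonomial}: a group $\mathrm{G}\subset\mathrm{Bir}(\mathbb{P}^2_\mathbb{C})$ of monomial elliptic elements is bounded. Recall that a monomial map $\phi_M$ is determined by a matrix $M\in\mathrm{GL}(2,\mathbb{Z})$, and that the assignment $M\mapsto\phi_M$ is an injective homomorphism $\mathrm{GL}(2,\mathbb{Z})\to\mathrm{Bir}(\mathbb{P}^2_\mathbb{C})$ with image $\mathrm{Mon}(2,\mathbb{C})$. So $\mathrm{G}$ corresponds to a subgroup $\Gamma\subset\mathrm{GL}(2,\mathbb{Z})$, and the hypothesis is that every element of $\Gamma$ gives an elliptic monomial map.

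The plan is to understand the degree growth of $\phi_M$ in terms of $M$ and then exploit the structure of $\mathrm{GL}(2,\mathbb{Z})$. First I would recall that the degree of $\phi_M$ is controlled by the norm of $M$ (roughly, $\deg\phi_M^n$ grows like the operator norm of $M^n$), so that $\phi_M$ is elliptic, i.e.\ $(\deg\phi_M^n)_n$ is bounded, precisely when the sequence $(\lVert M^n\rVert)_n$ is bounded. For an integer matrix $M\in\mathrm{GL}(2,\mathbb{Z})$ this boundedness forces the eigenvalues of $M$ to have modulus $1$ and $M$ to be diagonalizable (no nontrivial Jordan block), hence $M$ has finite order in $\mathrm{GL}(2,\mathbb{Z})$. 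Thus the hypothesis that every element of $\mathrm{G}$ is elliptic translates into: every element of $\Gamma$ has finite order, i.e.\ $\Gamma$ is a torsion subgroup of $\mathrm{GL}(2,\mathbb{Z})$.

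The key step is then the arithmetic fact that a torsion subgroup of $\mathrm{GL}(2,\mathbb{Z})$ is finite, and moreover has uniformly bounded order. Indeed $\mathrm{GL}(2,\mathbb{Z})$ is virtually free (it contains a free subgroup of finite index), so any torsion subgroup is finite; alternatively one invokes Minkowski's theorem that the kernel of reduction $\mathrm{GL}(2,\mathbb{Z})\to\mathrm{GL}(2,\mathbb{Z}/3\mathbb{Z})$ is torsion-free, whence any torsion subgroup injects into the finite group $\mathrm{GL}(2,\mathbb{Z}/3\mathbb{Z})$ and so has order bounded by a universal constant. Either way $\Gamma$ is finite of bounded order. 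A finite group of birational maps is an algebraic subgroup of $\mathrm{Bir}(\mathbb{P}^2_\mathbb{C})$, and finiteness of $\Gamma$ immediately bounds the degrees of all $\phi_M$, $M\in\Gamma$: the maps in $\mathrm{G}$ are finitely many, hence of bounded degree, so $\mathrm{G}$ is bounded.

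The main obstacle I anticipate is making the dictionary between the two notions of ``elliptic'' fully rigorous: one must argue carefully that for a monomial map the dynamical degree growth equals the norm growth of the defining matrix, and that boundedness of $(\lVert M^n\rVert)$ for $M\in\mathrm{GL}(2,\mathbb{Z})$ really is equivalent to $M$ having finite order (this uses that eigenvalues of modulus $1$ which are algebraic integers whose conjugates also lie on the unit circle are roots of unity, by Kronecker's theorem, together with diagonalizability). Once this translation is in place, the finiteness and the uniform bound come cheaply from the arithmetic of $\mathrm{GL}(2,\mathbb{Z})$, and boundedness of $\mathrm{G}$ in the sense of Chapter~\ref{Chapter:algebraicsubgroup} follows since all degrees are bounded by a constant depending only on $\mathrm{GL}(2,\mathbb{Z}/3\mathbb{Z})$.
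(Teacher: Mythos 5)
There is a genuine gap, and it lies in your very first translation step: you identify $\mathrm{G}$ with a subgroup $\Gamma\subset\mathrm{GL}(2,\mathbb{Z})$ via $M\mapsto\phi_M$, i.e.\ you take ``monomial'' to mean the coefficient-free maps forming $\mathrm{Mon}(2,\mathbb{C})$. But in the context where Lemma~\ref{lem:Urechmonomial} is applied (the case $R^{(\ell)}\simeq\mathbb{C}^*\times\mathbb{C}^*$ in the proof of Theorem~\ref{thm:urechell1}), the elements of $\mathrm{G}$ are automorphisms of the open orbit $\mathbb{C}^*\times\mathbb{C}^*$, i.e.\ elements of $\mathrm{GL}(2,\mathbb{Z})\ltimes(\mathbb{C}^*)^2$: monomial maps \emph{with} torus coefficients, of the form $(z_0,z_1)\dashrightarrow(\alpha z_0^az_1^b,\beta z_0^cz_1^d)$. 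For such groups your conclusion ``$\mathrm{G}$ is finite'' is simply false: the full diagonal torus $\mathrm{D}_2$, or any infinite subgroup of it, consists of elliptic monomial elements and is bounded but infinite. So your argument proves only the special case $\mathrm{G}\subset\mathrm{Mon}(2,\mathbb{C})\simeq\mathrm{GL}(2,\mathbb{Z})$, which is not the statement the paper needs.

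The missing idea, which is exactly how the paper argues, is to handle the torus part by the projection $\pi\colon\mathrm{G}\to\mathrm{GL}(2,\mathbb{Z})$ coming from the semidirect product $\mathrm{GL}(2,\mathbb{Z})\ltimes(\mathbb{C}^*)^2$. The kernel $\ker\pi=\mathrm{G}\cap\mathrm{D}_2$ is a group of automorphisms of $\mathbb{P}^2_\mathbb{C}$, hence bounded, and since the torus coefficients do not affect degrees, an element $(M,t)\in\mathrm{G}$ is elliptic if and only if $\phi_M$ is; your dictionary (degree growth of $\phi_M^n$ comparable to $\lVert M^n\rVert$, Kronecker, diagonalizability) then applies to show $\pi(\mathrm{G})$ is a torsion subgroup of $\mathrm{GL}(2,\mathbb{Z})$, and your Minkowski argument (torsion subgroups inject into $\mathrm{GL}(2,\mathbb{Z}/3\mathbb{Z})$, since the reduction kernel is torsion-free) shows $\pi(\mathrm{G})$ is finite — a cleaner justification than the paper's appeal to the finitely many conjugacy classes of finite subgroups of $\mathrm{GL}(2,\mathbb{Z})$. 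The conclusion is then that $\deg g\leq\max_{M\in\pi(\mathrm{G})}\deg\phi_M$ for every $g\in\mathrm{G}$, so $\mathrm{G}$ has bounded degree even though it may well be infinite. In short: your arithmetic core is sound and essentially coincides with the paper's treatment of the image, but without the exact sequence
\[
1\longrightarrow \mathrm{G}\cap\mathrm{D}_2\longrightarrow\mathrm{G}\longrightarrow\pi(\mathrm{G})\longrightarrow 1
\]
the proof does not cover the intended statement.
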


\begin{proof}
The group $\mathrm{G}$ is contained in
$\mathrm{GL}(2,\mathbb{Z})\ltimes(\mathbb{C}^*)^2$.
Consider the projection 
$\pi\colon\mathrm{G}\to\mathrm{GL}(2,\mathbb{Z})$. 
On the one hand $\ker\pi$ is bounded, on the other
hand all elements of $\pi(\mathrm{G})$ are bounded.
All elliptic elements in 
$\mathrm{GL}(2,\mathbb{Z})\subset\mathrm{Bir}(\mathbb{P}^2_\mathbb{C})$
are of finite order, so $\pi(\mathrm{G})$ is a 
torsion subgroup of $\mathrm{GL}(2,\mathbb{Z})$. 
Since there are only finitely many
conjugacy classes of finite subgroups in 
$\mathrm{GL}(2,\mathbb{Z})$ the group 
$\pi(\mathrm{G})$ is finite. Therefore, $\mathrm{G}$
is a finite extension of a bounded subgroup hence
$\mathrm{G}$ is bounded.
\end{proof}

\begin{lem}[\cite{Urech:ellipticsubgroups}]\label{lem:Urechsemisimple}
Let $\mathrm{H}$ be a semi-simple algebraic subgroup of 
$\mathrm{Bir}(\mathbb{P}^2_\mathbb{C})$. Let 
$\mathrm{G}\subset\mathrm{Bir}(\mathbb{P}^2_\mathbb{C})$ 
be a group of elliptic elements that normalizes 
$\mathrm{H}$. Then $\mathrm{G}$ is bounded.
\end{lem}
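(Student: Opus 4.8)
The plan is to exploit the fact that a semi-simple algebraic subgroup $\mathrm{H}$ of $\mathrm{Bir}(\mathbb{P}^2_\mathbb{C})$ has positive dimension and contains a maximal torus, then use the normalizing hypothesis to force any elliptic element of $\mathrm{G}$ to have bounded degree. The key observation is that a semi-simple algebraic group of positive dimension contains a one-dimensional subtorus $\mathrm{T}\simeq\mathbb{C}^*$; this is exactly the setting of Lemma \ref{lem:algcst}. First I would fix such a subtorus $\mathrm{T}\subset\mathrm{H}$ and consider, for a fixed $\varphi\in\mathrm{G}$, the conjugate torus $\varphi\circ\mathrm{T}\circ\varphi^{-1}$. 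Since $\mathrm{G}$ normalizes $\mathrm{H}$, this conjugate torus lies inside $\mathrm{H}$, so $\varphi$ sends one one-parameter subgroup of $\mathrm{H}$ to another.

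\textbf{Main steps.} The argument would proceed as follows. The group $\mathrm{H}$, being a semi-simple algebraic subgroup of the plane Cremona group, is (after conjugacy) the automorphism group of a del Pezzo surface or a conic bundle of bounded dimension; in any case it contains only finitely many conjugacy classes of maximal tori, and its maximal tori are all conjugate \emph{inside} $\mathrm{H}$. Given $\varphi\in\mathrm{G}$ elliptic, the torus $\varphi\circ\mathrm{T}\circ\varphi^{-1}\subset\mathrm{H}$ is thus conjugate in $\mathrm{H}$ to $\mathrm{T}$: there exists $h\in\mathrm{H}$ with $h\circ\mathrm{T}\circ h^{-1}=\varphi\circ\mathrm{T}\circ\varphi^{-1}$. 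Then $h^{-1}\circ\varphi$ normalizes $\mathrm{T}$, and modulo the finite Weyl-type group of $\mathrm{T}$ it in fact \emph{centralizes} a finite-index power; more precisely a bounded power $\psi=(h^{-1}\circ\varphi)^{k}$ centralizes $\mathrm{T}\simeq\mathbb{C}^*$. Since $\psi$ is a product of the bounded-degree element $h^{-1}$ (it lives in the bounded algebraic group $\mathrm{H}$) and $\varphi$, and since $\psi\in\mathrm{Cent}(\mathrm{T})$ is elliptic, Lemma \ref{lem:algcst} gives $\deg\psi\leq K(\mathrm{T})$. Combining the bound on $\deg h$ (uniform over $\mathrm{H}$) with the bound $\deg\psi\leq K(\mathrm{T})$ and controlling the bounded exponent $k$, I would deduce a uniform bound on $\deg\varphi$ independent of $\varphi\in\mathrm{G}$. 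A subset of $\mathrm{Bir}(\mathbb{P}^2_\mathbb{C})$ of uniformly bounded degree is bounded, so $\mathrm{G}$ is bounded.

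\textbf{The main obstacle.} The delicate point will be controlling the passage from $\varphi$ to a genuine centralizing element of $\mathrm{T}$ with \emph{uniformly} bounded combinatorial data. The conjugating element $h\in\mathrm{H}$ has degree bounded by the (fixed) algebraic group $\mathrm{H}$, which is fine; but I must ensure that the integer $k$ for which $(h^{-1}\circ\varphi)^k$ centralizes $\mathrm{T}$ (rather than merely normalizing it through the Weyl group) is bounded uniformly—this follows because the Weyl group of $\mathrm{T}$ in $\mathrm{H}$ is finite, so $k$ can be taken to be its exponent, a constant depending only on $\mathrm{H}$. A second subtlety is that $\varphi$ itself need not lie in any algebraic group, so I cannot directly invoke degree bounds on $\varphi$; the whole point is that Lemma \ref{lem:algcst} bounds the degree of the \emph{elliptic} centralizing element even though it is a priori only birational, and then one must unwind $\varphi=h\circ\psi^{1/k}$-type relations carefully, using that composition with the fixed bounded-degree $h$ and extraction of a bounded root keep degrees bounded via the Zariski-closure argument of \cite{BlancFurter}. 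Handling this degree bookkeeping cleanly, rather than any conceptual difficulty, is where the real work lies.
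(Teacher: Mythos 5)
Your overall target---manufacturing an element that commutes with a copy of $\mathbb{C}^*$ and invoking Lemma \ref{lem:algcst}---is indeed the engine of the paper's proof, but the mechanism you propose to produce that element does not work, and the paper's mechanism is different precisely at the points where yours breaks. The paper never passes through conjugacy of tori: since $\mathrm{H}$ is semi-simple, its inner automorphisms have finite index in its group of algebraic automorphisms, so there is a \emph{single} integer $N$, uniform in $\varphi\in\mathrm{G}$, such that conjugation by $\varphi^N$ is an inner automorphism of $\mathrm{H}$; correcting by a suitable $\psi\in\mathrm{H}$ yields an element $\varphi^N\circ\psi$ centralizing \emph{all} of $\mathrm{H}$, to which Lemma \ref{lem:algcst} is applied using some $\mathbb{C}^*\subset\mathrm{H}$, with $\deg\psi$ uniformly bounded because $\mathrm{H}$ is algebraic, hence of bounded degree.

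Your route has three concrete gaps. First, the conjugacy step: one-dimensional subtori of $\mathrm{H}$ are in general \emph{not} all conjugate in $\mathrm{H}$ (distinct cocharacters of the maximal torus of $\mathrm{PGL}(3,\mathbb{C})$ give non-conjugate copies of $\mathbb{C}^*$), so for $\mathrm{T}\simeq\mathbb{C}^*$ your element $h$ need not exist; and if you instead take $\mathrm{T}$ maximal, then when $\mathrm{H}$ has rank $2$ the element $h^{-1}\circ\varphi$ does not lie in $\mathrm{H}$, so the automorphism it induces on $\mathrm{T}\simeq(\mathbb{C}^*)^2$ lives in $\mathrm{Aut}(\mathrm{T})\simeq\mathrm{GL}(2,\mathbb{Z})$, an \emph{infinite} group, and not in the finite Weyl group $N_{\mathrm{H}}(\mathrm{T})/C_{\mathrm{H}}(\mathrm{T})$: recall that the normalizer of $\mathrm{D}_2$ in $\mathrm{Bir}(\mathbb{P}^2_\mathbb{C})$ is $\mathrm{GL}(2,\mathbb{Z})\ltimes\mathrm{D}_2$ and contains monomial maps acting on the torus with infinite order, so no uniform exponent $k$ exists. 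Second, ellipticity: Lemma \ref{lem:algcst} bounds only \emph{elliptic} elements of the centralizer---an essential hypothesis, since $\mathrm{Cent}(\mathbb{C}^*)$ contains the Jonqui\`eres twists $(z_0,z_1)\dashrightarrow(z_0z_1^m,z_1)$ of arbitrarily large degree---whereas your $\psi=(h^{-1}\circ\varphi)^k$ is a power of a product of two elliptic maps, which need not be elliptic; you assert its ellipticity without argument. (The paper's element is far more constrained: it centralizes the whole semi-simple group, not a single torus.) Third, the final bookkeeping is false as stated: from $\deg\big((h^{-1}\circ\varphi)^k\big)\leq K$ one cannot bound $\deg(h^{-1}\circ\varphi)$, since for instance a Jonqui\`eres involution of arbitrarily large degree has square equal to the identity; so "extraction of a bounded root keeps degrees bounded" is not a valid principle, and this step cannot be repaired by the Zariski-closure argument you cite.
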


\begin{proof}
The group $\mathrm{H}$ is semi-simple; in particular 
its group of inner automorphisms has finite index in 
its group of algebraic automorphisms. As a result 
there exists $N\in\mathbb{Z}$ such that for any $\phi$
in $\mathrm{G}$ conjugation by $\phi^N$ induces an 
inner automorphism of $\mathrm{H}$. Hence, there exists 
 an element $\psi$ in $\mathrm{H}$ such that 
$\phi^N\circ\psi$ centralizes $\mathrm{H}$. By assumption 
$\mathrm{H}$ is semi-simple, so $\mathrm{H}$ 
contains a closed subgroup $\mathrm{D}$ isomorphic
as an algebraic group to $\mathbb{C}^*$ and this 
group is centralized by $\phi^N\circ\psi$. From Lemma
\ref{lem:algcst} we get that $\deg(\phi^N\circ\psi)$ is 
bounded by a constant that depends neither on 
$\phi$, nor on $N$. As $\mathrm{H}$ is an algebraic
group both $\deg \psi$ and $\deg \phi$ are also 
bounded independently of $\phi$ and $N$. Finally 
$\mathrm{G}$ is bounded.
\end{proof}

\begin{lem}[\cite{Urech:ellipticsubgroups}]\label{lem:Urechtecn1}
Let $\mathrm{G}$ be a subgroup of 
$\mathrm{Bir}(\mathbb{P}^2_\mathbb{C})$ that fixes a 
point of $\mathbb{H}^\infty$. Then 
\begin{itemize}
\item[$\diamond$]  the degree of all elements in $\mathrm{G}$
is uniformly bounded;

\item[$\diamond$] there exist a smooth projective surface $S$
and a birational map 
$\varphi\colon\mathbb{P}^2_\mathbb{C}\dashrightarrow S$
such that 
$\varphi\circ\mathrm{G}\circ\varphi^{-1}\subset\mathrm{Aut}(S)$.
\end{itemize}
\end{lem}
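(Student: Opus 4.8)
\textbf{Proof proposal for Lemma \ref{lem:Urechtecn1}.}

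The plan is to exploit the action of $\mathrm{Bir}(\mathbb{P}^2_\mathbb{C})$ on the hyperbolic space $\mathbb{H}^\infty$ constructed in Chapter \ref{chap:hyperbolicspace}. By hypothesis $\mathrm{G}$ fixes a point $v\in\mathbb{H}^\infty$, that is $\phi_*(v)=v$ for every $\phi\in\mathrm{G}$. Recall that for the polarization $\mathbf{h}=\mathbf{e}_0$ the degree of $\phi$ is given by $\deg\phi=\langle\phi_*(\mathbf{e}_0),\mathbf{e}_0\rangle=\cosh(\mathrm{dist}(\mathbf{e}_0,\phi_*\mathbf{e}_0))$ (see \S \ref{sec:degreegrowth}). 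First I would bound the distance $\mathrm{dist}(\mathbf{e}_0,\phi_*\mathbf{e}_0)$ uniformly. Since $\phi_*$ is an isometry fixing $v$, one has $\mathrm{dist}(\phi_*\mathbf{e}_0,v)=\mathrm{dist}(\mathbf{e}_0,v)$, so by the triangle inequality
\[
\mathrm{dist}(\mathbf{e}_0,\phi_*\mathbf{e}_0)\leq\mathrm{dist}(\mathbf{e}_0,v)+\mathrm{dist}(v,\phi_*\mathbf{e}_0)=2\,\mathrm{dist}(\mathbf{e}_0,v).
\]
As $v$ is fixed, the right-hand side is a constant independent of $\phi$; hence $\deg\phi\leq\cosh\bigl(2\,\mathrm{dist}(\mathbf{e}_0,v)\bigr)=:d$ for all $\phi\in\mathrm{G}$. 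This establishes the first assertion: $\mathrm{G}\subset\mathrm{Bir}_{\leq d}(\mathbb{P}^2_\mathbb{C})$, i.e. $\mathrm{G}$ has bounded degree.

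For the second assertion I would invoke the regularization machinery available in the literature. Since $\mathrm{G}$ has bounded degree, the orbit $\mathrm{G}\cdot\mathbf{e}_0$ in $\mathbb{H}^\infty$ has bounded diameter, which is exactly the condition guaranteeing that $\mathrm{G}$ is a bounded subgroup of $\mathrm{Bir}(\mathbb{P}^2_\mathbb{C})$. By the results of Blanc and Furter recalled in Chapter \ref{Chapter:algebraicsubgroup} (in particular the fact that closed subgroups of bounded degree are algebraic, Corollary \ref{cor:agree}), together with the regularization theorem of Weil (Theorem \ref{thm:Weil}), any such group of birational transformations of uniformly bounded degree can be conjugated to act by automorphisms of some smooth projective surface. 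Concretely, one applies Weil's theorem to the rational action of (the algebraic hull of) $\mathrm{G}$ to obtain a variety $Y$ on which the action is regular, then passes to a smooth projective model $S$ via equivariant resolution of singularities. This produces the birational map $\varphi\colon\mathbb{P}^2_\mathbb{C}\dashrightarrow S$ with $\varphi\circ\mathrm{G}\circ\varphi^{-1}\subset\mathrm{Aut}(S)$.

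The main obstacle I anticipate is the passage from "bounded orbit on $\mathbb{H}^\infty$" to "regularizable by automorphisms of a smooth projective surface": the degree bound gives an algebraic structure only after taking the Zariski closure, and one must check that the closure still fixes $v$ (or at least still has bounded degree) and that the regularization can be arranged on a \emph{smooth projective} surface rather than merely a quasi-projective variety. The smoothness and projectivity are handled by equivariant resolution and completion, but care is needed to ensure the whole group $\mathrm{G}$—not just a finite-index or algebraic piece—acts biregularly on the final model. I expect this to follow from the explicit correspondence between bounded subgroups and algebraic subgroups established in Chapter \ref{Chapter:algebraicsubgroup}, so the argument reduces to citing those structural results once the uniform degree bound is in hand.
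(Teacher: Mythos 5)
Your proposal is correct and follows essentially the same route as the paper: the uniform bound $\deg\phi\leq\cosh\bigl(2\,\mathrm{dist}(\mathbf{e}_0,v)\bigr)$ via the isometric action and the triangle inequality is exactly the paper's argument, and the paper likewise disposes of the second assertion by citing Weil's regularization theorem (\S \ref{sec:WeilKraft}). The extra care you take about passing through the Zariski closure (Corollary \ref{cor:agree}) and an equivariant smooth projective model is sound and merely makes explicit what the paper leaves implicit.
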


\begin{proof}
Denote by $p\in\mathbb{H}^\infty$ the fixed point of 
$\mathrm{G}$, and by $\mathbf{e}_0\in\mathbb{H}^\infty$ the 
class of a line in~$\mathbb{P}^2_\mathbb{C}$. Take 
an element $\psi$ of $\mathrm{G}$. The action of 
$\mathrm{G}$ on $\mathbb{H}^\infty$ is isometric
hence $d(\psi(\mathbf{e}_0),p)=d(\mathbf{e}_0,p)$, and so 
$d(\psi(\mathbf{e}_0),p)\leq 2d(\mathbf{e}_0,p)$. This implies 
\[
\langle\psi(\mathbf{e}_0),\mathbf{e}_0\rangle\leq \cosh(2d(\mathbf{e}_0,p))\qquad\forall\,\psi\in\mathrm{G}.
\]
Since $\langle\psi(\mathbf{e}_0),\mathbf{e}_0\rangle=\deg\psi$ the 
previous inequality can be rewritten as follows
\[
\deg\psi\leq \cosh(2d(\mathbf{e}_0,p))\qquad\forall\,\psi\in\mathrm{G},
\]
{\it i.e.} the degrees of all elements in $\mathrm{G}$
are uniformly bounded. 

According to Weil $\mathrm{G}$ can be regularized 
(\S \ref{sec:WeilKraft}).
\end{proof}

Let us recall the following statement due to Cantat:

\begin{pro}[\cite{Cantat:annals}]\label{pro:etaumilieuCantat}
Let $\Gamma$ be a finitely generated subgroup of 
$\mathrm{Bir}(\mathbb{P}^2_\mathbb{C})$ of 
elliptic elements. Then 
\begin{itemize}
\item[$\diamond$] either $\Gamma$ is bounded, 

\item[$\diamond$] or $\Gamma$ preserves a 
rational fibration, {\it i.e.} $\Gamma\subset\mathcal{J}$
up to birational conjugacy.
\end{itemize}
\end{pro}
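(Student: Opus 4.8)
The statement to prove is Proposition \ref{pro:etaumilieuCantat}: a finitely generated subgroup $\Gamma$ of elliptic elements is either bounded or preserves a rational fibration. Let me sketch my plan.

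\textbf{Overall approach.} The plan is to exploit the isometric action of $\mathrm{Bir}(\mathbb{P}^2_\mathbb{C})$ on the infinite dimensional hyperbolic space $\mathbb{H}^\infty$ constructed in Chapter \ref{chap:hyperbolicspace}. Since every element of $\Gamma$ is elliptic, each $\phi\in\Gamma$ induces an isometry $\phi_*$ of $\mathbb{H}^\infty$ that is elliptic, i.e. has a fixed point. The key dichotomy will come from whether the whole group $\Gamma$ has a \emph{common} fixed point in $\mathbb{H}^\infty$ (or its boundary) or not. This is a standard move in geometric group theory: a group acting by isometries on a $\mathrm{CAT}(0)$ space in which each element is elliptic either has a global fixed point or fixes a point at infinity, provided one has some finiteness or boundedness control.

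\textbf{Key steps, in order.} First I would observe that $\Gamma$ is finitely generated, say by $\phi_1,\ldots,\phi_k$, each elliptic, hence each $(\phi_i)_*$ fixes a point of $\mathbb{H}^\infty$. The natural strategy is to look at orbits: I would consider the orbit $\Gamma\cdot\mathbf{e}_0$ of the class $\mathbf{e}_0$ of a line and ask whether it has bounded diameter in $\mathbb{H}^\infty$. If the orbit is bounded, then by the characterization in Lemma \ref{lem:Urechtecn1} (more precisely, by the fact that $\langle\phi_*(\mathbf{e}_0),\mathbf{e}_0\rangle=\deg\phi$ together with boundedness of the orbit) the degrees of all elements of $\Gamma$ are uniformly bounded, so $\Gamma$ is a bounded subgroup and we are in the first case. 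The heart of the argument is the other alternative: if the orbit is unbounded, I would use the fact that $\mathbb{H}^\infty$ is $\mathrm{CAT}(-1)$ (hence $\mathrm{CAT}(0)$ and Gromov hyperbolic, as recalled in Chapter \ref{chap:hyperbolicspace}) to produce a fixed point at infinity $\xi\in\partial\mathbb{H}^\infty$ for the action of $\Gamma$. The mechanism is that a group generated by finitely many elliptic isometries of a complete $\mathrm{CAT}(0)$ space, if it has no bounded orbit, must fix a point of the boundary; this is where the finite generation is essential, since it guarantees that unboundedness of one orbit propagates to a genuine structural conclusion rather than to wild behaviour. I would then translate the boundary fixed point $\xi$ into geometric data on $\mathbb{P}^2_\mathbb{C}$: a fixed point of $\Gamma$ in $\partial\mathbb{H}^\infty$ corresponds to an isotropic class $v$ with $\langle v,v\rangle=0$ fixed (up to scaling) by all $\phi_*$, and such a nef isotropic class corresponds to a fibration preserved by $\Gamma$. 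Finally I would argue that this preserved fibration is rational, so that up to birational conjugacy $\Gamma\subset\mathcal{J}$, invoking Theorem \ref{thm:fibration} (Noether--Enriques) to normalize.

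\textbf{Main obstacle.} The delicate point, which I expect to be the crux, is the passage from ``every generator is elliptic and some orbit is unbounded'' to ``$\Gamma$ fixes a boundary point.'' In finite dimensional $\mathbb{H}^n$ this is classical, but $\mathbb{H}^\infty$ is not locally compact and not proper, so one cannot simply extract convergent subsequences or use compactness of the boundary. I would handle this by working with the fact that $\mathbb{H}^\infty$ is $\mathrm{CAT}(-1)$ and using the theory of isometries of complete $\mathrm{CAT}(0)$ spaces: the relevant tool is that for a group with no global fixed point but all of whose finitely many generators (and their products) are elliptic, the absence of a bounded orbit forces a canonical fixed end, obtained via a limit of circumcenters or via the Busemann/horofunction machinery. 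One must be careful that the circumcenter of an unbounded orbit does not exist, so the argument really has to land at the boundary; controlling this requires the completeness of $\mathbb{H}^\infty$ (stated in Chapter \ref{chap:hyperbolicspace}) and the hyperbolicity constant. A secondary technical point is verifying that the fixed isotropic class $v$ is \emph{nef} and not merely isotropic, so that it genuinely defines a fibration on some model above $\mathbb{P}^2_\mathbb{C}$ rather than a degenerate class; this uses that the $\phi_*$ preserve the nef cone, which is part of the construction of the action recalled earlier.
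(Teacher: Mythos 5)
The first half of your plan is sound and is indeed the machinery this paper surrounds the statement with (the proposition itself is quoted from Cantat, and the relevant tool proved in the paper is Theorem \ref{thm:weak}): a bounded orbit of $\mathbf{e}_0$ gives uniformly bounded degrees via $\deg\phi=\langle\phi_*(\mathbf{e}_0),\mathbf{e}_0\rangle$, hence a bounded group. The genuine gap is your step ``a $\Gamma$-fixed point $\xi\in\partial\mathbb{H}^\infty$ gives an invariant isotropic nef class $v$, and such a class corresponds to a fibration.'' A boundary point of $\mathbb{H}^\infty(\mathbb{P}^2_\mathbb{C})$ is a ray of isotropic classes in the \emph{completion} $\mathrm{Z}(\mathbb{P}^2_\mathbb{C})=\big\{w+\sum_p m_p\mathbf{e}_p\,\vert\,\sum m_p^2<\infty\big\}$; generically such a class has infinitely many nonzero coefficients and is not the class of a fibre of any fibration on any surface dominating $\mathbb{P}^2_\mathbb{C}$. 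Nefness does not rescue this: only classes supported at a finite level of the Picard--Manin space, proportional to a rational class $F$ with $F^2=0$ in the N\'eron--Severi group of an honest blow-up, define fibrations. The paper itself records the failure: Wright's example at the start of \S\ref{sec:ellipticgroup} is an unbounded subgroup of elliptic elements preserving no fibration; by Theorem \ref{thm:weak} it fixes a boundary point, so your mechanism applied to it would ``prove'' it preserves a fibration. Likewise, in the proof of Lemma \ref{lem:Urechtecalt} the case where the fixed boundary point ``does not correspond to the class of a fibration'' is treated separately, and it is exactly there that Proposition \ref{pro:etaumilieuCantat} is invoked --- showing the implication you assert is not formal.

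Relatedly, you locate the role of finite generation at the wrong step. Theorem \ref{thm:weak} requires no finite generation: \emph{any} subgroup of $\mathrm{O}(1,\infty)$ without loxodromic elements fixes a point of $\mathbb{H}^\infty\cup\partial\mathbb{H}^\infty$ (bounded orbits or one-point limit set, via Ghys--de la Harpe). Finite generation is needed precisely where you pass from the boundary fixed point to a rational fibration. One way it enters (this is the flavour of Cantat's argument): each generator $\phi_i$ is elliptic and fixes $\xi$, and since in a $\mathrm{CAT}(-1)$ space the geodesic ray from a fixed point of $\phi_i$ to $\xi$ is unique, $\phi_i$ fixes such a ray pointwise; asymptotic rays converge, so for every $\varepsilon>0$ there are points displaced by less than $\varepsilon$ by \emph{all} of the finitely many generators, and this uniform control is what ultimately forces the invariant class to be the class of a rational fibration. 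Without an argument of this kind your sketch proves too much, as Wright's group shows. A secondary, smaller omission: you must also exclude the genus-one case to conclude $\Gamma\subset\mathcal{J}$; if the invariant class were that of a Halphen fibration, $\Gamma$ would sit in the automorphism group of a Halphen surface, which is virtually abelian of bounded rank by Theorem \ref{thm:candol} and so feeds back into the bounded alternative --- but this is minor compared with the main gap.
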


\begin{lem}[\cite{Urech:ellipticsubgroups}]\label{lem:Urechtecalt}
Let $\mathrm{G}\subset\mathrm{Bir}(\mathbb{P}^2_\mathbb{C})$
be a group of elliptic elements. Then one of the 
following holds:
\begin{itemize}
\item[$\diamond$] $\mathrm{G}$ preserves a fibration, 
and so up to birational conjugacy either 
$\mathrm{G}\subset\mathcal{J}$, or 
$\mathrm{G}\subset\mathrm{Aut}(S)$ where $S$ is 
a Halphen surface.

\item[$\diamond$] every finitely generated subgroup 
of $\mathrm{G}$ is bounded. 
\end{itemize}

Furthermore if $\mathrm{G}$ fixes a point 
$p\in\partial\mathbb{H}^\infty$ that does not 
correspond to the class of a rational fibration, 
then the second assertion holds.
\end{lem}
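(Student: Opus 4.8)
The plan is to prove Lemma \ref{lem:Urechtecalt} by exploiting the classification of isometries of $\mathbb{H}^\infty$ together with the structure results assembled earlier in this chapter. Since every element of $\mathrm{G}$ is elliptic, each $\phi_*$ fixes a point of $\mathbb{H}^\infty$; the real issue is to understand whether the whole group $\mathrm{G}$ shares a common fixed point or a common fixed boundary point, and then to translate that fixed-point data back into concrete geometry on $\mathbb{P}^2_\mathbb{C}$.

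First I would dispose of the case where $\mathrm{G}$ fixes a point $p\in\mathbb{H}^\infty$ \emph{inside} the hyperbolic space. By Lemma \ref{lem:Urechtecn1} this forces the degrees of all elements of $\mathrm{G}$ to be uniformly bounded and allows one to regularize $\mathrm{G}$ via Weil's theorem (\S\ref{sec:WeilKraft}), so that $\mathrm{G}$ becomes a group of automorphisms of a smooth projective surface; in particular every finitely generated subgroup is bounded, which is the second alternative. Next I would treat the case where $\mathrm{G}$ fixes no point of $\mathbb{H}^\infty$ but fixes a boundary point $p\in\partial\mathbb{H}^\infty$. Here the dichotomy hinges on whether $p$ is the class of a rational fibration. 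If $p$ \emph{does} correspond to a rational fibration, then $\mathrm{G}$ preserves that fibration and hence, up to birational conjugacy, $\mathrm{G}\subset\mathcal{J}$ by Theorem \ref{thm:fibration}; if instead $p$ corresponds to an elliptic (genus one) fibration, then Lemma \ref{lem:UrechHalphen} and the Halphen surface picture (\S\ref{sec:degreegrowth}) place $\mathrm{G}$ inside $\mathrm{Aut}(S)$ for a Halphen surface $S$. The final sentence of the lemma is then exactly the assertion that when the fixed boundary point is \emph{not} a rational fibration class, we are never thrown into the $\mathcal{J}$ case, and I would show this forces the bounded-on-finitely-generated-subgroups conclusion.

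The main obstacle is the case where $\mathrm{G}$ has \emph{no} global fixed point in $\mathbb{H}^\infty\cup\partial\mathbb{H}^\infty$. To handle it I would pass to finitely generated subgroups $\Gamma\subset\mathrm{G}$ and invoke Proposition \ref{pro:etaumilieuCantat}: each such $\Gamma$ is either bounded or preserves a rational fibration. The delicate point is promoting a \emph{per-subgroup} statement to a statement about all of $\mathrm{G}$. If every finitely generated $\Gamma$ is bounded we land directly in the second alternative. If some $\Gamma$ preserves a rational fibration, the difficulty is to show that this fibration can be chosen \emph{uniformly}, i.e.\ that the whole group preserves a single fibration. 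Here I would use Lemma \ref{lem:2fibr} — an elliptic element cannot fix pointwise two distinct rational fibrations — to argue that a boundary point fixed by a sufficiently rich finitely generated subgroup is forced to be fixed by all of $\mathrm{G}$, since an element of $\mathrm{G}$ moving the fibration would, by commuting or near-commuting relations with elements of $\Gamma$, produce a second invariant fibration and hence a torsion obstruction. Combined with Lemmas \ref{lem:Urechmoinsde9}, \ref{lem:Urechmonomial} and \ref{lem:Urechsemisimple} to control the regularized algebraic situations (monomial groups are bounded; groups normalizing a semi-simple algebraic subgroup are bounded), this should close the gap and yield the stated trichotomy, with the supplementary clause following because a non-rational-fibration boundary fixed point is incompatible with the conjugacy $\mathrm{G}\subset\mathcal{J}$.
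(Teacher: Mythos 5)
There is a genuine gap, and it sits exactly where you locate your ``main obstacle''. The case in which $\mathrm{G}$ has no global fixed point in $\mathbb{H}^\infty\cup\partial\mathbb{H}^\infty$ is \emph{vacuous}: since every element of $\mathrm{G}$ is elliptic, the induced isometry group contains no loxodromic element, and Theorem \ref{thm:weak} (the weak alternative) then asserts directly that the \emph{whole group} $\mathrm{G}$ fixes a point $p\in\mathbb{H}^\infty\cup\partial\mathbb{H}^\infty$. This is the opening move of the paper's proof, and you never invoke it. Worse, your substitute argument for the phantom case would not work as stated: Lemma \ref{lem:2fibr} concerns a \emph{single} birational map fixing two rational fibrations fiberwise, and there is no reason an arbitrary element of $\mathrm{G}$ should satisfy ``commuting or near-commuting relations'' with the elements of a finitely generated $\Gamma$ preserving a fibration; no such relations are available in an abstract subgroup of $\mathrm{Bir}(\mathbb{P}^2_\mathbb{C})$, so the claimed ``second invariant fibration and hence a torsion obstruction'' cannot be produced. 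Similarly, Lemmas \ref{lem:Urechmoinsde9}, \ref{lem:Urechmonomial} and \ref{lem:Urechsemisimple} belong to the proof of Theorem \ref{thm:urechell1}, which is downstream of this lemma; they play no role here.

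You also leave the supplementary clause unproved: you say a non-fibration boundary fixed point ``forces'' boundedness of finitely generated subgroups, but the actual mechanism is the following geodesic argument, which you should supply. Let $p\in\partial\mathbb{H}^\infty$ be the global fixed point, not the class of a rational fibration, and let $\Gamma\subset\mathrm{G}$ be finitely generated. By Proposition \ref{pro:etaumilieuCantat}, either $\Gamma$ is bounded (done), or $\Gamma$ preserves a rational fibration $\mathcal{F}$, in which case $\Gamma$ fixes the point $q\in\partial\mathbb{H}^\infty$ given by the class of a general fibre of $\mathcal{F}$. Since $p$ is not the class of a rational fibration, $p\neq q$, so $\Gamma$ fixes two distinct boundary points, hence fixes the geodesic line joining them, hence fixes a point of $\mathbb{H}^\infty$; Lemma \ref{lem:Urechtecn1} then bounds the degrees of all elements of $\Gamma$ uniformly. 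Your treatment of the two easy cases (interior fixed point via Lemma \ref{lem:Urechtecn1} and Weil regularization; boundary point corresponding to a fibration giving $\mathcal{J}$ or $\mathrm{Aut}(S)$ for a Halphen surface via Theorem \ref{thm:fibration} and Lemma \ref{lem:UrechHalphen}) is correct and matches the paper, but without Theorem \ref{thm:weak} and the geodesic argument the proof does not close.
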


\begin{proof}
The group $\mathrm{G}$ fixes a point 
$p\in\mathbb{H}^\infty\cup\partial\mathbb{H}^\infty$
(Theorem \ref{thm:weak}).

If $p$ belongs to $\mathbb{H}^\infty$, then $\mathrm{G}$
is bounded. 

Let us now assume that $p$ belongs to 
$\partial\mathbb{H}^\infty$. Then either $p$ 
corresponds to the class of a general fibre of 
some fibration, or not.
\begin{itemize}
\item[$\diamond$] If $p$ corresponds to the class 
of a general fibre of some fibration 
$\pi\colon Y\to\mathbb{P}^1_\mathbb{C}$
where $Y$ is a rational surface, then $\mathrm{G}$ 
preserves this fibration and is thus conjugate to a
subgroup of~$\mathcal{J}$ (if the fibration is 
rational), or to a subgroup of $\mathrm{Aut}(S)$ 
where $S$ is a Halphen surface (if the fibration 
consists of curves of genus $1$). 

\item[$\diamond$] Suppose now that
$p$ does not correspond to the class of a fibration.
Let $\Gamma$ be a finitely generated subgroup of 
$\mathrm{G}$. Then either $\Gamma$ is bounded, or
$\Gamma$ preserves a rational fibration 
(Proposition \ref{pro:etaumilieuCantat}). 
If $\Gamma$ preserves a rational fibration 
$\mathcal{F}$, then $\Gamma$ fixes a point 
$q\in\partial\mathbb{H}^\infty$ that corresponds
to the class of $\mathcal{F}$. Hence $p$ and $q$
are two distinct points preserved by~$\mathrm{G}$ 
and $\mathrm{G}$ fixes the geodesic
line through $p$ and $q$. In particular 
$\mathrm{G}$ fixes a point in $\mathbb{H}^\infty$
and according to Lemma \ref{lem:Urechtecn1} 
the degrees of all elements in $\mathrm{G}$ are 
uniformly bounded.
\end{itemize}
\end{proof}

\begin{proof}[Proof of Theorem \ref{thm:urechell1}]
Consider a subgroup $\mathrm{G}$ of 
$\mathrm{Bir}(\mathbb{P}^2_\mathbb{C})$ of elliptic
elements. According to Lemma \ref{lem:Urechtecn1}
either $\mathrm{G}$ preserves a rational fibration,
or any finitely generated subgroup of $\mathrm{G}$
is bounded. 

Assume that any finitely generated subgroup of 
$\mathrm{G}$ is bounded. Set
\[
n:=\sup\big\{\dim\overline{\Gamma}\,\vert\,\Gamma\subset\mathrm{G}\text{ finitely generated}\big\}.
\]
\begin{itemize}
\item[$\diamond$] If $n=0$, then $\mathrm{G}$ is a 
torsion group.

\item[$\diamond$] If $n=+\infty$, then take 
$\Gamma$ a finitely generated subgroup of
$\mathrm{G}$ such that $\dim\overline{\Gamma}\geq 9$.
By Lemma~\ref{lem:Urechmoinsde9} the group 
$\Gamma$ preserves a unique fibration and this 
fibration is, again by Lemma \ref{lem:Urechmoinsde9}, 
preserved as well by $\langle\Gamma,\,\phi\rangle$ 
for any $\phi$ in $\mathrm{G}$.

\item[$\diamond$] Assume now $n\in\mathbb{N}^*$.
Let $\Gamma$ be a finitely generated subgroup of 
$\mathrm{G}$ such that $\dim\overline{\Gamma}~=~n$. 
Let~$\overline{\Gamma}^0$ be the neutral component
of $\Gamma$. For any $\varphi\in\mathrm{G}$
the group 
$\langle\overline{\Gamma}^0,\varphi\circ\overline{\Gamma}^0\circ\varphi^{-1}\rangle$ 
is connected and contained in 
$\langle\overline{\Gamma,\varphi\circ\Gamma\circ\varphi^{-1}}\rangle$
which is finitely generated and thus of dimension
less or equal to $n$. As a consequence
$\langle\overline{\Gamma}^0,\varphi\circ\overline{\Gamma}^0\circ\varphi^{-1}\rangle=\overline{\Gamma}^0$
for any $\varphi\in\mathrm{G}$ and 
$\overline{\Gamma}^0$ is normalized by 
$\mathrm{G}$. If $\overline{\Gamma}^0$ is 
semi-simple, Lemma \ref{lem:Urechsemisimple} 
allows to conclude. Assume that 
$\overline{\Gamma}^0$ is not semi-simple. 
Denote by $R$ the radical of $\overline{\Gamma}^0$, 
{\it i.e.} $R$ is the maximal
connected normal solvable subgroup of 
$\overline{\Gamma}^0$. Since $\overline{\Gamma}^0$ is 
semi-simple the inequality $\dim R>0$ holds.
The radical is unique hence preserved by 
$\mathrm{Aut}(\overline{\Gamma}^0)$ and in 
particular normalized by $\mathrm{G}$. Denote
by 
\[
R^{(\ell+1)}=\big\{\mathrm{id}\big\}\subsetneq R^{(\ell)}\subset\ldots\subset R^{(2)}\subset R^{(1)}\subset R^{(0)}=R
\]
the derived series of $R$ ({\it i.e.} 
$R^{(k+1)}=[R^{(k)},R^{(k)}]$). Note that $\dim R^{(\ell)}>0$
and $R^{(\ell)}$ is abelian. This series is 
invariant under $\mathrm{Aut}(\overline{\Gamma}^0)$,
and so invariant under conjugation by elements 
of $\mathrm{G}$. In particular $\mathrm{G}$
normalizes $R^{(\ell)}$. Since $R^{(\ell)}$ is bounded, $R^{(\ell)}$ is 
conjugate to one of the groups of Theorem
\ref{thm:blanc11cases}; in particular $R^{(\ell)}$ can 
be regularized. In other words, up to birational
conjugacy, $\mathrm{G}$ is a subgroup of $\mathrm{Bir}(S)$ 
for some smooth projective surface $S$ on which
$R^{(\ell)}$ acts regularly. If all the orbits of $R^{(\ell)}$
have dimension $\leq 1$, then $\mathrm{G}$
preserves a rational fibration. Assume that
$R^{(\ell)}$ has an open orbit $\mathcal{O}$. 
The group~$\mathrm{G}$ normalizes $R^{(\ell)}$, so $\mathrm{G}$
acts regularly on $\mathcal{O}$. The action of 
$R^{(\ell)}$ is faithful; as a result $\dim R^{(\ell)}=1$ 
and $R^{(\ell)}\simeq \mathbb{C}^2$, or 
$R^{(\ell)}\simeq\mathbb{C}^*\times\mathbb{C}$, or 
$R^{(\ell)}\simeq\mathbb{C}^*\times\mathbb{C}^*$. 
If $R^{(\ell)}\simeq\mathbb{C}^2$, then $\mathcal{O}$
is isomorphic to the affine plane, and the 
action of $R^{(\ell)}$ on $\mathcal{O}$ is given by 
translations. But the normalizer of $\mathbb{C}^2$
in $\mathrm{Aut}(\mathbb{A}^2_\mathbb{C})$ is the group of
affine maps 
$\mathrm{GL}(2,\mathbb{C})\ltimes\mathbb{C}^2$ 
hence $\mathrm{G}$ is bounded. If 
$R^{(\ell)}\simeq\mathbb{C}^*\times\mathbb{C}$, then we 
similarly get the inclusion 
$\mathrm{G}\subset\mathrm{Aut}(\mathbb{C}^*\times\mathbb{C})$.
The $\mathbb{C}$-fibration of 
$\mathbb{C}^*\times\mathbb{C}$ is given by the 
invertible functions; it is thus preserved by 
$\mathrm{Aut}(\mathbb{C}^*\times\mathbb{C})$.
In particular $\mathrm{G}$ preserves a rational
fibration. If 
$R^{(\ell)}\simeq\mathbb{C}^*\times\mathbb{C}^*$, then
elements of $\mathrm{G}$ are monomial maps, 
and Lemma \ref{lem:Urechmonomial} allows to conclude.
\end{itemize}
\end{proof}

%%%%%%%%%%%%%%%%%%%%%%%%%%%%%%%%%%%%%%%%%%%%%%%%%%%%%%%%%%%%%%%%%%%%%%%%%%%%%%%%%%%%%%%%%%%%%%%%%%%%%%%%%%%%%%%%%%%
%%%%%%%%%%%%%%%%%%%%%%%%%%%%%%%%%%%%%%%%%%%%%%%%%%%%%%%%%%%%%%%%%%%%%%%%%%%%%%%%%%%%%%%%%%%%%%%%%%%%%%%%%%%%%%%%%%%
% section
%%%%%%%%%%%%%%%%%%%%%%%%%%%%%%%%%%%%%%%%%%Urech:ellipticsubgroups%%%%%%%%%%%%%%%%%%%%%%%%%%%%%%%%%%%%%%%%%%%%%%%%%%%%%%%%%%%%%%%%%%%%%%%%%
%%%%%%%%%%%%%%%%%%%%%%%%%%%%%%%%%%%%%%%%%%%%%%%%%%%%%%%%%%%%%%%%%%%%%%%%%%%%%%%%%%%%%%%%%%%%%%%%%%%%%%%%%%%%%%%%%%%

\section{Torsion subgroups of the Cremona group}

As we have seen at the beginning of 
\S \ref{sec:ellipticgroup} some torsion groups
can be embedded into 
$\mathrm{Bir}(\mathbb{P}^2_\mathbb{C})$ in such a 
way that they neither are bounded, nor preserve 
any fibration. However the group structure of 
torsion subgroups can be specified:

\begin{thm}[\cite{Urech:ellipticsubgroups}]\label{thm:urechtorsion}
A torsion subgroup $\mathrm{G}$ of 
$\mathrm{Bir}(\mathbb{P}^2_\mathbb{C})$ is
isomorphic to a bounded subgroup of 
$\mathrm{Bir}(\mathbb{P}^2_\mathbb{C})$. 

Furthermore $\mathrm{G}$ is isomorphic to 
a subgroup of $\mathrm{GL}(48,\mathbb{C})$.
\end{thm}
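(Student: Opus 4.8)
The goal is to show that a torsion subgroup $\mathrm{G}$ of $\mathrm{Bir}(\mathbb{P}^2_\mathbb{C})$ is isomorphic to a bounded subgroup and in fact embeds into $\mathrm{GL}(48,\mathbb{C})$. The first observation is that $\mathrm{G}$ is an elliptic subgroup: every torsion element is of finite order and hence elliptic. Thus Theorem~\ref{thm:urechell1} applies, and $\mathrm{G}$ falls into one of three cases, namely $\mathrm{G}$ is bounded, or $\mathrm{G}$ preserves a rational fibration, or $\mathrm{G}$ is a torsion group. The plan is to reduce all cases to the bounded one. If $\mathrm{G}$ is already bounded we are done with the first assertion; if $\mathrm{G}$ preserves a rational fibration we may assume $\mathrm{G}\subset\mathcal{J}$ up to conjugacy, so the real work is to understand torsion subgroups of the Jonqui\`eres group and of automorphism groups of Halphen surfaces, and to produce from an abstract torsion group an \emph{isomorphic} bounded realization.

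\textbf{Key steps.} First I would treat the case $\mathrm{G}\subset\mathcal{J}\simeq\mathrm{PGL}(2,\mathbb{C})\rtimes\mathrm{PGL}(2,\mathbb{C}(z_0))$. Via the projection $\mathrm{pr}_2\colon\mathcal{J}\to\mathrm{PGL}(2,\mathbb{C})$ one studies the torsion subgroup $\mathrm{pr}_2(\mathrm{G})$ of $\mathrm{PGL}(2,\mathbb{C})$ and the torsion subgroup $\mathrm{G}\cap\ker(\mathrm{pr}_2)$ inside $\mathrm{PGL}(2,\mathbb{C}(z_0))$. The classification of finite subgroups of $\mathrm{PGL}(2,\cdot)$ (cyclic, dihedral, and the three exceptional polyhedral groups) severely constrains torsion subgroups, and a Jordan-type bound on the orders of finite subgroups of $\mathrm{PGL}(2,\mathbb{C})$ keeps the structure tame. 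The delicate part is that $\mathrm{G}$ may be an infinite torsion group; here I expect to invoke model theory in the spirit of Malcev's argument on linear groups, realizing an abstract infinite torsion group as a bounded subgroup by a compactness/ultraproduct argument rather than by direct geometric normalization. The target dimension $48$ should emerge by combining the linear embeddings of the relevant algebraic subgroups obtained earlier: Lemma~\ref{lem:te1} embeds finite automorphism groups of del Pezzo surfaces into $\mathrm{GL}(8,\mathbb{C})$, Lemma~\ref{lem:te2} and Lemma~\ref{lem:ru4} give embeddings of the degree-$6$ case and of del Pezzo automorphism groups into $\mathrm{GL}(8,\mathbb{C})$, while conic-bundle cases embed into $\mathrm{PGL}(2,\mathbb{C})\times\mathrm{PGL}(2,\mathbb{C})$ by Lemmas~\ref{lem:ru1}, \ref{lem:ru2}, \ref{lem:ru3}. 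Assembling the finitely many possible linear representations coming from the eleven families of Theorem~\ref{thm:blanc11cases} and taking a sufficiently large direct sum produces the uniform bound $48$.

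\textbf{Main obstacle.} The hard part will be passing from a \emph{finitely generated} torsion subgroup, which one can regularize and embed linearly by the boundedness results of Theorem~\ref{thm:urechell1} together with Lemma~\ref{lem:ru4}, to an \emph{arbitrary} (possibly infinite, possibly non-finitely-generated) torsion group $\mathrm{G}$. For an infinite torsion group there need be no single surface on which all of $\mathrm{G}$ acts regularly, so one cannot simply quote the algebraic-subgroup classification directly. My strategy for this obstacle is a model-theoretic one: every finitely generated subgroup of $\mathrm{G}$ admits a faithful representation into $\mathrm{GL}(48,\mathbb{C})$ with uniformly bounded dimension, and by a compactness argument (passing to an ultraproduct of the finite-dimensional representations, or applying the local-to-global principle for linearity of abstract groups) one upgrades this to a faithful representation of all of $\mathrm{G}$ into $\mathrm{GL}(48,\mathbb{C})$ over a possibly larger algebraically closed field, which by the Lefschetz principle can be taken to be $\mathbb{C}$. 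Once $\mathrm{G}$ is linear and torsion, its image is conjugate into a bounded subgroup, which finishes the first assertion. I expect the bookkeeping of the uniform bound $48$ across the eleven families, and the verification that the ultraproduct representation remains faithful on $\mathrm{G}$, to be the two points requiring the most care.
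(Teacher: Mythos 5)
Your central instinct --- a Malcev-style model-theoretic compactness argument to pass from finitely generated subgroups to all of $\mathrm{G}$ --- is exactly the paper's route: the paper encodes a putative embedding as a ``mixed system'' of polynomial conditions, applies Malcev's compactness theorem (Theorem \ref{thm:Malcev}), and uses a cardinality argument to land back in $\mathbb{C}$. But your treatment of the finitely generated case is circular: for a torsion group the third alternative of Theorem \ref{thm:urechell1} simply returns ``$\mathrm{G}$ is a torsion group'' and gives no information, and you cannot ``regularize'' from it. What actually makes the finite subsystems compatible is Theorem \ref{thm:burnside} (the Burnside property for the Cremona group): a finitely generated torsion subgroup of $\mathrm{Bir}(\mathbb{P}^2_\mathbb{C})$ is \emph{finite}, hence bounded and covered by Blanc's classification (Theorem \ref{thm:blanc11cases}). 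In particular no analysis of torsion subgroups of $\mathcal{J}$ or of Halphen surfaces occurs anywhere --- the paper's proof is organized around the eleven families, not around the trichotomy of Theorem \ref{thm:urechell1}.

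There are two further genuine gaps. First, running compactness with target $\mathrm{GL}(48,\mathbb{C})$ can only yield linearity, and your closing inference ``once $\mathrm{G}$ is linear and torsion, its image is conjugate into a bounded subgroup'' does not parse: a subgroup of $\mathrm{GL}(48,\mathbb{C})$ is not a subgroup of $\mathrm{Bir}(\mathbb{P}^2_\mathbb{C})$, and the first assertion requires an isomorphism onto a bounded subgroup. To get this the mixed system must be run inside a \emph{fixed} algebraic subgroup of $\mathrm{Bir}(\mathbb{P}^2_\mathbb{C})$, with its defining polynomial equations among the conditions (e.g.\ the adjoint copy $\mathrm{PGL}(3,\mathbb{C})\subset\mathrm{GL}(8,\mathbb{C})$), and this in turn requires all finitely generated subgroups of $\mathrm{G}$ to embed into the \emph{same} family --- knowing each embeds in \emph{some} family is not enough. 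This uniformization is the real crux, and the paper achieves it by a trick you are missing: if for each of $\mathrm{Aut}(\mathbb{P}^2_\mathbb{C})$, $\mathrm{Aut}(S_6)$, $\mathrm{Aut}(\mathbb{P}^1_\mathbb{C}\times\mathbb{P}^1_\mathbb{C})$, $\mathrm{Aut}(\mathbb{F}_{2n})$ there is a finitely generated subgroup $\Gamma_i$ failing to embed there, one enlarges $\Gamma=\langle\Gamma_1,\Gamma_2,\Gamma_3,\Gamma_4\rangle$ to have order $>648$, so that by Theorem \ref{thm:648} the group $\langle\Gamma,\mathrm{H}\rangle$, for every finitely generated $\mathrm{H}\subset\mathrm{G}$, is forced into the conic-bundle cases, whence $\mathrm{H}$ embeds into $\mathrm{PGL}(2,\mathbb{C})\times\mathrm{PGL}(2,\mathbb{C})$ by Lemmas \ref{lem:ru1}, \ref{lem:ru2}, \ref{lem:ru3}. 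Second, your accounting for $48$ is off: a given $\mathrm{G}$ lies in a single family, so no direct sum across the eleven families arises (nor would a sum bound the dimension --- the bound is a maximum). The constant $48$ comes from one worst case, namely finite automorphism groups of $\big(\mathbb{Z}/2\mathbb{Z}\big)^2$-conic bundles, which embed into $\mathrm{GL}(48,\mathbb{C})$ by a lemma from Urech's thesis; every other case already lands in $\mathrm{GL}(8,\mathbb{C})$ or smaller.
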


Malcev used model theory to prove 
that if for a given group $\mathrm{G}$ 
every finitely generated subgroup can be 
embedded into $\mathrm{GL}(n,\Bbbk)$ 
for some field $\Bbbk$, then there exists
a field $\Bbbk'$ such that $\mathrm{G}$ 
can be embedded into $\mathrm{GL}(n,\Bbbk')$. 
Let us briefly introduce the compactness
theory from model theory; it states that a 
set of first order sentences has a model 
if and only if any of its finite subsets
has a model.

\begin{defi}
Let $\{x_i\}_{i\in I}$ be a set of 
variables. A 
\textsl{condition}\index{defi}{condition}
is an expression of the form
\[
P(x_{i_1},x_{i_2},\ldots,x_{i_k})=0
\]
or an expression of the form
\[
\big(P_1(x_{i_1},x_{i_2},\ldots,x_{i_k})\not=0\big) \vee \big(P_2(x_{i_1},x_{i_2},\ldots,x_{i_k})\not=0\big)\vee
\ldots\vee
\big(P_\ell(x_{i_1},x_{i_2},\ldots,x_{i_k})\not=0\big)
\]
where $P$ and the $P_i$'s are polynomials
with integer coefficients.
\end{defi}

\begin{defi}
A \textsl{mixed system}\index{defi}{mixed system}
is a set of conditions.
\end{defi}

\begin{defi}
A mixed system $S$ is 
\textsl{compatible}\index{defi}{compatible (mixed system)}
if there exists a field $\Bbbk$ which 
contains values $\{y_i\}_{i\in I}$ that 
satisfy $S$.
\end{defi}

\begin{thm}[\cite{Malcev}]\label{thm:Malcev}
If every finite subset of a mixed system $S$
is compatible, then $S$ is compatible.
\end{thm}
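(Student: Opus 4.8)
The plan is to deduce this directly from the compactness theorem of first-order logic, stated just above, by encoding the mixed system $S$ as a first-order theory in the language of fields and translating "compatibility" into "having a model". First I would fix the language $\mathcal{L}=\{+,\cdot,-,0,1\}$ of rings, augmented by a family of new constant symbols $(c_i)_{i\in I}$, one for each variable $x_i$ occurring in $S$; the cardinality of $I$ is irrelevant, since the compactness theorem holds for languages of arbitrary size. Every polynomial $P\in\mathbb{Z}[x_{i_1},\ldots,x_{i_k}]$ can then be rewritten as an $\mathcal{L}$-term $t_P(c_{i_1},\ldots,c_{i_k})$: a positive integer coefficient $m$ is encoded by $1+1+\cdots+1$ ($m$ times), a negative coefficient uses the symbol $-$, and products of variables are encoded using $\cdot$.

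Next I would turn each condition of $S$ into an $\mathcal{L}$-sentence. A condition $P=0$ becomes the atomic sentence $t_P=0$, while a disjunctive condition $\bigvee_{j=1}^\ell (P_j\neq 0)$ becomes $\bigvee_{j=1}^\ell \neg(t_{P_j}=0)$. Let $\Sigma$ be the set of all these sentences, and set $T=\mathrm{Fields}\cup\Sigma$, where $\mathrm{Fields}$ denotes the finite first-order axiom system for fields (associativity, commutativity, distributivity, $0\neq 1$, existence of multiplicative inverses for nonzero elements, and so on). By construction an $\mathcal{L}$-structure is a model of $T$ precisely when it is a field $\Bbbk$ in which the interpretations $y_i$ of the constants $c_i$ satisfy every condition of $S$; hence $S$ is compatible if and only if $T$ has a model.

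It then remains to check the compactness hypothesis. Any finite subset $T_0\subseteq T$ consists of finitely many field axioms together with finitely many sentences from $\Sigma$, and the latter correspond to a finite subset $S_0\subseteq S$. By assumption $S_0$ is compatible, so there are a field $\Bbbk_0$ and values $\{y_i\}$ satisfying $S_0$; interpreting in $\Bbbk_0$ the constants appearing in $T_0$ by these values (and any remaining constants arbitrarily) turns $\Bbbk_0$ into a model of $T_0$, since a field automatically satisfies $\mathrm{Fields}$. Thus every finite subset of $T$ has a model, and by the compactness theorem $T$ itself has a model, which is exactly a field witnessing the compatibility of $S$.

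The routine but essential point, and the step I expect to require the most care, is the faithful translation between algebraic conditions and genuine first-order sentences: one must verify that every polynomial with integer coefficients is expressible as an $\mathcal{L}$-term, and that equalities become atomic formulas while finite disjunctions of inequalities become finite disjunctions of negated atomic formulas, so that \emph{satisfaction in a field} coincides with \emph{satisfying the condition}. Once this dictionary is in place, the statement is an immediate application of compactness; alternatively, one could avoid invoking the compactness theorem as a black box and instead take, for each finite $S_0$, a compatible field $\Bbbk_{S_0}$, and form an ultraproduct of the $\Bbbk_{S_0}$ along an ultrafilter refining the filter of cofinal subsets of the directed set of finite subsets of $S$, the class of fields being closed under ultraproducts.
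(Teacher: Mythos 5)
Your proof is correct and takes exactly the route the paper intends: the paper gives no proof of its own (it cites Mal'cev and states the first-order compactness theorem immediately beforehand precisely so that the result reads as its application), and your encoding of a mixed system as the theory consisting of the field axioms together with the quantifier-free sentences $t_P=0$ and $\bigvee_j\neg(t_{P_j}=0)$ in the ring language with constants $(c_i)_{i\in I}$ is the standard way of filling in the details, including the observation that compactness holds for languages of arbitrary cardinality. Your ultraproduct variant via \L{}o\'s's theorem is a valid equivalent formulation and needs no further comment.
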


Let us now explain the proof of Theorem
\ref{thm:urechtorsion}. Let $\mathrm{G}$
be a torsion subgroup of 
$\mathrm{Bir}(\mathbb{P}^2_\mathbb{C})$.
If~$\mathrm{G}$ is finite, then $\mathrm{G}$
is bounded; we can thus assume that 
$\mathrm{G}$ is infinite. Following
Theorem \ref{thm:blanc11cases} we will 
deal with different cases.

\begin{itemize}
\item[$\diamond$] First assume that every
finitely generated subgroup of $\mathrm{G}$
is isomorphic to a subgroup of 
$\mathrm{PGL}(3,\mathbb{C})$. Consider the 
closed embedding $\rho$ of 
$\mathrm{PGL}(3,\mathbb{C})$ into 
$\mathrm{GL}(8,\mathbb{C})$ given by the 
adjoint representation. Let $P_1$, $P_2$, 
$\ldots$, $P_n$ be polynomials in the set 
of variables $\{x_{ij}\}_{1\leq i,\,j\leq 8}$
such that 
$\rho(\mathrm{PGL}(3,\mathbb{C}))\subset\mathrm{GL}(8,\mathbb{C})$
is the zero set of $P_1$, $P_2$, $\ldots$,
$P_n$. To any element $g\in\mathrm{G}$ we
associate a $8\times 8$ matrix of 
variables $(x_{ij}^g)$. Consider the 
following mixed system $S$ defined by
\begin{enumerate}
\item[(1)] the equations 
$(x_{ij}^f)(x_{ij}^g)=(x_{ij}^h)$ for all $f$,
$g$, $h\in\mathrm{G}$ such that $f\circ g=h$;

\item[(2)] the conditions 
$\big(\displaystyle\bigvee_i x_{ii}^g-1\not=0\big)\vee\big(\displaystyle\bigvee_{i\not=j} x_{ij}^g-1\not=0\big)$; 

\item[(3)] $x_{ii}^{\mathrm{id}}-1=0$ and
$x_{ij}^{\mathrm{id}}=0$ for all 
$1\leq i\not=j\leq N$;

\item[(4)] $P_k(\{x_{ij}\})=0$ for all 
$1\leq k\leq n$, for all $g\in\mathrm{G}$;

\item[(5)] $p\not=0$ for all 
$p\in\mathbb{Z}^+$ primes.
\end{enumerate}

\begin{lem}[\cite{Urech:ellipticsubgroups}]
The system $S$ is compatible.
\end{lem}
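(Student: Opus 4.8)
The system $S$ is compatible precisely when there is a field $\Bbbk$ together with values $\{x_{ij}^g\}$ satisfying all the listed conditions, and by the Malcev compactness principle (Theorem \ref{thm:Malcev}) it suffices to verify that every \emph{finite} subset of $S$ is compatible. A finite subset of $S$ involves only finitely many elements $g_1,g_2,\ldots,g_m$ of $\mathrm{G}$, hence only finitely many variables and conditions. My plan is therefore to reduce to the subgroup $\Gamma=\langle g_1,g_2,\ldots,g_m\rangle$ of $\mathrm{G}$ generated by these finitely many elements and to produce, in one stroke, a simultaneous assignment of matrices over $\mathbb{C}$ that realizes all the relations among the $g_i$ while recording the group structure faithfully.

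\textbf{Key steps, in order.} First I would observe that $\Gamma$ is a finitely generated torsion subgroup of $\mathrm{G}$; by the standing hypothesis of this case every such $\Gamma$ embeds into $\mathrm{PGL}(3,\mathbb{C})$. Fix such an embedding $\iota\colon\Gamma\hookrightarrow\mathrm{PGL}(3,\mathbb{C})$. Next, compose with the adjoint embedding $\rho\colon\mathrm{PGL}(3,\mathbb{C})\hookrightarrow\mathrm{GL}(8,\mathbb{C})$, which is a closed embedding, to obtain a faithful homomorphism $\rho\circ\iota\colon\Gamma\to\mathrm{GL}(8,\mathbb{C})$ landing inside the zero set of $P_1,P_2,\ldots,P_n$. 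For each element $g$ appearing in the chosen finite subset I would then set $(x_{ij}^g)=(\rho\circ\iota)(g)$, taking these matrix entries as elements of $\Bbbk=\mathbb{C}$. It remains to check that this assignment satisfies each type of condition: the multiplicativity equations (1) hold because $\rho\circ\iota$ is a homomorphism; condition (3) holds because $(\rho\circ\iota)(\mathrm{id})$ is the identity matrix; condition (4) holds because the image lands in $\rho(\mathrm{PGL}(3,\mathbb{C}))$, the common zero locus of the $P_k$; condition (5) holds because $\mathbb{C}$ has characteristic zero, so no positive prime vanishes; and condition (2) holds precisely because $\rho\circ\iota$ is \emph{injective}, so whenever $g\neq\mathrm{id}$ the matrix $(\rho\circ\iota)(g)$ differs from the identity in at least one entry, making one of the displayed disequalities true.

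\textbf{The main obstacle.} The delicate point is not any single relation but the \emph{uniformity} of the embedding target: Malcev's theorem allows the field and the realizing values to depend on the finite subset, but to conclude that $S$ itself is compatible I must ensure each finite subsystem is realized over a field of the same characteristic and with matrices of the same fixed size $8\times 8$ cut out by the same polynomials $P_k$. The fixed ambient group $\rho(\mathrm{PGL}(3,\mathbb{C}))\subset\mathrm{GL}(8,\mathbb{C})$ is exactly what supplies this uniformity, so the hypothesis ``every finitely generated subgroup embeds into $\mathrm{PGL}(3,\mathbb{C})$'' is doing the essential work; condition (2) encoding injectivity as a first-order disjunction is the technically subtle ingredient, since faithfulness must be expressible without negating equalities of the full matrices at once. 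Once compatibility of every finite subset is established, Theorem \ref{thm:Malcev} yields a single field $\Bbbk'$ and a faithful representation $\mathrm{G}\hookrightarrow\mathrm{GL}(8,\Bbbk')$ of the whole torsion group, which is the conclusion feeding into the remaining cases of Theorem \ref{thm:urechtorsion}.
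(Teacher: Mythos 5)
Your proof is correct and follows essentially the same route as the paper: reduce via Malcev's compactness theorem to a finite subsystem, apply the standing hypothesis to the finitely generated subgroup $\Gamma$ generated by the elements appearing there, and realize the values through the fixed adjoint embedding $\mathrm{PGL}(3,\mathbb{C})\hookrightarrow\mathrm{GL}(8,\mathbb{C})$. The only cosmetic difference is that the paper first invokes the Burnside property (Theorem~\ref{thm:burnside}) to note $\Gamma$ is finite before applying the hypothesis, a step you legitimately bypass since the hypothesis as stated covers all finitely generated subgroups, and you verify conditions $(1)$--$(5)$ explicitly where the paper leaves this implicit.
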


\begin{proof}
According to Theorem \ref{thm:Malcev} it
suffices to show that every finite subset
of $S$ is compatible. Let $c_1$, $c_2$, 
$\ldots$, $c_n\in S$ be finitely many 
conditions. Only finitely many variables
$x_{ij}^g$ appear in $c_1$, $c_2$,
$\ldots$, $c_n$. Let 
$\big\{g_1,\,g_2,\,\ldots,\,g_\ell\big\}\subset\mathrm{G}$ 
be the finite set of all elements 
$g\in\mathrm{G}$ such that for some 
$1\leq i,\,j\leq 8$ the variable $x_{ij}^g$ 
appears in one of the conditions $c_1$, $c_2$, 
$\ldots$, $c_n$.

Consider the finitely generated subgroup
$\Gamma=\langle g_1,\,g_2,\,\ldots,\,g_\ell\rangle$
of $\mathrm{G}$. By Theorem~\ref{thm:burnside}
the group $\Gamma$ is finite. Therefore, by 
assumption $\Gamma$ has a faithful representation
to $\mathrm{PGL}(3,\mathbb{C})$. This 
representation implies that $\mathbb{C}$ 
contains values that satisfy the conditions
$c_1$, $c_2$, $\ldots$, $c_n$. In other 
words $S$ is compatible.
\end{proof}

As a result there exists a field $\Bbbk$ such
that $\Bbbk$ contains values $y_{ij}^g$ for
all $1\leq i,\,j\leq 8$ and all $g\in\mathrm{G}$
satisfying conditions $(1)$ to $(5)$. Condition
$(5)$ asserts that the characteristic of $\Bbbk$ is~$0$. 
The group $\mathrm{G}$ has at most the 
cardinality of the continuum since 
$\mathrm{G}\subset\mathrm{Bir}(\mathbb{P}^2_\mathbb{C})$;
the values $\{y_{ij}^g\}$ are thus contained 
in a subfield $\Bbbk'$ of $\Bbbk$ that has 
the same cardinality as $\mathbb{C}$. Hence
$\Bbbk'$ can be embedded into $\mathbb{C}$ 
as a subfield. Hence we may suppose that 
$\Bbbk=\mathbb{C}$. Consider the map
\begin{align*}
&\varphi\colon\mathrm{G}\to\mathrm{PGL}(3,\mathbb{C}),
&& g\mapsto (y_{ij}^g)_{i,j}.
\end{align*}
Note that 
\begin{itemize}
\item[- ] conditions $(1)$ imply that the image of any element 
of~$\mathrm{G}$ is an invertible matrix and that $\varphi$ is
a group automorphism;

\item[- ] conditions $(2)$ lead that this automorphism 
is injective;

\item[- ] conditions $(3)$ imply $\varphi(\mathrm{id})=\mathrm{id}$;

\item[- ] conditions $(4)$ lead that 
$\varphi(\mathrm{G})\subset\mathrm{PGL}(3,\mathbb{C})\subset\mathrm{GL}(8,\mathbb{C})$.
\end{itemize}

\item[$\diamond$] Denote by $S_6$ the del Pezzo
surface of degree $6$. If any finitely generated 
subgroup of $\mathrm{G}$ can be embedded into 
$\mathrm{Aut}(S_6)\simeq\mathrm{D}_2\rtimes\Big(\faktor{\mathbb{Z}}{2\mathbb{Z}}\times\mathfrak{S}_3\Big)$ 
a similar reasoning leads to: $\mathrm{G}$ is 
isomorphic to a subgroup of $\mathrm{Aut}(S_6)$.

\item[$\diamond$] If any finitely generated 
subgroup of $\mathrm{G}$ can be embedded 
into 
\[
\mathrm{Aut}(\mathbb{P}^1_\mathbb{C}\times\mathbb{P}^1_\mathbb{C})\simeq\big(\mathrm{PGL}(2,\mathbb{C})\times\mathrm{PGL}(2,\mathbb{C})\big)\rtimes\faktor{\mathbb{Z}}{2\mathbb{Z}},
\]
then $\mathrm{G}$ is isomorphic to a subgroup of
$\mathrm{Aut}(\mathbb{P}^1_\mathbb{C}\times\mathbb{P}^1_\mathbb{C})$.  

\item[$\diamond$] If any finitely generated subgroup
of $\mathrm{G}$ can be embedded into 
\[
\mathrm{Aut}(\mathbb{F}_{2n})\simeq\mathbb{C}[z_0,z_1]_{2n}\rtimes\faktor{\mathrm{GL}(2,\mathbb{C})}{\mu_{2n}}
\] 
for some $n>0$ (and not necessarily the same for 
all finitely generated subgroups of $\mathrm{G}$), 
then $\mathrm{G}$ is isomorphic to a subgroup of 
$\mathrm{GL}(2,\mathbb{C})$ and thus can be 
embedded in $\mathrm{PGL}(3,\mathbb{C})$.

\item[$\diamond$] It remains to consider the case 
where $\mathrm{G}$ contains
\begin{itemize}
\item[-] a finitely generated subgroup $\Gamma_1$
that can not be embedded into 
$\mathrm{Aut}(\mathbb{P}^2_\mathbb{C})$,

\item[-] a finitely generated subgroup $\Gamma_2$
that can not be embedded into 
$\mathrm{Aut}(S_6)$,

\item[-] a finitely generated subgroup $\Gamma_3$
that can not be embedded into 
$\mathrm{Aut}(\mathbb{P}^1_\mathbb{C}\times\mathbb{P}^1_\mathbb{C})$,

\item[-] a finitely generated subgroup $\Gamma_4$
that can not be embedded into 
$\mathrm{Aut}(\mathbb{F}_{2n})$ for all $n>0$.
\end{itemize}
The finitely generated subgroup 
$\Gamma=\langle\Gamma_1,\,\Gamma_2,\,\Gamma_3,\,\Gamma_4\rangle$
is not isomorphic to any subgroup of infinite
automorphisms group of a del Pezzo
surface. Adding finitely many elements if 
needed we may assume that $\Gamma$ has order 
$>648$; as a consequence $\Gamma$ is isomorphic
neither to any subgroup of an automorphisms 
group of a del Pezzo surface 
(Theorem~\ref{thm:648}), nor to a subgroup of 
$\mathrm{Aut}(\mathbb{F}_{2n})$ for all $n>0$. 
Consider a finitely generated 
subgroup~$\mathrm{H}$ of $\mathrm{G}$. The finitely 
generated subgroup 
$\langle\Gamma,\,\mathrm{H}\rangle$, and in
particular $\mathrm{H}$, is isomorphic to 
a subgroup of (Theorem \ref{thm:blanc11cases})
\begin{itemize}
\item either $\mathrm{Aut}(S,\pi)$ where 
$\pi\colon S\to\mathbb{P}^1_\mathbb{C}$ is 
an exceptional conic bundle, 

\item or $\mathrm{Aut}(S,\pi)$ where $(S,\pi)$ 
is a $\Big(\faktor{\mathbb{Z}}{2\mathbb{Z}}\Big)^2$-conic bundle
and $S$ is not a del Pezzo surface,

\item or $\mathrm{Aut}(\mathbb{F}_{2n+1})$ 
for some $n>0$.
\end{itemize}
According to Lemmas \ref{lem:ru1}, \ref{lem:ru2} 
and \ref{lem:ru3} the group $\mathrm{H}$ is 
isomorphic to a subgroup of 
$\mathrm{PGL}(2,\mathbb{C})\times\mathrm{PGL}(2,\mathbb{C})$.
Therefore, every finitely generated subgroup of 
$\mathrm{G}$ is isomorphic to a subgroup of 
$\mathrm{PGL}(2,\mathbb{C})\times\mathrm{PGL}(2,\mathbb{C})$.
The group $\mathrm{G}$ is thus isomorphic to a 
subgroup of 
$\mathrm{PGL}(2,\mathbb{C})\times\mathrm{PGL}(2,\mathbb{C})$ 
(Theorem \ref{thm:Malcev}) and hence to a 
subgroup of 
$\mathrm{Aut}(\mathbb{P}^1_\mathbb{C}\times\mathbb{P}^1_\mathbb{C})$. 
\end{itemize}

\begin{lem}[\cite{Urech:ellipticsubgroups}]
Every torsion subgroup of 
$\mathrm{Bir}(\mathbb{P}^2_\mathbb{C})$ is 
isomorphic to a subgroup of 
$\mathrm{GL}(48,\mathbb{C})$.
\end{lem}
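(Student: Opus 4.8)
The final statement to prove is: \emph{Every torsion subgroup of $\mathrm{Bir}(\mathbb{P}^2_\mathbb{C})$ is isomorphic to a subgroup of $\mathrm{GL}(48,\mathbb{C})$.}

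The plan is to invoke the structural result already obtained in Theorem \ref{thm:urechtorsion}, namely that any torsion subgroup $\mathrm{G}\subset\mathrm{Bir}(\mathbb{P}^2_\mathbb{C})$ is isomorphic to a \emph{bounded} subgroup of $\mathrm{Bir}(\mathbb{P}^2_\mathbb{C})$. A bounded subgroup has uniformly bounded degree, so by Lemma \ref{lem:Urechtecn1} (applied via the fixed point it produces in $\mathbb{H}^\infty$) it can be regularized: after conjugation by a birational map it becomes a subgroup of $\mathrm{Aut}(S)$ for some smooth projective rational surface $S$. The key point is then that, after the case analysis carried out in the proof of Theorem \ref{thm:urechtorsion}, one knows precisely which automorphism groups arise. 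First I would recall that the proof of Theorem \ref{thm:urechtorsion} decomposes into exactly the situations coming from Blanc's classification (Theorem \ref{thm:blanc11cases}): $\mathrm{G}$ embeds into one of $\mathrm{Aut}(\mathbb{P}^2_\mathbb{C})=\mathrm{PGL}(3,\mathbb{C})$, $\mathrm{Aut}(S_6)$, $\mathrm{Aut}(\mathbb{P}^1_\mathbb{C}\times\mathbb{P}^1_\mathbb{C})$, or $\mathrm{PGL}(2,\mathbb{C})\times\mathrm{PGL}(2,\mathbb{C})$.

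The second step is to bound the linear dimension needed in each case, using the embedding lemmas already established. By Lemma \ref{lem:te1} every finite automorphism group of a del Pezzo surface embeds into $\mathrm{GL}(8,\mathbb{C})$; by the adjoint representation $\mathrm{PGL}(3,\mathbb{C})$ embeds into $\mathrm{GL}(8,\mathbb{C})$; by Lemma \ref{lem:te2} the group $\mathrm{Aut}(S_6)$ embeds into $\mathrm{GL}(6,\mathbb{C})$; and $\mathrm{Aut}(\mathbb{P}^1_\mathbb{C}\times\mathbb{P}^1_\mathbb{C})\simeq(\mathrm{PGL}(2,\mathbb{C})\times\mathrm{PGL}(2,\mathbb{C}))\rtimes\faktor{\mathbb{Z}}{2\mathbb{Z}}$ embeds into $\mathrm{GL}(6,\mathbb{C})$ (as already noted in the proof of Lemma \ref{lem:ru4}). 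Each of these target dimensions is at most $8$, so in particular the whole torsion group $\mathrm{G}$ embeds into $\mathrm{GL}(8,\mathbb{C})$, and \emph{a fortiori} into $\mathrm{GL}(48,\mathbb{C})$.

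The apparent mismatch between the sharp bound $8$ and the stated bound $48$ suggests that the intended route does not go through the final case distinction but rather applies the Malcev compactness argument (Theorem \ref{thm:Malcev}) uniformly: one considers the disjoint list of possible linear targets with dimensions $n_1,\dots,n_k$ coming from Theorem \ref{thm:blanc11cases} and combines them into a single representation of dimension $\sum_i n_i$ (or a least common accommodation), which is where the number $48$ arises. Concretely, I would set up one mixed system of conditions, as in the proof of Theorem \ref{thm:urechtorsion}, whose variables encode matrix entries in a block of total size $48$, with the defining polynomials of each candidate algebraic group, and then argue that every finite subset is compatible because every finitely generated (hence finite, by the Burnside-type Theorem \ref{thm:burnside}) subgroup of $\mathrm{G}$ embeds into $\mathrm{GL}(48,\mathbb{C})$ through one of the listed groups. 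The hard part will be the bookkeeping that verifies $48$ as the exact dimension into which \emph{all} the groups of the classification can be simultaneously accommodated, and ensuring the compatibility argument is genuinely uniform over the different geometric cases rather than requiring a separate embedding for each; once this is checked, Malcev's theorem delivers a single field realizing the representation, which as before can be taken to be $\mathbb{C}$, completing the proof.
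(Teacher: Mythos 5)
There is a genuine gap, and it sits exactly where the constant $48$ comes from. Your case analysis only lists $\mathrm{PGL}(3,\mathbb{C})$, $\mathrm{Aut}(S_6)$, $\mathrm{Aut}(\mathbb{P}^1_\mathbb{C}\times\mathbb{P}^1_\mathbb{C})$ and $\mathrm{PGL}(2,\mathbb{C})\times\mathrm{PGL}(2,\mathbb{C})$ as possible targets, but that list is only valid for \emph{infinite} torsion groups: the reduction of the conic-bundle cases to $\mathrm{PGL}(2,\mathbb{C})\times\mathrm{PGL}(2,\mathbb{C})$ rests on Lemma \ref{lem:ru3}, whose hypothesis requires $\mathrm{G}$ to be an infinite torsion group (its proof needs to enlarge finitely generated subgroups past the exceptional finite configurations, e.g.\ past order $648$). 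For a \emph{finite} torsion group $\mathrm{G}$ not contained in any infinite bounded subgroup, Theorem \ref{thm:blanc11cases} leaves three possibilities: $\mathrm{Aut}(S)$ of a del Pezzo surface (handled by Lemma \ref{lem:ru4}, giving $\mathrm{GL}(8,\mathbb{C})$), $\mathrm{Aut}(S,\pi)$ of an exceptional conic bundle (Lemma \ref{lem:ru2}, giving $\mathrm{PGL}(2,\mathbb{C})\times\mathrm{PGL}(2,\mathbb{C})$), and $\mathrm{Aut}(S,\pi)$ of a $\Big(\faktor{\mathbb{Z}}{2\mathbb{Z}}\Big)^2$-conic bundle whose total space is not del Pezzo. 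This last case is absent from your proposal, and it is precisely the one where the paper invokes an external result (Lemma 6.2.12 of Urech's thesis) to embed such finite extensions $1\to V\to\mathrm{Aut}(S,\pi)\to H_V\to 1$ into $\mathrm{GL}(48,\mathbb{C})$. Your blanket claim that everything lands in $\mathrm{GL}(8,\mathbb{C})$ is therefore unproven (and is not established anywhere in the paper), so the ``a fortiori'' step collapses. Note also that for finite $\mathrm{G}$ the reduction ``torsion $\Rightarrow$ bounded'' buys nothing: every finite group is trivially bounded, and the whole difficulty is which maximal algebraic subgroups of $\mathrm{Bir}(\mathbb{P}^2_\mathbb{C})$ can contain it.

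Your second-guessing about the origin of $48$ is also off target. Malcev's compactness theorem (Theorem \ref{thm:Malcev}) is used in the proof of the first assertion of Theorem \ref{thm:urechtorsion} to globalize embeddings of finitely generated (hence finite, by Theorem \ref{thm:burnside}) subgroups into a \emph{single fixed} algebraic group --- the target and its dimension are fixed in advance in each case, and no summing of block dimensions occurs. The number $48$ is not produced by any uniform bookkeeping over the classification; it is the dimension delivered by the $\Big(\faktor{\mathbb{Z}}{2\mathbb{Z}}\Big)^2$-conic bundle case alone, and all the other cases give the much smaller bound $8$. To repair your proof you would need either to quote the thesis lemma for that case, or to prove directly that a finite group fitting in an exact sequence $1\to\Big(\faktor{\mathbb{Z}}{2\mathbb{Z}}\Big)^2\to\mathrm{G}\to H_V\to 1$ with $H_V\subset\mathrm{PGL}(2,\mathbb{C})$ finite embeds into $\mathrm{GL}(48,\mathbb{C})$ --- neither of which your argument currently supplies.
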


\begin{proof}
Let $\mathrm{G}$ be a torsion group of 
$\mathrm{Bir}(\mathbb{P}^2_\mathbb{C})$.

\begin{itemize}
\item[$\diamond$] Assume that $\mathrm{G}$ is 
infinite. As we just see $\mathrm{G}$ is 
isomorphic to a subgroup of 
$\mathrm{Aut}(\mathbb{P}^2_\mathbb{C})$, 
$\mathrm{Aut}(\mathbb{P}^1_\mathbb{C}\times\mathbb{P}^1_\mathbb{C})$, 
$\mathrm{Aut}(S_6)$ or 
$\mathrm{Aut}(\mathbb{F}_n)$ for some $n\geq 2$.
According to the structure of 
$\mathrm{Aut}(\mathbb{F}_n)$ and Lemma 
\ref{lem:ru1} all torsion subgroups of 
$\mathrm{Aut}(\mathbb{F}_n)$ are isomorphic
to a subgroup of $\mathrm{GL}(2,\mathbb{C})$
or $\mathrm{PGL}(2,\mathbb{C})\times\mathbb{C}^*$.
But $\mathrm{PGL}(2,\mathbb{C})$ can be 
embedded into $\mathrm{GL}(3,\mathbb{C})$
and $\mathrm{PGL}(3,\mathbb{C})$ into 
$\mathrm{GL}(8,\mathbb{C})$, and 
$\mathrm{Aut}(S_6)$ into 
$\mathrm{GL}(6,\mathbb{C})$ (Lemma 
\ref{lem:te2}); the group $\mathrm{G}$ is
thus isomorphic to a subgroup of 
$\mathrm{GL}(8,\mathbb{C})$. 

\item[$\diamond$] Suppose that $\mathrm{G}$
is finite and not contained in an infinite 
bounded subgroup. Then $\mathrm{G}$ is 
contained in the automorphism group 
(Theorem \ref{thm:blanc11cases})
\begin{itemize}
\item either of a del Pezzo
surface,

\item or of an exceptional
fibration, 

\item or of a 
$(\faktor{\mathbb{Z}}{2\mathbb{Z}})^2$-fibration.
\end{itemize}
In the first case we get from Lemma 
\ref{lem:ru4} that $\mathrm{G}$ is 
isomorphic to a subgroup of 
$\mathrm{GL}(8,\mathbb{C})$. 

In the second case the group $\mathrm{G}$ 
can be embedded into 
$\mathrm{PGL}(2,\mathbb{C})\times\mathrm{PGL}(2,\mathbb{C})$
(Lemma \ref{lem:ru2}).

In the last case $\mathrm{G}$ is isomorphic
to a subgroup of $\mathrm{GL}(48,\mathbb{C})$
according to \cite[Lemma 6.2.12]{Urech:phd}.
\end{itemize}
\end{proof}

%%%%%%%%%%%%%%%%%%%%%%%%%%%%%%%%%%%%%%%%%%%%%%%%%%%%%%%%%%%%%%%%%%%%%%%%%%%%%%%%%%%%%%%%%%%%%%%%%%%%%%%%%%%%%%%%%%%
%%%%%%%%%%%%%%%%%%%%%%%%%%%%%%%%%%%%%%%%%%%%%%%%%%%%%%%%%%%%%%%%%%%%%%%%%%%%%%%%%%%%%%%%%%%%%%%%%%%%%%%%%%%%%%%%%%%
% section
%%%%%%%%%%%%%%%%%%%%%%%%%%%%%%%%%%%%%%%%%%Urech:ellipticsubgroups%%%%%%%%%%%%%%%%%%%%%%%%%%%%%%%%%%%%%%%%%%%%%%%%%%%%%%%%%%%%%%%%%%%%%%%%%
%%%%%%%%%%%%%%%%%%%%%%%%%%%%%%%%%%%%%%%%%%%%%%%%%%%%%%%%%%%%%%%%%%%%%%%%%%%%%%%%%%%%%%%%%%%%%%%%%%%%%%%%%%%%%%%%%%%

\section{Tits alternative and Burnside problem}\label{sec:tits}

A group $\mathrm{G}$ is \textsl{virtually solvable}\index{defi}{virtually solvable (group)}
if $\mathrm{G}$ contains a finite index solvable subgroup.

A group $\mathrm{G}$ 
\textsl{satisfies Tits alternative}\index{defi}{Tits alternative} 
if every subgroup of $\mathrm{G}$ either is virtually solvable 
or contains a non-abelian free subgroup.

A group $\mathrm{G}$ \textsl{satisfies Tits alternative for finitely 
generated subgroups}\index{defi}{Tits alternative for finitely 
generated subgroups} if every finitely gene\-rated subgroup of 
$\mathrm{G}$ either is virtually solvable or contains a non-abelian 
free subgroup.

Tits showed that linear groups over fields of characteristic zero 
satisfy the Tits alternative and that linear groups over fields 
of positive characteristic satisfy the Tits alternative for 
finitely generated subgroups (\cite{Tits}). Other well-known 
examples of groups that satisfy  Tits alternative include 
mapping class groups of surfaces (\cite{Ivanov}), the outer 
automorphisms group of the free group of finite rank~$n$
(\cite{BFH}), or hyperbolic groups in the sense of Gromov 
(\cite{Gromov:hypgroups}). Lamy studied the group 
$\mathrm{Aut}(\mathbb{A}^2_\mathbb{C})$; 
in particular using its amalgamated product structure he 
showed that Tits alternative holds for 
$\mathrm{Aut}(\mathbb{A}^2_\mathbb{C})$ (\emph{see} \cite{Lamy}). 
In \cite{Cantat:annals} Cantat established that 
$\mathrm{Bir}(\mathbb{P}^2_\mathbb{C})$ satisfies Tits 
alternative for finitely generated subgroups. Then 
Urech proved that $\mathrm{Bir}(\mathbb{P}^2_\mathbb{C})$ 
satisfies Tits alternative (\cite{Urech:simplesubgroups}).

On the contrary the group of $\mathcal{C}^\infty$-diffeomorphisms 
of the circle does not satisfy Tits alternative 
(\cite{BrinSquier, GhysSergiescu}). 

Note that since solvable subgroups
have either polynomial or exponential growth, if $\mathrm{G}$ sa\-tisfies 
Tits alternative, $\mathrm{G}$ does not contain groups with intermediate 
growth.

\smallskip

The main technique to prove that a group contains a non-abelian 
free group is the ping-pong Lemma (\emph{for instance}
\cite{delaHarpe}):

\begin{lem}\label{lem:pingpong}
Let $S$ be a set.
Let $g_1$ and $g_2$ be two bijections of $S$. 
Assume that $S$ contains two non-empty disjoint subsets $S_1$ and 
$S_2$ such that 
\begin{align*}
& g_1^m(S_2)\subset S_1&& g_2^m(S_1)\subset S_2&& \qquad\forall\,m\in\mathbb{Z}\smallsetminus\{0\}. 
\end{align*}

Then $\langle g_1,\,g_2\rangle$ is a free group on two generators.
\end{lem}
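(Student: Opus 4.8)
The plan is to show that the natural surjective homomorphism from the free group on two letters $a$, $b$ onto $\langle g_1,g_2\rangle$, sending $a\mapsto g_1$ and $b\mapsto g_2$, is injective; this is precisely the assertion that $\langle g_1,g_2\rangle$ is free on $g_1$, $g_2$. Concretely I must prove that every non-empty reduced word $w=g_{i_1}^{a_1}g_{i_2}^{a_2}\cdots g_{i_k}^{a_k}$, with $a_j\in\mathbb{Z}\smallsetminus\{0\}$ and $i_j\neq i_{j+1}$ for all $j$, represents a non-trivial bijection of $S$, that is $w\neq\mathrm{id}$.

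First I would treat a word that both begins and ends with a power of $g_1$, say $w=g_1^{a_1}g_2^{a_2}g_1^{a_3}\cdots g_1^{a_k}$ (so $k$ is odd and the indices alternate $1,2,1,\ldots,1$). The claim is that $w(S_2)\subset S_1$, which I would establish by reading the word from right to left and alternately invoking the two inclusion hypotheses: $g_1^{a_k}(S_2)\subset S_1$, then $g_2^{a_{k-1}}(S_1)\subset S_2$, then $g_1^{a_{k-2}}(S_2)\subset S_1$, and so on, the final application of $g_1^{a_1}$ landing in $S_1$ since after an odd number of steps we are in $S_1$. A clean way to organize this is induction on $k$. Once $w(S_2)\subset S_1$ is known, non-triviality is immediate: were $w$ the identity we would get $S_2=w(S_2)\subset S_1$, contradicting $S_1\cap S_2=\emptyset$ together with $S_2\neq\emptyset$. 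By the symmetric argument (exchanging the roles of $g_1,g_2$ and of $S_1,S_2$), a reduced word beginning and ending with a power of $g_2$ satisfies $w(S_1)\subset S_2$ and is therefore also non-trivial.

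It then remains to reduce the two \emph{mixed} cases—words beginning with a power of $g_1$ and ending with a power of $g_2$, or vice versa—to the previous situation, and here I would use conjugation. If $w=g_1^{a_1}\cdots g_2^{a_k}$, I would pick an integer $n$ with $n\neq 0$ and $n+a_1\neq 0$ and consider
\[
w'=g_1^{n}\,w\,g_1^{-n}=g_1^{n+a_1}g_2^{a_2}\cdots g_2^{a_k}g_1^{-n}.
\]
This $w'$ is again a reduced word, and now begins and ends with a non-zero power of $g_1$, so by the first case $w'\neq\mathrm{id}$; since $w$ and $w'$ are conjugate, $w\neq\mathrm{id}$ as well. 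The remaining mixed case is handled identically (or by symmetry). This shows that no non-empty reduced word is trivial, hence that $\langle g_1,g_2\rangle$ is free on two generators.

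The main point requiring care is the bookkeeping in the ping-pong step: one must check that the alternation of indices guarantees that each successive application genuinely switches between the hypotheses $g_1^m(S_2)\subset S_1$ and $g_2^m(S_1)\subset S_2$, and that the powers $a_j$ being non-zero is exactly what licenses these inclusions. The only other subtlety is verifying that the conjugated word $w'$ stays reduced, which is precisely why the conditions $n\neq 0$ and $n+a_1\neq 0$ are imposed; no genuine estimate is involved.
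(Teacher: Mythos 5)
Your proof is correct and follows essentially the same route as the paper's: a ping-pong argument showing that a reduced word beginning and ending with a power of $g_1$ maps $S_2$ into $S_1$ (hence is non-trivial, since $S_1\cap S_2=\emptyset$), combined with conjugation by a power of $g_1$ to reduce the remaining cases. Your only addition is to make explicit the choice of $n$ ensuring the conjugated word stays reduced, a point the paper passes over with ``up to conjugacy by a power of $g_1$''.
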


\begin{proof}[Sketch of the Proof]
Let $w=w(a,b)$ be a reduced word that represents a non-trivial
element in the free group $\mathbb{F}_2=\langle a,\,b\rangle$.
Let us prove that $w(g_1,g_2)$ is a non-trivial map of $S$. 
Up to conjugacy by a power of $g_1$ assume that $w(g_1,g_2)$
starts and ends with a power of $g_1$:
\[
w(g_1,g_2)=g_1^{\ell_n}g_2^{m_n}\ldots g_2^{m_1}g_1^{\ell_0}.
\]
One checks that $g_1^{\ell_0}$ maps $S_2$ into $S_1$, then 
$g_2^{m_1}g_1^{\ell_0}$ maps $S_2$ into $S_2$, $\ldots$
and $w$ maps~$S_2$ into $S_1$. As $S_2$ is disjoint from
$S_1$ one gets that $w(g_1,g_2)$ is non-trivial.
\end{proof}

Consider a group $\Gamma$ that acts on a hyperbolic space 
$\mathbb{H}^\infty$ and that contains
two loxodromic isometries $\psi_1$ and $\psi_2$ whose fixed
points in $\partial\mathbb{H}^\infty$
form two disjoint pairs. Let us take disjoint neighborhoods
$S_i\subset\overline{\mathbb{H}^\infty}$ 
of the fixed point sets of $\psi_i$, $i=1$,~$2$. Then 
Lemma \ref{lem:pingpong} applied to sufficiently high powers 
$\psi_1^n$ and $\psi_2^n$ of $\psi_1$ and~$\psi_2$ 
respectively produces a free subgroup of $\Gamma$. This 
strategy can be used for the Cremona group acting by 
isometries on $\mathbb{H}^\infty(\mathbb{P}^2_\mathbb{C})$.
More precisely Cantat obtained the following result:

\begin{thm}[\cite{Cantat:annals}]\label{thm:cantattits}
Let $S$ be a projective surface $S$ over a field $\Bbbk$.
The group $\mathrm{Bir}(S)$ satisfies Tits alternative
for finitely generated subgroups.
\end{thm}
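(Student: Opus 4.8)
The plan is to combine the isometric action of $\mathrm{Bir}(S)$ on the hyperbolic space $\mathbb{H}^\infty(S)$ with the classification of isometries into elliptic, parabolic and loxodromic types. The starting point is that a finitely generated subgroup $\Gamma\subset\mathrm{Bir}(S)$ acts by isometries on the $\mathrm{CAT}(-1)$, $\delta$-hyperbolic space $\mathbb{H}^\infty(S)$, and I would dichotomize according to whether $\Gamma$ contains a loxodromic element.

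First I would treat the case where $\Gamma$ contains no loxodromic element, i.e.\ every element of $\Gamma$ is elliptic or parabolic. Here the goal is to show $\Gamma$ is virtually solvable. I would invoke the structural results for elliptic subgroups: by an analogue of Theorem \ref{thm:urechell1} (via Proposition \ref{pro:etaumilieuCantat}), such a $\Gamma$ either is bounded, or preserves a fibration, or is a torsion group. In the bounded case $\Gamma$ regularizes to an algebraic group (Weil, \S\ref{sec:WeilKraft}, and Lemma \ref{lem:Urechtecn1}), and one applies the classical Tits alternative for the resulting linear algebraic group. If $\Gamma$ preserves a rational fibration, then up to conjugacy $\Gamma\subset\mathcal{J}\simeq\mathrm{PGL}(2,\mathbb{C})\rtimes\mathrm{PGL}(2,\mathbb{C}(z_0))$, and the ambient group is linear over the field $\mathbb{C}(z_0)$, so Tits' theorem applies directly to give the alternative for the finitely generated group $\Gamma$; the elliptic fibration case reduces to a virtually abelian automorphism group of a Halphen surface (Lemma \ref{lem:UrechHalphen} and the associated degree-growth analysis). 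The parabolic elements are absorbed because a parabolic isometry fixes a point of $\partial\mathbb{H}^\infty$, forcing invariance of a fibration.

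Second, and this is where I expect the main obstacle, I would handle the case where $\Gamma$ contains a loxodromic element $\phi$. The aim is to produce a non-abelian free subgroup by the ping-pong Lemma \ref{lem:pingpong} applied to the boundary action on $\partial\mathbb{H}^\infty$. If $\Gamma$ contains two loxodromic elements $\psi_1,\psi_2$ whose fixed-point pairs $\{\alpha(\psi_i),\omega(\psi_i)\}\subset\partial\mathbb{H}^\infty$ are disjoint, then choosing disjoint neighborhoods $S_1,S_2$ in $\overline{\mathbb{H}^\infty}$ and passing to high powers $\psi_1^n,\psi_2^n$ yields the north-south dynamics required, so $\langle\psi_1^n,\psi_2^n\rangle$ is free. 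The delicate point is the case where the loxodromic elements of $\Gamma$ all share a common fixed point (or a common fixed pair) on the boundary: then no two of them can play ping-pong, and I must show $\Gamma$ is virtually solvable instead. This is exactly the rank-one phenomenon encoded in Theorem \ref{thm:cent}: the centralizer of a loxodromic $\phi$ is virtually cyclic, and more generally the stabilizer of the axis $\mathrm{Ax}(\phi)$ maps to $\mathrm{Isom}(\mathbb{R})$ with virtually cyclic image and elliptic kernel, which after regularization of the kernel is controlled by the classification of algebraic subgroups.

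The overall architecture is therefore: reduce to the finitely generated setting, use the three-type classification of isometries of $\mathbb{H}^\infty$, apply ping-pong when two independent loxodromic axes exist, and invoke the virtual cyclicity of axis stabilizers (Theorem \ref{thm:cent}) together with the regularization and algebraic-subgroup classification to establish virtual solvability in all the degenerate configurations. I would organize the write-up so that the geometric group theory (slimness of triangles, loxodromic dynamics on $\partial\mathbb{H}^\infty$) does the work of producing free groups, while the birational geometry (Diller--Favre types, regularization, del Pezzo/conic-bundle classification) does the work of certifying solvability. The hardest technical step will be ruling out, or correctly analyzing, subgroups in which every loxodromic element shares a boundary fixed point, since this is precisely the boundary between the free-group and solvable worlds and is where the distinctive rank-one behavior of $\mathrm{Bir}(S)$ must be exploited rather than any general hyperbolicity argument.
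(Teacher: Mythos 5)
Your architecture is the paper's: the weak alternative for isometries of $\mathbb{H}^\infty$ (Theorem \ref{thm:weak}), ping-pong (Lemma \ref{lem:pingpong}) when two loxodromic elements have disjoint fixed pairs, Proposition \ref{pro:etaumilieuCantat} plus regularization and linear Tits for bounded groups, and the Jonqui\`eres/Halphen reduction for the fibration cases. The genuine gap is in the degenerate loxodromic configuration, which you correctly identify as the crux but for which the tool you name does not work. Theorem \ref{thm:cent} and the axis-stabilizer homomorphism to $\mathrm{Isom}(\mathbb{R})$ only handle the case where all loxodromic elements of $\Gamma$ share the \emph{same axis} (equivalently the same fixed pair). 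They say nothing about two loxodromic elements $\phi$, $\psi$ with \emph{distinct axes} sharing exactly one boundary point: then $\psi$ does not stabilize $\mathrm{Ax}(\phi)$, no translation-length homomorphism is defined on $\langle\phi,\psi\rangle$, and ping-pong fails as well. The paper closes exactly this hole with Lemma \ref{Urech:without}: two loxodromic maps with distinct axes never share a fixed point on $\partial\mathbb{H}^\infty$. Its proof is not general hyperbolic geometry: since a boundary point fixed by a loxodromic element is not a fibration class, the stabilizer of the common point $p$ carries a character $\rho$ to $\mathbb{R}^*_+$ whose kernel consists of elliptic elements; if some power $\phi^n$ is tight, then $\psi\circ\phi^n\circ\psi^{-1}\circ\phi^{-n}$ is a non-trivial element of $\ll\phi^n\gg$, hence loxodromic by the Cantat--Lamy normal subgroup theorem, yet it lies in $\ker\rho$ and so is elliptic --- a contradiction; if no power is tight, Theorem \ref{thm:pfou} forces $\phi$ into the monomial normalizer $\mathrm{GL}(2,\mathbb{Z})\ltimes\mathrm{D}_2$, where a separate computation (Lemma \ref{lem:withoutaxe}) concludes. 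This Cremona-specific input (the character with elliptic kernel together with tight-element/normal-subgroup theory, or the monomial analysis) cannot be extracted from the virtual cyclicity of centralizers, so your write-up as planned would stall precisely at the step you flagged as hardest.

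Two smaller points. First, in the non-loxodromic branch you list ``torsion group'' as a possible outcome and never resolve it; for finitely generated $\Gamma$, Proposition \ref{pro:etaumilieuCantat} has only the two cases bounded/Jonqui\`eres, and you must be careful not to dispose of a torsion case by invoking Theorem \ref{thm:burnside}, which is a corollary of the very theorem you are proving. Second, you do not need the full strength of a virtually cyclic image on the axis (which rests on the Blanc--Cantat spectral gap behind Theorem \ref{thm:cent}): an abelian image in $\mathbb{R}$, linear Tits applied to the bounded elliptic kernel, and Lemma \ref{lem:extsolv} already yield virtual solvability for the finitely generated group, so in the shared-axis case your argument can be lightened rather than strengthened.
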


Then Urech proves:

\begin{thm}[\cite{Urech:ellipticsubgroups}]\label{thm:urechtits}
Let $S$ be a complex K\"ahler surface.
Then $\mathrm{Bir}(S)$ sa\-tisfies Tits alternative.
\end{thm}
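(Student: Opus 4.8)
The plan is to reduce the statement to cases where the \emph{full} Tits alternative---not merely its finitely generated version from Theorem~\ref{thm:cantattits}---is already available. It suffices to produce, for every subgroup $\mathrm{G}\subseteq\mathrm{Bir}(S)$ that contains no non-abelian free subgroup, a finite index solvable subgroup. Since $\mathrm{Bir}(S)$ depends only on the birational class of $S$, the substantial case is $S$ rational, where $\mathrm{Bir}(S)\simeq\mathrm{Bir}(\mathbb{P}^2_\mathbb{C})$; when $S$ is not rational I would argue directly that $\mathrm{Bir}(S)$ is either virtually abelian (the non-uniruled case, where it is commensurable to the automorphism group of a minimal model) or linear over a function field of characteristic zero (the ruled case), so that in either situation the full Tits alternative for such groups settles the matter.

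The heart of the argument is the isometric action of $\mathrm{Bir}(S)$ on $\mathbb{H}^\infty(S)$. If $\mathrm{G}$ contained two loxodromic isometries whose pairs of fixed points on $\partial\mathbb{H}^\infty$ are disjoint, then applying the Ping-Pong Lemma~\ref{lem:pingpong} to sufficiently high powers, using disjoint boundary neighbourhoods of these fixed points, would produce a non-abelian free subgroup. Hence, assuming $\mathrm{G}$ has no free subgroup, $\mathrm{G}$ must be \emph{elementary}: either it fixes a point of $\mathbb{H}^\infty$, or it preserves a geodesic line (equivalently it fixes a pair of boundary points), or it fixes a single point of $\partial\mathbb{H}^\infty$. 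The task then splits according to these possibilities.

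In the bounded case, Lemma~\ref{lem:Urechtecn1} shows that $\mathrm{G}$ has bounded degree and, after regularisation, embeds into $\mathrm{Aut}(S')$ for a smooth projective surface $S'$; equivalently $\mathrm{G}$ is an elliptic subgroup, so Theorem~\ref{thm:urechell1} applies and leaves only the bounded subcase and the torsion subcase, the fibration subcase being treated below. A bounded subgroup sits inside a linear-by-(abelian variety) group, while a torsion subgroup is, by Theorem~\ref{thm:urechtorsion}, isomorphic to a subgroup of $\mathrm{GL}(48,\mathbb{C})$; in both situations Tits' theorem for linear groups over a field of characteristic zero---valid for \emph{arbitrary}, not only finitely generated, subgroups---forces virtual solvability once a free subgroup is excluded. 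When instead $\mathrm{G}$ fixes a boundary point attached to a fibration class, Lemma~\ref{lem:Urechtecalt} reduces matters, up to conjugacy, to $\mathrm{G}\subseteq\mathcal{J}\simeq\mathrm{PGL}(2,\mathbb{C})\rtimes\mathrm{PGL}(2,\mathbb{C}(z_0))$ or to $\mathrm{G}\subseteq\mathrm{Aut}(S)$ with $S$ a Halphen surface; the former is linear over $\mathbb{C}(z_0)$, so Tits' theorem again yields virtual solvability, and the latter is virtually abelian of rank at most $8$.

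The main obstacle---and the very reason Theorem~\ref{thm:cantattits} is not already the assertion---is exactly the passage from finitely generated subgroups to arbitrary ones: virtual solvability is not preserved under increasing unions without a uniform bound on both the solvable length and the index. I expect the delicate point to be the lineal and focal boundary cases, where $\mathrm{G}$ fixes a point or a pair of points of $\partial\mathbb{H}^\infty$ that need not correspond to a fibration; there one must extract a homomorphism to $\mathbb{R}$ from the translation lengths along the fixed geodesic, or from the Busemann cocycle at the fixed end, and control its kernel, showing it to be an elliptic, hence by Theorems~\ref{thm:urechell1} and \ref{thm:urechtorsion} a linearisable, subgroup. The whole scheme works because finite generation is never invoked at the final step: each terminal case is embedded in a genuinely linear group in characteristic zero, where Tits' theorem is unconditional, or in a virtually abelian group, so that the absence of a free subgroup translates at once into virtual solvability for $\mathrm{G}$ as a whole.
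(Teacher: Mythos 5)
Your proposal follows, in substance, the paper's own route: the isometric action on $\mathbb{H}^\infty$, ping-pong (Lemma~\ref{lem:pingpong}, via Theorem~\ref{thm:weak}) to extract free subgroups, and a case analysis whose terminal cases are exactly those of the text --- bounded and torsion subgroups (Theorems~\ref{thm:urechell1} and~\ref{thm:urechtorsion}, hence linear groups in characteristic zero, where Tits' theorem needs no finite generation), the Jonqui\`eres group $\mathcal{J}\simeq\mathrm{PGL}(2,\mathbb{C}(z_1))\rtimes\mathrm{PGL}(2,\mathbb{C})$ treated by Proposition~\ref{pro:titsext}, Halphen surfaces (Theorem~\ref{thm:candol}), and $\mathrm{GL}(2,\mathbb{Z})\ltimes\mathrm{D}_2$. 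Your handling of the lineal and focal boundary cases by a translation-length or Busemann homomorphism to $\mathbb{R}$ with elliptic kernel is precisely the mechanism inside the paper's Lemma~\ref{lem:Urechlox} and Theorem~\ref{thm:urechnorm}, where parabolics are excluded from the kernel because their unique fixed boundary point is always a fibration class (the dictionary of Theorem~\ref{thm:dilfav}); the paper packages the focal configuration as Lemma~\ref{Urech:without} --- two loxodromic maps with distinct axes share no boundary fixed point --- before applying ping-pong, but the content is the same. You also correctly identify the point of the theorem over Theorem~\ref{thm:cantattits}: finite generation is never invoked at the terminal steps.

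Two local corrections are needed. First, in the non-rational case (which the paper's sketch omits, restricting itself to $S=\mathbb{P}^2_\mathbb{C}$), your claim that $\mathrm{Bir}(S)$ is virtually abelian when $S$ is non-uniruled is false: there $\mathrm{Bir}(S)$ coincides with the automorphism group of the minimal model, and automorphism groups of tori or K$3$ surfaces can contain non-abelian free subgroups (e.g.\ $\mathrm{SL}(2,\mathbb{Z})$ acting on $E\times E$). What you actually need, and what suffices, is Theorem~\ref{thm:titsaut}: automorphism groups of compact K\"ahler manifolds satisfy Tits alternative; the ruled non-rational case is, as you say, an extension of a group linear over $\mathbb{C}(C)$ by a finite or virtually abelian subgroup of $\mathrm{Aut}(C)$, handled by Proposition~\ref{pro:titsext}. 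Second, ``elliptic kernel, hence linearisable'' is too strong: by Theorem~\ref{thm:urechell1} an elliptic group may instead preserve a rational fibration and need not be linear; but such a group embeds in $\mathcal{J}$, which satisfies Tits alternative (Proposition~\ref{pro:Titsjonq}), so your extension argument via Proposition~\ref{pro:titsext} still closes the case once ``linearisable'' is weakened to ``satisfying Tits alternative''.
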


Let us now give a sketch of the proof of this result
in the case $S=\mathbb{P}^2_\mathbb{C}$.

\subsection{Subgroups of $\mathrm{Bir}(\mathbb{P}^2_\mathbb{C})$ that contain a loxodromic element} 

Recall that
\begin{itemize}
\item[$\diamond$] the subgroup of diagonal automorphisms
\[
\mathrm{D}_2=\big\{(z_0,z_1)\mapsto(\alpha z_0,\beta z_1)\,\vert\,\alpha,\,\beta\in\mathbb{C}^*\big\}\subset\mathrm{PGL}(3,\mathbb{C})=\mathrm{Aut}(\mathbb{P}^2_\mathbb{C})
\]
is a real torus of rank $2$;

\item[$\diamond$] a matrix $A=(a_{ij})\in\mathrm{GL}(2,\mathbb{Z})$
determines a birational map of $\mathbb{P}^2_\mathbb{C}$
\[
(z_0,z_1)\dashrightarrow\big(z_0^{a_{00}}z_1^{a_{01}},z_0^{a_{10}}z_1^{a_{11}}\big)
\]
\end{itemize}
The normalizer of $\mathrm{D}_2$ in 
$\mathrm{Bir}(\mathbb{P}^2_\mathbb{C})$ is the 
semidirect product 
\[
\mathrm{Norm}\big(\mathrm{D}_2,\mathrm{Bir}(\mathbb{P}^2_\mathbb{C})\big)=\big\{\phi\in\mathrm{Bir}(\mathbb{P}^2_\mathbb{C})\,\vert\,\phi\circ\mathrm{D}_2\circ\phi^{-1}=\mathrm{D}_2\big\}=\mathrm{GL}(2,\mathbb{Z})\ltimes\mathrm{D}_2.
\]
If $M\in\mathrm{GL}(2,\mathbb{Z})$ has spectral radius
strictly larger than $1$, the associated birational 
map is loxodromic. In particular there exist 
loxodromic elements that normalize an infinite 
elliptic subgroup. Up to conjugacy these are the 
only examples with this property:

\begin{thm}[\cite{DelzantPy}]\label{thm:DelzantPy}
Let $\mathrm{G}$ be a subgroup of 
$\mathrm{Bir}(\mathbb{P}^2_\mathbb{C})$ containing
at least one loxodromic element. Assume that 
there exists a short exact sequence 
\[
1\longrightarrow \mathrm{A}\longrightarrow \mathrm{G}\longrightarrow\mathrm{B}\longrightarrow 1
\]
where $\mathrm{A}$ is infinite and of bounded degree.

Then $\mathrm{G}$ is conjugate to a subgroup of 
$\mathrm{GL}(2,\mathbb{Z})\ltimes\mathrm{D}_2$.
\end{thm}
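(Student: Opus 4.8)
<br>

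The plan is to exploit the isometric action of $\mathrm{Bir}(\mathbb{P}^2_\mathbb{C})$ on the infinite-dimensional hyperbolic space $\mathbb{H}^\infty=\mathbb{H}^\infty(\mathbb{P}^2_\mathbb{C})$ constructed in Chapter \ref{chap:hyperbolicspace}, together with the structural results on algebraic and elliptic subgroups. The core idea is that the normal subgroup $\mathrm{A}$, being infinite and of bounded degree, is a large \emph{elliptic} subgroup that the loxodromic elements of $\mathrm{G}$ must normalize; this normalization constraint is extremely rigid and should force $\mathrm{A}$ to be (conjugate to) an infinite subgroup of the diagonal torus $\mathrm{D}_2$.

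First I would analyze $\mathrm{A}$. Since $\mathrm{A}$ has bounded degree, its Zariski closure $\overline{\mathrm{A}}$ is an algebraic subgroup of $\mathrm{Bir}(\mathbb{P}^2_\mathbb{C})$ (Corollary \ref{cor:agree}, \cite{BlancFurter}), and every element of $\mathrm{A}$ is elliptic (Corollary \ref{cor:3plus}). As $\mathrm{A}$ is infinite, $\overline{\mathrm{A}}$ has positive dimension. The key dynamical observation is that a loxodromic $\phi\in\mathrm{G}$ acts by conjugation on $\overline{\mathrm{A}}$; its orbits of curves cannot organize into a $\phi$-invariant pencil, since $\phi_*$ being loxodromic precludes any invariant fibration (this is exactly the mechanism used in the proof of Theorem \ref{thm:cent}). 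After regularizing $\mathrm{A}$ via Weil's theorem (\S\ref{sec:WeilKraft}), one realizes $\mathrm{A}$ inside $\mathrm{Aut}(Y)$ for some rational surface $Y$, and the classification of algebraic subgroups together with the non-existence of an $\mathrm{A}$-invariant fibration compatible with $\phi$ should pin down the connected component $\overline{\mathrm{A}}^0$ to be a torus. The most delicate point here is ruling out the case where $\overline{\mathrm{A}}^0$ contains a unipotent part or acts with one-dimensional orbits: I would argue, as in Theorem \ref{thm:urechell1}, that an additive or one-parameter unipotent subgroup would produce exactly such an invariant pencil of curves, contradicting loxodromy of $\phi$.

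Once $\mathrm{A}$ is shown to be conjugate to an infinite subgroup of $\mathrm{D}_2$, I would study how $\mathrm{G}$ sits inside the normalizer of $\overline{\mathrm{A}}$. Because $\mathrm{A}\trianglelefteq\mathrm{G}$, every element of $\mathrm{G}$ normalizes $\overline{\mathrm{A}}$, hence (after conjugating so that $\overline{\mathrm{A}}^0=\mathrm{D}_2$) lies in
\[
\mathrm{Norm}\big(\mathrm{D}_2,\mathrm{Bir}(\mathbb{P}^2_\mathbb{C})\big)=\mathrm{GL}(2,\mathbb{Z})\ltimes\mathrm{D}_2,
\]
the explicit normalizer recalled before Theorem \ref{thm:DelzantPy}. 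The point is that $\mathrm{D}_2$ is a maximal torus (Demazure, Theorem \ref{thm:demazure}), so any element conjugating it to itself is monomial modulo a diagonal map. The existence of a loxodromic element in $\mathrm{G}$ is then consistent: it corresponds to a matrix $M\in\mathrm{GL}(2,\mathbb{Z})$ of spectral radius $>1$.

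The main obstacle I anticipate is the first step: proving that $\overline{\mathrm{A}}^0$ is genuinely a torus and not some other positive-dimensional connected algebraic group on which $\phi$ acts compatibly. Handling the interplay between the loxodromic dynamics on $\mathbb{H}^\infty$ and the algebraic structure of $\overline{\mathrm{A}}$ requires the Epstein--Thurston type control of the possible Lie algebras of $\overline{\mathrm{A}}^0$ and a careful use of the fact (Theorem \ref{thm:cantattits}, \cite{Cantat:annals}) that loxodromic isometries do not preserve finite-dimensional invariant subspaces giving fibrations. I expect that the delicate quotient analysis $1\to\mathrm{A}\to\mathrm{G}\to\mathrm{B}\to 1$ — controlling $\mathrm{B}$ and ensuring the whole of $\mathrm{G}$, not merely its elliptic and loxodromic generators, lands in $\mathrm{GL}(2,\mathbb{Z})\ltimes\mathrm{D}_2$ — is the part genuinely carried out in \cite{DelzantPy} using the geometry of $\mathbb{H}^\infty$, and I would follow their strategy there.
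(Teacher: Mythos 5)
Your overall route is the right one, and it is exactly the scheme this survey deploys around the statement (the survey itself only cites \cite{DelzantPy}, but its proofs of the generalization, Theorem \ref{thm:urechnorm}, and of Theorem \ref{thm:urechell1} follow your plan): by Corollary \ref{cor:agree} the closure $\overline{\mathrm{A}}$ is algebraic of positive dimension; normality of $\mathrm{A}$ makes every element of $\mathrm{G}$ normalize $\overline{\mathrm{A}}$ and hence the characteristic subgroup $\overline{\mathrm{A}}^0$; loxodromy of $\phi$ forbids any $\mathrm{G}$-invariant fibration; the surviving case is the two-dimensional torus, conjugate to $\mathrm{D}_2$ by Theorem \ref{thm:demazure}, whose normalizer $\mathrm{GL}(2,\mathbb{Z})\ltimes\mathrm{D}_2$ is recalled just before the statement. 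Note also that your final worry is moot: once $\overline{\mathrm{A}}^0=\mathrm{D}_2$, the inclusion $\mathrm{G}\subset\mathrm{Norm}\big(\mathrm{D}_2,\mathrm{Bir}(\mathbb{P}^2_\mathbb{C})\big)$ is immediate, and the quotient $\mathrm{B}$ never enters the argument.

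There is, however, a genuine gap in the middle step: your single mechanism, \og one-dimensional orbits or a unipotent subgroup produce an invariant pencil, contradicting loxodromy\fg, does not dispatch all cases of $\overline{\mathrm{A}}^0$. First one must pass to a characteristic \emph{abelian} piece (the radical and then the last non-trivial term of its derived series, as in the proof of Theorem \ref{thm:urechell1}) so that $\mathrm{G}$ still normalizes it. Then the open-orbit abelian groups $\mathbb{C}^2$ and $\mathbb{C}^*\times\mathbb{C}$ have \emph{no} invariant pencil of orbits, so your argument fails there as stated; they are excluded differently: the normalizer of the translation group $\mathbb{C}^2$ is the affine group $\mathrm{GL}(2,\mathbb{C})\ltimes\mathbb{C}^2$, which is bounded and thus contains no loxodromic element, while any map normalizing $\mathbb{C}^*\times\mathbb{C}$ preserves the fibration defined by the invertible functions. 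The case $\overline{\mathrm{A}}^0$ semi-simple is also untouched by the pencil argument and requires the finite-outer-automorphism trick of Lemma \ref{lem:Urechsemisimple} combined with Lemma \ref{lem:algcst}: a power of $\phi$ corrected by an inner element centralizes a one-parameter multiplicative subgroup, hence preserves its orbit pencil, which is where the contradiction actually lives. Finally, your phrase \og once $\mathrm{A}$ is conjugate to an infinite subgroup of $\mathrm{D}_2$\fg{} is strictly weaker than what the normalizer computation needs: you must know that $\overline{\mathrm{A}}^0$ is the \emph{full} two-torus, i.e.\ that $\mathrm{A}$ is Zariski dense in $\mathrm{D}_2$ (compare Lemma \ref{lem:densezar}); a one-dimensional closure is excluded by the orbit argument, but this has to be said.
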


Urech generalizes this result to the case where 
$\mathrm{A}$ is an infinite group of elliptic 
elements (\cite{Urech:ellipticsubgroups}):

\begin{thm}[\cite{Urech:ellipticsubgroups}]\label{thm:urechnorm}
Let $\mathrm{G}$ be a subgroup of 
$\mathrm{Bir}(\mathbb{P}^2_\mathbb{C})$ 
containing at least one loxodromic element.
Suppose that there exists a short sequence
\[
1\longrightarrow \mathrm{A}\longrightarrow \mathrm{G}\longrightarrow\mathrm{B}\longrightarrow 1
\]
where $\mathrm{A}$ is an infinite group of 
elliptic elements.

Then $\mathrm{G}$ is conjugate to a subgroup of 
$\mathrm{GL}(2,\mathbb{Z})\ltimes\mathrm{D}_2$.
\end{thm}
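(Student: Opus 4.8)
The plan is to reduce the statement to the theorem of Delzant and Py (Theorem \ref{thm:DelzantPy}) by showing that, under the present hypotheses, the normal subgroup $\mathrm{A}$ is in fact of bounded degree. First I would record that $\mathrm{A}$, being the kernel of $\mathrm{G}\to\mathrm{B}$, is normal in $\mathrm{G}$; hence for any $g\in\mathrm{G}$ and any point $p$ of $\overline{\mathbb{H}^\infty}=\mathbb{H}^\infty\cup\partial\mathbb{H}^\infty$ fixed by $\mathrm{A}_*$, the point $g_*(p)$ is again fixed by $\mathrm{A}_*$ (since $g^{-1}\mathrm{A}g=\mathrm{A}$). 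Thus the fixed-point set of $\mathrm{A}_*$ is $\mathrm{G}$-invariant, and in particular it is permuted by the chosen loxodromic element $g\in\mathrm{G}$. Applying Theorem \ref{thm:urechell1} to the elliptic group $\mathrm{A}$ leaves three possibilities: $\mathrm{A}$ is bounded, $\mathrm{A}$ preserves a rational fibration, or $\mathrm{A}$ is a torsion group. If $\mathrm{A}$ is bounded it has bounded degree and Theorem \ref{thm:DelzantPy} applies verbatim, so the whole difficulty lies in reducing the unbounded case to this one.

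Assume then that $\mathrm{A}$ is unbounded. By the weak fixed-point theorem (Theorem \ref{thm:weak}) $\mathrm{A}$ fixes some $p\in\overline{\mathbb{H}^\infty}$; since fixing a point of $\mathbb{H}^\infty$ would force bounded degree (Lemma \ref{lem:Urechtecn1}), this fixed point lies on the boundary, say $\xi\in\partial\mathbb{H}^\infty$. The key structural observation I would isolate is that the loxodromic element $g$ fixes exactly the two endpoints $v_g^{+}\neq v_g^{-}$ of its axis, and that neither of these is the class of a fibration: indeed a fibration class fixed by $g_*$ would make $g$ preserve a fibration and hence be elliptic or a twist, never loxodromic (Theorem \ref{thm:dilfav}). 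I would then split according to Lemma \ref{lem:Urechtecalt}, depending on whether $\xi$ is a fibration class.

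Suppose first that $\xi$ is the class of a rational fibration. By the previous observation $g(\xi)\neq\xi$, so the orbit $\{g^{n}(\xi)\}_{n\in\mathbb{Z}}$ consists of infinitely many distinct boundary points accumulating at $v_g^{+}$ and $v_g^{-}$; by $\mathrm{G}$-invariance of $\mathrm{Fix}(\mathrm{A}_*)$ all of them are fibration classes preserved by $\mathrm{A}$. Thus $\mathrm{A}$ preserves two distinct rational fibrations, and Lemma \ref{lem:2fibr} together with the finiteness constraints on groups preserving several fibrations forces $\mathrm{A}$ to be bounded (indeed finite on the intersections of generic fibers), contradicting the assumption that $\mathrm{A}$ is unbounded. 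Hence this sub-case does not occur, and one is reduced to the situation where $\xi$ is not a fibration class.

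The remaining case, where $\xi$ is not a fibration class, is the main obstacle and the genuine content beyond the theorem of Delzant and Py. Here Lemma \ref{lem:Urechtecalt} only yields that every finitely generated subgroup of $\mathrm{A}$ is bounded, which is strictly weaker than boundedness of $\mathrm{A}$ itself (the Wright-type infinite torsion groups show the gap is real). The plan is to upgrade this to global bounded degree by exploiting that $\mathrm{A}$ is normalised by the loxodromic $g$: conjugating a fixed $a\in\mathrm{A}$ by $g^{\pm n}$ stays inside $\mathrm{A}$ while dragging the interior fixed point of $a$ toward the two distinct endpoints $v_g^{+}$ and $v_g^{-}$ of the axis, whereas a group all of whose elements are elliptic can accumulate only on a single boundary point. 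Turning this tension into a uniform degree bound — in a space $\mathbb{H}^\infty$ which is \emph{not} proper, so that the classical ``a group of elliptics fixes a unique boundary point'' is unavailable — is precisely where the Cremona-specific input of Theorem \ref{thm:urechell1} and Lemma \ref{lem:Urechtecalt} must be combined with the contraction dynamics of $g$, via horospherical estimates at $\xi$. Once $\mathrm{A}$ is shown to be of bounded degree, Theorem \ref{thm:DelzantPy} gives the desired conjugacy of $\mathrm{G}$ into $\mathrm{GL}(2,\mathbb{Z})\ltimes\mathrm{D}_2$.
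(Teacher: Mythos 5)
There is a genuine gap, and you locate it yourself: in the main case (the fixed point $\xi\in\partial\mathbb{H}^\infty$ of $\mathrm{A}$ is not a fibration class) you propose to ``upgrade'' the conclusion of Lemma \ref{lem:Urechtecalt} — every finitely generated subgroup of $\mathrm{A}$ is bounded — to boundedness of $\mathrm{A}$ itself via ``horospherical estimates,'' and then quote Theorem \ref{thm:DelzantPy}. No such upgrade is carried out, and no metric estimate will produce it: as you note, Wright's examples show the implication fails for elliptic groups in general, so any argument must use the normalizing loxodromic $g$ in a structural, not merely dynamical, way. The paper's proof never shows $\mathrm{A}$ is bounded at this stage; it changes strategy entirely. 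It works with the full stabilizer $\mathrm{G}$ of $p=\xi$, builds a homomorphism $\rho\colon\mathrm{G}\to\mathbb{R}^*_+$ from the linear action on the isotropic line $L\subset\mathrm{Z}(\mathbb{P}^2_\mathbb{C})$ corresponding to $p$, and observes that $\ker\rho$ consists exactly of the elliptic elements of $\mathrm{G}$. It then shows no power of the loxodromic $\phi$ is tight in $\mathrm{G}$: if $\phi^n$ were tight, choose $\psi\in\mathrm{G}$ not commuting with $\phi^n$ (possible since $\langle\phi\rangle$ has finite index in $\mathrm{Cent}(\phi)$ by Theorem \ref{thm:cent}); by Cantat--Lamy the commutator $\psi\circ\phi^n\circ\psi^{-1}\circ\phi^{-n}$, a non-trivial element of $\ll\phi^n\gg$, is loxodromic, yet $\rho$ kills it, so it is elliptic — contradiction. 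Theorem \ref{thm:pfou} (resting on Shepherd--Barron's tight/monomial dichotomy) then makes $\phi$ monomial up to conjugacy with a dense torus $\Delta_2\subset\mathrm{G}$ normalized by it, and a Zariski-closure dimension argument on finitely generated subgroups of $\ker\rho$ finally produces an infinite \emph{bounded} normal subgroup to which Theorem \ref{thm:DelzantPy} applies. This tightness mechanism is the actual engine of the proof and is entirely absent from your sketch; likewise your claim that ``a group all of whose elements are elliptic can accumulate only on a single boundary point'' is exactly the kind of statement that fails to be available in the non-proper space $\mathbb{H}^\infty$.

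Two secondary points. First, in the sub-case where $\xi$ is a fibration class you invoke Lemma \ref{lem:2fibr}, but that lemma requires the map to fix two fibrations \emph{fiberwise}; merely preserving two fibrations gives no invariant finite sets of intersection points. The sub-case is excluded correctly and more simply: if $\mathrm{A}$ fixed two distinct boundary points it would fix the geodesic between them pointwise (each element is elliptic, so its translation along the geodesic is zero), hence fix a point of $\mathbb{H}^\infty$ and be bounded by Lemma \ref{lem:Urechtecn1}; so $p$ is unique, normality forces $g_*(p)=p$, and a loxodromic cannot fix a fibration class — no orbit argument is needed. Second, your reductions (interior fixed point, normality making $\mathrm{Fix}(\mathrm{A}_*)$ invariant, the observation that the endpoints of $\mathrm{Ax}(g)$ are not fibration classes) do match the paper's opening moves, so the flaw is concentrated precisely in the step you flagged as the ``genuine content beyond Delzant and Py.''
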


In order to give the proof of Theorem 
\ref{thm:urechnorm} we need to establish
some results.

\begin{lem}[\cite{Urech:ellipticsubgroups}]\label{lem:densezar}
Let $\phi$ be a loxodromic monomial map
of the complex projective plane. Let
$\Delta_2$ be an infinite subgroup of
$\mathrm{D}_2$ normalized by $\phi$.

Then $\Delta_2$ is dense in $\mathrm{D}_2$
with respect to the Zariski
topology.
\end{lem}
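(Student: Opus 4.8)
The statement concerns a loxodromic monomial map $\phi$, given by a matrix $M \in \mathrm{GL}(2,\mathbb{Z})$ with spectral radius $>1$, which normalizes an infinite subgroup $\Delta_2$ of the diagonal torus $\mathrm{D}_2 \simeq (\mathbb{C}^*)^2$. The goal is to show $\Delta_2$ is Zariski dense in $\mathrm{D}_2$. The plan is to argue by contradiction: suppose the Zariski closure $\overline{\Delta_2}$ is a proper algebraic subgroup of $\mathrm{D}_2$. Since $\phi$ acts on $\mathrm{D}_2$ by conjugation and normalizes $\Delta_2$, it normalizes $\overline{\Delta_2}$ as well (conjugation is an automorphism of $\mathrm{D}_2$, hence a homeomorphism for the Zariski topology, so it preserves Zariski closures). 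Thus $\overline{\Delta_2}$ is a $\phi$-invariant proper algebraic subgroup of $(\mathbb{C}^*)^2$, and the key is to show no such infinite subgroup can exist.

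\textbf{Key steps.} First I would recall the structure of algebraic subgroups of $(\mathbb{C}^*)^2$: the connected component of the identity of $\overline{\Delta_2}$ is a subtorus, and the quotient is finite. The crucial observation is how $\phi$ acts. The conjugation action of the monomial map $\phi_M$ on $\mathrm{D}_2$ corresponds precisely to the linear action of $M$ (or its inverse transpose, depending on conventions) on the cocharacter lattice $\mathbb{Z}^2$ of the torus. A proper positive-dimensional algebraic subgroup of $(\mathbb{C}^*)^2$ corresponds, via its identity component, to a rational line $L \subset \mathbb{R}^2$ in the cocharacter lattice, i.e. to a one-dimensional subtorus defined by a primitive vector $v \in \mathbb{Z}^2$. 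For $\overline{\Delta_2}$ to be $\phi$-invariant, $M$ would have to preserve this rational line, i.e. $v$ would be an eigenvector of $M$ with rational (hence, since $M$ is integral and $v$ is integral, real algebraic) slope. But since $M$ has spectral radius $>1$, its eigenvalues are real quadratic irrationals (the characteristic polynomial is $x^2 - (\mathrm{tr}\,M) x + \det M$ with $\det M = \pm 1$ and $|\mathrm{tr}\, M|$ large enough to force real irrational eigenvalues), so the eigendirections are irrational and no rational line is $M$-invariant. This rules out the positive-dimensional case.

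\textbf{The main obstacle.} The delicate part will be the case where $\overline{\Delta_2}$ is zero-dimensional, i.e. $\Delta_2$ is a finite group — but this is excluded by hypothesis since $\Delta_2$ is infinite, so in fact $\overline{\Delta_2}$ has positive dimension and the eigenvector argument applies directly once one checks $\overline{\Delta_2} \neq \mathrm{D}_2$ leads to a contradiction. The genuine subtlety I expect to handle carefully is the identification of the conjugation action of $\phi$ on $\mathrm{D}_2$ with the arithmetic action of $M$ on the lattice, together with verifying that loxodromicity of $\phi_M$ is equivalent to $M$ having real eigenvalues off the unit circle (the irrationality of the eigendirections is what forces the absence of invariant rational lines). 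Once this dictionary between the birational/monomial picture and the linear-algebraic picture over $\mathbb{Z}$ is set up cleanly, the conclusion that the only $M$-invariant algebraic subgroups of $(\mathbb{C}^*)^2$ are the finite ones and the whole torus follows, and since $\Delta_2$ is infinite its closure must be all of $\mathrm{D}_2$.
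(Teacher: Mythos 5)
Your proof is correct, but it closes the argument by a different mechanism than the paper. Both arguments start identically: pass to the neutral component of the Zariski closure of $\Delta_2$, which is a $\phi$-invariant subtorus of positive dimension (positive because $\Delta_2$ is infinite), and the two-dimensional case immediately gives density. The divergence is in how the one-dimensional case is excluded. The paper stays geometric and dynamical: the orbits of a one-dimensional subgroup of $\mathrm{D}_2$ sweep out a rational fibration of $\mathbb{P}^2_\mathbb{C}$, and since $\phi$ normalizes the subgroup it permutes these orbits, hence preserves the fibration --- impossible for a loxodromic map, since the class of a generic fibre would be a $\phi_*$-fixed isotropic vector in $\mathrm{Z}(\mathbb{P}^2_\mathbb{C})$ (exactly the remark in \S\ref{sec:degreegrowth} accompanying Theorem \ref{thm:dilfav}). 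You instead exploit the monomial hypothesis arithmetically: a $\phi$-invariant one-dimensional subtorus corresponds to an $M$-invariant rational line in the cocharacter lattice, i.e.\ an integral eigenvector of $M\in\mathrm{GL}(2,\mathbb{Z})$; the associated eigenvalue is then an integer whose inverse is also an integer, hence $\pm 1$, forcing spectral radius $1$ and contradicting $\lambda(\phi)=\rho(M)>1$. Your route is more elementary and self-contained --- pure linear algebra over $\mathbb{Z}$ once the dictionary between conjugation by $\phi_M$ and the $M$-action on the lattice is set up, plus the fact $\lambda(\phi_M)=\rho(M)$ --- whereas the paper's route rests on a single dynamical fact, that loxodromic maps preserve no fibration, which is used repeatedly elsewhere in the text and does not depend on $\phi$ being monomial. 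One small repair to your write-up: the justification ``$|\mathrm{tr}\,M|$ large enough to force real irrational eigenvalues'' is not quite right as stated; the clean argument is that a rational eigenvalue of an element of $\mathrm{GL}(2,\mathbb{Z})$ is an integral unit, hence $\pm 1$, so spectral radius $>1$ forces the discriminant $(\mathrm{tr}\,M)^2\mp 4$ to be positive and not a perfect square, which is precisely the irrationality of the eigendirections that your invariant-line argument needs.
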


\begin{proof}
Denote by $\overline{\Delta_2}^{\,0}$ the 
neutral component of the Zariski
closure of $\Delta_2$.

If $\overline{\Delta_2}^{\,0}$ has a dense
orbit on $\mathbb{P}^2_\mathbb{C}$, 
then $\Delta_2$ is dense in 
$\mathrm{D}_2$. Otherwise the dimension
of the generic orbits of $\overline{\Delta_2}^{\,0}$
is $1$. But $\phi$ normalizes 
$\overline{\Delta_2}^{\,0}$, so preserves its
orbits. In particular $\phi$ thus 
preserves a rational fibration: 
contradiction with the fact that 
$\phi$ is loxodromic.
\end{proof}

In \cite{ShepherdBarron} the classification
of tight elements of
$\mathrm{Bir}(\mathbb{P}^2_\mathbb{C})$
is given:

\begin{thm}[\cite{ShepherdBarron}]\label{thm:ShepherdBarron}
Every loxodromic element of the plane
Cremona group is rigid.

Let $\phi$ be a loxodromic birational
self map of the complex projective 
plane; then
\begin{itemize}
\item[$\diamond$] if $\phi$ is conjugate to a 
monomial map, no power of $\phi$
is tight; 
\item[$\diamond$] otherwise $\phi^n$ is tight
for some integer $n$.
\end{itemize}
\end{thm}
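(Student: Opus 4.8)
The plan is to treat the two assertions separately, both through the isometric action of $\mathrm{Bir}(\mathbb{P}^2_\mathbb{C})$ on $\mathbb{H}^\infty$ described in Chapter~\ref{chap:hyperbolicspace}. Recall that a loxodromic $\phi$ has a unique axis $\mathrm{Ax}(\phi)=\mathbb{H}^\infty\cap P_\phi$ with endpoints $\alpha(\phi)$, $\omega(\phi)$ determined by the isotropic eigenlines $\mathbb{R}v^-_{\phi_*}$, $\mathbb{R}v^+_{\phi_*}$, and that $\phi_*$ translates along it by $L(\phi_*)=\log\lambda(\phi)$. Saying that $\phi$ is \emph{rigid} means that its axis cannot be fellow-travelled arbitrarily long by a non-equal translate: there is a constant $B$ so that if some $f\in\mathrm{Bir}(\mathbb{P}^2_\mathbb{C})$ keeps a long subsegment of $\mathrm{Ax}(\phi)$ inside the $B$-neighbourhood of $f(\mathrm{Ax}(\phi))$, then $f(\mathrm{Ax}(\phi))=\mathrm{Ax}(\phi)$; saying that $\phi$ is \emph{tight} means moreover that every $f$ preserving $\mathrm{Ax}(\phi)$ satisfies $f\phi f^{-1}=\phi^{\pm1}$. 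The first sentence of the statement is the rigidity of every loxodromic element, and the itemized dichotomy refines it into the group-theoretic tightness condition.

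For rigidity I would argue that long fellow-travelling forces the endpoints to coincide. If $\mathrm{Ax}(\phi)$ and $f(\mathrm{Ax}(\phi))$ stay $B$-close along a segment of length far exceeding $L(\phi_*)$ and the $\delta$-hyperbolicity constant of $\mathbb{H}^\infty$, then comparing the contracting and expanding behaviour of $\phi_*$ and of the loxodromic isometry $(f\phi f^{-1})_*$, whose axis is $f(\mathrm{Ax}(\phi))$, on the common overlap shows that their attracting and repelling isotropic classes must be proportional, that is $\{\alpha(\phi),\omega(\phi)\}=\{f(\alpha(\phi)),f(\omega(\phi))\}$; equality of endpoints then gives $f(\mathrm{Ax}(\phi))=\mathrm{Ax}(\phi)$. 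The substantial input here is that these boundary points are never shared by genuinely different axes coming from the Cremona action: the isotropic eigenclasses are controlled by the base-point data of the iterates of $\phi$ (Chapter~\ref{chapter:intro}, \S\ref{sec:geodef}), and it is this algebraic-geometric control that I expect to be the main obstacle.

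The tightness dichotomy then comes from analysing the stabiliser $\mathrm{Stab}(\mathrm{Ax}(\phi))$, which, by rigidity, is also the setwise stabiliser of $\{\alpha(\phi),\omega(\phi)\}$. Restricting the action to the axis yields a homomorphism $\mathrm{Stab}(\mathrm{Ax}(\phi))\to\mathrm{Isom}(\mathbb{R})=\mathbb{R}\rtimes\big(\faktor{\mathbb{Z}}{2\mathbb{Z}}\big)$, whose kernel $K$ is a group of elliptic elements fixing the axis pointwise, normalised by $\phi$. If $\phi$ is conjugate to a monomial map $\phi_M$, I would show directly that tightness fails for every power: the torus $\mathrm{D}_2$ preserves the endpoints of $\mathrm{Ax}(\phi_M)$, hence lies in $\mathrm{Stab}(\mathrm{Ax}(\phi_M))$, while a one-line computation inside $\mathrm{GL}(2,\mathbb{Z})\ltimes\mathrm{D}_2$ gives $d\,\phi_M\,d^{-1}=d''\circ\phi_M$ with $d''\in\mathrm{D}_2$ non-trivial for generic $d$, so $d\,\phi_M^{\,n}\,d^{-1}\neq\phi_M^{\pm n}$ and condition (ii) is violated for all $n$.

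For the non-monomial case I would show that a suitable power is tight. First, $K$ must be finite: were it infinite, $\langle\phi\rangle\cdot K$ would contain the loxodromic element $\phi$ together with an infinite normal subgroup $K$ of elliptic elements, forcing $\phi$ to be conjugate into $\mathrm{GL}(2,\mathbb{Z})\ltimes\mathrm{D}_2$, that is monomial, by Theorem~\ref{thm:urechnorm} (\emph{cf.} Theorem~\ref{thm:DelzantPy}), a contradiction. Next, the translation part of the image in $\mathrm{Isom}(\mathbb{R})$ is discrete, since translation lengths of Cremona isometries are bounded below by $\log\lambda_L$, with $\lambda_L$ the Lehmer number (\emph{see} the proof of Theorem~\ref{thm:cent}); combined with finiteness of $K$ this makes $\mathrm{Stab}(\mathrm{Ax}(\phi))$ virtually cyclic, virtually generated by an element of minimal translation length together with at most one orientation-reversing involution. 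Finally, using that $\langle\phi\rangle$ has finite index in $\mathrm{Cent}(\phi)$ (Theorem~\ref{thm:cent}), I would choose $n$ so that every element of $\mathrm{Stab}(\mathrm{Ax}(\phi))$ conjugates $\phi^n$ to $\phi^{\pm n}$, which is precisely condition (ii); together with the rigidity established above, this shows $\phi^n$ is tight.
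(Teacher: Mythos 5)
The paper itself gives no proof of this statement; it is quoted from \cite{ShepherdBarron}, so your proposal has to be judged as a self-contained argument. The second half of your proposal — the tightness dichotomy granted rigidity — is essentially sound and matches the picture the paper sketches in \S\ref{CantatLamy:passimple} (some iterate of $\phi$ is tight if and only if $\mathrm{Stab}(\mathrm{Ax}(\phi))$ is virtually cyclic): your monomial computation $d\circ\phi_M\circ d^{-1}=\phi_M\circ d''$ with $d''\neq\mathrm{id}$ is correct, the spectral-gap bound $\log\lambda_L$ does make the translation part of the image of $\mathrm{Stab}(\mathrm{Ax}(\phi))$ in $\mathrm{Isom}(\mathbb{R})$ discrete, and extracting $n$ with $f\circ\phi^n\circ f^{-1}=\phi^{\pm n}$ for all $f$ in a virtually cyclic stabilizer is a standard argument. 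One repair is needed, though: inside this text Theorem \ref{thm:urechnorm} is proved via Theorem \ref{thm:pfou}, which the paper derives from the very theorem you are proving, so invoking it is circular. Fortunately your kernel $K$ fixes the axis pointwise, hence fixes points of $\mathbb{H}^\infty$ and has uniformly bounded degree (Lemma \ref{lem:Urechtecn1}), so the hypothesis "infinite and of bounded degree" of Theorem \ref{thm:DelzantPy} — which you only mention parenthetically — applies and breaks the circle.

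The genuine gap is the first assertion, rigidity, and it is precisely the hard core of Shepherd-Barron's theorem. Your fellow-travelling argument fails as stated: in a $\delta$-hyperbolic or $\mathrm{CAT}(-1)$ space two distinct geodesics can stay within distance $B$ of one another along an arbitrarily long finite segment without sharing an endpoint, and nothing forces the "contracting and expanding behaviour" of $\phi_*$ and $(f\circ\phi\circ f^{-1})_*$ to interact on the overlap — these are isometries with a priori unrelated dynamics. Worse, the rigidity in the definition of \S\ref{CantatLamy:passimple} is quantitative: there must exist a fixed $\varepsilon$ such that a single translate segment of length $\varepsilon$ inside the $1$-neighbourhood of $\mathrm{Ax}(\phi)$ already forces $f(\mathrm{Ax}(\phi))=\mathrm{Ax}(\phi)$. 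No coarse hyperbolicity argument can yield this — in $\mathbb{H}^n$ with its full isometry group no axis is rigid, since isometries can move any axis as little as one likes — so rigidity is a discreteness property of the specific action of $\mathrm{Bir}(\mathbb{P}^2_\mathbb{C})$, coming from the integral structure of the Picard--Manin lattice (the classes $\phi^n_*\mathbf{e}_0$ lie in $\mathcal{Z}(\mathbb{P}^2_\mathbb{C})$ and degrees are integers, giving gaps). You flag this input yourself as "the main obstacle" but never supply it; since tightness of $\phi^n$ presupposes rigidity of the axis (the second condition of the definition), this gap propagates to your proof of the second assertion as well.
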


Consider a subgroup $\mathrm{G}$ of 
$\mathrm{Bir}(\mathbb{P}^2_\mathbb{C})$. 
Let $\phi\in\mathrm{G}$ be a rigid 
element; then $\phi$ is also a rigid
element in $\mathrm{G}$. The same 
holds for tight elements but the 
converse does not: there exist 
loxodromic maps $\phi\in\mathrm{G}$
such that $\phi$ is tight in 
$\mathrm{G}$ but not in 
$\mathrm{Bir}(\mathbb{P}^2_\mathbb{C})$. 

Proof of Theorem \ref{thm:ShepherdBarron}
and Lemma \ref{lem:densezar} imply the 
following:

\begin{thm}[\cite{Urech:ellipticsubgroups}]\label{thm:pfou}
Let $\mathrm{G}$ be a subgroup of 
$\mathrm{Bir}(\mathbb{P}^2_\mathbb{C})$.
Let $\phi$ be a loxodromic element. 
The following assertions are equivalent:
\begin{itemize}
\item[$\diamond$] no power of $\phi$ is 
tight in $\mathrm{G}$;

\item[$\diamond$] there is a subgroup 
$\Delta_2\subset\mathrm{G}$ that is 
normalized by $\phi$ and a birational 
self map $\psi$ of $\mathbb{P}^2_\mathbb{C}$
such that 
$\psi\circ\Delta_2\circ\psi^{-1}$
is a dense subgroup of $\mathrm{D}_2$ and
$\psi\circ\varphi\circ\psi^{-1}$ belongs
to 
$\mathrm{GL}(2,\mathbb{Z})\ltimes\mathrm{D}_2$.
\end{itemize}
\end{thm}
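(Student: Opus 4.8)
The plan is to prove Theorem \ref{thm:pfou} by establishing both implications separately, relying heavily on the classification of tight elements (Theorem \ref{thm:ShepherdBarron}) and on the rigidity dichotomy it provides between the monomial case and the generic case. The key observation driving the whole argument is that Theorem \ref{thm:ShepherdBarron} tells us precisely when a loxodromic element fails to have a tight power in the \emph{ambient} Cremona group: this happens exactly when the element is conjugate to a monomial map. So the heart of the matter is to transfer this information from $\mathrm{Bir}(\mathbb{P}^2_\mathbb{C})$ down to the subgroup $\mathrm{G}$.

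For the implication ``no power of $\phi$ is tight in $\mathrm{G}$'' $\Rightarrow$ the existence of a dense diagonal subgroup, I would first argue that if no power of $\phi$ is tight \emph{in} $\mathrm{G}$, then in particular no power of $\phi$ is tight in $\mathrm{Bir}(\mathbb{P}^2_\mathbb{C})$ (since tightness in the big group is inherited by any subgroup containing the element). By Theorem \ref{thm:ShepherdBarron}, $\phi$ is therefore conjugate to a monomial map; after conjugating by a suitable $\psi$, I may assume $\phi \in \mathrm{GL}(2,\mathbb{Z})\ltimes\mathrm{D}_2$ and that $\phi$ is loxodromic, hence its image in $\mathrm{GL}(2,\mathbb{Z})$ has spectral radius $>1$. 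The delicate point is to produce the subgroup $\Delta_2 \subset \mathrm{G}$ with the required density. Here I expect the main obstacle: tightness fails in $\mathrm{G}$ for a reason that must be \emph{intrinsic} to $\mathrm{G}$, and I must extract from this failure an infinite abelian subgroup of diagonal type normalized by $\phi$. The strategy is to use that the failure of tightness of the monomial map, together with the structure of the normalizer $\mathrm{GL}(2,\mathbb{Z})\ltimes\mathrm{D}_2$, forces elements of $\mathrm{G}$ to accumulate along the axis $\mathrm{Ax}(\phi)$ in a way that generates commuting diagonal automorphisms; once such an infinite $\Delta_2 \subset \mathrm{D}_2$ normalized by $\phi$ is found, Lemma \ref{lem:densezar} immediately gives that $\psi\circ\Delta_2\circ\psi^{-1}$ is Zariski-dense in $\mathrm{D}_2$.

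For the converse, suppose there is a subgroup $\Delta_2 \subset \mathrm{G}$ normalized by $\phi$ and a conjugating map $\psi$ so that $\psi\circ\Delta_2\circ\psi^{-1}$ is dense in $\mathrm{D}_2$ and $\psi\circ\varphi\circ\psi^{-1} \in \mathrm{GL}(2,\mathbb{Z})\ltimes\mathrm{D}_2$. After conjugating I may assume $\psi = \mathrm{id}$, so $\phi$ is a monomial loxodromic map normalizing a Zariski-dense subgroup $\Delta_2$ of $\mathrm{D}_2$. By Theorem \ref{thm:ShepherdBarron} a monomial loxodromic map has no tight power in $\mathrm{Bir}(\mathbb{P}^2_\mathbb{C})$, and since $\Delta_2 \subset \mathrm{G}$ is a genuine subgroup normalized by $\phi$, any tight-element axis argument that would be needed to make $\phi^n$ tight in $\mathrm{G}$ is obstructed by the elements of $\Delta_2$: these map the axis $\mathrm{Ax}(\phi)$ to itself (since they commute with the diagonal torus direction) and hence violate the defining separation property of a tight element. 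I would spell this out by recalling that tightness of $\phi^n$ requires that any $g \in \mathrm{G}$ moving $\mathrm{Ax}(\phi)$ a bounded amount must preserve it and normalize $\langle \phi^n\rangle$; the Zariski-dense diagonal subgroup produces infinitely many such $g$ that fix $\mathrm{Ax}(\phi)$ pointwise but do \emph{not} normalize $\langle\phi^n\rangle$, contradicting tightness.

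I expect the genuinely hard part to be the forward direction's extraction of $\Delta_2$: translating the purely combinatorial/geometric statement ``$\phi^n$ is not tight in $\mathrm{G}$'' into the algebraic conclusion ``$\mathrm{G}$ contains an infinite diagonal subgroup normalized by $\phi$'' requires understanding exactly how the non-tightness witnesses (group elements slightly displacing the axis but not normalizing the cyclic group) assemble into a subgroup of $\mathrm{D}_2$. The cleanest route is probably to invoke Theorem \ref{thm:ShepherdBarron} to get that $\phi$ is monomial, pass to the normalizer $\mathrm{GL}(2,\mathbb{Z})\ltimes\mathrm{D}_2$, and then show that the non-tightness witnesses in $\mathrm{G}$ must themselves lie (after conjugation) in this normalizer and project into its $\mathrm{D}_2$-part, with the loxodromic dynamics of $\phi$ forcing the resulting diagonal subgroup to be infinite. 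Lemma \ref{lem:densezar} then upgrades ``infinite'' to ``Zariski-dense,'' closing the argument.
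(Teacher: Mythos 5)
Your converse direction is sound and is in substance the intended argument: after conjugating you may take $\phi\in\mathrm{GL}(2,\mathbb{Z})\ltimes\mathrm{D}_2$ monomial and $\Delta_2\subset\mathrm{D}_2$ dense; since $\mathrm{D}_2$ fixes the axes of monomial loxodromic maps pointwise, every $d\in\Delta_2$ preserves $\mathrm{Ax}(\phi)$, and for each $n$ one finds $d\in\Delta_2$ with $d\circ\phi^n\circ d^{-1}\notin\{\phi^n,\phi^{-n}\}$ (the case $\phi^n$ is excluded for all but finitely many $d$ because $\mathrm{Cent}(\phi^n)$ is virtually cyclic by Theorem \ref{thm:cent}, the case $\phi^{-n}$ because $d$ fixes the axis pointwise and hence preserves the translation direction), so the normalizer condition in the definition of tightness fails for every power. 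Do state this computation rather than the vaguer ``separation property'': rigidity of the axis is \emph{not} violated by $\Delta_2$; only the third condition is.

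The genuine gap is the forward direction. Your reduction via Theorem \ref{thm:ShepherdBarron} (tightness in $\mathrm{Bir}(\mathbb{P}^2_\mathbb{C})$ passes down to $\mathrm{G}$, so non-tightness in $\mathrm{G}$ forces $\phi$ monomial up to conjugacy) is correct but yields no information about $\mathrm{G}$ beyond $\langle\phi\rangle$, and the ``accumulation along the axis'' you then invoke has no mechanism behind it. What is actually needed, and what the paper's pointer to the \emph{proof} of Theorem \ref{thm:ShepherdBarron} encodes, is the following chain. First, by the rigidity half of Theorem \ref{thm:ShepherdBarron} every loxodromic axis is rigid, so the witness to non-tightness of $\phi^n$ in $\mathrm{G}$ cannot ``slightly displace'' $\mathrm{Ax}(\phi)$: it must preserve $\mathrm{Ax}(\phi)$ exactly and fail to conjugate $\phi^n$ to $\phi^{\pm n}$; the failure is therefore concentrated in $\mathrm{Stab}_{\mathrm{G}}(\mathrm{Ax}(\phi))$. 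Second, analyze that stabilizer as in the proof of Theorem \ref{thm:cent}: the translation-length homomorphism to $\mathbb{R}$ has discrete image (translation lengths are bounded below by $\log\lambda_L$), so if its kernel $K$, the pointwise stabilizer in $\mathrm{G}$, were finite, then $\mathrm{Stab}_{\mathrm{G}}(\mathrm{Ax}(\phi))$ would be virtually cyclic and some power of $\phi$ \emph{would} be tight in $\mathrm{G}$ (this is the equivalence between tightness of an iterate and virtual cyclicity of the axis stabilizer recalled in \S\ref{CantatLamy:passimple}) --- contradiction. Hence $K$ is infinite, consists of elliptic maps of uniformly bounded degree since they fix points of $\mathbb{H}^\infty$ (Lemma \ref{lem:Urechtecn1}), and is normalized by $\phi$. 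Third, Theorem \ref{thm:DelzantPy} applied to $\langle K,\phi\rangle$ supplies the \emph{single} conjugation $\psi$ putting $\phi$ in $\mathrm{GL}(2,\mathbb{Z})\ltimes\mathrm{D}_2$ and a finite-index subgroup of $K$ in $\mathrm{D}_2$; only now does Lemma \ref{lem:densezar} upgrade ``infinite'' to ``dense''. Your sketch uses Lemma \ref{lem:densezar} but omits rigidity, the virtually-cyclic-stabilizer dichotomy, and Delzant--Py; in particular your claim that the witnesses ``project into the $\mathrm{D}_2$-part'' is unjustified, since a priori they lie in no conjugate of $\mathrm{GL}(2,\mathbb{Z})\ltimes\mathrm{D}_2$ until Theorem \ref{thm:DelzantPy} is applied, and conjugating $\phi$ to a monomial map first need not be compatible with the conjugation diagonalizing $\Delta_2$ --- the statement requires one $\psi$ for both clauses.
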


\begin{proof}[Proof of Theorem \ref{thm:urechnorm}]
The group $\mathrm{A}$ fixes a point 
$p\in\partial\mathbb{H}^\infty\cup\mathbb{H}^\infty$
(Theorem \ref{thm:weak}). Note that 
if $p$ belongs to $\mathbb{H}^\infty$, 
then $\mathrm{A}$ is bounded and 
Theorem \ref{thm:DelzantPy} allows
to conclude. Let us assume that $p$
belongs to $\partial\mathbb{H}^\infty$.
Remark that if $\mathrm{A}$ fixes an 
other point $q$ on 
$\partial\mathbb{H}^\infty$, then 
$\mathrm{A}$ fixes the geodesic 
between $p$ and $q$, and so $\mathrm{A}$
would be bounded again. Suppose 
thus that $p$ is the only fixed point
of $\mathrm{A}$ in 
$\partial\mathbb{H}^\infty$. Consider
a loxodromic map $\phi$ of $\mathrm{N}$.
It normalizes $\mathrm{A}$ and so
$\phi$ fixes~$p$. As $\phi$ is loxodromic, 
$\phi$ does not preserve any fibration;
consequently $p$ does not correspond
to the class of a fibration. From
Lemma \ref{lem:Urechtecalt} any finitely
generated group of elliptic elements 
that fixes $p$ is bounded. Let 
$\mathrm{G}$ be the subgroup of birational
self maps of $\mathbb{P}^2_\mathbb{C}$
that fix $p$. Denote by $L$ the 
one-dimensional subspace of 
$\mathrm{Z}(\mathbb{P}^2_\mathbb{C})$
corresponding to $p$. The group
$\mathrm{G}$ fixes $p$; hence its 
linear action on 
$\mathrm{Z}(\mathbb{P}^2_\mathbb{C})$
acts on $L$ by automorphisms 
preserving the orientation. This implies
the existence of a group homomorphism
$\rho\colon\mathrm{G}\to\mathbb{R}^*_+$.
Note that $\mathrm{G}$ does not contain
any parabolic element because $p$ 
does not correspond to the class of a 
fibration and that loxodromic elements
do not fix any vector in 
$\mathrm{Z}(\mathbb{P}^2_\mathbb{C})$.
As a result $\ker\rho$ consists of 
elliptic elements. But $1$ is the only
eigenvalue of a map of 
$\mathrm{Z}(\mathbb{P}^2_\mathbb{C})$
induced by an elliptic birational 
self map (\cite{Cantat:annals}); 
as a consequence any 
elliptic birational map of 
$\mathrm{G}$ is contained in 
$\ker\rho$.

Take a loxodromic map $\phi$ in 
$\mathrm{G}$. Let us show by 
contradiction that no power of
$\phi$ is tight in~$\mathrm{G}$.
So assume that there exists 
$n\in\mathbb{Z}$ such that 
$\phi^n$ is tight in 
$\mathrm{G}$. The subgroup
$\mathrm{N}$ of $\mathrm{G}$ 
is infinite and 
$\langle\phi\rangle$ has finite
index in 
$\mathrm{Cent}(\phi)$ (Theorem \ref{thm:cent}); 
there thus exists 
$\psi\in\mathrm{G}$ that do not
commute with $\phi^n$. Since 
all non trivial elements of 
$\ll\phi^n\gg$ are loxodromic
(\cite{CantatLamy}) the map 
$\psi\circ\phi^n\circ\psi^{-1}\circ\phi^{-n}$ 
is loxodromic. But 
$\rho\big(\psi\circ\phi^n\circ\psi^{-1}\circ\phi^{-n}\big)=1$, {\it i.e.} 
$\psi\circ\phi^n\circ\psi^{-1}\circ\phi^{-n}$
is elliptic: contradiction. Finally
no power of $\phi$ is tight in 
$\mathrm{G}$. According to 
Theorem \ref{thm:pfou} there 
exist 
$\varphi\in\mathrm{Bir}(\mathbb{P}^2_\mathbb{C})$
and $\Delta_2$ an algebraic 
subgroup of $\mathrm{G}$ such 
that 
\begin{itemize}
\item $\varphi\circ\phi\circ\varphi^{-1}$ is monomial; 

\item $\varphi\circ\Delta_2\circ\varphi^{-1}=\mathrm{D}_2$.
\end{itemize}

Consider a finitely generated subgroup
$\Gamma$ of $\ker\rho$. The Zariski
closure $\overline{\Gamma}$ of 
$\Gamma$ is an algebraic subgroup of 
$\mathrm{G}$ because $\Gamma$ is 
bounded. Set
\[
d=\sup\{\dim\overline{\Gamma}\,\vert\,\Gamma\subset\ker\rho\text{ finitely generated}\}
\]
We will distinguish the cases $d$ is finite
and $d$ is infinite.
\begin{itemize}
\item First consider the case $d<\infty$.
Note that $\ker\rho$ contains 
a subgroup conjugated to $\mathrm{D}_2$,
so $d\geq 2$. Take $\Gamma$ a finitely 
generated subgroup of $\ker\rho$ such 
that $\dim\overline{\Gamma}=d$. 
Let $\overline{\Gamma}^0$ be the 
neutral component of the algebraic 
group $\overline{\Gamma}$. Let $\phi$
be an element of $\mathrm{G}$. The 
group 
$\phi\circ\overline{\Gamma}^0\circ\phi^{-1}$
is again an algebraic subgroup and 
$\langle\overline{\Gamma}^0,\,\phi\circ\overline{\Gamma}^0\circ\phi^{-1}\rangle$ 
is contained in 
$\overline{\langle\Gamma,\,\phi\circ\Gamma\circ\phi^{-1}\rangle}$. According to 
\cite{Humphreys} the group 
$\langle\overline{\Gamma}^0,\,\phi\circ\overline{\Gamma}^0\circ\phi^{-1}\rangle$
is closed and connected. On the one hand 
$\dim \langle\overline{\Gamma}^0,\,\phi\circ\overline{\Gamma}^0\circ\phi^{-1}\rangle\leq d$
and on the other hand 
$\overline{\Gamma}^0\subset\langle\overline{\Gamma}^0,\,\phi\circ\overline{\Gamma}^0\circ\phi^{-1}\rangle$.
As a consequence $\langle\overline{\Gamma}^0,\,\phi\circ\overline{\Gamma}^0\circ\phi^{-1}\rangle=\overline{\Gamma}^0$.
In other words $\phi$ 
normalizes~$\overline{\Gamma}^0$. But 
$\Gamma\cap\overline{\Gamma}^0$ is infinite,
so there exists a birational self map
$\psi$ of $\mathrm{Bir}(\mathbb{P}^2_\mathbb{C})$
such that 
$\psi\circ\mathrm{G}\circ\psi^{-1}\subset\mathrm{GL}(2,\mathbb{Z})\ltimes\mathrm{D}_2$ (Theorem
\ref{thm:DelzantPy}) and hence
$\psi\circ\mathrm{N}\circ\psi^{-1}\subset\mathrm{GL}(2,\mathbb{Z})\ltimes\mathrm{D}_2$.

\item Now assume $d=\infty$. Let $\Gamma$
be a finitely generated subgroup of 
$\ker\rho$ such that 
$\dim\overline{\Gamma}\geq~9$. The closure 
$\overline{\Gamma}$ of $\Gamma$ preserves
a unique rational fibration given by a 
rational map 
$\pi\colon\mathbb{P}^2_\mathbb{C}\dashrightarrow\mathbb{P}^1_\mathbb{C}$ (Lemma 
\ref{lem:Urechmoinsde9}). Consider an 
element $\phi$ of $\ker\rho$. The 
algebraic group 
$\overline{\langle\Gamma,\,\phi\rangle}$
also preserves a unique rational 
fibration; since 
$\overline{\Gamma}\subset\langle\overline{\Gamma},\,\phi\rangle$
this fibration is given by $\pi$. As
a result $\ker\rho$ preserves a rational
fibration. Hence $\ker\rho$ is bounded
and the group 
$\phi\circ\mathrm{G}\circ\phi^{-1}$
is contained in
$\mathrm{GL}(2,\mathbb{Z})\ltimes\mathrm{D}_2$
(Theorem \ref{thm:DelzantPy});
in particular 
$\phi\circ\mathrm{N}\circ\phi^{-1}$
is a subgroup of
$\mathrm{GL}(2,\mathbb{Z})\ltimes\mathrm{D}_2$.
\end{itemize}
\end{proof}

\begin{lem}[\cite{Urech:ellipticsubgroups}]\label{lem:withoutaxe}
Let $\phi$ and $\psi$ be two loxodromic 
elements of 
$\mathrm{Bir}(\mathbb{P}^2_\mathbb{C})$
such that 
$\mathrm{Ax}(\phi)\not=\mathrm{Ax}(\psi)$. 
Then 
\begin{itemize}
\item[$\diamond$] either $\phi$ and $\psi$
have not a common fixed point on 
$\partial\mathbb{H}^\infty$,

\item[$\diamond$] or $\langle\phi,\,\psi\rangle$
contains a subgroup $\mathrm{G}$ and there
exists a birational self map $\varphi$ of 
the complex projective plane such that 
\begin{itemize}
\item[- ] $\varphi\circ\langle\phi,\,\psi\rangle\circ\varphi^{-1}\subset\mathrm{GL}(2,\mathbb{Z})\ltimes\mathrm{D}_2$, 

\item[- ] $\varphi\circ\mathrm{G}\circ\varphi^{-1}$ 
is a dense subgroup of $\mathrm{D}_2$.
\end{itemize}
\end{itemize}
\end{lem}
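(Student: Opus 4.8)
Looking at this lemma, I need to analyze two loxodromic isometries $\phi$ and $\psi$ with distinct axes that nonetheless share a common fixed point on the boundary $\partial\mathbb{H}^\infty$.

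The plan is to reason about the fixed points on the boundary and exploit the structure of loxodromic isometries described in \S\ref{subsec:isometries}. Recall that each loxodromic element $\phi$ has exactly two fixed points on $\partial\mathbb{H}^\infty$, namely the attracting point $\alpha(\phi)$ and the repelling point $\omega(\phi)$, which are the endpoints of its axis $\mathrm{Ax}(\phi)$. First I would suppose that $\phi$ and $\psi$ \emph{do} share a common fixed point $p\in\partial\mathbb{H}^\infty$; the goal is then to produce the claimed dense subgroup of $\mathrm{D}_2$ and the conjugacy into $\mathrm{GL}(2,\mathbb{Z})\ltimes\mathrm{D}_2$. Since $\mathrm{Ax}(\phi)\neq\mathrm{Ax}(\psi)$ and a loxodromic isometry is determined (up to its rotational part) by its pair of fixed points, the two axes cannot share both endpoints, so they share exactly the one point $p$.

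The key step is to analyze the commutator and the group generated by $\phi$ and $\psi$ acting on the horospheres centered at $p$. Because both isometries fix $p$, the group $\langle\phi,\psi\rangle$ stabilizes the point $p$ and therefore acts on the associated Busemann function, yielding a homomorphism to $\mathbb{R}$ measuring translation along geodesics emanating from $p$ (this parallels the construction of $\rho$ in the proof of Theorem \ref{thm:urechnorm}). I would then consider the subgroup $\mathrm{G}$ consisting of elements in $\langle\phi,\psi\rangle$ lying in the kernel of this homomorphism: these elements fix $p$ without translating toward it, so they are elliptic. The central point is that $\mathrm{G}$ is then an infinite group of elliptic elements normalized by the loxodromic element $\phi$, which puts us precisely in the situation of Theorem \ref{thm:urechnorm} applied to the short exact sequence
\[
1\longrightarrow\mathrm{G}\longrightarrow\langle\phi,\psi\rangle\longrightarrow\mathrm{B}\longrightarrow 1.
\]
Theorem \ref{thm:urechnorm} then yields a birational map $\varphi$ conjugating $\langle\phi,\psi\rangle$ into $\mathrm{GL}(2,\mathbb{Z})\ltimes\mathrm{D}_2$, and Lemma \ref{lem:densezar} together with Theorem \ref{thm:pfou} identifies the conjugate of $\mathrm{G}$ as a dense subgroup of $\mathrm{D}_2$.

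The hard part will be verifying that the elliptic subgroup $\mathrm{G}$ is genuinely infinite and not merely trivial. If $\phi$ and $\psi$ share the fixed point $p$ but their axes differ, the commutator $[\phi,\psi]=\phi\circ\psi\circ\phi^{-1}\circ\psi^{-1}$ fixes $p$; I would show that because the translation lengths cancel in the commutator, $[\phi,\psi]$ lies in the kernel of the Busemann homomorphism and is therefore elliptic, and that by iterating commutators and conjugates one obtains infinitely many distinct elliptic elements (using that if the group of such elliptics were finite it would be bounded, forcing a second common boundary fixed point and hence $\mathrm{Ax}(\phi)=\mathrm{Ax}(\psi)$, a contradiction). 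Once infiniteness of $\mathrm{G}$ is established, the invocation of Theorem \ref{thm:urechnorm} is automatic, and the two bullet conclusions follow directly from its statement combined with Theorem \ref{thm:pfou}. The delicate case to rule out carefully is when $[\phi,\psi]$ itself turns out to be loxodromic, which would signal that no genuine common fixed point configuration of the required type exists and that we must instead fall into the first alternative of the lemma.
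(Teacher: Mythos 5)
Your skeleton coincides with the paper's up to a point: the paper also works with the one-dimensional subspace $L\subset\mathrm{Z}(\mathbb{P}^2_\mathbb{C})$ corresponding to the common fixed point $p$, gets the homomorphism $\rho\colon\langle\phi,\psi\rangle\to\mathbb{R}^*_+$ with elliptic kernel, and concludes with Theorem \ref{thm:urechnorm}. But the crux — why the elliptic kernel is infinite — is where your argument fails. Your justification, \og if the group of elliptics were finite it would be bounded, forcing a second common boundary fixed point \fg, does not work: a bounded group fixes a point of $\mathbb{H}^\infty$, not of $\partial\mathbb{H}^\infty$, and no second common boundary fixed point of $\phi$ and $\psi$ follows from that. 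The paper's actual mechanism is different and is the missing idea in your proposal: one shows that \emph{no power of $\phi$ is tight in $\langle\phi,\psi\rangle$}. Indeed, if $\phi^n$ were tight, then by the small cancellation theorem of \cite{CantatLamy} every non-trivial element of $\ll\phi^n\gg$ would be loxodromic; but the commutator $\psi\circ\phi^n\circ\psi^{-1}\circ\phi^{-n}$ is non-trivial (distinct axes force $\phi^n$ and $\psi$ not to commute) and lies in $\ker\rho$, hence is elliptic — contradiction. Theorem \ref{thm:pfou} then directly produces the subgroup $\Delta_2\subset\langle\phi,\psi\rangle$ whose conjugate is dense in $\mathrm{D}_2$, which at one stroke yields the second bullet of the lemma \emph{and} the infiniteness $\ker\rho\supset\Delta_2$ required to invoke Theorem \ref{thm:urechnorm}.

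This also exposes a circularity in your density step: you propose to \og identify the conjugate of $\mathrm{G}$ as a dense subgroup of $\mathrm{D}_2$ via Theorem \ref{thm:pfou} and Lemma \ref{lem:densezar} \fg, but the hypothesis of Theorem \ref{thm:pfou} is precisely the non-tightness you never establish, and Lemma \ref{lem:densezar} needs an infinite subgroup of $\mathrm{D}_2$ normalized by a loxodromic monomial map, which you have not produced. Two further remarks. First, your closing worry that $[\phi,\psi]$ might be loxodromic cannot occur: once $\phi$ and $\psi$ both fix $p$, every element of $\ker\rho$ is elliptic, since a parabolic element fixing $p$ would force $p$ to be the class of a fibration (impossible, as $p$ is an endpoint of the axis of a loxodromic map), and a loxodromic element scales $L$ by $\lambda^{\pm 1}\neq 1$. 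Second, if you insist on your order of argument, the infiniteness step can be repaired: were $\ker\rho$ finite, $\langle\phi,\psi\rangle$ would be finite-by-abelian, hence center-by-finite, so suitable powers $\phi^N$ and $\psi^M$ would commute; commuting loxodromic isometries share their axis, giving $\mathrm{Ax}(\phi)=\mathrm{Ax}(\psi)$, a contradiction. Even so, you would still owe the tightness analysis (or an argument that the projection of the elliptic kernel to $\mathrm{GL}(2,\mathbb{Z})$ is torsion, hence finite, so the kernel meets the torus in an infinite subgroup to which Lemma \ref{lem:densezar} applies) before the dense-subgroup bullet is proved.
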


\begin{proof}
Suppose that $\phi$ and $\psi$ have a
common fixed point $p\in\partial\mathbb{H}^\infty$.
Denote by $L$ the one-dimensional subspace
of $\mathrm{Z}(\mathbb{P}^2_\mathbb{C})$
corresponding to $p$. The group 
$\langle\phi,\,\psi\rangle$ generated by
$\phi$ and $\psi$ fixes $p$, so its 
linear action on 
$\mathrm{Z}(\mathbb{P}^2_\mathbb{C})$
acts on $L$ by automorphisms preserving
the orientation. A reasoning analogous 
to that of the proof of Theorem 
\ref{thm:urechnorm} implies the 
existence of a group homomorphism
\[
\rho\colon\langle\phi,\,\psi\rangle\to\mathbb{R}^*_+
\]
whose kernel consists of elliptic
birational maps (\emph{see} Proof
of Theorem \ref{thm:urechnorm}). 

Assume that $\phi^n$ is tight for 
some $n$. Since 
$\mathrm{Ax}(\phi)\not=\mathrm{Ax}(\psi)$
the maps $\phi^n$ and $\psi$ do not 
commute. According to 
\cite{CantatLamy} any non trivial 
element of $\ll\phi^n\gg$ is 
loxodromic. Therefore, on the one 
hand 
$\psi\circ\phi^n\circ\psi^{-1}\circ\phi^{-n}$
is loxodromic, and on the other
hand 
$\rho(\psi\circ\varphi^n\circ\psi^{-1})=1$ 
hence $\psi\circ\varphi^n\circ\psi^{-1}$
is elliptic: contradiction. As a 
result for any $k$ the map $\phi^k$
is not tight in 
$\langle\phi,\,\psi\rangle$. Theorem
\ref{thm:pfou} implies that there 
exist a birational self map 
$\varphi$ of $\mathbb{P}^2_\mathbb{C}$
and a bounded subgroup 
$\Delta_2\subset\langle\phi,\,\psi\rangle$
such that 
\begin{itemize}
\item[$\diamond$] $\varphi\circ\phi\circ\varphi^{-1}$ is monomial;

\item[$\diamond$] $\varphi\circ\Delta_2\circ\varphi^{-1}$ 
is a dense subgroup of $\mathrm{D}_2$.
\end{itemize}
In particular $\ker\rho\supset\Delta_2$
is thus infinite. Theorem 
\ref{thm:urechnorm} allows to 
conclude.
\end{proof}

\begin{lem}[\cite{Urech:ellipticsubgroups}]\label{Urech:without}
Let $\phi$ and $\psi$ be two loxodromic 
elements of 
$\mathrm{Bir}(\mathbb{P}^2_\mathbb{C})$ 
such that 
$\mathrm{Ax}(\phi)\not=\mathrm{Ax}(\psi)$.
Then $\phi$ and $\psi$ have not a common 
fixed point on $\partial\mathbb{H}^\infty$.
\end{lem}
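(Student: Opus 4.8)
The statement to establish is Lemma \ref{Urech:without}: two loxodromic maps $\phi$, $\psi$ with distinct axes cannot share a fixed point on $\partial\mathbb{H}^\infty$. The natural strategy is to argue by contradiction, exploiting the dichotomy furnished by the immediately preceding Lemma \ref{lem:withoutaxe}. So the plan is to assume that $\phi$ and $\psi$ \emph{do} share a fixed point on $\partial\mathbb{H}^\infty$, and then to rule out the only surviving alternative of Lemma \ref{lem:withoutaxe}, namely that $\langle\phi,\psi\rangle$ is (up to conjugacy) a subgroup of $\mathrm{GL}(2,\mathbb{Z})\ltimes\mathrm{D}_2$ containing a Zariski-dense copy $\mathrm{G}$ of $\mathrm{D}_2$.

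\textbf{Key steps.} First I would invoke Lemma \ref{lem:withoutaxe}: under the contradiction hypothesis, there is a birational map $\varphi$ and a subgroup $\mathrm{G}\subset\langle\phi,\psi\rangle$ such that $\varphi\circ\langle\phi,\psi\rangle\circ\varphi^{-1}\subset\mathrm{GL}(2,\mathbb{Z})\ltimes\mathrm{D}_2$ and $\varphi\circ\mathrm{G}\circ\varphi^{-1}$ is dense in $\mathrm{D}_2$. After conjugating by $\varphi$, I may therefore assume outright that $\phi$ and $\psi$ both lie in $\mathrm{GL}(2,\mathbb{Z})\ltimes\mathrm{D}_2$ and are monomial (modulo the torus). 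The crucial observation is that the projection
\[
\pi\colon\mathrm{GL}(2,\mathbb{Z})\ltimes\mathrm{D}_2\to\mathrm{GL}(2,\mathbb{Z})
\]
sends each loxodromic monomial map to a hyperbolic matrix, and the axis $\mathrm{Ax}(\phi)$ in $\mathbb{H}^\infty$ is determined by the pair of isotropic eigenlines of $\phi_*$, which in turn are governed by the eigenvectors of $\pi(\phi)\in\mathrm{GL}(2,\mathbb{Z})$. I would then compute that two hyperbolic elements of $\mathrm{GL}(2,\mathbb{Z})$ share exactly one of their two fixed points on $\partial\mathbb{H}^2$ if and only if they share both — because a common eigendirection of two hyperbolic integer matrices forces, via the characteristic polynomial with rational coefficients, the second eigendirection to coincide as well (the eigenvalue is an algebraic number of degree $2$, and its Galois conjugate pins down the other eigenline). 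Consequently a shared boundary fixed point of $\phi$ and $\psi$ would force $\mathrm{Ax}(\phi)=\mathrm{Ax}(\psi)$, contradicting the hypothesis.

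\textbf{Main obstacle.} The delicate point is translating the geometric hypothesis ``$\phi$ and $\psi$ share a fixed point on $\partial\mathbb{H}^\infty$'' into a statement purely about eigenvectors of the integer matrices $\pi(\phi)$, $\pi(\psi)$. The subtlety is that elements of the torus $\mathrm{D}_2$ act by elliptic isometries and could, \emph{a priori}, shift which isotropic eigenline corresponds to the attracting versus repelling fixed point; I would need to check that the torus part does not create a spurious coincidence of boundary points while keeping the axes distinct. I expect the cleanest route is to work directly in the Picard--Manin space $\mathrm{Z}(\mathbb{P}^2_\mathbb{C})$: the fixed boundary point $p$ corresponds to a common isotropic eigenvector $v$ of both $\phi_*$ and $\psi_*$, and since $\mathrm{G}$ is dense in $\mathrm{D}_2$ one obtains that $v$ must be fixed by the whole of $\mathrm{D}_2$'s action on $\mathrm{Z}$, which constrains $v$ to lie in the small invariant subspace coming from $\mathrm{NS}$ where the monomial action is diagonalized. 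Pinning down that this invariant line forces both eigenlines to coincide — equivalently that $v^\pm_{\phi_*}$ and $v^\pm_{\psi_*}$ generate the same plane $P_\phi=P_\psi$, hence the same axis — is the heart of the argument and the step I would spend the most care on.
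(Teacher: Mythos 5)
Your argument is correct, and it opens exactly as the paper does --- assume a common fixed point on $\partial\mathbb{H}^\infty$ and apply Lemma \ref{lem:withoutaxe} to conjugate $\langle\phi,\,\psi\rangle$ into $\mathrm{GL}(2,\mathbb{Z})\ltimes\mathrm{D}_2$ --- but it closes by a genuinely different route. The paper writes $\phi=d_1\circ m_1$ and $\psi=d_2\circ m_2$ with $d_i\in\mathrm{D}_2$ and $m_i\in\mathrm{GL}(2,\mathbb{Z})$, observes that $\mathrm{D}_2$ fixes the axes of loxodromic monomial maps, so that $m_1$ and $m_2$ have the same boundary fixed points as $\phi$ and $\psi$, and then applies Lemma \ref{lem:withoutaxe} a \emph{second} time, now to $\langle m_1,\,m_2\rangle$: that subgroup of $\mathrm{GL}(2,\mathbb{Z})$ cannot contain a subgroup conjugate to a dense subgroup of $\mathrm{D}_2$, so $m_1$ and $m_2$ share no boundary point, a contradiction. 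You replace this second, group-theoretic application by elementary arithmetic in $\mathrm{GL}(2,\mathbb{Z})$: a hyperbolic integer matrix of determinant $\pm 1$ has irrational quadratic eigendirections (a rational eigenvalue would have to be $\pm 1$), the two slopes are the roots of a single rational quadratic and hence are Galois conjugate, so two hyperbolic matrices sharing one eigenline share both; this forces $\mathrm{Ax}(m_1)=\mathrm{Ax}(m_2)$, hence $\mathrm{Ax}(\phi)=\mathrm{Ax}(\psi)$, contradicting the hypothesis on the axes rather than the shared-point assumption. Your version buys a stronger intermediate fact (one common endpoint forces both) and sidesteps the paper's slippery phrase that $\langle m_1,\,m_2\rangle$ ``does not contain any infinite abelian group'' (taken literally this fails, since it contains $\langle m_1\rangle\simeq\mathbb{Z}$; what is actually used is that it contains no infinite group of \emph{elliptic} elements), while the paper's version is shorter, simply recycling the dichotomy it has just established. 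One remark on the step you flag as the main obstacle: it is easier than you fear, and your density argument is not needed. The monomial valuation classes $w^{\pm}$ whose slopes are the eigendirections of $m_1$ are fixed by every element of $\mathrm{D}_2$, so $\phi_*w^{\pm}=(d_1)_*(m_1)_*w^{\pm}=\lambda^{\pm 1}w^{\pm}$, and uniqueness of the isotropic eigenvectors of a loxodromic isometry gives $\mathrm{Ax}(\phi)=\mathrm{Ax}(m_1)$ endpoint by endpoint --- which is precisely the unproved fact the paper invokes.
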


\begin{proof}
Assume by contradiction that $\phi$ and 
$\psi$ have no common fixed point on 
$\partial\mathbb{H}^\infty$. 
Lemma~\ref{lem:withoutaxe} thus implies that 
up to birational conjugacy 
\begin{itemize}
\item[$\diamond$] $\varphi\circ\langle\phi,\,\psi\rangle\circ\varphi^{-1}\subset\mathrm{GL}(2,\mathbb{Z})\ltimes\mathrm{D}_2$, 

\item[$\diamond$] $\varphi\circ\mathrm{G}\circ\varphi^{-1}\subset\mathrm{D}_2$ is a dense subgroup.
\end{itemize}
Let us write
\begin{align*}
& \phi=d_1\circ m_1 && \psi=d_2\circ m_2
\end{align*}
with $d_i\in\mathrm{D}_2$ and 
$m_i\in\mathrm{GL}(2,\mathbb{Z})$.
The group $\mathrm{D}_2$ fixes the 
axes of all the monomial loxodromic
elements; in particular $m_1$ and 
$m_2$ have the same fixed points on 
$\partial\mathbb{H}^\infty$ as $\phi$
and $\psi$. But the group 
$\langle m_1,\,m_2\rangle$ does not 
contain any infinite abelian group, 
so according to Lemma \ref{lem:withoutaxe}
the birational maps $m_1$ and $m_2$
have not a common fixed point on 
$\partial\mathbb{H}^\infty$: contradiction.
\end{proof}

\begin{lem}[\cite{Urech:ellipticsubgroups}]\label{lem:Urechlox}
Let $\mathrm{G}$ be a subgroup of the 
plane Cremona group that 
contains a loxodromic element. Then
one of the following holds:
\begin{itemize}
\item[$\diamond$] $\mathrm{G}$ is conjugate
to a subgroup of $\mathrm{GL}(2,\mathbb{Z})\ltimes\mathrm{D}_2$;

\item[$\diamond$] $\mathrm{G}$ contains a 
subgroup of index at most $2$ that is 
isomorphic to $\mathbb{Z}\ltimes\mathrm{H}$
where $\mathrm{H}$ is a finite group;

\item[$\diamond$] $\mathrm{G}$ contains a 
non-abelian free subgroup.
\end{itemize}
\end{lem}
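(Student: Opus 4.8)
The statement to prove is Lemma~\ref{lem:Urechlox}: a subgroup $\mathrm{G}$ of $\mathrm{Bir}(\mathbb{P}^2_\mathbb{C})$ containing a loxodromic element falls into one of three classes. The natural framework is the isometric action of $\mathrm{G}$ on the hyperbolic space $\mathbb{H}^\infty$ built in Chapter~\ref{chap:hyperbolicspace}, which converts the problem into a dichotomy about how $\mathrm{G}$ acts on the set of fixed points at infinity of its loxodromic elements. The first step is to fix a loxodromic $\phi\in\mathrm{G}$ with axis $\mathrm{Ax}(\phi)$ and its two boundary fixed points $\alpha(\phi),\omega(\phi)\in\partial\mathbb{H}^\infty$, and to analyze the subset $\mathcal{P}\subset\partial\mathbb{H}^\infty$ consisting of all fixed points of loxodromic elements of $\mathrm{G}$. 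The key trichotomy to set up is according to the cardinality and configuration of the axes of loxodromic elements in $\mathrm{G}$.

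\textbf{Key steps.} First I would treat the case where all loxodromic elements of $\mathrm{G}$ share a common axis, i.e.\ $\mathrm{Ax}(\psi)=\mathrm{Ax}(\phi)$ for every loxodromic $\psi\in\mathrm{G}$. In that situation $\mathrm{G}$ stabilizes the pair $\{\alpha(\phi),\omega(\phi)\}$, hence acts on this two-point set, giving an index-at-most-$2$ subgroup $\mathrm{G}_0$ fixing each of $\alpha(\phi)$ and $\omega(\phi)$. Using the homomorphism $\rho\colon\mathrm{G}_0\to\mathbb{R}^*_+$ recording the action on the line $L$ corresponding to a fixed endpoint (exactly as constructed in the proof of Theorem~\ref{thm:urechnorm}), I would argue that $\ker\rho$ consists of elliptic elements fixing both endpoints; since such a subgroup fixes a point of $\mathbb{H}^\infty$ it is bounded, and indeed finite because a bounded torsion-free-to-finite subgroup fixing the whole axis is finite. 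This yields $\mathrm{G}_0\simeq\mathbb{Z}\ltimes\mathrm{H}$ with $\mathrm{H}$ finite, which is the second alternative. Second, I would treat the case where there exist two loxodromic elements $\phi,\psi\in\mathrm{G}$ with $\mathrm{Ax}(\phi)\neq\mathrm{Ax}(\psi)$. Here Lemma~\ref{Urech:without} applies and tells me that $\phi$ and $\psi$ have \emph{no} common fixed point on $\partial\mathbb{H}^\infty$, unless we land in the monomial situation of Lemma~\ref{lem:withoutaxe} which conjugates $\mathrm{G}$ into $\mathrm{GL}(2,\mathbb{Z})\ltimes\mathrm{D}_2$ (the first alternative). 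So, outside the first alternative, I obtain two loxodromic isometries with four \emph{distinct} endpoints.

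\textbf{The Ping-Pong step.} With two loxodromic isometries whose fixed point sets are disjoint pairs, I would apply the Ping-Pong Lemma (Lemma~\ref{lem:pingpong}) to the north-south dynamics on $\overline{\mathbb{H}^\infty}$. Concretely, choosing disjoint neighborhoods $S_1,S_2\subset\overline{\mathbb{H}^\infty}$ of the two fixed-point pairs and passing to sufficiently high powers $\phi^n,\psi^n$, the attracting/repelling dynamics of loxodromic isometries guarantees the inclusions $\phi^{nm}(S_2)\subset S_1$ and $\psi^{nm}(S_1)\subset S_2$ for all nonzero $m$. Lemma~\ref{lem:pingpong} then produces a non-abelian free subgroup $\langle\phi^n,\psi^n\rangle$, which is the third alternative. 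I would organize the whole argument as: if $\mathrm{G}$ is not conjugate into $\mathrm{GL}(2,\mathbb{Z})\ltimes\mathrm{D}_2$, then either all axes coincide (giving the $\mathbb{Z}\ltimes\mathrm{H}$ structure) or two distinct axes exist (giving, via Lemma~\ref{Urech:without} and Ping-Pong, a free subgroup).

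\textbf{Main obstacle.} The delicate point will be the first case: showing that when all loxodromic elements share the axis $\mathrm{Ax}(\phi)$, the stabilizer of the two endpoints really is virtually $\mathbb{Z}\ltimes(\text{finite})$ and not something larger. The subtlety is controlling $\ker\rho$: I must rule out an infinite elliptic subgroup acting on $\mathbb{H}^\infty$ while fixing $\mathrm{Ax}(\phi)$ pointwise, for which I would invoke Theorem~\ref{thm:urechnorm} — an infinite elliptic normal subgroup together with a loxodromic element forces conjugacy into $\mathrm{GL}(2,\mathbb{Z})\ltimes\mathrm{D}_2$, i.e.\ back into the first alternative. Thus the genuine content is showing that once we have excluded the monomial/$\mathrm{D}_2$ case, $\ker\rho$ is finite, so that $\mathrm{G}_0$ is an extension of a subgroup of $(\mathbb{R}^*_+,\times)$ (necessarily cyclic, being discrete by the Lehmer-number lower bound on translation lengths from Theorem~\ref{thm:cent} and its proof) by a finite group, yielding $\mathbb{Z}\ltimes\mathrm{H}$. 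Assembling these three cases into the stated trichotomy then completes the proof.
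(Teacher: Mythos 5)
Your proposal is correct and follows essentially the same route as the paper's proof: the dichotomy according to whether $\mathrm{Ax}(\phi)$ is $\mathrm{G}$-invariant, with the translation homomorphism to $\mathbb{R}$ in the invariant case (kernel bounded, hence finite unless Theorem~\ref{thm:urechnorm}/Theorem~\ref{thm:DelzantPy} forces the first alternative, the image being discrete and hence cyclic by the Lehmer gap exactly as in the proof of Theorem~\ref{thm:cent}), and ping-pong on disjoint fixed-point pairs guaranteed by Lemma~\ref{Urech:without} in the other case. Your hedge ``unless we land in the monomial situation of Lemma~\ref{lem:withoutaxe}'' is superfluous, since Lemma~\ref{Urech:without} excludes a common boundary fixed point unconditionally (and Lemma~\ref{lem:withoutaxe} would anyway only conjugate $\langle\phi,\,\psi\rangle$, not all of $\mathrm{G}$, into $\mathrm{GL}(2,\mathbb{Z})\ltimes\mathrm{D}_2$); as that branch is vacuous the argument is unaffected, and your phrasing of the second case via two loxodromic elements with distinct axes even repairs the paper's implicit assumption that the element moving the axis is itself loxodromic.
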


\begin{proof}
Let $\phi$ be a loxodromic map of $\mathrm{G}$. 
\begin{itemize}
\item[$\diamond$] Assume first that all elements
in $\mathrm{G}$ preserve the axis $\mathrm{Ax}(\phi)$
of $\phi$. The group $\mathrm{G}$ contains a 
subgroup $\mathrm{H}$ of index at most $2$ 
with the following property: $\mathrm{H}$ 
preserves the orientation of the axis. As 
a result any 
element $\psi\in\mathrm{H}$ translates
the points on $\mathrm{Ax}(\phi)$ by a 
constant $c_\psi$. This yields a group 
morphism
\begin{align*}
&\pi\colon\mathrm{H}\to\mathbb{R}, && \psi\mapsto c_\psi
\end{align*}
such that $\ker\pi$ is a bounded group. From
Theorem \ref{thm:DelzantPy} either $\ker\pi$ 
is finite, or $\mathrm{G}$ is conjugate to a 
subgroup of 
$\mathrm{GL}(2,\mathbb{Z})\ltimes\mathrm{D}_2$.

\item[$\diamond$] Suppose that there is an 
element $\psi\in\mathrm{G}$ that does not 
preserve $\mathrm{Ax}(\phi)$. Denote by 
$\alpha(\phi)$ and $\omega(\phi)$ 
(resp. $\alpha(\psi)$ and $\omega(\psi)$)
the attractive and repulsive fixed points
of $\phi_\bullet$ (resp. $\psi_\bullet$).
Let $\mathcal{U}_1^+$ (resp. $\mathcal{U}_1^-$,
resp. $\mathcal{U}_2^+$, resp. 
$\mathcal{U}_2^-$) be a small neighborhood
of $\alpha(\phi)$ (resp. $\omega(\phi)$, 
resp. $\alpha(\psi)$, resp. $\omega(\psi)$)
in $\partial\mathbb{H}^\infty$. We can 
assume that $\mathcal{U}_1^+$, $\mathcal{U}_1^-$, 
$\mathcal{U}_2^+$ and $\mathcal{U}_2^-$
are pairwise disjoint. Set 
$\mathcal{U}_1=\mathcal{U}_1^+\cup\mathcal{U}_1^-$
and 
$\mathcal{U}_2=\mathcal{U}_2^+\cup\mathcal{U}_2^-$.
There exist $n_1$, $n_2$, $n_3$, $n_4$ some 
positive integers such that 
\begin{align*}
&\phi^{n_1}(\mathcal{U}_2)\subset\mathcal{U}_1^+, && \phi^{-n_2}(\mathcal{U}_2)\subset\mathcal{U}_1^-,
&& \psi^{n_3}(\mathcal{U}_1)\subset\mathcal{U}_2^+,&&\psi^{-n_4}(\mathcal{U}_1)\subset\mathcal{U}_2^-.
\end{align*}
Set $n=\max(n_1,n_2,n_3,n_4)$. Since 
\begin{align*}
&\phi(\mathcal{U}_1^+)\subset\mathcal{U}_1^+ && \phi^{-1}(\mathcal{U}_1^-)\subset\mathcal{U}_1^-&& \psi(\mathcal{U}_2^+)\subset\mathcal{U}_2^+ && \psi^{-1}(\mathcal{U}_2^-)\subset\mathcal{U}_2^-
\end{align*}
one gets that for any $k\leq n$
\begin{align*}
&\phi^k(\mathcal{U}_2)\subset\mathcal{U}_1 &&\phi^{-k}(\mathcal{U}_2)\subset\mathcal{U}_1 &&\psi^k(\mathcal{U}_1)\subset\mathcal{U}_2 && \psi^{-k}(\mathcal{U}_1)\subset\mathcal{U}_2
\end{align*}
\end{itemize}
According to Ping Pong Lemma applied to $\phi^n$, 
$\psi^n$ together with $\mathcal{U}_1$ and 
$\mathcal{U}_2$ we get that 
$\langle\psi^n,\,\phi^n\rangle$ generates a 
non-abelian free subgroup of $\mathrm{G}$.
\end{proof}

\subsection{Tits alternative for finitely generated 
subgroups for automorphisms groups and Jonqui\`eres group}

\begin{lem}[\cite{Cantat:annals}]\label{lem:extsolv}
Let $\mathrm{G}$ be a finitely generated group. Assume 
that $\mathrm{G}$ is an extension of a virtually solvable
group $\mathrm{R}$ of length $r$ by an other virtually 
solvable group $\mathrm{Q}$ of length $q$
\[
1\longrightarrow \mathrm{R}\longrightarrow\mathrm{G}\longrightarrow\mathrm{Q}\longrightarrow 1.
\]
Then $\mathrm{G}$ is virtually solvable of length 
$\leq q+r+1$.
\end{lem}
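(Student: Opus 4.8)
The plan is to prove that an extension of a virtually solvable group by a virtually solvable group is again virtually solvable, with an explicit control on the length. Recall the setup: we have a short exact sequence
\[
1\longrightarrow \mathrm{R}\longrightarrow\mathrm{G}\stackrel{\pi}{\longrightarrow}\mathrm{Q}\longrightarrow 1
\]
where $\mathrm{R}$ is virtually solvable of length $r$ and $\mathrm{Q}$ is virtually solvable of length $q$. By definition each contains a finite index solvable subgroup: there exist $\mathrm{R}_0\subset\mathrm{R}$ of finite index which is solvable of length $\leq r$, and $\mathrm{Q}_0\subset\mathrm{Q}$ of finite index which is solvable of length $\leq q$. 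The strategy is to pull these finite index subgroups back through $\pi$ and intersect, producing a finite index subgroup of $\mathrm{G}$ that fits into a short exact sequence of genuinely solvable groups, and then to bound the derived length of such an extension.

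First I would handle the finite index issue. Set $\mathrm{G}_1=\pi^{-1}(\mathrm{Q}_0)$; this is a subgroup of $\mathrm{G}$ of finite index equal to $[\mathrm{Q}:\mathrm{Q}_0]$, and it surjects onto $\mathrm{Q}_0$ with kernel exactly $\mathrm{R}$. Next I want to replace $\mathrm{R}$ by a solvable subgroup of finite index. The subtlety is that $\mathrm{R}_0$ need not be normal in $\mathrm{G}_1$, so one cannot directly form a quotient. The standard fix is to pass to a characteristic (hence normal) subgroup: since $\mathrm{R}_0$ has finite index in $\mathrm{R}$, and $\mathrm{R}$ is finitely generated (being a finite index subgroup of the finitely generated group $\mathrm{G}$), the intersection $\mathrm{N}$ of all subgroups of $\mathrm{R}$ of index $[\mathrm{R}:\mathrm{R}_0]$ is a characteristic subgroup of $\mathrm{R}$ of finite index contained in $\mathrm{R}_0$; hence $\mathrm{N}$ is solvable of length $\leq r$ and, being characteristic in $\mathrm{R}=\ker\pi$ which is normal in $\mathrm{G}_1$, it is normal in $\mathrm{G}_1$. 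Now form $\mathrm{G}_2=\mathrm{G}_1$ and consider $\mathrm{G}_2/\mathrm{N}$, which is an extension of $\mathrm{Q}_0$ (solvable, length $\leq q$) by the finite group $\mathrm{R}/\mathrm{N}$; passing once more to a finite index subgroup we may arrange the quotient term to be solvable as well. The cleanest route is to directly work with a finite index subgroup $\mathrm{H}\subset\mathrm{G}$ such that both $\mathrm{H}\cap\mathrm{R}$ is solvable of length $\leq r$ and $\pi(\mathrm{H})$ is solvable of length $\leq q$.

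Once I have reduced to the case where $\mathrm{R}$ and $\mathrm{Q}$ are honestly solvable, of lengths $\leq r$ and $\leq q$ respectively, the core computation is the length bound for a genuine extension. Here I would use the commutator identity for the derived series: if $1\to \mathrm{R}\to\mathrm{G}\to\mathrm{Q}\to 1$ with $\mathrm{R},\mathrm{Q}$ solvable, then $\pi(\mathrm{G}^{(q)})=\mathrm{Q}^{(q)}=\{\mathrm{id}\}$, so $\mathrm{G}^{(q)}\subset\mathrm{R}$; consequently $\mathrm{G}^{(q+r)}=(\mathrm{G}^{(q)})^{(r)}\subset \mathrm{R}^{(r)}=\{\mathrm{id}\}$, giving solvable length $\leq q+r$ for the reduced extension. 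The extra $+1$ in the stated bound absorbs the loss incurred when passing to the finite index subgroup $\mathrm{H}$ and accounting for the finite quotients; tracking this carefully is where I would be most careful.

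The main obstacle will be the bookkeeping around finite index subgroups and normality: ensuring that after intersecting the pullbacks of $\mathrm{R}_0$ and $\mathrm{Q}_0$ one still obtains a \emph{normal} solvable kernel so that the derived-series estimate applies, and then verifying that the length only increases by the advertised single unit rather than more. The finite generation hypothesis on $\mathrm{G}$ is what guarantees (via $\mathrm{R}$ being finitely generated and the Reidemeister--Schreier argument) that one can choose the relevant subgroups to be of finite index and characteristic; without it the intersection of all subgroups of a given index could fail to have finite index. I would therefore isolate as a preliminary lemma the statement that a finite index subgroup $\mathrm{R}_0$ of a finitely generated group $\mathrm{R}$ contains a characteristic finite index subgroup, and then the derived-series inequality $\mathrm{G}^{(q+r)}\subset\mathrm{R}^{(r)}$ closes the argument.
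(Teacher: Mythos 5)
The paper itself gives no proof of this lemma (it is quoted from \cite{Cantat:annals}), so your argument must stand on its own, and it has a genuine gap at its load-bearing step. You justify the existence of a characteristic finite-index subgroup $\mathrm{N}\subset\mathrm{R}_0$ by asserting that $\mathrm{R}$ is finitely generated, ``being a finite index subgroup of the finitely generated group $\mathrm{G}$''. This is false: $\mathrm{R}=\ker\pi$ has index $\vert\mathrm{Q}\vert$ in $\mathrm{G}$, which is infinite whenever $\mathrm{Q}$ is, and kernels of surjections from finitely generated groups need not be finitely generated. For instance $\mathrm{G}=\big(\bigoplus_{i\in\mathbb{Z}}\mathbb{Z}\big)\rtimes\mathbb{Z}$ (the wreath product $\mathbb{Z}\wr\mathbb{Z}$) is $2$-generated while its kernel $\bigoplus_{i\in\mathbb{Z}}\mathbb{Z}$ is not finitely generated --- and this example satisfies all the hypotheses of the lemma, with $\mathrm{R}$ even abelian. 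Without finite generation of $\mathrm{R}$ the construction collapses: a non-finitely generated group can have infinitely many subgroups of a given finite index, and their intersection can have infinite index (in an infinite elementary abelian $2$-group the intersection of all index-$2$ subgroups is trivial). Producing a finite-index solvable subgroup of $\mathrm{R}$ normalized by a finite-index subgroup of $\mathrm{G}$ is precisely the crux of the lemma, and it is where the finite generation of $\mathrm{G}$ itself (for example via the fact that $\mathrm{G}$, not $\mathrm{R}$, has only finitely many subgroups of each finite index) must be exploited; you locate the difficulty correctly but resolve it with an invalid claim.

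Your accounting for the $+1$ is also off, and the discrepancy is a symptom of the gap above. If your ``cleanest route'' reduction were achievable --- a finite-index subgroup $\mathrm{H}\leq\mathrm{G}$ with $\mathrm{H}\cap\mathrm{R}$ solvable of length $\leq r$ and $\pi(\mathrm{H})$ solvable of length $\leq q$ --- then $\mathrm{H}^{(q)}\subset\mathrm{H}\cap\mathrm{R}$ and $\mathrm{H}^{(q+r)}=\{\mathrm{id}\}$, so you would prove the stronger bound $q+r$: passing to a finite-index subgroup costs nothing in the definition of virtual solvability, so there is no ``loss'' for the $+1$ to absorb. In the standard argument the $+1$ enters at a precise and different point: after arranging a normal solvable subgroup $\mathrm{N}$ of finite index in $\mathrm{R}$, one is left with an extension $1\to\mathrm{F}\to\mathrm{G}_1/\mathrm{N}\to\mathrm{Q}_0\to 1$ with $\mathrm{F}$ finite, and a finite-by-solvable group need not contain a finite-index subgroup meeting $\mathrm{F}$ trivially. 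One instead passes to the centralizer $\mathrm{C}$ of $\mathrm{F}$ in $\mathrm{G}_1/\mathrm{N}$, of finite index because the map to $\mathrm{Aut}(\mathrm{F})$ has finite image; then $\mathrm{C}^{(q)}\subset\mathrm{C}\cap\mathrm{F}$, which is central in $\mathrm{C}$ and hence abelian, so $\mathrm{C}^{(q+1)}=\{\mathrm{id}\}$ and the preimage of $\mathrm{C}$ is solvable of length $\leq(q+1)+r$. Your sentence ``passing once more to a finite index subgroup we may arrange the quotient term to be solvable'' glosses over exactly this centralizer step; as written, your proposal establishes neither the key normal subgroup nor the mechanism producing the $+1$, so both halves of the proof need repair.
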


Hence one has the following statement:

\begin{pro}[\cite{Cantat:annals}]\label{pro:titsext}
Let $\mathrm{G}_1$ and $\mathrm{G}_2$ be two 
groups that satisfy Tits alternative. 

If $\mathrm{G}$ is an extension of $\mathrm{G}_1$
by $\mathrm{G}_2$, then $\mathrm{G}$ satisfies 
Tits alternative.
\end{pro}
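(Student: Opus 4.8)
The statement to prove is Proposition \ref{pro:titsext}: if $\mathrm{G}_1$ and $\mathrm{G}_2$ satisfy the Tits alternative, and $\mathrm{G}$ sits in a short exact sequence
\[
1\longrightarrow \mathrm{G}_1\longrightarrow\mathrm{G}\stackrel{\pi}{\longrightarrow}\mathrm{G}_2\longrightarrow 1,
\]
then $\mathrm{G}$ satisfies the Tits alternative. The natural strategy is to reduce the claim for an arbitrary subgroup of $\mathrm{G}$ to the two given hypotheses together with the extension lemma for virtually solvable groups (Lemma \ref{lem:extsolv}), which is already available in the excerpt. First I would reduce to the case $\mathrm{G}$ itself: indeed, if $\mathrm{H}\subset\mathrm{G}$ is any subgroup, then $\mathrm{H}$ fits into the exact sequence $1\to \mathrm{H}\cap\mathrm{G}_1\to\mathrm{H}\to\pi(\mathrm{H})\to 1$, where $\mathrm{H}\cap\mathrm{G}_1$ is a subgroup of $\mathrm{G}_1$ and $\pi(\mathrm{H})$ is a subgroup of $\mathrm{G}_2$. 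So it suffices to establish the dichotomy for an extension of a subgroup of $\mathrm{G}_2$ by a subgroup of $\mathrm{G}_1$; in other words, proving the statement for $\mathrm{G}$ itself automatically covers all subgroups, because every subgroup of $\mathrm{G}$ is again such an extension.

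\textbf{Key steps.} Having made that reduction, I would argue by cases according to what the Tits alternative gives for the two pieces. Apply the hypothesis to $\mathrm{G}_1$: either it is virtually solvable or it contains a non-abelian free subgroup. In the second case, that free subgroup lies inside $\mathrm{G}$, so $\mathrm{G}$ contains a non-abelian free group and we are done. Otherwise $\mathrm{G}_1$ is virtually solvable, say of length $r$. Now apply the hypothesis to $\mathrm{G}_2$: either it contains a non-abelian free subgroup $\mathrm{F}$, or it is virtually solvable. If $\mathrm{G}_2$ contains a non-abelian free group $\mathrm{F}$, I would lift generators of $\mathrm{F}$ to elements of $\mathrm{G}$ and consider the subgroup $\mathrm{F}'=\langle \tilde a,\tilde b\rangle$ they generate; since $\pi(\mathrm{F}')=\mathrm{F}$ is free, any relation in $\mathrm{F}'$ would project to a relation in $\mathrm{F}$, forcing it to be trivial, so $\mathrm{F}'$ is itself a non-abelian free subgroup of $\mathrm{G}$ and again we are done. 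In the remaining case both $\mathrm{G}_1$ and $\mathrm{G}_2$ are virtually solvable, and then Lemma \ref{lem:extsolv} applies directly: $\mathrm{G}$, being an extension of the virtually solvable group $\mathrm{G}_2$ by the virtually solvable group $\mathrm{G}_1$, is virtually solvable of length at most $q+r+1$. This exhausts the cases and completes the dichotomy.

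\textbf{The main obstacle.} The one point that requires genuine care, rather than routine bookkeeping, is the lifting argument in the case where $\mathrm{G}_2$ contains a free subgroup. The subtlety is that the short exact sequence need not split, so there is no group-theoretic section of $\pi$ to lean on; one must instead argue purely that a preimage of a free group is free, which is immediate from the universal property of free groups (a surjection onto a free group splits as a set-map sending generators to lifts, and the resulting homomorphism $\mathrm{F}\to\mathrm{G}$ is a section precisely because $\mathrm{F}$ is free). I would make sure to phrase this cleanly: a subgroup of $\mathrm{G}$ surjecting onto a non-abelian free group contains a non-abelian free subgroup. The other steps — the reduction to subgroups and the invocation of Lemma \ref{lem:extsolv} — are formal. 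It is also worth noting that this proposition handles extensions of \emph{groups that satisfy the Tits alternative}, whereas Theorem \ref{thm:cantattits} only claims the alternative for finitely generated subgroups of $\mathrm{Bir}(S)$; the finitely-generated hypotheses must be tracked carefully if this proposition is later applied in that restricted setting, but the proof of the proposition as stated is insensitive to finite generation except insofar as Lemma \ref{lem:extsolv} requires it.
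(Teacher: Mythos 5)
Your proof is correct and takes essentially the same route as the paper's: there one fixes an arbitrary subgroup $\Gamma\subset\mathrm{G}$ containing no non-abelian free subgroup, observes that $\Gamma\cap\mathrm{G}_1$ and the image of $\Gamma$ in $\mathrm{G}_2$ are then virtually solvable, and concludes with Lemma \ref{lem:extsolv} — precisely the contrapositive of your case analysis, with your subgroup-by-subgroup reduction and your lifting step (a subgroup surjecting onto a non-abelian free group contains one) left implicit. If anything you are more careful than the paper, which invokes Lemma \ref{lem:extsolv} for arbitrary, not necessarily finitely generated, subgroups $\Gamma$ without comment on that hypothesis.
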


\begin{proof}
Let $\Gamma$ be a subgroup of $\mathrm{G}$ that 
does not contain a non abelian free subgroup. 
For $i\in\{1,\,2\}$ denote by 
$\mathrm{pr}_i\colon\mathrm{G}\to\mathrm{G}_i$ the 
canonical projection. Since $\mathrm{pr}_i(\mathrm{G})$
does not contain a non abelian free subgroup
$\mathrm{pr}_i(\Gamma)=\Gamma\cap\mathrm{G}_i$ 
is virtually solvable ($\mathrm{G}_i$
satisfies Tits alternative). Then according 
to Lemma \ref{lem:extsolv} the group
$\Gamma$ is virtually solvable. 
\end{proof}

A first consequence of this result is the following 
one:

\begin{thm}[\cite{Cantat:annals}]\label{thm:titsaut}
Let $V$ be a K\"ahler compact manifold. Its 
automorphism group satisfies Tits alternative.
\end{thm}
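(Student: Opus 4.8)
The plan is to prove Theorem \ref{thm:titsaut} by combining the structure theory of automorphism groups of compact K\"ahler manifolds with the extension principle of Proposition \ref{pro:titsext} and the results already established for isometries of $\mathbb{H}^\infty$. The key observation is that for a K\"ahler compact manifold $V$, the automorphism group $\mathrm{Aut}(V)$ fits into a short exact sequence
\[
1\longrightarrow \mathrm{Aut}(V)^0\longrightarrow\mathrm{Aut}(V)\longrightarrow\mathrm{Aut}(V)^*\longrightarrow 1
\]
where $\mathrm{Aut}(V)^0$ is the connected component of the identity (a complex Lie group) and $\mathrm{Aut}(V)^*$ is the image of $\mathrm{Aut}(V)$ in the group of linear transformations of the cohomology $H^*(V,\mathbb{Z})$ preserving the Hodge structure and the cup product. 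By Proposition \ref{pro:titsext}, it suffices to prove that both $\mathrm{Aut}(V)^0$ and $\mathrm{Aut}(V)^*$ satisfy the Tits alternative.

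For the connected part $\mathrm{Aut}(V)^0$, the plan is to use that it is a complex Lie group acting holomorphically on $V$; its action on the Albanese torus and the structure of complex Lie groups reduce the question to a linear one. More precisely, a connected complex Lie group is, up to a central subgroup, built from a linear algebraic group and an abelian variety, so $\mathrm{Aut}(V)^0$ is virtually a linear group in characteristic zero. Tits' original theorem then applies directly: any subgroup either is virtually solvable or contains a non-abelian free group. First I would make this reduction precise by invoking the Chevalley-type structure (as already used in the proof that algebraic subgroups of $\mathrm{Bir}(\mathbb{P}^n_\mathbb{C})$ are affine) together with the fact that $V$ being K\"ahler controls the Albanese.

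For the discrete part $\mathrm{Aut}(V)^*$, the plan is to exploit that it acts faithfully by isometries on $H^{1,1}(V,\mathbb{R})$ equipped with the intersection form, which for surfaces has signature $(1,h^{1,1}-1)$ and thus gives an action on a finite-dimensional hyperbolic space; in higher dimension one restricts to a suitable invariant subspace. This is exactly the finite-dimensional analogue of the $\mathbb{H}^\infty$ situation treated throughout Chapter \ref{chap:hyperbolicspace}, and the Ping Pong argument of Lemma \ref{lem:Urechlox} (via Lemma \ref{lem:pingpong}) together with the classification of isometries into elliptic, parabolic and loxodromic types applies verbatim: a subgroup of $\mathrm{Aut}(V)^*$ without free subgroups must consist, after suitable analysis, of elements fixing a common point or a common axis, forcing virtual solvability.

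The hard part will be handling the K\"ahler (non-projective) generality and the higher-dimensional case cleanly, since the clean hyperbolic picture of signature $(1,\infty)$ is a surface phenomenon; in higher dimension the intersection form on $H^{1,1}$ need not have Lorentzian signature, so one must instead appeal to the structure theory of groups acting on cohomology preserving the Hodge decomposition. I expect the main obstacle to be verifying that $\mathrm{Aut}(V)^*$ satisfies the Tits alternative without a ready-made hyperbolic action; the natural route is to note that $\mathrm{Aut}(V)^*$ is itself a linear group (it embeds into $\mathrm{GL}(H^*(V,\mathbb{Z}))$) and therefore Tits' theorem over $\mathbb{Q}$ applies immediately. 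This last remark in fact bypasses the geometric subtleties entirely: both pieces of the extension are linear, so the whole group is an extension of two groups satisfying the Tits alternative, and Proposition \ref{pro:titsext} concludes.
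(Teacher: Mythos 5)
Your final, streamlined argument is essentially the paper's proof: the paper likewise considers the representation $\rho\colon\mathrm{Aut}(V)\to\mathrm{GL}(H^*(V,\mathbb{Z}))$, notes (citing \cite{Lieberman}) that $\ker\rho$ is a complex Lie group with finitely many components whose identity component is an extension of a compact complex torus by a complex algebraic group, and concludes from classical Tits together with Proposition \ref{pro:titsext} --- your hyperbolic-space detour for the image is, as you yourself observe, discarded in favor of exactly this linearity remark. The one point to tighten is that the kernel of the cohomology action is $\ker\rho$, not $\mathrm{Aut}(V)^0$ itself, so your short exact sequence is only correct after invoking Lieberman's finiteness of $\ker\rho/\mathrm{Aut}(V)^0$ (harmless here, since the Tits alternative is stable under finite extensions via Proposition \ref{pro:titsext}).
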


\begin{proof}
The group $\mathrm{Aut}(V)$ acts on the 
cohomology of $V$. This yields to a morphism
$\rho$ from $\mathrm{Aut}(V)$ to
$\mathrm{GL}(H^*(V,\mathbb{Z}))$ where 
$H^*(V,\mathbb{Z})$ denotes the direct sum
of the cohomology groups of $V$.
According to \cite{Lieberman} 
\begin{itemize}
\item[$\diamond$] the kernel of $\rho$ is a 
complex Lie group with a finite number of 
connected components;

\item[$\diamond$] its connected component 
$\mathrm{Aut}^0(V)$ is an extension of a 
compact complex torus by a complex algebraic
group. We get the result from 
Proposition \ref{pro:titsext} and classical 
Tits alternative.
\end{itemize}
\end{proof}

A direct consequence of Proposition \ref{pro:titsext}
and Tits alternative for linear groups is:

\begin{pro}[\cite{Cantat:annals}]\label{pro:Titsjonq}
The Jonqui\`eres group 
\[
\mathcal{J}\simeq\mathrm{PGL}(2,\mathbb{C})\rtimes\mathrm{PGL}(2,\mathbb{C}(z_0))
\]
satisfies Tits alternative.
\end{pro}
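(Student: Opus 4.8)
The statement to prove is Proposition \ref{pro:Titsjonq}: the Jonqui\`eres group $\mathcal{J}\simeq\mathrm{PGL}(2,\mathbb{C})\rtimes\mathrm{PGL}(2,\mathbb{C}(z_0))$ satisfies Tits alternative.

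The plan is to exhibit $\mathcal{J}$ as an extension of one group satisfying Tits alternative by another, and then invoke Proposition \ref{pro:titsext}, which asserts precisely that an extension of two groups satisfying Tits alternative again satisfies Tits alternative. The semidirect product structure $\mathcal{J}\simeq\mathrm{PGL}(2,\mathbb{C})\ltimes\mathrm{PGL}(2,\mathbb{C}(z_0))$ gives exactly the short exact sequence I need, namely
\[
1\longrightarrow\mathrm{PGL}(2,\mathbb{C}(z_0))\longrightarrow\mathcal{J}\longrightarrow\mathrm{PGL}(2,\mathbb{C})\longrightarrow 1,
\]
where the normal subgroup is $\mathrm{PGL}(2,\mathbb{C}(z_0))$ and the quotient is $\mathrm{PGL}(2,\mathbb{C})$. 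Concretely, recall that an element of $\mathcal{J}$ in affine coordinates has the form
\[
(z_0,z_1)\dashrightarrow\left(\frac{\alpha z_0+\beta}{\gamma z_0+\delta},\frac{A(z_0)z_1+B(z_0)}{C(z_0)z_1+D(z_0)}\right),
\]
so the projection onto the first factor $\mathrm{PGL}(2,\mathbb{C})$ records the action on the base $z_0$, and its kernel is precisely the group $\mathrm{PGL}(2,\mathbb{C}(z_0))$ acting on the fibers.

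The key observation is that both the normal subgroup and the quotient are \emph{linear} groups over fields of characteristic zero: $\mathrm{PGL}(2,\mathbb{C})$ is linear over $\mathbb{C}$, and $\mathrm{PGL}(2,\mathbb{C}(z_0))$ is linear over the field $\mathbb{C}(z_0)$, which also has characteristic zero. By the theorem of Tits, any linear group over a field of characteristic zero satisfies Tits alternative: every subgroup is either virtually solvable or contains a non-abelian free group. Hence both $\mathrm{PGL}(2,\mathbb{C})$ and $\mathrm{PGL}(2,\mathbb{C}(z_0))$ satisfy Tits alternative. Applying Proposition \ref{pro:titsext} to the extension displayed above then yields immediately that $\mathcal{J}$ satisfies Tits alternative, which is the desired conclusion.

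The main point requiring care — though it is more a matter of bookkeeping than of genuine difficulty — is to confirm that Proposition \ref{pro:titsext} applies verbatim, i.e.\ that $\mathcal{J}$ really is realized as a group extension with the two stated factors and that no finiteness hypothesis is secretly needed. Here I should note that Proposition \ref{pro:titsext} as stated in the excerpt assumes only that the two factor groups satisfy Tits alternative, with no finite-generation restriction on the factors themselves, so there is no obstruction on that front. The one subtlety is that Tits' original theorem gives the full Tits alternative for linear groups in characteristic zero (not merely for finitely generated subgroups, which is the weaker conclusion in positive characteristic); since both $\mathbb{C}$ and $\mathbb{C}(z_0)$ have characteristic zero, I get the strong form for each factor, and therefore the strong form for $\mathcal{J}$. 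Thus the proof reduces to recording the extension and citing the two inputs, with the genuine content already packaged in Proposition \ref{pro:titsext} and in Tits' theorem.
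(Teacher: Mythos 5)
Your proof is correct and is essentially the paper's own argument: the paper likewise derives the proposition directly from Proposition~\ref{pro:titsext} applied to the extension $1\to\mathrm{PGL}(2,\mathbb{C}(z_0))\to\mathcal{J}\to\mathrm{PGL}(2,\mathbb{C})\to 1$, with Tits' theorem in characteristic zero supplying the alternative for both linear factors. Your remark that characteristic zero gives the strong (not merely finitely generated) form of the alternative for each factor is exactly the point implicit in the paper's one-line deduction.
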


\subsection{"Weak alternative" for isometries of $\mathbb{H}^\infty$}

Let us recall some notations and definitions introduced 
in Chapter \ref{chap:hyperbolicspace}.

Let $\mathcal{H}$ be a seperable Hilbert space. Let us 
fix a Hilbert basis $\mathcal{B}=(\mathbf{e}_i)_i$ on $\mathcal{H}$. 
Consider the scalar product defined on $\mathcal{H}$ by
\[
\langle v,\,v\rangle=v_0^2-\displaystyle\sum_{i=1}^\infty v_i^2
\]
where the coordinates $v_i$ are the coordinates of 
$v$ in $\mathcal{B}$. The 
\textsl{light cone of 
$\mathcal{H}$}\index{defi}{light cone (of a seperable Hilbert space)} 
is the set 
\[
\mathcal{L}(\mathcal{H})=\big\{v\in\mathcal{H}\,\vert\,\langle v,\,v\rangle=0\big\}.
\]
Let $\mathbb{H}^\infty$ be the connected component of 
the hyperboloid
\[
\big\{v\in\mathcal{H}\,\vert\,\langle v,\,v\rangle=1\big\}
\]
that contains $\mathbf{e}_0$. Consider the metric 
defined on $\mathbb{H}^\infty$ by 
\[
d(u,v):=\mathrm{arccos}(\langle u,\,v\rangle).
\]
The space $\mathbb{H}^\infty$ is a complete
CAT$(-1)$ space, so is hyperbolic (Chapter
\ref{chap:hyperbolicspace}). Its boundary 
$\partial\mathbb{H}^\infty$ can be identified
to $\mathbb{P}(\mathcal{L}(\mathcal{H}))$.

\begin{thm}[\cite{Cantat:annals}]\label{thm:weak}
Let $\Gamma$ be a subgroup of $\mathrm{O}(1,\infty)$.
\begin{enumerate}
\item If $\Gamma$ contains a loxodromic isometry $\psi$, 
then one of the following properties holds:
\begin{itemize}
\item[$\diamond$] $\Gamma$ contains a non-abelian free
group,

\item[$\diamond$] $\Gamma$ permutes the two fixed points
of $\psi$ that lie on $\partial\mathbb{H}^\infty$.
\end{itemize}

\item If $\Gamma$ contains no loxodromic isometry, 
then $\Gamma$ fixes a point of
$\mathbb{H}^\infty\cup\partial\mathbb{H}^\infty$.
\end{enumerate}
\end{thm}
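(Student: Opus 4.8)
\textbf{Plan of proof for Theorem \ref{thm:weak}.}
The statement is a classical trichotomy for isometry groups of $\mathrm{CAT}(-1)$ spaces, adapted to the infinite dimensional hyperbolic space $\mathbb{H}^\infty$. The key structural facts I would use are: the classification of isometries into elliptic, parabolic and loxodromic types (recalled in \S\ref{subsec:isometries}), the fact that $\mathbb{H}^\infty$ is complete and $\mathrm{CAT}(-1)$, and the Ping Pong Lemma \ref{lem:pingpong}. The plan is to treat the two assertions separately, assertion (1) via a dynamical ping-pong argument on the boundary $\partial\mathbb{H}^\infty$ and assertion (2) via a fixed-point/convexity argument.

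For assertion (2), I would first observe that if $\Gamma$ contains no loxodromic isometry, then every element is elliptic or parabolic, so no element has positive translation length realized at a point with a genuine axis. The plan is to show $\Gamma$ has a global fixed point in $\mathbb{H}^\infty\cup\partial\mathbb{H}^\infty$ by analyzing the infimum $\inf_{v}\sup_{\gamma\in\Gamma} d(v,\gamma(v))$, or equivalently by using the behaviour of orbits. If the orbit $\Gamma\cdot\mathbf{e}_0$ is bounded, then by completeness and the $\mathrm{CAT}(0)$ property (a $\mathrm{CAT}(-1)$ space is $\mathrm{CAT}(0)$) the circumcenter of the orbit is a well-defined point of $\mathbb{H}^\infty$ fixed by $\Gamma$, since the circumcenter is unique and hence canonical. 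If the orbit is unbounded, then I would extract from a geodesic ray a boundary point and show, using the absence of loxodromic elements to rule out the existence of two independent attracting/repelling configurations, that $\Gamma$ stabilizes a single point of $\partial\mathbb{H}^\infty$; here the parabolic case contributes a fixed isotropic line which all of $\Gamma$ must share.

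For assertion (1), suppose $\psi\in\Gamma$ is loxodromic with attracting and repelling fixed points $\alpha(\psi)$, $\omega(\psi)$ on $\partial\mathbb{H}^\infty$. The plan is a dichotomy: either every element of $\Gamma$ preserves the pair $\{\alpha(\psi),\omega(\psi)\}$, giving the second alternative directly, or there exists $\gamma\in\Gamma$ moving this pair to a disjoint pair. In the latter case I would choose small disjoint neighborhoods in $\partial\mathbb{H}^\infty$ of the four fixed points of $\psi$ and $\gamma\psi\gamma^{-1}$ (the conjugate is again loxodromic with a distinct axis), and then apply the Ping Pong Lemma \ref{lem:pingpong} to sufficiently high powers $\psi^n$ and $(\gamma\psi\gamma^{-1})^n$ to produce a non-abelian free group. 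This is exactly the mechanism used in Lemma \ref{lem:Urechlox}, so I would follow that template closely.

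\textbf{Main obstacle.} The delicate point is the unbounded-orbit subcase of assertion (2): ensuring that a group consisting entirely of non-loxodromic isometries cannot ``drift'' toward two different boundary points and thereby fail to fix any single point. In finite dimension this follows from local compactness, but $\mathbb{H}^\infty$ is not locally compact, so I cannot extract convergent subsequences of the orbit naively. The hard part will be replacing compactness arguments with the convexity of the distance function in the $\mathrm{CAT}(0)$ setting and with a careful analysis of the displacement function $v\mapsto d(v,\gamma v)$: I would argue that if the infimum of displacement is not attained for some $\gamma$ (the parabolic case) the corresponding fixed isotropic ray is unique, and that joint non-loxodromicity forces all these rays to coincide, so $\Gamma$ fixes their common endpoint. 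Making this uniqueness and coincidence rigorous without compactness, using only completeness and $\delta$-hyperbolicity, is where the real work lies.
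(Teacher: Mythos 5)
Your overall route is the paper's: for assertion (1) a dichotomy followed by ping-pong on $\psi$ and a conjugate of it, and for assertion (2) a circumcenter when some orbit is bounded and a canonical boundary point otherwise. The bounded-orbit circumcenter step and the ping-pong mechanism of Lemma \ref{lem:pingpong} are exactly as in the paper. However, there are two genuine gaps.

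First, your dichotomy in assertion (1) is false as stated: besides ``every element of $\Gamma$ preserves the pair $\{\alpha(\psi),\omega(\psi)\}$'' and ``some $\gamma$ moves this pair to a disjoint pair'' there is a third configuration, where $\gamma(\{\alpha(\psi),\omega(\psi)\})$ meets the pair in exactly one point (say $\gamma$ fixes $\alpha(\psi)$ but moves $\omega(\psi)$). This configuration really occurs: the group of upper triangular matrices in $\mathrm{PSL}(2,\mathbb{R})$, acting on $\mathbb{H}^2\subset\mathbb{H}^\infty$, contains a loxodromic element, contains no non-abelian free subgroup, and does not preserve the fixed pair of that loxodromic. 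In this case the conjugate $\gamma\circ\psi\circ\gamma^{-1}$ shares an endpoint with $\psi$, so you cannot choose the four pairwise disjoint neighborhoods needed for ping-pong. The paper's proof handles this by the combinatorial step you omit: in the non-free case the $\Gamma$-images of the fixed pair pairwise intersect, and the conclusion is the weaker one that $\Gamma$ fixes either $\{\alpha(\psi),\omega(\psi)\}$, or $\{\alpha(\psi)\}$, or $\{\omega(\psi)\}$, so that a subgroup of index $2$ fixes an endpoint. Your plan, which only allows ``the pair is preserved'' as the non-free outcome, cannot close this case.

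Second, in assertion (2) you correctly isolate the unbounded, non-locally-compact situation as the obstacle, but the tools you propose (convexity of displacement functions, uniqueness of the parabolic ray) do not supply the needed mechanism. The paper's argument rests on two combination lemmas. For parabolics: if $\phi,\psi\in\Gamma$ are parabolic with distinct fixed points $\alpha(\phi)\neq\alpha(\psi)$ on $\partial\mathbb{H}^\infty$, one picks $\ell\in\mathcal{H}$ with $\langle\alpha(\phi),\ell\rangle<0<\langle\alpha(\psi),\ell\rangle$, so the hyperplane $L=\ell^\perp\cap\mathbb{H}^\infty$ separates the two points; for suitable $m,n$ the four sets $\phi^{\pm m}(L)$ and $\psi^{\pm n}(L)$ are pairwise disjoint, whence $\phi^m\circ\psi^n$ has two distinct boundary fixed points and is loxodromic --- contradicting the hypothesis, so all parabolic fixed points coincide at a point which is then $\Gamma$-fixed by conjugation-invariance. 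Nothing in your sketch produces this product-of-powers loxodromic. For the all-elliptic case your displacement-function plan collapses outright: each elliptic displacement function attains $0$, yet the group can be unbounded (the paper itself exhibits unbounded elliptic groups obtained as increasing unions of finite cyclic groups), so CAT$(0)$ convexity locates no canonical point. The paper settles this case by quoting \cite{delaHarpeGhys}: for a group of elliptic isometries of a hyperbolic space, either every orbit is bounded or the limit set is a single point, together with \cite{delaHarpeValette}; the proof of that dichotomy uses genuinely $\delta$-hyperbolic estimates (two elliptics with large, transverse displacements compose to a loxodromic), which is precisely the missing idea in your proposal.
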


\begin{proof}
\begin{itemize}
\item[$\diamond$] Assume first that $\Gamma$ contains 
two loxodromic isometries $\phi$ and $\psi$ such that the 
fixed points of $\phi$ and $\psi$ on $\partial\mathbb{H}^\infty$
are pairwise distinct. According to the ping-pong Lemma
(Lemma \ref{lem:pingpong}) there are two integers $n$
and $m$ such that $\phi^n$ and $\psi^m$ generate a subgroup
of~$\Gamma$ isomorphic to the free group $\mathbb{F}_2$.

\item[$\diamond$] Suppose that $\Gamma$ contains at 
least one loxodromic isometry $\phi$. Let $\alpha(\phi)$ 
and $\omega(\phi)$ be the fixed points of $\phi$ on 
$\partial\mathbb{H}^\infty$. If $\Gamma$ contains
an element $\psi$ such that 
\[
\big\{\alpha(\phi),\,\omega(\phi)\big\}\cap\big\{\alpha(\psi),\,\omega(\psi)\big\}=\emptyset
\]
then $\phi$ and $\psi\circ\phi\circ\psi^{-1}$ are two loxodromic 
isometries to which we can apply the previous argument. 
Otherwise $\Gamma$ fixes either $\big\{\alpha(\phi),\,\omega(\phi)\big\}$,
or $\{\alpha(\phi)\}$, or $\{\omega(\phi)\}$. Then $\Gamma$
contains a subgroup of index $2$ that fixes 
$\alpha(\phi)$ of $\omega(\phi)$.

\item[$\diamond$] Assume that $\Gamma$ contains two 
parabolic isometries $\phi$ and $\psi$ whose fixed points 
$\alpha(\phi)\in\partial\mathbb{H}^\infty$ and 
$\alpha(\psi)\in\partial\mathbb{H}^\infty$ are 
distinct. Take two elements of $\mathcal{L}(\mathcal{H})$
still denoted $\alpha(\phi)$ and $\alpha(\psi)$ that represent
these two points of $\partial\mathbb{H}^\infty$. Let 
$\ell$ be a point of $\mathcal{H}$ such that 
\begin{align*}
\langle\alpha(\phi),\,\ell\rangle<0 && \langle\alpha(\psi),\,\ell\rangle>0 .
\end{align*}
The hyperplane of $\mathcal{H}$ orthogonal to $\ell$
intersects $\mathbb{H}^\infty$ in a subspace $L$ 
that "separates" $\alpha(\phi)$ and $\alpha(\psi)$. As a 
result there exist integers $n$ and $m$ such that
$\phi^m(L)$, $\phi^{-m}(L)$, $\psi^n(L)$ and $\psi^{-n}(L)$ don't
pairwise intersect. The isometry $\phi^m\circ\psi^n$ has thus 
two distinct fixed points on $\partial\mathbb{H}^\infty$;
hence it is a loxodromic one. Applying the above 
argument we get that $\langle\phi,\,\psi\rangle$ contains
a free group. Therefore, if $\Gamma$ contains at 
least one parabolic isometry, then
\begin{itemize}
\item[-] either $\Gamma$ contains a non-abelian free 
group;

\item[-] or $\Gamma$ fixes a point of 
$\partial\mathbb{H}^\infty$ that is the unique 
fixed point of the parabolic isometries of $\Gamma$.
\end{itemize}

\item[$\diamond$] Let us finish by assuming that
all elements of $\Gamma$ are elliptic ones.
According to \cite[Chapter $8$, Lemma $35$ and Corollary $36$]{delaHarpeGhys} 
\begin{itemize}
\item[-] either the orbit of any point of $\mathbb{H}^\infty$ is bounded;

\item[-] or the limit set of $\Gamma$ is a point.
\end{itemize}
From \cite[Chapter $2$, \S b.8]{delaHarpeValette} one gets 
the following alternative: $\Gamma$ fixes 
\begin{itemize}
\item[-] either a point of $\mathbb{H}^\infty$;

\item[-] or a point of $\partial\mathbb{H}^\infty$.
\end{itemize}
\end{itemize}
\end{proof}

\subsection{Proof of Theorem \ref{thm:urechtits}}

\subsubsection{Assume that $\mathrm{G}$ contains a 
loxodromic element}

Let $\mathrm{G}$ be a subgroup of 
$\mathrm{Bir}(\mathbb{P}^2_\mathbb{C})$
that contains a loxodromic element.  
According to Lemma \ref{lem:Urechlox} we have to 
consider the three following cases:
\begin{itemize}
\item[$\diamond$] $\mathrm{G}$ is conjugate
to a subgroup of $\mathrm{GL}(2,\mathbb{Z})\ltimes\mathrm{D}_2$
and Tits alternative holds by 
Proposition~\ref{pro:titsext};

\item[$\diamond$] $\mathrm{G}$ contains a 
subgroup of index at most $2$ that is 
isomorphic to $\mathbb{Z}\ltimes\mathrm{H}$
where $\mathrm{H}$ is a finite group, in 
other words $\mathrm{G}$ is cyclic up to 
finite index, so Tits alternative holds;

\item[$\diamond$] $\mathrm{G}$ contains a 
non-abelian free subgroup, and Tits 
alternative holds.
\end{itemize}

We can thus state:

\begin{cor}[\cite{Cantat:annals, Urech:ellipticsubgroups}]
Let $\mathrm{G}$ be a subgroup of 
$\mathrm{Bir}(\mathbb{P}^2_\mathbb{C})$
that contains a loxodromic element. 
Then~$\mathrm{G}$ satisfies Tits 
alternative.
\end{cor}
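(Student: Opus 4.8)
The plan is to derive the statement from the trichotomy of Lemma~\ref{lem:Urechlox}, exploiting that the property of satisfying the Tits alternative is hereditary (if a group satisfies it, so does every subgroup) and plainly invariant under conjugation inside $\mathrm{Bir}(\mathbb{P}^2_\mathbb{C})$. So to prove that $\mathrm{G}$ satisfies the Tits alternative I would take an arbitrary subgroup $\mathrm{H}\leq\mathrm{G}$ and establish the dichotomy for it, splitting according to whether $\mathrm{H}$ contains a loxodromic element. The whole point is that when a loxodromic isometry is present, Lemma~\ref{lem:Urechlox} applies verbatim.

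When $\mathrm{H}$ contains a loxodromic element I apply Lemma~\ref{lem:Urechlox} to $\mathrm{H}$ and treat its three conclusions in turn. If $\mathrm{H}$ is conjugate to a subgroup of $\mathrm{GL}(2,\mathbb{Z})\ltimes\mathrm{D}_2$, I first observe that the ambient group sits in an extension
\[
1\longrightarrow \mathrm{D}_2\longrightarrow \mathrm{GL}(2,\mathbb{Z})\ltimes\mathrm{D}_2\longrightarrow \mathrm{GL}(2,\mathbb{Z})\longrightarrow 1
\]
of the linear group $\mathrm{GL}(2,\mathbb{Z})$ by the abelian group $\mathrm{D}_2\simeq(\mathbb{C}^*)^2$; both factors are linear and hence satisfy the Tits alternative by Tits' theorem, so Proposition~\ref{pro:titsext} shows that $\mathrm{GL}(2,\mathbb{Z})\ltimes\mathrm{D}_2$ satisfies it, and heredity transfers this to $\mathrm{H}$. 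If $\mathrm{H}$ contains a subgroup of index at most $2$ isomorphic to $\mathbb{Z}\ltimes\mathrm{K}$ with $\mathrm{K}$ finite, then that subgroup is virtually cyclic, so $\mathrm{H}$ is virtually solvable and the first alternative holds for $\mathrm{H}$ and all its subgroups. Finally, if $\mathrm{H}$ contains a non-abelian free subgroup, the second alternative holds directly. In particular, taking $\mathrm{H}=\mathrm{G}$ already gives the dichotomy for $\mathrm{G}$ itself, which is exactly the three-case bookkeeping recorded in the text.

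The delicate point—and where I expect the real obstacle to lie—is a subgroup $\mathrm{H}\leq\mathrm{G}$ that contains no loxodromic element, since then Lemma~\ref{lem:Urechlox} is unavailable. Here I would invoke the weak alternative for isometries of $\mathbb{H}^\infty$ (Theorem~\ref{thm:weak}): such an $\mathrm{H}$ either contains a non-abelian free group (and we are done) or fixes a point of $\mathbb{H}^\infty\cup\partial\mathbb{H}^\infty$, so that all its elements are elliptic or parabolic. In that fixed-point case I would route $\mathrm{H}$ through the companion structure results: if $\mathrm{H}$ is a group of elliptic elements it is, by Theorem~\ref{thm:urechell1}, bounded, a torsion group, or conjugate into the Jonqui\`eres group $\mathcal{J}$; a torsion subgroup is isomorphic to a linear one by Theorem~\ref{thm:urechtorsion} and, being torsion, contains no non-abelian free subgroup, hence is virtually solvable; and a subgroup preserving a rational fibration lies in $\mathcal{J}$, which satisfies the Tits alternative by Proposition~\ref{pro:Titsjonq}. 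The genuine difficulty is thus not the loxodromic trichotomy at all, but rather that the clean split of Lemma~\ref{lem:Urechlox} only governs subgroups that actually see a loxodromic isometry; controlling the elliptic and parabolic subgroups of $\mathrm{G}$ forces one to lean on the full classification machinery (the action on $\mathbb{H}^\infty$ together with the descriptions of algebraic, torsion, and fibration-preserving subgroups) rather than on the loxodromic hypothesis imposed on $\mathrm{G}$.
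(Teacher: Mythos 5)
Your core argument is exactly the paper's: the paper proves this corollary by applying Lemma~\ref{lem:Urechlox} to $\mathrm{G}$ and settling the three branches just as you do --- Proposition~\ref{pro:titsext} (together with Tits' theorem for the linear groups $\mathrm{D}_2\simeq(\mathbb{C}^*)^2$ and $\mathrm{GL}(2,\mathbb{Z})$) in the first case, virtual cyclicity in the second, and the free subgroup in the third. You are also right that the literal definition of the Tits alternative used in the paper quantifies over \emph{all} subgroups of $\mathrm{G}$, so that the third branch only yields the dichotomy for $\mathrm{G}$ itself; the paper silently delegates the remaining subgroups to the other two subsections of the proof of Theorem~\ref{thm:urechtits} (any subgroup $\mathrm{H}\leq\mathrm{G}$ falls under one of the three case headings there), whereas you make this heredity step explicit inside the corollary. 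That extra care is legitimate and, for loxodromic-containing subgroups $\mathrm{H}$, your reapplication of Lemma~\ref{lem:Urechlox} to $\mathrm{H}$ is exactly what is needed.

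There is, however, a concrete hole in your treatment of the non-loxodromic subgroups. After invoking Theorem~\ref{thm:weak} you only analyze the case where $\mathrm{H}$ is a group of \emph{elliptic} elements, via Theorem~\ref{thm:urechell1}, Theorem~\ref{thm:urechtorsion} and Proposition~\ref{pro:Titsjonq}. But a subgroup $\mathrm{H}\leq\mathrm{G}$ may contain a parabolic element and no loxodromic one; then $\mathrm{H}$ is not an elliptic group and Theorem~\ref{thm:urechell1} simply does not apply to it, while your remark that ``a subgroup preserving a rational fibration lies in $\mathcal{J}$'' does not capture Halphen twists, whose invariant fibration has genus-one fibres. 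The paper's route here is Lemma~\ref{lem:jonqhalph}: such an $\mathrm{H}$ is conjugate either into $\mathcal{J}$ (then Proposition~\ref{pro:Titsjonq} applies) or into $\mathrm{Aut}(S)$ for a Halphen surface $S$, in which case Theorem~\ref{thm:candol} shows $\mathrm{Aut}(S)$ is virtually abelian, hence virtually solvable. A smaller omission of the same kind: in the elliptic case you never dispatch the ``bounded'' branch of Theorem~\ref{thm:urechell1}; one should say that a bounded subgroup lies in an algebraic subgroup of $\mathrm{Bir}(\mathbb{P}^2_\mathbb{C})$, which is affine and hence linear, so the linear Tits alternative and heredity conclude. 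With these two branches restored your argument is complete and matches the paper's global structure.
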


\subsubsection{Assume that $\mathrm{G}$ contains a parabolic element but no loxodromic element}

\begin{lem}[\cite{Urech:ellipticsubgroups}]\label{lem:jonqhalph}
Let $\mathrm{G}$ be a subgroup of 
$\mathrm{Bir}(\mathbb{P}^2_\mathbb{C})$ that does 
not contain any loxodromic element but contains a 
parabolic element. Then $\mathrm{G}$ is conjugate
to a subgroup of $\mathcal{J}$ or $\mathrm{Aut}(S)$, 
where $S$ is a Halphen surface.
\end{lem}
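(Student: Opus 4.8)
The plan is to exploit the isometric action of $\mathrm{Bir}(\mathbb{P}^2_\mathbb{C})$ on $\mathbb{H}^\infty$ together with the weak alternative of Theorem \ref{thm:weak}. First I would observe that, since $\mathrm{G}$ contains no loxodromic element, the image $\mathrm{G}_*$ of $\mathrm{G}$ in $\mathrm{Isom}(\mathbb{H}^\infty)$ falls under the second assertion of Theorem \ref{thm:weak}, so $\mathrm{G}_*$ fixes a point $p\in\mathbb{H}^\infty\cup\partial\mathbb{H}^\infty$. Let $\phi\in\mathrm{G}$ be a parabolic element. A parabolic isometry has no fixed point in $\mathbb{H}^\infty$, and $\phi_*$ necessarily fixes $p$ because $\phi$ belongs to $\mathrm{G}$; hence $p$ cannot lie in $\mathbb{H}^\infty$, that is $p\in\partial\mathbb{H}^\infty$. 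Since a parabolic isometry fixes a unique boundary point, $p$ is precisely the point of $\partial\mathbb{H}^\infty$ determined by $\phi$.

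Next I would identify $p$ with the class of a fibration. By Theorem \ref{thm:dilfav} the parabolic map $\phi$ is either a Jonqui\`eres twist or a Halphen twist. In either case, after a birational conjugacy $\psi\colon Z\dashrightarrow\mathbb{P}^2_\mathbb{C}$, the map $\phi$ permutes the fibers of a rational (resp. genus one) fibration $\pi\colon Z\to B$; if $z$ denotes the divisor class of a generic fiber, then $z$ is an isotropic vector fixed by $\phi_*$, and $\mathbb{R}_+z$ is the unique fixed boundary point of $\phi$, so that $p=\mathbb{R}_+z$. The key structural input here is that a Jonqui\`eres (resp. Halphen) twist preserves a unique fibration, which is rational (resp. of genus one).

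Then I would transfer the invariance to the whole group. Because $\mathrm{G}$ fixes $p=\mathbb{R}_+z$, every element $\varphi\in\mathrm{G}$ satisfies $\varphi_*z\in\mathbb{R}_+z$; consequently $\varphi$ permutes the members of the linear system attached to $z$ and preserves the fibration $\pi$. If $\phi$ is a Jonqui\`eres twist, this fibration is rational, so $\mathrm{G}$ preserves a rational fibration and is therefore conjugate to a subgroup of $\mathcal{J}$ by Theorem \ref{thm:fibration}. If $\phi$ is a Halphen twist, the fibration consists of genus one curves, so $\mathrm{G}$ preserves a pencil of genus one curves and is therefore conjugate to a subgroup of $\mathrm{Aut}(S)$ for some Halphen surface $S$, by the corollary following Lemma \ref{lem:UrechHalphen}.

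The main obstacle is the passage from the fixed boundary point to an honestly invariant fibration: one must argue that fixing the isotropic ray $\mathbb{R}_+z$ really forces $\mathrm{G}$ to permute the fibers of $\pi$, rather than merely to fix an abstract point of $\partial\mathbb{H}^\infty$. This requires unwinding the dictionary between nef isotropic classes in $\mathrm{Z}(\mathbb{P}^2_\mathbb{C})$ and fibrations, namely that $z$ is base-point free after passing to the suitable model $Z$ and that the linear system $|z|$ reconstructs $\pi$, so that an isometry fixing $\mathbb{R}_+z$ descends to the base $B$ and permutes fibers. Making this precise, and checking that the rational versus genus one nature of the fiber class matches the Jonqui\`eres versus Halphen dichotomy for $\phi$, is where the real work lies.
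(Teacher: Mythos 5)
Your proposal is correct and follows essentially the same route as the paper's proof: weak alternative (Theorem \ref{thm:weak}) forces a fixed point, the parabolic element pins it to its unique boundary fixed point, Theorem \ref{thm:dilfav} identifies that point with the ray through the isotropic class of a generic fiber, and invariance of that class transfers to all of $\mathrm{G}$, concluding via $\mathcal{J}$ in the rational case and via Lemma \ref{lem:UrechHalphen} in the genus-one case. The "obstacle" you flag at the end is dispatched in the paper exactly as you suggest: since every element of $\mathrm{G}$ fixes the isotropic class $m$ of the fiber $F$ in $\mathrm{Z}(\mathbb{P}^2_\mathbb{C})$, it preserves the divisor class of $F$ and hence permutes the fibers of $\pi$, so no further work is needed.
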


\begin{proof}
By Theorem \ref{thm:weak} the group $\mathrm{G}$
fixes a point 
$p\in\mathbb{H}^\infty\cup\partial\mathbb{H}^\infty$.
Consider a parabolic element $\varphi$ of $\mathrm{G}$;
then $\varphi$ has no fixed point in 
$\mathbb{H}^\infty$ and a unique fixed point $q$ 
in $\partial\mathbb{H}^\infty$. As a consequence
$p=q$. According to Theorem \ref{thm:dilfav}
there exist a surface $S$, a birational map 
$\psi\colon\mathbb{P}^2_\mathbb{C}\dashrightarrow S$, 
a curve $C$, and a fibration $\pi\colon S\to C$ 
such that $\psi\circ\varphi\circ\psi^{-1}$ 
permutes the fibres of $\pi$. In particular
$\psi\circ\varphi\circ\psi^{-1}$ preserves the 
divisor class of a fibre $F$ of $\pi$. Since $F$
is a class of a fibre, $F\cdot F=0$. The point 
$m\in\mathrm{Z}(\mathbb{P}^2_\mathbb{C})$ corresponding 
to $F$, so satisfies $m\cdot m=0$. Therefore,
$q\in\partial\mathbb{H}^\infty$ corresponds to
the line passing through the origin and $m$. 
It follows that any element in $\mathrm{G}$
fixes $m$, and so preserves the divisor class of
$F$. In other words any element in 
$\varphi\circ\mathrm{G}\circ\varphi^{-1}$ 
permutes the fibres of the fibration 
$\pi\colon S\to C$. If the fibration is 
rational, then up to birational conjugacy
$\mathrm{G}\subset\mathcal{J}$. If the fibration
is a fibration of genus $1$ curves, there exists
a Halphen surface $S'$ such that up to birational
conjugacy $\mathrm{G}$ is contained in 
$\mathrm{Bir}(S')$ and preserves the Halphen 
fibration. By Lemma \ref{lem:UrechHalphen} the
group $\mathrm{G}$ is contained in 
$\mathrm{Aut}(S')$.
\end{proof}

Assume first that up to birational conjugacy
$\mathrm{G}\subset\mathcal{J}\simeq\mathrm{PGL}(2,\mathbb{C}(z_1))\rtimes\mathrm{PGL}(2,\mathbb{C})$.
Tits alternative for linear groups in 
characteristic $0$ and Proposition 
\ref{pro:titsext} imply Tits alternative 
for~$\mathrm{G}$.

Finally suppose that 
$\mathrm{G}\subset\mathrm{Aut}(S)$ where
$S$ is a Halphen surface. The automorphisms
groups of Halphen surfaces have been studied
(\cite{Gizatullin, CantatDolgachev, Grivaux}). 
In particular Cantat and Dolgachev prove

\begin{thm}[\cite{CantatDolgachev}]\label{thm:candol}
Let $S$ be a Halphen surface. There 
exists a homomorphism 
$\rho\colon\mathrm{Aut}(S)\to\mathrm{PGL}(2,\mathbb{C})$
with finite image such that $\ker\rho$ is an 
extension of an abelian group of rank $\leq 8$ 
by a cyclic group of order dividing $24$.
\end{thm}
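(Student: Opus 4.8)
The plan is to exploit the canonical elliptic fibration carried by a Halphen surface and to analyse how $\mathrm{Aut}(S)$ acts on its base and on its fibres. Since $S$ is a Halphen surface, the pencil $|-mK_S|$ is base-point-free and one-dimensional, so it defines an elliptic fibration $\pi\colon S\to\mathbb{P}^1_\mathbb{C}$ whose fibres are the members of the Halphen pencil. Every automorphism of $S$ fixes the canonical class $K_S$, hence preserves $|-mK_S|$ and therefore permutes the fibres of $\pi$; this produces a homomorphism $\rho\colon\mathrm{Aut}(S)\to\mathrm{Aut}(\mathbb{P}^1_\mathbb{C})=\mathrm{PGL}(2,\mathbb{C})$ recording the induced action on the base. (This is the same canonical fibration that appears in Lemma \ref{lem:UrechHalphen}, which is why it is $\mathrm{Aut}(S)$-invariant.)

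First I would show that $\mathrm{im}\,\rho$ is finite. A rational elliptic surface satisfies $\chi_{\mathrm{top}}(S)=12$, and by additivity of the Euler characteristic along the fibration the Euler numbers of the singular fibres sum to $12$; combined with the structure of elliptic fibrations over $\mathbb{P}^1_\mathbb{C}$ this forces at least three singular fibres, so the critical locus $\Delta\subset\mathbb{P}^1_\mathbb{C}$ is a finite set with $\#\Delta\geq 3$ (the images of the multiple fibres of the Halphen pencil provide further invariant points if needed). Since $\rho(\mathrm{Aut}(S))$ permutes $\Delta$ and any subgroup of $\mathrm{PGL}(2,\mathbb{C})$ preserving at least three points injects into a symmetric group (an element fixing three points being the identity), the image of $\rho$ is finite.

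Next I would describe $\ker\rho$, the group of fibrewise automorphisms. Restriction to the generic fibre $E_\eta$, a smooth genus one curve over $K=\mathbb{C}(t)$, gives an injective homomorphism $\ker\rho\hookrightarrow\mathrm{Aut}_K(E_\eta)$, injectivity following from density of the smooth fibres. Now $\mathrm{Aut}_K(E_\eta)$ sits in an exact sequence $1\to T\to\mathrm{Aut}_K(E_\eta)\to F_0\to 1$, where $T$ is the translation group (a subgroup of the Mordell–Weil group of the Jacobian fibration of $\pi$) and $F_0$ is the finite cyclic group of origin-fixing automorphisms. Pulling this back yields $1\to A\to\ker\rho\to C\to 1$ with $A=\ker\rho\cap T$ abelian and $C$ cyclic. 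The bound $\mathrm{rk}\,A\leq 8$ then follows from Shioda's theory of Mordell–Weil lattices: the Mordell–Weil lattice of a rational elliptic surface embeds isometrically into the root lattice $E_8$, so its rank is at most $8$.

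The hard part will be the sharp bound on the finite quotient, namely that $C$ has order dividing $24$. The order of an origin-fixing automorphism of a single elliptic curve lies in $\{1,2,3,4,6\}$, but a fibrewise automorphism of $S$ must act compatibly across the whole family, so its order is simultaneously constrained by the variation of the $j$-invariant, the monodromy of $\pi$ around the singular fibres, and the induced action on the relative holomorphic differential. Showing that these constraints force the order to divide $\mathrm{lcm}(2,4,6)=12$, and that the extra factor of $2$ coming from the multiple fibres of a Halphen pencil yields the sharp divisor $24$, is the delicate computation, and it is precisely where the detailed geometry of the singular and multiple fibres enters. I would organise it by first settling the Jacobian (index one) case, where $C$ is literally $F_0$ for the generic fibre, and then transferring to arbitrary index $m$ through the relation between a Halphen surface and its associated Jacobian rational elliptic surface.
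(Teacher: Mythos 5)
The paper does not actually prove this theorem: it is stated as a citation of Cantat--Dolgachev and used as a black box in the Tits-alternative section, so your sketch can only be compared with the expected argument of that reference. Your skeleton is indeed the standard one and surely matches it in outline: the fibration defined by $\vert -mK_S\vert$, the homomorphism $\rho$ recording the action on the base, restriction of $\ker\rho$ to the generic fibre, translations coming from the Mordell--Weil group of the associated Jacobian fibration, and Shioda's embedding into $E_8$ giving the rank bound $\leq 8$. Those parts are fine (modulo absorbing the torsion of the Mordell--Weil group into the abelian part, and a quibble: as literally parsed, ``extension of an abelian group by a cyclic group'' puts the cyclic group as the kernel, whereas your exact sequence $1\to A\to\ker\rho\to C\to 1$ has it as the quotient).

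There are, however, two genuine gaps. First, your finiteness argument for $\mathrm{im}\,\rho$ fails as written: $\chi_{\mathrm{top}}(S)=12$ does \emph{not} force three singular fibres. There exist relatively minimal rational elliptic surfaces with exactly two singular fibres (types $II^*{+}II$, $III^*{+}III$, $IV^*{+}IV$, $I_0^*{+}I_0^*$), necessarily isotrivial, and the stabilizer of a two-point set in $\mathrm{PGL}(2,\mathbb{C})$ is the infinite group $\mathbb{C}^*\rtimes\mathbb{Z}/2\mathbb{Z}$. This is not a removable technicality: the minimal resolution of $y^2=x^3+t$ is a Halphen surface of index $1$ carrying a $\mathbb{C}^*$-action ($(x,y,t)\mapsto(\lambda^2x,\lambda^3y,\lambda^6t)$) whose image on the base is infinite, so the finiteness of $\mathrm{im}\,\rho$ is delicate exactly in the isotrivial/nodal cases and requires input you do not supply (non-constancy of the $j$-map, unnodality-type hypotheses, or a separate treatment of these surfaces). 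Second, the bound ``$C$ cyclic of order dividing $24$'' --- the precise quantitative content of the theorem --- is explicitly deferred: you name the relevant constraints (monodromy, $j$-invariant, the relative differential, the multiple fibre) and announce a ``delicate computation,'' but nothing in the proposal actually carries it out; beyond the generic-fibre bound $n\in\{1,2,3,4,6\}$, no divisibility is established, and the promised transfer from the Jacobian (index one) case to arbitrary index $m$ is likewise only a plan. As it stands the proposal is a correct outline of the first half of the statement and an announcement, not a proof, of the second half.
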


In other words the automorphism group of 
a Halphen surface is virtually abelian 
hence~$\mathrm{G}$ is solvable up to finite
index.

\subsubsection{Assume that $\mathrm{G}$ is a group of elliptic elements}

According to Theorems \ref{thm:urechell1} and \ref{thm:urechell2} 
one of the following holds:
\begin{itemize}
\item[$\diamond$] $\mathrm{G}$ is isomorphic to a 
bounded subgroup;

\item[$\diamond$] $\mathrm{G}$ preserves a rational
fibration.
\end{itemize}

Suppose that $\mathrm{G}$ is isomorphic to a bounded
subgroup; in particular $\mathrm{G}$ is isomorphic
to a subgroup of linear groups, and so satisfies Tits
alternative. 

If $\mathrm{G}$ preserves a rational fibration, then
$\mathrm{G}$ satisfies Tits alternative (Proposition
\ref{pro:Titsjonq}).

\subsection{A consequence of Tits alternative: the Burnside problem}

The Burnside problem posed by Burnside in $1902$ asks 
whether a finitely generated torsion group is finite.
Schur showed in $1911$ that any finitely generated
torsion group that is a subgroup of invertible 
$n\times n$ complex matrices is finite (\cite{Schur}).
One of the tool of the proof is the Jordan-Schur 
Theorem. 

In the $1930$'s Burnside asked another related question
called the restricted Burnside problem: if it is known
that a group $\mathrm{G}$ with $m$ generators and 
exponent $n$ is finite, can one conclude that the 
order of $\mathrm{G}$ is bounded by some constant 
depending only on $n$ and $m$ ? In other words 
are there only finitely many finite groups with 
$m$ generators of exponent $n$ up to isomorphism ?

In $1958$ Kostrikin was able to prove that among 
the finite groups with a given number of generators
and a given prime exponent, there exists a largest
one: this provides a solution for the restricted
Burnside problem for the case of prime exponent
(\cite{Kostrikin}).

Later Zelmanov solved the restricted Burnside 
problem for an arbitrary exponent 
(\cite{Zelmanov1991, Zelmanov1992}).

Golod gave a negative answer to the Burnside 
problem for groups that have a complete system
of linear representations (\cite{Golod}).

Later many examples of infinite, finitely 
generated and torsion groups with even bounded
ordres were exhibited (\cite{AdjanNovikov1, 
AdjanNovikov2, AdjanNovikov3, Olshanskii, Ivanov:Burnside, Lysenok}).

The problem raised by Burnside is still open
for homeomorphism (resp. diffeomorphism) groups 
on closed manifolds. Very few examples are 
known.

Cantat gave a positive answer to the Burnside 
problem for the Cremona group:

\begin{thm}[\cite{Cantat:annals}]\label{thm:burnside}
Every finitely generated torsion subgroup of 
$\mathrm{Bir}(\mathbb{P}^2_\mathbb{C})$ is 
finite.
\end{thm}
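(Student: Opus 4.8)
The plan is to derive Theorem \ref{thm:burnside} as a direct consequence of the Tits alternative for the Cremona group (Theorem \ref{thm:cantattits}, equivalently Theorem \ref{thm:urechtits}) together with the classical Schur theorem on torsion linear groups. Let $\mathrm{G}$ be a finitely generated torsion subgroup of $\mathrm{Bir}(\mathbb{P}^2_\mathbb{C})$. Since $\mathrm{G}$ is a torsion group it contains no element of infinite order, and in particular it contains no non-abelian free subgroup (a free group on two generators has elements of infinite order). By the Tits alternative for finitely generated subgroups of $\mathrm{Bir}(\mathbb{P}^2_\mathbb{C})$, the group $\mathrm{G}$ must therefore be virtually solvable: there is a finite-index solvable subgroup $\mathrm{H}\subseteq\mathrm{G}$.

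The next step is to reduce to the solvable case and exploit the structure theory already available in the text. First I would observe that $\mathrm{H}$ is again finitely generated (a finite-index subgroup of a finitely generated group is finitely generated) and is a torsion group, being a subgroup of $\mathrm{G}$. Since $\mathrm{G}$ is finite if and only if $\mathrm{H}$ is finite, it suffices to prove that a finitely generated solvable torsion subgroup of $\mathrm{Bir}(\mathbb{P}^2_\mathbb{C})$ is finite. At this point the cleanest route is to invoke Theorem \ref{thm:urechtorsion}: any torsion subgroup of $\mathrm{Bir}(\mathbb{P}^2_\mathbb{C})$ is isomorphic to a subgroup of $\mathrm{GL}(48,\mathbb{C})$. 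Thus $\mathrm{G}$ embeds into $\mathrm{GL}(48,\mathbb{C})$ as a finitely generated torsion subgroup, and Schur's theorem (every finitely generated torsion subgroup of $\mathrm{GL}(n,\mathbb{C})$ is finite) yields that $\mathrm{G}$ is finite.

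The main subtlety to address is that Theorem \ref{thm:burnside} as stated by Cantat predates the linearity result Theorem \ref{thm:urechtorsion}, so I would also sketch the self-contained argument that avoids it, staying within the hyperbolic-geometry framework. Here the key point is the classification of subgroups in Theorem \ref{thm:weak}. A finitely generated torsion group contains no loxodromic element (loxodromic isometries have infinite order, since $\exp(L(\phi_*))>1$) and no parabolic element (parabolic isometries also have infinite order); hence every element of $\mathrm{G}$ is elliptic, so $\mathrm{G}$ is an elliptic subgroup. Applying the trichotomy of Theorem \ref{thm:urechell1}, either $\mathrm{G}$ is a bounded subgroup, or it preserves a rational fibration, or it is a torsion group. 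In the bounded case $\mathrm{G}$ is conjugate into the automorphism group of an algebraic surface and thus embeds into a linear group, so Schur applies; in the fibration-preserving case $\mathrm{G}\subseteq\mathcal{J}\simeq\mathrm{PGL}(2,\mathbb{C}(z_0))\rtimes\mathrm{PGL}(2,\mathbb{C})$, a linear group over a field of characteristic zero, where a finitely generated torsion subgroup is again finite by Schur.

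The hard part is making the elliptic case genuinely uniform, because the third alternative of Theorem \ref{thm:urechell1} merely restates that $\mathrm{G}$ is a torsion group and does not by itself give finiteness; the Wright-type examples show that infinite elliptic torsion subgroups exist once one drops the finite-generation hypothesis. The crucial use of finite generation is therefore exactly that it forces, through the boundedness of degrees supplied by the fixed-point property and Lemma \ref{lem:Urechtecn1}, a reduction to an honest linear representation on $H^2$ or $\mathrm{NS}$ of a regularizing surface. I expect the cleanest write-up to simply cite Theorem \ref{thm:urechtorsion} for the embedding into $\mathrm{GL}(48,\mathbb{C})$ and then apply Schur, treating the hyperbolic-space argument as the conceptual explanation of why the three alternatives each collapse to the linear setting.
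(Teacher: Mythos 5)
Your first step is exactly the paper's: a torsion group contains no non-abelian free subgroup, so the Tits alternative (Theorem \ref{thm:cantattits}) makes $\mathrm{G}$ virtually solvable. But where the paper then finishes in one line with the elementary group-theoretic fact that any finitely generated solvable (hence virtually solvable) torsion group is finite --- by induction on the derived length: $\mathrm{G}/[\mathrm{G},\mathrm{G}]$ is a finitely generated abelian torsion group, hence finite, so $[\mathrm{G},\mathrm{G}]$ has finite index and is again finitely generated --- you instead invoke Theorem \ref{thm:urechtorsion} to embed $\mathrm{G}$ into $\mathrm{GL}(48,\mathbb{C})$ and conclude by Schur. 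This is circular within the paper's logical structure: the proof of Theorem \ref{thm:urechtorsion} given in the chapter on torsion subgroups explicitly uses Theorem \ref{thm:burnside} (the finitely generated subgroup $\Gamma=\langle g_1,\ldots,g_\ell\rangle$ appearing in the Malcev compactness argument is shown to be finite precisely by citing Theorem \ref{thm:burnside}, which is what makes the mixed system compatible). You note the chronology --- Cantat's theorem predates Urech's linearity result --- but draw the wrong conclusion from it: the dependence runs from Urech back to Cantat, so the linearity theorem cannot be used here. No linearity or Schur-type input is needed at all once virtual solvability is in hand.

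Your fallback sketch has the same unresolved gap, and you say so yourself: the trichotomy of Theorem \ref{thm:urechell1} returns ``$\mathrm{G}$ is a torsion group'' as its third alternative, which is vacuous for your $\mathrm{G}$, and your remark that finite generation ``forces a reduction to an honest linear representation'' is not an argument. That route can in fact be repaired without Theorem \ref{thm:urechtorsion}: for a \emph{finitely generated} group of elliptic elements, Proposition \ref{pro:etaumilieuCantat} gives only two alternatives --- bounded, or conjugate into $\mathcal{J}$ --- and in both cases $\mathrm{G}$ lands in a linear group over a field of characteristic zero, where Schur applies. Had you cited that proposition instead of Theorem \ref{thm:urechell1}, your second route would be a correct (if heavier) alternative to the paper's proof. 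As written, however, both of your routes either rest on a circular citation or terminate in an acknowledged gap.
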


\begin{proof}
Let $\mathrm{G}$ be a finitely generated 
torsion subgroup of 
$\mathrm{Bir}(\mathbb{P}^2_\mathbb{C})$.
From Tits alternative (Theorem \ref{thm:cantattits})
$\mathrm{G}$ is solvable up to finite
index. Since any torsion, solvable, 
finitely generated group is finite, 
$\mathrm{G}$ is finite.
\end{proof}

%%%%%%%%%%%%%%%%%%%%%%%%%%%%%%%%%%%%%%%%%%%%%%%%%%%%%%%%%%%%%%%%%%%%%%%%%%%%%%%%%%%%%%%%%%%%%%%%%%%%%%%%%%%%%%%%%%%
%%%%%%%%%%%%%%%%%%%%%%%%%%%%%%%%%%%%%%%%%%%%%%%%%%%%%%%%%%%%%%%%%%%%%%%%%%%%%%%%%%%%%%%%%%%%%%%%%%%%%%%%%%%%%%%%%%%
% section
%%%%%%%%%%%%%%%%%%%%%%%%%%%%%%%%%%%%%%%%%%%%%%%%%%%%%%%%%%%%%%%%%%%%%%%%%%%%%%%%%%%%%%%%%%%%%%%%%%%%%%%%%%%%%%%%%%%
%%%%%%%%%%%%%%%%%%%%%%%%%%%%%%%%%%%%%%%%%%%%%%%%%%%%%%%%%%%%%%%%%%%%%%%%%%%%%%%%%%%%%%%%%%%%%%%%%%%%%%%%%%%%%%%%%%%

\section{Solvable subgroups of $\mathrm{Bir}(\mathbb{P}^2_\mathbb{C})$}\label{sec:solvable}

The study of the solvable subgroups of the plane Cremona 
group starts in \cite{Deserti:resoluble} and goes on in 
\cite{Urech:ellipticsubgroups}. 

\begin{thm}[\cite{Urech:ellipticsubgroups}]\label{thm:resoluble}
Let $\mathrm{G}$ be a solvable subgroup of
$\mathrm{Bir}(\mathbb{P}^2_\mathbb{C})$.
Then one of the following holds:
\begin{itemize}
\item[$\diamond$] $\mathrm{G}$ is a subgroup of elliptic elements, in 
particular $\mathrm{G}$ is isomorphic either to a solvable subgroup 
of $\mathcal{J}$, or to a solvable subgroup of a bounded group;

\item[$\diamond$] $\mathrm{G}$ is conjugate to a subgroup of 
$\mathcal{J}$;

\item[$\diamond$] $\mathrm{G}$ is conjugate to a subgroup of 
the automorphism group of a Halphen surface;

\item[$\diamond$] $\mathrm{G}$ is conjugate to a subgroup of
$\mathrm{GL}(2,\mathbb{Z})\ltimes\mathrm{D}_2$ where
\[
\mathrm{D}_2=\big\{(z_0,z_1)\mapsto(\alpha z_0,\beta z_1)\,\vert\,\alpha,\,\beta\in\mathbb{C}^*\big\};
\]

\item[$\diamond$] $\mathrm{G}$ contains a loxodromic element 
and there exists a finite subgroup
$\mathrm{H}$ of $\mathrm{Bir}(\mathbb{P}^2_\mathbb{C})$ such
that $\mathrm{G}=\mathbb{Z}\ltimes H$. 
\end{itemize}
\end{thm}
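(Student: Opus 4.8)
The plan is to combine the classification results already established in this chapter with the structural dichotomy provided by Theorem \ref{thm:weak}. The starting observation is that any subgroup $\mathrm{G}$ of $\mathrm{Bir}(\mathbb{P}^2_\mathbb{C})$ falls into exactly one of three types according to the nature of the isometries of $\mathbb{H}^\infty$ it contains: $\mathrm{G}$ may consist entirely of elliptic elements; or $\mathrm{G}$ may contain a parabolic element but no loxodromic element; or $\mathrm{G}$ may contain a loxodromic element. I would organize the proof around this trichotomy, treating each case in turn, and simply verify that a solvable $\mathrm{G}$ lands in one of the listed conclusions.

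In the elliptic case, I would invoke Theorem \ref{thm:urechell1}: an elliptic subgroup is either a bounded subgroup, or preserves a rational fibration (hence is conjugate into $\mathcal{J}$), or is a torsion group. Since $\mathrm{G}$ is assumed solvable, the first two possibilities immediately give the first conclusion of the statement (solvable subgroup of a bounded group, or solvable subgroup of $\mathcal{J}$), and the torsion case is absorbed by these via Theorem \ref{thm:urechell2}, which identifies torsion subgroups with bounded ones. For the parabolic-but-not-loxodromic case, the relevant input is Lemma \ref{lem:jonqhalph}: such a $\mathrm{G}$ is conjugate to a subgroup of $\mathcal{J}$ or to a subgroup of $\mathrm{Aut}(S)$ for a Halphen surface $S$. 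These are precisely the second and third conclusions, and no further solvability argument is needed here since the conclusion is stated as membership in a conjugacy class, not a finer structural claim.

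The heart of the matter is the loxodromic case, which is where the last two conclusions arise and where the solvability hypothesis does real work. Here I would appeal to Lemma \ref{lem:Urechlox}, which splits a group containing a loxodromic element into three subcases: $\mathrm{G}$ is conjugate to a subgroup of $\mathrm{GL}(2,\mathbb{Z})\ltimes\mathrm{D}_2$; or $\mathrm{G}$ contains an index-at-most-$2$ subgroup isomorphic to $\mathbb{Z}\ltimes\mathrm{H}$ with $\mathrm{H}$ finite; or $\mathrm{G}$ contains a non-abelian free group. The last possibility is excluded outright because a solvable group cannot contain a non-abelian free subgroup. The first two subcases give the fourth and fifth conclusions respectively. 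The one point requiring care is the fifth conclusion: Lemma \ref{lem:Urechlox} produces an index-$\leq 2$ subgroup of the form $\mathbb{Z}\ltimes\mathrm{H}$, and I would need to promote this, using solvability and the structure of the semidirect product, to the clean statement $\mathrm{G}=\mathbb{Z}\ltimes\mathrm{H}$ with $\mathrm{H}$ finite, by analyzing the two cosets and checking that the finite normal part can be chosen correctly.

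The main obstacle I anticipate is precisely this last bookkeeping step in the loxodromic case: reconciling the index-$2$ phenomenon of Lemma \ref{lem:Urechlox} with the exact semidirect-product form demanded in the statement, and ensuring that the various conclusions are genuinely exhaustive and not overlapping in a way that hides a missing case. A secondary subtlety is verifying that the solvability hypothesis is consistent with membership in $\mathrm{GL}(2,\mathbb{Z})\ltimes\mathrm{D}_2$ and in $\mathrm{Aut}(S)$ for Halphen $S$ — but here Theorem \ref{thm:candol} shows $\mathrm{Aut}(S)$ is virtually abelian, so solvable subgroups sit comfortably inside, and the torus-by-$\mathrm{GL}(2,\mathbb{Z})$ case is handled similarly. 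Apart from this, the proof is essentially a careful dispatch through the already-proven classification lemmas, with the solvability hypothesis entering only to kill the free-subgroup alternative and to sharpen the final normal form.
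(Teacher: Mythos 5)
Your proposal follows the paper's proof essentially verbatim: the same trichotomy (loxodromic element present / parabolic but no loxodromic / all elliptic), with Theorems \ref{thm:urechell1} and \ref{thm:urechell2} handling the elliptic case, the content of Lemma \ref{lem:jonqhalph} handling the parabolic case (the paper reproves this inline via the light-cone argument on $\alpha(\phi)$ rather than citing the lemma, but the content is identical), and Lemma \ref{lem:Urechlox} together with the Tits alternative killing the free-subgroup branch in the loxodromic case. The index-$\leq 2$ subtlety you flag for the last conclusion is real but is not your gap alone: the paper's own proof also stops at the ``subgroup of index at most two isomorphic to $\mathbb{Z}\ltimes\mathrm{H}$'' statement of Lemma \ref{lem:Urechlox} without carrying out the promotion to $\mathrm{G}=\mathbb{Z}\ltimes\mathrm{H}$, so your only divergence from the paper is an honest acknowledgment of a step it elides.
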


\begin{rem}
A solvable subgroup of a bounded group is a solvable 
subgroup from one of the groups that appear in Theorem
\ref{thm:blanc11cases}.
\end{rem}

\begin{rem}
The centralizer of a birational self map of 
$\mathbb{P}^2_\mathbb{C}$ that preserves a unique fibration that is 
rational is virtually solvable (\S\ref{subsubsec:centrjonq}); 
this example illustrates the second case.
\end{rem}

Before giving the proof let us state some consequences. 

The soluble length of a nilpotent subgroup of 
$\mathrm{Bir}(\mathbb{P}^2_\mathbb{C})$ can be bounded by the 
dimension of~$\mathbb{P}^2_\mathbb{C}$ as Epstein and Thurston 
did in the context of Lie algebras and rational vector fields on
a connected complex manifold (\cite{EpsteinThurston}):

\begin{cor}[\cite{Deserti:resoluble}]
Let $\mathrm{G}$ be a nilpotent subgroup of 
$\mathrm{Bir}(\mathbb{P}^2_\mathbb{C})$ that is not a torsion 
group. The soluble length of $\mathrm{G}$ is bounded by $2$.
\end{cor}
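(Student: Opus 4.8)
The plan is to reduce the statement to a bound on the nilpotency class, and then read that bound off the structure theorem for solvable subgroups. Recall that having soluble length at most $2$ means exactly $\mathrm{G}^{(2)}=[\mathrm{G}^{(1)},\mathrm{G}^{(1)}]=\{\mathrm{id}\}$, where $\mathrm{G}^{(1)}=C^1\mathrm{G}=[\mathrm{G},\mathrm{G}]$. The spine of the argument is the commutator-calculus inclusion $[C^1\mathrm{G},C^1\mathrm{G}]\subseteq C^3\mathrm{G}$, valid in any group: it shows that as soon as $\mathrm{G}$ is nilpotent of class at most $3$, i.e. $C^3\mathrm{G}=\{\mathrm{id}\}$, its derived subgroup $C^1\mathrm{G}$ is abelian and $\mathrm{G}^{(2)}=\{\mathrm{id}\}$. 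Thus it suffices to show that a non-torsion nilpotent subgroup of $\mathrm{Bir}(\mathbb{P}^2_\mathbb{C})$ has nilpotency class at most $3$, which is precisely the dimension-$2$ instance ($2n-1=3$) of the Epstein--Thurston bound alluded to in the surrounding remark.

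First I would note that a nilpotent group is solvable, and that since $\mathrm{G}$ is not a torsion group, Theorem \ref{thm:resoluble} places $\mathrm{G}$, up to conjugacy, into one of its five families (the abelian possibility giving soluble length at most $1$ at once). In the three ``continuous'' families --- $\mathrm{G}$ conjugate into the Jonqui\`eres group $\mathcal{J}$, into $\mathrm{GL}(2,\mathbb{Z})\ltimes\mathrm{D}_2$, or into the automorphism group of a Halphen surface --- I would use the fibered normal form. Projecting to the base action (to $\mathrm{PGL}(2,\mathbb{C})$ for $\mathcal{J}$ and, via Theorem \ref{thm:candol}, for the Halphen case; to $\mathrm{GL}(2,\mathbb{Z})$ for the monomial case), the image of $\mathrm{G}$ is nilpotent in a group all of whose \emph{infinite} nilpotent subgroups are abelian, so this image has finite derived subgroup. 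Hence, modulo torsion, the commutators of $\mathrm{G}$ fall into the abelian part of the fiber ($\mathcal{J}_0=\mathrm{PGL}(2,\mathbb{C}(z_0))$, resp. $\mathrm{D}_2$, resp. the abelian normal part of $\ker\rho$), and an explicit computation with the shears and translations living there (of the type carried out on the two examples following Theorem \ref{thm:nilpotent}) bounds the class by $3$.

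The main obstacle is the torsion contribution to the derived subgroup. Even though $\mathrm{G}$ is non-torsion, $C^1\mathrm{G}$ may be a finite nilpotent group --- this is exactly what occurs in the fifth family $\mathbb{Z}\ltimes\mathrm{H}$ of Theorem \ref{thm:resoluble} and in the finite directions of the other families --- and a finite nilpotent group need not be metabelian. The crux is therefore to prove that every finite nilpotent subgroup of $\mathrm{Bir}(\mathbb{P}^2_\mathbb{C})$ entering the picture is metabelian. Here I would invoke the classification of finite abelian subgroups of the plane Cremona group: the bound on their rank (at most $2$ for primes $p\geq 5$, at most $3$ for $p=3$, at most $4$ for $p=2$) constrains the Sylow subgroups of such a finite nilpotent group, and via the structure theory of $p$-groups whose abelian subgroups have small rank one deduces that they, hence the whole group, are metabelian.

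In short, the generic (infinite) cases are handled by the triangular/fibered normal forms and the class bound $[C^1\mathrm{G},C^1\mathrm{G}]\subseteq C^3\mathrm{G}$, while the delicate point --- what upgrades the conclusion from ``metabelian up to finite index'' (Theorem \ref{thm:nilpotent}) to the genuine inequality $\ell(\mathrm{G})\leq 2$ --- is the exact control of the finite part of the derived subgroup through the finite-subgroup classification. I expect this torsion analysis, rather than the continuous cases, to be where essentially all the work lies.
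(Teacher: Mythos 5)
Your reduction collapses at the very first step: it is not true that a non-torsion nilpotent subgroup of $\mathrm{Bir}(\mathbb{P}^2_\mathbb{C})$ has nilpotency class at most $3$, and the survey itself contains the counterexamples. In \S\ref{section:notlinear}, Cornulier's groups $\Gamma_n=\langle s_1,\,\alpha_{z_0^n}\rangle$ are torsion-free nilpotent of class \emph{exactly} $n+1$ for every $n$; they are nevertheless metabelian (they sit inside $\big\{(z_0,z_1)\dashrightarrow(z_0+t,z_1+f(z_0))\big\}$), exactly as the corollary predicts. So while your inclusion $[C^1\mathrm{G},C^1\mathrm{G}]\subseteq C^3\mathrm{G}$ is correct, it only runs the useless way: metabelian subgroups of the Cremona group have unbounded class, hence no class bound exists from which the corollary could be deduced. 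You have also inverted the Epstein--Thurston analogy: their theorem, and the remark surrounding the corollary, bound the \emph{soluble length} by a quantity depending on the dimension --- the corollary \emph{is} the sought analogue of that bound --- not the nilpotency class, which is already unbounded for nilpotent Lie algebras of polynomial vector fields on $\mathbb{C}^2$ (take the algebra generated by $\partial/\partial z_0$ and $z_0^n\,\partial/\partial z_1$, the infinitesimal version of $\Gamma_n$).

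The torsion analysis you identify as the crux also fails as proposed, because its intended conclusion is false: finite nilpotent subgroups of $\mathrm{Bir}(\mathbb{P}^2_\mathbb{C})$ need not be metabelian. Embed a dihedral group $D_4$ of order $8$ in each factor of $\mathrm{PGL}(2,\mathbb{C})\times\mathrm{PGL}(2,\mathbb{C})$ and adjoin the swap $(z_0,z_1)\mapsto(z_1,z_0)$: the resulting wreath product $D_4\wr\faktor{\mathbb{Z}}{2\mathbb{Z}}\subset\mathrm{Aut}(\mathbb{P}^1_\mathbb{C}\times\mathbb{P}^1_\mathbb{C})$ is a $2$-group of order $128$, hence nilpotent, and its second derived subgroup contains $([c,d],[c^{-1},d^{-1}])\neq\mathrm{id}$ for non-commuting $c,d\in D_4$, so its derived length is exactly $3$. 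Moreover it contains $\big(\faktor{\mathbb{Z}}{2\mathbb{Z}}\big)^4$, the maximal rank permitted for $p=2$, so the rank constraints on abelian $p$-subgroups that you invoke cannot exclude it. The non-torsion hypothesis must therefore be used globally rather than by classifying finite nilpotent subgroups in isolation: what has to be proved is, e.g., that no group of the shape $\mathbb{Z}\times\big(D_4\wr\faktor{\mathbb{Z}}{2\mathbb{Z}}\big)$ embeds, which is a statement about centralizers of infinite-order elements. This is how the actual argument runs (Theorem \ref{thm:nilpotent} and \cite{Deserti:resoluble}, from which the survey deduces the corollary via the classification of Theorem \ref{thm:resoluble}): one works with a non-torsion finitely generated piece, shows the derived subgroup preserves fiberwise an invariant rational or elliptic fibration, and bounds the derived length directly in each family --- note in passing that $\mathcal{J}_0\simeq\mathrm{PGL}(2,\mathbb{C}(z_1))$ is emphatically not ``the abelian part of the fiber''; that its nilpotent subgroups are abelian up to finite index is itself a lemma in that proof. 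None of this is recoverable from your outline.
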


Theorem \ref{thm:blanc11cases} allows to prove:

\begin{cor}[\cite{Urech:ellipticsubgroups}]
The derived length of a bounded solvable subgroup of 
$\mathrm{Bir}(\mathbb{P}^2_\mathbb{C})$ is $\leq 5$.

The derived length of a solvable subgroup of 
$\mathrm{Bir}(\mathbb{P}^2_\mathbb{C})$ is at most $8$.
\end{cor}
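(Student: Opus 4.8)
The plan is to derive both inequalities from the two structural classifications already available: Theorem~\ref{thm:blanc11cases} in the bounded case and Theorem~\ref{thm:resoluble} in general. In each situation the problem reduces to bounding the derived length of a solvable subgroup of an explicit group, after which the extension estimate of Lemma~\ref{lem:extsolv} assembles the pieces. I use throughout that the derived length does not increase when passing to subgroups or quotients, that for a genuine extension $1\to\mathrm{R}\to\mathrm{G}\to\mathrm{Q}\to 1$ one has $\mathrm{G}^{(q+r)}=\{\mathrm{id}\}$ whenever $\mathrm{R}^{(r)}=\mathrm{Q}^{(q)}=\{\mathrm{id}\}$, and that the Zariski closure of a solvable group is again solvable.

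For the first assertion, let $\mathrm{G}$ be a bounded solvable subgroup. Its Zariski closure $\overline{\mathrm{G}}$ has bounded degree and is closed and solvable, hence is an algebraic subgroup (Corollary~\ref{cor:agree}) and so is contained in one of the eleven maximal families of Theorem~\ref{thm:blanc11cases}. Since $\mathrm{G}\subseteq\overline{\mathrm{G}}$, it suffices to bound the derived length of a solvable subgroup of each family. The families are either finite (of order at most $648$ by Theorem~\ref{thm:648}) or embed into a classical linear group through Lemmas~\ref{lem:te1}, \ref{lem:te2}, \ref{lem:ru1}, \ref{lem:ru2}, \ref{lem:ru3} and \ref{lem:ru4}; in the conic bundle cases Lemmas~\ref{lem:ru2} and~\ref{lem:ru3} place the group inside $\mathrm{PGL}(2,\mathbb{C})\times\mathrm{PGL}(2,\mathbb{C})$, and in the del~Pezzo cases Lemma~\ref{lem:ru4} places it inside $\mathrm{GL}(8,\mathbb{C})$. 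First I would dispose of the toric families ($\mathrm{PGL}(3,\mathbb{C})$, $\mathrm{Aut}(\mathbb{P}^1_\mathbb{C}\times\mathbb{P}^1_\mathbb{C})$, the del~Pezzo surface of degree $6$), where a Borel subgroup or a torus normalizer is the extremal solvable subgroup. The decisive family is $\mathrm{Aut}(\mathbb{F}_n)\simeq\mathbb{C}^{n+1}\rtimes\bigl(\faktor{\mathrm{GL}(2,\mathbb{C})}{\mu_n}\bigr)$: a solvable subgroup is an extension of a solvable subgroup of $\mathrm{GL}(2,\mathbb{C})$, of derived length at most $4$ (attained by the binary octahedral group), by the abelian group $\mathbb{C}^{n+1}$, whence the bound $5$. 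Verifying that no other family exceeds $5$ completes this part.

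For the second assertion I would run through the five cases of Theorem~\ref{thm:resoluble}. A subgroup conjugate into $\mathrm{Aut}(S)$ for a Halphen surface $S$ is virtually abelian by Theorem~\ref{thm:candol}, and a direct inspection bounds its derived length; a subgroup of $\mathrm{GL}(2,\mathbb{Z})\ltimes\mathrm{D}_2$ is an extension of a solvable subgroup of $\mathrm{GL}(2,\mathbb{Z})$ by the abelian group $\mathrm{D}_2$, again of small derived length; and a group $\mathbb{Z}\ltimes\mathrm{H}$ with $\mathrm{H}$ finite has derived length at most $1+\ell(\mathrm{H})$. The extremal case, which also absorbs the group of elliptic elements, is that of a solvable subgroup of the Jonqui\`eres group $\mathcal{J}$: here I would project onto the quotient $\mathrm{PGL}(2,\mathbb{C})$ and intersect with the normal factor $\mathrm{PGL}(2,\mathbb{C}(z_0))$, bound the derived length of each, and combine the two through Lemma~\ref{lem:extsolv} to reach the value $8$.

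The hard part will be pinning down the exact constants in the two pivotal estimates: the maximal derived length of a solvable subgroup of $\mathrm{GL}(2,\mathbb{C})$, and that of a solvable subgroup of $\mathrm{PGL}(2,K)$ over the non-algebraically-closed field $K=\mathbb{C}(z_0)$. For these one must combine the triangularizable (Lie--Kolchin) subgroups with the finite solvable subgroups classified by Dickson's theorem — with the octahedral and binary octahedral groups as the genuine extremizers — and check that the field $\mathbb{C}(z_0)$ creates no solvable subgroup of larger derived length than over $\mathbb{C}$. A companion subtlety is that the estimate must be kept sharp without passing to a finite-index subgroup: I must use the genuine extension bound $q+r$ where the normal factor is a true (not merely virtual) solvable group, reserving the weaker bound $q+r+1$ of Lemma~\ref{lem:extsolv} for the steps that genuinely involve virtual solvability, so that the advertised constants $5$ and $8$ are reached but not overshot.
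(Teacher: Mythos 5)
Your skeleton is exactly the route the paper points to (the paper prints no argument beyond ``Theorem~\ref{thm:blanc11cases} allows to prove''): for the bounded case, pass to the Zariski closure, which is algebraic by Corollary~\ref{cor:agree} and solvable of the same derived length, place it in one of the eleven maximal families, and bound each family; for the general case, run through the five cases of Theorem~\ref{thm:resoluble} and assemble with the extension estimate. Your identification of $\mathrm{Aut}(\mathbb{F}_n)\simeq\mathbb{C}^{n+1}\rtimes\bigl(\mathrm{GL}(2,\mathbb{C})/\mu_n\bigr)$ as the decisive bounded family is right, with the binary octahedral group realizing derived length $4$ in $\mathrm{GL}(2,\mathbb{C})$ and hence $1+4=5$ after the abelian kernel (note that for odd $n$ the quotient $\mathrm{GL}(2,\mathbb{C})/\mu_n$ is isomorphic to $\mathrm{GL}(2,\mathbb{C})$ itself --- the ``$\mathrm{PGL}(2,\mathbb{C})$'' in Lemma~\ref{lem:ru1} is a misprint, as a dimension count shows). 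Your worry about the field $\mathbb{C}(z_0)$ in the Jonqui\`eres case dissolves immediately: $\mathbb{C}(z_0)$ embeds into $\mathbb{C}$ as an abstract field, so $\mathrm{GL}(2,\mathbb{C}(z_0))\hookrightarrow\mathrm{GL}(2,\mathbb{C})$ and the degree-$2$ bound $4$ applies verbatim; passing through the central extension $\mathrm{GL}(2,K)\to\mathrm{PGL}(2,K)$ then bounds each factor of $\mathcal{J}$ by $4$, and the genuine extension bound $q+r$ gives $8$. Two small points: in the case $\mathbb{Z}\ltimes\mathrm{H}$ you should bound $\ell(\mathrm{H})$ by applying the first assertion to the finite (hence bounded) group $\mathrm{H}$, giving $1+5=6\leq 8$; and the corollary claims only upper bounds, so the sharpness (``reached'') you propose to verify is not required.

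The one step that fails as written is your treatment of the ``toric families,'' specifically $\mathrm{PGL}(3,\mathbb{C})$: it is false that a Borel subgroup or a torus normalizer is the extremal solvable subgroup there. Lie--Kolchin triangularizes only \emph{connected} solvable groups, and $\mathrm{PGL}(3,\mathbb{C})$ contains finite solvable subgroups conjugate into neither, e.g.\ the Hessian group $\bigl(\mathbb{Z}/3\mathbb{Z}\bigr)^2\rtimes\mathrm{SL}(2,\mathbb{F}_3)$ of order $216$, whose derived series $216\to 72\to 18\to 9\to 1$ has length $4$, strictly larger than the length $3$ of a Borel of $\mathrm{PGL}(3,\mathbb{C})$ (and the corresponding extraspecial normalizer $3^{1+2}\rtimes\mathrm{SL}(2,\mathbb{F}_3)$ in $\mathrm{GL}(3,\mathbb{C})$ even attains length $5$). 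So bounding family (1) by conjugating into $B$ or $N(T)$ proves a false classification, and --- more seriously --- a ``verification that no other family exceeds $5$'' carried out in this style would silently miss the finite extremizers in the del Pezzo families (6)--(10), which are essentially finite groups. The repair is standard: for family (1) invoke the uniform bound for solvable linear groups of degree $3$ (derived length at most $5$ in $\mathrm{GL}(3,\mathbb{C})$, by Zassenhaus--Newman or Dickson's classification, which descends to $\mathrm{PGL}(3,\mathbb{C})$ through the central extension), and for the remaining families bound the finite solvable subgroups via the explicit extensions listed in Theorem~\ref{thm:blanc11cases}, exactly as you already do in the conic-bundle cases. With that substitution every family stays at derived length $\leq 5$ and your constants $5$ and $8$ come out as planned.
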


\begin{proof}[Proof of Theorem \ref{thm:resoluble}]
It decomposes into three parts:
$\mathrm{G}$ contains a loxodromic element;
$\mathrm{G}$ does not contain a loxodromic 
element but $\mathrm{G}$ contains a parabolic
element;
$\mathrm{G}$ is a group of elliptic elements.

\begin{enumerate}
\item Assume first that $\mathrm{G}$ contains a 
loxodromic element. Then Tits alternative and 
Lemma \ref{lem:Urechlox} imply the following 
alternative
\begin{itemize}
\item[$\diamond$] either $\mathrm{G}$ is conjugate
to a subgroup of 
$\mathrm{GL}(2,\mathbb{Z})\ltimes\mathrm{D}_2$,

\item[$\diamond$] or $\mathrm{G}$ contains a 
subgroup of index at most two that is 
isomorphic to $\mathbb{Z}\ltimes\mathrm{H}$
where $\mathrm{H}$ is a finite group.
\end{itemize}

\item Suppose now that $\mathrm{G}$ does not
contain a loxodromic element but $\mathrm{G}$
contains a parabolic element $\phi$. 
The map $\phi$ preserves a unique fibration $\mathcal{F}$
that is elliptic or rational. Let us prove that any element
of $\mathrm{G}$ preserves $\mathcal{F}$.
Denote by $\alpha(\phi)\in\partial\mathbb{H}^\infty$ the 
fixed point of $\phi_*$. Take one element in the light cone
\[
\mathcal{L}\mathrm{Z}(\mathbb{P}^2_\mathbb{C})=\big\{d\in\mathrm{Z}(\mathbb{P}^2_\mathbb{C})\,\vert\, d\cdot d=0\big\}
\]
of $\mathrm{Z}(\mathbb{P}^2_\mathbb{C})$ still denoted
by $\alpha(\phi)$ that represents $\alpha(\phi)$. Assume
by contradiction that there exists $\varphi$ in $\mathrm{G}$ 
such that $\varphi(\alpha(\phi))\not=\alpha(\phi)$. 
The map $\psi=\varphi\circ\phi\circ\varphi^{-1}$ is parabolic
and fixes the unique element $\alpha(\psi)$
of $\mathcal{L}\mathrm{Z}(\mathbb{P}^2_\mathbb{C})$ 
proportional to $\varphi(\alpha(\phi))$. If $\varepsilon>0$ 
let us denote by $\mathcal{U}\big(\alpha,\varepsilon\big)$ 
the set
\[
\mathcal{U}\big(\alpha,\varepsilon\big)=\big\{\ell\in\mathcal{L}\mathrm{Z}(\mathbb{P}^2_\mathbb{C})\,\vert\,\alpha\cdot\ell<\varepsilon\big\}.
\]
Take $\varepsilon>0$ such that 
$\mathcal{U}\big(\alpha(\phi),\varepsilon\big)\cap\mathcal{U}\big(\alpha(\psi),\varepsilon\big)=\emptyset$.
Since $\psi_*$ is parabolic, 
$\psi_*^n\big(\mathcal{U}\big(\alpha(\phi),\varepsilon\big)\big)$ 
is contained in $\mathcal{U}\big(\alpha(\psi),\varepsilon\big)$ 
for $n$ large enough. 
For $m$ sufficiently large the following inclusions hold
\[
\phi_*^m\circ\psi_*^n\big(\mathcal{U}\big(\alpha(\phi),\varepsilon\big)\big)\subset\mathcal{U}\left(\alpha(\phi),\frac{\varepsilon}{2}\right)\subsetneq\mathcal{U}\big(\alpha(\phi),\varepsilon\big).
\]
This implies that $\phi_*^m\circ\psi_*^n$ is loxodromic: contradiction. So 
$\alpha(\phi_*)=\alpha(\varphi_*)$ for any $\varphi\in\mathrm{G}$.
Finally $\mathrm{G}$ is a subgroup either of $\mathcal{J}$, or
of the automorphism group of a Halphen surface.

\item If $\mathrm{G}$ is a group of elliptic elements, then
according to Theorems \ref{thm:urechell1} and 
\ref{thm:urechell2} either $\mathrm{G}$ is a bounded subgroup, 
or $\mathrm{G}$ preserves a rational fibration.
\end{enumerate}
\end{proof}

%%%%%%%%%%%%%%%%%%%%%%%%%%%%%%%%%%%%%%%%%%%%%%%%%%%%%%%%%%%%%%%%%%%%%%%%%%%%%%%%%%%%%%%%%%%%%%%%%%%%%%%%%%%%%%%%%%%
%%%%%%%%%%%%%%%%%%%%%%%%%%%%%%%%%%%%%%%%%%%%%%%%%%%%%%%%%%%%%%%%%%%%%%%%%%%%%%%%%%%%%%%%%%%%%%%%%%%%%%%%%%%%%%%%%%%
% section
%%%%%%%%%%%%%%%%%%%%%%%%%%%%%%%%%%%%%%%%%%%%%%%%%%%%%%%%%%%%%%%%%%%%%%%%%%%%%%%%%%%%%%%%%%%%%%%%%%%%%%%%%%%%%%%%%%%
%%%%%%%%%%%%%%%%%%%%%%%%%%%%%%%%%%%%%%%%%%%%%%%%%%%%%%%%%%%%%%%%%%%%%%%%%%%%%%%%%%%%%%%%%%%%%%%%%%%%%%%%%%%%%%%%%%%

\section{Normal subgroups of the Cremona group}\label{CantatLamy:passimple}

The strategy of Cantat and Lamy 
to produce strict, non-trivial, normal subgroups of $\mathrm{Bir}(\mathbb{P}^2_\Bbbk)$  is to let 
$\mathrm{Bir}(\mathbb{P}^2_\Bbbk)$ 
act on the hyperbolic space 
$\mathbb{H}^\infty(\mathbb{P}^2_\Bbbk)$. 
In the first part of their paper they define
the notion of tight element: an element 
$\phi$ of $\mathrm{Bir}(\mathbb{P}^2_\Bbbk)$
is \textsl{tight}\index{defi}{tight (birational map)} if it 
satisfies the following three properties:
\begin{itemize}
\item[$\diamond$] $\phi_*\in\mathrm{Isom}(\mathbb{H}^\infty)$
is hyperbolic;

\item[$\diamond$] there exists a positive number 
$\varepsilon$ such that: if $\psi$ belongs to 
$\mathrm{Bir}(\mathbb{P}^2_\Bbbk)$ and 
$\psi_*(\mathrm{Ax}(\phi))$ contains two points 
at distance $\varepsilon$ which are at 
distance at most $1$ from $\mathrm{Ax}(\phi)$,
then $\psi_*(\mathrm{Ax}(\phi))=\mathrm{Ax}(\phi)$;

\item[$\diamond$] if $\psi$ belongs to 
$\mathrm{Bir}(\mathbb{P}^2_\Bbbk)$
and $\psi_*(\mathrm{Ax}(\phi))=\mathrm{Ax}(\phi)$, 
then $\psi\circ\phi\circ\psi^{-1}=\phi$ or
$\psi\circ\phi\circ\psi^{-1}=~\phi^{-1}$.
\end{itemize}

The second property is a rigidity property
of $\mathrm{Ax}(\phi)$ with respect to 
isometries $\psi_*$ for 
$\psi\in\mathrm{Bir}(\mathbb{P}^2_\Bbbk)$;
we say that $\mathrm{Ax}(\phi)$ is 
\textsl{rigid}\index{defi}{rigid (axis)}
under the action of 
$\mathrm{Bir}(\mathbb{P}^2_\Bbbk)$.
The third property means that the 
stabilizer of $\mathrm{Ax}(\phi)$ coincides
with the normalizer of the cyclic group
$\langle\phi\rangle$.

Here since there is no confusion we write $\ll \phi\gg$
for $\ll \phi\gg_{\mathrm{Bir}(\mathbb{P}^2_\Bbbk)}$.

Cantat and Lamy established the 
following statement:

\begin{thm}[\cite{CantatLamy}]\label{thm:canlam}
Let $\Bbbk$ be an algebraically closed field.
If $\phi\in\mathrm{Bir}(\mathbb{P}^2_\Bbbk)$ is 
tight, then there exists a non-zero integer
$n$ such that for any non-trivial element $\psi$ of 
$\ll \phi^n\gg$
\[
\deg \psi\geq\deg(\phi^n).
\]

In particular 
$\ll \phi^n\gg$ is
a proper subgroup of 
$\mathrm{Bir}(\mathbb{P}^2_\Bbbk)$.
\end{thm}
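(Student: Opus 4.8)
The plan is to translate the degree inequality into a statement about displacements in $\mathbb{H}^\infty$ and then to establish it by a small cancellation (``windmill'') argument built on the three defining properties of tightness. Recall from Chapter~\ref{chap:hyperbolicspace} that for any $\psi\in\mathrm{Bir}(\mathbb{P}^2_\Bbbk)$ one has $\deg\psi=\cosh(\mathrm{dist}(\mathbf{e}_0,\psi_*\mathbf{e}_0))$, where $\mathbf{e}_0$ is the class of a line; since $\cosh$ is increasing, the inequality $\deg\psi\geq\deg(\phi^n)$ is \emph{equivalent} to $\mathrm{dist}(\mathbf{e}_0,\psi_*\mathbf{e}_0)\geq\mathrm{dist}(\mathbf{e}_0,\phi^n_*\mathbf{e}_0)$. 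Because $\phi$ is tight, $\phi_*$ is loxodromic with axis $A=\mathrm{Ax}(\phi)$ and translation length $\ell=L(\phi_*)=\log\lambda(\phi)>0$, so $L(\phi^n_*)=n\ell$ and $\mathrm{dist}(\mathbf{e}_0,\phi^n_*\mathbf{e}_0)$ is $n\ell$ up to a bounded additive error measuring how far $\mathbf{e}_0$ sits from $A$. Thus the whole problem reduces to producing an integer $n$ such that every non-trivial $\psi\in\ll\phi^n\gg$ displaces $\mathbf{e}_0$ by at least this amount.

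First I would write a non-trivial $\psi\in\ll\phi^n\gg$ as a product of conjugates, $\psi=g_1\phi^{\pm n}g_1^{-1}\cdots g_k\phi^{\pm n}g_k^{-1}$, each factor being loxodromic with axis the translate $g_i(A)$ and translation length $n\ell$. The function of the tightness hypotheses is exactly to control how much two distinct translates $g(A)$ and $g'(A)$ can fellow-travel. The rigidity property (the second clause of the definition, with its constant $\varepsilon$), combined with the $\mathrm{CAT}(-1)$ and $\delta$-hyperbolic geometry of $\mathbb{H}^\infty$ recalled earlier, should bound by a universal constant $B$ the diameter of the set where a translate $g(A)\neq A$ stays within distance $1$ of $A$; and the third property (the stabiliser of $A$ coincides with the normaliser of $\langle\phi\rangle$) ensures that a minimal-length expression of $\psi$ is genuinely reduced, so that no hidden collapse of consecutive factors occurs.

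The heart of the argument is then to fix $n$ so large that $n\ell$ dominates the overlap constant by a definite margin (a condition of the shape $n\ell>2B+c\delta$), and to construct inductively on $k$ a connected $\delta$-hyperbolic ``windmill'' $W\subset\mathbb{H}^\infty$ obtained by amalgamating the successive translates $g_i(A)$ along their bounded overlaps, anchored at $\mathbf{e}_0$. A Ping--Pong analysis on $W$ in the spirit of Lemma~\ref{lem:pingpong} then shows that $\psi_*$ translates a distinguished geodesic through $W$, so that $\psi_*$ is loxodromic and, crucially, moves $\mathbf{e}_0$ by at least $n\ell$. I expect this to be the principal obstacle: one must verify that the overlaps are \emph{uniformly} small (this is precisely where rigidity enters, and where the non-proper, infinite-dimensional nature of $\mathbb{H}^\infty$ forces one to argue by $\delta$-slimness rather than by compactness), and one must track the geometry through each amalgamation step carefully enough that the displacement of the anchor $\mathbf{e}_0$ never drops below the target value after a new blade is attached.

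Finally, once $\mathrm{dist}(\mathbf{e}_0,\psi_*\mathbf{e}_0)\geq\mathrm{dist}(\mathbf{e}_0,\phi^n_*\mathbf{e}_0)$ is established for every non-trivial $\psi\in\ll\phi^n\gg$ (adjusting $n$ upward if necessary to absorb the bounded error coming from $\mathrm{dist}(\mathbf{e}_0,A)$), the inequality $\deg\psi\geq\deg(\phi^n)$ follows at once from the identity $\deg\cdot=\cosh(\mathrm{dist}(\mathbf{e}_0,\cdot_*\mathbf{e}_0))$ and monotonicity of $\cosh$. Properness of $\ll\phi^n\gg$ is then immediate: since $\deg(\phi^n)>1$ for a loxodromic $\phi$, no non-trivial element of $\ll\phi^n\gg$ has degree $1$, so $\ll\phi^n\gg$ meets $\mathrm{Aut}(\mathbb{P}^2_\Bbbk)=\mathrm{PGL}(3,\Bbbk)$ only in the identity and is therefore a strict normal subgroup of $\mathrm{Bir}(\mathbb{P}^2_\Bbbk)$, which yields the non-simplicity of the plane Cremona group.
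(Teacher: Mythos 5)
Your overall architecture is faithful to how this theorem is actually proved: the paper itself only states the result, attributing the argument to \cite{CantatLamy}, and your translation of degrees into displacements via $\deg\psi=\cosh(\mathrm{dist}(\mathbf{e}_0,\psi_*\mathbf{e}_0))$, together with a geometric small-cancellation scheme on $\mathbb{H}^\infty$ in which the rigidity clause of tightness bounds the overlap of distinct translates of $\mathrm{Ax}(\phi)$ (indeed the definition gives the bound $B=\varepsilon$ directly, no extra $\delta$-hyperbolic work needed) and the normalizer clause guarantees reducedness of expressions as products of conjugates of $\phi^{\pm n}$, is exactly the right mechanism. Note only that Cantat and Lamy do not use Dahmani--Guirardel--Osin windmills (which postdate their paper) but an induction on reduced \og admissible\fg{} products of conjugates; your windmill variant is the later route of Lonjou described at the end of \S\ref{CantatLamy:passimple}, and it is legitimate here because, as the paper remarks there, $\phi$ satisfies the WPD property if and only if some positive iterate of $\phi$ is tight. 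The amalgamation estimates you announce but do not carry out are genuinely the technical bulk, but the plan for them is sound.

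The genuine gap is your final quantitative step. What any argument of this kind delivers is a lower bound on translation length: every non-trivial $\psi\in\ll\phi^n\gg$ is loxodromic with $L(\psi_*)\geq L(\phi^n_*)=n\ell$, hence $\mathrm{dist}(\mathbf{e}_0,\psi_*\mathbf{e}_0)\geq n\ell$. But your target is $\mathrm{dist}(\mathbf{e}_0,\phi^n_*\mathbf{e}_0)$, which in a $\mathrm{CAT}(-1)$ space is roughly $n\ell+2\,\mathrm{dist}(\mathbf{e}_0,\mathrm{Ax}(\phi))$; equivalently, by Theorem \ref{thm:dilfav}, $\deg\phi^n=c\,\lambda(\phi)^n+O(1)$ with a constant $c$ that may exceed $1$. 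Your proposal to \og adjust $n$ upward to absorb the bounded error\fg{} fails: the deficit $2\,\mathrm{dist}(\mathbf{e}_0,\mathrm{Ax}(\phi))$ is independent of $n$, and increasing $n$ adds the same $n\ell$ to both your lower bound and the target, so the gap never closes. Worse, the displacement inequality you aim at cannot hold in this naive form, since for a conjugate $\psi=f\circ\phi^n\circ f^{-1}$ one has $\mathrm{dist}(\mathbf{e}_0,\psi_*\mathbf{e}_0)=\mathrm{dist}(f_*^{-1}\mathbf{e}_0,\phi^n_*f_*^{-1}\mathbf{e}_0)$, which drops toward $n\ell$ whenever $f_*^{-1}\mathbf{e}_0$ lies closer to the axis than $\mathbf{e}_0$ does. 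The correct bridge is through dynamical degrees: since $\deg(\psi^k)\leq(\deg\psi)^k$ one has $\deg\psi\geq\lambda(\psi)$, and by the identity $L(\psi_*)=\log\lambda(\psi)$ of \S\ref{sec:degreegrowth} this gives $\deg\psi\geq\lambda(\psi)=\exp L(\psi_*)\geq\lambda(\phi)^n=\lambda(\phi^n)$. This already yields the \og in particular\fg: no non-trivial element of $\ll\phi^n\gg$ is elliptic, so the subgroup meets $\mathrm{Aut}(\mathbb{P}^2_\Bbbk)$ trivially and is proper. To get $\deg(\phi^n)$ itself on the right-hand side one needs in addition $\deg(\phi^n)=\lambda(\phi)^n$, i.e.\ that $\phi$ is algebraically stable as a self-map of $\mathbb{P}^2_\Bbbk$ (then $\deg(\phi^n)=(\deg\phi)^n$ and $\lambda(\phi)=\deg\phi$); this holds for the tight elements $a\circ j$ constructed by Cantat and Lamy, whose forward and backward base loci are disjoint, and this normalization is the ingredient missing from your argument.
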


In the second part of their article Cantat 
and Lamy showed that 
$\mathrm{Bir}(\mathbb{P}^2_\Bbbk)$ contains
tight elements. They distinguished two cases:
$\Bbbk=\mathbb{C}$ and $\Bbbk\not=\mathbb{C}$.
Let us focus on the case $\Bbbk=\mathbb{C}$.
They proved that an element $\phi$ of 
$\mathrm{Bir}(\mathbb{P}^2_\mathbb{C})$
of the form $a\circ j$, where $a$ is a 
general element of 
$\mathrm{PGL}(3,\mathbb{C})$ and $j$ is
a Jonqui\`eres twist, is tight. 
Let us explain what general means in this 
context: any element of 
$\mathrm{PGL}(3,\mathbb{C})$ suits after
removing a countable number of Zariski
closed subsets of $\mathrm{PGL}(3,\mathbb{C})$.
More precisely they needed the two following
conditions: 
\begin{itemize}
\item[$\diamond$] the base-points of $\phi$
and $\phi^{-1}$ belong to $\mathbb{P}^2_\mathbb{C}$;

\item[$\diamond$] 
$\mathrm{Base}(\phi^k)\cap\mathrm{Base}(\phi^{-i})=\emptyset$
for any $k$, $i>0$. 
\end{itemize}

In \cite{Lonjou} Lonjou proved the following
statement:

\begin{thm}[\cite{Lonjou}]\label{thm:lonjou}
For any field $\Bbbk$ the plane
Cremona group 
$\mathrm{Bir}(\mathbb{P}^2_\Bbbk)$ is not 
simple.
\end{thm}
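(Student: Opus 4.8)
The plan is to deduce the non-simplicity of $\mathrm{Bir}(\mathbb{P}^2_\Bbbk)$ over an arbitrary field $\Bbbk$ from the machinery of tight elements developed by Cantat and Lamy, reducing the statement to the existence of a single tight element. By Theorem \ref{thm:canlam}, if $\mathrm{Bir}(\mathbb{P}^2_\Bbbk)$ contains a tight element $\phi$, then some power $\phi^n$ generates a proper non-trivial normal subgroup $\ll\phi^n\gg$, which immediately yields non-simplicity. So the entire difficulty is concentrated in producing a tight element over every field $\Bbbk$, including non-closed fields and fields of positive characteristic. Note that Cantat and Lamy already handle the case $\Bbbk=\mathbb{C}$ (and more generally algebraically closed fields) by exhibiting tight elements of the form $a\circ j$ with $a\in\mathrm{PGL}(3,\Bbbk)$ general and $j$ a Jonqui\`eres twist; the new content of Theorem \ref{thm:lonjou} is precisely to remove the hypothesis that $\Bbbk$ be algebraically closed.

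The first step is to set up the isometric action of $\mathrm{Bir}(\mathbb{P}^2_\Bbbk)$ on the hyperbolic space $\mathbb{H}^\infty(\mathbb{P}^2_\Bbbk)$ as described in Chapter \ref{chap:hyperbolicspace}, verifying that the Picard--Manin construction and the three properties defining tightness (hyperbolicity of $\phi_*$, rigidity of the axis $\mathrm{Ax}(\phi)$ under $\mathrm{Bir}(\mathbb{P}^2_\Bbbk)$, and the normalizer condition) make sense verbatim over $\Bbbk$. The second step is the construction of a candidate tight element. The natural approach is to seek a loxodromic birational map $\phi$ whose base-points and those of $\phi^{-1}$ are defined over $\Bbbk$ (so that the combinatorics of the orbits of base-points, which controls the action on $\mathbb{H}^\infty$, is not disturbed by passing to the algebraic closure), and such that $\mathrm{Base}(\phi^k)\cap\mathrm{Base}(\phi^{-i})=\emptyset$ for all $k,i>0$. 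One would construct $\phi$ explicitly as a composition of a well-chosen linear map and a Jonqui\`eres involution or twist, arranging the base-point configuration by hand over the prime field or a small extension so that it works uniformly in every characteristic.

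The main obstacle will be the rigidity property, the second tightness axiom: over a non-algebraically-closed field one must control all birational maps $\psi$ whose action on $\mathbb{H}^\infty$ moves the axis $\mathrm{Ax}(\phi)$ only slightly, and show that they in fact stabilize it. The subtlety is that the arguments of Cantat and Lamy that establish this rigidity rely on base-point analysis that a priori takes place over $\overline{\Bbbk}$, where Galois conjugates of base-points may appear; one must check that the Galois action permutes the relevant geometric data compatibly and does not create extra isometries violating rigidity. The cleanest strategy is to prove that a tight element of $\mathrm{Bir}(\mathbb{P}^2_{\overline{\Bbbk}})$ whose base-point orbit data is $\mathrm{Gal}(\overline{\Bbbk}/\Bbbk)$-invariant descends to a tight element of $\mathrm{Bir}(\mathbb{P}^2_\Bbbk)$, using that $\mathrm{Bir}(\mathbb{P}^2_\Bbbk)$ embeds into $\mathrm{Bir}(\mathbb{P}^2_{\overline{\Bbbk}})$ as the subgroup of Galois-invariant maps. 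Once rigidity and the normalizer condition are verified for this descended $\phi$, Theorem \ref{thm:canlam} applies over $\Bbbk$ and gives a proper non-trivial normal subgroup $\ll\phi^n\gg$, completing the proof. A secondary point requiring care is the positive-characteristic case, where one must confirm that the degree-growth estimates underlying the normalizer condition and the lower bound $\deg\psi\geq\deg(\phi^n)$ remain valid, since some of the classical surface-theoretic inputs are characteristic-sensitive; I expect this to be manageable by choosing $\phi$ to avoid inseparability phenomena.
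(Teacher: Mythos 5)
The crux of the theorem is exactly the step you defer: producing a tight (equivalently, by the Remark at the end of \S\ref{CantatLamy:passimple}, a WPD) element of $\mathrm{Bir}(\mathbb{P}^2_\Bbbk)$ for \emph{every} field, and neither of your two mechanisms for this survives scrutiny. The genericity construction of Cantat--Lamy (a general $a\in\mathrm{PGL}(3,\Bbbk)$ composed with a Jonqui\`eres twist) is unavailable over countable fields: over $\mathbb{Q}$ the countably many excluded Zariski-closed subsets can swallow every rational point of $\mathrm{PGL}(3,\mathbb{Q})$, and over $\mathbb{F}_q$ there are only finitely many linear maps at all, so ``arranging the base-point configuration by hand over the prime field'' is a restatement of the problem rather than an argument. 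Your descent step is also not where the difficulty lies: since the tightness axioms are universally quantified over $\psi$ in the ambient group, a tight element of $\mathrm{Bir}(\mathbb{P}^2_{\overline{\Bbbk}})$ that happens to lie in $\mathrm{Bir}(\mathbb{P}^2_\Bbbk)$ is \emph{automatically} tight for the subgroup acting on the same space $\mathbb{H}^\infty(\mathbb{P}^2_{\overline{\Bbbk}})$ --- the gap is that you have exhibited no such element with $\Bbbk$-rational data, and that existence \emph{is} Lonjou's theorem. Finally, the characteristic issue is not a removable technicality: for the degree-$p$ H\'enon map $h_p$ in characteristic $p$ one has $\ll h_p^\ell\gg=\mathrm{Bir}(\mathbb{P}^2_{\overline{\Bbbk}})$ for all $\ell\geq 1$ (Remark after Proposition \ref{pro:lonjou}), so natural candidates genuinely fail and a hypothesis of the form ``$\mathrm{char}\,\Bbbk$ does not divide the degree'' is essential, not merely prudent.

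For comparison, the paper's route (Lonjou's) bypasses tightness and genericity altogether: she takes the explicit map $h_n\colon(z_0:z_1:z_2)\dashrightarrow(z_1z_2^{n-1}:z_1^n-z_0z_2^{n-1}:z_2^n)$, defined over the prime field, lets $\mathrm{Bir}(\mathbb{P}^2_\Bbbk)$ act on $\mathbb{H}^\infty(\mathbb{P}^2_{\overline{\Bbbk}})$ (your observation that the hyperbolic space must be built over $\overline{\Bbbk}$ is correct and is indeed how she proceeds), proves by a direct and field-uniform analysis of the base-points of all iterates $h_n^{\pm k}$ that the group acts discretely along $\mathrm{Ax}(h_n)$ whenever $\mathrm{char}\,\Bbbk$ does not divide $n$ (Proposition \ref{pro:lonjou}; choosing $n\in\{2,3\}$ according to the characteristic covers every field), and then concludes from the Dahmani--Guirardel--Osin small cancellation theorem --- rather than Theorem \ref{thm:canlam}, whose statement here assumes $\Bbbk$ algebraically closed --- that $\ll h_n^m\gg$ is a proper, indeed free, normal subgroup. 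So your overall architecture (normal closure of a high power of a loxodromic element with a rigidity property) matches the paper, and your instinct that rigidity is the main obstacle is right; but the verification of that rigidity for a concrete $\Bbbk$-rational map, which is the actual content of the theorem, is absent from your proposal.
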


She did not use the notion of tight element
but uses the WPD (weakly properly discontinuous)
property. This property was proposed in the 
context of the mapping class group in
\cite{BestvinaFujiwara}. An element $g$
of a group $\mathrm{G}$ satisfies the WPD
property if for any $\varepsilon \geq 0$
for any point $p\in\mathbb{H}^\infty$ there 
exists a positive integer $N$ such that the
set\index{not}{$S(\varepsilon,p;N)$} 
\[
S(\varepsilon,p;N)=\big\{h\in\mathrm{G}\,\vert\,\mathrm{dist}(h(p),p)\leq\varepsilon,\,\mathrm{dist}(h(g^N(p)),g^N(p))\leq\varepsilon\big\}
\]
is finite.
Since the elements studied by Lonjou have 
an axis she followed the terminology
introduced in \cite{Coulon} and said that
the group $\mathrm{G}$ 
\textsl{acts discretely along the axis of $g$}\index{defi}{discrete action along an axis}.

In \cite{DahmaniGuirardelOsin} the authors 
generalized the small cancellation theory 
for groups acting by isometries on 
$\delta$-hyperbolic spaces.

Small cancellation theory and the WPD property
are connected:
\begin{itemize}
\item[$\diamond$] in the normal group generated 
by a family satisfying the small cancellation 
property elements have a large translation 
length (\cite{Guirardel});

\item[$\diamond$] if some element $g$ satisfies
WPD property then the conjugates of 
$\langle g^n\rangle$ form a family satisfying
the small cancellation property.
\end{itemize}

Combining these two statements the following 
holds:

\begin{thm}[\cite{DahmaniGuirardelOsin}]
Let $\varepsilon$ be a positive real number. Let 
$\mathrm{G}$ be a group acting by isometries
on a $\delta$-hyperbolic space $X$. Let $g$
be a loxodromic element of $\mathrm{G}$.
If $\mathrm{G}$ acts discretely along the axis
of $g$, then there exists $n\in\mathbb{N}$
such that for any 
$h\in\ll g^n\gg\smallsetminus\{\mathrm{id}\}$
the translation length $L(h)$ of $h$ satisfies
$L(h)>\varepsilon$. 

In particular, for $n$ big enough 
$\ll g^n\gg$ is a proper
subgroup of $\mathrm{G}$. Furthermore this 
subgroup is free.
\end{thm}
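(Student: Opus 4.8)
The plan is to derive the statement from the small cancellation theory for groups acting on $\delta$-hyperbolic spaces established in \cite{DahmaniGuirardelOsin}, exactly as the sentence preceding the statement suggests. The mechanism is to show that, for $n$ large enough, the family of cyclic subgroups $\{h\langle g^n\rangle h^{-1}\,\vert\,h\in\mathrm{G}\}$ satisfies a small cancellation (``very rotating family'') condition relative to the action on $X$; the structure theorem for such families then simultaneously yields the lower bound on translation lengths, the properness of $\ll g^n\gg$, and its freeness. First I would record the elementary fact that $L(g^n)=nL(g)$ grows linearly, so that the translation length of the generators can be made as large as one wishes.

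The heart of the argument is to convert the hypothesis that $\mathrm{G}$ acts discretely along the axis of $g$ (the WPD property of \cite{BestvinaFujiwara, Coulon}) into the geometric small cancellation input. Concretely, I would first use discreteness to show that the stabiliser of $\mathrm{Ax}(g)$ in $\mathrm{G}$ contains $\langle g\rangle$ with finite index: if infinitely many isometries moved a long subsegment of $\mathrm{Ax}(g)$ close to itself, the finiteness of the sets $S(\varepsilon,p;N)$ would be violated. The same finiteness, applied to translates $h\cdot\mathrm{Ax}(g)$, produces a uniform constant $\Delta$ bounding the diameter of the fellow-travelling region of any two distinct translates of the axis, since a long common subsegment would again yield infinitely many near-stabilising elements. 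This bounded overlap is precisely the data needed to verify the small cancellation inequalities once the injectivity radius $nL(g)$ of the rotation subgroups dwarfs $\Delta$ and $\delta$; I would therefore fix $n$ so that $nL(g)$ exceeds the threshold depending on $\delta$, $\Delta$ and the desired $\varepsilon$.

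With the very rotating family in hand, I would invoke the main theorem of \cite{DahmaniGuirardelOsin} applied to the cone-off construction on $X$. It gives that every non-trivial element of the normal subgroup $N=\ll g^n\gg$ acts loxodromically with translation length $L(h)>\varepsilon$; in particular $N$ cannot contain the small-translation isometries realised by generic elements of $\mathrm{G}$, so $N$ is a proper subgroup. The Greendlinger-type part of the same theorem identifies $N$ as the free product of the conjugates of $\langle g^n\rangle$ occurring in the family; since each such factor is infinite cyclic, $N$ is free.

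The hard part will be the second paragraph: extracting the quantitative small cancellation condition from the purely qualitative WPD hypothesis, and in particular constructing the auxiliary space on which the family genuinely rotates. This requires the cone-off machinery of \cite{DahmaniGuirardelOsin} together with careful bookkeeping of the interplay between the constants $\delta$, $\Delta$, $nL(g)$ and $\varepsilon$; verifying the separation estimates for distinct axis-translates, rather than the formal deduction of freeness and properness, is where the real work lies.
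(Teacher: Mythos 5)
Your proposal is correct and takes essentially the same route as the paper: the paper gives no proof of its own but records exactly your two ingredients from \cite{DahmaniGuirardelOsin} — that the WPD hypothesis (discreteness along the axis of $g$) makes the family of conjugates of $\langle g^n\rangle$ a geometric small cancellation (very rotating) family for $n$ large, and that the normal closure of such a family consists of elements of large translation length and is a free product of the rotating subgroups, hence free and proper. Your added detail — extracting uniform bounded overlap of axis-translates from the finiteness of the sets $S(\varepsilon,p;N)$, comparing it with the injectivity radius $nL(g)$, and invoking the cone-off construction — is precisely the content of the cited work, which the paper does not reproduce.
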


As a result to prove Theorem \ref{thm:lonjou} 
Lonjou needed to exhibit elements satisfying 
the WPD property:

\begin{pro}[\cite{Lonjou}]\label{pro:lonjou}
Let $n\geq 2$ and let $\Bbbk$ be a field of 
characteristic which does not divide $n$. 
Consider the action of 
$\mathrm{Bir}(\mathbb{P}^2_\Bbbk)$ on 
$\mathbb{H}^\infty(\mathbb{P}^2_{\overline{\Bbbk}})$
where $\overline{\Bbbk}$ is the algebraic 
closure of $\Bbbk$. The group 
$\mathrm{Bir}(\mathbb{P}^2_\Bbbk)$ acts 
discretely along the axis of the loxodromic
map
\[
h_n\colon(z_0:z_1:z_2)\dashrightarrow\big(z_1z_2^{n-1}:z_1^n-z_0z_2^{n-1}:z_2^n\big).
\]
\end{pro}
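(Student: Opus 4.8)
The plan is to recognize $h_n$ as a generalized H\'enon automorphism and then combine the hyperbolicity of $\mathbb{H}^\infty$ with a rigidity property of its base-points. First I would pass to the affine chart $z_2=1$, where a direct computation gives $h_n\colon(x,y)\dashrightarrow(y,y^n-x)$ with $x=z_0$, $y=z_1$; thus $h_n$ is a polynomial automorphism of $\mathbb{A}^2_\Bbbk$ of H\'enon type, with inverse $(x,y)\mapsto(x^n-y,x)$. Such maps are algebraically stable on $\mathbb{P}^2$ (the line at infinity is blown down to a point that never returns to the single proper indeterminacy point $(1:0:0)$), so $\deg(h_n^k)=n^k$ and $\lambda(h_n)=n>1$. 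By the dictionary of Theorem~\ref{thm:dilfav}, $h_n$ is loxodromic, so $(h_n)_*$ admits an axis $\mathrm{Ax}(h_n)\subset\mathbb{H}^\infty(\mathbb{P}^2_{\overline\Bbbk})$ with translation length $L((h_n)_*)=\log n$. A key preliminary computation is to describe, in the Picard--Manin space, the classes $(h_n^k)_*\mathbf{e}_0=n^k\mathbf{e}_0-\sum_i m_i\mathbf{e}_{p_i}$ explicitly: for a H\'enon map the base-points $p_i$ of $h_n^k$ (and of $h_n^{-k}$) are concentrated in a single tower of points infinitely near $(1:0:0)$ (resp.\ $(0:1:0)$), and one checks that $\mathrm{Base}(h_n^k)\cap\mathrm{Base}(h_n^{-i})=\emptyset$ for all $k,i>0$, the characteristic hypothesis $\mathrm{char}\,\Bbbk\nmid n$ being exactly what prevents degenerate coincidences in this configuration.

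Next I would set up the geometric half of the WPD estimate. Fix $\varepsilon\geq 0$ and take $p=\mathbf{e}_0$, which lies at bounded distance from $\mathrm{Ax}(h_n)$. For $N$ large the points $p$ and $(h_n^N)_*(p)$ are at distance $\approx N\log n$ apart and both lie within a fixed distance of the axis. Suppose $h\in\mathrm{Bir}(\mathbb{P}^2_\Bbbk)$ satisfies $\mathrm{dist}(h_*(p),p)\leq\varepsilon$ and $\mathrm{dist}\big(h_*((h_n^N)_*(p)),(h_n^N)_*(p)\big)\leq\varepsilon$. Since $\mathbb{H}^\infty$ is $\mathrm{CAT}(-1)$, hence $\delta$-hyperbolic with $\delta$ as for $\mathbb{H}^2$ (Chapter~\ref{chap:hyperbolicspace}), the displacement function $q\mapsto\mathrm{dist}(h_*(q),q)$ is convex, so $h_*$ moves every point of the geodesic segment $[p,(h_n^N)_*(p)]$ by at most $\varepsilon$. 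Consequently $h_*(\mathrm{Ax}(h_n))$ fellow-travels $\mathrm{Ax}(h_n)$ along a segment of length growing linearly in $N$, up to a uniform error depending only on $\varepsilon$ and $\delta$.

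The hard part will be converting this long fellow-traveling into genuine finiteness, i.e.\ the combinatorial rigidity step. The idea is that a sufficiently long segment of $\mathrm{Ax}(h_n)$ records, through the marked points $(h_n)_*^{j}\mathbf{e}_0$ and their expansions $n^{j}\mathbf{e}_0-\sum m_i\mathbf{e}_{p_i}$, the base-point data of a whole block of consecutive iterates of $h_n$; because the translation length is bounded below (by the Lehmer-number estimate of \cite{BlancCantat}) while $\delta$ is fixed, a block of controlled length suffices. I would then argue that an isometry $h_*$ preserving such a block up to bounded error must send this base-point tower onto a translate of itself, hence map a bounded number of the infinitely-near points $p_i$ (defined over $\overline\Bbbk$, in the general position guaranteed by $\mathrm{char}\,\Bbbk\nmid n$) to prescribed points. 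A birational map of $\mathbb{P}^2_\Bbbk$ that respects such a rigid finite configuration of proper and infinitely near points is determined up to finitely many choices, so $S(\varepsilon,p;N)$ is finite. Making the passage ``fellow-traveling $\Rightarrow$ alignment of base-point towers $\Rightarrow$ finiteness'' precise — controlling exactly how much of the axis must be matched, and ruling out accidental symmetries of the tower — is the genuine obstacle; everything before it is either the H\'enon computation or the soft convexity argument in the hyperbolic space.
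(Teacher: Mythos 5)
Your soft half is correct and is indeed how the actual argument (which the paper only cites from \cite{Lonjou}, it does not reproduce the proof) begins: in the chart $z_2=1$ one has $h_n\colon(x,y)\mapsto(y,y^n-x)$ with inverse $(x,y)\mapsto(x^n-y,x)$; the map is algebraically stable because the contracted line $z_2=0$ goes to the fixed point $(0{:}1{:}0)\neq(1{:}0{:}0)=\mathrm{Ind}(h_n)$, so $\deg h_n^k=n^k$, $h_n$ is loxodromic with translation length $\log n$, the base-points of $h_n^{k}$ (resp.\ $h_n^{-k}$) form towers infinitely near $(1{:}0{:}0)$ (resp.\ $(0{:}1{:}0)$), and convexity of the displacement function of an isometry of the $\mathrm{CAT}(0)$ space $\mathbb{H}^\infty$ converts the two WPD inequalities into fellow-travelling of $h_*(\mathrm{Ax}(h_n))$ with $\mathrm{Ax}(h_n)$ along a segment of length growing linearly in $N$.

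The gap is in the finiteness step, and it is not merely that you leave it open: the route you sketch for closing it is wrong because you have misplaced the characteristic hypothesis. The towers over $(1{:}0{:}0)$ and $(0{:}1{:}0)$ and the disjointness $\mathrm{Base}(h_n^k)\cap\mathrm{Base}(h_n^{-i})=\emptyset$ hold over \emph{every} field; $\mathrm{char}\,\Bbbk\nmid n$ contributes nothing to ``general position'' of this configuration. Consequently your proposed rigidity step never uses the hypothesis, so if it worked it would prove WPD in all characteristics — and that is false. Concretely, if $\mathrm{char}\,\Bbbk=p$ and $n=p$, then $(y+b)^p=y^p+b^p$ gives $h_p\circ T_{(a,b)}=T_{(b,\,b^p-a)}\circ h_p$ for every translation $T_{(a,b)}\colon(x,y)\mapsto(x+a,y+b)$, so $h_p^{-N}T_{(a,b)}h_p^{N}$ is again a translation for every $N$; since every automorphism of $\mathbb{P}^2$ fixes $\mathbf{e}_0$, one gets $d(T_*\mathbf{e}_0,\mathbf{e}_0)=0$ and $d\big(T_*((h_p^N)_*\mathbf{e}_0),(h_p^N)_*\mathbf{e}_0\big)=d\big((h_p^{-N}Th_p^N)_*\mathbf{e}_0,\mathbf{e}_0\big)=0$, hence the infinite group of translations lies in $S(\varepsilon,\mathbf{e}_0;N)$ for all $\varepsilon\geq 0$ and all $N$ — this is exactly the phenomenon behind the remark following Proposition~\ref{pro:lonjou}. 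Moreover your key sentence ``a birational map that respects such a rigid finite configuration is determined up to finitely many choices'' is false even over $\mathbb{C}$: a homaloidal net determines a Cremona map only up to postcomposition by an element of $\mathrm{PGL}(3,\overline{\Bbbk})$, and the subgroup of $\mathrm{PGL}(3,\overline{\Bbbk})$ fixing $(1{:}0{:}0)$, $(0{:}1{:}0)$ and the first terms of the flag data is positive-dimensional (it contains the diagonal torus). In Lonjou's argument the finiteness comes from matching the towers \emph{at both ends} of the matched segment: an element of $S(\varepsilon,\mathbf{e}_0;N)$ must preserve, up to bounded defect, the jets of the germs contracted by $h_n^{\pm N}$ at the two points, and these jets are governed by the local model $y\mapsto y^n$; writing out the compatibility forces diagonal-type maps whose eigenvalues satisfy multiplicative relations involving $n$, and it is precisely the invertibility of $n$ in $\Bbbk$ that makes the solution set finite and rules out additive families such as the translations above. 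Without carrying out this two-ended jet analysis — and without locating $\mathrm{char}\,\Bbbk\nmid n$ there — your outline cannot be completed.
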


\begin{rem}
If $\Bbbk$ is an algebraically closed field
of characteristic $p>0$, then for any $\ell\geq 1$
one has (\cite{CerveauDeserti:ptdegre})
\[
\ll h_p^\ell \gg=\mathrm{Bir}(\mathbb{P}^2_\Bbbk).
\]
Let us explain why when $\Bbbk=\mathbb{C}$.

Let us first establish that
\begin{equation}\label{relation1}
\ll\sigma_2\gg=\mathrm{Bir}(\mathbb{P}^2_\mathbb{C}).
\end{equation}
Let $\phi$ be a birational self map of the 
complex projective plane. According to the 
Noether and Castelnuovo Theorem 
\[
\phi=(A_1)\circ\sigma_2\circ A_2\circ\sigma_2\circ A_3\circ\ldots\circ A_n\circ(\sigma_2)
\]
where the $A_i$'s belong to 
$\mathrm{PGL}(3,\mathbb{C})$.
The group $\mathrm{PGL}(3,\mathbb{C})$ is 
simple; as a result any $A_i$ can be written 
as
\begin{small}
\[
B_1\circ\big((z_0,z_1)\mapsto(-z_0,-z_1)\big)\circ B_1^{-1}\circ B_2\circ\big((z_0,z_1)\mapsto(-z_0,-z_1)\big)\circ B_2^{-1}\circ\ldots\circ B_n\circ\big((z_0,z_1)\mapsto(-z_0,-z_1)\big)\circ B_n^{-1}
\]
\end{small}
with $B_i$ in $\mathrm{PGL}(3,\mathbb{C})$. 
The involutions $(z_0,z_1)\mapsto(-z_0,-z_1)$ and $\sigma_2$ 
are conjugate; therefore, $\phi$ can be written 
as a composition of conjugates of $\sigma_2$.

\smallskip

Since $\mathrm{PGL}(3,\mathbb{C})$ 
is simple, for any non-trivial element $A$ of 
$\mathrm{PGL}(3,\mathbb{C})$ the involution 
$\iota\colon(z_0,z_1)\mapsto(-z_0,z_1)$ can 
be written as a composition of conjugates of 
$A$. The involutions $\iota$ and $\sigma_2$ 
being conjugate one has
\[
\sigma_2=\varphi_1\circ A\circ \varphi_1^{-1}\circ \varphi_2\circ A\circ \varphi_2^{-1}\circ\ldots\circ \varphi_n\circ A\circ \varphi_n^{-1}
\]
where the $\varphi_i$'s are some elements of 
$\mathrm{Bir}(\mathbb{P}^2_\mathbb{C})$. As a 
result 
$\ll\sigma_2\gg\subset\ll A\gg$. 
But 
$\ll\sigma_2\gg=\mathrm{Bir}(\mathbb{P}^2_\mathbb{C})$ 
(see (\ref{relation1})),
so 
\begin{equation}\label{relation2}
\ll A\gg=\mathrm{Bir}(\mathbb{P}^2_\mathbb{C}).
\end{equation}

\smallskip

If $\phi$ belongs to $\mathrm{PGL}(2,\mathbb{C}(z_1))$, then
\begin{equation}\label{relation3}
\ll\phi\gg=\mathrm{Bir}(\mathbb{P}^2_\mathbb{C})
\end{equation}
Indeed since $\mathrm{PGL}(2,\mathbb{C}(z_1))$
is simple, the involution $\iota$ can be written 
as a composition of conjugates of $\phi$. But
according to (\ref{relation2}) one has
$\ll\iota\gg=\mathrm{Bir}(\mathbb{P}^2_\mathbb{C})$ 
hence 
$\ll\phi\gg=\mathrm{Bir}(\mathbb{P}^2_\mathbb{C})$.

\smallskip

If $\phi$ belongs to $\mathcal{J}$, then
\begin{equation}\label{relation4}
\ll\phi\gg=\mathrm{Bir}(\mathbb{P}^2_\mathbb{C})
\end{equation}
Indeed up to birational conjugacy 
$\phi\colon(z_0,z_1)\dashrightarrow\Big(\phi_1(z_0,z_1),\gamma(z_1)\Big)$ where 
$\gamma$ is an homothety or a translation. Consider 
an element $\psi\colon(z_0,z_1)\dashrightarrow\Big(\psi_1(z_0,z_1),z_1\Big)$ of 
$\mathrm{PGL}(2,\mathbb{C}(z_1))$. The map 
$\varphi=[\phi,\psi]$ belongs to 
\[
\ll\phi\gg\cap\,\mathrm{PGL}(2,\mathbb{C}(z_1)).
\]
If $\psi$ is well chosen, then $\varphi$ is non 
trivial and from (\ref{relation3}) one gets
\[
\ll\phi\gg=\mathrm{Bir}(\mathbb{P}^2_\mathbb{C}).
\]

\smallskip

As a result if $\phi$ is a 
birational self map of the complex projective 
plane such that there exists 
$\psi\in\mathrm{Bir}(\mathbb{P}^2_\mathbb{C})$
for which $[\phi,\psi]$ preserves a rational
fibration, then from (\ref{relation4})
\begin{equation}\label{relation5}
\ll\phi\gg=\mathrm{Bir}(\mathbb{P}^2_\mathbb{C})
\end{equation}

\smallskip

Let 
$\phi\colon(z_0,z_1)\mapsto(z_1,P(z_1)-\delta z_0)$, 
$\delta\in\mathbb{C}^*$, $P\in\mathbb{C}[z_1]$, 
$\deg P\geq 2$, be a H\'enon map. Then 
$\ll\phi\gg=\mathrm{Bir}(\mathbb{P}^2_\mathbb{C})$.
Indeed if $\psi\colon(z_0,z_1)\mapsto(z_0,2z_1)$, 
then $[\phi,\psi]$ preserves the rational fibration
$z_0=$cst; one concludes with (\ref{relation5}).
\end{rem}

More generally over any infinite field of 
characteristic which does not divide $n$
the map $h_n$ does not satisfy the WPD property:
this explains the assumptions of Proposition
\ref{pro:lonjou}.

Let us mention that Lonjou got not only the 
non-simplicity of the plane Cremona 
group from \cite{DahmaniGuirardelOsin} but 
also the following result:

\begin{thm}[\cite{Lonjou}]
Let $\Bbbk$ be a field. The plane 
Cremona group 
\begin{itemize}
\item[$\diamond$] contains free normal subgroups;

\item[$\diamond$] is $SQ$-universal, that is any countable
subgroup embeds in a quotient of 
$\mathrm{Bir}(\mathbb{P}^2_\Bbbk)$.
\end{itemize}
\end{thm}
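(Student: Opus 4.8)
The plan is to transfer the problem to geometric group theory via the isometric action of $\mathrm{Bir}(\mathbb{P}^2_\Bbbk)\subseteq\mathrm{Bir}(\mathbb{P}^2_{\overline{\Bbbk}})$ on the hyperbolic space $\mathbb{H}^\infty(\mathbb{P}^2_{\overline{\Bbbk}})$ built in Chapter \ref{chap:hyperbolicspace}, and then to feed the loxodromic element $h_n$ of Proposition \ref{pro:lonjou} into the small cancellation machinery of Dahmani, Guirardel and Osin. Throughout I fix an integer $n\geq 2$ meeting the characteristic hypothesis of Proposition \ref{pro:lonjou}: if $\mathrm{char}\,\Bbbk=0$ take $n=2$, and if $\mathrm{char}\,\Bbbk=p>0$ take the smallest $n\geq 2$ coprime to $p$. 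For such $n$ the map $h_n$ is loxodromic and, by Proposition \ref{pro:lonjou}, $\mathrm{Bir}(\mathbb{P}^2_\Bbbk)$ acts discretely along its axis.

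First I would produce the free normal subgroups, which is almost immediate from the theorem of Dahmani, Guirardel and Osin recalled above. Applying that statement with $\mathrm{G}=\mathrm{Bir}(\mathbb{P}^2_\Bbbk)$, $X=\mathbb{H}^\infty(\mathbb{P}^2_{\overline{\Bbbk}})$, $g=h_n$ and a fixed $\varepsilon>0$, one obtains an integer $N$ such that every non-trivial element of $\ll h_n^N\gg$ has translation length $>\varepsilon$, and moreover $\ll h_n^N\gg$ is free. This normal subgroup is non-trivial, since it contains the loxodromic element $h_n^N$, which has infinite order. It is proper: every element of $\ll h_n^N\gg$ is loxodromic (having positive translation length), whereas $\mathrm{Bir}(\mathbb{P}^2_\Bbbk)$ contains elliptic elements such as automorphisms of $\mathbb{P}^2_\Bbbk$, whose translation length is $0$. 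Hence $\ll h_n^N\gg$ is a non-trivial proper free normal subgroup, as required for the first assertion.

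The harder part is $SQ$-universality, where the quoted consequence of \cite{DahmaniGuirardelOsin} does not suffice and I would invoke the full theory of hyperbolically embedded subgroups. The key point is that the WPD property of $h_n$ guaranteed by Proposition \ref{pro:lonjou} ensures that the elementary closure $E(h_n)$, namely the maximal virtually cyclic subgroup containing $h_n$, equivalently the stabiliser in $\mathrm{Bir}(\mathbb{P}^2_\Bbbk)$ of the pair of endpoints of $\mathrm{Ax}(h_n)$ on $\partial\mathbb{H}^\infty$, is hyperbolically embedded in $\mathrm{Bir}(\mathbb{P}^2_\Bbbk)$. Since $\mathrm{Bir}(\mathbb{P}^2_\Bbbk)$ is visibly not virtually cyclic, two independent loxodromic elements with disjoint axes, obtained by applying the Ping Pong Lemma \ref{lem:pingpong} to high powers of $h_n$ and of a suitable conjugate, already generate a non-abelian free subgroup, the general criterion of Dahmani, Guirardel and Osin applies: a group that is not virtually cyclic and admits a proper infinite hyperbolically embedded subgroup is $SQ$-universal. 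This yields that every countable group embeds into a quotient of $\mathrm{Bir}(\mathbb{P}^2_\Bbbk)$.

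The main obstacle is therefore concentrated entirely in the two geometric inputs, and in their validity over an \emph{arbitrary} field. Passing to $\overline{\Bbbk}$ must not destroy the discreteness along the axis, so a careful argument has to control the base-points of all iterates of $h_n$ and $h_n^{-1}$ simultaneously over $\overline{\Bbbk}$, which is exactly the content hidden in the \og acts discretely along the axis \fg conclusion of Proposition \ref{pro:lonjou}; one also needs the small cancellation constants of \cite{DahmaniGuirardelOsin} to be chosen uniformly so that the same $N$ works. Once the hyperbolically embedded virtually cyclic subgroup $E(h_n)$ is in place, both bullets become formal consequences of the Dahmani–Guirardel–Osin framework, and no further birational geometry is needed.
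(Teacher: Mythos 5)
Your proposal is correct and takes essentially the same route the paper attributes to Lonjou: the WPD property of $h_n$ from Proposition \ref{pro:lonjou}, fed into the Dahmani--Guirardel--Osin machinery --- the quoted small-cancellation theorem giving the free normal subgroups $\ll h_n^N\gg$, and the hyperbolically embedded elementary closure $E(h_n)$ giving $SQ$-universality. No gap to report.
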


In \cite{ShepherdBarron} the author proved
that any loxodromic element in the Cremona
group over any field $\Bbbk$ generates a 
proper normal subgroup; as a result the group 
$\mathrm{Bir}(\mathbb{P}^2_\Bbbk)$ is not 
a simple group. He also gave a criterion 
in terms of the translation length of a 
loxodromic map $\phi$ to know if $\phi$ is tight
and hence if $\ll\phi^n\gg$ is a 
proper subgroup of 
$\mathrm{Bir}(\mathbb{P}^2_\Bbbk)$ for some 
$n$.

\begin{rem}
Let us give the relationship between tight 
element and element that satisfies WPD
property.
When we study the action of the Cremona group
on 
$\mathbb{H}^\infty(\mathbb{P}^2_\Bbbk)$ the
axis of any loxodromic element $\phi$ is rigid
and the stabiliser 
\[
\mathrm{Stab}(\mathrm{Ax}(\phi))=\big\{\psi\in\mathrm{Bir}(\mathbb{P}^2_\Bbbk)\,\vert\,\psi(\mathrm{Ax}(\phi))=\mathrm{Ax}(\phi)\big\}
\]
of the axis $\mathrm{Ax}(\phi)$ is virtually
cyclic if and only if some positive iterate 
of $\phi$ is tight 
(\cite{CantatLamy, Lonjou, ShepherdBarron}).
As a result for $N$ large the set 
$S(\varepsilon,p;N)$ is contained in 
$\mathrm{Stab}(\mathrm{Ax}(\phi))$. The map
$\phi$ thus satisfies the WPD property if 
and only if some positive iterate of $\phi$
is tight.
\end{rem}

\begin{rem}
Let us recall that a subgroup 
$\mathrm{H}$ of a group $\mathrm{G}$
is called a 
\textsl{characteristic subgroup}\index{defi}{characteristic (group)}
of $\mathrm{G}$ if for every automorphism
$\varphi$ of $\mathrm{G}$ the inclusion 
$\varphi(\mathrm{H})\subset\mathrm{H}$
holds.

Recall that the examples of elements having 
the WPD property given by Lonjou are the 
H\'enon maps
\[
h_n\colon(z_0:z_1:z_2)\dashrightarrow\big(z_1z_2^{n-1}:z_1^n-z_0z_2^{n-1}:z_2^n\big)
\]
of degree $n$ which is not divisible by the 
characteristic of $\Bbbk$. The group of 
automorphisms of 
$\mathrm{Bir}(\mathbb{P}^2_\mathbb{C})$ is 
generated by inner automorphisms and the 
action of $\mathrm{Aut}(\mathbb{C},+,\cdot)$
(\emph{see} \S \ref{sec:autbir}). 
As $h_n$ is defined over $\mathbb{Z}$ the 
subgroup $\ll h^m\gg$
is a characteristic subgroup of 
$\mathrm{Bir}(\mathbb{P}^2_\mathbb{C})$.
One has the following result:

\begin{pro}[\cite{Cantat:survey}]
The plane Cremona group contains
infinitely many characteristic subgroups.
\end{pro}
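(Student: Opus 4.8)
The plan is to produce, for each sufficiently large $n$ not divisible by the characteristic (here over $\mathbb{C}$ this is no restriction), a distinct characteristic subgroup, and to argue that infinitely many of these coincide with none of the others. First I would invoke Proposition \ref{pro:lonjou} together with the theorem of \cite{DahmaniGuirardelOsin}: for the H\'enon map $h_n$ of degree $n$, the Cremona group acts discretely along $\mathrm{Ax}(h_n)$, so some positive iterate $h_n^m$ is tight (equivalently satisfies WPD), and consequently $\ll h_n^m\gg$ is a \emph{proper} normal subgroup of $\mathrm{Bir}(\mathbb{P}^2_\mathbb{C})$ by Theorem \ref{thm:canlam}. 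Proper normality is what makes these subgroups interesting; the point of the statement is that they are moreover characteristic.

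The key observation making them characteristic is that $h_n$ is defined over $\mathbb{Z}$, hence fixed by the Galois-type action of $\mathrm{Aut}(\mathbb{C})$ on $\mathrm{Bir}(\mathbb{P}^2_\mathbb{C})$ given by $\phi\mapsto {}^{\kappa}\!\,\phi$. By Theorem \ref{thm:autofbir}, every automorphism $\varphi$ of $\mathrm{Bir}(\mathbb{P}^2_\mathbb{C})$ is the composition of an inner automorphism with such a field action. A normal subgroup is of course preserved by inner automorphisms, so I would check that $\ll h_n^m\gg$ is preserved by each ${}^{\kappa}\!\,(\cdot)$: since ${}^{\kappa}\!\,(h_n^m)=h_n^m$ (the defining polynomials have integer coefficients, which are $\kappa$-invariant) and ${}^{\kappa}\!\,(\cdot)$ is a group automorphism, it maps the normal closure of $h_n^m$ onto the normal closure of ${}^{\kappa}\!\,(h_n^m)=h_n^m$, i.e. onto itself. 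Hence each $\ll h_n^m\gg$ is a characteristic subgroup.

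The main obstacle, and the step I would spend the most care on, is showing that infinitely many of these characteristic subgroups are genuinely \emph{distinct}, rather than all coinciding with some fixed proper normal subgroup. Here I would exploit the quantitative control from Theorem \ref{thm:canlam}: for the tight iterate there is an integer so that every non-trivial element $\psi\in\ll h_n^m\gg$ satisfies $\deg\psi\geq\deg(h_n^m)=n^m$. Since the dynamical degree (equivalently the translation length $\log\lambda$) is a conjugacy invariant and is bounded below on each $\ll h_n^m\gg$ by the translation length of $h_n^m$, which grows with $n$, the subgroups associated to a suitable infinite sequence of degrees $n$ have mutually incomparable ``minimal complexity'': a subgroup all of whose non-trivial elements have degree at least $n^m$ cannot equal one that contains an element (namely a conjugate of $h_{n'}^{m'}$) of strictly smaller degree. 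By passing to a subsequence of $n$ along which these lower bounds are strictly increasing and exceed the degrees realized in the previously chosen subgroups, I obtain an infinite strictly descending or at least pairwise-distinct family. This yields infinitely many pairwise distinct characteristic subgroups, establishing the proposition; the delicate part is organizing the degree bounds so that membership of a low-degree element in one group rules out equality with another, which I expect to require a careful choice of the sequence $(n,m)$ and an appeal to the invariance of $\lambda$ under the automorphisms of the field.
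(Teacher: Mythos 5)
Your proposal is correct and follows essentially the same route as the paper: the H\'enon maps $h_n$ satisfy the WPD property (Proposition \ref{pro:lonjou}), so the normal closures $\ll h_n^m\gg$ of suitable powers are proper by \cite{DahmaniGuirardelOsin}/Theorem \ref{thm:canlam}, and since $h_n$ is defined over $\mathbb{Z}$ while every automorphism of $\mathrm{Bir}(\mathbb{P}^2_\mathbb{C})$ is inner up to a field automorphism (Theorem \ref{thm:autofbir}), these subgroups are characteristic. One small precision: the degree floor $\deg\psi\geq\deg(h_n^m)$ from Theorem \ref{thm:canlam} is not conjugacy-invariant, so distinctness must be run through the translation-length floor $L(\psi)>\varepsilon$ of \cite{DahmaniGuirardelOsin} with the exponent chosen inductively after $\varepsilon$ --- which is exactly what you do at the end, so the argument closes.
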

\end{rem}

%%%%%%%%%%%%%%%%%%%%%%%%%%%%%%%%%%%%%%%%%%%%%%%%%%%%%%%%%%%%%%%%%%%%%%%%%%%%%%%%%%%%%%%%%%%%%%%%%%%%%%%%%%%%%%%%%%%
%%%%%%%%%%%%%%%%%%%%%%%%%%%%%%%%%%%%%%%%%%%%%%%%%%%%%%%%%%%%%%%%%%%%%%%%%%%%%%%%%%%%%%%%%%%%%%%%%%%%%%%%%%%%%%%%%%%
% section
%%%%%%%%%%%%%%%%%%%%%%%%%%%%%%%%%%%%%%%%%%%%%%%%%%%%%%%%%%%%%%%%%%%%%%%%%%%%%%%%%%%%%%%%%%%%%%%%%%%%%%%%%%%%%%%%%%
%%%%%%%%%%%%%%%%%%%%%%%%%%%%%%%%%%%%%%%%%%%%%%%%%%%%%%%%%%%%%%%%%%%%%%%%%%%%%%%%%%%%%%%%%%%%%%%%%%%%%%%%%%%%%%%%%%%

\section{Simple groups of $\mathrm{Bir}(\mathbb{P}^2_\mathbb{C})$}

This section is devoted to the classification of 
simple subgroups of $\mathrm{Bir}(\mathbb{P}^2_\mathbb{C})$
(Theorems \ref{thm:urechsimple1} and 
\ref{thm:urechsimple2}) but also to 
the proof of the following statement:

\begin{thm}[\cite{Urech:simplesubgroups}]
Let $S$ be a complex surface.

If $\mathrm{G}$ is a finitely generated 
simple subgroup of $\mathrm{Bir}(S)$,
then $\mathrm{G}$ is finite.
\end{thm}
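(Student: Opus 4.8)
The plan is to combine the Tits alternative for $\mathrm{Bir}(S)$ with the classification of simple subgroups, the Burnside theorem, and the residual finiteness of finitely generated linear groups. First I would pass to a smooth projective (or Kähler) model in the birational class of $S$, so that $\mathrm{Bir}(S)$ acts by isometries on $\mathbb{H}^\infty(S)$ and the Tits alternative for finitely generated subgroups (Theorem \ref{thm:cantattits}, Theorem \ref{thm:urechtits}) applies. This alternative splits the argument into two cases: either $\mathrm{G}$ is virtually solvable, or $\mathrm{G}$ contains a non-abelian free subgroup. The virtually solvable case is immediate and does not even use finite generation: if $\mathrm{H}\leq\mathrm{G}$ is solvable of finite index, its normal core $\mathrm{N}=\bigcap_{g\in\mathrm{G}}g\mathrm{H}g^{-1}$ is a normal, finite-index, solvable subgroup, so by simplicity $\mathrm{N}=\{\mathrm{id}\}$ or $\mathrm{N}=\mathrm{G}$. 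In the first case $\{\mathrm{id}\}$ has finite index and $\mathrm{G}$ is finite; in the second case $\mathrm{G}$ is solvable, whence $\mathrm{G}'\neq\mathrm{G}$ forces (by simplicity) $\mathrm{G}$ abelian, and a simple abelian group is cyclic of prime order. Either way $\mathrm{G}$ is finite.

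It remains to show that a finitely generated simple subgroup $\mathrm{G}$ cannot contain a non-abelian free group, and I would derive a contradiction. Such a $\mathrm{G}$ is neither a torsion group (Theorem \ref{thm:burnside} would then force it to be finite, contradicting the free subgroup) nor virtually solvable, so I would examine its action on $\mathbb{H}^\infty(S)$. The decisive input is Theorem \ref{thm:urechsimple2}: a simple subgroup of the Cremona group contains no loxodromic element. Hence by the weak alternative (Theorem \ref{thm:weak}) the group $\mathrm{G}$ fixes a point of $\mathbb{H}^\infty\cup\partial\mathbb{H}^\infty$, and therefore is either bounded or preserves a fibration (Theorem \ref{thm:urechell1}, Lemma \ref{lem:jonqhalph}).

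Each of these possibilities would then be excluded. If $\mathrm{G}$ is bounded, its Zariski closure is an algebraic, hence affine, subgroup of the Cremona group, so $\mathrm{G}$ embeds in some $\mathrm{GL}(n,\mathbb{C})$; being finitely generated and linear it is residually finite (Malcev), and an infinite residually finite group is never simple. If $\mathrm{G}$ preserves a genus-one fibration it lies in the automorphism group of a Halphen surface, which is virtually abelian (Theorem \ref{thm:candol}), contradicting the presence of a free subgroup. If $\mathrm{G}$ preserves a rational fibration I would exploit the extension $1\to\mathrm{G}\cap\mathcal{J}_0\to\mathrm{G}\to\mathrm{pr}_2(\mathrm{G})\to 1$ coming from $\mathcal{J}\cong\mathrm{PGL}(2,\mathbb{C})\rtimes\mathrm{PGL}(2,\mathbb{C}(z_0))$ with $\mathcal{J}_0=\mathrm{PGL}(2,\mathbb{C}(z_0))$: simplicity forces the normal subgroup $\mathrm{G}\cap\mathcal{J}_0$ to be trivial or all of $\mathrm{G}$, so $\mathrm{G}$ embeds either in $\mathrm{PGL}(2,\mathbb{C})$ or in $\mathrm{PGL}(2,\mathbb{C}(z_0))$, both linear and hence residually finite; an infinite simple group cannot occur. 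This trick is needed because $\mathcal{J}$ itself is not linear (Cornulier's example lives in $\mathcal{J}$), so one cannot argue via linearity of $\mathcal{J}$ directly. In every case an infinite simple $\mathrm{G}$ is impossible, so $\mathrm{G}$ must be finite.

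The main obstacle is hidden in the cited Theorem \ref{thm:urechsimple2}: ruling out loxodromic elements rests on the normal subgroup theory of the Cremona group (tight and WPD elements, the Dahmani–Guirardel–Osin machinery), since a loxodromic element with a tight power generates a proper nontrivial normal subgroup of $\mathrm{G}$, whereas a monomial loxodromic element forces $\mathrm{G}$ into the linear group $\mathrm{GL}(2,\mathbb{Z})\ltimes\mathrm{D}_2$; both outcomes contradict simplicity together with the existence of a free subgroup. A secondary point to verify carefully is the reduction to a convenient model for an arbitrary complex surface $S$: for non-rational $S$ the group $\mathrm{Bir}(S)$ is considerably more rigid and the trichotomy above collapses to the bounded or virtually abelian cases, so the argument there only simplifies.
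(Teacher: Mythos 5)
Your proposal is correct, and in the parabolic and elliptic cases it follows essentially the paper's own route: reduce to $\mathrm{Aut}(S)$ for a Halphen surface or to $\mathcal{J}$ (Lemma \ref{lem:jonqhalph}), respectively to a bounded --- hence affine algebraic, hence linear --- group or to $\mathcal{J}$ (Proposition \ref{pro:etaumilieuCantat}, Theorem \ref{thm:urechell1}), split along the extension $1\to\mathrm{G}\cap\mathrm{PGL}(2,\mathbb{C}(z_0))\to\mathrm{G}\to\mathrm{PGL}(2,\mathbb{C})$ using simplicity, and kill the infinite simple possibility by Malcev-type residual finiteness of finitely generated subgroups, exactly as in Lemma \ref{lem:mot} and the Bass--Lubotzky input for the Halphen case. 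Where you genuinely diverge is the scaffolding and the loxodromic case. The paper never passes through the Tits alternative; your preliminary dichotomy, with the normal-core argument disposing of the virtually solvable branch, is clean but redundant, since the three isometry types already exhaust everything. More substantially, for the loxodromic case the paper does not quote Theorem \ref{thm:urechsimple2}: it reproves non-simplicity directly for finitely generated $\mathrm{G}$ via Proposition \ref{pro:degxie}, a specialization morphism $\upsilon\colon\mathrm{G}\to\mathrm{Bir}(\mathbb{P}^2_\Bbbk)$ over a \emph{finite} field that does not increase degrees, so that the infinite bounded-degree subgroup $\Delta_2$ produced by Theorem \ref{thm:pfou} has finite image and $\ker\upsilon$ is a proper non-trivial normal subgroup (the tight case being settled by Theorem \ref{thm:canlam}). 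That finite-field argument is what underlies the remark that the theorem holds over an arbitrary base field; your appeal to Theorem \ref{thm:urechsimple2} is legitimate over $\mathbb{C}$ and non-circular, since that theorem is proved earlier and independently, but it buys less generality. Two minor slips worth fixing: containment of $\mathrm{G}$ in $\mathrm{GL}(2,\mathbb{Z})\ltimes\mathrm{D}_2$ does not by itself contradict the presence of a free subgroup (that group contains many); the actual contradiction is linearity plus residual finiteness, or the infinite kernel of the projection to $\mathrm{GL}(2,\mathbb{Z})$ supplied by Lemma \ref{lem:densezar}. And when you invoke Theorem \ref{thm:urechell1} you silently drop its torsion alternative --- vacuous in your free-subgroup branch, but it should be said.
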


\subsection{Simple subgroups of 
$\mathrm{Bir}(\mathbb{P}^2_\mathbb{C})$}

Let us first prove Theorem \ref{thm:urechsimple1}.
Consider a simple group acting non-trivially
on a rational complex surface. Then 
according to Theorems 
\ref{thm:urechsimple2}
and 
\ref{thm:blanc11cases}
the group $\mathrm{G}$ is isomorphic to a 
subgroup of $\mathrm{PGL}(3,\mathbb{C})$.

Conversely the group
$\mathrm{PGL}(3,\mathbb{C})=\mathrm{Aut}(\mathbb{P}^2_\mathbb{C})$
acts by birational maps on $S$. 

\bigskip

Let us now deal with the proof of 
Theorem \ref{thm:urechsimple2}. Let 
$\mathrm{G}$ be a simple subgroup of the 
plane Cremona group.
We distinguish three cases:
\begin{itemize}
\item[(i)] $\mathrm{G}$ contains no 
loxodromic element but a parabolic one;

\item[(ii)] $\mathrm{G}$ is an elliptic
group;

\item[(iii)] $\mathrm{G}$ contains a 
loxodromic element.
\end{itemize}

\begin{itemize}
\item[(i)] Assume that $\mathrm{G}$ contains no 
loxodromic element but a parabolic one.

\begin{lem}[\cite{Urech:simplesubgroups}]
Consider a simple subgroup $\mathrm{G}$
of $\mathrm{Bir}(\mathbb{P}^2_\mathbb{C})$
that contains no loxodromic element but
a parabolic element. 

Then $\mathrm{G}$ is conjugate to a 
subgroup of $\mathcal{J}$ and is 
isomorphic to a subgroup of 
$\mathrm{PGL}(2,\mathbb{C})$.
\end{lem}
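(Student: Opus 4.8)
The plan is to combine Lemma \ref{lem:jonqhalph} with the results on Halphen surfaces to first pin down the geometric model, and then use simplicity to eliminate the Halphen case. First I would invoke Lemma \ref{lem:jonqhalph}: since $\mathrm{G}$ contains a parabolic element but no loxodromic one, $\mathrm{G}$ is conjugate either to a subgroup of $\mathcal{J}$ or to a subgroup of $\mathrm{Aut}(S)$ where $S$ is a Halphen surface. The first task is thus to rule out the Halphen case. Here I would use Theorem \ref{thm:candol}: the automorphism group of a Halphen surface admits a homomorphism $\rho\colon\mathrm{Aut}(S)\to\mathrm{PGL}(2,\mathbb{C})$ with finite image whose kernel is virtually abelian. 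In particular $\mathrm{Aut}(S)$ is virtually solvable, and hence so is every subgroup of it. An infinite simple group cannot be virtually solvable (a finite-index solvable subgroup would, by simplicity, force the whole group to be solvable, contradicting simplicity unless the group is finite or trivial). A finite simple group containing a parabolic element is impossible, since parabolic elements have infinite order. This forces $\mathrm{G}$ to be conjugate to a subgroup of $\mathcal{J}$.

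Once $\mathrm{G}\subset\mathcal{J}\simeq\mathrm{PGL}(2,\mathbb{C}(z_0))\rtimes\mathrm{PGL}(2,\mathbb{C})$, the next step is to produce the homomorphism to $\mathrm{PGL}(2,\mathbb{C})$ and analyze it via simplicity. I would take the projection $\mathrm{pr}_2\colon\mathcal{J}\to\mathrm{PGL}(2,\mathbb{C})$ recording the action on the base of the rational fibration, and restrict it to $\mathrm{G}$. Its kernel $\mathrm{G}\cap\mathrm{PGL}(2,\mathbb{C}(z_0))$ is a normal subgroup of $\mathrm{G}$, so by simplicity it is either trivial or all of $\mathrm{G}$. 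If the kernel equals $\mathrm{G}$, then $\mathrm{G}\subset\mathrm{PGL}(2,\mathbb{C}(z_0))$, which is a linear group over the field $\mathbb{C}(z_0)$ and hence embeds into $\mathrm{PGL}(2,\mathbb{C})$-type statements only after care; in fact $\mathrm{PGL}(2,\mathbb{C}(z_0))$ itself is the relevant target, and I would then argue that $\mathrm{G}$ preserves the fibration fiberwise, so the claimed embedding into $\mathrm{PGL}(2,\mathbb{C})$ is to be read through a single generic fiber. If the kernel is trivial, then $\mathrm{pr}_2$ restricts to an injective homomorphism $\mathrm{G}\hookrightarrow\mathrm{PGL}(2,\mathbb{C})$, giving exactly the desired linear embedding.

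The subtle point, which I expect to be the main obstacle, is the case where the kernel is all of $\mathrm{G}$, i.e. $\mathrm{G}\subset\mathrm{PGL}(2,\mathbb{C}(z_0))$: here one must still produce an embedding into $\mathrm{PGL}(2,\mathbb{C})$ rather than merely into $\mathrm{PGL}(2,\mathbb{C}(z_0))$. I would handle this by noting that $\mathrm{PGL}(2,\mathbb{C}(z_0))$ is the group of automorphisms of $\mathbb{P}^1_{\mathbb{C}(z_0)}$, and that a simple subgroup of it containing a parabolic element of $\mathcal{J}$ acts on the generic fiber $\mathbb{P}^1_{\mathbb{C}(z_0)}$; the parabolic assumption ensures the action does not degenerate, and one reduces to a finitely generated subfield $\Bbbk\subset\mathbb{C}(z_0)$, embeds $\Bbbk$ into $\mathbb{C}$, and thereby obtains an injection $\mathrm{G}\hookrightarrow\mathrm{PGL}(2,\mathbb{C})$. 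Alternatively one can argue directly that the only simple subgroups of $\mathrm{PGL}(2,\mathbb{C}(z_0))$ are, up to this specialization, already subgroups of $\mathrm{PGL}(2,\mathbb{C})$, since simplicity forbids the nontrivial $\mathbb{C}(z_0)$-parameter families that would otherwise arise. Assembling these cases yields both conclusions of the lemma: conjugacy into $\mathcal{J}$ and the abstract isomorphism onto a subgroup of $\mathrm{PGL}(2,\mathbb{C})$.
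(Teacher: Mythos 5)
Your proposal follows the paper's proof of this lemma essentially step for step: Lemma \ref{lem:jonqhalph} gives the dichotomy (Halphen surface or $\mathcal{J}$), Theorem \ref{thm:candol} kills the Halphen case because a virtually abelian group contains no infinite simple subgroup, and then the projection $\mathrm{pr}_2\colon\mathcal{J}\to\mathrm{PGL}(2,\mathbb{C})$ together with simplicity yields the alternative "kernel trivial or kernel everything", which is exactly the paper's short-exact-sequence argument.

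The one step that fails as written is your treatment of the case $\mathrm{G}\subset\mathrm{PGL}(2,\mathbb{C}(z_0))$. You propose to reduce to a finitely generated subfield $\Bbbk\subset\mathbb{C}(z_0)$ and embed $\Bbbk$ into $\mathbb{C}$; but $\mathrm{G}$ is not assumed finitely generated, and indeed cannot be assumed so here: by Theorem \ref{thm:finitelysimple} a finitely generated simple subgroup of $\mathrm{Bir}(\mathbb{P}^2_\mathbb{C})$ is finite, whereas the relevant groups (e.g. $\mathrm{PSL}(2,\mathbb{C})$) are uncountable, so the matrix entries of the elements of $\mathrm{G}$ need not lie in any finitely generated subfield. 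The repair is immediate and is what the paper's terse "in both cases" implicitly uses: $\mathbb{C}(z_0)$ is a field of characteristic zero and cardinality at most the continuum, hence embeds abstractly into $\mathbb{C}$ (which has transcendence degree the continuum over $\mathbb{Q}$), so all of $\mathrm{PGL}(2,\mathbb{C}(z_0))$ is isomorphic to a subgroup of $\mathrm{PGL}(2,\mathbb{C})$ — no reduction and no use of the parabolic element is needed; your remark that "the parabolic assumption ensures the action does not degenerate" plays no role, and your mid-proof suggestion that the embedding should be "read through a single generic fiber" is not a valid mechanism, since restriction to a fiber does not define a homomorphism to $\mathrm{PGL}(2,\mathbb{C})$. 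Your alternative claim that simplicity "forbids the nontrivial $\mathbb{C}(z_0)$-parameter families" is unsupported and unnecessary. Finally, a small point in the Halphen elimination: from a finite-index solvable subgroup one should pass to its normal core, which is a finite-index normal solvable subgroup; simplicity forces the core to be trivial (making $\mathrm{G}$ finite, impossible since a parabolic element has infinite order) or all of $\mathrm{G}$ (making $\mathrm{G}$ solvable and simple, hence of prime order, again impossible) — you asserted this conclusion but skipped the core step.
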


\begin{proof}
According to Lemma \ref{lem:jonqhalph}
one has the following alternative:
$\mathrm{G}$ is conjugate
\begin{itemize}
\item either to a subgroup of the 
automorphisms group of a Halphen
surface,

\item or to a subgroup of $\mathcal{J}$.
\end{itemize}

But automorphisms groups of Halphen
surfaces are finite extensions of abelian
subgroups (Theorem \ref{thm:candol}), so 
do not contain infinite simple subgroups. 
As a result $\mathrm{G}$ is conjugate to 
a subgroup of $\mathcal{J}$. The short
exact sequence from the semi-direct product
of~$\mathcal{J}$ is 
\[
1\longrightarrow\mathrm{PGL}(2,\mathbb{C}(z_1))\longrightarrow\mathcal{J}\stackrel{f}{\longrightarrow}\mathrm{PGL}(2,\mathbb{C})\longrightarrow 1
\]
The group $\mathrm{G}$ is simple thus 
contained in the kernel of the image 
of $f$. In both cases $\mathrm{G}$ is 
isomorphic to a subgroup of 
$\mathrm{PGL}(2,\mathbb{C})$.
\end{proof}

\item[(ii)] Suppose that $\mathrm{G}$ is an 
elliptic group.

\begin{lem}[\cite{Urech:simplesubgroups}]
Let $\mathrm{G}$ be a simple subgroup
of the plane Cremona
group of elliptic elements. Then 
\begin{itemize}
\item[$\diamond$] either $\mathrm{G}$
is a subgroup of an algebraic group of
$\mathrm{Bir}(\mathbb{P}^2_\mathbb{C})$,

\item[$\diamond$] or $\mathrm{G}$ is 
conjugate to a subgroup of 
$\mathcal{J}$.
\end{itemize}
\end{lem}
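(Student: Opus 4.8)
The plan is to combine the two big structural results already available in the excerpt: Theorem \ref{thm:urechell1}, which says that an elliptic subgroup of $\mathrm{Bir}(\mathbb{P}^2_\mathbb{C})$ is either bounded, or preserves a rational fibration, or is a torsion group; and Theorem \ref{thm:urechtorsion}, which constrains torsion groups. I would first dispose of the fibration-preserving case: if $\mathrm{G}$ preserves a rational fibration, then up to conjugacy $\mathrm{G}\subset\mathcal{J}$, which is exactly the second alternative of the lemma, so there is nothing more to prove in that branch.

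Next I would treat the torsion case. If $\mathrm{G}$ is a torsion group, then by Theorem \ref{thm:urechtorsion} it is \emph{isomorphic} to a bounded subgroup of $\mathrm{Bir}(\mathbb{P}^2_\mathbb{C})$; in particular it embeds into some $\mathrm{GL}(48,\mathbb{C})$. The point I would exploit is that the bounded groups appearing in Blanc's classification (Theorem \ref{thm:blanc11cases}) are all of the form $\mathrm{Aut}(S,\pi)$ for $S$ a rational surface, hence are linear algebraic (or finite extensions of such). So in both the torsion and the genuinely bounded case, $\mathrm{G}$ is realized, up to conjugacy, as a subgroup of the automorphism group of one of the surfaces on Blanc's list, i.e.\ a subgroup of an algebraic subgroup of $\mathrm{Bir}(\mathbb{P}^2_\mathbb{C})$, which is the first alternative. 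The bounded case itself is immediate from the definition: a bounded subgroup has uniformly bounded degree, so by Corollary \ref{cor:agree} its Zariski closure is an algebraic subgroup of $\mathrm{Bir}(\mathbb{P}^2_\mathbb{C})$ containing $\mathrm{G}$.

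The one subtlety I would want to handle carefully, and which I expect to be the main obstacle, is the torsion case: simplicity of $\mathrm{G}$ plays no role in Theorem \ref{thm:urechtorsion}, so the isomorphism it produces is abstract, not a conjugacy inside $\mathrm{Bir}(\mathbb{P}^2_\mathbb{C})$. Thus I would need to track which of the families of Theorem \ref{thm:blanc11cases} actually contains $\mathrm{G}$ up to \emph{conjugacy}, using the argument in the proof of Theorem \ref{thm:urechtorsion} (the Malcev compactness argument together with the case distinction over the eleven families). The upshot is that every finitely generated subgroup, and hence $\mathrm{G}$ itself, embeds conjugately into the automorphism group of a del Pezzo surface, a Hirzebruch surface, $\mathbb{P}^1_\mathbb{C}\times\mathbb{P}^1_\mathbb{C}$, or a conic bundle: all of these are algebraic subgroups of $\mathrm{Bir}(\mathbb{P}^2_\mathbb{C})$, giving the first alternative, unless the realizing surface carries an invariant rational fibration (the Hirzebruch and conic bundle cases), in which case $\mathrm{G}$ is conjugate into $\mathcal{J}$ and we land in the second alternative.

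Finally, for a clean write-up I would invoke simplicity only to streamline the statement, observing that a simple elliptic group cannot split off a nontrivial fibration-action quotient unless it lies entirely in the kernel; this is what forces the dichotomy to be exactly \emph{algebraic subgroup} versus \emph{subgroup of $\mathcal{J}$} rather than a messier mixture. The hard part is genuinely bookkeeping: matching the abstract linearity of Theorem \ref{thm:urechtorsion} to an honest conjugacy into one of the eleven geometric families, and checking that each family is either an algebraic subgroup of $\mathrm{Bir}(\mathbb{P}^2_\mathbb{C})$ or conjugate into $\mathcal{J}$.
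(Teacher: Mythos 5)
Your first two branches are fine: the fibration case goes into $\mathcal{J}$ by Theorem \ref{thm:fibration}, and the bounded case gives an algebraic subgroup via the Zariski closure and Corollary \ref{cor:agree}. The genuine gap is exactly at the point you flag as "bookkeeping": your plan for the torsion case cannot work. Theorem \ref{thm:urechtorsion} is proved by a model-theoretic compactness argument (Malcev), and that argument only assembles the \emph{abstract} embeddings of the finitely generated subgroups into a single abstract embedding into a linear group; at no stage does it produce, nor can it be upgraded to produce, a \emph{conjugacy} of $\mathrm{G}$ into one of the eleven families of Theorem \ref{thm:blanc11cases}. Worse, the statement you are trying to prove along the way -- that every torsion elliptic subgroup embeds conjugately into an algebraic subgroup -- is false: the Wright examples recalled at the start of \S\ref{sec:ellipticgroup} are infinite torsion elliptic subgroups of unbounded degree preserving no fibration, and since conjugation by a fixed birational map distorts degrees by at most a bounded factor, an unbounded group can never be conjugated into an algebraic (hence bounded-degree) subgroup. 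So without simplicity the dichotomy of the lemma simply fails in the torsion branch, and no amount of tracking through Blanc's list will repair it.

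The fix is to use simplicity \emph{inside} the torsion case, not as a cosmetic afterthought at the end: by Theorem \ref{thm:urechtorsion} the torsion group $\mathrm{G}$ is isomorphic to a subgroup of $\mathrm{GL}(48,\mathbb{C})$, so by the Jordan--Schur theorem it contains a normal abelian subgroup $\mathrm{A}$ of finite index; simplicity forces either $\mathrm{A}=\{\mathrm{id}\}$ (so $\mathrm{G}$ is finite) or $\mathrm{A}=\mathrm{G}$ (so $\mathrm{G}$ is abelian and simple, hence of prime order). Either way $\mathrm{G}$ is finite, hence itself an algebraic subgroup of $\mathrm{Bir}(\mathbb{P}^2_\mathbb{C})$, giving the first alternative. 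This is the paper's argument, and it makes the conjugacy-versus-isomorphism issue you correctly worried about evaporate, because a finite group needs no realization inside one of the geometric families to be algebraic. Your closing remark about $\mathrm{G}$ not "splitting off a fibration-action quotient" is not needed for this lemma (it belongs to the analysis of the case where $\mathrm{G}\subset\mathcal{J}$) and does not substitute for the Jordan--Schur step.
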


\begin{proof}
According to Theorems \ref{thm:urechell1}
and \ref{thm:urechell2} 
one of the following holds:
\begin{itemize}
\item[$\diamond$] $\mathrm{G}$ is conjugate
to a subgroup of an algebraic group;

\item[$\diamond$] $\mathrm{G}$ preserves a 
rational fibration; 

\item[$\diamond$] $\mathrm{G}$ is a torsion
group and $\mathrm{G}$ is isomorphic to
a subgroup of an algebraic group. 
\end{itemize}
In the first two cases we are done. Let us
assume that we are in the third one. Then 
$\mathrm{G}$ is a linear group and according 
to the Theorem of Jordan and 
Schur $\mathrm{G}$ has a normal 
abelian subgroup of finite index. As a 
consequence $\mathrm{G}$ is finite, and
so algebraic.
\end{proof}

\item[(iii)] Finally we give a sketch
of the proof of 

\begin{thm}[\cite{Urech:simplesubgroups}]
A simple subgroup of 
$\mathrm{Bir}(\mathbb{P}^2_\mathbb{C})$
does not contain any loxodromic element.
\end{thm}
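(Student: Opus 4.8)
The plan is to argue by contradiction, so suppose $\mathrm{G}$ is a simple subgroup of $\mathrm{Bir}(\mathbb{P}^2_\mathbb{C})$ containing a loxodromic element $\phi$. The whole strategy rests on the non-simplicity machinery of Section \ref{CantatLamy:passimple}: loxodromic elements (or at least suitable iterates of them) tend to generate \emph{proper} normal subgroups, and a simple group cannot contain such a proper non-trivial normal subgroup unless that subgroup is either trivial or all of $\mathrm{G}$. First I would recall that by Theorem \ref{thm:ShepherdBarron} (Shepherd-Barron) every loxodromic element of $\mathrm{Bir}(\mathbb{P}^2_\mathbb{C})$ is rigid, and that there are two mutually exclusive possibilities: either $\phi$ is conjugate to a monomial map, or some positive power $\phi^n$ is tight.

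The first step is to dispose of the tight case. If some $\phi^n$ is tight, then by Theorem \ref{thm:canlam} there is a non-zero integer $m$ such that $\ll\phi^{nm}\gg_{\mathrm{Bir}(\mathbb{P}^2_\mathbb{C})}$ is a \emph{proper} normal subgroup of $\mathrm{Bir}(\mathbb{P}^2_\mathbb{C})$. Restricting attention to $\mathrm{G}$, the normal closure of $\phi^{nm}$ inside $\mathrm{G}$ is a non-trivial normal subgroup (it contains $\phi^{nm}\neq\mathrm{id}$). Since $\mathrm{G}$ is simple this normal closure must equal $\mathrm{G}$; but every non-trivial element of $\ll\phi^{nm}\gg$ is loxodromic of degree at least $\deg(\phi^{nm})$ by Theorem \ref{thm:canlam}, which forces $\mathrm{G}$ to consist (apart from the identity) entirely of loxodromic elements of large degree. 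This is the contradiction to exploit: $\mathrm{G}$ cannot be a group all of whose non-trivial elements are loxodromic, since for instance a commutator of two non-commuting elements, or a product like $\phi\psi^{-1}$ with $\psi$ a conjugate of $\phi$ by a suitable element, will fail to be loxodromic or will have too small degree. More cleanly, one uses that such a normal subgroup is free (by the $\mathrm{DahmaniGuirardelOsin}$ small-cancellation result invoked in Section \ref{CantatLamy:passimple}), and a non-abelian free group is never simple, contradicting $\mathrm{G}=\ll\phi^{nm}\gg$ simple and non-abelian.

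The second step handles the monomial case, which I expect to be the main obstacle since Theorem \ref{thm:canlam} does not directly apply there. If $\phi$ is conjugate to a monomial map, I would invoke Theorem \ref{thm:pfou}: no power of $\phi$ is tight in $\mathrm{Bir}(\mathbb{P}^2_\mathbb{C})$ precisely when, after conjugation by some $\psi$, one has $\psi\circ\phi\circ\psi^{-1}\in\mathrm{GL}(2,\mathbb{Z})\ltimes\mathrm{D}_2$ together with a subgroup $\Delta_2$ dense in $\mathrm{D}_2$ and normalized by $\phi$. The point is then to use Theorem \ref{thm:urechnorm}: a subgroup of $\mathrm{Bir}(\mathbb{P}^2_\mathbb{C})$ containing a loxodromic element and an infinite elliptic normal subgroup is conjugate into $\mathrm{GL}(2,\mathbb{Z})\ltimes\mathrm{D}_2$. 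The normalizer structure $\mathrm{GL}(2,\mathbb{Z})\ltimes\mathrm{D}_2$ fits in the exact sequence
\[
1\longrightarrow\mathrm{D}_2\longrightarrow\mathrm{GL}(2,\mathbb{Z})\ltimes\mathrm{D}_2\longrightarrow\mathrm{GL}(2,\mathbb{Z})\longrightarrow 1,
\]
and since $\mathrm{G}$ is simple it must land inside one of the pieces: either inside $\mathrm{D}_2$, which is abelian hence not the home of an infinite simple group, or it projects isomorphically into $\mathrm{GL}(2,\mathbb{Z})$, which is virtually free (hence its simple subgroups are finite). In either sub-case $\mathrm{G}$ is either finite or abelian, contradicting the presence of a loxodromic element of infinite order generating an infinite, non-abelian, non-solvable dynamics.

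The delicate point, and where I would spend the most care, is verifying that $\mathrm{G}$ genuinely falls under the hypotheses of Theorem \ref{thm:urechnorm} in the monomial case: one must produce the \emph{infinite} elliptic normal subgroup. Here the density statement from Theorem \ref{thm:pfou} is essential, since $\Delta_2$ dense in $\mathrm{D}_2$ is infinite and consists of elliptic (indeed semisimple) elements, and it is normalized by the loxodromic $\phi$; I would need to check that $\Delta_2$ can be taken inside $\mathrm{G}$ itself rather than merely inside the ambient Cremona group, or else replace $\Delta_2$ by $\Delta_2\cap\mathrm{G}$ and argue it remains infinite using the normalizing action of $\phi$ and the simplicity of $\mathrm{G}$. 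Once both the tight and monomial cases yield contradictions, the conclusion is that a simple subgroup of $\mathrm{Bir}(\mathbb{P}^2_\mathbb{C})$ cannot contain any loxodromic element, completing the argument.
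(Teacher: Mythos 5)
Your overall architecture (dichotomy tight versus monomial, small cancellation to kill the tight case, Theorem \ref{thm:pfou} to produce a torus, then the projection to $\mathrm{GL}(2,\mathbb{Z})$) is the paper's, but there is a genuine gap at the pivotal step of the monomial case: you deduce $\mathrm{G}\subset\mathrm{GL}(2,\mathbb{Z})\ltimes\mathrm{D}_2$ by invoking Theorem \ref{thm:urechnorm}, whose hypothesis is that the infinite group of elliptic elements is the kernel $\mathrm{A}$ of a short exact sequence, i.e. is \emph{normal in} $\mathrm{G}$. The group $\Delta_2$ furnished by Theorem \ref{thm:pfou} is only normalized by $\phi$, not by $\mathrm{G}$; worse, a simple $\mathrm{G}$ containing a loxodromic element can never satisfy the hypotheses of Theorem \ref{thm:urechnorm}, since an infinite elliptic normal subgroup would be non-trivial and proper (it misses $\phi$), which is exactly what simplicity forbids. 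At best that theorem applies to $\langle\phi,\,\Delta_2\rangle$, which only recovers what Theorem \ref{thm:pfou} already gave. The missing content — the heart of the proof in \cite{Urech:simplesubgroups} — is the element-by-element argument pushing all of $\mathrm{G}$ into $\mathrm{GL}(2,\mathbb{Z})\ltimes\mathrm{D}_2$: one first shows, by analysing the curves contracted by elements of $\mathrm{G}$ (Urech's Lemmas 3.17 and 3.18, quoted in the paper), that every loxodromic element of $\mathrm{G}$ is monomial; then, for an arbitrary $\psi\in\mathrm{G}$, the map $\psi\circ\phi\circ\psi^{-1}$ is loxodromic hence monomial, its axis is fixed pointwise both by $\mathrm{D}_2$ and by $\psi\circ\mathrm{D}_2\circ\psi^{-1}$, so the group these two tori generate is bounded and conjugate to a subgroup of $\mathrm{D}_2$, forcing $\psi\circ\mathrm{D}_2\circ\psi^{-1}\subset\mathrm{D}_2$, i.e. $\psi\in\mathrm{GL}(2,\mathbb{Z})\ltimes\mathrm{D}_2$. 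Only after this does your endgame (non-trivial morphism $\upsilon\colon\mathrm{G}\to\mathrm{GL}(2,\mathbb{Z})$ with $\ker\upsilon$ containing the infinite $\Delta_2$, contradicting simplicity — the paper's Lemma \ref{lem:densezar}) go through.

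Two secondary points. First, your dichotomy is taken in $\mathrm{Bir}(\mathbb{P}^2_\mathbb{C})$ via Theorem \ref{thm:ShepherdBarron}, whereas Theorem \ref{thm:pfou} is an equivalence about tightness \emph{in} $\mathrm{G}$: a monomial $\phi$ may perfectly well have a power that is tight in a small subgroup $\mathrm{G}$, so "conjugate to monomial" does not by itself unlock the implication of Theorem \ref{thm:pfou} that yields $\Delta_2\subset\mathrm{G}$. The clean split is on tightness in $\mathrm{G}$: if some $\phi^n$ is tight in $\mathrm{G}$, the theorem of \cite{DahmaniGuirardelOsin} applied to the $\mathrm{G}$-action already gives that $\ll\phi^{nm}\gg_{\mathrm{G}}$ is a proper non-trivial normal subgroup of $\mathrm{G}$ — an immediate contradiction, no freeness needed (though your freeness argument is also valid); if no power is tight in $\mathrm{G}$, Theorem \ref{thm:pfou} hands you $\Delta_2\subset\mathrm{G}$ directly, so the "delicate point" you flagged is settled by the statement of that theorem once the dichotomy lives at the level of $\mathrm{G}$. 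Second, your first suggested contradiction in the tight case — that a commutator or a product of conjugates "will fail to be loxodromic" — is not a valid argument: there exist subgroups of $\mathrm{Bir}(\mathbb{P}^2_\mathbb{C})$ all of whose non-trivial elements are loxodromic, e.g. the free normal subgroups of \S\ref{CantatLamy:passimple}, so only your "more cleanly" route through freeness actually closes that case.
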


Let $\mathrm{G}$ be a simple subgroup 
of $\mathrm{Bir}(\mathbb{P}^2_\mathbb{C})$.
Assume by contradiction that 
$\mathrm{G}$ contains a loxodromic map
$\phi$. Theorems \ref{thm:canlam} and 
\ref{thm:pfou} imply that $\phi$ is a
monomial map up to birational conjugacy.
Looking at the curves contracted by 
elements of $\mathrm{G}$ Urech
gets that all loxodromic elements of 
$\mathrm{G}$ are contained in 
$\mathrm{GL}(2,\mathbb{Z})\ltimes\mathrm{D}_2$
(\cite[Lemmas 3.17. and 3.18.]{Urech:ellipticsubgroups}).
Consider $\psi$ in $\mathrm{G}$. As 
$\psi\circ\phi\circ\psi^{-1}$ is loxodromic
it is monomial. The axis of 
$\psi\circ\phi\circ\psi^{-1}$ is fixed
pointwise by both 
$\psi\circ\mathrm{D}_2\circ\psi^{-1}$
and $\mathrm{D}_2$. The group $\mathrm{H}$
generated by $\psi\circ\phi\circ\psi^{-1}$
and $\mathrm{D}_2$ is thus bounded and 
according to Theorem \ref{thm:urechnorm}
conjugate to a subgroup of $\mathrm{D}_2$. 
Hence $\psi\circ\mathrm{D}_2\circ\psi^{-1}$
is contained in $\mathrm{D}_2$ and 
$\psi$ belongs to 
$\mathrm{GL}(2,\mathbb{Z})\ltimes\mathrm{D}_2$. 
Consequently we have the inclusion 
$\mathrm{G}\subset\mathrm{GL}(2,\mathbb{Z})\ltimes\mathrm{D}_2$
and get a non trivial morphism 
$\upsilon\colon\mathrm{G}\to\mathrm{GL}(2,\mathbb{Z})$. 
The kernel of $\upsilon$ contains an 
infinite subgroup of $\mathrm{D}_2$ 
normalized by $\phi$ (Lemma \ref{lem:densezar}): 
contradiction with the fact that $\mathrm{G}$
is simple.
\end{itemize}

\subsection{Finitely generated simple 
subgroups of 
$\mathrm{Bir}(\mathbb{P}^2_\mathbb{C})$}

We finish the chapter by giving a sketch of the 
proof of the following statement:

\begin{thm}[\cite{Urech:simplesubgroups}]\label{thm:finitelysimple}
Any finitely generated simple subgroup of the 
plane Cremona group is 
finite.
\end{thm}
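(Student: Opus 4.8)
The plan is to reduce the statement to a purely group-theoretic dichotomy by first linearising $\mathrm{G}$ and then exploiting residual finiteness. Let $\mathrm{G}$ be a finitely generated simple subgroup of $\mathrm{Bir}(\mathbb{P}^2_\mathbb{C})$. If $\mathrm{G}$ is trivial there is nothing to prove, so assume $\mathrm{G}\neq\{\mathrm{id}\}$. Then $\mathrm{G}$ acts non-trivially by birational transformations on the rational surface $\mathbb{P}^2_\mathbb{C}$, so Theorem \ref{thm:urechsimple1} applies and yields an abstract isomorphism of $\mathrm{G}$ onto a subgroup of $\mathrm{PGL}(3,\mathbb{C})$. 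In particular $\mathrm{G}$ is a finitely generated linear group over $\mathbb{C}$.

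First I would record that, by Malcev's classical theorem, every finitely generated linear group over a field is residually finite; hence $\mathrm{G}$ is residually finite. The second step is the elementary observation that a non-trivial residually finite simple group is necessarily finite: choosing any $g\in\mathrm{G}\smallsetminus\{\mathrm{id}\}$, residual finiteness produces a finite-index normal subgroup $\mathrm{N}\subset\mathrm{G}$ with $g\notin\mathrm{N}$; since $\mathrm{G}$ is simple its only normal subgroups are $\{\mathrm{id}\}$ and $\mathrm{G}$, and $\mathrm{N}\neq\mathrm{G}$ forces $\mathrm{N}=\{\mathrm{id}\}$. Thus $[\mathrm{G}:\{\mathrm{id}\}]=[\mathrm{G}:\mathrm{N}]<\infty$ and $\mathrm{G}$ is finite, completing the proof.

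The main point where care is needed is the input from Theorem \ref{thm:urechsimple1}: one must be sure that the classification genuinely produces an abstract group isomorphism onto a subgroup of $\mathrm{PGL}(3,\mathbb{C})$ (and not merely a birational conjugacy to a subgroup of some non-linear algebraic group), since residual finiteness is a property of the abstract group and it is the linearity of $\mathrm{PGL}(3,\mathbb{C})$ that is decisive. I expect this to be the only delicate step; it is exactly here that the earlier structural results, namely Theorem \ref{thm:urechsimple2} together with the classification of maximal algebraic subgroups (Theorem \ref{thm:blanc11cases}), are used to rule out the loxodromic and parabolic cases and to embed the remaining elliptic simple groups linearly. Note that the Tits alternative alone (Theorem \ref{thm:urechtits}) does not suffice here: it disposes of the virtually solvable case, in which a simple virtually solvable group is cyclic of prime order via the normal-core argument, but it cannot by itself exclude a simple subgroup containing a non-abelian free group $\mathbb{F}_2$, so residual finiteness is genuinely needed.
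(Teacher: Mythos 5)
Your proof is correct, and it takes a genuinely different route from the paper's. You deduce everything in one stroke from the classification: a non-trivial finitely generated simple subgroup $\mathrm{G}\subset\mathrm{Bir}(\mathbb{P}^2_\mathbb{C})$ is abstractly isomorphic to a subgroup of $\mathrm{PGL}(3,\mathbb{C})$ by Theorem \ref{thm:urechsimple1}, hence residually finite by Malcev, and a non-trivial residually finite simple group is finite. This is non-circular within the text, since Theorem \ref{thm:urechsimple1} is proved (via Theorem \ref{thm:urechsimple2} and Theorem \ref{thm:blanc11cases}) before and independently of Theorem \ref{thm:finitelysimple}; the worry you flag yourself --- that one needs an abstract isomorphism into $\mathrm{PGL}(3,\mathbb{C})$, not merely birational conjugacy into some algebraic group --- is exactly what the statement of Theorem \ref{thm:urechsimple1} supplies. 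The paper never invokes that theorem here; it runs the trichotomy loxodromic/parabolic/elliptic, and only the last two cases use your mechanism (the Malcev property, via Lemma \ref{lem:mot} for the Jonqui\`eres case and the Bass--Lubotzky result \cite{BassLubotzky} for automorphisms of Halphen surfaces). The loxodromic case rests on a genuinely different idea, Proposition \ref{pro:finitelysimple}: either some power of the loxodromic element is tight and Theorem \ref{thm:canlam} yields a proper normal subgroup, or Theorem \ref{thm:pfou} yields an infinite bounded-degree subgroup $\Delta_2$ normalized by the map, and the specialization morphism of Proposition \ref{pro:degxie} to $\mathrm{Bir}(\mathbb{P}^2_\Bbbk)$ over a finite field $\Bbbk$ --- non-increasing on degrees --- is finite on $\Delta_2$, so it has a proper non-trivial kernel, contradicting simplicity. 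What each approach buys: yours is shorter and conceptually cleaner, but it is tied to $\mathbb{C}$ and to rational surfaces because the classification of simple subgroups is; the paper's case analysis avoids the classification precisely where it is unavailable, which is what makes the theorem valid over an arbitrary field and for $\mathrm{Bir}(S)$ of any surface, as asserted in the remark following it. Your closing observation is also accurate: finite generation enters only through Malcev, and the Tits alternative alone could not exclude a simple group containing a non-abelian free subgroup.
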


This result and the classification of finite
subgroups of 
$\mathrm{Bir}(\mathbb{P}^2_\mathbb{C})$
(\emph{see} \cite{DolgachevIskovskikh}) 
imply:

\begin{cor}[\cite{Urech:simplesubgroups}]
A finitely generated simple subgroup of
$\mathrm{Bir}(\mathbb{P}^2_\mathbb{C})$
is isomorphic~to 
\begin{itemize}
\item[$\diamond$] either $\faktor{\mathbb{Z}}{p\mathbb{Z}}$
for some prime $p$;

\item[$\diamond$] or $\mathcal{A}_5$;

\item[$\diamond$] or $\mathcal{A}_6$;

\item[$\diamond$] or $\mathrm{PSL}(2,\mathbb{C})$.
\end{itemize}
\end{cor}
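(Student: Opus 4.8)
The plan is to feed a simple subgroup $\mathrm{G}\subset\mathrm{Bir}(\mathbb{P}^2_\mathbb{C})$ into the structural trichotomy of Theorem~\ref{thm:urechsimple2} and then identify exactly which abstract simple groups can occupy each piece. First I would record that, since $\mathrm{G}$ is simple, it contains no loxodromic element, and that up to conjugacy $\mathrm{G}$ is either \emph{(i)} a simple subgroup of $\mathrm{PGL}(2,\mathbb{C})$ — this covers the parabolic case, where $\mathrm{G}\subset\mathcal{J}$ and simplicity applied to the exact sequence $1\to\mathrm{PGL}(2,\mathbb{C}(z_0))\to\mathcal{J}\to\mathrm{PGL}(2,\mathbb{C})\to 1$ pushes $\mathrm{G}$ into a single factor, as well as the elliptic-in-$\mathcal{J}$ case — or \emph{(ii)} a simple subgroup of an algebraic subgroup of $\mathrm{Bir}(\mathbb{P}^2_\mathbb{C})$. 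The classification then splits cleanly into a finite case and an infinite case, and the claimed four families should emerge from these.

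For the finite case I would follow the route announced in the text: a finitely generated simple $\mathrm{G}$ is finite by Theorem~\ref{thm:finitelysimple} (the linearity coming from the reduction above, combined with residual finiteness of finitely generated linear groups, forbids an infinite simple quotient). Then I would invoke the Dolgachev--Iskovskikh classification of finite subgroups together with Theorem~\ref{thm:blanc11cases}: every finite $\mathrm{G}$ is conjugate into the automorphism group of a del Pezzo surface or of a conic bundle, and Lemmas~\ref{lem:te1}, \ref{lem:te2} and \ref{lem:ru4} embed all of these into $\mathrm{GL}(8,\mathbb{C})$ or into $\mathrm{PGL}(2,\mathbb{C})\times\mathrm{PGL}(2,\mathbb{C})$, with orders bounded by Theorem~\ref{thm:648}. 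The remaining task is the purely group-theoretic matching of abstract finite simple groups against these low-rank linear constraints, which I expect to isolate $\faktor{\mathbb{Z}}{p\mathbb{Z}}$ (present in every factor), $\mathcal{A}_5$ (already inside $\mathrm{PGL}(2,\mathbb{C})$), and $\mathcal{A}_6$ (the Valentiner subgroup on the del Pezzo side). This bookkeeping, and in particular a careful exclusion of the other small simple groups sitting in $\mathrm{PGL}(3,\mathbb{C})$, is the most delicate part of the finite analysis.

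The group $\mathrm{PSL}(2,\mathbb{C})$ in the list is exactly the content one obtains by \emph{not} imposing finite generation: here I would work with type \emph{(i)}, an infinite simple subgroup of $\mathrm{PGL}(2,\mathbb{C})$, and take its Zariski closure $\overline{\mathrm{G}}$. Since a solvable closure would force $\mathrm{G}$ solvable — impossible for an infinite simple group — one gets $\overline{\mathrm{G}}=\mathrm{PGL}(2,\mathbb{C})=\mathrm{PSL}(2,\mathbb{C})$. The hard part, and the main obstacle, will be upgrading Zariski density to the actual equality $\mathrm{G}=\mathrm{PSL}(2,\mathbb{C})$, since $\mathrm{PGL}(2,\mathbb{C})$ a priori contains many proper Zariski-dense simple subgroups such as $\mathrm{PSL}(2,k)$ for subfields $k$; I expect this to hinge on the same linearity-and-residual-finiteness mechanism used for Theorem~\ref{thm:finitelysimple}, applied now to rule out proper dense copies. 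I would also have to reconcile the statement's hypotheses with its output — under Theorem~\ref{thm:finitelysimple} a genuinely finitely generated simple subgroup is finite, so the presence of $\mathrm{PSL}(2,\mathbb{C})$ signals that the intended object is a simple subgroup without the finiteness-forcing restriction — and to check that the type \emph{(ii)} infinite simple subgroups (inside $\mathrm{PGL}(3,\mathbb{C})$, $\mathrm{Aut}(\mathbb{P}^1_\mathbb{C}\times\mathbb{P}^1_\mathbb{C})$, or del Pezzo groups) contribute nothing beyond what is listed; this comparison with $\mathrm{PGL}(3,\mathbb{C})=\mathrm{PSL}(3,\mathbb{C})$ is where the argument needs the most scrutiny.
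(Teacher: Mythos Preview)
The paper's proof is a single sentence: Theorem~\ref{thm:finitelysimple} forces any finitely generated simple subgroup to be finite, and then the Dolgachev--Iskovskikh classification of finite subgroups of $\mathrm{Bir}(\mathbb{P}^2_\mathbb{C})$ supplies the list of possible isomorphism types. Your finite-case analysis follows this line, but you are doing far more work than required: there is no need to route through Theorem~\ref{thm:urechsimple2}, the Jonqui\`eres exact sequence, or the various embedding lemmas --- all of that is already absorbed into the proof of Theorem~\ref{thm:finitelysimple}, which the corollary simply quotes.

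The substantive problem is your treatment of $\mathrm{PSL}(2,\mathbb{C})$. You correctly diagnose that this group cannot arise as a \emph{finitely generated} simple subgroup (it is uncountable), but instead of flagging this you try to reinterpret the hypothesis of the corollary to make room for it, and then embark on an argument about Zariski closures and dense simple subgroups of $\mathrm{PGL}(2,\mathbb{C})$. This is a detour driven by what is almost certainly a typographical error in the statement: the intended fourth item is $\mathrm{PSL}(2,\mathbb{F}_7)$, the simple group of order $168$, which sits in $\mathrm{PGL}(3,\mathbb{C})$ as the automorphism group of the Klein quartic and appears on the Dolgachev--Iskovskikh list of finite simple subgroups of the Cremona group alongside $\mathcal{A}_5$ and $\mathcal{A}_6$. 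With that correction the corollary is a finite-group statement and your entire infinite-case discussion is unnecessary. The right move, when a listed conclusion contradicts the hypothesis this starkly, is to suspect the printed statement rather than to bend the argument around it.
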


Note that the conjugacy classes of these finite
groups are also described in \cite{DolgachevIskovskikh}.

\begin{rem}
Theorem \ref{thm:finitelysimple} also holds for 
the group of birational self maps of a surface
over a field $\Bbbk$. 
\end{rem}

Let $\mathrm{G}$ be a finitely generated subgroup 
of $\mathrm{Bir}(\mathbb{P}^2_\mathbb{C})$. Let
first see that $\mathrm{G}$ does not contain 
loxodromic elements:

\begin{pro}[\cite{Urech:simplesubgroups}]\label{pro:finitelysimple}
Let $\mathrm{G}$ be a finitely generated 
subgroup of $\mathrm{Bir}(\mathbb{P}^2_\mathbb{C})$.
If $\mathrm{G}$ contains a loxodromic 
element, then $\mathrm{G}$ is not simple.
\end{pro}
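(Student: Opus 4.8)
The plan is to produce, in each case, an explicit proper non-trivial normal subgroup of $\mathrm{G}$, thereby contradicting simplicity. Fix a loxodromic element $\phi\in\mathrm{G}$. By Theorem~\ref{thm:ShepherdBarron} exactly one of two mutually exclusive situations occurs: either some positive power of $\phi$ is tight, or $\phi$ is conjugate to a monomial map and no power of $\phi$ is tight. I would treat these separately, the first being soft and the second carrying essentially all the weight.

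In the tight case I would apply Theorem~\ref{thm:canlam} to a tight power of $\phi$. Since a positive power of a tight map is again tight (the rigidity and normaliser conditions are governed by the common axis $\mathrm{Ax}(\phi)$, which is unchanged under taking powers), I may fix a tight power $\phi^{k}$ with $k$ so large that $\deg(\phi^{m})>\deg\phi$ for all $m\geq k$; this is legitimate because $\phi$ loxodromic forces $\deg(\phi^{n})\to\infty$. Theorem~\ref{thm:canlam} then yields a multiple $N\geq k$ of $k$ such that $\ll\phi^{N}\gg$ is a proper subgroup of $\mathrm{Bir}(\mathbb{P}^2_\mathbb{C})$ and every non-trivial element of $\ll\phi^{N}\gg$ has degree at least $\deg(\phi^{N})$. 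The subgroup $\ll\phi^{N}\gg_{\mathrm{G}}\trianglelefteq\mathrm{G}$ is non-trivial, since it contains $\phi^{N}$, and it is contained in $\ll\phi^{N}\gg$. As $\deg(\phi^{N})>\deg\phi$, the element $\phi\in\mathrm{G}$ has degree strictly smaller than every non-trivial element of $\ll\phi^{N}\gg_{\mathrm{G}}$; hence $\phi\notin\ll\phi^{N}\gg_{\mathrm{G}}$, so this normal subgroup is proper and $\mathrm{G}$ is not simple.

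In the monomial case I would first conjugate so that $\phi$ lies in $\mathrm{GL}(2,\mathbb{Z})\ltimes\mathrm{D}_2$. Following the analysis carried out in the proof of Theorem~\ref{thm:urechsimple2} (the behaviour of the curves contracted by loxodromic monomial maps, together with Theorem~\ref{thm:urechnorm}), every loxodromic element of $\mathrm{G}$ is monomial, and from this one deduces the inclusion $\mathrm{G}\subseteq\mathrm{GL}(2,\mathbb{Z})\ltimes\mathrm{D}_2$. Projecting onto the first factor gives a homomorphism $\upsilon\colon\mathrm{G}\to\mathrm{GL}(2,\mathbb{Z})$ which is non-trivial, since $\upsilon(\phi)$ is the linear part of a loxodromic monomial map and therefore has spectral radius $>1$. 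The kernel $\ker\upsilon=\mathrm{G}\cap\mathrm{D}_2$ is normal in $\mathrm{G}$. If $\ker\upsilon$ is proper and non-trivial we are done at once; it cannot equal $\mathrm{G}$, as that would force $\upsilon$ to be trivial; and if $\ker\upsilon$ is trivial, then $\upsilon$ embeds $\mathrm{G}$ into $\mathrm{GL}(2,\mathbb{Z})$, so $\mathrm{G}$ is a finitely generated linear group, hence residually finite (Malcev). Being infinite (it contains the infinite-order element $\phi$), such a $\mathrm{G}$ admits proper finite-index, and therefore proper non-trivial normal, subgroups, and again fails to be simple. Alternatively, Lemma~\ref{lem:densezar} shows directly that $\ker\upsilon$ is infinite, producing the proper normal subgroup without the residual finiteness detour.

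The hard part is the monomial case, and precisely the inclusion $\mathrm{G}\subseteq\mathrm{GL}(2,\mathbb{Z})\ltimes\mathrm{D}_2$: the small-cancellation machinery of Theorem~\ref{thm:canlam} is simply unavailable there, since no power of $\phi$ is tight, so one must instead exploit the rigid arithmetic structure of loxodromic monomial maps and the classification of groups normalising an infinite bounded subgroup provided by Theorem~\ref{thm:urechnorm}. It is also exactly at this point that the hypothesis of finite generation enters in an essential way, through the residual finiteness of finitely generated linear groups; the embedding $\mathrm{G}\hookrightarrow\mathrm{GL}(2,\mathbb{Z})$ by itself does not preclude simplicity, and in its absence one would have to rely on the more delicate Lemma~\ref{lem:densezar} to conclude.
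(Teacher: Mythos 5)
Your tight case is sound: tightness passes to powers, Theorem~\ref{thm:canlam} applies, and your degree estimate correctly shows that $\ll\phi^{N}\gg_{\mathrm{G}}$ is a non-trivial proper normal subgroup. The gap is in the monomial case, and it stems from splitting along the wrong notion of tightness. Theorem~\ref{thm:ShepherdBarron} concerns tightness in the whole group $\mathrm{Bir}(\mathbb{P}^2_\mathbb{C})$, whereas the dichotomy your second branch needs is tightness \emph{in} $\mathrm{G}$: as recalled just before Theorem~\ref{thm:pfou}, tight in $\mathrm{Bir}(\mathbb{P}^2_\mathbb{C})$ implies tight in $\mathrm{G}$, but not conversely. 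Theorem~\ref{thm:pfou} says that no power of $\phi$ is tight in $\mathrm{G}$ exactly when $\mathrm{G}$ \emph{itself} contains a subgroup $\Delta_2$, normalized by $\phi$ and conjugate to a dense subgroup of $\mathrm{D}_2$; it is this dense torus inside $\mathrm{G}$ --- not the mere monomiality of $\phi$ --- that powers every step of the analysis of Theorem~\ref{thm:urechsimple2} you invoke (monomiality of all loxodromic elements of $\mathrm{G}$, boundedness of $\langle\psi\circ\phi\circ\psi^{-1},\,\mathrm{D}_2\rangle$, Theorem~\ref{thm:urechnorm}, Lemma~\ref{lem:densezar}). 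For a generic $\mathrm{G}=\langle\phi,\psi\rangle$ with $\phi$ monomial loxodromic and $\psi$ arbitrary, no such $\Delta_2$ lies in $\mathrm{G}$, so by Theorem~\ref{thm:pfou} some power of $\phi$ \emph{is} tight in $\mathrm{G}$ although none is tight in $\mathrm{Bir}(\mathbb{P}^2_\mathbb{C})$; your case 1 does not catch this (you apply Theorem~\ref{thm:canlam}, which requires tightness in the full Cremona group), and your claimed inclusion $\mathrm{G}\subseteq\mathrm{GL}(2,\mathbb{Z})\ltimes\mathrm{D}_2$ is simply false for such $\mathrm{G}$, which typically contains non-monomial loxodromic elements. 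Note also that the proof of Theorem~\ref{thm:urechsimple2} assumes simplicity from its first line (that is precisely how the tight-in-$\mathrm{G}$ alternative is excluded there), so importing its conclusion as an unconditional structure statement for an arbitrary $\mathrm{G}$ containing a monomial loxodromic element is not legitimate.

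The paper's proof splits instead on tightness in $\mathrm{G}$. If some $\phi^{n}$ is tight in $\mathrm{G}$, the Cantat--Lamy argument (your case 1, in its $\mathrm{G}$-relative form) concludes. If no power is tight in $\mathrm{G}$, Theorem~\ref{thm:pfou} supplies the infinite bounded subgroup $\Delta_2\subset\mathrm{G}$, and then comes the step your proposal lacks entirely: Proposition~\ref{pro:degxie}, which uses finite generation to produce a non-trivial morphism $\upsilon\colon\mathrm{G}\to\mathrm{Bir}(\mathbb{P}^2_\Bbbk)$ over a \emph{finite} field with $\deg\upsilon(\varphi)\leq\deg\varphi$; since $\mathrm{Bir}(\mathbb{P}^2_\Bbbk)$ has only finitely many elements of degree $\leq N$, the image $\upsilon(\Delta_2)$ is finite and $\ker\upsilon$ is a non-trivial proper normal subgroup. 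So finite generation enters through specialization to a finite field --- a mechanism that moreover works over any base field --- and not, as you assert, through residual finiteness of finitely generated linear groups, which in your scheme is anyway only reached in a subcase that Lemma~\ref{lem:densezar} renders vacuous. The repair is to take Theorem~\ref{thm:pfou} as the case division: in the tight-in-$\mathrm{G}$ case your degree computation adapts once you use the $\mathrm{G}$-relative version of Theorem~\ref{thm:canlam} (tight in $\mathrm{Bir}(\mathbb{P}^2_\mathbb{C})$ being a special case), and in the complementary case the dense torus $\Delta_2\subset\mathrm{G}$ you now actually possess legitimizes either the paper's specialization argument or, over $\mathbb{C}$, a corrected version of your $\mathrm{GL}(2,\mathbb{Z})\ltimes\mathrm{D}_2$ route.
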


To prove it we need the following statement.

\begin{pro}[\cite{Urech:simplesubgroups}]\label{pro:degxie}
Let $\mathrm{G}$ be a finitely generated 
subgroup of $\mathrm{Bir}(\mathbb{P}^2_\mathbb{C})$.
There exist a finite field $\Bbbk$ and 
a non trivial morphism
$\upsilon\colon\mathrm{G}\to\mathrm{Bir}(\mathbb{P}^2_\Bbbk)$
such that for any $\phi$ in~$\mathrm{G}$
the following inequality holds:
$\deg\upsilon(\phi)\leq\deg\phi$.
\end{pro}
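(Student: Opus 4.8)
The statement to prove is Proposition \ref{pro:degxie}: for a finitely generated subgroup $\mathrm{G}$ of $\mathrm{Bir}(\mathbb{P}^2_\mathbb{C})$, there exist a finite field $\Bbbk$ and a non-trivial morphism $\upsilon\colon\mathrm{G}\to\mathrm{Bir}(\mathbb{P}^2_\Bbbk)$ which does not increase degrees. The plan is to exploit the fact that a finitely generated subgroup of $\mathrm{Bir}(\mathbb{P}^2_\mathbb{C})$ is defined over a finitely generated subring of $\mathbb{C}$, and then to specialize this arithmetic data at a suitable prime to land in a Cremona group over a finite field.

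First I would fix a finite generating set $\phi_1,\phi_2,\ldots,\phi_s$ of $\mathrm{G}$, together with the finitely many relations among them that are needed; more precisely, each $\phi_i$ and each $\phi_i^{-1}$ is given by homogeneous polynomials in $\mathbb{C}[z_0,z_1,z_2]$. Let $R\subset\mathbb{C}$ be the subring generated by all the coefficients of these finitely many polynomials together with the coefficients appearing in the equations $\phi_i\circ\phi_i^{-1}=P_i\,\mathrm{id}$ and in every defining relation of $\mathrm{G}$. Then $R$ is a finitely generated $\mathbb{Z}$-algebra, and $\mathrm{G}$ is realized inside $\mathrm{Bir}(\mathbb{P}^2_{\mathrm{Frac}(R)})$ by formulas with coefficients in $R$. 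The second step is to choose a maximal ideal $\mathfrak{m}\subset R$: by the Nullstellensatz for finitely generated $\mathbb{Z}$-algebras (a ring-theoretic fact one may assume), the residue field $\Bbbk=R/\mathfrak{m}$ is finite. Reduction modulo $\mathfrak{m}$ then sends each $\phi_i$ to a self-map $\overline{\phi_i}$ of $\mathbb{P}^2_\Bbbk$ given by the reduced polynomials.

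The delicate point, which I expect to be the main obstacle, is ensuring that the reduction is simultaneously \emph{well-defined as a group homomorphism}, \emph{degree-non-increasing}, and \emph{non-trivial}. Reduction can fail on all three counts: the reduced polynomials defining $\overline{\phi_i}$ may acquire a common factor (so $\deg\overline{\phi_i}<\deg\phi_i$, which is fine for the inequality but must be tracked), they may become identically zero or cease to be birational, and the relations $\phi_i\circ\phi_i^{-1}=P_i\,\mathrm{id}$ might degenerate if $P_i$ reduces to $0$. The strategy is to avoid the finitely many bad primes: the leading coefficients, resultants, and the nonzero polynomials $P_i$ witnessing birationality all lie in $R$ and are nonzero, so they lie outside some maximal ideals; I would choose $\mathfrak{m}$ to avoid the finite set of maximal ideals containing any of these finitely many distinguished nonzero elements. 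For such $\mathfrak{m}$, each $\overline{\phi_i}$ remains birational over $\Bbbk$, the relations are preserved, and $\upsilon(\phi_i)=\overline{\phi_i}$ extends to a group homomorphism $\upsilon\colon\mathrm{G}\to\mathrm{Bir}(\mathbb{P}^2_\Bbbk)$.

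Finally, the degree inequality $\deg\upsilon(\phi)\leq\deg\phi$ holds for an arbitrary $\phi\in\mathrm{G}$ because writing $\phi$ as a word in the generators expresses its defining polynomials as (reductions of) compositions, and reduction modulo $\mathfrak{m}$ can only introduce extra common factors, never raise the degree; thus passing from $\phi$ to $\upsilon(\phi)$ can only lower the degree or leave it unchanged. Non-triviality of $\upsilon$ is arranged by further requiring $\mathfrak{m}$ to avoid a prime at which at least one generator (say one whose defining formulas are not those of the identity) reduces to the identity; since each such generator differs from the identity by finitely many nonzero coefficient conditions in $R$, only finitely many maximal ideals need be excluded, and infinitely many choices of finite residue field remain. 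I would conclude by noting that all exclusions are of finitely many maximal ideals, so a suitable $\Bbbk$ and $\upsilon$ exist, completing the proof.
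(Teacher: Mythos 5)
You should first note that the survey states this proposition without proof, quoting \cite{Urech:simplesubgroups}; the argument there is a specialization argument whose skeleton you have correctly identified (spread the generators out over a finitely generated $\mathbb{Z}$-algebra $R\subset\mathbb{C}$, then reduce to a finite residue field). But two of your steps have genuine gaps, and they are exactly the two delicate points. The first concerns well-definedness of $\upsilon$ as a group homomorphism: you build $R$ from the coefficients of ``every defining relation of $\mathrm{G}$'' and then assert that ``the relations are preserved'' after excluding finitely many bad primes. A finitely generated subgroup of $\mathrm{Bir}(\mathbb{P}^2_\mathbb{C})$ need not be finitely presented, and each relation $w(\phi_1,\dots,\phi_s)=\mathrm{id}$ imposes its own condition: the composed polynomial tuple $F_w\in R[z_0,z_1,z_2]^3$ equals $g_w\cdot(z_0,z_1,z_2)$ for some homogeneous $g_w\in R[z_0,z_1,z_2]$, and for the reduced word to be the identity you need $\overline{g_w}\neq 0$ modulo $\mathfrak{m}$ --- a priori infinitely many conditions, which cannot be absorbed into avoiding finitely many elements of $R$. (This can be rescued: reduction commutes with composition of polynomial tuples, and once each reduced generator is birational --- guaranteed by keeping the $P_i$ nonzero mod $\mathfrak{m}$ --- every composed reduced tuple is automatically nonzero, since substituting a dominant map into a nonzero tuple cannot kill it, which forces $\overline{g_w}\neq 0$; but no such argument appears in your proof, and without it $\upsilon$ is not shown to be well defined.) A smaller slip of the same nature: the set of maximal ideals of $R$ containing a fixed nonzero element is in general infinite (consider $x\in\mathbb{Z}[x]$); the correct move is to pass to $R[1/a]$ and invoke the fact that finitely generated $\mathbb{Z}$-algebras are Jacobson with finite residue fields at maximal ideals.

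The second gap is the degree inequality itself, which is the heart of the statement. Your closing argument --- reduction ``can only introduce extra common factors, never raise the degree'' --- only yields $\deg\upsilon(\phi)\leq\deg F_w$, the degree of the composed formula, which is the product of the generators' degrees and in general much larger than $\deg\phi$. What must be proved is that the gcd of $\overline{F_w}$ over $\Bbbk$ has degree at least $\deg g_w$, where $F_w=g_w\cdot f_\phi$ over $K=\mathrm{Frac}(R)$ with $\deg f_\phi=\deg\phi$. For that one must realize this factorization integrally at $\mathfrak{m}$ with $f_\phi$ of unit content, so that $\overline{F_w}=\overline{g_w}\,\overline{f_\phi}$; over the local ring $R_{\mathfrak{m}}$ of an arbitrary maximal ideal this can fail, because multiplicativity of contents (Gauss's lemma) breaks down in dimension $\geq 2$ --- already over $\mathbb{Z}[x]$ at $\mathfrak{m}=(2,x)$, a form with content ideal $(2,x)$ admits no scaling having a unit coefficient --- and clearing denominators separately for each $\phi$ would exclude one element of $R$ per element of $\mathrm{G}$, again infinitely many conditions. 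The standard repair, and the route taken in the source, is not to reduce at an arbitrary maximal ideal but to specialize along a place of $K$ with finite residue field (a valuation ring, where the valuative criterion and Gauss's lemma do hold), chosen so that the finitely many genuine witnesses --- the $P_i$ certifying birationality of the generators and a nonzero cross product $z_iF_j-z_jF_i$ certifying non-triviality --- become units. As written, both the homomorphism property and the inequality $\deg\upsilon(\phi)\leq\deg\phi$ remain unproved at exactly the points where the proposition is delicate.
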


\begin{proof}[Proof of Proposition \ref{pro:finitelysimple}]
Let $\phi$ be a loxodromic element of 
$\mathrm{G}$.

If $\phi^n$ is tight in $\mathrm{G}$ for some 
integer $n$, then Theorem \ref{thm:canlam}
allows to conclude.

If no power of $\phi$ is tight, then $\mathrm{G}$
contains an infinite subgroup $\Delta_2$ that is 
normalized by $\phi$ and that is conjugate 
either to a subgroup of $\mathrm{D}_2$, 
or to a subgroup of $\mathbb{C}^2$ (Theorem
\ref{thm:pfou}). In particular the degrees of 
the elements of $\Delta_2$ are uniformly 
bounded by an integer $N$. According to 
Proposition \ref{pro:degxie} there exist
a finite field $\Bbbk$ and a non trivial 
morphism 
$\upsilon\colon\mathrm{G}\to\mathrm{Bir}(\mathbb{P}^2_\Bbbk)$
such that for all $\phi$ in $\mathrm{G}$
\[
\deg\upsilon(\phi)\leq\deg\phi.
\]
In $\mathrm{Bir}(\mathbb{P}^2_\Bbbk)$ there 
exist only finitely many elements of degree 
$\leq N$. As a result $\upsilon(\Delta_2)$ 
is finite. The morphism $\upsilon$ has thus 
a proper kernel and $\mathrm{G}$ is not 
simple: contradiction.
\end{proof}

We now have the following alternative
\begin{itemize}
\item[(i)] $\mathrm{G}$ contains a parabolic element,

\item[(ii)] $\mathrm{G}$ is an elliptic subgroup.
\end{itemize}

Let us look at these two possibilities.

\begin{itemize}
\item[(i)] If $\mathrm{G}$
contains a parabolic element,  
then $\mathrm{G}$ is conjugate
either to a subgroup of the automorphism group
$\mathrm{Aut}(S)$ of a Halphen surface, 
or to a subgroup of the Jonqui\`eres
group $\mathcal{J}$.

\begin{itemize}
\item[$\diamond$] Assume first that, up to 
conjugacy, 
$\mathrm{G}\subset\mathrm{Aut}(S)$ where 
$S$ is a Halphen surface. 

Recall that a group $\mathrm{G}$ satisfies
\textsl{Malcev property}\index{defi}{Malcev property} if 
every finitely gene\-rated subgroup 
$\Gamma$ of $\mathrm{G}$ is residually finite, 
{\it i.e.} for any $g\in\Gamma$ there exist
a finite group $\mathrm{H}$ and a morphism 
$\upsilon\colon\Gamma\to\mathrm{H}$ such
that $g$ does not belong to $\ker\upsilon$.

Malcev showed that linear groups 
satisfy this property (\cite{Malcev}). 
In \cite{BassLubotzky} the authors proved
that automorphism groups of scheme over
any commutative ring also satisfy this
property. Consequently if $\mathrm{G}$ 
contains a parabolic element, then 
$\mathrm{G}$ is, up to conjugacy, a 
subgroup of $\mathcal{J}$.

\item[$\diamond$] Suppose that 
$\mathrm{G}\subset\mathcal{J}$ up to 
birational conjugacy. Then $\mathrm{G}$
is finite. Indeed:

\begin{lem}[\cite{Urech:simplesubgroups}]\label{lem:mot}
Let $\mathcal{C}$ be a curve and let 
$\mathrm{G}\subset\mathrm{Bir}(\mathbb{P}^1_\mathbb{C}\times\mathcal{C})$ 
be a finitely generated simple subgroup
that preserves the 
$\mathbb{P}^1_\mathbb{C}$-fibration given
by the projection to $\mathcal{C}$.

Then $\mathrm{G}$ is finite.
\end{lem}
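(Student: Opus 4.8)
The plan is to exploit the structure of the group of birational self maps of $\mathbb{P}^1_\mathbb{C}\times\mathcal{C}$ that preserve the projection $\pi$ onto $\mathcal{C}$, and then to combine the simplicity of $\mathrm{G}$ with a residual finiteness argument. Since every element of $\mathrm{G}$ permutes the fibres of $\pi$, there is an induced action on the base curve $\mathcal{C}$. This yields a group homomorphism $f\colon\mathrm{G}\to\mathrm{Bir}(\mathcal{C})$ whose kernel consists exactly of the elements acting fibrewise, that is $\ker f=\mathrm{G}\cap\mathrm{PGL}(2,\mathbb{C}(\mathcal{C}))$, where $\mathrm{PGL}(2,\mathbb{C}(\mathcal{C}))$ is the automorphism group of the generic fibre $\mathbb{P}^1_\mathbb{C}$ over the function field $\mathbb{C}(\mathcal{C})$. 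This is the analogue, for $\mathbb{P}^1_\mathbb{C}\times\mathcal{C}$, of the semidirect product description of $\mathcal{J}$.

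First I would use simplicity. The subgroup $\ker f$ is normal in $\mathrm{G}$, so either $\ker f=\{\mathrm{id}\}$ or $\ker f=\mathrm{G}$. In the first case $f$ is injective and $\mathrm{G}$ embeds into $\mathrm{Bir}(\mathcal{C})$; in the second case $\mathrm{G}$ is a subgroup of $\mathrm{PGL}(2,\mathbb{C}(\mathcal{C}))$. Next I would check that in both cases $\mathrm{G}$ is residually finite (the Malcev property, available in the excerpt). On the one hand $\mathrm{PGL}(2,\mathbb{C}(\mathcal{C}))$ is linear over the field $\mathbb{C}(\mathcal{C})$, hence residually finite by Malcev (\cite{Malcev}). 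On the other hand, since a birational self map of a smooth projective curve is an isomorphism, one has $\mathrm{Bir}(\mathcal{C})=\mathrm{Aut}(\mathcal{C})$; if $\mathcal{C}\simeq\mathbb{P}^1_\mathbb{C}$ this is $\mathrm{PGL}(2,\mathbb{C})$, which is linear, and if $\mathcal{C}$ has positive genus it is the automorphism group of a scheme over a commutative ring, hence residually finite by Bass and Lubotzky (\cite{BassLubotzky}). In every case the finitely generated group $\mathrm{G}$ is residually finite.

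To conclude I would run the standard argument that a finitely generated residually finite simple group is finite: given a nontrivial $g\in\mathrm{G}$, residual finiteness provides a morphism $\upsilon\colon\mathrm{G}\to\mathrm{H}$ with $\mathrm{H}$ finite and $\upsilon(g)\neq\mathrm{id}$, so $\ker\upsilon\neq\mathrm{G}$; since $\mathrm{G}$ is simple this forces $\ker\upsilon=\{\mathrm{id}\}$, whence $\mathrm{G}$ injects into the finite group $\mathrm{H}$ and is finite. The main obstacle is the careful justification of the fibration-preserving structure, in particular identifying $\ker f$ with $\mathrm{G}\cap\mathrm{PGL}(2,\mathbb{C}(\mathcal{C}))$, and the positive genus case of the base, where linearity alone does not suffice and one genuinely needs the residual finiteness of $\mathrm{Aut}(\mathcal{C})$ coming from \cite{BassLubotzky} rather than from \cite{Malcev}.
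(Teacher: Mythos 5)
Your proof is correct and follows essentially the same route as the paper: simplicity forces $\mathrm{G}$ either into the fibrewise kernel, a subgroup of $\mathrm{PGL}(2,\mathbb{C}(\mathcal{C}))$, or into $\mathrm{Aut}(\mathcal{C})$, and in both cases the Malcev property for finitely generated subgroups (linearity, resp.\ \cite{BassLubotzky}) combined with simplicity yields finiteness. The only cosmetic difference is that the paper states the first case as an embedding into $\mathrm{PGL}(2,\mathbb{C})$, using that a finitely generated field of characteristic zero embeds into $\mathbb{C}$, whereas you apply Malcev's theorem directly to the linear group over $\mathbb{C}(\mathcal{C})$.
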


\begin{proof}
The group $\mathrm{G}$ being simple, 
$\mathrm{G}$ is isomorphic either to 
a subgroup of $\mathrm{PGL}(2,\mathbb{C})$, 
or to a subgroup of $\mathrm{Aut}(\mathcal{C})$.
But both $\mathrm{PGL}(2,\mathbb{C})$ and 
$\mathrm{Aut}(\mathcal{C})$ satisfy 
Malcev property, so $\mathrm{G}$ 
is finite.
\end{proof}
\end{itemize}

\item[(ii)] It remains to look at 
$\mathrm{G}$ when $\mathrm{G}$ is a 
finitely generated, simple, elliptic 
subgroup of~$\mathrm{Bir}(\mathbb{P}^2_\mathbb{C})$. 
Proposition \ref{pro:etaumilieuCantat}
asserts that either $\mathrm{G}$ is 
conjugate to a subgroup of 
$\mathcal{J}$, or $\mathrm{G}$ is 
contained in an algebraic subgroup 
of 
$\mathrm{Bir}(\mathbb{P}^2_\mathbb{C})$. 
In the first case Lemma \ref{lem:mot}
allows to conclude. Let us focus on the 
last case: algebraic subgroups of 
$\mathrm{Bir}(\mathbb{P}^2_\mathbb{C})$ 
are linear hence~$\mathrm{G}$ is linear
and therefore finite since linear 
groups satisfy Malcev property.
\end{itemize}

%%%%%%%%%%%%%%%%%%%%%%%%%%%%%%%%%%%%%%%%%%%%%%%%%%%%%%%%%%%%%%%%%%%%%%%%%%%%%%%%%%%%%%%%%%%%%%%%%%%%%%%%%%%%%%%%%%%
%%%%%%%%%%%%%%%%%%%%%%%%%%%%%%%%%%%%%%%%%%%%%%%%%%%%%%%%%%%%%%%%%%%%%%%%%%%%%%%%%%%%%%%%%%%%%%%%%%%%%%%%%%%%%%%%%%%
%%%%%%%%%%%%%%%%%%%%%%%%%%%%%%%%%%%%%%%%%%%%%%%%%%%%%%%%%%%%%%%%%%%%%%%%%%%%%%%%%%%%%%%%%%%%%%%%%%%%%%%%%%%%%%%%%%%
%CHAPTER
%%%%%%%%%%%%%%%%%%%%%%%%%%%%%%%%%%%%%%%%%%%%%%%%%%%%%%%%%%%%%%%%%%%%%%%%%%%%%%%%%%%%%%%%%%%%%%%%%%%%%%%%%%%%%%%%%%%
%%%%%%%%%%%%%%%%%%%%%%%%%%%%%%%%%%%%%%%%%%%%%%%%%%%%%%%%%%%%%%%%%%%%%%%%%%%%%%%%%%%%%%%%%%%%%%%%%%%%%%%%%%%%%%%%%%%
%%%%%%%%%%%%%%%%%%%%%%%%%%%%%%%%%%%%%%%%%%%%%%%%%%%%%%%%%%%%%%%%%%%%%%%%%%%%%%%%%%%%%%%%%%%%%%%%%%%%%%%%%%%%%%%%%%%

\chapter{Big subgroups of automorphisms "of positive entropy"}\label{chapter:dyn}

\bigskip
\bigskip

In this chapter we will focus on automorphisms of surfaces 
with positive entropy. Recall that a
\textsl{K$3$ surface}\index{defi}{K$3$ surface}\footnote{"so named in 
honor of Kummer, K\"ahler, Kodaira 
and of the beautiful mountain K$2$ in Kashmir" (\cite{Weil:collected}).} 
is a complex,
compact, simply connected surface $S$ with a trivial canonical
bundle. Equivalently there exists a holomorphic $2$-form $\omega$
on~$S$ which is never zero; $\omega$ is unique modulo multiplication
by a scalar. Let $S$ be a K$3$ surface with a holomorphic involution
$\iota$. If $\iota$ has no fixed point, the quotient of~$S$ by
$\langle\iota\rangle$ is an
\textsl{Enriques surface}\index{defi}{Enriques surface}, otherwise
it is a rational surface. Recall that every non-minimal
rational surface can be obtained by repeatedly blowing
up a minimal rational surface. The \textsl{minimal rational surfaces}\index{defi}{minimal (rational surface)} are 
the complex projective plane, 
$\mathbb{P}^1_\mathbb{C}\times\mathbb{P}^1_\mathbb{C}$
and the Hirzebruch surfaces $\mathbb{F}_n$, 
$n\geq 2$. If $S$ is a complex, compact surface
carrying a biholomorphism of positive topological entropy, then 
$S$ is either a complex torus, or a K$3$ surface,
or an Enriques surface, or a non-minimal rational surface
(\cite{Cantat2}). Although automorphisms of complex
tori are easy to describe, it is rather difficult to construct
automorphisms on K$3$ surfaces or rational surfaces. Constructions
and dynamical properties of automorphisms of K$3$ surfaces can be
found in \cite{Cantat3} and \cite{McMullen2}.
The first examples of rational surfaces endowed with biholomorphisms
of positive entropy are due to Coble and Kummer (\cite{Coble}):
\begin{itemize}
\item[$\diamond$] the Coble surfaces are obtained by blowing up
  the ten nodes of a nodal sextic in $\mathbb{P}^2_\mathbb{C}$;
  
\item[$\diamond$] the Kummer surfaces are desingularizations of
  quotients of complex $2$-tori by involutions with fixed
  points.
\end{itemize}

Obstructions to the existence of such biholomorphisms on rational
surfaces are also known: if $\phi$ is a biholomorphism of a rational
surface $S$ such that $\mathrm{h_{top}}(\phi)>0$, then the representation
\begin{align*}
&\mathrm{Aut}(S)\to\mathrm{GL}(\mathrm{Pic}(S)) && g\mapsto g^*
\end{align*}
has infinite image. Hence according to \cite{Harbourne} its kernel is 
finite so that $S$ has no non-zero holomorphic vector field. A second
obstruction follows from \cite{Nagata}:
the surface $S$ has to be obtained by successive blowups from the
complex projective plane and the number of blowups must be at least ten.
The first infinite families of examples have been constructed
independently in \cite{McMullen} and \cite{BedfordKim1} by different methods.
Since then many constructions have emerged (\emph{see for instance} 
\cite{BedfordKim2, BedfordKim3, Diller, DesertiGrivaux, Uehara, McMullen}).

In the first section we give three answers to the question "When is 
a birational self map of a complex projective surface birationally 
conjugate to an automorphism~?" In the second section we deal with 
constructions of automorphisms of rational surfaces with positive
entropy. In the last section we explain how 
$\mathrm{SL}(2,\mathbb{Z})$ is realized as a subgroup of automorphisms
of a rational surface with the property that every element of 
infinite order has positive entropy. 

%%%%%%%%%%%%%%%%%%%%%%%%%%%%%%%%%%%%%%%%%%%%%%%%%%%%%%%%%%%%%%%%%%%%%%%%%%%%%%%%%%%%%%%%%%%%%%%%%%%%%%%%%%%%%%%%%%%
%%%%%%%%%%%%%%%%%%%%%%%%%%%%%%%%%%%%%%%%%%%%%%%%%%%%%%%%%%%%%%%%%%%%%%%%%%%%%%%%%%%%%%%%%%%%%%%%%%%%%%%%%%%%%%%%%%%
% section
%%%%%%%%%%%%%%%%%%%%%%%%%%%%%%%%%%%%%%%%%%%%%%%%%%%%%%%%%%%%%%%%%%%%%%%%%%%%%%%%%%%%%%%%%%%%%%%%%%%%%%%%%%%%%%%%%%%
%%%%%%%%%%%%%%%%%%%%%%%%%%%%%%%%%%%%%%%%%%%%%%%%%%%%%%%%%%%%%%%%%%%%%%%%%%%%%%%%%%%%%%%%%%%%%%%%%%%%%%%%%%%%%%%%%%%

\section{Birational maps and automorphisms}

\subsection{Definitions}

Given a birational map $\phi\colon S\dashrightarrow S$
of a projective complex surface its dynamical 
degree $\lambda(\phi)$ is a positive real number
that measures the complexity of the dynamics of 
$\phi$ (\emph{see} \S \ref{sec:degreegrowth}).
The neperian logarithm $\log\lambda(\phi)$
provides an upper bound for the topological 
entropy of $\phi\colon S\dashrightarrow S$ 
and is equal to it under natural 
assumptions (\cite{BedfordDiller, DinhSibony}).
Let us give an alternative but 
equivalent definition to 
that of \S\ref{sec:degreegrowth}.
A birational map $\phi\colon S\dashrightarrow S$ of 
a projective complex surface determines 
an endomorphism 
$\phi_*\colon\mathrm{NS}(S)\to\mathrm{NS}(S)$; 
the dynamical degree $\lambda(\phi)$ of 
$\phi$ is defined as the spectral radius of 
the sequence of endomorphisms $(\phi^n)_*$ as
$n$ goes to infinity:
\[
\lambda(\phi)=\displaystyle\lim_{n\to +\infty}\vert\vert(\phi^n)_*\vert\vert^{1/n}
\]
where $\vert\vert\cdot\vert\vert$ denotes a 
norm on the real vector space 
$\mathrm{End}(\mathrm{NS}(S))$. This limit
exists and does not depend on the choice 
of the norm. For any ample divisor 
$D\subset S$
\[
\lambda(\phi)=\displaystyle\lim_{n\to +\infty}(D\cdot (\phi^n)_*D)^{1/n}.
\]
The N\'eron-Severi group 
of $\mathbb{P}^2_\mathbb{C}$ coincides with 
the Picard group of 
$\mathbb{P}^2_\mathbb{C}$, has rank $1$, 
and is generated by the class 
$\mathbf{e}_0$ of a line
\[
\mathrm{NS}(\mathbb{P}^2_\mathbb{C})=\mathrm{Pic}(\mathbb{P}^2_\mathbb{C})=\mathbb{Z}\mathbf{e}_0.
\]
A map 
$\phi\in\mathrm{Bir}(\mathbb{P}^2_\mathbb{C})$
acts on $\mathrm{Pic}(\mathbb{P}^2_\mathbb{C})$
by multiplication by $\deg\phi$.

\subsection{Pisot and Salem numbers}

We will give the definitions of 
Pisot and Salem
numbers, for more details \emph{see}
\cite{BDGGHPDS}.

A \textsl{Pisot number}\index{defi}{Pisot number} is
an algebraic integer $\lambda\in]1,+\infty[$ 
whose other Galois conjugates lie 
in the unit disk. Let us denote by 
$\mathrm{Pis}$\index{not}{$\mathrm{Pis}$}
the set of Pisot numbers. It 
includes all integers $\geq 2$ as well
as all reciprocical quadratic integers 
$\lambda>1$. The set $\mathrm{Pis}$ is a 
closed subset of the real line; its infimum
is equal to the unique root
$\lambda_P>1$\index{not}{$\lambda_P$}
of the cubic equation $x^3=x+1$. The 
smallest accumultation point of 
$\mathrm{Pis}$ is the golden mean 
$\lambda_G=\frac{1+\sqrt{5}}{2}$\index{not}{$\lambda_G$}.
Note that all Pisot numbers between 
$\lambda_P$ and $\lambda_G$ have been listed.

A \textsl{Salem number}\index{defi}{Salem number}
is an algebraic integer $\lambda\in]1,+\infty[$
whose other Galois conjugates are in the closed 
unit disk with at least one on the boundary.
The minimal polynomial of $\lambda$ has thus
at least two complex conjugate roots on the 
unit circle, its roots are permuted by the 
involution $z\mapsto \frac{1}{z}$ and has 
degree at least $4$. 
Let $\mathrm{Sal}$\index{not}{$\mathrm{Sal}$}
be the set of Salem numbers. The 
unique root $\lambda_L>1$\index{not}{$\lambda_L$} 
of the irreducible polynomial 
$x^{10}+x^9-x^7-x^6-x^5-x^4-x^3+x+1$ is a
Salem number. Conjecturally the 
infimum of $\mathrm{Sal}$ is larger than 
$1$ and should be equal to $\lambda_L$.

Remark that $\mathrm{Pis}$ is contained 
in the closure of $\mathrm{Sal}$.

\subsection{Dynamical degrees and Pisot and Salem numbers}

Let us recall that a birational map 
$\phi\colon S\dashrightarrow S$ of a 
compact complex surface is algebraically
stable if $(\phi^*)^n=(\phi^n)^*$ for
all $n\geq 0$ (\emph{see} \S 
\ref{sec:degreegrowth}). If $\phi$ is
algebraically stable, then so does
$\phi^{-1}$ and $\lambda(\phi)$ is 
an algebraic integer. Any 
birational map of a compact complex 
surface is conjugate by a birational 
morphism to an algebraically stable 
map (Proposition \ref{pro:DillerFavre}). From this fact and the Hodge
index theorem according to which the intersection
form has signature $(1,r_S-1)$, where 
$r_S$ denotes the rank of~$S$, Diller and 
Favre get the following 
statement:

\begin{thm}[\cite{DillerFavre}]
Let $\phi$ be a birational self map of a
complex projective surface. If 
$\lambda(\phi)$ is distinct from $1$, 
{\it i.e.} if $\phi$ is loxodromic, then
$\lambda(\phi)$ is a Pisot
or a Salem number.
\end{thm}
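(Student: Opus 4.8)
The plan is to exploit the algebraic stability reduction together with the Hodge index theorem. First I would recall from Proposition \ref{pro:DillerFavre} that, up to conjugation by a birational morphism, one may assume $\phi$ is algebraically stable, so that $(\phi^*)^n = (\phi^n)^*$ for all $n \geq 0$. Since conjugation by a birational morphism does not change the dynamical degree (this is recorded in \S\ref{sec:degreegrowth}, where $\lambda(\phi)$ is shown to be invariant under conjugacy), it suffices to prove the statement for the algebraically stable model. The key consequence of algebraic stability is that $\lambda(\phi)$ is then simply the spectral radius of the single linear operator $\phi^* \colon \mathrm{NS}(S) \to \mathrm{NS}(S)$, rather than a limit of spectral radii; in particular $\lambda(\phi)$ is an eigenvalue of an integer matrix, hence an algebraic integer, and moreover a root of the characteristic polynomial of $\phi^*$, which has integer coefficients.

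The heart of the argument is to use the structure of $\phi^*$ as an isometry of the intersection form. On $\mathrm{NS}(S)$ the intersection form has signature $(1, r_S - 1)$ by the Hodge index theorem, where $r_S$ is the rank of $\mathrm{NS}(S)$. The operator $\phi^*$ preserves this form, so it lies in the orthogonal group $\mathrm{O}(1, r_S - 1)$ of a lattice. This is exactly the situation analyzed in \S\ref{subsec:isometries}: an isometry of a form of signature $(1,n)$ has eigenvalues that are either of modulus $1$ or isotropic, and for a loxodromic isometry there is a unique eigenvalue $\lambda > 1$ with a reciprocal eigenvalue $\lambda^{-1}$, while the remaining spectrum lies on the unit circle. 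Thus when $\lambda(\phi) \neq 1$, the map $\phi_*$ is loxodromic, its largest eigenvalue is $\lambda(\phi)$, and $\lambda(\phi)^{-1}$ is also an eigenvalue, the other eigenvalues all having modulus $1$.

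From here the plan is to read off the arithmetic constraints on the characteristic polynomial. The characteristic polynomial $\chi$ of $\phi^*$ is monic with integer coefficients, so $\lambda(\phi)$ is an algebraic integer. Since $\phi^*$ preserves a nondegenerate integral form, $\chi$ is essentially reciprocal (its roots are stable under $z \mapsto 1/z$, up to factoring out cyclotomic pieces coming from roots of unity among the modulus-$1$ eigenvalues). I would factor $\chi$ into its cyclotomic part and the minimal polynomial $P$ of $\lambda(\phi)$. The eigenvalue analysis shows: the Galois conjugates of $\lambda(\phi)$ consist of $\lambda(\phi)$ itself, its reciprocal $\lambda(\phi)^{-1}$ (if it is a genuine conjugate), and algebraic numbers on the unit circle. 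If $\lambda(\phi)^{-1}$ is a conjugate, then there are at least two conjugates on the unit circle (the reciprocal pairing forces complex conjugate pairs), and one obtains a Salem number; if instead all other conjugates lie strictly inside the unit disk, one obtains a Pisot number.

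\textbf{Main obstacle.} The delicate point, and the step I expect to require the most care, is the dichotomy in the final paragraph: distinguishing precisely when $\lambda(\phi)^{-1}$ is a Galois conjugate of $\lambda(\phi)$ and controlling the locations of the remaining conjugates. The eigenvalue picture from \S\ref{subsec:isometries} tells us about the \emph{complex} eigenvalues of $\phi^*$, but a Salem or Pisot number is defined through its \emph{Galois} conjugates, i.e.\ through the irreducible factor of $\chi$ containing $\lambda(\phi)$. One must argue that the Galois conjugates of $\lambda(\phi)$ are themselves a subset of the eigenvalues of $\phi^*$ — which holds because $\chi \in \mathbb{Z}[x]$ has $\lambda(\phi)$ as a root, so its minimal polynomial divides $\chi$ and all its roots are eigenvalues — and then that these are constrained to $\{\lambda(\phi), \lambda(\phi)^{-1}\} \cup \{|z|=1\}$. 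Ruling out the degenerate possibilities (e.g.\ that $\lambda(\phi)$ is a quadratic unit whose minimal polynomial could a priori be neither Pisot nor Salem of the standard degree) and verifying that at least one conjugate lies strictly on the unit circle in the Salem case requires combining the signature $(1, r_S-1)$ with the reciprocity of $\chi$; this is precisely where the Hodge index theorem does the essential work and where I would concentrate the technical effort.
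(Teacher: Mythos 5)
Your opening steps match the paper: the reduction to an algebraically stable model via Proposition \ref{pro:DillerFavre}, the conjugacy invariance of $\lambda(\phi)$, and the observation that $\lambda(\phi)$ then becomes the spectral radius, hence an eigenvalue, of the single integer matrix $\phi^*$ acting on $\mathrm{NS}(S)$. But the heart of your argument contains a genuine gap: the claim that $\phi^*$ preserves the intersection form and hence lies in $\mathrm{O}(1,r_S-1)$ is false for birational maps that are not automorphisms. The failure of isometry is exactly what the push-pull formula (\ref{eq:square}) quantifies: its consequence (\ref{eq:losange}) reads $\phi^*C\cdot\phi^*C=C\cdot C+\sum_{j=1}^m(E_j\cdot C)^2$, where the $E_j$ are the effective classes coming from a resolution of $\phi$, so $\phi^*$ is an isometry only when all the terms $E_j\cdot C$ vanish, i.e.\ essentially when $\phi$ is an automorphism. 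A concrete counterexample is furnished by the paper itself: the H\'enon map $\phi\colon(z_0,z_1)\mapsto(z_1,z_0+z_1^2)$ is algebraically stable on $\mathbb{P}^2_\mathbb{C}$, where $\mathrm{NS}(\mathbb{P}^2_\mathbb{C})=\mathbb{Z}\mathbf{e}_0$ and $\phi^*$ is multiplication by $2$; the characteristic polynomial $x-2$ is not reciprocal, $\lambda(\phi)^{-1}=1/2$ is not an eigenvalue, and $\lambda(\phi)=2$ is not an algebraic unit. Since an integral isometry of a nondegenerate lattice has determinant $\pm 1$, reciprocal characteristic polynomial, and only algebraic units as eigenvalues, your framework would force $\lambda(\phi)$ to be a Salem number or a reciprocal quadratic unit and would exclude all non-reciprocal Pisot numbers — yet the Pisot half of the theorem lives precisely in this non-isometric regime, which your argument erases. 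For the same reason your spectral picture \og the spectrum is $\{\lambda,\lambda^{-1}\}$ together with points on the unit circle\fg{} is in general false for $\phi^*$.

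The correct substitute, and the route the paper takes following \cite{DillerFavre}, is the one-sided inequality extracted from the push-pull formula: $\phi^*C\cdot\phi^*C\geq C\cdot C$ for every class $C$, so $\phi^*$ never decreases self-intersection. It is this expansion property, combined with the Hodge index theorem (signature $(1,r_S-1)$), that confines all eigenvalues of $\phi^*$ other than $\lambda(\phi)$ to the closed unit disk. Once that is in place, your final dichotomy goes through essentially as you wrote it: the minimal polynomial of $\lambda(\phi)$ divides the integral characteristic polynomial, so the Galois conjugates are among the eigenvalues; if some conjugate $\alpha$ lies on the unit circle then $1/\alpha=\bar{\alpha}$ is also a conjugate, which forces the minimal polynomial to be reciprocal and $\lambda(\phi)$ to be a Salem number, and otherwise all other conjugates lie strictly inside the disk and $\lambda(\phi)$ is a Pisot number. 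Note in particular that in the Pisot case there is no need — and in general no way — for $\lambda(\phi)^{-1}$ to appear among the conjugates, so you should drop that expectation rather than try to force it from a loxodromic-isometry normal form.
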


\subsection{When is a birational map conjugate to an automorphism ?}

A natural question is the following one;
when is a birational self map of a complex
projective surface birationally conjugate
to an automorphism ? There are three 
answers to this question and we will detail
it. 

\subsubsection{A first answer}

Diller and Favre give
the first characterization of loxodromic
birational maps which are conjugate to 
an automorphism of a projective surface:

\begin{thm}[\cite{DillerFavre}]\label{thm:blabla}
Let $\phi\in\mathrm{Bir}(\mathbb{P}^2_\mathbb{C})$
be a loxodromic map. Assume that $\phi$ is 
algebraically stable. The action of $\phi$ on 
$\mathrm{H}^{1,1}(\mathbb{P}^2_\mathbb{C})$
admits the eigenvalue $\lambda(\phi)>1$ with
eigenvector $\Theta(\phi)$. 

The map $\phi$ is birationally conjugate to
an automorphism if and only if 
$\Theta(\phi)\cdot\Theta(\phi)=0$.
\end{thm}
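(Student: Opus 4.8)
The statement to prove is Theorem \ref{thm:blabla}, which characterizes when an algebraically stable loxodromic birational map $\phi$ of $\mathbb{P}^2_\mathbb{C}$ is birationally conjugate to an automorphism, in terms of the self-intersection $\Theta(\phi)\cdot\Theta(\phi)$ of the leading eigenvector. My plan is to relate the obstruction to conjugacy to the isotropy of the eigenvector $\Theta(\phi)$, using the geometry of the Picard--Manin space $\mathcal{Z}(\mathbb{P}^2_\mathbb{C})$ and the axis $\mathrm{Ax}(\phi)$ introduced in Chapter \ref{chap:hyperbolicspace}. The key observation is that for a loxodromic $\phi$, the action $\phi_*$ on $\mathrm{Z}(\mathbb{P}^2_\mathbb{C})$ has attracting and repelling isotropic eigenlines $\mathbb{R}v^+_{\phi_*}$, $\mathbb{R}v^-_{\phi_*}$ with eigenvalues $\lambda(\phi)^{\pm 1}$, and $\Theta(\phi)$ is the finite-dimensional shadow of $v^+_{\phi_*}$ on $\mathrm{NS}$.

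\textbf{The forward direction.} First I would treat the case where $\phi$ is already an automorphism of some rational surface $S$, which is the situation produced by a successful birational conjugacy. If $\phi \in \mathrm{Aut}(S)$, then the action $\phi^*$ on $\mathrm{NS}(S)$ is a genuine isometry of a \emph{finite-dimensional} lattice with respect to the intersection form of signature $(1, r_S-1)$ (Hodge index theorem). Being loxodromic, $\phi^*$ has a leading eigenvalue $\lambda(\phi)>1$ with eigenvector $\Theta(\phi)$ lying in $\mathrm{NS}(S)\otimes\mathbb{R}$; since $\phi^*$ preserves the intersection form, $\phi^*\Theta(\phi)=\lambda(\phi)\Theta(\phi)$ forces $\langle \Theta(\phi),\Theta(\phi)\rangle = \lambda(\phi)^2\langle\Theta(\phi),\Theta(\phi)\rangle$, and since $\lambda(\phi)\neq 1$ this yields $\Theta(\phi)\cdot\Theta(\phi)=0$. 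The delicate point here is that $\Theta(\phi)$ computed upstairs on $\mathbb{P}^2_\mathbb{C}$ must be identified, via the conjugating birational map and the functoriality of $\phi_*$ on $\mathcal{Z}(\mathbb{P}^2_\mathbb{C})$, with the finite-dimensional eigenvector on $S$; here I would invoke that $\Theta(\phi)$ corresponds to $v^+_{\phi_*}$, which is isotropic in $\mathrm{Z}(\mathbb{P}^2_\mathbb{C})$, and that the self-intersection is a conjugacy invariant of $\phi_*$.

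\textbf{The converse and the main obstacle.} For the converse, suppose $\Theta(\phi)\cdot\Theta(\phi)=0$. The strategy is to show that the eigenvector $v^+_{\phi_*}$, which a priori lives in the infinite-dimensional completion $\mathrm{Z}(\mathbb{P}^2_\mathbb{C})$, actually has \emph{finite support}: its expansion $v^+_{\phi_*}=w+\sum_{p}m_p\mathbf{e}_p$ involves only finitely many of the classes $\mathbf{e}_p$. Here I expect the main obstacle to lie. The vanishing of the self-intersection, combined with the equalities $d^2 = 1+\sum_j m_j^2$ and $3d-3=\sum_j m_j$ from Chapter \ref{chapter:intro} and the eigenvalue relation $\phi_* v^+ = \lambda(\phi) v^+$, should constrain the orbit of base-points of $\phi$ and $\phi^{-1}$ so that the total collection of points blown up is finite and $\phi_*$-invariant. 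Concretely, one argues that the base-points of all positive and negative iterates accumulate in only finitely many classes precisely when $\Theta(\phi)$ is isotropic; blowing up this finite invariant set via a birational morphism $\pi\colon S\to\mathbb{P}^2_\mathbb{C}$ then lifts $\phi$ to a biregular map $\pi^{-1}\circ\phi\circ\pi\in\mathrm{Aut}(S)$. Controlling this finiteness --- ruling out that infinitely many infinitely near points must be blown up --- is the crux; I would use the algebraic stability of $\phi$ (so that $(\phi^n)^*=(\phi^*)^n$ and no cancellation of base-points occurs under iteration) together with the Diller--Favre regularization machinery (Proposition \ref{pro:DillerFavre}) to organize the orbit data, and the signature $(1,\infty)$ of the intersection form to force the isotropic eigenvector into the finite-dimensional $\mathrm{NS}(S)$ of the resulting surface.
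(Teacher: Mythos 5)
Your forward direction is essentially the paper's: an automorphism acts on $\mathrm{NS}(S)$ by an isometry of the intersection form, so $\Theta(\phi)\cdot\Theta(\phi)=\lambda(\phi)^2\,\Theta(\phi)\cdot\Theta(\phi)$ forces isotropy, and the transfer between algebraically stable models is exactly the content of Theorem \ref{thm:blablabla}. One caveat: the class $v^+_{\phi_*}$ in the Picard--Manin space is isotropic for \emph{every} loxodromic map, so your appeal to ``$\Theta(\phi)$ corresponds to $v^+_{\phi_*}$, which is isotropic'' proves nothing by itself; the whole content of $\Theta(\phi)\cdot\Theta(\phi)=0$ is that $v^+$ has no $\mathbf{e}_p$-tail, i.e.\ lies in $\mathrm{NS}(S,\mathbb{R})$.

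The converse is where your argument genuinely fails. You infer: finite support of $v^+$ $\Rightarrow$ only finitely many base-points occur among all iterates $\Rightarrow$ a finite blow-up $\pi\colon S\to\mathbb{P}^2_\mathbb{C}$ lifts $\phi$ to an automorphism. The first implication is false, and the conclusion is false: regularization may require blowing \emph{down}, not up. Take a loxodromic automorphism $\varphi$ of a rational surface $X$ and a point $q$ with infinite orbit; on $X'=\mathrm{Bl}_qX$ the conjugate $\phi'=\pi^{-1}\circ\varphi\circ\pi$ is algebraically stable (it contracts only the exceptional curve $E_q$, onto $\varphi(q)$, whose forward orbit never meets the indeterminacy point $\varphi^{-1}(q)$), and $v^+(\phi')=v^+(\varphi)$ lies in $\mathrm{NS}(X',\mathbb{R})$, so $\Theta(\phi')\cdot\Theta(\phi')=0$. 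Yet $\mathrm{Base}(\phi'^{\,n})=\{\varphi^{-n}(q)\}$, so the base-points of the iterates sweep out an infinite set of distinct proper points --- their multiplicities stay bounded and are killed by the factor $\lambda^{-n}$ in the limit defining $v^+$, which is why finite support of $v^+$ controls nothing about the base loci --- and no finite blow-up of $X'$ regularizes $\phi'$ (if $\tau^{-1}\circ\phi'\circ\tau$ were an automorphism of $\mathrm{Bl}_FX'$ one would get $\bigcup_n\mathrm{Base}(\phi'^{\,n})\subseteq F$); one must instead contract $E_q$. Contraction is precisely the paper's mechanism, and its key lemma is absent from your plan: the push--pull formula (\ref{eq:square}) yields $(\lambda(\phi)^2-1)\,\Theta(\phi)\cdot\Theta(\phi)=\sum_j(E_j\cdot\Theta(\phi))^2$, so isotropy of $\Theta(\phi)$ says exactly that the divisors $E_j$ coming from a resolution of $\phi$ are orthogonal to $\Theta(\phi)$; since $\Theta(\phi)$ is nef and irrational (it rescales by $\lambda(\phi)^{-k}$ under $\phi_*^k$, which preserves the lattice $\mathrm{NS}(\mathbb{Z},S)$), the intersection form is negative definite on the span of the components of the $E_j$, and the Grauert--Mumford contraction theorem blows them down to a possibly singular surface; iterating drops the Picard rank until the induced map is biregular, and one ends with a minimal desingularization (Theorem \ref{thm:diamond}). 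Note finally that your Noether equalities $\sum m_j=3(d-1)$ and $\sum m_j^2=d^2-1$ are specific to $\mathbb{P}^2_\mathbb{C}$ and do not apply on the algebraically stable model $S$, which is in general not $\mathbb{P}^2_\mathbb{C}$ --- indeed on $\mathbb{P}^2_\mathbb{C}$ itself one always has $\Theta(\phi)\cdot\Theta(\phi)>0$, so the criterion only becomes meaningful after passing to such a model.
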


When $\phi$ is an automorphism, it is easy
to check that $\Theta(\phi)\cdot\Theta(\phi)=0$.
We will thus deal with the reciprocical 
property. 
Let $\phi$ be a birational self map of 
a complex projective surface $S$. Assume
that $\phi$ is algebraically stable. Hence
$\lambda(\phi)$ is equal to the spectral
radius of 
$\phi_*\in\mathrm{End}(\mathrm{NS}(\mathbb{R},S))$
but also to the spectral radius of 
$\phi^*=(\phi^{-1})_*$; indeed these 
endomorphisms are adjoint for the 
intersection form: 
\[
\phi_*C\cdot D=C\cdot\phi^*D
\]
for all $C$, $D$ divisor classes. One can
factorize $\phi$ as $\phi=\eta\circ\pi^{-1}$
where $\eta\colon Z\to S$ and $\pi=\pi_1\circ\ldots\circ\pi_m\colon Z\to S$
are two sequences of point blowups. Denote
by $F_j\subset Z$ the total transform of the 
indeterminacy point of $\pi_j^{-1}$ under
the map $\pi_j\circ\ldots\circ\pi_m$. For
$1\leq j\leq m$ let $E_j$ be the direct
image of $F_j$ by $\eta$. Each $E_j$, if 
not zero, is an effective divisor.
According to \cite{DillerFavre} we get the 
following formula called push-pull formula
\begin{equation}\label{eq:square}
\phi_*\phi^*C=C+\displaystyle\sum_{j=1}^m(C\cdot E_j)E_j
\end{equation}
for all curves (resp. divisor classes) 
$C$ in $S$. Since $\phi_*$ and $\phi^*$ 
are adjoint endomorphisms of 
$\mathrm{NS}(\mathbb{R},S)$ for the 
intersection form we get 
\begin{equation}\label{eq:losange}
\phi^*C\cdot\phi^*C=C\cdot C\displaystyle\sum_{j=1}^m(E_j\cdot C)^2
\end{equation}
This formula and the Hodge index 
theorem  imply
that $\lambda(\phi)$ is a Pisot
number or Salem number.

The endomorphisms $\phi^*$ and $\phi_*$
preserve both the pseudo effective and
nef cones of $\mathrm{NS}(\mathbb{R},S)$.
Suppose that $\lambda(\phi)>1$. According
to the Perron-Frobenius
theorem there exists an eigenvector 
$\Theta(\phi)$ for $\phi_*$ in the nef
cone of $\mathrm{NS}(S)$ such that 
\begin{equation}\label{eq:doublelosange}
\phi^*\Theta(\phi)=\lambda(\phi)\Theta(\phi)
\end{equation}
Note that furthermore this vector is 
unique up to scalar form (\cite{DillerFavre}).
Both (\ref{eq:losange}) and 
(\ref{eq:doublelosange}) imply that 
\[
(\lambda(\phi)^2-1)\Theta(\phi)\cdot\Theta(\phi)=\displaystyle\sum_{j=1}^m(E_j\cdot\Theta(\phi))^2.
\]
As a result for all $E_j$
\[
\Theta(\phi)\cdot\Theta(\phi)=0\qquad\Longleftrightarrow
\qquad \Theta(\phi)\cdot E_j=0.
\]
Assume now that $\Theta(\phi)\cdot\Theta(\phi)=0$; 
then $\Theta(\phi)\cdot E_j=0$ for all $E_j$. As
the $E_j$'s are effective and $\Theta(\phi)$
is nef the $\mathbb{Q}$-vector subspace of
$\mathrm{NS}(\mathbb{Q},S)$ generated by the
irreducible components of the divisors $E_j$
is contained in $\Theta(\phi)^\perp$. On the 
orthogonal complement $\Theta(\phi)^\perp$ 
of the isotropic vector $\Theta(\phi)$ the 
intersection form is negative and its kernel
is the line generated by $\Theta(\phi)$. 
Equation (\ref{eq:square}) implies
\[
\phi_*^k\Theta(\phi)=\frac{1}{\lambda(\phi)^k}\Theta(\phi).
\]
But $\lambda(\phi)>1$ and $\phi_*$ preserves
the lattice $\mathrm{NS}(\mathbb{Z},S)$, so 
$\Theta(\phi)$ is irrational. Consequently
the intersection form is negative definite
on the $\mathbb{Q}$-vector space generated
by all classes of irreducible components of
the divisors $E_j$. According to the 
Grauert-Mumford contraction
theorem (\cite{BHPV}) there exists
a birational morphism $\eta\colon S\to Y$
that contracts simultaneously all these 
components. Set 
$\varphi=\eta\circ\phi\circ\eta^{-1}$. As 
$\Theta(\phi)$ does not intersect the 
curves contracted by $\eta$ the class 
$\eta_*\Theta(\phi)\in\mathrm{NS}(\mathbb{R},Y)$ is 
\begin{itemize}
\item[$\diamond$] isotropic, and

\item[$\diamond$] an eigenvector for 
$\varphi_*$ with eigenvalue $\lambda(\phi)$.
\end{itemize}

Let us iterate this process until $\varphi^{-1}$
does not contract any curve, that is 
$\varphi\in\mathrm{Aut}(Y)$. If $Y$ is 
singular, then consider the minimal 
desingularization $\widetilde{Y}$ of $Y$; 
the automorphism $\varphi$ lifts to an 
automorphism $\widetilde{\varphi}$ of 
$\widetilde{Y}$. 

As a result one can state

\begin{thm}[\cite{DillerFavre}]\label{thm:diamond}
Let $S$ be a complex projective surface.
Let $\phi$ be a loxodromic birational 
self map of $S$. Then 
\begin{itemize}
\item[$\diamond$] all divisors $E_j$ are
orthogonal to $\Theta(\phi)$ if and only 
if $\Theta(\phi)$ is an isotropic vector;

\item[$\diamond$] if $\Theta(\phi)$ is an
isotropic vector, then there exists a 
birational morphism $\eta\colon S\to Y$
such that $\eta\circ\phi\circ\eta^{-1}$ 
is an automorphism of $Y$.
\end{itemize}
\end{thm}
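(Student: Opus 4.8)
The statement to prove is Theorem~\ref{thm:diamond}, which records the two facts that underlie the criterion of Theorem~\ref{thm:blabla}. Since the excerpt has already carried out the essential computation, the plan is really to assemble the pieces in the right order rather than to discover anything new. First I would reduce to an algebraically stable representative: by Proposition~\ref{pro:DillerFavre} any birational self map of $S$ is conjugate, by a finite sequence of blowups, to an algebraically stable map, and the conclusion of the theorem is invariant under such a conjugacy. So I may assume from the start that $\phi$ is algebraically stable, which is exactly what makes $\phi_*=(\phi^{-1})^*$ an honest endomorphism of $\mathrm{NS}(\mathbb{R},S)$ with $\lambda(\phi)$ as spectral radius, and which validates the push-pull formula~(\ref{eq:square}).

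The first bullet is then a direct consequence of the identity
\[
(\lambda(\phi)^2-1)\,\Theta(\phi)\cdot\Theta(\phi)=\displaystyle\sum_{j=1}^m\big(E_j\cdot\Theta(\phi)\big)^2,
\]
which follows by combining the adjointness relation~(\ref{eq:losange}) with the eigenvector equation~(\ref{eq:doublelosange}). Since $\lambda(\phi)>1$ the coefficient $\lambda(\phi)^2-1$ is strictly positive and the right-hand side is a sum of squares; hence $\Theta(\phi)\cdot\Theta(\phi)=0$ holds if and only if every term $E_j\cdot\Theta(\phi)$ vanishes, i.e.\ if and only if all divisors $E_j$ are orthogonal to $\Theta(\phi)$. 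This gives the equivalence of the first bullet with no further work. I would state explicitly that $\Theta(\phi)$ is nef and the $E_j$ are effective, so that ``orthogonal'' can later be upgraded to negative-definiteness on the span of the components.

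For the second bullet I would argue as the excerpt indicates. Assuming $\Theta(\phi)\cdot\Theta(\phi)=0$, the first bullet gives $\Theta(\phi)\cdot E_j=0$ for all $j$, so the $\mathbb{Q}$-span of the irreducible components of the $E_j$ lies in $\Theta(\phi)^\perp$. The key analytic input is the Hodge index theorem: on $\Theta(\phi)^\perp$ the intersection form is negative semidefinite with kernel exactly $\mathbb{R}\Theta(\phi)$. To exclude $\Theta(\phi)$ itself from the span of the components I would use irrationality of $\Theta(\phi)$: from~(\ref{eq:square}) one gets $\phi_*^k\Theta(\phi)=\lambda(\phi)^{-k}\Theta(\phi)$, and since $\phi_*$ preserves the integral lattice $\mathrm{NS}(\mathbb{Z},S)$ while $\lambda(\phi)>1$, the class $\Theta(\phi)$ cannot be rational. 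Therefore the intersection form is negative \emph{definite} on the span of the components, and the Grauert--Mumford contraction theorem produces a birational morphism $\eta\colon S\to Y$ contracting exactly those components. I would then check that $\eta_*\Theta(\phi)$ remains isotropic and is an eigenvector of the induced map $\varphi=\eta\circ\phi\circ\eta^{-1}$, and iterate until $\varphi^{-1}$ contracts no curve, i.e.\ until $\varphi$ is a biholomorphism; a final minimal desingularization handles the case where $Y$ is singular.

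\textbf{Main obstacle.} The computational identities are free once algebraic stability is in place, so the delicate point is the contraction step: verifying that the span of the components is not merely negative semidefinite but negative definite (which is precisely where the irrationality of $\Theta(\phi)$ is indispensable), and then checking that the contraction process \emph{terminates}. Termination is not automatic from a single contraction; I expect to have to show that after contracting one sees the isotropic eigenclass on $Y$ again and that the number of curves contracted by $\varphi^{-1}$ strictly decreases, so that after finitely many steps $\varphi^{-1}$ contracts nothing and $\varphi\in\mathrm{Aut}(Y)$. This inductive bookkeeping, together with the care needed when $Y$ acquires singularities, is where the real attention must go.
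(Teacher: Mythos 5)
Your proposal is correct and follows essentially the same route as the paper: reduction to an algebraically stable model via Proposition~\ref{pro:DillerFavre}, the identity $(\lambda(\phi)^2-1)\,\Theta(\phi)\cdot\Theta(\phi)=\sum_j(E_j\cdot\Theta(\phi))^2$ for the first bullet, then Hodge index plus irrationality of $\Theta(\phi)$ (from $\phi_*^k\Theta(\phi)=\lambda(\phi)^{-k}\Theta(\phi)$ and lattice preservation) to get negative definiteness on the span of the components, Grauert--Mumford contraction, and iteration. The termination point you flag is handled in the paper exactly as you suspect, by noting that each contraction strictly decreases the rank of the N\'eron--Severi (Picard) group, with a final minimal desingularization if $Y$ is singular.
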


Then Diller and Favre
prove the following statement:

\begin{thm}[\cite{DillerFavre}]\label{thm:blablabla}
Let $\phi\in\mathrm{Bir}(S)$ $($resp.  
$\psi\in\mathrm{Bir}(\widetilde{S}))$
be an algebraically stable map of 
a complex projective surface 
$S$ $($resp. $\widetilde{S})$. Assume
that $\phi$ and $\psi$ are conjugate
via a proper modification. Suppose 
that $\lambda(\phi)>1$ $($or equivalently
that $\lambda(\psi)>1)$. Then 
$\Theta(\phi)\cdot\Theta(\phi)=0$
if and only if 
$\Theta(\psi)\cdot\Theta(\psi)=0$.
\end{thm}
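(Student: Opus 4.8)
The plan is to show that the isotropy of the eigenvector $\Theta$ is a birational invariant by reducing everything to the previous structural results. The key object is the push-pull formula (\ref{eq:square}) together with its quadratic consequence (\ref{eq:losange}), which gives the identity
\[
(\lambda(\phi)^2-1)\,\Theta(\phi)\cdot\Theta(\phi)=\displaystyle\sum_{j}(E_j\cdot\Theta(\phi))^2,
\]
and the analogous identity for $\psi$. The first thing I would record is that, by Theorem \ref{thm:diamond}, the condition $\Theta(\phi)\cdot\Theta(\phi)=0$ is equivalent to the condition that $\Theta(\phi)$ is orthogonal to every $E_j$; this is the local, geometric reformulation of isotropy that will be stable under conjugacy. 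So the goal reduces to showing that $\Theta(\phi)\cdot\Theta(\phi)=0$ implies that the corresponding eigenvector $\Theta(\psi)$ on $\widetilde S$ is also isotropic.

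The core of the argument is to relate the two eigenvectors through the proper modification realizing the conjugacy. First I would write $\psi=\zeta\circ\phi\circ\zeta^{-1}$ for a birational map $\zeta\colon S\dashrightarrow\widetilde S$, and factor $\zeta$ as $\zeta=\eta_2\circ\eta_1^{-1}$ where $\eta_1\colon Z\to S$ and $\eta_2\colon Z\to\widetilde S$ are birational morphisms (sequences of blowups), using the Zariski-type factorization recalled in the introduction. The induced isometries $(\eta_i)_*$ on the Picard--Manin/N\'eron--Severi groups conjugate $\phi_*$ to $\psi_*$, and since the dynamical degree $\lambda(\phi)=\lambda(\psi)>1$ is the same and its eigenvector in the nef cone is \emph{unique up to scalar} (the Perron--Frobenius uniqueness quoted in the proof of Theorem \ref{thm:diamond}), the pushforward of $\Theta(\phi)$ along this conjugacy must be proportional to $\Theta(\psi)$. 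The point is that the embeddings of the N\'eron--Severi groups into the common Picard--Manin space $\mathcal{Z}(S)\cong\mathcal{Z}(\widetilde S)$ are isometric for the intersection form, so the self-intersection number $\Theta\cdot\Theta$ is preserved: $\Theta(\phi)\cdot\Theta(\phi)=0$ forces $\Theta(\psi)\cdot\Theta(\psi)=0$. The whole statement is then symmetric in $\phi$ and $\psi$, giving the equivalence.

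The main obstacle will be the bookkeeping needed to identify $(\zeta)_*\Theta(\phi)$ with $\Theta(\psi)$ rather than merely with \emph{some} eigenvector: one must verify that the pushforward lands in the nef cone (not merely in the eigenspace for $\lambda$) so that Perron--Frobenius uniqueness applies, and that the algebraic stability assumed for $\phi$ and $\psi$ is compatible with passing through the intermediate surface $Z$. Concretely, $\eta_1^*$ and $(\eta_2)_*$ preserve the effective and nef cones and intersect trivially with the exceptional classes they introduce, so $\Theta(\phi)$ pushes forward to a nef class which is again an eigenvector of $\psi_*$ for the eigenvalue $\lambda$; uniqueness then finishes it. I would handle this by working throughout in the completed Picard--Manin space $\mathrm{Z}(S)$, where both $\phi_*$ and $\psi_*$ act as isometries preserving the relevant cones, so that the self-intersection is manifestly a conjugacy invariant. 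Once that invariance is in place, combining it with the $E_j$-orthogonality characterization from Theorem \ref{thm:diamond} yields the desired equivalence and, in view of Theorem \ref{thm:diamond}, the simultaneous existence of an automorphism model on either side.
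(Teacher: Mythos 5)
Your reduction to the Picard--Manin space does not work as stated, and the failure is at the central step. In $\mathrm{Z}(S)$ the $\lambda$-eigenvector of \emph{any} loxodromic isometry is automatically isotropic (the two eigenlines of a loxodromic isometry lie on the isotropic cone, as recalled in \S\ref{subsec:isometries}), so the observation that ``self-intersection is manifestly a conjugacy invariant'' in $\mathrm{Z}(S)$ is vacuous: it says nothing about $\Theta(\phi)\cdot\Theta(\phi)$, which is the self-intersection of a specific class of $\mathrm{NS}(S)$, not of the Picard--Manin eigenvector $v^+_\phi$. Moreover $\Theta(\phi)$, viewed inside $\mathrm{Z}(S)$, satisfies $\phi_*\Theta(\phi)=\lambda(\phi)\Theta(\phi)$ only when its image acquires no components along exceptional classes $\mathbf{e}_p$, and by the push--pull formula (\ref{eq:square}) and the identity derived from (\ref{eq:losange}) this happens exactly when $\Theta(\phi)\cdot E_j=0$ for all $j$, i.e.\ exactly when $\Theta(\phi)\cdot\Theta(\phi)=0$. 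So in the direction where you assume $\Theta(\phi)^2=0$ you do get that $\zeta_*\Theta(\phi)$ is an isotropic $\lambda$-eigenvector of $\psi_*$ in $\mathrm{Z}(\widetilde S)$ --- but to conclude $\Theta(\psi)^2=0$ you must identify this vector with $\Theta(\psi)$, and your appeal to Perron--Frobenius uniqueness presupposes that $\Theta(\psi)$ spans the Picard--Manin eigenline, which is \emph{equivalent} to $\Theta(\psi)^2=0$, the conclusion. The argument is circular at precisely the point that carries the content of the theorem.

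The finite-dimensional variant you sketch has the same problem in different clothing: $(\eta_2)_*\eta_1^*\Theta(\phi)\in\mathrm{NS}(\widetilde S)$ is in general not a $\psi^*$-eigenvector, because $\zeta\circ\phi=\psi\circ\zeta$ as birational maps does not give $(\eta_2)_*\eta_1^*\circ\phi^*=\psi^*\circ(\eta_2)_*\eta_1^*$ on N\'eron--Severi groups --- the push--pull correction terms of type (\ref{eq:square}) appear --- and $(\eta_2)_*$ is not an isometry, so neither proportionality nor self-intersection transports. Your remark that the exceptional classes ``intersect trivially'' is true for the $\eta_1$-exceptionals against $\eta_1^*\Theta(\phi)$, but the real obstruction is orthogonality of $\eta_1^*\Theta(\phi)$ to the $\eta_2$-exceptional classes, i.e.\ to the curves contracted by the conjugating map $\zeta$ itself; nothing forces this, since $\zeta$ is arbitrary. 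Note also that the survey states this theorem as a citation of \cite{DillerFavre} without reproducing a proof, so there is no in-text argument to fall back on: what your note is missing is a genuine lemma --- using the algebraic stability of $\psi$ --- showing that the isotropic eigenvector $\zeta_*\Theta(\phi)\in\mathrm{Z}(\widetilde S)$ has no exceptional components and hence lies in $\mathrm{NS}(\widetilde S)\otimes\mathbb{R}$ (whereupon a comparison with $\Theta(\psi)$, or first with $\Theta(\psi^{-1})$ via the adjoint action, can be made honestly), or equivalently a direct finite-dimensional comparison of $\zeta^*\Theta(\psi)$ with $\Theta(\phi)$ through push--pull identities. Until one of these is supplied, the key identification $\zeta_*\Theta(\phi)\sim\Theta(\psi)$ is unproved.
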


Theorem \ref{thm:blabla} follows from
Theorems \ref{thm:diamond} and \ref{thm:blablabla}.

\subsubsection{A second answer}

The following statement gives another 
characterization of birational maps
conjugate to an automorphism of a 
smooth projective rational surface:

\begin{thm}[\cite{DillerFavre, BlancCantat}]\label{thm:fleur}
Let $\phi$ be a birational map of a 
complex projective surface~$S$.
\begin{itemize}
\item[$\diamond$] If $\lambda(\phi)$ is a 
Salem number, then there exists a 
birational map 
$\psi\colon\widetilde{S}\dashrightarrow S$
that conjugates~$\phi$ to an automorphism
of $\widetilde{S}$;

\item[$\diamond$] if $\phi$ is conjugate to
an automorphism, then $\lambda(\phi)$ is a 
quadratic integer or a Salem 
number.
\end{itemize}
\end{thm}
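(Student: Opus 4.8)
The statement has two independent implications, and I would treat them separately. For the first implication I would invoke Theorem \ref{thm:blabla} together with Theorem \ref{thm:diamond}: the heart of the matter is to show that if $\lambda(\phi)$ is a Salem number, then the leading eigenvector $\Theta(\phi)$ of the action on $\mathrm{H}^{1,1}$ (equivalently on $\mathrm{NS}(\mathbb{R},S)$) is isotropic, i.e.\ $\Theta(\phi)\cdot\Theta(\phi)=0$. Once this is established, Theorem \ref{thm:diamond} produces a birational morphism $\eta\colon S\to Y$ such that $\eta\circ\phi\circ\eta^{-1}$ is an automorphism, and desingularizing $Y$ if necessary yields the required automorphism of a smooth surface $\widetilde{S}$. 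The plan for the isotropy claim is to use the push-pull formula together with the arithmetic structure of a Salem number. First I would reduce to the algebraically stable case by Proposition \ref{pro:DillerFavre}, so that $\lambda(\phi)$ equals the spectral radius of $\phi_*$ on $\mathrm{NS}(\mathbb{R},S)$ and $\Theta(\phi)$ satisfies $\phi^*\Theta(\phi)=\lambda(\phi)\Theta(\phi)$.

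\textbf{The key step.} The crux is the following dichotomy coming from the formula
\[
(\lambda(\phi)^2-1)\,\Theta(\phi)\cdot\Theta(\phi)=\displaystyle\sum_{j=1}^m(E_j\cdot\Theta(\phi))^2
\]
recalled in the proof of Theorem \ref{thm:blabla}: the right-hand side is a sum of squares, so $\Theta(\phi)\cdot\Theta(\phi)\geq 0$, and by the Hodge index theorem $\Theta(\phi)\cdot\Theta(\phi)\leq 0$ since $\Theta(\phi)$ is a limit of nef classes lying in the closure of the positive cone; hence $\Theta(\phi)\cdot\Theta(\phi)\in\{0\}$ precisely when it is both nonnegative and nonpositive. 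The main obstacle is to exclude the case $\Theta(\phi)\cdot\Theta(\phi)>0$, and this is exactly where the Salem hypothesis enters. When $\lambda(\phi)$ is a Salem number, its minimal polynomial is reciprocal and all its Galois conjugates lie on the unit circle except $\lambda(\phi)$ and $1/\lambda(\phi)$; the characteristic polynomial of $\phi_*$ acting on the integral lattice $\mathrm{NS}(\mathbb{Z},S)$ therefore factors through this Salem polynomial, forcing the eigenspace structure to be compatible with an isometry of the intersection form having a single eigenvalue of modulus $>1$. I would argue that a strictly positive self-intersection of $\Theta(\phi)$ would be incompatible with the lattice being preserved, since $\Theta(\phi)$ is irrational (its rationality would contradict $\lambda(\phi)>1$ and $\phi_*\Theta(\phi)=\lambda(\phi)^{-1}\Theta(\phi)$), and the arithmetic of Salem numbers pins the associated eigendata to the isotropic cone. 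This is the delicate point and the place where I expect the real work to lie.

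\textbf{The converse.} For the second implication I would start from an automorphism $\psi$ of $\widetilde{S}$ birationally conjugate to $\phi$, so $\lambda(\phi)=\lambda(\psi)$ by the conjugacy-invariance of the dynamical degree established in \S\ref{sec:degreegrowth}. The action $\psi^*$ is now a genuine isometry of the lattice $\mathrm{NS}(\mathbb{Z},\widetilde{S})$ equipped with the intersection form of signature $(1,r-1)$ (Hodge index theorem), so $\lambda(\psi)$ is an eigenvalue of an integral isometry of a quadratic lattice of Minkowski signature. A standard lattice-theoretic argument then shows that the spectral radius of such an isometry, when larger than $1$, must be either a quadratic integer (when the relevant invariant subspace has small dimension) or a Salem number, since the characteristic polynomial is monic with integer coefficients, reciprocal, and all roots lie in the closed unit disk except the leading one and its inverse. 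Here I would simply cite that the classification of eigenvalues of integral isometries of hyperbolic lattices gives precisely the stated alternative; this half is essentially formal once the automorphism model is in hand and requires no new estimate.

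\textbf{Assembly.} Finally I would combine the two implications with Theorem \ref{thm:diamond} to present the whole statement, taking care that the first bullet only requires $\lambda(\phi)$ to be Salem (not more) and that the second bullet's quadratic-integer possibility genuinely occurs, for instance for linear automorphisms of abelian surfaces whose dynamical degree is a reciprocal quadratic integer. The overall difficulty is concentrated in the isotropy argument of the first implication; everything else follows from results already recorded in the excerpt.
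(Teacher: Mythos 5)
Your converse half is essentially correct and is the standard argument: once $\phi$ is conjugated to an automorphism $\psi$ of $\widetilde{S}$, the action $\psi^*$ is an integral isometry of $\mathrm{NS}(\mathbb{Z},\widetilde{S})$ equipped with the signature $(1,r-1)$ intersection form, its characteristic polynomial is monic, integral and reciprocal with at most one root outside the closed unit disk, and the alternative \og quadratic integer or Salem number\fg{} follows; the paper cites this rather than reproving it, so no comparison is needed there.

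The first half, however, has a genuine gap, located exactly at the step you yourself flag as \og the delicate point\fg. Your proposed dichotomy is wrong: from $(\lambda(\phi)^2-1)\,\Theta(\phi)\cdot\Theta(\phi)=\sum_j(E_j\cdot\Theta(\phi))^2$ one indeed gets $\Theta(\phi)\cdot\Theta(\phi)\geq 0$, but the Hodge index theorem does \emph{not} give $\Theta(\phi)\cdot\Theta(\phi)\leq 0$ --- $\Theta(\phi)$ is nef, and nef classes have \emph{nonnegative} self-intersection. In fact $\Theta(\phi)\cdot\Theta(\phi)>0$ genuinely occurs (this is precisely the case of algebraically stable maps not conjugate to automorphisms, by Theorem~\ref{thm:blabla}, whose dynamical degrees are then Pisot numbers), so no sign or positivity argument can exclude it, and your appeal to \og the arithmetic of Salem numbers pins the eigendata to the isotropic cone\fg{} is a restatement of the goal, not a proof. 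The mechanism the paper actually uses is a Galois twist, which your sketch never reaches: let $P$ be the minimal polynomial of $\lambda(\phi)$, pick a root $\alpha$ of $P$ with $\vert\alpha\vert=1$ (this is the only place the Salem hypothesis enters) and a field automorphism $\kappa$ of $\mathbb{C}$ with $\kappa(\lambda(\phi))=\alpha$. Taking $\Theta(\phi)$ in $\mathrm{NS}(L,S)$ for $L$ the splitting field of $P$, and using that the classes $E_j$ lie in $\mathrm{NS}(S)$ and are therefore $\kappa$-invariant, one applies $\kappa$ to $\phi^*\Theta(\phi)=\lambda(\phi)\Theta(\phi)$ to obtain $\phi^*\widetilde{\Theta}=\alpha\widetilde{\Theta}$ where $\widetilde{\Theta}=\kappa(\Theta(\phi))$. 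The push-pull identity applied to $\widetilde{\Theta}$ and its complex conjugate $\overline{\widetilde{\Theta}}$, together with $\alpha\overline{\alpha}=1$, gives
\[
\widetilde{\Theta}\cdot\overline{\widetilde{\Theta}}=\widetilde{\Theta}\cdot\overline{\widetilde{\Theta}}+\displaystyle\sum_{j=1}^m\big(E_j\cdot\widetilde{\Theta}\big)\big(E_j\cdot\overline{\widetilde{\Theta}}\big),
\]
a sum of squared moduli, forcing $E_j\cdot\widetilde{\Theta}=0$ and hence $E_j\cdot\Theta(\phi)=0$ for all $j$; by the first assertion of Theorem~\ref{thm:diamond} this is equivalent to $\Theta(\phi)\cdot\Theta(\phi)=0$, and the second assertion (after desingularizing $Y$) produces the automorphism. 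So your overall scaffolding --- reduce to the algebraically stable case by Proposition~\ref{pro:DillerFavre}, prove isotropy, conclude by Theorem~\ref{thm:diamond} --- is the right one, but the isotropy step, which is the only substantive step, is missing, and the partial argument you offer in its place would fail.
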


Assume that $\lambda(\phi)$ is a Salem
number. Denote by $P(t)\in\mathbb{Z}[t]$ 
the minimal polynomial of~$\lambda(\phi)$.
But $\lambda(\phi)$ is a Salem
number, so there exists a root of $P$ with 
modulus $1$, denote it $\alpha$. Hence fix
an automorphism $\kappa$ of the field 
$\mathbb{C}$ such that 
$\kappa(\lambda(\phi))=\alpha$. According to
Proposition \ref{pro:DillerFavre} we can 
suppose that $\phi$ is algebraically stable 
up to birational conjugacy. The eigenvector 
$\Theta(\phi)$ thus corresponds to the 
eigenvalue $\lambda(\phi)$, and so may be taken 
in $\mathrm{NS}(L,S)$ where $L$ is the 
splitting field of~$P$. The automorphism
$\kappa$ acts on $\mathrm{NS}(\mathbb{C},S)$
preserving $\mathrm{NS}(S)$ pointwise.
Since $\phi^*$ is defined over $\mathbb{Z}$
and $\phi^*\Theta(\phi)=\lambda(\phi)\Theta(\phi)$
one obtains 
\[
\phi^*(\kappa(\Theta(\phi))=\kappa(\lambda(\phi))\kappa(\Theta(\phi))=\alpha\kappa(\Theta(\phi))
\]
that is 
$\phi^*\widetilde{\Theta}=\alpha\widetilde{\Theta}$
where $\widetilde{\Theta}=\kappa(\Theta(\phi))$. 
The divisor classes of the $E_j$'s belong to 
$\mathrm{NS}(S)$, so they are $\kappa$-invariant.
As a consequence (\ref{eq:square}) implies
\begin{equation}\label{eq:doublesquare}
\phi_*\phi^*\widetilde{\Theta}=\widetilde{\Theta}+\displaystyle\sum_{j=1}^m(\widetilde{\Theta}\cdot E_j)E_j
\end{equation}
Denote by $\overline{\widetilde{\Theta}}$ 
the conjugate of $\widetilde{\Theta}$ 
and by $\overline{\alpha}$ the conjugate
of $\alpha$; from (\ref{eq:doublesquare})
one gets
\[
(\alpha\overline{\alpha})\widetilde{\theta}\cdot\overline{\widetilde{\theta}}=\phi^*\widetilde{\Theta}\cdot\phi^*\overline{\widetilde{\Theta}}
\]
As $\vert\alpha\vert=\alpha\overline{\alpha}=1$
one gets that $E_j\cdot\widetilde{\Theta}=0$ 
for any $1\leq j\leq m$ and $E_j\cdot\Theta(\phi)=0$ 
for any $1\leq j\leq m$.

Theorem \ref{thm:fleur} follows from Theorem
\ref{thm:diamond}.

\begin{rem}
Theorem \ref{thm:fleur} does not extend to 
quadratic integers (\emph{see} \cite{BlancCantat}).
\end{rem}

\subsubsection{A third answer}

As we have seen in \S \ref{sec:firstdef} 
if $S$ is a projective
smooth surface, then every $\phi\in\mathrm{Bir}(S)$
admits a minimal resolution, {\it i.e.} 
there exist $\pi_1\colon Z\to S$, 
$\pi_2\colon Z\to S$ two sequences of blow ups
such that
\begin{itemize}
\item[$\diamond$] no $(-1)$-curves of $Z$ is 
contracted by both $\pi_1$ and $\pi_2$;

\item[$\diamond$] $\phi=\pi_2\circ\pi_1^{-1}$.
\end{itemize}

Denote by 
$\mathfrak{b}(\phi)$\index{not}{$\mathfrak{b}(\phi)$}
the number of base points of $\phi$; note that 
$\mathfrak{b}(\phi)$ is equal to the difference
of the ranks of $\mathrm{Pic}(Z)$ and 
$\mathrm{Pic}(S)$; thus $\mathfrak{b}(\phi)$ is equal to 
$\mathfrak{b}(\phi^{-1})$. Let us introduce the 
\textsl{dynamical number of the base-points of 
$\phi$}\index{defi}{dynamical number of the base-points}:
it is\index{not}{$\mu(\phi)$}
\[
\mu(\phi)=\displaystyle\lim_{k\to +\infty}\frac{\mathfrak{b}(\phi^k)}{k}
\]
Since 
$\mathfrak{b}(\phi\circ\psi)\leq\mathfrak{b}(\phi)+\mathfrak{b}(\psi)$ 
for any $\phi$, $\psi$ in $\mathrm{Bir}(S)$, 
$\mu(\phi)$ is a non-negative real number.
As $\mathfrak{b}(\phi)=\mathfrak{b}(\phi^{-1})$
one gets $\mu(\phi^k)=\vert k\mu(\phi)\vert$ 
for any $k\in\mathbb{Z}$. Furthermore if 
$\psi\colon S\dashrightarrow Z$ is a birational
map between smooth projective surfaces and
if $\phi\in\mathrm{Bir}(S)$, then for all 
$n\in\mathbb{Z}$
\[
-2\mathfrak{b}(\psi)+\mathfrak{b}(\phi^n)\leq\mathfrak{b}(\psi\circ\varphi^n\circ\psi^{-1})\leq 2\mathfrak{b}(\psi)+\mathfrak{b}(\phi^n);
\]
hence $\mu(\phi)=\mu(\psi\circ\phi\circ\psi^{-1})$. 
One can thus state the following 
result:

\begin{lem}[\cite{BlancDeserti:degree}]\label{lem:losange}
The dynamical number of base-points is an 
invariant of conjugation. In particular if 
$\phi$ is conjugate to an automorphism of
a smooth projective surface, then 
$\mu(\phi)=0$.
\end{lem}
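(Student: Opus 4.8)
The statement to prove is Lemma~\ref{lem:losange}: the dynamical number of base-points $\mu(\phi)=\lim_{k\to+\infty}\frac{\mathfrak{b}(\phi^k)}{k}$ is a conjugacy invariant, and in particular vanishes when $\phi$ is conjugate to an automorphism of a smooth projective surface. The approach is to exploit the subadditivity of $\mathfrak{b}$ together with the symmetry $\mathfrak{b}(\phi)=\mathfrak{b}(\phi^{-1})$, both of which are already recorded in the excerpt just before the statement. The key quantitative input that I would establish first is the two-sided estimate controlling how $\mathfrak{b}$ changes under conjugation by a fixed birational map $\psi$, namely that for all $n\in\mathbb{Z}$,
\[
-2\mathfrak{b}(\psi)+\mathfrak{b}(\phi^n)\leq\mathfrak{b}(\psi\circ\phi^n\circ\psi^{-1})\leq 2\mathfrak{b}(\psi)+\mathfrak{b}(\phi^n).
\]
This is already asserted in the text, so I may invoke it directly; its proof rests on subadditivity applied to the decomposition $\psi\circ\phi^n\circ\psi^{-1}$ and on the fact that $\mathfrak{b}(\psi^{-1})=\mathfrak{b}(\psi)$.

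\textbf{First step: invariance under conjugation.} Given the displayed inequality, I would divide through by $n$ and let $n\to+\infty$. The term $\frac{2\mathfrak{b}(\psi)}{n}$ tends to $0$ because $\mathfrak{b}(\psi)$ is a fixed finite constant independent of $n$, so both the upper and lower bounds converge to $\mu(\phi)$. By the squeeze principle, the limit defining $\mu(\psi\circ\phi\circ\psi^{-1})$ exists and equals $\mu(\phi)$. I should be slightly careful that the limit is taken along $k\to+\infty$ with the iterate $(\psi\circ\phi\circ\psi^{-1})^k=\psi\circ\phi^k\circ\psi^{-1}$, so the inequality applied with $n=k$ gives exactly the numerator $\mathfrak{b}(\psi\circ\phi^k\circ\psi^{-1})$ that I need; this matches the displayed bound verbatim. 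This establishes $\mu(\phi)=\mu(\psi\circ\phi\circ\psi^{-1})$, i.e.\ that $\mu$ is a conjugacy invariant.

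\textbf{Second step: automorphisms have $\mu=0$.} If $\phi$ is itself an automorphism of a smooth projective surface $S$, then $\phi$ is an isomorphism and hence has no base-points, so $\mathfrak{b}(\phi^k)=0$ for every $k$, whence $\mu(\phi)=0$ trivially. If $\phi$ is merely conjugate to an automorphism $\alpha$ via some birational map $\psi$, then by the invariance just proved $\mu(\phi)=\mu(\alpha)=0$. This completes the argument.

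\textbf{Expected obstacle.} The only genuinely nontrivial point is the two-sided conjugation estimate, but since the excerpt states it explicitly immediately before the lemma, I am entitled to assume it, and the remainder is a short limit argument. If I instead had to prove that estimate from scratch, the main care would be in tracking how base-points of a composite relate to those of the factors --- the inequality $\mathfrak{b}(\phi\circ\psi)\le\mathfrak{b}(\phi)+\mathfrak{b}(\psi)$ is the crucial subadditivity, and one applies it to both $\psi\circ\phi^n$ and $(\psi\circ\phi^n)\circ\psi^{-1}$ for the upper bound, then to $\phi^n=\psi^{-1}\circ(\psi\circ\phi^n\circ\psi^{-1})\circ\psi$ for the lower bound, using $\mathfrak{b}(\psi)=\mathfrak{b}(\psi^{-1})$ throughout. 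Given the tools already in place, the proof as a whole is short and presents no serious difficulty.
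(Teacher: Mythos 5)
Your proof is correct and follows essentially the same route as the paper: the text of \cite{BlancDeserti:degree} as reproduced here derives the invariance exactly from the two-sided estimate $-2\mathfrak{b}(\psi)+\mathfrak{b}(\phi^n)\leq\mathfrak{b}(\psi\circ\phi^n\circ\psi^{-1})\leq 2\mathfrak{b}(\psi)+\mathfrak{b}(\phi^n)$ by dividing by $n$ and letting $n\to+\infty$, together with the identity $(\psi\circ\phi\circ\psi^{-1})^k=\psi\circ\phi^k\circ\psi^{-1}$ and the observation that an automorphism has $\mathfrak{b}(\alpha^k)=0$ for all $k$. Your remarks on how the two-sided estimate itself follows from subadditivity and $\mathfrak{b}(\psi)=\mathfrak{b}(\psi^{-1})$ match the paper's setup, so there is nothing to add.
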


A base-point $p$ of $\phi$ is a 
\textsl{persistent base-point}
\index{defi}{persistent base-point}
if there exists an integer $N$ such 
that for any $k\geq N$
\[
\left\{
\begin{array}{ll}
p\in\mathrm{Base}(\phi^k)\\
p\not\in\mathrm{Base}(\phi^{-k})
\end{array}
\right.
\]
Let $p$ be a point of $S$ or a 
point infinitely near $S$ such that 
$p\not\in\mathrm{Base}(\phi)$. Consider
a minimal resolution of $\phi$
\[
 \xymatrix{
     & Z\ar[rd]^{\pi_1}\ar[ld]_{\pi_2} & \\
    S\ar@{-->}[rr]_\phi & & S
  }
\]
Because $p$ is not a base-point of $\phi$
it corresponds via $\pi_1$ to a point 
of $Z$ or infinitely near; using $\pi_2$
we view this point on $S$ again maybe 
infinitely near and denote 
it~$\phi^\bullet(p)$\index{not}{$\phi^\bullet$}.
For instance if $S=\mathbb{P}^2_\mathbb{C}$, 
$p=(1:0:0)$ and $\phi$ is the birational 
self map of~$\mathbb{P}^2_\mathbb{C}$ given by
\[
(z_0:z_1:z_2)\dashrightarrow(z_1z_2+z_0^2:z_0z_2:z_2^2)
\]
the point $\phi^\bullet(p)$ is not equal
to $p=\phi(p)$ but is infinitely near to 
it. Note that if $\phi$, $\psi$ are 
two birational self maps of $S$ and 
$p$ is a point of $S$ such that 
$p\not\in\mathrm{Base}(\phi)$, 
$\phi(p)\not\in\mathrm{Base}(\psi)$, 
then 
$(\psi\circ\phi)^\bullet(p)=\psi^\bullet(\phi^\bullet(p))$. One can put an equivalence relation on 
the set of points of $S$ or infinitely 
near $S$: the point $p$ is 
\textsl{equivalent}\index{defi}{equivalent (point)}
to the point~$q$ if there exists an 
integer $k$ such that $(\phi^k)^\bullet(p)=q$;
in particular $p\not\in\mathrm{Base}(\phi^k)$
and $q\not\in\mathrm{Base}(\phi^{-k})$. 
Note that the equivalence class is the 
generalization of set of orbits for 
birational maps.

A base-point is \textsl{periodic}\index{defi}{periodic base-point} if 
\begin{itemize}
\item either $(\phi^k)^\bullet(q)=q$ for some $k\geq 0$,

\item or $q\in\mathrm{Base}(\phi^k)$ for any $k\in\mathbb{Z}\smallsetminus\{0\}$ 
(in particular $(\phi^k)^\bullet(p)$ is 
never defined for $k\not=0$).
\end{itemize}
Let $\mathcal{P}$ be the set of periodic 
base-points of $\phi$. Denote by 
$\widehat{\mathcal{P}}$ the finite set of 
points equivalent to a point of $\mathcal{P}$.
Both $\mathfrak{b}(\phi)$ and 
$\mathfrak{b}(\phi^{-1})$ are finite, so there 
exists $n\in\mathbb{N}$ such that for 
any $p\in\mathrm{Base}(\phi)$ non periodic
and for any $j$, $\ell\geq N$
\[
\left\{
\begin{array}{ll}
 p\in\mathrm{Base}(\phi^j)\quad\Longleftrightarrow \quad p\in\mathrm{Base}(\phi^\ell)\\
  p\in\mathrm{Base}(\phi^{-j})\quad\Longleftrightarrow \quad p\in\mathrm{Base}(\phi^{-\ell}) 
\end{array}
\right.
\]

Let us decompose $\mathrm{Base}(\phi)$ into 
five disjoint sets:
\begin{eqnarray*}
& & \mathcal{B}_{++}=\big\{p\,\vert\,p\not\in\mathcal{P},\,p\in\mathrm{Base}(\phi^j),\,p\in\mathrm{Base}(\phi^{-j})\quad\forall\,j\geq N\big\}\\
& & \mathcal{B}_{+-}=\big\{p\,\vert\,p\not\in\mathcal{P},\,p\in\mathrm{Base}(\phi^j),\,p\not\in\mathrm{Base}(\phi^{-j})\quad\forall\,j\geq N\big\}\\
& & \mathcal{B}_{-+}=\big\{p\,\vert\,p\not\in\mathcal{P},\,p\not\in\mathrm{Base}(\phi^j),\,p\in\mathrm{Base}(\phi^{-j})\quad\forall\,j\geq N\big\}\\
& & \mathcal{B}_{--}=\big\{p\,\vert\,p\not\in\mathcal{P},\,p\not\in\mathrm{Base}(\phi^j),\,p\not\in\mathrm{Base}(\phi^{-j})\quad\forall\,j\geq N\big\}
\end{eqnarray*}
and $\mathcal{P}$.

\begin{rems}
Note that:
\begin{itemize}
\item[$\diamond$] $\mathcal{B}_{+-}$ is the set of 
persistent base-points of $\phi$;

\item[$\diamond$] $\mathcal{B}_{-+}$ is the set of 
persistent base-points of $\phi^{-1}$;

\item[$\diamond$] two equivalent base-points of 
$\phi$ belong to the same subsets of 
$\mathrm{Base}(\phi)$.
\end{itemize}
\end{rems}

Take $k\geq 2N$ an integer. Let us compute 
$\mathfrak{b}(\phi^k)$. Any base-point of 
$\phi^k$ is equivalent to a base-point of 
$\phi$. Let us thus consider a base-point 
$p$ of $\phi$ and determine the number 
$m_{p,k}$ of base-points of $\phi^k$ which
are equivalent to $p$.

\begin{enumerate}
\item[a)] If $p$ belongs to $\mathcal{P}$, 
then the number of points equivalent to $p$
is less than $\#\mathcal{P}$ 
and $m_{p,k}\leq\#\mathcal{P}$.

\item[b)] If $p$ does not belong to $\mathcal{P}$, 
then any point equivalent to $p$ is equal
to $(\phi^i)^\bullet(p)$ for some $i$; 
furthermore these points all are distinct.
Hence $m_{p,k}=\# I_{p,k}$ where 
\[
I_{p,k}=\big\{i\in\mathbb{Z}\,\vert\,p\not\in\mathrm{Base}(\phi^i),\,p\in\mathrm{Base}(\phi^{i+k})\big\}.
\]
\begin{enumerate}
\item[b)i)] Suppose that $p$ belongs to 
$\mathcal{B}_{++}$. Since $p$ does not 
belong to $\mathrm{Base}(\phi^i)$, the 
following inequalities hold: $-N<i<N$,
and so $m_{p,k}<2N$.

\item[b)ii)] If $p$ belongs to $\mathcal{B}_{--}$, 
then $p$ belongs to $\mathrm{Base}(\phi^{i+k})$
hence $-N<i+k<N$ and $m_{p,k}<2N$.

\item[b)iii)] Assume that $p$ belongs 
to $\mathcal{B}_{-+}$. As 
$p\not\in\mathrm{Base}(\phi^i)$ 
(resp. $p\in\mathrm{Base}(\phi^{i+k})$), 
one has $-N<i$ (resp. $i+k\leq N$). 
These two conditions imply $-N<i\leq N-k$.
But $k>2N$, so $m_{p,k}=0$.

\item[b)iv)] Finally consider a point 
$p$ in $\mathcal{B}_{+-}$. The fact that
$p\not\in\mathrm{Base}(\phi^i)$ 
(resp. $p\in\mathrm{Base}(\phi^{i+k})$)
yields $i<N$ (resp. $-N<i+k$). As a 
result $-N-k<i<N$ and $m_{p,k}\leq 2N+k$.
Conversely if $i\leq -N$ and $i+k\geq N$, 
then $p\not\in\mathrm{Base}(\phi^i)$ 
and $p\in\mathrm{Base}(\phi^{i+k})$, 
{\it i.e.} $i\in I_{p,k}$. As a
consequence $m_{p,k}\geq \#[N-k,-N]=k-2N+1$.
Finally 
\[
-2N\leq m_{p,k}-k\leq 2N.
\]
\end{enumerate}
\end{enumerate}

Consequently there exist
two constants $\alpha$, $\beta$ (independent
on $k$) such that for all $k\geq 2N$
\[
\nu k+\alpha\leq \mathfrak{b}(\phi^k)\leq\nu k+\beta
\]
where $\nu$ is the number of 
equivalence classes of persistent base-points
of $\phi$ (recall that $\mathcal{B}_{+-}$
is the set of persistent base-points of 
$\phi$). But 
$\mu(\phi)=\displaystyle\lim_{k\to +\infty}\frac{\mathfrak{b}(\phi^k)}{k}$,
so $\mu(\phi)=\nu$. One can thus state:

\begin{pro}[\cite{BlancDeserti:degree}]
Let $S$ be a smooth projective surface.
Let $\phi$ be a birational self map 
of $S$.

Then $\mu(\phi)$ coincides with the 
number of equivalence classes of 
persistent base-points of $\phi$. 
In particular $\mu(\phi)$ is an 
integer.
\end{pro}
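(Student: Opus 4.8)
The statement to prove asserts two things: that $\mu(\phi)$ equals the number $\nu$ of equivalence classes of persistent base-points of $\phi$, and that consequently $\mu(\phi)$ is an integer. The natural strategy is to directly estimate $\mathfrak{b}(\phi^k)$ for large $k$ by counting base-points according to the equivalence relation generated by $\phi^\bullet$, then to take the limit defining $\mu(\phi)$. Since $\mu(\phi)=\lim_{k\to+\infty}\mathfrak{b}(\phi^k)/k$ exists and every base-point of $\phi^k$ is equivalent to some base-point of $\phi$, the whole computation reduces to controlling, for each base-point $p$ of $\phi$, the quantity $m_{p,k}$ counting how many base-points of $\phi^k$ lie in the equivalence class of $p$.

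\textbf{Main steps.} First I would fix, using finiteness of $\mathfrak{b}(\phi)$ and $\mathfrak{b}(\phi^{-1})$, an integer $N$ large enough that membership of a non-periodic base-point $p$ in $\mathrm{Base}(\phi^{\pm j})$ stabilizes for $j\geq N$; this is exactly the threshold that lets one partition $\mathrm{Base}(\phi)$ into the five disjoint sets $\mathcal{B}_{++}$, $\mathcal{B}_{+-}$, $\mathcal{B}_{-+}$, $\mathcal{B}_{--}$, and $\mathcal{P}$. Next, for $k\geq 2N$ and each $p\in\mathrm{Base}(\phi)$, I would identify $m_{p,k}$ with the cardinality of the index set
\[
I_{p,k}=\big\{i\in\mathbb{Z}\,\vert\,p\not\in\mathrm{Base}(\phi^i),\,p\in\mathrm{Base}(\phi^{i+k})\big\},
\]
using that distinct integers $i$ produce distinct equivalent points $(\phi^i)^\bullet(p)$ when $p$ is non-periodic. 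The crux is the case analysis over the five sets: for $p\in\mathcal{P}$ one bounds $m_{p,k}$ by $\#\mathcal{P}$; for $p\in\mathcal{B}_{++}$ and $p\in\mathcal{B}_{--}$ the constraints $-N<i<N$ respectively $-N<i+k<N$ force $m_{p,k}<2N$; for $p\in\mathcal{B}_{-+}$ the incompatible constraints $-N<i\leq N-k$ give $m_{p,k}=0$ once $k>2N$; and for the persistent base-points $p\in\mathcal{B}_{+-}$ one obtains the two-sided estimate $k-2N+1\leq m_{p,k}\leq k+2N$, i.e.\ $\lvert m_{p,k}-k\rvert\leq 2N$. Summing these contributions over the finitely many base-points of $\phi$ yields constants $\alpha,\beta$ independent of $k$ with
\[
\nu k+\alpha\leq\mathfrak{b}(\phi^k)\leq\nu k+\beta,
\]
where only the $\mathcal{B}_{+-}$ contributions carry the leading linear term and $\nu$ counts equivalence classes of persistent base-points. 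Dividing by $k$ and letting $k\to+\infty$ gives $\mu(\phi)=\nu$, and since $\nu$ is a cardinality of a finite set it is an integer.

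\textbf{The main obstacle.} The delicate point will be the bookkeeping in the $\mathcal{B}_{+-}$ case, where one must be careful that one is counting equivalence \emph{classes} of persistent base-points rather than points: a single equivalence class contributes the linear-in-$k$ term exactly once, and I must verify that the threshold $N$ can be chosen uniformly so that the stabilization of membership in $\mathrm{Base}(\phi^{\pm j})$ holds simultaneously for all finitely many base-points and respects the equivalence relation (equivalent base-points of $\phi$ lie in the same one of the five sets, as already noted in the remarks). A secondary subtlety is the treatment of periodic base-points, where $(\phi^i)^\bullet(p)$ may fail to be defined for $i\neq 0$; here one falls back on the crude bound by $\#\mathcal{P}$, which is harmless since it contributes only to the bounded error terms $\alpha,\beta$. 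Everything else is the elementary arithmetic of the index sets $I_{p,k}$, which I would not grind through in detail beyond recording the four inequalities above.
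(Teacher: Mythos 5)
Your proposal is correct and follows essentially the same route as the paper's proof: the same stabilization threshold $N$, the same five-set partition of $\mathrm{Base}(\phi)$ into $\mathcal{B}_{++}$, $\mathcal{B}_{+-}$, $\mathcal{B}_{-+}$, $\mathcal{B}_{--}$ and $\mathcal{P}$, the identification $m_{p,k}=\#\,I_{p,k}$, and the identical case-by-case inequalities culminating in $\nu k+\alpha\leq\mathfrak{b}(\phi^k)\leq\nu k+\beta$. The bookkeeping subtlety you flag (counting equivalence \emph{classes} in the $\mathcal{B}_{+-}$ contribution, justified by the remark that equivalent base-points lie in the same subset) is precisely how the paper handles it as well.
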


The following statement gives another
characterization of birational maps 
which are conjugate to an automorphism
of a projective surface; contrary to 
the two previous one it works for all
maps of $\mathrm{Bir}(S)$.

\begin{thm}[\cite{BlancDeserti:degree}]\label{thm:doublelosange}
Let $\phi$ be a birational self map of a smooth 
projective surface. Then $\phi$ is conjugate
to an automorphism of a smooth projective 
surface if and only if $\mu(\phi)=0$. 
\end{thm}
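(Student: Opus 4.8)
The statement to prove is Theorem \ref{thm:doublelosange}: a birational self map $\phi$ of a smooth projective surface is conjugate to an automorphism of a smooth projective surface if and only if the dynamical number of base-points $\mu(\phi)$ vanishes. One direction is already essentially recorded in Lemma \ref{lem:losange}: if $\phi$ is conjugate via some birational map $\psi$ to an automorphism $g$ of a smooth projective surface $Z$, then since $\mathfrak{b}(g^k)=0$ for every $k$ (an automorphism has no base-points), we get $\mu(g)=0$, and since $\mu$ is a conjugacy invariant, $\mu(\phi)=\mu(g)=0$. So the whole content lies in the converse: assuming $\mu(\phi)=0$, I must produce a birational conjugacy turning $\phi$ into a biregular automorphism.

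\textbf{The key reduction.} The bridge between $\mu(\phi)$ and the geometry is the proposition proved just above the theorem: $\mu(\phi)$ equals the number of equivalence classes of \emph{persistent} base-points of $\phi$, i.e. the cardinality of the set $\mathcal{B}_{+-}$ of base-points $p$ with $p\in\mathrm{Base}(\phi^j)$ but $p\notin\mathrm{Base}(\phi^{-j})$ for all large $j$, counted up to the orbit-equivalence relation $p\sim(\phi^k)^\bullet(p)$. Thus $\mu(\phi)=0$ means $\phi$ has \emph{no} persistent base-points. The strategy I would follow is to exploit this: a persistent base-point is exactly a base-point whose forward orbit under $\phi^\bullet$ never returns to being regular, and it is precisely such points that obstruct regularization. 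When none exist, every base-point of $\phi$ is either periodic or eventually disappears symmetrically, and this should allow a finite sequence of blow-ups to clear all indeterminacy simultaneously for $\phi$ and $\phi^{-1}$.

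\textbf{The construction.} Concretely, I would argue as follows. Since $\mu(\phi)=0$, the set $\mathcal{B}_{+-}$ is empty; by the symmetric argument applied to $\phi^{-1}$ (whose persistent base-points form $\mathcal{B}_{-+}$ and satisfy $\mu(\phi^{-1})=\mu(\phi)=0$) that set is empty too. Hence every base-point of $\phi$ lies in $\mathcal{B}_{++}$, $\mathcal{B}_{--}$, or the periodic set $\mathcal{P}$; in each case the base-point is ``caught'' in a bounded window of iterates or cycles back on itself. The plan is then to blow up the finite set $\widehat{\mathcal P}$ of all points equivalent to a periodic base-point, together with the finitely many points of the bounded $\mathcal{B}_{++}$- and $\mathcal{B}_{--}$-orbits, obtaining a birational morphism $\pi\colon\widetilde S\to S$; on $\widetilde S$ the lift $\widetilde\phi=\pi^{-1}\circ\phi\circ\pi$ should have no base-points because each obstructing point has been resolved and the finite invariant cluster of exceptional curves carries the dynamics biregularly. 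One must check that this set is genuinely finite (it is, since $\mathfrak b(\phi)$ and $\mathfrak b(\phi^{-1})$ are finite and the relevant orbits have bounded length) and invariant enough that blowing it up clears indeterminacy for both $\widetilde\phi$ and $\widetilde\phi^{-1}$ simultaneously.

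\textbf{The main obstacle.} The hard part will be the last verification: showing that after blowing up this canonically-defined finite collection of points, the resulting map really is \emph{biregular} rather than merely having fewer base-points. The subtlety is that blowing up one point of an orbit may create or move base-points elsewhere, so I must track the number of base-points of the lift carefully and confirm it drops to zero — essentially re-running the base-point bookkeeping $a)$--$b)$ of the proposition on $\widetilde S$ and using that no persistent (hence no ``escaping'') base-points remain to regenerate indeterminacy. Controlling the infinitely-near structure and ensuring the blown-up cluster is stable under $\widetilde\phi^\bullet$ in both directions is where the real work lies; the periodic case $\mathcal P$ (where a point may satisfy $(\phi^k)^\bullet(p)=p$ or stay perpetually in $\mathrm{Base}(\phi^k)$) will require the most care, and I expect to invoke the Diller--Favre algebraic-stabilization machinery (Proposition \ref{pro:DillerFavre}) to reduce to a situation where the orbit structure is transparent before performing the final contraction/blow-up.
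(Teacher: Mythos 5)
Your easy direction and your reduction of the problem to ``no persistent base-points'' both match the paper, but the main construction has a genuine gap, located exactly where you placed your ``main obstacle'': the set you propose to blow up is not finite, and your parenthetical justification (finiteness of $\mathfrak{b}(\phi)$ and $\mathfrak{b}(\phi^{-1})$, hence ``bounded orbit length'') conflates two different things. A point of $\mathcal{B}_{--}$ is a \emph{base-point} only inside a bounded window of iterates, but its equivalence class under the maps $(\phi^k)^\bullet$ can be infinite; the bookkeeping in the proposition only bounds the contribution $m_{p,k}$ of such a class to each $\mathfrak{b}(\phi^k)$, not the size of the class itself. Concretely, take an automorphism $g$ of a smooth projective surface $X$ and a point $x_0$ with infinite $g$-orbit; set $\pi\colon S=\mathrm{Bl}_{x_0}X\to X$ and $\phi=\pi^{-1}\circ g\circ\pi$. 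Then $\mathrm{Base}(\phi^k)=\{\pi^{-1}(g^{-k}(x_0))\}$, so $\mathfrak{b}(\phi^k)=1$ for all $k$ and $\mu(\phi)=0$; the map $\phi$ is algebraically stable, its unique base-point $p_1=\pi^{-1}(g^{-1}(x_0))$ lies in $\mathcal{B}_{--}$ (and $\mathcal{P}=\mathcal{B}_{++}=\emptyset$, so your recipe would blow up precisely the class of $p_1$), yet $(\phi^{-k})^\bullet(p_1)=\pi^{-1}(g^{-k-1}(x_0))$ for every $k\geq 1$, so that class is infinite and the recipe is not executable. Worse, \emph{no} blow-up of any finite set regularizes this $\phi$: it contracts the exceptional curve $E$ over $x_0$ onto $\pi^{-1}(g(x_0))$, and after blowing up finitely many orbit points the lifted map still contracts the last exceptional curve onto the next orbit point, which was not blown up. What regularizes $\phi$ is the opposite move, contracting $E$, i.e.\ conjugating by $\pi$ back to $g$. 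In general blow-ups can only achieve algebraic stability; they cannot remove a curve contracted onto an infinite orbit of regular points, and this is exactly the configuration that $\mathcal{B}_{--}$ base-points encode.

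The paper's proof therefore runs \emph{downward} in its second half. After the easy implication (Lemma \ref{lem:losange}), it applies Proposition \ref{pro:DillerFavre} to conjugate $\phi$ to an algebraically stable map -- the only blow-ups in the argument, harmless since $\mu$ is a conjugacy invariant. Algebraic stability forces $\mathcal{P}=\mathcal{B}_{++}=\emptyset$, and $\mu(\phi)=0$ gives $\mathcal{B}_{+-}=\mathcal{B}_{-+}=\emptyset$, so all base-points lie in $\mathcal{B}_{--}$. If $\phi$ is not yet an automorphism, let $\tau\colon Z\to S$ be the blow-up of $\mathrm{Base}(\phi)$, so that $\chi=\phi\circ\tau$ is the blow-up of $\mathrm{Base}(\phi^{-1})$, and pick a $(-1)$-curve $E\subset Z$ contracted by $\chi$; algebraic stability gives $\chi(E)\notin\mathrm{Base}(\phi^k)$ for all $k\geq 0$, hence $C=\tau(E)$ is contracted by every $\phi^{k+1}$, and since all base-points are in $\mathcal{B}_{--}$ there is an $n$ such that no base-point of $\phi^n$ lies on $C$; it follows that $C$ is a $(-1)$-curve, and contracting it conjugates $\phi$ to an algebraically stable map, still with all base-points in $\mathcal{B}_{--}$, on a surface of strictly smaller Picard rank. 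Induction on the Picard rank terminates at an automorphism. So your instinct to invoke the Diller--Favre stabilization first is right, and your hedge ``contraction/blow-up'' points in the right direction, but the actual mechanism is a descending induction by contraction of $(-1)$-curves, with the Picard rank as terminating quantity, rather than a single blow-up of a canonical finite cluster: that finite cluster does not exist.
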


\begin{rem}
This characterization was implicitely 
used in \cite{BedfordKim1, BedfordKim2, 
BedfordKim3, DesertiGrivaux}.

Let us give an example of \cite{DesertiGrivaux}. 
Consider the birational self map
of $\mathbb{P}^2_\mathbb{C}$ given by 
\[
\psi\colon (z_0:z_1:z_2)\dashrightarrow(z_0z_2^2+z_1^3:z_1z_2^2:z_2^3);
\]
it has five base-points: $p=(1:0:0)$ and 
four points infinitely near. Denote by 
$\widehat{P_1}$ the collection of these 
points. Similarly $\psi^{-1}$ has five
base-points: $(1:0:0)$ and four points
infinitely near; let $\widehat{P_2}$
be the collection of these points. 
Consider the automorphism $A$ given by
\begin{small}
\[
A\colon(z_0:z_1:z_2)\mapsto\big(\alpha z_0+2(1-\alpha)z_1+(2+\alpha-\alpha^2)z_2:-z_0+(\alpha+1)z_2:z_0-2z_1+(1-\alpha)z_2\big)
\]
\end{small}
with $\alpha\in\mathbb{C}\smallsetminus\{0,\,1\}$. 
Then
\begin{itemize}
\item[$\diamond$] $\widehat{P_1}$, 
$A(\widehat{P_2})$, and 
$(A\circ\psi\circ A)(\widehat{P_2})$
have distinct supports; 

\item[$\diamond$] 
$\widehat{P_1}=(A\circ\psi)^2\circ A(\widehat{P_2})$.
\end{itemize}
As a result the base-points of 
$\phi=A\circ\psi$ are non-persistent,
so $\phi$ is conjugate to an automorphism
of a rational surface; this rational 
surface is $\mathbb{P}^2_\mathbb{C}$
blown up in $\widehat{P_1}$, 
$A(\widehat{P_2})$, and 
$(A\circ\psi\circ A)(\widehat{P_2})$.
Furthermore $\lambda(A\circ\psi)>1$. 
\end{rem}

\begin{proof}[Proof of Theorem \ref{thm:doublelosange}]
Lemma \ref{lem:losange} shows that if 
$\phi$ is conjugate to an automorphism 
of a smooth projective surface, then 
$\mu(\phi)=0$. 

Let us prove the converse. Assume 
that $\mu(\phi)=0$. One can 
suppose that by blowing-up points 
$\phi$ is algebraically stable 
(Proposition \ref{pro:DillerFavre}).
Therefore, $\phi$ has no periodic 
base point and $\mathcal{B}_{++}=\emptyset$.
Furthermore $\mu(\phi)=0$ corresponds to
$\mathcal{B}_{+-}=\mathcal{B}_{-+}=\emptyset$.
All base-points thus belong to 
$\mathcal{B}_{--}$. Assume that $\phi$ is
not an automorphism of $S$. Let 
$\tau\colon Z\to S$ be the blow-up of the 
base-points of $\phi$. The morphism
$\chi=\phi\circ\tau\colon Z\to S$ is 
the blow-up of the base-points of $\phi^{-1}$.
Consider a $(-1)$-curve $E\subset Z$
contracted by $\chi$. The image $\chi(E)$
of $E$ is a proper point of $S$ that
belongs to $\mathrm{Base}(\phi^{-1})$. 
Since $\phi$ is algebraically stable, 
then for all $k\geq 0$
\[
\chi(E)\not\in\mathrm{Base}(\phi^k).
\]
As a result 
$\phi^k\circ\chi\colon Z\dashrightarrow S$
is well-defined at any point of $E$. The 
curve $C=\tau(E)$ is thus an irreducible
curve of $S$ contracted by $\phi^{k+1}$; 
any base-point of $\phi^{k+1}$ that 
belongs to $C$ as proper of infinitely 
near point is also a base-point of $\phi$.
This finite set of points is contained
in $\mathcal{B}_{--}$; so there is $n>0$
such that no base-point of $\phi^n$
belongs to $C$. Since $C$ is blown 
down by $\phi^n$, $C$ is a $(-1)$-curve
of $S$. Contracting $C$ conjugates $\phi$
to an algebraically stable birational 
map whose all base-points are in 
$\mathcal{B}_{--}$. The rank of the 
Picard group of this new surface
is strictly less than the rank of 
$\mathrm{Pic}(S)$. Consequently if we repeat 
this process, it has to stop. In other
words $\phi$ is conjugate to an automorphism
of a smooth projective surface.
\end{proof}

%%%%%%%%%%%%%%%%%%%%%%%%%%%%%%%%%%%%%%%%%%%%%%%%%%%%%%%%%%%%%%%%%%%%%%%%%%%%%%%%%%%%%%%%%%%%%%%%%%%%%%%%%%%%%%%%%%%
%%%%%%%%%%%%%%%%%%%%%%%%%%%%%%%%%%%%%%%%%%%%%%%%%%%%%%%%%%%%%%%%%%%%%%%%%%%%%%%%%%%%%%%%%%%%%%%%%%%%%%%%%%%%%%%%%%%
% section
%%%%%%%%%%%%%%%%%%%%%%%%%%%%%%%%%%%%%%%%%%%%%%%%%%%%%%%%%%%%%%%%%%%%%%%%%%%%%%%%%%%%%%%%%%%%%%%%%%%%%%%%%%%%%%%%%%%
%%%%%%%%%%%%%%%%%%%%%%%%%%%%%%%%%%%%%%%%%%%%%%%%%%%%%%%%%%%%%%%%%%%%%%%%%%%%%%%%%%%%%%%%%%%%%%%%%%%%%%%%%%%%%%%%%%%

\section{Constructions of automorphisms with positive entropy}

\subsection{McMullen's idea}

In \cite{McMullen} McMullen establishes a result 
similar to Torelli's  theorem for K$3$ surfaces: 
he constructs automorphisms on some rational 
surfaces prescribing the action of the automorphisms
on cohomological groups of the surface.

The relationship between the Coxeter group
and the birational geometry of the plane, used
by McMullen, is discussed since $1895$ (\emph{see}
\cite{Kantor}) and has been much developed since
then (\emph{see for instance} 
\cite{Coble, DolgachevOrtland, DolgachevZhang, Harbourne2, Gizatullin}).

A rational surface $S$ is a 
\textsl{marked blow-up}\index{defi}{marked blow-up} of
$\mathbb{P}^2_\mathbb{C}$ if it is presented as a 
blow-up $\pi\colon S\to\mathbb{P}^2_\mathbb{C}$ of 
$\mathbb{P}^2_\mathbb{C}$ at $n$ distinct points
$p_1$, $p_2$, $\ldots$, $p_n$. The marking 
determines the basis for $\mathrm{Pic}(S)$
given by the hyperplane bundle and the classes 
of the exceptional curves over the $p_j$. The 
first step toward finding an automorphism $\phi$
of $S$ is to construct a plausible candidate for 
its linear action $\phi^*$ on the Picard group. 
Note that candidate actions must preserve the 
intersection form, the class of the canonical 
divisor, and the set of effective classes. Let 
us mention two sorts of involutions on 
$\mathrm{Pic}(S)$ that satisfy these restrictions:
\begin{itemize}
\item[$\diamond$] an abstraction of the involution 
$\sigma_2$,

\item[$\diamond$] the involution that swaps the 
basis elements corresponding to two different
exceptional curves.
\end{itemize}

If we compose such involutions one gets a Coxeter
group $\mathrm{W}_n$
\begin{itemize}
\item[$\diamond$] that is infinite as soon as $n\geq 9$,

\item[$\diamond$] has elements with positive spectral
radius when $n\geq 10$.
\end{itemize}

Furthermore except in some degenerate situations 
an element $w\in\mathrm{W}_n$ transforms the 
basis of $\mathrm{Pic}(S)$ corresponding to 
the given marking into a basis corresponding  
to some other marking 
$\varphi'\colon S\to\mathbb{P}^2_\mathbb{C}$. If 
the base-points of the new marking coincide, up 
to an element of $\mathrm{Aut}(\mathbb{P}^2_\mathbb{C})$,
with those of the original, then one obtains 
an automorphism $\phi=\varphi^{-1}\circ\varphi'$
of $S$ with $\phi^*=w$. The main problem with 
this approach is that it is not easy, given 
$w\in\mathrm{W}_n$, to see how the base-points
of the two markings are related. The problem 
is easier if the base-points of the original 
marking lie along an elliptic curve; indeed 
in that case the new base-points also lie 
on this elliptic curve. Computations are thus 
computations on a curve so simpler. The 
best case is the case of a cuspidal cubic 
as there is a one-parameter subgroup of 
$\mathrm{Aut}(\mathbb{P}^2_\mathbb{C})$
fixing such a curve. McMullen proved 

\begin{thm}[\cite{McMullen}]
For any $n\geq 10$ the standard element $w$ of 
$\mathrm{W}_n$ may be realized by an automorphism 
$\phi$ of a marked blow-up $S$ with an invariant
cuspidal anticanonical curve. The entropy 
of $\phi$ is the spectral radius of $w$ which 
is positive.
\end{thm}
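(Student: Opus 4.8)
The goal is to realize the standard Coxeter element $w \in \mathrm{W}_n$ (the product of the abstract $\sigma_2$-involution with the cyclic permutation of the exceptional classes) as $\phi^*$ for an honest automorphism $\phi$ of a marked blow-up $S \to \mathbb{P}^2_\mathbb{C}$, and then to read off the entropy. The strategy I would follow is McMullen's Torelli-type scheme: rather than trying to produce the automorphism directly, first build the prescribed linear action $w$ on $\mathrm{Pic}(S) = \mathbb{Z}\mathbf{e}_0 \oplus \bigoplus_{i=1}^n \mathbb{Z}\mathbf{e}_{p_i}$, verify it is an isometry of the intersection form fixing $K_S = -3\mathbf{e}_0 + \sum \mathbf{e}_{p_i}$ and preserving effectivity, and then solve the \emph{inverse problem} of finding base-points $p_1, \dots, p_n$ whose geometry is compatible with $w$. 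The key device that makes this tractable is to force all $p_i$ to lie on a fixed cuspidal cubic $C \subset \mathbb{P}^2_\mathbb{C}$, which will be the anticanonical curve on $S$.

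\textbf{Key steps in order.} First I would record the combinatorial data: $w$ acts on the rank-$(n+1)$ lattice $\mathbb{Z}^{1,n}$ preserving the intersection form of signature $(1,n-1)$ and fixing $K_S$, so $w$ acts on $K_S^\perp$, which carries the root system $E_n$; the standard element is a Coxeter transformation whose spectral radius $\lambda(w)$ is $>1$ precisely when $n \geq 10$. Second, I would exploit that the smooth locus $C^* = C \smallsetminus \{\text{cusp}\}$ of a cuspidal cubic is isomorphic as a group to $(\mathbb{C}, +)$, and that $\mathrm{Aut}(\mathbb{P}^2_\mathbb{C})$ fixing $C$ contains a one-parameter group acting on $C^* \cong \mathbb{C}$ by a homothety $t \mapsto \delta t$. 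The crucial point is a \emph{marking-translation} principle: if the original base-points $p_i$ lie on $C$, then applying $w$ produces a new marking $\varphi'\colon S \to \mathbb{P}^2_\mathbb{C}$ whose base-points $p_i'$ again lie on $C$, and the passage $p_i \mapsto p_i'$ is governed by an explicit affine-linear formula on the group $C^* \cong \mathbb{C}$ determined by the linear map $w$. Third, I would set up the fixed-point equation: $\phi = \varphi^{-1} \circ \varphi'$ is a genuine automorphism exactly when the two base-point configurations agree up to an element of $\mathrm{Aut}(\mathbb{P}^2_\mathbb{C})$ preserving $C$. Because that automorphism group acts on $C^* \cong \mathbb{C}$ by $t \mapsto \delta t$, the matching condition becomes a \emph{linear eigenvalue equation} on $\mathbb{C}^n$: one must find a configuration $(t_1, \dots, t_n) \in \mathbb{C}^n$ and a scalar $\delta$ such that $w$ acts on the $t_i$ as multiplication by $\delta$. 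The natural choice is to take $\delta = \lambda(w)$ (or a suitable root of the characteristic polynomial of $w$ acting on the appropriate quotient), and the configuration to be a corresponding eigenvector, which exists since $\lambda(w)$ is an eigenvalue of $w$.

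\textbf{The main obstacle.} The hard part will be the third step: showing that the linear algebra solution $(t_i, \delta)$ can be chosen so that the $t_i$ are \emph{distinct} (so that the blow-up is genuinely at $n$ distinct points) and so that the resulting configuration is in sufficiently general position that no unexpected $(-1)$-curve or degenerate marking spoils the construction. In particular one must check that the realization does not collapse into one of the degenerate situations where $w$ fails to send one marking basis to another; this is exactly where the cuspidal-cubic hypothesis pays off, because the translation formula on $C^* \cong \mathbb{C}$ is affine and its fixed-point/eigenvector analysis is explicit and nondegenerate for the standard $w$. Once $\phi$ is produced with $\phi^* = w$, the entropy statement is immediate: by Theorem~\ref{thm:dilfav} and the identification of dynamical degree with the translation length in $\mathbb{H}^\infty$ (equivalently, the spectral radius of the action on $\mathrm{NS}(S)$), one has $\mathrm{h_{top}}(\phi) = \log \lambda(\phi) = \log \lambda(w)$, and $\lambda(w) > 1$ for $n \geq 10$ by the Coxeter-element computation, so the entropy is positive.
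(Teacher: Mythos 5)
Your proposal reproduces McMullen's Torelli-type scheme exactly as the paper outlines it: prescribe $w$ as a candidate action on $\mathrm{Pic}(S)$ preserving the intersection form, $K_S$ and effectivity, place the base-points on a cuspidal cubic so that the new marking's points stay on the curve, use the one-parameter group of automorphisms of $\mathbb{P}^2_\mathbb{C}$ fixing the cubic (acting on $C^*\cong(\mathbb{C},+)$ by $t\mapsto\delta t$) to turn the matching condition into an eigenvector equation with $\delta$ an eigenvalue of $w$, and then deduce the entropy from the spectral radius via Theorem \ref{thm:dilfav}. This is essentially the same approach as the paper's (itself only a sketch of \cite{McMullen}), including your correct identification of the genericity/distinctness of the points as the main technical obstacle, so there is nothing further to flag.
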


\medskip

The following question "What are the elements
of $w\in W_n$ which may be realized by rational
surface automorphisms ?" was also considered
in \cite{Diller} and \cite{Uehara}. Diller
gave a rather thorough enumeration of the 
possibilities for quadratic birational maps
which have an invariant curve. Such maps 
are determined by the data consisting of 
three orbit lengths $(n_1,n_2,n_3)$ and 
a permutation of $\{1,\,2,\,3\}$. Diller 
also showed that not all orbit data, and 
not all $w\in\mathrm{W}_n$, are realizable 
by maps with invariant curve. Uehara 
established the following statement:

\begin{thm}[\cite{Uehara}]
For every $w\in\mathrm{W}_n$ with spectral 
radius $>1$ there is a rational surface
automorphism $\phi$ such that the spectral 
radius of $\phi^*$ is the same as the spectral
radius of $w$.
\end{thm}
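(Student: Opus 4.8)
The goal is to show that for every $w\in\mathrm{W}_n$ with spectral radius strictly larger than $1$, there exists a rational surface automorphism $\phi$ such that the spectral radius of $\phi^*$ equals the spectral radius of $w$.

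\textbf{Overall strategy.} The plan is to follow McMullen's philosophy as outlined in the excerpt, but to relax the requirement that $w$ itself be realized exactly; instead I would aim only to match the spectral radius. First I would recall McMullen's realization theorem (the ``standard element'' case) and Diller's quadratic-map analysis: both produce automorphisms whose action on $\mathrm{Pic}(S)$ is a prescribed Coxeter element, at least under suitable genericity/realizability hypotheses. The key reduction is that the spectral radius of an element $w$ of the Weyl group $\mathrm{W}_n$ is a conjugacy invariant, and moreover it depends only on the characteristic polynomial of $w$ acting on the lattice $\mathbb{Z}^{1,n}$. So I would first prove a lattice-theoretic lemma: every $w\in\mathrm{W}_n$ with spectral radius $>1$ is conjugate in $\mathrm{W}_n$ (or has the same spectral radius as some element conjugate) to a Coxeter element or a product of reflections whose realizability is already guaranteed by McMullen's and Diller's constructions. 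The point is that the spectral radius is governed by the Salem or Pisot factor of the characteristic polynomial, and Salem numbers arising from $\mathrm{W}_n$ form a controlled family.

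\textbf{Key steps in order.} First I would set up the Picard lattice framework: for a blow-up of $\mathbb{P}^2_\mathbb{C}$ at $n$ points, $\mathrm{Pic}(S)\cong\mathbb{Z}^{1,n}$ with the standard intersection form, and $\mathrm{W}_n$ acts as the group generated by the reflections described in the excerpt, preserving the intersection form and the canonical class $K_S=-3\mathbf{e}_0+\sum_j\mathbf{e}_{p_j}$. Second, I would invoke the fact (used in the excerpt's discussion of Theorem~\ref{thm:fleur} and Theorem~\ref{thm:diamond}) that a birational map is conjugate to an automorphism precisely when its isotropic eigenvector has self-intersection zero, which on the lattice side means the relevant eigenvalue is a Salem number or a quadratic integer. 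Third, the central construction: given $w$ with spectral radius $\lambda(w)>1$, I would realize a marked blow-up with base-points chosen along a cuspidal cubic (the ``best case'' singled out in McMullen's method), so that the relation between the two markings reduces to computations on the cubic, where a one-parameter subgroup of $\mathrm{Aut}(\mathbb{P}^2_\mathbb{C})$ fixing the curve gives enough freedom to match the base-point configurations. Fourth, I would verify that the resulting automorphism $\phi$ has $\phi^*$ cohomologically conjugate to $w$, or at least sharing the same Salem factor, hence the same spectral radius; by Theorem~\ref{thm:dilfav} this spectral radius equals $\lambda(\phi)$ and the entropy is $\log\lambda(\phi)>0$.

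\textbf{The main obstacle.} The hard part will be the realizability step: as Diller's work shows, not every $w\in\mathrm{W}_n$ is realized by a birational map with an invariant anticanonical curve, so one cannot naively demand $\phi^*=w$ on the nose. The crucial insight that makes Uehara's theorem weaker-but-achievable is precisely that one only needs to match the spectral radius, not the full conjugacy class of $w$. Thus the technical core is a combinatorial/number-theoretic argument showing that the Salem (or quadratic) factor of the characteristic polynomial of any $w$ with $\lambda(w)>1$ is itself realized by some \emph{realizable} Coxeter-type element, together with the careful base-point placement on the cuspidal cubic so that the periodicity/orbit-data constraints (the ``persistent base-point'' obstruction of Theorem~\ref{thm:doublelosange}, i.e. $\mu(\phi)=0$) are satisfied. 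I expect the delicate point to be controlling, via the group law on the cuspidal cubic, the fixed-point equation forcing the new marking's base-points to coincide with the original ones up to $\mathrm{Aut}(\mathbb{P}^2_\mathbb{C})$, and checking that this equation admits solutions for the full range of required spectral radii rather than only for the standard element handled by McMullen.
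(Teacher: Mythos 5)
There is a genuine gap at the heart of your reduction step. You propose to show that every $w\in\mathrm{W}_n$ with spectral radius $>1$ is conjugate to, or at least shares its spectral radius with, a Coxeter element or a product of reflections ``whose realizability is already guaranteed by McMullen's and Diller's constructions,'' and then to conclude by citation. But no such realizability is available: McMullen realizes only the standard Coxeter element of $\mathrm{W}_n$ (one element for each $n\geq 10$), and Diller's analysis of quadratic maps with invariant anticanonical curve explicitly shows --- as the text itself records --- that not all orbit data, and not all $w\in\mathrm{W}_n$, are realizable in his framework. The spectral radii of elements of $\mathrm{W}_n$ form a large family of Salem (and quadratic) numbers going far beyond the Coxeter case, so the reduction you need is exactly the content of Uehara's theorem, not a lemma one can borrow; your final paragraph flags this as ``the technical core'' but supplies no argument, and the claim as stated is false.

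What Uehara actually does (and what the paper sketches) is different in structure: given $w$, he does not conjugate $w$ into a previously realized list. Instead he prescribes orbit data $(n_1,\ldots,n_k;\sigma)$ --- orbit lengths together with a permutation, generalizing Diller's quadratic setting --- whose associated characteristic polynomial has the same largest root as that of $w$, and he then proves a \emph{new} realization theorem: such orbit data are realized by automorphisms built from quadratic maps preserving a cuspidal cubic, the base-point configurations being solved for via the group law on the cubic (this is where the one-parameter subgroup of $\mathrm{Aut}(\mathbb{P}^2_\mathbb{C})$ fixing the cuspidal cubic enters, as in \cite{McMullen}). Crucially, the resulting $\phi^*$ has the same spectral radius as $w$ but need not be conjugate to $w$, which matches your correct observation that only the spectral radius must be matched. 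So your overall framing --- cuspidal cubic, group-law computations, weakening exact realization to spectral-radius realization --- points in the right direction, but the proof cannot be completed as proposed: you must supply both the combinatorial passage from $w$ to orbit data of equal spectral radius and the realization theorem for those orbit data, and neither follows from the results of \cite{McMullen} or \cite{Diller} that you invoke.
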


Uehara's method combines elements of 
McMullen's and Diller's approaches. Given 
$w\in\mathrm{W}_n$ he prescribed a set of 
orbit data and proved that these orbit
data can be realized by an automorphism
$\phi$. The induced $\phi^*$ has the same
spectral radius as $w$, although the 
two may not be conjugate. 

\begin{rem}
While McMullen's and Diller's constructions
involve automorphisms with invariant curves
note that in \cite{BedfordKim1} the 
authors showed that rational surface 
automorphisms of positive entropy do not
necessarily possess invariant curves.
\end{rem}

\subsection{Bedford and Kim construction}

In \cite{BedfordKim4} and \cite{BedfordKim1} 
the authors found automorphisms within a 
specific two-parameters family of plane 
birational maps. The initial observation 
in the two papers is the same: for certain 
parameter pairs all points of indeterminacy 
for all iterates of the map in question can 
be eliminated by performing finitely many
point blow-ups. The map then lifts to an 
automorphism of the resulting rational 
surface. This idea was "systematized" in 
\cite{DesertiGrivaux}. 

In \cite{BedfordKim1} the authors prove that 
essentially all examples of rational surfaces
automorphisms associated to Coxeter elements 
can be found within the two-parameter 
birational family $(f_{a,b})_{(a,b)}$ given 
by $f_{a,b}(z_0,z_1)=\left(z_1,\frac{z_1+a}{z_0+b}\right)$.

%%%%%%%%%%%%%%%%%%%%%%%%%%%%%%%%%%%%%%%%%%%%%%%%%%%%%%%%%%%%%%%%%%%%%%%%%%%%%%%%%%%%%%%%%%%%%%%%%%%%%%%%%%%%%%%%%%%
%%%%%%%%%%%%%%%%%%%%%%%%%%%%%%%%%%%%%%%%%%%%%%%%%%%%%%%%%%%%%%%%%%%%%%%%%%%%%%%%%%%%%%%%%%%%%%%%%%%%%%%%%%%%%%%%%%%
% section
%%%%%%%%%%%%%%%%%%%%%%%%%%%%%%%%%%%%%%%%%%%%%%%%%%%%%%%%%%%%%%%%%%%%%%%%%%%%%%%%%%%%%%%%%%%%%%%%%%%%%%%%%%%%%%%%%%%
%%%%%%%%%%%%%%%%%%%%%%%%%%%%%%%%%%%%%%%%%%%%%%%%%%%%%%%%%%%%%%%%%%%%%%%%%%%%%%%%%%%%%%%%%%%%%%%%%%%%%%%%%%%%%%%%%%%

\section{Automorphisms are pervasive}

\subsection{Automorphisms of del Pezzo surfaces}

Any del Pezzo surface $S$ contains a finite number of $(-1)$-curves 
(\emph{i.e.} smooth curves isomorphic to $\mathbb{P}^1_\mathbb{C}$ and of 
self-intersection $-1$). Each of them can be contracted to get another 
del Pezzo surface of degree $(K_S)^2+1$. There are, moreover, the only 
reducible curves of $S$ of negative self-intersection. If 
$S\not=\mathbb{P}^2_\mathbb{C}$, then there is a finite number of conic 
bundles $S\to\mathbb{P}^1_\mathbb{C}$ up to automorphism of 
$\mathbb{P}^1_\mathbb{C}$ and each of them has exactly $8-(K_S)^2$ 
singular fibers.

This latter fact can be found by contracting one component in each singular
fiber which is the union of two $(-1)$-curves, obtaining a line bundle 
on a del Pezzo surface, isomorphic 
to~$\mathbb{P}^1_\mathbb{C}\times\mathbb{P}^1_\mathbb{C}$ or to the 
Hirzebruch surface $\mathbb{F}_1$ and having degree $8$.

For more details see \cite{Demazure:sousgroupesalgebriques, Manin}.

\subsubsection*{Automorphisms of del Pezzo surfaces of order $4$}

Set
\[
S=\big\{(z_0:z_1:z_2:z_3)\in\mathbb{P}(2,1,1,1)\,\vert\,z_0^2-z_1^4=z_2z_3(z_2+z_3)(z_2+\mu z_3)\big\}
\]
where $\mu$ belongs to $\mathbb{C}\smallsetminus\{0,\,1\}$. The surface
$S$ is a del Pezzo one of degree $2$. The automorphism $\beta$ given by
\[
\beta\colon(z_0:z_1:z_2:z_3)\mapsto(z_0:\mathbf{i}z_1:z_2:z_3)
\]
fixes pointwise the elliptic curve given by $z_0=0$. When $\mu$ varies all
possible elliptic curves are obtained. Moreover 
$\mathrm{rk}\,\mathrm{Pic}(S)^{\beta}=1$. 

There are other automorphisms $\beta$ of order $4$ of rational surfaces $S$
such that $\beta^2$ fixes an elliptic curve but none for which 
$\mathrm{rk}\,\mathrm{Pic}(S)^{\beta}=1$ (\emph{see} \cite{Blanc:cyclic}).

\subsubsection*{Automorphisms of del Pezzo surfaces of order $6$}

Let us give explicit possibilities for  automorphisms of order $6$.

\begin{itemize}
\item[i)] Set 
\[
S=\big\{(z_0:z_1:z_2:z_3)\in\mathbb{P}(3,1,1,2)\,\vert\,z_0^2=z_3^3+\mu z_1^4z_3+z_1^6+z_2^6\big\}
\]
for some general $\mu\in\mathbb{C}$ such that $S$ is smooth. The 
surface $S$ is a del Pezzo surface of degree $1$. Consider on $S$
\[
\alpha\colon(z_0:z_1:z_2:z_3)\mapsto(z_0:z_1:-\mathbf{j}z_2:z_3)
\]
where $\mathbf{j}=\mathrm{e}^{2\mathbf{i}\pi/3}$.

The automorphism $\alpha$ fixes pointwise the elliptic curve given 
by $z_2=~0$. When $\mu$ varies all possible elliptic curves are obtained. 
The equality $\mathrm{rk}\,\mathrm{Pic}(S)^\alpha=1$ holds (\emph{see}
\cite[Corollary 6.11]{DolgachevIskovskikh}).

\item[ii)] Set 
\[
S=\big\{(z_0:z_1:z_2:z_3)\in\mathbb{P}^3_\mathbb{C}\,\vert\,z_0z_1^2+z_0^3+z_2^3+z_3^3+\mu z_0z_2z_3=0\big\}
\]
where $\mu$ is such that the cubic surface is smooth. The surface is a
del Pezzo surface of degree $3$. Consider on $S$ the automorphism $\alpha$
given by 
\[
\alpha\colon(z_0:z_1:z_2:z_3)\mapsto(z_0:-z_1:\mathbf{j}z_2:\mathbf{j}^2z_3). 
\]
Remark that
$\alpha^3$ fixes pointwise the elliptic curve $z_1=0$ and $\alpha$ acts on
it via a translation of order $3$. When $\mu$ varies all possible elliptic
curves are obtained. The equality $\mathrm{rk}\,\mathrm{Pic}(S)^{\alpha}=1$ 
holds (\cite{DolgachevIskovskikh}).

\item[iii)] Set
\[
S=\big\{(z_0:z_1:z_2:z_3)\in\mathbb{P}^3_\mathbb{C}\,\vert\, z_0^3+z_1^3+z_2^3+(z_1+\mu z_2)z_3^2=0\big\}
\]
where $\mu\in\mathbb{C}$ is such that the cubic surface is smooth. It is 
a del Pezzo surface of degree $3$. Consider $\alpha$ defined by
$\alpha\colon(z_0:z_1:z_2:z_3)\mapsto(\mathbf{j}z_0:z_1:z_2:z_3)$. The 
automorphism $\alpha^3$ fixes pointwise the elliptic curve $z_3=0$ and 
$\alpha$ acts on it via an automorphism of order $3$ with three fixed 
points. When $\mu$ varies the birational class of $\alpha$ changes but 
not the isomorphism class of the elliptic curve fixed by $\alpha^3$. 
\end{itemize}

\subsection{Outline of the construction}

\subsubsection[The central involution of $\mathrm{SL}(2,\mathbb{Z})$ and its 
image]{The central involution of $\mathrm{SL}(2,\mathbb{Z})$ and its 
image into $\mathrm{Bir}(\mathbb{P}^2_\mathbb{C})$}

Set $A=\left(
\begin{array}{cc}
1 & 1 \\
0 & 1
\end{array}
\right)$ and 
$B=\left(
\begin{array}{cc}
0 & 1 \\
-1 & 0
\end{array}
\right).$
A presentation of $\mathrm{SL}(2,\mathbb{Z})$ is given by (\emph{see}
\cite{Newman})
\[
\langle A,\, B\,\vert\,B^4=(AB)^3=1,\,B^2(AB)=(AB)B^2\rangle. 
\]
As as result the quotient of $\mathrm{SL}(2,\mathbb{Z})$ by its center is
a free product of $\faktor{\mathbb{Z}}{2\mathbb{Z}}$ 
and~$\faktor{\mathbb{Z}}{3\mathbb{Z}}$
generated by the classes $[B]$ of $B$ and $[AB]$ of $AB$
\[
\mathrm{PSL}(2,\mathbb{Z})=\langle [B],\, [AB]\,\vert\, [B]^2=
[AB]^3=\mathrm{id}\rangle.
\]
Recall that $\mathrm{SL}(2,\mathbb{R})$ acts on the upper half plane
\[
\mathbb{H}=\big\{x+\mathbf{i}y\in\mathbb{C}\,\vert\, x,\, y\in\mathbb{R},\, y>0\big\}
\]
by M\"obius transformations
\begin{align*}
&\mathrm{SL}(2,\mathbb{R})\times\mathbb{H}\to\mathbb{H}, && 
\left(\left(
\begin{array}{cc}
a & b\\
c & d
\end{array}
\right),z\right)\mapsto\frac{az+b}{cz+d}
\end{align*}
the hyperbolic structure of $\mathbb{H}$ being preserved. This yields 
to a natural notion of elliptic, parabolic and loxodromic elements of 
$\mathrm{SL}(2,\mathbb{R})$. If $M$ belongs to $\mathrm{SL}(2,\mathbb{Z})$
one can be more precise and check the following observations:
\begin{itemize}
\item[$\diamond$] $M$ is elliptic if and only if $M$ has finite order;

\item[$\diamond$] $M$ is parabolic if and only if $M$ has infinite order
and its trace is $\pm 2$;

\item[$\diamond$] $M$ is loxodromic if and only if $M$ has infinite order and 
its trace is $\not=\pm 2$.
\end{itemize}

Up to conjugacy the elliptic elements of $\mathrm{SL}(2,\mathbb{Z})$ are
\begin{align*}
& \left(
\begin{array}{cc}
-1 & 0 \\
0 & -1
\end{array}
\right),
&& \left(
\begin{array}{cc}
0 & 1\\
-1 & -1
\end{array}
\right),
&& \left(
\begin{array}{cc}
0 & 1 \\
-1 & 0
\end{array}
\right),
&& \left(
\begin{array}{cc}
0 & -1\\
1 & 0
\end{array}
\right),
&& \left(
\begin{array}{cc}
0 & -1 \\
1 & 1
\end{array}
\right).
\end{align*}

In particular an element of finite order is of order $2$, $3$, $4$ or $6$. 

A parabolic element of $\mathrm{SL}(2,\mathbb{Z})$ is up to conjugacy one 
of the following one 
\begin{align*}
& \left(
\begin{array}{cc}
1 & a \\
0 & 1
\end{array}
\right) 
&& \left(
\begin{array}{cc}
-1 & a \\
0 & -1
\end{array}
\right)
\end{align*}
with $a\in\mathbb{Z}$.

Since $B^2\in\mathrm{SL}(2,\mathbb{Z})$ is an involution its image by any
embedding 
$\theta\colon\mathrm{SL}(2,\mathbb{Z})\to\mathrm{Bir}(\mathbb{P}^2_\mathbb{C})$
is a birational involution. As we have seen in \S
\ref{sec:ordre2} an element of order $2$ of the Cremona group
is up to conjugacy one of the following
\begin{itemize}
\item[$\diamond$] an automorphism of $\mathbb{P}^2_\mathbb{C}$,

\item[$\diamond$] a Jonqui\`eres involution of degree $\geq 2$,

\item[$\diamond$] a Bertini involution, 

\item[$\diamond$] a Geiser involution.
\end{itemize}

Since $B^2$ commutes with $\mathrm{SL}(2,\mathbb{Z})$ the group 
$\theta\big(\mathrm{SL}(2,\mathbb{Z})\big)$ is contained in the centralizer
of $\theta(B^2)$. But if $\theta(B^2)$ is a Bertini involution or a Geiser involution, 
then the centralizer of $\theta(B^2)$ is finite (\cite{BlancPanVust}).
As a result $\theta(B^2)$ is conjugate either to an automorphism of 
$\mathbb{P}^2_\mathbb{C}$, or to a Jonqui\`eres involution. Assume that 
$\theta(B^2)$ is not linearisable; $\theta(B^2)$ fixes thus pointwise
a unique irreducible curve $\Gamma$ of genus $\geq 1$. Denote by 
$\mathrm{G}$ the image of $\theta$. The group $\mathrm{G}$ 
preserves~$\Gamma$ and the action of $\mathrm{G}$ on $\Gamma$ gives the exact
sequence
\[
1 \longrightarrow\mathrm{G}'\longrightarrow\mathrm{G}\longrightarrow
\mathrm{H}\longrightarrow 1
\]
where $\mathrm{H}$ is a subgroup of $\mathrm{Aut}(\Gamma)$ and 
$\mathrm{G}'$ contains $\theta(B^2)$ and fixes $\Gamma$. The genus 
of~$\Gamma$ is positive; hence $\mathrm{H}$ cannot coincide with 
$\faktor{\mathrm{G}}{\langle\theta(B^2)\rangle}$, a free product of 
$\faktor{\mathbb{Z}}{2\mathbb{Z}}$ and~$\faktor{\mathbb{Z}}{3\mathbb{Z}}$. As a consequence
$\mathrm{G}'\triangleleft\mathrm{G}$ strictly contains 
$\langle\theta(B^2)\rangle$; thus $\mathrm{G}'$ is infinite and not 
abelian. In particular the group of birational maps fixing pointwise 
$\Gamma$ is infinite and not abelian. So according to 
\cite{BlancPanVust2} the curve $\Gamma$ has genus $1$. One can now state:

\begin{lem}[\cite{BlancDeserti:embeddings}]
Let $\theta$ be an embedding of $\mathrm{SL}(2,\mathbb{Z})$ into the 
plane Cremona group. Then up to birational conjugacy
\begin{itemize}
\item[$\diamond$] either $\theta(B^2)$ is an automorphism of 
$\mathbb{P}^2_\mathbb{C}$ of order $2$, 

\item[$\diamond$] or $\theta(B^2)$ is a Jonqui\`eres involution of degree $3$
fixing $($pointwise$)$ an elliptic curve.
\end{itemize}
\end{lem}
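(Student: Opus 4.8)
The strategy is to analyze the involution $\theta(B^2)$ using the classification of order-$2$ elements of $\mathrm{Bir}(\mathbb{P}^2_\mathbb{C})$ recalled in \S\ref{sec:ordre2} (Theorem \ref{thm:BayleBeauville}), exploiting the crucial fact that $B^2$ is central in $\mathrm{SL}(2,\mathbb{Z})$. First I would recall the presentation
\[
\mathrm{SL}(2,\mathbb{Z})=\langle A,\,B\,\vert\,B^4=(AB)^3=1,\,B^2(AB)=(AB)B^2\rangle
\]
so that $B^2$ generates the center of $\mathrm{SL}(2,\mathbb{Z})$, and the quotient modulo $\langle B^2\rangle$ is the free product
\[
\mathrm{PSL}(2,\mathbb{Z})=\langle [B],\,[AB]\,\vert\,[B]^2=[AB]^3=\mathrm{id}\rangle\simeq\faktor{\mathbb{Z}}{2\mathbb{Z}}\ast\faktor{\mathbb{Z}}{3\mathbb{Z}}.
\]
Since $B^2$ is an involution, $\theta(B^2)$ is a birational involution of $\mathbb{P}^2_\mathbb{C}$, hence by Theorem \ref{thm:BayleBeauville} it is conjugate to an automorphism of $\mathbb{P}^2_\mathbb{C}$, a Jonqui\`eres involution, a Geiser involution, or a Bertini involution.

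The first elimination step uses centrality together with the rigidity of Geiser and Bertini involutions. Because $B^2$ is central, the whole group $\mathrm{G}=\theta(\mathrm{SL}(2,\mathbb{Z}))$ lies in the centralizer of $\theta(B^2)$ in $\mathrm{Bir}(\mathbb{P}^2_\mathbb{C})$. I would invoke the fact that the centralizer of a Bertini or Geiser involution is finite (\cite{BlancPanVust}); since $\mathrm{G}$ is infinite (it contains the infinite-order element $\theta(A)$), the involution $\theta(B^2)$ can be conjugate neither to a Bertini nor to a Geiser involution. This leaves two possibilities: $\theta(B^2)$ is conjugate to an automorphism of $\mathbb{P}^2_\mathbb{C}$ of order $2$, or to a Jonqui\`eres involution of some degree $\nu\geq 2$.

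The heart of the proof is to show that, in the Jonqui\`eres case, the degree must be exactly $3$ and the fixed curve must be an elliptic curve. Here I would argue as follows. Suppose $\theta(B^2)$ is not linearizable; then it is conjugate to a Jonqui\`eres involution, which fixes pointwise a unique irreducible curve $\Gamma$, whose genus is $\nu-2$ when $\nu\geq 3$ (a smooth conic if $\nu=2$). The group $\mathrm{G}$ preserves $\Gamma$, and the restriction of the action to $\Gamma$ yields an exact sequence
\[
1\longrightarrow\mathrm{G}'\longrightarrow\mathrm{G}\longrightarrow\mathrm{H}\longrightarrow 1
\]
where $\mathrm{H}\subset\mathrm{Aut}(\Gamma)$ and $\mathrm{G}'$ is the subgroup fixing $\Gamma$ pointwise, which contains $\theta(B^2)$. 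The key observation is that if $\Gamma$ had genus $0$, the involution would be linearizable (contradicting our assumption), while a positive genus forces the quotient $\faktor{\mathrm{G}}{\langle\theta(B^2)\rangle}\simeq\mathrm{PSL}(2,\mathbb{Z})$ to not coincide with $\mathrm{H}$; hence $\mathrm{G}'$ strictly contains $\langle\theta(B^2)\rangle$ and is therefore an infinite, non-abelian group of birational maps fixing $\Gamma$ pointwise.

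\textbf{The main obstacle} is the final pinning-down of the genus: I expect the hardest step to be showing that an infinite non-abelian group can fix pointwise a curve $\Gamma$ only when $\Gamma$ has genus $1$. For this I would appeal to the classification of groups of birational maps fixing a curve pointwise (\cite{BlancPanVust2}), which asserts precisely that the subgroup of $\mathrm{Bir}(\mathbb{P}^2_\mathbb{C})$ fixing pointwise a curve of genus $g\geq 2$ is abelian (indeed finite or of bounded structure), so that only genus $g=1$ is compatible with $\mathrm{G}'$ being infinite and non-abelian. Once $\Gamma$ is forced to have genus $1$, the relation $\nu-2=1$ gives degree $\nu=3$, completing the dichotomy. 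I would then assemble these steps to conclude that, up to birational conjugacy, either $\theta(B^2)$ is an automorphism of $\mathbb{P}^2_\mathbb{C}$ of order $2$, or it is a Jonqui\`eres involution of degree $3$ fixing pointwise an elliptic curve.
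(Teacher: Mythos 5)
Your proposal is correct and follows essentially the same route as the paper: the Bayle--Beauville classification of birational involutions, the centrality of $B^2$ combined with the finiteness of the centralizer of Geiser and Bertini involutions (\cite{BlancPanVust}) to eliminate those cases, the exact sequence coming from the action of $\theta(\mathrm{SL}(2,\mathbb{Z}))$ on the fixed curve $\Gamma$, and finally \cite{BlancPanVust2} to force $\Gamma$ to have genus $1$, hence degree $\nu=3$. The only difference is cosmetic: you make explicit the step that an infinite non-abelian group cannot fix pointwise a curve of genus $\geq 2$, which the paper leaves as a direct appeal to the same reference.
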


\subsubsection{Existence of infinitely many loxodromic embeddings
of $\mathrm{SL}(2,\mathbb{Z})$ into $\mathrm{Bir}(\mathbb{P}^2_\mathbb{C})$}

Let us consider the standard embedding 
\begin{align*}
& \theta_e\colon\mathrm{SL}(2,\mathbb{Z})\to\mathrm{Bir}(\mathbb{P}^2_\mathbb{C}) && \left(
\begin{array}{cc}
a & b \\
c & d
\end{array}
\right)\mapsto \Big((z_0:z_1:z_2)\mapsto(az_0+bz_1:cz_0+dz_1:z_2)\Big).
\end{align*}
Note that $\theta_e\big(\mathrm{SL}(2,\mathbb{Z})\big)$ is a subgroup of
$\mathrm{PGL}(3,\mathbb{C})$ that preserves the line $L_{z_2}$ of equation 
$z_2=0$ and acts on it via the maps
\[
\mathrm{SL}(2,\mathbb{Z})\to\mathrm{PSL}(2,\mathbb{Z})\subset\mathrm{PSL}(2,\mathbb{C})=\mathrm{Aut}(L_{z_2}).
\]
Pick $\mu\in\mathbb{C}^*$ such that the point $p=(\mu:1:0)\in L_{z_2}$ has a 
trivial isotropy group under the action of $\mathrm{PSL}(2,\mathbb{Z})$. 
Fix an even integer $k>0$; consider $\psi$ the conjugation of 
\[
\psi'\colon(z_0:z_1:z_2)\dashrightarrow(z_0^k:z_0^{k-1}z_1+z_2^k:z_0^{k-1}z_2)
\]
by $(z_0:z_1:z_2)\mapsto(z_0+\mu z_1:z_1:z_2)$. 
Then define the morphism 
$\theta_k\colon\mathrm{SL}(2,\mathbb{Z})\to\mathrm{Bir}(\mathbb{P}^2_\mathbb{C})$
as follows
\begin{align*}
&\theta_k(B)=\theta_e(B)\colon(z_0:z_1:z_2)\mapsto(z_1:-z_0:z_2)
&&\theta_k(AB)=\psi\circ\theta_e(AB)\circ\psi^{-1}.
\end{align*}
The map $\psi'$ restricts to an automorphism of the affine plane where 
$z_0\not=0$, commutes with 
$\theta_k(B^2)=\theta_e(B^2)=(z_0:z_1:-z_2)\in\mathrm{Aut}(\mathbb{P}^2_\mathbb{C})$
and acts trivially on~$L_{z_2}$. Since $\psi$ commutes with $\theta_k(B^2)$ 
the map $\theta_k(AB)$ commutes with $\theta_k(B^2)$. As a result $\theta_k$ 
is a well-defined morphism. As $\psi_{\vert L_{z_2}\smallsetminus\{p\}}=\mathrm{id}$
the actions of $\theta_e$ and $\theta_k$ on $L_{z_2}$ are the same; $\theta_k$ is
thus an embedding.

\begin{lem}[\cite{BlancDeserti:embeddings}]\label{lem:dynamicaldegree}
Let $n$ be a positive integer. Let $a_1$, $\ldots$, $a_n$, $b_1$, $\ldots$, 
$b_n$ be~$2n$ elements in $\{-1,\,1\}$. The birational self map of $\mathbb{P}^2_\mathbb{C}$
\[
\theta_k\big(B^{b_n}(AB)^{a_n}B^{b_{n-1}}(AB)^{a_{n-1}}\ldots B^{b_1}(AB)^{a_1}\big)
\]
has degree $k^{2n}$ and has exactly $2n$ proper base-points, all lying on $L_{z_2}$.

More precisely the base-points are
\[
p,\,\big((AB)^{a_1}\big)^{-1}(p),\,\big(B^{b_1}(AB)^{a_1}\big)^{-1}(p),
\]
\[
\big((AB)^{a_2}B^{b_1}(AB)^{a_1}\big)^{-1}(p),\,\ldots,\,\big((AB)^{a_n}B^{b_{n-1}}(AB)^{a_{n-1}}\ldots B^{b_1}(AB)^{a_1}\big)^{-1}(p),
\]
\[
\big(B^{b_n}(AB)^{a_n}B^{b_{n-1}}(AB)^{a_{n-1}}\ldots B^{b_1}(AB)^{a_1}\big)^{-1}(p).
\]
\end{lem}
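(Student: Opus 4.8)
<br>

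\textbf{Approach.} The plan is to prove the statement by induction on $n$, exploiting the fact that $\theta_k$ was built precisely so that $\theta_k(AB)$ acts like $\theta_e(AB)$ on $L_{z_2}$ but twists the complementary directions through the degree-$k$ map $\psi'$. The key structural observation is that $\psi$ introduces a single proper base-point, namely $p=(\mu:1:0)\in L_{z_2}$, together with an invariant line, and that all the base-point bookkeeping therefore takes place \emph{on the line} $L_{z_2}$, where $\theta_k$ and $\theta_e$ agree and hence the action is governed by $\mathrm{PSL}(2,\mathbb{Z})$ acting on $L_{z_2}\simeq\mathbb{P}^1_\mathbb{C}$. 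The genericity of $\mu$ (trivial isotropy under $\mathrm{PSL}(2,\mathbb{Z})$) will guarantee that the $2n$ listed points are pairwise distinct, so that no base-points collide or cancel.

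\textbf{Main steps.} First I would analyse the elementary factors $\theta_k\big((AB)^{a}\big)$ and $\theta_k\big(B^{b}\big)$ for $a,b\in\{-1,1\}$: the factors $\theta_k(B^{b})$ are linear (they lie in $\theta_e(\mathrm{SL}(2,\mathbb{Z}))\subset\mathrm{PGL}(3,\mathbb{C})$, since $\psi$ only conjugates the $AB$-generator), so they contribute no base-points and do not raise the degree, whereas each factor $\theta_k\big((AB)^{a}\big)=\psi\circ\theta_e\big((AB)^{a}\big)\circ\psi^{-1}$ has degree $k$ with a single proper base-point. Second I would set up the induction: writing $w_n=B^{b_n}(AB)^{a_n}\cdots B^{b_1}(AB)^{a_1}$ and $\phi_n=\theta_k(w_n)$, I want to show $\deg\phi_n=k^{2n}$ with the $2n$ stated proper base-points. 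The inductive step multiplies $\phi_{n-1}$ on the left by $\theta_k\big(B^{b_n}(AB)^{a_n}\big)$; the degree formula for composition of birational maps of $\mathbb{P}^2_\mathbb{C}$ (the product of degrees minus corrections coming from shared base-points, as recalled in \S\ref{sec:geodef} and in \cite{AlberichCarraminana}) reduces the claim $\deg\phi_n=k^2\deg\phi_{n-1}$ to showing that the base-points of $\theta_k\big((AB)^{a_n}\big)^{-1}$ and the points $\phi_{n-1}$ sends to the relevant curves are in sufficiently general position, which is exactly what the explicit list of base-points encodes. Third, I would verify that the listed points are the images under the successive inverse words of the single point $p$, and confirm they all lie on $L_{z_2}$ because $\theta_k$ preserves $L_{z_2}$ and acts on it through $\mathrm{PSL}(2,\mathbb{Z})$.

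\textbf{The hard part.} The main obstacle will be controlling the base-point behaviour under composition so that there is no unexpected degeneration: a priori, composing $\phi_{n-1}$ with the next factor could create a common factor (dropping the degree below $k^{2n}$) or could produce infinitely near rather than proper base-points. The genericity of $\mu$ is the crucial input here, and I expect the delicate bookkeeping to be in showing that the two points whose coincidence would cause a degree drop are distinct. Concretely, since the action on $L_{z_2}$ is the faithful $\mathrm{PSL}(2,\mathbb{Z})$-action and $p$ has trivial isotropy, the $2n$ points $\big(u\big)^{-1}(p)$ for the successive prefixes $u$ of $w_n$ are distinct precisely because distinct elements of $\mathrm{PSL}(2,\mathbb{Z})$ move $p$ to distinct points; this is where I would invoke that $[B]$ and $[AB]$ generate a free product and that the prefixes are pairwise inequivalent reduced words. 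Once distinctness of the base-points on $L_{z_2}$ is established, the degree multiplicativity $\deg\phi_n=k^2\deg\phi_{n-1}$ follows from each $(AB)^{\pm1}$-factor contributing its full degree $k$ without cancellation, and the induction closes.
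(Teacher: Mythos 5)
Your overall strategy---induction on $n$, treating the $\theta_k(B^{b})$ factors as linear, and using the free-product structure of $\mathrm{PSL}(2,\mathbb{Z})$ together with the trivial isotropy of $p$ to rule out base-point cancellation---is exactly the route the paper takes. But there is a concrete error in your analysis of the elementary factor, and it is not a cosmetic one: you assert that $\theta_k\big((AB)^{a}\big)=\psi\circ\theta_e\big((AB)^{a}\big)\circ\psi^{-1}$ ``has degree $k$ with a single proper base-point.'' This is false, and it contradicts the lemma itself at $n=1$: since $\theta_k(B^{b_1})$ is linear, the $n=1$ case says precisely that $\theta_k\big((AB)^{a_1}\big)$ has degree $k^{2}$ with \emph{two} proper base-points. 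The correct computation is the one the paper details: $\psi$ and $\psi^{-1}$ each have degree $k$ and unique proper base-point $p$, and $\psi_{\vert L_{z_2}\smallsetminus\{p\}}=\psi^{-1}_{\vert L_{z_2}\smallsetminus\{p\}}=\mathrm{id}$; because $\mu$ was chosen so that $\theta_e\big((AB)^{a}\big)$ moves $p$ to a \emph{different} point of $L_{z_2}$, there is no cancellation in the conjugation, the degrees multiply to give $k\cdot 1\cdot k=k^{2}$, and the two proper base-points are $p$ (coming from $\psi^{-1}$) and $\big((AB)^{a}\big)^{-1}(p)$ (the base-point of $\psi$ pulled back through the middle factors, which stays put on $L_{z_2}$ because $\psi$ is the identity there away from $p$). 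Note that the trivial isotropy of $p$ is already doing work at this very first step, not only in the distinctness bookkeeping later.

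This error also makes your inductive step internally inconsistent: you claim the recursion $\deg\phi_n=k^{2}\deg\phi_{n-1}$, yet you also say each $(AB)^{\pm 1}$-factor ``contributes its full degree $k$,'' which would give $\deg\phi_n=k\deg\phi_{n-1}$ and hence $\deg\phi_n=k^{n}$, not the claimed $k^{2n}$. Once the factor is corrected to have degree $k^{2}$ with base-points $p$ and $\big((AB)^{a_j}\big)^{-1}(p)$, your recursion becomes coherent, and the rest of your plan is sound and matches the paper: each inductive step composes $\phi_{n-1}$ on the left with the degree-$k^{2}$ factor $\theta_k\big(B^{b_n}(AB)^{a_n}\big)$, multiplicativity of degrees holds because the relevant points on $L_{z_2}$ are of the form $u^{-1}(p)$ for pairwise distinct reduced prefixes $u$ in the free product $\langle [B]\rangle\ast\langle[AB]\rangle$, and trivial isotropy of $p$ then forces these points to be pairwise distinct, so no curve contracted by one factor lands on a base-point of the next. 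In short: right architecture, but the base case is wrong as stated, and since the entire degree count $k^{2n}$ hinges on it, the proof does not close without the correction above.
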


This result implies the existence of infinitely many loxodromic embeddings
of $\mathrm{SL}(2,\mathbb{Z})$ into $\mathrm{Bir}(\mathbb{P}^2_\mathbb{C})$:

\begin{cor}[\cite{BlancDeserti:embeddings}]
Let $n$ be a positive integer. Let $a_1$, $a_2$, $\ldots$, $a_n$, $b_1$, 
$b_2$, $\ldots$, $b_n$ be $2n$ elements in $\{-1,\,1\}$. The birational self map of $\mathbb{P}^2_\mathbb{C}$
\[
\theta_k\big(B^{b_n}(AB)^{a_n}B^{b_{n-1}}(AB)^{a_{n-1}}\ldots B^{b_1}(AB)^{a_1}\big)
\]
has dynamical degree $k^{2n}$.

In particular, $\theta_k$ is a loxodromic embedding and 
\[
\big\{\lambda(\phi)\,\vert\,\phi\in\theta_k\big(\mathrm{SL}(2,\mathbb{Z})\big)\big\}=\big\{1,\,k^2,\,k^4,\,k^6,\,\ldots\big\}.
\]
\end{cor}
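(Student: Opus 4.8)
The plan is to deduce the corollary directly from Lemma~\ref{lem:dynamicaldegree}. The key point is that every element of $\theta_k\big(\mathrm{SL}(2,\mathbb{Z})\big)$ of infinite order can, after conjugacy inside the group, be written in the normal form appearing in the lemma, namely a reduced word $B^{b_n}(AB)^{a_n}\cdots B^{b_1}(AB)^{a_1}$ with $a_i,b_i\in\{-1,1\}$; this is precisely the normal form coming from the free product structure $\mathrm{PSL}(2,\mathbb{Z})=\langle[B],[AB]\,\vert\,[B]^2=[AB]^3=\mathrm{id}\rangle$ recalled at the start of the construction.

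First I would recall that the dynamical degree is the limit $\lambda(\phi)=\lim_{m\to+\infty}(\deg\phi^m)^{1/m}$ and that it is invariant under conjugacy (this was established in \S\ref{sec:degreegrowth}). Lemma~\ref{lem:dynamicaldegree} asserts that the word $w_n=B^{b_n}(AB)^{a_n}\cdots B^{b_1}(AB)^{a_1}$ has image under $\theta_k$ of degree exactly $k^{2n}$ \emph{with exactly $2n$ proper base-points}. The crucial consequence is that no base-point of this image is a base-point of its inverse: the listed base-points all lie on $L_{z_2}$ and are pairwise distinct orbit points under the $\mathrm{PSL}(2,\mathbb{Z})$-action on $L_{z_2}$, since $p$ was chosen with trivial isotropy group. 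Hence the map is algebraically stable, and more is true: passing to powers $\theta_k(w_n)^m$ simply concatenates reduced words (the word $w_n^m$ is again reduced in the free product, because $w_n$ is cyclically reduced after the initial conjugation), so by the same lemma $\deg\big(\theta_k(w_n)^m\big)=k^{2nm}=(k^{2n})^m$. Therefore $\lambda\big(\theta_k(w_n)\big)=\lim_m (k^{2nm})^{1/m}=k^{2n}$.

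The main obstacle will be the bookkeeping needed to guarantee that $w_n^m$ remains reduced, or equivalently that the base-points of $\theta_k(w_n)^m$ do not collide or cancel against those of its inverse; this is what makes the degree strictly multiplicative. Concretely I would argue that because $p$ has trivial $\mathrm{PSL}(2,\mathbb{Z})$-isotropy, the $2nm$ base-points predicted by applying Lemma~\ref{lem:dynamicaldegree} to the length-$nm$ word $w_n^m$ are genuinely distinct proper points of $L_{z_2}$, so no reduction of degree occurs and $\theta_k(w_n)$ is already algebraically stable in the sense needed. Once the equality $\deg\theta_k(w_n)^m=(k^{2n})^m$ is in hand the dynamical degree computation is immediate.

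For the second assertion, since $k>0$ and the exponent $2n$ ranges over all positive integers as $w_n$ ranges over the infinite-order conjugacy classes, the displayed spectral radius set is $\{1,k^2,k^4,k^6,\ldots\}$, the value $1$ being attained exactly on the finite-order (elliptic) elements, whose degree sequence is bounded. A loxodromic element is one with dynamical degree $>1$, so every infinite-order element of $\theta_k\big(\mathrm{SL}(2,\mathbb{Z})\big)$ is loxodromic and $\theta_k$ is a loxodromic embedding. I expect the argument to be short modulo the reducedness verification, which is the genuine content inherited from the careful base-point count of Lemma~\ref{lem:dynamicaldegree}.
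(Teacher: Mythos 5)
Your proposal is correct and follows essentially the same route as the paper: conjugate an infinite-order element to the normal form $B^{b_n}(AB)^{a_n}\cdots B^{b_1}(AB)^{a_1}$ coming from the free-product structure of $\mathrm{PSL}(2,\mathbb{Z})$, observe that its $m$-th power is a word of the same shape with $nm$ syllables (automatically, since the word begins with a $B$-syllable and ends with an $(AB)$-syllable), apply Lemma~\ref{lem:dynamicaldegree} to get $\deg\theta_k(w_n^m)=k^{2nm}$, and conclude $\lambda\big(\theta_k(w_n)\big)=k^{2n}$ by conjugacy invariance of the dynamical degree. Your separate discussion of algebraic stability and the trivial-isotropy condition on $p$ is redundant at this stage --- that is precisely the content already packaged into the lemma's exact base-point count, which the paper simply invokes for the power $\varphi^r$ directly.
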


\begin{proof}
Let us consider an element of infinite order of $\mathrm{SL}(2,\mathbb{Z})$;
it is conjugate to 
\[
\varphi=B^{b_n}(AB)^{a_n}B^{b_{n-1}}(AB)^{a_{n-1}}\ldots B^{b_1}(AB)^{a_1}
\]
where $a_1$, $a_2$, $\ldots$, $a_n$, $b_1$, $b_2$, $\ldots$, $b_n\in\{-1,\,1\}$.
According to Lemma \ref{lem:dynamicaldegree} the degree of $\theta_k(\varphi^r)$
is equal to $k^{2nr}$. As a consequence 
$\lambda\big(\theta_k(\varphi)\big)=k^{2n}$.
\end{proof}

\begin{proof}[Idea of the proof of Lemma \ref{lem:dynamicaldegree}]
We proceed by induction on $n$. Let us detail the case $n=~1$. The birational
map $\psi$ has degree $k$ and has a unique proper base-point 
$p=(\mu:1:0)\in L_{z_2}$. The same holds for $\psi^{-1}$. Moreover 
$\psi_{\vert L_{z_2}\smallsetminus\{p\}}=\psi^{-1}_{\vert L_{z_2}\smallsetminus\{p\}}=\mathrm{id}$. 
Since $\theta_e(AB)^{a_1}\in\mathrm{Aut}(\mathbb{P}^2_\mathbb{C})$ moves the 
point $p$ onto another point of $L_{z_2}$, the map $\theta_k\big((AB)^{a_1}\big)$
has degree $k^2$ and exactly two proper base-points which are $p$ and 
$\big((AB)^{a_1}\big)^{-1}(p)=(\psi\circ\theta_e)(AB)^{-a_1}$. As $\theta_k(S)$ belongs to 
$\mathrm{Aut}(\mathbb{P}^2_\mathbb{C})$, $\theta_k(B^{b_1}(AB)^{a_1})$ has also
degree $k^2$ and two proper base-points which are $p$ and 
$\big((AB)^{a_1}\big)^{-1}(p)$.
\end{proof}

\subsubsection[Description of loxodromic embeddings]{Description of loxodromic embeddings for which the central element fixes (pointwise) an elliptic curve}

Let us note that 
\[
\mathrm{SL}(2,\mathbb{Z})=\langle\alpha,\,\beta\,\vert\,\beta^4=\mathrm{id},\,\alpha^3=\beta^2\rangle
\] 
(take the presentation we gave before and set $\alpha^2=AB$, $\beta=B$) and
that 
\[
\mathrm{SL}(2,\mathbb{Z})=\langle\alpha,\,\beta\,\vert\,\alpha^6=\beta^4=\alpha^3\beta^2=\mathrm{id}\rangle.
\]
In this section we will use this last presentation.

We say that a curve is \textsl{fixed}\index{defi}{fixed (curve)} by a birational
map if it is pointwise fixed, and say that a curve is 
\textsl{invariant}\index{defi}{invariant (curve)} or 
\textsl{preserved}\index{defi}{preserved (curve)} if the map induces a 
birational action on the curve.

All conjugacy classes of elements of order $4$ and $6$ 
in~$\mathrm{Bir}(\mathbb{P}^2_\mathbb{C})$ have been classified in 
\cite{Blanc:commentarii}. Many of them can act on del Pezzo surfaces 
of degree $1$, $2$, $3$ or $4$. 

del Pezzo surfaces $X$, $Y$
of degree $\leq 4$ and automorphisms $\alpha\in\mathrm{Aut}(X)$, resp. 
$\beta\in\mathrm{Aut}(Y)$ of order $6$, resp. $4$ so that 
\begin{itemize}
\item[$\diamond$] $\alpha^3$ and $\beta^2$ fix pointwise an elliptic curve, 
\item[$\diamond$] and that $\mathrm{Pic}(X)^{\alpha}$,
$\mathrm{Pic}(Y)^{\beta}$ both have rank $1$
\end{itemize}
are defined to create the embedding. Contracting $(-1)$-curves 
invariant by the involutions $\alpha^3$ and $\beta^2$ 
(but not by $\alpha$, $\beta$ which act
minimally on $X$ and $Y$) we get rational morphisms $X\to X_4$ and 
$Y\to Y_4$ where $X_4$, $Y_4$ are del Pezzo surfaces on which 
$\alpha^3$ and $\beta^2$ act minimally. Furthermore $X_4$ and $Y_4$ are
del Pezzo surfaces of degree $4$, both $\mathrm{Pic}(X_4)^{\alpha^3}$ 
and $\mathrm{Pic}(Y_4)^{\beta^2}$ have rank $2$ and are generated by the 
fibers of the two conic bundles on $X_4$ and~$Y_4$. Choosing a birational
map $X_4\dashrightarrow Y_4$ conjugating $\alpha^3$ to $\beta^2$ (which 
exists if and only if the elliptic curves are isomorphic), which is general
enough, we obtain a loxodromic embedding 
\[
\mathrm{SL}(2,\mathbb{Z})\to\mathrm{Bir}(\mathbb{P}^2_\mathbb{C}).
\]
To prove that there is no other relation in $\langle\alpha,\,\beta\rangle$
and that all elements of infinite order are loxodromic the 
morphisms $X\to X_4$ and $Y\to Y_4$ and the actions of $\alpha$ and $\beta$ 
on $\mathrm{Pic}(X)^{\alpha^3}$ and~$\mathrm{Pic}(Y)^{\beta^2}$ are 
described ; furthermore the composition of the elements does what is expected.

\backmatter

\chapter*{Index}

\printindex{defi}{Index}
\printindex{not}{Index notations}

\bibliographystyle{alpha}
\bibliography{biblio}

\newcommand{\etalchar}[1]{$^{#1}$}
\begin{thebibliography}{BDGGH{\etalchar{+}}92}

\bibitem[AC02]{AlberichCarraminana}
M.~Alberich-Carrami{\~n}ana.
\newblock {\em Geometry of the plane {C}remona maps}, volume 1769 of {\em
  Lecture Notes in Mathematics}.
\newblock Springer-Verlag, Berlin, 2002.

\bibitem[Ale16]{Alexander}
J.~W. Alexander.
\newblock On the factorization of {C}remona plane transformations.
\newblock {\em Trans. Amer. Math. Soc.}, 17(3):295--300, 1916.

\bibitem[BB00]{BayleBeauville}
L.~Bayle and A.~Beauville.
\newblock Birational involutions of {${\bf P}^2$}.
\newblock {\em Asian J. Math.}, 4(1):11--17, 2000.
\newblock Kodaira's issue.

\bibitem[BB04]{BeauvilleBlanc}
A.~Beauville and J.~Blanc.
\newblock On {C}remona transformations of prime order.
\newblock {\em C. R. Math. Acad. Sci. Paris}, 339(4):257--259, 2004.

\bibitem[BC16]{BlancCantat}
J.~Blanc and S.~Cantat.
\newblock Dynamical degrees of birational transformations of projective
  surfaces.
\newblock {\em J. Amer. Math. Soc.}, 29(2):415--471, 2016.

\bibitem[BCW82]{BassConnelWright}
H.~Bass, E.~H. Connell, and D.~Wright.
\newblock The {J}acobian conjecture: reduction of degree and formal expansion
  of the inverse.
\newblock {\em Bull. Amer. Math. Soc. (N.S.)}, 7(2):287--330, 1982.

\bibitem[BCW09]{BonattiCrovisierWilkinson}
C.~Bonatti, S.~Crovisier, and A.~Wilkinson.
\newblock The {$C^1$} generic diffeomorphism has trivial centralizer.
\newblock {\em Publ. Math. Inst. Hautes \'{E}tudes Sci.}, (109):185--244, 2009.

\bibitem[BD05]{BedfordDiller}
E.~Bedford and J.~Diller.
\newblock Energy and invariant measures for birational surface maps.
\newblock {\em Duke Math. J.}, 128(2):331--368, 2005.

\bibitem[BD12]{BlancDeserti:embeddings}
J.~Blanc and J.~D\'eserti.
\newblock Embeddings of {${\rm SL}(2,\Bbb Z)$} into the {C}remona group.
\newblock {\em Transform. Groups}, 17(1):21--50, 2012.

\bibitem[BD15]{BlancDeserti:degree}
J.~Blanc and J.~D\'eserti.
\newblock Degree growth of birational maps of the plane.
\newblock {\em Ann. Sc. Norm. Super. Pisa Cl. Sci. (5)}, 14(2):507--533, 2015.

\bibitem[BDGGH{\etalchar{+}}92]{BDGGHPDS}
M.-J. Bertin, A.~Decomps-Guilloux, M.~Grandet-Hugot, M.~Pathiaux-Delefosse, and
  J.-P. Schreiber.
\newblock {\em Pisot and {S}alem numbers}.
\newblock Birkh\"{a}user Verlag, Basel, 1992.
\newblock With a preface by David W. Boyd.

\bibitem[Bea83]{Beauville:book}
A.~Beauville.
\newblock {\em Complex algebraic surfaces}, volume~68 of {\em London
  Mathematical Society Lecture Note Series}.
\newblock Cambridge University Press, Cambridge, 1983.
\newblock Translated from the French by R. Barlow, N. I. Shepherd-Barron and M.
  Reid.

\bibitem[Bea07]{Beauville}
A.~Beauville.
\newblock {$p$}-elementary subgroups of the {C}remona group.
\newblock {\em J. Algebra}, 314(2):553--564, 2007.

\bibitem[Ber77]{Bertini}
E.~Bertini.
\newblock Ricerche sulle trasformazioni univoche involutorie nel piano.
\newblock {\em Annali di Mat.}, 8:254--287, 1877.

\bibitem[BF02]{BestvinaFujiwara}
M.~Bestvina and K.~Fujiwara.
\newblock Bounded cohomology of subgroups of mapping class groups.
\newblock {\em Geom. Topol.}, 6:69--89, 2002.

\bibitem[BF13]{BlancFurter}
J.~Blanc and J.-P. Furter.
\newblock Topologies and structures of the {C}remona groups.
\newblock {\em Ann. of Math. (2)}, 178(3):1173--1198, 2013.

\bibitem[BF19]{BlancFurter:length}
J.~Blanc and J.-P. Furter.
\newblock Length in the {C}remona group.
\newblock {\em Ann. H. Lebesgue}, 2:187--257, 2019.

\bibitem[BFH00]{BFH}
M.~Bestvina, M.~Feighn, and M.~Handel.
\newblock The {T}its alternative for {${\rm Out}(F_n)$}. {I}. {D}ynamics of
  exponentially-growing automorphisms.
\newblock {\em Ann. of Math. (2)}, 151(2):517--623, 2000.

\bibitem[BH15]{BlancHeden}
J.~Blanc and I.~Hed\'en.
\newblock The group of {C}remona transformations generated by linear maps and
  the standard involution.
\newblock {\em Ann. Inst. Fourier (Grenoble)}, 65(6):2641--2680, 2015.

\bibitem[BHPVdV04]{BHPV}
W.~P. Barth, K.~Hulek, C.~A.~M. Peters, and A.~Van~de Ven.
\newblock {\em Compact complex surfaces}, volume~4 of {\em Ergebnisse der
  Mathematik und ihrer Grenzgebiete. 3. Folge. A Series of Modern Surveys in
  Mathematics}.
\newblock Springer-Verlag, Berlin, second edition, 2004.

\bibitem[BIM05]{BurgerIozziMonod}
M.~Burger, A.~Iozzi, and N.~Monod.
\newblock Equivariant embeddings of trees into hyperbolic spaces.
\newblock {\em Int. Math. Res. Not.}, (22):1331--1369, 2005.

\bibitem[Bir36]{Birkhoff}
G.~Birkhoff.
\newblock Lie groups simply isomorphic with no linear group.
\newblock {\em Bull. Amer. Math. Soc.}, 42(12):883--888, 1936.

\bibitem[BK06]{BedfordKim4}
E.~Bedford and K.~Kim.
\newblock Periodicities in linear fractional recurrences: degree growth of
  birational surface maps.
\newblock {\em Michigan Math. J.}, 54(3):647--670, 2006.

\bibitem[BK09]{BedfordKim1}
E.~Bedford and K.~Kim.
\newblock Dynamics of rational surface automorphisms: linear fractional
  recurrences.
\newblock {\em J. Geom. Anal.}, 19(3):553--583, 2009.

\bibitem[BK10]{BedfordKim2}
E.~Bedford and K.~Kim.
\newblock Continuous families of rational surface automorphisms with positive
  entropy.
\newblock {\em Math. Ann.}, 348(3):667--688, 2010.

\bibitem[BK12]{BedfordKim3}
E.~Bedford and K.~Kim.
\newblock Dynamics of rational surface automorphisms: rotation domains.
\newblock {\em Amer. J. Math.}, 134(2):379--405, 2012.

\bibitem[BL83]{BassLubotzky}
H.~Bass and A.~Lubotzky.
\newblock Automorphisms of groups and of schemes of finite type.
\newblock {\em Israel J. Math.}, 44(1):1--22, 1983.

\bibitem[Bla06a]{Blanc:manuscripta}
J.~Blanc.
\newblock Conjugacy classes of affine automorphisms of {$\Bbb K^n$} and linear
  automorphisms of {$\Bbb P^n$} in the {C}remona groups.
\newblock {\em Manuscripta Math.}, 119(2):225--241, 2006.

\bibitem[Bla06b]{Blanc:these}
J.~Blanc.
\newblock {\em Finite abelian subgroups of the Cremona group of the plane}.
\newblock PhD thesis, University of Geneva, 2006.

\bibitem[Bla07a]{Blanc:CRAS}
J.~Blanc.
\newblock Finite abelian subgroups of the {C}remona group of the plane.
\newblock {\em C. R. Math. Acad. Sci. Paris}, 344(1):21--26, 2007.

\bibitem[Bla07b]{Blanc:SMF}
J.~Blanc.
\newblock The number of conjugacy classes of elements of the {C}remona group of
  some given finite order.
\newblock {\em Bull. Soc. Math. France}, 135(3):419--434, 2007.

\bibitem[Bla09a]{Blanc:abelien}
J.~Blanc.
\newblock Linearisation of finite abelian subgroups of the {C}remona group of
  the plane.
\newblock {\em Groups Geom. Dyn.}, 3(2):215--266, 2009.

\bibitem[Bla09b]{Blanc:ssgpealg}
J.~Blanc.
\newblock Sous-groupes alg\'ebriques du groupe de {C}remona.
\newblock {\em Transform. Groups}, 14(2):249--285, 2009.

\bibitem[Bla10]{Blanc:simplicite}
J.~Blanc.
\newblock Groupes de {C}remona, connexit\'e et simplicit\'e.
\newblock {\em Ann. Sci. \'Ec. Norm. Sup\'er. (4)}, 43(2):357--364, 2010.

\bibitem[Bla11a]{Blanc:cyclic}
J.~Blanc.
\newblock Elements and cyclic subgroups of finite order of the {C}remona group.
\newblock {\em Comment. Math. Helv.}, 86(2):469--497, 2011.

\bibitem[Bla11b]{Blanc:commentarii}
J.~Blanc.
\newblock Elements and cyclic subgroups of finite order of the {C}remona group.
\newblock {\em Comment. Math. Helv.}, 86(2):469--497, 2011.

\bibitem[Bla12]{Blanc:relations}
J.~Blanc.
\newblock Simple relations in the {C}remona group.
\newblock {\em Proc. Amer. Math. Soc.}, 140(5):1495--1500, 2012.

\bibitem[Bla16]{Blanc:algel}
J.~Blanc.
\newblock Algebraic elements of the {C}remona groups.
\newblock In {\em From classical to modern algebraic geometry}, Trends Hist.
  Sci., pages 351--360. Birkh\"{a}user/Springer, Cham, 2016.

\bibitem[BLZar]{BlancLamyZimmermann}
J.~Blanc, S.~Lamy, and S.~Zimmermann.
\newblock Quotients of higher dimensional {C}remona groups.
\newblock to appear.

\bibitem[Bou98]{Bourbaki}
N.~Bourbaki.
\newblock {\em General topology. {C}hapters 1--4}.
\newblock Elements of Mathematics (Berlin). Springer-Verlag, Berlin, 1998.
\newblock Translated from the French, Reprint of the 1989 English translation.

\bibitem[BPV08]{BlancPanVust2}
J.~Blanc, I.~Pan, and T.~Vust.
\newblock Sur un th\'eor\`eme de {C}astelnuovo.
\newblock {\em Bull. Braz. Math. Soc. (N.S.)}, 39(1):61--80, 2008.

\bibitem[BPV09]{BlancPanVust}
J.~Blanc, I.~Pan, and T.~Vust.
\newblock On birational transformations of pairs in the complex plane.
\newblock {\em Geom. Dedicata}, 139:57--73, 2009.

\bibitem[Bri10]{Brion}
M.~Brion.
\newblock Some basic results on actions of nonaffine algebraic groups.
\newblock In {\em Symmetry and spaces}, volume 278 of {\em Progr. Math.}, pages
  1--20. Birkh\"{a}user Boston, Inc., Boston, MA, 2010.

\bibitem[Bro76]{Shafarevich}
F.~E. Browder, editor.
\newblock {\em Mathematical developments arising from {H}ilbert problems}.
\newblock Proceedings of Symposia in Pure Mathematics, Vol. XXVIII. American
  Mathematical Society, Providence, R. I., 1976.

\bibitem[Bru15]{Brunella}
M.~Brunella.
\newblock {\em Birational geometry of foliations}, volume~1 of {\em IMPA
  Monographs}.
\newblock Springer, Cham, 2015.

\bibitem[BS85]{BrinSquier}
M.~G. Brin and C.~C. Squier.
\newblock Groups of piecewise linear homeomorphisms of the real line.
\newblock {\em Invent. Math.}, 79(3):485--498, 1985.

\bibitem[BT73]{BorelTits}
A.~Borel and J.~Tits.
\newblock Homomorphismes ``abstraits'' de groupes alg\'ebriques simples.
\newblock {\em Ann. of Math. (2)}, 97:499--571, 1973.

\bibitem[ByB66]{BialynickiBirula}
A.~Bia\l~ynicki Birula.
\newblock Remarks on the action of an algebraic torus on {$k^{n}$}.
\newblock {\em Bull. Acad. Polon. Sci. S\'er. Sci. Math. Astronom. Phys.},
  14:177--181, 1966.

\bibitem[BYar]{BlancYasinsky}
J.~Blanc and E.~Yasinsky.
\newblock Quotients of groups of birational transformations of cubic del pezzo
  fibrations.
\newblock {\em J. \'{E}c. polytech. Math.}, to appear.

\bibitem[BZ18]{BlancZimmermann}
J.~Blanc and S.~Zimmermann.
\newblock Topological simplicity of the {C}remona groups.
\newblock {\em Amer. J. Math.}, 140(5):1297--1309, 2018.

\bibitem[Can99]{Cantat2}
S.~Cantat.
\newblock Dynamique des automorphismes des surfaces projectives complexes.
\newblock {\em C. R. Acad. Sci. Paris S\'er. I Math.}, 328(10):901--906, 1999.

\bibitem[Can01]{Cantat3}
S.~Cantat.
\newblock Dynamique des automorphismes des surfaces {$K3$}.
\newblock {\em Acta Math.}, 187(1):1--57, 2001.

\bibitem[Can11]{Cantat:annals}
S.~Cantat.
\newblock Sur les groupes de transformations birationnelles des surfaces.
\newblock {\em Ann. of Math. (2)}, 174(1):299--340, 2011.

\bibitem[Can13]{Cantat:survey}
S.~Cantat.
\newblock The {C}remona group in two variables.
\newblock {\em Proceedings of the sixth European Congress of Math.},
  187:211--225, 2013.

\bibitem[Can14]{Cantat5}
S.~Cantat.
\newblock Morphisms between {C}remona groups, and characterization of rational
  varieties.
\newblock {\em Compos. Math.}, 150(7):1107--1124, 2014.

\bibitem[Cas01]{Castelnuovo}
G.~Castelnuovo.
\newblock Le trasformationi generatrici del gruppo cremoniano nel piano.
\newblock {\em Atti R. Accad. Sci. Torino}, 36:861--874, 1901.

\bibitem[CD12a]{CantatDolgachev}
S.~Cantat and I.~Dolgachev.
\newblock Rational surfaces with a large group of automorphisms.
\newblock {\em J. Amer. Math. Soc.}, 25(3):863--905, 2012.

\bibitem[CD12b]{CerveauDeserti:centralisateurs}
D.~Cerveau and J.~D\'eserti.
\newblock Centralisateurs dans le groupe de {J}onqui\`eres.
\newblock {\em Michigan Math. J.}, 61(4):763--783, 2012.

\bibitem[CD13]{CerveauDeserti:ptdegre}
D.~Cerveau and J.~D\'eserti.
\newblock {\em Transformations birationnelles de petit degr\'e}, volume~19 of
  {\em Cours Sp\'ecialis\'es}.
\newblock Soci\'et\'e Math\'ematique de France, Paris, 2013.

\bibitem[CdC20]{CantatCornulier}
S.~Cantat and Y.~de~Cornulier.
\newblock Distortion in {C}remona groups.
\newblock {\em Ann. Sc. Norm. Super. Pisa Cl. Sci. (5)}, 20(2):827--858, 2020.

\bibitem[CDP90]{CoornaertDelzantPapadopoulos}
M.~Coornaert, T.~Delzant, and A.~Papadopoulos.
\newblock {\em G\'eom\'etrie et th\'eorie des groupes}, volume 1441 of {\em
  Lecture Notes in Mathematics}.
\newblock Springer-Verlag, Berlin, 1990.
\newblock Les groupes hyperboliques de Gromov, With an English summary.

\bibitem[CF03]{CantatFavre}
S.~Cantat and C.~Favre.
\newblock Sym\'etries birationnelles des surfaces feuillet\'ees.
\newblock {\em J. Reine Angew. Math.}, 561:199--235, 2003.

\bibitem[Chi01]{Chiswell}
I.~Chiswell.
\newblock {\em Introduction to {$\Lambda$}-trees}.
\newblock World Scientific Publishing Co., Inc., River Edge, NJ, 2001.

\bibitem[CL06]{CantatLamyaut}
S.~Cantat and S.~Lamy.
\newblock Groupes d'automorphismes polynomiaux du plan.
\newblock {\em Geom. Dedicata}, 123:201--221, 2006.

\bibitem[CL13]{CantatLamy}
S.~Cantat and S.~Lamy.
\newblock Normal subgroups in the {C}remona group.
\newblock {\em Acta Math.}, 210(1):31--94, 2013.
\newblock With an appendix by Yves de Cornulier.

\bibitem[Cob61]{Coble}
A.~B. Coble.
\newblock {\em Algebraic geometry and theta functions}.
\newblock Revised printing. American Mathematical Society Colloquium
  Publication, vol. X. American Mathematical Society, Providence, R.I., 1961.

\bibitem[Cor]{Cornulier}
Y.~Cornulier.
\newblock Nonlinearity of some subgroups of the planar {C}remona group.
\newblock {\em Unpublished manuscript, 2013}.

\bibitem[Cor13]{Cornulier:amalgamatedproduct}
Y.~Cornulier.
\newblock Appendix to normal subgroups in the {C}remona group.
\newblock {\em Acta Math.}, 210(1):31--94, 2013.

\bibitem[Cou16]{Coulon}
R.~Coulon.
\newblock Th\'{e}orie de la petite simplification: une approche
  g\'{e}om\'{e}trique [d'apr\`es {F}. {D}ahmani, {V}. {G}uirardel, {D}. {O}sin
  et {S}. {C}antat, {S}. {L}amy].
\newblock {\em Ast\'{e}risque}, (380, S\'{e}minaire Bourbaki. Vol.
  2014/2015):Exp. No. 1089, 1--33, 2016.

\bibitem[D\'06a]{Deserti:IMRN}
J.~D\'eserti.
\newblock Groupe de {C}remona et dynamique complexe: une approche de la
  conjecture de {Z}immer.
\newblock {\em Int. Math. Res. Not.}, pages Art. ID 71701, 27, 2006.

\bibitem[D\'06b]{Deserti:abelien}
J.~D\'eserti.
\newblock Sur les automorphismes du groupe de {C}remona.
\newblock {\em Compos. Math.}, 142(6):1459--1478, 2006.

\bibitem[D\'07a]{Deserti:hopfian}
J.~D\'eserti.
\newblock Le groupe de {C}remona est hopfien.
\newblock {\em C. R. Math. Acad. Sci. Paris}, 344(3):153--156, 2007.

\bibitem[D\'07b]{Deserti:nilpotent}
J.~D\'eserti.
\newblock Sur les sous-groupes nilpotents du groupe de {C}remona.
\newblock {\em Bull. Braz. Math. Soc. (N.S.)}, 38(3):377--388, 2007.

\bibitem[D\'15a]{Deserti:resoluble}
J.~D\'eserti.
\newblock On solvable subgroups of the {C}remona group.
\newblock {\em Illinois J. Math.}, 59(2):345--358, 2015.

\bibitem[D\'15b]{Deserti:reg}
J.~D\'eserti.
\newblock Some properties of the group of birational maps generated by the
  automorphisms of {$\Bbb{P}^n_{\Bbb{C}}$} and the standard involution.
\newblock {\em Math. Z.}, 281(3-4):893--905, 2015.

\bibitem[Dem70]{Demazure:sousgroupesalgebriques}
M.~Demazure.
\newblock Sous-groupes alg\'ebriques de rang maximum du groupe de {C}remona.
\newblock {\em Ann. Sci. \'Ecole Norm. Sup. (4)}, 3:507--588, 1970.

\bibitem[DF01]{DillerFavre}
J.~Diller and C.~Favre.
\newblock Dynamics of bimeromorphic maps of surfaces.
\newblock {\em Amer. J. Math.}, 123(6):1135--1169, 2001.

\bibitem[dF04]{deFernex}
T.~de~Fernex.
\newblock On planar {C}remona maps of prime order.
\newblock {\em Nagoya Math. J.}, 174:1--28, 2004.

\bibitem[dFE02]{DeFernexEin}
T.~de~Fernex and L.~Ein.
\newblock Resolution of indeterminacy of pairs.
\newblock In {\em Algebraic geometry}, pages 165--177. de Gruyter, Berlin,
  2002.

\bibitem[DG11]{DesertiGrivaux}
J.~D\'{e}serti and J.~Grivaux.
\newblock Automorphisms of rational surfaces with positive entropy.
\newblock {\em Indiana Univ. Math. J.}, 60(5):1589--1622, 2011.

\bibitem[DGO17]{DahmaniGuirardelOsin}
F.~Dahmani, V.~Guirardel, and D.~Osin.
\newblock Hyperbolically embedded subgroups and rotating families in groups
  acting on hyperbolic spaces.
\newblock {\em Mem. Amer. Math. Soc.}, 245(1156):v+152, 2017.

\bibitem[DI09]{DolgachevIskovskikh}
I.~V. Dolgachev and V.~A. Iskovskikh.
\newblock Finite subgroups of the plane {C}remona group.
\newblock In {\em Algebra, arithmetic, and geometry: in honor of {Y}u. {I}.
  {M}anin. {V}ol. {I}}, volume 269 of {\em Progr. Math.}, pages 443--548.
  Birkh\"auser Boston Inc., Boston, MA, 2009.

\bibitem[Die71]{Dieudonne}
J.~A. Dieudonn\'{e}.
\newblock {\em La g\'{e}om\'{e}trie des groupes classiques}.
\newblock Springer-Verlag, Berlin-New York, 1971.
\newblock Troisi\`eme \'{e}dition, Ergebnisse der Mathematik und ihrer
  Grenzgebiete, Band 5.

\bibitem[Dil11]{Diller}
J.~Diller.
\newblock Cremona transformations, surface automorphisms, and plane cubics.
\newblock {\em Michigan Math. J.}, 60(2):409--440, 2011.
\newblock With an appendix by Igor Dolgachev.

\bibitem[dlH00]{delaHarpe}
P.~de~la Harpe.
\newblock {\em Topics in geometric group theory}.
\newblock Chicago Lectures in Mathematics. University of Chicago Press,
  Chicago, IL, 2000.

\bibitem[dlHV89]{delaHarpeValette}
P.~de~la Harpe and A.~Valette.
\newblock La propri\'et\'e {$(T)$} de {K}azhdan pour les groupes localement
  compacts (avec un appendice de {M}arc {B}urger).
\newblock {\em Ast\'erisque}, (175):158, 1989.
\newblock With an appendix by M. Burger.

\bibitem[DO88]{DolgachevOrtland}
I.~Dolgachev and D.~Ortland.
\newblock Point sets in projective spaces and theta functions.
\newblock {\em Ast\'{e}risque}, (165):210 pp. (1989), 1988.

\bibitem[Dol12]{Dolgachev}
I.~V. Dolgachev.
\newblock {\em Classical algebraic geometry}.
\newblock Cambridge University Press, Cambridge, 2012.
\newblock A modern view.

\bibitem[DP12]{DelzantPy}
T.~Delzant and P.~Py.
\newblock K\"{a}hler groups, real hyperbolic spaces and the {C}remona group.
\newblock {\em Compos. Math.}, 148(1):153--184, 2012.

\bibitem[DPT80]{Demazure:delPezzo}
M.~Demazure, H.~C. Pinkham, and B.~Teissier, editors.
\newblock {\em S\'{e}minaire sur les {S}ingularit\'{e}s des {S}urfaces}, volume
  777 of {\em Lecture Notes in Mathematics}.
\newblock Springer, Berlin, 1980.
\newblock Held at the Centre de Math\'{e}matiques de l'\'{E}cole Polytechnique,
  Palaiseau, 1976--1977.

\bibitem[DS05]{DinhSibony}
T.-C. Dinh and N.~Sibony.
\newblock Une borne sup\'{e}rieure pour l'entropie topologique d'une
  application rationnelle.
\newblock {\em Ann. of Math. (2)}, 161(3):1637--1644, 2005.

\bibitem[DZ01]{DolgachevZhang}
I.~V. Dolgachev and D.-Q. Zhang.
\newblock Coble rational surfaces.
\newblock {\em Amer. J. Math.}, 123(1):79--114, 2001.

\bibitem[Enr93]{Enriques}
F.~Enriques.
\newblock Sui gruppi continui di transformazioni cremoniani nel piano.
\newblock {\em Rend. Accad. Lincei}, 1er sem., 1893.

\bibitem[Enr95]{Enriques2}
F.~Enriques.
\newblock Conferenze di geometria: fondamenti di una geometria iperspaziale.
\newblock {\em Bologna}, 1895.

\bibitem[ET79]{EpsteinThurston}
D.~B.~A. Epstein and W.~P. Thurston.
\newblock Transformation groups and natural bundles.
\newblock {\em Proc. London Math. Soc. (3)}, 38(2):219--236, 1979.

\bibitem[Fil82]{Filipkiewicz}
R.~P. Filipkiewicz.
\newblock Isomorphisms between diffeomorphism groups.
\newblock {\em Ergodic Theory Dynamical Systems}, 2(2):159--171 (1983), 1982.

\bibitem[Fra65]{Franklin}
S.~P. Franklin.
\newblock Spaces in which sequences suffice.
\newblock {\em Fund. Math.}, 57:107--115, 1965.

\bibitem[GdlH90]{delaHarpeGhys}
\'{E}. Ghys and P.~de~la Harpe, editors.
\newblock {\em Sur les groupes hyperboliques d'apr\`es {M}ikhael {G}romov},
  volume~83 of {\em Progress in Mathematics}.
\newblock Birkh\"{a}user Boston, Inc., Boston, MA, 1990.
\newblock Papers from the Swiss Seminar on Hyperbolic Groups held in Bern,
  1988.

\bibitem[Gei67]{Geiser}
C.~F. Geiser.
\newblock Ueber zwei geometrische {P}robleme.
\newblock {\em J. Reine Angew. Math.}, 67:78--89, 1867.

\bibitem[Ghy93]{Ghys}
\'{E}. Ghys.
\newblock Sur les groupes engendr\'{e}s par des diff\'{e}omorphismes proches de
  l'identit\'{e}.
\newblock {\em Bol. Soc. Brasil. Mat. (N.S.)}, 24(2):137--178, 1993.

\bibitem[Giz80]{Gizatullin}
M.~H. Gizatullin.
\newblock Rational {$G$}-surfaces.
\newblock {\em Izv. Akad. Nauk SSSR Ser. Mat.}, 44(1):110--144, 239, 1980.

\bibitem[Giz82]{Gizatullin:relations}
M.~Kh. Gizatullin.
\newblock Defining relations for the {C}remona group of the plane.
\newblock {\em Izv. Akad. Nauk SSSR Ser. Mat.}, 46(5):909--970, 1134, 1982.

\bibitem[Giz99]{Gizatullin:rep}
M.~Gizatullin.
\newblock On some tensor representations of the {C}remona group of the
  projective plane.
\newblock In {\em New trends in algebraic geometry ({W}arwick, 1996)}, volume
  264 of {\em London Math. Soc. Lecture Note Ser.}, pages 111--150. Cambridge
  Univ. Press, Cambridge, 1999.

\bibitem[Gol64]{Golod}
E.~S. Golod.
\newblock On nil-algebras and finitely approximable {$p$}-groups.
\newblock {\em Izv. Akad. Nauk SSSR Ser. Mat.}, 28:273--276, 1964.

\bibitem[Gri16]{Grivaux}
J.~Grivaux.
\newblock Parabolic automorphisms of projective surfaces (after {M}. {H}.
  {G}izatullin).
\newblock {\em Mosc. Math. J.}, 16(2):275--298, 2016.

\bibitem[Gro87]{Gromov:hypgroups}
M.~Gromov.
\newblock Hyperbolic groups.
\newblock In {\em Essays in group theory}, volume~8 of {\em Math. Sci. Res.
  Inst. Publ.}, pages 75--263. Springer, New York, 1987.

\bibitem[GS87]{GhysSergiescu}
\'E. Ghys and V.~Sergiescu.
\newblock Sur un groupe remarquable de diff\'eomorphismes du cercle.
\newblock {\em Comment. Math. Helv.}, 62(2):185--239, 1987.

\bibitem[Gui14]{Guirardel}
V.~Guirardel.
\newblock Geometric small cancellation.
\newblock In {\em Geometric group theory}, volume~21 of {\em IAS/Park City
  Math. Ser.}, pages 55--90. Amer. Math. Soc., Providence, RI, 2014.

\bibitem[Har77]{Hartshorne}
R.~Hartshorne.
\newblock {\em Algebraic geometry}.
\newblock Springer-Verlag, New York-Heidelberg, 1977.
\newblock Graduate Texts in Mathematics, No. 52.

\bibitem[Har87]{Harbourne}
B.~Harbourne.
\newblock Rational surfaces with infinite automorphism group and no
  antipluricanonical curve.
\newblock {\em Proc. Amer. Math. Soc.}, 99(3):409--414, 1987.

\bibitem[Har88]{Harbourne2}
B.~Harbourne.
\newblock Iterated blow-ups and moduli for rational surfaces.
\newblock In {\em Algebraic geometry ({S}undance, {UT}, 1986)}, volume 1311 of
  {\em Lecture Notes in Math.}, pages 101--117. Springer, Berlin, 1988.

\bibitem[Hud27]{Hudson}
H.~P. Hudson.
\newblock {\em {C}remona {T}ransformations in {P}lane and {S}pace}.
\newblock Cambridge University Press. 1927.

\bibitem[Hum75]{Humphreys}
J.~E. Humphreys.
\newblock {\em Linear algebraic groups}.
\newblock Springer-Verlag, New York-Heidelberg, 1975.
\newblock Graduate Texts in Mathematics, No. 21.

\bibitem[IKT93]{IskovskikhKabdykairovTregub}
V.~A. Iskovskikh, F.~K. Kabdykairov, and S.~L. Tregub.
\newblock Relations in a two-dimensional {C}remona group over a perfect field.
\newblock {\em Izv. Ross. Akad. Nauk Ser. Mat.}, 57(3):3--69, 1993.

\bibitem[Isk79]{Iskovskih:minimal}
V.~A. Iskovskih.
\newblock Minimal models of rational surfaces over arbitrary fields.
\newblock {\em Izv. Akad. Nauk SSSR Ser. Mat.}, 43(1):19--43, 237, 1979.

\bibitem[Isk83]{Iskovskikh:generatorsandrelations}
V.~A. Iskovskikh.
\newblock Generators and relations in a two-dimensional {C}remona group.
\newblock {\em Vestnik Moskov. Univ. Ser. I Mat. Mekh.}, (5):43--48, 1983.

\bibitem[Isk85]{Iskovskikh:relations}
V.~A. Iskovskikh.
\newblock Proof of a theorem on relations in the two-dimensional {C}remona
  group.
\newblock {\em Uspekhi Mat. Nauk}, 40(5(245)):255--256, 1985.

\bibitem[Isk05]{Iskovskikh}
V.~A. Iskovskikh.
\newblock On finite subgroups of the {C}remona group.
\newblock {\em Vietnam J. Math.}, 33(Special Issue):61--80, 2005.

\bibitem[Iva84]{Ivanov}
N.~V. Ivanov.
\newblock Algebraic properties of the {T}eichm\"{u}ller modular group.
\newblock {\em Dokl. Akad. Nauk SSSR}, 275(4):786--789, 1984.

\bibitem[Iva94]{Ivanov:Burnside}
S.~V. Ivanov.
\newblock The free {B}urnside groups of sufficiently large exponents.
\newblock {\em Internat. J. Algebra Comput.}, 4(1-2):ii+308, 1994.

\bibitem[Jou79]{Jouanolou}
J.-P. Jouanolou.
\newblock {\em \'{E}quations de {P}faff alg\'ebriques}, volume 708 of {\em
  Lecture Notes in Mathematics}.
\newblock Springer, Berlin, 1979.

\bibitem[Jul22]{Julia1}
G.~Julia.
\newblock M\'{e}moire sur la permutabilit\'{e} des fractions rationnelles.
\newblock {\em Ann. Sci. \'{E}cole Norm. Sup. (3)}, 39:131--215, 1922.

\bibitem[Jul68]{Julia2}
G.~Julia.
\newblock {\em \OE uvres de {G}aston {J}ulia. {V}ol. {I}}.
\newblock Gauthier-Villars, Paris, 1968.
\newblock Edited by Michel Herv\'{e}.

\bibitem[Jun42]{Jung}
H.~W.~E. Jung.
\newblock \"{U}ber ganze birationale {T}ransformationen der {E}bene.
\newblock {\em J. Reine Angew. Math.}, 184:161--174, 1942.

\bibitem[Kan95]{Kantor}
S.~Kantor.
\newblock Theorie der endlichen {G}ruppen von eindeutigen {T}ransformationen in
  der ebene.
\newblock 1895.

\bibitem[KM79]{KargapolovMerzljakov}
M.~I. Kargapolov and Ju.~I. Merzljakov.
\newblock {\em Fundamentals of the theory of groups}, volume~62 of {\em
  Graduate Texts in Mathematics}.
\newblock Springer-Verlag, New York-Berlin, 1979.
\newblock Translated from the second Russian edition by Robert G. Burns.

\bibitem[Kos58]{Kostrikin}
A.~I. Kostrikin.
\newblock On {B}urnside's problem.
\newblock {\em Dokl. Akad. Nauk SSSR}, 119:1081--1084, 1958.

\bibitem[Kra18]{Kraft:regularization}
H.~Kraft.
\newblock Regularization of rational group actions.
\newblock {\em ar{X}iv:1808.08729}, 2018.

\bibitem[Lam01a]{Lamy:dynamique}
S.~Lamy.
\newblock Dynamique des groupes paraboliques d'automorphismes polynomiaux de
  {$\Bbb C^2$}.
\newblock {\em Bol. Soc. Brasil. Mat. (N.S.)}, 32(2):185--212, 2001.

\bibitem[Lam01b]{Lamy}
S.~Lamy.
\newblock L'alternative de {T}its pour {${\rm Aut}[{\Bbb C}^2]$}.
\newblock {\em J. Algebra}, 239(2):413--437, 2001.

\bibitem[Lam02]{Lamy:jung}
S.~Lamy.
\newblock Une preuve g\'{e}om\'{e}trique du th\'{e}or\`{e}me de {J}ung.
\newblock {\em Enseign. Math. (2)}, 48(3-4):291--315, 2002.

\bibitem[Lie78]{Lieberman}
D.~I. Lieberman.
\newblock Compactness of the {C}how scheme: applications to automorphisms and
  deformations of {K}\"{a}hler manifolds.
\newblock In {\em Fonctions de plusieurs variables complexes, {III} ({S}\'{e}m.
  {F}ran\c{c}ois {N}orguet, 1975--1977)}, volume 670 of {\em Lecture Notes in
  Math.}, pages 140--186. Springer, Berlin, 1978.

\bibitem[Lon16]{Lonjou}
A.~Lonjou.
\newblock Non simplicit\'{e} du groupe de {C}remona sur tout corps.
\newblock {\em Ann. Inst. Fourier (Grenoble)}, 66(5):2021--2046, 2016.

\bibitem[Lon19a]{Lonjou:publmath}
A.~Lonjou.
\newblock Pavage de {V}orono\"{\i} associ\'{e} au groupe de {C}remona.
\newblock {\em Publ. Mat.}, 63(2):521--599, 2019.

\bibitem[Lon19b]{Lonjou:epiga}
A.~Lonjou.
\newblock Sur l'hyperbolicit\'{e} de graphes associ\'{e}s au groupe de
  {C}remona.
\newblock {\em \'{E}pijournal Geom. Alg\'{e}brique}, 3:Art. 11, 29, 2019.

\bibitem[Lys96]{Lysenok}
I.~G. Lys\"{e}nok.
\newblock Infinite {B}urnside groups of even period.
\newblock {\em Izv. Ross. Akad. Nauk Ser. Mat.}, 60(3):3--224, 1996.

\bibitem[Mal40]{Malcev}
A.~Malcev.
\newblock On isomorphic matrix representations of infinite groups.
\newblock {\em Rec. Math. [Mat. Sbornik] N.S.}, 8 (50):405--422, 1940.

\bibitem[Man66]{Manin:rational}
Ju.~I. Manin.
\newblock Rational surfaces over perfect fields.
\newblock {\em Inst. Hautes \'Etudes Sci. Publ. Math.}, (30):55--113, 1966.

\bibitem[Man67]{Manin:rational2}
Ju.~I. Manin.
\newblock Rational surfaces over perfect fields. {II}.
\newblock {\em Mat. Sb. (N.S.)}, 72 (114):161--192, 1967.

\bibitem[Man86]{Manin}
Yu.~I. Manin.
\newblock {\em Cubic forms}, volume~4 of {\em North-Holland Mathematical
  Library}.
\newblock North-Holland Publishing Co., Amsterdam, second edition, 1986.
\newblock Algebra, geometry, arithmetic, Translated from the Russian by M.
  Hazewinkel.

\bibitem[McM02]{McMullen2}
C.~T. McMullen.
\newblock Dynamics on {$K3$} surfaces: {S}alem numbers and {S}iegel disks.
\newblock {\em J. Reine Angew. Math.}, 545:201--233, 2002.

\bibitem[McM07]{McMullen}
C.~T. McMullen.
\newblock Dynamics on blowups of the projective plane.
\newblock {\em Publ. Math. Inst. Hautes \'Etudes Sci.}, (105):49--89, 2007.

\bibitem[McQ98]{McQuillan}
M.~McQuillan.
\newblock Diophantine approximations and foliations.
\newblock {\em Inst. Hautes \'Etudes Sci. Publ. Math.}, (87):121--174, 1998.

\bibitem[Men00]{Mendes}
L.~G. Mendes.
\newblock Kodaira dimension of holomorphic singular foliations.
\newblock {\em Bol. Soc. Brasil. Mat. (N.S.)}, 31(2):127--143, 2000.

\bibitem[MO15]{MinasyanOsin1}
A.~Minasyan and D.~Osin.
\newblock Acylindrical hyperbolicity of groups acting on trees.
\newblock {\em Math. Ann.}, 362(3-4):1055--1105, 2015.

\bibitem[MO19]{MinasyanOsin2}
A.~Minasyan and D.~Osin.
\newblock Correction to: {A}cylindrical hyperbolicity of groups acting on trees
  [ {MR}3368093].
\newblock {\em Math. Ann.}, 373(1-2):895--900, 2019.

\bibitem[Mum76]{Mumford}
D.~Mumford.
\newblock Hilbert's fourteenth problem--the finite generation of subrings such
  as rings of invariants.
\newblock pages 431--444, 1976.

\bibitem[NA68a]{AdjanNovikov1}
P.~S. Novikov and S.~I. Adjan.
\newblock Infinite periodic groups. {I}.
\newblock {\em Izv. Akad. Nauk SSSR Ser. Mat.}, 32:212--244, 1968.

\bibitem[NA68b]{AdjanNovikov2}
P.~S. Novikov and S.~I. Adjan.
\newblock Infinite periodic groups. {II}.
\newblock {\em Izv. Akad. Nauk SSSR Ser. Mat.}, 32:251--524, 1968.

\bibitem[NA68c]{AdjanNovikov3}
P.~S. Novikov and S.~I. Adjan.
\newblock Infinite periodic groups. {III}.
\newblock {\em Izv. Akad. Nauk SSSR Ser. Mat.}, 32:709--731, 1968.

\bibitem[Nag60]{Nagata}
M.~Nagata.
\newblock On rational surfaces. {I}. {I}rreducible curves of arithmetic genus
  {$0$}\ or {$1$}.
\newblock {\em Mem. Coll. Sci. Univ. Kyoto Ser. A Math.}, 32:351--370, 1960.

\bibitem[New72]{Newman}
M.~Newman.
\newblock {\em Integral matrices}.
\newblock Academic Press, New York-London, 1972.
\newblock Pure and Applied Mathematics, Vol. 45.

\bibitem[Ol'82]{Olshanskii}
A.~Yu. Ol'shanski\u{\i}.
\newblock The {N}ovikov-{A}dyan theorem.
\newblock {\em Mat. Sb. (N.S.)}, 118(160)(2):203--235, 287, 1982.

\bibitem[Pal78]{Palis}
J.~Palis.
\newblock Rigidity of the centralizers of diffeomorphisms and structural
  stability of suspended foliations.
\newblock In {\em Differential topology, foliations and {G}elfand-{F}uks
  cohomology ({P}roc. {S}ympos., {P}ontif\'{i}cia {U}niv. {C}at\'{o}lica, {R}io
  de {J}aneiro, 1976)}, volume 652 of {\em Lecture Notes in Math.}, pages
  114--121. Springer, Berlin, 1978.

\bibitem[Pan99]{Pan:generation}
I.~Pan.
\newblock Une remarque sur la g\'en\'eration du groupe de {C}remona.
\newblock {\em Bol. Soc. Brasil. Mat. (N.S.)}, 30(1):95--98, 1999.

\bibitem[Pop13]{Popov}
V.~L. Popov.
\newblock Tori in the {C}remona groups.
\newblock {\em Izv. Ross. Akad. Nauk Ser. Mat.}, 77(4):103--134, 2013.

\bibitem[PY89a]{PalisYoccoz}
J.~Palis and J.-C. Yoccoz.
\newblock Centralizers of {A}nosov diffeomorphisms on tori.
\newblock {\em Ann. Sci. \'{E}cole Norm. Sup. (4)}, 22(1):99--108, 1989.

\bibitem[PY89b]{PalisYoccoz2}
J.~Palis and J.-C. Yoccoz.
\newblock Rigidity of centralizers of diffeomorphisms.
\newblock {\em Ann. Sci. \'{E}cole Norm. Sup. (4)}, 22(1):81--98, 1989.

\bibitem[Rit23]{Ritt}
J.~F. Ritt.
\newblock Permutable rational functions.
\newblock {\em Trans. Amer. Math. Soc.}, 25(3):399--448, 1923.

\bibitem[Ros56]{Rosenlicht}
M.~Rosenlicht.
\newblock Some basic theorems on algebraic groups.
\newblock {\em Amer. J. Math.}, 78:401--443, 1956.

\bibitem[SB13]{ShepherdBarron}
N.I. Shepherd-Barron.
\newblock Some effectivity questions for plane {C}remona transformations.
\newblock {\em ar{X}iv:1311.6608}, 2013.

\bibitem[Sch11]{Schur}
I.~Schur.
\newblock \"{U}ber {G}ruppen linearer {S}ubstitutionen mit {K}oeffizienten aus
  einem algebraischen {Z}ahlk\"{o}rper.
\newblock {\em Math. Ann.}, 71(3):355--367, 1911.

\bibitem[Ser77]{Serre:arbres}
J.-P. Serre.
\newblock {\em Arbres, amalgames, {${\rm SL}_{2}$}}.
\newblock Soci\'et\'e Math\'ematique de France, Paris, 1977.
\newblock Avec un sommaire anglais, R\'edig\'e avec la collaboration de Hyman
  Bass, Ast\'erisque, No. 46.

\bibitem[Sou73]{Soule}
C.~Soul\'e.
\newblock Groupes op\'erant sur un complexe simplicial avec domaine
  fondamental.
\newblock {\em C. R. Acad. Sci. Paris S\'er. A-B}, 276:A607--A609, 1973.

\bibitem[Ste85]{Steinberg}
R.~Steinberg.
\newblock Some consequences of the elementary relations in {${\rm SL}_n$}.
\newblock In {\em Finite groups---coming of age ({M}ontreal, {Q}ue., 1982)},
  volume~45 of {\em Contemp. Math.}, pages 335--350. Amer. Math. Soc.,
  Providence, RI, 1985.

\bibitem[Swa71]{Swan}
R.~G. Swan.
\newblock Generators and relations for certain special linear groups.
\newblock {\em Advances in Math.}, 6:1--77 (1971), 1971.

\bibitem[Tit72]{Tits}
J.~Tits.
\newblock Free subgroups in linear groups.
\newblock {\em J. Algebra}, 20:250--270, 1972.

\bibitem[Ueh16]{Uehara}
T.~Uehara.
\newblock Rational surface automorphisms with positive entropy.
\newblock {\em Ann. Inst. Fourier (Grenoble)}, 66(1):377--432, 2016.

\bibitem[Ume80]{Umemura:dim3}
H.~Umemura.
\newblock Sur les sous-groupes alg\'ebriques primitifs du groupe de {C}remona
  \`a trois variables.
\newblock {\em Nagoya Math. J.}, 79:47--67, 1980.

\bibitem[Ume82a]{Umemura:imp}
H.~Umemura.
\newblock Maximal algebraic subgroups of the {C}remona group of three
  variables. {I}mprimitive algebraic subgroups of exceptional type.
\newblock {\em Nagoya Math. J.}, 87:59--78, 1982.

\bibitem[Ume82b]{Umemura:maxconnalg1}
H.~Umemura.
\newblock On the maximal connected algebraic subgroups of the {C}remona group.
  {I}.
\newblock {\em Nagoya Math. J.}, 88:213--246, 1982.

\bibitem[Ure]{Urech:ellipticsubgroups}
C.~Urech.
\newblock Subgroups of elliptic elements of the {C}remona group.
\newblock {\em J. Reine Angew. Math.}

\bibitem[Ure17]{Urech:phd}
C.~Urech.
\newblock {\em Subgroups of {C}remona groups}.
\newblock PhD thesis, Iniversity of Basel/University of Rennes $1$, 2017.

\bibitem[Ure20]{Urech:simplesubgroups}
C.~Urech.
\newblock Simple groups of birational transformations in dimension two.
\newblock {\em Comment. Math. Helv.}, 95(2):211--246, 2020.

\bibitem[UZ19]{UrechZimmermann}
C.~Urech and S.~Zimmermann.
\newblock A new presentation of the plane {C}remona group.
\newblock {\em Proc. Amer. Math. Soc.}, 147(7):2741--2755, 2019.

\bibitem[Wei55]{Weil}
A.~Weil.
\newblock On algebraic groups of transformations.
\newblock {\em Amer. J. Math.}, 77:355--391, 1955.

\bibitem[Wei79]{Weil:collected}
A.~Weil.
\newblock {\em Scientific works. {C}ollected papers. {V}ol. {II} (1951--1964)}.
\newblock Springer-Verlag, New York-Heidelberg, 1979.

\bibitem[Wey39]{Weyl}
H.~Weyl.
\newblock On unitary metrics in projective space.
\newblock {\em Ann. of Math. (2)}, 40(1):141--148, 1939.

\bibitem[Whi63]{Whittaker}
J.~V. Whittaker.
\newblock On isomorphic groups and homeomorphic spaces.
\newblock {\em Ann. of Math. (2)}, 78:74--91, 1963.

\bibitem[Wim96]{Wiman}
A.~Wiman.
\newblock Zur {T}heorie der endlichen {G}ruppen von birationalen
  {T}ransformationen in der {E}bene.
\newblock {\em Math. Ann.}, 48(1-2):195--240, 1896.

\bibitem[Wri79]{Wright:abelian}
D.~Wright.
\newblock Abelian subgroups of {${\rm Aut}_{k}(k[X,\,Y])$} and applications to
  actions on the affine plane.
\newblock {\em Illinois J. Math.}, 23(4):579--634, 1979.

\bibitem[Wri92]{Wright}
D.~Wright.
\newblock Two-dimensional {C}remona groups acting on simplicial complexes.
\newblock {\em Trans. Amer. Math. Soc.}, 331(1):281--300, 1992.

\bibitem[Zel90]{Zelmanov1991}
E.~I. Zel'manov.
\newblock Solution of the restricted {B}urnside problem for groups of odd
  exponent.
\newblock {\em Izv. Akad. Nauk SSSR Ser. Mat.}, 54(1):42--59, 221, 1990.

\bibitem[Zel91]{Zelmanov1992}
E.~I. Zel'manov.
\newblock Solution of the restricted {B}urnside problem for {$2$}-groups.
\newblock {\em Mat. Sb.}, 182(4):568--592, 1991.

\bibitem[Zha01]{Zhang}
D.-Q. Zhang.
\newblock Automorphisms of finite order on rational surfaces.
\newblock {\em J. Algebra}, 238(2):560--589, 2001.
\newblock With an appendix by I. Dolgachev.

\bibitem[Zha19]{Zhao}
S.~Zhao.
\newblock Centralizers of elements of infinite order in plane {C}remona groups.
\newblock {\em ar{X}iv:1907.11203}, 2019.

\bibitem[Zim84]{Zimmer2}
R.~J. Zimmer.
\newblock Kazhdan groups acting on compact manifolds.
\newblock {\em Invent. Math.}, 75(3):425--436, 1984.

\bibitem[Zim86]{Zimmer1}
R.~J. Zimmer.
\newblock On connection-preserving actions of discrete linear groups.
\newblock {\em Ergodic Theory Dynam. Systems}, 6(4):639--644, 1986.

\bibitem[Zim87a]{Zimmer3}
R.~J. Zimmer.
\newblock Actions of semisimple groups and discrete subgroups.
\newblock In {\em Proceedings of the {I}nternational {C}ongress of
  {M}athematicians, {V}ol. 1, 2 ({B}erkeley, {C}alif., 1986)}, pages
  1247--1258. Amer. Math. Soc., Providence, RI, 1987.

\bibitem[Zim87b]{Zimmer4}
R.~J. Zimmer.
\newblock Lattices in semisimple groups and invariant geometric structures on
  compact manifolds.
\newblock In {\em Discrete groups in geometry and analysis ({N}ew {H}aven,
  {C}onn., 1984)}, volume~67 of {\em Progr. Math.}, pages 152--210.
  Birkh\"auser Boston, Boston, MA, 1987.

\bibitem[Zim16]{Zimmermann:presentation}
S.~Zimmermann.
\newblock The {C}remona group of the plane is compactly presented.
\newblock {\em J. Lond. Math. Soc. (2)}, 93(1):25--46, 2016.

\end{thebibliography}

\nocite{}

\end{document}